\newcommand\cyr
\renewcommand\rmdefault{wncyr}
\renewcommand\sfdefault{wncyss}
\renewcommand\encodingdefault{OT2}
\DeclareTextFontCommand{\textcyr}{\cyr}
\def\@settitle
\theoremstyle{plain} 
\newtheorem{theorem}{Theorem}[section] 
\newtheorem{corollary}[theorem]{Corollary} 
\newtheorem{lemma}[theorem]{Lemma}
\newtheorem{prop}[theorem]{Proposition}
\newtheorem{defn}[theorem]{Definition}
\newtheorem*{defn*}{Definition}
\newtheorem*{thm*}{Theorem}
\newtheorem*{prop*}{Proposition}
\theoremstyle{remark}
\newtheorem{remark}[theorem]{Remark}
\numberwithin{equation}{chapter} 
\numberwithin{section}{chapter} 
\numberwithin{figure}{chapter} 
\theoremstyle{definition}
\newtheorem{example}{Example}[section]
\newcommand{\Z}{\ensuremath{\mathbb{Z}}}
\newcommand{\R}{\ensuremath{\mathbb{R}}}
\newcommand{\N}{\ensuremath{\mathbb{N}}}
\newcommand{\Rn}{\ensuremath{\mathbb{R}^n}}
\newcommand{\obal}{\ensuremath{\mathbb{R}^n} \setminus B_r}
\newcommand{\frlap}{\ensuremath{(-\Delta)^s}}
\newcommand{\eee}{\ensuremath{\varepsilon}}
\newcommand{\E}{\ensuremath{\mathcal{E}}}
\newcommand{\C}{\ensuremath{\mathcal{C}}}
\newcommand{\Co}{\ensuremath{\mathbb{C}}}
\newcommand{\K}{\ensuremath{\mathcal{K}}}
\newcommand{\bgs}[1]{\begin{equation*} \begin{aligned} #1 \end{aligned} \end{equation*}}
\newcommand{\eqlab}[1]{\begin{equation}  \begin{aligned}#1 \end{aligned}\end{equation}}
\newcommand{\sys}[2][]{\begin{equation*}#1  \left\{\begin{aligned}#2\end{aligned}\right.\end{equation*}}
 \newcommand{\al}{\ensuremath{&\;}}
\newcommand{\Lm}{\ensuremath{\mathcal{L}}}
\newcommand{\lr}[1]{\left( #1 \right)}
\newcommand{\lrq}[1]{\left[#1 \right]}
\newcommand{\ig}{\ensuremath{ \frac{\sin \pi s}{ \pi}}}
\newcommand{\F}{\ensuremath{\mathcal{F}}}
\newcommand{\Sa}{\ensuremath{\mathcal{S}}}
\newcommand{\wck}{\ensuremath{\widecheck}}
\newcommand{\Fl}{\ensuremath{\mathcal{F}_{\varepsilon}}} 
\newcommand{\Lst}{\ensuremath{L^{2_s^*}(\Rn)}} 
\newcommand{\eps}{\ensuremath{\varepsilon}} 
\newcommand{\ukp}{\ensuremath{(U_k)_+}} 
\newcommand{\cx}{\ensuremath{2_s^{\star}}} 
 \newcommand{\Hsa}{\ensuremath{\dot  H_a^s(\R^{n+1}_+)}}  
 \newcommand{\Rp}{\ensuremath{\mathbb{R}^{n+1}_+}} 
  \newcommand{\syslab}[2] []  {\begin{equation}#1  \left\{\begin{aligned}#2\end{aligned}\right.\end{equation}} 
   \newcommand{\scp}[1]{\langle #1 \rangle} 
\newcommand{\I}{\mathcal I}
\newcommand{\Ha}{\mathcal H}
\newcommand{\Ll}{\mathcal L}
\newcommand{\alig}[1] {\left\{\begin{aligned}#1 \end{aligned}\right.}
\def\l@subsection{\@tocline{2}{0pt}{2.5pc}{5pc}{}}
\begin{document}

\includepdf[pages={1}]{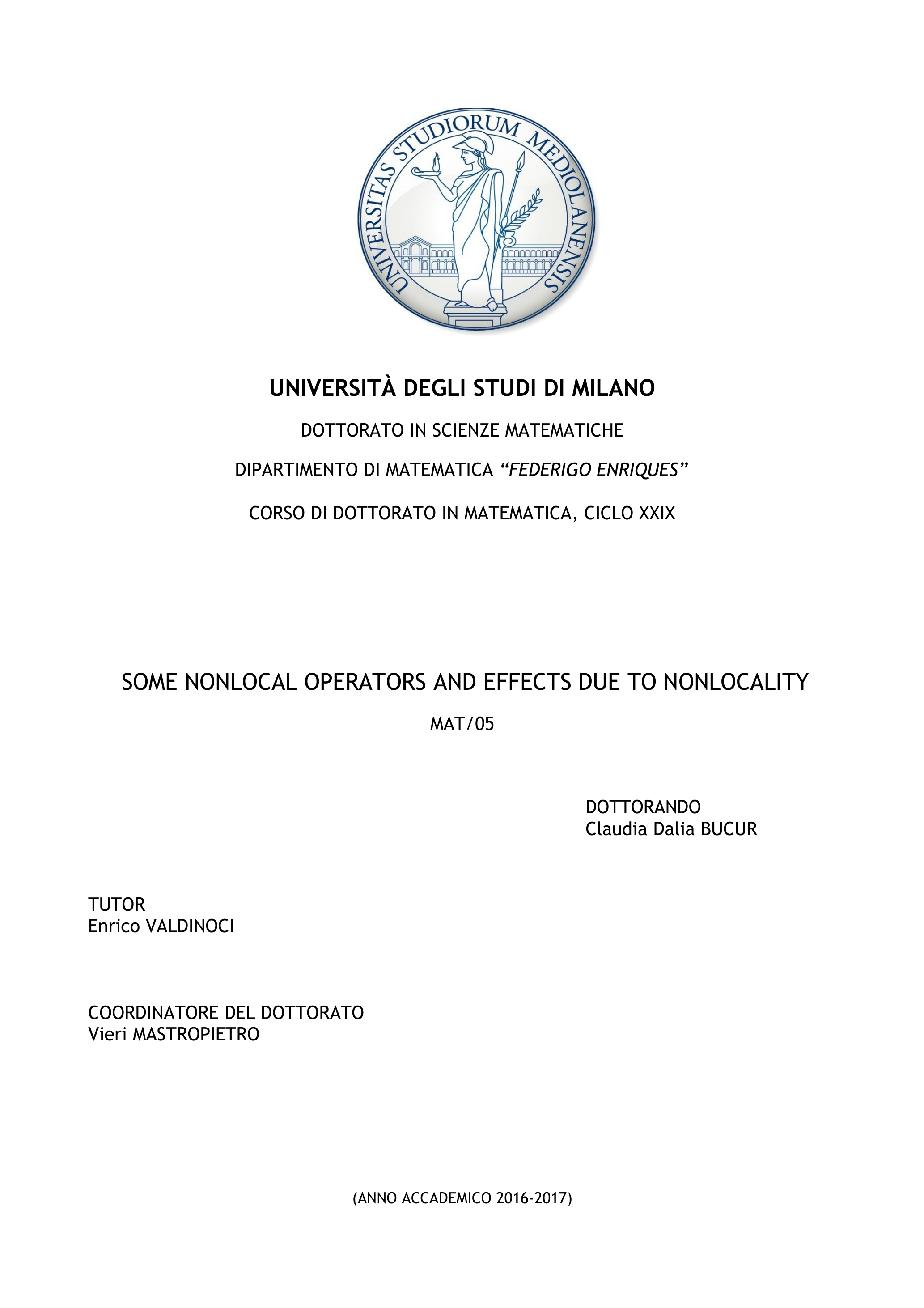}

\date{}
\author{Claudia Bucur}

\address{Claudia Bucur: Dipartimento di Matematica\\ Universit\`a degli Studi di Milano \\ Via Cesare Saldini, 50 \\ 20100, Milano-Italy}

\email{claudia.bucur@unimi.it}

\keywords{}
\email{}
\thanks{}

\title[]{Some nonlocal operators and effects due to nonlocality}
\maketitle
\begin{abstract}In this thesis, we deal with problems related to nonlocal operators, in particular to the fractional Laplacian and to some other types of fractional derivatives (the Caputo and the Marchaud derivatives). We make an extensive introduction to the fractional Laplacian, we present some related contemporary research results and we add some original material. Indeed, we study the potential theory of this operator, introduce a new proof of Schauder estimates using the potential theory approach, we study a fractional elliptic problem in $\Rn$ with convex nonlinearities and critical growth  and we present a stickiness property of  nonlocal minimal surfaces for small values of the fractional parameter. Also, we point out that the (nonlocal) character of the fractional Laplacian gives rise to some surprising nonlocal effects. We 
 prove that other fractional operators have a similar behavior: in particular, Caputo-stationary functions are dense  in the space of smooth functions; moreover, we introduce an extension operator for Marchaud-stationary functions. 
\end{abstract}

\tableofcontents
\chapter*{Introduction}
The interest in nonlocal operators has increased in the last decades given their numerous applications in many branches of physics, engineering, biology and so on. Just to name a few, several models involving nonlocal operators are being used to describe anomalous diffusion processes, viscoelasticity, signal processing, geomorphology, materials sciences, fractals, and many others. 

 Nonlocal operators have the peculiarity of capturing long-range interactions, i.e. events that happen far away, may that be in time or in space. In our setting, we study some aspects of nonlocal behavior, introduced by the following integral operators: the fractional Laplacian, the Caputo and the Marchaud fractional derivatives.
 
 Fractional calculus is a classical argument, studied since the end of the seventeenth century by many great mathematicians (see \cite{MillerRoss} for an interesting time-line history). Fractional operators generalize classical (integer) ones, in the sense that if the order of the fractional operator is given by the parameter $s\in (0,1)$, then letting $s\to 0^+$ one obtains the identity, and letting $s\to 1^-$, one gets the classical (integer order) operator. In the literature, there are several definitions of fractional operators, like the Riemann-Liouville, the Caputo, the Riesz, the Marchaud fractional derivative, or the generalization given by the Erdélyi-Kober operator (see \cite{KST06,MillerRoss,samkokilbas} for more details on fractional integrals, derivatives and applications). 
 
   The fractional Laplacian well describes nonlocal diffusion phenomena. For instance, we can use it to describe what happens to a sheet of metal (that has  a crystalline configuration), since  the behavior at a given point (for instance, its deformation when an external force is applied) depends on a large scale on the whole object. On the other hand, if we think of a function depending on time, the Caputo and the Marchaud derivatives exhibit a ``memory effect'', that is they ``see past events'', providing a model in which the state of a system at a given time depends on the past. They describe, hence, a causal system, also called a non-anticipative system. \\
We dedicate most of the thesis to the fractional Laplacian. Moreover, we introduce along the way the two other fractional derivatives (Caputo and Marchaud) and show that their nonlocal character induces some properties similar to those of the fractional Laplacian. 

\smallskip

Starting from the basics of the nonlocal equations and in particular of the fractional Laplace operator, in this thesis we will discuss in
detail some recent developments in some very interesting topics of research, presenting:
\begin{itemize}
\item a problem arising in crystal dislocation (which is related to a
classical model introduced by Peierls and Nabarro),
\item a problem arising in phase transitions
(which is related to
a nonlocal version of the classical
Allen–Cahn equation),
\item a nonlocal version of the Schr\"odinger equation
for standing waves (as introduced by Laskin), and
\item the limit interfaces arising in the above
nonlocal phase transitions
(which turn out
to be nonlocal minimal surfaces,
as introduced by
Caffarelli, Roquejoffre and Savin).
\end{itemize}
In particular,  we focus our 
attention on the following original contributions: 
\begin{itemize}
\item a Schauder estimate for the fractional Laplacian using the potential theory approach,
\item a fractional equation in $\Rn$ in the convex, critical case,
\item a stickiness phenomenon of nonlocal minimal surfaces, when the fractional parameter is small.
\end{itemize}

Moreover,  we prove some original results related to the Caputo derivative (see \cite{CAPUTO}) and the Marchaud derivative (see \cite{Marchaud}). In particular, we state a density property that  the Caputo derivative shares with the fractional Laplacian. Indeed, Caputo-stationary functions are locally dense in the space of smooth functions (just like $s$-harmonic functions are).

Furthermore, we introduce the extension operator of the Marchaud derivative. The extension is a local operator defined in one dimension more, whose trace is the original nonlocal operator itself. In this way, the nonlocal behavior of the Marchaud derivative can be seen as the effect of local events that occur in a space with an extra dimension (similarly to the extension operator for the fractional Laplacian, check \cite{CAFSIL}).  The advantage of working with the extension is that one can overcome the difficulty induced by the nonlocality, and use tools that are somehow classical. In our case, we prove a Harnack inequality for Marchaud-stationary functions using the Harnack inequality in the local case. 

\bigskip

\section*{Overview of the thesis and original results}

This thesis gathers some recent research on the fractional Laplacian. Starting from the basics of the theory for this operator, we will collect examples, recent results and observations, and enrich the material with some original contributions. Also, we will see that some nonlocal effects registered by the fractional Laplacian find correspondence for other fractional operators. In this sense, we will introduce and work with two types of fractional derivative (the Caputo and the Marchaud definitions). Furthermore, we will present some known recent results on   nonlocal minimal surfaces, and discuss in detail some new results on the behavior of  nonlocal minimal surfaces for a small value of the fractional parameter.
\bigskip

This thesis is organized in seven chapters, each of which focuses on a particular research theme. We consequently present the content of each chapter.

\bigskip
To start with, in the first chapter, we will give a motivation
for the fractional Laplacian, that
originates from probabilistic considerations.
For $s\in (0,1)$ and for regular enough functions the fractional Laplacian is defined as
\begin{equation}\label{frlapintr1}
\frlap u(x)= \frac{C(n,s) }2  \int_{\Rn} \frac{u(x) -u(x+y)-u(x-y)} {|y|^{n+2s} } \, dy,\end{equation} 
where $C(n,s)$ is a positive constant.
 We present here two probabilistic models in which this operator naturally arises: a random walk that allows long jumps and a payoff model. Indeed, we show that the fractional heat equation, i.e. 
\[\partial_t u+ (-\Delta)^s u=0,\]  naturally arises from a probabilistic process in which a particle moves randomly in the space, subject to a probability that allows long jumps. Using the same probabilistic process and supposing that exiting the domain for the first time by jumping to an outside point means earning a certain (known) quantity of money,  the payoff function will be $s$-harmonic in the domain, that is, inside the domain it will satisfy $\frlap u =0 .$	\\
As a matter of fact, no advanced knowledge of probability theory
is assumed from the reader, and the topic is dealt with
at an elementary level. 
%

\bigskip

In Chapter 2, we will recall some basic properties of
the fractional Laplacian, 
discuss some explicit examples in detail
and point out some structural inequalities, that are due to a
comparison principle.  
At first, we introduce some equivalent representations for the fractional Laplacian, as a principal value integral (in the sense of Cauchy),
\[\frlap u(x)=C(n,s) P.V. \int_{\Rn} \frac{u(x)-u(y)}{|x-y|^{n+s}}\, dy\]
and as a pseudo-differential operator
\[\frlap u(x) =\mathcal F^{-1}\left(|\xi|^{2s}\widehat u(\xi) \right).\] We also explicitly compute the constant $C(n,s) $ that  was introduced in definition \eqref{frlapintr1}. \\ Fractional Sobolev spaces enjoy quite a number of important functional inequalities. We present here two important results and give some simple and nice proofs, namely the fractional Sobolev inequality and the generalized co-area formula.\\
 Moreover, we present an explicit example of an $s$-harmonic function on the positive half-line, that is, we prove that
 \[\frlap (x_+)^s=0 \quad \mbox{ on } \; \R_+,\] and an example of a function with constant Laplacian on the ball, that is, we prove that up to constants
 \[ \frlap (1-|x|^2)_+^s = 1 \quad \mbox{in } \; B_1.\] We also discuss some maximum principles and a Harnack inequality, and present a quite surprising result, which states that every smooth function can be locally approximated
by functions with vanishing fractional Laplacian
(in sharp contrast with the rigidity of the classical harmonic functions).  

In analogy to this result on the Laplacian, \textbf{\textcolor{black}{as an original contribution}}, we prove that Caputo-stationary functions are dense in the space of smooth functions. Indeed, the nonlocal character of the Caputo derivative gives rise to this peculiar behavior: on a bounded interval, say $[0,1]$, one can find a Caputo-stationary function ``close enough'' to any smooth function, without any geometrical constraints. This again is surprising, since classical derivatives are rigid in tis sense (for instance, the functions with null first derivative are constant functions and  the functions with null second derivatives are affine functions). \\
 We notice that this behavior seems to be a typical nonlocal feature, and is shared by solutions of other nonlocal equations (see for this \cite{approxforall}, where the same type of result is proved for solutions of the fractional heat equation).\\
In particular, we introduce the Caputo derivative of a (good enough) function $u$ to be
\[ D_a^s u(x)= c_s\int_a^x u'(t) (x-t)^{-s}\, dt,\]
where $c_s$ is a positive constant, and prove the following result.
\begin{thm*}
Let $k\in \N_0$ and $s\in (0,1)$ be two arbitrary parameters. Then for any $f \in C^k\big([0,1]\big)$ and any $\varepsilon>0$ there exists an initial point $a<0$ and a function $u\in C^{1,s}_a $ such that 
	\[ D_a^s u(x)=0 \text{ in } [0,\infty) \]and 
	\[\| u-f\|_{C^k\lr{[0,1]}} < \varepsilon.\]	
	\end{thm*}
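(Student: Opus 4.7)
My plan is to adapt the Hahn--Banach / Poisson-kernel strategy (developed for the density of $s$-harmonic functions) to the one-dimensional Caputo setting. Fix a small buffer $b>0$ and restrict attention to those $u$ satisfying the \emph{stronger} condition $D_a^s u=0$ on $[-b,\infty)$, which implies the required identity on $[0,\infty)$. With exterior data $\phi:=u|_{[a,-b]}\in C_c^\infty\big((a,-b)\big)$, the equation splits as
$$\int_{-b}^{x}u'(t)(x-t)^{-s}\,dt=-\int_{a}^{-b}\phi'(t)(x-t)^{-s}\,dt,$$
whose Abel inversion determines $u$ on $[-b,\infty)$; since the Abel singularity sits at $x=-b<0$, the resulting $u$ is automatically $C^\infty$ on $[0,1]$ so the norm in the statement is meaningful. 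An integration by parts in $\phi$ together with the hypergeometric identity ${}_2F_1(s{+}1,1;s{+}1;z)=(1-z)^{-1}$ applied to the inner Beta-integral collapse the resulting double integral into the clean Poisson representation
$$u(x)=\frac{\sin\pi s}{\pi}\,(x+b)^{s}\int_{a}^{-b}\frac{\phi(t)}{(-b-t)^{s}\,(x-t)}\,dt,\qquad x\geq -b.$$
Writing $P(x,t)$ for this kernel, the set $V:=\{u|_{[0,1]}\}$ is a linear subspace of $C^k([0,1])$.

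I then argue by contradiction. If $V$ is not $C^k$-dense, Hahn--Banach plus Riesz yield a nonzero functional $\Lambda(g)=\sum_{j=0}^{k}\int_0^1 g^{(j)}\,d\mu_j$ with $\Lambda(V)=0$, and Fubini gives $\Psi(t):=\Lambda_x\big(P(\cdot,t)\big)=0$ for every $t\in(a,-b)$. Since $P(x,t)$ and its $x$-derivatives are real-analytic in $t$ on $(-\infty,-b)$, uniformly for $x\in[0,1]$, the function $\Psi$ is real-analytic on $(-\infty,-b)$ and therefore vanishes identically there. Expanding $P$ for large $|t|$ via
$$\frac{1}{x-t}=\sum_{k\geq 0}(-1)^{k}x^{k}|t|^{-k-1},\qquad (-b-t)^{-s}=|t|^{-s}\sum_{j\geq 0}\binom{-s}{j}(-b)^{j}|t|^{-j},$$
and matching the coefficient of $|t|^{-s-m-1}$ in the identity $\Psi\equiv 0$ forces $\Lambda\big((x+b)^{s}Q_m(x)\big)=0$ for every $m\geq 0$, where $Q_m(x):=\sum_{j+k=m}\binom{-s}{j}(-b)^{j}(-1)^{k}x^{k}$ has degree exactly $m$.

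To finish, observe that $\{Q_m\}_{m\geq 0}$ is a basis of $\mathbb{R}[x]$, and multiplication by the smooth, nonvanishing factor $(x+b)^{s}$ preserves $C^k([0,1])$-density: given $f\in C^k([0,1])$, apply the Weierstrass approximation theorem in $C^k$ to $g:=f/(x+b)^{s}$ and multiply back. Hence $\Lambda$ vanishes on a dense subspace of $C^k([0,1])$, forcing $\Lambda\equiv 0$, which contradicts $\Lambda\neq 0$. The main technical obstacle in this plan is the derivation of the closed-form Poisson kernel in Step 1: the integration by parts relies on $\phi$ vanishing at both endpoints, and the inner Beta-integral must be recognized as the particular ${}_{2}F_{1}$ that collapses to $(1-z)^{-1}$; once the kernel is in hand, the analyticity-plus-Laurent-matching mechanism at the end is essentially bookkeeping.
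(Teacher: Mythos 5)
Your proposal is correct, and it is a genuinely different proof from the one in the paper. The paper follows the DSV14 template for $s$-harmonic functions: reduce to monomials via Stone--Weierstrass; build a sequence $v_j$ of Caputo-stationary functions converging locally uniformly to $\kappa\,x^s$ by rescaling one fixed solution and reading off its boundary behavior from the Abel representation; then prove, by a finite-dimensional linear algebra contradiction, that the jet map $(v,x)\mapsto(v(x),\dots,v^{(m)}(x))$ has full range (if a vector $c\neq 0$ annihilated every such jet, passing to the limit $v_j\to\kappa x^s$ would force $\sum_j c_j\,s(s-1)\cdots(s-j+1)\,x^{m-j}\equiv 0$, impossible since $s\notin\N$); finally rescale to get $\|u-x^m\|_{C^k}<\varepsilon$. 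You instead push the same Abel inversion all the way to a closed-form Poisson kernel $P(x,t)=\frac{\sin\pi s}{\pi}\frac{(x+b)^s}{(-b-t)^s(x-t)}$ (which I verified; the integration by parts with $\phi$ vanishing at the endpoints together with the Beta-to-${}_2F_1$ collapse indeed produces this kernel), replace the finite-jet argument by Hahn--Banach duality, and extract the ``monomial'' information from the large-$|t|$ Laurent expansion of $P$ rather than from a convergent sequence.

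Both proofs pivot on the same structural fact: the non-integer power $s$ makes $(\,\cdot\,)^s$ a nondegenerate generator. The paper reads this off through $D^j x^s\big|_{x_0}=s(s-1)\cdots(s-j+1)\,x_0^{s-j}\neq 0$; you read it off through the Laurent coefficients $(x+b)^sQ_m(x)$ being, after division by the nonvanishing $C^\infty$ factor $(x+b)^s$, a graded basis of $\R[x]$ (indeed $Q_m$ has leading coefficient $(-1)^m$). Both proofs finish with density of polynomials in $C^k([0,1])$. What your route buys is a cleaner conceptual picture---a single explicit kernel, one annihilation identity $\Lambda_x P(\cdot,t)\equiv 0$, and uniqueness of Laurent expansions---at the cost of having to derive the closed form and justify the analytic continuation of $\Psi$ and the interchange of $\Lambda$ with the infinite series (both routine, given that $P$ and all its $x$-derivatives extend holomorphically in $t$ to a complex neighborhood of $(-\infty,-b)$, uniformly in $x\in[0,1]$, and the series converges in $C^k([0,1])$ for $|t|>1+b$). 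The paper's route is more constructive, avoids Hahn--Banach entirely, and gives a hands-on recipe for building the approximant, but requires the careful bookkeeping of Subsections 2.2.2--2.2.4 to establish the limit $v_j\to\kappa x^s$ and the spanning property. As a minor note, your sketch implicitly uses the well-posedness and regularity results that the paper proves as Lemma 2.2.6 and Theorem 2.2.7 (that the Abel inverse with smooth right-hand side lands in $C^{1,s}_a$ and is $C^\infty$ past the singularity); you should cite or reprove those when fleshing out ``the Abel inversion determines $u$.''
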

  
\noindent To prove this theorem, we follow the steps of \cite{DSV14}. The main difficulties are given by the structure of the Caputo derivative and the lack of symmetry of the exterior conditions.  In order to get the result, we do the following: we reduce the problem to finding a Caputo stationary function close to any monomial, and this comes to finding a Caputo stationary function with an arbitrarily large number of derivatives prescribed. By providing the ``right'' prescribed (exterior) data, we build a sequence of Caputo-stationary functions that tends uniformly to the function $x_+^s$. This allows us to obtain a Caputo-stationary function with an arbitrarily large number of derivatives prescribed and to conclude the proof.

\bigskip

In Chapter 3, we introduce the potential theory related to the fractional Laplacian. We underline here that the fractional Laplacian is closely related to the Riesz kernel, that in our context is the fundamental solution of the fractional Laplacian. As a matter of fact
\[ \frlap \Phi=\delta_0,\]
where $\Phi$ is the Riesz kernel and $\delta_0$ is the Dirac Delta at zero. The convolution operation with this singular integral kernel (which is also called, in fractional calculus, the Riesz fractional integral) is  the inverse operator (in a distributional sense) of the fractional Laplacian.  Indeed, for $u\in C_c^{0,\eps}(\Rn)$ we have that
\[ \frlap (u*\Phi)(x)= u(x) \quad \mbox{ in } \; \Rn,\]
both pointwise and in a distributional sense. We introduce also the Poisson kernel, that gives an $s$-harmonic function inside the ball when the exterior data is known, by convolution with the known term,
that is
\bgs{ \frlap \bigg(\int_{\Rn \setminus B_1} P_1(y,x) u(y) \bigg)= 0  \quad \mbox{ in } \; B_1 ,}
where $u$ is fixed outside of $B_1$ (and is continuous and integrable at infinity with respect to the weight). Here, $P_1(y,x)$ is the Poisson kernel (outside of the ball of radius $1$).\\
When a function is zero outside the ball, the Green function gives the solution of the identity problem inside the ball,  more precisely in $B_1$ we have that
\[ \frlap \left(\int_{B_1} u(y)G(x,y)\, dy\right) = u(x)  ,\]
for $u\in C_c^{0,\eps}(B_1)\cap C(\overline B_1) $ and $u=0$ in $\Rn \setminus B_1$. We also prove a formula for the Green function, that is more suitable for applications.
 The main results in this section are inspired from \cite{Landkof,conto,stableprocess}, but the proofs we give are elementary and easy to follow. 
 
 Furthermore, using the potential theory, we give \textbf{\textcolor{black}{an original proof}} of the Schauder estimates for a fractional Laplacian equation, using a dyadic ball argument. In particular, we take $f$ to be a H\"{o}lder continuous function in $B_1$  and $u$ solving 
$ (-\Delta)^s u=f $  in $B_1.$ Then we prove
  that on the half ball, $u$ has the regularity of $f$ increased by $2s$.  
  More precisely
 \begin{thm*} Let $s\in(0,1)$,  $\alpha <1$ and $f\in C^{0,\alpha}(B_1)\cap C(\overline B_1)$ be a given function with modulus of continuity 
 \[\omega(r):=\sup_{|x-y|<r} |f(x)-f(y)|.\]
 Let  $u\in  L^{\infty}(\Rn)\cap C^1(B_1)$ be a pointwise solution of 
 \[ \frlap u=f \quad \mbox{ in }  \; B_1.\]
  Then for any $x,y \in B_{1/2}$ and denoting $\delta:=|x-y|$ we have for  $ s\leq 1/2$ that
  \bgs{  |u(x)-u(y)|\leq &\; C_{n,s}  \bigg( \delta  \|u\|_{L^{\infty}(\Rn\setminus B_1)} +\delta \sup_{\overline B_1}|f| + \int_0^{c \delta} \omega(t)t^{2s-1}\, dt + \delta \int_{\delta}^1  \omega(t)t^{2s-2}\, dt\bigg) ,}
  and  for $s>1/2$ that
 \bgs{ |Du(x)-Du(y)|\leq &\; C_{n,s}  \bigg(\delta  \|u\|_{L^{\infty}(\Rn\setminus B_1)} +\delta \sup_{\overline B_1}|f| + \int_0^{c \delta} \omega(t)t^{2s-2}\, dt+ \delta \int_{\delta}^1  \omega(t)t^{2s-3}\, dt\bigg) ,}
 where $C_{n,s}$ and $ c$ are positive dimensional constants.
\end{thm*}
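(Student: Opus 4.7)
The plan is to leverage the Green--Poisson representation developed in Chapter~3. On any ball $B_r(x_0)\subset B_1$ containing $x$ and $y$ one has
\[u(w) = \int_{B_r(x_0)} G_r(w,y)\, f(y)\, dy + \int_{\Rn\setminus B_r(x_0)} P_r(y,w)\, u(y)\, dy =: I_r(w) + J_r(w)\]
for every $w\in B_r(x_0)$. I would fix a macroscopic ball $B_r(x_0)\subset B_1$ with $r$ of order~$1$ and $x,y\in B_{r/2}(x_0)$, then decompose the integration domain of $I_r$ dyadically in annuli $A_k := B_{2^{-k}r}(x_0)\setminus B_{2^{-k-1}r}(x_0)$, splitting the resulting sum at the index $k^{\star}$ for which $2^{-k^{\star}}r\simeq \delta$. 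The Poisson piece $J_r$ will produce the $\|u\|_{L^{\infty}(\Rn\setminus B_1)}$ contribution, while the Green piece $I_r$ will account for both the $\sup_{\overline{B_1}}|f|$ term and the two $\omega$--integrals.

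Since $J_r$ is $s$--harmonic in $B_r(x_0)$, the interior smoothness of $s$--harmonic functions from Chapter~2 gives $\|D^{m} J_r\|_{L^{\infty}(B_{r/2}(x_0))} \le C\, r^{-m}\|u\|_{L^{\infty}(\Rn\setminus B_1)}$; applying the mean value theorem with $m=1$ if $s\le 1/2$ and $m=2$ if $s>1/2$ produces the $\delta\|u\|_{L^{\infty}(\Rn\setminus B_1)}$ term. For $I_r$, I would write $f(y) = f(x_0)+(f(y)-f(x_0))$ and split $I_r = f(x_0)\Phi_r + R_r$, where $\Phi_r$ is the explicit solution of $\frlap \Phi_r = 1$ in $B_r(x_0)$ with vanishing exterior data, given up to a dimensional constant by $(r^2-|w-x_0|^2)_+^s$ as computed in Chapter~2. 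Since $\Phi_r$ is smooth in $B_{r/2}(x_0)$ with bounded first and second derivatives there, its contribution accounts for the $\delta\,\sup_{\overline B_1}|f|$ term, uniformly in both regimes.

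The core of the proof is the residual $R_r(w) = \int_{B_r(x_0)} G_r(w,y)\bigl(f(y)-f(x_0)\bigr)\, dy$, where the modulus of continuity of $f$ enters through $|f(y)-f(x_0)|\le \omega(|y-x_0|)$. Summing the annular contributions on $\{|y-x_0|\le c\delta\}$ using the near--diagonal bound $G_r(w,y)\le C|w-y|^{2s-n}$ converts the dyadic sum into $\int_0^{c\delta}\omega(t)\, t^{2s-1}\, dt$; on $\{|y-x_0|\ge c\delta\}$ the Green function is smooth in $w$ with $|\nabla_w G_r(w,y)|\le C|w-y|^{2s-n-1}$, and applying the mean value theorem to compare values at $x$ and $y$ yields an extra factor $\delta$ and an additional $|w-y|^{-1}$, so the tail dyadic sum telescopes into $\delta\int_{\delta}^{1}\omega(t)\, t^{2s-2}\, dt$. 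For $s>1/2$ the same scheme is run after differentiating the representation once in $w$: each quantity picks up an additional negative power of $|w-y|$, lowering the exponents of $t$ by one and producing exactly the announced $t^{2s-2}$ and $t^{2s-3}$.

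The main technical obstacle I expect is establishing the precise pointwise bounds for $G_r$ and $\nabla_w G_r$ and their first--order increments that make the dyadic sums telescope to exactly the stated $\omega$--integrals; the integrability of $t^{2s-1}$ (respectively $t^{2s-2}$) at the origin is sharp, and in the case $s>1/2$ one must verify that the estimate for $\nabla_w G_r$ still produces a convergent inner contribution on the innermost annuli. A secondary delicate point is passing from the fixed macroscopic scale $r\sim 1$ down to $\delta$ through the dyadic sum in a manner compatible with the only $C^1(B_1)$ regularity assumed on $u$, which requires justifying the pointwise differentiation of the representation formula in the $s>1/2$ case.
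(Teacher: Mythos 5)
Your proposal takes a genuinely different route from the paper's. You work directly with the Green--Poisson representation of $u$ on a fixed macroscopic ball and perform a dyadic decomposition of the \emph{integration domain} into annuli around the evaluation point, turning the dyadic sums into the $\omega$-integrals via pointwise kernel estimates for $G_r$, $\nabla_w G_r$, and their first-order increments. The paper instead follows Wang's perturbative scheme: it introduces a dyadic sequence of \emph{approximating functions} $u_k$ solving $\frlap u_k = f(0)$ in $B_{\rho^k}$ with $u_k = u$ outside, observes that each difference $u_k - u_{k+1}$ is $s$-harmonic in $B_{\rho^{k+1}}$, and telescopes derivative estimates across scales. The Green and Poisson kernels enter the paper only indirectly, through two soft ingredients (the $L^\infty$ bound of Lemma~\ref{estimate2} via the Poisson representation, and the interior derivative estimates of Lemma~\ref{estimate1} for $s$-harmonic functions); the paper never needs pointwise bounds on $G_r$ or its gradient, which is exactly what your route requires and flags as the delicate point. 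What your approach buys is a more ``hands-on'' single decomposition that makes the appearance of the $\omega$-integrals very transparent; what the paper's approach buys is robustness, since it is insensitive to the precise form of the Green function and would transfer to kernels where $G_r$ has no closed form and its pointwise increments are hard to control. Both schemes are sharp, and both sensibly split at the scale $\delta = |x-y|$ so that the inner piece integrates $\omega(t)t^{2s-1}$ (resp. $t^{2s-2}$) and the outer piece, after one mean-value application producing the extra factor $\delta$, integrates $\omega(t)t^{2s-2}$ (resp. $t^{2s-3}$). One technical point to watch in your plan: you center the dyadic annuli at a fixed point $x_0$ of the macroscopic ball, but the split must align with the scale $|x-y|$, so in the write-up you should center at $x$ (or the midpoint of $x,y$) so that the first-order increment $|G_r(x,z) - G_r(y,z)|$ genuinely picks up a factor $\delta$ against $|x-z|^{2s-n-1}$ on the outer annuli; also, the $s>1/2$ case needs the representation formula to be differentiable at the required points, which you rightly note must be justified against the mere $C^1(B_1)$ assumption on $u$, and is resolved in the paper's Lemma~\ref{grlem12}.
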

\noindent In particular, 
we have for $s\leq 1/2$ that
 $u\in C^{0,2s+\alpha}(B_{1/2})$ as long as $\alpha <1-2s$ and that $u$ is Lipschitz if $\alpha>1-2s$. For $s>1/2$ we have that
	 $u\in C^{1,\alpha+2s-1}(B_{1/2})$ if  $\alpha \leq 2-2s$, while for  $2-2s\leq \alpha <1$ the derivative $Du$ is Lipschitz in $B_{1/2}.$ 
	
	\noindent In order to prove these bounds, we rely on the very nice method used in \cite{wang} for the classical Laplacian,  which is based only on the 
 higher order derivative estimates and a dyadic ball argument. 

\bigskip

In Chapter 4 we deal with extended problems.
It is a quite remarkable fact that
in some occasions nonlocal operators can be equivalently
represented as local (though possibly degenerate or singular)
operators in one dimension more. Moreover, as a counterpart,
several models arising in a local framework
give rise to nonlocal equations, due to boundary effects.
So, to introduce the extension problem and give
a concrete intuition of it, we will present some models in physics
that are naturally set on an extended space
to start with, and we will show their relation with the fractional Laplacian
on a trace space. We will also give a detailed justification
of this extension procedure by means of the Fourier transform. \\
As a special example of problems arising in physics
that produce a nonlocal equation, we consider a problem related to crystal dislocation, present some mathematical results
that have been recently obtained on this topic, and
discuss the relation between these results and
the observable phenomena.

We end this chapter by introducing the Marchaud fractional derivative and, as \textbf{\textcolor{black}{an original contribution}}, the extension operator related to it. The Marchaud (left) fractional derivative is defined (up to constants) for a bounded, locally H{\"o}lder continuous function in $\mathbb{R}$, as   
\begin{equation*} \label{frader}
	\begin{split} \bold{D}^s f (t):=  \int_{0}^\infty \frac{f(t)-f(t-\tau)}{\tau^{s+1}}\, d\tau.
	 \end{split}
\end{equation*}
This derivative naturally arises when dealing with a family of singular/degenerate parabolic problems (which, for $s={1}/{2}$, reduces to the heat conduction problem) on the positive half-plane, with a positive space variable and  for all times, namely for $(x,t)\in [0,\infty)\times \mathbb{R} $.  

\noindent Considering the function $\varphi$ of one variable, formally representing the time variable, our approach relies on constructing a parabolic local operator by adding an extra variable, say the space variable, on the positive half-line, and working on the extended plane $ [0,\infty)\times \mathbb{R} $. Namely, we prove that
\begin{thm*} 
Let $s\in (0,1) $ and $\bar{\gamma}\in (s,1]$ be fixed. Let $\varphi \in C^{\bar{\gamma}}(\mathbb{R})$ 
be a bounded function 
and let $U\colon [0,\infty)\times \mathbb{R}\to \mathbb{R}$ be a solution of the problem 
\begin{equation}\label{prob1}
\left\{
\begin{split}
	&\frac{\partial U}{\partial t} (x,t)= \frac{1-2s}{x} \frac{\partial U }{\partial x}(x,t)+ \frac{\partial^2 U }{\partial x^2}(x,t), & &    (x,t)\in(0,\infty)\times \mathbb{R}\\
	 & U(0,t)=\varphi(t),  &&t\in\mathbb{R}\\
	  & \lim_{x \to +\infty} U(x,t)=0, &&t\in\mathbb{R}.
\end{split}
\right.
\end{equation}
Then $U$ defines the extension operator for $\varphi$, such that
\begin{equation*}
\bold{D}^s \varphi(t)=-\lim_{x\to 0^+} c_s x^{-2s}(U(x,t)- \varphi(t)),
\end{equation*}
where $c_s$ is a positive constant.
 \end{thm*}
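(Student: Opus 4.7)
The plan is to construct an explicit extension of $\varphi$ via a Poisson-type kernel, identify it with $U$ by a uniqueness argument for \eqref{prob1}, and then compute the prescribed trace-type limit by dominated convergence. Motivated by the self-similarity $(x,t)\mapsto(\lambda x,\lambda^2 t)$ of the operator and by the case $s=1/2$ (where \eqref{prob1} reduces to the heat equation on a half-plane), I would introduce the candidate kernel
\[
P_s(x,\tau):=\frac{1}{4^s\,\Gamma(s)}\,\frac{x^{2s}}{\tau^{1+s}}\,e^{-x^2/(4\tau)},\qquad x>0,\ \tau>0,
\]
and set $U_0(x,t):=\int_0^\infty P_s(x,\tau)\,\varphi(t-\tau)\,d\tau$. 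A direct computation, differentiating $P_s$ term by term, verifies $\partial_\tau P_s=\frac{1-2s}{x}\partial_x P_s+\partial_{xx}P_s$, and an integration by parts in $\tau$ propagates the PDE from $P_s$ to $U_0$. The substitution $u=x^2/(4\tau)$ shows $\int_0^\infty P_s(x,\tau)\,d\tau=1$ for every $x>0$, so $P_s(x,\cdot)$ is a probability density in $\tau$ concentrating at $\tau=0^+$ as $x\to 0^+$; together with $\varphi\in C^{\bar\gamma}$ this forces $U_0(x,t)\to\varphi(t)$, while the Gaussian factor combined with boundedness of $\varphi$ gives $U_0(x,t)\to 0$ as $x\to+\infty$.

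Next I would invoke a uniqueness result for bounded solutions of \eqref{prob1} to conclude $U=U_0$. The cleanest route is to apply a Fourier transform in $t$: the equation becomes the modified Bessel-type ODE $\widehat U''+\tfrac{1-2s}{x}\widehat U'-i\xi\,\widehat U=0$, whose bounded solution decaying at $x=+\infty$ is unique up to a multiplicative constant, fixed by the trace condition $\widehat U(0,\xi)=\widehat\varphi(\xi)$.

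With $U=U_0$ in hand, I would write
\[
x^{-2s}\bigl(U(x,t)-\varphi(t)\bigr)=\frac{1}{4^s\,\Gamma(s)}\int_0^\infty\frac{e^{-x^2/(4\tau)}}{\tau^{1+s}}\bigl[\varphi(t-\tau)-\varphi(t)\bigr]\,d\tau,
\]
and pass to the limit via dominated convergence. The Hölder bound $|\varphi(t-\tau)-\varphi(t)|\leq[\varphi]_{C^{\bar\gamma}}\,\tau^{\bar\gamma}$ for $\tau\leq 1$ together with $|\varphi(t-\tau)-\varphi(t)|\leq 2\|\varphi\|_\infty$ for $\tau\geq 1$ produces an integrable majorant precisely because $\bar\gamma>s$. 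Taking $x\to 0^+$ therefore gives
\[
\lim_{x\to 0^+}x^{-2s}\bigl(U(x,t)-\varphi(t)\bigr)=-\frac{1}{4^s\,\Gamma(s)}\int_0^\infty\frac{\varphi(t)-\varphi(t-\tau)}{\tau^{1+s}}\,d\tau=-\frac{1}{4^s\,\Gamma(s)}\,\bold{D}^s\varphi(t),
\]
which is the claimed identity with $c_s:=4^s\,\Gamma(s)>0$.

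The hardest step I expect is the uniqueness argument: the coefficient $(1-2s)/x$ is singular at $x=0$ when $s\neq 1/2$, and the natural energy framework for \eqref{prob1} lives in weighted Sobolev spaces attached to the Muckenhoupt $A_2$ weight $x^{1-2s}$, so a direct maximum principle is delicate. Routing through the Fourier transform in $t$ sidesteps the weighted theory, but justifying the transform for $\varphi$ merely bounded and Hölder continuous requires either an approximation by Schwartz data or a distributional interpretation. Everything else---the PDE check for $P_s$, the approximate-identity behavior at $x=0$, the decay at $x=+\infty$, and the dominated-convergence passage in the trace limit---becomes routine once the hypothesis $\bar\gamma>s$ is exploited to control the singularity of the Marchaud kernel at $\tau=0$.
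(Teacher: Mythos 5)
Your proposal matches the paper's argument in all the essential steps: you introduce the same kernel $\Psi_s(x,\tau)=\frac{1}{4^s\Gamma(s)}x^{2s}\tau^{-1-s}e^{-x^2/(4\tau)}$, form the convolution $U_0=\Psi_s(x,\cdot)*\varphi$, observe $\int_0^\infty\Psi_s(x,\tau)\,d\tau=1$, verify the PDE and the decay at $x\to+\infty$, and compute the trace limit by dominated convergence using exactly the split of $|\varphi(t-\tau)-\varphi(t)|$ into the H\"older estimate for $\tau\le 1$ (which needs $\bar\gamma>s$) and boundedness for $\tau>1$. The one cosmetic difference is that the paper motivates the kernel through a Laplace transform in $t$ (reducing to a modified Bessel ODE and citing a boundary-value result for it), whereas you propose a Fourier transform in $t$; both lead to the same representation formula.

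Where you diverge, and where you should be careful, is the step "$U=U_0$ by a uniqueness result." The paper explicitly acknowledges, already in the heuristic $s=\tfrac12$ discussion, that the heat-conduction problem on a half-plane does \emph{not} have a unique bounded solution without further growth restrictions (Tychonoff-type counterexamples); the same pathology persists for general $s$. Consequently, the paper never proves uniqueness: the transform is used only formally to \emph{derive} the candidate, and the theorem is then proved by direct substitution of the explicit convolution formula, i.e.\ it is really a statement about the constructed solution. Your Fourier-transform route cannot establish uniqueness for merely bounded, H\"older data either, since nothing in the hypotheses rules out the nonzero, rapidly growing solutions of the homogeneous problem. So you should drop the uniqueness claim and instead present $U:=\Psi_s(x,\cdot)*\varphi$ as a definition (as the paper implicitly does); the rest of your argument then goes through unchanged and gives exactly $c_s=4^s\Gamma(s)$.
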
 

\noindent An interesting application that follows from this extension procedure is a Harnack inequality for Marchaud-stationary functions in an interval $J \subseteq {\mathbb{R}}$ (namely for functions that satisfy  $\bold{D}^s \varphi=0$  in  $J$). 
\begin{thm*} 
Let $s\in (0,1)$. There exists a positive constant $\gamma$ such that, if $\bold{D}^s\varphi=0$ in an interval $J\subseteq \mathbb{R}$ and $\varphi\geq 0$ in $\mathbb{R}$, then  
\begin{equation*}\label{Harnack_intro}
\sup_{[t_0-\frac{3}{4}\delta,t_0-\frac{1}{4}\delta]}\varphi\leq \gamma \inf_{[t_0+\frac{3}{4}\delta,t_0+\delta]}\varphi
\end{equation*}
for every $t_0\in \mathbb{R}$ and for every  $\delta >0$ such that $[t_0-\delta,t_0+\delta]\subset J$.
\end{thm*}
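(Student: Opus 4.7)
The guiding idea is to transfer the Harnack inequality from the extension $U$, which satisfies a local degenerate parabolic equation, back to its trace $\varphi$.

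First, I would rewrite the bulk equation in divergence form. A direct computation shows that on $(0,\infty)\times\mathbb{R}$
\begin{equation*}
x^{1-2s}\frac{\partial U}{\partial t} = \frac{\partial}{\partial x}\lr{x^{1-2s}\frac{\partial U}{\partial x}},
\end{equation*}
so $U$ solves a degenerate parabolic equation whose weight $x^{1-2s}$ lies in the Muckenhoupt class $A_2$ for every $s\in(0,1)$. On the other hand, the hypothesis $\mathbf{D}^s\varphi=0$ in $J$, combined with the extension formula, gives
\begin{equation*}
\lim_{x\to 0^+} x^{1-2s}\frac{\partial U}{\partial x}(x,t)=0\qquad\text{for every } t\in J,
\end{equation*}
that is, a vanishing weighted conormal condition on $\{0\}\times J$.

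Second, I would even-reflect across $\{x=0\}$: setting $\tilde U(x,t):=U(|x|,t)$ on $\mathbb{R}\times J$, the vanishing conormal condition is precisely what is needed to guarantee that $\tilde U$ is a weak solution of the same divergence-form equation across $\{x=0\}$, now with the even weight $|x|^{1-2s}\in A_2$. The non-negativity $\varphi\geq 0$ passes to $U\geq 0$ through the integral representation of $U$ in terms of $\varphi$ via the Poisson-type kernel associated with the extension problem, and hence to $\tilde U\geq 0$.

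Third, I would invoke the weighted parabolic Harnack inequality for non-negative weak solutions of divergence-form parabolic equations with $A_2$ weights (in the spirit of Chiarenza--Serapioni and Fabes--Kenig--Serapioni in the elliptic setting, extended to the parabolic case by Gutiérrez--Wheeden and Ishige). Applied to parabolic cylinders centred at $(0,t_0)$ of space-time size comparable to $\delta$, it yields a constant $\gamma>0$ with
\begin{equation*}
\sup_{\{0\}\times[t_0-\frac{3}{4}\delta,t_0-\frac{1}{4}\delta]}\tilde U \leq \gamma\inf_{\{0\}\times[t_0+\frac{3}{4}\delta,t_0+\delta]}\tilde U.
\end{equation*}
Since $\tilde U(0,t)=\varphi(t)$ on $J$, this is exactly the claim. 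Independence of $\gamma$ from $t_0$ and $\delta$ follows from the parabolic rescaling $(x,t)\mapsto(\lambda x,\lambda^2 t)$, which preserves both the equation and the $A_2$-constant of $|x|^{1-2s}$.

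The main obstacle will be the reflection step: one has to verify, in the correct weak formulation, that the weighted flux $x^{1-2s}\partial_x U$ has a well-defined trace on $\{0\}\times J$ that genuinely vanishes, so that the reflected function $\tilde U$ is a weak solution across $\{x=0\}$ without any singular source concentrated on the boundary. A secondary technicality is aligning the numerical constants in the abstract weighted parabolic Harnack inequality with the specific time-shifted cylinders in the statement, which is handled by a routine covering and iteration argument starting from the standard Chiarenza--Serapioni cylinders.
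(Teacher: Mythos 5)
Your proposal follows essentially the same route as the paper: extend $\varphi$ to a solution $U$ of the degenerate divergence-form parabolic equation, observe that $\mathbf{D}^s\varphi=0$ on $J$ translates into a vanishing weighted conormal derivative, even-reflect to obtain a nonnegative weak solution across $\{x=0\}$ with the $A_2$ weight $|x|^{1-2s}$, apply the Chiarenza--Serapioni parabolic Harnack inequality, and slice at $x=0$. You even correctly identify the reflection step (verifying $\tilde U$ is a genuine weak solution across the hyperplane, with no concentrated flux) as the main technical point, which is precisely the content of Lemma \ref{tildeu} in the paper, and the translation/rescaling you mention at the end matches how the paper passes from $J=(0,T)$ to a general interval.
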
 
\noindent This result is obtained from the Harnack inequality for some degenerate parabolic operators by ``looking at it'' on the trace. Indeed, using the extension operator, it is quite easy to obtain this type of result.  

\bigskip

In Chapter 5, we look at some nonlocal equations related to the fractional Laplacian. We first discuss a stationary Schr\"{o}dinger type equation arising in quantum mechanics, given by
\begin{equation*}
		\begin{cases}
		\varepsilon^{2s} \frlap u +u = u^p &\text{ in } \Omega \subset \Rn\\ 
		u=0 &\text{ in } \Rn \setminus \Omega, 
		\end{cases}
	\end{equation*}
in the subcritical case $p\in (1, 2_s^*-1)$,  where for $n>2s$, 
$2_s^*:=  \frac{2n}{n-2s} $ is the critical fractional Sobolev exponent. We give a sketch of the proof of the existence of a solution that concentrates at interior points of the domain for sufficiently small values of $\varepsilon$. This concentration phenomena is written
in terms of the ground state solution~$w$ (i.e. $w$ solves $\frlap w +w=w^p \text{ in } \Rn$).
Namely, the first approximation for the solution
is exactly the ground state~$w$,
scaled and concentrated at an appropriate point of
the domain. Moreover, we discuss a connection between the uncertainty principle and a fractional weighted inequality.

In the last section of this chapter, we prove as \textbf{\textcolor{black}{an original result}}, the existence of a positive solution of the nonlinear and nonlocal elliptic equation in $\Rn$ 
\eqlab{\label{introdproblem} (-\Delta)^s u =\varepsilon h u^q+u^{\cx-1} }
in the convex case $1\leq q<\cx-1$,  where $\varepsilon$ is a small parameter and $h$ is a given bounded, integrable function.  		
	The problem has a variational structure and we prove the existence of a solution by using the classical Mountain-Pass Theorem. We work here with the harmonic extension of the fractional Laplacian, which allows us to deal with a weighted  degenerate local operator, rather than with a nonlocal energy. In order to overcome the loss of compactness induced by the critical power we use a Concentration-Compactness principle. 
  The main result of this section goes as follows.
	\begin{thm*}
	Let $\varepsilon>0$ be a small parameter, let $q\in [1, \cx-1)$ and $h$ be such that
	\bgs{& h\in L^1(\Rn)\cap L^{\infty}(\Rn)\quad \mbox{and}\\
	& \mbox{there exists a ball }B\subset \Rn \mbox{ such that } \inf_B h>0.\\
	& \mbox{If }n\in (2s,6s), \mbox{suppose in addition }h\geq 0.}
Then problem \eqref{introdproblem}
 admits a positive (mountain-pass) solution, provided that
	 $n>{\frac{2s(q+3)}{q+1} }$.
	\end{thm*}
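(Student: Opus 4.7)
The plan is to work with the Caffarelli--Silvestre harmonic extension, which by the extension discussion of Chapter~4 translates the nonlocal equation into the local degenerate/singular problem
\begin{equation*}
\begin{cases}
-\operatorname{div}(y^{1-2s}\nabla U)=0 & \text{in } \Rp,\\[1pt]
-\kappa_s\,\lim_{y\to 0^+} y^{1-2s}\partial_y U=\eps h\, u^q+u^{\cx-1} & \text{on }\Rn\times\{0\},
\end{cases}
\end{equation*}
where $u=U(\cdot,0)$ is the trace. The natural energy space is $\Hsa$ with the weighted Dirichlet norm, and the associated functional is
\begin{equation*}
\Fl(U)=\tfrac{1}{2}\int_{\Rp} y^{1-2s}|\nabla U|^2\,dx\,dy-\tfrac{\eps}{q+1}\int_{\Rn} h|u|^{q+1}\,dx-\tfrac{1}{\cx}\int_{\Rn}|u|^{\cx}\,dx.
\end{equation*}
Solutions of the extended problem give, by taking traces, solutions of the original fractional equation, so it suffices to produce a nontrivial critical point of $\Fl$ and then recover positivity by a standard argument (truncating to work with $u_+$ in the nonlinearities, then applying the maximum principle of Chapter~2 to rule out sign-changes).

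The scheme is the classical Mountain Pass of Ambrosetti--Rabinowitz. I first check the geometric hypotheses: $\Fl(0)=0$; the fractional Sobolev embedding (trace inequality for $\Hsa$) combined with $h\in L^\infty\cap L^1$ and the subcritical growth $q+1<\cx$ yields, for $\eps$ small, a ring $\|U\|=\rho$ on which $\Fl\geq\alpha>0$; and choosing any nonnegative $U_0\not\equiv 0$ whose trace is supported where $h>0$, the critical term $-\tfrac{1}{\cx}\|tu_0\|_{\cx}^{\cx}$ dominates for $t$ large, giving $\Fl(tU_0)\to-\infty$. This produces a Palais--Smale sequence at the min-max level $c_\eps$.

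The main obstacle is the loss of compactness at the critical exponent. The standard threshold below which PS sequences are precompact is $c^\ast=\tfrac{s}{n}\,\Sa_s^{\,n/(2s)}$, where $\Sa_s$ is the sharp fractional Sobolev constant realized by the Aubin--Talenti bubbles $w_\mu(x)=\mu^{-(n-2s)/2}w(x/\mu)$. The delicate step is to prove
\[ c_\eps<c^\ast\quad\text{for }\eps>0\text{ small},\]
which I carry out by plugging the extension $W_\mu$ of a rescaled bubble, translated to a point $x_0$ where $\inf_B h>0$ (after suitable cut-off), into $\Fl(tW_\mu)$ and maximizing in $t$. The positive term $\tfrac{\eps}{q+1}\int h W_\mu^{q+1}$ then has to strictly exceed the error produced by the cut-off plus the standard Brezis--Nirenberg-type expansion of the critical pieces. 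Evaluating the scaling exponents, the bubble contribution behaves like $\mu^{(n-2s)-q(n-2s)/2}$ (up to log factors), and the requirement that this term is effective for some $\mu\to 0$ is exactly the dimensional threshold $n>\tfrac{2s(q+3)}{q+1}$; the sign condition $h\geq 0$ in low dimensions removes a potentially negative remainder. With $c_\eps<c^\ast$ secured, I apply the Concentration--Compactness principle of Lions adapted to the weighted space $\Hsa$ (as in the trace setting) to the PS sequence: boundedness is automatic from the Mountain Pass structure, while dichotomy and vanishing are ruled out precisely by the strict inequality, forcing strong convergence in the relevant $L^p$ spaces and thus a nontrivial critical point $U\not\equiv 0$ of $\Fl$. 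Its trace $u$ is a positive mountain-pass solution of \eqref{introdproblem}.
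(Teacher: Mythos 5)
Your overall architecture is the same as the paper's — Caffarelli--Silvestre extension to $\Hsa$, Mountain Pass with truncated nonlinearities $u_+^q$ and $u_+^{\cx-1}$, concentration--compactness to recover compactness, and a bubble test to bound the min-max level — so the plan is sound in outline. But there is one genuine gap that is precisely the crux of this problem, and it is where your argument, if carried out as described, would fail.

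You assert that the threshold below which Palais--Smale sequences are precompact is $c^\ast=\tfrac{s}{n}S^{n/2s}$ and that it then suffices to show $c_\eps<c^\ast$. This is \emph{not} the correct threshold in this setting. When you run the concentration--compactness argument (split $U_k=V_k+W_k$ with a radial cut-off, estimate $\mathcal F_\eps$ on each piece), the lower bound you obtain for $\mathcal F_\eps(U_k)$ is of the form $\tfrac{s}{n}S^{n/2s}-c_1\eps^{1+\delta}-\overline C\,\eps^{\cx/(\cx-(q+1))}$ (when $n\geq 6s$; the $\overline C$-term is absent for $n\in(2s,6s)$ using $h\geq 0$). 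The corrections come respectively from the cut-off/decomposition errors in the energy splitting and from absorbing the subcritical $h\,u_+^{q+1}$ term via a Young-type inequality (Proposition~\ref{proppqs}). Consequently the compactness condition you actually need is
\[
c_\eps + c_1\eps^{1+\delta}+\overline C\,\eps^{\cx/(\cx-(q+1))}<\frac{s}{n}S^{n/2s},
\]
a \emph{strictly smaller} window than $c_\eps<c^\ast$. The paper's Theorem~\ref{PSCthm} states this precisely, and the introduction explicitly flags that using $c^\ast$ alone is a known flaw already present in the classical $s=1$ literature. Here the issue is sharper than in the concave--convex case: since both nonlinearities are convex, there is no negative minimum of the functional to give extra room, so showing the bubble test stays below the \emph{corrected} threshold requires picking the concentration scale $\mu=\eps^\beta$ with $\beta$ in a carefully computed range depending on $\delta$, $q$, $n$, $s$, and checking that both penalty terms are beaten by the gain $-C\eps^{1+\beta(n+(2s-n)(q+1)/2)}$ from the $h$-term. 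Without this step the argument does not close.

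A secondary slip: your stated bubble scaling $\mu^{(n-2s)-q(n-2s)/2}$ for the $h$-term is not correct. After rescaling, $\int_B h\,w_\mu^{q+1}$ behaves like $\mu^{\,n+\frac{(2s-n)(q+1)}{2}}$ (see Lemma~\ref{lemqp1}), which, when compared against the cut-off error $\mu^{n-2s}$, is exactly what forces the threshold $n>\tfrac{2s(q+3)}{q+1}$. You arrive at the correct dimensional condition, so this is likely a transcription error, but it is worth getting the exponent right since it governs the entire balance.
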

	Notice that in our problem the two nonlinearities are convex, and the geometry of the functional suggests the existence of one solution. In order to prove the existence of a solution we use, roughly speaking, the following strategy:\\
(i) we consider the energy functional associated to \eqref{introdproblem} and we prove that it satisfies some compactness condition (Palais-Smale condition) below a certain energy level,\\
(ii) we build a sequence of functions with an appropriate geometry (of Mountain Pass type) whose energy lies below the critical level found in (i), and\\
(iii) we apply the Mountain Pass Lemma to pass to the limit, getting a solution.\\
\noindent The proof follows the strategy of \cite{maria} for the concave-convex (fractional) case, and is based on two fundamental points: to identify the energy level, and to find the appropriate sequence. We point out that, in the concave-convex (fractional) problem, the geometry derived from the concave term (the functional has a minimum of negative energy) helps to prove that the sequence stays below the critical level. However, here both nonlinearities are convex, and the proof gets more involved.
\\
\noindent Thus, the study of \eqref{introdproblem} will first require a finer analysis of the compactness properties of the functional. More precisely, we will have to improve the estimates of the functional in order to get a slightly higher critical level.
Accordingly, once we have found this new critical level, we perform a more careful analysis of the energy of the sequence given by the minimizers. We will finally conclude by applying the Mountain Pass Lemma in the standard way.

\bigskip

Chapter 6 and 7
present topics of contemporary research related to the fractional Laplacian.
We will discuss in particular:
some phase transition
equations of nonlocal type and
their limit interfaces,
which (below a critical threshold of the fractional parameter)
are surfaces that minimize a nonlocal perimeter functional.
We will present a De Giorgi conjecture in the fractional setting, which wonders whether entire, smooth, monotone (in one direction), bounded solutions of the (fractional) Allen-Cahn equation are one-dimensional. This indeed is the case in the classical framework in dimension at most $3$ (and up to $8$, with an additional, quite natural assumption, see \cite{SavinGenius,S09}). The dimension $8$ seems to be suggested also  by a link with a problem of Bernstein. This problem asks if all 
minimal graphs (i.e.
surfaces that locally minimize the perimeter and that
are graphs in a given direction)
in $\R^n$ must be necessarily affine.
The link between this Bernstein problem and the conjecture of De Giorgi could be implied
by the fact that
minimizers approach minimal surfaces in the limit but
of course, 
much work is needed to deeply understand the connections
between the two problems.

In Chapter 6 we consider
a nonlocal phase transition model, in particular described by the Allen-Cahn equation
\[ \frlap u=u-u^3\] in a bounded domain $\Omega\subset \Rn$. The Allen-Cahn equation  in a  nonlocal 
setting has theoretical interest and concrete applications. 
Indeed, the study of
long range interactions naturally leads to the analysis of
phase transitions and interfaces of nonlocal type. A fractional analogue of
a conjecture of De Giorgi, that deals with possible one-dimensional symmetry of entire solutions, naturally arises from treating the fractional Allen-Cahn equation, and is then presented. We give an alternative proof to the De Giorgi conjecture in $\R^2$, using the Dirichlet energy associated to the fractional Allen-Cahn equation. 

\bigskip
Chapter 7 deals with nonlocal minimal surfaces, as introduced in \cite{nms} in 2010. In particular, following the approach of De Giorgi (for classical minimal surfaces), we introduce the fractional perimeter and look for minimizers in bounded open sets with respect to some fixed exterior data. The boundaries of such (nonlocal minimal) sets are called nonlocal minimal surfaces (and are indeed $(n-1)$-dimensional and smooth almost everywhere). We give some notions on this subject, outline some nice recent achievements and also present a new result about a stickiness phenomena when the fractional parameter is small.\\
The fractional perimeter is defined as
\bgs{ \text{Per}_s(E,\Omega) := \Ll_s(E\cap \Omega,\C E)
+\Ll_s(E\setminus \Omega,\Omega \setminus E), }
where the interaction $\Ll_s(A,B)$ between two disjoint subsets of $\Rn$ is
	\bgs{
		\Ll_s(A,B):=\int_A \int_B \frac{dx\, dy}{|x-y|^{n+s}}
			=\int_{\Rn} \int_{\Rn} \frac{ \chi_A(x) \chi_B (x) }{|x-y|^{n+s}}\, dx \, dy.
	}
	Moreover, taking~$\Omega$ an open set of~$\R^n$, we say that $E\subset \Rn$ is $s$-minimal in~$\Omega$
if $\text{Per}_s(E,\Omega)$ is finite and if, for any competitor (that is, for any set~$F$ such that $E\setminus\Omega =F\setminus \Omega$), we have that
	\[ \text{Per}_s(E,\Omega) \leq \text{Per}_s(F,\Omega).\]
Furthermore, we introduce the $s$-fractional mean curvature of a set $E$ at a point $q\in\partial E$ (as the fractional counterpart of the classical mean curvature notion). It is defined as the principal value integral
\[\I_s[E](q):=P.V.\int_{\R^n}\frac{\chi_{\C E}(y)-\chi_E(y)}{|y-q|^{n+s}}\,dy\]
(for the main properties of the fractional mean curvature, we refer to \cite{Abaty}).\\
 In this Chapter we discuss some very nice known results such as
\begin{itemize}
\item  $s$-minimal graphs (i.e., $s$-minimal sets that are graphs in given direction)  in $\R^{n+1}$ are flat if no singular cones exist in dimension $n$ (and this is related to a known Bernstein problem),
\item minimizers with respect to the exterior data that is a subgraph, is a subgraph also inside the domain,
\item  nontrivial minimal cones in dimension two do not exist (which implies, according to the first point, that $s$-minimal graphs in $\R^3$ are flat). 
\end{itemize}
Also, we discuss some nice examples of boundary regularity and stickiness phenomena.

On the other hand, the asymptotic behavior of nonlocal minimal surfaces as $s$ reaches $0$ or $1$ is another interesting matter. 
As $s\to 1^+$, one would like to obtain the classical counterpart of the objects under study. And this is indeed the case, as the following known results show. 
For a set $E \subset \Rn$ with $C^{1,\gamma}$ boundary in $B_R$ for some $R>0$ and $\gamma\in(0,1)$, for almost any $r<R$ and up to constants one has indeed that
  \[ \lim_{s \to 1^-} (1-s)P_s(E,B_r)= P(E,B_r).\] 	 See for the proof \cite{uniform}.
  Not only, but also (see Theorem 12 in \cite{Abaty}, and \cite{uniform}) for a set $E\subset \Rn$ with $C^2$ boundary and any $x\in \partial E$, one has that
\[ \lim_{s \to 1} \mathcal (1-s)I_s[E] (x) = \omega_{n-1}H[E](x),\]  where $H$ is the classical mean curvature of $E$ at the point $x$ (with the convention that we take $H$ such that the curvature of the ball is a positive quantity).  
  
As $s\to 0^+$, the asymptotic behavior is a bit more involved and some surprising behavior may arise.  This is due to the fact that as $s$ gets smaller, the nonlocal contribution to the perimeter becomes more and more important, and the local behavior loses influence. Some very nice first results in this sense were achieved in \cite{asympt1}. There, in order to mathematically encode the behavior at infinity of a set, the authors introduce the following quantity:
   \eqlab{\label{alpha} \alpha(E)=\lim_{s\to 0^+} s\int_{\C B_1} \frac{\chi_E(y)}{|y|^{n+s}}\, dy,  }
  (see  \cite{asympt1}). The set function $\alpha(E)$ appears naturally when looking at the
behavior near $s=0$ of the fractional perimeter (see \cite{asympt1}). So, let $\Omega$ be a bounded open set with $C^{1,\gamma}$ boundary, for some $\gamma\in (0,1)$, and $E \subset \Rn$ be a set with have finite $s_0$-perimeter, for some $s_0\in (0,1)$. If $\alpha(E)$ exists then
		\bgs{  \lim_{s\to 0^+} sP_s(E,\Omega)= &\; \alpha(\C E) |E\cap \Omega| + \alpha(E) |\C E \cap \Omega| 	.}
On this argument, we introduce in the last section some other \textbf{\textcolor{black}{original achievements}} on the behavior of $s$-minimal surfaces for small values of the fractional perimeter.	
	Indeed, there we obtain the asymptotic behavior of the fractional mean curvature for $s\to 0^+$, noticing that the limit takes into account only the data at infinity. In essence we prove that 
  \begin{thm*}
Let $E\subset\Rn$ and let $p\in\partial E$ be such that $\partial E$ is $C^{1,\gamma}$ near $p$,
for some $\gamma\in(0,1]$. Then
\bgs{& \liminf_{s\to0^+} s\,\I_s[E](p) =\omega_n -2 \overline \alpha(E)\\
& \limsup_{s\to0^+} s\,\I_s[E](p) =\omega_n-2 \underline\alpha(E).}
\end{thm*}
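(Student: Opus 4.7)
The strategy is to decompose $\I_s[E](p)$ into a near part on $B_1(p)$ and a far part on $\C B_1(p)$, and handle them by completely different tools. Assume without loss of generality that $p=0$ and that the inner normal of $\partial E$ at $0$ points in the direction $e_n$; denote the tangent half-space by $H:=\{y_n<0\}$. Then
\begin{equation*}
s\,\I_s[E](0) = s\,P.V.\!\int_{B_1}\!\frac{\chi_{\C E}(y)-\chi_E(y)}{|y|^{n+s}}\,dy + s\!\int_{\C B_1}\!\frac{\chi_{\C E}(y)-\chi_E(y)}{|y|^{n+s}}\,dy.
\end{equation*}
For the far part I would use $\chi_{\C E}-\chi_E = 1-2\chi_E$. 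A direct spherical-coordinates computation then shows that $s\int_{\C B_1}|y|^{-n-s}\,dy = \omega_n$ for every $s\in(0,1)$, while the remaining term $-2s\int_{\C B_1}\chi_E(y)|y|^{-n-s}\,dy$ has $\limsup$ and $\liminf$ as $s\to 0^+$ equal, by the very definition in \eqref{alpha}, to $-2\underline{\alpha}(E)$ and $-2\overline{\alpha}(E)$, respectively.

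For the near part, my plan is to compare $E$ to the tangent half-space $H$. Since $H$ is symmetric under $y\mapsto -y$, the odd symmetry of the integrand yields $P.V.\int_{B_1}(\chi_{\C H}-\chi_H)|y|^{-n-s}\,dy = 0$ for every $s$ (the cancellation is exact at every truncation radius $\eps>0$, since $B_1\setminus B_\eps$ is symmetric about the origin). Subtracting, the residual integrand $-2(\chi_E-\chi_H)|y|^{-n-s}$ is supported on the symmetric difference $E\triangle H$ and, by the bound derived next, absolutely integrable — so the principal value coincides with the Lebesgue integral. Writing $\partial E$ near $0$ as a graph $y_n=f(y')$ with $f(0)=0$ and $\nabla f(0)=0$, the $C^{1,\gamma}$ hypothesis yields $|f(y')|\le C|y'|^{1+\gamma}$ and hence $|(E\triangle H)\cap B_r|\le C r^{n+\gamma}$ for $r$ small. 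A dyadic annular decomposition of $B_1$ then produces
\begin{equation*}
\int_{(E\triangle H)\cap B_1}\!\frac{dy}{|y|^{n+s}} \;\le\; C\sum_{k\ge 0}2^{-k(\gamma-s)} \;\le\; \frac{C}{\gamma-s},
\end{equation*}
which is $O(1)$ uniformly for $s\in(0,\gamma/2)$. Multiplied by $s$, the near piece therefore tends to $0$.

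Combining the two estimates yields $s\,\I_s[E](0) = \omega_n - 2s\int_{\C B_1}\chi_E(y)|y|^{-n-s}\,dy + o(1)$ as $s\to 0^+$, and passing to $\liminf$ and $\limsup$ gives the two claimed identities. The main obstacle I anticipate is the careful treatment of the principal value in the near piece: the symmetry cancellation for $H$ must be checked at each truncation radius $\eps>0$ before letting $\eps\to 0^+$, and the constant in the dyadic estimate must be controlled uniformly in $s$ small. These are precisely the steps where the geometric hypothesis $C^{1,\gamma}$ enters, and everything else is a direct computation.
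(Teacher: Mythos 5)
Your decomposition is essentially the right one and arrives at the correct identities, but you handle the near part by a genuinely different argument than the paper. The paper's proof (which reduces to Proposition~\ref{propsto0} with $E_k\equiv E$, $q_k\equiv p$) isolates a small cylinder $Q_{r,h}(p)$ on which $\partial E$ is a $C^{1,\gamma}$ graph and invokes the explicit graph representation~\eqref{complete_curv_formula} together with the H\"older bound~\eqref{Holder_useful} on $\mathcal G$, obtaining $|I_1|\lesssim r^{\gamma-s}/(\gamma-s)$; the intermediate region $B_R\setminus Q_{r,h}$ is then killed by the crude bound $\omega_n(\delta^{-s}-R^{-s})/s$, and only the far part $\C B_R$ survives. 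You instead compare $E$ to its tangent half-space $H$ inside a ball, use the exact antipodal cancellation of $\chi_{\C H}-\chi_H$ at every truncation radius, and estimate the residual via the measure of $E\triangle H$ in dyadic annuli. The two arguments implement the same cancellation -- the subtracted term $G_s\bigl(\nabla v(p')\cdot\frac{y'-p'}{|y'-p'|}\bigr)$ in~\eqref{complete_curv_formula} \emph{is} the tangent-plane contribution that vanishes by antipodal symmetry -- but yours is coordinate-free and avoids the machinery of~\cite{regularity}. Both buy the same thing: an $O(1)$ bound, uniform for $s$ bounded away from $\gamma$, which is annihilated by the factor $s$.

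Two points need tightening. First, the cancellation for $H$ does not come from $H$ being symmetric under $y\mapsto-y$ (it is not): what makes $\chi_{\C H}-\chi_H$ odd is the antipodal complementarity $-H=\C H$, and it is that oddness, together with the central symmetry of the punctured ball $B_1\setminus B_\eps$, that forces each truncated integral to vanish. Second, the bound $|(E\triangle H)\cap B_r|\le Cr^{n+\gamma}$, and hence your dyadic estimate, is justified only for $r$ below the scale $r_0$ at which $\partial E$ is actually a $C^{1,\gamma}$ graph; on the remaining shell $B_1\setminus B_{r_0}$ you must instead use the crude bound
\[s\int_{B_1\setminus B_{r_0}}\frac{dy}{|y|^{n+s}}=\omega_n\bigl(r_0^{-s}-1\bigr)\longrightarrow0\quad\text{as }s\to0^+.\]
Neither point affects the outcome, but both belong in a complete write-up.
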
 
Furthermore, we prove the continuity of the fractional mean curvature in all variables for $s\in [0,1]$. As a matter of fact, the $s$-fractional mean curvature is continuous
with respect to $C^{1,\alpha}$ convergence of sets, for any $s\in (0,\alpha)$ and with respect to $C^2$ convergence of sets, for $s$ close to 1. Here, by $C^{1,\alpha}$ convergence of sets we mean that our sets locally converge in measure and can locally be described as the supergraphs
of functions which converge in $C^{1,\alpha}$.  Indeed, we have the following results:

\begin{thm*}
Let $E_k\xrightarrow{C^{1,\alpha}}E$ in a neighborhood of $q\in \partial E$. Let $q_k\in\partial E_k$ be such that $
q_k\longrightarrow q$ and let $s,s_k\in(0,\alpha)$ be such that $s_k\xrightarrow{k\to\infty} s$.
Then
\[\lim_{k\to\infty}\I_{s_k}[E_k](q_k)=\I_s[E](q).\]

Let $E_k\xrightarrow{C^2}E$ in a neighborhood of $q\in \partial E$. Let $q_k\in\partial E_k$ be such that $q_k\longrightarrow q$ and let $s_k\in(0,1)$ be such that $s_k\xrightarrow{k\to\infty} 1$. Then
\[\lim_{k\to\infty}(1-s_k)\I_{s_k}[E_k](q_k)=\omega_{n-1}H[E](q).\]
\end{thm*}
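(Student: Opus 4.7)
The plan is to treat both statements uniformly by splitting $\I_{s_k}[E_k](q_k)$ into a near part on $B_\rho(q_k)$ and a far part on its complement, for a small radius $\rho>0$ to be chosen at the end. Near the point, boundary regularity provides a symmetric cancellation against the tangent half-space that overcomes the singularity of the kernel; far from the point, the kernel is bounded and we appeal to local convergence in measure. The difference between the two statements is only a matter of the rate at which the near-part bound degenerates as $s_k$ approaches the endpoint of its range.

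First I would set up local coordinates: since $q_k\to q$ and $\partial E_k\to\partial E$ in $C^{1,\alpha}$ (respectively $C^2$) near $q$, for $k$ large one may work in a common rotated frame in which $\partial E$ and $\partial E_k$ are graphs $x_n=u(x')$ and $x_n=u_k(x')$ over a disk $B'_r\subset\R^{n-1}$, with $u_k\to u$ in $C^{1,\alpha}$ (resp.\ $C^2$). Let $H_k$ denote the half-space tangent to $E_k$ at $q_k$. Since the principal value of $\chi_{\C H_k}-\chi_{H_k}$ against $|y-q_k|^{-(n+s_k)}$ over $B_\rho(q_k)$ vanishes by symmetry, the near part becomes an absolutely convergent integral
\[\mathcal N_k(\rho)=\int_{B_\rho(q_k)\cap(E_k\triangle H_k)}\frac{\pm 2}{|y-q_k|^{n+s_k}}\,dy,\]
with the sign chosen pointwise. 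For the first statement, uniform control of $[u_k]_{C^{1,\alpha}}$ yields $|E_k\triangle H_k\cap B_t(q_k)|\leq C\,t^{n+\alpha}$ uniformly in $k$, and a layer-cake integration gives $|\mathcal N_k(\rho)|\leq C\rho^{\alpha-s_k}/(\alpha-s_k)$, which vanishes as $\rho\to 0^+$ uniformly in $k$ since $s_k$ stays bounded away from $\alpha$. The far part has kernel $|y-q_k|^{-(n+s_k)}$ bounded on bounded regions and with integrable tail at infinity uniformly in $k$; by dominated convergence and the $L^1_{\mathrm{loc}}$ convergence of $\chi_{E_k}$ to $\chi_E$ (built into the notion of $C^{1,\alpha}$ convergence of sets), $\mathcal F_k(\rho)\to\mathcal F(\rho)$ for each fixed $\rho$. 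Sending first $k\to\infty$ and then $\rho\to 0^+$ concludes part one.

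For the second statement, the factor $(1-s_k)$ is crucial. The far estimate $|\mathcal F_k(\rho)|\leq C\rho^{-s_k}$ gives $(1-s_k)\mathcal F_k(\rho)\to 0$, so only the near contribution survives. The $C^2$ Taylor expansion
\[u_k(x')-u_k(y'_k)-\nabla u_k(y'_k)\cdot(x'-y'_k)=\tfrac{1}{2}D^2u_k(y'_k)[(x'-y'_k)^{\otimes 2}]+o(|x'-y'_k|^2)\]
pins down the quadratic profile of $E_k\triangle H_k$ near $q_k$; passing to polar coordinates and computing the resulting angular moments on $S^{n-2}$ produces $(1-s_k)\mathcal N_k(\rho)=\omega_{n-1}H[E_k](q_k)+o_\rho(1)+o_k(1)$, where $H[E_k](q_k)$ equals $\tfrac{1}{2}(1+|\nabla u_k(y'_k)|^2)^{-3/2}$ times the trace of the appropriately projected $D^2u_k(y'_k)$. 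Convergence $u_k\to u$ in $C^2$ then gives $H[E_k](q_k)\to H[E](q)$, and a standard diagonal argument in $\rho$ and $k$ closes the proof.

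The hard part will be the asymptotic identification in the second statement: extracting the classical mean curvature with the correct constant $\omega_{n-1}$ from the second-order expansion, controlling the remainder uniformly in both $k$ and $\rho$, and verifying that the non-symmetric contributions from the linear part are killed by the principal-value cancellation. Once the leading-order expansion is established, the remaining steps reduce to bookkeeping of error terms already tamed by the graph convergence.
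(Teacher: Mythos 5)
The proposal is sound for the first statement and takes a genuinely different route from the paper's: you perform the half-space subtraction from scratch, splitting the integral over $B_\rho(q_k)$ versus its complement, whereas the paper restricts to $B_R$, translates, and plugs into the explicit graph formula~\eqref{complete_curv_formula} of \cite{regularity}, in which the tangent-plane cancellation is already encoded in the term $G_s\big(\nabla v(p')\cdot\tfrac{y'-p'}{|y'-p'|}\big)$. Your layer-cake bound $|\mathcal N_k(\rho)|\le C\rho^{\alpha-s_k}/(\alpha-s_k)$ is correct and is the analogue of the paper's use of~\eqref{Holder_useful} together with dominated convergence; both rely on the uniform $C^{1,\alpha}$ bound for the $u_k$. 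One thing the paper does that you keep implicit is to first cut at radius $R$ and only send $R\to\infty$ at the very end, which is how it controls the tail uniformly in $k$ (via the bound $\frac{4\omega_n}{\sigma_0}R^{-\sigma_0}$); you should make that two-parameter structure explicit, since otherwise the ``dominated convergence on the far part'' has to handle both the moving center $q_k$ and the unbounded domain at once.

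For the second statement the divergence between the approaches is more substantial. You propose to directly extract $\omega_{n-1}H[E](q)$ from the Taylor expansion of $u_k$, computing angular moments on $S^{n-2}$ and controlling all error terms uniformly in $k$ and $\rho$; you flag this as the hard part and leave it un-executed. The paper sidesteps precisely this uniformity issue: it rotates so the normals align, proves the \emph{single} comparison estimate
$(1-s_k)\big|\I^{loc}_{s_k}[F_k](0)-\I^{loc}_{s_k}[E\cap B_R](0)\big|\lesssim\|v_k-u\|_{C^2}\,r^{1-s_k}\to 0$
by a mean-value-theorem argument on the graph functions, and then cites the already-known \emph{non-uniform} asymptotic $(1-s)\I_s[E\cap B_R](0)\to\omega_{n-1}H[E](0)$ (Theorem 12 of \cite{Abaty}). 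In effect the paper decouples ``uniformity across $k$'' from ``the $s\to 1$ asymptotic,'' whereas your plan re-derives the asymptotic and the uniformity simultaneously — possible, but it amounts to re-proving Theorem 12 of \cite{Abaty} with extra uniformity, and the interchange of the $\rho\to 0$ and $k\to\infty$ limits together with the $\rho^{1-s_k}$ pre-factor and the Taylor remainder needs to be spelled out carefully. (As a small aside, the inline formula you quote for $H[E_k](q_k)$ with a prefactor $\tfrac12(1+|\nabla u_k|^2)^{-3/2}$ is not the general graph mean-curvature formula; the correct expression involves the trace of $\big((1+|\nabla u|^2)I-\nabla u\otimes\nabla u\big)D^2u$ over $(n-1)(1+|\nabla u|^2)^{3/2}$ — this doesn't affect the strategy, but it's worth getting right since extracting the exact constant $\omega_{n-1}$ is the point of the computation.) Also note that relying on the PV cancellation to kill the linear term, rather than rotating so that $\nabla u_k(q'_k)=0$ as in the paper, is legitimate and in fact a modest simplification; just be aware that the paper's use of rotations $\mathcal R_k\to\mathrm{Id}$ is what lets it reduce to $\nabla v_k(0)=0$ and invoke the clean graph formula.
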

Using this, we see that as the parameter $s$ varies, the fractional mean curvature may change sign. 

When $s\to 0^+$ we do not need the $C^{1,\alpha}$ convergence of sets, but only the uniform boundedness  of the $C^{1,\alpha}$ norms of the functions defining the boundary of $E_k$ in a neighborhood of the boundary points. However, we have to require that the measure of the symmetric difference is uniformly bounded. More precisely:

\begin{prop*}\label{propsto0}
Let $ E\subset \Rn$
be such that $\alpha(E)$ exists.  Let 
$q \in \partial E$ be such that 
\bgs{ E\cap Q_{r,h}(q)=\{(x',x_n)\in\R^n\,|\, x'\in B'_{r}(q'),\,u(x')<x_n<h+q_n\},}
for some $r,h>0$ small enough and $u\in C^{1,\alpha}(\overline B'_{r}(q'))$ such that $u(q')=q_n$. 
Let $E_k\subset \Rn$ be such that
\[ |E_k\Delta E|<C_1  \] 
for some $C_1>0$. Let $q_k\in \partial E_k \cap B_d$, for some $d>0$, such that 
 \bgs{E_k\cap Q_{r,h}(q_k)=\{(x',x_n)\in\R^n\,|\,x'\in B'_{r}(q_k'),\,u_k(x')<x_n<h+q_{k,n}\} }
for some functions $u_k\in C^{1,\alpha}(\overline B_{r}'(q_k'))$ such that $u_k(q_k')=q_{k,n}$ and
\[ \|u_k\|_{C^{1,\alpha}(\overline B'_{r}(q'_k))} <C_2 \] 
for some $C_2>0$. Let $s_k\in(0,\alpha)$ be such that $s_k\xrightarrow{k\to\infty} 0$. Then
\bgs{\lim_{k\to\infty}s_k\I_{s_k}[E_k](q_k)=\omega_n-2\alpha(E).}
\end{prop*}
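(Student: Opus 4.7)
The plan is to decompose $s_k\,\I_{s_k}[E_k](q_k)$ into a local piece near $q_k$ and a tail piece, show that the local piece is uniformly bounded thanks to the $C^{1,\alpha}$ graph structure (and then killed by the factor $s_k$), and reduce the tail piece to $\omega_n-2\alpha(E)$ by exploiting the fact that both this constant and $\alpha(E)$ depend only on the behavior of the set at infinity.

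First I would fix $\rho\in(0,\min\{r,h\})$ and split
\[
s_k\,\I_{s_k}[E_k](q_k)
=s_k\,P.V.\!\!\int_{B_\rho(q_k)}\frac{\chi_{\C E_k}(y)-\chi_{E_k}(y)}{|y-q_k|^{n+s_k}}\,dy
+s_k\!\int_{\C B_\rho(q_k)}\frac{1-2\chi_{E_k}(y)}{|y-q_k|^{n+s_k}}\,dy.
\]
For the local piece, inside the cylinder $Q_{r,h}(q_k)$ the boundary $\partial E_k$ is the graph of $u_k$ with $\|u_k\|_{C^{1,\alpha}}\leq C_2$ uniformly in $k$. The standard odd-part cancellation of $\chi_{\C E_k}-\chi_{E_k}$ with respect to the tangent hyperplane at $q_k$ then produces a bound of order $\int_0^\rho t^{\alpha-s_k-1}\,dt$, uniform in $k$ as soon as $s_k\leq\alpha/2$; multiplication by $s_k\to 0$ sends this contribution to zero.

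For the tail, the bare-kernel piece is immediate: substituting $z=(y-q_k)/\rho$ gives
\[
s_k\int_{\C B_\rho(q_k)}|y-q_k|^{-n-s_k}\,dy=\rho^{-s_k}\,s_k\int_{\C B_1}|z|^{-n-s_k}\,dz=\rho^{-s_k}\omega_n\longrightarrow\omega_n.
\]
The weighted piece $-2s_k\int_{\C B_\rho(q_k)}\chi_{E_k}(y)|y-q_k|^{-n-s_k}\,dy$ I would reduce to $-2\alpha(E)$ in two steps: (i) replace $\chi_{E_k}$ by $\chi_E$, with error bounded by $s_k|E_k\Delta E|\rho^{-n-s_k}\leq C_1 s_k\rho^{-n}\to 0$; (ii) compare with $s_k\int_{\C B_1}\chi_E(y)|y|^{-n-s_k}\,dy$, which tends to $\alpha(E)$ by definition.

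The main obstacle is step (ii): although $\alpha(E-q_k)=\alpha(E)$ for each $k$ with $|q_k|\leq d$ bounded, the translation invariance of $\alpha$ must be quantified uniformly in $k$ as $s_k\to 0$. My plan is to decompose $\C B_\rho(q_k)$ into an intermediate annulus $\{\rho<|y|<R\}$ (bounded integrand, killed by $s_k\to 0$) and a tail $\{|y|>R\}$ (on which the Taylor expansion $\bigl||y-q_k|^{-n-s_k}-|y|^{-n-s_k}\bigr|\leq Cd\,|y|^{-n-1-s_k}$ yields an $O(d/R)$ bound uniform in $s_k$). Choosing $R$ large first and then letting $k\to\infty$ closes the estimate and produces the claimed limit $\omega_n-2\alpha(E)$.
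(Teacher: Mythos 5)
Your proposal is correct and follows essentially the same route as the paper: split the curvature integral into a local graph piece (controlled by the uniform $C^{1,\alpha}$ bound and killed by the factor $s_k$), a middle region (bounded integrand, again killed by $s_k$), and a far tail, then replace $\chi_{E_k}$ by $\chi_E$ using the uniform bound on $|E_k\Delta E|$, and finally identify the tail with the quantity defining $\alpha(E)$. The only technical difference is in the last step: where the paper invokes its Proposition on the uniform $s$-independence of $\alpha_s(q,r,E)$ with respect to $q$ in a compact set (proved via the multiplicative comparison $|y-q|\ge(1-\eps)|y|$), you quantify the translation invariance directly with a first-order Taylor bound $\bigl||y-q_k|^{-n-s_k}-|y|^{-n-s_k}\bigr|\lesssim d\,|y|^{-n-1-s_k}$; both estimates close the argument after first choosing $R$ large and then sending $k\to\infty$.
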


Finally, when $s\in (0,1)$ is small we classify the behavior of $s$-minimal surfaces, in dependence of the exterior data at infinity.
We prove that when the fractional parameter is small and the exterior data at infinity occupies (in measure, with respect to the weight) less than half the space, then $s$-minimal sets completely stick at the boundary (that is, they are empty inside the domain), or become ``topologically dense'' in their domain.
Indeed, denoting 
  \bgs{\label {baralpha1} \overline \alpha (E):= \limsup_{s\to 0^+} s\int_{\C B_1} \frac{\chi_E(y)}{|y|^{n+s}}\, dy ,} we give the next definition.
  \begin{defn*}
   Let $\Omega\subset \Rn$ be a bounded open set.
   We say that a set $E$ is $\delta$-{dense} in $\Omega$ for some fixed $\delta>0$ if $|B_\delta(x)\cap E|>0$ for any $x\in \Omega$ for which $B_\delta(x)\subset\subset\Omega$.
  \end{defn*}
  \noindent This notion of $\delta$-density is a ``topological'' notion,
rather than a measure theoretic one. With this definition and denoting
\[ \delta_s=-\frac{c}s,\quad \mbox{ where} \quad c:=c\big(E_0)= \log\frac{3\omega_n-4\overline \alpha(E_0)}{5\omega_n -2 \overline\alpha (E_0)},\] we obtain the following classifications:
\begin{thm*}
  Let $\Omega$ be a  bounded and  connected open set with $C^2$ boundary. Let $E_0\subset \C \Omega$ be such that\[\overline \alpha(E_0)<\frac{\omega_n}{2}.\]  
 Then the following two results hold.\\
  A)  There exists
  $s_1=s_1(E_0,\Omega)\in (0,1/2)$ such that if $s<s_1$ and $E$ is an $s$-minimal set in $\Omega$ with exterior data $E_0$, then either
     \bgs{(A.1) \;  E\cap \Omega=\emptyset \quad  \mbox{ or} \quad\; (A.2)\;  E \mbox{ is } \delta_s-\mbox{dense}.}
 \noindent
 B) Either \\
(B.1) there exists
  $\tilde s=\tilde s(E_0,\Omega)\in (0,1)$ such that if $E$ is an $s$-minimal set in $\Omega$ with exterior data $E_0$ and $s\in(0,\tilde s)$, then
     \bgs{  E\cap \Omega=\emptyset,}
     or \\
    (B.2)    there exist  $\delta_k \searrow 0$, $s_k \searrow 0$ and a sequence of sets  $E_k$ such that each $E_k$ is $s_k$-minimal in $\Omega$ with exterior data $E_0$ and for every $k$
     \bgs{ \partial E_k \cap B_{\delta_k}(x) \neq \emptyset \quad \forall \; B_{\delta_k}(x)\subset\subset \Omega.}
     \end{thm*}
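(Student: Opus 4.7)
The plan is to prove Part A by contradiction, using two competitor comparisons together with the asymptotics of $sP_s(\cdot,\Omega)$ and of $\overline\alpha$ recalled earlier in this chapter. Part B will then follow as a dichotomy argument built directly on Part A. The hypothesis $\overline\alpha(E_0)<\omega_n/2$ plays the crucial role throughout, since it forces the exterior data to contribute, asymptotically, strictly less than half of the total mass $\omega_n$ coming from infinity.

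For Part A, assume that $E$ is $s$-minimal in $\Omega$ with exterior data $E_0$, that $E\cap\Omega\ne\emptyset$, and that $E$ is \emph{not} $\delta_s$-dense; then there exists $x\in\Omega$ with $\overline{B_{\delta_s}(x)}\subset\Omega$ and $|B_{\delta_s}(x)\cap E|=0$. I would test the minimality of $E$ against two natural competitors: $F_1=E\setminus\Omega=E_0$, admissible because $E\cap\Omega\ne\emptyset$, and $F_2=E\cup B_{\delta_s}(x)$, admissible because the added ball sits compactly in $\Omega$ and is measure-disjoint from $E$. Using the disjoint decomposition $E=(E\cap\Omega)\cup E_0$ and expanding $\mathrm{Per}_s(E,\Omega)\le \mathrm{Per}_s(F_i,\Omega)$ in terms of the pairwise interactions $\Ll_s$, one obtains the two structural inequalities
\[
\Ll_s(E\cap\Omega,\C(E\cap\Omega))\le 2\Ll_s(E_0,E\cap\Omega), \qquad \Ll_s(B_{\delta_s}(x),\C B_{\delta_s}(x))\ge 2\Ll_s(B_{\delta_s}(x),E_0).
\]
The decisive step is now to multiply these by $s$, use the chapter's asymptotic formulas so that the left-hand sides produce prefactors proportional to $\omega_n$ while the right-hand sides produce prefactors controlled from above by $\overline\alpha(E_0)$, and carefully track the $\delta_s$-scaling of each term (recalling that $\delta_s\sim 1/s\to\infty$). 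Balancing the two inequalities against each other, the ratio of the prefactors on the two sides yields exactly the constant $c=\log\frac{3\omega_n-4\overline\alpha(E_0)}{5\omega_n-2\overline\alpha(E_0)}$, and the contradiction for $s<s_1(E_0,\Omega)$.

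For Part B, the dichotomy splits naturally: either there exists $\tilde s$ such that every $s$-minimal $E$ with $s<\tilde s$ satisfies $E\cap\Omega=\emptyset$, which is case (B.1), or one extracts a sequence $s_k\searrow 0$ and $s_k$-minimal sets $E_k$ with $E_k\cap\Omega\ne\emptyset$. In the latter alternative, Part A gives that each $E_k$ is $\delta_{s_k}$-dense, and running a localized version of the same competitor argument on small subballs $B_{\delta_k}(x)\subset\subset\Omega$ with $\delta_k\searrow 0$ shows that $E_k$ cannot entirely fill any such subball; combining density of $E_k$ and density of $\C E_k$ at scale $\delta_k$ produces points of $\partial E_k$ in every $B_{\delta_k}(x)\subset\subset\Omega$, which is case (B.2).

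The hardest step is the asymptotic balancing in Part A: the volume $|E\cap\Omega|$ is only assumed positive, the empty-ball scale $\delta_s$ grows like $1/s$, and the interactions with $E_0$ mix boundary-layer contributions near $\partial\Omega$ with the tail at infinity controlled by $\overline\alpha(E_0)$, so that the uniform control of the error terms needed to recover \emph{exactly} the logarithmic constant $c$ requires a careful splitting of the integrals into a near-$\partial\Omega$ piece, a bulk piece, and a tail piece, each treated with the appropriate asymptotic. A secondary subtlety in Part B is that the complementary hypothesis $\overline\alpha(\C E_0)<\omega_n/2$ need not hold, which forces one to argue locally on balls of radius $\delta_k$ rather than globally on $\Omega$ in order to derive the boundary density of $E_k$.
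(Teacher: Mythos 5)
Your proposal takes a genuinely different route from the paper, and it has a fatal flaw in the scaling of~$\delta_s$.

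The central error: you write ``$\delta_s\sim 1/s\to\infty$''. In fact $\delta_s\to 0^+$ as $s\to 0^+$. The body of the chapter defines $\delta_s=e^{-\frac{1}{s}\log\frac{\omega_n+2\beta}{\omega_n+\beta}}$ with $\beta=\tfrac{1}{4}(\omega_n-2\overline\alpha(E_0))>0$, so the logarithm is positive and $\delta_s$ decays exponentially fast to zero; the Remark right after the proof of Theorem~\ref{positivecurvature} states this explicitly. (The display $\delta_s=-c/s$ in the thesis overview is not the operative formula.) This is not a cosmetic issue: if $\delta_s\to\infty$ on a bounded $\Omega$, the $\delta_s$-density alternative (A.2) becomes vacuous, and the construction in (B.2) — which needs $\delta_{s_{k_h}}<\delta_h\searrow 0$ — cannot even be set up. Your scaling analysis for Part~A, which treats the empty ball as large, is therefore aimed in the wrong direction, and the claim that the balancing recovers the logarithmic constant is unsupported.

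The paper's Part~A is not an energy competitor argument at all. It is a mean-curvature argument: if $E\cap\Omega\neq\emptyset$ and $E$ is not $\delta_s$-dense, a sliding-ball lemma (Proposition~\ref{tgball}) produces a point $q\in\partial E\cap\Omega$ with an exterior tangent ball of radius $\delta_s$; the Euler--Lagrange equation gives $\I_s[E](q)\le 0$; and Theorem~\ref{positivecurvature} gives $\I_s[E](q)\ge\beta/s>0$ for $s$ small, a contradiction. The positivity estimate in Theorem~\ref{positivecurvature} is precisely where the explicit $\delta_s$ and the hypothesis $\overline\alpha(E_0)<\omega_n/2$ enter, via a careful splitting of $\I_s$ into a tangent-ball contribution, a local geometric piece, and a contribution from infinity controlled by $\overline\alpha$. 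Your two competitor inequalities ($F_1=E_0$ and $F_2=E\cup B_{\delta_s}(x)$) are legitimate to write down, and an energy comparison with $F_1=E_0$ is indeed used elsewhere in the chapter — in the proof of Theorem~\ref{notfull} — but that result only yields an upper bound on the relative volume $|E\cap\Omega\cap B_\delta(x)|$, not the density dichotomy (A.1)/(A.2). You would need a separate argument to convert your inequalities into the sharp alternative, and there is no reason the exponent in $\delta_s$ should pop out of an $L^1$-type balancing.

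Your sketch of Part~B is roughly aligned with the paper: assume (B.1) fails, extract $s_k\searrow 0$ and $s_k$-minimal $E_k$ with $E_k\cap\Omega\neq\emptyset$, use Part~A to get $\delta_{s_k}$-density of $E_k$, then get density of $\C E_k$ from a localized competitor argument (this is Theorem~\ref{notfull}, applied with $\gamma=1/2$), and intersect. The point you flag — that one cannot simply apply the symmetric hypothesis to $\C E_0$ — is correctly identified; the paper resolves it with Theorem~\ref{notfull}, which uses only $\overline\alpha(E_0)<\omega_n/2$. But since the paper's Part~B leans on Part~A (for the correct $\delta_{s_k}\to 0$) and on Theorem~\ref{notfull}, it does not run a second ``localized'' version of your energy comparison; it combines two already-proved ingredients.

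In short: Part~B is structurally right but depends on a Part~A you have not established, and your Part~A uses the wrong asymptotics for $\delta_s$ and a strategy (direct energy balancing) that does not match the mean-curvature/sliding-ball mechanism the theorem actually rests on.
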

 An analogue result, that is that $s$-minimal sets fill the domain or their complementaries become dense, is obtained when the exterior data occupies in the appropriate sense more than half the space (so this threshold is  optimal). We point out that in this way, when  $\alpha(E_0)\neq  {\omega_n}/{2} $ we have a complete classification of $s$-minimal sets when $s$ is small.

 \section*{Scientific production}
 This thesis collects the results obtained by myself and others in the following papers.
 \begin{enumerate}
 \item  Claudia Bucur, \emph{Some observations on the Green function for the ball in the fractional Laplace framework}, Communications on Pure and Applied Analysis \textbf{15} (2016), pp. 657-699, DOI 10.3934/cpaa.2016.15.657. MR3461641 
 \item  Claudia Bucur and Enrico Valdinoci, Nonlocal diffusion and applications, \emph{Lecture Notes of the Unione Matematica Italiana} \textbf{20} (2016), Springer, Unione Matematica Italiana, Bologna, pp. xii+155, DOI 10.1007/978-3-319-28739-3. MR3469920
\item Claudia Bucur and Fausto Ferrari, \emph{An extension problem for the  fractional derivative defined by Marchaud} , Fractional Calculus and Applied Analysis \textbf{19} (2016), pp. 867-887, DOI 10.1515/fca-2016-0047. MR3543684 
\item Claudia Bucur and Aram L. Karakhanyan, \emph{Potential theory approach to Schauder estimates for the Fractional {L}aplacian}, Proc. Amer. Math. Soc. \textbf{145} (2017), pp. 637-651, DOI https://doi.org/10.1090/proc/13227.
\item  Claudia Bucur, \emph{Local density of Caputo-stationary functions in the space of smooth functions}, Accepted for pubblication in ESAIM: Control, Optimisation and Calculus of Variations, DOI http://dx.doi.org/10.1051/cocv/2016056 
\item Claudia Bucur and Maria Medina, \emph{A fractional elliptic problem in $\mathbb{R}^n$ with critical growth and convex nonlinearities}, preprint
\item Claudia Bucur, Luca Lombardini and Enrico Valdinoci, \emph{Stickiness at the boundary of nonlocal minimal surfaces}, preprint
 \end{enumerate}
 
 \newpage
 
 \section*{Notations}
 \begin{itemize}
 \item We consider $n\in \N$ to be the dimension of the space of reference, that will be $\R^n$. We usually denote elements in the reference spaces as $x\in \Rn$ and $X\in \R^{n+1}$.
 \item  We write $|E|=\mathcal L^n(E)$ for the $n$-dimensional Lebesgue measure of a set $E\subset\Rn$ and $\mathcal H^d$ for the $d$-dimensional Hausdorff measure for any $d\geq 0$. 
  \item  We denote by $\C E= \Rn \setminus E$ the complementary of any $E\subset \Rn$. 
 \item We denote the $n$-dimensional open ball of radius $r$ and center $x_0\in\Rn$ as
 \[ B_r(x_0)=\{ x\in \Rn \; \big| \; |x-x_0|<r\}\]  and write $B_r$ whenever $x_0=0$. Also,
 we use the notation
 \[{S}^{n-1} =\partial B_1\]
 for the $(n-1)$-dimensional sphere.
 \item We define the area of the surface of the $(n-1)$-dimensional sphere as the constant
 \[ \omega_n=\mathcal H^{n-1}(S^{n-1}) = \frac{2\pi^{\frac{n}2}}{\Gamma(\frac{n}2)},\]
 where $\Gamma$ is the Gamma function defined in \eqref{ABRAMOWITZ}. The volume of the $n$-dimensional unit ball is then \[\mathcal L^{n}(B_1)=\frac{\omega_n}n.\]
 \item We denote by $\mathcal S(\Rn)$ the Schwartz space of smooth functions rapidly decaying at infinity (see Section \ref{Four} in the Appendix for the definition and some other details).
 \item We will use the following notation for the class of H\"{o}lder continuous functions. Let $\alpha\in (0,1]$, let $S\subset\R^n$ and let $v:S\longrightarrow\R^m$. The $\alpha$-H\"{o}lder semi-norm of $v$ in $S$
is defined as
\[[v]_{C^{0,\alpha}(S,\R^m)}:=\sup_{x\neq y\in S}\frac{|v(x)-v(y)|}{|x-y|^\alpha}.\]
With a slight abuse of notation, we will omit the $\R^m$ in the formulas.
We also define
\[\|v\|_{C^0(S)}:=\sup_{x\in S}|v(x)|\quad\textrm{and}\quad\|v\|_{C^{0,\alpha}(S)}
:=\|v\|_{C^0(S)}+[v]_{C^{0,\alpha}(S)}.\]
Given an open set $\Omega\subset\R^n$, we define the space of uniformly H\"{o}lder continuous functions
$C^{0,\alpha}(\overline \Omega,\R^m)$ as
\[C^{0,\alpha}(\overline \Omega,\R^m):=\{v\in C^0(\overline{\Omega},\R^m)\,|\,
\|v\|_{C^{0,\alpha}(\overline{\Omega})}<\infty\}.\] 
Recall that $C^1(\overline{\Omega})$ is the space of those functions $u:\overline{\Omega}\longrightarrow\R$ such that
$u\in C^0(\overline{\Omega})\cap C^1(\Omega)$ and
such that $\nabla u$ can be continuously extended to $\overline{\Omega}$.
For every $S\subset\overline{\Omega}$ we write
\[\|u\|_{C^{1,\alpha}(S)}:=\|u\|_{C^0(S)}+\|\nabla u\|_{C^{0,\alpha}(S)},\]
and we define
\[C^{1,\alpha}(\overline \Omega):=\{u\in C^1(\overline{\Omega})\,|\,
\|u\|_{C^{1,\alpha}(\overline{\Omega})}<\infty\}.\]
We will usually consider the local versions of the above spaces. Given an open set $\Omega\subset\R^n$,
the space of locally H\"{o}lder continuous functions $C^{k,\alpha}(\Omega)$, with $k\in\{0,1\}$, is defined as
\[C^{k,\alpha}(\Omega):=\{u\in C^k(\Omega)\,|\,\|u\|_{C^{k,\alpha}(\mathcal O)}<\infty \mbox{ for every } \mathcal O\subset \subset \Omega\}.\] 

 \end{itemize}

\chapter{A probabilistic motivation for the fractional Laplacian} \label{chap2}
\begin{abstract} The goal of this chapter is to show that nonlocal operators well describe nonlocal phenomena. We introduce briefly the fractional Laplacian and then we present two probabilistic models in which such operator naturally arises. Indeed, we show that the fractional heat equation  arises from a probabilistic process in which a particle moves randomly in the space subject to a probability that allows long jumps. Using the same probabilistic process and supposing that exiting the domain for the first time by jumping to an outside point means earning a certain (known) quantity of money,  the payoff function will be $s$-harmonic in the domain. This models are treated in an elementary way, and little knowledge on probability theory is required from the reader. 
\end{abstract}


\bigskip 
\bigskip

We consider a function~$u\colon \Rn\to \R$ (which is supposed\footnote{To
write~\eqref{frlap2def} it is sufficient, for simplicity, to take $u$ in the Schwartz space $\mathcal{S}(\Rn)$  
of smooth and rapidly decaying functions (see \eqref{schsp}), or in~$ C^2(\Rn)\cap
L^{\infty}(\Rn)$.}
to be regular enough) and a fractional parameter~$s\in (0,1)$. Then, the 
fractional Laplacian of~$u$ is given by
\begin{equation}\label{frlap2def}
\frlap u(x)= \frac{C(n,s)}{2} \int_{\Rn} \frac{2 u(x) -u(x+y)-u(x-y)} {|y|^{n+2s} } \, dy,\end{equation} where $C(n,s)$ is a dimensional\footnote{The explicit
value of~$C(n,s)$ is usually unimportant. Nevertheless,
we will compute its value explicitly in formulas~\eqref{cnsgalattica}
and~\eqref{EXPL}.
The reason for which it is convenient to divide~$C(n,s)$ by a factor~$2$
in~\eqref{frlap2def} will be clear later on, in formula~\eqref{frlapdef}.}
constant.

One sees from~\eqref{frlap2def}
that $(-\Delta)^s$
is an operator of order~$2s$, namely, it arises from
a differential quotient of order~$2s$ weighted in the whole
space. 

The probabilistic model under consideration is a random process
that allows long jumps
(in further generality, it is known that the fractional Laplacian 
is an infinitesimal generator of L\`evy processes,
see e.g.~\cite{B96,Applebaum} for further details). 
A more detailed
mathematical introduction
to the fractional Laplacian
is presented in the subsequent
Section \ref{sspn}.

\section{The random walk with arbitrarily long jumps}\label{srw}

We will show here that the fractional heat equation (i.e. the ``typical'' equation that drives the fractional diffusion and that can be written, up to dimensional constants, as~$\partial_t u+ (-\Delta)^s u=0$) naturally arises from a probabilistic process in which a particle moves randomly in the space subject to a probability that allows long jumps with a polynomial tail.

For this scope, we introduce a probability distribution on the natural numbers~$\N^*:=\{1,2,3,\cdots\}$ as follows. If $I\subseteq \N^*$, then the probability of $I$ is defined to be 
	\[ P(I):= c_s \, \sum_{k\in I} \frac{1}{|k|^{1+2s}}.\]
The constant $c_s$ is taken in order to normalize $P$ to be a probability measure. Namely, we take 
	\[ c_s:=\left( \sum_{k\in \N^*} \frac{1}{|k|^{1+2s}}\right)^{-1},\]
so that we have~$P(\N^*)=1$.

Now we consider a particle that moves in $\R^n$ according to a probabilistic process. The process will be discrete both in time and space
(in the end, we will formally take the limit when these time and space steps are small).
We denote by $\tau$ the discrete time step, and by $h$ the discrete space step. We will take the scaling $\tau=h^{2s}$ and we denote by $u(x,t)$ the probability of finding the particle at the point $x$ at time $t$.

The particle in $\R^n$ is supposed to move according to the following probabilistic law: at each time step~$\tau$, the particle selects randomly both a direction $v\in \partial B_1$, according to the uniform distribution on $\partial B_1$, and a natural number~$k\in\N^*$,
according to the probability law $P$, and it moves by a discrete space step $khv$. Notice that long jumps are allowed with small probability. Then, if the particle is at time~$t$ at the point~$x_0$ and, following the probability law, it picks up a direction~$v\in \partial B_1$
and a natural number~$k\in\N^*$, then the particle at time~$t+\tau$ will lie at~$x_0+khv$.

Now, the probability~$u(x,t+\tau)$ of finding the particle at~$x$ at time~$t+\tau$ is the sum of the probabilities of finding the
particle somewhere else, say at~$x+khv$, for some direction~$v\in \partial B_1$ and some natural number~$k\in\N^*$, times the probability of having selected such a direction and such a natural number.
%
\begin{figure}[htpb]
	\begin{minipage}[b]{0.70\linewidth}
	\includegraphics[width=0.7\textwidth]{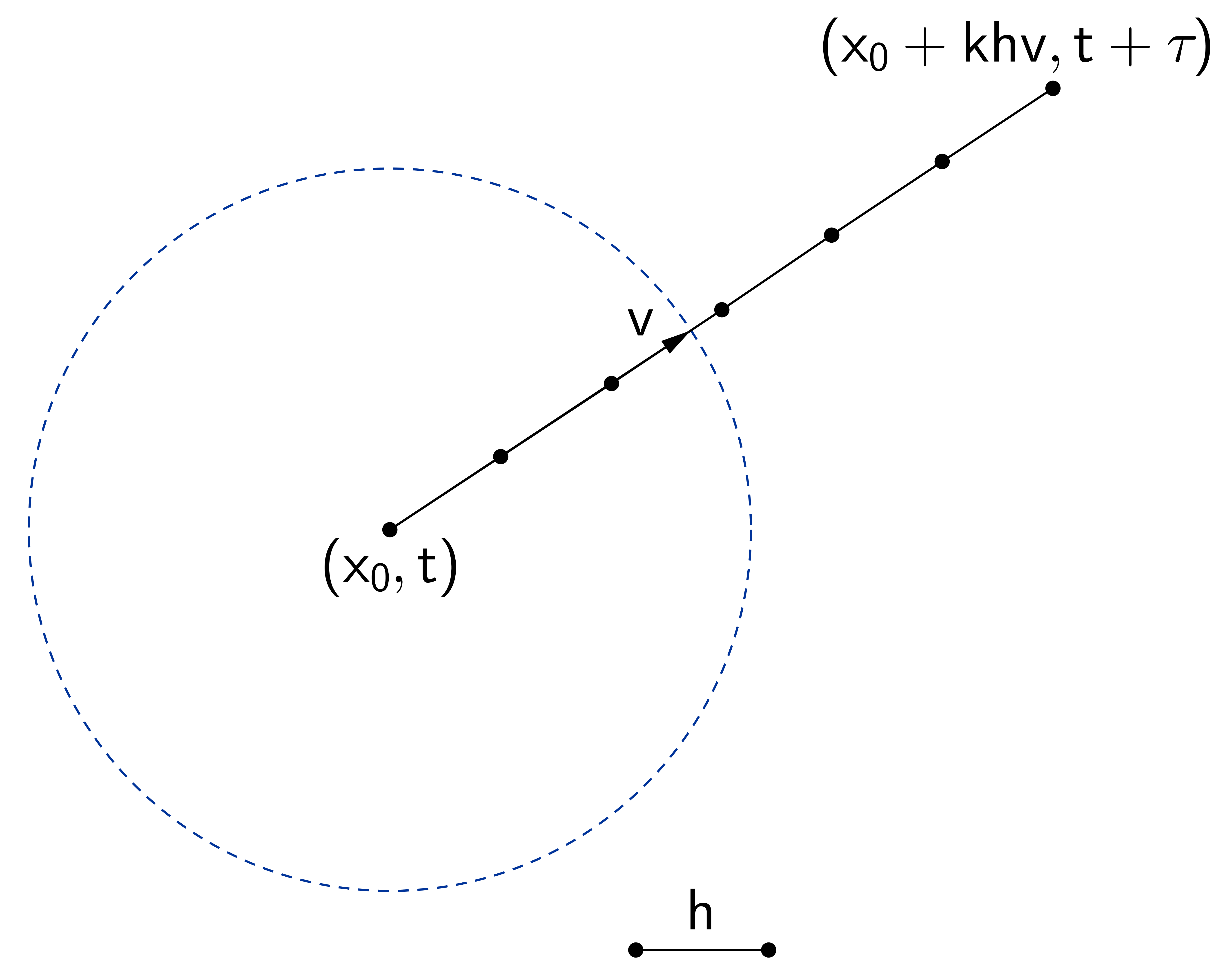}
	\caption{The random walk with jumps}   
	\label{fign:rndw}
	\end{minipage}
\end{figure}

\noindent This translates into 	
	\[ u(x,t+\tau) =\frac{c_s}{|\partial B_1|} \sum_{k\in\N^*} \int_{\partial B_1}
\frac{u(x+khv,t)}{|k|^{1+2s}} \,d\mathcal{H}^{n-1}(v).\]
Notice that the factor $c_s/|\partial B_1|$ is a normalizing probability
constant, hence we subtract~$u(x,t)$ and we obtain
	\bgs{
	u(x,t+\tau) -u(x,t)=\al
	\frac{c_s}{|\partial B_1|} \sum_{k\in\N^*} \int_{\partial B_1}
	 \frac{u(x+khv,t)}{|k|^{1+2s}} \,d\mathcal{H}^{n-1}(v) - u(x,t)\\
	=\al
	\frac{c_s}{|\partial B_1|} \sum_{k\in\N^*} \int_{\partial B_1}
	\frac{u(x+khv,t)-u(x,t)}{|k|^{1+2s}} \,d\mathcal{H}^{n-1}(v).
	}
As a matter of fact, by symmetry, we can change~$v$ to~$-v$ in the integral above, so we find that
	\[ u(x,t+\tau) - u(x,t) = \frac{c_s}{|\partial B_1|}  \sum_{k\in\N^*} \int_{\partial B_1} \frac{u(x-khv,t)-u(x,t)}{|k|^{1+2s}} \,d\mathcal{H}^{n-1}(v).\]
Then we can sum up these two expressions (and divide by $2$) and obtain that
	\bgs { u(&x,t+ \tau)  -u(x,t) \\
	\al = \frac{c_s}{2\,|\partial B_1|}  \sum_{k\in\N^*} \int_{\partial B_1} \frac{u(x+khv,t)+u(x-khv,t)-2u(x,t)}{|k|^{1+2s}}\,d\mathcal{H}^{n-1}(v) .} 
Now we divide by~$\tau=h^{2s}$, we recognize a Riemann sum, we take a formal limit and we use polar coordinates, thus obtaining:
	\bgs{
		\partial_t u(x,t)\simeq & \frac{ u(x,t+\tau)-u(x,t)}{\tau} 
		\\ =&
		\frac{c_s\,h}{2\,|\partial B_1|} \sum_{k\in\N^*} \int_{\partial B_1}
		\frac{u(x+khv,t)+u(x-khv,t)-2u(x,t)}{|hk|^{1+2s}}d\mathcal{H}^{n-1}(v) 
		\\ \simeq&  \frac{c_s}{2\,|\partial B_1|} \int_0^{+\infty}
		\int_{\partial B_1}
		\frac{u(x+rv,t)+u(x-rv,t)-2u(x,t)}{|r|^{1+2s}}d\mathcal{H}^{n-1}(v) \,dr \\
		=& \frac{c_s}{2\,|\partial B_1|} \int_{\R^n}
		\frac{u(x+y,t)+u(x-y,t)-2u(x,t)}{|y|^{n+2s}} \,dy\\
		=& - c_{n,s} \,(-\Delta)^s u(x,t)	}
for a suitable $c_{n,s}>0$.

This shows that, at least formally, for small time and space steps, the above probabilistic process approaches a fractional heat equation.\bigskip

We observe that processes of this type occur in nature quite
often, see in particular the biological observations
in~\cite{MARINE}, 
other interesting observations in \cite{reynolds_levy,schouten,woycz} and the mathematical discussions
in~\cite{KAO, FRIEDMAN, MONTEFUSCO-PELLACCI-VERZINI, MASSACCESI,metzler}.

Roughly speaking, let us say that it is not unreasonable that
a predator may decide to use a nonlocal dispersive strategy
to hunt its preys more efficiently
(or, equivalently, that the natural selection may favor some kind of
nonlocal diffusion):
small fishes will not wait to be eaten by a big fish once they
have seen it, so it may be more convenient for the big fish
just to pick up a random direction, move rapidly
in that direction, stop quickly and eat the small fishes
there (if any) and then go on with the hunt. And this ``hit-and-run''
hunting procedure
seems quite related to that described in Figure~\ref{fign:rndw}.

\section{A payoff model}\label{pym}

Another probabilistic motivation for the fractional Laplacian arises from a payoff approach. Suppose to move in a domain~$\Omega$ according to a random walk with jumps as discussed in Section~\ref{srw}. Suppose also that exiting the domain~$\Omega$ for the first time by jumping to an outside point~$y\in\Rn \setminus\Omega$, means earning $u_0(y)$ Monopoly money. A relevant question is, of course, how rich we expect to become
in this way. That is, if we start at a given point~$x\in\Omega$ and we denote by~$u(x)$ the amount of Monopoly money that we expect to gain, is there a way to obtain information on~$u$?

The answer is that 
(in the right scale limit of the random walk with jumps presented in Section~\ref{srw}) the expected payoff~$u$ is determined by the equation
	\begin{equation}\label{HAS}
			\begin{cases}
				(-\Delta)^s u =0 & {\mbox{ in }}\Omega,\\
				u=u_0 &{\mbox{ in }}\R^n\setminus\Omega.
			\end{cases}
	\end{equation}
To better explain this,
let us fix a point~$x\in\Omega$. The expected value of the payoff at~$x$ is the average of all the payoffs at the points~$\tilde x$ from which one can reach~$x$, weighted by the probability of the jumps. That is, by writing~$\tilde x=x+khv$, with~$v\in\partial B_1$, $k\in\N^*$ and~$h>0$, as in the previous Section~\ref {srw}, we have that the probability of jump is~$\displaystyle \frac{c_s}{|\partial B_1|\,|k|^{1+2s}}$. This leads to the formula 
	\[ u(x)= \frac{c_s}{|\partial B_1|} \sum_{k\in\N^*} \int_{\partial B_1} \frac{u(x+khv)}{|k|^{1+2s}}\,d{\mathcal{H}}^{n-1}(v).\]
By changing~$v$ into~$-v$ we obtain 
  \[ u(x)= \frac{c_s}{|\partial B_1|} \sum_{k\in\N^*}\int_{\partial B_1} \frac{u(x-khv)}{|k|^{1+2s}}\,d{\mathcal{H}}^{n-1}(v)\]
 so, by summing up,
	\[  2u(x)= \frac{c_s}{|\partial B_1|} \sum_{k\in\N^*}\int_{\partial B_1} \frac{u(x+khv)+u(x-khv)}{|k|^{1+2s}}\,d{\mathcal{H}}^{n-1}(v).\]
Since the total probability is~$1$, we can subtract~$2u(x)$ to both sides and obtain that
 \[ 0 = \frac{c_s}{|\partial B_1|} \sum_{k\in\N^*}\int_{\partial B_1} \frac{u(x+khv)+u(x-khv)-2u(x) }{|k|^{1+2s}}\,d{\mathcal{H}}^{n-1}(v). \]
We can now divide by~$h^{1+2s}$ and recognize a Riemann sum, which, after passing to the limit as~$h\searrow0$, gives~$0=-(-\Delta)^s
u(x)$,
that is~\eqref{HAS}.

\chapter{The fractional Laplacian and the Caputo derivative} \label{chap3}
\begin{abstract}We introduce here some preliminary notions on the fractional Laplacian and on fractional Sobolev spaces. The definition and equivalent representations for the fractional Laplacian are introduced and the constant that appears in this definition is explicitly computed. Fractional Sobolev spaces enjoy quite a number of important functional inequalities.
We will present here two important inequalities which have a simple and nice proof, namely the fractional Sobolev Inequality and the Generalized Coarea Formula. Moreover, we present an explicit example of an $s$-harmonic function on the positive half-line, i.e. $\frlap (x_+)^s=0$ on $\R_+$ and an example of a function with constant Laplacian on the ball. We also discuss some maximum principles and a Harnack inequality, and present a quite surprising local density property of $s$-harmonic functions into the space of smooth functions. In the last section, we prove that Caputo-stationary functions enjoy the same property, that is they locally approximate any given smooth function.\end{abstract}

\section{The fractional Laplacian}\label{sspn}
We introduce here the fractional Laplace operator, the fractional 
Sobolev spaces and give some useful pieces of notation.
We also refer to~\cite{galattica} for further details related to the topic.

%
Another useful notion for the fractional Laplacian (other than the definition \eqref{frlap2def}) is the one of principal value, namely we consider
the definition
\begin{equation}\label{PV-1} \mbox{P.V.}
\int_{\Rn}\frac{u(x) - u(y)}{|x- y|^{n+2s}}\, dy :=
\lim_{\eee \to 0} \int_{{\Rn}\setminus
B_{\eee}(x)} \frac{u(x) - u(y)}{|x- y|^{n+2s}}\, dy.\end{equation}
Notice indeed that the integrand above
is singular when~$y$ is in a neighborhood
of~$x$, and this singularity is, in general,
not integrable (in the sense of Lebesgue). As a matter of fact, near~$x$
we have that~$u(x) - u(y)$ behaves at the first order
like~$\nabla u(x)\cdot(x-y)$,
hence the integral above behaves at the first order like
\begin{equation}\label{PV-2}
\frac{ \nabla u(x)\cdot(x-y) }{|x- y|^{n+2s}}\end{equation}
whose absolute value gives an infinite integral near~$x$
(unless either~$\nabla u(x)=0$
or~$s<1/2$).

The idea of the definition in~\eqref{PV-1}
is that the term in~\eqref{PV-2} averages out
in a neighborhood of~$x$
by symmetry, since the term is odd with respect to~$x$,
and so it does not contribute to the integral if we perform it
in a symmetrical way. In a sense,
the principal value in~\eqref{PV-1}
kills the first order of the function at the numerator,
which produces a linear growth, and focuses
on the second order remainders.

The notation in~\eqref{PV-1} allows us to write~\eqref{frlap2def}
in the following more compact form:
\begin{equation*}\begin{split}
	\frlap u(x)\,& =\,\frac{C(n,s)}{2}  \int_{\Rn} \frac{2 u(x) -u(x+y)-u(x-y)} {|y|^{n+2s}}\, dy\\
	&=\, \,\frac{C(n,s)}{2}\lim_{\eee \to 0} \int_{{\Rn}\setminus B_{\eee}}\frac{2 u(x) -u(x+y)-u(x-y)} {|y|^{n+2s}}\, dy \\
	&=\,\frac{C(n,s)}{2}	\lim_{\eee \to 0} \left[\int_{{\Rn}\setminus B_{\eee}}\frac{u(x) -u(x+y)} {|y|^{n+2s}}\, dy+\int_{{\Rn}\setminus B_{\eee}}\frac{u(x) -u(x-y)} {|y|^{n+2s}}\, dy\right]\\
	&=\,\frac{C(n,s)}{2}\lim_{\eee \to 0}\left[\int_{{\Rn}\setminus B_{\eee}(x)}\frac{u(x) -u(\eta)} {|x-\eta|^{n+2s}}\, d\eta+\int_{{\Rn}\setminus B_{\eee}(x)}\frac{u(x) -u(\zeta)} {|x-\zeta|^{n+2s}}\, d\zeta\right]\\
	&=\,C(n,s)\,\lim_{\eee \to 0}\int_{{\Rn}\setminus B_{\eee}(x)}\frac{u(x) -u(\eta)} {|x-\eta|^{n+2s}}\, d\eta,
	\end{split}\end{equation*}
where the changes of variable~$\eta:=x+y$ and~$\zeta:=x-y$
were used, i.e.
\begin{equation}\label{frlapdef}
	\frlap u(x) =C(n,s)\, \mbox{P.V.} \int_{\Rn}\frac{u(x) - u(y)}{|x- y|^{n+2s}}\, dy .\end{equation}
The simplification
above also explains why it was convenient to
write~\eqref{frlap2def} with the factor~$2$
dividing~$C(n,s)$.
Notice that the
expression in~\eqref{frlap2def}
does not require the P.V. formulation since, for instance, taking $u\in L^{\infty}(\Rn)$ and locally $C^2$, using a Taylor expansion of $u$ in $B_1$, one observes that 
	\[ \begin{split}
		 \int_{\Rn} \al \frac{|2 u(x) -u(x+y)-u(x-y)|} {|y|^{n+2s} } \, dy \\
		\leq \al   \|u\|_{L^{\infty}(\Rn)} \int_{\Rn \setminus B_1} |y|^{-n-2s} \, dy + \int_{B_1} \frac{|D^2u(x)| |y|^2}{|y|^{n+2s}}\, dy \\
	\leq \al \|u\|_{L^{\infty}(\Rn)}\int_{\Rn \setminus B_1}   |y|^{-n-2s}\, dy + \|D^2 u\|_{L^{\infty}(\Rn)}\int_{B_1} |y|^{-n-2s +2 }\, dy ,\\
\end{split} \] 
and the integrals above provide a finite quantity.

 As a further remark, definition \eqref{frlapdef} is well posed for $u\in L^1_s(\Rn)$ and locally $C^{2s+\eee}$, where the space
	\eqlab{ \label{ls1space} L^1_s (\Rn):=\Big\{ u \in L^1_{\text{loc}}(\Rn)\; \mbox{ s.t. } \; \int_{\Rn}\frac{ |u(x)|}{1+|x|^{n+2s}} \, dx <\infty\Big\}}
	is endowed naturally with the norm
		\[ \| u\|_{L^1_s({\Rn})} := \int_{\Rn}\frac{ |u(x)|}{1+|x|^{n+2s}} \, dx.\]
Moreover, for $\eee>0$ a small fixed quantity, we write $C^{2s+\eee}$ to denote both $C^{0,2s+\eee}$ for $s<1/2$  and $C^{1,2s+\eee-1}$ for $s\geq 1/2$.

\smallskip
Formula~\eqref{frlapdef}
has also a stimulating analogy with the classical Laplacian.
Namely, the classical Laplacian (up to normalizing constants)
is the measure of the infinitesimal displacement of
a function in average
(this is the ``elastic'' property
of harmonic functions, whose value at a given point
tends to revert to the average in a ball). Indeed,
by canceling the odd contributions, and using that
\begin{eqnarray*}&&\int_{B_r(x)} |x-y|^2 \,dy =
\sum_{k=1}^n \int_{B_r(x)} (x_k-y_k)^2 \,dy
= n \int_{B_r(x)} (x_i-y_i)^2 \,dy,
\\&&\qquad{\mbox{ for any }}i\in\{1,\dots,n\},\end{eqnarray*}
we see that
\begin{equation}\label{KJ896}
\begin{split}
 \lim_{r\to0} \frac{1}{r^2}&
\left( u(x)-\frac{1}{|B_r(x)|}\, \int_{B_r(x)} u(y)\,dy\right)\\
=\;& 
\lim_{r\to0} -\frac{1}{r^2 |B_r(x)| } \int_{B_r(x)} \big(u(y)-u(x) \big) dy
\\
=\;& 
\lim_{r\to0} -\frac{1}{r^{n+2}\,|B_1|}\, \int_{B_r(x)} \nabla u(x)\cdot(x-y)
+\frac{1}{2} D^2u(x) (x-y)\cdot(x-y)
\\ &\qquad+{\mathcal{O}} (|x-y|^3)\,dy
\\
=\;&
\lim_{r\to0} -\frac{1}{2r^{n+2}\,|B_1|}\, \sum_{i,j=1}^n \int_{B_r(x)} 
\partial^2_{i,j} u(x)\,(x_i-y_i)(x_j-y_j) \,dy \\
=\;&
\lim_{r\to0} -\frac{1}{2r^{n+2}\,|B_1|}\, \sum_{i=1}^n \int_{B_r(x)}
\partial^2_{i,i} u(x)\,(x_i-y_i)^2 \,dy \\
=\;&
\lim_{r\to0} -\frac{1}{2n \,r^{n+2}\,|B_1|}\, \sum_{i=1}^n 
\partial^2_{i,i} u(x)\,
\int_{B_r(x)} |x-y|^2 \,dy \\
=\;& -C_n \Delta u(x),
\end{split}\end{equation}
for some~$C_n>0$. In this spirit,
when we compare this formula with~\eqref{frlapdef},
we can think that the fractional Laplacian corresponds to
a weighted average of the function's oscillation.
While the average in~\eqref{KJ896}
is localized in the vicinity of a point~$x$, the one in~\eqref{frlapdef}
occurs in the whole space (though it decays at infinity).
Also, the spacial homogeneity of the average in~\eqref{KJ896}
has an extra factor that is proportional to the space variables to
the power~$-2$, while the corresponding power in the average 
in~\eqref{frlapdef} is~$-2s$ (and this is consistent for~$s\to1$).
\bigskip

We use the usual notations for the Fourier and inverse Fourier transform (see Appendix \ref{Four}). For $u \in \mathcal{S}(\Rn)$, the fractional Laplace
operator can be 
expressed in Fourier frequency variables multiplied
by~$(2\pi|\xi|)^{2s}$,
as stated in the following lemma. 
\begin{lemma}\label{frlaphdeflem}
We have that
	\begin{equation} 
		\frlap u(x)=\mathcal{F}^{-1} \big((2\pi|\xi|)^{2s} \widehat u (\xi)\big) .
		\label{frlaphdef} 
	\end{equation}
\end{lemma}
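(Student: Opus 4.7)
The plan is to compute the Fourier transform of $\frlap u$ directly from the symmetric integral representation in~\eqref{frlap2def} and identify the resulting multiplier with $(2\pi|\xi|)^{2s}$ up to the dimensional constant $C(n,s)$. Since $u\in\Sa(\Rn)$ the integrand in~\eqref{frlap2def} is absolutely integrable (near zero the numerator is $O(|y|^2)$ by Taylor's theorem, and at infinity $u$ is bounded while $|y|^{-n-2s}$ is integrable), so Fubini can be applied without the need for a principal-value regularization.

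First I would apply $\F$ to both sides of~\eqref{frlap2def}. Using the translation rule $\F[u(\cdot+y)](\xi)=e^{2\pi i\xi\cdot y}\widehat u(\xi)$ together with its counterpart for $u(\cdot-y)$, the numerator $2u(x)-u(x+y)-u(x-y)$ transforms into $\bigl(2-e^{2\pi i\xi\cdot y}-e^{-2\pi i\xi\cdot y}\bigr)\widehat u(\xi)=2\bigl(1-\cos(2\pi\xi\cdot y)\bigr)\widehat u(\xi)$. Interchanging the order of integration (justified by the absolute integrability just discussed) yields
\begin{equation*}
\F\bigl[\frlap u\bigr](\xi)= C(n,s)\,\widehat u(\xi)\int_{\Rn}\frac{1-\cos(2\pi\xi\cdot y)}{|y|^{n+2s}}\,dy.
\end{equation*}

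Next I would evaluate the integral. For $\xi\neq 0$, the change of variables $y=z/(2\pi|\xi|)$ produces a factor $(2\pi|\xi|)^{2s}$ in front, leaving
\begin{equation*}
\int_{\Rn}\frac{1-\cos(2\pi\xi\cdot y)}{|y|^{n+2s}}\,dy=(2\pi|\xi|)^{2s}\int_{\Rn}\frac{1-\cos(\omega\cdot z)}{|z|^{n+2s}}\,dz,
\end{equation*}
where $\omega=\xi/|\xi|\in S^{n-1}$. By applying a rotation that sends $\omega$ to $e_1$, rotational invariance of Lebesgue measure and of $|z|$ shows that the remaining integral depends only on $n$ and $s$; a short analysis (integrability near $0$ from $1-\cos(z_1)\sim z_1^2/2$ and at infinity from the decay of $|z|^{-n-2s}$) confirms it is a finite positive constant, which I will call $A(n,s)$. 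Thus $\F[\frlap u](\xi)=C(n,s)A(n,s)\,(2\pi|\xi|)^{2s}\widehat u(\xi)$, and the identity~\eqref{frlaphdef} follows upon noting that the normalization of $C(n,s)$ is precisely $C(n,s)=A(n,s)^{-1}$ (this is the computation anticipated in the footnote referring to~\eqref{cnsgalattica} and~\eqref{EXPL}).

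I expect the only real obstacle to be the honest verification of $A(n,s)\in(0,\infty)$ together with the Fubini step, but both reduce to the $O(|y|^2)$ bound near the origin coming from the symmetric second difference in~\eqref{frlap2def}. Everything else is bookkeeping of the Fourier translation rule and a scaling/rotation argument.
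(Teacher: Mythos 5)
Your proposal is correct and follows essentially the same route as the paper: apply $\F$ under the integral sign, recognize $2(1-\cos(2\pi\xi\cdot y))$ as the multiplier of $\widehat u(\xi)$, rescale and rotate to reduce to the constant $\int_{\Rn}(1-\cos\omega_1)|\omega|^{-n-2s}\,d\omega$, check its finiteness via the $O(|\omega|^2)$ bound near the origin and the decay at infinity, and absorb it into the normalization of $C(n,s)$. The only cosmetic difference is that you perform the scaling $y\mapsto z/(2\pi|\xi|)$ in a single step where the paper splits it into $z=|\xi|y$ followed by $\tilde\omega=2\pi\omega$; the substance is identical.
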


Roughly speaking, formula~\eqref{frlaphdef}
characterizes the fractional Laplace operator in the Fourier
space, by taking the $s$-power of the multiplier associated
to the classical Laplacian operator.
Indeed, by using the inverse Fourier transform, one has that
\begin{eqnarray*}
&& -\Delta u(x) =-\Delta ({\mathcal{F}}^{-1}(\widehat u))(x)
=-\Delta
\int_{{\Rn}} \widehat u(\xi) e^{2\pi i x \cdot \xi} \, d\xi
\\ &&\qquad = 
\int_{{\Rn}} (2\pi |\xi|)^2
\widehat u(\xi) e^{2\pi i x \cdot \xi} \, d\xi
= {\mathcal{F}}^{-1} \big( (2\pi |\xi|)^2
\widehat u(\xi)\big),\end{eqnarray*}
which gives that the classical Laplacian acts in a Fourier space
as a multiplier of~$(2\pi |\xi|)^2$. {F}rom this and
Lemma~\ref{frlaphdeflem} it also follows
that the classical Laplacian is the limit case of the 
fractional one, namely for any $u \in \mathcal{S}(\Rn)$
	\[\lim_{s\to 1} \frlap u =-\Delta u  \quad \mbox{ and also }\quad \lim_{s\to 0}\frlap  u = u.\]

Let us now
prove that indeed the two formulations \eqref{frlap2def} and \eqref{frlaphdef} are equivalent.
\begin{proof}[Proof of Lemma \ref{frlaphdeflem}]
Consider identity \eqref{frlap2def} and apply the Fourier transform to obtain 
	\begin{equation}\label{3.7bis} \begin{split}
		  \mathcal{F} \Big(\frlap u(x)\Big)  = &\; \frac{C(n,s)}{2} \int_{\Rn} \frac{\mathcal{F}\Big(2u(x) -u(x+y)-u(x-y)\Big)}{|y|^{n+2s}} \, dy \\ 
		= &\; \frac{C(n,s)}{2} \int_{\Rn} \widehat{u}(\xi)   \frac{2- e^{2\pi
i\xi \cdot  y} - e^{-2\pi
i\xi \cdot  y} }  {|y|^{n+2s}}\, dy   \\
		= &\; C(n,s) \, \widehat{u}(\xi)  \int_{\Rn}   \frac{1-\cos(2\pi\xi \cdot y) }  {|y|^{n+2s}}\, dy.   
	\end{split}\end{equation} 
Now, we use the change of variable $z= |\xi| y$ and get that
		\[ \begin{split}
		J (\xi)  : =  \int_{\Rn} \frac{1-\cos(2\pi\xi \cdot  y)}{|y|^{n+2s}} \, dy
		= |\xi|^{2s} \int_{\Rn} \frac{1-\cos\frac{2\pi\xi}{|\xi|}\cdot z} {|z|^{n+2s}} \, dz.	 
		\end{split}\] 
Since~$J$ is rotationally invariant, we consider a rotation $R$ that sends~$e_1=(1,0,\dots,0)$ into~$\xi/|\xi|$,
that is~$Re_1= \xi/|\xi|$,
and we
call $R^T$ its transpose. Then, by using the change of variables $\omega=R^T z$ we have that
	\begin{equation*}
\begin{split}
		 J(\xi) = &\;	|\xi|^{2s} \int_{\Rn} \frac{1-\cos (2\pi
Re_1 \cdot z)}{|z|^{n+2s}}\, dz
				= 	|\xi|^{2s} \int_{\Rn} \frac{1-\cos(2\pi R^T z \cdot e_1)}{|R^Tz|^{n+2s}}\, dz \\
				= &\;|\xi|^{2s}  \int_{\Rn} \frac{1-\cos (2\pi
\omega_1)}{|\omega|^{n+2s}}\, d\omega.
	\end{split}\end{equation*}
Changing variables $\tilde \omega = 2\pi\omega$ (we still write $\omega $ as a variable of integration), we obtain that
	\eqlab{ \label{37BB}
	 	J(\xi)  = \left(2\pi|\xi|\right)^{2s}  \int_{\Rn} \frac{1-\cos \omega_1}{|\omega|^{n+2s}}\, d\omega.}
Notice that this integral is finite.
Indeed, integrating outside the ball $B_1$ we have that 
	\[ 	 \int_{{\Rn}\setminus B_1} \frac{|1-\cos\omega_1|}{|\omega|^{n+2s}}  \, d\omega  \leq \int_{{\Rn}\setminus B_1} \frac{2}{|\omega|^{n+2s}} < \infty, 	\]
while inside the ball we can use the Taylor expansion of the cosine function and observe that
	\[ \int_{B_1} \frac{|1-\cos\omega_1|}{|\omega|^{n+2s}}  \, d\omega \leq \int_{B_1} \frac{|\omega|^2}{|\omega|^{n+2s}}  \, d\omega \leq \int_{B_1} \frac{ d\omega}{|\omega|^{n+2s-2}} < \infty. \]
Therefore, by taking
	\begin{equation}
		 C(n, s) := \bigg(\int_{\Rn} \frac{1 - \cos\omega_1}{|\omega|^{n+2s}} \, d\omega\bigg)^{-1} 
		\label{cnsgalattica}
	\end{equation}
it follows from~\eqref{37BB} that
	\[J(\xi) = \frac{\left(2\pi |\xi|\right)^{2s}}{C(n,s)}.\]
By inserting this into~\eqref{3.7bis}, we obtain that
$$ \mathcal{F} \Big(\frlap u(x)\Big)= C(n,s)\,\widehat u(\xi)\,J(\xi)=
(2\pi|\xi|)^{2s}\widehat u(\xi),$$
which concludes the proof.
\end{proof}

Notice that the renormalization constant $C(n,s)$ introduced in~\eqref{frlap2def} is now computed in~\eqref{cnsgalattica}.
\bigskip

Another approach to the fractional Laplacian comes
from the theory of semigroups (or, equivalently from the fractional calculus
arising in subordination identities). This technique
is classical (see~\cite{YOSIDA}),
but it has also been efficiently used in recent research papers
(see for
instance~\cite{LLAVE-VALDINOCI-AIHP, stingatorrea2, STINGA-CAFFARELLI}).
Roughly speaking, the main idea underneath
the semigroup approach comes from the explicit
formulas for the Euler's function (check the Appendix \ref{special}): for any~$\lambda>0$, one
uses an integration by parts and the substitution~$\tau=\lambda t$
to see that
\bgs{  -s\Gamma(-s) =\al \Gamma(1-s) = \int_0^{+\infty} \tau^{-s} e^{-\tau}\,d\tau  
	= - \int_0^{+\infty} \tau^{-s} \frac{d}{d\tau} (e^{-\tau}-1)\,d\tau
\\=\al 
-s\int_0^{+\infty} \tau^{-s-1} (e^{-\tau}-1)\,d\tau 
 = -s\lambda^{-s} \int_0^{+\infty} t^{-s-1} (e^{-\lambda t}-1)\,dt,}
that is
\begin{equation}\label{L-EQ} \lambda^s =
\frac{1}{\Gamma(-s)} \int_0^{+\infty} t^{-s-1} (e^{-\lambda t}-1)\,dt.\end{equation}
Then one applies formally this identity to~$\lambda:=-\Delta$. Of course, this formal step needs to be justified, but if
things go well one obtains
$$ (-\Delta)^s =
\frac{1}{\Gamma(-s)} \int_0^{+\infty} t^{-s-1} (e^{t\Delta}-1)\,dt,$$
that is (interpreting~$1$ as the identity operator)
\begin{equation}\label{FP}
(-\Delta)^s u(x) =
\frac{1}{\Gamma(-s)} \int_0^{+\infty} t^{-s-1} (e^{t\Delta} u(x)-u(x))\,dt.
\end{equation}
Formally, if~$U(x,t):=e^{t\Delta}u(x)$, we have that~$U(x,0)=u(x)$ and
$$ \partial_t U=\frac{\partial}{\partial t} ( e^{t\Delta} u(x) ) = \Delta
e^{t\Delta} u(x) =\Delta U,$$
that is~$U(x,t)=e^{t\Delta}u(x)$ can be interpreted as the solution
of the heat equation with initial datum~$u$.
We indeed point out that these formal computations can be justified:

\begin{lemma}\label{L32}
Formula~\eqref{FP} holds true. That is, if~$u\in{\mathcal{S}}(\R^n)$
and~$U=U(x,t)$ is
the solution of the heat equation
$$ \left\{
\begin{matrix}
\partial_t U=\Delta U & {\mbox{ for }}t>0,\\
U\big|_{t=0} = u,
\end{matrix}
\right. $$
then
\begin{equation}\label{EX-CO} (-\Delta)^s u(x) = 
\frac{1}{\Gamma(-s)} \int_0^{+\infty} t^{-s-1} (U(x,t)-u(x))\,dt.
\end{equation}
\end{lemma}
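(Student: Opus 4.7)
The plan is to verify identity \eqref{EX-CO} by taking Fourier transforms on both sides and matching them, using Lemma \ref{frlaphdeflem} to characterize $(-\Delta)^s u$ in frequency and the identity \eqref{L-EQ} to recognize the time integral as a $\lambda^s$. Since~$U(x,t)=e^{t\Delta}u(x)$ solves the heat equation with initial datum~$u$, a direct computation (writing~$\partial_t\widehat U=-(2\pi|\xi|)^2\widehat U$ with~$\widehat U(\xi,0)=\widehat u(\xi)$) yields
\[ \widehat U(\xi,t)=e^{-(2\pi|\xi|)^2 t}\,\widehat u(\xi). \]
Then, assuming we may interchange the Fourier transform with the time integral,
\[ \mathcal{F}\!\left(\frac{1}{\Gamma(-s)}\int_0^{+\infty}\! t^{-s-1}\bigl(U(\cdot,t)-u\bigr)\,dt\right)\!(\xi)=\frac{\widehat u(\xi)}{\Gamma(-s)}\int_0^{+\infty}\! t^{-s-1}\bigl(e^{-(2\pi|\xi|)^2 t}-1\bigr)\,dt. \]
Applying \eqref{L-EQ} with $\lambda=(2\pi|\xi|)^2$ makes the right-hand side equal to~$(2\pi|\xi|)^{2s}\widehat u(\xi)$, which by Lemma \ref{frlaphdeflem} is exactly~$\mathcal{F}((-\Delta)^s u)(\xi)$. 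Taking inverse Fourier transforms gives \eqref{EX-CO}.

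The only nontrivial step is to justify the interchange. The cleanest route is to stay on the Fourier side, where the heat semigroup acts as a clean multiplier. Since~$u\in\mathcal{S}(\R^n)$, we have~$\widehat u\in\mathcal{S}(\R^n)$, and the elementary bound~$|e^{-a}-1|\leq\min\{1,a\}$ for~$a\geq 0$ gives
\[ t^{-s-1}\bigl|\widehat U(\xi,t)-\widehat u(\xi)\bigr|\leq |\widehat u(\xi)|\,\min\!\left\{t^{-s-1},\;(2\pi|\xi|)^{2}\,t^{-s}\right\}. \]
Splitting the time integral at~$t=1$, the piece~$\int_0^1 (2\pi|\xi|)^2 t^{-s}\,dt$ converges since~$s<1$, while~$\int_1^{+\infty}t^{-s-1}\,dt$ converges since~$s>0$. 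Integrating against~$|\widehat u(\xi)|$ in~$\xi$ is harmless thanks to the Schwartz decay, so the double integral in~$(\xi,t)$ is finite. This legitimates Fubini, and hence the computation displayed above.

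The main (and essentially only) obstacle is this integrability check at the endpoints~$t\to 0^+$ and~$t\to+\infty$: at small~$t$ one needs to exploit the cancellation in~$U-u$ (which vanishes to first order because~$u$ is smooth), and at large~$t$ one needs decay of the heat evolution (encoded here by the factor~$\min\{1,(2\pi|\xi|)^2 t\}$). Once this is handled, the rest is the identity \eqref{L-EQ} applied pointwise in~$\xi$, and the proof reduces to taking one inverse Fourier transform. As a sanity remark, $\Gamma(-s)=-\Gamma(1-s)/s<0$ and the bracketed integral is also negative, so the factor~$1/\Gamma(-s)$ correctly produces the positive quantity~$\lambda^s$.
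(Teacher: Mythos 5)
Your proof is correct, but it takes a genuinely different route from the paper's, and the difference matters for the surrounding architecture. You work entirely on the Fourier side: compute $\widehat U(\xi,t)=e^{-(2\pi|\xi|)^2 t}\widehat u(\xi)$, justify Fubini with the bound $|e^{-a}-1|\leq\min\{1,a\}$ and a split at $t=1$, apply \eqref{L-EQ} pointwise in $\xi$, and then invoke Lemma \ref{frlaphdeflem} to recognize the multiplier. This is clean and short, and your integrability check is exactly right. The paper, by contrast, stays in physical space: it substitutes the explicit Gaussian kernel $G(y,t)=(4\pi t)^{-n/2}e^{-|y|^2/(4t)}$, changes variables $\tau=|y|^2/(4t)$, integrates out the Gamma integral in $\tau$, and matches the result directly with the second-difference formula \eqref{frlap2def}. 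The payoff of the paper's longer route is that it produces the explicit constant $C(n,s)$ in \eqref{EXPL} via a computation that is \emph{independent} of Lemma \ref{frlaphdeflem}; the two values are then reconciled in Lemma \ref{LM-COST}, whose proof is essentially your Fourier argument. So your proof is valid as a proof of Lemma \ref{L32} per se, but since it already uses Lemma \ref{frlaphdeflem}, it cannot be used to derive $C(n,s)$ anew, and it would render the subsequent consistency check in Lemma \ref{LM-COST} vacuous. If the goal is merely to establish \eqref{EX-CO}, your approach is arguably more efficient; if the goal is also to supply a second, independent computation of the normalization constant (as the paper wants), the physical-space Gaussian argument is the one that does the work.
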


\begin{proof} {F}rom Theorem~1 on page~47
in~\cite{EVANS}
we know that~$U$ is obtained by Gaussian convolution
with unit mass, i.e.
\eqlab{\label{G-EQ} 
& U(x,t)=\int_{\R^n} G(x-y,t)\,u(y)\,dy=\int_{\R^n} G(y,t)\,u(x-y)\,dy, \qquad{\mbox{ where }}\\
&\;
G(x,t):= (4\pi t)^{-n/2} e^{-\frac{|x|^2}{4t}}.}
As a consequence, using the substitution~$\tau:=|y|^2/(4t)$,
\bgs{  \int_0^{+\infty} & t^{-s-1} (U(x,t)-u(x))\,dt \\ =\al  \int_0^{+\infty}\left[ \int_{\R^n} t^{-s-1}  G(y,t)\,\big( u(x-y)-u(x)\big) \,dy \right]\,dt \\
	=\al \int_0^{+\infty}\left[ \int_{\R^n} (4\pi t)^{-n/2} t^{-s-1}  e^{-|y|^2/(4t)} \,\big( u(x-y)-u(x)\big) \,dy \right]\,dt\\
	=\al \int_0^{+\infty}\left[\int_{\R^n} \tau^{n/2} (\pi |y|^2)^{-n/2} |y|^{-2s} (4\tau)^{s+1}  e^{-\tau} \,\big( u(x-y) - u(x)\big) \,dy \right]\,\frac{ d\tau }{4\tau^2 } \\
	=\al 2^{2s-1}\pi^{-n/2} \int_0^{+\infty}\left[ \int_{\R^n}  \tau^{\frac{n}{2}+s-1} e^{-\tau}\frac{ u(x+y)+u(x-y)-2u(x) }{ |y|^{n+2s} }\,dy \right]\,d\tau.
}
Now we notice that
$$ \int_0^{+\infty}\tau^{\frac{n}{2}+s-1} e^{-\tau}\,d\tau=\Gamma\left(
\frac{n}{2}+s\right),$$
so we obtain that
\begin{equation*}\begin{split} \int_0^{+\infty} &t^{-s-1} (U(x,t)-u(x))\,dt
\\=\al 2^{2s-1}\pi^{-n/2}\Gamma\left(\frac{n}{2}+s\right)
\int_{\R^n} \frac{ u(x+y)+u(x-y)-2u(x) }{ |y|^{n+2s} } \,dy
\\=\al -\frac{ 2^{2s}\,\pi^{-n/2}\,\Gamma\left(\frac{n}{2}+s\right) }{ C(n,s) }(-\Delta)^s u(x).\end{split}\end{equation*}
This proves~\eqref{EX-CO}, by choosing $C(n,s)$ appropriately.
And, as a matter of fact, gives the explicit value of the constant~$C(n,s)$
as
\begin{equation}\label{EXPL}
C(n,s)=-\frac{2^{2s}\,\Gamma\left( \frac{n}{2}+s\right)}{\pi^{n/2} 
\Gamma(-s)}
=
\frac{2^{2s}\,s\,\Gamma\left( \frac{n}{2}+s\right)}{\pi^{n/2}
\Gamma(1-s)},
\end{equation}
where we have used again that~$\Gamma(1-s)=-s\Gamma(-s)$, for any~$s\in(0,1)$.
\end{proof}

It is worth pointing out that the renormalization constant~$C(n,s)$
introduced in~\eqref{frlap2def}
has now been explicitly computed in~\eqref{EXPL}.
Notice that the choices of~$C(n,s)$
in~\eqref{cnsgalattica}
and~\eqref{EXPL} must agree (since we have computed
the fractional Laplacian in two different ways). We give below a direct proof
that the settings in~\eqref{cnsgalattica}
and~\eqref{EXPL} are the same, by using
Fourier methods and~\eqref{L-EQ}. 

\begin{lemma}\label{LM-COST}
For any~$n\in\N$, $n\ge1$, and~$s\in(0,1)$, we have that
\begin{equation}\label{EQW-IO-1}
\int_{\R^n} \frac{1-\cos(2\pi\omega_1)}{|\omega|^{n+2s}}\,d\omega
=\frac{\pi^{\frac{n}{2}+2s}\,\Gamma(1-s)}{
s\,\Gamma\left( \frac{n}{2}+s\right)}.\end{equation}
Equivalently, we have that
\begin{equation}\label{EQW-IO-2}
\int_{\R^n} \frac{1-\cos\omega_1}{|\omega|^{n+2s}}\,d\omega
=\frac{\pi^{\frac{n}{2}}\,\Gamma(1-s)}{
	2^{2s}s\,\Gamma\left( \frac{n}{2}+s\right)}. \end{equation}
\end{lemma}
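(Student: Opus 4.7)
The plan is to deduce \eqref{EQW-IO-1} from \eqref{EQW-IO-2} by a simple rescaling, and to attack \eqref{EQW-IO-2} by representing $|\omega|^{-(n+2s)}$ as a Gamma integral of a Gaussian, exchanging the order of integration with the $\omega$-integral, and then recognizing the resulting one-dimensional integral as a case of the subordination identity \eqref{L-EQ}.

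First I would observe that the change of variable $\omega\mapsto\omega/(2\pi)$ in \eqref{EQW-IO-1} produces a factor $(2\pi)^{2s}$ and transforms the integral into the one of \eqref{EQW-IO-2}; since $(2\pi)^{2s}\cdot\pi^{n/2}=(2\pi)^{2s}\pi^{n/2}$ matches the $\pi^{n/2+2s}2^{2s}$ prefactor of \eqref{EQW-IO-1} divided by $2^{2s}$, the two claims are equivalent and it suffices to prove \eqref{EQW-IO-2}.

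For \eqref{EQW-IO-2}, I would start from the elementary identity
\[
\frac{1}{|\omega|^{n+2s}}=\frac{1}{\Gamma\!\left(\tfrac{n}{2}+s\right)}\int_0^{+\infty} u^{\frac{n}{2}+s-1}\,e^{-u|\omega|^2}\,du,
\]
obtained by substituting $t=u|\omega|^2$ in the definition of $\Gamma(\tfrac{n}{2}+s)$. Inserting this into the left-hand side of \eqref{EQW-IO-2} and applying Fubini (the integrand is nonnegative, since $1-\cos\omega_1\ge 0$), I would be left with evaluating, for each $u>0$, the Gaussian-type integral
\[
\int_{\R^n}(1-\cos\omega_1)\,e^{-u|\omega|^2}\,d\omega
=\Bigl(\tfrac{\pi}{u}\Bigr)^{\!n/2}\Bigl(1-e^{-1/(4u)}\Bigr),
\]
which follows from the standard formula $\int_{\R}e^{i\omega_1}e^{-u\omega_1^2}\,d\omega_1=\sqrt{\pi/u}\,e^{-1/(4u)}$ together with the factorization of the remaining $(n-1)$-dimensional Gaussian. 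After this step the double integral collapses to
\[
\frac{\pi^{n/2}}{\Gamma\!\left(\tfrac{n}{2}+s\right)}\int_0^{+\infty}u^{s-1}\bigl(1-e^{-1/(4u)}\bigr)\,du.
\]

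Next I would perform the change of variable $t=1/(4u)$, which turns the remaining integral into $4^{-s}\int_0^{+\infty}t^{-s-1}(1-e^{-t})\,dt$. Finally, applying \eqref{L-EQ} with $\lambda=1$ gives
\[
\int_0^{+\infty}t^{-s-1}(1-e^{-t})\,dt=-\Gamma(-s)=\frac{\Gamma(1-s)}{s},
\]
where I use $\Gamma(1-s)=-s\,\Gamma(-s)$. Combining these pieces yields exactly the right-hand side of \eqref{EQW-IO-2}, and hence of \eqref{EQW-IO-1}.

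The calculation is essentially routine; the only mildly delicate point is ensuring that the Fubini exchange and the substitution $t=1/(4u)$ are legitimate near $u=0^+$ and $u=+\infty$. Near $u=+\infty$ the factor $1-e^{-1/(4u)}\sim 1/(4u)$ guarantees integrability against $u^{s-1}$ (since $s<1$), and near $u=0^+$ the factor $(1-e^{-1/(4u)})$ is bounded by $1$ while $u^{s-1}$ is integrable there (since $s>0$); so there is no real obstacle, and the identity follows.
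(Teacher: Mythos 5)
Your proof is correct, and it takes a genuinely different route from the one in the text. The paper establishes \eqref{EQW-IO-1}--\eqref{EQW-IO-2} \emph{indirectly}: it takes the Fourier-multiplier characterization of the fractional Laplacian from Lemma~\ref{frlaphdeflem} (which produces the constant in \eqref{cnsgalattica}) and the heat-semigroup formula from Lemma~\ref{L32} (which produces the constant in \eqref{EXPL}), transforms the heat-kernel representation back to Fourier space with the aid of the subordination identity \eqref{L-EQ}, and finally equates the two expressions for the same operator to read off the integral. Your argument instead evaluates the integral \emph{directly}: you represent $|\omega|^{-(n+2s)}$ by the Gamma-integral (Schwinger) identity, swap the order of integration via Tonelli (legitimate as you note, since $1-\cos\omega_1\ge 0$), compute the resulting Gaussian integral in closed form, and reduce what remains to $\int_0^\infty t^{-s-1}(1-e^{-t})\,dt$, which \eqref{L-EQ} with $\lambda=1$ identifies as $-\Gamma(-s)=\Gamma(1-s)/s$. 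Both derivations hinge on \eqref{L-EQ}, but yours is self-contained and does not require first having the full machinery of Lemmas \ref{frlaphdeflem} and \ref{L32}; the paper's, conversely, earns the identity ``for free'' as a consistency check between two characterizations of $(-\Delta)^s$ that it needs to develop anyway. Your computations and the integrability checks near $u\to 0^+$ and $u\to+\infty$ are all accurate, and the rescaling argument reducing \eqref{EQW-IO-1} to \eqref{EQW-IO-2} is sound.
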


\begin{proof} Of course, formula~\eqref{EQW-IO-1}
is equivalent to~\eqref{EQW-IO-2} (after the substitution~$\tilde\omega:=
2\pi\omega$).
Strictly speaking, in Lemma~\ref{frlaphdeflem}
(compare \eqref{frlap2def}, \eqref{frlaphdef}, and~\eqref{cnsgalattica})
we have proved that 
\eqlab{ \label{PoP-0} \frac{1} {2\displaystyle \int_{\Rn}  \displaystyle \frac{1 -\cos\omega_1}{|\omega|^{n+2s}} \, d\omega } \, \int_{\Rn} \frac{2 u(x) -u(x+y)-u(x-y)} {|y|^{n+2s} } \, dy 
=  {\mathcal{F}}^{-1} \big((2\pi|\xi|)^{2s} \widehat u(\xi)\big).}
Similarly, by means of Lemma~\ref{L32}
(compare \eqref{frlap2def}, 
\eqref{EX-CO} and~\eqref{EXPL}) we know that
\begin{equation}\label{PoP-1}\begin{split}&
\frac{2^{2s-1}\,s\,\Gamma\left( \frac{n}{2}+s\right)}{\pi^{n/2}
\Gamma(1-s)} \int_{\Rn} \frac{2 u(x) -u(x+y)-u(x-y)} {|y|^{n+2s} } \, dy\\&\qquad=
\frac{1}{\Gamma(-s)} \int_0^{+\infty} t^{-s-1} (U(x,t)-u(x))\,dt.\end{split}\end{equation}
Moreover (see~\eqref{G-EQ}), we have that~$U(x,t):=G(\cdot,t) * u(x)$.
We recall that the Fourier transform of
a Gaussian is a Gaussian itself, namely
$$ {\mathcal{F}}(e^{-\pi|x|^2}) =e^{-\pi|\xi|^2},$$
therefore, for any fixed~$t>0$, using the substitution~$y:=x/\sqrt{4\pi t}$,
\begin{eqnarray*}
{\mathcal{F}}\,G(\xi,t) &=& 
\frac{1}{(4\pi t)^{n/2}} \int_{\R^n} 
e^{-|x|^2/(4t)} e^{-2\pi ix\cdot \xi}\,dx
\\ &=& 
\int_{\R^n}
e^{-\pi|y|^2} e^{-2\pi iy\cdot (\sqrt{4\pi t}\xi)}\,dy = e^{-4\pi^2 t|\xi|^2}.
\end{eqnarray*}
As a consequence
\begin{eqnarray*}
&& {\mathcal{F}} \big(U(x,t)-u(x)\big)
={\mathcal{F}}\big( G(\cdot,t)*u(x)-u(x)\big)
\\ &&\qquad ={\mathcal{F}}(G(\cdot,t)*u)(\xi)-\widehat u(\xi)
=\big({\mathcal{F}}\,G(\xi,t))-1\big)\widehat u(\xi)
\\ &&\qquad = (e^{-4\pi^2 t|\xi|^2}-1)\widehat u(\xi).
\end{eqnarray*}
We multiply by~$ t^{-s-1}$ and integrate over~$t>0$,
and we obtain
\begin{eqnarray*}
{\mathcal{F}} \int_0^{+\infty} t^{-s-1}\big(U(x,t)-u(x)\big)\,dt
&=& \int_0^{+\infty} t^{-s-1}(e^{-4\pi^2 t|\xi|^2}-1)\,dt\,
\widehat u(\xi)
\\&=&\Gamma(-s)\, (4\pi^2 |\xi|^2)^s\,\widehat u(\xi),\end{eqnarray*}
thanks to~\eqref{L-EQ} (used here with~$\lambda:=4\pi^2 |\xi|^2$).
By taking the inverse Fourier transform, we have
$$ \int_0^{+\infty} t^{-s-1}\big(U(x,t)-u(x)\big)\,dt =
\Gamma(-s)\, (2 \pi)^{2s} {\mathcal{F}}^{-1}
\big(|\xi|^{2s}\,\widehat u(\xi)\big).$$
We insert this information into~\eqref{PoP-1}
and we get
$$ \frac{2^{2s-1}\,s\,\Gamma\left( \frac{n}{2}+s\right)}{\pi^{n/2}
\Gamma(1-s)} \int_{\Rn} \frac{2 u(x) -u(x+y)-u(x-y)} {|y|^{n+2s} } \, dy=
(2 \pi)^{2s} {\mathcal{F}}^{-1}
\big(|\xi|^{2s}\,\widehat u(\xi)\big).$$
Hence, recalling~\eqref{PoP-0},
\bgs{ &\frac{2^{2s-1}\,s\,\Gamma\left( \frac{n}{2}+s\right)}{\pi^{n/2}
\Gamma(1-s)} \int_{\Rn} \frac{2 u(x) -u(x+y)-u(x-y)} {|y|^{n+2s} } \, dy
\\ &\qquad=
\frac{1}{2\displaystyle\int_{\Rn} \displaystyle \frac{1 -
\cos\omega_1}{|\omega|^{n+2s}} \, d\omega}\,
\int_{\Rn} \frac{2 u(x) -u(x+y)-u(x-y)} {|y|^{n+2s} } \, dy
,}
which gives the desired result.
\end{proof}

An alternative proof of Lemma \ref{LM-COST} is given in the subsequent Theorem \ref{thm:Cc} in Chapter \ref{chap4}, by using the potential theory approach. 
For other approaches to the proof of
Lemma~\ref{LM-COST} see also the recent PhD dissertations~\cite{Felsinger} (and related \cite{Felsinger2})
and~\cite{Jarohs}.\bigskip

\subsection{Fractional Sobolev Inequality and Generalized Coarea Formula}\label{sobineq}
\noindent Fractional Sobo-\\lev spaces enjoy quite
a number of important functional inequalities.
It is almost impossible to list here all the results
and the possible applications, therefore we will
only present two important inequalities which have a simple and nice proof,
namely the fractional Sobolev Inequality and the Generalized Coarea Formula.

The fractional Sobolev Inequality can be written as follows:

\begin{theorem}
For any~$s\in(0,1)$, $p\in\left(1,\frac{n}{s}\right)$ and~$u\in C^\infty_0(\R^n)$,
\begin{equation}\label{OK:sob:p}
\| u \|_{L^{\frac{np}{n-sp}}(\R^n)}\le C\left(
\int_{\R^n}\int_{\R^n}\frac{|u(x)-u(y)|^p}{|x-y|^{n+sp}}
\,dx\,dy\right)^{\frac{1}{p}},\end{equation}
for some~$C>0$, depending only on~$n$ and~$p$.
\end{theorem}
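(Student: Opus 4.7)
My plan is to first establish the endpoint case $p=1$ by combining the Generalized Coarea Formula (which is proved alongside this theorem in the same section) with the fractional isoperimetric inequality, and then to reduce the general range $p\in(1,n/s)$ to this endpoint by a power substitution and H\"older's inequality.

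For the $p=1$ step, I would use the layer-cake decomposition $|u(x)|=\int_0^{\infty}\chi_{\{|u|>t\}}(x)\,dt$ together with Minkowski's integral inequality in $L^{n/(n-s)}(\R^n)$ to obtain
\[\|u\|_{L^{n/(n-s)}(\R^n)}\le \int_0^{\infty}\bigl|\{|u|>t\}\bigr|^{(n-s)/n}\,dt.\]
The fractional isoperimetric inequality $|E|^{(n-s)/n}\le C\,P_s(E)$, valid for measurable sets of finite $s$-perimeter, then bounds the right-hand side by $C\int_0^{\infty}P_s(\{|u|>t\})\,dt$, and by the Generalized Coarea Formula this equals
\[C\int_{\R^n}\!\int_{\R^n}\frac{\bigl||u(x)|-|u(y)|\bigr|}{|x-y|^{n+s}}\,dx\,dy \le C\int_{\R^n}\!\int_{\R^n}\frac{|u(x)-u(y)|}{|x-y|^{n+s}}\,dx\,dy,\]
the last step being the reverse triangle inequality.

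To pass from $p=1$ to general $p$, set $\gamma:=\dfrac{p(n-s)}{n-sp}>1$, so that $\dfrac{\gamma n}{n-s}=\dfrac{np}{n-sp}=:q$, and apply the $p=1$ inequality to $v:=|u|^{\gamma}$. This produces
\[\|u\|_{L^{q}(\R^n)}^{\gamma}=\|v\|_{L^{n/(n-s)}(\R^n)}\le C\int_{\R^n}\!\int_{\R^n}\frac{\bigl||u(x)|^\gamma-|u(y)|^\gamma\bigr|}{|x-y|^{n+s}}\,dx\,dy.\]
Using the elementary bound $\bigl||a|^{\gamma}-|b|^{\gamma}\bigr|\le \gamma\max(|a|,|b|)^{\gamma-1}|a-b|$ and H\"older's inequality with conjugate exponents $p,p'$ chosen so that $(\gamma-1)p'=q$, the right-hand side is dominated by
\[C_{\gamma}\,\|u\|_{L^{q}(\R^n)}^{\gamma-1}\left(\int_{\R^n}\!\int_{\R^n}\frac{|u(x)-u(y)|^p}{|x-y|^{n+sp}}\,dx\,dy\right)^{1/p}.\]
Dividing both sides by $\|u\|_{L^{q}}^{\gamma-1}$, which is finite because $u\in C_0^{\infty}(\R^n)$, closes the argument.

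The main obstacle is the H\"older step in the reduction: the singular kernel $|x-y|^{-(n+s)}$ cannot be split evenly between the factor $|u(x)|^{\gamma-1}$ and the difference quotient $|u(x)-u(y)|/|x-y|^{(n+sp)/p}$ without producing a divergent integral of the form $\int|x-y|^{-n}\,dy$. A careful execution requires either a dyadic decomposition in $|x-y|$ with separate estimates on each annular shell, or an equivalent Riesz-potential characterization of the Gagliardo seminorm combined with Hardy-Littlewood-Sobolev. The hypothesis $p<n/s$ enters precisely to keep $\gamma>0$ and $q=np/(n-sp)$ a legitimate Lebesgue exponent.
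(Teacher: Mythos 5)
Your plan is honest about its own weak point, and you are right to flag it: the H\"older step in the reduction from $p=1$ to general $p$ does not close, and the remedies you mention are not small patches — they are substantial technical detours that, once carried out, would constitute a different proof.

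Concretely, after the elementary bound $\bigl||u(x)|^{\gamma}-|u(y)|^{\gamma}\bigr|\le \gamma\max(|u(x)|,|u(y)|)^{\gamma-1}|u(x)-u(y)|$, you must split the kernel $|x-y|^{-(n+s)}$ as $|x-y|^{-(n+sp)/p}\cdot|x-y|^{-n/p'}$ to make the difference-quotient factor carry exactly the weight appearing in the $W^{s,p}$ seminorm. H\"older with exponents $p,p'$ then lands the remaining factor on $\iint \max(|u(x)|,|u(y)|)^{(\gamma-1)p'}|x-y|^{-n}\,dx\,dy$, and the kernel $|x-y|^{-n}$ is not integrable either near the diagonal or at infinity. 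This is not a cosmetic nuisance: it is the reason the classical $p=1\Rightarrow p>1$ reduction for Sobolev (where the pointwise product $|u|^{\gamma-1}|\nabla u|$ has no extra kernel to distribute) does not transfer mechanically to the Gagliardo seminorm. Absorbing the bad power into a dyadic decomposition in $|x-y|$, or rerouting through Riesz potentials and Hardy--Littlewood--Sobolev, requires an argument at least as long as the theorem itself. Additionally, your $p=1$ step invokes the fractional isoperimetric inequality as a black box, which is itself the $p=1$ Sobolev inequality applied to characteristic functions, so some independent proof of it (e.g.\ by symmetrization) would need to be supplied to avoid circularity.

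The paper's proof (Brezis's argument, following Ponce) sidesteps all of this: it starts from $|u(x)|\le|u(x)-u(y)|+|u(y)|$, integrates over a ball $B_R(x)$, applies H\"older separately to the two pieces, optimizes over $R$, and then raises to the $\tfrac{np}{n-sp}$-power and integrates in $x$. This handles every $p\in(1,n/s)$ in one shot, with no coarea formula, no isoperimetric inequality, and no power substitution, and the only inequality used is H\"older applied to genuinely integrable quantities. If you want an elementary proof, that route is the one to follow; if you insist on the $p=1$-first strategy, you must actually carry out one of the repairs you sketch, and you should expect the effort to exceed that of the direct method.
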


\begin{proof} We follow here the very nice proof given in~\cite{PONCE} (where, in turn, the proof is attributed to Ha\"{\i}m Brezis).
We have that
	\[ |u(x)|\leq |u(x)-u(y)|+ |u(y)|.\]
For a fixed $R$ (that will be given later on), we integrate over the ball $B_R(x)$ and have that
	\eqlab{ \label{sobin1}  |B_R(x)||u(x)|\leq \int_{B_R(x)} |u(x)-u(y)|\, dy + \int_{B_R(x)}|u(y)|\, dy =I_1+I_2.}
	We apply the H{\"o}lder inequality for the exponents $p$ and $p/(p-1)$ in the first integral and obtain that
	\bgs{ I_1=\al \int_{B_R(x)}\frac{|u(x)-u(y)|}{|x-y|^{\frac{n+sp}{p}}} |x-y|^{\frac{n+sp}{p}} \, dy \\
	\leq \al R^{\frac{n+sp}{p}} \lr{\int_{B_R(x)} \frac{|u(x)-u(y)|^p}{|x-y|^{n+sp}}\, dy }^{\frac{1}p}\lr{\int_{B_R(x)} dy}^{\frac{p-1}p}\\
	= \al C R^{n+s} \lr{\int_{\Rn} \frac{|u(x)-u(y)|^p}{|x-y|^{n+sp}}\, dy }^{\frac{1}p} .}
The H{\"o}lder inequality for $\frac{np}{n-sp}$ and $\frac{np}{n(p-1)+sp}$ gives in the second integral
\bgs{ I_2 \leq\al  \lr{\int_{B_R(x) }|u(y)|^{\frac{np}{n-sp}} \, dy}^{\frac{n-sp}{np}}\lr{ \int_{B_R(x)} dy}^ {\frac{n(p-1)+sp}{np}} \\
	\leq \al R^{\frac{n(p-1) +sp}{p}} \lr{ \int_{\Rn}|u(y)|^{\frac{np}{n-sp}}\, dy}^{\frac{n-sp}{np}}.} 
Dividing by $R^{n}$ in \eqref{sobin1} and renaiming the constants, it follows that
	\[ |u(x)|\leq C R^s \lrq{ \lr{\int_{\Rn}  \frac{|u(x)-u(y)|^p}{|x-y|^{n+sp}}\, dy }^{\frac{1}p}  + R^{-\frac{n}p} \lr{ \int_{\Rn}|u(y)|^{\frac{np}{n-sp}}\, dy}^{\frac{n-sp}{np}}   },\] where $C=C(n,p)>0$.
	We take now $R$ such that	
	\[  \lr{ \int_{\Rn} \frac{|u(x)-u(y)|^p}{|x-y|^{n+sp}}\, dy }^{\frac{1}p}  = R^{-\frac{n}p} \lr{ \int_{\Rn}|u(y)|^{\frac{np}{n-sp}}\, dy}^{\frac{n-sp}{np}} \] and we obtain
	\[ |u(x)|\leq C  \lr{\int_{\Rn}  \frac{|u(x)-u(y)|^p}{|x-y|^{n+sp}}\, dy }^{\frac{n-sp}{np}} \lr{ \int_{\Rn}|u(y)|^{\frac{np}{n-sp}}\, dy}^{\frac{s(n-sp)}{n^2}}.\] 
	Raising to the power $\frac{np} {n-sp}$ and integrating over $\Rn$, we get that
		\[ \int_{\Rn} |u(x)|^{\frac{np} {n-sp}} \, dx \leq C \lr{\iint_{\R^{n}\times \Rn} \frac{|u(x)-u(y)|^p}{|x-y|^{n+sp}}\, dx\, dy }\lr{\int_{\Rn} |u(y)|^{\frac{np}{n-sp}} \, dy }^{\frac{ps}n}.\] After a simplification, we obtain that
		\[\lr{ \int_{\Rn} |u(x)|^{\frac{np}{n-sp}} \, dx} ^{\frac{n-sp}{np}} \leq C \lr{ \iint_{\R^{n}\times \Rn} \frac{|u(x)-u(y)|^p}{|x-y|^{n+sp}}\, dx\, dy}^{\frac{1}p}. \]
which is~\eqref{OK:sob:p}. This concludes the proof of the Theorem.
\end{proof}

What follows is the Generalized Co-area Formula of~\cite{VISENTIN}
(the link with the classical Co-area Formula will be indeed
more evident in terms of the fractional perimeter functional
discussed in Chapter~\ref{nlms}).

\begin{theorem}
For any~$s\in(0,1)$ and any measurable function~$u:\Omega\to[0,1]$,
$$ \frac12 \int_\Omega\int_\Omega \frac{|u(x)-u(y)|}{|x-y|^{n+s}}\,dx\,dy
=\int_0^1 \left(\int_{\{x\in\Omega,\;
u(x)>t\}}\int_{\{y\in\Omega,\;u(y)\le t\}}
\frac{dx\,dy}{|x-y|^{n+s}}\right)\,dt.$$
\end{theorem}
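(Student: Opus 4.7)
The plan is to reduce everything to the ``layer cake'' identity for the absolute difference of two values in $[0,1]$, and then apply Fubini. The main point is the pointwise identity
\[|u(x)-u(y)|=\int_0^1 \big|\chi_{\{u>t\}}(x)-\chi_{\{u>t\}}(y)\big|\,dt,\]
which holds for any two values $u(x),u(y)\in[0,1]$. To see this, assume without loss of generality $u(x)\ge u(y)$; then $\chi_{\{u>t\}}(x)-\chi_{\{u>t\}}(y)$ equals $1$ precisely when $u(y)\le t<u(x)$ and is $0$ otherwise, so its integral in $t$ over $[0,1]$ is $u(x)-u(y)=|u(x)-u(y)|$. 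Since the integrand is nonnegative for this choice, the absolute value inside the integral can be inserted freely.

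Next I would insert this identity into the left-hand side and exchange the order of integration via Fubini (the integrand is nonnegative, so Tonelli applies without integrability concerns). This yields
\[\frac12\int_\Omega\int_\Omega\frac{|u(x)-u(y)|}{|x-y|^{n+s}}\,dx\,dy=\int_0^1\!\left(\frac12\int_\Omega\int_\Omega\frac{|\chi_{E_t}(x)-\chi_{E_t}(y)|}{|x-y|^{n+s}}\,dx\,dy\right)dt,\]
where I write $E_t:=\{x\in\Omega\,:\,u(x)>t\}$ for brevity.

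Finally, for a characteristic function of a set $E_t\subseteq \Omega$, the quantity $|\chi_{E_t}(x)-\chi_{E_t}(y)|$ equals $1$ exactly when precisely one of $x,y$ lies in $E_t$. Splitting the double integral over $\Omega\times\Omega$ into the two symmetric regions $(E_t)\times(\Omega\setminus E_t)$ and $(\Omega\setminus E_t)\times (E_t)$, which contribute equally by the symmetry of the kernel $|x-y|^{-(n+s)}$, gives
\[\frac12\int_\Omega\int_\Omega\frac{|\chi_{E_t}(x)-\chi_{E_t}(y)|}{|x-y|^{n+s}}\,dx\,dy=\int_{\{u>t\}\cap\Omega}\int_{\{u\le t\}\cap\Omega}\frac{dx\,dy}{|x-y|^{n+s}}.\]
Substituting this into the previous display yields exactly the right-hand side of the Generalized Coarea Formula, completing the proof.

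No step is truly an obstacle here: the only subtle point is verifying the pointwise layer-cake identity with the absolute value (which, as noted above, reduces to a one-line case analysis), and checking that Fubini applies, which is immediate since all integrands are nonnegative and measurable.
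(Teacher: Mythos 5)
Your proof is correct and follows essentially the same route as the paper: both rest on the pointwise layer-cake identity for $|u(x)-u(y)|$ (you phrase it via $|\chi_{\{u>t\}}(x)-\chi_{\{u>t\}}(y)|$, the paper via the identically equal sum $\chi_{\{u>t\}}(x)\chi_{\{u\le t\}}(y)+\chi_{\{u\le t\}}(x)\chi_{\{u>t\}}(y)$), followed by Tonelli and the symmetry of the kernel. Your version has the minor merit of stating the Tonelli step explicitly, which the paper leaves implicit.
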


\begin{proof} We claim that for any~$x$, $y\in\Omega$
\begin{equation}\label{CV-OA:1}
|u(x)-u(y)|=\int_0^1 \Big(
\chi_{\{u>t\}}(x) \,\chi_{\{u\le t\}}(y)+
\chi_{\{u\le t\}}(x)\,\chi_{\{u>t\}}(y) \Big)\,dt.
\end{equation}
To prove this, we fix $x $ and $y$ in $\Omega$, and by possibly exchanging them, we can suppose that~$
u(x)\ge u(y)$. Then, we define
$$\varphi(t):=
\chi_{\{u>t\}}(x) \,\chi_{\{u\le t\}}(y)+
\chi_{\{u\le t\}}(x)\,\chi_{\{u>t\}}(y) .$$
By construction
\bgs{ \varphi(t)=\left\{\begin{matrix}
		0 & {\mbox{ if }} & &t<u(y) {\mbox{ and }} t\ge u(x),\\
		1 & {\mbox{ if }} && u(y) \leq t<u(x) ,\\
		\end{matrix}
\right.}
therefore
$$ \int_0^1 \varphi(t)\,dt =
\int_{u(y)}^{u(x)} \,dt=u(x)-u(y),$$
which proves \eqref{CV-OA:1}.

So, multiplying by the singular kernel
and integrating~\eqref{CV-OA:1} over~$\Omega\times\Omega$, we obtain
that
\begin{eqnarray*}&&
\int_\Omega\int_\Omega \frac{|u(x)-u(y)|}{|x-y|^{n+s}}\,dx\,dy\\&=&
\int_0^1 \left(
\int_\Omega\int_\Omega
\frac{
\chi_{\{u>t\}}(x) \,\chi_{\{u\le t\}}(y)+
\chi_{\{u\le t\}}(x)\,\chi_{\{u>t\}}(y) }{|x-y|^{n+s}}\,dx\,dy\right)\,dt
\\ &=&
\int_0^1 \left(
\int_{\{u>t\}}\int_{\{u\le t\}}
\frac{dx\,dy}{|x-y|^{n+s}}
+\int_{\{u\le t\}}
\int_{\{u>t\}}
\frac{dx\,dy}{|x-y|^{n+s}}
\right)\,dt
\\ &=& 2\int_0^1\left( \int_{\{u>t\}}\int_{\{u\le t\}}
\frac{dx\,dy}{|x-y|^{n+s}}\right)\,dt,
\end{eqnarray*}
as desired.
\end{proof}

\subsection{Maximum Principle and Harnack Inequality}\label{S:PGRAG}

The Harnack Inequality and \label{PGRAG} the Maximum Principle for harmonic functions are classical topics in elliptic regularity theory. Namely, in the classical case, if a non-negative function is harmonic in~$B_1$ and~$r\in(0,1)$, then its minimum and maximum in~$B_{r}$ must always be comparable (in particular, the function cannot touch the level zero
in~$B_r$).

It is worth pointing out that the fractional counterpart of these facts is, in general, false, as this next simple
result shows (see \cite{K15}):

\begin{theorem} \label{NH}
There exists a bounded function~$u$ which is~$s$-harmonic in~$B_1$, non-negative in~$B_1$, but such that~$\displaystyle \inf_{B_1} u =0$.
\end{theorem}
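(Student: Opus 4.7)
The plan is to produce $u$ explicitly through the Poisson representation for the fractional Laplacian on $B_1$ recalled in the Introduction. Fix a non-negative, bounded exterior datum whose support is separated from $\partial B_1$, for instance
\[ g(y):=\chi_{\{2\le |y|\le 3\}}(y),\qquad y\in\Rn\setminus B_1, \]
and set $u:=g$ on $\Rn\setminus B_1$ and
\[ u(x):=\int_{\Rn\setminus B_1} P_1(y,x)\,g(y)\,dy\qquad\text{for }x\in B_1. \]
By the Poisson kernel property, $\frlap u=0$ in $B_1$. Non-negativity follows from $g\ge 0$ and $P_1\ge 0$, and $u$ is bounded on $\Rn$ because $g\in L^\infty(\Rn\setminus B_1)$ and the Poisson integral of the constant datum $1$ is again the constant $1$ on $B_1$ (so $\|u\|_{L^\infty(\Rn)}\le\|g\|_{L^\infty(\Rn\setminus B_1)}$).

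The crux is to show $\inf_{B_1} u=0$. Using the explicit form
\[ P_1(y,x)=c_{n,s}\Big(\frac{1-|x|^2}{|y|^2-1}\Big)^{\!s}\frac{1}{|x-y|^n}, \]
the factor $(1-|x|^2)^s$ tends to $0$ as $|x|\to 1^-$, while the remaining integrand is uniformly controlled for $y$ in $\mathrm{supp}(g)\subseteq\{2\le |y|\le 3\}$: indeed one has $|y|^2-1\ge 3$ and $|x-y|\ge |y|-|x|\ge 1$ for every $x\in B_1$. Consequently
\[ 0\le u(x)\le c_{n,s}\,(1-|x|^2)^s\,\|g\|_{L^\infty}\,\int_{\{2\le |y|\le 3\}}\frac{dy}{3^s}, \]
which gives $u(x)\to 0$ as $|x|\to 1^-$. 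On the other hand, since $P_1>0$ and $g$ is strictly positive on a set of positive measure, we have $u(x)>0$ at every point of $B_1$. Hence the infimum equals $0$ without being attained.

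The main obstacle is precisely this boundary-decay estimate, for which the explicit formula of the Poisson kernel is the most direct tool; an alternative route would be a barrier argument using the $s$-harmonic extension of a cutoff that vanishes in a neighbourhood of some boundary point. The example makes transparent the nonlocal failure of Harnack: the values of $u$ come from a distant annulus, and no amount of ``positivity far away'' prevents $u$ from collapsing to zero as one approaches the boundary of $B_1$, in sharp contrast with the rigidity of classical harmonic functions.
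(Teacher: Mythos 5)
Your construction does verify the theorem's literal wording: $u$ is bounded, $s$-harmonic in $B_1$, nonnegative in $B_1$, and $\inf_{B_1}u=0$ because $u(x)\to 0$ as $|x|\to 1^-$. But notice what you have built: $u>0$ at every point of $B_1$, and the infimum is only approached at the boundary, never attained. This is exactly the situation for the classical Laplacian with a nonnegative boundary datum vanishing at a point of $\partial B_1$ — nothing specifically nonlocal appears. Worse, your $u$ is nonnegative on \emph{all} of $\R^n$, not merely in $B_1$; and for a function whose sign is controlled in the whole of $\R^n$ the interior Harnack inequality does hold (the paper states this explicitly right after the theorem). So your closing sentence, that the example ``makes transparent the nonlocal failure of Harnack,'' is false: Harnack is not violated by your $u$, and $\sup_{B_r}u/\inf_{B_r}u$ stays finite for every $r<1$.

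The paper's argument produces a genuinely different object, one in which $\inf_{B_1}u=0$ is \emph{attained at an interior point} — a phenomenon impossible for classical nonnegative harmonic functions and the real content of the result. It considers $u_M$, $s$-harmonic in $B_1$ with exterior datum equal to $1-M$ on $B_3\setminus B_2$ and $1$ elsewhere outside $B_1$. Normalizing $v_M:=(u_M+M-1)/M$ and passing to the limit, a maximum-principle contradiction at an interior maximum shows $\inf_{B_1}u_M\to-\infty$ as $M\to+\infty$; since $\inf_{B_1}u_0=1$, one picks $M_\star$ with $\inf_{B_1}u_{M_\star}=0$. Because the datum near $\partial B_1$ equals $1$, $u_{M_\star}$ tends to $1$ at the boundary, so the zero value is achieved strictly inside $B_1$, and Harnack is truly broken on any $B_r$ containing that point. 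The missing idea in your proposal is precisely the \emph{negative} exterior datum: it is the nonlocal mechanism that pulls the solution down to zero inside the ball. Keeping $g\ge 0$ outside $B_1$, as you did, can never produce this.
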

\begin{proof}[Sketch of the proof]
The main idea is that we are able to take the datum of~$u$ outside~$B_1$ in a suitable way as to ``bend down'' the function inside~$B_1$ until it reaches the level zero. Namely, let~$M\ge 0$ and we take~$u_M$ to be the function satisfying
	\begin{equation}\label{e56}
		\begin{cases}
			(-\Delta)^s u_M =0 & {\mbox{ in }} B_1,\\
			u_M = 1-M & {\mbox{ in }} B_3\setminus B_2,\\
			u_M =1 & {\mbox{ in }} \R^n\setminus \big((B_3\setminus B_2)\cup B_1\big).
	\end{cases}
	\end{equation}
When~$M=0$, the function~$u_M$ is identically~$1$.
When~$M>0$, we expect~$u_M$ to bend down, since the fact that the fractional Laplacian vanishes in $B_1$ forces the second order quotient to vanish 
in average (recall~\eqref{frlap2def},
or the equivalent formulation in~\eqref{frlapdef}).
Indeed, we claim that there exists~$M_\star>0$ such that~$u_{M_\star}\ge0$ in~$B_1$ with~$\displaystyle \inf_{B_1} u_{M_\star}=0$. Then, the result of
Theorem~\ref{NH} would be reached by taking~$u:=u_{M_\star}$.

To check the existence of such~$M_\star$,
we show that~$\displaystyle \inf_{B_1} u_M\to -\infty$ as~$M\to+\infty$.
Indeed, we argue by contradiction and suppose this cannot happen. Then, for any~$M\ge0$, we would have that
\begin{equation}\label{e1234}
\inf_{B_1} u_M\ge -a,\end{equation} for some fixed~$a\in\R$.
We set
	\[  v_M:= \frac{u_M+M-1}{M}.\]
Then, by~\eqref{e56}, 
	\[  \begin{cases}
			(-\Delta)^s v_M =0 & {\mbox{ in }} B_1,\\
			v_M = 0 & {\mbox{ in }} B_3\setminus B_2,\\
			v_M =1 & {\mbox{ in }} \R^n\setminus \big((B_3\setminus B_2)\cup B_1\big).
	\end{cases}\]
Also, by~\eqref{e1234}, for any~$x\in B_1$,
	\[  v_M(x)\ge \frac{-a+M-1}{M}.\]
By taking limits, one obtains that~$v_M$ approaches a function~$v_\infty$ that satisfies
 	\[ \begin{cases}
(-\Delta)^s v_\infty =0 & {\mbox{ in }} B_1,\\
v_\infty= 0& {\mbox{ in }} B_3\setminus B_2,\\
v_\infty = 1 & {\mbox{ in }} \R^n\setminus \big((B_3\setminus B_2)\cup 
B_1\big)
\end{cases} \]
and, for any~$x\in B_1$,	
		\[ v_\infty (x)\ge 1.\] 
In particular the maximum of~$v_\infty$ is attained at some point~$x_\star\in B_1$, with~$v_\infty(x_\star)\ge 1$. Accordingly, 
	\begin{eqnarray*}
&& 0 = P.V. \int_{\R^n} \frac{v_\infty(x_\star)-v_\infty(y)}{|x_\star-y|^{n+2s}}\,dy
\ge P.V. \int_{B_3\setminus B_2} 
\frac{v_\infty(x_\star)-v_\infty(y)}{|x_\star-y|^{n+2s}}\,dy
\\ &&\qquad\ge
P.V. \int_{B_3\setminus B_2} \frac{1-0}{|x_\star-y|^{n+2s}}\,dy>0,
\end{eqnarray*}
which is a contradiction.
\end{proof}

The example provided by Theorem~\ref{NH} is not the end
of the story concerning the Harnack Inequality
in the fractional setting.
On the one hand, Theorem~\ref{NH} is just a particular case of the very dramatic effect that the datum at infinity may have on the fractional Laplacian (a striking example of this phenomenon will be given in Section \ref{afsh}).
On the other hand, the Harnack Inequality and the Maximum Principle hold true if, for instance, the sign of the function~$u$ is controlled in the whole of~$\Rn$.

We refer to \cite{BASS-KASSMANN, SILV-HOELDER, K15,FerFra} and to the references therein for a detailed introduction to the fractional Harnack Inequality, and to \cite{DCKP14} for general estimates of this type.

Just to point out
the validity of a global Maximum Principle, 
we state in detail the following simple result:

\begin{theorem}\label{THM-MA-1}
If~$(-\Delta)^s u\ge0$ in~$B_1$ and~$u\ge0$ in~$\R^n\setminus B_1$,
then~$u\ge 0$ in~$B_1$.\end{theorem}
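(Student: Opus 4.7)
The plan is a direct proof by contradiction exploiting the sign of the fractional Laplacian at an interior minimum, using the symmetric representation \eqref{frlap2def}.

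First, I would suppose towards a contradiction that $u$ is negative somewhere in $B_1$. Under the implicit regularity needed to make $(-\Delta)^s u$ pointwise defined (say $u\in L^1_s(\R^n)$ and continuous on $\overline{B_1}$), the set $\{u<0\}$ is contained in the compact set $\overline{B_1}$, and so the global infimum $m:=\inf_{\R^n} u=\inf_{\overline{B_1}} u$ is attained at some point $x_\star\in B_1$ with $m=u(x_\star)<0$. Crucially, $x_\star$ is an interior minimum relative to all of $\R^n$, not merely relative to $B_1$, since outside $B_1$ we have $u\geq 0>m$.

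Second, I would evaluate $(-\Delta)^s u(x_\star)$ via
\[(-\Delta)^s u(x_\star)=\frac{C(n,s)}{2}\int_{\R^n}\frac{2u(x_\star)-u(x_\star+y)-u(x_\star-y)}{|y|^{n+2s}}\,dy.\]
Since $u(x_\star\pm y)\geq m=u(x_\star)$ for every $y\in\R^n$, the numerator is pointwise $\leq 0$, giving $(-\Delta)^s u(x_\star)\leq 0$. To upgrade this to a strict inequality, I would use the exterior datum: for $y$ in the set $A:=\{y\in\R^n:\,x_\star+y\in\R^n\setminus B_1\text{ and }x_\star-y\in\R^n\setminus B_1\}$ (which contains, for instance, all $y$ with $|y|$ sufficiently large, and hence has positive Lebesgue measure), we have $u(x_\star\pm y)\geq 0>m$, so
\[2u(x_\star)-u(x_\star+y)-u(x_\star-y)\leq 2m<0\qquad\text{for }y\in A.\]
Combined with the nonpositivity of the integrand elsewhere, this forces $(-\Delta)^s u(x_\star)<0$, contradicting the hypothesis $(-\Delta)^s u\geq 0$ in $B_1$.

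The main (very minor) obstacle is just making sure the pointwise evaluation of $(-\Delta)^s u$ at the minimum $x_\star$ is justified; once that is in place the argument is purely algebraic, relying on the symmetric form \eqref{frlap2def} rather than a principal value, which is exactly what avoids any cancellation issues at the minimum and makes the sign of the integrand transparent.
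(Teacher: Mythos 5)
Your argument is correct and is essentially the paper's own proof: both use the symmetric representation \eqref{frlap2def}, observe that the second-difference numerator is $\leq 0$ everywhere because $x_\star$ is a global minimum, and upgrade to a strict inequality on a positive-measure set of $y$ for which both $x_\star+y$ and $x_\star-y$ land outside $B_1$ (the paper takes this set to be $\R^n\setminus B_2$, a concrete instance of your set $A$).
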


\begin{proof} Suppose by contradiction that the minimal point~$x_\star\in B_1$ satisfies~$u(x_\star)<0$. Then $u(x_*)$ is a minimum in $\Rn$ (since $u$ is positive outside $B_1$), if $y \in B_2$ we have that $2u(x_\star) -u(x_\star+y)-u(x_\star-y)  \leq 0$. On the other hand, in $\Rn \setminus B_2$ we have that $x_\star\pm y \in \Rn \setminus B_1$, hence $u(x_\star\pm y)\geq 0$. We thus have	\[\begin{split}  0\leq \al \int_{\R^n} \frac{2u(x_\star)-u(x_\star+y)-u(x_\star-y)}{|y|^{n+2s}}\,dy\\
		\leq \al  \int_{\R^n\setminus B_2} \frac{2u(x_\star)-u(x_\star+y)-u(x_\star-y)}{|y|^{n+2s}}\,dy \\ 
		\leq \al \int_{\R^n\setminus B_2}  \frac{2u(x_\star)}{|y|^{n+2s}}\,dy<0. \end{split}\]
This leads to a contradiction.\end{proof}

Similarly to Theorem~\ref{THM-MA-1}, one can prove a Strong Maximum Principle,
such as:

\begin{theorem}\label{THM-MA-1-STRONG}
If~$(-\Delta)^s u\ge0$
in~$B_1$ and~$u\ge0$ in~$\R^n\setminus B_1$,
then~$u>0$ in~$B_1$, unless~$u$ vanishes identically.\end{theorem}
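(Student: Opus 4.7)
The plan is to build directly on Theorem~\ref{THM-MA-1}: that result already gives $u\geq 0$ in $B_1$, hence $u\geq 0$ on all of $\R^n$. So the strong version reduces to showing that if $u\geq 0$ everywhere and $u(x_\star)=0$ for some $x_\star\in B_1$, then $u$ must vanish identically. My strategy is to argue by contradiction at such a would-be interior zero, using the integral definition~\eqref{frlap2def} of the fractional Laplacian and exploiting the nonnegativity of the integrand that arises there.

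First I would suppose, toward a contradiction, that $u\not\equiv 0$ and yet there exists $x_\star\in B_1$ with $u(x_\star)=0$. Since $u\geq 0$ everywhere, $x_\star$ is a global minimum attaining the value $0$. Then I would evaluate $(-\Delta)^s u$ at $x_\star$ using the symmetric representation
\[
(-\Delta)^s u(x_\star)=\frac{C(n,s)}{2}\int_{\R^n}\frac{2u(x_\star)-u(x_\star+y)-u(x_\star-y)}{|y|^{n+2s}}\,dy
=-\frac{C(n,s)}{2}\int_{\R^n}\frac{u(x_\star+y)+u(x_\star-y)}{|y|^{n+2s}}\,dy.
\]
The hypothesis $(-\Delta)^s u(x_\star)\geq 0$ combined with the fact that the integrand is pointwise nonnegative (because $u\geq 0$) squeezes the integral to be exactly $0$.

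From this identity I would conclude that $u(x_\star+y)+u(x_\star-y)=0$ for almost every $y\in\R^n$, and therefore, again using $u\geq 0$, that $u(x_\star\pm y)=0$ for a.e.\ $y$; hence $u\equiv 0$ a.e.\ on $\R^n$. Under the usual regularity assumptions that make the pointwise fractional Laplacian meaningful (for instance $u$ continuous on $B_1$ and in $L^1_s(\R^n)$, as in~\eqref{ls1space}), this a.e.\ statement upgrades to $u\equiv 0$, contradicting the assumption that $u$ does not vanish identically.

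The only genuinely delicate point, and the one I would be most careful about, is justifying that ``integral zero'' implies ``integrand zero a.e.'' in the presence of the principal value in the sense of~\eqref{PV-1}: near $x_\star$ one might worry about the P.V.\ cancellation. However, since $x_\star$ is a minimum and $u\geq 0$, the numerator $2u(x_\star)-u(x_\star+y)-u(x_\star-y)=-u(x_\star+y)-u(x_\star-y)$ has a fixed sign, so no cancellation is taking place; the symmetric integrand is genuinely nonnegative with respect to the Lebesgue measure $dy/|y|^{n+2s}$, and one can read the conclusion directly, without any subtle principal-value argument. That removes the only technical subtlety and closes the proof.
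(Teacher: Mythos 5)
Your proof is correct and follows essentially the same route as the paper's: invoke Theorem~\ref{THM-MA-1} to get $u\ge 0$ everywhere, then evaluate $(-\Delta)^s u$ at an interior zero via~\eqref{frlap2def} and use the sign of the resulting integrand to force $u\equiv 0$. Your remark about the principal value being harmless here because the symmetric integrand has a fixed sign is a sensible extra clarification, but it does not change the argument.
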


\begin{proof}
We observe that we already know that~$u\ge0$
in the whole of~$\R^n$, thanks to Theorem~\ref{THM-MA-1}.
Hence, if $u$ is not strictly positive, there
exists~$x_0\in B_1$
such that~$u(x_0)=0$. This gives that
\[  0\le \int_{\R^n} \frac{2u(x_0)-u(x_0+y)-u(x_0-y)}{|y|^{n+2s}}\,dy
=- \int_{\Rn}\frac{u(x_0+y)+u(x_0-y)}{|y|^{n+2s}}\,dy.\]
Now both $u(x_0+y)$ and  $u(x_0-y)$ 
are non-negative, hence the latter
integral is less than or equal to zero,
and so it must vanish identically, proving that~$u$ also
vanishes identically.
\end{proof}

A simple version of a Harnack-type
inequality in the fractional setting can be also obtained as follows:

\begin{prop}
Assume that~$(-\Delta)^s u\ge0$ in~$B_2$, with~$u\ge0$ in
the whole of~$\R^n$. Then
$$ u(0)\ge c\int_{B_1} u(x)\,dx,$$
for a suitable~$c>0$.
\end{prop}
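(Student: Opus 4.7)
The plan is to deduce the inequality from the Poisson kernel representation for the ball $B_\rho$ (developed in Chapter 3), combined with an averaging trick in the radius $\rho$.

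First I would fix $\rho\in(0,2)$ and let $\tilde u_\rho$ denote the $s$-harmonic function in $B_\rho$ that agrees with $u$ outside, given by the Poisson formula
\[\tilde u_\rho(x)=\int_{\R^n\setminus B_\rho} P_\rho(y,x)\,u(y)\,dy.\]
Since $B_\rho\subset B_2$, the difference $w:=u-\tilde u_\rho$ satisfies $(-\Delta)^s w\ge 0$ in $B_\rho$ and $w=0$ outside $B_\rho$; by the Maximum Principle (Theorem \ref{THM-MA-1}, applied to the ball $B_\rho$ by scaling), $w\ge 0$ in $B_\rho$. Evaluating at the origin,
\[u(0)\;\ge\;\int_{\R^n\setminus B_\rho} P_\rho(y,0)\,u(y)\,dy\qquad\text{for every }\rho\in(0,2).\]

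Next I would integrate this inequality with respect to $d\rho$ over the interval $(0,1)$. Since $u\ge 0$ and $P_\rho(\cdot,0)\ge 0$, Fubini--Tonelli is legitimate and yields
\[u(0)\;=\;u(0)\int_0^1 d\rho\;\ge\;\int_{\R^n} u(y)\,K(y)\,dy,\qquad K(y):=\int_0^{\min(|y|,1)} P_\rho(y,0)\,d\rho.\]
Using the explicit form of the Poisson kernel of the ball at its center, which in Chapter 3 is shown to be
\[P_\rho(y,0)=c_{n,s}\,\frac{\rho^{2s}}{(|y|^2-\rho^2)^s\,|y|^n}\qquad\text{for }|y|>\rho,\]
the substitution $\rho=|y|t$ gives, for every $y$ with $|y|\le 1$,
\[K(y)=\frac{c_{n,s}}{|y|^{n-1}}\int_0^1\frac{t^{2s}}{(1-t^2)^s}\,dt\;=\;\frac{c'_{n,s}}{|y|^{n-1}}\;\ge\;c'_{n,s},\]
where the last inequality uses $|y|\le 1$ and $n\ge 1$, and where the Beta-type integral is finite because $2s>-1$ and $s<1$. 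Restricting the lower bound $u(0)\ge\int_{\R^n}u K$ to $B_1$ (where $u,K\ge 0$) gives
\[u(0)\;\ge\;\int_{B_1}u(y)\,K(y)\,dy\;\ge\;c'_{n,s}\int_{B_1}u(y)\,dy,\]
which is the desired Harnack-type estimate with $c:=c'_{n,s}>0$.

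The main conceptual point, which is where I expect all the real work to sit, is the Poisson inequality $u(0)\ge\int P_\rho(y,0)\,u(y)\,dy$ for superharmonic $u$; this relies on the existence and positivity of $\tilde u_\rho$ (requiring $u$ to have enough integrability at infinity, e.g.\ $u\in L^1_s(\R^n)$, so that the Poisson formula makes sense) and on the Maximum Principle for the $s$-superharmonic difference $u-\tilde u_\rho$. Once those two ingredients are in place, as they are in the framework of Chapter 3 and Section \ref{S:PGRAG}, the averaging over $\rho$ is a direct computation and no further delicate estimates are needed.
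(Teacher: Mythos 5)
Your proof is correct but takes a genuinely different route from the paper's. The paper does not use the Poisson kernel at all: it fixes a bump function $\Gamma\in C^\infty_0(B_{1/2})$ with $\Gamma(0)=1$, forms the sliding family $\Gamma_a:=2(u(0)+\epsilon)\Gamma-a$, finds the extremal $a_*$ at which $\Gamma_{a_*}$ touches $u$ from below at some $x_0\in\overline{B_{1/2}}$, and then compares $(-\Delta)^s\Gamma_{a_*}(x_0)$ (bounded by $C(u(0)+\epsilon)$ since $\Gamma$ is smooth and compactly supported) with $(-\Delta)^s u(x_0)\ge 0$; the pointwise kernel difference at $x_0$ produces exactly $\int_{B_1}u$ up to a controlled error, and letting $\epsilon\to 0$ finishes. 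Your argument instead rests on the representation inequality $u(0)\ge\int_{\C B_\rho}P_\rho(y,0)u(y)\,dy$ for $s$-superharmonic $u$, followed by the nice averaging-in-$\rho$ device that turns the exterior integrals into a kernel $K(y)\asymp|y|^{1-n}$ on $B_1$, which is bounded below on $B_1$. The two approaches have different dependencies: yours invokes the Poisson kernel theory (Theorem~\ref{theorem:DPL}) and the maximum principle (Theorem~\ref{THM-MA-1}), and requires $u\in L^1_s(\R^n)\cap C(\R^n)$ for the Poisson representation to be well-defined; the paper's is lighter on machinery (only a smooth test function and the pointwise definition of $(-\Delta)^s$) and is placed before the potential theory is developed, so it avoids the forward reference. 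Both need comparable implicit regularity to make sense of $(-\Delta)^s u$ pointwise, so the conceptual gain of your version is mainly that it exposes the mean-value structure behind the Harnack bound, at the price of relying on the Chapter 3 machinery. One small remark: your reduction $u(0)\ge\int_{\R^n}u(y)K(y)\,dy$ is restricted to $B_1$ using $u,K\ge 0$; this is fine, but you should note explicitly that for $|y|\ge 1$ the inner integral $K(y)=\int_0^1 P_\rho(y,0)\,d\rho$ has a different (and finite) expression, which you are simply dropping by positivity — as written, the displayed $K(y)=c'_{n,s}|y|^{1-n}$ is valid only for $|y|\le 1$.
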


\begin{proof} Let~$\Gamma\in C^\infty_0(B_{1/2})$,
with~$\Gamma(x)\in[0,1]$ for any~$x\in\R^n$, and~$\Gamma(0)=1$.
We fix~$\epsilon>0$, to be taken arbitrarily small at the end of this
proof and set
\begin{equation}\label{78HJP}
\eta:=u(0)+\epsilon>0.\end{equation}
We define~$\Gamma_a(x):= 2\eta\,\Gamma(x)-a$.
Notice that if~$a>2\eta$, then~$\Gamma_a (x) \le2\eta-a<0\leq u(x)$
in the whole of~$\R^n$, hence the set $\{ \Gamma_a<u {\mbox{ in $\R^n$}}\}$ is not empty, and we can define
$$ a_* := \inf_{a\in \R} \{ \Gamma_a<u {\mbox{ in $\R^n$}}\}.$$
By construction
\begin{equation*}\label{89U}
a_* \le 2\eta.
\end{equation*}
If~$a<\eta$ then~$\Gamma_a(0)=2\eta-a > \eta >u(0)$, therefore
\begin{equation}\label{U7-a-be}
a_* \ge \eta.
\end{equation}
Notice that
\begin{equation}\label{x0EX0}
{\mbox{$\Gamma_{a_*}\le u$ in the whole of $\R^n$.}}\end{equation}
We claim that 
\begin{equation}\label{x0EX}
{\mbox{there exists~$x_0\in \overline{B_{1/2}}$ such that $\Gamma_{a_*}(x_0)=u(x_0)$.}}
\end{equation}
To prove this, we suppose by contradiction that~$u>\Gamma_{a_*} $
in~$\overline{B_{1/2}}$, i.e.
$$ \mu:=\min_{ \overline{B_{1/2}} } (u-\Gamma_{a_*} )>0.$$
Also, if~$x\in \R^n\setminus \overline{B_{1/2}}$, we have that
$$ u(x)-\Gamma_{a_*}(x) = u(x)
-2\eta\,\Gamma(x)+a_* = u(x)+a_*\ge a_*\ge\eta,$$
thanks to~\eqref{U7-a-be}.
As a consequence, for any~$x\in\R^n$,
$$ u(x)-\Gamma_{a_*}(x) \ge\min\{\mu,\eta\}=:\mu_*>0.$$
So, if we define~$a_\sharp:= a_*-(\mu_*/2)$,
we have that~$a_\sharp<a_*$ and
$$ u(x)-\Gamma_{a_\sharp}(x) = u(x)-\Gamma_{a_*}(x) -\frac{\mu_*}{2}
\ge \frac{\mu_*}{2} >0.$$
This is in contradiction with the definition of~$a_*$
and so it proves~\eqref{x0EX}.

{F}rom~\eqref{x0EX} we have that $x_0\in \overline{ B_{1/2}}$, hence ~$(-\Delta)^s u(x_0)\ge0$.
Also~$|(-\Delta)^s \Gamma_{a_*}(x)|=
2\eta\,|(-\Delta)^s\Gamma(x)|\le C\eta$, for any~$x\in\R^n$, therefore,
recalling~\eqref{x0EX0} and~\eqref{x0EX},
\begin{eqnarray*}
C\eta &\ge& (-\Delta)^s \Gamma_{a_*}(x_0) -(-\Delta)^s u(x_0)\\
&=& C(n,s)\,\mbox{P.V.} \int_{\R^n} 
\frac{ \big[ \Gamma_{a_*}(x_0)-\Gamma_{a_*}(x_0+y) \big]- 
\big[ u(x_0)-u(x_0+y)\big] }{|y|^{n+2s}}\,dy \\
&=& C(n,s)\,\mbox{P.V.} \int_{\R^n} 
\frac{ u(x_0+y)-\Gamma_{a_*}(x_0+y)}{|y|^{n+2s}}\,dy
\\ &\ge& C(n,s)\,\mbox{P.V.} \int_{B_1(-x_0)}
\frac{ u(x_0+y)-\Gamma_{a_*}(x_0+y)}{|y|^{n+2s}}\,dy
.\end{eqnarray*}
Notice now that if~$y\in B_1(-x_0)$, then~$|y|\le |x_0|+1<2$,
thus we obtain
$$ C\eta \ge \frac{C(n,s)}{2^{n+2s}}\,\int_{B_1(-x_0)}
\big[ u(x_0+y)-\Gamma_{a_*}(x_0+y) \big]\,dy.$$
Notice now that $\Gamma_{a_*}(x)= 2\eta\Gamma(x) -a_* \le \eta$, due to~\eqref{U7-a-be},
therefore we conclude that
$$ C\eta \ge \frac{C(n,s)}{2^{n+2s}}\,\left( \int_{B_1(-x_0)}
u(x_0+y)\,dy-\eta|B_1|\right).$$
That is,
using the change of variable~$x:=x_0+y$, recalling~\eqref{78HJP}
and renaming the constants, we have
$$ C\big(u(0)+\epsilon\big)=
C\eta\ge \int_{B_1} u(x)\,dx,$$
hence the desired result follows by sending~$\epsilon\to0$.
\end{proof}

\subsection{An $s$-harmonic function} \label{shf1}

We provide here an explicit
example of a function that is $s$-harmonic on the positive line $\R_+:=
(0,+\infty)$. Namely, we prove the following result: 
\begin{theorem}\label{G1}
For any $x\in\R$, let $w_s(x):=x_+^s=\max\{x,0\}^s$. Then
	\[ (-\Delta)^s w_s (x) = \left\{
	\begin{matrix}
		-c_s |x|^{-s} & \mbox{ if  } x<0,\\
		0 &\mbox{ if } x>0,
	\end{matrix}
	\right.\]
for a suitable constant~$c_s>0$.
\end{theorem}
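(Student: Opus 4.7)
My plan is to reduce the computation to two point evaluations by the $s$-homogeneity of $w_s$, to handle $x=-1$ by a direct Beta-function integration, and to establish the vanishing at $x=1$ by the inversion trick $y\mapsto 1/y$ combined with a Beta-function identity.

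The reduction step is straightforward. Since $w_s(\lambda x)=\lambda^s w_s(x)$ for every $\lambda>0$, the change of variable $y=\lambda z$ in the principal-value representation \eqref{frlapdef} yields
\[(-\Delta)^s w_s(\lambda x) = \lambda^{-s} (-\Delta)^s w_s(x)\qquad\text{for all }\lambda>0,\]
so it suffices to compute the operator at $x=\pm 1$. At $x=-1$ the function $w_s$ vanishes on $(-\infty,0]$ and $|{-1-y}|=1+y$ for $y>0$, so the integral collapses to a standard Beta integral,
\[(-\Delta)^s w_s(-1) = -C(1,s)\int_0^\infty \frac{y^s}{(1+y)^{1+2s}}\,dy = -C(1,s)\,B(s+1,s).\]
Setting $c_s:=C(1,s)\,\Gamma(s+1)\Gamma(s)/\Gamma(2s+1)>0$ and applying the scaling gives $(-\Delta)^s w_s(x)=-c_s|x|^{-s}$ for every $x<0$.

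For $x=1$, the contribution over $(-\infty,0)$ integrates cleanly to $1/(2s)$, so the theorem reduces to showing that
\[B:=\mathrm{P.V.}\int_0^\infty \frac{1-y^s}{|1-y|^{1+2s}}\,dy = -\frac{1}{2s}.\]
The key idea will be the involution $y\mapsto 1/y$, which preserves $(0,\infty)$ and fixes the singular point $y=1$. Applied to the outer piece $\int_{1+\epsilon}^\infty$, this substitution converts the integrand into $y^{s-1}(1-y^s)/(1-y)^{1+2s}$ on $(0,1/(1+\epsilon))$; the mismatch between the cutoffs $1-\epsilon$ and $1/(1+\epsilon)$, of size $\epsilon^2$, produces an $O(\epsilon^{2-2s})$ correction near $y=1$ that vanishes in the limit. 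Combining this with the inner piece $\int_0^{1-\epsilon}$ reduces $B$ to the manifestly convergent integral $\int_0^1 (1-y^s)(1-y^{s-1})(1-y)^{-1-2s}\,dy$. I would then evaluate this by one integration by parts with the antiderivative $v(y)=[(1-y)^{-2s}-1]/(2s)$ (chosen so that both boundary terms vanish), together with the Beta-function identity $\int_0^1 (1-y)^{s-1}y^{-2s}\,dy = \int_0^\infty (1+y)^{s-1}y^{-2s}\,dy = B(1-2s,s)$, which one reads off by the change of variable $y=t/(1+t)$. The resulting cancellation yields $B=-1/(2s)$.

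The main obstacle I anticipate is the bookkeeping around the principal value and the regime $s\geq 1/2$: the Beta pieces obtained after expanding $(1-y^s)(1-y^{s-1})$ against $(1-y)^{-1-2s}$ are individually divergent at $y=0$ (and the Beta identity above requires analytic continuation, or a common cutoff, when $s\geq 1/2$), so the divergent parts must be combined before the limit is taken, and the cutoff mismatch near $y=1$ must be tracked explicitly. Once these cancellations are verified and the Beta identity is applied, the value $-1/(2s)$ emerges cleanly, and the theorem follows by combining with the scaling and the $x=-1$ computation.
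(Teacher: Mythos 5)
Your plan is correct and, at bottom, runs along the same lines as the paper's proof. The homogeneity reduction to $x=\pm1$ is identical. At $x=-1$ the paper only observes that the symmetrized integrand is nonnegative and not identically zero, which already gives $c_s>0$; your explicit Beta evaluation $c_s=C(1,s)\,\Gamma(s+1)\Gamma(s)/\Gamma(2s+1)$ is a small sharpening. The cancellation at $x=1$ rests, in both cases, on one integration by parts together with the substitution $t\mapsto t/(1-t)$: your inversion $y\mapsto 1/y$, after writing $y=1-t$ and $\tilde t = 1/y - 1$, is precisely the paper's change of variables in the proof of Lemma~\ref{L1}, and the Beta identity $\int_0^1(1-t)^{s-1}t^{-2s}\,dt=\int_0^\infty(1+\tilde t)^{s-1}\tilde t^{-2s}\,d\tilde t$ you invoke is exactly the one appearing there. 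The one structural difference worth noting is that the paper works in the second-difference form of $(-\Delta)^s$, so the integrand at $x=1$ already has a quadratic zero at the origin, there is no principal value, and the $O(\epsilon^{2-2s})$ cutoff-mismatch you correctly flag as the irritating part of the $\mathrm{P.V.}$ route never arises; that choice also keeps the post-integration-by-parts pieces individually finite across all $s\in(0,1)$, so the regrouping you anticipate needing for $s\ge 1/2$ disappears. Your target value $B=-1/(2s)$ is indeed correct (expanding $(1-y^s)(1-y^{s-1})$ into four analytically-continued Beta integrals, the $y^{2s-1}$ term is annihilated by the pole of $\Gamma$ at the origin, the two middle terms cancel, and only $B(1,-2s)=-1/(2s)$ survives), so, modulo the bookkeeping you already identified, the sketch closes.
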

\begin{figure}[htpb]
	\begin{minipage}[b]{0.90\linewidth}
	\includegraphics[width=0.85\textwidth]{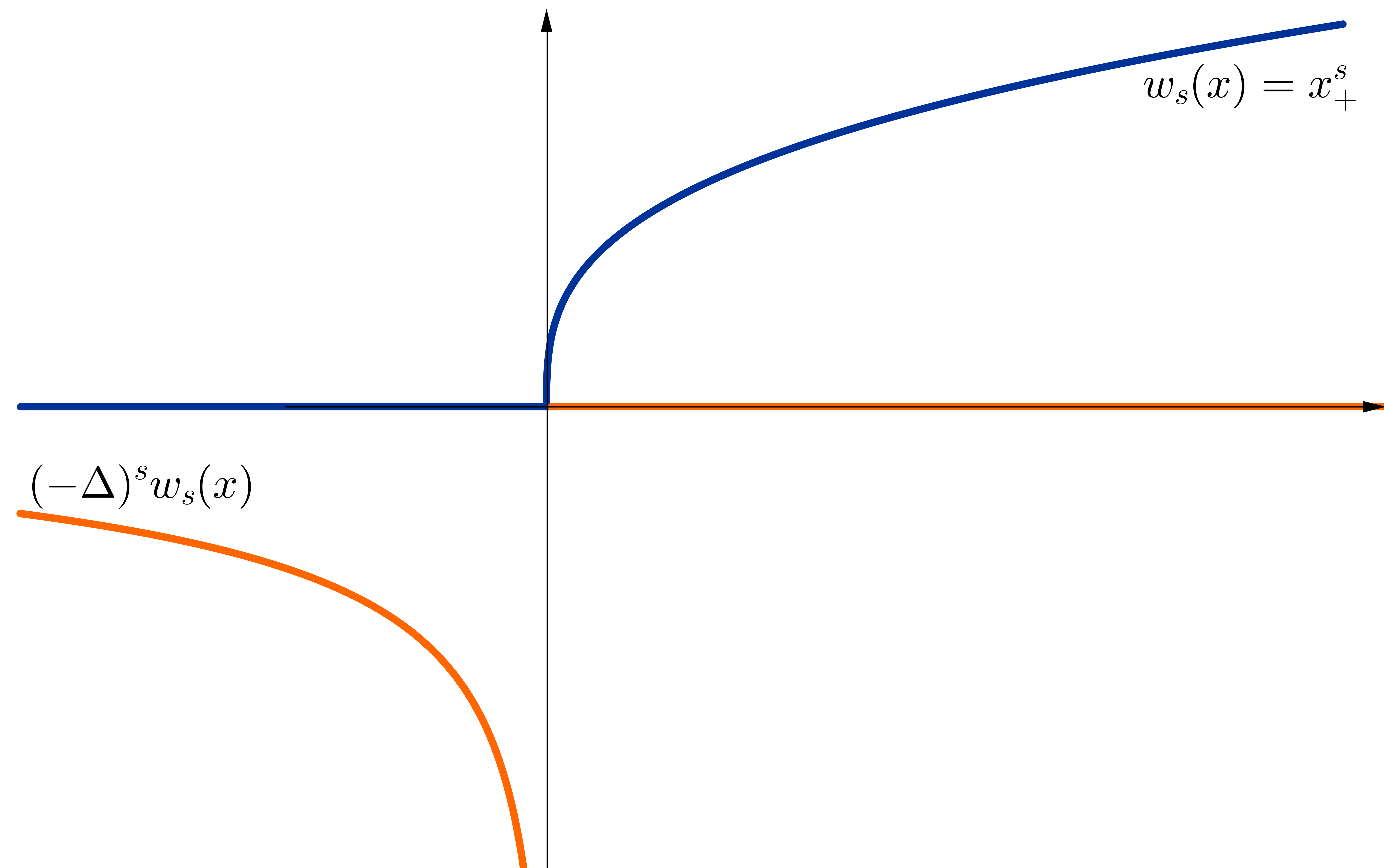}
	\caption{An $s$-harmonic function}   
	\label{fign:sh}
	\end{minipage}
\end{figure}
At a first glance, it may be quite surprising that the function~$x_+^s$ is $s$-harmonic in~$(0,+\infty)$, since such function is not smooth (but only continuous) uniformly up to the boundary, so let us try to give some heuristic explanations for it. 

We try to understand why the function~$x_+^s$ is $s$-harmonic in, say, the interval~$(0,1)$ when~$s\in(0,1]$. From the discussion in Section~\ref{pym}, we know that the $s$-harmonic function in~$(0,1)$ that agrees with~$x_+^s$ outside~$(0,1)$ coincides with the expected value of a payoff, subject to a random walk (the random walk is classical when~$s=1$ and it presents jumps when~$s\in(0,1)$). If~$s=1$ and we start from the middle of the interval, we have the same probability of being moved randomly to the left and to the right. This means that we have the same probability of exiting the interval~$(0,1)$ to the right (and so ending the process at~$x=1$, which gives~$1$ as payoff) or to the left~(and so ending the process at~$x=0$, which gives~$0$ as payoff). Therefore the expected value starting at~$x=1/2$ is exactly the average between~$0$ and~$1$, which is~$1/2$. Similarly, if we start the process at the point~$x=1/4$, we have the same probability of reaching the point~$0$ on the left and the point~$1/2$ to the right. Since we know that the payoff at~$x=0$ is~$0$ and the expected value of the payoff at~$x=1/2$ is~$1/2$, we deduce in this case that the expected value for the process starting at~$1/4$ is the average between~$0$ and~$1/2$, that is exactly~$1/4$. We can repeat this argument over and over, and obtain the (rather obvious)
fact that the linear function is indeed harmonic in the classical sense.

The argument above, which seems either
trivial or unnecessarily complicated in the classical case,
can be adapted when~$s\in(0,1)$ and it can give a qualitative picture of the corresponding~$s$-harmonic function. Let us start again the random walk, this time with jumps, at~$x=1/2$: in presence of jumps, we have the same probability of reaching the left interval~$(-\infty,0]$ and the right interval~$[1,+\infty)$. Now, the payoff at~$(-\infty,0]$ is~$0$, while the payoff at~$[1,+\infty)$ is {\em bigger} than~$1$. This implies that the expected value at~$x=1/2$ is the average between~$0$ and something bigger than~$1$, which produces a value larger than~$1/2$. When repeating this argument over and over, we obtain a concavity property enjoyed by the~$s$-harmonic functions in this case (the exact values prescribed in~$[1,+\infty)$ are not essential here, it would be enough that these values were monotone increasing and larger than~$1$).
\begin{figure}[htpb]
	\begin{minipage}[b]{0.95\linewidth}
	\includegraphics[width=0.90\textwidth]{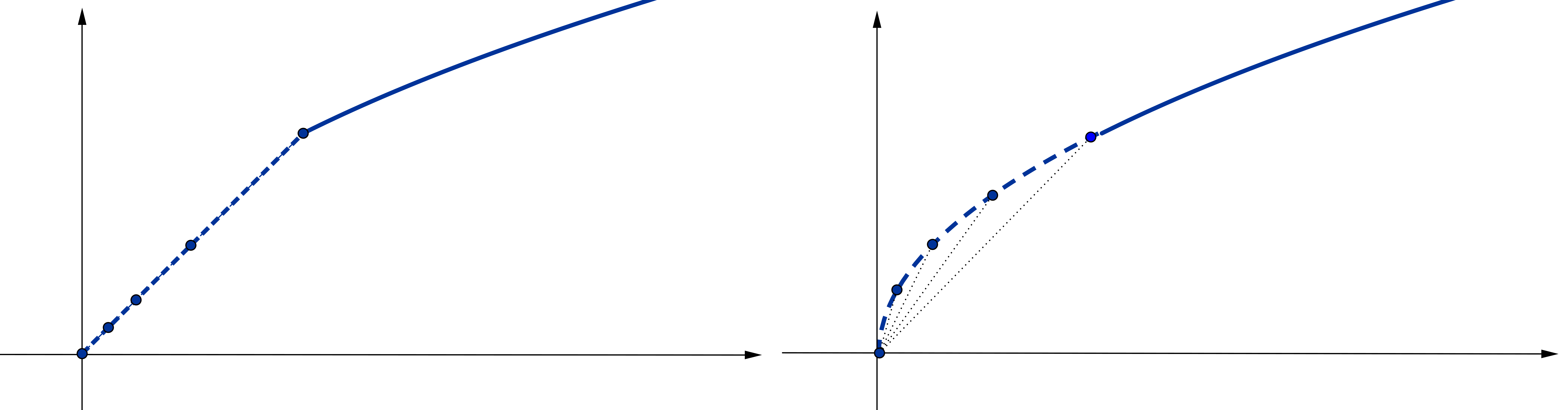}
	\caption{A payoff model: case $ s=1$ and $s \in (0,1)$}   
	\label{fign:py}
	\end{minipage}
\end{figure}

In a sense, therefore, this concavity properties and loss of Lipschitz regularity near the minimal boundary values is typical of nonlocal diffusion and it is due to the possibility of ``reaching far away points'', which may increase the expected payoff.

\bigskip

Now we present a complete, elementary proof
of Theorem \ref{G1},
starting with
some preliminary computations.  

\begin{lemma}\label{L1}
For any $s\in(0,1)$
\[ \int_0^{1} \frac{(1+t)^s+(1-t)^s-2}{t^{1+2s}}\,dt+ \int_1^{+\infty}\frac{(1+t)^s}{t^{1+2s}}\, dt = \frac{1}{s}.\]
\end{lemma}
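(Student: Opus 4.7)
The plan is: (1) integrate by parts in $J$ to extract an elementary boundary term equal to $1/s$ plus a remainder involving the exponents $s-1$; (2) evaluate that remainder for $s$ small via standard Beta integrals, in which two identical Beta functions cancel; (3) extend to all $s\in(0,1)$ by analyticity in $s$.

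First I would integrate by parts on each of the two subintervals using the antiderivative $v(t) = -t^{-2s}/(2s)$ of $t^{-1-2s}$. On $(0,1)$, the Taylor expansion $(1+t)^s+(1-t)^s-2 = s(s-1)t^2 + O(t^4)$ makes the boundary term at $t=0$ vanish (since $s<1$), while at $t=1$ it contributes $(2-2^s)/(2s)$. On $(1,\infty)$ the boundary at $+\infty$ vanishes because $(1+t)^s/t^{2s}\sim t^{-s}\to 0$, and at $t=1$ it contributes $2^s/(2s)$. These elementary pieces sum to $1/s$, leaving
\[ J = \frac{1}{s} + \frac{K}{2}, \qquad K := \int_0^1 \frac{(1+t)^{s-1}-(1-t)^{s-1}}{t^{2s}}\,dt + \int_1^{\infty} \frac{(1+t)^{s-1}}{t^{2s}}\,dt.\]
The task therefore reduces to showing $K=0$.

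Next, for $s\in(0,1/2)$ each of the three summands hidden inside $K$ is separately absolutely integrable (the singularities $t^{-2s}$ at the origin and $(1-t)^{s-1}$ at $t=1$ are both tame in this range), so one may split and reorganize as
\[ K = \int_0^{\infty}\frac{(1+t)^{s-1}}{t^{2s}}\,dt \;-\; \int_0^1 \frac{(1-t)^{s-1}}{t^{2s}}\,dt.\]
Using the classical identities $\int_0^\infty t^{a-1}(1+t)^{-a-b}\,dt = \int_0^1 t^{a-1}(1-t)^{b-1}\,dt = B(a,b)$ with $a = 1-2s$ and $b=s$, both integrals equal $B(1-2s,s) = \Gamma(1-2s)\Gamma(s)/\Gamma(1-s)$. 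Hence $K\equiv 0$ on $(0,1/2)$.

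Finally, to pass from $s<1/2$ to the full range, I would view $K(s)$ as a holomorphic function of $s$ on the strip $\{0<\mathrm{Re}(s)<1\}$. The integrand is dominated by an integrable majorant uniformly on compact subsets of the strip: near $t=0$ the difference $(1+t)^{s-1}-(1-t)^{s-1} = 2(s-1)t + O(t^3)$ controls the $t^{-2s}$ singularity, yielding an $O(t^{1-2\mathrm{Re}(s)})$ bound; near $t=1$ one uses $|(1-t)^{s-1}| = (1-t)^{\mathrm{Re}(s)-1}$, integrable for $\mathrm{Re}(s)>0$; on $(1,\infty)$ the integrand decays like $t^{-1-\mathrm{Re}(s)}$. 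Standard dominated convergence plus Morera's theorem gives holomorphicity, and since $K$ vanishes on the subinterval $(0,1/2)$ the identity theorem forces $K\equiv 0$ on the whole strip, so $J = 1/s$ on $(0,1)$.

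The main obstacle is precisely the range $s\geq 1/2$: the naive splitting of $K$ into separate Beta-type integrals fails there because of a nonintegrable $t^{-2s}$ singularity at the origin. The analyticity argument is what cleanly bypasses the need for a more delicate, case-by-case regularization that would preserve the Beta structure simultaneously for all $s\in(0,1)$.
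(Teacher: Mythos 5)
Your argument is correct, and it is genuinely different from the paper's. Both proofs start with the same integration by parts, isolating the elementary boundary contributions (which sum to $1/s$) and a remainder $K/2$ with $(1\pm t)^{s-1}$ and $t^{-2s}$ in the integrand. From there the two strategies diverge. The paper keeps a regularization $\int_\eps^1$, performs the substitution $\tilde t = t/(1-t)$ on the $(1-t)^{s-1}t^{-2s}$ piece — which maps it bijectively onto $(1+\tilde t)^{s-1}\tilde t^{-2s}$ over $(\eps/(1-\eps),\infty)$ — lets the overlap cancel, sends $\eps\to0$, and then integrates by parts a second time on $\int_1^\infty(1+t)^{s-1}t^{-2s}\,dt$ to recover the form $\int_1^\infty(1+t)^s t^{-1-2s}\,dt$. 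This is entirely elementary calculus and handles all $s\in(0,1)$ uniformly without ever splitting $K$ into non-integrable pieces. You instead split $K$ for $s<1/2$, recognize both pieces as $B(1-2s,s)$, and then extend to the full strip by holomorphicity plus the identity theorem. What your route buys is conceptual clarity (the remainder manifestly vanishes as a difference of two identical Beta integrals) at the cost of importing special-function identities and complex-analytic machinery; what the paper's route buys is a proof readable without any reference to $\Gamma$, $B$, or Morera's theorem, which fits the elementary tone of the surrounding exposition. One small clarification worth noting: the "more delicate, case-by-case regularization" you say analyticity lets you avoid is in fact exactly what the paper does, and it turns out to be quite short — the change of variables $t\mapsto t/(1-t)$ makes the cancellation happen at the regularized level for every $s$, so no case distinction on $s$ is required there either.
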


\begin{proof} Fixed $\eee>0$, we integrate by parts:
	\eqlab{ \label{6f7gg}		
	\int_\eee^{1} & \frac{(1+t)^s+(1-t)^s-2}{t^{1+2s}}\,dt \;\\\ =\al  -\frac{1}{2s} \int_\eee^{1} \Big[ (1+t)^s+(1-t)^s-2\Big] \frac{d}{dt} t^{-2s}\,dt \\
	 =&\;\frac{1}{2s} \bigg[\frac{ (1+\eee)^s+(1-\eee)^s-2 }{ \eee^{2s} } -2^s+2\bigg]
	+\frac{1}{2} \int_\eee^{1} \frac{ (1+t)^{s-1}-(1-t)^{s-1}}{ t^{2s}} \,dt \\ 
	 =&\;\frac{1}{2s} \left[ o(1) -2^s+2\right]+\frac{1}{2}\bigg( \int_\eee^{1} (1+t)^{s-1} t^{-2s}\,dt
 	-\int_\eee^{1}(1-t)^{s-1} t^{-2s}\,dt\bigg), }
		with~$o(1)$ infinitesimal as~$\eee\searrow0$.
Moreover, by changing variable~$\tilde t:=t/(1-t)$, that is~$t:=\tilde t/(1+\tilde t)$,
we have that
\begin{equation*}
	\int_\eee^{1} (1-t)^{s-1} t^{-2s}\,dt
= \int_{\eee/(1-\eee)}^{+\infty} (1+\tilde t)^{s-1}\tilde t^{-2s}\,d\tilde t.
\end{equation*}
Inserting this into \eqref{6f7gg} (and writing $t$ instead of~$\tilde t$ as variable of integration), we obtain
	\begin{equation}\label{98-1}
		\begin{split}
		&\int_\eee^{1} \frac{(1+t)^s+(1-t)^s-2}{t^{1+2s}}\,dt \\
		&\quad=\frac{1}{2s} \big[o(1)	-2^s+2\big]+\frac{1}{2}\bigg[\int_\eee^{1} (1+t)^{s-1} t^{-2s}\,dt	- \int_{\eee/(1-\eee)}^{+\infty}(1+t)^{s-1}t^{-2s}\,dt\bigg]\\
		&\quad=	\frac{1}{2s} \big[ o(1)	-2^s+2\big]+\frac{1}{2}\bigg[	\int_\eee^{\eee/(1-\eee)} (1+t)^{s-1} t^{-2s}\,dt- \int_1^{+\infty}(1+t)^{s-1}t^{-2s}\,dt\bigg] .\end{split}\end{equation}
Now we remark that
	\[ \int_\eee^{\eee/(1-\eee)} (1+t)^{s-1} t^{-2s}\,dt\le  \int_\eee^{\eee/(1-\eee)} (1+\eee)^{s-1} \eee^{-2s}\,dt
	=\eee^{2-2s}(1-\eee)^{-1}(1+\eee)^{s-1} ,\]
therefore
	\[\lim_{\eee\searrow0} \int_\eee^{\eee/(1-\eee)} (1+t)^{s-1} t^{-2s}\,dt =0.\]
So, by passing to the limit in~\eqref{98-1}, we get
	\begin{equation}\label{98c}
		\int_0^{1} \frac{(1+t)^s+(1-t)^s-2}{t^{1+2s}}\,dt=
		\frac{-2^s+2}{2s} -\frac{1}{2}
		\int_1^{+\infty} (1+t)^{s-1}t^{-2s}\,dt.
	\end{equation}
Now, integrating by parts we see that
	\begin{eqnarray*}
		 \frac{1}{2}\int_1^{+\infty} (1+t)^{s-1}t^{-2s}\,dt
	&=& \frac{1}{2s}\int_1^{+\infty} t^{-2s} \frac{d}{dt}(1+t)^s\,dt
		\\ &=&-\frac{2^s}{2s}+
		\int_1^{+\infty} t^{-1-2s} (1+t)^s\,dt.
	\end{eqnarray*}
By plugging this into~\eqref{98c} we obtain that
\[\int_0^{1} \frac{(1+t)^s+(1-t)^s-2}{t^{1+2s}}\,dt=
\frac{-2^s+2}{2s}+
\frac{2^s}{2s}-
\int_1^{+\infty} t^{-1-2s} (1+t)^s\,dt , \]
which gives the desired result.
\end{proof}

 {F}rom Lemma~\ref{L1} we deduce the following (somehow unexpected)
cancellation property.

\begin{corollary}\label{L2}
Let~$w_s$ be as in the statement of Theorem~\ref{G1}. Then
$$ (-\Delta)^s w_s (1) = 0.$$
\end{corollary}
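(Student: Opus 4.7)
The plan is to write the fractional Laplacian directly from definition~\eqref{frlap2def} at the point $x=1$, exploit symmetry to fold the integral onto the half-line $(0,\infty)$, and split at $y=1$ so that the two resulting pieces match exactly the two integrals appearing in Lemma~\ref{L1}.

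More precisely, I would start by writing
\[ (-\Delta)^s w_s(1)=\frac{C(1,s)}{2}\int_{\R}\frac{2w_s(1)-w_s(1+y)-w_s(1-y)}{|y|^{1+2s}}\,dy. \]
Since $w_s(1)=1$ and $w_s(1\pm y)=(1\pm y)_+^s$, the integrand is invariant under $y\mapsto -y$, so the integral equals twice the corresponding integral over $(0,\infty)$. This gives
\[ (-\Delta)^s w_s(1)=C(1,s)\int_0^{\infty}\frac{2-(1+y)^s-(1-y)_+^s}{y^{1+2s}}\,dy. \]

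Next I would split this integral at $y=1$, since $(1-y)_+^s$ vanishes for $y\geq 1$. On $(0,1)$ the integrand is $\bigl[2-(1+y)^s-(1-y)^s\bigr]/y^{1+2s}$, which gives (up to a minus sign) the first integral in Lemma~\ref{L1}; note that this is a genuine principal value near $y=0$, where the numerator vanishes like $O(y^2)$ by Taylor expansion, so the singularity is integrable. On $(1,\infty)$ I would write
\[ \int_1^{\infty}\frac{2-(1+y)^s}{y^{1+2s}}\,dy = 2\int_1^{\infty}\frac{dy}{y^{1+2s}}-\int_1^{\infty}\frac{(1+y)^s}{y^{1+2s}}\,dy=\frac{1}{s}-\int_1^{\infty}\frac{(1+y)^s}{y^{1+2s}}\,dy. \]

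Putting the two pieces together, the quantity inside the brackets becomes
\[ -\left[\int_0^{1}\frac{(1+y)^s+(1-y)^s-2}{y^{1+2s}}\,dy+\int_1^{\infty}\frac{(1+y)^s}{y^{1+2s}}\,dy\right]+\frac{1}{s}, \]
which, by Lemma~\ref{L1}, equals $-\tfrac{1}{s}+\tfrac{1}{s}=0$. The main conceptual point (and the only non-routine step) is recognizing that Lemma~\ref{L1} is precisely tailored to this cancellation; once that is seen the proof is a short bookkeeping of the splitting. The only technical care required is to verify that the integrand near $y=0$ is absolutely integrable (so we do not need the P.V. prescription here), which follows from a second-order Taylor expansion of $(1+y)^s+(1-y)^s$ around $y=0$.
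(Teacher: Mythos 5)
Your proof is correct and follows essentially the same route as the paper: both reduce the computation to the two integrals appearing in Lemma~\ref{L1} and invoke the cancellation it provides. The only cosmetic difference is that you observe upfront that $y\mapsto w_s(1+y)+w_s(1-y)$ is even and fold the whole line onto $(0,\infty)$ at once, while the paper treats $(-\infty,-1)$ separately via the substitution $\tilde t=-t$ before recombining; the splitting at $y=1$ and the subsequent bookkeeping are identical.
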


\begin{proof} The function~$t\mapsto (1+t)^s+(1-t)^s-2$
is even, therefore
$$ \int_{-1}^1 \frac{(1+t)^s+(1-t)^s-2}{|t|^{1+2s}}\,dt=2
\int_{0}^1 \frac{(1+t)^s+(1-t)^s-2}{t^{1+2s}}\,dt.$$
Moreover, by changing variable~$\tilde t:=-t$, we have that
$$ \int_{-\infty}^{-1}\frac{(1-t)^s-2}{|t|^{1+2s}}\,dt
=\int_1^{+\infty}\frac{(1+\tilde t)^s-2}{\tilde t^{1+2s}}\,d\tilde t.$$
Therefore
\begin{equation*}\begin{split}
\int_{-\infty}^{+\infty}& \frac{w_s(1+t)+w_s(1-t)-2w_s(1)}{|t|^{1+2s}}\,dt\\
=&
\int_{-\infty}^{-1}\frac{(1-t)^s-2}{|t|^{1+2s}}\,dt
+
\int_{-1}^{1}\frac{(1+t)^s+(1-t)^s-2}{|t|^{1+2s}}\,dt
+
\int_{1}^{+\infty}\frac{(1+t)^s-2}{|t|^{1+2s}}\,dt
\\=&
2\int_{0}^{1}\frac{(1+t)^s+(1-t)^s-2}{t^{1+2s}}\,dt
+2\int_{1}^{+\infty}\frac{(1+t)^s-2}{t^{1+2s}}\,dt
\\=&
2\left[
\int_{0}^{1}\frac{(1+t)^s+(1-t)^s-2}{t^{1+2s}}\,dt
+\int_{1}^{+\infty}\frac{(1+t)^s}{t^{1+2s}}\,dt
-2\int_{1}^{+\infty}\frac{dt}{t^{1+2s}}
\right]
\\ =&
2\left[\frac1s-2\int_{1}^{+\infty}\frac{dt}{t^{1+2s}}\right],
\end{split}\end{equation*}
where Lemma~\ref{L1} was used in the last line.
Since
$$ \int_{1}^{+\infty}\frac{dt}{t^{1+2s}} = \frac1{2s},$$
we obtain that
$$ \int_{-\infty}^{+\infty}\frac{w_s(1+t)+w_s(1-t)-2w_s(1)}{|t|^{1+2s}}\,dt=0,$$
that proves the desired claim.
\end{proof}

The counterpart of Corollary~\ref{L2} is given by the following
simple observation:

\begin{lemma}\label{L3}
Let~$w_s$ be as in the statement of Theorem~\ref{G1}. Then
$$ -(-\Delta)^s w_s (-1) > 0.$$
\end{lemma}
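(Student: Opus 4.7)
The plan is to write out $-(-\Delta)^s w_s(-1)$ using the symmetric second-difference formulation \eqref{frlap2def}, namely
\begin{equation*}
-(-\Delta)^s w_s(-1) = \frac{C(1,s)}{2}\int_{\R}\frac{w_s(-1+t)+w_s(-1-t)-2w_s(-1)}{|t|^{1+2s}}\,dt,
\end{equation*}
and then exploit the extreme asymmetry that the function $w_s(x)=x_+^s$ has at the point $x=-1$, which lies strictly inside the region where $w_s\equiv 0$.

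First, I would observe that $w_s(-1)=0$ since $-1<0$, so the ``base point'' term disappears entirely and the integrand reduces to $w_s(-1+t)+w_s(-1-t)$ divided by $|t|^{1+2s}$. Next, I would note that $w_s\geq 0$ on all of $\R$ by definition, so the integrand is pointwise non-negative on $\R$. This is the key structural point: unlike the case $x=1$ treated in Corollary \ref{L2}, here there are no negative contributions to cancel the positive ones.

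Finally, I would show the integral is strictly positive (not merely non-negative) by exhibiting a subset of positive measure on which the integrand is strictly positive. For instance, for every $t>1$ one has $-1+t>0$, hence $w_s(-1+t)=(t-1)^s>0$, so
\begin{equation*}
\int_{\R}\frac{w_s(-1+t)+w_s(-1-t)}{|t|^{1+2s}}\,dt \;\ge\; \int_1^{+\infty}\frac{(t-1)^s}{t^{1+2s}}\,dt \;>\; 0,
\end{equation*}
and multiplying by the positive constant $C(1,s)/2$ yields the claim. One should of course check that the full integral converges, which follows from the growth $w_s(t)\sim|t|^s$ at infinity against the weight $|t|^{-1-2s}$ and from the $C^2$ regularity of $w_s$ away from the origin near $t=0$; but since there is no cancellation to exploit, no delicate computation is needed. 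There is essentially no obstacle here: the statement is really just a qualitative manifestation of the fact that $w_s$ ``feels'' the positive mass lying to the right of the origin even while being evaluated at a point in the region where it vanishes.
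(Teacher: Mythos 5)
Your proof is correct and takes essentially the same route as the paper: observe that $w_s(-1)=0$, so the second-difference in \eqref{frlap2def} reduces to $w_s(-1+t)+w_s(-1-t)\ge 0$, which is not identically zero, and conclude. You spell out the strict positivity a bit more explicitly by integrating over $t>1$, but the core idea is identical.
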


\begin{proof} We have that
$$ w_s(-1+t)+w_s(-1-t)-2w_s(-1)=(-1+t)_+^s+(-1-t)_+^s\ge0$$
and not identically zero, which implies the desired result.
\end{proof}

We have now all the elements to proceed to the proof of Theorem~\ref{G1}.
\begin{proof} [Proof of Theorem~\ref{G1}]

We let~$\sigma\in\{+1,-1\}$ denote the sign of a fixed $x\in\R\setminus\{0\}$.
We claim that
\begin{equation}\label{Si}\begin{split}
\int_{-\infty}^{+\infty}&
\frac{w_s(\sigma(1+t))+w_s(\sigma(1-t))-2w_s(\sigma)}{ |t|^{1+2s}}\,dt
\\=\;&\int_{-\infty}^{+\infty}
\frac{w_s(\sigma+t)+w_s(\sigma-t)-2w_s(\sigma)}{ |t|^{1+2s}}\,dt.
\end{split}\end{equation}
Indeed, the formula above is obvious when~$x>0$ (i.e. $\sigma=1$),
so we suppose~$x<0$ (i.e. $\sigma=-1$) and we change variable~$\tau:=-t$,
to see that, in this case,
\begin{equation*}
	\begin{split}
\int_{-\infty}^{+\infty}&
\frac{w_s(\sigma(1+t))+w_s(\sigma(1-t))-2w_s(\sigma)}{ |t|^{1+2s}}\,dt\\ =\; &
\int_{-\infty}^{+\infty}
\frac{w_s(-1-t)+w_s(-1+t)-2w_s(\sigma)}{ |t|^{1+2s}}\,dt \\
 =\; &\int_{-\infty}^{+\infty}
\frac{w_s(-1+\tau)+w_s(-1-\tau)-2w_s(\sigma)}{ |\tau|^{1+2s}}\,d\tau
\\=\; &\int_{-\infty}^{+\infty}
\frac{w_s(\sigma+\tau)+w_s(\sigma-\tau)-2w_s(\sigma)}{ |\tau|^{1+2s}}\,d\tau
,\end{split}
	\end{equation*}
thus checking~\eqref{Si}.

Now we observe that, for any~$r\in\R$,
$$ w_s(|x|r)=(|x|r)_+^s =|x|^s r_+^s =|x|^s w_s(r).$$
That is
$$ w_s(xr)=w_s(\sigma |x| r)=|x|^sw_s(\sigma r).$$ 
So we change variable~$y=tx$ and we obtain that
\begin{equation*}
	\begin{split}
 \int_{-\infty}^{+\infty}&
\frac{w_s(x+y)+w_s(x-y)-2w_s(x)}{|y|^{1+2s}}\,dy\\=\; &
\int_{-\infty}^{+\infty}
\frac{w_s(x(1+t))+w_s(x(1-t))-2w_s(x)}{|x|^{2s} |t|^{1+2s}}\,dt
\\=\; &|x|^{-s} \int_{-\infty}^{+\infty}
\frac{w_s(\sigma(1+t))+w_s(\sigma(1-t))-2w_s(\sigma)}{ |t|^{1+2s}}\,dt
\\=\; & |x|^{-s} \int_{-\infty}^{+\infty}
\frac{w_s(\sigma+t)+w_s(\sigma-t)-2w_s(\sigma)}{ |t|^{1+2s}}\,dt,
\end{split}
	\end{equation*}
where~\eqref{Si} was used in the last line.
This says that
$$ (-\Delta)^s w_s(x)
= \left\{\begin{matrix}
|x|^{-s} \,(-\Delta)^s w_s(-1)& {\mbox{ if $x<0$,}}\\
|x|^{-s}\,(-\Delta)^s w_s(1)&{\mbox{ if $x>0$,}}\end{matrix}
\right.$$
hence the result in Theorem~\ref{G1} follows from Corollary~\ref{L2}
and Lemma~\ref{L3}.
\end{proof}

\subsection {A function with constant fractional Laplacian on the ball}\label{ctfrlap}
In this subsection, we explicitly compute the fractional Laplacian of the function $ \mathcal U(x) = (1-|x|^2)_+^s$ in $B_1$. We have that
	\[ \frlap \mathcal U(x) = C(n,s)\frac{\omega_n}{2} \beta(s, 1-s) \quad \mbox{ any } x\in B_1,\]
	where $C(n,s)$ is given by \eqref{cnsgalattica} and $\beta$ is the special Beta function (see Appendix \ref{special} and references therein).
	Just to give an idea of how such computation can be obtained, with small modifications respect to \cite{DYDA,DYDA1} we go through the proof of this result. The reader can find the more general result, i.e. for $\mathcal U(x)= (1-|x|^2)_+^p$ for $p>-1$, in the above mentioned \cite{DYDA,DYDA1}. 	
	  
 Let us take $u \colon \R \to \R$ as $u(x) = (1-|x|^2)^s_+$. We consider the regional  fractional Laplacian restricted to $(-1,1)$ 
	\[\Lm u(x) := P.V. \int_{-1}^1 \frac{u(x)-u(y)}{|x-y|^{1+2s}}\, dy  \] and we compute its value at zero. By symmetry we have that
	\[	 		\Lm u(0) = 2 \lim_{\eee\to 0}\int_{\eee}^1 \frac{1-(1-y^2)^s}{y^{1+2s}}\, dy . 	\]
 Changing the variable $\omega= y^2$ and integrating by parts we get that
	\[
		\begin{split}
		\Lm u(0) =& \; 2 \lim_{\eee\to 0} \bigg( \int_{\eee}^1 y^{-1-2s}\, dy - \int_{\eee}^1 (1-y^2)^s y^{-1-2s}\, dy \bigg) \\
		 = & \; -\frac{1}{s} + \lim_{\eee\to 0} \bigg( \frac{\eee^{-2s}}{s}  - \int_{\eee^2}^1 (1-\omega)^s \omega^{-s-1}  \, d\omega\bigg) \\
	= & \; -\frac{1}{s} + \lim_{\eee\to 0} \bigg( \frac{\eee^{-2s}-\eee^{-2s} (1-\eee^2)^s}{s} + \int_{\eee^2}^1 \omega^{-s}(1-\omega)^{s-1}\, d\omega \bigg).
		\end{split} \] 
Using the integral representation of the Beta function (see Appendix \ref{special}, formula \eqref{betazerouno}) 
it yields that
	\[
		\Lm u(0) =\beta(1-s,s) -\frac{1}{s}.
	\]	
	For $x\in B_1$ we use the change of variables $ \omega=\frac{x-y}{1-xy}$. We obtain that
\begin{equation}
		\begin{split}\label{llpux}
		\Lm u(x) =  &\; P.V. \int_{-1}^1 \frac{ (1-x^2)^s -(1-y^2)^s}{|x-y|^{1+2s}} \, dy \\
			= &\; (1-x^2)^{-s} P.V. \int_{-1}^1 \frac{(1-\omega x)^{2s-1} -(1-\omega^2)^s(1-\omega x)^{-1}} {|\omega|^{2s+1}}\, d\omega\\
			= &\; (1-x^2)^{-s}P.V. \Bigg( \int_{-1}^1 \frac{1-(1-\omega^2)^s}{|\omega|^{2s+1}} \, d\omega + \int_{-1}^1 \frac{(1-\omega x)^{2s-1}-1}{|\omega|^{2s+1}} \, d\omega    \\
			&\;  +\int_{-1}^1 \frac{(1-\omega^2)^s \Big ( 1- (1-\omega x)^{-1} \Big)}{|\omega|^{2s+1}} \, d\omega  \Bigg) \\
		=&\; (1-x^2)^{-s}\left(  \Lm u(0) + J(x) +I(x) \right),
		\end{split}
	\end{equation}
where we have recognized the regional fractional Laplacian and denoted
	\[ \begin{split} J(x) := &\;P.V. \int_{-1}^1 \frac{(1-\omega x)^{2s-1}-1}{|\omega|^{2s+1}} \, d\omega \qquad \qquad \mbox{ and } \\
	 I(x) :=&\; \text{P.V.} \int_{-1}^1 \frac{1-(1-\omega x)^{-1} } {|\omega|^{2s+1}} (1-\omega^2)^s \, d\omega. \end{split} \]
In $J(x)$ we have that 
	\[ \begin{split}
		J(x)  =&\; P.V.\left( \int_{-1}^1 \frac{(1-\omega x)^{2s-1}}{|\omega|^{2s+1}}  \, d\omega - \int_{-1}^1 |\omega|^{-1-2s} \, d\omega \right)\\
	=&\;  \lim_{\eee \to 0}\bigg( \int_{\eee}^1\frac{(1+\omega x)^{2s-1} +(1-\omega x)^{2s-1} }{|\omega|^{2s+1}} \, d\omega -  2 \int_{\eee}^1 \omega^{-1-2s} \, d\omega \bigg) .\end{split}  \] With the change of variable $t=\displaystyle \frac{1}{\omega}$ we get that
	\begin{equation}\label{jxah} \begin{split}
		J(x)  =&\; \frac{1}{s} + \lim_{\eee \to 0}   \bigg(\int_1^{1/\eee} \Big[ (t+x)^{2s-1} + (t-x)^{2s-1} \Big] \, dt - \frac{\eee^{-2s} }{s}\bigg)\\
		=&\; \frac{1}{s}  - \frac{(1+x)^{2s} +(1-x)^{2s}}{2s} + \frac{1}{2s} \lim_{\eee \to 0}\frac{ (1+\eee x)^{2s} +(1-\eee x)^{2s}   -2}{\eee^{2s}}\\
		=&\; \frac{1}{s}  - \frac{(1+x)^{2s} +(1-x)^{2s}}{2s}.
		\end{split} \end{equation}
To compute $I(x)$, with a Taylor expansion of $ (1-\omega x)^{-1}$ at $0$ we have that  
	\[	I(x) =  \text{P.V.} \int_{-1}^1 \displaystyle \frac{-\sum_{k=1}^\infty (x\omega)^k } {|\omega|^{2s+1}} (1-\omega^2)^s \, d\omega.\]
The odd part of the sum vanishes by symmetry, and so
	\begin{equation*}
		\begin{split}
		I(x)=\;&  -2 \lim_{\eee\to 0} 	\int_{\eee}^1 \displaystyle \frac{\sum_{k=1}^{\infty} (x\omega)^{2k} } {\omega^{2s+1}} (1-\omega^2)^s \, d\omega\\
			=\;&- 2 \lim_{\eee\to 0} \displaystyle  \sum_{k=1}^{\infty} x^{2k}  \int_{\eee}^1 \omega^{2k-2s-1} (1-\omega^2)^s\, d\omega. \\
				\end{split}	
	\end{equation*}
We change the variable $t =\omega^2$ and integrate by parts to obtain
	\begin{equation*}
		\begin{split}
	  I(x)=	\;&  - \lim_{\eee\to 0}   \sum_{k=1}^{\infty} x^{2k}   \int_{\eee^2 }^1 t^{k-s-1} (1-t)^s\, dt,\\
	     = \;&  \sum_{k=1}^{\infty} x^{2k} \lim_{\eee\to 0}   \bigg[ \frac{ \eee^{2k-2s}(1-\eee^2)^s}{k-s} - \frac{s}{k-s}  \int_{\eee^2}^1 t^{k-s}(1-t)^{s-1} \, dt \bigg].
		\end{split}	
	\end{equation*}
For $k\geq 1$, the limit for $\eee$ that goes to zero is null, and using the integral representation of the Beta function, we have that
	\begin{equation*}	
			I(x) = \sum_{k=1}^{\infty} x^{2k} \frac{-s}{k-s} \beta(k+1-s,s) .
		\end{equation*}
We use the Pochhammer symbol defined as
		\begin{equation}\label{Pochh}
		(q)_k = \begin{cases}   
			1   &\mbox{ for } k = 0 ,\\
  		q(q+1) \cdots (q+k-1) &\mbox{ for } k > 0
		 \end{cases}
		\end{equation} and with some manipulations, we get
	\[	\begin{split}
			&\; \frac{-s}{k-s}  \beta(k+1-s,s) =   \frac{(-s) \Gamma(k+1-s) \Gamma(s)}{(k-s) \Gamma(k+1) }\\
			 = &\; \frac{(-s) \Gamma(k-s)\Gamma(s)}{k!}
			= \beta(1-s,s) \frac{(-s)_k}{k!}. 
		\end{split}\]	
And so
\[ I(x) =  \beta(1-s,s)  \sum_{k=1}^{\infty} x^{2k} \frac{(-s)_k}{k!}.\] By the definition of the hypergeometric function (see Appendix \ref{special}) we obtain 
 \[ \begin{split}
 I(x)  =&\; -\beta(1-s,s) + \beta(1-s,s) \sum_{k=0}^{\infty}  { (-s)_k} \frac{x^{2k}}{k!} \\
  = &\; \beta(1-s,s) \bigg(  F\Big(-s,\frac{1}{2},\frac{1}{2}, x^2\Big)- 1\bigg).\end{split}\]
Now, by 
\eqref{hypelc1} in the Appendix, we have that
\[ F\Big(-s,\frac{1}{2},\frac{1}{2}, x^2\Big) = (1-x^2)^s\]
and therefore
	\begin{equation*}  I(x)  =  \beta(1-s,s) \Big( (1-x^2)^s-1\Big).\end{equation*}
Inserting this and \eqref{jxah} into \eqref{llpux} we obtain
	\begin{equation} \label{onedimfin} \Lm u(x) = \beta(1-s,s) -(1-x^2)^{-s}   \frac{(1+x)^{2s} +(1-x)^{2s}}{2s}.\end{equation}
We write the fractional Laplacian of $u$ as
	\[\begin{split}
	\frac{\frlap u(x) }{C(1,s)} =&\; \Lm u(x) + \int_{-\infty}^{-1}\frac {u(x)}{|x-y|^{1+2s}} \, dy + \int_1^{\infty}\frac {u(x)}{|x-y|^{1+2s}} \, dy \\
							 =&\;\Lm u(x)+ (1-x^2)^s \bigg(  \int_{-\infty}^{-1} |x-y|^{-1-2s} \, dy +\int_1^{\infty}|x-y|^{-1-2s}  \, dy \bigg) \\
							 =&\; \Lm u(x) +  (1-x^2)^s \frac {(1+x)^{-2s} +(1-x)^{-2s}}{2s}.
	\end{split}\]		 	
Inserting \eqref{onedimfin} into the computation, we obtain		 	
\begin{equation} \label{infrl122}	\frlap u(x) =C(1,s) \beta(1-s,s).\end{equation}
To pass to the $n$-dimensional case, without loss of generality and up to rotations, we consider $x=(0,0,\dots, x_n)$ with $x_n\geq 0$. We change into polar coordinates $x-y=th$, with $h\in \partial B_1$ and $t\geq0$. We have that
	\begin{equation}\label{lapcaln1} \begin{split}
			\frac{\frlap \mathcal U (x)}{C(n,s)} =&\; P.V. \int_{\Rn} \frac{(1-|x|^2)^s - (1-|y|^2)^s}{|x-y|^{n+2s}} \, dy\\
					=&\;\frac{1}{2}\int_{\partial B_1} \bigg( P.V. \int_{\R}  \frac{(1-|x|^2)^s - (1-|x+ht|^2)^s}{|t|^{1+2s}} \,dt\bigg) \, d\mathcal{H}^{n-1} (h).	
	\end{split}
		\end{equation}
Changing the variable $t= -|x|h_n +\tau \sqrt{|h_nx|^2-|x|^2+1}$, we notice that \[1-|x+ht|^2 = (1-\tau^2)(1-|x|^2 +|h_nx|^2)\] and so 
	 \[\begin{split}
	 P.V. \int_{\R}  &\frac{(1-|x|^2)^s - (1-|x+ht|^2)^s}{|t|^{1+2s}}  \, dt\\
	  = &P.V. \int_{\R} \frac{(1-|x|^2)^s - (1-\tau^2)^s(|h_nx|^2-|x|^2+1)^s } {\Big| -|x|h_n +\tau \sqrt{|h_nx|^2-|x|^2+1}\Big|^{1+2s}}\sqrt{|h_nx|^2-|x|^2+1} \, d\tau \\
	 	=& P.V. \int_{\R}  \frac{ \bigg(1-\displaystyle \frac{|x|^2h_n^2}{|h_nx|^2-|x|^2+1} \bigg)^s- (1-\tau^2)^s}{\bigg|\tau -\displaystyle\frac{|x|h_n}{ \sqrt{|h_nx|^2-|x|^2+1}}\bigg| ^{1+2s}}\, d\tau \\
	 		=& \frac{\frlap u\bigg( \displaystyle\frac{|x|h_n}{ \sqrt{|h_nx|^2-|x|^2+1}}\bigg)}{C(1,s)}\\
	 		= &  \beta(1-s,s),
	 \end{split}		\]
where the last equality follows from identity \eqref{infrl122}. Hence from \eqref{lapcaln1} we have that \[  \frlap \mathcal U(x) = C(n,s) \beta(1-s,s)\frac{\omega_n}{2}.\] This concludes the proof of the result.
\subsection[All functions are locally $s$-harmonic up to a small error]{All functions are locally $s$-harmonic up to a small error } \label{afsh}

Here give a sketch of the proof that $s$-harmonic functions can locally approximate any given function, without any geometric constraints (the reader can see the paper \cite{DSV14} for further details and a complete proof).
This fact is rather surprising and it
is a purely nonlocal feature, in the sense that it has no classical
counterpart.
Indeed, in the classical setting, harmonic functions are quite rigid, for instance they cannot have a strict local maximum, and therefore cannot approximate a function with a strict local maximum. The nonlocal picture is, conversely, completely different, as the oscillation of a function ``from far'' can make the function locally harmonic, almost independently from its local behavior.

We want to give here some
hints on the proof of this approximation result:

\begin{theorem}\label{ALL FUNCTIONS}
Let $k\in \N$ be fixed. Then for any $f\in C^k(\overline{B_1})$ and any $\eee>0$ there exists $R>0$ and $u\in H^s(\Rn)\cap C^s(\Rn)$ such that
	\begin{equation}\label{cc1} \begin{cases} \frlap u(x)= 0 \quad &\mbox{ in }  B_1\\
					u=0 \quad &\mbox{ in }  \Rn \setminus  B_R	
	\end{cases}\end{equation}
and \[ \|f-u\|_{C^k(\overline{B_1})} \leq \eee.\]					
\end{theorem}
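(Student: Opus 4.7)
My plan would follow the approach of \cite{DSV14}, proceeding in three conceptual stages: polynomial approximation, a Taylor-jet matching lemma at the origin via the Poisson representation, and a final passage from pointwise jet matching to $C^k$ closeness on $\overline{B_1}$.

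First I would reduce to approximating polynomials. Since $C^\infty(\overline{B_1})$ is dense in $C^k(\overline{B_1})$ (via mollification) and polynomials are dense in $C^\infty(\overline{B_1})$ in every $C^k$-norm (via Taylor truncation, or by Stone--Weierstrass combined with mollification of the derivatives), it is enough to show that for every polynomial $P$ and every $\eee>0$ there exists an admissible $u$ satisfying (\ref{cc1}) with $\|u-P\|_{C^k(\overline{B_1})}<\eee$.

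The core ingredient is then the following \emph{jet-matching lemma}: for every $N\ge 0$ and every prescribed vector $(c_\alpha)_{|\alpha|\le N}\in\R^{M}$ there exist $R>1$ and $u$ satisfying (\ref{cc1}) with $D^\alpha u(0)=c_\alpha$ for all $|\alpha|\le N$. To prove it, view the assignment
\[ T\colon g\longmapsto \bigl(D^\alpha u_g(0)\bigr)_{|\alpha|\le N} \]
as a linear map from exterior data $g\in C_c(\Rn\setminus\overline{B_1})$ to $\R^M$, where $u_g$ is the Poisson extension $u_g(x)=\int_{\Rn\setminus\overline{B_1}} P_1(y,x)\,g(y)\,dy$ discussed in Chapter~3; the claim is the surjectivity of $T$. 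Argue by contradiction: if the image were a proper subspace then there would exist $(\lambda_\alpha)\ne 0$ with
\[ \int_{\Rn\setminus\overline{B_1}}\Bigl(\sum_{|\alpha|\le N}\lambda_\alpha\,\partial_x^\alpha P_1(y,0)\Bigr)\,g(y)\,dy=0\quad\text{for every admissible } g, \]
and the arbitrariness of $g$ would force $K(y):=\sum_\alpha\lambda_\alpha\,\partial_x^\alpha P_1(y,0)$ to vanish identically on $\Rn\setminus\overline{B_1}$. Expanding the explicit Poisson kernel in powers of $x$ and inspecting the asymptotics as $|y|\to\infty$, where each multi-index $\alpha$ contributes a pairwise distinct polynomial rate of decay in $1/|y|$, one then shows that $\{\partial_x^\alpha P_1(y,0)\}_{|\alpha|\le N}$ is linearly independent, yielding the desired contradiction.

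Given the jet-matching lemma, apply it with $c_\alpha:=D^\alpha P(0)$ and $N\gg k+\deg P$, producing $u$ that is $s$-harmonic in some $B_{R_0}$ with $R_0>1$ (so that the $s$-harmonicity domain properly contains $\overline{B_1}$) and compactly supported. Since $D^\alpha P\equiv 0$ beyond the degree of $P$, the difference $u-P$ has vanishing Taylor jet at the origin up to order $N$, and the interior real-analyticity estimates for $s$-harmonic functions---controlling $\|D^{N+1}u\|_{L^\infty(\overline{B_1})}$ geometrically in $N$ by $\|u\|_{L^\infty(\Rn)}$---combined with Taylor's theorem with remainder yield $\|u-P\|_{C^k(\overline{B_1})}<\eee$ once $N$ is taken sufficiently large; the membership $u\in H^s(\Rn)\cap C^s(\Rn)$ is automatic from the boundedness and compact support of the exterior data together with standard fractional-regularity theory. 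The main obstacle is unquestionably the surjectivity step, namely the rigorous verification that the Poisson-kernel derivatives $\partial_x^\alpha P_1(y,0)$ are linearly independent as functions of $y$ in the exterior of $B_1$: this is precisely where the nonlocal character is decisive, since the analogous statement fails for the classical Laplacian (whose Poisson kernel has a rigid derivative structure, in agreement with the fact that classical harmonic functions cannot approximate arbitrary smooth functions), so any valid argument must genuinely exploit the polynomial decay of the fractional Poisson kernel at infinity.
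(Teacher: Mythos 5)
Your Step~1 (reduce by Stone--Weierstrass and linearity to approximating polynomials, indeed monomials) matches the paper, and your derivative-matching lemma is in the same conceptual spirit as the paper's Step~2 (show that the jet map is surjective by arguing by contradiction that its image cannot lie in a proper subspace). However, the two places where you diverge from the paper are exactly where your proposal has genuine problems.

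First, your contradiction argument hinges on ``each multi-index $\alpha$ contributes a pairwise distinct polynomial rate of decay in $1/|y|$'' for $\partial_x^\alpha P_1(y,0)$. This is false as stated. Even in dimension $n=1$, expanding $P_1(y,x)=c(1,s)\bigl(\tfrac{1-x^2}{y^2-1}\bigr)^{s}|y-x|^{-1}$ about $x=0$, both $\partial_x^{0}P_1(y,0)$ and $\partial_x^{2}P_1(y,0)$ decay like $|y|^{-1-2s}$ (the derivative of $(1-x^2)^s$ dominates the derivative of $|y-x|^{-1}$ when the order is even), so a purely ``rate-of-decay'' argument cannot separate them. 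In $\R^n$ with $n\geq 2$ the situation is worse: all multi-indices of the same total order share the leading decay rate, and any viable argument would have to compare angular parts (e.g.\ via spherical harmonics), which you do not address. The paper sidesteps all of this by using the explicit $s$-harmonic function $x_+^s$: the derivatives of $x^s$ are $s(s-1)\cdots(s-j+1)x^{s-j}$, whose coefficients never vanish since $s\notin\Z$, and the linear combination $\sum_j c_j s(s-1)\cdots(s-j+1)x^{m-j}\equiv 0$ then forces all $c_j=0$ directly. That is both more concrete and avoids the decay-rate issue entirely.

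The more serious gap is in your closing step. You propose to match jets up to order $N$, then let $N\to\infty$ and control the Taylor remainder on $\overline{B_1}$ by interior real-analyticity estimates, ``$\|D^{N+1}u\|_{L^\infty(\overline{B_1})}$ geometrically in $N$ by $\|u\|_{L^\infty(\R^n)}$.'' This does not close. The analyticity estimate for $s$-harmonic functions on $\overline{B_1}\subset B_{R_0}$ gives $\|D^{N+1}u\|_{L^\infty(\overline{B_1})}\lesssim C^{N+1}(N+1)!\,\|u\|_\infty$, so the Taylor remainder of order $N$ at $0$ evaluated on $\overline{B_1}$ (radius $1$) is of size $\sim C^{N+1}\|u\|_\infty$, which does \emph{not} tend to zero unless $C<1$ --- and the analyticity constant $C$ is not less than $1$ in general. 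Moreover each $N$ produces a \emph{different} function $u=u_N$ from the surjectivity argument, and nothing bounds $\|u_N\|_\infty$ uniformly in $N$; it could grow wildly. The paper's proof fixes this by never letting the jet order go to infinity. It matches only $D^j u(0)=\delta_{jm}$ for $j\leq m$ (one nonzero derivative, the one for the monomial $x^m$), so that $u(x)=x^m+\mathcal O(x^{m+1})$ near $0$, and then performs the blow-up $u_\lambda(x):=u(\lambda x)/\lambda^m=x^m+\lambda\,\mathcal O(x^{m+1})$. The rescaling gains the crucial small factor $\lambda$ in the remainder (and enlarges the domain of $s$-harmonicity), and letting $\lambda\to 0$ gives $C^k$ closeness to $x^m$ on $\overline{B_1}$ with $m$ and $u$ fixed throughout. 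This blow-up argument is the key missing ingredient in your proposal, and there is no apparent way to avoid it by pushing $N$ to infinity.
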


\begin{proof}[Sketch of the proof]
For the sake of convenience, we divide
the proof into three steps. Also, for simplicity, we give the sketch of the proof in the one-dimensional case. See \cite{DSV14} for the entire and more general proof.\\
\textbf{Step 1. Reducing to monomials}\\
Let $k\in \N$ be fixed. We use first of all the Stone-Weierstrass Theorem and we have that for any $\eee>0$ and any $f \in C^k \big ([0,1]\big)$ there exists a polynomial $P$ such that
\[ \|f-P\|_{C^k{(\overline{B_1})}} \leq \eee.\]
Hence it is enough to prove Theorem~\ref{ALL FUNCTIONS}
for polynomials. Then, by linearity, it is enough to prove it
for monomials. Indeed, if~$P(x)= \displaystyle \sum_{m=0}^N c_m x^{m} $
and one finds
an $s$-harmonic function  $u_m$ such that  
	\[  \|u_m - x^{m} \|_{C^k{(\overline{B_1})}} \leq \frac{\eee}{|c_m|(N+1)}, \]
then by taking $u:=\displaystyle\sum_{m=0}^N c_m u_m $ we have that 
	\[ \| u-P\|_{C^k{(\overline{B_1})}} 
\leq \sum_{m=0}^N |c_m| \|u_m -x^{m} \|_{C^k{(\overline{B_1})}} \leq {\eee}.\] 
Notice that the function $u$ is still $s$-harmonic, since the fractional Laplacian is a linear operator. 
  \bigskip

\noindent \textbf{Step 2. Spanning the derivatives}\\ We prove
the existence of an $s$-harmonic function in $B_1$, vanishing outside a compact set and with arbitrarily large number of derivatives prescribed.
That is, we show that for any~$m\in\N$
there exist $R > r > 0$, a point~$x\in\R$ and a function~$u$
such that
\eqlab{ \label{SPA}
& (-\Delta)^s u=0 &{\mbox{ in }} &(x-r,x+r),\\
& u=0 &{\mbox{ in }} &\R \setminus (x-R,x+R),}
and 
\eqlab{ \label{spnd} & D^j u(x)=0 {\mbox{ for any }} j\in\{0,\dots,m-1\},\\
\;&
D^m u(x)=1.} 
To prove this,
we argue by contradiction.

We consider $\mathcal Z$ to be the set of all pairs $(u,x)$ of $s$-harmonic functions in a neighborhood of~$x$,
and points $x\in \R$ satisfying \eqref{SPA}.
 To any pair, we associate the vector \[\big(u(x), Du(x), \dots, D^m u(x) \big) \in \R^{m+1}\] and take $V$ to be the vector space 
spanned by this construction, i.e.
$$ V:= \Big\{
\big(u(x), Du(x), \dots, D^m u(x) \big),\;{\mbox{ for }}(u,x)\in {\mathcal Z}
\Big\}.$$
Notice indeed that
\begin{equation}\label{LIy}
{\mbox{$V$ is a linear space.}}\end{equation}
Indeed, let~$V_1$, $V_2\in V$ and~$a_1$, $a_2\in\R$.
Then, for any~$i\in\{1,2\}$,
we have that\[ V_i = \big(u_i(x_i), Du_i(x_i), \dots, D^m u_i(x_i) \big) \quad \mbox{ for some } \; (u_i,x_i)\in {\mathcal Z}, \]
 i.e.~$u_i$ is $s$-harmonic
in~$(x_i-r_i,x_i+r_i)$ and vanishes outside~$(x_i-R_i,x_i+R_i)$, for some~$R_i\ge r_i>0$.
We set
$$ u_3(x):= a_1 u_1(x+x_1)+a_2u_2(x+x_2).$$
By construction, $u_3$ is $s$-harmonic in~$(-r_3,r_3)$,
and it vanishes outside~$(-R_3,R_3)$,
with~$r_3:=\min\{r_1,r_2\}$
and~$R_3:=\max\{R_1,R_2\}$, therefore~$(u_3,0)\in {\mathcal Z}$.
Moreover
$$ D^j u_3(x)= a_1 D^j u_1(x+x_1)+a_2D^ju_2(x+x_2)$$
and thus
\begin{eqnarray*}&&
a_1 V_1+a_2 V_2 \\&=& a_1\big(u_1(x_1), Du_1(x_1), \dots, D^m u_1(x_1) \big)
+a_2 \big(u_2(x_2), Du_2(x_2), \dots, D^m u_2(x_2) \big)
\\ &=& \big(u_3(0), Du_3(0), \dots, D^m u_3(0) \big).
\end{eqnarray*}
This establishes~\eqref{LIy}.

Now, to complete the proof of Step 2, it is enough to show that
\begin{equation}\label{PaV}
V =\R^{m+1}.\end{equation}
Indeed, if~\eqref{PaV}
holds true, then in particular~$(0,\dots, 0,1)\in V$,
which is the desired claim in Step~2.

To prove~\eqref{PaV}, we argue by contradiction: if
not, by~\eqref{LIy}, we have that~$V$ is a proper subspace
of~$\R^{m+1}$ and so it lies in a hyperplane.

Hence there exists a vector $c=(c_0, \dots, c_{m}) 
\in\R^{m+1}\setminus \{0\}
$ such that
$$ V\subseteq \left\{ \zeta\in\R^{m+1} \; \big| \; c\cdot\zeta=0\right\}.$$
That is, taking a pair $(u,x)\in \mathcal Z$, we have that 
\[ \sum_{j \leq m } c_j D^{j} u(x) =0,\]
i.e. the vector~$c=(c_0, \dots, c_{m})$ is
orthogonal to any vector $\big(u(x), Du(x), \dots, D^m u(x) \big)$.
To find a contradiction, we now choose an appropriate
$s$-harmonic function~$u$
and we evaluate it at an appropriate point~$x$.
As a matter of fact,
a good candidate for the $s$-harmonic function is $x^s_+$,
as we know from Theorem~\ref{G1}:
strictly speaking, this function is not allowed
here, since it is not compactly supported,
but let us say that one can construct a compactly
supported $s$-harmonic function with the same 
behavior near the origin. With this slight caveat set aside,
we compute for a (possibly small)~$x$ in~$(0,1)$:
\[ D^{j} x^s=
s(s-1)\dots (s-j+1) x^{s-j} \] and multiplying the sum with $x^{m-s}$ (for $x\neq 0$) we have that 
	\[ \displaystyle \sum_{j\leq m} c_j  s(s-1)\dots (s-j+1) x^{m-j}  =0.\] But since $s\in (0,1)$ the product $ s(s-1)\dots (s-j+1)$ never vanishes. Hence the polynomial is identically null if and only if $c_j=0$ for any $j$, and we reach a contradiction.
This completes the proof of the existence of a function~$u$
that satisfies~\eqref{SPA} and~\eqref{spnd}.

\noindent \textbf{Step 3. Rescaling argument and completion of the proof}\\
By Step 2,
for any $m\in\N$ 
we are able to construct
a locally $s$-harmonic
function $u$ such that $u(x)=x^m +\mathcal{O}(x^{m+1})$ near the origin
(up to a translation).
By considering the blow-up 
\[ u_\lambda (x) = \frac{u(\lambda x) }{\lambda^m} = x^m +\lambda \mathcal{O}(x^{m+1})\]
 we have that for $\lambda$ small, $u_\lambda$ is arbitrarily close to the 
monomial $x^m$. As stated in Step 1, this concludes the proof
of Theorem~\ref{ALL FUNCTIONS}.
\end{proof}

It is worth pointing out that the flexibility
of $s$-harmonic functions given by Theorem~\ref{ALL FUNCTIONS}
may have concrete consequences.
For instance, as a byproduct of Theorem~\ref{ALL FUNCTIONS},
one has that a biological population with nonlocal dispersive attitudes
can better locally adapt to a given distribution of
resources (see e.g. Theorem~1.2 in~\cite{MASSACCESI}).
Namely, nonlocal biological species may efficiently use
distant resources and they can fit to the resources available nearby
by consuming them (almost) completely,
thus making more difficult for a different
competing species to come into place.

\section{Density of Caputo stationary functions in the space of smooth functions}\label{capdens}

The Caputo fractional derivative is a so-called nonlocal operator, that models long-range interactions. For instance, if we think of a function depending on time, the Caputo fractional derivative would represent a memory effect, pointing out that the state of a system at a given time depends on past events.  In other words, the Caputo derivative describes a causal system (also known as a non-anticipative system).   

This nonlocal character of the Caputo derivative gives rise to a peculiar behavior: on a bounded interval, say $[0,1]$, one can find a Caputo-stationary function ``close enough'' to any smooth function, without any geometrical constraints. This is a surprising result when one thinks of the rigidity of the classical derivatives. For instance, the functions with null first derivative are constant functions,  the functions with null second derivatives are affine functions. Such functions cannot approximate locally any given $C^k$ function, for any fixed $k\in \N_0$. We remark that this property of Caputo-stationary functions is in analogy to $s$-harmonic functions, as proved in Subsection \ref{afsh}.

Let $a \in \R$ and $s \in (0,1)$ be two arbitrary parameters. We define the functional space 
\eqlab{	 \label{ca1s} C_a^{1,s}  := \Big\{ f   \colon \R \to \R \: \big| \;\; \mbox{for any } x>a,  \;  f \in AC\big([a,x]\big) 
	\mbox{ and } \displaystyle & f'(\cdot){(x-\cdot)^{-s}} \in L^1\big( (a, x)\big)  \Big\} .}
We denote here by $AC(I)$ the space of absolutely continuous functions on $I$ and define the Caputo derivative.
\begin{defn}
The Caputo derivative of $u\in C_a^{1,s}$ with initial point $a\in \R$ at the point $x>a$ is given by
	\begin{equation} \label{caputo}
		D^s_a u(x):= \displaystyle \frac{1}{\Gamma(1-s)}\int_a^x u'(t)(x-t)^{-s}\, dt .
		\end{equation}
\end{defn}
 With this definition, we have that:
\begin{defn}
We say that $u\in C_a^{1,s}$  is Caputo-stationary with initial point $a\in \R$ at the point $x>a$   if
	\bgs{  \label{caph}
	 	D^s_a u(x)=0.} 
Let $I$ be an interval such that $a\leq \inf I$. We say that $u$ is Caputo-stationary with initial point $a$ in $I$ if $D_a^su(x)=0$ holds for any $x \in I$.
\end{defn}
For $k\in \N_{0}$, we consider $C^k\big([0,1]\big)$ to be the space of the $k$-times continuous differentiable functions on $[0,1]$, endowed with the $C^k$-norm 
\[ \|f\|_{C^k\lr{[0,1]}} =\sum_{i=0}^k \sup_{x\in [0,1]}|f^{(i)}(x)|.\] The main result that we prove here is that for any fixed $k \in \N_0$, given any $C^k\big([0,1]\big)$ function, there exists an initial point $a<0$ and a Caputo-stationary function with initial point $a$, that in $[0,1]$ is arbitrarily close (in the $C^k$ norm) to the given function. More precisely:

\begin{theorem}\label{thm:thm1}
Let $k\in \N_0$ and $s\in (0,1)$ be two arbitrary parameters. Then for any $f \in C^k\big([0,1]\big)$ and any $\varepsilon>0$ there exists an initial point $a<0$ and a function $u\in C^{1,s}_a $ such that 
	\[ D_a^s u(x)=0 \text{ in } [0,\infty) \]and 
	\[\| u-f\|_{C^k\lr{[0,1]}} < \varepsilon.\]	
\end{theorem}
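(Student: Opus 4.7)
The plan is to mirror the three-step strategy of Theorem \ref{ALL FUNCTIONS}, adapted to the asymmetric Caputo framework. By the Stone--Weierstrass theorem, every $f \in C^k([0,1])$ is uniformly approximable by polynomials, so by linearity of $D_a^s$ the problem reduces to approximating each monomial $x^m$ in $C^k$-norm by some Caputo-stationary function. A scaling argument then further reduces monomial approximation to the existence of Caputo-stationary functions with arbitrarily prescribed $m$-jet at a point: a direct change of variables shows the covariance $D_{a/\lambda}^s[u(\lambda\,\cdot)](x) = \lambda^s D_a^s u(\lambda x)$, so if $u$ is Caputo-stationary with initial point $a$ and satisfies $u^{(j)}(0) = \delta_{jm}\, m!$ for $0 \leq j \leq m$, then $u_\lambda(x) := \lambda^{-m} u(\lambda x)$ is Caputo-stationary on $[0, \infty)$ with initial point $a/\lambda$ and, by Taylor expansion, equals $x^m + O(\lambda)$ in $C^k([0,1])$; choosing $\lambda$ small enough gives the required approximation.

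The heart of the proof is therefore the spanning of the derivatives. For each $m \in \N_0$ I would set
\[ V := \mathrm{span}\bigl\{(u(x_0), u'(x_0), \ldots, u^{(m)}(x_0)) : u \in C_a^{1,s},\ D_a^s u \equiv 0 \text{ near }x_0\bigr\} \subseteq \R^{m+1}, \]
verify that $V$ is a linear subspace (linear combinations of two Caputo-stationary functions are Caputo-stationary, with initial point equal to the smaller of the two initial points), and argue by contradiction that $V = \R^{m+1}$. If not, a nonzero covector $(c_0, \ldots, c_m)$ annihilates $V$, giving $\sum_{j=0}^m c_j u^{(j)}(x_0) = 0$ for every admissible $(u, x_0)$. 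To refute this, I would exhibit a Caputo-stationary function whose behavior near a chosen base point $x_0$ is, modulo smooth terms, $(x - x_0)_+^s$; substitution yields
\[ \sum_{j=0}^m c_j\, s(s-1)\cdots(s-j+1)\, (x - x_0)^{s-j} = 0 \]
in a right neighborhood of $x_0$, and since $s \in (0,1)$ none of the factors $s(s-1)\cdots(s-j+1)$ vanishes while the powers $(x-x_0)^{s-j}$ are linearly independent, forcing every $c_j = 0$ — the desired contradiction.

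The hard part, and what the introduction identifies as the technical novelty, is constructing this $x_+^s$-like profile in the Caputo setting. Writing the equation $D_a^s u(x) = 0$ for $x \geq 0$ as
\[ \int_0^x u'(t)(x-t)^{-s}\, dt \,=\, -\int_a^0 u'(t)(x-t)^{-s}\, dt, \]
the right-hand side is entirely determined by the freely chosen ``initial'' data $u|_{[a,0]}$, while the left-hand side is an Abel integral equation of the first kind for $u'$ on $[0, \infty)$. The strategy is to choose the initial data (depending on a parameter) so that the Abel inversion produces a solution with leading-order behavior $x^s$ at the origin, with $u' \in L^1\bigl((a,x); (x-\cdot)^{-s}\bigr)$ for every $x > 0$ so that $u \in C_a^{1,s}$; passing to a limit one then obtains a sequence of Caputo-stationary functions converging uniformly on compact sets to $x_+^s$. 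Controlling the inversion of the Abel operator and the asymptotics of $u$ both at $0^+$ and at $\infty$ — complicated precisely by the lack of symmetry that made the $s$-harmonic argument of \cite{DSV14} smoother — is the main obstacle to overcome in completing the proof.
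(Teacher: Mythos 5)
Your outline reproduces the paper's proof architecture step for step: Stone--Weierstrass to reduce to monomials, scaling/translation covariance of $D_a^s$ to reduce to prescribing an $m$-jet (the covariance identity you compute is the same one the paper uses inside Lemma \ref{ls1}), a spanning argument in $\R^{m+1}$ by contradiction, and the recognition that the contradiction hinges on producing Caputo-stationary functions that imitate $x_+^s$ — itself obtained by converting $D_a^s u = 0$ on $(b,\infty)$ into an Abel equation for $u'$ and inverting (the paper's Lemma \ref{lem:int11} and Theorem \ref{thm:probc}). So this is essentially the same route.

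The one place the argument as written has a gap is the contradiction step. You say you would ``exhibit a Caputo-stationary function whose behavior near $x_0$ is, modulo smooth terms, $(x-x_0)_+^s$'' and then substitute to get $\sum_j c_j\, s(s-1)\cdots(s-j+1)(x-x_0)^{s-j}=0$. But no single Caputo-stationary function has that exact profile: interior regularity (Lemma \ref{intreg}) forces any function that is Caputo-stationary in an open neighborhood of a point to be $C^\infty$ there, and if $x_0$ is instead the left endpoint of the domain of stationarity the jet $u^{(j)}(x_0)$ does not even exist. What is true is what you state in the next paragraph: there is a \emph{sequence} $v_j$ of genuine Caputo-stationary functions, each $C^\infty$ on $(0,\infty)$, with $v_j\to\kappa x^s$ locally uniformly. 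The paper (Theorem \ref{thm4}) then obtains the identity $\sum_i c_i\kappa(x^s)^{(i)}=0$ not by substitution but by integrating $\sum_i c_i v_j^{(i)}(x)=0$ against a test function $\varphi\in C_c^\infty((0,\infty))$, integrating by parts, passing $j\to\infty$ using the uniform convergence, and integrating by parts back — a distributional limit, not a pointwise plug-in. Your sketch has all the ingredients for this fix but elides it, and since the substituted function is not in the admissible class $\mathcal Z$, the elision is not merely cosmetic. The rest — the need for an Abel inversion with controlled $x^s$ asymptotics at the origin and a blow-up to build the sequence — you flag honestly as not carried out, and it does constitute the bulk of the paper's work (Lemmas \ref{caplem1} and \ref{ls1}).
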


\bigskip
In the next lines we recall some notions and make some preliminary remarks on the Caputo derivative.
\smallskip

The reader can see Chapter 7.5 in \cite{zygmund} for the definition of absolutely continuous functions. In particular, we use the following characterization, given in Theorem 7.29 in \cite{zygmund}, that we recall in the next Theorem.
\begin{theorem}\label{acrep} A function $f$ is absolutely continuous in $ [a,b]$ if and only if $f'$ exists almost everywhere in $[a,b]$, $f'$ is integrable on $[a,b]$ and
\bgs{ f(x)-f(a)=\int_a^x f'(t)\, dt, \quad a\leq x\leq b.}
\end{theorem}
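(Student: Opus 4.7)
The plan is to prove the two implications separately, with the ``if'' direction being a routine application of the absolute continuity of the Lebesgue integral and the ``only if'' direction relying on Lebesgue's differentiation theorem for monotone functions together with a Vitali covering argument.

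For the ``if'' direction, I would assume that $f'$ exists a.e.\ in $[a,b]$, is integrable, and satisfies $f(x)-f(a)=\int_a^x f'(t)\, dt$. Since $f'\in L^1([a,b])$, the absolute continuity of the Lebesgue integral gives, for every $\varepsilon>0$, some $\delta>0$ such that $\int_E |f'(t)|\, dt<\varepsilon$ whenever $E\subseteq[a,b]$ is measurable with $|E|<\delta$. Then for any finite collection of pairwise disjoint subintervals $(\alpha_i,\beta_i)\subset[a,b]$ with $\sum_i(\beta_i-\alpha_i)<\delta$, setting $E=\bigcup_i(\alpha_i,\beta_i)$,
\[
\sum_i |f(\beta_i)-f(\alpha_i)|=\sum_i\left|\int_{\alpha_i}^{\beta_i} f'(t)\, dt\right|\le \int_E |f'(t)|\, dt<\varepsilon,
\]
which is precisely the definition of absolute continuity.

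For the ``only if'' direction, I would proceed in three steps. First, I would show that any $f\in AC([a,b])$ has bounded variation: choose $\delta$ as in the AC definition with $\varepsilon=1$, partition $[a,b]$ into finitely many subintervals of length less than $\delta$, and observe that the variation over each is at most $1$. By Jordan's decomposition, $f$ is then a difference of two nondecreasing absolutely continuous functions, so by Lebesgue's theorem on differentiation of monotone functions, $f'$ exists a.e.\ and is integrable, with $\int_a^b |f'|\le V_a^b(f)$.

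Second, I would consider the auxiliary function $g(x):=f(x)-\int_a^x f'(t)\, dt$. By the ``if'' direction applied to $\int_a^x f'(t)\,dt$ (which is AC by the argument above), $g$ is AC on $[a,b]$; moreover, by the Lebesgue differentiation theorem, $g'(x)=f'(x)-f'(x)=0$ for a.e.\ $x\in[a,b]$. The key remaining step, which I expect to be the main obstacle, is to show that an absolutely continuous function with a.e.\ vanishing derivative must be constant. I would prove this via a Vitali covering argument: fix $x\in(a,b]$ and $\varepsilon>0$, choose the corresponding $\delta$ from absolute continuity of $g$, and cover the full-measure set $\{t\in[a,x]:g'(t)=0\}$ by a Vitali family of intervals $[t,t+h]\subset[a,x]$ on which $|g(t+h)-g(t)|<\varepsilon h/(b-a)$. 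Extract a finite disjoint subcollection whose union leaves out a set of measure less than $\delta$; the contribution of the subcollection to $|g(x)-g(a)|$ is at most $\varepsilon$, and the contribution of the exceptional complement is at most $\varepsilon$ by absolute continuity. Letting $\varepsilon\to 0$ yields $g(x)=g(a)=f(a)$, which is exactly the desired identity $f(x)-f(a)=\int_a^x f'(t)\, dt$.
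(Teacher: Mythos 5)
The paper does not actually prove this statement: it is recalled verbatim as Theorem 7.29 of the cited reference (Wheeden--Zygmund), so there is no in-paper argument to compare yours against. Your proof is the standard, complete one and all three steps are sound: the ``if'' direction via absolute continuity of the Lebesgue integral; the passage AC $\Rightarrow$ BV $\Rightarrow$ $f'$ exists a.e.\ and is integrable via Jordan decomposition and Lebesgue's monotone differentiation theorem; and the reduction to showing that an AC function with a.e.\ vanishing derivative is constant, handled correctly by the Vitali covering argument (finite disjoint subfamily controls the ``good'' part by the pointwise estimate $|g(t+h)-g(t)|<\varepsilon h/(b-a)$, and the leftover set of measure $<\delta$ is a finite union of intervals controlled by the AC modulus). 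The only cosmetic remark is that in Step 1 you do not need the monotone pieces of the Jordan decomposition to be themselves AC — bounded variation alone already yields a.e.\ differentiability and $\int_a^b|f'|\le V_a^b(f)$ — though the stronger claim is true and harmless.
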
 
  
By convention, when we take the Caputo derivative $D_a^s$ of a function, we assume that the function is ``causal'', i.e. that it is constant on $(-\infty,a)$. In particular, we take $u(x)=u(a)$ for any $x<a$ and this, by the definition \eqref{caputo}, implies that $D_a^s u(x) =0$ for $x<a$. 


Moreover, we notice that if, for instance $u\in C^2(\R)$, then
\[ \lim_{s\to 0^+} D_a^s u(x)=u(x)-u(a), \qquad \lim_{s\to 1^{-}}D_a^s u(x)=u'(x).\] 
Indeed, 
\[ |u'(t) (x-t)^{-s} |\leq  |u'(t)|\chi_{[a,x-1]}(t) + |u'(t)|(x-t)^{-1/2}\chi_{[x-1,x]}(t)\in L^1\left([a,x]\right),\] and using the Dominated Convergence Theorem  we get that
\[  \lim_{s\to 0^+} D_a^s u(x)= \lim_{s\to 0^+} \frac{1}{\Gamma(1-s)}\int_a^x u'(t)(x-t)^{-s}\, dt = u(x)-u(a).\]
Also, integrating by parts and using \eqref{gamxx1} we get that
\[ D_a^s u(x) = \frac{u'(a)(x-a)^{1-s}}{\Gamma(2-s)} + \frac{1}{\Gamma(2-s)} \int_a^x u''(t)(x-t)^{1-s}\, dt.\] 
Since
\[ |u''(t) (x-t)^{1-s} |\leq  |u''(t)|\chi_{[a,x-1]}(t)(x-t) + |u''(t)|\chi_{[x-1,x]}(t)\in L^1\left([a,x]\right),\] using the Dominated Convergence Theorem  we obtain that
\[ \lim_{s \to 1^-} D_a^su(x)=u'(x).\]

The proof of Theorem \ref{thm:thm1} follows the steps of the sketch of the proof introduced in Subsection \ref{afsh} for the fractional Laplacian. Here, we give a complete proof of the statement, taking into account the structure of the Caputo derivative. As a matter of fact, the main idea of the proof is (as for the fractional Laplacian) that one can build a Caputo-stationary function in say $I = [0,1]$ by choosing a ``good'' given function as exterior datum. But while the fractional Laplacian takes into account the entire space and the exterior datum is $ \C I$,  the Caputo derivative considers only the left-side complement and this reflects in the lack of symmetry of these exterior conditions. Namely, the exterior datum is $(-\infty, 0]$, adding the convention that events start at a given point, say $t_0<0$ and $f$ is constant before time $t_0$. This structure has to be accounted for when proving Theorem \ref{thm:thm1}. 
 
We reduce the proof of Theorem \ref{thm:thm1} to finding a Caputo-stationary function close to any monomial. 
 For this, we follow Step 1 of the sketch of the proof of Theorem \ref{ALL FUNCTIONS}, using the Stone Weierstrass Theorem and the linearity of the Caputo derivative.
In the rest of the section,
we proceed as follows: 
In Subsection \ref{sectrfcp}, we obtain a representation formula for $u$, when $D_a^s u(x)= 0 $ in $(b,\infty)$ for a given $b>a$ and having prescribed $u$ on $(-\infty,b]$.
In Subsection \ref{sectaps}, we prove that there exists a sequence $(v_j)_{j\in \N}$ of Caputo-stationary functions in $(0,\infty)$ such that, uniformly on bounded  subintervals of $(0,\infty)$, we have that $\lim_{j\to \infty} v_j(x) = \kappa x^s$, for a suitable constant $\kappa>0$.
Then, in Subsection \ref{sectma} we prove that there exists a Caputo-stationary function with an arbitrarily large number of derivatives prescribed. 
and the last Subsection \ref{sectthm1} deals with the proof of Theorem \ref{thm:thm1}. 

\subsection{A representation formula}\label{sectrfcp}
We deduce here a Poisson-like representation formula for a function $u$  that is Caputo-stationary with initial point $a$ in the interval $(b,\infty)$ for $b>a$, and fixed outside. To do this, we prove that if $u\in  C_a^{1,s}$, the two following problems are equivalent

\[ \begin{matrix}  
	\displaystyle D_a^s u(x) =  0 &\text{ in } & (b,\infty),  & & & &    &\displaystyle \int_b^x u'(t)(x-t)^{-s}\, dt = g(x)  &\text{ in }     & (b,\infty), \\ 
	\mbox{  prescribed data }      &\text{ in } & (-\infty,b], & & & &    &\mbox{  prescribed data }  					 &\text{ in }   & (-\infty,b]. 
\end{matrix}\]
%
\noindent Moreover, we present here an interior regularity result.
	\begin{center}
\begin{figure}[htpb]
	\hspace{0.6cm}
	\begin{minipage}[b]{0.85\linewidth}
	\centering
	\includegraphics[width=0.90\textwidth]{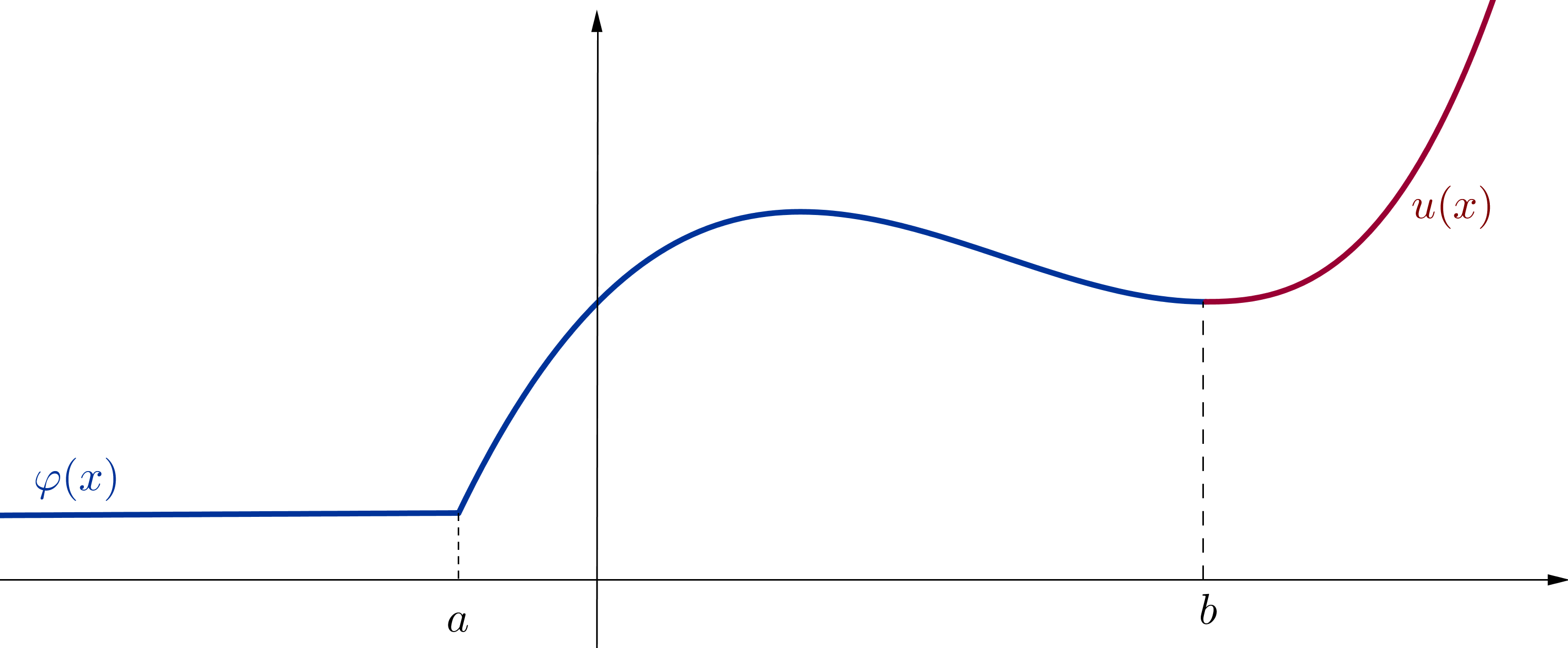}
	\caption{A Caputo-stationary function in $(b,\infty)$ prescribed on $(-\infty,b]$}   
	\label{fign:Lem31}
	\end{minipage}
\end{figure} 
\end{center} 

In this subsection, we fix the arbitrary parameters $a,b \in \R$ with $b>a$ and $s\in(0,1)$. 
\begin{lemma}\label{lem:int11}
Let $\varphi \in C\big((-\infty,b] \big)\cap C^1\big([a,b]\big)$ such that $\varphi(x) = \varphi(a)$ in $(-\infty,a]$. Then $u\in  C_a^{1,s}$  satisfies the equation
	\bgs{ \label{intr1}
			D_a^s u(x)& =0 &\text{ in } & (b,\infty),\\
			u(x)&=\varphi(x) &\text{ in }  & (-\infty,b]}
if and only if it satisfies 
	\bgs{ \label{intr2}
		 \int_b^x u'(t)(x-t)^{-s} \, dt   &=- \int_{a}^b \varphi'(t)(x-t)^{-s}\, dt & \text{ in }& (b,\infty),\\
		u(x)&=\varphi(x)&\text{ in } & (-\infty,b]. }
\end{lemma}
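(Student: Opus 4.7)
The plan is to prove the equivalence by a direct manipulation of the defining integral, splitting it at the transition point $b$ between the prescribed exterior data and the unknown. Starting from the definition \eqref{caputo}, for any $x>b$ one has
\[ \Gamma(1-s)\, D_a^s u(x) = \int_a^x u'(t)(x-t)^{-s}\,dt = \int_a^b u'(t)(x-t)^{-s}\,dt + \int_b^x u'(t)(x-t)^{-s}\,dt. \]
The first step is to argue that on $[a,b]$ one may replace $u'$ by $\varphi'$. Since $u \in C_a^{1,s}$, the function $u$ is absolutely continuous on $[a,b]$, and by hypothesis $u = \varphi$ pointwise on $(-\infty,b]$; since $\varphi \in C^1([a,b])$, Theorem \ref{acrep} applied to the two representations of $u$ on $[a,b]$ gives $u'(t) = \varphi'(t)$ for a.e.\ $t \in [a,b]$.

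The next step is to verify that both integrals on the right-hand side are finite so that the splitting and the resulting equation make sense. The integrability of $u'(\cdot)(x-\cdot)^{-s}$ on $(a,x)$ is part of the definition of $C_a^{1,s}$, hence it holds on the sub-intervals $(a,b)$ and $(b,x)$ as well. For the prescribed side, $\varphi'$ is continuous on the compact interval $[a,b]$, hence bounded, and $(x-t)^{-s}$ is bounded above by $(x-b)^{-s}$ for $t \in [a,b]$ and $x>b$, so $\int_a^b \varphi'(t)(x-t)^{-s}\,dt$ converges absolutely. Dividing by $\Gamma(1-s) > 0$, the equation $D_a^s u(x) = 0$ in $(b,\infty)$ is then equivalent to
\[ \int_b^x u'(t)(x-t)^{-s}\,dt = -\int_a^b \varphi'(t)(x-t)^{-s}\,dt \qquad \text{in } (b,\infty), \]
which, together with the unchanged condition $u = \varphi$ on $(-\infty,b]$, is exactly the alternative formulation. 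Since every step is a reversible equality, both directions of the equivalence are established at once.

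There is no substantial obstacle here: the argument is essentially bookkeeping, and the only point that requires a small amount of care is the identification $u' = \varphi'$ a.e.\ on $[a,b]$, which relies on the absolute continuity assumption built into $C_a^{1,s}$ together with the $C^1$ regularity of $\varphi$ on $[a,b]$. I would not expect to need the hypothesis $\varphi(x)=\varphi(a)$ on $(-\infty,a]$ for this particular lemma, since the integral defining $D_a^s u(x)$ only involves the interval $(a,x)$; that convention is relevant only to fix the causal extension of $\varphi$ (and of $u$) to the left of $a$ for later use.
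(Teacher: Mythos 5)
Your proof is correct and follows essentially the same approach as the paper: split the defining integral at $b$, replace $u'$ by $\varphi'$ on $[a,b]$, check that the prescribed-data integral is finite, and divide by $\Gamma(1-s)$. The only differences are cosmetic: you make the a.e.\ identification $u'=\varphi'$ on $[a,b]$ explicit (the paper takes it for granted since $u=\varphi$ there), and your closing observation that the convention $\varphi(x)=\varphi(a)$ on $(-\infty,a]$ plays no role in this lemma is accurate.
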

The reader can see a qualitative graphic of a function described by Lemma \ref{lem:int11} in Figure \ref{fign:Lem31}. An explicit example of such a function is build in Example \ref{exam}, in Figure \ref{fign:es1}.
\begin{proof} Since $\varphi \in C^1\big([a,b]\big)$ we have for any $x\geq b$  
	\bgs{\bigg |\int_a^{b} \varphi'(t)(x-t)^{-s} \, dt \bigg |\leq  \sup_{t\in [a,b]} |\varphi'(t)| \frac{ (x-a)^{1-s}-(x-b)^{1-s}}{1-s}<\infty.}
Hence the map $x\mapsto\displaystyle  \int_a^{b} \varphi'(t)(x-t)^{-s} \, dt $ is well defined in $[b,\infty)$. Using the definition \eqref{caputo} for $x >b$ we have that
	\begin{equation*}
		\begin{split}
		\Gamma(1-s) D_a^s u(x)
		= \al  \int_b^x u'(t)(x-t)^{-s} \, dt + \int_a^{b}  u'(t)(x-t)^{-s} \, dt \\
		=\al  \int_b^x u'(t)(x-t)^{-s} \, dt + \int_a^{b} \varphi'(t)(x-t)^{-s} \, dt  .
		\end{split}
	\end{equation*}
It follows that $D_a^s u(x)=0$ on $(b,\infty)$ is equivalent to 
		\bgs{ 
		 \int_b^x u'(t)(x-t)^{-s} \, dt   =- \int_a^{b} \varphi'(t)(x-t)^{-s} \, dt  \quad \text{ in } (b,\infty).	}
 This concludes the proof of the Lemma. 
\end{proof}

In the following Theorem we introduce a representation formula for an integro-differential equation.
\begin{theorem}
\label{thm:probc}
Let $g \in C_b^{1,1-s}$. The problem 
\eqlab{ \label{probc1}
			\int_b^x u'(t)(x-t)^{-s} \,dt  & = g(x) \quad \mbox{ in } (b,\infty),\\
			 u(b)  &=  0 } admits on $[b,\infty)$ a unique solution $u\in C_b^{1,s}$. Moreover, for any $x>b$,
			\begin{equation} \label{solc1} 
	u(x)= \ig \int_b^x g(t)(x-t)^{s-1} \, dt .
	\end{equation}
\end{theorem}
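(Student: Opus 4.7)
The plan is to recognize this as an Abel integral equation of the first kind for $u'$, and to invert it by the classical trick of composing with a dual fractional kernel. Concretely, I would first establish \emph{uniqueness plus the formula} by the following computation: assume $u\in C_b^{1,s}$ solves the equation, fix $y>b$, multiply both sides of the integral equation by $(y-x)^{s-1}$ and integrate $x$ over $[b,y]$. Since $u'(t)(x-t)^{-s}\in L^1(b,x)$ for each $x$ and $(y-x)^{s-1}$ is integrable, Fubini gives
\[
\int_b^y g(x)(y-x)^{s-1}\,dx=\int_b^y u'(t)\left(\int_t^y (y-x)^{s-1}(x-t)^{-s}\,dx\right)dt.
\]
The inner integral, via the substitution $x=t+(y-t)\tau$, collapses to the Beta integral
\[
\int_0^1 (1-\tau)^{s-1}\tau^{-s}\,d\tau=B(s,1-s)=\Gamma(s)\Gamma(1-s)=\frac{\pi}{\sin\pi s}.
\]
Using $u(b)=0$, the right-hand side simplifies to $\frac{\pi}{\sin\pi s}\,u(y)$, which yields the representation formula \eqref{solc1} and, in particular, uniqueness.

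Next I would verify \emph{existence}, i.e., that the $u$ defined by \eqref{solc1} actually lies in $C_b^{1,s}$ and satisfies the equation. To compute $u'$ cleanly, I would first integrate by parts in \eqref{solc1}, using $g\in C_b^{1,1-s}$, to rewrite
\[
u(x)=\frac{\sin\pi s}{\pi s}\Bigl[g(b)(x-b)^s+\int_b^x g'(t)(x-t)^s\,dt\Bigr],
\]
differentiate (the singular factor becomes $(x-t)^{s-1}$, which is locally integrable for $s\in(0,1)$), and obtain
\[
u'(x)=\frac{\sin\pi s}{\pi}\Bigl[g(b)(x-b)^{s-1}+\int_b^x g'(t)(x-t)^{s-1}\,dt\Bigr].
\]
From this explicit expression I can read off that $u'(\cdot)(x-\cdot)^{-s}\in L^1(b,x)$ for each $x$, hence $u\in C_b^{1,s}$.

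Finally I would substitute this $u'$ into $\int_b^x u'(\xi)(x-\xi)^{-s}\,d\xi$ and again apply Fubini and the Beta identity in the reverse direction:
\[
\int_b^x(\xi-b)^{s-1}(x-\xi)^{-s}\,d\xi=\frac{\pi}{\sin\pi s},\qquad \int_t^x(\xi-t)^{s-1}(x-\xi)^{-s}\,d\xi=\frac{\pi}{\sin\pi s}.
\]
The first term then contributes $g(b)$, while the double integral involving $g'$ collapses to $g(x)-g(b)$ via the fundamental theorem of calculus, so the two pieces sum to $g(x)$ as required.

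The main obstacle is a careful justification of the two Fubini swaps and of the differentiation under the integral sign, since both the kernel $(x-t)^{-s}$ and the inverted kernel $(x-t)^{s-1}$ are singular at the endpoint. I would control this by splitting the $\xi$-integral near the endpoint $x$ (where $(x-\xi)^{-s}$ is the only singularity and $(\xi-t)^{s-1}$ is bounded) and near $t$ (where the roles are reversed), using the hypothesis $g\in C_b^{1,1-s}$ to bound $g'(t)(x-t)^{s-1}$ and $g(b)(x-b)^{s-1}$ in $L^1_{\mathrm{loc}}$. Once this bookkeeping is in place, all swaps are absolutely convergent and the Beta identity finishes the argument.
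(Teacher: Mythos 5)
Your plan is correct and is built from the same two ingredients as the paper's proof: the Beta identity $\int_t^y(y-x)^{s-1}(x-t)^{-s}\,dx=\pi/\sin(\pi s)$ and Tonelli/Fubini to justify the iterated-integral swaps. The organization differs modestly: you apply the dual-kernel trick directly to the original equation, extracting the representation formula and uniqueness in one sweep, whereas the paper first verifies that the candidate \eqref{solc1} lies in $C_b^{1,s}$ and solves the problem, and proves uniqueness afterwards by running the same calculation on the difference of two solutions; both orders work. The place where your plan needs the most care is the derivation of $u'$: after integrating by parts you propose to differentiate $\int_b^x g'(t)(x-t)^s\,dt$ under the integral sign, but $g'$ is merely $L^1_{\mathrm{loc}}$, so the classical theorem for differentiating a moving-endpoint parameter integral does not apply as stated. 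The paper sidesteps this by using $g\in AC$ together with Fubini to rewrite $u(x)$ as $\frac{\sin\pi s}{\pi}\int_b^x\bigl(g(b)(y-b)^{s-1}+\int_b^y g'(\tau)(y-\tau)^{s-1}\,d\tau\bigr)\,dy$, from which absolute continuity of $u$ and the a.e.\ formula for $u'$ drop out at once; that Fubini-to-AC rewriting is precisely the rigorous content your ``differentiate under the integral'' shortcut is hiding, so the ``careful justification'' you flag at the end would lead you to essentially the paper's argument. The final verification --- substituting the formula for $u'$ back into the Abel operator and invoking Fubini and the Beta identity again --- matches the paper's exactly.
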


\begin{proof}
We prove this theorem by showing that $u$ given in \eqref{solc1} is well defined, belongs to the space $C_b^{1,s}$ and is the unique solution of the problem \eqref{probc1}.
 
Since $g$ belongs to $C_b^{1,1-s}$ (recall \eqref{ca1s}), for any $x> b$ we have that
	\bgs{|u(x)| \leq\ig  \int_b^x |g(t)| (x-t)^{s-1} \, dt  \leq c_s\sup_{ t \in [b,x]}	|g(t)|  (x-b)^{s} <\infty,} where $c_s$ is a positive constant. Hence the definition \eqref{solc1} is well posed.  

\bigskip 
We prove that $u$ belongs to $ C_b^{1,s}$.
We claim that 
 \eqlab{ \label{cbsu1}  g\in C_b^{1,1-s} & \mbox{ and } u \mbox{ as in } \eqref{solc1} \Rightarrow  \\
 & u \in AC\big([b,\infty)\big) \mbox{ and} \\
  & u'(y)=\frac{\sin \pi s} {\pi}  \lr{ \int_b^y g'(\tau)(y-\tau)^{s-1}\, d\tau + g(b)(y-b)^{s-1}} \quad  \mbox{   a.e. in  } [b,\infty). }
\noindent We fix an arbitrary $x>b$. According to definition \eqref{ca1s},  we have $g \in AC\big( [b,x]\big) $ and thanks to Theorem \ref{acrep} it follows that
for any $t\in [b,x]$
	\[ g(t)=\int_b^t g'(\tau)\, d\tau + g(b).\]
And so in \eqref{solc1} we have that
	\eqlab{ \label{bla2}   \frac{\pi}{\sin \pi s} \, u(x)
		= \al \int_b^x \lr{ \int_b^t g'(\tau)\, d\tau } (x-t)^{s-1}\, dt + g(b) \int_b^x (x-t)^{s-1}\,dt .}
We compute
	\eqlab {\label{bla1}  \int_b^x (x-t)^{s-1} \, dt = \frac{(x-b)^s}{s} = \int_b^x (y-b)^{s-1} \, dy.}
Tonelli theorem applied to the positive measurable function $|g'(\tau)|(x-t)^{s-1}$ on the domain 
\eqlab{ \label{rev2}D_{b,x}:=\big\{ (t,\tau) \: \big| \;\;  b\leq t\leq x, \;b\leq \tau\leq t\big\}} 
with the product measure $d(t,\tau)$ gives 
	\eqlab{\label{rev1}  
		\al \iint_{D_{b,x}} |g'(\tau)|\,   (x-t)^{s-1} \, d (t, \tau) =    \int_b^x |g'(\tau)| \lr{  \int_\tau^x (x-t)^{s-1} \, dt }\,d\tau  \\ 
	 	=\al \frac{1}{s}\int_b^x |g'(\tau)| (x-\tau)^s \, d\tau \leq  \frac{(x-b)^s}{s}  \|g'\|_{L^1\lr{(b,x)}}, }
which is a finite quantity. Hence $|g'(\tau)| (x-\tau)^{s-1} \in L^1\big(D_{b,x}, d(t,\tau)\big)$ and by Fubini theorem \textcolor{black}{and using \eqref{bla1}} it follows that
	\bgs{ \al \int_b^x \lr{\int_b^t g'(\tau)\, d\tau } (x-t)^{s-1}\, dt  = \int_b^x g'(\tau) \lr{ \int_{\tau}^x  (x-t)^{s-1} \, dt} \, d\tau\\
			= \al \int_b^x  g'(\tau) \lr{\int_\tau^x (y-\tau)^{s-1} \, dy} \, d\tau = \int_b^x \lr{ \int_b^y g'(\tau) (y-\tau)^{s-1} \, d\tau }\, dy.}
Inserting this and identity \eqref{bla1} into \eqref{bla2}, we obtain that
	\[  \frac{\pi}{\sin \pi s}\,  u(x) = \int_b^x \lr{  \int_b^y g'(\tau) (y-\tau)^{s-1} \, d\tau + g(b) (y-b)^{s-1} } \, dy.\] 
Hence $u$ is the integral function of a $L^1\big((b,x)\big)$ function (thanks to \eqref{rev1}) and recalling that $u(b)=0$, according to Theorem \ref{acrep} we have that $u\in AC\big([b,x]\big)$.
Moreover, almost everywhere in $[b,x]$ 
	 \bgs {\label{u1d}
	\frac{\pi} {\sin \pi s} \,u'(y)=\int_b^y g'(\tau)(y-\tau)^{s-1}\, d\tau + g(b)(y-b)^{s-1}. }
\textcolor{black}{With this, given the arbitrary choice of $x$, we have proved the claim \eqref{cbsu1}.}

We claim now that $ u'(\cdot) (x-\cdot)^{-s} \in L^1\big( (b,x)\big)$. Using the second identity in \eqref{cbsu1}, we obtain that
	\eqlab{ \label{fbca} &\frac{\pi}{\sin \pi s}  \int_b^x  | u'(y)| (x-y)^{-s} \, dy \\
	\leq 
	 \al \int_b^x  \lr{\int_b^y |g'(\tau)| (y-\tau)^{s-1}  \, d\tau} (x-y)^{-s} \, dy + | g(b) |  \int_b^x (y-b)^{s-1} (x-y)^{-s}dy . }
Tonelli theorem applied to the positive function $|g'(\tau)| (y-\tau)^{s-1} (x-y)^{-s} $ on the domain $D_{b,x}$ \textcolor{black}{given in \eqref{rev2}} with the product measure $d(y,\tau)$ gives
	\bgs { \label{tt31} \iint_{D_{b,x}}  |g'(\tau)| (y-\tau)^{s-1} (x-y)^{-s} \, d(y, \tau) = \al \int_b^x  |g'(\tau)| \lr{ \int_\tau^x (y-\tau)^{s-1} (x-y)^{-s} \, dy } \, d\tau.} 
By using the change of variables $\displaystyle t = \frac{y-\tau}{x-\tau}$, thanks to the definition of the Beta function \eqref{betazerouno} and identity \eqref{betas} we have that
 \eqlab{ \label{bfcomp} 
\int_\tau^x (y-\tau)^{s-1} (x-y)^{-s} \, dy  = \int_0^1 t^{s-1}(1-t)^{-s} \, dt = \frac{\pi}{\sin\pi s}.}
Hence we obtain that
	\eqlab{  \label{Fubsto}  \iint_{D_{b,x}}  |g'(\tau)| (y-\tau)^{s-1} (x-y)^{-s} \, d(y ,\tau) 
 = \al  \frac{\pi}{\sin\pi s} \| g'\|_{L^1\left((b,x)\right)}.} 
\textcolor{black}{From this and using again \eqref{bfcomp} with $b=\tau$,} we obtain in \eqref{fbca} that 
	\bgs{ \al  \int_b^x  | u'(y)| (x-y)^{-s} \, dy \leq   \|g'\|_{L^1\lr{(b,x)}} + |g(b)|.}
Hence  $ u'(\cdot) (x-\cdot)^{-s} \in L^1\big( (b,x))$, as claimed. \textcolor{black}{From this and \eqref{cbsu1}, recalling definition \eqref{ca1s} it follows that} $u$ 
belongs to the space $C_b^{1,s}$.

\bigskip 
We prove now that $u$ is a solution of the problem \eqref{probc1}. Using the second identity in \eqref{cbsu1} we have that 
	\eqlab{ \label{blaq1}  \frac{\pi}{\sin \pi s} \int_b^x u' (y)(x-y)^{-s}\,  dy  
						=\al \int_b^x \lr { \int_b^y g'(\tau)(y-\tau)^{s-1} \, d\tau } (x-y)^{-s} \, dy  \\ \al +  g(b) \int_b^x (y-b)^{s-1} (x-y)^{-s}\, dy  .}
Thanks to \eqref{Fubsto}, we have that $|g'(\tau)| (y-\tau)^{s-1} (x-y)^{-s} \in L^1 \big(D_{b,x}, d(y,\tau)\big) $. We apply Fubini theorem and using \eqref{bfcomp} we get that  
	\bgs{ \int_b^x \lr{\int_b^y g'(\tau)(y-\tau)^{s-1} (x-y)^{-s}\, d\tau }\, dy =\al  \int_b^x g'(\tau) \lr{ \int_\tau^x (y-\tau)^{s-1} (x-y)^{-s} \, dy }  \, d\tau, \\
		=\al \frac{\pi}{\sin \pi s}  \lr{ g(x)-g(b) } .} 
Thanks again to \eqref{bfcomp}, in \eqref{blaq1} it follows that 	
\bgs{ \int_b^x \al u'(y)(x-y)^{-s}\, dy  = 	g(x),}
therefore $u$ is a solution of the problem \eqref{probc1}.
\bigskip

The solution is unique. We prove this by taking two different solutions $u_1,u_2\in C_b^{1,s}$ of the problem \eqref{probc1}. Let $u:=u_1-u_2$, then $u$ satisfies
	\bgs{ \int_b^x u'(t)(x-t)^{-s}\, dt &=0 &\text{in} \quad &(b,\infty),	\\
		u(b)&=0 .&&}
We take any $y>x$, we multiply both terms by the positive quantity $(y-x)^{s-1}$, integrate from $b$ to $y$ and obtain that
	\eqlab {\label{bla3}  \int_b^y \lr{ \int_b^x u'(t) (x-t)^{-s}\, dt } (y-x)^{s-1} \, dx =0.}
Since $u \in  C_b^{1,s}$, \textcolor{black}{we use Tonelli theorem on $D_{b,y}$ (we recall definition \eqref{rev2}) and by \eqref{bfcomp} we obtain that}
 	\bgs{  \iint_{D_{b,y}}  |u'(t)|  (x-t)^{-s}(y-x)^{s-1} \, d(x,t) 
	=\al  \int_b^y |u'(t)| \lr{ \int_t^y  (x-t)^{-s}(y-x)^{s-1} \, dx} \, dt\\
	=\al  \frac{\pi}{\sin \pi s} \|u'\|_{L^1\lr{(b,y)}}, } which is a finite quantity. Fubini theorem then allows us to compute 
\bgs{  \int_b^y \lr{ \int_b^x u'(t) (x-t)^{-s}\, dt } (y-x)^{s-1} \, dx =&\;  \int_b^y  u'(t) \lr{  \int_t^y (x-t)^{-s}(y-x)^{s-1} \, dx }\, dt \\
		= \al  \frac{\pi}{\sin \pi s}u(y)  	  .} 
It follows from \eqref{bla3} and from the initial condition $u(b)=0$ that $u_1(x)= u_2(x)$ on $[b,\infty)$. Therefore $u$ given in \eqref{solc1} is the unique solution of the problem \eqref{probc1} and this concludes the proof of the Theorem.
\end{proof}

We introduce an interior regularity result.
\begin{lemma} \label{intreg}
Let $g \in C^{\infty}\big([b,\infty)\big)$ and $u$ be defined as in \eqref{solc1}. Then $u\in C^{\infty}\big((b,\infty)\big)$. 
\end{lemma}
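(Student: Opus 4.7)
The plan is to apply the substitution $\tau = x-t$ to the representation~\eqref{solc1}, which rewrites
\[ u(x) = \ig \int_0^{x-b} g(x-\tau)\,\tau^{s-1}\,d\tau, \]
and then to isolate the integrable singularity at $\tau = 0$ from the $x$-dependent upper endpoint $\tau = x-b$ by splitting the interval of integration. This decoupling is the key device that allows one to differentiate under the integral sign to every order.

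Fix an arbitrary $x_0 \in (b,\infty)$ and choose $\delta\in\bigl(0,(x_0-b)/4\bigr)$, so that for every $x$ in the open neighbourhood $U := (x_0-\delta,x_0+\delta)$ one has $x-b > 2\delta$. Split
\[ u(x)=\ig\int_0^{\delta} g(x-\tau)\,\tau^{s-1}\,d\tau+\ig\int_\delta^{x-b} g(x-\tau)\,\tau^{s-1}\,d\tau=:u_1(x)+u_2(x). \]
For $u_1$ the domain of integration $[0,\delta]$ is independent of $x$, and for each $k\in\N$ the derivative $\partial_x^k\bigl[g(x-\tau)\,\tau^{s-1}\bigr]=g^{(k)}(x-\tau)\,\tau^{s-1}$ is jointly continuous on $U\times(0,\delta]$ and dominated by the integrable majorant $\bigl(\sup_{|y-x_0|\le 2\delta}|g^{(k)}(y)|\bigr)\tau^{s-1}$ (the weight $\tau^{s-1}$ being integrable at the origin since $s>0$). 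The standard differentiation-under-the-integral theorem therefore gives $u_1\in C^\infty(U)$.

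For $u_2$ the singularity at $\tau=0$ has been avoided, so the integrand is smooth on a neighbourhood of $\overline U\times[\delta,x_0+\delta-b]$. Leibniz' rule applied to the variable upper limit $x\mapsto x-b$ yields
\[ u_2'(x) = \ig\,g(b)(x-b)^{s-1}+\ig\int_\delta^{x-b} g'(x-\tau)\,\tau^{s-1}\,d\tau; \]
the boundary term $g(b)(x-b)^{s-1}$ is smooth on $U$ because $x-b>2\delta>0$ there, and the remaining integral has exactly the same structure as $u_2$ with $g'$ in place of $g$. Iterating (which stays legitimate at every stage since $g\in C^\infty$) gives $u_2\in C^\infty(U)$, hence $u=u_1+u_2\in C^\infty(U)$. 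Since $x_0\in(b,\infty)$ was arbitrary, this proves $u\in C^\infty\bigl((b,\infty)\bigr)$. No real obstacle appears; the only point to handle carefully is the initial splitting, which confines the singular weight $\tau^{s-1}$ to a fixed interval so that the dominated convergence argument justifies interchanging $\partial_x^k$ and $\int_0^{\delta}$ for every $k$ simultaneously.
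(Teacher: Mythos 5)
Your proof is correct, and it takes a genuinely different route from the paper. The paper proves the lemma by induction on $n$, establishing the explicit formula~\eqref{undiff1} for $u^{(n)}$ on all of $(b,\infty)$; each inductive step leans on the absolute-continuity claim~\eqref{cbsu1} (proved inside Theorem~\ref{thm:probc} via Tonelli/Fubini in the $C_b^{1,s}$ framework) to pass from $u^{(n)}\in AC$ to $u^{(n+1)}\in C^0$. You instead work locally around an arbitrary $x_0>b$: the substitution $\tau=x-t$ moves all $x$-dependence into the argument of $g$, and the split at a fixed $\delta$ decouples the integrable singularity $\tau^{s-1}$ (fixed domain $[0,\delta]$, so classical differentiation under the integral applies for every order) from the moving endpoint $x-b$ (where the integrand is smooth, so Leibniz' rule for variable-limit integrals applies, with the boundary term $g(b)(x-b)^{s-1}$ smooth because $x-b$ is bounded away from $0$ on $U$). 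The iteration on $u_2$ is a clean self-similar recursion $v_k'(x)=g^{(k)}(b)(x-b)^{s-1}+v_{k+1}(x)$, and the hypothesis $g\in C^\infty$ keeps it alive at every stage. What each approach buys: the paper's proof yields the closed-form expression~\eqref{undiff1} for $u^{(n)}$, which is exploited later (for instance in Lemma~\ref{caplem1}), whereas yours is shorter, more elementary, and independent of the $C_b^{1,s}$/$AC$ machinery, but gives only qualitative smoothness without the explicit derivative formula. Both arrive at the same first-derivative identity after recombining your two integrals, so the results are consistent.
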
 
\begin{proof}
We prove by induction that the next statement, which we call $P(n)$, holds for any $n\in \N$: 
\textcolor{black}{  \[ u\in C^n\big((b,\infty)) \]} 
and
 \eqlab{ \label{undiff1} u^{(n)}(y) = \ig\lr{  \int_b^y g^{(n)}(\tau) (y-\tau)^{s-1} \, d\tau +  \sum_{i=0}^{n-1} \tilde c_{s,i} g^{(i)}(b) (y-b)^{s-n+i}  } \\\mbox{for any } y\in (b,\infty) ,}
where 
	\begin{equation} \label{ctcsi1} \tilde c_{s,i} = \begin{cases}    (s-1)\dots (s-n+i+2) (s-n+i+1) \quad &\text{ for }  i\neq n-1\\
										  1  \quad \quad\quad\quad &\text{ for }  i= n-1. \end{cases}
	\end{equation}
		
We denote by
	\[ v(y):=\int_b^y g'(\tau)(y-\tau)^{s-1}\, d\tau \] 
\textcolor{black}{and from \eqref{cbsu1} we have that almost anywhere in $[b,\infty)$
	\eqlab{\label{uprim} u'(y) = \ig \lr{ v(y) +   g(b)(y-b)^{s-1}}.} Since $g\in C^{\infty}\big([b,\infty)\big)$, we have in particular that $g'\in C_b^{1,1-s}$ hence from the definition of $v$ and \eqref{cbsu1} we get that $v\in AC\big([b,\infty)\big)$. It follows that $u'\in C\big((b,\infty)\big)$, since it is a sum of continuous functions. Therefore $u\in C^1\big((b,\infty)\big)$ and \eqref{uprim} holds pointwise in $(b,\infty)$}.
And so $P(1)$ is true. 

In order to prove the inductive step, we suppose that $P(n)$ holds and prove $P(n+1)$.
Let now
	\[ v(y):=\int_b^y  g^{(n)}(\tau)(y-\tau)^{s-1}\, d\tau.\] From \eqref{undiff1} we have that for any $y\in (b,\infty)$
	\eqlab{\label{unv1} u^{(n)}(y) =\ig\lr{ v(y)+  \sum_{i=0}^{n-1} \tilde c_{s,i} g^{(i)}(b) (y-b)^{s-n+i} }  .} 
\textcolor{black}{Since $g\in C^{\infty}\big([b,\infty)\big)$, in particular we have that $g^{(n)}\in C_b^{1,1-s}$, hence from the definition of $v$ and thanks to \eqref{cbsu1} we get that $v\in AC\big([b,\infty)\big)$ and 
almost everywhere on $[b,\infty) $
	\[ v'(y) = \int_b^y g^{(n+1)}(\tau) (y-\tau)^{s-1}\, d\tau + g^{(n)}(b) (y-b)^{s-1}.\] Now, also $g^{(n+1)}\in C_b^{1,1-s}$ and so, thanks to \eqref{cbsu1}, the map 
	\eqlab{\label{yg1} y \mapsto \displaystyle \int_b^y g^{(n+1)}(\tau) (y-\tau)^{s-1}\, d\tau \quad \in AC\big([b,\infty)\big) .} It yields that $v\in C^1\big((b,\infty)\big)$ and so from \eqref{unv1} we get that $u^{(n+1)}\in C\big((b,\infty)\big)$. Taking the derivative of \eqref{unv1} we have that pointwise in $(b,\infty)$}
\bgs{ \frac{\pi}{\sin\pi s}  u^{(n+1)} (y)=  \al \int_b^y g^{(n+1)}(\tau) (y-\tau)^{s-1}\, d\tau  +  	
%
	g^{(n)}(b)(y-b)^{s-1} \\ &+ 
	\sum_{i=0}^{n-1} \tilde c_{s,i} g^{(i)}(b) (s-n+i) (y-b)^{s-n+i-1} \\
		=\al \int_b^y g^{(n+1)}(\tau)(y-\tau)^{s-1}\, d\tau  +  \sum_{i=0}^{n} \tilde c_{s,i} g^{(i)}(b) (y-b)^{s-n+i},}
		 where we have used \eqref{ctcsi1} in the last line. 
Therefore the statement $P(n+1)$ is true and the proof by induction is concluded. 

It finally yields that $u\in C^{\infty}\big((b,\infty)\big)$ and this concludes the proof of the Lemma.
\end{proof}
\smallskip 

\subsection{Building a sequence of Caputo-stationary functions}\label{sectaps}

In this subsection we build a sequence of functions that are Caputo-stationary  in $(0,\infty)$ and that tends uniformly on bounded subintervals of $(0,\infty)$ to the function $x^s$. We do this by building a Caputo-stationary function in $(1,\infty)$, that at the point $1+\varepsilon$ is asymptotic to $\varepsilon^s$ and then we use a blow-up argument.

\bigskip

We fix the arbitrary parameter $s\in (0,1)$. We introduce the first Lemma of this subsection.

\begin{lemma}\label{caplem1}
Let $\psi_0 \in C^1\big( [0,1]\big) \cap C\big((-\infty,1]\big)$ be such that 
	\eqlab {	\label{fifi41} &\psi_0(x)=\psi_0(0) & \text{ for any } &x\in (-\infty, 0],\\
			&\psi_0(x) =0 &\text{ for any } &x\in \bigg[\frac{3}{4}, 1\bigg],\\
	&\psi'_0 (x)<0 &\text{ for any } &x\in \bigg[0,\frac{3}{4}\bigg). }
Let $\psi\in C_0^{1,s}$ be the solution of the problem
	\eqlab {\label{capprob1}
			D_0^s\psi(x) &=0 &\text{ in }& (1,\infty),\\
			\psi(x)&=\psi_0(x)&\text{ in } &(-\infty,1].} \textcolor{black}{Then $\psi \in C^{\infty}\big((1,\infty)\big)$ and} if $x=1+\varepsilon$, we have that 
	\eqlab{\label{psieee} \psi(1+\varepsilon) =\kappa \varepsilon^s + \mathcal O(\varepsilon^{s+1})  }
as $\varepsilon \to 0$, for some $\kappa>0$. 
\end{lemma}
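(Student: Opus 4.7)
The plan is to use Lemma \ref{lem:int11} to convert the Caputo problem into the integral equation studied in Theorem \ref{thm:probc}, and then exploit the resulting Poisson-type representation to read off the behavior at $x=1$. First, I would apply Lemma \ref{lem:int11} with $a=0$, $b=1$, $\varphi=\psi_0$, obtaining the equivalent problem
\[
\int_1^x \psi'(t)(x-t)^{-s}\,dt \;=\; g(x) \quad\text{in } (1,\infty), \qquad \psi=\psi_0 \text{ on }(-\infty,1],
\]
where $g(x):=-\int_0^1 \psi_0'(\tau)(x-\tau)^{-s}\,d\tau$. Since $\psi_0\equiv 0$ on $[3/4,1]$ we have $\psi(1)=\psi_0(1)=0$, so I may invoke Theorem \ref{thm:probc} with $b=1$ once I check that $g$ lies in $C_1^{1,1-s}$.

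Next, I would verify that $g$ is actually $C^\infty([1,\infty))$: for $x\ge 1$ and $\tau\in[0,1]$ the kernel $(x-\tau)^{-s}$ is smooth in $x$, and since $\psi_0'\in C^0([0,1])$ is bounded, differentiation under the integral sign is justified and yields uniformly integrable estimates for every derivative. This gives $g\in C^\infty([1,\infty))\subset C_1^{1,1-s}$, so Theorem \ref{thm:probc} produces the explicit formula
\[
\psi(x) \;=\; \frac{\sin\pi s}{\pi}\int_1^x g(t)(x-t)^{s-1}\,dt \qquad (x>1),
\]
and Lemma \ref{intreg} then gives $\psi\in C^\infty((1,\infty))$.

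To extract the asymptotics at $x=1+\varepsilon$, I would change variable via $t=1+\varepsilon u$, $u\in[0,1]$, producing
\[
\psi(1+\varepsilon) \;=\; \frac{\sin\pi s}{\pi}\,\varepsilon^{s}\int_0^1 g(1+\varepsilon u)(1-u)^{s-1}\,du.
\]
Because $g$ is smooth at $t=1$, a first-order Taylor expansion $g(1+\varepsilon u)=g(1)+\mathcal O(\varepsilon u)$ valid uniformly in $u\in[0,1]$ yields $\int_0^1 g(1+\varepsilon u)(1-u)^{s-1}\,du = g(1)/s+\mathcal O(\varepsilon)$, whence $\psi(1+\varepsilon)=\kappa\varepsilon^s+\mathcal O(\varepsilon^{s+1})$ with $\kappa=\frac{\sin\pi s}{\pi s}\,g(1)$.

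The remaining point, which is really the only place where the hypotheses on $\psi_0$ are used beyond regularity, is to show $\kappa>0$: from $g(1)=-\int_0^1 \psi_0'(\tau)(1-\tau)^{-s}\,d\tau$ and the assumption that $\psi_0'<0$ on $[0,3/4)$ while $\psi_0'=0$ on $[3/4,1]$, the integrand $-\psi_0'(\tau)(1-\tau)^{-s}$ is non-negative and strictly positive on a set of positive measure, giving $g(1)>0$. I expect the main technical care to go into the regularity check for $g$ near $t=1$ (ensuring all derivatives remain integrable against $\psi_0'$) and in making the Taylor remainder estimate uniform in $u$ so that it survives the weight $(1-u)^{s-1}$; once these are in hand, the chain of invocations of Lemma \ref{lem:int11}, Theorem \ref{thm:probc}, and Lemma \ref{intreg} closes the argument.
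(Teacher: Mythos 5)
Your proposal follows the same chain of lemmas as the paper: Lemma \ref{lem:int11} to pass to the integral equation, regularity of $g$ via differentiation under the integral sign, Theorem \ref{thm:probc} for the representation formula, and Lemma \ref{intreg} for interior smoothness. Where you differ is in extracting the asymptotics: the paper substitutes the definition of $g$ back into the representation, swaps the order of integration via Fubini, and then Taylor-expands the kernel $(\varepsilon z + 1 - t)^{-s}$ in powers of $\varepsilon$, recognizing Beta-function integrals term by term. You instead exploit the already-established smoothness $g\in C^\infty([1,\infty))$ to Taylor-expand $g(1+\varepsilon u)$ directly around $u=0$ after the substitution $t=1+\varepsilon u$, which produces $\int_0^1 g(1+\varepsilon u)(1-u)^{s-1}\,du = g(1)/s + \mathcal{O}(\varepsilon)$ in one stroke. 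Your route is cleaner — it bypasses the Fubini step and the explicit Pochhammer/Beta bookkeeping — and it yields the same $\kappa$, since $\beta(1,s)=1/s$ reconciles your constant with the paper's $\beta(1,s)g(1)$ (modulo the paper's dropped prefactor $\frac{\sin\pi s}{\pi}$, which is immaterial for positivity). The only item you omit relative to the paper is the check that the solution built from Theorem \ref{thm:probc}, which a priori lives in $C_1^{1,s}$, actually belongs to $C_0^{1,s}$ and so coincides with the $\psi$ postulated in the lemma; this is a short verification but worth including for completeness.
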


An explicit example of a function described in Lemma \ref{caplem1} is depicted in Figure \ref{fign:es2} in Example \ref{exam2}.
\begin{proof}[Proof of Lemma \ref{caplem1}]

Thanks to Lemma \ref{lem:int11} we have that $\psi \in C_0^{1,s}$ is solution of the problem \eqref{capprob1} if and only if 
	\bgs{ \int_1^x \psi'(t)(x-t)^{-s}\, dt &= -\int_0^{3/4} \psi_0'(t) (x-t)^{-s} \, dt &\mbox{in } &(1,\infty), \\
										\psi(x) &=\psi_0(x) &\mbox{in } &(-\infty,1].}	
On $[1,\infty)$ we define the function
\eqlab{\label{gbla1} g(x):=- \int_0^{3/4} \psi_0'(t) (x-t)^{-s} \, dt, }
\textcolor{black}{hence our problem is now
	\eqlab{ \label{psil4} \int_1^x \psi'(t)(x-t)^{-s}\, dt &=g(x) &\mbox{in } &(1,\infty), \\
										\psi(x) &=\psi_0(x) &\mbox{in } &(-\infty,1].}	
We claim that $g\in C^{\infty}\big([1,\infty)\big)$.}
For that, let $F\colon [1,\infty)\times [0,3/4]\to \R$ be defined as $F(x,t):=\psi_0'(t)(x-t)^{-s}$. 
\textcolor{black}{Now, for any $h>0$ arbitrarily small we have that
\[ \bigg| \frac{F(x+h,t)-F(x,t)}h \bigg| \leq \sup_{t\in [0,3/4]} |\psi_0'(t)| \bigg|\frac{(x+h-t)^{-s}-(x-t)^{-s}}h\bigg|. \]
Since the map $[1,\infty)\ni x \mapsto (x-t)^{-s}$ is differentiable for any $t\in [0,3/4]$, by the Mean Value Theorem we have that for $\theta \in (0,h)$
\[ \bigg|\frac{(x+h-t)^{-s}-(x-t)^{-s}}h\bigg| \leq s (x+\theta-t)^{-s-1} \leq s(x-t)^{-s-1}.\]
Then
\[\bigg|\frac{F(x+h,t)-F(x,t)}h\bigg|\leq s \sup_{t\in [0,3/4]} |\psi_0'(t)| (x-t)^{-s-1} \in L^1\big([0,3/4],dt\big),\]
hence by the Dominated Convergence Theorem, we can pass the limit inside the integral and obtain that
\[ g'(x)= -\int_0^{3/4} \partial_x F (x,t)\, dt =  s\int_0^{3/4} \psi'_0(t)  (x-t)^{-s-1}\, dt.\]}
We can now take for any $n \in \N$ the function $F_n\colon [1,\infty)\times [0,3/4]\to \R$ to be $F_n(x,t):=\psi_0'(t)(x-t)^{-s-n}$ and repeat the above argument. We obtain that $g$ is $C^\infty\big([1,\infty)\big)$, as claimed and
moreover for any $n\in \N_0$ we have that
	\eqlab{ \label{gn1} g^{(n)}(x) =  -\bar c_{s,n}\int_0^{3/4} \psi_0'(t) (x-t)^{-s-n}\,dt, }
where
	\begin{equation} \label{ctcns2} \bar c_{s,n} = \begin{cases} (-s)(-s-1)\dots (-s-n+1) &\mbox{ for } n\neq 0\\
																1 &\mbox{ for } n=0.\end{cases}\end{equation}
																
Since $\psi(1)=0$ and $g\in C^\infty\big([1,\infty)\big)$ (hence in particular $g\in C_1^{1,1-s}$), thanks to Theorem \ref{thm:probc} we get that the problem \eqref{psil4} admits a unique solution $\psi \in C_1^{1,s}$ given by
\eqlab{ \label{solll} &\psi(x)=\ig  \int_1^x g(t) (x-t)^{s-1} \, dt &\mbox{ in } &(1,\infty),\\
		&\psi(x)=\psi_0(x) &\mbox{ in } & (-\infty,1].} 
 \textcolor{black}{Moreover, we claim that $\psi \in C_0^{1,s}$. Indeed, from Lemma \ref{intreg} we get that $\psi\in C^{\infty}\big( (1,\infty)\big)$. Also 
$ \lim_{x\to 1^+} \psi(x) =0 =\psi(1)$ and so from this and the hypothesis we have that $\psi \in C^{\infty}\big( (1,\infty)\big)  \cap  C^1\big([0,1]\big) \cap C(\R)$, hence $\psi \in AC\big([0,\infty)\big)$. Also for any $x>0$
\[ \int_0^x |\psi'(t)(x-t)^{-s}| \, dt \leq c_s  \|\psi'\|_{L^{\infty}\lr{(0,x)}} x^{1-s} <\infty, \]
and so the claim follows from definition \eqref{ca1s}.}
Therefore, $\psi \in C_0^{1,s}$ is the unique solution of problem \eqref{psil4} and from Lemma \ref{lem:int11} it follows that \eqref{solll} is also the unique solution of problem the \eqref{capprob1}.  
\bigskip

We prove now the claim \eqref{psieee}. Let $x=1+\varepsilon$. Then from \eqref{solll} we have that 
\bgs{  \frac{\pi}{\sin \pi s} \psi(1+\varepsilon) =  \int_1^{1+\varepsilon} g(\tau) (1+\varepsilon-\tau)^{s-1} \, d\tau.} 
The change of variables $z= (\tau-1)/\varepsilon$ gives
	\bgs{ \frac{\pi}{\sin \pi s} \psi(1+\varepsilon) = \varepsilon^s  \int_0^1 g(\varepsilon z+ 1) (1-z)^{s-1}\, dz.}
Using definition \eqref{gbla1} we have that 
	\[ g(\varepsilon z+ 1) = -\int_0^{3/4} \psi_0'(t)(\varepsilon z+1-t)^{-s} \, dt ,\] 
hence
	\bgs{ \frac{\pi}{\sin \pi s} \psi(1+\varepsilon) = -\varepsilon^s \int_0^1 \lr{ \int_0^{3/4} \psi_0'(t)(\varepsilon z+1-t)^{-s}\, dt} (1-z)^{s-1}\,dz.} 
Tonelli theorem on $[0,1]\times [0,3/4]$ applied to the function $|\psi_0'(t)|(\varepsilon z+1-t)^{-s} (1-z)^{s-1}$ yields
	\bgs{  \iint_{ [0,1]\times [0,3/4]} \al |\psi_0'(t)|(\varepsilon z+1-t)^{-s} (1-z)^{s-1} d(t,z)\\
=\al  \int_0^{3/4} |\psi_0'(t)| \lr{ \int_0^1 (1-z)^{s-1}(\varepsilon z+1-t)^{-s}\, dz}\, dt.}
We have that $(\varepsilon z+1-t)^{-s}\leq (1-t)^{-s}\leq 4^s$, hence
	 \bgs{  \int_0^{3/4} |\psi_0'(t)| \lr{ \int_0^1 (1-z)^{s-1}(\varepsilon z+1-t)^{-s}\, dz}\, dt \leq \al 4^s  \int_0^{3/4} |\psi_0'(t)| \lr{\int_0^1 (1-z)^{s-1}\, dz}\, dt  \\
			\leq \al \frac{3 \cdot 4^{s-1}}s \sup_{t\in[0,3/4]}|\psi_0'(t)|  ,}
which is finite. Therefore $|\psi_0'(t)|(\varepsilon z+1-t)^{-s} (1-z)^{s-1}\in L^1\big([0,1]\times [0,3/4], d(t,z)\big)$ and by Fubini theorem we have that
	\eqlab{ \label{psibla2}
			 \frac{\pi}{\sin \pi s}  \psi(1+\varepsilon)  =\al  -\varepsilon^s \int_0^{3/4}\psi_0'(t) \lr{ \int_0^1 (\varepsilon z+1-t)^{-s} (1-z)^{s-1}  \, dz}\, dt \\
			=\al -\varepsilon^s \int_0^{3/4} \psi_0'(t)  I_s(\varepsilon,t)\, dt.}
We consider the function $f(z)=(\varepsilon z+1-t)^{-s}$ and make a Taylor expansion with a Lagrange reminder in $0$. Namely, one has that there exists $c \in (0,z)$ such that
	\[ (\varepsilon z+ 1-t)^{-s} = \sum_{i=0}^n  \frac{\bar c_{s,i}} {i!}\varepsilon^i  (1-t)^{-s-i}  z^i + \frac{\bar c_{s,n+1}}{(n+1)!} \varepsilon^{n+1} (\varepsilon c+1-t)^{-s-n-1} z^{n+1},\]
where $\bar c_{s,i}$ is given in \eqref{ctcns2}. Using this, we have that
	\bgs{ I_s(\varepsilon,t)= \al \sum_{i=0}^n  \frac{\bar c_{s,i}} {i!}\varepsilon^i  (1-t)^{-s-i}\int_0^1 (1-z)^{s-1} z^i\, dz \\ \al +  \frac{\bar c_{s,n+1}}{(n+1)!} \varepsilon^{n+1} (\varepsilon c+1-t)^{-s-n-1} \int_0^1 (1-z)^{s-1} z^{n+1}\, dz.}
We use the definition \eqref{betazerouno} of the Beta function and continue
	\bgs{  I_s(\varepsilon,t) = \sum_{i=0}^n  \frac{\bar c_{s,i}\beta(i+1,s) } {i!}\varepsilon^i  (1-t)^{-s-i} + \frac{\bar c_{s,n+1}\beta(n+2,s)}{(n+1)!} \varepsilon^{n+1} (\varepsilon c+1-t)^{-s-n-1}.}
	In \eqref{psibla2} we obtain that
	\eqlab { \label{psibla3}  \frac{\pi}{\sin \pi s}  \psi(1+\varepsilon) =\al -\varepsilon^{s} \sum_{i=0}^n \frac{\bar c_{s,i}\beta(i+1,s) } {i!} \varepsilon^i \int_0^{3/4}\psi_0'(t) (1-t)^{-s-i}\, dt \\ \al- \varepsilon^{s+n+1} \frac{\bar c_{s,n+1}\beta(n+2,s)}{(n+1)!}  \int_0^{3/4} \psi_0'(t)  (\varepsilon c+1-t)^{-s-n-1} \, dt.}
We notice that $(\varepsilon c+1-t)^{-s-n-1}\leq 4^{s+n+1}$ and it follows that
	\[ \bigg| \int_0^{3/4} \psi_0'(t)  (\varepsilon c+1-t)^{-s-n-1} \, dt\bigg| \leq 3\cdot  4^{s+n} \sup_{t \in [0,3/4]} |\psi_0'(t)|,\] which is finite.
 We define then the finite quantities
	\bgs{ C_{s,\psi_0,i}:=\al  -\frac{\bar c_{s,i} \beta(i+1,s) }{i!} \int_0^{3/4} \psi_0'(t)(1-t)^{-s-i}\, dt\\ =\al  \frac{\beta(i+1,s) } {i!}   g^{(i)}(1) \quad  \mbox{for} \quad  i=0,\dots,n } and
	 \bgs{ C_{s,\psi_0,n+1}: = \al -\frac{\bar c_{s,n+1} \beta(n+2,s) }{(n+1)!} \int_0^{3/4} \psi_0'(t)(\varepsilon c +1-t)^{-s-n-1}\, dt \\
=\al  \frac{\beta(n+2,s) } {(n+1)!}   g^{(n+1)}(\varepsilon c +1), }
where we have used \eqref{gn1}.  \\
It follows in \eqref{psibla3} that
	\[ \frac{\pi}{\sin \pi s}\psi(1+\varepsilon) = \sum_{i=0}^{n+1} C_{s,\psi_0,i}  \textcolor{black}{\varepsilon^{s+i}} . \]
 This gives for $\varepsilon \to 0$ that
	\[\psi(1+\varepsilon)= \kappa\varepsilon^s + \mathcal O (\varepsilon^{s+1}),\]
where 
	\bgs{ \kappa =C_{s,\psi,0}= \beta(1,s) g(1) \textcolor{black}{=} -   \beta(1,s) \int_0^{3/4} \psi_0'(t)(1-t)^{-s} \, dt. }
Since $-\psi_0'(x)>0$ in $[0,3/4)$ by hypothesis (see \eqref{fifi41}), we have that 
\[-\int_0^{3/4} \psi_0'(t)(1-t)^{-s} \, dt>0.\] 
This implies that $\kappa$  is strictly positive and it concludes the proof of the Lemma.
\end{proof}
 \bigskip

Blowing up the function built in Lemma \ref{caplem1}, we obtain a sequence of Caputo-stationary functions in $(0,\infty)$ that on $(0,\infty)$ tends to the function $x^s$.  

\begin{lemma}\label{ls1}
There exists a sequence $(v_j)_ {j \in \N}$ of functions $v_j \in  C^{1,s}_{-j}\cap C^{\infty}\big((0,\infty)\big)$ such that  for any $j \in \N$
	\begin{equation} \label{pbvj1}
		\begin{aligned}
			D^s_{-j} v_j(x)&= 0 &\text{ in } &(0,\infty),	\\
			v_j(x) &= 0 &\text{ in } & \Big[-\frac{j}4,0\Big]
		\end{aligned} 
	\end{equation}  
and for any $x>0$
	 \eqlab{ \label{limvj} \lim_{j\to \infty} v_j(x)=\textcolor{black}{\kappa} x^s,  } for some $\textcolor{black}{\kappa}>0$. 
Moreover, on any bounded subinterval $I\subseteq (0,\infty)$ the convergence is uniform.
\end{lemma}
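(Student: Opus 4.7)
The plan is to obtain $v_j$ by an affine rescaling of the function $\psi$ produced in Lemma \ref{caplem1}. Recall that $\psi$ is Caputo-stationary with initial point $0$ in $(1,\infty)$, vanishes on $[3/4,1]$, belongs to $C^{\infty}((1,\infty))$, and obeys the asymptotic expansion \eqref{psieee}. I would define, for each $j\in\N$,
\[ v_j(x):=j^s\,\psi\!\left(\tfrac{x}{j}+1\right),\qquad x\ge -j, \]
extended by the constant $j^s\psi_0(0)$ on $(-\infty,-j)$. The affine map $x\mapsto x/j+1$ sends $-j\mapsto 0$ and $0\mapsto 1$, so that $[-j/4,0]$ is mapped into $[3/4,1]$ (where $\psi=0$) and $(0,\infty)$ into $(1,\infty)$ (where $\psi$ is smooth and Caputo-stationary).

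The technical core is a scaling identity for the Caputo derivative. For any $c,\mu>0$ and $\nu\in\R$, the change of variable $\tau=\mu t+\nu$ in \eqref{caputo} yields
\[ D_a^s\bigl(c\,\psi(\mu\cdot+\nu)\bigr)(x)=c\,\mu^s\,D_{\mu a+\nu}^s\psi(\mu x+\nu). \]
Specializing to $c=j^s$, $\mu=1/j$, $\nu=1$ and $a=-j$ gives $D_{-j}^s v_j(x)=D_0^s\psi(x/j+1)$, which vanishes for $x>0$ by Lemma \ref{caplem1}. The condition $v_j\equiv 0$ on $[-j/4,0]$ is immediate from \eqref{fifi41}, and $v_j\in C^{\infty}((0,\infty))$ follows from smoothness of $\psi$ on $(1,\infty)$. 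To verify $v_j\in C^{1,s}_{-j}$, the same substitution $\tau=t/j+1$ translates the integrability of $v_j'(\cdot)(x-\cdot)^{-s}$ on $(-j,x)$ into that of $\psi'(\cdot)(x/j+1-\cdot)^{-s}$ on $(0,x/j+1)$, which holds because $\psi\in C^{1,s}_0$; absolute continuity on compact subintervals is preserved by affine reparametrization.

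For the convergence, fix $x>0$ and set $\varepsilon_j:=x/j\searrow 0$. Then \eqref{psieee} gives
\[ v_j(x)=j^s\bigl(\kappa\,\varepsilon_j^s+\mathcal O(\varepsilon_j^{s+1})\bigr)=\kappa\,x^s+\mathcal O\!\left(\tfrac{x^{s+1}}{j}\right), \]
which proves \eqref{limvj}. For any bounded subinterval $I\subseteq(0,\infty)$, one has $I\subseteq(0,b]$ for some $b>0$, and the previous estimate becomes the uniform bound $|v_j(x)-\kappa x^s|\le C\,b^{s+1}/j$ for all $x\in I$ and all $j$ sufficiently large, which yields uniform convergence on $I$.

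The only mild subtlety I expect is ensuring that the remainder $\mathcal O(\varepsilon^{s+1})$ in \eqref{psieee} is uniform in a right-neighborhood of $\varepsilon=0$; but this is visible from the proof of Lemma \ref{caplem1}, where the Lagrange-form coefficient $C_{s,\psi_0,1}$ is controlled by the supremum of $g^{(1)}$ on a compact interval. Apart from this bookkeeping, the entire proof is a direct transfer of properties of $\psi$ through the rescaling.
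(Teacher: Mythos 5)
Your proposal is correct and coincides in all essentials with the paper's own proof: the same definition $v_j(x)=j^s\psi(x/j+1)$, the same change of variables identity showing $D_{-j}^s v_j(x)=D_0^s\psi(x/j+1)$, and the same use of the expansion \eqref{psieee} to obtain $v_j(x)=\kappa x^s+\mathcal{O}(x^{s+1}/j)$ and hence uniform convergence on bounded subintervals. Your remark about the uniformity of the remainder is sound (in the proof of Lemma \ref{caplem1} the coefficient in front of $\varepsilon^{s+1}$ involves $g^{(1)}$ evaluated on a compact interval, so it is uniformly bounded) and is consistent with what the paper takes for granted.
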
	
A qualitative example of a sequence described in Lemma \ref{ls1} is depicted in Figure \ref{fign:Lem42}.

\begin{center}
 \begin{figure}[htpb]
	\hspace{0.6cm}
	\begin{minipage}[b]{0.85\linewidth}
	\centering
	\includegraphics[width=0.95\textwidth]{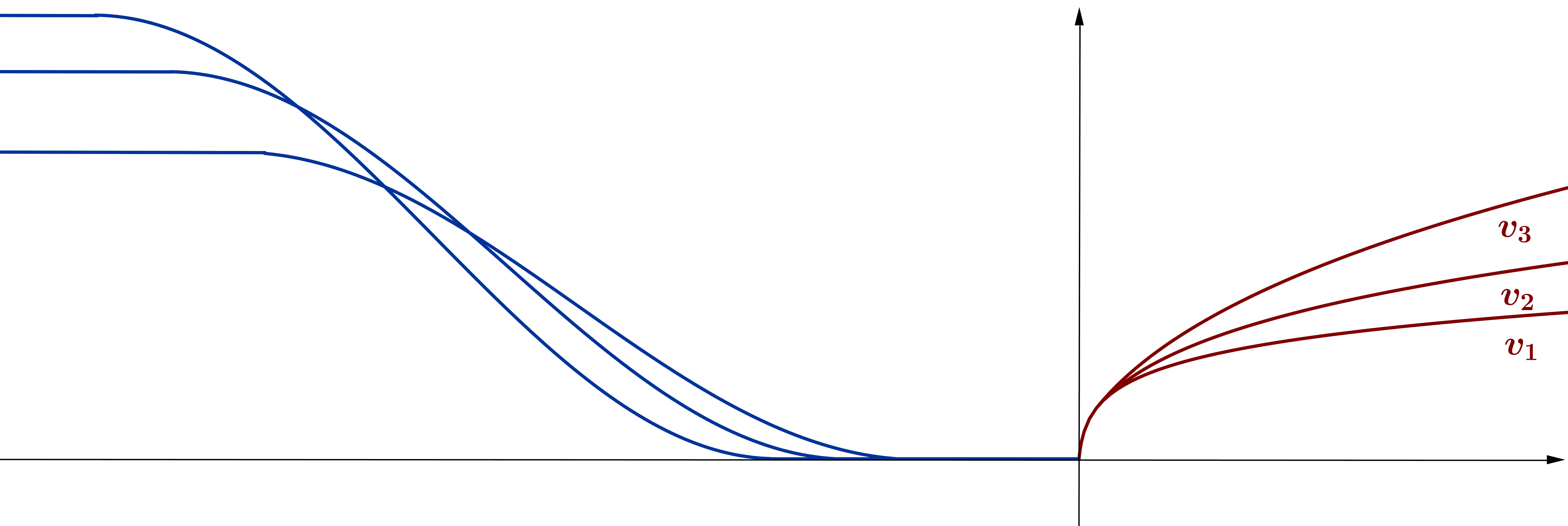}
	\caption{A sequence of Caputo-stationary functions in $(0,\infty)$}   
	\label{fign:Lem42}
	\end{minipage}
\end{figure}
\end{center} 
\begin{proof}
We consider the function $\psi$ solution of the problem \eqref{capprob1} as introduced in Lemma \ref{caplem1}, and define for any $j\in \N$
\[ v_j(x) := j^s \psi\bigg(\frac{x}{j} +1\bigg).\] 
Since $\psi \in C_0^{1,s}\cap C^{\infty}\big((1,\infty)\big)$, then $v_j\in C_{-j}^{1,s} \cap C^{\infty}\big((0,\infty)\big)$.  Also, since $\psi$ is solution of the problem \eqref{capprob1}, we have that
	\bgs{	D_{-j}^s v_j(x) =\al  \frac{1}{\Gamma(1-s)}  \int^x_{-j} v_j'(t)(x-t)^{-s} \, dt\\
					= \al  \frac{ j^{s-1}}{\Gamma(1-s)} \int^x_{-j}\psi'\Big(\frac{t}{j}+1\Big) (x-t)^{-s} \, dt.}
We use the change of variables $y= t/j +1$ and obtain
	\bgs{	D_{-j}^s v_j(x) = \frac{1}{\Gamma(1-s)} \int_0^{x/j+1} \psi'(y)\Big(\frac{x}{j}+1-y\Big)^{-s}\, dy = D_0^s \psi \Big(\frac{x}{j} +1\Big).}
This implies that $D_{-j}^s v_j(x) =0 $ (using \eqref{capprob1}) when $x>0$. So, using Lemma \ref{caplem1} and the definition in $v_j$, we easily verify
that for any $j\in \N$  the functions $v_j \in C_{-j}^{1,s} \cap C^{\infty}\big((0,\infty)\big)$ satisfy
	\bgs{ D_{-j}^s v_j(x) &= 0 &\mbox{ in } &  (0,\infty), \\ 
			v_j(x) &= 0 &\mbox{ in } & \left[-\frac{j}4,0\right] }
and
	\bgs{ v_j(x)&=j^s\psi_0\lr{\frac{x}j +1} 	&\mbox{ in } &  (-\infty,0],\\
				v_j(x)&=j^s\psi_0(0) 	&\mbox{ in } &  (-\infty,-j].}
In particular, $v_j$ is solution of the problem \eqref{pbvj1} for any $j \geq 1$.

Now, using \eqref{psieee}, for $x>0$ and for a large $j$ we have that
	\[ v_j(x) = j^s \psi \left(\frac{x}{j}+1 \right) = j^s \left( \kappa \frac{x^s}{j^s} + \mathcal O \left(\frac{x^{s+1}}{j^{s+1}}\right)\right) = \kappa x^s + \mathcal O \left(\frac{x^{s+1}}{j}\right).\]
By sending $j$ to infinity we obtain that 
	\[ \lim_{j\to \infty} v_j(x)=\kappa x^s.\]
On any bounded subinterval $I\subseteq (0,\infty)$, we have that
	\bgs{ \lim_{j \to \infty} \sup_{x\in I} |v_j(x) - \kappa x^s| = 0. } It follows also that on any bounded subinterval $I\subseteq (0,\infty)$ the sequence $v_j$ is uniformly bounded.
This concludes the proof of the Lemma. 
\end{proof}

\subsection{A Caputo-stationary function with  derivatives prescribed}\label{sectma}

Using Lemma \ref{ls1} we prove that there exists a Caputo-stationary function with arbitrarily large number of derivatives prescribed.
More precisely:

\begin{theorem}\label{thm4}
 For any $m \in \N$ there exist a point $p>0$, a constant $R>0$ and a function $v \in C_{-R}^{1,s} \cap C^{\infty} \big((0,\infty)\big)$ such that
	\eqlab{ \label{cc1} 
				D^s_{-R} v(x)&=0 &\text{ in } &(0, \infty), \\
		v(x)&=0 &\text{ in } & \Big [-\frac{R}4,0\Big]}	
and 
	\eqlab{ \label{cc2}
			&v^{(l)}(p)=0 & & \text{ for any } \quad  l< m\\
		&v^{(m)}(p)=1.&&}
	\end{theorem}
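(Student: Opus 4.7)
The plan is to fix $j \in \N$ and $p > 0$ and to show that the linear map
\[\Phi_{j,p}(\psi_0) := \bigl(v_j^{\psi_0\,(l)}(p)\bigr)_{l=0}^m \in \R^{m+1}\]
is surjective as $\psi_0$ ranges over admissible past profiles, where $v_j^{\psi_0}$ denotes the rescaled Caputo-stationary function from Lemmas \ref{caplem1}--\ref{ls1}. By construction $v_j^{\psi_0} \in C^{1,s}_{-j}\cap C^\infty((0,\infty))$ satisfies $D^s_{-j}v_j^{\psi_0} = 0$ on $(0,\infty)$ and vanishes on $[-j/4, 0]$; moreover the correspondence $\psi_0 \mapsto v_j^{\psi_0}$ is linear, since Theorem \ref{thm:probc} provides an integral representation linear in the past data. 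Consequently the image $V_{j,p} \subseteq \R^{m+1}$ is a linear subspace, and once $V_{j,p} = \R^{m+1}$ is established, one picks $\psi_0$ with $\Phi_{j,p}(\psi_0) = (0,\ldots,0,1)$ to obtain the function required by the theorem, with $R = j$.

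The proof of $V_{j,p} = \R^{m+1}$ proceeds by contradiction, in the spirit of Step 2 of the sketch of Theorem \ref{ALL FUNCTIONS}. If $V_{j,p}$ were contained in a hyperplane, there would exist $c = (c_0,\ldots,c_m) \neq 0$ with $\sum_l c_l v_j^{\psi_0\,(l)}(p) = 0$ for every admissible $\psi_0$. Substituting the scaling $v_j^{\psi_0\,(l)}(p) = j^{s-l}\psi^{(l)}(p/j + 1)$ and the series expansion
\[\psi(1 + \varepsilon) = \sum_{i \geq 0} C_{s, \psi_0, i}\, \varepsilon^{s+i}\]
from the proof of Lemma \ref{caplem1} (differentiated $l$ times, which is legitimate thanks to Lemma \ref{intreg}), together with the explicit form $C_{s,\psi_0,i} \propto \int_0^{3/4}\psi_0'(\tau)(1 - \tau)^{-s - i}\, d\tau$, rewrites the constraint as
\[\int_0^{3/4}\psi_0'(\tau)\sum_{i\geq 0} g_i\,(1 - \tau)^{-s-i}\, d\tau = 0 \quad \text{for every admissible } \psi_0,\]
where $g_i$ is a nonzero constant multiple of $P(s+i)\,p^{s+i}\,j^{-i}$ and $P(x) := \sum_{l = 0}^m c_l\, x(x-1)\cdots(x-l+1)\, p^{-l}$. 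Linear independence of the family $\{(1-\tau)^{-s-i}\}_{i\geq 0}$ on $[0, 3/4]$ forces the integrand kernel to vanish, so $g_i = 0$ for every $i$, i.e.\ $P(s + i) = 0$ for every integer $i \geq 0$.

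Since $P$ is a polynomial of degree at most $m$ with infinitely many roots, $P \equiv 0$; because the falling factorials $\{x(x-1)\cdots(x-l+1)\}_{l=0}^m$ form a basis of polynomials of degree at most $m$ and $p^{-l} \neq 0$, every coefficient $c_l$ must vanish, contradicting $c \neq 0$. Hence $V_{j,p} = \R^{m+1}$ and the theorem is proved. The main obstacle I anticipate lies in the rigorous justification of two technical steps: the term-by-term differentiation of the expansion of $\psi$ at $\varepsilon = 0$ (handled via Lemma \ref{intreg} and a Taylor-remainder estimate implicit in the proof of Lemma \ref{caplem1}), and the passage from a vanishing integral functional to a vanishing kernel (which requires density of admissible $\psi_0'$ in a suitable function space, together with control on the convergence of the series $\sum_i g_i (1-\tau)^{-s-i}$ on $[0, 3/4]$).
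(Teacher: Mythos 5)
Your proposal is logically sound in outline, but it takes a genuinely different route from the paper, and the two approaches have rather different cost structures.

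Both arguments run by contradiction through the ``hyperplane'' device: if the relevant set of derivative vectors did not span $\R^{m+1}$, a nonzero $c=(c_0,\dots,c_m)$ would annihilate $\bigl(v^{(l)}(p)\bigr)_l$ for all admissible data. The paper then lets $j\to\infty$: it exploits the uniform convergence $v_j\to\kappa\,x^s$ from Lemma~\ref{ls1}, tests the relation $\sum_l c_l v_j^{(l)}(x)=0$ against an arbitrary $\varphi\in C_c^\infty((0,\infty))$, integrates by parts, and applies the Dominated Convergence Theorem to arrive at $\sum_l c_l (x^s)^{(l)}\equiv 0$ on $(0,\infty)$. Multiplying by $x^{m-s}$ produces a polynomial identity whose coefficients involve the never-vanishing products $s(s-1)\cdots(s-l+1)$, whence $c_l=0$. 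No series expansion, no density argument, no convergence of an infinite Taylor series at a branch point is ever needed.

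Your version instead freezes $j$ and $p$ and lets the profile $\psi_0$ vary, pushing the problem back through the explicit expansion $\psi(1+\varepsilon)=\sum_i C_{s,\psi_0,i}\varepsilon^{s+i}$ of Lemma~\ref{caplem1}. This works \emph{in principle}, but note what it buys you at what price. You correctly identify the two technical debts at the end — term-by-term differentiability of the $\varepsilon^{s+i}$-series and the passage from ``$\int_0^{3/4}\psi_0'\,K=0$ for all admissible $\psi_0$'' to ``$K\equiv 0$'' — and both are real. The first requires an estimate on $C_{s,\psi_0,i}$ (roughly $i^{-s}4^{s+i}$, so the series only converges when $\varepsilon=p/j<1/4$, forcing $j$ large anyway). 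The second requires noticing that the admissible $\psi_0'$ form a \emph{cone}, not a linear space — so your phrase ``the image $V_{j,p}$ is a linear subspace'' is imprecise; what is true is that its linear span is a subspace, and that differences of admissible $\psi_0'$ are dense in $\{h\in C([0,3/4]):h(3/4)=0\}$, after which you still need to argue that $K=\sum_i g_i(1-\tau)^{-s-i}$ converges uniformly on $[0,3/4]$ and that, via the substitution $w=(1-\tau)^{-1}\in[1,4]$, uniqueness of power-series coefficients gives $g_i=0$. All of this is fillable, but it is considerably heavier than the paper's argument, which sidesteps series convergence entirely by passing to the limit $j\to\infty$ first. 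The moral is that the paper's ordering of quantifiers — keep $\psi_0$ fixed, vary $j$ and $x$ — trades the delicate Taylor-remainder and Hausdorff-moment-style analysis for a single application of dominated convergence and the elementary uniqueness of polynomials.
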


\begin{proof}

We consider $\mathcal Z$ to be the set of the pairs $(v,x)$ of all functions $v\in C_{-R}^{1,s}\cap C^{\infty} \big((0,\infty)\big)$ satisfying conditions \eqref{cc1} for some $R>0$, and $x\in (0,\infty)$. So let
	\bgs{\mathcal Z: = \Big\{ (v,x) \; \big| \;\;  x\in (0,\infty) \mbox{ and } \exists\, R>0 \text{ s.t. } & v\in C_{-R}^{1,s} \cap C^{\infty} \big((0,\infty)\big), D^s_{-R} v=0 \text{ in } (0, \infty), \\ 
	&v =0 \text{ in } \Big [-\frac{R}4,0\Big]  \Big\}. }
We fix $m\in \N$. To each pair $(v,x)\in \mathcal Z$ we associate the vector $ \big(v(x), v'(x), \dots, v^{(m)}(x)\big) \in \R^{m+1}$ and consider $\mathcal V$ to be the vector space spanned by this construction. We claim that this vector space exhausts $\R^{m+1}$. 
Suppose by contradiction that this is not so and $\mathcal V$ lays in a hyperplane. Then there exists a vector $(c_0,c_1,\dots,c_m)\in \R^{m+1}\setminus \{0\}$ orthogonal to any vector $  \big(v(x), v'(x), \dots, v^{(m)}(x)\big) $ with $(v,x) \in \mathcal Z$, hence 
	\[ \sum_{i=0}^m c_i v^{(i)}(x) = 0.\] 
We notice that for any $j\geq 1$ the pairs $(v_j,x)$ with $v_j$ satisfying problem \eqref{pbvj1} and $x\in (0,\infty)$ belong to the set $\mathcal Z$. It follows that for any $j\geq 1$ we have that
	\eqlab{  \label{bla10} \sum_{i=0}^m c_i v_j^{(i)} (x) =0.}

Let $\varphi \in  C^{\infty}_c\big((0,\infty)\big)$ be a smooth compactly supported function. Integrating by parts we have that for every $i\in \N_{0}$
	\bgs{ \int_\R v_j^{(i)}(x)\varphi(x)\, dx= (-1)^i \int_\R v_j(x) \varphi^{(i)}(x)\, dx.} 
Thanks to Lemma \ref{ls1}, the sequence $v_j$ is uniformly convergent to $\kappa x^s$ on any bounded subinterval $I\subseteq (0,\infty)$, for some $\kappa>0$. By the  Dominated Convergence Theorem we have that
	\[ \lim_{j \to \infty}  \int_\R v_j^{(i)}(x)\varphi(x)\, dx = (-1)^i\lim_{j \to \infty}  \int_{\R} v_j(x) \varphi^{(i)}(x)\, dx =   (-1)^i \int_{\R} \kappa x^s \varphi^{(i)}(x) \, dx.\] We integrate by parts one more time and obtain that
	\[  (-1)^i \int_\R \kappa x^s \varphi^{(i)}(x)\, dx = \int_\R \kappa (x^s)^{(i)} \varphi(x)\, dx.\]
It follows that
	\[ \lim_{j \to \infty}  \int_\R v_j^{(i)}(x)\varphi(x)\, dx =   \int_\R  \kappa ( x^s)^{(i)} \varphi(x)\, dx .\] Multiplying by $c_i$ and summing up, we obtain that
\bgs{  \lim_{j\to \infty}  \int_\R \sum_{i=0}^m c_i v_j^{(i)}(x) \varphi(x)\, dx  =  \int_\R  \sum_{i=0}^m c_i \kappa ( x^s)^{(i)} \varphi(x)\, dx .}
From this and equality \eqref{bla10} we finally obtain that 
	\[  0 = \int_{\R}\sum_{i=0}^m c_i \kappa (x^s)^{(i)} \varphi(x)\, dx \] for any $\varphi \in C_c^{\infty}\big((0,\infty)\big)$. 
This implies that on $(0,\infty)$
	\[ 0=  \kappa \sum_{i=0}^m c_i(x^s)^{(i)} =\kappa \sum_{i=0}^m c_i s(s-1)\dots(s-i+1)x^{s-i}.   \]
We divide this relation by $\kappa$ (that is strictly positive), multiply by $x^{m-s}$ and obtain that for any $x \in (0,\infty) $
	\[ \sum_{i=0}^m c_i s(s-1)\dots(s-i+1) x^{m-i}  =0.\]
We have here a polynomial that vanishes for any positive $x$. Thanks to the fact the $s \in(0,1)$ the product $s(s-1) \dots (s-i+1)$ is never zero, therefore one must have $c_i= 0 $ for every $i\in\N_0$. This is a contradiction since the vector $(c_0,\dots,c_m)$ was assumed not null. Hence the vector space $\mathcal V$ exhausts $\R^{m+1}$ and there exists $(v,p) \in \mathcal Z $ such that $\big( v(p), v'(p),\dots, v^{(m)}(p)\big)=(0,0,\dots,1)$. This concludes the proof of Theorem \ref{thm4}.
\end{proof}

\subsection{Proof of the density result}\label{sectthm1}
This subsection is dedicated to the proof of Theorem \ref{thm:thm1}. 

\begin{proof}[Proof of Theorem \ref{thm:thm1}]	
We prove that for any $m\in \N$ and any monomial $q_m(x)=x^m$ there exists a Caputo-stationary function $u$ such that 
	\[ \|u-q_m\|_{C^k\lr{[0,1]}} < \varepsilon.\] 
For an arbitrary $m\in \N$, we take for convenience the monomial \[ q_m(x)=\frac{x^{m} }{m!}. \] \textcolor{black}{Also, we consider $p,R>0$ and the function $v$ as introduced in Theorem \ref{thm4}} and 
we translate and rescale $v$. Let $\delta $ be a positive quantity (to be taken conveniently small  in the sequel) and let $u$ be the function
		 \[ u(x):= \frac{v(\delta x +p)}{\delta^m} .\]
Since $v\in C_{-R}^{1,s} \cap C^{\infty}\big((0,\infty)\big)$ we have that $u \in  C_{ \frac{-p-R}{\delta}}^{1,s} \cap C^{\infty} \lr{\Big(-\displaystyle \frac{p}{\delta}, \infty\Big)}$ and 
	\bgs{  \Gamma(1-s) D_{ \frac{-p-R}{\delta} }^s u(x) =\al    \int_{  \frac{-p-R}{\delta} }^x u'(t)(x-t)^{-s}\, dt \\
		= \al  \delta^{1-m} \int_{  \frac{-p-R}{\delta} }^x v'(\delta t+ p) (x-t)^{-s}\, dt.}
We change the variable $y=\delta t +p$ and obtain that
	\bgs{ \Gamma(1-s)  D^s_{\frac{-p-R}{\delta} } u(x)  = \al  \delta^{s-m} \int_{-R}^{\delta x +p} v'(y) (\delta x+ p-y)^{-s}\, dy\\=\al 
\Gamma(1-s)  D^s_{-R} v(\delta x +p).}
Let  $a:=\displaystyle \frac{-p-R}{\delta}$. Using the properties \eqref{cc1} of $v$ we obtain that
	\bgs{ D_a ^su(x) =0 \text{ in } \Big(-\frac{p}{\delta}, \infty\Big). }  
With this notation, we have that $u\in C_a^{1,s}$ and since $\displaystyle -\frac{p}{\delta}<0$, that $D_a ^su(x) =0 \text{ in } [0, \infty). $\\ 
Furthermore, from the conditions \eqref{cc2} and the definition of $u$ we get that
	\bgs{ u^{(l)}(0)& = \delta^{l-m}v^{(l)}(p)= 0 & & \text{ for any } \quad  l< m\\
		u^{(m)}(0)& = v^{(m)}(p)= 1.&&}
Let for any $x>-p/\delta$ \[ g(x):= u(x) -q_m(x).\] We have that
	\eqlab{ \label{gr1} g^{(l)}(0)&= 0  &\mbox{ for any } &l\leq m\quad  \mbox{and}\\
		g^{(m+l)} (x)&= u^{(m+l)}(x) & \mbox{ for any } &l\geq 1 .} 
Moreover for $l\geq 1 $ we have that $u^{(m+l)}(x)= \delta^l v^{(m+l)} (\delta x +p)$ and it follows that
	\[| g^{(m+l)}(x) | = \delta^{l} |v^{(m+l)}(\delta x+p) | .\]
Hence for $x \in [0,1]$ we have the bound
	\eqlab{ \label{bg1} | g^{(m+l)}(x) |\leq \delta^l \sup_{y \in [p, p+\delta] } |v^{(m+l)} (y)| = \tilde C \delta^l,} where $\tilde C$ is a positive constant.
We consider the derivative of order $k$ of $g$ and take its Taylor expansion with the Lagrange reminder. \textcolor{black}{Thanks to \eqref{gr1}, for some $c\in (0,x)$ we have that
	\[ g^{(k)}(x)= \sum_{i=\max\{ k,m+1\}} ^{k+m+1} g^{(i)}(0) \frac{x^{i-k}}{(i-k)!} +g^{(m+k+2)} (c)  \frac{x^{m+2}}{(m+2)!} .\] }
Using \eqref{bg1} for any $x\in [0,1]$, eventually renaming the constants we have that 
\[ |  g^{(k)}(x)| \leq C \sum_{i={\max\{1,k-m\}}}^{k+2} \delta^i, \]
therefore for $k\in \N_{0}$
\[ |  g^{(k)}(x)| = |q_m^{(k)}(x) -u^{(k)}(x)|= \mathcal O(\delta) .  \]
If we let $\delta\to 0$ we have that $u^{(k)}$ approximates $q_m^{(k)}$. Finally, for any small $\varepsilon(\delta)>0$
	\[ \|u-q_m\|_{C^k\lr{[0,1]}} < \varepsilon\] and this concludes the proof of Theorem \ref{thm:thm1}.
\end{proof}
%

We give here some explicit examples related to some Lemmas that were introduced in this section.\\
\begin{example}\label{exam}
To give an example of Lemma \ref{lem:int11}, we take $a=0, b=1, s=1/2$ and the function $\varphi(x)=x$ in $[0,1]$ and $\varphi(x)=0$ in $(-\infty,0)$. We built the function $u\in C_0^{1,1/2}$ that satisfies
	\eqlab{\label{es1} D_0^{\frac{1}2} u(x)& =0 &\text{ in } & (1,\infty),\\
			u(x)&=x &\text{ in }  & [0,1],\\
			u(x)&=0&\text{ in }  & (-\infty,0) .}
Let \[ g(x):= -\int_0^1 \frac{\varphi'(t)}{\sqrt{x-t}} \, dt=-\int_0^1 (x-t)^{\frac{1}2}\, dt = 2\sqrt{x-1}-2\sqrt{x}.\] 
According to Lemma \ref{lem:int11} and to Theorem \ref{thm:probc}, the unique solution of the problem \eqref{es1} is given by 
\[ u(x) =u(1)+  \frac{1}{\pi}\int_1^x \frac{g(t)}{\sqrt{x-t}}\, dt,\]
and computing, this gives
\[ u(x)=\frac{2}{\pi} \lr{x\arcsin \frac{1}{\sqrt x}-\sqrt{x-1}}. \]
We depict this function in the following Figure \ref{fign:es1}.
\begin{center}
\begin{figure}[htpb]
	\hspace{0.6cm}
	\begin{minipage}[b]{0.85\linewidth}
	\centering
	\includegraphics[width=0.90\textwidth]{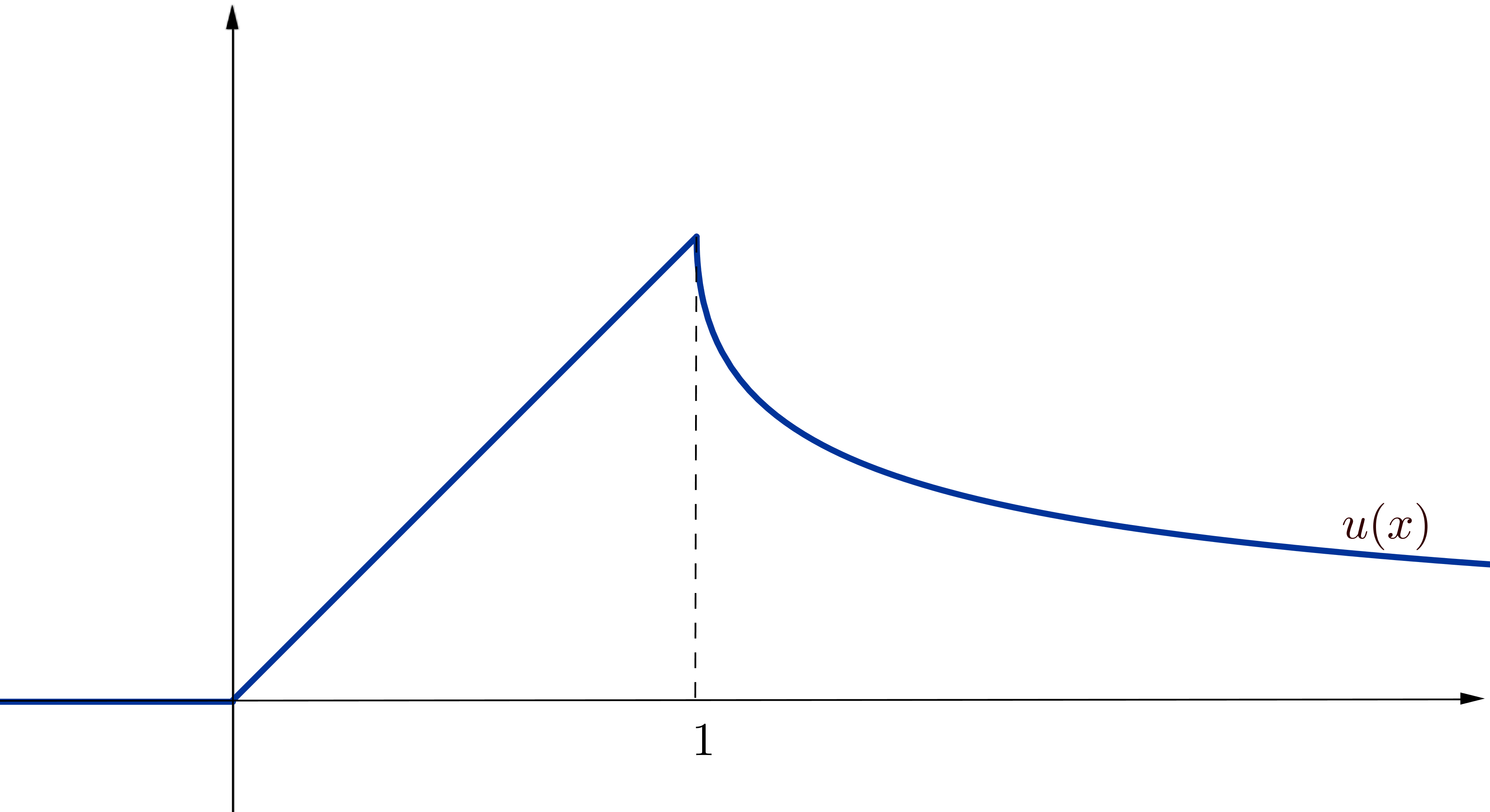}
	\caption{A Caputo-stationary function in $(1,\infty)$ prescribed on $(-\infty,1]$}   
	\label{fign:es1}
	\end{minipage}
\end{figure} 
\end{center}
\end{example}

\begin{example}\label{exam2}
In Lemma \ref{caplem1}, we take $a=0, b=1, s=1/2$ and the quadratic function
\begin{equation*}   \psi_0(x) =\left\{\begin{aligned} &\frac{16}9 \lr{x-\frac{3}4}^2 &\mbox{ in } &\lrq{0,\frac{3}4},\\
                       	& 0  & \mbox{ in } &\lrq{\frac{3}4,1}.\end{aligned}\right.\end{equation*}
                So we are looking for a function $\psi  \in C_0^{1,1/2}$ that satisfies   
                \eqlab{\label{es2} D_0^{\frac{1}2} \psi(x)& =0 &\text{ in } & (1,\infty),\\
			\psi(x)&=\psi_0(x) &\text{ in }   & (-\infty,1] .}
			The solution, according again to Lemma \ref{lem:int11} and to Theorem \ref{thm:probc} is given by
			\[\psi(x) = \frac{1}{\pi}\int_1^{x}g(t)(x-t)^{-\frac{1}2} \, dt, \quad \mbox{ where } \quad g(t)= -\int_0^{\frac{3}4}  \psi_0'(t) (x-t)^{-\frac{1}2}\, dt.\]
Computing this, we have that
\[ g(t)= -\frac{16}{27} \lr{ 8t^{\frac{3}2}-9t^{\frac{1}2} -(4t-3)^{\frac{3}2}}\] 
and  
\bgs{  27 \pi \psi  (x) = &\; 27 \pi + \sqrt{x-1} (-48x+52) +\arcsin \frac{1}{\sqrt{x}} (96x^2-144x)   \\
	&\; -\arcsin \frac{1}{\sqrt{4x-3}} (96x^2-144x+54).}
We depict this function in the following Figure \ref{fign:es2}.	
\begin{center}
\begin{figure}[htpb]
	\hspace{0.6cm}
	\begin{minipage}[b]{0.85\linewidth}
	\centering
	\includegraphics[width=0.90\textwidth]{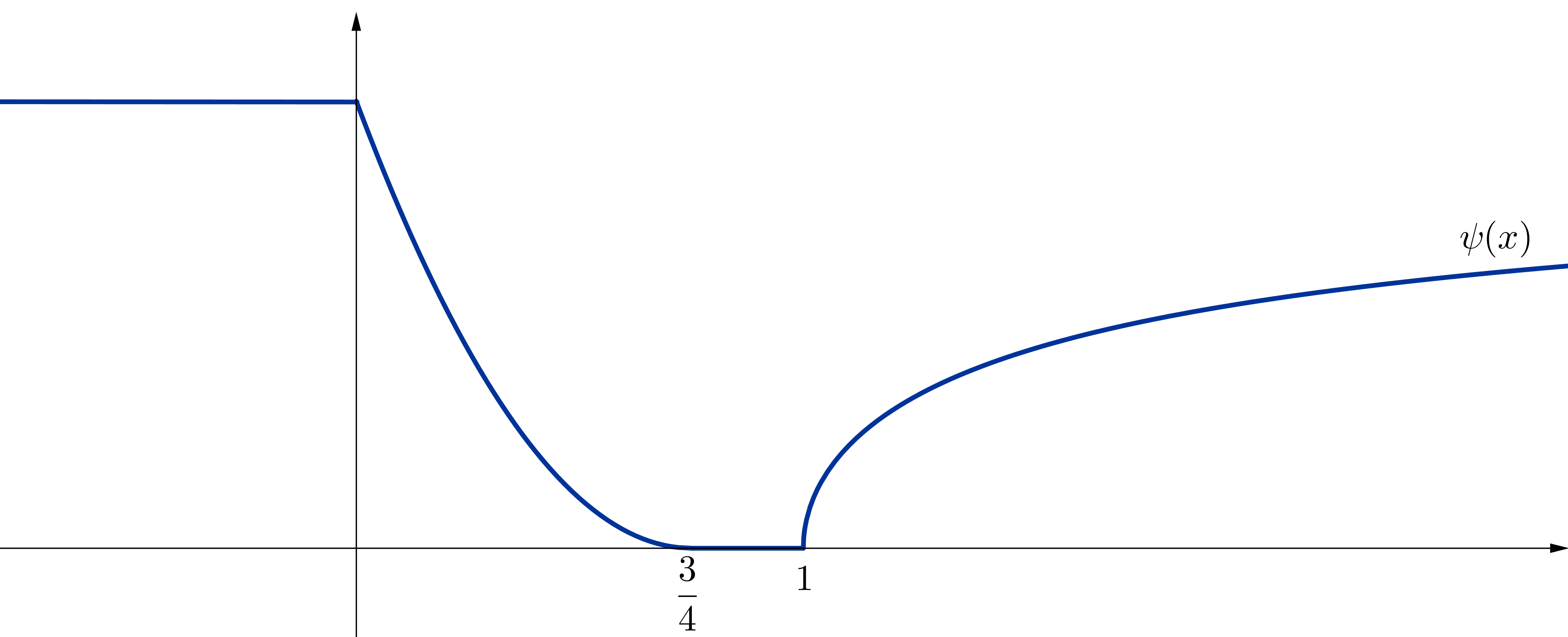}
	\caption{A Caputo-stationary function in $(1,\infty)$ prescribed on $(-\infty,1]$}   
	\label{fign:es2}
	\end{minipage}
\end{figure} 
\end{center}	
\end{example}

\chapter{Potential theory approach to the fractional Laplacian}\label{chap4}
\begin{abstract}
In this chapter, we give a self-contained elementary exposition of the representation formula for the Green function on the ball. In this exposition, only elementary calculus techniques will be used, in particular, no probabilistic methods or computer assisted algebraic manipulations are needed.  The main result of the first section in itself is not new (see for instance \cite{conto,stableprocess}), however we believe that the exposition is original and easy to follow. In the last section of the Chapter we present an elementary approach for the proof of the Schauder estimates for the equation 
$(-\Delta)^s u(x)=f(x)$, with $f$ having a modulus of continuity $\omega_f$. This is 
based on the Poisson representation formula and 
dyadic ball approximation argument. We give the explicit modulus of continuity of 
$u$ in balls $B_r(x)\subset \R^n$ in terms of $\omega_f$.
\end{abstract}

\section{Some observations on the Green function on the ball}
The Green function for the ball in a fractional Laplace framework naturally arises in the study of the representation formulas for the fractional Laplace equations. In particular, in analogy to the classical case of the Laplacian, given an equation with a known forcing term on the ball and vanishing Dirichlet data outside the ball, the representation formula for the solution is precisely the convolution of the Green function with the forcing term. As in the classical case, the Green function is introduced in terms of the Poisson kernel. For this, we will provide both the representation formulas for the problems
	\begin{equation}\label{LaplaceeqD}
		\begin{cases}
			\frlap u= 0   \qquad  &\mbox{ in }  {B_r},    \\
			 u= g \qquad  &\mbox{ in } {\obal}
		\end{cases}
	\end{equation}
and
	\begin{equation}\label{PoissoneqD}
		\begin{cases}
   			\frlap u= g \qquad  &\mbox{ in }    {B_r},    \\
			 u= 0   \qquad  &\mbox{ in }  {\obal}
	\end{cases}
	\end{equation}
in terms of the fractional Poisson kernel and respectively the Green function. Moreover, we will prove an explicit formula for the Green function on the ball.

\bigskip
Here follow some notations and a few preliminary notions.

Check the Appendix \ref{Four} for a brief introduction to the Fourier transform. We add here that $\widehat{f}$ is the Fourier transform of $f$ in a distributional sense, for $f$ that satisfies \[  \int_{\Rn} \frac{|f(x)|}{1+|x|^{p}}\, dx <\infty \quad \mbox{ for some }p \in \mathbb{N}\]  if for any $\varphi \in \mathcal{S}(\Rn)$ we have that
	\begin{equation} \label{dists1} \int_{\Rn} \widehat f(x) \varphi(x) \, dx= \int_{\Rn} f(x) \widehat \varphi (x) \, dx.\end{equation} We remark that the integral notation is used in a formal manner whenever the arguments are not integrable.
	
  We introduce the notion of distributional solution. Following the approach in \cite{Silvestre} (see Definition 2.1.3),  we introduce a suitable functional space where distributional solutions can be defined.
 Let \[ \Sa_s(\Rn) := \Big\{ f \in C^{\infty}(\Rn) \; \big| \; \forall \alpha \in \mathbf{N}^n_0, \; \sup_{x\in \Rn} \big(1+|x|^{n+2s}\big) |D^{\alpha} f(x) | <+\infty \Big\}. \]   The linear space $\Sa_s(\Rn)$ endowed with the family of seminorms
 	\begin{equation*} \label{seminormss1}[f]^{\alpha}_{\Sa_s(\Rn)}:=\sup_{x\in \Rn} \big(1+|x|^{n+2s}\big) |D^{\alpha} f (x)|\end{equation*} is a locally convex topological space. We denote with $\Sa_s'(\Rn)$ the topological dual of $\Sa_s(\Rn)$.
 	
We notice that if $\varphi \in \Sa(\Rn)$ then $\frlap \varphi \in \Sa_s(\Rn)$, which makes this framework appropriate for the distributional formulation. In order to prove this, we observe that for any $x\in \Rn \setminus B_1$ the bound
 	\begin{equation} |\frlap \varphi(x)| \leq c_{n,s} |x|^{-n-2s}\label{frb1} \end{equation}
 	follows from the upcoming computation and the fact that $\varphi \in S(\Rn)$
 	\[\begin{split}
 	  |\frlap& \varphi (x) |\\ \leq \;&  \int_ {B_{\frac{|x|}{2}}} \frac{\big|2 \varphi (x) -\varphi(x-y) - \varphi(x+y) \big| }{|y|^{n+2s}}\, dy
 	 + 2 \int_ {\Rn \setminus B_{\frac{|x|}{2}}} \frac{\big| \varphi (x)-  \varphi(x+y) \big|}{|y|^{n+2s}}\, dy  \\
 	 \leq &\; c_{n,s} |x|^{-n-2s} \bigg( \sup_{z\in \Rn} (1+|z|)^{n+2}|D^2\varphi (z)|   +  \sup_{z\in \Rn} (1+|z|)^{n}|\varphi (z)|
 	 + \|\varphi \|_{L^1(\Rn)}\bigg)  .
 	\end{split}\]
 	Moreover, we observe that, up to constants,
 		\[ \begin{split}   \partial_{x_i} \frlap \varphi(x) &\;=\partial_{x_i} \F^{-1}\Big(|\xi|^{2s}\widehat \varphi(\xi)\Big)(x)  		
 		= \F^{-1}\Big( i\xi_i|\xi|^{2s}\widehat \varphi(\xi) \Big)(x)\\
 		&\;= \F^{-1}\Big(|\xi|^{2s}\widehat{\partial_{x_i} \varphi} (\xi) \Big) (x)= \frlap \partial_{x_i} \varphi(x).
 			\end{split}\]
Hence, by iterating the presented argument, one proves that $\frlap \varphi \in \Sa_s(\Rn)$, which gives the claim. And so:
\begin{defn}
Let $f \in \Sa'(\Rn)$, we say that $u \in \Sa_s'(\Rn)$ is a distributional solution of
		\[ \frlap u=f \; \mbox{in } \; \Rn\] if
		\begin{equation}\label{disf1} \langle u,\frlap \varphi \rangle_s= \int_{\Rn} f(x)\varphi(x) \,dx\quad \mbox{for any} \; \varphi \in \Sa(\Rn), \end{equation}
where $\langle\cdot, \cdot \rangle_s$ denotes the duality pairing of $\Sa_s'(\Rn)$ and $\Sa_s(\Rn)$ and the latter (formal) integral notation designates the pairing $\Sa(\Rn)$ and $\Sa'(\Rn)$.
\end{defn}
\noindent We use the integral notation in \eqref{disf1} in a formal manner whenever the arguments are not integrable. Notice that the inclusion $L_s^1(\Rn) \subset \Sa_s'(\Rn)$ holds, in particular for any $u\in L_s^1(\Rn)$ and $\psi \in \Sa_s(\Rn)$ we have that
	\begin{equation}\begin{split}
	\Big| \langle u ,\psi\rangle_s  \Big| \leq &\;  \int_{\Rn} |u(x)|\, |\psi(x)|\, dx \leq \int_{\Rn} \frac{|u(x)|}{1+|x|^{n+2s}}  (1+|x|^{n+2s})|\psi(x)|\, dx \\
	\leq&\;  [\psi]^0_{\Sa_s(\Rn)} \| u\|_{L_s^1(\Rn)}.
	\label{db1} \end{split}
	\end{equation}

We introduce now the four functions $A_r$, $\Phi$, $P_r$ and $G$, namely the $s$-mean kernel, the fundamental solution, the Poisson kernel and the Green function. The reader can see Section 2.2 in \cite{EVANS} for the theory in the classical case.

\begin{defn}
 Let  $r>0$ be fixed. Then
	\begin{equation} \label{smeandefn}
	A_r(y) :=  \begin{cases}
		c(n,s)  \displaystyle \frac{r^{2s}}{(|y|^2-r^2)^s|y|^n} \quad &y \in \Rn \setminus \overline{B_r},\\
		0 \quad &y \in \overline B_r,
		\end{cases}
	\end{equation}
where $ c(n,s)>0$.
\end{defn}

\begin{defn} For any $x\in \Rn \setminus \{0\}$ 
   	\begin{equation}
		\Phi(x) :=
			\begin{cases}
			 \displaystyle a(n,s){|x|^{-n+2s}} \quad &\text {if } n \neq 2s, \\
			\displaystyle a\Big(1,\frac{1}{2}\Big) \log |x| \quad & \text {if } n = 2s  ,
			\end{cases}
	\label{fundsolution}
	\end{equation}
where $a(n,s)>0$.
\end{defn}

\begin{defn}
Let $r>0$ be fixed. For any $ x \in B_r$ and any $ y \in \Rn \setminus \overline{B}_r$
	\begin{equation}
	 P_r(y,x) :=  c(n,s) \Bigg (\frac {r^2-|x|^2}{|y|^2-r^2}\Bigg)^s \frac {1}{|x-y|^n}. \label{poissondefn}
	\end{equation}
\end{defn}
\noindent The Poisson kernel $P_r$ gives a function which is known outside the ball and $s$-harmonic inside (i.e., a solution for the problem \eqref{LaplaceeqD}), by convolution with the known exterior data. Indeed:

 \begin{theorem} \label{theorem:DPL}
 Let $r>0$, $g \in L^1_s(\Rn) \cap C({\Rn})$ and let
\begin{equation}
		 u_g(x) : =
			\begin{cases}	
				\displaystyle  \int_{{\Rn}\setminus B_r} P_r(y,x) g(y)\, dy &\quad  \, \text{if } x\in B_r, \\
				g(x) &\quad \, \text{if } x \in {\obal}.
			\end{cases} \label{solD}
	\end{equation}
Then $u_g$ is the unique pointwise continuous solution of the problem \eqref{LaplaceeqD}
	\begin{equation*}
	\begin{cases}
	\frlap u= 0 \qquad  &\mbox{ in }  {B_r},
\\	u= g \qquad  &\mbox{ in }  {\obal}.
		\end{cases}
	\end{equation*}
\end{theorem}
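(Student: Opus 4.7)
The plan is to split the argument into four steps: (i) establish the normalization of the Poisson kernel, (ii) verify that, as a function of~$x$, the kernel $P_r(y,\cdot)$ is pointwise $s$-harmonic on $B_r$ for every fixed $y\in\R^n\setminus\overline{B_r}$, (iii) pass this property under the integral sign to conclude that $u_g$ is $s$-harmonic in $B_r$ and continuous across $\partial B_r$, and finally (iv) deduce uniqueness from the global maximum principle stated in Theorem~\ref{THM-MA-1}.

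First I would show that $\int_{\R^n\setminus B_r} P_r(y,x)\,dy=1$ for every $x\in B_r$. The case $x=0$ amounts to computing $\int_{\R^n\setminus B_r} A_r(y)\,dy=1$, which is a one-variable integral after passing to polar coordinates (and it pins down the constant $c(n,s)$ via the Beta function identities used in Subsection~\ref{ctfrlap}). To pass from $x=0$ to general $x\in B_r$ I would exploit the Möbius/Kelvin-type change of variable $y\mapsto z$ with $z-x$ proportional to the inversion of $y$ through $\partial B_r$, which carries $A_r(z)\,dz$ exactly into $P_r(y,x)\,dy$; alternatively one can differentiate in $x$ and verify the resulting integral vanishes.

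The second step is the core computational point. I would prove that for each fixed $y\in\R^n\setminus\overline{B_r}$ the function $x\mapsto P_r(y,x)$ is pointwise $s$-harmonic in $B_r$, using the singular-integral definition \eqref{frlapdef}. This can be reduced by translation and scaling to the case $r=1$, and then by a rotation to the case $y$ on a coordinate axis; at this point the integral defining $(-\Delta)^s P_1(y,\cdot)(x)$ can be split into an inner ball contribution (where the P.V.\ cancellation kills the singularity) and a far-field contribution, both of which can be evaluated via the Beta function formulas already exploited in Subsection~\ref{ctfrlap} and in Lemma~\ref{L1}. The hard part will be this direct verification of $(-\Delta)^s_x P_r(y,x)=0$: this is where one must handle the most delicate cancellations, and an equivalent (and cleaner) route is to show instead the mean value identity $u(x)=\int_{\R^n\setminus B_r(x_0)} A_\rho(y-x_0)\,u(y)\,dy$ for $s$-harmonic functions on $B_\rho(x_0)$ and then interpret $P_r(y,x)$ as the Radon--Nikodym derivative, with respect to $A_r(y)\,dy$, of the harmonic measure at $x$.

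Third, the dominated convergence theorem, together with the bound $|P_r(y,x)|\le C(x)\,|y|^{-n-2s}$ for $|y|$ large and the hypothesis $g\in L^1_s(\R^n)$, allows differentiation under the integral and gives $(-\Delta)^s u_g(x)=0$ on $B_r$. To show continuity at a point $x_0\in\partial B_r$ I would use the normalization from step~(i) to write
\[
u_g(x)-g(x_0)=\int_{\R^n\setminus B_r}P_r(y,x)\big(g(y)-g(x_0)\big)\,dy,
\]
split the integral into a small ball $B_\delta(x_0)$ and its complement, control the first piece by the modulus of continuity of $g$ at $x_0$ (using the normalization), and control the second piece using that $r^2-|x|^2\to 0$ as $x\to x_0\in\partial B_r$. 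This forces $u_g(x)\to g(x_0)$. Finally, uniqueness follows by applying Theorem~\ref{THM-MA-1} (and its companion for $-u$) to the difference $u-v$ of two pointwise continuous solutions: it vanishes outside $B_r$ and is $s$-harmonic inside, hence identically zero.
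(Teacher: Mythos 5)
Your overall architecture (normalization, $s$-harmonicity of the kernel, passage under the integral, boundary continuity, uniqueness via the maximum principle) is sensible, and steps (i), the boundary-continuity argument, and the uniqueness argument match the paper's proof closely. The key structural difference is your step (ii): you propose to show \emph{directly} that $x\mapsto P_r(y,x)$ satisfies $(-\Delta)^s_x P_r(y,x)=0$, by a hands-on Beta-function computation, and then to push $(-\Delta)^s$ under the integral sign by dominated convergence. The paper deliberately avoids this: it instead proves that $u_g$ satisfies the \emph{$s$-mean value property} $A_\rho*u_g(x)=u_g(x)$ for all small $\rho$, and then invokes Theorem~\ref{theorem:arm}, which converts the mean value property into $s$-harmonicity. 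The mean value identity is obtained from the potential-theoretic identities \eqref{Ifu} and \eqref{Ipu} (namely $\int A_\rho\,\Phi(\cdot-y)=\Phi$ and $\int P_r\,\Phi(\cdot-y)=\Phi$), first for $g\in C^\infty_c$, where $g$ is represented as $\varphi*\Phi$ via Corollary~\ref{lemma:unouno}, and then for general $g\in L^1_s\cap C$ by a cutoff approximation $g_k=\eta_k g$ with careful control of the tails.

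Two concrete weaknesses in your primary route. First, the claim $(-\Delta)^s_x P_r(y,x)=0$ for fixed $y\notin\overline{B_r}$ is a genuinely nontrivial cancellation identity; you flag it as ``the hard part'' but do not resolve it, and there is no painless reduction to the computations in Subsection~\ref{ctfrlap} or Lemma~\ref{L1} (those treat $(1-|x|^2)^s_+$ and $x^s_+$, not a kernel with a moving singularity at $y$). Second, even granting the identity, interchanging the principal-value operator $(-\Delta)^s$ with the $y$-integral is not a bare dominated-convergence argument: you must dominate the $\eps$-truncated singular integral uniformly in $\eps$ and in $y$, which requires a second-order Taylor bound on $P_r(y,\cdot)$ near $x$ that degenerates as $|y|\to\infty$ and as $|x|\to r$. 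The paper sidesteps \emph{both} of these issues by working with the bounded kernel $A_\rho$ and the mean value property rather than with the singular integral $(-\Delta)^s$ itself. Your ``cleaner alternative'' — read $P_r(y,x)$ as a Radon--Nikodym derivative of $s$-harmonic measure and use a mean value identity — is essentially the paper's route, but as stated it is a redescription of the goal rather than an argument: you would still need to prove the mean value identity for $u_g$, and for that the representation $g=\varphi*\Phi$ (for $g\in C^\infty_c$) together with \eqref{Ifu}--\eqref{Ipu} and an approximation argument is exactly what's needed.

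On the smaller points: your uniqueness step correctly applies the weak maximum principle Theorem~\ref{THM-MA-1} to $u-v$ and $v-u$ (the paper cites the strong version Theorem~\ref{THM-MA-1-STRONG}, but your shorter route works). Your boundary-continuity argument via the normalization and a split into $B_\delta(x_0)$ and its complement matches the paper's computation. So the proposal is correct in spirit but leaves the central step as a declared gap, where the paper has a complete and more economical argument via potential theory.
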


\begin{defn} Let $r>0$ be fixed. For any $x, z \in B_r$ and $x\neq z$,
\begin{equation} G(x,z) := \Phi(x-z) -\int_{\obal} \Phi(z-y) P_r(y,x) \, dy . \label{greendefn}\end{equation}
\end{defn}
\noindent A formula for the Green function $G$ that is more suitable for applications is introduced in the following result. 
\begin{theorem} \label{theorem:thm1} 
Let $r>0$ be fixed and let $G$ be the function defined in \eqref{greendefn}. Then if $n \neq 2s$\begin{equation} \label{forgkns}   G(x,z) =  \kappa(n,s) |z-x|^{2s-n}  \int_0^{r_0(x,z)}  \frac{t^{s-1}} {(t+1)^\frac{n}{2}} \, dt ,\end{equation}
where
 	\begin{equation} \displaystyle r_0(x,z) = \frac{(r^2-|x|^2)(r^2-|z|^2)}{r^2|x-z|^2}\label{ro} \end{equation}
and $\kappa(n,s)>0$.\\
For $n=2s$, the following holds
\begin{equation} G(x,z)= \kappa\Big(1,\frac{1}{2}\Big) \log\bigg( \frac{r^2-xz+\sqrt{(r^2-x^2)(r^2-z^2)}}{r|z-x|}\bigg).\label{formn1s12} \end{equation}
\end{theorem}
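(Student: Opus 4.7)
The plan is to evaluate the convolution integral in \eqref{greendefn} explicitly. A dilation $y\mapsto y/r$ reduces matters to the case $r=1$, since $\Phi$ is homogeneous of degree $2s-n$, $P_r$ scales accordingly, and the quantity $r_0(x,z)$ is itself scale invariant. Rotational invariance then lets me assume $x$ and $z$ lie in a fixed two-plane. The goal is to show that $\Phi(x-z) - \int_{\C B_1} \Phi(z-y)\,P_1(y,x)\,dy$ coincides with the right-hand side of \eqref{forgkns} (or of \eqref{formn1s12} when $n=2s$).

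The key step is a M\"obius change of variables on $\overline{B_1}$ that sends $x$ to the origin. Under such a transformation $T$, the Poisson kernel $P_1(\cdot,x)\,dy$ becomes the radial kernel $P_1(\cdot,0)\,dw$ multiplied by an explicit Jacobian factor, and the image $z^{*}:=T(z)$ satisfies a Kelvin-type identity that relates $(1-|z^{*}|^2)/|z^{*}|^2$ to $r_0(x,z)$, so the target quantity $r_0(x,z)$ emerges naturally. After this reduction, polar coordinates $w=\rho\omega$ with $\rho>1$, together with the classical Funk--Hecke spherical average of $|z^{*}-\rho\omega|^{2s-n}$, turn the integral into a one-dimensional radial integral in $\rho$. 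A final substitution of the form $t=(1-|z^{*}|^2)/(\rho^2-\mathrm{const.})$, chosen so that $\rho=1\leftrightarrow t=r_0$ and $\rho=\infty\leftrightarrow t=0$, converts it into the incomplete-Beta form $\int_0^{r_0} t^{s-1}(t+1)^{-n/2}\,dt$ appearing in \eqref{forgkns}.

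The numerical constant $\kappa(n,s)$ produced by combining the Jacobian of $T$, the spherical average, and the substitution is pinned down by a single normalization check: as $r_0\to+\infty$ (i.e.\ $z\to x$ inside $B_1$), the full integral $\int_0^{\infty} t^{s-1}(t+1)^{-n/2}\,dt = B(s,\tfrac{n}{2}-s)$ must combine with $|z-x|^{2s-n}$ to recover the singularity $\Phi(x-z)$, forcing $\kappa(n,s)\,B(s,\tfrac{n}{2}-s)=a(n,s)$. The case $n=2s$, necessarily $n=1$ and $s=1/2$, is handled by the same strategy with $\Phi(w)=a(1,\tfrac12)\log|w|$; the spherical average is trivial and the resulting one-dimensional integral is elementary, yielding after simplification the inverse-hyperbolic-logarithm expression \eqref{formn1s12}. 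The main obstacle will be the careful bookkeeping of Jacobians in the M\"obius step and of the multiplicative constants in the spherical average; a useful sanity check throughout is the boundary limit $r_0\to 0$ (as $x$ or $z$ tends to $\partial B_1$), which forces $G(x,z)\to 0$ both in \eqref{forgkns} and in the Poisson-type definition \eqref{greendefn}.
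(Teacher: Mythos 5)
Your proposal is correct, and it takes a genuinely different route from the paper. The paper proves the identity by the Kelvin inversion $\mathbf{K}_z$ \emph{centered at the second argument} $z$, which exchanges $B_r$ and $\Rn\setminus\overline{B_r}$: it sends $x$ to a point $x^*$ \emph{outside} the ball and pulls the exterior integration domain back to $B_r$; the one-dimensional reduction is then achieved by an explicit hyperspherical-coordinate identity (Proposition~\ref{proposition:pointinv} and \eqref{prop1}) followed by a chain of real substitutions and Proposition~\ref{proposition:uss}, and the constant $\kappa(n,s)=a(n,s)\,k(n,s)$ is evaluated head-on from Beta integrals. You instead use a M\"obius automorphism of $B_1$ that fixes the ball and normalizes $x$ to the origin, so that the Poisson kernel becomes radial $P_1(\cdot,0)$; the distance factor $|z-y|^{2s-n}$ is then handled by the two-point Kelvin identity $|T(z)-T(y)| = |z-y|/(\sigma(z)\sigma(y))^{1/2}$, and the angular average is absorbed by a Funk--Hecke/Gegenbauer computation rather than the paper's ad-hoc trigonometric identity. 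The two conformal maps are not the same (a ball automorphism versus an exterior--interior inversion), but they serve the same structural purpose. Your determination of $\kappa(n,s)$ by matching the short-distance singularity ($r_0\to\infty$, $G(x,z)\sim\Phi(x-z)$, $\int_0^\infty t^{s-1}(t+1)^{-n/2}\,dt = B(s,\tfrac{n}{2}-s)$) is slicker than the paper's direct evaluation and does indeed reproduce $\kappa(n,s)=a(n,s)/B(s,\tfrac{n}{2}-s)$, consistent with Theorem~\ref{theorem:kns}. The trade-off is that your approach presupposes the conformal covariance of the fractional Poisson kernel under ball automorphisms, which is a standard but nontrivial fact you would need to establish or cite, whereas the paper's route is longer but uses only elementary identities proved in Subsection~\ref{appy}. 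Finally, note that for $n=2s$ (i.e.\ $n=1$, $s=1/2$) the paper \emph{also} passes through a one-dimensional fractional-linear map $v=(yx-1)/(y-x)$, which is precisely a M\"obius automorphism of the interval; so in that case your proposal and the paper's proof essentially coincide.
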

This result is not new (see \cite{conto,stableprocess}), however, the proof we provide uses only calculus techniques, therefore we hope it will be accessible to a wide audience. It makes elementary use of special functions like the Euler-Gamma function, the Beta and the hypergeometric function, that are introduced in the Appendix \ref{special} (see also references therein).

The main property of the Green function is stated in the next theorem, as it gives the solution of an equation with a known forcing term in a ball and vanishing Dirichlet data outside the ball, by convolution with the forcing term. While this convolution property in itself may be easily guessed from the superposition effect induced by the linear character of the equation, the main property that we point out is that the convolution kernel is explicitly given by the function $G$.

\begin{theorem}\label{theorem:thm2}
Let $r>0$, $h \in {C^{0,\eee}(  B_r)  \cap C(\overline B_r)}$ and let
	 \begin{equation*}
		u(x) : =
			\begin{cases}
		\displaystyle \int_{B_r} h(y) G(x,y) \, dy \quad & \text{ if } x\in B_r, \\
		0 \quad \quad & \text{ if } x \in {\obal}.
		\end{cases}
	\end{equation*}
Then $u$ is the unique pointwise continuous  solution of the problem \eqref{PoissoneqD}
	\begin{equation*}
		\begin{cases}
		    \frlap u= h   \qquad &\mbox{ in } {B_r} ,
 		\\  u= 0   \qquad &\mbox{ in } {\obal}.
	\end{cases}
	\end{equation*}
\end{theorem}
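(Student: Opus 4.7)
My plan is to decompose $u$ using the very structure of $G$ as defined in \eqref{greendefn}, reducing the problem to two already established pieces: the mapping property of the Riesz potential (Riesz inverse of $(-\Delta)^s$, already recorded in the introduction as $(-\Delta)^s(u*\Phi)=u$ for H\"older data) and the Poisson representation of $s$-harmonic functions (Theorem \ref{theorem:DPL}). Specifically, for $x\in B_r$ I would write
\begin{equation*}
u(x)=\int_{B_r} h(y)\,\Phi(x-y)\,dy-\int_{B_r} h(y)\!\left(\int_{\obal}\Phi(y-w)\,P_r(w,x)\,dw\right)dy.
\end{equation*}
The first integral is $v(x):=(h\chi_{B_r})*\Phi(x)$. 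Since $h\chi_{B_r}\in C_c^{0,\eps}(\R^n)$ (compactly supported and H\"older in a neighborhood of each point of $B_r$), the introductory property of $\Phi$ gives $(-\Delta)^s v=h$ pointwise in $B_r$. For the double integral, I would apply Fubini (checking the integrand is in $L^1$ via the explicit singularities of $\Phi$ and $P_r$) to rewrite it as
\begin{equation*}
\int_{\obal} P_r(w,x)\,v(w)\,dw,
\end{equation*}
which, by Theorem \ref{theorem:DPL} applied to the continuous, integrable-at-infinity datum $v$, is precisely the $s$-harmonic extension in $B_r$ of $v\big|_{\obal}$. Call this extension $w$. Then $u=v-w$ in $B_r$ and hence $(-\Delta)^s u=(-\Delta)^s v-(-\Delta)^s w=h-0=h$ in $B_r$, while $u\equiv 0$ in $\obal$ by definition.

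Next I would verify the continuity of $u$ across $\partial B_r$, which is the step where one must be a bit careful. On one hand $v$ is continuous on $\R^n$ (being the convolution of a bounded compactly supported function with the locally integrable kernel $\Phi$). On the other hand, $w$ equals $v$ on $\obal$ and, by Theorem \ref{theorem:DPL}, equals a Poisson integral inside $B_r$ that attains the boundary values of $v$ continuously on $\partial B_r$. Therefore $u=v-w$ tends to $0$ as $x\to\partial B_r$ from inside, matching the exterior value $0$. Hence $u$ is globally continuous, and the pointwise equation $(-\Delta)^s u=h$ holds in the classical sense inside $B_r$.

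Finally, for uniqueness, if $u_1,u_2$ are two such continuous pointwise solutions, then $\tilde u:=u_1-u_2$ satisfies $(-\Delta)^s\tilde u=0$ in $B_r$ and $\tilde u=0$ in $\obal$, with $\tilde u$ bounded and continuous. Applying the global Maximum Principle and Strong Maximum Principle of Section \ref{S:PGRAG} (Theorems \ref{THM-MA-1} and \ref{THM-MA-1-STRONG}) to both $\tilde u$ and $-\tilde u$ forces $\tilde u\equiv 0$. The step I expect to be the most delicate is the Fubini interchange and the continuity at $\partial B_r$: one needs an honest estimate on $v(w)$ as $|w|\to\infty$ (using the decay $|\Phi(x-y)|\lesssim |w|^{2s-n}$ for $|w|$ large, or the logarithmic analog when $n=2s$) to ensure $v$ is integrable against $P_r(w,x)$ at infinity, and one needs the joint continuity of the Poisson integral up to the boundary, both of which can be handled directly from the explicit form of $P_r$ and of $\Phi$.
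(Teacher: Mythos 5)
Your strategy---decompose $u$ via the definition of $G$ into a Riesz potential $v$ and its Poisson extension $w$, deduce $(-\Delta)^s u=(-\Delta)^s v-(-\Delta)^s w=h-0$ in $B_r$, check continuity at $\partial B_r$, and invoke the Maximum Principle for uniqueness---is exactly the skeleton of the paper's argument. However, there is a genuine gap at the first step: you assert $h\chi_{B_r}\in C_c^{0,\eps}(\R^n)$ and invoke the ``introductory property of $\Phi$'' (i.e.\ Theorem~\ref{theorem:poissonsolution}) to conclude $(-\Delta)^s v=h$ in $B_r$. That assertion is false: $h\chi_{B_r}$ has a jump discontinuity across $\partial B_r$ whenever $h$ does not vanish there, so it is not even continuous on $\R^n$, let alone globally H\"older. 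Being ``H\"older in a neighborhood of each point of $B_r$'' says nothing about the behavior of the extension by zero across $\partial B_r$. Consequently Theorem~\ref{theorem:poissonsolution} does not apply to $f=h\chi_{B_r}$, and the identity $(-\Delta)^s v=h$ in $B_r$ is left unjustified by what you cite.

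The paper circumvents this by first choosing a genuine extension $\tilde h\in C_c^{0,\eps}(\R^n)$ with $\tilde h=h$ on $B_r$ and working with $g:=\tilde h*\Phi$. Then Theorem~\ref{theorem:poissonsolution} applies legitimately, giving $g\in L_s^1(\R^n)\cap C^{2s+\eps}(\R^n)$ and $(-\Delta)^s g=\tilde h$, hence $(-\Delta)^s g=h$ in $B_r$, so that $g$ minus its Poisson extension $v_1$ solves the problem. One must then still check that $g-v_1$ coincides with $u$ inside $B_r$: the extra term $(\tilde h\,\chi_{\R^n\setminus B_r})*\Phi$ coming from the tail of $\tilde h$ has to drop out of the difference. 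This is exactly where the identity $\int_{\obal}P_r(y,x)\,\Phi(z-y)\,dy=\Phi(z-x)$ for $x\in B_r$, $z\in\obal$ (formula~\eqref{Ipu}) enters, since it shows that $(\tilde h\,\chi_{\R^n\setminus B_r})*\Phi$ agrees in $B_r$ with its own Poisson integral and therefore cancels. Without the extension trick (plus this cancellation), or a separate interior regularity lemma for Riesz potentials of merely bounded compactly supported densities (which the paper does not record), your derivation of $(-\Delta)^s v=h$ is incomplete. The remainder of your argument---the Fubini interchange, the boundary continuity via Theorem~\ref{theorem:DPL}, and the uniqueness via the Maximum Principle---is sound and mirrors the paper.
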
	
\noindent The proof is classical, and makes use of the properties and representation formulas involving the two functions $\Phi$ and $P_r$.

We are also interested in the values of the normalization constants that appear in the definitions of the $s$-mean kernel (and the Poisson kernel) and of the fundamental solution. {We will deal separately with the two cases $n\neq2s$ and $n=2s$. We have the following definition}:

\begin{defn}
\label{definition:ctn} The constant $a(n,s)$ introduced in definition \eqref{fundsolution} is
\begin{align}
        a(n,s) : &= \displaystyle { \frac{ \Gamma (\frac{n}{2}-s)} { 2^{2s}\pi ^ {\frac{n}{2} }  \Gamma(s)}} &  \text{ for } &n\neq 2s \label{ctans1},\\
	a\Big(1,\frac{1}{2}\Big) :&= { -\frac{1}{\pi}} & \text{ for } &n=2s. \label{ctans3}
	\end{align}

	The constant $c(n,s)$ introduced in definition \eqref{smeandefn} is
\begin{equation}
	c(n,s) :=\frac {\Gamma (\frac{n}{2}) \sin \pi s }  {\pi^{\frac{n}{2}+1} }.\label{ctcns}
\end{equation}

\end{defn}

\noindent These constants are used for normalization purposes, and we explicitly clarify how their values arise. However, these values are only needed to compute the constant $\kappa(n,s)$ from Theorem \ref{theorem:thm1}, and have no role for the rest of our discussion. Indeed, we explicitly compute:

\begin{theorem}\label{theorem:kns} The constant $\kappa(n,s)$ introduced in identity \eqref{forgkns} is
	\begin{equation*} \begin{aligned}
	 \kappa(n,s) &= \displaystyle {\frac{\Gamma(\frac{n}{2}) } {2^{2s}\pi^{ \frac{n}2}   \Gamma^2(s) } }& \text{ for } &n \neq 2s, \\
		\kappa\Big(1,\frac{1}{2}\Big) &={\frac{1}{\pi}}
		  & \text{ for } &n=2s.
		  \end{aligned}
		  \end{equation*}
\end{theorem}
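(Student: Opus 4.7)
The strategy is to pin down $\kappa(n,s)$ by matching the leading-order behavior of the candidate formula \eqref{forgkns} (or \eqref{formn1s12}) against that forced by the defining identity \eqref{greendefn}, letting $z \to x$ with $x \in B_r$ fixed. The key point is that in this limit the ``regular'' part of $G$, namely the integral term $H(x,z) := \int_{\obal} \Phi(z-y) P_r(y,x)\, dy$, stays bounded, so all the singularity of $G(x,z)$ is carried by $\Phi(x-z)$, and this must agree with the singularity of the proposed closed-form expression.

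Step 1 (case $n\neq 2s$, actually the case $n>2s$). As $z \to x$, formula \eqref{ro} yields $r_0(x,z) \to +\infty$, so by the integral representation of the Beta function (see \eqref{betazerouno} and the appendix)
\begin{equation*}
\int_0^{r_0(x,z)} \frac{t^{s-1}}{(1+t)^{n/2}}\, dt \;\longrightarrow\; \int_0^{+\infty}\frac{t^{s-1}}{(1+t)^{n/2}}\, dt = \beta\!\left(s,\tfrac{n}{2}-s\right) = \frac{\Gamma(s)\,\Gamma(\frac{n}{2}-s)}{\Gamma(\frac{n}{2})}.
\end{equation*}
Hence the right-hand side of \eqref{forgkns} is asymptotic to $\kappa(n,s)\,\beta(s,\tfrac{n}{2}-s)\,|z-x|^{2s-n}$. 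On the other hand, by \eqref{greendefn}, $G(x,z) = \Phi(x-z) - H(x,z)$, and $H(x,z)$ is uniformly bounded as $z \to x$: for $z$ in a small neighborhood of $x$ and $y \in \obal$, $|z-y|$ stays bounded below, and the $|y|\to\infty$ tail integrand decays like $|y|^{2s-n}\cdot |y|^{-2s-n} = |y|^{-2n}$, which is integrable. Therefore the leading singular part of $G(x,z)$ is exactly $\Phi(x-z) = a(n,s)|x-z|^{2s-n}$. Matching coefficients gives $\kappa(n,s)\,\beta(s,\tfrac{n}{2}-s) = a(n,s)$, whence using \eqref{ctans1}
\begin{equation*}
\kappa(n,s) = \frac{a(n,s)}{\beta(s,\frac{n}{2}-s)} = \frac{\Gamma(\frac{n}{2}-s)}{2^{2s}\pi^{n/2}\Gamma(s)} \cdot \frac{\Gamma(\frac{n}{2})}{\Gamma(s)\Gamma(\frac{n}{2}-s)} = \frac{\Gamma(\frac{n}{2})}{2^{2s}\pi^{n/2}\Gamma^2(s)}.
\end{equation*}

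Step 2 (case $n=2s$, i.e.\ $n=1$, $s=\tfrac{1}{2}$). Here I work directly with \eqref{formn1s12}. As $z \to x$,
\begin{equation*}
r^2 - xz + \sqrt{(r^2-x^2)(r^2-z^2)} \;\longrightarrow\; 2(r^2-x^2),
\end{equation*}
so $G(x,z) = -\kappa(1,\tfrac{1}{2})\log|z-x| + O(1)$. Meanwhile, by \eqref{greendefn} and \eqref{ctans3}, $G(x,z) = \Phi(x-z) + O(1) = -\tfrac{1}{\pi}\log|x-z| + O(1)$. Matching the coefficients of $\log|z-x|$ yields $\kappa(1,\tfrac{1}{2}) = \tfrac{1}{\pi}$, in agreement with the stated formula since $\Gamma(\tfrac{1}{2}) = \sqrt{\pi}$ and so $\tfrac{\Gamma(1/2)}{2\sqrt{\pi}\,\Gamma^2(1/2)} = \tfrac{1}{2\pi}$… checking against the advertised value $\tfrac{1}{\pi}$, one sees that the $n=2s$ case genuinely falls outside the general formula and must be verified separately, exactly as in the statement.

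The main obstacle is purely bookkeeping: one must justify that $H(x,z)$ is indeed continuous at $z=x$ (which, with the above decay estimates, is routine) and carry the constants through the Gamma-function identities without error. There is no deeper analytic difficulty, because the formula \eqref{forgkns} is already given — only the normalization constant is at stake, and a single asymptotic comparison pins it down.
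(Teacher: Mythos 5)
Your asymptotic-matching strategy is elegant and genuinely different from the paper's route, at least in the regime where it applies. The paper defines $\kappa(n,s):=a(n,s)\,k(n,s)$ in formula \eqref{kappa} during the derivation of \eqref{forgkns}, computes $k(n,s)=\Gamma(\tfrac{n}{2})/[\Gamma(\tfrac{n}{2}-s)\Gamma(s)]$ via \eqref{kns}, and then just carries out Gamma-function algebra. You instead extract the relation $\kappa(n,s)\,\beta(s,\tfrac{n}{2}-s)=a(n,s)$ by matching the leading singularity of the two sides of \eqref{forgkns} as $z\to x$; since $1/\beta(s,\tfrac{n}{2}-s)=k(n,s)$, this recovers the same identity but without re-tracing the intermediate constant $k(n,s)$, and it doubles as an independent consistency check on the closed form for $G$. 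Your $n=2s$ computation, matching the $\log$ singularity against $\Phi(x-z)=-\tfrac{1}{\pi}\log|x-z|$, is likewise correct.

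There is, however, a genuine gap that your parenthetical ``(case $n\neq 2s$, actually the case $n>2s$)'' flags but never closes: the regime $n<2s$, that is $n=1$ with $s>\tfrac{1}{2}$, is part of the theorem's claim but is not covered by your argument. There the fundamental solution $\Phi(x)=a(1,s)|x|^{2s-1}$ \emph{vanishes} at the origin (since $2s-1>0$), so $G(x,z)$ tends to a finite nonzero limit as $z\to x$ rather than blowing up, and the Beta integral $\int_0^{\infty}t^{s-1}(1+t)^{-1/2}\,dt$ that your matching rests on diverges, because $\tfrac{n}{2}-s=\tfrac{1}{2}-s<0$. Accordingly, the right-hand side of \eqref{forgkns} produces the indeterminate form $0\cdot\infty$ at $z=x$, and there is no leading singularity to match. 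The answer still agrees formally with $a(n,s)/\beta(s,\tfrac{n}{2}-s)$ read off through the $\Gamma$-quotient (the paper verifies this separately via the constant $k(1,s)$ of \eqref{k1s}), but your derivation of that relation through a \emph{convergent} integral representation of $\beta$ is specific to $n>2s$. To close the gap one would need, for example, to evaluate the finite limit $\lim_{z\to x}G(x,z)$ on both sides of \eqref{forgkns} directly, or to invoke analytic continuation in $s$ across $s=\tfrac{1}{2}$; as written, the proposal does not establish the constant for $n<2s$.
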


\noindent One interesting thing that we want to point out here is related to the two constants $C(n,s)$ and $c(n,s)$. The constant $C(n,s)$ is given in \cite{galattica} in the definition of the fractional Laplacian, is consistent with the Fourier expression of the fractional Laplacian, and was explicitly computed in \eqref{EXPL}. The constant $c(n,s)$ is introduced in \cite{Landkof} in the definition of the $s$-mean kernel and the Poisson-kernel, and is here given in \eqref{ctcns}. It is used to normalize the Poisson kernel (and the $s$-mean kernel), and is consistent with the constants used for the fundamental solution and the Green function. Hence, the two constants are used for different normalization purposes, and they have similar asymptotic properties. 
In the following proposition we give another proof of the explicit expression obtained in \eqref{EXPL}.

\begin{theorem}\label{thm:Cc}
The constant $C(n,s)$ is given by
	\begin{equation} \label{cnscomputed} C(n,s) = \frac{2^{2s} s \Gamma\left(\frac{n}2 +s\right)} {\pi^{\frac{n}2} \Gamma(1-s)}.\end{equation}
\end{theorem}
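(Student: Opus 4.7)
The plan is to deduce the identity for $C(n,s)$ from potential-theoretic tools already developed in this chapter, namely the explicit formula for the Green function in Theorem~\ref{theorem:thm1} (with constant $\kappa(n,s)$ from Theorem~\ref{theorem:kns}) and the representation of solutions to the Poisson problem in Theorem~\ref{theorem:thm2}, combined with the explicit computation of $(-\Delta)^s \mathcal{U}$ for $\mathcal{U}(x):=(1-|x|^2)_+^s$ carried out in Subsection~\ref{ctfrlap}. The latter yields, in $B_1$,
\begin{equation*}
(-\Delta)^s \mathcal{U}(x)=C(n,s)\,\frac{\omega_n}{2}\,\beta(s,1-s),
\end{equation*}
and since $\mathcal{U}\equiv 0$ on $\R^n\setminus B_1$, Theorem~\ref{theorem:thm2} (uniqueness of continuous pointwise solutions) forces
\begin{equation*}
\mathcal{U}(x)=C(n,s)\,\frac{\omega_n}{2}\,\beta(s,1-s)\int_{B_1} G(x,y)\,dy,\qquad x\in B_1.
\end{equation*}

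Next I would evaluate this identity at $x=0$, where $\mathcal{U}(0)=1$ and $r_0(0,y)=(1-|y|^2)/|y|^2$. By the explicit formula \eqref{forgkns}, passing to polar coordinates with $\rho=|y|$,
\begin{equation*}
\int_{B_1}G(0,y)\,dy=\omega_n\,\kappa(n,s)\int_0^1 \rho^{2s-1}\!\!\int_0^{(1-\rho^2)/\rho^2}\!\frac{t^{s-1}}{(t+1)^{n/2}}\,dt\,d\rho.
\end{equation*}
The change of variable $u=\rho^{-2}-1$ turns the outer integral into $\frac12\int_0^\infty(1+u)^{-s-1}\bigl(\int_0^u\cdots dt\bigr)du$. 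Swapping the order of integration (justified by Tonelli, since everything is positive) and computing the resulting inner integral $\int_t^\infty(1+u)^{-s-1}du=\frac{1}{s}(1+t)^{-s}$ reduces the whole expression to a single Beta integral, yielding
\begin{equation*}
\int_{B_1}G(0,y)\,dy=\frac{\omega_n\,\kappa(n,s)}{2s}\,B(s,n/2)=\frac{\omega_n\,\kappa(n,s)}{2s}\,\frac{\Gamma(s)\,\Gamma(n/2)}{\Gamma(n/2+s)}.
\end{equation*}

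Finally I would plug in the explicit values $\omega_n=2\pi^{n/2}/\Gamma(n/2)$, $\kappa(n,s)=\Gamma(n/2)/(2^{2s}\pi^{n/2}\Gamma(s)^2)$, and $\beta(s,1-s)=\Gamma(s)\Gamma(1-s)$, and solve the resulting scalar equation for $C(n,s)$; the $\Gamma(n/2)$ and $\Gamma(s)$ factors cancel cleanly and leave exactly $C(n,s)=2^{2s}s\,\Gamma(n/2+s)/(\pi^{n/2}\Gamma(1-s))$, which is \eqref{cnscomputed}. The case $n=2s$ (i.e. $n=1$, $s=1/2$) would be handled analogously, using \eqref{formn1s12} in place of \eqref{forgkns}.

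The main obstacle is the bookkeeping in the double integral: one has to be careful in carrying out the change of variable, applying Tonelli, and recognizing the Beta integral, because sign errors or mistaken exponents at this stage would propagate into the final constant. Once this reduction is done cleanly, the rest is just algebraic simplification using standard Gamma-function identities, and no further ideas are needed.
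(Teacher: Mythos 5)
Your proposal is correct and follows essentially the same route as the paper: it uses the explicit value of $(-\Delta)^s(1-|x|^2)_+^s$ from Subsection~\ref{ctfrlap}, the Green representation from Theorem~\ref{theorem:thm2}, evaluation at the origin, and a Tonelli swap that collapses the Green-function integral into a single Beta function. The only cosmetic difference is that you insert the substitution $u=\rho^{-2}-1$ before swapping the iterated integrals, whereas the paper swaps directly on the domain $\{\rho<(1+t)^{-1/2}\}$; both reduce to the same Beta integral, and the remaining Gamma-function algebra and the separate treatment of the case $n=2s$ match the paper's argument.
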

	
This section is structured as follows: in Subsection \ref{smean} we define the $s$-mean value property by means of the $s$-mean kernel and prove that if a function has the $s$-mean value property then it is $s$-harmonic. Subsection \ref{fundsol} deals with the study of the function $\Phi$ as the fundamental solution of the fractional Laplacian.
The fractional Poisson kernel is introduced in Subsection \ref{thepoissonkernel}, and the representation formula for equation \eqref{LaplaceeqD} is obtained. Subsection \ref{green} focuses on the Green function, and there we prove Theorems \ref{theorem:thm1} and \ref{theorem:thm2}.
The computation of the normalization constants introduced at the beginning of this section is done at the end of Subsection \ref{green}.
In Subsection \ref{appy} we recall the point inversion transformations and present some calculus identities that we use in this section.

%

Throughout this section, we fix the fractional parameter $s\in (0,1)$.

\subsection{The $s$-mean value property}\label{smean}
We give here some properties of the $s$-mean kernel.  The $s$-mean value property of the function $u$ is an average property defined by convolution of $u$ with the $s$-mean kernel. We recall the definition \eqref{ls1} of the weighted $L^1$ space.

\begin{defn}[$s$-mean value property]
Let $x\in \Rn$. We say that $u\in L_s^1(\Rn)$, continuous in a neighborhood of $x$, has the $s$-mean value property at $ x$ if, for any $r>0$ arbitrarily small,
	\begin{equation}  u(x)= A_r* u (x) .\label{smvp}\end{equation}
We say that $u$ has the $s$-mean value property in $\Omega \subseteq {\Rn}$ if for any $r>0$ arbitrarily small, identity \eqref{smvp} is satisfied at any point $x \in \Omega$.
\end{defn}

The above definition makes it reasonable to say that $A_r$ plays the role of the $s$-mean kernel. The main result that we state here is that if a function has the $s$-mean value property, then it is $s$-harmonic  (i.e. it satisfies the classical relation $\frlap u =0$).

\begin{theorem}
\label{theorem:arm}
Let $u \in L_s^1(\Rn)$ be $C^{2s+\eee}$ in a neighborhood of $x\in \Rn$.
If $u$ has the $s$-mean value property at $x$, then $u$ is $s$-harmonic at $x$.
\end{theorem}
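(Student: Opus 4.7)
The plan is to extract $\frlap u(x)$ as a rescaled limit of the identity $u(x)=A_r*u(x)$ as $r\searrow 0$. First I would use that $A_r$ is radial (hence $A_r(-y)=A_r(y)$) and integrates to $1$ (a property built into the normalization constant $c(n,s)$, and forced anyway by applying the mean value identity to the constant function), to rewrite the hypothesis in the symmetric form
\[
0 \;=\; -\frac{1}{2}\int_{\Rn\setminus B_r} A_r(y)\,\bigl[\,2u(x)-u(x+y)-u(x-y)\,\bigr]\, dy.
\]

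Next I would divide through by $r^{2s}$ and observe that, by \eqref{smeandefn},
\[
\frac{A_r(y)}{r^{2s}} \;=\; \frac{c(n,s)}{(|y|^2-r^2)^{s}\,|y|^n}, \qquad |y|>r,
\]
which converges pointwise on $\Rn\setminus\{0\}$ to $c(n,s)/|y|^{n+2s}$, i.e.\ to the kernel of the fractional Laplacian up to a dimensional constant. The goal is then to show that the integral converges as $r\to 0^+$ to a positive multiple of $\frlap u(x)$, which combined with the vanishing above will yield $\frlap u(x)=0$.

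The main obstacle is that the rescaled kernel $A_r(y)/r^{2s}$ is not pointwise dominated by $|y|^{-n-2s}$: near $|y|=r$ it blows up like $(|y|-r)^{-s}$. I would therefore split the domain into the outer region $\{|y|\geq 2r\}$ and the inner annulus $\{r<|y|<2r\}$. On the outer region one has $|y|^2-r^2\geq \tfrac{3}{4}|y|^2$, so the rescaled kernel is bounded above by $(4/3)^{s}c(n,s)/|y|^{n+2s}$; the second-order difference $|2u(x)-u(x+y)-u(x-y)|$ is controlled near the origin by $C|y|^{2s+\eee}$ (this is where the $C^{2s+\eee}$ regularity of $u$ near $x$ enters, covering both cases $s<1/2$ and $s\geq 1/2$ via a Taylor expansion) and near infinity by $u\in L^1_s(\Rn)$. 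Dominated convergence therefore gives
\[
\lim_{r\to 0^+}\int_{\{|y|\geq 2r\}} \frac{A_r(y)}{r^{2s}}\cdot\frac{2u(x)-u(x+y)-u(x-y)}{2}\, dy \;=\; \frac{c(n,s)}{C(n,s)}\,\frlap u(x),
\]
using the representation \eqref{frlap2def}. On the inner annulus, a direct estimate using the asymptotic $(|y|^2-r^2)^{s}\asymp r^{s}(|y|-r)^{s}$ together with $|2u(x)-u(x+y)-u(x-y)|\leq C|y|^{2s+\eee}\leq C r^{2s+\eee}$ shows, after integrating in polar coordinates and substituting $\tau=|y|-r$, that the contribution is $O(r^{\eee})$, hence vanishes in the limit.

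Putting these two estimates together with the identity from Step 1, I obtain $\tfrac{c(n,s)}{C(n,s)}\,\frlap u(x)=0$, and since both constants are strictly positive, $\frlap u(x)=0$, concluding the proof.
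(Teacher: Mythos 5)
Your proof is correct, and while it pursues the same overall strategy as the paper --- rescale the mean value identity and pass to the limit $r\searrow 0$ to read off $\frlap u(x)=0$ --- the execution is genuinely different and somewhat lighter. You symmetrize at the outset, so the integrand becomes the second-order difference of \eqref{frlap2def}, which is absolutely integrable under the $C^{2s+\eee}$ hypothesis and needs no principal value; the paper instead keeps the one-sided difference $u(x)-u(x-y)$, works with a principal-value limit, and cancels the first-order term by oddness (which it misnames ``even'') only on a bounded annulus. You also split at the $r$-dependent radius $2r$: on $\{|y|\geq 2r\}$ the rescaled kernel $A_r(y)/r^{2s}$ is uniformly dominated by $(4/3)^s\,c(n,s)\,|y|^{-n-2s}$, so dominated convergence applies in one stroke, while the shrinking annulus $\{r<|y|<2r\}$ contributes $O(r^{\eee})$ by a direct polar-coordinate estimate. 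The paper instead fixes an auxiliary intermediate radius $R>r\sqrt 2$, applies dominated convergence on $\Rn\setminus B_R$, and must bound the \emph{difference} of the kernels $(|y|^2-r^2)^{-s}|y|^{-n}$ and $|y|^{-n-2s}$ over all of $B_R\setminus B_r$, which requires a more delicate change of variables. Your route trades that kernel comparison on a fixed-width annulus for a cruder but shorter bound on a vanishing annulus; both are sound. (Minor typo on your side: the symmetrized identity has a $+\tfrac12$, not $-\tfrac12$, though since the left-hand side is zero it makes no difference.)
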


\begin{proof}[Proof]
The function $u$ has the $s$-mean value property for any $r>0$ arbitrarily small, namely
	 \begin{equation*}
           u(x)=  A_r* u (x) =  \int_{  \obal} A_r(y)u(x-y) \, dy.
	\end{equation*}
Using identity \eqref{Ir} we obtain that
	\begin{equation*}
		\begin{split}
	0= u(x)  - \int_{\obal}   A_r(y)  u(x-y) \,  dy  = c(n,s) r^{2s}  \int_{\obal}   \frac{u(x) -u(x-y)}{{(|y|^2-r^2)}^s|y|^n}  \, dy,
		\end{split}
	\end{equation*}
thus, since $r>0$
 	\begin{equation}  \int_{\obal}  \frac {u(x)-u(x-y)  }{(|y|^2-r^2)^s |y|^n}\,  dy =0.\label{zeroeq} \end{equation}
 Hence, in order to obtain $\frlap u(x)=0$ we prove that
 	\begin{equation} \lim_{r \to 0} \int_{\Rn\setminus B_r} \frac{u(x)-u(x-y)}{|y|^{n+2s}} \, dy = \lim_{r \to 0} \int_{\obal}  \frac {u(x)-u(x-y)  }{(|y|^2-r^2)^s |y|^n}\,  dy \label{clsmfrl}. \end{equation}
Let $R>r\sqrt{2}$. We write the integral in \eqref{zeroeq} as
	\begin{equation}\label{eq1112}
		\begin{split}
		 \int_{\obal}   &\frac{u(x)-u(x-y)}{(|y|^2-r^2)^s |y|^n}\, dy \\ = \; &\int_{{\Rn}\setminus B_{R}}  \frac { u(x)-u(x-y) }{(|y|^2-r^2)^s |y|^n}\, dy
	 + \int_{B_{R}\setminus B_r} \frac { u(x)-u(x-y) }{(|y|^2-r^2)^s |y|^n}\, dy \\
	=\;  &I_1(r,R)+I_2(r,R).
		\end{split}
	\end{equation}
In $I_1(r,R)$ we see that $ \frac{|y|^2 }{|y|^2-r^2} <2$ and obtain 
	\[ \frac{|u(x)-u(x-y)|} {(|y|^2-r^2)^s |y|^n}   \leq 2^{s}\frac{ |u(x)-u(x-y) |} {|y|^{n+2s}}  \in L^1(\Rn \setminus B_R, \,dy),\]
	as  $u\in L_s^1(\Rn)$. We can use the Dominated Convergence Theorem, send $r \to 0$  and conclude that \begin{equation}\label{eq1111} \lim_{r\to 0} I_1(r,R)=\int_{\Rn\setminus B_{R}} \frac{u(x)-u(x-y)}{|y|^{n+2s}} \, dy .\end{equation}
Now, for $r<|y|<R$ and $u\in C^{2s+\eee}$ (for $s<1/2$) in a neighborhood of $x$  we have the bound
	\[ \begin{split}
		\big| u(x)-u(x-y) \big|\leq c |y|^{2s+\eee},  \end{split}\]
while for $s\geq1/2$ and $u \in C^{1,2s+\eee-1}$ we use that
	\[\begin{split}
		 |u(x) -u(x-y) - y \cdot \nabla u(x)|\; =&\; \Big| \int_0^1 y \big(\nabla u(x-ty) -\nabla u(x) \big) \, dt\Big|\\
		 	 \leq &\; |y| \int_0^1 \Big| \nabla u(x-ty) -\nabla u(x) \Big| \,  \, dt
		 	 \leq c(s,\eee) |y|^{2s+\eee}. \end{split} \]
Notice that $ \frac{ y \cdot \nabla u(x)}{(|y|^2-r^2)^s |y|^n} $ and $\frac{ y \cdot \nabla u(x)}{|y|^{2s+n}} $ are even functions, hence they vanish when integrated on the symmetrical domain $B_{R} \setminus B_r$. Therefore, by setting
	\begin{equation}\label{eq1113}  J(r,R):=I_2(r,R) - \int_{B_R \setminus B_r} \frac{u(x)-u(x-y) } {|y|^{2s+n}}  \,dy\end{equation}
we have that
	\[ \begin{split}    J(r,R) = \int_{B_R \setminus B_r} \Bigg( \frac{u(x)\!-\!u(x\!-\!y)\!-\!y \cdot \nabla u(x)}{(|y|^2\!-\!r^2)^s |y|^n}  -\frac{u(x)\!-\!u(x\!-\!y) \!-\!y \cdot \nabla u(x)} {|y|^{2s+n}} \Bigg)\,dy
	\end{split}\]
and by passing to polar coordinates and afterwards making the change of variables $\rho =rt$ we get
\[ \begin{split}
		|J(r,R)| \leq &\; c (s,\eee) \int_{B_R \setminus B_r} |y|^{2s+\eee}  \Big((|y|^2-r^2)^{-s} |y|^{-n} - |y|^{-n-2s}\Big)\, dy \\
				= &\; {c}(n,s,\eee) \int_r^R \rho ^{\eee-1} \bigg( \frac{\rho^{2s}}{(\rho^2-r^2)^{s}} -1\bigg) \, d\rho 
				< 	 {c}(n,s,\eee) r^{\eee} \int_1^{\frac{R}{r}} t^{\eee-1}  \bigg( \frac{t^{s}}{(t-1)^s} -1\bigg) \, dt	
\end{split}\]
since $t/(t+1)>1$. Now for $ t \in (1, \sqrt 2)$ we have that
	\[ \begin{split}   \int_1^{\sqrt 2} t^{\eee-1} \bigg( \frac{t^s}{(t-1)^s} -1\bigg) \, dt  \leq c(s) \int_1^{\sqrt 2} \bigg( (t-1)^{-s} -t^{-s}\bigg) \, dt = \tilde c(s).
	\end{split} \]
On the other hand, for $t\geq\sqrt 2$
\[\Big(1- \frac{1}{t}\Big)^{-s} - 1 \leq \frac{s}{t} \Big(1-\frac{1}{\sqrt 2} \Big)^{-s-1} \] and we have that
	 \[ \begin{split}   \lim_{r \to 0} \int_{\sqrt 2}^{\frac{R}{r}} t^{\eee-1} \bigg( \frac{t^s}{(t-1)^s} -1\bigg) \, dt   \leq &\;  \int_{\sqrt 2}^{\infty}  t^{\eee-1}  \Bigg( \Big( 1- \frac{1}{t}\Big)^{-s} -1 \Bigg) \, dt\\
\leq &\;c(s) \int_{\sqrt 2} ^{\infty} t^{\eee -2 }\, dt = \bar c(s,\eee).
	\end{split} \]
Thus by sending $r \to 0 $ we obtain that
	\[ \begin{split}   \lim_{ r \to 0} J(r,R) =0
	\end{split} \]
and therefore in \eqref{eq1113} \[ \lim_{r \to 0}I_2(r,R) =  \lim_{r \to 0} \int_{B_R \setminus B_r} \frac{u(x)-u(x-y) } {|y|^{2s+n}}  \,dy.\]
Using this and \eqref{eq1111} and passing to the limit in \eqref{eq1112}, claim \eqref{clsmfrl} follows and hence  the conclusion that $\frlap u(x)=0$.
\end{proof}

\subsection{The fundamental solution}\label{fundsol}

We claim that the function $\Phi$ plays the role of the fundamental solution of the fractional Laplacian, namely the fractional Laplacian of $\Phi$ is equal in the distributional sense to the Dirac Delta function evaluated at zero. The following theorem provides the motivation for this claim.

\begin{theorem}
\label{theorem:thm3} In the distributional sense (given by definition \eqref{disf1})
	 \[\frlap \Phi =\delta_0 .\]
\end{theorem}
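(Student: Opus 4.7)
The plan is to verify the distributional identity
\[
\langle \Phi, (-\Delta)^s \varphi\rangle_s = \varphi(0)
\qquad \text{for every } \varphi \in \mathcal{S}(\mathbb{R}^n)
\]
by means of the Fourier characterization of the fractional Laplacian from Lemma \ref{frlaphdeflem}, namely $\mathcal{F}((-\Delta)^s\varphi)(\xi) = (2\pi|\xi|)^{2s}\widehat\varphi(\xi)$. The constant $a(n,s)$ in \eqref{fundsolution} will turn out to be chosen exactly so that $\widehat\Phi(\xi) = (2\pi|\xi|)^{-2s}$ as a tempered distribution, from which the identity follows by a Parseval-type pairing.

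First I would treat the case $n\neq 2s$. Using the Gaussian representation
$|x|^{-\alpha} = \Gamma(\alpha/2)^{-1}\int_0^\infty t^{\alpha/2-1}e^{-t|x|^2}\,dt$
and Fubini (legitimate once paired against a Schwartz test function), a short computation with the Gaussian Fourier transform $\mathcal{F}(e^{-t|x|^2})(\xi) = (\pi/t)^{n/2} e^{-\pi^2|\xi|^2/t}$ and the change of variable $u=\pi^2|\xi|^2/t$ yields
\[
\widehat{|x|^{-(n-2s)}}(\xi) \;=\; \frac{\pi^{n/2-2s}\,\Gamma(s)}{\Gamma(\tfrac{n}{2}-s)}\,|\xi|^{-2s}.
\]
Multiplying by $a(n,s)$ from \eqref{ctans1} gives exactly $\widehat\Phi(\xi) = (2\pi|\xi|)^{-2s}$. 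Then, for any $\varphi\in\mathcal{S}(\mathbb{R}^n)$,
\[
\langle \Phi, (-\Delta)^s\varphi\rangle_s \;=\; \bigl\langle \widehat\Phi,\; \mathcal{F}^{-1}\bigl((-\Delta)^s\varphi\bigr)\bigr\rangle \;=\; \int_{\mathbb{R}^n} (2\pi|\xi|)^{-2s}\,(2\pi|\xi|)^{2s}\,\widehat\varphi(-\xi)\,d\xi \;=\; \varphi(0),
\]
which is the claim.

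For the borderline case $n=2s$ (i.e.\ $n=1$, $s=1/2$), $\Phi$ is a logarithm rather than a Riesz potential, but its distributional Fourier transform is classical: $\widehat{\log|x|} = -\tfrac{1}{2|\xi|} + c\,\delta_0$ for an explicit constant $c$, so that $a(1,\tfrac12)\widehat{\log|x|}$ still equals $(2\pi|\xi|)^{-1}$ modulo a multiple of $\delta_0$. The $\delta_0$-term is harmless once paired against $(2\pi|\xi|)^{2s}\widehat\varphi(\xi)$, since the multiplier vanishes at $\xi=0$, and the same chain of equalities then produces $\varphi(0)$. The main obstacle I expect is not conceptual but bookkeeping: carefully tracking the normalization in \eqref{ctans1}--\eqref{ctans3} to match the convention $\widehat f(\xi)=\int f\,e^{-2\pi i x\cdot\xi}\,dx$ adopted earlier, and justifying the distributional Parseval identity when $\Phi\notin L^1\cup L^2$. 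A convenient way around this is to regularize via $\Phi_\varepsilon(x)=a(n,s)(|x|^2+\varepsilon)^{(-n+2s)/2}$, exploit that $(-\Delta)^s\varphi\in\mathcal{S}_s(\mathbb{R}^n)$ thanks to the estimate \eqref{frb1}, and pass to the limit $\varepsilon\to 0$ by dominated convergence.
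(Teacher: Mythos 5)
Your approach---show that $\widehat{\Phi} = (2\pi|\cdot|)^{-2s}$ as a tempered distribution and conclude by a Parseval-type pairing---is conceptually the same as the paper's. For $n>2s$, your heat-kernel representation of the Riesz potential and the resulting Fourier computation
\[
\widehat{|x|^{-(n-2s)}}(\xi) = \frac{\pi^{n/2-2s}\,\Gamma(s)}{\Gamma(\tfrac{n}{2}-s)}|\xi|^{-2s}
\]
are correct, and multiplying by $a(n,s)$ indeed gives $(2\pi|\xi|)^{-2s}$; this matches what the paper does in Proposition~\ref{proposition:ansss}, where the same Gaussian identity appears as $\int e^{-\pi\delta|x|^2}\wck f\,dx = \delta^{-n/2}\int e^{-\pi|x|^2/\delta}f\,dx$, multiplied by $\delta^{\frac{n}{2}-s-1}$ and integrated in $\delta$. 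Your treatment of $n=2s$ via the distributional Fourier transform of $\log|x|$ and the observation that the extra multiple of $\delta_0$ is annihilated because $(2\pi|\xi|)^{2s}\widehat\varphi(\xi)$ vanishes at $\xi=0$ is also sound in spirit.

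However, there is a genuine gap in the case $n<2s$ (i.e.\ $n=1$, $s\in(\tfrac12,1)$), which you fold into ``$n\neq 2s$'' but which your argument does not actually cover. Here $\Phi(x) = a(1,s)|x|^{2s-1}$ has a \emph{positive} exponent and grows at infinity, so $\alpha := n-2s < 0$. The identity
\[
|x|^{-\alpha} = \frac{1}{\Gamma(\alpha/2)}\int_0^\infty t^{\alpha/2-1}e^{-t|x|^2}\,dt
\]
requires $\alpha>0$: for $\alpha<0$ the integrand behaves like $t^{\alpha/2-1}$ with $\alpha/2-1<-1$ and the integral diverges at $t\to 0^+$, while $\Gamma(\alpha/2)$ becomes the Gamma function at a negative non-integer---so the formula cannot even be written. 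Pairing against a Schwartz test function does not save the argument because the divergence is in the $t$-integral, not in $x$. Similarly, your proposed regularization $\Phi_\varepsilon(x)=a(n,s)(|x|^2+\varepsilon)^{(2s-n)/2}$ has a positive exponent in this regime, so $\Phi_\varepsilon$ grows at infinity and its Fourier transform is not given by the classical convergent integral either. To close this gap you would need either a genuinely separate computation (the paper truncates to balls $B_R$, integrates by parts, and controls oscillatory tails as $R\to\infty$, carefully using the hypothesis $\wck f\in\Sa_s(\R)$ and the decay conditions of Proposition~\ref{proposition:ansss}(b)), or to invoke and justify the analytic continuation in $\alpha$ of the distributional Fourier transform of $|x|^{-\alpha}$ past $\alpha=0$, which is a nontrivial statement that should not be treated as free.
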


The computation of the Fourier transform of the fundamental solution is required in order to prove Theorem \ref{theorem:thm3}.
\begin{prop}
\label{proposition:ansss}
a) For $n>2s$, let $f \in L^1(\Rn)\cap C(\Rn)$ with $\wck f \in \Sa_s(\Rn)$,	\\
b) for $n\leq 2s$, let $f\in L^1(\R)\cap C(\R) \cap C^1\big( (-\infty, 0) \cup (0,+\infty)\big)$ with $\wck f\in \Sa_s(\R)$ such that
	 \begin{equation}\label{condffff}\begin{aligned}
	 &|f(x)|\leq c_1|x|^{2s} \quad &\text{ for } & x \in \R\\
	   & |f(x)|\leq \frac{c_2}{|x|} \quad &\text{ for } &|x|>1 \\
	  	 &|f'(x)|\leq c'_1|x|^{2s-1} \quad &\text{ for } &0 <|x|\leq 1\\
	  &  |f'(x)|\leq \frac{c'_2}{|x|} \quad &\text{ for } &|x|>1. \end{aligned} \end{equation}
 Then in both cases
\[\int_{\Rn} \Phi(x) \wck f(x) \, dx =\int_{\Rn} ({2\pi} |x|)^{-2s} f(x) \, dx.\]
\end{prop}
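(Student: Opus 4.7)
The plan is to show that $\Phi$ defines a tempered distribution in $\mathcal{S}_s'(\Rn)$ whose distributional Fourier transform equals $(2\pi|\xi|)^{-2s}$. Once established, the claimed identity follows from the Parseval-type duality $\langle \Phi, \wck f\rangle_s = \langle \widehat\Phi, f\rangle$, which is legitimate because $\Phi \in L^1_s(\Rn) \subset \mathcal{S}_s'$ (the integrability check $\int |x|^{2s-n}(1+|x|^{n+2s})^{-1}\,dx < \infty$ uses that $2s-n > -n$ near the origin and $2s - n - (n+2s) = -2n < -n$ at infinity) and $\wck f \in \mathcal{S}_s$ by hypothesis.

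For case (a), where $n > 2s$ and $\Phi(x) = a(n,s)|x|^{2s-n}$, I would insert the Gamma representation
\[
|x|^{2s-n} = \frac{1}{\Gamma((n-2s)/2)} \int_0^\infty t^{(n-2s)/2 - 1} e^{-t|x|^2}\,dt,
\]
which is valid since $(n-2s)/2 > 0$ and follows from the change of variables $\tau = t|x|^2$ in the integral representation of $\Gamma((n-2s)/2)$. Inserted into $\int \Phi(x)\wck f(x)\,dx$ and combined with Fubini (justified by the decay $|\wck f(x)| \le C(1+|x|^{n+2s})^{-1}$ inherent to $\mathcal{S}_s$), the problem reduces to computing $\int_{\Rn} e^{-t|x|^2}\wck f(x)\,dx$. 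Applying Plancherel to the Gaussian, whose Fourier transform is $(\pi/t)^{n/2} e^{-\pi^2 |\xi|^2/t}$, yields
\[
\int_{\Rn} e^{-t|x|^2}\wck f(x)\,dx = (\pi/t)^{n/2}\int_{\Rn} e^{-\pi^2|\xi|^2/t} f(\xi)\,d\xi.
\]
A second Fubini exchange and a reapplication of the Gamma identity (this time with parameter $\lambda = \pi^2|\xi|^2$) contribute a factor $\Gamma(s)(\pi^2|\xi|^2)^{-s}$. Substituting the explicit value of $a(n,s)$ from \eqref{ctans1} collapses all constants into the required $(2\pi|\xi|)^{-2s}$.

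For case (b) I would split on $n < 2s$ versus $n = 2s$. When $n < 2s$ (only $n=1$, $s>1/2$), the Gamma representation above fails, so I would rely on analytic continuation in the parameter $s$: both sides are holomorphic in $s$ on a complex strip, agree on $\{s : n > 2s\}$ by case (a), and hence coincide wherever both integrals converge absolutely, which the bounds \eqref{condffff} ensure. When $n = 2s$ (so $n=1$, $s=1/2$) and $\Phi(x) = -(1/\pi)\log|x|$, I would proceed by renormalized approximation: apply case (a) to $\Phi_\alpha(x) = a(1,\alpha)|x|^{2\alpha-1}$ with $\alpha \to 1/2$, using the asymptotic $a(1,\alpha)\sim [\pi(1-2\alpha)]^{-1}$ together with $|x|^{2\alpha-1}-1 = (2\alpha-1)\log|x|+o(2\alpha-1)$. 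Since $f(0)=0$ by the first bound in \eqref{condffff}, one has $\int \wck f = 0$, which permits subtracting the constant $1$ from $|x|^{2\alpha-1}$ without altering the pairing; the resulting limit $a(1,\alpha)(|x|^{2\alpha-1}-1)\to -\log|x|/\pi = \Phi(x)$ is locally uniform, and the decay conditions on $f$ and $f'$ in \eqref{condffff} furnish the integrable majorants needed for dominated convergence both at the origin and at infinity, where $\log|x|$ is unbounded.

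The principal obstacle is the logarithmic case $n=2s$: the simultaneous blowup of $a(1,\alpha)$ and the collapse of $|x|^{2\alpha-1}$ to $1$ require a delicate renormalization, and the two-sided unboundedness of $\log|x|$ forces the use of the full set of conditions in \eqref{condffff}, including the derivative bounds on $f'$, which are needed via integration by parts to control tail contributions at infinity. Without those derivative bounds, the dominated convergence step in the limit $\alpha\to 1/2$ would fail and the logarithmic singularity could not be handled rigorously.
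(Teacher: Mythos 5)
Your case (a) argument (Gaussian subordination of $|x|^{2s-n}$ via the Gamma representation, Fubini, Plancherel on the Gaussian, and collapse of constants) is in substance the same as the paper's, which starts from $\mathcal{F}(e^{-\pi\delta|x|^2})=\delta^{-n/2}e^{-\pi|x|^2/\delta}$, applies Parseval, multiplies by $\delta^{n/2-s-1}$, and integrates over $\delta$; the two organizations differ only in the order in which the parameter integral and the Parseval step are performed. Your treatment of case (b) is a genuinely different route from the paper's. The paper handles $n\leq 2s$ (so $n=1$) by a direct cutoff computation: truncating at $B_R$, integrating by parts, reducing to the oscillatory integral $\int_0^\infty t^{2s-2}\sin t\,dt=-\cos(\pi s)\Gamma(2s-1)$ (proved separately by a contour argument), and letting $R\to\infty$, with the derivative bounds in \eqref{condffff} controlling the boundary terms. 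You instead propose analytic continuation in $s$ for $n<2s$ and a renormalized limit $\alpha\to 1/2$ for $n=2s$. This is a more conceptual approach and it buys a uniform explanation for the form of the constants, but it carries obligations the direct computation avoids: you must verify that both sides are holomorphic on a connected complex neighborhood of $(0,s_0]$, and in particular that the pole of $a(1,s)=\Gamma(1/2-s)/(2^{2s}\sqrt{\pi}\Gamma(s))$ at $s=1/2$ is cancelled by a simple zero of $\int|x|^{2s-1}\wck f(x)\,dx$ there — which is exactly the statement $\int\wck f=f(0)=0$ forced by the bound $|f(x)|\le c_1|x|^{2s_0}$, the same observation underlying your subtraction trick in the $n=2s$ case. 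That cancellation should be made explicit, since without it the analytic continuation from $\{s<1/2\}$ to $\{s>1/2\}$ does not go through on a connected domain. It is also worth noting that the paper's explicit route sidesteps one subtlety of yours: in the continuation argument you implicitly use the inclusion $\mathcal{S}_{s_0}\subset\mathcal{S}_s$ for $s<s_0$ to apply case (a), which is true but deserves a word. Overall, your proposal is correct as far as it goes, and the analytic-continuation approach is a legitimate alternative, but the paper's direct contour-integral computation makes the required estimates (and in particular the role of the derivative bounds in \eqref{condffff}) more visibly tied to the convergence analysis.
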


\begin{proof}
We notice that the hypothesis in the proposition 
assure that both integrals above are well defined. Indeed, since $\Phi \in L_s^1(\Rn) \subset \Sa'_s(\Rn)$ the left hand side is finite thanks to \eqref{db1}. The right hand side is also finite since, for $n>2s$,
	\[ \begin{split} \int_{\Rn} |f(x)||x|^{-2s} \, dx  \leq &\; c_n\sup_{x\in B_1} |f(x)| \int_0^1 \rho^{n-2s-1}\, d\rho + \int_{\Rn \setminus B_1}  |f(x)| |x|^{-2s}\, dx\\
		\leq &\; c_n\sup_{x \in B_1} |f(x)| + \|f\|_{L^1(\Rn)} \end{split}\]
		and for $n\leq 2s$  we have that
	\[ \begin{split}\int_{\R} |f(x)||x|^{-2s} \, dx  \leq &\; \int_{\R\setminus B_1} |f(x)| |x|^{-2s} \, dx+ c_1 \int_{B_1}  dx \\
							\leq &\; \|f\|_{L^1(\R)} + 2c_1.\end{split}\]

\noindent a) For $n>2s$ we prove that
	\begin{equation} \label{firstcl111} a(n,s)\int_{\Rn} |x|^{-n+2s} \wck {f}(x)\, dx=\int_{\Rn} ({2\pi}|x|)^{-2s} f(x)\, dx.\end{equation}
We use the Fourier transform of the Gaussian distribution as the starting point of the proof. For any $ \delta > 0$ we have that
	\[    \mathcal{F} (e^{-\pi \delta |x|^2})  = \delta^{- \frac{n}{2}} e^{-\pi\frac{|x|^2}{\delta}}.\]
In particular for any $ f \in L^1(\Rn)$ and $\wck {f}\in \Sa_s(\Rn)$ (which is a subspace of $L^2(\Rn)$), by Parseval identity we obtain
	\[ \int_{\Rn}  e^{-\pi \delta |x|^2} \wck {f}(x) \, dx= \int_{\Rn} \delta^{-\frac{n}{2}} e^{-\pi\frac{|x|^2}{\delta}}  f(x)\, dx. \]
We multiply both sides by $\delta^{\frac{n}{2} -s-1}$, integrate in $\delta$ from $0$ to $\infty$. We use the notations
	\[I_1=\int_0^{\infty} \Bigg( \int_{\Rn} \delta^{\frac{n}{2} - s -1} e^{-\pi\delta |x|^2}\wck {f} (x)  \, dx\Bigg) \, d\delta \]
and
	\[I_2= \int_0^{\infty} \Bigg( \int_{\Rn} \delta^{- s -1} e^{-\pi \frac {|x|^2}{\delta}}   f(x) \, dx\Bigg) \, d\delta,\] having $I_1=I_2$.
With the change of variable $\alpha =\delta|x|^2$ we obtain that
	\[I_1 =  \int_{\Rn}|x|^{-n+2s} \wck{f} (x)  \Bigg(\int_0^{\infty} \alpha^{\frac{n}{2} -s-1}  e^{-\pi\alpha}\, d\alpha \Bigg)\, dx. \]
We set	\begin{equation} \label{c1} c_1 := \int_0^{\infty} \alpha^{\frac{n}{2} -s-1}  e^{-\pi\alpha}\, d\alpha, \end{equation} which is a finite quantity since $ \frac{n}{2} -s-1 > -1$.
On the other hand in $I_2$ we change the variable $ \displaystyle \alpha = {|x|^2}/{\delta} $ and obtain that
	\[I_2 =  \int_{\Rn}   |x|^{-2s}  f(x) \Bigg(\int_0^{\infty} \alpha^{s-1}  e^{-\pi\alpha}\, d\alpha \Bigg)\, dx. \]
We then set
	\begin{equation} c_2 :=  \int_0^{\infty} \alpha^{s-1}  e^{-\pi\alpha}\, d\alpha\label{c2},  \end{equation} which is finite since $ s-1 > -1$.
As $I_1=I_2$ it yields that
	\begin{equation*}  \frac{c_1}{c_2 ({2\pi})^{2s}} \int_{\Rn} |x|^{-n+2s}  \wck{f} (x)\, dx =  \int_{\Rn} ({2\pi}|x|)^{-2s}   f(x)\, dx . \end{equation*}
We take
	\begin{equation}\label{aaargh1} a(n,s) =  \frac{c_1}{c_2 ({2\pi})^{2s}}\end{equation}and the claim \eqref{firstcl111} follows. This concludes the proof for $n>2s$.

b) For $n<2s$ (hence $n=1$ and $s>1/2$), let $R>0$ be as large as we wish (we will make $R$ go to $\infty$ in sequel). Then
	\[ \begin{split}
		\int_{B_R}&\; |x|^{2s-1} \wck f (x) \, dx  = \int_0^R x^{2s-1} \Big( \wck f(x) + \wck f(-x)\Big) \, dx \\
		= &\;2\int_0^R x^{2s-1} \int_{\R} f(\xi) \cos(2\pi  \xi x) \, d\xi \, dx 
		= 2 \int_{\R} f(\xi) \left( \int_0^R x^{2s-1} \cos(2\pi \xi x) \, dx\right) \, d\xi.\end{split} \]
{We use the change of variables $\bar x = 2 \pi x$ (but still write $x$ as the variable of integration for simplicity), and let $\bar R = 2 \pi R$}. Then
 \[\int_0^R x^{2s-1} \cos({2\pi} \xi x) \, dx = {(2\pi)^{-2s}} \int_0^{\bar R} x^{2s-1} \cos(\xi x) \, dx.\]
 We have that
 	\[ 	\int_{B_R} |x|^{2s-1} \wck f (x) \, dx  ={\frac{2^{1-2s}}{\pi^{2s}} }\int_{\R} f(\xi) \left( \int_0^{\bar R} x^{2s-1} \cos(\xi x)\, dx  \right)\, d\xi.\]
Integrating by parts and changing the variable $|\xi| x=t$ we get that
	\[ \begin{split}
				\int_0^{\bar R} x^{2s-1} \cos(\xi x) \, dx = &\;x^{2s-1} \frac{\sin(\xi x)}{\xi} \bigg|_0^{\bar R} -(2s-1) \int_0^{\bar R} x^{2s-2} \frac{\sin(\xi x)}{\xi} \, dx \\
				=&\; {\bar R}^{2s-1} \frac{\sin(\xi {\bar R})}{\xi} -(2s-1)|\xi|^{-2s} \int_0^{{\bar R}|\xi|} t^{2s-2} \sin t \, dt.
	\end{split} \]
	Therefore
	\begin{equation} \begin{split}\label{wckn1}
		\int_{B_R} |x|^{2s-1} \wck f(x)\, dx = &\;\frac{2^{1-2s}}{\pi^{2s} } {\bar R}^{2s-1} \int_{\R} f(\xi)\frac{\sin(\xi {\bar R})}{\xi} \, d\xi \\
			&\;- \frac{2^{1-2s}(2s-1) }{\pi^{2s} }  \int_{\R} f(\xi) |\xi |^{-2s} \bigg( \int_0^{{\bar R}|\xi|} t^{2s-2} \sin t \, dt \bigg)\, d\xi. \end{split}\end{equation}
We claim that
	\begin{equation}\label{n1c1} \lim_{R \to \infty} {\bar R}^{2s-1}\int_{\R} f(\xi)\frac{\sin(\xi {\bar R})}{\xi} \, d\xi=0 . \end{equation}
	We integrate by parts and obtain that
	\[ \begin{split}
		\bigg|\int_0^{\infty} f(\xi) \frac{\sin(\xi {\bar R})}{\xi} \, d\xi \bigg| \leq &\;  \frac{|f(\xi)|}{\xi} \frac{|\cos(\xi {\bar R})|}{{\bar R}} \bigg|_{0}^{\infty} + \frac{1}{{\bar R}}   \bigg( \int_{0}^{\infty}  |\cos(\xi {\bar R})| \frac{|f(\xi)|}{\xi^2} \, d\xi   \\
		&\; +\int_{0}^{\infty} |\cos(\xi {\bar R})| \frac{|f'(\xi)|}{\xi} \, d\xi\bigg).
	\end{split}\]
By \eqref{condffff}, for $\xi $ large we have that
	 \[ \frac{|f(\xi)|}{\xi}|\cos(\xi {\bar R})|  \leq c_2\frac{|\cos(\xi {\bar R})| }{\xi^2}, \quad \mbox{hence} \quad \lim_{\xi \to  \infty}  \frac{|f(\xi)|}{\xi} \frac{|\cos(\xi {\bar R})|}{{\bar R}} =0.\]
	  For $\xi $ small we have that \[ \frac{|f(\xi)|}{\xi} \leq c _1\xi^{2s-1},\quad \mbox{hence} \quad  \lim_{\xi \to 0} \frac{|f(\xi)|}{\xi} \frac{|\cos(\xi {\bar R})|}{{\bar R}} =0.\]
Furthermore, by changing the variable $t=\xi {\bar R}$ (and noticing that the constants may change value from line to line) we have that
\[\begin{split}
	\int_0^{\infty} \frac{|f(\xi)|}{\xi^2} |\cos (\xi {\bar R}) |\, d\xi \leq &\;c_1 \int_0^1   \xi^{2s-2}  |\cos (\xi {\bar R}) | \, d\xi  +c_2  \int_1^{\infty}  \xi^{-3}  |\cos (\xi {\bar R}) |\, d\xi \\
	\leq &\;c_1 {\bar R}^{1-2s} \int_0^{\bar R} t^{2s-2}  |\cos t |\, dt +c_2 {\bar R}^2  \int_{\bar R}^{\infty} t^{-3}|\cos t| dt
	\leq   \frac{c}{2}
	\end{split}\]
and \[\begin{split}
	\int_0^{\infty} \frac{|f'(\xi)|}{\xi} |\cos (\xi {\bar R}) |\, d\xi \leq &\;c_1' \int_0^1 	  \xi^{2s-2} |\cos (\xi {\bar R}) |  \, d\xi  +c_2' \int_1^{\infty}  \xi^{-2} |\cos (\xi {\bar R}) |  \, d\xi \\
	\leq &\; c_1' {\bar R}^{1-2s} \int_0^{\bar R} t^{2s-2}|\cos t| \, dt +c_2'{\bar R} \int_{\bar R}^{\infty}t^{-2} |\cos t|\, dt
	 \leq  \frac{c}{2}.
	\end{split}\]
Hence \[\bigg| \int_0^\infty  f(\xi) \frac{\sin(\xi{\bar R})}{\xi} \, d\xi \bigg| \leq \frac{c}{R},\]
and in the same way we obtain
	\[\bigg|\int_{-\infty}^0  f(\xi) \frac{\sin(\xi {\bar R})}{\xi} \, d\xi  \bigg|= \bigg|\int_0^\infty  f(-\xi) \frac{\sin(\xi {\bar R})}{\xi} \, d\xi  \bigg| \leq \frac{c}{R}.\] Therefore
	\[ \lim_{R\to \infty} {\bar R}^{2s-1} \int_{\R}  f(\xi) \frac{\sin(\xi {\bar R})}{\xi} \, d\xi   = 0 \] and we have proved the claim \eqref{n1c1}.
	Now we claim that (and this holds also for $n= 2s$)
\begin{equation}\label{n1c2}   \lim_{R \to \infty} \int_\R f(\xi)|\xi|^{-2s}  \Big(\int_0^{{\bar R}|\xi|} t^{2s-2} \sin t \, dt\Big) \, d\xi = - \cos (\pi s) \Gamma(2s-1) \int_\R f(\xi)|\xi|^{-2s}   \, d\xi .\end{equation} In order to prove this, we estimate the difference
	\[ \begin{split}
		\bigg| \int_0^\infty t^{2s-2} \sin t \, dt &\; -\int_0^{{\bar R}|\xi|} t^{2s-2} \sin t \, dt \bigg| \leq  \bigg| \int_{{\bar R}|\xi|}^\infty t^{2s-2} \sin t \, dt\bigg|\\
		\leq &\; |t ^{2s-2} \cos t |\bigg|_{{\bar R}|\xi|}^\infty + (2s-2) \int_{{\bar R}|\xi|}^\infty |t|^{2s-3} | \cos t | \, dt
		\leq  c({\bar R}|\xi|)^{2s-2}.
	\end{split}\]
We then have that
	\[ \begin{split}
	\bigg|	\int_\R & f(\xi)|\xi|^{-2s}  \Big( \int_0^\infty t^{2s-2} \sin t \, dt  -\int_0^{{\bar R}|\xi|} t^{2s-2} \sin t \, dt \Big)  \, d\xi \bigg| \\
	\leq&\; c {\bar R}^{2s-2} \int_\R |f(\xi)| |\xi|^{-2}\, d\xi 
	\leq  c {\bar R}^{2s-2}\bigg( c_1\int_0^1  \xi^{2s-2} \, d\xi + c_2\int_1^\infty \xi^{-3}\, d\xi \bigg)
	= R^{2s-2} \overline c. \end{split}\]
	Hence we obtain
		\[  \lim_{R \to \infty} \int_\R f(\xi)|\xi|^{-2s}  \Big(\int_0^{{\bar R}|\xi|}t^{2s-2} \sin t \, dt\Big) \, d\xi = \int_\R f(\xi)|\xi|^{-2s} \left(\int_0^{\infty}t^{2s-2} \sin t \, dt \right)  \, d\xi \]
and the claim \eqref{n1c2} follows from the identity \eqref{ctcomp1111} at the end of this Section (in Subsection \ref{appy}). \\
By sending $R$ to infinity in \eqref{wckn1} we finally obtain  that
		\begin{equation} \begin{split} \label{oba1s}
			\int_{\R} |x|^{2s-1} \wck f(x)\, dx =&\; \frac{2^{1-2s}}{\pi^{2s}} (2s-1) \cos(\pi s) \Gamma(2s-1) \int_\R |\xi|^{-2s} f(\xi)\, d\xi \\
				=&\;   2\cos(\pi s) \Gamma(2s) \int_\R (2\pi|\xi|)^{-2s} f(\xi)\, d\xi.\end{split}\end{equation}
Therefore {taking $ a(1,s)  =({2\cos(\pi s)\Gamma(2s)})^{-1}$ we get that}
	\[ a(1,s) \int_{\R} |x|^{2s-1} \wck f(x)\, dx = \int_\R ({2\pi} |x|)^{-2s} f(x)\, dx ,\]
			hence the result for $n<2s$.
			
On the other hand, for $n=2s$ we have that
			\[ \int_{B_R} \log |x| \wck f(x)\, dx = 2\int_{\R} f(\xi) \left(\int_0^R \log x \cos({2 \pi} \xi x) \, dx \right)\, d\xi .\]
		We change the  variable $\bar x = 2 \pi x$ (but still write $x$ as the variable of integration for simplicity) and let $\bar R = 2 \pi R$. Then we have that
		\[ \begin{split} \int_0^R \log x\cos(2\pi \xi x)\, dx = &\; \int_0^{\bar R } \Big(\log x -\log (2\pi) \Big) \cos(\xi x)\, \frac{dx}{2\pi}\\
			= &\; \frac{1}{2\pi}  \int_0^{\bar R}  \log x\cos(\xi x) \, dx - \frac{\log(2\pi)}{2\pi} \frac{\sin(\xi \bar R)}{\xi}.
			\end{split}\]
	We integrate by parts and obtain that
			\[\int_0^{\bar R} \log x\cos(\xi x) \, dx = \log {\bar R} \frac{ \sin (\xi {\bar R})}{\xi} -\frac{1}{|\xi|} \int_0^{{\bar R}|\xi|} \frac{\sin t }{t}\,dt.\]
	We thus have that
			\[ \int_{B_R} \log |x| \wck f(x)\, dx = \frac{1}{\pi} \log R \int_{\R} f(\xi) \frac{\sin(\xi \bar R)}{\xi} \, d\xi - \frac{1}{\pi} \int_{\R} f(\xi) |\xi|^{-1} \left (\int_0^{\bar R |\xi|} \frac{\sin t}{t} \, dt \right)\, d\xi.\]
			We claim that\begin{equation} \label{n2c1} \lim_{R\to \infty} \log R \int_{\R} f(\xi) \frac{ \sin (\xi {\bar R} )}{\xi} \, d\xi =0  .\end{equation}
	Indeed we have  \[ \bigg|\int_0^{1/{\bar R}} f(\xi) \frac{\sin (\xi {\bar R})}{\xi} \, d\xi \bigg| \leq  \int_0^{1/{\bar R}} |f(\xi)| \frac{\xi {\bar R}}{\xi} \, d\xi \leq c_1 {\bar R}\int_0^{1/{\bar R}} \xi \, d\xi = \frac{  c_1}{\bar R}.\]
	  Moreover  integrating by parts
	 \[ \begin{split}
	 	\bigg| \int_{1/{\bar R}}^{\infty} f(\xi) \frac{\sin (\xi {\bar R})}{\xi} \, d\xi  \bigg| \leq &\;  \frac{|f(\xi)|}{\xi } \frac{| \cos(\xi {\bar R})| } {{\bar R}} \Bigg|_{1/{\bar R}}^{\infty}  +\frac{1}{{\bar R}} \Bigg( \int_{1/{\bar R}}^{\infty} \frac{|f(\xi)|}{\xi^2} |\cos(\xi {\bar R})|\, d\xi\\
	 	&\; + \int_{1/{\bar R}}^{\infty} \frac{|f'(\xi)|}{\xi}| \cos(\xi {\bar R})| \, d\xi \Bigg).
		\end{split}\]
	We have  for $\xi$ large that
			 	\[  \frac{|f(\xi)|}{\xi } \frac{|\cos(\xi {\bar R})|}{{\bar R}} \leq \frac{c_2}{\bar R} \frac{|\cos(\xi {\bar R})|}{\xi^2} \] hence
			 	\[ \lim_{\xi\to \infty} \frac{|f(\xi)|}{\xi } \frac{|\cos(\xi {\bar R})|} {{\bar R}}  =0.\]
	On the other hand by using the change of variables $t=\xi {\bar R}$
		\[ \begin{split} \int_{1/{\bar R}}^{\infty} \frac{|f(\xi)|}{\xi^2}|\cos(\xi {\bar R})| \, d\xi \leq &\;c_1 \int_{1/{\bar R}}^{1} \frac{ |\cos(\xi {\bar R})| }{\xi}\, d\xi +c_2 \int_1^{\infty} \frac{|\cos(\xi {\bar R})|}{\xi^3} \, d\xi\\
		  =&\; c_1 \int_1^{\bar R} \frac{|\cos t|}{t} \, dt + c_2 \int_{\bar R}^{\infty} \frac{{\bar R}^2}{t^3} |\cos t|\, dt
		  \leq \bar c_1 \log R +\bar c_2.
		\end{split}\]
			 Moreover we have that
			 \[ \begin{split} \int_{1/{\bar R}}^{\infty} \frac{|f'(\xi)|}{\xi}|\cos(\xi {\bar R})| \, d\xi \leq &\;c'_1 \int_{1/{\bar R}}^{1} \frac{|\cos(\xi {\bar R})|}{\xi} \, d\xi +c'_2 \int_1^{\infty} \frac{|\cos(\xi {\bar R})| }{\xi^2} \, d\xi\\
		  =&\; c'_1 \int_1^{\bar R} \frac{|\cos t|}{t} \, dt + c'_2 \int_{\bar R}^{\infty} \frac{{\bar R}}{t^2} |\cos t|\, dt
		  \leq\bar c'_1 \log { R} +\bar c'_2.
		\end{split}\]
		Hence \[ \lim_{R \to \infty} \log R \int_0^{\infty} f(\xi) \frac{\sin (\xi {\bar R})}{\xi} \, d\xi =0\] and since the same bounds hold for $ \int_{-\infty}^0  f(\xi) \frac{\sin (\xi {\bar R})}{\xi} \, d\xi ,$  the claim \eqref{n2c1} follows.
	Also, the proof of claim \eqref{n1c2} gives that  \[ \begin{split} \lim_{R \to \infty} \int_0^{\infty} f(\xi) |\xi|^{-1}  \bigg(\int_0^{{\bar R}|\xi|} \frac{\sin t }{t}\,dt \bigg) \, d\xi  = &\; \int_0^{\infty} f(\xi) |\xi|^{-1} \bigg(\int_0^{\infty} \frac{\sin t }{t}\,dt \bigg) \, d\xi   \\
			  =&\; \frac{\pi}{2} \int_0^{\infty} f(\xi) |\xi|^{-1} \, d \xi .\end{split}\] It follows that
			\begin{equation*} \int_{\R} \log |x| \wck f(x)\, dx = -\frac{1}{2}  \int_\R |\xi|^{-1} f(\xi)\, d\xi,  \end{equation*} 	
				hence 				
				\begin{equation} \label{obans3} -\frac{1}\pi  \int_{\R} \log |x| \wck f(x)\, dx = \int_\R ({2\pi} |\xi|)^{-1} f(\xi)\, d\xi  \end{equation}
				and the result holds for $n=2s$.				This concludes the proof of the Proposition.
	\end{proof}

\begin{remark}
It is now clear that we have chosen $a(n,s)$ in Definition \ref{definition:ctn} in order to normalize the Fourier transform of the fundamental solution. Indeed, for $n>2s$, we perform the change of variable $\pi \alpha =t$ in \eqref{c1} and by the definition of the Gamma function (see \eqref{ABRAMOWITZ}) we obtain that
	\[ c_1= \pi^{ s-\frac{n}{2}} \int_0^{\infty} t^{\frac{n}{2}-s-1} e^{-t} \, dt = \pi^{ s-\frac{n}{2}}  \Gamma \bigg( \frac{n}{2} -s\bigg).\]
Also in \eqref{c2} we change the variable $\pi \alpha =t$ and get that
	\[c_2= \pi^{ -s}  \int_0^{\infty}t^{s-1} e^{-t}\, dt= \pi^{ -s} \Gamma(s).\]
Therefore
	 \begin{equation*} \frac{c_1}{c_2}= \frac{ \pi ^ {2s-\frac{n}{2} }  \Gamma (\frac{n}{2}-s)} {\Gamma(s)}, \quad  \mbox{hence by \eqref{aaargh1}}\quad   a(n,s)= {\frac{ \Gamma (\frac{n}{2}-s)} {2^{2s}\pi^{\frac{n}2} \Gamma(s)}}.\end{equation*}
The value $a(1,s)$ is computed in \eqref{oba1s}. We point out that we can rewrite this value using \eqref{gam1} and \eqref{gam2}, as follows
	\[\begin{aligned}  a(1,s) = \frac{1}{2\cos(\pi s)\Gamma(2s) }= \frac{\Gamma(1/2-s) }{2^{2s} \sqrt{\pi} \Gamma(s)} .\end{aligned}\]
	Moreover, we observe that identity \eqref{obans3} says that
				\[ a\left(1,\frac{1}{2}\right) = -\frac{1}{\pi}.  \]
		\end{remark}		

By applying this latter Proposition \ref{proposition:ansss}, we prove Theorem \ref{theorem:thm3}.
\begin{proof}[Proof of Theorem \ref{theorem:thm3}]
For any $f\in \mathcal{S}(\Rn)$ we have that $\F^{-1} \Big(|\xi|^{2s} \widehat f(\xi) \Big) \in \Sa_s(\Rn) $ (according to definition \eqref{frlaphdef} and to \eqref{frb1}). Notice that $|\xi|^{2s} \widehat f(\xi) \in L^1(\Rn)\cap C(\Rn)$, since
	\[ \int_{\Rn} |x|^{2s}|\widehat f(x)|\, dx\leq [\widehat f]_{\Sa(\Rn)}^{0,n+2} \int_{\Rn \setminus B_1} |x|^{2s-n-2} \, dx + \sup_{x\in B_1} |\widehat f(x)| \leq c(f), \]where we use the seminorm defined in \eqref{seminormss}.
Moreover, for $n\leq 2s$ we have that
	 	\[\begin{aligned}
	 		&  |\xi|^{2s} |\widehat f(\xi)| \leq \|\widehat f\|_{L^{\infty} (\R)} |\xi|^{2s} = c_1 |\xi|^{2s} \quad &\text{ for } &\xi\in \R,\\
	 		&  |\xi|^{2s} |\widehat f(\xi)| \leq [\widehat f(\xi)]_{\Sa(\R)}^{0,3} |\xi|^{2s-3} \leq \frac{c_2}{|\xi|} \quad &\text{ for } &|\xi| > 1.\end{aligned}\]  Also for $0 \neq|\xi|\leq 1$
	 		\[\begin{split} \Big| \frac{d}{d\xi} \big( |\xi|^{2s}\widehat f(\xi)\big) \Big| \leq &\; 2s |\xi|^{2s-1} |\widehat f(\xi)| + |\xi|^{2s}\Big|\frac{d}{d\xi} \widehat f (\xi)\Big| \\
	 		\leq&\; |\xi|^{2s-1} \bigg( 2s\|\widehat f \|_{L^{\infty}(\R)} +\Big\|\frac{d\widehat f(\xi)}{d\xi}  \Big \|_{L^{\infty}(\R)} \bigg)
	 		= c_1' |\xi|^{2s-1} \end{split}\]
	 		and for $|\xi|>1$ 	 		
	 \[\begin{split} \Big| \frac{d}{d\xi} \big( |\xi|^{2s}\widehat f(\xi)\big) \Big| \leq &\; 2s |\xi|^{2s-1} |\widehat f(\xi)| + |\xi|^{2s}\Big|\frac{d}{d\xi} \widehat f (\xi)\Big| \\	
	 			\leq &\; 2s [\widehat f]_{\Sa(\R)}^{0,2} |\xi|^{2s-3} + |\xi|^{2s -3} [\widehat f]_{\Sa(\R)}^{1,3}
	 			\leq c_2' |\xi|^{-1}, \\
	 		\end{split}\]
which proves that $f$ satisfies \eqref{condffff}. From Proposition \ref{proposition:ansss} it follows that
	\begin{equation*}
		\begin{split}
	\langle \Phi, \frlap f\rangle_s=   &\;\int_{\Rn} \Phi(x) \F^{-1}\Big( (2\pi|\xi|)^{2s} \widehat f(\xi) \Big)(x) \, dx \\
	= &\;\int_{\Rn} (2\pi|\xi|)^{-2s} (2\pi |\xi|)^{2s} {\widehat f (\xi)}  \, d\xi
	=\int_{\Rn} {\widehat f (\xi)}  \, d\xi
	= f(0).
			\end{split}
	 \end{equation*}
Therefore in the distributional sense
\begin{equation*} \frlap \Phi = \delta_0 .	\qedhere\end{equation*}
\end{proof}
We introduce now the following Lemmata, that will be the main ingredients in the proof of main result of the subsection.

\begin{lemma}\label{lem:uff2}
Let  $f \in  C^{\infty}_c(\Rn) $, let $\varphi $ be an arbitrary function such that we have $\wck \varphi \in \Sa_s(\Rn)$ and the following hold:\\
a) for $n>2s$, $\varphi \in L^1(\Rn)\cap C(\Rn)$, \\
b) for $n\leq 2s$, $\varphi \in L^1(\R)\cap C(\R) \cap C^1\big( (-\infty,0)\cup (0,\infty)\big))$ and
	 \begin{equation*} \begin{aligned}
	 &|\varphi(x)|\leq c_1|x|^{2s} \quad &\text{ for } & x \in \R\\
	   & |\varphi(x)|\leq \frac{c_2}{|x|} \quad &\text{ for } &|x|>1 \\
	    &|\varphi'(x)|\leq c'_1|x|^{2s-1} \quad &\text{ for } &0< |x|\leq 1\\
	  &  |\varphi'(x)|\leq \frac{c'_2}{|x|} \quad &\text{ for } &|x|>1. \end{aligned} \end{equation*}
	  Then in both cases
 \begin{equation} \label{disf2} \int_{\Rn} f*\Phi(x) \wck \varphi (x) \, dx = \int_{\Rn} ({2\pi}|x|)^{-2s} \wck f(x) \varphi(x) \, dx.\end{equation}
\end{lemma}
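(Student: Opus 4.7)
The plan is to reduce the identity \eqref{disf2} to Proposition \ref{proposition:ansss} by a Fubini-type rearrangement that moves $\Phi$ against the inverse Fourier transform of a product. First, I would write the convolution explicitly, $(f*\Phi)(x) = \int_{\R^n}\Phi(y)f(x-y)\,dy$, and interchange the order of integration in the left-hand side to obtain
\[
\int_{\R^n}(f*\Phi)(x)\wck\varphi(x)\,dx = \int_{\R^n}\Phi(y)\,F(y)\,dy,
\qquad F(y):=\int_{\R^n}f(x-y)\wck\varphi(x)\,dx.
\]
The interchange is legitimate because $f\in C^\infty_c$ confines one variable to a bounded set, $\wck\varphi\in\Sa_s(\R^n)$ gives the decay $(1+|x|^{n+2s})^{-1}$, and $\Phi$ is locally integrable with controlled growth at infinity (polynomial of degree $|2s-n|$, or logarithmic when $n=2s$), so the triple integral is absolutely convergent.

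Next I would identify $F$ with an inverse Fourier transform. A direct computation, again using Fubini (now justified by $f\in L^1$ and $\varphi\in L^1$), gives
\[
F(y)=\int_{\R^n}f(z)\wck\varphi(z+y)\,dz = \int_{\R^n}\varphi(\xi)\wck f(\xi)e^{2\pi i y\cdot\xi}\,d\xi = \wck(\varphi\wck f)(y).
\]
Thus the left-hand side of \eqref{disf2} equals $\int_{\R^n}\Phi(y)\wck g(y)\,dy$ with the natural choice $g:=\varphi\,\wck f$. Applying Proposition \ref{proposition:ansss} to this $g$ would then produce $\int_{\R^n}(2\pi|y|)^{-2s}\varphi(y)\wck f(y)\,dy$, which is exactly the right-hand side of \eqref{disf2}.

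The core obstacle, and the only nontrivial step, is verifying that $g=\varphi\wck f$ meets the hypotheses of Proposition \ref{proposition:ansss}. When $n>2s$, the requirement $g\in L^1(\R^n)\cap C(\R^n)$ is immediate, since $\wck f\in\Sa(\R^n)$ is bounded and smooth, and $\varphi\in L^1\cap C$ by assumption. When $n\le 2s$ (so $n=1$), the pointwise bounds on $\varphi$ in \eqref{condffff} transfer to $g$ via $\|\wck f\|_{L^\infty}<\infty$, and for the derivative bounds the product rule gives $|g'(x)|\le|\varphi'(x)|\,\|\wck f\|_{L^\infty}+|\varphi(x)|\,\|(\wck f)'\|_{L^\infty}$; on $0<|x|\le 1$ this is $\le C|x|^{2s-1}$ (using $|x|^{2s}\le|x|^{2s-1}$ there), while on $|x|>1$ it is $\le C|x|^{-1}$, matching \eqref{condffff}.

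The remaining requirement, $\wck g\in\Sa_s(\R^n)$, reduces to proving $\wck\varphi*f\in\Sa_s(\R^n)$ because $\wck g=\wck\varphi*\wck{(\wck f)}=\wck\varphi*f$. Since $f\in C^\infty_c$, the convolution is smooth and inherits the decay of $\wck\varphi$: if $\supp f\subset B_R$, then for $|x|>2R$ and any multi-index $\alpha$, $|D^\alpha(\wck\varphi*f)(x)|=|(\wck\varphi*D^\alpha f)(x)|\le\|D^\alpha f\|_{L^1}\sup_{y\in\supp f}|\wck\varphi(x-y)|\le C_\alpha(1+|x|)^{-(n+2s)}$, with a uniform constant on compact sets inside $B_{2R}$. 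This hands every hypothesis of Proposition \ref{proposition:ansss} to $g=\varphi\wck f$ and closes the argument.
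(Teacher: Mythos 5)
Your argument is correct and mirrors the paper's proof line for line: both rearrange by Fubini to write the left-hand side as $\int_{\R^n}\Phi(y)\,\wck{g}(y)\,dy$ with $g=\wck f\cdot\varphi$, then verify the hypotheses of Proposition \ref{proposition:ansss} for this $g$ and invoke it to finish. The only inaccuracy is cosmetic: with the paper's conventions $\wck{(\wck f)}(y)=f(-y)$ rather than $f(y)$, so $\wck{g}=\wck\varphi*f(-\cdot)$; since $f$ has compact support and $\mathcal{S}_s(\R^n)$ is invariant under reflection, your decay estimate for $\wck\varphi*f$ applies verbatim and the argument goes through.
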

\begin{proof}
In order to prove identity \eqref{disf2} we notice that by the Fubini-Tonelli theorem we have that
	\[\begin{split}
		\int_{\Rn} f*\Phi(x) \wck \varphi(x) dx =&\; \int_{\Rn} \bigg( \int_{\Rn} \Phi(y) f(x-y) \, dy\bigg) \, \wck \varphi(x)\, dx \\
		= &\;\int_{\Rn} \Phi(y)\bigg( \int_{\Rn} f(x-y) \wck\varphi(x)\, dx \bigg) \, dy.  \end{split}\]
		We denote
		\[ f\bar*\wck \varphi (y):=\int_{\Rn} f(x-y)\wck \varphi(x) \, dx =\int_{\Rn} f(x)\wck \varphi(x+y)\, dx\]
		and write \begin{equation} \label{fbarwck1} \int_{\Rn} f*\Phi(x) \wck \varphi(x) dx  =\int_{\Rn} \Phi(y) f\bar*\wck\varphi(y) \, dy.\end{equation}
The operation $\bar* $ is well defined for $f \in C^{\infty}_c(\Rn)$ and $\wck \varphi \in \Sa_s(\Rn)$, furthermore it is easy to see that
	\[ \F(f\bar *\wck\varphi)(x) = \wck f (x)  \varphi (x).  \] We notice at first that since $\varphi$ and $\wck \varphi$ are continuous, $\F(\wck \varphi)=\varphi$ on $\Rn$. We define
		\begin{equation}\label{psifi2}  \psi(x) := \F(f\bar *\wck\varphi)(x) = \wck f (x)  \varphi (x)\end{equation} and we write \eqref{fbarwck1} as
		\begin{equation}\label{psifi1}  \int_{\Rn} f*\Phi(x) \wck \varphi(x) dx  =\int_{\Rn} \Phi(y) \wck \psi(y) \, dy .\end{equation}
		We check that $\psi$ verifies the hypothesis of Proposition \ref{proposition:ansss}.
Since \[ \int_{\Rn} |\psi(x)|\, dx = \int_{\Rn} |\wck f(x)| |\varphi(x)| \, dx\leq \|\wck f\|_{L^{\infty}(\Rn)} \|\varphi\|_{L^1(\Rn)},\] we have that $\psi \in L^1(\Rn)$. Also, $\psi \in C(\Rn)$ as a product of continuous functions.  We claim that $f\bar*\wck\varphi \in \Sa_s(\Rn)$. Indeed, suppose $\text{supp}\, f\subseteq B_R$ for $R>0$. We remark that in the next computations the constants may change from line to line. Then for $|x|\leq 2R$ we have that	
	\[ \begin{split}
			(1+|x|^{n+2s} ) |f\bar*\wck\varphi(x)| \leq &\; c_{n,s,R} \int_{B_{R}(x)} |f(y-x) \wck\varphi(y)|\, dy \\
			\leq &\;  c_{n,s,R} \|f\|_{L^\infty(B_R)} \|\wck \varphi\|_{L^{\infty}(B_{3R})}.
			\end{split}\]
For $|x|>2R$ we have that
	\[ \begin{split} |x|^{n+2s} |f\bar*\wck\varphi(x)|\leq\|f\|_{L^\infty(B_R)} [\wck \varphi]_{\Sa_s(\Rn)}^0 |x|^{n+2s} \int_{B_{R}  } |x+y|^{-n-2s} \, dy
	\end{split}\]
and we remark that $|y| \leq |x| /2$ (otherwise $y\notin \text{supp} \,f$). Then we use the bound $|x+y|\geq |x|-|y|\geq |x|/2$ and we have that
	\[
	 |x|^{n+2s} |f\bar*\wck\varphi(x)|\leq\|f\|_{\infty} [\wck \varphi]_{\Sa_s(\Rn)}^0  |x|^{n+2s}  \int_{ B_{R} }  |x|^{-n-2s}\, dy =c_{n,s,R}.
	\]
	We can iterate the same method to prove that $(1+|x|^{n+2s} ) |D^{\alpha} f\bar*\wck\varphi(x)|$ is bounded since $D^{\alpha} f\bar*\wck\varphi(x)=f\bar*D^{\alpha} \wck\varphi(x)$ and $D^{\alpha} \wck\varphi \in \Sa_s(\Rn)$.
	For $n\leq 2s$ we have that 	
\[\begin{aligned}
	 		&  |\psi(x)| \leq | \wck f(x)||\varphi(x)| \leq \|\wck f\|_{L^{\infty}(\R)} c_1|x|^{2s} \quad &\text{ for } &|x| \leq 1,\\
	 		&  |\psi(x)| \leq | \wck f(x)||\varphi(x)| \leq \|\wck f\|_{L^{\infty}(\R)} \frac{c_2}{|x|} \quad &\text{ for } &|x| > 1 .\end{aligned}\]
	 		Moreover, for $|x|>1$
	 		\[\begin{split}
	 		|\psi'(x)| \leq&\; | \wck f(x)||\varphi'(x)| + \Big|\frac{d}{dx} \wck f (x) \varphi(x)\Big|\\
	 		 \leq &\;\|\wck f\|_{L^{\infty}(\R)} \frac{c'_2}{|x|} + \Big| \int  f (\xi) (i\xi)e^{ix\xi} \, d\xi\Big| |\varphi(x)|\\
	 		 \leq&\; \|\wck f\|_{L^{\infty}(\R)} \frac{c'_2}{|x|} + \|\xi f(\xi)\|_{L^1(\R)} \frac{c_2}{|x|} 	 \leq\frac{C}{|x|}
	 		\end{split}\]
and for $|x|\leq 1$, since $f\in C_c^{\infty}(\R)$
 	\[\begin{split}
	 		|\psi'(x)| \leq&\; | \wck f(x)||\varphi'(x)| + \Big|\frac{d}{dx} \wck f (x)\Big| | \varphi(x)|\\
	 					\leq&\; \|\wck f\|_{L^{\infty}(\R)} c_1' |x|^{2s-1} + c_1|x|^{2s} \|\xi f(\xi)\|_{L^{\infty}(\R)} |x|^{-1}
	 					= C|x|^{2s-1}.
\end{split}\]
Hence we can apply Proposition \ref{proposition:ansss} and taking also into account \eqref{psifi1} we have that
			\[ \int_{\Rn} f*\Phi(x)\wck \varphi(x)\, dx = \int_{\Rn} \Phi (x) \wck \psi (x) \, dx =\int_{\Rn} ({2\pi}|x|)^{-2s} \psi (x)  \, dx,\]
			and from \eqref{psifi2} we conclude that
				\[  \int_{\Rn} f*\Phi(x)\wck \varphi(x)\, dx=\int_{\Rn} ({2\pi}|x|)^{-2s}  \wck f (x) \varphi(x)\, dx. \qedhere\]
\end{proof}

Also:
\begin{lemma}\label{lem:uff1}
Let  $f \in  C_c(\Rn) $, then $ f*\Phi \in  L_s^1(\Rn)$.
\end{lemma}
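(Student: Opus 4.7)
The plan is to fix the support $\mathrm{supp}\,f\subseteq B_R$ for some $R>0$, split the weighted integral as
\[
\int_{\Rn}\frac{|f*\Phi(x)|}{1+|x|^{n+2s}}\,dx
=\int_{B_{2R}}\frac{|f*\Phi(x)|}{1+|x|^{n+2s}}\,dx
+\int_{\Rn\setminus B_{2R}}\frac{|f*\Phi(x)|}{1+|x|^{n+2s}}\,dx,
\]
and bound each piece separately. For the first (local) piece the strategy is purely to use the local integrability of $\Phi$: in all three cases ($n>2s$, $n=2s$ with the logarithm, and $n<2s$ which forces $n=1$, $s>1/2$), the function $\Phi$ belongs to $L^1_{\mathrm{loc}}(\Rn)$ because $2s-n>-n$ in the power case and $\log|x|$ is locally integrable in the remaining case. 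Therefore by Tonelli
\[
\int_{B_{2R}}|f*\Phi(x)|\,dx\le \|f\|_{L^\infty(\Rn)}\int_{B_{R}}\Big(\int_{B_{2R}}|\Phi(x-y)|\,dx\Big)\,dy\le C\,\|f\|_{L^\infty(\Rn)}\,|B_R|\int_{B_{3R}}|\Phi(z)|\,dz,
\]
which is finite and trivially dominates the local part once divided by the positive weight.

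For the second (tail) piece I would use that for $|x|>2R$ and $y\in B_R$ one has $|x|/2\le |x-y|\le 3|x|/2$, and then estimate $\Phi(x-y)$ case by case. If $n>2s$, $|\Phi(x-y)|=a(n,s)|x-y|^{2s-n}\le C|x|^{2s-n}$, hence $|f*\Phi(x)|\le C\|f\|_{L^1(\Rn)}|x|^{2s-n}$, and
\[
\int_{\Rn\setminus B_{2R}}\frac{|f*\Phi(x)|}{1+|x|^{n+2s}}\,dx\le C\int_{\Rn\setminus B_{2R}}|x|^{-2n}\,dx<\infty.
\]
If $n<2s$ (so $n=1$, $s>1/2$), $|\Phi(x-y)|\le a(1,s)(3|x|/2)^{2s-1}\le C|x|^{2s-1}$, so the integrand is bounded by $C|x|^{2s-1}/|x|^{1+2s}=C|x|^{-2}$, integrable at infinity. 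Finally, if $n=2s$ (so $n=1$, $s=1/2$), $|\Phi(x-y)|\le |a(1,1/2)|\,\bigl(\log|x|+\log(3/2)\bigr)$, so the integrand is bounded by $C\log|x|/|x|^{2}$, still integrable at infinity.

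Combining the two bounds yields $\displaystyle\int_{\Rn}\frac{|f*\Phi(x)|}{1+|x|^{n+2s}}\,dx<\infty$, which is the definition of $f*\Phi\in L^1_s(\Rn)$. The only mild obstacle is the case analysis (three cases according to the sign of $n-2s$ and the form of $\Phi$), but the decay mechanism is the same each time: the weight $(1+|x|^{n+2s})^{-1}$ always beats the polynomial (or logarithmic) growth/decay of $\Phi$ in the far field, while locally nothing worse than integrating $\Phi$ on a compact set ever happens, thanks to $f$ being bounded and compactly supported.
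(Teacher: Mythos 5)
Your proof is correct and follows essentially the same route as the paper: both split the weighted integral into a near region and a far region, control the near part by the local integrability of $\Phi$ together with $f\in L^\infty$ with compact support, and control the far part by the pointwise estimate $|x-y|\sim|x|$ combined with the fact that the weight $(1+|x|^{n+2s})^{-1}$ dominates the growth or decay of $\Phi$ at infinity, with the same three-case analysis according to the sign of $n-2s$. The only cosmetic difference is the bookkeeping: the paper applies Tonelli first and then splits the inner $x$-integral, while you split the outer $x$-integral first and then bound each piece.
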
 

\begin{proof}
To prove that $f*\Phi \in  L_s^1(\Rn)$, we suppose that $\mbox{supp}\, f\subseteq B_R$ and we compute
\[\begin{split}
\int_{\Rn} \frac{|f*\Phi(x)|}{1+|x|^{n+2s} }\, dx =&\;
	\int_{B_R}| f(y)|\bigg(\int_{\Rn}\frac{ \Phi(x-y)}{1+|x|^{n+2s}} \, dx \bigg) \, dy\\
	\leq &\; \|f\|_{L^\infty(\Rn)} \int_{B_R} \bigg(\int_{\Rn}\frac{ \Phi(x-y)}{1+|x|^{n+2s}} \, dx \bigg) \, dy.
	\end{split} \]
We set
	\begin{equation} \label{intppr}{ c_{n,s,R} :=\int_{B_R} \bigg(\int_{\Rn}\frac{ \Phi(x-y)}{1+|x|^{n+2s}} \, dx \bigg) \, dy} \end{equation}
	and prove it is a finite quantity.
	We take for simplicity $R=1$ and remark that the constants in the next computations may change value from line to line. For $n > 2s$ we have that
   \[\begin{split}
   	 \int_{B_1} \bigg(\int_{\Rn}\frac{ \Phi(x-y)}{1+|x|^{n+2s}} \, dx \bigg) \, dy  = a(n,s) \int_{B_1} \bigg(\int_{\Rn}\frac{ |x-y|^{2s-n}}{1+|x|^{n+2s}} \, dx \bigg) \, dy.   	 	
   \end{split} \]
 For $x$ small we have that
      \[\begin{split}
       \int_{B_1}  &\; \bigg(\int_{B_2}\frac{ |x-y|^{2s-n}}{1+|x|^{n+2s}} \, dx \bigg) \, dy   \leq\int_{B_1} \left( \int_{B_{2}} |x-y|^{2s-n} \, dx\right)\, dy\\
       \leq &\;  c_n \int_{B_1} \bigg(\int_0^{2+|y|} t^{2s-1}  \, dt \bigg) \, dy 
       =c_{n,s} \int_0^1 (2+t)^{2s} t^{n-1}\, dt  = \bar c_{n,s}.
   \end{split} \]
For $x$ large, we use that $|x-y|\geq |x|-|y|$ and $1+|x|^{n+2s}>|x|^{n+2s}$, thus
  \[\begin{split}
    \int_{B_1} &\; \bigg(\int_{\Rn \setminus B_2}\frac{ |x-y|^{2s-n}}{1+|x|^{n+2s}} \, dx \bigg) \, dy \leq  \int_{B_1} \bigg(\int_{\Rn \setminus B_2} (|x|-|y|)^{2s-n} |x|^{-n-2s} \, dx \bigg) \, dy\\
    =&\; c_n \int_0^1 t^{n-1}\bigg(\int_2^{\infty} (\rho-t)^{2s-n} \rho^{-n-2s} \rho^{n-1} \, d\rho \bigg)\, dt 
    \leq c_n \int_2^{\infty}  (\rho-1)^{2s-n-1} \, d\rho = c_{n,s}.
   \end{split} \]
   Hence for $n>2s$ the quantity $c_{n,s,R}$ in \eqref{intppr} is finite.	
Meanwhile, for $n<2s$ for $x$ small the same bound as for $n>2s$ holds. For $x$ large, we have that
  \[\begin{split}
    \int_{B_1} \bigg(\int_{\R \setminus B_2}\frac{ |x-y|^{2s-1}}{1+|x|^{1+2s}} \, dx \bigg) \, dy \leq &\; \int_{B_1} \bigg(\int_{\R \setminus B_2} (|x|+|y|)^{2s-1} |x|^{-1-2s} \, dx \bigg) \, dy\\
    =&\; c  \int_0^1 \bigg(\int_2^{\infty} (\rho+t)^{2s-1} \rho^{-1-2s}  \, d\rho \bigg)\, dt  = c_s.\\
     \end{split} \]
In the case $n=2s$ from the triangle inequality we have that
	\[ \int_{B_1} \bigg( \int_{B_2} \frac {\log |x-y|}{1+|x|^2} \, dx \bigg) \, dy  \leq  c  \int_0^2 \log(t+1) \, dt = \tilde c\]
	and
	\[\begin{split}
	\int_{B_1} \bigg(  \int_{\R \setminus B_2}  \frac {\log |x-y|}{1+|x|^2} \, dx \bigg) \, dy   \leq  \int_2^{\infty} \log(t+1) t^{-2}\, dt =\tilde c.
		\end{split}\]
Hence $c_{n,s,R}$ in \eqref{intppr} is finite and we have that
	\begin{equation} \int_{\Rn} \frac{|f*\Phi(x)(x)|}{1+|x|^{n+2s} }\, dx \leq  c_{n,s,R}\|f\|_{L^\infty(\Rn)} \label{db2} .\end{equation} It follows that $f*\Phi\in L_s^1(\Rn)$, as stated.
	\end{proof}

We introduce moreover the following regularity result.
\begin{lemma}  \label{grlem12} Let $s\in (0,1)$ be fixed. Let $f\in C_c^{0,\eee}(\Rn)$ be a given function (for a small $\eps>0$) and $u$ be defined as
 \eqlab{ u(x):= \int_{\Rn} \frac{f(y)}{|x-y|^{n-2s}} \, dy.}Then 
 $u\in C^{2s+\eee}(\Rn)$. 
\end{lemma}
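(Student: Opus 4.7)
Write $u=c_{n,s}(K\ast f)$ where $K(z):=|z|^{-n+2s}$ (the logarithmic kernel case $n=2s$ is analogous with only cosmetic changes). Compact support of $f$ together with local integrability of $K$ yields $u\in L^{\infty}(\R^n)$, so I only need to control $|u(x_1)-u(x_2)|$ (case $s<1/2$) or $|\nabla u(x_1)-\nabla u(x_2)|$ (case $s\geq 1/2$) when $h:=|x_1-x_2|$ is small. The central device throughout is: set $x_0:=(x_1+x_2)/2$, split each relevant integral over $B_{2h}(x_0)$ and its complement, and within each piece insert an appropriate constant shift of $f$ (either $f(x_0)$ or $f(x_j)$) so that the constant-coefficient part vanishes by the reflection symmetry $y\mapsto 2x_0-y$ combined with the evenness of $K$, while the Hölder-weighted remainder absorbs both the local singularity of the kernel and its decay at infinity through a careful balance of $\varepsilon$ against $2s$.

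\textbf{Case $s<1/2$ ($u\in C^{0,2s+\varepsilon}$).} Starting from $u(x_1)-u(x_2)=\int f(y)[K(x_1-y)-K(x_2-y)]\,dy$, insert $f(y)=f(x_0)+[f(y)-f(x_0)]$ in each piece. The $f(x_0)$-term vanishes by reflection symmetry: on $B_{2h}(x_0)$ the integrals converge absolutely, while on the complement the mean value estimate $|K(x_1-y)-K(x_2-y)|\leq Ch|y-x_0|^{-n+2s-1}$ is integrable at infinity precisely because $2s<1$, so the cancellation is a rigorous Lebesgue identity. The remaining Hölder-weighted part yields $[f]_\varepsilon h^\varepsilon\cdot Ch^{2s}$ on the near ball, and on the complement $[f]_\varepsilon h\int_{2h}^{1}r^{\varepsilon+2s-2}\,dr=O(h^{2s+\varepsilon})$ (the power $\varepsilon+2s-2<-1$ being precisely what makes the integral blow up at $2h$ at the correct rate), plus a harmless $O(h)\leq O(h^{2s+\varepsilon})$ tail from the bounded region $\{|y-x_0|\geq 1\}\cap\mathrm{supp}\,f$.

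\textbf{Case $s\geq 1/2$ ($u\in C^{1,2s+\varepsilon-1}$).} First I justify the identity
$$\nabla u(x)=\int[f(y)-f(x)]\,\nabla K(x-y)\,dy,$$
whose integrand has integrable local singularity $\lesssim|x-y|^{\varepsilon-n+2s-1}$ because $\varepsilon+2s-1>0$, and where the ``missing'' $f(x)\int\nabla K$ contribution vanishes by odd symmetry of $\nabla K$ (as a principal value when $s=1/2$). Then I apply exactly the same splitting strategy to $\nabla u(x_1)-\nabla u(x_2)$ with $\nabla K$ in place of $K$: on $B_{2h}(x_0)$ the direct bound $|[f(y)-f(x_j)]\nabla K(x_j-y)|\leq [f]_\varepsilon|y-x_j|^{\varepsilon-n+2s-1}$ integrates to $O(h^{2s+\varepsilon-1})$; on the complement the second mean-value bound $|\nabla K(x_1-y)-\nabla K(x_2-y)|\leq Ch|y-x_0|^{-n+2s-2}$ combined with the Hölder continuity of $f$ (after isolating the $f(x_0)$-constant contribution by the reflection trick) yields $Ch\int_{2h}^{1}r^{\varepsilon+2s-3}\,dr=O(h^{2s+\varepsilon-1})$, the power $\varepsilon+2s-3$ being always below $-1$.

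\textbf{Main obstacle.} The delicate point is the rigorous symmetry cancellation in the far field together with the correct representation of $\nabla u$ when $s\geq 1/2$: the individual tail integrals $\int K(x_j-y)\,dy$ and $\int\nabla K(x_j-y)\,dy$ diverge, so the vanishing of the constant-coefficient term has to be read as the limit of finite cutoffs $\{|y-x_0|<R\}$ (each exactly zero by reflection) as $R\to\infty$, and the borderline $s=1/2$ additionally demands a genuine principal-value formulation of $\nabla u$ and a careful handling of the residual $\nabla K(x_j-y)$ contributions (which can otherwise produce a spurious logarithmic factor if one uses triangle-inequality naively instead of the reflection cancellation). Once these technicalities are in place, the rest reduces to standard Schauder bookkeeping balancing integrable powers of $|y-x_0|$ against the Hölder exponent $\varepsilon$, as sketched above.
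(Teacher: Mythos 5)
Your midpoint-reflection scheme is a genuinely different route, and for $s<1/2$ it is correct and arguably cleaner than the paper's. The paper subtracts $f(x_1)$ and is then forced to estimate $|f(x_1)|\cdot\bigl|\int_{B_R}\bigl(|x_1-y|^{2s-n}-|x_2-y|^{2s-n}\bigr)dy\bigr|$ directly (the term called $I_4$ there): it introduces a point $p\in\partial(\mathrm{supp}\,f)$ of minimal distance, splits into the cases $|x_1-x_2|\gtrless\tfrac12|x_1-p|$, and invokes a Samko-type bound in terms of $\max\{\mathrm{dist}(x_1,\partial B_R),\mathrm{dist}(x_2,\partial B_R)\}^{1-2s}$. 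Your choice of $f(x_0)$ with $x_0$ the midpoint kills that constant term exactly: $K$ is even and $x_1-(2x_0-y)=-(x_2-y)$, so $K(x_1-y)-K(x_2-y)$ is odd under $\sigma(y):=2x_0-y$ and integrates to zero over $B_{2h}(x_0)$ and (by absolute integrability when $2s<1$) over its complement. This turns the far field into a clean H\"older computation with no case split at all.

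The $s\geq 1/2$ part, however, contains a genuine gap. The same reflection applied to the gradient gives $\nabla K(x_1-\sigma(y))=-\nabla K(x_2-y)$ because $\nabla K$ is \emph{odd}, and therefore $\nabla K(x_1-y)-\nabla K(x_2-y)$ is \emph{even} under $\sigma$ — it maps to itself, not to its negative. So neither $\int_{B_{2h}(x_0)}$ nor $\int_{\C B_{2h}(x_0)}$ of that vector difference is annihilated by the reflection. A direct computation (substitute $z=x_j-y$ and compare balls around $\pm(x_1-x_2)/2$) shows that $\int_{B_{2h}(x_0)}\bigl(\nabla K(x_1-y)-\nabla K(x_2-y)\bigr)dy=2\int_{B_{2h}((x_1-x_2)/2)}\nabla K(z)\,dz$, a vector of size comparable to $h^{2s-1}$; once multiplied by $|f(x_0)|$ this overwhelms your target $O(h^{2s+\varepsilon-1})$. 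The same uncontrolled constant contribution $f(x_j)\int_{B_{2h}(x_0)}\nabla K(x_j-y)\,dy\sim h^{2s-1}$ sits unaddressed in your near-field estimate as well. What is true is $\lim_{R\to\infty}\int_{B_R(x_0)}\bigl(\nabla K(x_1-y)-\nabla K(x_2-y)\bigr)dy=0$, but the mechanism is the odd symmetry of $\nabla K$ \emph{centered at $x_j$} (giving $\int_{B_R(x_j)}\nabla K(x_j-y)\,dy=0$) combined with the symmetric-difference estimate $\bigl|\int_{B_R(x_0)\Delta B_R(x_j)}\nabla K(x_j-y)\,dy\bigr|=O(hR^{2s-2})\to0$ — not the midpoint reflection. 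To repair the proof one has to carry this total-over-$\Rn$ cancellation across the near/far split rather than expecting each piece to vanish separately, and then trade $f(x_j)\leftrightarrow f(x_0)$ at a cost $O(h^\varepsilon)$ so the constants match up. As written, the $s\geq 1/2$ half of your argument does not close.
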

\begin{proof}
Let $s<1/2$ and $\eps>0$ be such that $2s+\eps<1$ and we prove that $u\in C^{0,2s+\eps}(\Rn)$.
Let $R>0$ be such that $\mbox{supp } f\subseteq B_R$. Then taking $x_1, x_2 \in \Rn$ and denoting by 
\[\delta:=|x_1-x_2|\]
 we have that
\bgs{ |u(x_1)&-u(x_2)| \\\leq \al \int_{B_R \cap \{ |x_1-y|\leq 2\delta\} }\frac{|f(y)-f(x_1)|}{|x_1-y|^{n-2s}} \, dy+  \int_{B_R \cap \{ |x_1-y|\leq 2\delta\}} \frac{|f(y)-f(x_1)|}{|x_2-y|^{n-2s}} \, dy\\ \al 
+ 
 \int_{B_R \setminus \{ |x_1-y|\leq 2\delta\}} |f(y)-f(x_1)| \bigg| \frac{1}{|x_1-y|^{n-2s} }-\frac{1}{|x_2-y|^{n-2s}}\bigg|\, dy  \\ \al+ |f(x_1)|\bigg|\int_{B_R} \lr{ \frac{1}{|x_1-y|^{n-2s} }-\frac{1}{|x_2-y|^{n-2s}}} dy \bigg|\\
 =:\al I_1 +I_2+I_3 +I_4.  }
 Since $f$ is H{\"o}lder continuous, we have that for $C>0$
 \[ |f(y)-f(x_1)|\leq C |y-x_1|^{\eee}.\] Noticing that in the next computations the constants may change value form line to line, we obtain that
 \bgs{ & I_1 \leq C  \int_{B_R \cap \{ |x_1-y|\leq 2\delta\} } |x_1-y|^{-n+2s+\eee}\, dy = C_{n,s} \delta^{2s+\eee},\\
 &I_2 \leq  C\int_{B_R \cap \{ |x_1-y|\leq 2\delta\} } |x_1-y|^\eee |x_2-y|^{-n+2s}\, dy \leq C_{n,s} \delta^{2s+\eee}}
 since $|x_2-y|\leq |x_2-x_1|+|x_1-y|\leq 3\delta$. \\
In $I_3$ we notice that since $|x_1-y|\geq 2\delta$ we have that $|x_2-y|\geq|x_1-y|- |x_1-x_2|\geq \delta$. The function $|\cdot\,-y|^{2s-n}$ is differentiable  in $\Rn \setminus B_{\delta}(y)$, hence at each point on the segment $x_1x_2$. Using the Mean Value Theorem we have that for some $x^\star$ on the segment $x_1x_2$ 
 \eqlab{\label{MVTgreen}\left| \frac{1}{|x_1-y|^{n-2s}} -\frac{1}{|x_2-y|^{n-2s}} \right| \leq C_n \frac{|x_1-x_2|}{|x^\star-y|^{n-2s+1}}    =C_n \delta \frac{1}{|x^\star-y|^{n-2s+1}} .}
 It follows that
   \bgs{  I_3 \leq  C  \delta \int_{B_R \setminus   \{ |x_1-y|\leq 2\delta\} } \frac{|x_1-y|^{\eee}}{|x^\star -y|^{n-2s+1}}  \, dy .}
  Since $|y-x^\star| \geq |y-x_1| -|x_1-x^\star| \geq \frac{1}2 |x_1-y|$, recalling that $2s+\eps<1$ we obtain
   \bgs{  I_3   \leq C_{n,s} \delta^{2s+\eee}. }
   
Now for $I_4$, if $x_1\notin B_R$ or $x_2\notin B_R$ (it is enough in this latter case to replace $x_1$ with $x_2$ in the above computations), then we are done. Else, for  $x_1,x_2 \in B_R$, suppose that $\mbox{dist}(x_1,\partial B_R) \geq  \mbox{dist}(x_2,\partial B_R)  $ and take $p\in \partial B_R$ (hence $f(p)=0$) such that $\mbox{dist} (x_1,\partial B_r)=|x_1-p|$. So
 \[|f(x_1)|=|f(x_1)-f(p)|\leq C  |x_1-p|^{\eee} \]  and we distinguish two cases. When
 \[  (1) \quad |x_1-x_2| \geq \frac{1}2|x_1-p|\]   
  we have that
  \bgs{  \int_{B_R } & \bigg| \frac{1}{|x_1-y|^{n-2s}} -\frac{1}{|x_2-y|^{n-2s}} \bigg| \, dy \\
\leq \al  \Bigg[ \int_{B_R \cap \{ |x_1-y|\leq 2\delta\} }\frac{dy}{|x_1-y|^{n-2s}} +  \int_{B_R \cap \{ |x_1-y|\leq 2\delta\}} \frac{dy}{|x_2-y|^{n-2s}}  \\ \al 
+ 
 \int_{B_R \setminus \{ |x_1-y|\leq 2\delta\}} \bigg| \frac{1}{|x_1-y|^{n-2s} }-\frac{1}{|x_2-y|^{n-2s}} \bigg|\, dy \Bigg]\\
 =:\al  J_1 +J_2+J_3.  }
By passing to polar coordinates we have that 
 \[ J_1 \leq C_n  \int_0^{2\delta} \rho ^{2s-1} \, d\rho = C_{n,s} \delta^{2s}\]
and that
 \[ J_2  \leq C_n   \int_0^{3\delta}\rho^{2s-1}\, d\rho = C_{n,s} \delta^{2s},\] since $|x_2-y|\leq 3\delta$.
 For $J_3$, we use \eqref{MVTgreen} and get 
 \[ J_3\leq C_n \delta  \int_{B_R \setminus \{ |x_1-y|\leq 2\delta\}}  \frac{1}{|x^\star-y|^{n-2s+1}}\, dy.\]
 Passing to polar coordinates, since $2s<1$, we get that
 	\bgs{ J_3\leq &\;C_n\delta   \int_{2\delta}^{\infty}  \rho ^{2s-2}\, d\rho = C_{n,s} \delta^{2s}  .}
 	 	So we obtain that
\eqlab{\label{exressch}  \int_{B_R } \bigg| \frac{1}{|x_1-y|^{n-2s}} -\frac{1}{|x_2-y|^{n-2s}} \bigg| \, dy \leq C \delta^{2s},}where $C=C(n,s) $ is a positive constant.  
 %
 Given that $|x_1-p|<2|x_1-x_2|=2\delta$ we get that 
 \[ I_4\leq  C|x_1-p|^{\eee} \delta^{2s} \leq C \delta^{2s+\eee}.\] 
 This sets the bound for $J_4$ in the case (1).
 On the other hand, when   \[ (2)\quad |x_1-x_2| \leq \frac{1}2 |x_1-p|\]  we use the following bound (see Lemmas 2.1 and 3.5 in \cite{samko})
 \eqlab{\label{sdom} \bigg|\int_{B_R }|x_1-y|^{2s-n}-|x_2-y|^{2s-n} \, dy\bigg| \leq \frac{|x_1-x_2|}{\max\{\mbox{dist}(x_1,\partial B_R) ,\mbox{dist}(x_2,\partial B_R) \}^{1-2s} }.} Since $2s+\eee-1 <0$ we get that
\bgs{ I_4 \leq C \delta |x_1-p |^{2s-1+\eee} \leq C_{n,s} \delta^{2s+\eee}.}
 This concludes the proof of the Lemma for $s<1/2$. 
 In order to prove the result for $s\geq1/2$ (hence to prove that $u\in C^{1,2s+\eps-1}$), one can use Lemma 4.1 in \cite{trudy}
\bgs{ D u(x)=\int_{\Omega} D \Phi(x-y) f(y)\, dy=\int_{\Omega} \frac{f(y) }{|x-y|^{n-2s+1}}\, dy,} and iterate the computations of this proof.
   \end{proof}
The interested reader can see Theorem 4.6 in \cite{samko}, where the result given here in Lemma \ref{grlem12} is proved for $u$ defined as
\[ u(x)= \int_{\Omega} \frac{f(y)}{|x-y|^{n-2s}} \, dy,\] where $\Omega$ is a domain with the $s$-property (see Definition 3.3 therein). In particular, these domains are defined such that they satisfy a bound of the type given in \eqref{sdom}, while the ball is the typical example of this type.  
%

We state now the main result of this subsection.
\begin{theorem}
\label{theorem:poissonsolution}
 Let $f \in  {C^{0,\eee}_c(\Rn)} $ and let $u$  be defined as
	\[  u(x):=\Phi*f\, (x) .\]
Then $ u\in L_s^1(\Rn)\cap C^{2s+\eee}(\Rn)$ and both in the distributional sense and pointwise
	\[ \frlap u =f. \]
\end{theorem}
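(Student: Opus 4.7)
The plan is to assemble the theorem from the three lemmas that immediately precede it (Lemma~\ref{lem:uff1}, Lemma~\ref{lem:uff2}, Lemma~\ref{grlem12}) together with the distributional identity $(-\Delta)^s\Phi=\delta_0$ of Theorem~\ref{theorem:thm3}. The first step is easy bookkeeping: since $f\in C_c^{0,\eee}(\Rn)\subset C_c(\Rn)$, Lemma~\ref{lem:uff1} yields directly that $u=f*\Phi\in L^1_s(\Rn)$; moreover, recalling the explicit form of $\Phi$ in \eqref{fundsolution}, we have (up to the multiplicative constant $a(n,s)$) that $u(x)=a(n,s)\int_{\Rn} f(y)|x-y|^{2s-n}\,dy$ for $n\neq 2s$, so Lemma~\ref{grlem12} gives $u\in C^{2s+\eee}(\Rn)$. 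The logarithmic case $n=2s=1$ is handled by an analogous and even simpler Hölder estimate, since the singularity of $\log|x-y|$ is milder than any negative power.

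The heart of the argument is the distributional identity $(-\Delta)^s u=f$. Fix $\varphi\in\Sa(\Rn)$ and set $\psi(\xi):=(2\pi|\xi|)^{2s}\widehat\varphi(\xi)$, so that by \eqref{frlaphdef} we have $(-\Delta)^s\varphi=\wck\psi$. The checks carried out inside the proof of Theorem~\ref{theorem:thm3} show that $\psi$ lies in the class of functions to which Lemma~\ref{lem:uff2} applies (in particular $\wck\psi=(-\Delta)^s\varphi\in\Sa_s(\Rn)$ by \eqref{frb1}, and the needed pointwise/derivative bounds near the origin and at infinity for $n\leq 2s$ are exactly the ones verified there). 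Consequently Lemma~\ref{lem:uff2} gives
\[
\langle u,(-\Delta)^s\varphi\rangle_s=\int_{\Rn}(f*\Phi)(x)\,\wck\psi(x)\,dx=\int_{\Rn}(2\pi|x|)^{-2s}\wck f(x)\,\psi(x)\,dx=\int_{\Rn}\wck f(x)\,\widehat\varphi(x)\,dx.
\]
A standard Fubini/Parseval computation rewrites the last integral as $\int_{\Rn}f(x)\varphi(x)\,dx$, which is precisely the defining relation \eqref{disf1} for $(-\Delta)^s u=f$ in $\Sa'_s(\Rn)$.

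The third step upgrades the distributional identity to pointwise equality. Since $u\in L^1_s(\Rn)\cap C^{2s+\eee}(\Rn)$, the principal-value expression \eqref{frlapdef} is pointwise well defined for every $x\in\Rn$, and a dominated-convergence argument (of exactly the kind used in the discussion immediately after \eqref{frlap2def}) shows that $x\mapsto(-\Delta)^s u(x)$ is continuous. Both $(-\Delta)^s u$ and $f$ are therefore continuous functions that induce the same element of $\Sa'_s(\Rn)$, and hence must coincide at every point.

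The step I expect to be the main obstacle is not any single calculation but the careful verification that $\psi(\xi)=(2\pi|\xi|)^{2s}\widehat\varphi(\xi)$ satisfies all the technical hypotheses of Lemma~\ref{lem:uff2}, especially in the delicate low-dimensional regime $n\leq 2s$ where pointwise and derivative bounds must be controlled simultaneously near $0$ and at $\infty$; once this is in place, the rest of the proof is a clean chain of the lemmas already established.
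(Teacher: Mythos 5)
There is a genuine gap in the distributional step. You apply Lemma~\ref{lem:uff2} directly to the Hölder-continuous, compactly supported $f$ of the theorem, but that lemma is stated (and its proof is carried out) under the hypothesis $f\in C^\infty_c(\Rn)$, not $f\in C^{0,\eee}_c(\Rn)$. Your verification concerns only the \emph{other} function in the lemma, namely that $\psi(\xi)=(2\pi|\xi|)^{2s}\widehat\varphi(\xi)$ satisfies the hypotheses of the ``$\varphi$'' in Lemma~\ref{lem:uff2} (which indeed follows from the checks in Theorem~\ref{theorem:thm3}); you never address the smoothness requirement on $f$, which is where the theorem's hypothesis is strictly weaker than the lemma's. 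As the proposal stands, the chain
\[
\langle u,\frlap\varphi\rangle_s=\int_{\Rn}(f*\Phi)\,\wck\psi\,dx
=\int_{\Rn}(2\pi|x|)^{-2s}\wck f\,\psi\,dx
\]
is not justified for $f\in C^{0,\eee}_c(\Rn)\setminus C^\infty_c(\Rn)$.

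The paper closes exactly this gap with a two-step argument: first it establishes $\langle u,\frlap\varphi\rangle_s=\int f\varphi$ for $f\in C^\infty_c(\Rn)$ (where Lemma~\ref{lem:uff2} legitimately applies); then for $f\in C^{0,\eee}_c(\Rn)$ it takes a sequence $f_k\in C^\infty_c(\Rn)$ with $\|f_k-f\|_{L^\infty(\Rn)}\to0$, sets $u_k=\Phi*f_k$, and passes to the limit in $\langle u_k,\frlap\varphi\rangle_s=\int f_k\varphi$. The passage to the limit is controlled by the estimates \eqref{db1} and \eqref{db2}, which show that
\[
|\langle u_k-u,\frlap\varphi\rangle_s|\le[\frlap\varphi]^0_{\Sa_s(\Rn)}\|u_k-u\|_{L^1_s(\Rn)}
\le c_{n,s,R}\,[\frlap\varphi]^0_{\Sa_s(\Rn)}\|f_k-f\|_{L^\infty(\Rn)}\longrightarrow 0.
\]
Without this approximation step (or an explicit argument that Lemma~\ref{lem:uff2} extends to $f\in C^{0,\eee}_c$), the proposal is incomplete. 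The remaining parts — membership of $u$ in $L^1_s(\Rn)\cap C^{2s+\eee}(\Rn)$ via Lemmas~\ref{lem:uff1} and~\ref{grlem12}, and the upgrade from the distributional to the pointwise equality via continuity of $x\mapsto\frlap u(x)$ — follow the paper's route and are sound; the citation of Lemma~\ref{grlem12} for the Hölder regularity is in fact more precise than the paper's text, which attributes it to Lemma~\ref{lem:uff1} alone.
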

\begin{proof}

From Lemma \ref{lem:uff1}, we have that $u\in L_s^1(\Rn)\cap C^{2s+\eee}(\Rn)$. We prove at first the statement for $f\in C^{\infty}_c(\Rn)$.

We notice that for $\varphi \in \mathcal{S}(\Rn)$, the function $\mathcal F^{-1} ((2\pi |\xi)|^{2s} \widehat \varphi (\xi)) \in \mathcal{S}_s(\Rn)$ and $(2\pi |\xi)|^{2s} \widehat \varphi (\xi)$ satisfies the hypothesis of Lemma \ref{lem:uff2}. Hence, by \eqref{disf2}
	\begin{equation*}
		\begin{split}
	\langle u, \frlap \varphi\rangle_s =&\;\int_{\Rn}  f*\Phi(x) \mathcal{F}^{-1} \Big((2\pi |\xi|)^{2s}\widehat \varphi(\xi) \Big)(x) \, dx\\
		=&\; \int_{\Rn} \wck{f}(\xi) {\widehat \varphi(\xi)} \, d\xi=  \int_{\Rn}  f(x) \varphi(x) \, dx.
	\end{split}
	\end{equation*}
	The last equality follows since $\wck f \in L^1(\Rn)$, which is implied by the infinite differentiability of $f$.
	We conclude that $u$ is the distributional solution of \[\frlap u = f.\]

We consider now $f\in C^{0,\eee}_c(\Rn) $. We take a sequence of functions $(f_k)_k \in C^{\infty}_c (\Rn)$ such that $\|f_k-f\|_{L^{\infty}(\Rn)} \underset{k\to \infty}{\longrightarrow} 0$ and we consider $u_k=\Phi*f_k$. Then we have that for any $\varphi \in \Sa(\Rn)$
	\[ \langle u_k, \frlap \varphi\rangle_s= \int_{\Rn} f_k (x) \varphi(x)\, dx.\] By definition of $f_k$ \[ \lim_{k \to +\infty} \int_{\Rn} f_k(x)\varphi(x)\, dx = \int_{\Rn} f(x)\varphi(x)\, dx ,\]
	moreover, using \eqref{db1} and \eqref{db2} we have that
		\[\begin{split}
		 \langle u_k -u,\frlap \varphi\rangle_s \leq&\; [\frlap \varphi]^{0}_{\Sa_s(\Rn)}\|u_k-u\|_{L_s^1(\Rn)}\\
		  \leq&\;  c_{n,s,R}[\frlap \varphi]^{0}_{\Sa_s(\Rn)}\|f_k-f\|_{L^{\infty}(\Rn)} \underset{k \to \infty}{\longrightarrow} 0.
		 \end{split}\]
	We thus obtain that for any $\varphi \in \Sa(\Rn)$ \[\langle u,\frlap \varphi\rangle_s =\int_{\Rn} f(x)\varphi(x)\, dx.\] Hence in the distributional sense $\frlap u = f $ on $\Rn$ for any $f\in C^{0,\eee}_c(\Rn)$.
	 
	In order to obtain the pointwise solution, 
	from the continuity of the mapping $ \Rn \ni x \mapsto \frlap u(x) $ (according to Proposition 2.1.7 from \cite{Silvestre}), we have that
		$  \int_{\Rn}\frlap u(x) \varphi(x)\, dx$ is well defined. Since for any $\varphi \in C^{\infty}_c(\Rn)$ we have that
		\[\int_{\Rn} u(x)\frlap \varphi(x)\, dx = \int_{\Rn} f(x)\varphi(x)\, dx ,\]  thanks to Fubini-Tonelli's Theorem and changing variables we obtain that 
			\[ \int_{\Rn} f(x)\varphi(x)\, dx=\int_{\Rn} u(x)\frlap \varphi(x)\, dx = \int_{\Rn} \frlap u(x)\varphi(x) \, dx  .\]  Since both $f$ and $\frlap u$ are continuous, we conclude that pointwise in $\Rn$ \[ \frlap  u(x) = f(x).\qedhere \]
	 \end{proof}

{As a corollary, we have a representation formula for a $C^{\infty}_c ({\Rn})$ function.}
\begin{corollary}
\label{lemma:unouno}
For any $f \in C^{\infty}_c ({\Rn})$ there exists a function $\varphi \in C^{\infty}({\Rn})$ such that 		
	\[f(x)=\varphi* \Phi(x), \]  and $\varphi (x)=\mathcal{O} (|x|^{-n-2s})$ as $|x| \rightarrow \infty$.
\end{corollary}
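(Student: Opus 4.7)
\medskip

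The plan is to take $\varphi:=\frlap f$ and verify the three required properties: smoothness, decay, and the convolution identity $\varphi*\Phi=f$.

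First I would check that $\varphi\in C^{\infty}(\Rn)$. Writing $\frlap f(x)=\tfrac{C(n,s)}{2}\int_{\Rn}\tfrac{2f(x)-f(x+y)-f(x-y)}{|y|^{n+2s}}\,dy$, one differentiates under the integral sign: for any multi-index $\alpha$ the $x$-derivative of the integrand is controlled, uniformly in $x$ on compacts, by $\min(|y|^{-n-2s+2}\|D^{\alpha+2}f\|_{L^\infty},\,|y|^{-n-2s}\|D^{\alpha}f\|_{L^\infty})$, which is in $L^1(\Rn,dy)$. Next, the decay $\varphi(x)=\mathcal O(|x|^{-n-2s})$ follows from the estimate \eqref{frb1}: if $\mathrm{supp}\,f\subseteq B_R$ and $|x|>2R$, then $f(x)=0$ and $f(x\pm y)\neq 0$ only when $|y|>|x|-R>|x|/2$, whence
\[
|\varphi(x)|=\frac{C(n,s)}{2}\bigg|\int_{|y|>|x|/2}\!\!\frac{f(x+y)+f(x-y)}{|y|^{n+2s}}\,dy\bigg|\le \frac{2^{n+2s+1}C(n,s)\|f\|_{L^1(\Rn)}}{|x|^{n+2s}}.
\]

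The main step is the identity $\varphi*\Phi=f$. I would establish it in the distributional sense against an arbitrary $\psi\in C_c^\infty(\Rn)$ and then conclude pointwise by continuity of both sides. Set $u:=\Phi*\psi$. By Theorem~\ref{theorem:poissonsolution} applied to $\psi\in C_c^\infty\subset C_c^{0,\eee}$, we have $u\in L_s^1(\Rn)\cap C^{2s+\eee}(\Rn)$ with $\frlap u=\psi$ pointwise. Since $\varphi\in L^1(\Rn)$ (the decay $|x|^{-n-2s}$ is integrable at infinity and $\varphi\in C^\infty\cap L^\infty$ locally) and $\Phi$ is radial, Fubini together with the integrability bound of Lemma~\ref{lem:uff1}-type and the fact that $\psi$ has compact support give
\[
\int_{\Rn}(\varphi*\Phi)(x)\,\psi(x)\,dx=\int_{\Rn}\varphi(y)(\Phi*\psi)(y)\,dy=\int_{\Rn}\frlap f(y)\,u(y)\,dy.
\]
Now I would move the fractional Laplacian onto $u$ by the symmetry of the kernel: using the principal-value representation \eqref{frlapdef} and the fact that $f\in C_c^\infty$ (which kills any growth of $u$) together with $u\in L_s^1(\Rn)\cap C^{2s+\eee}$ (which handles the singular kernel at the diagonal and at infinity), a standard Fubini argument applied to the symmetric kernel $|x-y|^{-n-2s}$ yields
\[
\int_{\Rn}\frlap f(y)\,u(y)\,dy=\int_{\Rn}f(y)\,\frlap u(y)\,dy=\int_{\Rn}f(y)\,\psi(y)\,dy.
\]
Since this holds for every $\psi\in C_c^\infty(\Rn)$, and both $f$ and $\varphi*\Phi$ are continuous (the latter by the same type of argument as in Lemma~\ref{grlem12}), we obtain $\varphi*\Phi=f$ pointwise on $\Rn$.

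The main obstacle is the last symmetry exchange $\int \frlap f\cdot u=\int f\cdot\frlap u$, since $u$ is not Schwartz — it lies only in $L_s^1\cap C^{2s+\eee}$, and $\frlap f$ only decays like $|x|^{-n-2s}$. The key observations making this rigorous are that $f$ has compact support (so all factors involving $f(x\pm y)$ are automatically truncated and the double integral over $(x,y)$ has a bounded $x$-domain once supp $f$ is fixed) and that $u\in L_s^1(\Rn)$ (which makes the weight $(1+|y|^{n+2s})^{-1}$ integrate $u$), so that the symmetric double integral $\iint \frac{[2f(x)-f(x+y)-f(x-y)]\,u(x)}{|y|^{n+2s}}\,dy\,dx$ converges absolutely and Fubini legitimizes the transfer of $\frlap{}$ from $f$ to $u$.
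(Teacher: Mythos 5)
Your proposal is correct, but it takes a genuinely different (and more elaborate) route than the paper. The paper's proof of this corollary is a one-liner: after setting $\varphi:=\frlap f$, checking smoothness and the decay from \eqref{frb1}, it simply asserts that $\varphi*\Phi=\frlap f*\Phi=f$ ``by using Theorem~\ref{theorem:poissonsolution}.'' This implicitly applies Theorem~\ref{theorem:poissonsolution} to $f$ (so $\frlap(\Phi*f)=f$) and then commutes $\frlap$ with the convolution against $\Phi$, a standard Fourier-multiplier fact for $f\in\mathcal S(\R^n)$ that the paper leaves unstated. You avoid the commutation entirely: you apply Theorem~\ref{theorem:poissonsolution} to the test function $\psi$ rather than to $f$, then transfer $\frlap$ from $f$ to $u=\Phi*\psi$ via the symmetry of the kernel and a Fubini argument, and finally upgrade from the distributional equality to a pointwise one using continuity of both sides. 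What the paper's route buys is brevity at the cost of a tacit commutation lemma; what your route buys is a self-contained justification of the convolution identity that never leaves the function classes already treated by the paper's lemmas, at the cost of a somewhat delicate Fubini step (which you correctly flag and handle by exploiting the compact support of $f$ and the $L^1_s$-membership of $u$). One small remark: your appeal to ``the same type of argument as in Lemma~\ref{grlem12}'' for the continuity of $\varphi*\Phi$ is slightly imprecise, since that lemma is stated for compactly supported densities and $\varphi$ is not compactly supported; the continuity does hold, but it is more directly seen from the decay $\varphi=\mathcal O(|x|^{-n-2s})$ and dominated convergence.
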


\begin{proof}[Proof]
For $f \in C^{\infty}_c(\Rn)$, we define $\varphi$ as
	\[ \varphi (x):=\frlap f(x).\]
The bound established in \eqref{frb1} gives the asymptotic behavior of~$\varphi$, while it is not hard to see that $\varphi \in C^{\infty}({\Rn})$.
Then by using Theorem \ref{theorem:poissonsolution} we have that pointwise in $\Rn$
	\[\varphi*\Phi(x)= \frlap f *\Phi(x)=  f(x).\qedhere\]

\end{proof}

\subsection{The Poisson kernel} \label{thepoissonkernel}

We claim that $P_r$ plays the role of the fractional Poisson kernel
namely, we prove here Theorem \ref{theorem:DPL}.
\begin{proof}[Proof of Theorem \ref{theorem:DPL}]
We see at first that $u_g \in L_s^1(\Rn) $. Take $R>2r$ and $x \in B_r$, then by using \eqref{Ip}, the inequality $|x-y| >|y|-r$ and for $|y|>R$ the bound
	\begin{equation} \label{bbly1}\frac{|y|^{n+2s}}{(|y|^2-r^2)^s|x-y|^n} \leq  2^{n+s}\end{equation}
we have that
	\[ \begin{split} |u_g(x)|\leq &\;  \int_{R>|y|>r} P_r(y,x) |g(y)|\, dy + \int_{|y|>R} P_r(y,x) |g(y)|\, dy\\
							\leq&\; c(n,s) \sup_{y\in \overline B_R\setminus B_r} |g(y)| + 2^{n+s} c(n,s) (r^2-|x|^2)^s \int_{|y|>R}  \frac{|g(y)|}{|y|^{n+2s}}\, dy\\
							\leq&\; c(n,s) \sup_{y\in \overline B_R\setminus B_r} |g(y)| + 2^{n+s} c(n,s) r^{2s}  \int_{|y|>R}  \frac{|g(y)|}{|y|^{n+2s}}\, dy.
							 \end{split} \]
Since $g \in L_s^1(\Rn)$, the last integral is bounded, and so $u_g$ is bounded in $B_r$. It follows that $u_g\in L_s^1(\Rn)$, as stated. Moreover, the local $C^\infty$ regularity of $u_g$ in $B_r$ follows from the regularity of the Poisson kernel.

Let us fix $x \in B_r$ and prove that $u_g$ has the $s$-mean value property in $x$. If this holds, indeed, Theorem \ref{theorem:arm} implies that $\frlap u(x)=0$, and given the arbitrary choice of $x$, the same is true in the whole $B_r$.


We claim that for any $\rho$ such that $0<\rho< r-|x|$ we have
\begin{equation} A_{\rho} * u_g (x) = u_g(x).\label{grclaim} \end{equation}
\noindent Let at first $g$ be in $C^{\infty}_c{(\Rn)} $. By Corollary \ref{lemma:unouno}, there exists a function $\varphi\in C^{\infty}(\Rn)$ such that
	  \[ g(y)= \int_{\Rn} \Phi(z-y) \varphi(z) \, dz \] and at infinity $\varphi (z) = \mathcal{O}(|z|^{-n-2s})$.
For $r>0$ fixed, we write $g$ as
	\begin{equation} \label{grbbbb} g(y)=\int_{\obal} \Phi(t-y) \varphi(t) \, dt + \int_{B_r} \Phi(z-y) \varphi(z) \, dz.\end{equation}
Using identity \eqref{Ipu} we have that
	\begin{equation*}
		\begin{split}
	\int_{B_r} \Phi(z-y) \varphi(z) \,  dz  =\;&\int_{B_r} \bigg(\int_{\obal}P_r(t,z)\Phi(y-t) \, dt\bigg)\, \varphi(z) \, dz\\
	= \;&\int_{\obal}\Phi(y-t) \bigg(\int_{B_r}P_r(t,z)\varphi(z)\, dz\bigg)\, dt.
		\end{split}
	\end{equation*}
Therefore, in \eqref{grbbbb} it follows that
	 \begin{equation} g(y) = \int_{\obal} \Phi(y-t)\psi(t)\, dt, \label{gsecondcase}\end{equation}
where $\psi(t) = \varphi(t)+\int_{B_r}P_r(t,z)\varphi(z) \, dz$.
In particular, using \eqref{solD} and \eqref{gsecondcase} we have that {
	\begin{equation*} \begin{split}
		 u_g(x)=&\; \int_{|y|>r}P_r(y,x) \bigg(\int_{|t|>r} \Phi(y-t) \psi(t) \, dt\bigg) \, dy\\
		 	= &\; \int_{|t|>r} \psi(t)\bigg( \int_{|y|>r} P_r(y,x) \Phi(y-t) \, dy\bigg) \, dt = \int_{|t|>r} \psi(t) \Phi(x-t) \, dt\end{split}
		 \end{equation*}
thanks to \eqref{Ipu}. Furthermore, we compute
	\begin{equation*} \begin{split}
		A_{\rho}*u_g(x) =&\; \int_{|y|>\rho} A_\rho(y)  \bigg(\int_{|t|>r} \psi(t) \Phi(x-y-t) \, dt \bigg)\, dy  \\
					=&\;    \int_{|t|>r} \psi(t) \bigg(\int_{|y|>\rho} A_\rho(y) \Phi(x-y-t) \, dy\bigg) \, dt.					
	\end{split}
		 \end{equation*}}
Having chosen $\rho\leq r-|x|$ we have that $|x-t|\geq |t|-|x| \geq \rho$ and from \eqref{Ifu} we obtain
 \begin{equation*}
		A_{\rho}*u_g(x) = \int_{|t|>r} \psi(t)  \Phi(x-t)\, dt.
	 \end{equation*}
Consequently $A_{\rho} * u_g (x) =u_g(x),$ thus for $g\in C^{\infty}_c({\Rn})$ the claim \eqref{grclaim} is proved.\\

We now prove the claim \eqref{grclaim} for any forcing term $g \in L^1_s({\Rn})\cap C(\Rn)$. In particular, let $\eta_k \in C^{\infty}_c(\Rn)$ be such that $\eta_k(x) \in [0,1]$, $\eta_k=1$ in $B_k$ and $\eta_k=0$ in $B_{k+1}$. Then $g_k:=\eta_k g \in C^{\infty}_c(\Rn)$ and we have that $g_k \underset{k\to \infty}{\longrightarrow} g$ pointwise in $\Rn$, in norm $L_s^1(\Rn)$ and uniformly on compact sets. So, for any $k \geq 0$ the function $u_{g_k}(x)$ has the $s$-mean value property in $x$. Precisely, for any $\rho>0$ small independent of $k$,
	\begin{equation} \big(A_{\rho}*u_{g_k}\big)(x) =u_{g_k}(x). \label{primacosa} \end{equation}
We claim that
	\begin{equation}
		\lim_{k\to \infty} u_{g_k}(x) = u_g(x) \label{puno}
	\end{equation}
and that for any $\rho>0$ small
	\begin{equation}
		\begin{split}
		&\lim_{k\to \infty} \big(A_{\rho}*u_{g_k}\big)(x) = A_{\rho}*u_g (x)  \label{pdue}.
		\end{split}
	\end{equation}
Let $\eee $ be any arbitrarily small quantity. For $k$ large and $R>2r,$ we take advantage of \eqref{bbly1} and obtain that for $x \in B_r$
\[ \begin{split}	
		|u_{g_k} (x) -u_g(x)| \leq &\; \int_{\Rn\setminus B_r} |g_k(y)-g(y)| P_r(y,x)\,dy \\
				\leq&\;  2^{n+s} c(n,s) (r^2-|x|^2)^s  \int_{\Rn \setminus B_R}  \frac{|g_k(y)-g(y)| }{|y|^{n+2s}} \, dy \\
				&\;+ \sup _{y \in \overline B_R\setminus B_r}|g_k(y)-g(y)| \int_{B_R\setminus B_r}  P_r(y,x)\, dy\\	
				\leq &\; c(n,s,r) \int_{\Rn \setminus B_R}  \frac{|g_k(y)-g(y)| }{|y|^{n+2s}} \, dy + \sup _{y \in \overline B_R\setminus B_r}|g_k(y)-g(y)|  \leq \eee \end{split}\]
by the convergence in $L_s^1(\Rn)$ norm, the uniform convergence on compact sets of $g_k$ to $g$ and integrability in $\Rn \setminus B_r$ of the Poisson kernel (by identity  \eqref{Ip}). Hence, claim \eqref{puno} is proved.			
In order to prove claim \eqref{pdue}, we notice that for any $\rho>0$ small we have that
	\begin{align} \label{illy}
		 | A_{\rho}* u_{g_k}(x)  - A_\rho * u_g(x) |\leq &  \int_{|y|>\rho} A_{\rho}(y)|   u_{g_k}  (x-y)  -u_g(x-y)| \, dy \notag\\
		\leq & \int_{\substack{ {|y|>\rho}\\{|x-y|\geq r}}} A_{\rho}(y) |g_k(x-y)-g(x-y)|  \, dy \notag\\
		&+  \int_{|z|>r}  | g_k(z)-g(z)|    \int_{\substack{ {|y|>\rho}\\ {|x-y|<r}}}    A_{\rho} (y) P_r(z,x-y)  dy \, dz\notag\\
		=&\; I_1+I_2.
	\end{align}
Let $R>2\rho$. Thanks to the bound \eqref{bbly1} for $|y|>R$, the convergence in $L_s^1(\Rn)$ norm, the uniform convergence on compact sets of $g_k$ to $g$ and the integrability in $\Rn \setminus B_{\rho}$ of the $s$-mean kernel (by identity \eqref{Ir}) we have that for $k$ large
\[	\begin{split}
		I_1 =&\; c(n,s) r^{2s} \int_{\substack{ {|y|>\rho}\\{|x-y|\geq r}}} \frac{|g_k(x-y)-g(x-y)|} {(|y|^2-\rho^2)^s |y|^n} \, dy\\
			\leq &\;2^{n+s} c(n,s,r) \int_{|y|>R} \frac{|g_k(x-y) - g(x-y)| }{|y|^{n+2s}}\, dy \\
						&\; + \sup_{y \in \overline B_R\setminus B_\rho} |g_k (x-y)-g(x-y)| \int_{R>|y|>\rho} A_\rho(y) \, dy
						\leq  \frac{\eee}{2}. \end{split} \]
Once more, for $R>2r$ and $|z|>R$ we use the bound \eqref{bbly1} and we have that
\[	\begin{split}
		I_2 = &\; \int_{|z|>R} |g_k(z)-g(z)| \int_{\substack{{|y|>\rho}\\ {|x-y|<r}}} A_\rho(y)P_r(z,x-y)  dy   \, dz \\\
		&\;  +  \int_{R>|z|>r} |g_k(z)-g(z)| \int_{\substack{{|y|>\rho}\\ {|x-y|<r}}} A_\rho(y)P_r(z,x-y)  \, dy \, dz \\
	\leq &\;c(n,s) \int_{\substack{{|y|>\rho}\\ {|x\!-\!y|<r}}} A_\rho(y) (r^2\!-\!|x\!-\!y|^2)^s \!   \int_{|z |>R} \frac{|g_k(z) \!-\! g(z)| }{(|z|^2\! -\!r^2)^s |z\!-\!x\!+\!y|^n}\, dz  \, dy \\
						&\;+  \sup_{z\in \overline B_R\setminus B_r} |g_k (z)-g(z)|  \int_{\substack{{|y|>\rho}\\ {|x-y|<r}}} A_\rho(y)   \int_{R>|z|>r} P_r(z,x-y)  \, dz  \, dy \\
				\leq &\;c(n,s,r)  \int_{|z|>R} \frac{|g_k(z) - g(z)| }{|z|^{n+2s}}dz   + \sup_{z \in \overline B_R\setminus B_r} |g_k (z)-g(z)| 
				\end{split} \]
since by identity \eqref{Ir} and \eqref{Ip}
	\[
	\int_{\substack{{|y|>\rho}\\ {|x-y|<r}}} A_\rho(y)   \int_{R>|z|>r} P_r(z,y-x)  \, dz  \, dy  	\leq 1.
	\]
 Therefore again by the convergence in $L_s^1(\Rn)$ norm, the uniform convergence on compact sets of $g_k$ we have that $I_2 \leq 	\displaystyle \frac{\eee}{2}$. In \eqref{illy} it follows that
\begin{equation*}
		\lim_{k\to \infty} \big(A_{\rho}* u_{g_k}\big)(x) =  A_{\rho}*u_g(x),
	\end{equation*}
thus the desired result \eqref{pdue}.

By \eqref{primacosa}, \eqref{puno} and \eqref{pdue} we have that
\[A_{\rho}*u_g (x) = \lim_{k \to \infty}A_{\rho}* u_{g_k}(x)   = \lim_{k \to \infty} u_{g_k} (x) =u_g(x),\]
thus $u_g$ has the $s$-mean value property at $x$.
This concludes the proof of the claim \eqref{grclaim}.

We now prove the continuity of $u_g$. Of course, $u_g$ is continuous in $B_r$ and in $\obal$. We need to check the continuity at the boundary of $B_r$.

Let $y_0 \in \partial B_r$ and $\eee>0$ arbitrarily small to be fixed,  $\delta_{\eee}>0$ be such that, if $y \in B_{\delta_{\eee}}(y_0)$ then $|g(y)-g(y_0)| < \eee$. We fix $\mu$ arbitrarily small such that $0 <\mu<  \frac{\delta_{\eee}}{2}$, $R> 2r$, and $x \in  B_r \cap \ B_{\mu} (y_0 )$. Notice that \[ r^2-|x|^2 = (r+|x|) (r-|x|) < 2r |y_0-x| < 2r \mu.\]
From \eqref{Ip} we have that
	\begin{equation}\label{illy1} |u_g(x)-g(y_0)| \leq \int_{\obal} \big|g(y)-g(y_0)\big|P_r(y,x) \, dy .\end{equation}
For $r<|y|<R$ and $|y-y_0|\geq \delta_\eee$ we have that
	$ |x-y| \geq \delta_{\eee} - \mu>  \frac{\delta_{\eee}}{2} .$
Meanwhile, for $|y|>R$ we use the bound \eqref{bbly1}. We have that
	\begin{equation*}
		\begin{split}
		& \int_{\substack{ {|y|>r}\\{|y-y_0|\geq  \delta_{\eee}}}}  |g(y)-g(y_0)|P_r(y,x) \, dy\\
		 \leq\; &   c(n,s,R) \mu^s \bigg( \frac{2^n}{ \delta_{\eee}^{n}} \int_{\substack{ {R>|y|>r}\\{|y-y_0|\geq  \delta_{\eee}}}} \frac{|g(y)|+|g(y_0)|}{ (|y|^2-r^2)^s }\, dy
		 + 2^{n+s} \int_{|y|>R} \frac{|g(y)|+ |g(y_0)|}{ |y|^{n+2s}}\, dy   \bigg) \\
	\leq \; &  c(n,s,R) \mu^s \bigg(\frac{2^n }{\delta_{\eee}^{n}} \overline c(r,R,s,g)
	+ 2^{n+s} \|g\|_{L_s^1(\Rn)}+ \tilde c (g,s, R)  \bigg) \\
		=  &\;C(n,s,R,r,g,\delta_\eee) \mu^s .
		\end{split}
	\end{equation*}
	From this and the fact that	\[\int_{\substack{  {|y|>r}\\{|y-y_0|< \delta_{\eee}}} } |g(y)-g(y_0)|P_r(y,x) \, dy \leq \eee \int_{\obal} P_r(y,x)\, dy =\eee\]
		by \eqref{Ip} and the continuity of $g$, we can pass to the limit in \eqref{illy1}. Sending first $\mu  \rightarrow 0$ and afterwards $\eee \rightarrow 0$ we obtain that
	\[  \lim_{x \rightarrow y_0 }\big(u_g(x)-g(y_0)\big) =0,\] thus the continuity of $u_g$.

{The uniqueness of the solution follows from the Maximum Principle. Indeed, if one takes $u_1$ and $u_2$ two different continuous solutions of the Dirichlet problem, then $u=u_1-u_2$ is a continuous solution to the problem
	 \begin{align*}
		\frlap u(x)  &=0, &  &\text{in } B_r \\
		u(x)& =0, & &\text{in } \Rn \setminus  B_r.
		\end{align*}
By Theorem \ref{THM-MA-1-STRONG}, the solution $u$ is constant, hence null since it is continuous in $\Rn$ and vanishing outside of $B_r$.} This concludes the proof of the Theorem.
\end{proof}


\subsection[The Green function for the ball]{The Green function for the ball}\label{green}

The purpose of this subsection is to prove Theorems~\ref{theorem:thm1} and~\ref{theorem:thm2}.
We also compute the normalization constants needed in the formula of the Green function on the ball.

We prove now Theorem \ref{theorem:thm1} in the three cases $n>2s$, $n<2s$ and $n=2s$ separately.
\begin{proof}[Proof of Theorem \ref{theorem:thm1} for~$n>2s$]

Let $x,\, z\in B_r$ be fixed.\\
We insert the explicit formula \eqref{fundsolution} into definition \eqref{greendefn} and obtain that
	\begin{equation}G(x,z) =  a(n,s) \big( |z-x|^{2s-n}-A(x,z)  \big), \label{axz}\end{equation}
where \[A(x,z) := \int_{|y|>r} \frac{P_r(y,x) }{|y-z|^{n-2s}} \, dy.\]
Inserting also definition \eqref{poissondefn} we have that
	\begin{equation*}
\begin{split}
 A(x,z)= c(n,s)\int_{|y|>r} \frac{ (r^2-|x|^2)^s} {|y-z|^{n-2s} (|y|^2-r^2)^s|y-x|^n} \, dy .\\
		\end{split}
	\end{equation*}

We use the point inversion transformation that is detailed in the Subsection \ref{appy}. { Let $x^* \in \Rn \setminus \overline{B}_r$ and $y^* \in B_r$ be the inversion of $x$, respectively $y$ with center at $z$, defined by the relation \eqref{tr}. }
With this transformation, using formulas \eqref{firsttr}, \eqref{dxtr} and \eqref{sectr} we obtain that
	\begin{equation*}
		\begin{split}
		A(x, z)
		=   c(n,s) |z-x|^{2s-n} \int_{B_r}  \frac{  (|x^*|^2-r^2)^s }{  (r^2-|y^*|^2)^s |x^*-y^*|^n } \, dy^*.
		\end{split}	
	\end{equation*}

{We continue the proof for $n>3$. However, the results hold for $n\leq 3$ and can be proved with similar computations. We use hyperspherical coordinates with $\rho>0$ and  $\theta, \theta_1, \dots, \theta_{n-3} \in [0, \pi], \theta_{n-2} \in [0,2\pi]$ (see \eqref{chofvarhypersph} in the Subsection \ref{appy} and observations therein).}
Without loss of generality and up to rotations, we assume that $x^* =|x^*|e_n$, so we have the identity $ |x^*-y^*|^2 = \rho^2+|x^*|^2-2|x^*| \rho\cos\theta$ (see Figure~\ref{fign:xycalc} in the Subsection \ref{appy} for clarity). With this change of coordinates, we obtain
\begin{equation*}
		\begin{split}
		  A(x,z)=\; &c(n,s)|z-x|^{2s-n} (|x^*|^2-r^2)^s 2\pi \prod_{k=1}^{n-3}\int_0^{\pi} \sin^k\theta\, d\theta  \\
		 & \;  \int_0^r \frac{\rho^{n-1}}{(r^2 -\rho^2)^s}   \bigg(\int_0^{\pi} \frac{ \sin^{n-2}\theta}{(\rho^2+|x^*|^2-2|x^*| \rho\cos\theta)^{n/2}} \, d\theta \bigg) \,d\rho.		
    \end{split}
	\end{equation*}
Let $\tau:=  \frac{|x^*|}{\rho}$ (notice that $\tau >1$). We have that
	\[	\int_0^{\pi} \frac{ \sin^{n-2}\theta}{(\rho^2+|x^*|^2-2|x^*| \rho\cos\theta)^{n/2}} \, d\theta =  \frac{1}{\rho^n}  \int_0^{\pi} \frac{ \sin^{n-2}\theta}{(\tau^2+1-2\tau \cos\theta)^{n/2}} \, d\theta.\]
Thanks to identity \eqref{prop1} we obtain that
	\begin{equation*}
		\begin{split}
\int_0^{\pi} \frac{ \sin^{n-2}\theta}{(\rho^2+|x^*|^2-2|x^*| \rho\cos\theta)^{n/2}} \, d\theta = \; &\frac{1}{\rho^n}  \frac{1}{\tau^{n-2}(\tau^2-1)} \int_0^{\pi} \sin^{n-2}\alpha \, d\alpha \\
=\; &  \frac{1}{|x^*|^{n-2}(|x^*|^2-\rho^2)}\int_0^{\pi}\sin^{n-2}\alpha \, d\alpha.
		\end{split}
	\end{equation*}
Then, using identity \eqref{prop2} and inserting the explicit value of $c(n,s)$ given by \eqref{ctcns}, we arrive at
	\begin{equation} \label{axzz}
		\begin{split}
	 A(x,z)
	= \; & \frac{\sin (\pi s) }{\pi}   |z-x|^{2s-n} \frac{\big(|x^*|^2-r^2\big)^s} {|x^*|^{n-2}}  \int_0^r  \frac{2\rho^{n-1}}{(r^2 -\rho^2)^s (|x^*|^2-\rho^2)}\, d\rho\\
	=  \; & \frac{\sin (\pi s) }{\pi}  |z-x|^{2s-n} \frac{\big(|x^*|^2-r^2\big)^s} {|x^*|^{n-2}} J(x^*),
		\end{split}
	\end{equation}
where \[ J(x^*)=  \int_0^r  \frac{2\rho^{n-1}}{(r^2 -\rho^2)^s (|x^*|^2-\rho^2)}\, d\rho.\]

Now we define the constant \begin{equation} k(n,s) :=  \frac{1}{2}  \bigg(\int_0^1  \tau^{n-2s-1} (1-\tau^2)^{s-1} \, d\tau\bigg)^{-1} \label{intkns}\end{equation}
(we compute its explicit value at the end of Subsection \ref{green}).
Then we have that
	\[J(x^*)= 2 k(n,s) \int_0^r\frac{2\rho^{n-1}}{(r^2-\rho^2)^s(|x^*|^2-\rho^2)} \bigg(\int_0^1 \tau^{n-2s-1} (1-\tau^2)^{s-1} \, d\tau\bigg)  \, d\rho.\]
We perform the change of variables $t=\tau \rho$ and apply the Fubini-Tonelli's theorem to obtain that
	\begin{equation*}
	\begin{split}
	J(x^*)=\;& 2 k(n,s) \int_0^r \frac{2\rho}{(r^2-\rho^2)^s(|x^*|^2-\rho^2)} \bigg(\int_0^{\rho} t^{n-2s-1} (\rho^2-t^2)^{s-1} \, dt  \bigg)  \, d\rho\\
	=\;& 2k(n,s) \int_0^r t^{n-2s-1} \bigg( \int_t^r \frac{ 2\rho (\rho^2-t^2)^{s-1} }{(r^2-\rho^2)^s(|x^*|^2-\rho^2) }\, d\rho \bigg) \, dt.
	\end{split}
	\end{equation*}
We change variables $\rho^2-t^2=\tau$ and $r^2-\tau-t^2=\rho$ to obtain
\begin{equation*}
	\begin{split}
J(x^*)=\;& 2   k(n,s) \int_0^r t^{n-2s-1} \bigg( \int_0^{r^2-t^2} \frac{\tau^{s-1}}{(r^2-\tau-t^2)^s(|x^*|^2- \tau -t^2)} \, d\tau \bigg) \, dt\\
		 = \;& 2k(n,s)\int_0^r t^{n-2s-1} \bigg( \int_0^{r^2-t^2} \frac{ (r^2-t^2-\rho)^{s-1}}{\rho^s (|x^*|^2+\rho -r^2)} \, d\rho \bigg) \, dt \\
	= \;&  2k(n,s) \int_0^r t^{n-2s-1}  I(t) \, dt,
		\end{split}
	\end{equation*}
where \[I(t) = \int_0^{r^2-t^2} \frac{ (r^2-t^2-\rho)^{s-1}}{\rho^s (|x^*|^2+\rho -r^2)} \, d\rho.\]
Using Proposition \ref{proposition:uss} for $\alpha=r^2-t^2$ and $\beta=|x^*|^2-r^2$
we have that \[  I(t)= \frac{\pi}{\sin(\pi s)} \frac{(|x^*|^2-t^2)^{s-1}}{(|x^*|^2-r^2)^s}.\]
Hence in $J(x^*)$, with the changes of variables $ \frac{|x^*|}{t}= \tau$ and then $\tau^2-1=t$ we have that
	\begin{equation*}
		\begin{split}
		 J(x^*) = \;& 2k(n,s)  \frac{\pi}{\sin(\pi s)} (|x^*|^2-r^2)^{-s} \int_0^r t^{n-2s-1} (|x^*|^2 -t^2)^{s-1} \, dt.\\
		=\;& 2 k(n,s) \frac{\pi}{\sin(\pi s)} \frac{ |x^*|^{n-2}}{(|x^*|^2-r^2)^s}  \int_{\frac {|x^*|}{r}}^{\infty} \frac{(\tau^2-1)^{s-1} }{\tau^{n-1}} \, d\tau \\
 	=   \;&   k(n,s)\frac{\pi}{\sin(\pi s)} \frac{ |x^*|^{n-2}}{(|x^*|^2-r^2)^s}  \int_{\frac{|x^*|^2-r^2}{r^2}}^{\infty} \frac{t^{s-1}} {(t+1)^{n/2}} \, dt.
		\end{split}
	\end{equation*}
{	Using formula \eqref{firsttr} and definition \eqref{ro} we have the equalities
	\[\frac{|x^*|^2-r^2}{r^2}  =\frac{(r^2-|x|^2)(r^2-|z|^2)}  {r^2|x-z|^2}= r_0(x,z). \]
Therefore inserting $J(x^*)$ into \eqref{axzz} it follows that
	\[	A(x,z)=  k(n,s) |z-x|^{2s-n}  \int_{r_0(x,z)}^{\infty} \frac{t^{s-1}} {(t+1)^{n/2}} \, dt.\]}
By inserting this into \eqref{axz} we obtain that
\begin{equation*} G(x,z)=  a(n,s)|z-x|^{2s-n} \bigg(  1 - k(n,s)\int_{r_0(x,z)}^{\infty} \frac{t^{s-1}} {(t+1)^{n/2}} \, dt   \bigg).
\end{equation*}
 Now we change the variable $t=1/\tau^2 -1$ in definition \eqref{intkns} and obtain that
	\begin{equation}  k(n,s) \int_0^{\infty}  \frac{t^{s-1}} {(t+1)^\frac{n}{2}} \, dt =1.\label{knsinfty}\end{equation}
It follows that
	\begin{equation*}
		\begin{split}
	 G(x,z)
	=  a(n,s)k(n,s) |z-x|^{2s-n} \int_0^{r_0(x,z)}  \frac{t^{s-1}} {(t+1)^\frac{n}{2}} \, dt  .
		\end{split}
	\end{equation*}
We set
 \begin{equation}\kappa(n,s):=a(n,s)k(n,s)\label{kappa}\end{equation}
and conclude that
	\begin{equation*} G(x,z)= \kappa(n,s)|z-x|^{2s-n}  \int_0^{r_0 (x,z)}  \frac{t^{s-1}} {(t+1)^\frac{n}{2}} \, dt .    \end{equation*}  	
Hence the desired result in the case $n>2s$.
\end{proof}

\begin{proof}[Proof of Theorem \ref{theorem:thm1} for~$n<2s$]
We consider without loss of generality $r=1$ (by rescaling, the statement of the theorem is verified in the more general case). By \eqref{fundsolution} and definition \eqref{greendefn} we have that
\begin{equation}G(x,z) =  a(1,s) \big( |z-x|^{2s-1}-A(x,z)  \big), \label{axz11}\end{equation}
where \[A(x,z) := \int_{\mathbb{R}\setminus (-1,1)} \frac{P_1(y,x) }{|z-y|^{1-2s}} \, dy.\]
Using definition \eqref{poissondefn} we have that
\begin{equation*}
\begin{split}
A(x,z)= c(1,s)\int_{\mathbb{R}\setminus (-1,1)} \frac{ (1-x^2)^s} {|y-z|^{1-2s} (y^2-1)^s|y-x|} \, dy  .
		\end{split}
	\end{equation*}
We proceed exactly as in the case $n>2s$ performing the point inversion transformation and we arrive at
\begin{equation*}
		\begin{split}
		 A(x, z)=  c(1,s) |z-x|^{2s-1} \int_{-1}^1  \frac{  (x^{*^2}-1)^s }{  (1-y^{*^2})^s |x^*-y^*| } \, dy^*,
		\end{split}	
	\end{equation*}
	where $|x^*|>1$.
By symmetry we have that \begin{equation}
		 A(x, z)=   c(1,s) |z-x|^{2s-1} (x^{*^2}-1)^s |x^*|   J(x^*),\label{axzn1}
		\end{equation}
with \[J(x^*)=  \int_0^1  \frac{2 }{  (1-y^{*^2})^s (x^{*^2}-y^{*^2} )} \, dy^* .\]
We change the variable ${y^*}^2=t$ and obtain that
	\[J(x^*)= \Big(\frac{1}{x^*}\Big)^2 \int_0^1 t^{-1/2} (1-t)^{-s} \Big(1-\frac{t}{{x^*}^2} \Big)^{-1} \, dt.\]
By the integral representation \eqref{inthyp} of the hypergeometric function, it follows that
	\[ J(x^*)=   \Big(\frac{1}{x^*}\Big)^2 \, \frac{\Gamma(\frac{1}{2})\Gamma(1-s)}{\Gamma(\frac{3}{2}-s)} F\bigg(1, \frac{1}{2}, \frac{3}{2}-s, \frac{1}{{x^*}^2}\bigg).\]
We use the linear transformation \eqref{hyp4} (notice that $\big({1}/{x^*}\big)^2 <1$) and obtain that
	\begin{equation}
		\begin{split}
		 F\bigg(1, \frac{1}{2}, \frac{3}{2}-s, \frac{1}{{x^*}^2}\bigg)=\;& \frac{\Gamma(\frac{3}{2}-s) \Gamma(-s)}{\Gamma(\frac{1}{2}-s)\Gamma(1-s)} F\bigg(1,\frac{1}{2},s+1,\frac{{x^*}^2-1}{{x^*}^2}\bigg) \\
	+ \;&  \bigg(\frac{{x^*}^2\!-\!1}{{x^*}^2} \bigg)^{-s}  \frac{\Gamma(\frac{3}{2}\!-\!s) \Gamma(s)}{\Gamma(1)\Gamma(\frac{1}{2})} F\bigg(\frac{1}{2}\!-\!s, 1\!-\!s,1\!-\!s, \frac{{x^*}^2-1}{{x^*}^2}\bigg).
		\label{sumhyp}
		\end{split}
	\end{equation}
The first hypergeometric function obtained in the sum \eqref{sumhyp} is transformed according to \eqref{hyp3} as
	\[ F\bigg(1,\frac{1}{2},s+1,\frac{{x^*}^2-1}{{x^*}^2} \bigg)=  |x^*|F\bigg(\frac{1}{2},s,s+1,1-{x^*}^2 \bigg). \]
 For $s+1>s>0$ and $|1-{x^*}^2|<1 $ the convergence conditions are fulfilled for the integral representation \eqref{inthyp} of the hypergeometric function. Therefore we may write
	\[F\bigg(1,\frac{1}{2},s+1,\frac{{x^*}^2-1}{{x^*}^2} \bigg)= |x^*| \frac{\Gamma(s+1)}{\Gamma(s)} \int_0^1 \frac{t^{s-1}}{ \big(1+({x^*}^2-1) t\big)^{1/2}} \, dt .\]
On the other hand, for the second hypergeometric function obtained in identity \eqref{sumhyp}, we use transformations \eqref{hyp2} and \eqref{hyp3} and arrive at
	\begin{equation*}
		\begin{split}
			F\bigg(\frac{1}{2}-s, 1-s,1-s, \frac{{x^*}^2-1}{{x^*}^2}\bigg)= &\;{x^*}^{-2s} |x^*| F\bigg(\frac{1}{2}-s,0,1-s,1-{x^*}^2\bigg)\\
	=&\;{x^*}^{-2s} |x^*|  F\bigg(0,\frac{1}{2},1-s, \frac{{x^*}^2-1}{{x^*}^2}\bigg).
		\end{split}
	\end{equation*}
We use the Gauss expansion \eqref{gausshyp} with $a=0$, $b=  \frac{1}{2}$, $c=1-s$ and $w=  \frac{{x^*}^2-1}{{x^*}^2}$ (we notice that $0>c-a-b>-1$ for $s>1/2$ and $|w|<1$, thus the series is convergent). Since $a=0$, all the terms of the series vanish, except for $k=0$. Hence we obtain that
	\[F\bigg(0,\frac{1}{2},1-s, \frac{{x^*}^2-1}{{x^*}^2}\bigg)=1\]
and therefore
	\[F\bigg(\frac{1}{2}-s, 1-s,1-s, \frac{{x^*}^2-1}{{x^*}^2}\bigg)= {x^*}^{-2s} |x^*| .\]
Consequently
	\begin{equation*}
		\begin{split}
	J(x^*)=&\;\frac{1}{|x^*| } \bigg( \frac{ \Gamma(\frac{1}{2}) \Gamma(-s) \Gamma(s+1)}{\Gamma(\frac{1}{2}-s) \Gamma(s)}
	\int_0^1 \frac{t^{s-1}}{ \big(1+({x^*}^2-1) t\big)^{1/2}} \, dt
	+  \frac{ \Gamma(s)\Gamma(1-s)}{ ( {x^*}^2 -1)^{s}}\bigg).
		\end{split}
	\end{equation*}
{We recall that $c(1,s)= \big(\Gamma(s)\Gamma(1-s)\big)^{-1}$ and we define the constant
\begin{equation}k(1,s):= c(1,s) \frac{ \Gamma(\frac{1}{2}) \Gamma(-s) \Gamma(s+1)}{\Gamma(\frac{1}{2}-s) \Gamma(s)}.\label{k1s}\end{equation} We insert $J(x^*)$ into \eqref{axzn1} and have that
	\begin{equation*}
		\begin{split}
	   A(x,z)=  k(1,s) |z-x|^{2s-1}    \int_0^1 \frac{ ({x^*}^2-1)^s t^{s-1}}{ \big(1+({x^*}^2-1) t\big)^{1/2}} \, dt  +|z-x|^{2s-1} .			
		 \end{split}
	\end{equation*}
	With the change of variables $({x^*}^2 -1)t = \tau$ we obtain that
	\begin{equation*}
		\begin{split}
	   A(x,z)=  k(1,s) |z-x|^{2s-1}\int_0^{{x^*}^2-1} \frac{t^{s-1}} {(t+1)^{\frac{1}{2}}} \, dt  +|z-x|^{2s-1} .			
		 \end{split}
	\end{equation*}}
Inserting this into \eqref{axz11} and noticing that ${x^*}^2-1 =r_0(x,z)$ it follows that
		\[ G(x,z)= - a(1,s) k(1,s)  |z-x|^{2s-1}\int_0^{r_0(x,z)} \frac{t^{s-1}} {(t+1)^{\frac{1}{2}}} \, dt.\]
We call \begin{equation} \kappa(1,s) =- a(1,s) k(1,s) \label{kk1s}\end{equation} and conclude the proof of Theorem \ref{theorem:thm1} for $n<2s$.
\end{proof}

\begin{proof}[Proof of Theorem \ref{theorem:thm1} for~$n=2s$]
Without loss of generality, we assume $r=1$.  We insert the explicit formulas  \eqref{fundsolution} and \eqref{poissondefn} into definition \eqref{greendefn}. Moreover, we use the explicit values of the constant $  a\Big(1,\frac{1}{2}\Big)$ from \eqref{ctans3} and $ c\Big(1, \frac{1}{2}\Big)$ from \eqref{ctcns}. We obtain that
	 \begin{equation}\label{gxz111}
			G(x,z) =-\frac{1}\pi \log |x-z| + \frac{1}{\pi^2} \int_{|y|\geq 1} \log|y-z| \sqrt{\frac{1-x^2}{y^2-1}  } \frac{dy}{|x-y|}.
		\end{equation}
Let \[A(x,z):= \int_{|y|\geq 1} \log|y-z| \sqrt{\frac{1-x^2}{y^2-1}  } \frac{dy}{|x-y|}.\]
We perform the change of variables $v=  \frac{yx-1}{y-x}$. Since $1-v^2 \geq 0$, we have that $|v|\leq 1$. We set $w:=  \frac{xz-1}{z-x}$ and observe that $|w|\geq 1$.
It follows that
	\[A(x,z) = \int_{|v|\leq 1}   \bigg(\log\frac{ |v-w|}{|v-x|}+ \log|z-x|\bigg) \, \frac{dv}{\sqrt{1-v^2}}. \]
We use identity \eqref{logid} and since $|w|\geq 1$ and $|x|\leq 1$ we obtain that
 \begin{equation*}
	\begin{split}
	  A(x,z) =&\; \pi \log \Big(|w|+(w^2-1)^{1/2}\Big)  + \pi \log |x-z| \\
	= &\; \pi \log (1-zx + \sqrt{(1-x^2)(1-z^2)} ).
	\end{split}
\end{equation*}
Inserting this into \eqref{gxz111} we obtain that
	\[G(x,z)=\frac{1}\pi \log \bigg(\frac{1-zx + \sqrt{(1-x^2)(1-z^2)}}{|x-z|} \bigg).\]
This completes the proof of Theorem \ref{theorem:thm1} for~$n=2s$.
\end{proof}

We prove here Theorem~\ref{theorem:thm2}, which gives the representation formula for the Poisson equation.


\begin{proof}[Proof of Theorem \ref{theorem:thm2}]
We identify $h$ with its $C_c^{0,\eee} (\Rn)$ extension, namely we consider $\tilde h \in C_c^{0,\eee}(\Rn)$ with $B_r \subset \text{supp} \,\tilde h$ such that $\tilde h=h$ on $B_r$.
Then, by definition \eqref{greendefn} we have that in $B_r$
	\begin{equation*}
		\begin{split}
		 u(x) = &\; \int_{B_r}h(z) G(x,z)\, dz \\
		 	= &\; \int_{B_r} h(z)\Phi(z-x)  dz -   \int_{B_r} h(z) \bigg(\int_{\obal} \Phi(y-z)P_r(y,x) dy \bigg)  dz \\
	=&\;  h*\Phi(x) - \int_{\obal} P_r(y,x) \big(h*\Phi\big)(y) \, dy.
		\end{split}
	\end{equation*}
Let  \[ g(x):= h*\Phi(x) \mbox{ for any  } x \in \Rn.\] From Theorem~\ref{theorem:poissonsolution}, we have that $g \in L_s^1(\Rn) \cap C^{2s+\eps}(\Rn)$.
Let for any $x \in \Rn$ \[ u(x)=v_0(x)-v_1(x),\] where
$v_0(x) = g(x) \mbox{ in } \Rn$
and
\[ v_1(x)=	
		\begin{cases}
	\displaystyle  \int_{{\Rn}\setminus B_r} P_r(y,x) g(y)\, dy &\quad  \, \text{if } x\in B_r, \\
				g(x) &\quad \, \text{if } x \in {\obal}.
		\end{cases}
\]
Then for $x\in B_r$, thanks to Theorems \ref{theorem:poissonsolution} and \ref{theorem:DPL}
	\[ \frlap u(x) = h(x) - 0=h(x),\]
hence $u$ is solution \eqref{PoissoneqD}. Also, from Theorems \ref{theorem:poissonsolution} and \ref{theorem:DPL}, it follows that  $u\in C(\Rn)$.

The uniqueness of the solution follows from the simple application of the Maximum Principle for the fractional Laplacian (see Theorem \ref{THM-MA-1-STRONG}).
\end{proof}
\bigskip
%
%
We compute now the constant $\kappa(n,s)$ in Theorem \ref{theorem:thm1}. For this, we start with the next identity for  
 $k(n,s)$ given in \eqref{intkns}, when   $n>2s$:
 \begin{equation} k(n,s)=  \frac{\Gamma({\frac{n}{2}})} { \Gamma(\frac{n}{2}-s) \Gamma(s)}.\label{kns} \end{equation}

Indeed, using definition \eqref{intkns} and taking the change of variable $\tau^2=t$ we have that
\[ \frac{1}{k(n,s)} = 2 \int_0^1  \tau^{n-2s-1} (1-\tau^2)^{s-1} \, dt =  \int_0^1 t^{\frac{n}{2}-s -1} (1-t)^{s-1} \, dt .\]
We use identities \eqref{betazerouno} and \eqref{betagamma} to obtain that
\[ \int_0^1  t^{\frac{n}{2}-s-1} (1-t)^{s-1} \, dt =  \frac{\Gamma( \frac{n}{2}  -s)\Gamma(s)}{\Gamma(\frac{n}{2} )},\]
which is exactly the result.

We now prove Theorem \ref{theorem:kns}, namely we compute the constant $\kappa(n,s)$ encountered in the formula of the Green function $G$.
\begin{proof}[Proof of Theorem \ref{theorem:kns}]
For $n>2s$, we insert the values of $a(n,s)$ from \eqref{ctans1} and of $k(n,s)$ from \eqref{kns} into  \eqref{kappa} and we obtain that
\begin{equation*} \begin{aligned}
\kappa(n,s) =a(n,s)k(n,s)
 =  \frac{ \Gamma(\frac{n}{2}) }{2^{2s}\pi^{\frac{n}2} \Gamma^2(s)}.
\end{aligned} \end{equation*}
For $n<2s$, we recall definitions \eqref{k1s}, \eqref{kk1s} and \eqref{ctans1}, we use  identities \eqref{gam3}, \eqref{gam4} and \eqref{gamxx1} relative to the Gamma function and obtain that
	\[ \begin{aligned}
	\kappa(1,s)=   -a(1,s) k(1,s)
		=  \frac{(-s)\Gamma(-s) }{2^{2s}\Gamma(s)} \frac{1}{\Gamma(1-s)\Gamma(s)}
		=\frac{1}{2^{2s}\Gamma^2(s)}. \end{aligned} \]
On the other hand, we recall that $\kappa\Big( 1,\frac{1}{2}\Big)=  \frac{1}\pi$, as we have seen in the proof of Theorem \ref{theorem:thm1}  for $n=2s$.
This concludes the proof of Theorem \ref{theorem:kns}.
\end{proof}

%

We prove now Theorem \ref{thm:Cc}, that gives the value of the constant $C(n,s)$ introduced in \eqref{frlap2def}.

\begin{proof}[Proof of Theorem \ref{thm:Cc}]
By Lemma \ref{ctfrlap} we have that in $B_1$
 \[\frlap u(x) = C(n,s) \frac{\omega_n}{2} B(1-s,s).\]
We use Theorem \ref{theorem:thm2} and for $n\neq 2s$, we obtain that
	\begin{equation*}
		\begin{split}
		u(x) = &\; \int_{B_1} C(n,s) \frac{\omega_n}{2}  B(1-s,s) G(x,y) dy \\
	= &\;  C(n,s) \frac{\omega_n}{2} B(1-s,s)  \kappa(n,s) \int_{B_1} |x-y|^{2s-n} \bigg( \int_0^{r_0(x,y)} \frac {t^{s-1}}{(t+1)^{\frac{n}{2}}}\, dt\bigg) \, dy.
		\end{split}
	\end{equation*}
We compute this identity in zero and have that
 \begin{equation}
		\begin{split}
		&1= C(n,s) \frac{\omega_n}{2} B(1-s,s)  \kappa(n,s) \int_{B_1} |y|^{2s-n} \bigg( \int_0^{\frac{1-|y|^2}{|y|^2}} \frac {t^{s-1}}{(t+1)^{\frac{n}{2}}}\, dt\bigg) \, dy .\label{constant}
		\end{split}
	\end{equation}
We compute the double integral, by using Fubini-Tonelli's theorem
 \begin{equation*}
		\begin{split}
		\int_{B_1} |y|^{2s-n} \bigg( \int_0^{\frac{1-|y|^2}{|y|^2}} \frac {t^{s-1}}{(t+1)^{\frac{n}{2}}}\, dt\bigg) \, dy  =&\; \omega_n   \int_0^1 \rho^{2s-1} \bigg( \int_0^{\frac{1-\rho^2}{\rho^2}} \frac {t^{s-1}}{(t+1)^{\frac{n}{2}}}\, dt\bigg) \, d\rho\\
	 = &\; \omega_n \int_0^{\infty} \frac {t^{s-1}}{(t+1)^{\frac{n}{2}}} \bigg( \int_0^{\frac{1}{\sqrt{t+1}}} \rho^{2s-1} \, d \rho\bigg) \, dt \\=&\; \frac{\omega_n}{2s} \int_0^{\infty} \frac {t^{s-1}}{(t+1)^{\frac{n}{2}+s}}\, dt =   \frac{\omega_n}{2s} B\bigg(s,\frac{n}{2}\bigg).
		\end{split}
	\end{equation*}
By inserting this, the value of $\kappa(n,s)$ from Theorem \ref{theorem:kns} and the measure of the $(n-1)$-dimensional unit sphere $\omega_n=({ 2\pi^{n/2}})/{\Gamma(n/2)}$ into \eqref{constant} and using \eqref{betagamma} we obtain that
\[ C(n,s) = \frac{2^{2s} s \Gamma(\frac{n}2 +s)} {\pi^{\frac{n}2 }\Gamma(1-s)}.\]
For $n=2s$ we have 	that $\frlap u(x) = C\left(1, 1/2\right) \pi.$
Thanks to Theorem \ref{theorem:thm2}
	\[ u(x) =   C\left(1, \frac{1}2\right) \pi \int_{-1}^1 G(x,y)\, dy.\]
	Using formula \eqref{formn1s12} and computing $u$ at zero, we obtain that
		\[ \begin{aligned} 1 =  C\left(1, \frac{1}2\right)  \int_{-1}^1 \log \frac{ 1+\sqrt{1-y^2}}{|y|}\, dy
			=  {\pi} C\left(1, \frac{1}2\right) .\end{aligned} \]
			Hence $ C\left(1, 1/2\right)  = {1}/{\pi}$ and this concludes the proof of the Theorem.
\end{proof}


\subsection{ Point inversion transformations and some useful integral identities}\label{appy}
The purpose of this subsection is to recall some basic geometric features of the point inversion, related to the so-called Kelvin transformation.

Let $r>0$ to be fixed.

\begin{defn}
Let $x_0\in B_r$ be a fixed point. The inversion with center $x_0$ is a point transformation that maps an arbitrary point $y \in {\Rn}\setminus \{x_0\}$ to the point $ \mathbf{K}_{x_0}(y)$ such that the points $y$, $x_0$, $ \mathbf{K}_{x_0}(y)$ lie on one line, $x_0$ separates $y$ and $\mathbf{K}_{x_0}(y)$ and
	\begin{equation}
	\mathbf{K}_{x_0}(y):=x_0- \frac{r^2-|x_0|^2}{|y-x_0|^2} \big( y-x_0\big). \label{trk}
	\end{equation}
\end{defn}
This is a bijective map from ${\Rn}\setminus \{ x_0\} $ onto itself. Of course, $\mathbf{K}_{x_0}\big( \mathbf{K}_{x_0}(x) \big) = x$. When this does not generate any confusion, we will use the notation $y^*:=\mathbf{K}_{x_0}(y)$ and $x^*:=\mathbf{K}_{x_0}(x)$ to denote the inversion of $y$ and $x$ respectively, with center at $x_0$.

\begin{remark}
It is not hard to see, from definition \eqref{trk}, that
	 \begin{equation}
	 |y^*-x_0||y-x_0|=r^2-|x_0|^2. \label{tr}
	\end{equation}

\end{remark}

\begin{prop}
\label{proposition:pointinv}
Let $x_0 \in B_r$ be a fixed point, and $x^*$ and $y^*$ be the inversion of $x \in {\Rn}\setminus \{x_0\}$ respectively $y \in {\Rn}\setminus \{x_0\}$ with center at $x_0$.
Then:
	\begin{subequations}
		\begin{align}
			\text{ a) }  & \text{points on the sphere } \partial B_r \text{ are mapped into points on the same sphere},  \quad \quad \quad\quad \quad \quad \quad \quad \quad \notag 
			\\
			 \text{ b) } & \text{points outside the sphere } \partial B_r  \text{ are mapped into points inside the sphere}, \notag 
			 \\
	          		\text{ c) } &\frac{|y-x_0|^2}{(r^2-|x_0|^2)(r^2-|y|^2)}=\frac{1}{|y^*|^2-r^2},  \label{firsttr}	\\
			\text{ d) } & \frac{dy}{|y-x_0|^n}=\frac{dy^*}{|y^*-x_0|^n},  \label{dxtr} \\
			\text{ e) } & |y^*-x^*|=(r^2-|x_0|^2)\frac{|y-x|}{|y-x_0||x-x_0|}.  \label{sectr}	
		\end{align}
	\end{subequations}
\end{prop}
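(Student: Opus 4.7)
The plan is to exploit the single algebraic identity \eqref{tr}, which is immediate from the definition \eqref{trk}, and then reduce everything to elementary vector algebra in the translated variables $u := y-x_0$ and $v := x-x_0$, in which the inversion takes the clean form
\[
y^* - x_0 \;=\; -(r^2-|x_0|^2)\,\frac{u}{|u|^2}, \qquad x^* - x_0 \;=\; -(r^2-|x_0|^2)\,\frac{v}{|v|^2}.
\]
My first step is to prove (c), since (a) and (b) follow from it for free. Writing $|y^*|^2 - r^2 = |y^*-x_0|^2 + 2\langle x_0, y^*-x_0\rangle - (r^2-|x_0|^2)$ and substituting the displayed formula for $y^*-x_0$, I would factor out $(r^2-|x_0|^2)/|y-x_0|^2$ and simplify the resulting bracket using $|y-x_0|^2 = |y|^2 - 2\langle x_0, y\rangle + |x_0|^2$; after cancellation one is left precisely with $r^2-|y|^2$, giving
\[
|y^*|^2 - r^2 \;=\; \frac{(r^2-|x_0|^2)(r^2-|y|^2)}{|y-x_0|^2},
\]
which is \eqref{firsttr}. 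From this, (a) is the case $|y|=r$ and (b) is the case $|y|>r$, where both factors in the numerator carry opposite signs.

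Next I would prove (e) by direct computation. With $u,v$ as above,
\[
|y^*-x^*|^2 \;=\; (r^2-|x_0|^2)^2\,\Bigl|\tfrac{u}{|u|^2}-\tfrac{v}{|v|^2}\Bigr|^2 \;=\; (r^2-|x_0|^2)^2\,\frac{|u|^2+|v|^2-2\langle u,v\rangle}{|u|^2|v|^2} \;=\; (r^2-|x_0|^2)^2\,\frac{|y-x|^2}{|y-x_0|^2|x-x_0|^2},
\]
from which \eqref{sectr} follows by taking square roots. This is essentially the classical ``Kelvin identity'' and is the shortest part.

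The main (though still mild) obstacle is (d), since it requires computing a Jacobian. I would write the inversion as $y\mapsto x_0 - (r^2-|x_0|^2)\,g(u)$ with $g(u) := u/|u|^2$ and $u=y-x_0$, and compute
\[
Dg(u) \;=\; \frac{1}{|u|^2}\Bigl(I - 2\,\tfrac{uu^{T}}{|u|^2}\Bigr).
\]
The matrix $I - 2uu^{T}/|u|^2$ is the reflection across $u^{\perp}$, so it has determinant $-1$; hence $|\det Dg(u)| = |u|^{-2n}$ and
\[
dy^{\ast} \;=\; (r^2-|x_0|^2)^n\,|y-x_0|^{-2n}\,dy.
\]
Combining this with \eqref{tr} in the form $|y^*-x_0|^n = (r^2-|x_0|^2)^n/|y-x_0|^n$ and dividing yields \eqref{dxtr}. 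All five statements thus reduce to the two identities \eqref{tr} and \eqref{firsttr}, plus one determinant computation, and no deeper ingredient is needed.
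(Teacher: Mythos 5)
Your proof is correct, but it takes a genuinely different route from the paper's. The paper's argument is geometric: for (a)--(c) and (e) it drops perpendiculars, applies the Pythagorean theorem in several triangles, adds the scalar relation \eqref{tr}, and solves the resulting system of equations; for (d) it reduces by symmetry to the unit inversion $y\mapsto -y/|y|^2$ with $y=|y|e_1$ and asserts the result ``plainly follows.'' You instead work entirely in vector algebra: after passing to $u=y-x_0$, $v=x-x_0$, the inversion reads $y^*-x_0=-(r^2-|x_0|^2)\,u/|u|^2$, and everything is a direct computation --- (c) by expanding $|y^*|^2-r^2$ and cancelling the cross terms against $|u|^2$, (e) via the Kelvin identity $\bigl|u/|u|^2-v/|v|^2\bigr|^2=|u-v|^2/(|u|^2|v|^2)$, and (d) by identifying $Dg(u)=|u|^{-2}(I-2uu^{T}/|u|^2)$ as a scaled reflection across $u^{\perp}$ with $|\det Dg|=|u|^{-2n}$, which combined with \eqref{tr} gives \eqref{dxtr} directly. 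Your approach buys a self-contained, coordinate-free proof that needs no figure and makes (d) rigorous via an explicit Jacobian rather than a symmetry hand-wave; the paper's approach buys geometric intuition (it makes visible why \eqref{tr} is the master identity) at the cost of relying on elementary Euclidean figure-chasing and a reference to \cite{Landkof} for (e).
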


{The Kelvin point inversion transformation is well known (see, for instance, the Appendix in \cite{Landkof}) and elementary geometrical considerations can be used to prove this lemma. We give here a sketch of the proof. }
\begin{proof}[Sketch of the proof]
A simple way to prove claims a) to c) is to consider the first triangle in Figure \ref{fign:figure1}.

\begin{figure}[htpb]
	\hspace{0.85cm}
	\begin{minipage}[b]{0.98\linewidth}
	\centering
	\includegraphics[width=0.98\textwidth]{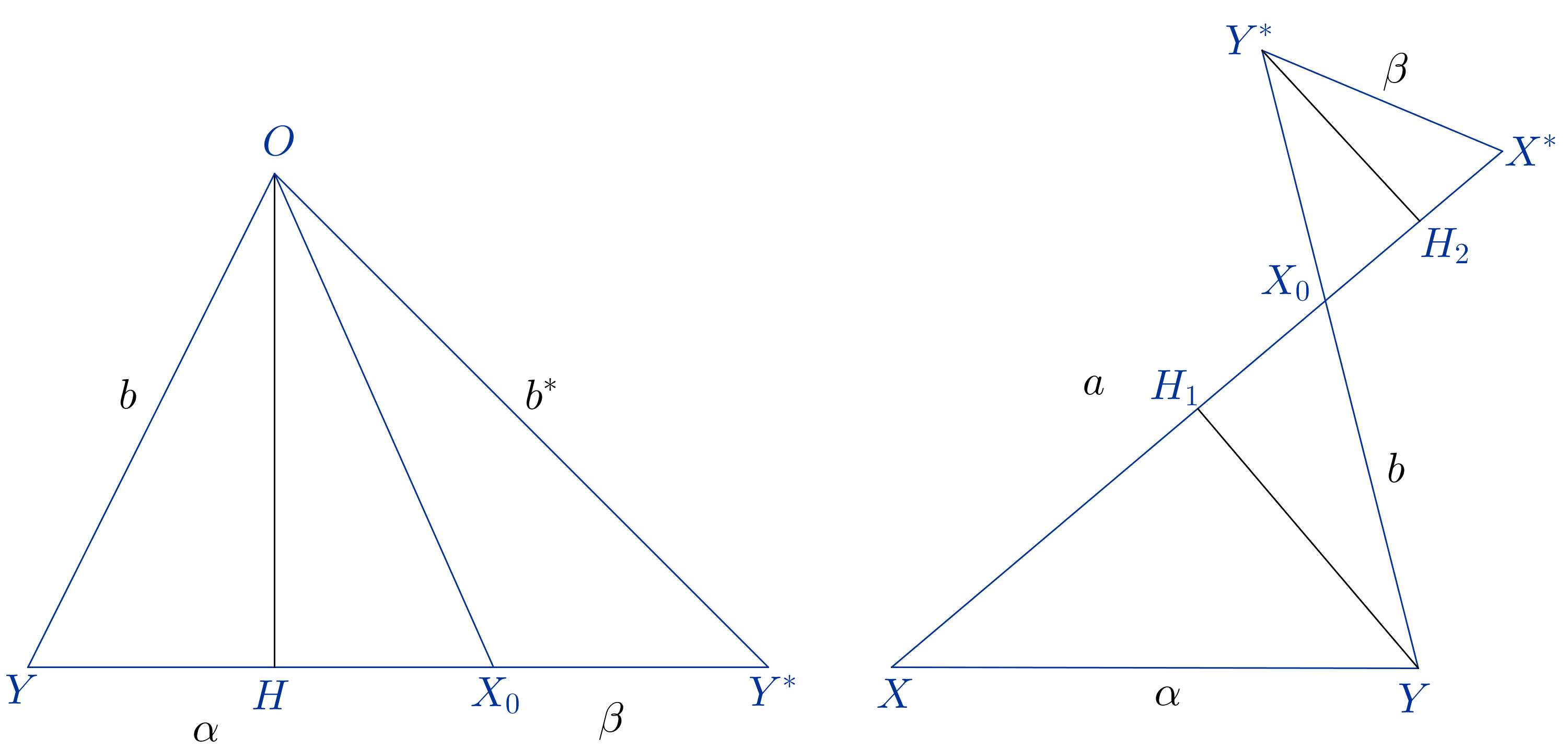}
	\caption{Inversion of $x, y$ with center at $x_0$}
	\label{fign:figure1}
	\end{minipage}
	\end{figure}
%
%
We denote
 $b:=|OY|=|y|$, $b^*:=|OY^*|=|y^*|$, $\alpha:=|X_0Y|=|y-x_0|$ and $\beta := |X_0Y^*|=|y^*-x_0|$.
 Let $OH$ be the perpendicular from $O$ onto $YY^*$.
 We apply the Pythagorean Theorem in the three triangles $\triangle OYH, \, \triangle OHX_0, \, \triangle OHY^*$,  add  the equation \eqref{tr} and by solving the system, one gets that
	 \begin{equation*}
		 {b^*}^2= \frac{\beta r^2+\alpha r^2-\beta b^2}{\alpha}. \label{sol}
	\end{equation*} From this, claims a) to c) follow after elementary computations.

In order to prove d), without loss of generality, one can consider the point inversion of radius one with center at zero
	$y^*={-y}/{|y|^2} $ and take its derivative. Since the point inversion transformation is invariant under rotation, we can assume that $y=|y|e_1$ and the desired result plainly follows.

To prove e), see the Appendix in \cite{Landkof}, or consider the second triangle in Figure \ref{fign:figure1}. We denote
$a :=|X_0X|=|x-x_0|$,
$b:= |X_0Y|=|y-x_0|$,
$\alpha :=|XY|=|x-y|$ and $\beta:=|X^*Y^*|=|x^*-y^*|$. Let $YH_1$ and $Y^*H_2$ be perpendiculars from $Y$, respectively $Y^*$ onto the segment $XX^*$.
 By applying the Pythagorean Theorem in the four triangles $\triangle X_0YH_1,\, \triangle XYH_1, \,  \triangle X_0Y^*H_2$, $\triangle X^*Y^*H_2$, adding relation \eqref{tr}  and using that $YH_1$ is parallel to $Y^*H_2$, one gets after solving the system that
\begin{equation*}  \beta=\frac{(r^2-|x_0|^2) \alpha}{ab}, \end{equation*}	which is the desired result.
\end{proof}

We present here a few detailed computations related to the functions $\Phi$, $A_r$ and $P_r$ and some other useful integral identities.

\begin{lemma} For any $r>0$
\begin{equation}  \int _{\obal} A_r(y) \, dy =1. \label{Ir} \end{equation}
\end{lemma}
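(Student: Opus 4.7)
The plan is to reduce the integral to a one-dimensional one via polar coordinates, and then recognize a Beta-function integral. First I would write $y = \rho \omega$ with $\omega \in \partial B_1$ and $\rho > r$, obtaining
\[ \int_{\Rn \setminus B_r} A_r(y)\, dy = c(n,s)\, r^{2s}\, \omega_n \int_r^{+\infty} \frac{\rho^{n-1}}{(\rho^2 - r^2)^s\, \rho^n}\, d\rho = c(n,s)\, r^{2s}\, \omega_n \int_r^{+\infty} \frac{d\rho}{(\rho^2 - r^2)^s\, \rho}, \]
where $\omega_n = 2\pi^{n/2}/\Gamma(n/2)$ is the surface measure of the unit sphere.

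Next I would perform the substitution $t := (\rho/r)^2 - 1$, so that $(\rho^2 - r^2)^s = r^{2s} t^s$ and $d\rho/\rho = dt/(2(1+t))$. This yields
\[ \int_r^{+\infty} \frac{d\rho}{(\rho^2 - r^2)^s\, \rho} = \frac{1}{2\, r^{2s}} \int_0^{+\infty} \frac{t^{-s}}{1+t}\, dt. \]
The remaining integral is exactly the standard representation of the Beta function $\beta(1-s, s)$, which by identity \eqref{betas} in the Appendix equals $\pi/\sin(\pi s)$.

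Finally, I would assemble the pieces and substitute the explicit value $c(n,s) = \Gamma(n/2)\sin(\pi s)/\pi^{n/2 + 1}$ from \eqref{ctcns}:
\[ c(n,s)\, r^{2s}\, \omega_n \cdot \frac{1}{2 r^{2s}} \cdot \frac{\pi}{\sin(\pi s)} = \frac{\Gamma(n/2)\sin(\pi s)}{\pi^{n/2+1}} \cdot \frac{2\pi^{n/2}}{\Gamma(n/2)} \cdot \frac{\pi}{2\sin(\pi s)} = 1, \]
so all constants cancel neatly. There is no real obstacle here; the calculation is essentially bookkeeping, and the scaling in $r$ drops out (as it must, since both $A_r$ and its integration domain scale compatibly). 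The only minor point is recognizing that the substitution $t = (\rho/r)^2 - 1$ is the natural one, chosen so that the singularity at $\rho = r$ becomes the integrable singularity $t^{-s}$ at $t = 0$ and so that the tail at $\rho = \infty$ matches the tail of the Beta integrand at $t = \infty$.
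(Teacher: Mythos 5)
Your proof is correct and follows essentially the same route as the paper: pass to polar coordinates, perform the substitution $z=(\rho/r)^2-1$, and recognize the resulting one-dimensional integral as $\beta(1-s,s)=\pi/\sin(\pi s)$ via \eqref{beta} and \eqref{betas}, after which the normalization constant $c(n,s)$ from \eqref{ctcns} makes everything cancel. The only difference is cosmetic — you write out the final cancellation explicitly, whereas the paper leaves it as an immediate consequence.
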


\begin{proof}
Using \eqref{smeandefn} and passing to polar coordinates we have that
 	\begin{equation*}
 		\begin{split}
 		 \int _{\obal} A_r(y) \, dy= &\;  c(n,s) \int _{\obal}   \frac {r^{2s} } {(|y|^2 -r^2)^s|y|^n}\, dy\\
 			= &\; c(n,s)  \omega_n  \int_r ^{\infty}   \frac { r^{2s} }{\rho  (\rho^2-r^2)^s} \, d\rho.
 		\end{split}
 	\end{equation*}
We change the variable $z= (\rho/r)^2-1$ and have that
	\begin{equation}
\int _{\obal} A_r(y) \, dy
  = \frac {c(n,s)}{2} 	 \omega_n  \int_0 ^{\infty}  \frac  {1} {(z+1) z^s} \, dz .  \label{Ircalc}   	 \end{equation}
Using \eqref{betas} and the definition \eqref{ctcns} of $c(n,s)$, it follows that $ \int _{\obal} A_r(y) \, dy= 1$, as desired.
\end{proof}

\begin{lemma} \label{lemmaip}
For any $r>0$ and any $x \in B_r$
\begin{equation} \int_{\obal}P_r(y,x) \, dy  =1. \label{Ip} \end{equation}
\end{lemma}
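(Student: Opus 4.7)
My plan is to reduce $\int_{\obal} P_r(y,x)\,dy$ to the already-established identity $\int_{\obal} A_r(y)\,dy = 1$ from \eqref{Ir}, using the point-inversion machinery of Proposition \ref{proposition:pointinv}. The first observation is that the case $x=0$ is free: directly comparing \eqref{smeandefn} with \eqref{poissondefn} gives $P_r(y,0) = A_r(y)$, so \eqref{Ip} at $x=0$ is precisely \eqref{Ir}.

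For general $x \in B_r$, I would perform the Kelvin inversion $y \mapsto y^* = \mathbf{K}_x(y)$ with center $x$, which maps $\obal$ bijectively onto $B_r\setminus\{x\}$. Combining \eqref{tr} (equivalently, the rewriting of \eqref{firsttr} after swapping $y$ with $y^*$) and \eqref{dxtr}, a short algebraic manipulation (formally identical to the first steps of the proof of Theorem \ref{theorem:thm1}) yields the transformation rule
\[
	P_r(y,x)\,dy \;=\; c(n,s) \,\frac{|y^*-x|^{2s-n}}{(r^2-|y^*|^2)^s}\,dy^*.
\]
Relabeling $y^* = z$, the problem reduces to proving that
\[
	J(x) \;:=\; \int_{B_r} \frac{|z-x|^{2s-n}}{(r^2-|z|^2)^s}\,dz
\]
is independent of $x \in B_r$: once this is known, applying the same transformation at $x=0$ gives $c(n,s)\,J(0) = \int_{\obal}A_r(y)\,dy = 1$, whence $c(n,s)\,J(x) = 1$ as desired.

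The third and decisive step is therefore to establish that $J(x)$ is $x$-independent on $B_r$. Two natural routes are available. The first is brute force: rotational symmetry reduces to $x = a e_n$ with $a \in [0,r)$, polar coordinates $z = \rho\omega$ separate the integral, and the angular part $\int_{S^{n-1}} (\rho^2 - 2\rho a\,\omega_n + a^2)^{(2s-n)/2}\,d\sigma$ is computed via an integral representation of the Gauss hypergeometric function, exactly as in the proof of Theorem \ref{theorem:thm1} for $n<2s$; the remaining $\rho$-integral is then evaluated with the Beta identity $B(s,1-s)=\pi/\sin(\pi s)$. The second, cleaner route is to exploit the Möbius invariance of the Poisson kernel of the ball: there is a Möbius automorphism $T_x$ of $B_r$ sending $x$ to $0$ (built as an inversion centered at the exterior point $x^{**} = r^2 x/|x|^2$ followed by a similarity), and direct verification using the formulas \eqref{firsttr}--\eqref{sectr} shows that $P_r(T_x^{-1}(w),x)\,|\det DT_x^{-1}(w)| = P_r(w,0) = A_r(w)$, reducing \eqref{Ip} immediately to \eqref{Ir}.

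The main obstacle is precisely this third step: the first two are routine applications of the identities in Proposition \ref{proposition:pointinv}, but the angular exponent $(n-2s)/2$ does not match the convenient exponent $n/2$ appearing in \eqref{prop1}, so neither the hypergeometric computation nor the verification of the Möbius covariance is automatic. I would start with the Möbius-invariance approach, as it is the more conceptual and less computationally burdensome, falling back to the hypergeometric angular integration only if the algebra of $T_x$ proves inconvenient in higher dimensions.
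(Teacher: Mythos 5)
Your Kelvin-inversion reduction is correct: combining \eqref{tr}, \eqref{firsttr}, and \eqref{dxtr} really does give $P_r(y,x)\,dy = c(n,s)\,|y^*-x|^{2s-n}(r^2-|y^*|^2)^{-s}\,dy^*$, so $\int_{\obal}P_r(y,x)\,dy = c(n,s)J(x)$, and checking $P_r(\cdot,0)=A_r$ ties $c(n,s)J(0)=1$ to \eqref{Ir}. But this is precisely the logical content of the paper's identity \eqref{If}, and the paper proves \eqref{If} \emph{from} \eqref{Ip} by running the same inversion in reverse. Your proposal therefore closes a circle: you have reduced \eqref{Ip} to \eqref{If}, and the paper reduces \eqref{If} to \eqref{Ip}. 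Neither reduction produces a proof of either statement on its own; you must establish one of them from scratch, and that is exactly where your argument stops.

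The step you have not carried out --- showing $J(x)$ is independent of $x\in B_r$ --- is the entire difficulty, and you have correctly diagnosed why the paper's own angular trick does not transplant: after the inversion the angular kernel is $(\rho^2-2\rho a\cos\theta+a^2)^{(2s-n)/2}$, not the exponent-$n/2$ kernel to which the substitution $\sin\alpha=\rho\sin\theta/\sqrt{\rho^2-2\rho\cos\theta+1}$ behind \eqref{prop1} applies. Your ``brute-force'' fallback (Gegenbauer/hypergeometric expansion of the angular integral, then Beta identities) would indeed land the result, but you have not done the computation, and in dimension $n\geq 2$ it is not less work than what the paper does. Your ``M\"obius'' route faces the same issue: the covariance of the kernel $|z-x|^{2s-n}(r^2-|z|^2)^{-s}$ under automorphisms of $B_r$ is a genuine algebraic identity that needs to be checked, and the computation is essentially the same bookkeeping with \eqref{firsttr}--\eqref{sectr} you would need in the other route. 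In short, the plan is sound but has a real gap where the lemma's content lives.

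For comparison, the paper avoids the inversion entirely. It works directly with $\int_{\obal}P_r(y,x)\,dy$ in hyperspherical coordinates (after rescaling by $|x|$ so one may assume $|x|=1$), obtaining the angular integral $\int_0^\pi\sin^{n-2}\theta\,(\rho^2-2\rho\cos\theta+1)^{-n/2}\,d\theta$ --- \emph{with} the convenient exponent $n/2$ --- which the change of variables behind \eqref{prop1} evaluates as $\rho^{2-n}(\rho^2-1)^{-1}\int_0^\pi\sin^{n-2}\alpha\,d\alpha$. The remaining radial integral is a Beta function, and $c(n,s)$ is chosen to make the total equal $1$. If you want to salvage your approach, the cleanest fix is to prove \eqref{If} directly by the paper's polar-coordinates method applied to $J(x)$, i.e.\ actually carry out the hypergeometric evaluation of the angular integral with exponent $(n-2s)/2$ rather than gesture at it.
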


\begin{proof}

We make the proof for $n>3$. However, the results hold for $n\leq 3$. We change variables using the hyperspherical coordinates with radius $\rho>0$ and angles $\theta, \theta_1, \dots, \theta_{n-3} \in [0, \pi], \theta_{n-2} \in [0,2\pi]$
	\begin{equation}
		\begin{split} \label{chofvarhypersph}
		 y_1= &\rho \sin\theta\sin\theta_1 \dots \sin\theta_{n-3} \sin \theta_{n-2}\\
 	             y_2=  &\rho\sin \theta \sin \theta_1 \dots \sin \theta_{n-3} \cos \theta_{n-2} \\
   	             y_3= &\rho \sin \theta \sin \theta_1 \dots \cos \theta_{n-3} \\
   	       \dots \\
		 y_n=&\rho \cos\theta.
		\end{split}
	\end{equation}
	The Jacobian of the transformation is given by $ \rho^{n-1} \sin ^{n-2}\theta  \sin ^{n-3}\theta_1 \dots \sin \theta_{n-3} $.
We only remark that for $n=3$ the usual spherical coordinates can be used $	y_1= \rho \sin\theta\sin\theta_1,  y_2=  \rho\sin \theta \cos \theta_1  \mbox{ and } y_3= \rho \cos \theta $, while for $n=2$ and $n=1$ similar computations can be performed.

Without loss of generality and up to rotations, we assume that $x =|x|e_n$ to obtain the identity $ |x-y|^2 = \rho^2+|x|^2-2|x| \rho\cos\theta$ (see Figure~\ref{fign:xycalc} for clarity). With this change of coordinates, we obtain	
	\begin{equation*}
		\begin{split}
		& \int_{\obal}P_r(y,x) \, dy\\
		=& c(n,s) (r^2  - |x|^2)^s 2\pi \prod_{k=1}^{n-3}\int_0^{\pi} \sin^k \theta \,d\theta
   \int_r^{\infty}\int_0^{\pi}  \frac{\rho^{n-1}\sin^{n-2} \theta \, d\theta \, d\rho }{(\rho^2-r^2)^s (\rho^2+|x|^2 - 2\rho |x| \cos \theta)^{n/2}} .
		\end{split}
	\end{equation*}
	\begin{figure}[htb]
	\hspace{0.85cm}
	\begin{minipage}[b]{0.68\linewidth}
	\centering
	\includegraphics[width=0.68\textwidth]{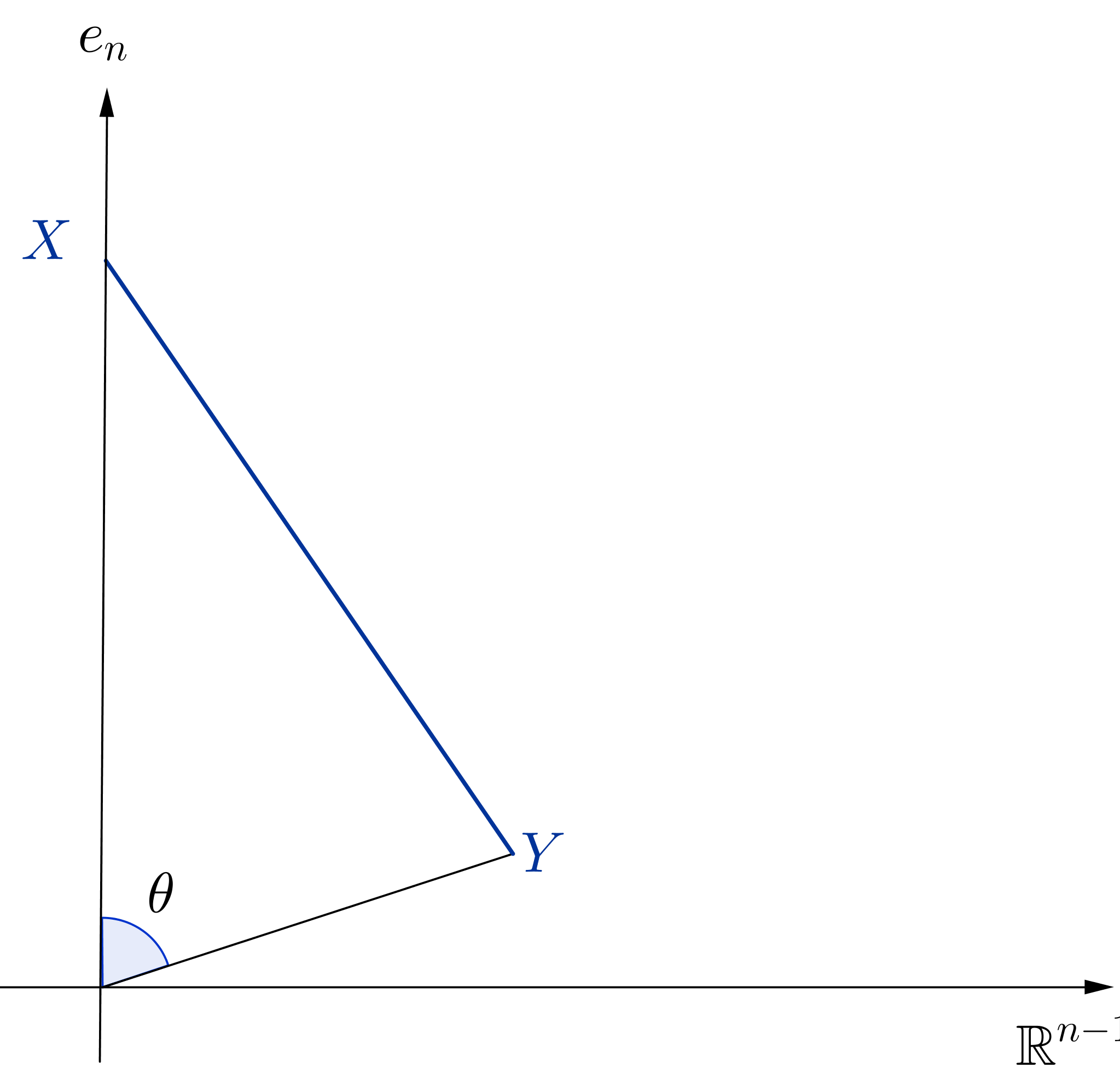}
	\caption{}
	\label{fign:xycalc}
	\end{minipage}
	\end{figure}
We do the substitution $\bar r= {r}/{|x|} $ and $\bar \rho= {\rho}/{ |x|}$ but still use $r$ and $\rho$ for simplicity and we remark that now $\rho>1$ and $r>1$.
 We obtain that
	\begin{equation}
\label{ipxxx}
		\begin{split}
	&  \int_{\obal}P_r(y,x) \, dy\\
		 =&\; c(n,s) (r^2-1)^s 2\pi  \prod_{k=1}^{n-3}\int_0^{\pi}  \sin^k\theta\,  d\theta
\int_r^{\infty} \frac {\rho^{n-1}}{(\rho^2-r^2)^s}\bigg( \int_0^{\pi} \frac  {\sin^{n-2}\theta \,d\theta} {(\rho^2+1-2\rho \cos\theta )^{n/2}}  \bigg)\, d\rho.
		\end{split}
	\end{equation}
Let
	\begin{equation*}
	 i (\rho):=\int_0^{\pi} \frac {\sin^{n-2}\theta}{(\rho^2-2\rho \cos\theta +1)^{n/2}} \, d\theta.
	\end{equation*}
We claim that, given that $\rho >1$
\begin{equation} \label{aahhh}
	 i(\rho)=\frac {1} {\rho^{n-2} (\rho^2-1)}\int_0^{\pi} \sin^{n-2} \theta \, d\theta.
	\end{equation}
To prove this, we use the following change of coordinates
\begin{equation}  \frac {\sin\theta}{\sqrt {\rho^2 -2\rho \cos\theta +1 }} = \frac {\sin \alpha }{\rho}. \label{changeofvar} \end{equation}
We have that
	\begin{equation} d\theta = \Bigg( 1- \frac{\cos \alpha}{\sqrt{\rho^2-\sin^2\alpha}}\Bigg) \, d\alpha. \label{chvar} \end{equation}
{ To see this, one takes the derivative of the relation \eqref{changeofvar}
	\begin{equation}
		 \frac{(\rho \cos \theta -1)(\rho - \cos \theta)} {(\rho^2- 2\rho \cos \theta +1)^{\frac{3}{2}}}\, d\theta = \frac{\cos \alpha}{\rho}\, d \alpha \label{capduedue}
	\end{equation}
and obtains with some manipulations of  identity \eqref{changeofvar} that}
\[  \frac{ \cos \alpha  \sqrt{\rho^2 - \sin^2\alpha} }{ \sqrt{\rho^2 - \sin^2\alpha} -\cos \alpha} =\frac{\rho(\rho \cos \theta -1)(\rho - \cos \theta)} {(\rho^2- 2\rho \cos \theta +1)^{\frac{3}{2}}} .\]
Now by changing variables we obtain that
	\begin{equation*}
		\begin{split}
		 i(\rho)= &\;\int_0^{\pi} \frac {\sin^{n-2}\theta}{(\rho^2-2\rho \cos\theta +1)^{n/2}} \, d\theta  \\
		=&\; \frac{1}{\rho^{n-2}} \int_0^{\pi}  \frac{\sin ^{n-2}\alpha \, d\alpha}{(\sqrt{\rho^2- \sin^2 \alpha} -\cos \alpha ) \sqrt{\rho^2- \sin^2 \alpha}} \\
= &\; \frac{1}{\rho^{n-2}} \int_0^{\pi}   \frac{\sin ^{n-2}\alpha (\sqrt{\rho^2- \sin^2 \alpha} +\cos \alpha ) \, d\alpha}{ (\rho^2-1) \sqrt{\rho^2- \sin^2 \alpha}} \\
= &\;\frac{1}{\rho^{n-2}(\rho^2-1) } \bigg( \int_0^{\pi} \sin ^{n-2} \alpha  \, d\alpha + \int_0^{\pi} \frac{\sin^{n-2} \alpha \, \cos \alpha}{   \sqrt{\rho^2- \sin^2 \alpha}}\, d\alpha\bigg).
		\end{split}
	\end{equation*}
By symmetry \[\int_0^{\pi} \frac{\sin^{n-2} \alpha \, \cos \alpha}{   \sqrt{\rho^2- \sin^2 \alpha}}\, d\alpha  =0,\]
therefore \[ i(\rho)= \frac{1}{\rho^{n-2}(\rho^2-1) }  \int_0^{\pi} \sin ^{n-2} \alpha  \, d\alpha .\]
We substitute this into \eqref{ipxxx} and obtain that
	\[\int_{\obal}P_r(y,x) \, dy=c(n,s) (r^2-1)^s 2\pi \prod_{k=1}^{n-2} \int_0^{\pi} \sin^k\theta\,  d\theta  \int_r^{\infty} \frac{\rho\,  d\rho}{  (\rho^2-r^2)^s (\rho^2-1)}.\]
We claim that
	\begin{equation}
	\pi \prod_{k=1}^{n-2} \int_0^{\pi}\sin^k\theta d\theta = \frac{{\pi}^{n/2}}{\Gamma (n/2)}.
	\label{sinstuff}
	\end{equation}
To prove this, we integrate by parts and obtain that
	\begin{equation*}
		\begin{split}
		 I_k=  \int_0^{\pi} \sin^k\theta \, d\theta
		=(k-1) \int_0^{\pi} \sin^{k-2}\theta \, d\theta - (k-1)\int_0^{\pi}  \sin^k \theta \, d \theta  ,
		\end{split}
	\end{equation*}
which implies that \[  I_k=  \frac{k-1}{k}\int_0^{\pi}  \sin^{k-2}\theta \, d\theta = \frac{k-1}{k} I_{k-2}.  \]
Thus we have
	\begin{equation*}
                         I_k=
		\displaystyle \begin{cases}
			\displaystyle \frac{k-1}{k}\frac{k-3}{k-2}\dots \frac{1}{2}I_0 \quad   \text{  if $k$ even}, \\
			\displaystyle  \frac{k-1}{k}\frac{k-3}{k-2}\dots \frac{2}{3}I_1\quad  \text{  if $k$ odd},
		 \end{cases}
	\end{equation*}
with $I_0=\pi$ and $I_1=2$, and the claim \eqref{sinstuff} follows after elementary computations. And so
	\[\int_{\obal}P_r(y,x) \, dy=c(n,s) (r^2-1)^s  \frac{\pi^{n/2}}{\Gamma(n/2)}\int_r^{\infty} \frac {2\rho}{(\rho^2-r^2)^s(\rho^2-1)} \, d\rho. \]
We change variable $ \frac{\rho^2-r^2}{r^2-1}=z$ and obtain
	\begin{equation*}
		\begin{split}
                              \int_{\obal}P_r(y,x) \, dy
                               	=   c(n,s)   \frac{\pi^{n/2}}{\Gamma(n/2)} \int_0^{\infty} \frac {1}{z^s \, (z+1)}\, dz.
		\end{split}
	\end{equation*}
We use \eqref{betas} and the value of $c(n,s)$ from \eqref{ctcns} and obtain that
	\begin{equation*} \int_{\obal}P_r(y,x) \, dy= 1. \end{equation*}
This completes the proof of Lemma \ref{lemmaip}.
\end{proof}

\begin{lemma}
For any $r>0$ and any $x \in B_r $
	\begin{equation}  c(n,s) \int_{B_r} \frac {dy}{(r^2-|y|^2)^s |x-y|^{n-2s}} =1 \label{If} .\end{equation}
\end{lemma}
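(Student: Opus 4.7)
The plan is to reduce identity \eqref{If} to the already established identity \eqref{Ip} by means of the Kelvin point inversion transformation centered at $x$, which maps $B_r \setminus \{x\}$ bijectively onto $\mathbb{R}^n \setminus \overline{B_r}$.

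First I would fix $x \in B_r$ and, for $y \in B_r \setminus \{x\}$, introduce the inverted point $y^* := \mathbf{K}_x(y)$. By part (b) of Proposition \ref{proposition:pointinv} (applied to the inverse map, which takes points outside the sphere to points inside), $y^*$ ranges over $\mathbb{R}^n \setminus \overline{B_r}$ as $y$ ranges over $B_r \setminus \{x\}$. The key geometric identities I will use are \eqref{tr}, which gives $|y-x|\,|y^*-x| = r^2 - |x|^2$, the identity \eqref{firsttr} rewritten as
\[
\frac{r^2 - |x|^2}{|y^*|^2 - r^2} \;=\; \frac{|y-x|^2}{r^2 - |y|^2},
\]
and the Jacobian formula \eqref{dxtr}, which gives $dy^* = \frac{|y^*-x|^n}{|y-x|^n}\,dy = \frac{(r^2-|x|^2)^n}{|y-x|^{2n}}\,dy$.

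Next I would substitute these into the definition \eqref{poissondefn} of the Poisson kernel at the point $y^*$:
\[
P_r(y^*,x) \;=\; c(n,s)\left(\frac{r^2-|x|^2}{|y^*|^2-r^2}\right)^{\!s}\frac{1}{|x-y^*|^n}
\;=\; c(n,s)\,\frac{|y-x|^{2s}}{(r^2-|y|^2)^s}\cdot\frac{|y-x|^n}{(r^2-|x|^2)^n}.
\]
Multiplying by the Jacobian, the factors of $(r^2-|x|^2)^n$ cancel and powers of $|y-x|$ simplify, yielding
\[
P_r(y^*,x)\,dy^* \;=\; \frac{c(n,s)}{(r^2-|y|^2)^s\,|y-x|^{n-2s}}\,dy.
\]

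Finally, integrating this identity over $B_r \setminus \{x\}$ (the removed point has measure zero and is irrelevant by the integrability shown below), the change of variables gives
\[
c(n,s)\int_{B_r}\frac{dy}{(r^2-|y|^2)^s|x-y|^{n-2s}} \;=\; \int_{\mathbb{R}^n\setminus\overline{B_r}} P_r(y^*,x)\,dy^* \;=\; 1,
\]
where the last equality is \eqref{Ip}. The only step that needs a little care is verifying the local integrability of the left-hand integrand near $y=x$ (where the Riesz-type singularity $|x-y|^{2s-n}$ is integrable since $2s > 0$) and near $\partial B_r$ (where $(r^2-|y|^2)^{-s}$ is integrable since $s<1$); this guarantees that the change of variables is legitimate and no boundary terms are lost. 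I do not expect any serious obstacle: the whole argument is essentially the observation that, via the Kelvin transformation centered at $x$, the measure on $B_r$ appearing in \eqref{If} is the push-forward of the Poisson probability measure on $\mathbb{R}^n\setminus\overline{B_r}$.
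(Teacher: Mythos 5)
Your proposal is correct and takes essentially the same approach as the paper: the paper's proof also performs the Kelvin inversion centered at $x$ and invokes \eqref{firsttr}, \eqref{dxtr}, and \eqref{Ip}, just in a more compressed form. Your more explicit working out of the algebra (in particular the intermediate expression for $P_r(y^*,x)\,dy^*$) and the remark on integrability are sound embellishments but not a different argument.
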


\begin{proof}
Let $y^*$ be the inversion of $y$ with center at $x$ (notice that $ |y^*|>r$). Then by using \eqref{firsttr}
and \eqref{dxtr}
we obtain that
	\begin{equation*}\int_{B_r} \frac {dy}{(r^2-|y|^2)^s |x-y|^{n-2s}}=\int_{\obal} \Bigg(\frac {r^2-|x|^2}{|y^*|^2-r^2}\Bigg)^s \frac{dy^*}{|x-y^*|^n}.
	\end{equation*}
From identity \eqref{Ip} the desired result immediately follows.
\end{proof}

\begin{lemma}
For any $r>0$ and any $x \in \obal$
\begin{equation}\int_{\obal}A_r(y) \Phi(x-y)\, dy = \Phi(x)\label{Ifu}. \end{equation}
\end{lemma}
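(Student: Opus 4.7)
The plan is to reduce \eqref{Ifu} to the already-established identity \eqref{If} via the Kelvin inversion centered at the origin. I will treat the case $n\neq 2s$ in detail; the case $n=2s$ (so $n=1$, $s=1/2$) follows the same substitution, after decomposing $\log|x-y|$ into four log-terms that are handled by \eqref{Ir} together with a one-dimensional log-variant of \eqref{If}.

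Fix $x\in\Rn$ with $|x|>r$; the boundary case $|x|=r$ will follow by continuity. Introduce the Kelvin inversion $y\mapsto y^*:=r^2 y/|y|^2$, which is an involution between $\{|y|>r\}$ and $B_r\setminus\{0\}$, and set $x^*:=r^2 x/|x|^2$, so that $|x^*|<r$. The standard identities for this map give
\[
|x-y| \;=\; \frac{r^2\,|x^*-y^*|}{|x^*|\,|y^*|},\qquad
|y|^2-r^2 \;=\; \frac{r^2\,(r^2-|y^*|^2)}{|y^*|^2},\qquad
dy \;=\; \Bigl(\frac{r^2}{|y^*|^2}\Bigr)^{\!n}\,dy^*.
\]

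Substituting these into the integral and keeping careful track of the powers, every factor of $|y^*|$ cancels, and one arrives at
\[
\int_{\obal} A_r(y)\,\Phi(x-y)\,dy
\;=\; a(n,s)\,c(n,s)\,r^{4s-2n}\,|x^*|^{n-2s}\!
\int_{B_r}\!\frac{dy^*}{(r^2-|y^*|^2)^s\,|x^*-y^*|^{n-2s}}.
\]
Since $|x^*|<r$, identity \eqref{If} applies and gives $\int_{B_r}(r^2-|y^*|^2)^{-s}|x^*-y^*|^{2s-n}\,dy^*=1/c(n,s)$. Combined with $|x^*|^{n-2s}=r^{2n-4s}/|x|^{n-2s}$, all powers of $r$ collapse to $r^0=1$ and one is left with $a(n,s)\,|x|^{2s-n}=\Phi(x)$.

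The only real obstacle is bookkeeping: keeping the kernel $A_r$, the fundamental solution $\Phi$, and the Jacobian in lockstep through the Kelvin substitution, so that the cancellation of the powers of $r$ and $|y^*|$ is transparent. Once the integral is put in the form above, \eqref{If} closes the argument. For $n=2s$, the decomposition $\log|x-y|=2\log r-\log|x^*|-\log|y^*|+\log|x^*-y^*|$, together with the fact that $A_r(y)\,|dy|$ transforms to $c(1,\tfrac12)(r^2-|y^*|^2)^{-1/2}\,dy^*$, reduces everything to the normalization \eqref{Ir} and to the classical identity $\int_{-r}^r (r^2-t^2)^{-1/2}\log|x^*-t|\,dt = \pi\log(r/2)$ valid for $|x^*|<r$.
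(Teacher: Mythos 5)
Your argument is correct and follows essentially the same route as the paper: expand $A_r\,\Phi$, apply the Kelvin inversion centered at the origin (your convention differs from the paper's $\mathbf{K}_0$ by an immaterial sign), and close the $n\neq 2s$ case with \eqref{If}; your $n=2s$ sketch via the four-term $\log$-decomposition and \eqref{Ir}/\eqref{logid} likewise reproduces the paper's substitution $v=1/y$, $w=1/x$, just carried out for a general radius $r$ rather than after normalizing $r=1$.
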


\begin{proof}
We prove the claim for $n\neq 2s$. We insert definitions \eqref{smeandefn} and \eqref{fundsolution} and obtain that
 \begin{equation*}
	\begin{split}
			\int_{\obal}A_r(y) \Phi(x-y)\, dy  =  r^{2s}c(n,s) a(n,s)\int_{\obal} \frac{1}{(|y|^2-r^2)^s|y|^n |x-y|^{n-2s}}\, dy.
	\end{split}
\end{equation*}
Let $x^*$ and $y^*$ be the inversion of $x$, respectively $y$ with center at $0$. Using identities \eqref{tr},
\eqref{firsttr}
 \eqref{sectr}
and \eqref{dxtr}
we obtain that	 \begin{equation*}
		\begin{split}
			\int_{\obal}A_r(y) \Phi(x-y)\, dy
			= \frac{c(n,s)   a(n,s)}{|x|^{n-2s}}\int_{B_r} \frac{dy^*}{|x^*-y^*|^{n-2s}\,  \big( r^2-|y^*|^2 \big)^s}.
				\end{split}
	\end{equation*}
From \eqref{If} it follows that
 	\begin{equation*}
			\int_{\obal}A_r(y) \Phi(x-y)\, dy =  \frac{a(n,s)}{|x|^{n-2s}},
	\end{equation*}
and thus the desired result.

We now prove the claim for $n=2s$, assuming $r=1$. We have that
\[ \int_{\obal}A_r(y) \Phi(x-y)\, dy =  \frac{-1}{\pi^2}\int_{|y|>1} \frac {\log|y-x|}{ \sqrt{ y^2-1} |y|} dy.\]
We perform the change of variables $v= \frac{1}{y}$, with $|v|\leq 1$. We set $w:= \frac{1}{x}$, hence $|w|\leq 1$.
Then we have that
	\[ \int_{\obal}A_r(y) \Phi(x-y)\, dy   = \frac{-1}{\pi^2} \int_{|v|\leq 1}\bigg( \log \frac{|v-w|}{|v|} + \log|x|\bigg)\frac{dv}{ \sqrt{1-v^2}}.\]
We use the following result (see \cite{conto}, page 549)
\begin{equation}
	   \int_{|v|\leq 1} \log |v-a| \frac{dv}{ \sqrt{1-v^2}}=  \begin{cases}  -\pi \log 2, &\text{ if } |a|\leq 1\\
								 \pi \log (|a|+(a^2-1)^{1/2}) - \pi \log 2, &\text{ if } |a|\geq 1.
	\end{cases}
	\label{logid}
	\end{equation}
We thus obtain \[\int_{\obal}A_r(y) \Phi(x-y)\, dy = -\frac{1}\pi\log|x|,\]
which concludes the proof of the Lemma.
\end{proof}

\begin{lemma}
For any $r>0$, let $x_0 \in B_r$ be a fixed point.  For any $x \in \obal$
	 \begin{equation} \int_{\obal}P_r(y,x_0) \Phi(x-y) \, dy  = \Phi(x-x_0). \label{Ipu} \end{equation}
\end{lemma}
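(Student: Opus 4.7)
The plan is to prove \eqref{Ipu} by a direct change of variables via the point inversion $\mathbf{K}_{x_0}$ with center $x_0 \in B_r$, in the same spirit as the derivations of \eqref{If} and \eqref{Ifu}. Fix $x_0 \in B_r$ and $x \in \obal$, and set $y^* := \mathbf{K}_{x_0}(y)$ and $x^* := \mathbf{K}_{x_0}(x)$; by Proposition \ref{proposition:pointinv}, the substitution $y \mapsto y^*$ is a bijection $\obal \to B_r \setminus \{x_0\}$ with $x^* \in B_r$. The strategy is to substitute this transformation into $\int_{\obal}P_r(y,x_0)\Phi(x-y)\,dy$ using identities \eqref{firsttr}, \eqref{tr}, \eqref{dxtr} and \eqref{sectr}, and to observe that the collection of $|y-x_0|$, $|y^*-x_0|$, and $(r^2-|x_0|^2)$ factors cancel, reducing the problem to a previously established identity.

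Concretely, in the generic case $n\neq 2s$, I would proceed as follows. From \eqref{firsttr},
\[ \left(\frac{r^2-|x_0|^2}{|y|^2-r^2}\right)^s = \frac{(r^2-|x_0|^2)^{2s}}{(r^2-|y^*|^2)^s\,|y-x_0|^{2s}}; \]
from \eqref{sectr} one has $|x-y| = |y^*-x^*|\,|y-x_0|\,|x-x_0|/(r^2-|x_0|^2)$, whence
\[ \Phi(x-y) = a(n,s)\,\frac{|y^*-x^*|^{2s-n}\,|y-x_0|^{2s-n}\,|x-x_0|^{2s-n}}{(r^2-|x_0|^2)^{2s-n}}; \]
and from \eqref{dxtr} combined with the relation $|y-x_0|\,|y^*-x_0| = r^2-|x_0|^2$ of \eqref{tr} one gets $dy = (r^2-|x_0|^2)^n\,|y^*-x_0|^{-2n}\,dy^*$. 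Multiplying these three expressions and collecting exponents, every power of $|y-x_0|$ and of $(r^2-|x_0|^2)$ cancels (the $|y^*-x_0|$ powers also disappear after one further application of $|y-x_0|\,|y^*-x_0| = r^2-|x_0|^2$), and one is left with
\[ \int_{\obal}P_r(y,x_0)\Phi(x-y)\,dy = a(n,s)\,|x-x_0|^{2s-n}\,c(n,s)\int_{B_r}\frac{dy^*}{(r^2-|y^*|^2)^s\,|y^*-x^*|^{n-2s}}. \]
The last integral equals $1$ by \eqref{If}, so the expression reduces to $a(n,s)\,|x-x_0|^{2s-n} = \Phi(x-x_0)$, proving \eqref{Ipu}. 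The main obstacle here is nothing more than the careful exponent accounting across four independent sources (the Poisson weight, $\Phi$, the Jacobian, and the inversion relation), after which the problem collapses onto \eqref{If}.

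The borderline case $n=2s$ (i.e.\ $n=1$, $s=1/2$) requires a separate argument because $\Phi(z) = -\pi^{-1}\log|z|$ is logarithmic and the multiplicative cancellation above breaks down. I would perform the same substitution $y \mapsto y^*$ and exploit the striking fact that, after the above manipulations, $P_r(y,x_0)\,dy$ transforms into the $x_0$-independent measure $dy^*/(\pi\sqrt{r^2-{y^*}^2})$ on $(-r,r)$, which carries total mass $1$ (a one-dimensional version of \eqref{Ip}). Using $|y-x_0| = (r^2-x_0^2)/|y^*-x_0|$, the factorization $|x-y| = |y^*-x^*|\,|y-x_0|\,|x-x_0|/(r^2-x_0^2)$ of \eqref{sectr} simplifies to $|x-y| = |y^*-x^*|\,|x-x_0|/|y^*-x_0|$, so that $\log|x-y| = \log|x-x_0| + \log|y^*-x^*| - \log|y^*-x_0|$. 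Integrating against the transformed unit-mass measure, the constant contribution $\log|x-x_0|$ produces exactly $-\pi\,\Phi(x-x_0)$ on the right-hand side, while the two remaining logarithmic integrals over $(-r,r)$ cancel because the identity \eqref{logid} shows that $\int_{-r}^r\log|y-a|\,dy/\sqrt{r^2-y^2}$ is independent of $a\in(-r,r)$; since both $x^*$ and $x_0$ lie in $B_r = (-r,r)$, their contributions coincide and subtract to zero, completing the proof.
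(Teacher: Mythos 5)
Your proof is correct and follows essentially the same route as the paper: point inversion with center $x_0$, the identities \eqref{firsttr}, \eqref{tr}, \eqref{dxtr}, \eqref{sectr}, reduction to \eqref{If} for $n\neq 2s$, and \eqref{logid} for $n=2s$. The M\"obius substitution $v=(yx_0-1)/(y-x_0)$ appearing in the paper's treatment of $n=2s$ is precisely the one-dimensional point inversion $y^*$ with center $x_0$ and radius $\sqrt{1-x_0^2}$, so your observation that $P_r(y,x_0)\,dy$ becomes the unit-mass measure $dy^*/(\pi\sqrt{r^2-{y^*}^2})$ and that the two logarithmic terms cancel via \eqref{logid} is the same computation the paper carries out.
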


\begin{proof}
We prove the claim for $n\neq 2s$.
We have that  \begin{equation*}
	\begin{split}
		\int_{\obal}P_r(y,x_0) \Phi(x-y) \, dy = 	c(n,s) a(n,s) \int_{\obal} \frac{(r^2-|x_0|^2)^s|x-y|^{2s-n}\, dy} {(|y|^2-r^2)^s|y-x_0|^n} .
	\end{split}
\end{equation*}
Let $x^*$ and $y^*$ be the inversion of $x$, respectively $y$ with center at $x_0$. From \eqref{tr},
 \eqref{firsttr}
\eqref{dxtr} and
\eqref{sectr}
 we have that
	\begin{equation*}\begin{split}
				&  \int_{\obal}P_r(y,x_0) \Phi(x-y) \, dy \\ =&\; c(n,s) a(n,s) \frac{|x^*-x_0|^{n-2s}  }{(r^2-|x_0|^2)^{n-2s}  } \int_{B_r}  \frac {dy^*}{(r^2-|y^*|^2)^s|y^*-x^*|^{n-2s}}.
		\end{split} \end{equation*}
Using \eqref{If}, we obtain that \begin{equation*} \int_{\obal}P_r(y,x_0) \Phi(x-y) \, dy =  \frac{a(n,s)}{|x-x_0|^{n-2s}},\end{equation*}
which concludes the proof for $n\neq 2s$.

We now prove the claim for $n=2s$, assuming $r=1$. We have that
\[ \int_{\obal}P_r(y,x_0) \Phi(x-y) \, dy = \frac{-1}{\pi^2} \int_{|y|>1} \sqrt{ \frac{1-{x_0}^2}{y^2-1}}  \frac{\log|y-x|}{|y-x_0|} dy.\]
We perform the change of variables $v=\frac{yx_0-1}{y-x_0}$, noticing that $|v|\leq 1$. We set $w:= \frac{x x_0-1}{x-x_0}$, hence $|w|\leq 1$.
Then we have that
	\[\int_{\obal}P_r(y,x_0) \Phi(x-y) \, dy =\frac{-1}{\pi^2} \int_{|v|\leq 1} \bigg( \log  \frac{|v-w|}{|v-x_0|}+ \log |x-x_0|\bigg) \frac{ dv}{\sqrt{1-v^2}}.\]
We use identity \eqref{logid} and obtain \[\int_{\obal}P_r(y,x_0) \Phi(x-y) \, dy = -\frac{1}{\pi}\log|x-x_0|,\]
which concludes the proof.
\end{proof}
 We emphasize here two computations that we used in the proof of Lemma \ref{lemmaip}, namely identities \eqref{aahhh} and \eqref{sinstuff}.

\begin{prop} For any $\tau >1$
	\begin{equation} \label{prop1}  \int_0^{\pi} \frac {\sin^{n-2}\theta}{(\tau^2-2\tau \cos\theta +1)^{n/2}} \, d\theta = \frac {1} {\tau^{n-2} (\tau^2-1)}\int_0^{\pi} \sin^{n-2} \alpha \, d\alpha .\end{equation}
\end{prop}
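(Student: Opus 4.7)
The plan is to use exactly the change of variable already employed in the proof of Lemma on $\int_{\obal} P_r(y,x)\,dy = 1$ (see formulas \eqref{changeofvar}--\eqref{chvar} in the excerpt), namely
\[
\sin\alpha \,=\, \frac{\tau\sin\theta}{\sqrt{\tau^2-2\tau\cos\theta+1}}.
\]
For $\tau>1$ the right-hand side parametrizes a bijection of $[0,\pi]$ onto itself (it has the geometric meaning of the angle opposite to the side of length $\tau$ in a triangle with sides $1,\tau$ and included angle $\theta$). The preliminary algebraic step is to solve for $\cos\theta$ from the squared relation; this yields a quadratic in $\cos\theta$ whose correct root is
\[
\tau\cos\theta \,=\, \sin^2\alpha+\cos\alpha\sqrt{\tau^2-\sin^2\alpha},
\]
and consequently
\[
\tau^2-2\tau\cos\theta+1 \,=\, \bigl(\sqrt{\tau^2-\sin^2\alpha}-\cos\alpha\bigr)^2,
\]
the square root of which is positive because $\tau>1$ implies $\sqrt{\tau^2-\sin^2\alpha}>|\cos\alpha|$. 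The Jacobian is already recorded in \eqref{chvar} as $d\theta=\bigl(1-\cos\alpha/\sqrt{\tau^2-\sin^2\alpha}\bigr)\,d\alpha$.

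With these identities I would substitute into the left-hand side of \eqref{prop1}. Using $\sin\theta=\sin\alpha\,\bigl(\sqrt{\tau^2-\sin^2\alpha}-\cos\alpha\bigr)/\tau$, the ratio $\sin^{n-2}\theta/(\tau^2-2\tau\cos\theta+1)^{n/2}$ telescopes to
\[
\frac{\sin^{n-2}\alpha}{\tau^{n-2}\bigl(\sqrt{\tau^2-\sin^2\alpha}-\cos\alpha\bigr)^2},
\]
and multiplying by $d\theta$ produces
\[
\int_0^{\pi}\frac{\sin^{n-2}\alpha}{\tau^{n-2}\bigl(\sqrt{\tau^2-\sin^2\alpha}-\cos\alpha\bigr)\sqrt{\tau^2-\sin^2\alpha}}\,d\alpha,
\]
which is precisely the integral reached in the excerpt on the way to \eqref{aahhh}.

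The last step is to rationalize the denominator by multiplying and dividing by $\sqrt{\tau^2-\sin^2\alpha}+\cos\alpha$: since $(\sqrt{\tau^2-\sin^2\alpha})^2-\cos^2\alpha=\tau^2-1$, the integrand becomes
\[
\frac{1}{\tau^{n-2}(\tau^2-1)}\left(\sin^{n-2}\alpha+\frac{\sin^{n-2}\alpha\,\cos\alpha}{\sqrt{\tau^2-\sin^2\alpha}}\right).
\]
The second summand is odd under the reflection $\alpha\mapsto\pi-\alpha$ (since $\sin^{n-2}\alpha$ and $\sqrt{\tau^2-\sin^2\alpha}$ are invariant while $\cos\alpha$ changes sign), so it integrates to $0$ over $[0,\pi]$. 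The remaining term gives exactly the right-hand side of \eqref{prop1}.

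The only genuinely delicate point is step one: checking that the map $\theta\mapsto\alpha$ defined implicitly is a diffeomorphism $[0,\pi]\to[0,\pi]$ and that the correct sign of the square root is the one written above. Once this is settled — either by the quadratic-formula computation sketched here or by the geometric/law-of-sines interpretation — the rest is essentially algebraic simplification plus a symmetry argument.
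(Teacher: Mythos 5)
Your proposal is correct and is essentially identical to the paper's own computation: the Proposition is precisely the intermediate claim \eqref{aahhh} established inside the proof of Lemma \ref{lemmaip}, via the same substitution $\sin\alpha=\tau\sin\theta/\sqrt{\tau^2-2\tau\cos\theta+1}$, the same Jacobian \eqref{chvar}, the same rationalization by $\sqrt{\tau^2-\sin^2\alpha}+\cos\alpha$, and the same odd-reflection argument to kill the $\cos\alpha$ term. Your extra care in solving the quadratic for $\cos\theta$, selecting the correct root, and checking that $\sqrt{\tau^2-\sin^2\alpha}>|\cos\alpha|$ makes explicit a step the paper only gestures at ("with some manipulations"), but does not change the method.
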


\begin{prop}
\begin{equation} \pi \prod_{k=1}^{n-2} \int_0^{\pi}\sin^k\theta \, d\theta = \frac{{\pi}^{n/2}}{\Gamma (n/2)}.\label{prop2} \end{equation}
\end{prop}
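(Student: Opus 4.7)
The plan is to prove the identity by establishing a telescoping product for consecutive Wallis integrals. Let me set $I_k := \int_0^{\pi} \sin^k \theta \, d\theta$, so the goal becomes to show that $\pi \prod_{k=1}^{n-2} I_k = \pi^{n/2}/\Gamma(n/2)$.

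First, I would reproduce the integration-by-parts step already sketched in the proof of Lemma~\ref{lemmaip}: writing $\sin^k \theta = \sin^{k-1}\theta \cdot \sin\theta$ and integrating by parts gives the recurrence $I_k = \frac{k-1}{k} I_{k-2}$ for $k \geq 2$, together with the initial values $I_0 = \pi$ and $I_1 = 2$. From this recurrence one immediately obtains $(k+1) I_k I_{k+1} = (k+1) I_k \cdot \frac{k}{k+1} I_{k-1} = k \, I_{k-1} I_k$, so the quantity $k\, I_{k-1} I_k$ is independent of $k \geq 1$ and therefore equals $1 \cdot I_0 I_1 = 2\pi$. This gives the key telescoping identity
\begin{equation*}
I_{k-1} I_k = \frac{2\pi}{k}, \qquad k \geq 1.
\end{equation*}

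Next I would telescope the product $\prod_{k=1}^{n-2} I_k$ by splitting into cases according to the parity of $n$. If $n = 2m+2$ with $m \geq 0$, pair consecutive factors as $(I_1 I_2)(I_3 I_4)\cdots (I_{2m-1} I_{2m})$; each pair equals $\tfrac{2\pi}{2j}$ for $j=1,\dots,m$, so
\begin{equation*}
\prod_{k=1}^{2m} I_k = \prod_{j=1}^{m} \frac{2\pi}{2j} = \frac{\pi^m}{m!}.
\end{equation*}
If instead $n = 2m+3$ with $m \geq 0$, peel off the first factor and pair the rest as $I_1 (I_2 I_3)(I_4 I_5)\cdots (I_{2m} I_{2m+1})$, obtaining
\begin{equation*}
\prod_{k=1}^{2m+1} I_k = 2 \prod_{j=1}^{m} \frac{2\pi}{2j+1} = \frac{2 \cdot (2\pi)^m \cdot 2^m \, m!}{(2m+1)!} = \frac{2^{2m+1} \, m! \, \pi^m}{(2m+1)!}.
\end{equation*}

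Finally I would match each case against $\pi^{n/2}/\Gamma(n/2)$. For $n = 2m+2$ we have $\Gamma(n/2) = m!$, so $\pi \cdot \pi^m/m! = \pi^{m+1}/\Gamma(m+1)$, matching. For $n = 2m+3$ we use the half-integer value $\Gamma(m+\tfrac{3}{2}) = \frac{(2m+1)!}{2^{2m+1} m!}\sqrt{\pi}$ (which follows from $\Gamma(1/2) = \sqrt{\pi}$ and the functional equation \eqref{gamxx1}), giving $\pi^{n/2}/\Gamma(n/2) = \pi^{m+3/2} \cdot 2^{2m+1} m!/\bigl((2m+1)!\sqrt{\pi}\bigr)$, which equals $\pi$ times the product computed above. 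No single step is really an obstacle here; the only mildly delicate point is handling the two parities uniformly, which is why I prefer to separate them rather than attempt a single closed-form expression in terms of $\Gamma$-functions throughout.
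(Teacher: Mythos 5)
Your proof is correct, and it starts from the same recurrence $I_k = \frac{k-1}{k}I_{k-2}$ that the paper derives by integration by parts. Where you diverge is in how you close the argument. The paper unrolls the recurrence into an explicit closed form for $I_k$ (split by parity) and then declares that ``the claim follows after elementary computations,'' leaving the reader to multiply those closed forms together and match them against $\pi^{n/2}/\Gamma(n/2)$ --- which is the genuinely tedious part. You instead observe that the recurrence immediately implies the invariant $k\,I_{k-1}I_k = 2\pi$, so consecutive pairs in $\prod_{k=1}^{n-2} I_k$ collapse cleanly and you never need an explicit formula for any single $I_k$. This Wallis-type pairing is a real simplification: it replaces a product of two interleaved ratio-chains with a short telescoping computation, and it makes the final matching against $\Gamma(n/2)$ a one-line calculation in each parity case (using $\Gamma(m+1)=m!$ for even $n$ and the half-integer values of $\Gamma$, which you correctly derive from $\Gamma(1/2)=\sqrt{\pi}$ and the functional equation \eqref{gamxx1}, for odd $n$). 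Both approaches are valid and rest on the same recurrence; yours is the cleaner way to turn that recurrence into the stated product identity, and it explicitly fills the ``elementary computations'' gap that the paper leaves implicit.
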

In the next Proposition we introduce yet another integral identity.
\begin{prop}
\label{proposition:uss} Let $\alpha, \beta \in \mathbb{R}$ such that $ \big| \frac{\alpha}{\alpha +\beta}\big| < 1$. Then
\[  \int_0^\alpha \frac{(\alpha-x)^{s-1}}{x^s(\beta+x)}\, dx  = \frac{\pi}{\sin(\pi s)} \frac{(\alpha+\beta)^{s-1}} {\beta^s}.\]
\end{prop}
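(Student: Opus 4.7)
The plan is to reduce the given integral to the classical Beta integral $\int_{0}^{\infty} v^{s-1}(1+v)^{-1}\,dv=\pi/\sin(\pi s)$ recorded in \eqref{betas}, by means of two elementary substitutions.

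First, I would apply the change of variables $u=(\alpha-x)/x$, equivalently $x=\alpha/(1+u)$, which maps $(0,\alpha)$ to $(\infty,0)$. A short computation shows
\[
\alpha-x=\frac{\alpha u}{1+u},\qquad \beta+x=\frac{(\alpha+\beta)+\beta u}{1+u},\qquad dx=-\frac{\alpha\,du}{(1+u)^{2}}.
\]
The key point is that, substituting these into the integrand, the factors of $(1+u)$ arising from $(\alpha-x)^{s-1}$, $x^{-s}$, $(\beta+x)^{-1}$ and $dx$ contribute exponents $-(s-1),\,s,\,1,\,-2$ respectively, which sum to zero. Similarly the $\alpha$-powers cancel. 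After reversing orientation one is left with the clean identity
\[
\int_{0}^{\alpha}\frac{(\alpha-x)^{s-1}}{x^{s}(\beta+x)}\,dx=\int_{0}^{\infty}\frac{u^{s-1}}{(\alpha+\beta)+\beta u}\,du.
\]

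Second, I would use the rescaling $v=\beta u/(\alpha+\beta)$, which is legitimate precisely because the hypothesis $|\alpha/(\alpha+\beta)|<1$ forces $\beta$ and $\alpha+\beta$ to be nonzero and of the same sign, so the denominator $(\alpha+\beta)+\beta u$ never vanishes for $u\ge 0$ and the change of variables preserves orientation on $(0,\infty)$. It produces a factor $(\alpha+\beta)^{s-1}/\beta^{s}$ out front and reduces the integral to
\[
\frac{(\alpha+\beta)^{s-1}}{\beta^{s}}\int_{0}^{\infty}\frac{v^{s-1}}{1+v}\,dv=\frac{\pi}{\sin(\pi s)}\,\frac{(\alpha+\beta)^{s-1}}{\beta^{s}},
\]
the last equality being \eqref{betas}. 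Combining the two steps yields the claimed formula.

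There is no conceptual obstacle here: the whole argument rests on the cancellation of the $(1+u)$-powers in the first substitution (which is the only non-obvious calculation and amounts to careful bookkeeping) and on the reflection-type Beta integral already used elsewhere in the paper. The assumption on $\alpha,\beta$ enters exclusively to guarantee that the two substitutions are well defined on the real half-line.
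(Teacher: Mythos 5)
Your proof is correct, and it takes a genuinely different and more elementary route than the paper's. The paper first rescales $x=\alpha t$ to bring the integral to the form $\frac{1}{\beta}\int_0^1 t^{-s}(1-t)^{s-1}(1+\tfrac{\alpha}{\beta}t)^{-1}\,dt$, recognizes this as the integral representation \eqref{inthyp} of a hypergeometric function $F(1,1-s,1,-\alpha/\beta)$, applies the linear transformation \eqref{hyp3} and then the Gauss series \eqref{gausshyp} (with a vanishing parameter, so that only the $k=0$ term survives) to collapse the hypergeometric function to $(\tfrac{\alpha+\beta}{\beta})^{s-1}$, and finally invokes \eqref{betas}. Your argument bypasses the hypergeometric machinery entirely: the substitution $u=(\alpha-x)/x$ is precisely the reparametrization that turns the finite-interval integral into the half-line Beta integral $\int_0^\infty u^{s-1}\,du/((\alpha+\beta)+\beta u)$, after which a linear rescaling in $u$ exposes the prefactor $(\alpha+\beta)^{s-1}/\beta^{s}$ and reduces to \eqref{betas} directly. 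You correctly verify that the powers of $(1+u)$ and of $\alpha$ cancel, and your observation that the hypothesis $|\alpha/(\alpha+\beta)|<1$ forces $\beta$ and $\alpha+\beta$ to share a sign (so that the denominator stays away from zero on $u\ge 0$ and the rescaling is orientation-preserving) is exactly the right justification. What the paper's route buys is consistency with the special-function toolkit it uses elsewhere (and it illustrates the hypergeometric transformation identities in action); what your route buys is transparency and self-containment, since it needs nothing beyond the Beta integral already recorded in \eqref{betas}.
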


\begin{proof}

We change the variable $x =\alpha t$ and obtain that
	\begin{equation*}
		 \int_0^\alpha \frac{(\alpha-x)^{s-1}}{x^s(\beta+x)}\, dx = \frac{1}{\beta}  \int_0^1 t^{-s} (1-t)^{s-1} \bigg(1+ \frac{\alpha}{\beta}t\bigg)^{-1}\, dt.
	\end{equation*}
We use the integral definition \eqref{inthyp} of the hypergeometric function for $a=1$, $b=1-s$, $c=1$ and $w= -\frac{\alpha}{\beta}$ (since $|t|<1$, the integral is convergent) and we obtain that
	\[  \int_0^1 t^{-s} (1-t)^{-s} \bigg(1+ \frac{\alpha}{\beta}t\bigg)^{-1}\, dz =  \frac{\Gamma(s) \Gamma(1-s)}{\Gamma(1)} F\bigg(1,1-s,1,-\frac{\alpha}{\beta}\bigg).\]
Now we use the linear transformation \eqref{hyp3} and compute
	\[ F\bigg(1,1-s,1,-\frac{\alpha}{\beta}\bigg) = \bigg(\frac{\alpha+\beta}{\beta} \bigg)^{s-1} F\bigg(1-s,0,1,\frac{\alpha}{\alpha+\beta}\bigg)  .\]
We use the Gauss expansion in \eqref{gausshyp} and notice that for $k>0$, all the terms of the sum vanish. We are left with only with the term $k=0$ and obtain that
	\[ F\bigg(1-s,0,1,\frac{\alpha}{\alpha+\beta}\bigg)=1.\]
Furthermore, from \eqref{betas} it follows that
	\begin{equation*}\int_0^\alpha \frac{(\alpha-x)^{s-1}}{x^s(\beta+x)}\, dx=  \frac{\pi}{\sin \pi s} \frac{(\alpha+\beta)^{s-1}}{\beta^s}.\qedhere \end{equation*}
\end{proof}

We explicitly compute here another integral that was used in our computations, namely :

\begin{prop}For any $s\in (0,1/2]$ we have that
\begin{equation} \label{ctcomp1111} \int_0^\infty t^{2s-2} \sin t \, dt =-\cos(\pi s) \Gamma(2s-1).\end{equation}
\end{prop}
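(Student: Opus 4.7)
The plan is to reduce the integral to the classical Mellin transform of cosine via a single integration by parts, and then invoke the analytic-continuation (or contour-rotation) formula $\int_0^\infty r^{a-1}\cos r\, dr = \Gamma(a)\cos(\pi a/2)$ valid for $0<a<1$. First, note the integral is well defined: near $0$, $t^{2s-2}\sin t \sim t^{2s-1}$ with $2s-1>-1$, so it is absolutely integrable there, and near infinity, for $s<1/2$ we have absolute integrability (since $2s-2<-1$) while for $s=1/2$ the improper integral $\int^\infty t^{-1}\sin t\, dt$ converges conditionally.

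The key step is the integration by parts with $u=\sin t$, $dv=t^{2s-2}\,dt$, giving $v=t^{2s-1}/(2s-1)$. I would show that the boundary term $\frac{t^{2s-1}\sin t}{2s-1}\big|_0^\infty$ vanishes: at $0$, $t^{2s-1}\sin t \sim t^{2s}\to 0$ because $2s>0$; at $+\infty$, $t^{2s-1}\to 0$ for $s<1/2$, while for $s=1/2$ the boundary value is directly $\sin t$ which must be interpreted through a limiting truncation $\int_0^R$ whose boundary contribution $\sin R$ does not converge pointwise but whose Ces\`aro-like average against the remaining integrand vanishes (equivalently, integrate over $[\varepsilon,R]$ and take $\varepsilon\to 0^+$ first and then use the Dirichlet-test convergence of $\int_0^\infty t^{2s-1}\cos t\,dt$ to handle $R\to\infty$). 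Thus
\[\int_0^\infty t^{2s-2}\sin t\,dt = -\frac{1}{2s-1}\int_0^\infty t^{2s-1}\cos t\,dt.\]

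Next, I would establish the classical formula
\[\int_0^\infty r^{a-1}\cos r\,dr=\Gamma(a)\cos\!\Big(\frac{\pi a}{2}\Big),\qquad 0<a<1,\]
by contour rotation: starting from $\Gamma(a)=\int_0^\infty t^{a-1}e^{-t}\,dt$, rotate the path of integration from the positive real axis to the positive imaginary axis (closing through a quarter circle of radius $R$, on which the integrand decays because $a<1$ and $\operatorname{Re}(e^{-Re^{i\theta}})\ge 0$ for $\theta\in[0,\pi/2)$, with a small-arc estimate near the origin using $a>0$). The substitution $t=ir$ gives $\Gamma(a)=i^{a}\!\int_0^\infty r^{a-1}e^{-ir}\,dr$, and separating real and imaginary parts yields the stated cosine (and sine) formulas. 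Applying this with $a=2s\in(0,1]$, the result is $\int_0^\infty t^{2s-1}\cos t\,dt = \Gamma(2s)\cos(\pi s)$.

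Combining the two displays and using the functional equation $\Gamma(2s)=(2s-1)\Gamma(2s-1)$ gives
\[\int_0^\infty t^{2s-2}\sin t\,dt = -\frac{\Gamma(2s)\cos(\pi s)}{2s-1} = -\Gamma(2s-1)\cos(\pi s),\]
which is the claim. The main technical obstacle is the boundary analysis in the integration-by-parts step at the endpoint $s=1/2$, where the decay at infinity is borderline; this can either be bypassed by first proving the identity for $s\in(0,1/2)$ and then passing to the limit $s\nearrow 1/2$ by continuity of both sides (the right-hand side is continuous in $s$ and the left-hand side depends continuously on $s$ by dominated convergence applied to $\int_0^R$ together with uniform control of the tail via the second mean-value theorem), or by justifying the contour rotation directly for $a=1$ where $\int_0^\infty r^{-1/2}$-type terms must be treated with a small indentation.
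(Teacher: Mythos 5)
Your proof is correct for $s\in(0,1/2)$, and the limiting argument you sketch closes the endpoint $s=1/2$ (the apparent $0\cdot\infty$ in $-\cos(\pi s)\Gamma(2s-1)$ at $s=1/2$ is removable, with limit $\pi/2=\int_0^\infty t^{-1}\sin t\,dt$). Your route is genuinely different from the paper's, and arguably cleaner near the origin. The paper applies contour rotation directly to $\int_0^\infty t^{2s-2}e^{-it}\,dt$ over a quarter disk whose boundary passes through $z=0$, and identifies the real-axis leg with $\Gamma(2s-1)$. But for $s\le1/2$ the exponent $2s-2\le-1$ makes the singularity at $z=0$ non-integrable on that contour, so the real-axis leg $\int_0^\rho t^{2s-2}e^{-t}\,dt$ diverges as written; the paper does not indent at the origin or track the cancellation of divergent pieces, so its chain of identities is implicitly an analytic continuation from the range $s>1/2$ where everything converges. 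Your preliminary integration by parts shifts the exponent to $2s-1>-1$ before rotating the contour, replacing the Mellin integral by the genuinely convergent $\int_0^\infty t^{2s-1}\cos t\,dt$, so the rotation of $\Gamma(2s)$ requires no regularization: the small-arc contribution is $O(\epsilon^{2s})$, the large-arc contribution is $O(R^{2s-1})$, and both vanish for $0<s<1/2$. One small correction to your write-up: at $s=1/2$ the antiderivative of $t^{-1}$ is $\log t$, not $t^{2s-1}/(2s-1)$, so your integration-by-parts display literally fails at the endpoint; the continuity-in-$s$ fallback you propose is the right and simplest fix, and the Ces\`aro-type digression should be dropped.
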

\begin{proof}
We have that
\begin{equation}\label{sti1} \int_0^{\infty} t^{2s-2} \sin t \, dt= - \text{Im} \int_0^{\infty}  t^{2s-2} e^{-it}\, dt. \end{equation}
We consider the closed path $\Omega_{\rho} =\partial\Big( \big([0,\rho]\times [0,\rho] \big) \cap B_\rho(0) \Big)$. We take the contour integral $\int_{\Omega_{\rho}} z^{2s-2} e^{-z}\, dz $, and let $\gamma_{\rho} = \partial  B_\rho(0) \cap  \big([0,\rho]\times [0,\rho] \big) $ (the boundary of the quarter of the circle). By Cauchy's Theorem, the contour integral is 0 (there are no poles inside $\Omega_{\rho}$), therefore
\[ \int_0^{\rho} t^{2s-2}e^{-t} \, dt + \int_{\gamma_{\rho}} z^{2s-2} e^{-z}\, dz - i \int_0^{\rho} (it)^{2s-2} e^{-it}\, dt =0.\] Integrating along $\gamma_{\rho}$, by using polar coordinates $z={\rho}e^{i\theta}$ and then the change of variables $\cos \theta =t$ we have that
	\[ \begin{split} \bigg| \int_{\gamma_{\rho}} z^{2s-2} e^{-z} \, dz \bigg| =&\; \bigg| \int_0^{\pi/2} {\rho}^{2s-1} e^{i\theta (2s-1)} e^{-{\rho}e^{i\theta}} \, d\theta\bigg|
	\leq {\rho}^{2s-1} \bigg| \int_0^{\pi/2} e^{-{\rho}\cos \theta}\, d\theta \bigg| \\
	 				=&\; {\rho}^{2s-1} \bigg| \int_0^1 \frac{ e^{-{\rho}t}}{\sqrt{1-t^2} }\, dt \bigg| \\
	 				\leq&\; {\rho}^{2s-1} e^{-{\rho}/2}  \bigg| \int_{1/2}^1 (1-t)^{-1/2}\, dt \bigg|   + \overline  c {\rho}^{2s-1}  \bigg|  \int_0^{1/2}  e^{-{\rho}t} \, dt \bigg| \\
	 				=&\; c  {\rho}^{2s-1} e^{-{\rho}/2} + \overline c {\rho}^{2s-2} (e^{-{\rho}/2}-1) .
\end{split}\]  Hence \[ \lim_{{\rho}\to \infty} \int_{\gamma_{\rho}} z^{2s-2} e^{-z} \, dz =0\]
and we are left only with the integrals along the real and the imaginary axis, namely
	\[ \int_0^{\infty} t^{2s-2}e^{-t} \, dt = i^{2s-1} \int_0^{\infty} t^{2s-2} e^{-it} \, dt .\] Here the left hand side returns the Gamma function according to definition \eqref{ABRAMOWITZ}. We compute $i^{1-2s} =\big( \cos (\pi/2) + i\sin(\pi /2)\big) ^{1-2s}= \sin(\pi s)+ i\cos(\pi s)$ and in \eqref{sti1} we obtain that
		 \[   \int_0^{\infty} t^{2s-2} \sin t \, dt= - \Gamma(2s-1)  \text{Im} \Big(   \sin(\pi s)+ i\cos(\pi s)\Big) = -\cos(\pi s) \Gamma(2s-1).\]
		 This concludes the proof of the Proposition.
\end{proof}
\section{Schauder estimates for the fractional Laplacian}
In what follows we assume that $n\geq 2$ and consider $s\in (0,1)$ to be a fixed quantity.
Let $f$ be a given H\"{o}lder continuous function and $u$ solving
\eqlab{\label{fondsolcon} (-\Delta)^s u=f \quad \mbox{ in } \; B_1.}
We want to study the regularity of $u$ using a very simple method 
based on the Poisson representation formula
and dyadic ball approximation argument. 
For regularity up to the boundary of weak solutions of the Dirichlet problem, see the very nice paper \cite{oton}.  

More precisely, we prove here that given  $f\in C^{0,\alpha}(B_1)\cap C(\overline B_1)$, then on the half ball $u$ has the regularity of $f$ increased by $2s$. 
\begin{theorem} \label{schthm}Let $s\in(0,1)$,  $\alpha <1$ and $f\in C^{0,\alpha}(B_1)\cap C(\overline B_1)$ be a given function with modulus of continuity $\omega(r):=\sup_{|x-y|<r} |f(x)-f(y)|$.
 Let  $u\in  L^{\infty}(\Rn)\cap C^1(B_1)$ be a pointwise solution of $\frlap u=f$ in $B_1$. Then for any $x,y \in B_{1/2}$ and denoting $\delta:=|x-y|$ we have that for $ s\leq 1/2$ 
  \eqlab{\label{mm1}    |u(x)-u(y)|\leq &\; C_{n,s}  \bigg( \delta  \|u\|_{L^{\infty}(\Rn\setminus B_1)} +\delta \sup_{\overline B_1}|f| + \int_0^{c \delta} \omega(t)t^{2s-1}\, dt + \delta \int_{\delta}^1  \omega(t)t^{2s-2}\, dt\bigg) }
  while for $s>1/2$ 
  \eqlab{\label{mm} 
 |Du(x)-Du(y)|\leq &\; C_{n,s}  \bigg(\delta  \|u\|_{L^{\infty}(\Rn\setminus B_1)} +\delta \sup_{\overline B_1}|f| + \int_0^{c \delta} \omega(t)t^{2s-2}\, dt+ \delta \int_{\delta}^1  \omega(t)t^{2s-3}\, dt\bigg) ,}
 where $C_{n,s}$ and $ c$ are positive dimensional constants.
\end{theorem}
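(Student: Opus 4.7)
The plan is to leverage the Green--Poisson representation developed earlier in the chapter. Combining Theorems \ref{theorem:DPL} and \ref{theorem:thm2} (together with the uniqueness granted by Theorem \ref{THM-MA-1-STRONG}), one writes, for $z\in B_1$,
\begin{equation*}
u(z)=\int_{\Rn\setminus B_1}P_1(w,z)\,u(w)\,dw+\int_{B_1}G(z,w)\,f(w)\,dw=:u_2(z)+u_1(z).
\end{equation*}
The Poisson piece $u_2$ is $C^\infty$ on $\overline{B_{1/2}}$ with kernel derivative bounds uniform in $w\in\Rn\setminus B_1$, which yields $|u_2(x)-u_2(y)|\le C_{n,s}\,\delta\,\|u\|_{L^\infty(\Rn\setminus B_1)}$ (and the analogous bound for $Du_2$ when $s>1/2$). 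This accounts for the first term of \eqref{mm1} and \eqref{mm}.

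The main work is on $u_1$. I would subtract the ``frozen'' value of $f$ at $x$: the function $\Psi(z):=\int_{B_1}G(z,w)\,dw$ solves $\frlap\Psi=1$ in $B_1$ with zero exterior data, so by Subsection \ref{ctfrlap} it equals $(1-|z|^2)_+^s$ up to a dimensional constant and is therefore smooth on $B_{1/2}$. Writing
\begin{equation*}
u_1(z)=f(x)\Psi(z)+R(z),\qquad R(z):=\int_{B_1}G(z,w)\bigl(f(w)-f(x)\bigr)\,dw,
\end{equation*}
one obtains $|f(x)|\,|\Psi(x)-\Psi(y)|\le C_{n,s}\,\delta\,\sup_{\overline B_1}|f|$, matching the second term of \eqref{mm1}. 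To estimate $R(x)-R(y)$ I would split the $w$-integral at the radius $2\delta$ around $x$. After first establishing, from \eqref{forgkns}, the pointwise kernel bounds
\begin{equation*}
|G(z,w)|\le C_{n,s}\,|z-w|^{2s-n},\qquad |D_z^m G(z,w)|\le C_{n,s,m}\,|z-w|^{2s-n-m}\quad(m\ge 1),
\end{equation*}
valid for $z\in B_{1/2}$, $w\in B_1\setminus\{z\}$, the near region $\{|w-x|<2\delta\}$ is handled by bounding $|G(x,w)|+|G(y,w)|$ by $C|w-x|^{2s-n}$ (the buffer $2\delta$ ensures $|w-y|\simeq|w-x|$) together with $|f(w)-f(x)|\le\omega(|w-x|)$ in polar coordinates, giving $\int_0^{c\delta}\omega(t)\,t^{2s-1}\,dt$. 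On the far region $\{|w-x|\ge 2\delta\}\cap B_1$, the mean value theorem along the segment $[x,y]$ combined with the $D_zG$ bound yields $C_{n,s}\,\delta\int_{\delta}^{1}\omega(t)\,t^{2s-2}\,dt$. Summing the pieces delivers \eqref{mm1}.

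For $s>1/2$ I would differentiate the representation once: $u_2\in C^\infty(B_{1/2})$ causes no trouble, $f(x)\,D\Psi$ is again smooth, and the near/far split is repeated on $DR$, with the kernel $D_zG$ playing the role of $G$ in the near region (producing the exponent $t^{2s-2}$) and $D_z^2G$ replacing $D_zG$ in the far region (producing $t^{2s-3}$), thereby reproducing \eqref{mm}. The main obstacle is the pointwise derivative bound on $G$: the explicit formula \eqref{forgkns} involves the factor $r_0(z,w)$ which degenerates as either $z$ or $w$ approaches $\partial B_1$, and one must check that differentiation in $z$ does not introduce spurious boundary singularities spoiling the clean scaling $|z-w|^{2s-n-m}$ when $z\in B_{1/2}$ and $w$ is near $\partial B_1$. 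The elementary fact that $1-|z|^2\ge 3/4$ on $B_{1/2}$, combined with a case analysis according to whether $w$ is close to the diagonal or close to $\partial B_1$, should render these estimates routine once carefully set up; after that, the Schauder bounds are merely a matter of careful bookkeeping in polar coordinates, exactly as in the classical dyadic-ball argument of \cite{wang}.
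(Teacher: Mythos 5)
Your proposal takes a genuinely different route from the paper. The paper follows Wang's dyadic-ball scheme: for $\rho=1/2$ one introduces auxiliary solutions $u_k$ of $\frlap u_k = f(0)$ in $B_{\rho^k}$ with $u_k=u$ outside, and controls the differences $u_k-u$ and $u_k-u_{k+1}$ using only two ingredients --- the interior derivative estimates for $s$-harmonic functions (Lemma~\ref{estimate1}) and the $L^\infty$ bound for the Dirichlet problem with zero exterior datum (Lemma~\ref{estimate2}). No pointwise bound on the Green function or its derivatives is ever derived there; the geometric sums over $k$ are converted to the integrals in \eqref{mm1}--\eqref{mm} via a Riemann-sum identity. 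Your proof instead uses the explicit representation $u=u_2+u_1$ (Poisson integral plus Green potential), freezes $f$ at the point $x$, and splits the Green-potential integral into a near region $\{|w-x|<2\delta\}$ and a far region. This buys explicitness and keeps the argument closer to the classical Schauder proof, at the cost of having to verify the pointwise bounds $|D_z^m G(z,w)|\le C\,|z-w|^{2s-n-m}$ for $z\in B_{1/2}$. These do hold: since $1-|z|^2\ge 3/4$ on $B_{1/2}$, one checks that the factor $r_0(z,w)$ in \eqref{forgkns} satisfies $|\nabla_z r_0|\le C\,r_0/|z-w|$, and the decay of the integrand $t^{s-1}(1+t)^{-n/2}$ (together with $n\ge 2$) keeps $r_0^{s}(1+r_0)^{-n/2}$ and the relevant $r_0$-derivatives bounded, so no spurious boundary singularity appears.

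One sentence in your near-region treatment is incorrect: the buffer $2\delta$ does \emph{not} ensure $|w-y|\simeq|w-x|$ there. On the contrary, the ball $B_{2\delta}(x)$ contains $y$, so $|w-y|$ can be arbitrarily small while $|w-x|\approx\delta$. The piece $\int_{B_{2\delta}(x)}|G(y,w)|\,|f(w)-f(x)|\,dw$ must be handled separately: bound $|f(w)-f(x)|\le\omega(2\delta)$, observe that $B_{2\delta}(x)\subset B_{3\delta}(y)$, and compute $\int_{B_{3\delta}(y)}|y-w|^{2s-n}\,dw\le C\delta^{2s}$, producing $C\,\omega(2\delta)\,\delta^{2s}$; this is still controlled by $\int_0^{c\delta}\omega(t)t^{2s-1}\,dt$ using the monotonicity and subadditivity of $\omega$. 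It is the \emph{far} region where the buffer does its work, guaranteeing $|w-\xi|\ge\tfrac12|w-x|$ for every $\xi$ on the segment $[x,y]$, which is what the mean-value argument requires. With this correction your proof is sound, and the same near/far split with $D_zG$ and $D_z^2G$ in place of $G$ and $D_zG$ reproduces \eqref{mm} for $s>1/2$.
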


 There are other approaches to prove Schauder estimates for the fractional order operators with more general kernels 
 see  \cite{Dong} and references therein. Here we follow the one proposed by Xu-Jia Wang in \cite{wang} which is based only on the 
 higher order derivative estimates, that we state here in Lemma \ref{estimate1} and on a maximum principle, given in Lemma \ref{estimate2}. 
  \smallskip

 \medskip 
 
One of the motivations to study \eqref{fondsolcon} comes from 
the active scalars (see \cite{Constantin}). The 2D incompressible Euler equation  
\begin{eqnarray}
\left\{
\begin{array}{lll}
\omega_t+v\nabla \omega=0\\
v=(\partial_2\psi, -\partial_1\psi)\\
\omega=\Delta\psi
\end{array}
\right.
\end{eqnarray}
is one of the well-known active scalar equations. Here $v$ is the 
velocity, $\omega$ the vorticity, $\psi$ the stream function. 

\smallskip

The uniqueness was proved  by Yudovich (see \cite{russo}) under the condition that 
$\omega(t)\in L^{\infty}(0, T; L^1(\R^2))\cap  L^{\infty}(0, T; L^\infty(\R^2))$. 
Observe that by the Biot -Savart law one has that $v=k*\omega$, where 
$$k(x)=\frac{x^\perp}{2\pi |x|^2}.$$ 
Clearly $k\in L^p_{loc}(\R^2), \,1\leq p<2$
and $k\in L^q(\R^2),\, q>2$ near infinity, implying that one must assume 
$\omega\in L^{p_o}(\R^2)\cap L^{q_o}(\R^2), \,p_o<2<q_o$ to make sure that $v=k*\omega$
is well defined.  In particular $p_o=1,\, q_o=\infty$ will do.

\medskip 

A generalization of the 2D Euler equation is the 
quasigeostrophic active scalar 
\begin{eqnarray}
\left\{
\begin{array}{lll}
\omega_t+v\nabla \omega=0\\
v=(\partial_2\psi, -\partial_1\psi)\\
-\omega=(-\Delta)^\frac12 \psi
\end{array}
\right.
\end{eqnarray}
or more generally when one takes $-\omega=(-\Delta)^s \psi, \,0<s<1$. 
Thus this leads to the study of $k_s*(\Delta)^{-s}\omega$
where $n=2$ and 
$$k_s(x)=\nabla^\perp \frac{C_{n,s}}{|x|^{n-2\sigma}}.$$
We see that the regularity of the stream function can be 
concluded from that of $\omega$ via Schauder estimates.
\medskip 


 \subsection{H\"{o}lder estimates for the Riesz  potentials}\label{newton}
In the next Lemma, we establish that given a bounded function with bounded support, its convolution with the function $\Phi$ defined in \eqref{fundsolution} is H\"{o}lder continuous. 
 \begin{lemma}\label{lem1}
 Let $s\in (0,1/2)\cup (1/2,1)$ be fixed. Let $\Omega \subseteq \Rn$ be a bounded set, the function $f\in L^{\infty}(\Rn)$ be supported in $\Omega$ and $u$ be defined as
 \eqlab{  \label{lem1f2}u(x):= \int_{\Rn} \frac{f(y)}{|x-y|^{n-2s}} \, dy.}Then 
 $u\in C^{0,2s}(\Rn)$ for $s<1/2$ and $u\in C^{1,2s-1}$ for $s>1/2$. 
 \end{lemma}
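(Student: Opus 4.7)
The plan is to reduce the proof to estimating, for two points $x_1,x_2\in\R^n$ with $\delta:=|x_1-x_2|$, the difference $u(x_1)-u(x_2)$ (when $s<1/2$) or $\nabla u(x_1)-\nabla u(x_2)$ (when $s>1/2$), by splitting the domain of integration into a ``near'' region $B_{2\delta}(x_1)$ and a ``far'' region $\R^n\setminus B_{2\delta}(x_1)$ (intersected with the bounded support $\Omega$). In the near region I would bound each term in the difference separately using the $L^\infty$ bound on $f$ and integrability of the kernel; in the far region I would use the Mean Value Theorem on $x\mapsto|x-y|^{2s-n}$ (resp.\ on its gradient) together with the elementary observation that for $y$ outside $B_{2\delta}(x_1)$ every point on the segment from $x_1$ to $x_2$ satisfies $|x^\star-y|\ge \tfrac12|x_1-y|$.

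For the case $s<1/2$, I first note that $\int_{B_{2\delta}(x_1)}|x_1-y|^{2s-n}\,dy\le C_{n,s}\delta^{2s}$, and since $B_{2\delta}(x_1)\subset B_{3\delta}(x_2)$ the same bound holds with $x_2$; this controls the near part by $C_{n,s}\|f\|_\infty\delta^{2s}$. For the far part, MVT gives
\[
\Big|\,|x_1-y|^{2s-n}-|x_2-y|^{2s-n}\Big|\le C_n\,\delta\,|x^\star-y|^{2s-n-1}\le C_n\,\delta\,|x_1-y|^{2s-n-1},
\]
and passing to polar coordinates yields $\delta\int_{2\delta}^{R}r^{2s-2}\,dr\le C_{n,s}\,\delta^{2s}$, where $R=\mathrm{diam}(\Omega\cup\{x_1\})$ (the integral converges at $\infty$ precisely because $2s-2<-1$). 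Combining the two parts gives $|u(x_1)-u(x_2)|\le C\|f\|_\infty\delta^{2s}$, whence $u\in C^{0,2s}(\R^n)$.

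For the case $s>1/2$, I first justify differentiating under the integral sign: for $s>1/2$, the kernel $|x-y|^{2s-n-1}$ is locally integrable (in dimension $n\ge 2$ this holds since $2s-n-1>-n$), so by a standard dominated-convergence argument on difference quotients with compact support of $f$ one obtains
\[
\nabla u(x)=(2s-n)\int_{\R^n}\frac{(x-y)\,f(y)}{|x-y|^{n-2s+2}}\,dy,
\]
and this defines a continuous function of $x$. I would then run the very same splitting argument for $\nabla u$: the near part is bounded by $C_{n,s}\|f\|_\infty\int_0^{2\delta}r^{2s-2}\,dr\le C_{n,s}\|f\|_\infty\delta^{2s-1}$ (finite because $s>1/2$), while for the far part MVT applied to the vector kernel gives a pointwise bound by $C_n\,\delta\,|x_1-y|^{2s-n-2}$, whose radial integral $\delta\int_{2\delta}^{R}r^{2s-3}\,dr\le C_{n,s}\delta^{2s-1}$ is controlled by the fact that $2s-3<-1$. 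This yields $|\nabla u(x_1)-\nabla u(x_2)|\le C\|f\|_\infty\delta^{2s-1}$, i.e.\ $u\in C^{1,2s-1}(\R^n)$.

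The routine part is the polar-coordinate computation of the kernel integrals; the only technical point that deserves care is the justification of differentiation under the integral sign when $s>1/2$, which hinges on the observation that the gradient-kernel $|x-y|^{2s-n-1}$ remains locally integrable so that the $L^\infty$-bounded, compactly supported factor $f$ gives a uniform majorant for the difference quotients. The main conceptual obstacle is therefore not any single hard estimate but rather the bookkeeping: one must ensure that the boundedness of the support $\Omega$ (rather than of $f$ alone) is used exactly where the far-region integral needs to terminate, and that the constants in the MVT step depend only on $n$ and $s$ and not on the specific points~$x_1,x_2\in\R^n$.
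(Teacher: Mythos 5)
Your proof is correct and follows essentially the same route as the paper's: for $s<1/2$, you reduce to estimating $\int_\Omega\big||x_1-y|^{2s-n}-|x_2-y|^{2s-n}\big|\,dy$ and bound it by $C\delta^{2s}$ via the standard near/far split — near $B_{2\delta}(x_1)$ you bound each kernel separately in polar coordinates, far away you apply the Mean Value Theorem together with $|x^\star-y|\ge\tfrac12|x_1-y|$ — which is exactly the computation carried out in the paper (in its earlier Lemma on the Riesz potential, whose estimate \eqref{exressch} the paper's proof of this lemma then invokes). For $s>1/2$, you differentiate under the integral to obtain $\nabla u(x)=(2s-n)\int_\Omega(x-y)f(y)|x-y|^{2s-n-2}\,dy$ and rerun the same argument with the new kernel; the paper does the same, citing a reference for the differentiation step where you instead give a self-contained dominated-convergence justification using the local integrability of $|x-y|^{2s-n-1}$ when $2s>1$ — a marginally more elementary treatment of the same point, not a different approach. (Minor cosmetic remark: your parenthetical that local integrability of $|x-y|^{2s-n-1}$ ``holds in dimension $n\ge 2$'' is superfluous — the condition $2s-n-1>-n$ reduces to $s>1/2$ in every dimension.)
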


The proof of this Lemma takes inspiration from \cite{russo}, where some bounds are obtained in the case $s=1/2$. Check also Lemma 3.1 in \cite{samko} for other considerations.
 
\begin{proof} Let $s<1/2$ be fixed. We consider $x_1,x_2 \in \Rn$ and denote by $\delta:=|x_1-x_2|$. We notice that in the course of the proof, the constants may change value from line to line. We have that
\bgs{|u(x_1)&-u(x_2)| \leq |f\|_{L^{\infty}(\Rn)}  \int_{\Omega  }\bigg| \frac{1}{|x_1-y|^{n-2s}} -\frac{1}{|x_2-y|^{n-2s}} \bigg| \, dy .
  }
 	 	Using \eqref{exressch} (with $\Omega$ instead of $B_R$) we obtain that
\eqlab{\label{lem1eq2} |u(x_1)-u(x_2)| \leq \|f\|_{L^{\infty}(\Rn)} C \delta^{2s},}where $C=C({n,s}) $ is a positive constant.  
To prove the bound for $s>1/2$, thanks to Lemma 4.1 in \cite{trudy} we have that
\eqlab{\label{fundsolderiv} D u(x)=\int_{\Omega} D \Phi(x-y) f(y)\, dy=\int_{\Omega} \frac{f(y) }{|x-y|^{n-2s+1}}\, dy.}
The proof then follows as for $s<1/2$, and one gets that
\bgs{ |D u(x_1) \al-D u(x_2)|  \leq \al \|f\|_{L^{\infty}(\Rn)} \int_{\Omega} \bigg|\frac{1}{|x_1-y|^{n-2s}} -\frac{1}{|x_2-y|^{n-2s}} \bigg| \, dy
%
 \leq \al 
 C \|f\|_{L^{\infty}(\Rn)}  \delta^{2s-1},} where $C=C({n,s})$ is a positive constant. 
 This concludes the proof of the Lemma.
\end{proof}
\begin{remark} 
On $\Omega$ one has the following bounds. For $x_1,x_2\in \Omega$
 \sys [|u(x_1)-u(x_2) |\leq] {&\; C |x_1-x_2|\Big(1+	|x_1-x_2|^{2s-1}\Big)    &\mbox{ for } \quad &s<  1/2\\
 &\;C|x_1-x_2| \Big(1+\big|\ln|x_1-x_2|\big|  \Big) & \mbox{ for } \quad &s= 1/2 ,} and
 \bgs{ |Du(x_1)-Du(x_2)|\leq C|x_1-x_2| \Big(1+|x_1-x_2|^{2s-2} \Big)&\mbox{\; for } \quad &s>  1/2,
 }
where $C=C(n,s,f, \Omega) $ is a positive constant depending on $n, s$ 
the $L^{\infty}$ norm of $f$ and the diameter of $\Omega$.  
%
 	 	\end{remark} 
 \subsection{Some useful estimates}
 
 In this subsection we introduce some useful estimates, using the representation formulas in Theorems \ref{theorem:poissonsolution} and \ref{theorem:DPL}. The interested reader can also check \cite{fall}, where Cauchy-type estimates for the derivatives of $s$-harmonic functions are proved using the Riesz and Poisson kernel.
 
 We fix $r>0$.
 \begin{lemma}\label{estimate1} Let $u\in  L^\infty(\Rn)\cap C(\Rn\setminus B_r) $ be such that $\frlap u(x)=0$  for any $x$ in $B_r$. Then for any $\alpha \in \N^n_0$
 	\eqlab{\label{estimate1} \|D^\alpha u \|_{L^{\infty}(B_{r/2})} \leq c r^{-|\alpha|} \|u\|_{L^\infty(\Rn\setminus B_r)},} where $c=c(n,s,\alpha)$ is a positive constant.
 \end{lemma}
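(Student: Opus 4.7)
The plan is to exploit the Poisson representation from Theorem \ref{theorem:DPL} and simply differentiate under the integral sign, using that the Poisson kernel is smooth in $x$ once $x$ stays uniformly away from the boundary $\partial B_r$. First I would reduce to the case $r=1$ by scaling: set $v(x):=u(rx)$, so $v$ is pointwise $s$-harmonic in $B_1$ and $\|v\|_{L^\infty(\R^n\setminus B_1)}=\|u\|_{L^\infty(\R^n\setminus B_r)}$. The chain rule gives $D^\alpha v(x)=r^{|\alpha|}(D^\alpha u)(rx)$, so an estimate $\|D^\alpha v\|_{L^\infty(B_{1/2})}\le c\|v\|_{L^\infty(\R^n\setminus B_1)}$ yields \eqref{estimate1} at once.

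With $r=1$, Theorem \ref{theorem:DPL} (applied to $u$, which is bounded, hence in $L^1_s(\R^n)$, and continuous off $B_1$) furnishes the representation
\begin{equation*}
u(x)=\int_{\R^n\setminus B_1}P_1(y,x)\,u(y)\,dy,\qquad x\in B_1.
\end{equation*}
For $x\in B_{1/2}$ and $y\in\R^n\setminus B_1$ we have $|x-y|\ge 1/2$ and $1-|x|^2\ge 3/4$, so the integrand is $C^\infty$ in $x$ with all $x$-derivatives bounded on $B_{1/2}$ by $L^1(\R^n\setminus B_1;dy)$ functions (domination by the kernel itself, as shown below). Dominated convergence therefore legalizes differentiating under the integral to the order $|\alpha|$.

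The computational core is a pointwise bound of the form
\begin{equation*}
|D^\alpha_x P_1(y,x)|\le C_\alpha\,P_1(y,x)\qquad \text{for all }x\in B_{1/2},\ y\in\R^n\setminus B_1.
\end{equation*}
Writing $P_1(y,x)=c(n,s)(1-|x|^2)^s(|y|^2-1)^{-s}|x-y|^{-n}$, the $x$-dependence factors into $(1-|x|^2)^s$ and $|x-y|^{-n}$. On $B_{1/2}$ the first factor is smooth with all derivatives bounded (and bounded below by $(3/4)^s$), while $|D^\beta_x|x-y|^{-n}|\le C_\beta|x-y|^{-n-|\beta|}$; since $|x-y|\ge 1/2$, the extra negative powers are absorbed into a combinatorial constant. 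Leibniz's rule then gives the claimed bound.

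Combining the bound on $D^\alpha_x P_1$ with Lemma \ref{lemmaip} (identity \eqref{Ip}), we obtain for $x\in B_{1/2}$
\begin{equation*}
|D^\alpha u(x)|\le C_\alpha\|u\|_{L^\infty(\R^n\setminus B_1)}\int_{\R^n\setminus B_1}P_1(y,x)\,dy=C_\alpha\|u\|_{L^\infty(\R^n\setminus B_1)},
\end{equation*}
which is the $r=1$ estimate; the rescaling of the first paragraph then completes the proof. The only delicate point I foresee is invoking the Poisson representation of Theorem \ref{theorem:DPL} when $u$ is merely in $L^\infty(\R^n)\cap C(\R^n\setminus B_r)$ and not necessarily continuous across $\partial B_r$; this can be handled either by applying the formula on a slightly smaller ball $B_{r'}$, $r'<r$ (where $u$ is smooth by interior regularity of $s$-harmonic functions and hence continuous), and then letting $r'\nearrow r$, or by invoking the strong maximum principle (Theorem \ref{THM-MA-1-STRONG}) to identify $u$ with the Poisson integral of its exterior data.
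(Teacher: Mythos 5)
Your proof is correct and follows essentially the same route as the paper: reduce to $r=1$ by scaling, invoke the Poisson representation of Theorem \ref{theorem:DPL}, and differentiate under the integral using that for $x\in B_{1/2}$ and $y\in\R^n\setminus B_1$ the kernel is smooth and $|x-y|\ge 1/2$. Where you diverge is in closing the estimate. The paper computes $D_ju$ explicitly, bounds the resulting terms by switching to polar coordinates and evaluating integrals such as $\int_1^\infty(\rho-1)^{-s}\rho^{-1}\,d\rho$, and then says to ``reiterate the computation'' for higher-order derivatives; you instead package everything into the single pointwise domination $|D^\alpha_x P_1(y,x)|\le C_\alpha\,P_1(y,x)$ on $B_{1/2}\times(\R^n\setminus B_1)$ (which you justify correctly via Leibniz, the lower bound $1-|x|^2\ge 3/4$, and $|x-y|\ge 1/2$) and then invoke the normalization $\int_{\R^n\setminus B_1}P_1(y,x)\,dy=1$ from Lemma \ref{lemmaip}. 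This handles all multi-indices $\alpha$ at once and avoids the explicit polar-coordinate computations, so it is a tidier way of finishing; it also makes the dominated-convergence justification for differentiating under the integral manifest. Your final remark about the apparent mismatch with the hypotheses of Theorem \ref{theorem:DPL} (which asks $g\in C(\R^n)$, whereas here $u$ is only continuous off $B_r$) is a legitimate point that the paper silently glosses over, and the two fixes you suggest (apply the formula on $B_{r'}$ with $r'<r$ using interior smoothness of $s$-harmonic functions, then let $r'\nearrow r$; or identify $u$ with the Poisson integral via the maximum principle) are both workable.
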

 \begin{proof}
 We notice that it is enough to prove \eqref{estimate1} for $r=1$, i.e.
 \eqlab{\label{estimate16} \|D^\alpha u \|_{L^{\infty}(B_{1/2})} \leq c \|u\|_{L^\infty(\Rn\setminus B_1)}.}   Indeed, if \eqref{estimate16} holds, then by rescaling, namely letting $y=rx$ and $v(y)=u(x)$   for $x\in B_1$, we have that  $D^\alpha u(x) =r^{|\alpha|}D^\alpha v(y) $. Hence $r^{|\alpha|}| D^\alpha v(y)|=|D^\alpha u(x)|\leq c\|u\|_{L^{\infty}(\Rn\setminus B_1) }= c\|v\|_{L^\infty(\Rn\setminus B_r)}$  and one gets the original estimate for any $r$.
 
 We use the representation formula given in Theorem \ref{theorem:DPL}. By inserting definition \eqref{poissondefn}, we have that in $B_1$
 \bgs{ u(x) = &\;\int_{\Rn \setminus B_1} u(y) P_1(y,x)\, dy\\
 =&\; c(n,s) \int_{\Rn \setminus B_r} u(y)\frac{(1-|x|^2)^s}{(|y|^2-1)^s} \frac{dy}{|x-y|^n}.}
 Let $x\in B_{1/2}$. We take the j$^{th}$ derivative of $u$ and have that
  \bgs{ &\;  D_j u(x) \\ =&\; c(n,s) \int_{\Rn \setminus B_1} u(y)D_j \lrq{ \frac{(1-|x|^2)^s}{(|y|^2-1)^s} \frac{1}{|x-y|^n}}\, dy\\
   =&\; c(n,s) \int_{\Rn \setminus B_r}  \frac{u(y)}{(|y|^2-1)^s} \lrq{ \frac{(-2sx_j) (1-|x|^2)^{s-1} }{|x-y|^n} + (-n) \frac{(1-|x|^2)^s (x_j-y_j)}{|x-y|^{n+2}} \, }dy.}
   Therefore renaming the constants (even from line to line),
   \eqlab{ \label{derivb}|Du(x)|\leq c_{n,s} \int_{\Rn\setminus B_1} \frac{|u(y)|}{(|y|^2-1)^s} \lrq{\frac{|x|(1-|x|^2)^{s-1} }{|x-y|^n} + \frac{(1-|x|^2)^s}{|x-y|^{n+1}} } \, dy.}
Given that $|x|\leq 1/2$ we have that $ 3 /4 \leq 1-|x|^2 \leq 1$ and $|x-y|\geq |y|/2$ and so 
   \bgs{|Du(x)|\leq c_{n,s}\|u\|_{L^\infty(\Rn\setminus B_1)}  \int_{\Rn\setminus B_1} \lrq{ \frac{1}{(|y|-1)^s |y|^n} +\frac{1}{(|y|-1)^s |y|^{n+1}} } dy.}
   Passing to polar coordinates and renaming the constants, we have that
   \bgs{|Du(x)|\leq c_{n,s}\|u\|_{L^\infty(\Rn\setminus B_1)} \lrq{ \int_1^\infty (\rho-1)^{-s}\rho^{-1}\, d\rho + \int_1^\infty (\rho-1)^{-s}\rho^{-2}\, d\rho}.}
 Now we compute
 \[\int_1^\infty (\rho-1)^{-s}\rho^{-1}\, d\rho =\int_1^{2} (\rho-1)^{-s}\rho^{-1}\, d\rho+\int_{2}^\infty (\rho-1)^{-s}\rho^{-1}\, d\rho \leq C \] and likewise,
  \[\int_1^\infty (\rho-1)^{-s}\rho^{-2}\, d\rho  \leq C .\]
  It follows that 
  \bgs{|Du(x)|\leq c_{n,s} \|u\|_{L^\infty(\Rn\setminus B_1)} \quad \mbox{ for any } x\in B_{1/2}.} By reiterating the computation, we obtain the conclusion for the $\alpha$ derivative. This proves the estimate \eqref{estimate16}, thus \eqref{estimate1} by rescaling.
 \end{proof}
 
 \begin{lemma}\label{estimate2} Let $f\in C^{0,\eee}( B_r)\cap C(\overline B_r)$ be a given function and $u\in C^1(B_r)\cap L^{\infty}(\Rn)$ be a pointwise solution of
\[\begin{cases}
		    \frlap u= f   \qquad &\mbox{ in } {B_r} , 
 		\\  u= 0   \qquad &\mbox{ in } {\Rn \setminus B_r}. 
	\end{cases}\] 
 					Then
 					\eqlab{\label{estimate2} \|u\|_{L^{\infty}(B_r)}\leq c r^{2s}\sup_{\overline B_r}|f|,}
 					where $c=c(n,s)$ is a positive constant. Furthermore, for $s>1/2$ 
 					\eqlab{\label{estimate3} \|Du\|_{L^{\infty}(B_{r/2})}\leq \overline c r^{2s-1} \sup_{\overline B_r}|f|,} where $\overline c=\overline c(n,s)$ is a positive constant.
  \end{lemma}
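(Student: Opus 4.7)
For the first bound, my plan is to argue by a barrier constructed from the function computed explicitly in Subsection \ref{ctfrlap}. Recall that $\mathcal{U}(x)=(1-|x|^2)_+^s$ satisfies $\frlap \mathcal{U}= K_{n,s}$ in $B_1$ for the positive constant $K_{n,s}=C(n,s)\beta(1-s,s)\omega_n/2$. By the scaling $\mathcal{U}_r(x):=r^{2s}\mathcal{U}(x/r)=(r^2-|x|^2)_+^s$ and the homogeneity of $\frlap$ under dilations, the same constant appears, so $\frlap\mathcal{U}_r=K_{n,s}$ in $B_r$ and $\mathcal{U}_r\equiv 0$ in $\Rn\setminus\overline{B_r}$. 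Setting $\psi(x):=K_{n,s}^{-1}\sup_{\overline B_r}|f|\,\mathcal{U}_r(x)$ gives $\frlap(\psi\pm u)\geq 0$ in $B_r$ and $\psi\pm u\geq 0$ outside $B_r$ (where both vanish), so the global Maximum Principle (Theorem \ref{THM-MA-1}) yields $|u(x)|\leq\psi(x)\leq K_{n,s}^{-1}r^{2s}\sup_{\overline B_r}|f|$ in $B_r$, which is \eqref{estimate2}.

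For the gradient bound when $s>1/2$, I would decompose $u=u_1+u_2$, where, after extending $f$ to $\tilde f\in C_c^{0,\eps}(\Rn)$ with $\tilde f=f$ on $\overline B_r$, $\operatorname{supp}\tilde f\subset\overline{B_{2r}}$ and $\sup_{\Rn}|\tilde f|\leq c\sup_{\overline B_r}|f|$, one sets $u_1:=\Phi*\tilde f$ and $u_2:=u-u_1$. By Theorem \ref{theorem:poissonsolution}, $\frlap u_1=\tilde f=f$ pointwise in $B_r$, so $u_2$ is $s$-harmonic in $B_r$. Using the differentiation formula \eqref{fundsolderiv} from Lemma \ref{grlem12}, and passing to polar coordinates, for $x\in B_{r/2}$
\[ |Du_1(x)|\leq c_{n,s}\sup|f|\int_{B_{2r}}|x-y|^{2s-n-1}dy\leq c_{n,s}\sup|f|\int_0^{3r}\rho^{2s-2}\,d\rho=c'_{n,s}\,r^{2s-1}\sup|f|, \]
where the integral converges since $s>1/2$. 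Similarly, splitting $B_{2r}$ into $\{|x-y|\leq 3r\}$ and $\{|x-y|>3r\}$, one gets $\|u_1\|_{L^\infty(\Rn)}\leq c\,r^{2s}\sup|f|$. Combined with the first part of the lemma, this yields $\|u_2\|_{L^\infty(\Rn\setminus B_r)}\leq c\,r^{2s}\sup|f|$. Now invoking Lemma \ref{estimate1} for the $s$-harmonic function $u_2$ with $|\alpha|=1$ gives
\[ \|Du_2\|_{L^\infty(B_{r/2})}\leq c\,r^{-1}\|u_2\|_{L^\infty(\Rn\setminus B_r)}\leq c\,r^{2s-1}\sup|f|, \]
and adding the two estimates produces \eqref{estimate3}.

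The main technical care is in setting up the decomposition: one needs an extension $\tilde f$ that preserves the sup-norm up to a universal constant (a McShane-type extension works) and has compact support, so that Theorem \ref{theorem:poissonsolution} applies and gives $\frlap u_1=f$ \emph{pointwise} in $B_r$; and one needs the far-field control of $u_1$ outside $B_r$ in order to feed $\|u_2\|_{L^\infty(\Rn\setminus B_r)}$ into the derivative estimate of Lemma \ref{estimate1}. Once the decomposition is in place, everything else is a direct computation of Riesz-potential type integrals exploiting the assumption $s>1/2$, which is exactly what ensures that the kernel $|x-y|^{2s-n-1}$ remains integrable.
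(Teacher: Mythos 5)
Your proof is correct. For the first estimate you take a genuinely different route: the paper argues by potential theory, extending $f$ to $\tilde f\in C_c^{0,\eps}(\Rn)$, setting $\tilde u:=\Phi*\tilde f$, representing the $s$-harmonic difference $\tilde u-u$ as a Poisson integral of $\tilde u$ over $\Rn\setminus B_r$, and estimating both $\|\tilde u\|_{L^\infty}$ and that Poisson integral. You instead compare $\pm u$ against the explicit supersolution $K_{n,s}^{-1}\sup_{\overline B_r}|f|\,(r^2-|x|^2)_+^s$ from Subsection~\ref{ctfrlap} via the global Maximum Principle (Theorem~\ref{THM-MA-1}); this is shorter, more elementary, and gives the constant $c=K_{n,s}^{-1}$ explicitly. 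Both arguments implicitly need the solution to be continuous up to $\partial B_r$ (yours so that a negative interior minimum of $\psi\pm u$ is actually attained; the paper's to invoke the uniqueness clause of Theorem~\ref{theorem:DPL}), so neither is at a disadvantage on that score.

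For the gradient bound your decomposition $u=u_1+u_2$, with $u_1=\Phi*\tilde f$ and $u_2$ $s$-harmonic in $B_r$, is the same one the paper uses (there appearing as $\tilde u$ and $\tilde u-u$). The paper re-derives the interior gradient estimate for the harmonic piece by differentiating the Poisson kernel inside the proof; you invoke Lemma~\ref{estimate1}, which packages exactly that computation, and this is cleaner. The Riesz-potential estimate for $|Du_1|$, exploiting $s>1/2$, is the same in both. One small redundancy in your write-up: you do not need the first part of the lemma to control $\|u_2\|_{L^\infty(\Rn\setminus B_r)}$, since $u\equiv 0$ outside $B_r$ forces $u_2=-u_1$ there, so the bound $\|u_1\|_{L^\infty(\Rn)}\leq c\,r^{2s}\sup_{\overline B_r}|f|$ already suffices (the paper instead bounds $\|\tilde u\|_{L^1_s(\Rn\setminus B_2)}$, the natural quantity for its Poisson-kernel route; your $L^\infty$ control is equally good when fed into Lemma~\ref{estimate1}).
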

 \begin{proof}
  We notice that it is enough to prove \eqref{estimate2} and \eqref{estimate3} for $r=1$, i.e.
 \eqlab{\label{estimate26} \ \|u\|_{L^{\infty}(B_1)}\leq c \sup_{\overline B_1}|f|} and\eqlab{\label{estimate36} \|Du\|_{L^{\infty}(B_{1/2})}\leq \overline c \sup_{\overline B_1}|f|.}
    Indeed, by rescaling, we let $y=rx$ and  $v(y)=u(x)$ we have that  $\frlap v(y)=r^{-2s}\frlap u (x)$, while $r D v (y) =D u(x)$  and one gets the original estimates for any $r$.
    
 We take $\tilde f$ to be a continuous extension of $f$, namely let $\tilde f \in C^{0,\eee}_c(\Rn) $ be such that
 \sys[ \tilde f=]{ & f &\mbox{ in } &B_1 \\
 				&0 &\mbox{ in } &\Rn \setminus B_{3/2}} and $\sup_{\Rn}| \tilde f|\leq C \sup_{ \overline B_1} |f|$. 
 Let
 \eqlab{ \label{defu0} \tilde u(x):=\tilde f* \Phi(x) =a(n,s)\int_{\Rn} \frac{\tilde f(y)}{|x-y|^{n-2s}}\, dy.}
Then $\tilde u \in L_s^1(\Rn)\cap C^{2s+\eee}(\Rn)$ (according to Lemmata \ref{lem:uff1} and \ref{grlem12})
Thanks to Theorem \ref{theorem:poissonsolution}, we have that 
 $ \frlap \tilde u=\tilde f	$ pointiwse in $\Rn$. Hence, thanks to the definition of $\tilde f$,  $ \frlap (\tilde u-u)=0$ in $B_1$. Moreover, $\tilde u-u =\tilde u$ in $\Rn \setminus B_1$ and from Theorem \ref{theorem:DPL} we have that in $B_1$
\eqlab{\label{defu01}  (\tilde u-u)(x) =\int_{\Rn\setminus B_1} \tilde u(y)P_1(y,x)\, dy.}	We notice at first that  by definition \eqref{defu0} and passing to polar coordinates, we obtain for any positive constant $\tilde c$ that
 \eqlab{  \label{u01} \|\tilde u\|_{L^{\infty}(B_{\tilde c } )} \leq a_{n,s} \sup_{\Rn} |\tilde f| \int_0^{\tilde c +3/2}\rho^{2s-1}\, d\rho \leq c_{n,s}  \sup_{\overline B_1}|f| .}
By renaming constants, we also have that
 \eqlab{ \label{thing2} \|\tilde u-u\|_{L^{\infty}(B_1)} \leq &\;\int_{ B_{2}\setminus B_1} |\tilde u(y)| P_1(y,x)\, dy + \int_{\Rn\setminus   B_{2}} |\tilde u(y)| P_1(y,x)\, dy \\
 \leq &\; \|\tilde u\|_{L^{\infty}(B_{2})} + I\\
 \leq &\; c_{n,s}  \sup_{\overline B_1}|f|  + I.}
 Inserting the definition \eqref{poissondefn} and using for $|y|\geq 2$ the bounds $|y-x|\geq |y|/2$ and $|y|^2-1\geq |y|^2/2$ we have that
 \bgs{ I\leq &\;c(n,s)   \int_{\Rn\setminus   B_{2}}  \frac {|\tilde u(y)|}{(|y|^2-1)^s |x-y|^n}\, dy\\
 \leq &\; c_{n,s}    \int_{\Rn\setminus   B_{2}} \frac {|\tilde u(y)|}{|y|^{n+2s} }\, dy .} 
 We estimate the $L_s^1$ norm of $\tilde u$ as follows
 \eqlab{\label{thing6} \|\tilde u\|_{L_s^1(\Rn\setminus B_{2})} = &\; \int_{\Rn\setminus B_{2}} \frac{|\tilde u(y)|}{|y|^{n+2s}}\, dy\\
 \leq &\;a(n,s)\int_{\Rn\setminus B_{2}} |y|^{-n-2s} \lrq{ \int_{B_{3/2}} \frac{|\tilde f(t)|}{|y-t|^{n-2s}} \, dt} \,dy\\
 \leq &\; a(n,s)\sup_{\Rn}|\tilde f|  \int_{B_{3/2}}  \lrq{ \int_{\Rn\setminus B_{2}} |y|^{-n-2s}|y-t|^{2s-n}\, dy}\, dt.}
 We use that $|y-t|\geq |y|/4$ and passing to polar coordinates we get that
  \eqlab{\label{thing66} \|\tilde u\|_{L_s^1(\Rn\setminus B_{2})} \leq  a_{n,s}\sup_{\Rn}|\tilde f|    \int_{2}^\infty \rho^{-n-1} \, d\rho =a_{n,s} \sup_{\overline B_1}| f|  .}
Hence 
 \[ I \leq  c_{n,s}  \sup_{\overline B_1} |f|.\]
 It follows in \eqref{thing2} (eventually renaming the constants) that
\eqlab{ \|\tilde u-u\|_{L^{\infty}(B_1)} \leq  c_{n,s}\sup_{\overline B_1} |f|. \label{fb1} }
 By the triangle inequality, we have that 
 \bgs{ \|u\|_{L^{\infty}(B_1)} \leq \|\tilde u\|_{L^{\infty}(B_1)}  +\|\tilde u-u\|_{L^{\infty}(B_1)} .}
Hence by using \eqref{u01} and \eqref{fb1} we obtain that
\[ \|u\|_{L^{\infty}(B_1)} \leq c_{n,s} \sup_{\overline B_1} |f|,\]
that is the desired estimate \eqref{estimate26}, hence \eqref{estimate2} after rescaling.

In order to prove \eqref{estimate36}, we take $x\in B_{1/2}$ and obtain by the triangle inequality that
\eqlab{ \label{thing7} |Du(x)|\leq |D(\tilde u-u)(x)|+|D\tilde u(x)|.}
We notice that in the next computations the constants may change value from line to line.
 By using \eqref{defu01} and \eqref{derivb}, for $|x|\leq 1/2$ (hence $|y-x|\geq |y|/2$) we obtain that
 \eqlab{\label{thing4} |D(\tilde u-u)(x)|\leq &\; c_{n,s} \int_{\Rn\setminus B_1}\frac{|\tilde u(y)|}{(|y|^2-1)^s|y|^{n}}\, dy \\ &\; +c_{n,s} \int_{\Rn\setminus B_1}\frac{|\tilde u(y)|}{(|y|^2-1)^s|y|^{n+1}}\, dy \\
 =&\; c_{n,s} (I_1+I_2) .}
 We compute by passing to polar coordinates that
 \bgs{  \int_{B_{2}\setminus B_1}\frac{|\tilde u(y)|}{(|y|^2-1)^s|y|^{n}}\, dy  \leq c_{n,s} \|\tilde u\|_{L^{\infty}(B_{2 })} \leq c_{n,s} \sup_{\overline B_1} |f|, } according to \eqref{u01}.
 Moreover, for $|y|\geq 2 $ we have that $|y|^2-1\geq |y|^2/2$ and so
 \bgs{ \int_{\Rn\setminus B_{2 }}\frac{|\tilde u(y)|}{(|y|^2-1)^s|y|^{n}}\, dy \leq   \int_{\Rn\setminus B_{2 }}\frac{|\tilde u(y)|}{|y|^{2s+n}}\, dy  \leq a_{n,s} \sup_{\overline B_1}|f|}
 thanks to \eqref{thing6} and \eqref{thing66}. Hence \[I_1\leq c_{n,s} \sup_{\overline B_1}|f|.\]
 We split also integral $I_2$ into two and by passing to polar coordinates, we get that
 \bgs{ \int_{B_{2 }\setminus B_1} \frac{|\tilde u(y)|}{(|y|^2-1)^s|y|^{n+1}}\, dy \leq &\; c_{n,s} \|\tilde u\|_{L^\infty(B_{2 })} \leq c_{n,s}   \sup_{\overline B_1}|f| } again by \eqref{u01}. 
 Also, using definition \eqref{defu0} of $\tilde u$ and for $|y|\geq2$ the fact that $|y|^2-1\geq |y|^2/2$, we get
 \[  \int_{\Rn \setminus B_{2 }} \frac{|\tilde u(y)|}{(|y|^2-1)^2|y|^{n+1}}\, dy  \leq a(n,s) \int_{\Rn \setminus B_{2 }}  |y|^{-n-2s-1} \lrq{\int_{B_{3/2}} \frac{|\tilde f(t)|}{|y-t|^{n-2s}} \, dt} \, dy.\] We have that $|y-t|\geq |y|/4$ and obtain that
 \[  \int_{\Rn \setminus B_{2 }} \frac{|\tilde u(y)|}{(|y|^2-1)^2|y|^{n+1}}\, dy  \leq a_{n,s}    \sup_{\overline B_1}|f|.\]
 It follows that 
 \bgs{ I_2\leq c_{n,s}   \sup_{\overline B_1}|f|.} Inserting the bounds on $I_1$ and $I_2$ into \eqref{thing4}, we finally obtain that
 \eqlab{\label{thing5} |D(\tilde u -u)(x)|\leq c_{n,s}  \sup_{\overline B_1}|f|.}
 On the other hand, for $s>1/2$, using \eqref{fundsolderiv} we get that
 \[ D\tilde u(x)=a(n,s) \int_{B_{3 /2}} \frac{\tilde f(y) }{|x-y|^{n-2s+1} }\, dy\] and therefore by passing to polar coordinates 
 \bgs{ |D\tilde u(x)|\leq &\;a_{n,s} \sup_{\overline B_1 }|f| \int_{B_{3 /2}} |x-y|^{2s-n-1}\, dy \leq a_{n,s} \sup_{\overline B_1 }|f|  \int_0^{2 } \rho^{2s-2}\, d\rho   \\&\;=a_{n,s}   \sup_{\overline B_1 } |f| .}
 This and \eqref{thing5} finally allow us to conclude from \eqref{thing7} that
 \[ |Du(x)|\leq \overline c   \sup_{\overline B_1} |f|\] for any $x\in B_{1/2}$, therefore the bound in \eqref{estimate36}. From this after rescaling, we obtain the  estimate in \eqref{estimate3}. 
   \end{proof}

 \subsection{A proof of Schauder estimates} 
In this subsection we give a simple proof of some Schauder estimates related to the fractional Laplacian, as stated in Theorem \ref{schthm}.
As we see by substituting in \eqref{mm1} and \eqref{mm} that $\omega(r)\leq C r^\alpha$, we obtain for $s\leq 1/2$ 
\[ |u(x)- u(y) |\leq  C_{n,s}\delta \lr{  \|u\|_{L^{\infty}(\Rn\setminus B_1)} + \sup_{\overline B_1}|f|   + \delta^{\alpha+2s-1}},\] 
hence $u\in C^{0,2s+\alpha}(B_{1/2})$ as long as $\alpha <1-2s$ and Lipschitz if $\alpha>1-2s$. For $s>1/2$ we have that
	\[ |Du(x)-D u(y) |\leq  C_{n,s}\delta \lr{  \|u\|_{L^{\infty}(\Rn\setminus B_1)} + \sup_{\overline B_1}|f|   + \delta^{\alpha+2s-2}}.\] Hence if $\alpha \leq 2-2s$ then $u\in C^{1,\alpha+2s-1}(B_{1/2})$ while for  $2-2s\leq \alpha <1$ the derivative $Du$ is Lipschitz in $B_{1/2}.$ 
The proof takes its inspiration from \cite{wang}, where a similar result is proved for the classical case of the Laplacian. 

We prove here the case $s>1/2$, noting that for $s\leq 1/2$ the proof follows in the same way, using the lower order estimates.
\begin{proof}[Proof of Theorem \ref{schthm}] For $k=1,2,\dots$, we denote by $B_k:=B_{\rho^{k}}(0)$, where $\rho =1/2$ and let $u_k$ be a solution of 
\sys{&\frlap u_k = f(0) &\mbox{ in } &B_k\\
	&u_k=u &\mbox{ in } &\Rn \setminus B_k.}
Then we have that
	\sys{&\frlap (u_k-u) = f(0)-f &\mbox{ in } &B_k\\
	&u_k-u=0 &\mbox{ in } &\Rn \setminus B_k.}
	We remark that in the next computations, the constants may change value from line to line. \\
		Thanks to \eqref{estimate2}, we get that
	\eqlab{\label{my1} \|u_k-u\|_{L^{\infty}(B_k)} \leq &\;c_{n,s} \rho^{2ks}\sup_{B_k} |f(0)-f|\\
	\leq &\; c_{n,s} \rho^{2ks} \omega(\rho^k).}
		Using \eqref{estimate3}, we obtain that
	\eqlab{\label{my3} \|D (u_k-u)\|_{L^\infty(B_{k+1})} \leq c_{n,s} \rho^{(2s-1)k}\omega(\rho^k).} From here, sending $k$ to infinity, for $s>1/2$ it yields that
	\eqlab{ \label{lim1} \lim_{k \to \infty} D u_k(0) =D u(0).} 
Furthermore,
\sys{&\frlap (u_k-u_{k+1}) =0 &\mbox{ in } &B_{k+1}\\
	&u_k-u_{k+1}=u_k-u  &\mbox{ in } &B_k \setminus B_{k+1}\\
	&u_k-u_{k+1}=0 &\mbox{ in } &\Rn \setminus B_k,} hence from \eqref{estimate1} we have that
	\[ \|D(u_k-u_{k+1})\|_{L^{\infty}(B_{k+2})} \leq c_{n,s}  \rho^{-(k+1)} \sup_{B_k\setminus B_{k+1}} |u_k-u|\] and
	\bgs{ \|D^2(u_k-u_{k+1})\|_{L^{\infty}(B_{k+2})} \leq c_{n,s}  \rho^{-2(k+1)} \sup_{B_k\setminus B_{k+1}} |u_k-u|.}
	Using now \eqref{my1}, we get that
	\eqlab{\label{my2}\|D(u_k-u_{k+1})\|_{L^{\infty}(B_{k+2})}   \leq c_{n,s} \rho^{(2s-1)k}\omega(\rho^k)} and
\eqlab{\label{y1}\|D^2(u_k-u_{k+1})\|_{L^{\infty}(B_{k+2})} \leq c_{n,s}  \rho^{(2s-2)k} \omega(\rho^k).}
Let us fix $s>1/2$. Then for any given point $z$ near the origin we have that
\eqlab{\label{bla1} |Du(z)-Du(0)|\leq &\; |Du_k(z)-Du(z)|+ |Du_k(0)-Du(0)|+|Du_k(z)-Du_k(0)| \\ = &\;A_1+A_2+A_3.}
For $k \in \N^*$ fixed, we take $z$ such that $\rho^{k+2}\leq |z|\leq \rho^{k+1}$. Using \eqref{my3}  we get that
\[ A_1 \leq c_{n,s} \rho^{(2s-1)k}\omega(\rho^k).\] 
Taking into account \eqref{lim1} and using \eqref{my2}, we have that
\[ A_2 \leq \sum_{j=k}^{\infty} |Du_j(0)-D u_{j+1}(0)| \leq c_{n,s}\sum_{j=k}^{\infty} \rho^{(2s-1)j}\omega(\rho^j),\] therefore by renaming the constants
\bgs{  A_1+A_2 \leq &\;   c_{n,s} \rho^{(2s-1)k}\omega(\rho^k) + c_{n,s}\sum_{j=k}^{\infty} \rho^{(2s-1)j}\omega(\rho^j) \\ &\; \leq c_{n,s}\sum_{j=k}^{\infty} \rho^{(2s-1)j}\omega(\rho^j) .}  
 For the positive constant $c_s= (2s-1) / \lr{{\rho^{1-2s}-1}}$ and any $j =k, k+1, \dots$ we have that 
\[\rho^{(2s-1)j}=c_s \int_{\rho^{j}}^{\rho^{j-1}} t^{2s-2}\, dt.\]
Since $\omega$ is a increasing function, we obtain that
\bgs{\omega(\rho^j) \rho^{(2s-1)j}  = c_s\, \omega(\rho^j) \int_{\rho^{j}}^{\rho^{j-1}} t^{2s-2}\, dt \leq c_s \int_{\rho^{j}}^{\rho^{j-1}} \omega(t) t^{2s-2}\, dt,}
hence given that $8|z|\geq \rho^{k-1}$
\bgs{ \sum_{j=k}^{\infty} \rho^{(2s-1)j}\omega(\rho^j) \leq &\;  c_s \sum_{j=k}^{\infty} \int_{\rho^{j}}^{\rho^{j-1}} \omega(t) t^{2s-2}\, dt
\leq c_s \int_0^{\rho^{k-1}} \omega(t) t^{2s-2}\, dt
 \\
 \leq  &\; {c_s  \int_0^{8|z|} \omega(t)t^{2s-2}\, dt } .}
  Therefore, \eqlab{\label{a12} A_1 + A_2 \leq c_{n,s} \int_0^{8|z|} \omega(t)t^{2s-2}\, dt.} 
Moreover,  for $j=0, 1, \dots, k-1$ we consider $h_j:= u_{j+1}-u_{j}$ and have that
	\bgs{ A_3 \leq \sum_{j=0}^{k-1} |Dh_j(z)-Dh_j(0)| + |Du_0(z)-Du_0(0)|.}
	By the mean value theorem, there exists $\theta \in(0,|z|)$ such that
	\[ |Dh_j(z)-Dh_j(0)|\leq |z||D^2h_j(\theta)|\] and since $|z|\leq \rho^{k+1}$, thanks to \eqref{y1} we obtain that 
	\bgs{ |D^2h_j(\theta)| \leq  c_{n,s} \rho^{(2s-2)j} \omega(\rho^{j}).}
	Hence
	\[\sum_{j=0}^{k-1}|Dh_j(z)-Dh_j(0)| \leq c_{n,s} |z|  \sum_{j=0}^{k-1} \rho^{(2s-2)j}\omega(\rho^{j})
	=c_{n,s} |z|  \Big( \sup_{\overline B_1}|f| +  \sum_{j=1}^{k-1} \rho^{(2s-2)j}\omega(\rho^{j})\Big).\]
As previously done, we have that for the positive constant $c_s=(2-2s)/\lr{{1-\rho^{2-2s}}}$  and $j=1,\dots, k-1$
\[\rho^{(2s-2)j}= c_s \int_{\rho^{j}}^{\rho^{j-1}} t^{2s-3}\, dt\]and since $\omega$ is increasing
\bgs{\omega(\rho^{j}) \rho^{(2s-2)j} \leq c_s \int_{\rho^{j}}^{\rho^{j-1}} \omega(t) t^{2s-3}\, dt.} It follows that
\bgs{ \sum_{j=1}^{k-1} \rho^{(2s-2)j}\omega(\rho^{j}) \leq &\;  c_s \sum_{j=1}^{k-1} \int_{\rho^{j}}^{\rho^{j-1}} \omega(t) t^{2s-3}\, dt
\leq c_s  \int_{\rho^{k-1}}^1 \omega(t)t^{2s-3}\, dt\\
 \leq   &\;c_s \int_{|z|}^1 \omega(t)t^{2s-3}\, dt, } since $|z|\leq \rho^{k-1}$.  Therefore, \[\sum_{j=1}^{k-1}|Dh_j(z)-Dh_j(0)| \leq c_{n,s} |z| \int_{|z|}^1  \omega(t)t^{2s-3}\, dt.\]
 Moreover, let 
 \[ v_0(x):= k_{n,s} f(0) (1-|x|^2)^s_+ \quad \mbox{ for } x \in \Rn . \]  
 Then using the result in Subsection \ref{ctfrlap} (check there also the explicit value of $ k_{n,s}$)
 we have in $B_1$ that  $\frlap v_0(x) =f(0)$. Then the function $u_0-v_0$ is $s$-harmonic in $B_1$, with boundary data $u$. We have that
 \[ |Du_0(z)-Du_0(0)|\leq |z| |D^2 u_0(\theta)|\leq |z|\lr{ |D^2(u_0-v_0) (\theta)| + |D^2 v_0(\theta)|}.\]
  Using the estimate in \eqref{estimate1} we have  for $\theta \in (0,|z|)$
    \[ |D^2(u_0-v_0) (\theta)|\leq  c_{n,s}\,  \, \|u\|_{L^{\infty}(\Rn\setminus B_1)}.\] Moreover, $|D^2 v_0(\theta)|$ is bounded. It follows that  
    \[ |Du_0(z)-Du_0(0)|\leq c_{n,s} |z| \|u\|_{L^{\infty}(\Rn\setminus B_1) } ,\] hence
 \[ A_3 \leq c_{n,s}|z| \lr{ \sup_{\overline B_1}|f| + \|u\|_{L^{\infty}(\Rn\setminus B_1) } + \int_{|z|}^1  \omega(t)t^{2s-3}\, dt}.\]
 
 Inserting this and \eqref{a12} into \eqref{bla1} we finally obtain that 
 \bgs{ |Du(z)-Du(0)| \leq &\; C_{n,s} \bigg[ |z|\lr{  \|u\|_{L^{\infty}(\Rn\setminus B_1)} +\sup_{\overline B_1}|f|}  + \int_0^{c|z|} \omega(t)t^{2s-2}\, dt \\  &\;+ |z| \int_{|z|}^1  \omega(t)t^{2s-3}\, dt \bigg].} From this the conclusion plainly follows.
  This concludes the proof of the Theorem.
\end{proof}
\chapter{Extension problems} \label{S:3}

\begin{abstract}
{We discuss in this chapter an extension procedure for two integral (nonlocal) operators, the fractional Laplacian and the Marchaud derivative. We present at first two applications for the fractional Laplacian: the water wave model and a model related to crystal dislocations, making clear how the extension problem appears in these models. We then discuss in detail this harmonic extension problem via the Fourier transform.
Furthermore, we prove that the (nonlocal) Marchaud fractional derivative in $\mathbb{R}$ can be obtained from a parabolic extension problem with an extra (positive) variable as the operator that maps the heat conduction equation to the Neumann condition.
Some properties of the fractional derivative are deduced from those of the local operator. In particular we prove a Harnack inequality for Marchaud-stationary functions. }
\end{abstract}

\bigskip 
\bigskip

We dedicate this chapter to obtaining the fractional Laplacian and the Marchaud derivative from an extension procedure, as the behavior on the trace of two local operators, defined in an extra-dimension space.
We present at first two applications, the water wave model and the
Peierls-Nabarro model related to crystal dislocations. We show that the extension operator related to the half-Laplacian arises in the theory of water waves of irrotational, incompressible, inviscid fluids in the small amplitude, long wave regime. The mathematical framework of crystal dislocation is related to the Peierls-Nabarro model and in this context we obtain that at a macroscopic level, dislocations tend to concentrate at single points, following the natural periodicity of the crystal.  We then discuss\footnote{Though we do not develop this approach here,
it is worth mentioning that extended problems arise naturally
also from the probabilistic interpretation described in Chapter~\ref{chap2}.
Roughly speaking, a stochastic process with jumps in~$\R^n$
can often be seen as the ``trace''
of a classical stochastic process in~$\R^n\times [0,+\infty)$
(i.e., each time that the classical stochastic process 
in~$\R^n\times [0,+\infty)$
hits~$\R^n\times\{0\}$ it induces a jump process over~$\R^n$).
Similarly, stochastic process with jumps may also be seen
as classical processes at discrete, random, time steps.}
in detail the extension problem via the Fourier transform. We conclude this chapter by discussing the extension operator related to the Marchaud fractional derivative. As an application of this, we give a proof of a Harnack inequality for Marchaud-stationary functions.

\section{The harmonic extension of the fractional Laplacian}
\begin{center}
\begin{figure}[htpb]
	\hspace{0.6cm}
	\begin{minipage}[b]{0.8\linewidth}
	\centering
	\includegraphics[width=0.8\textwidth]{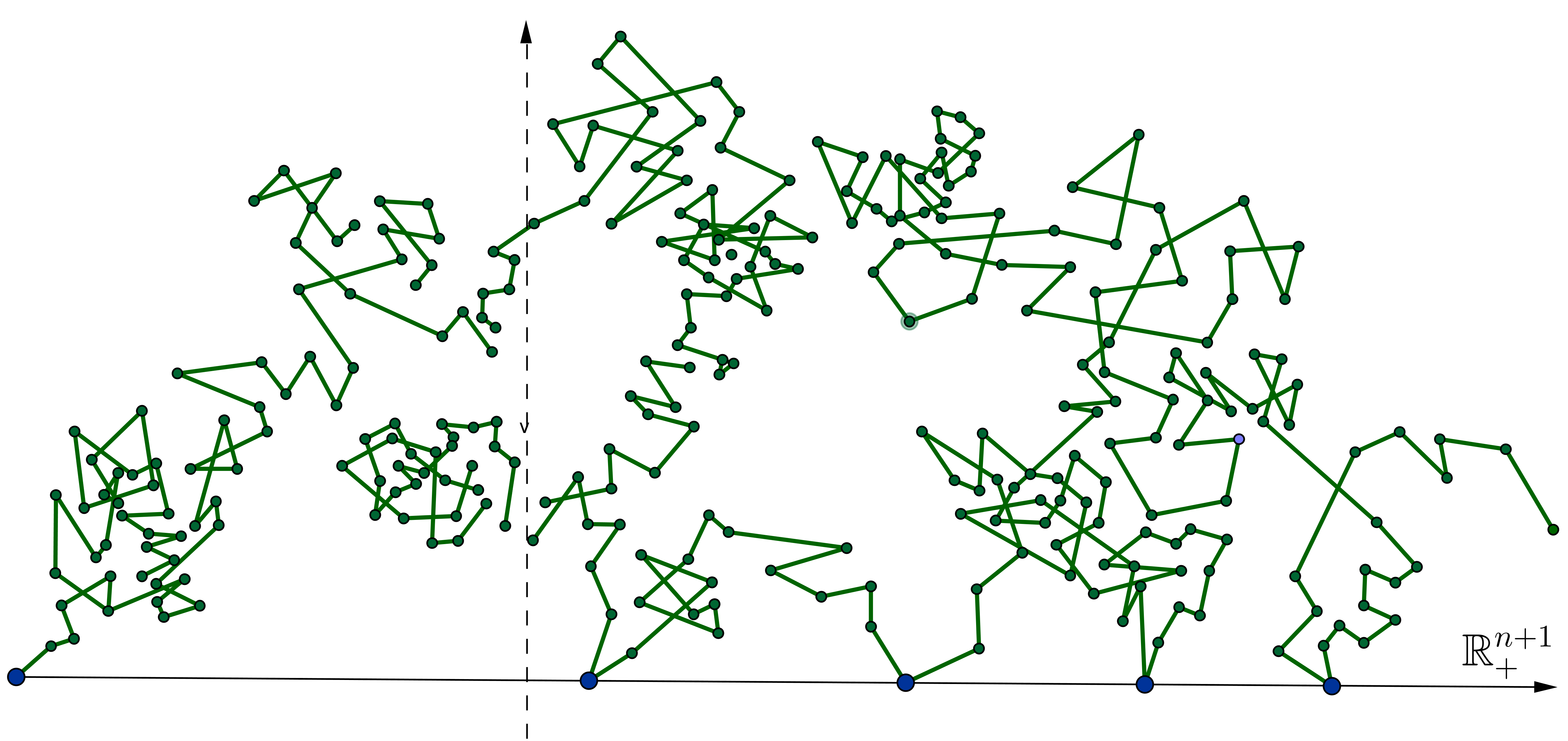}
	\caption{The random walk with jumps in $\Rn$ can be seen as a classical random walk in $\R^{n+1}$}   
	\label{fign:rwmm}
	\end{minipage}
\end{figure} 
\end{center}
The harmonic extension of the fractional Laplacian in the framework
considered here
is due to Luis Caffarelli and Luis Silvestre (we
refer to \cite{CAFSIL} for details). We also recall that this extension procedure
was obtained by S.~A. Mol{\v{c}}anov and E.~Ostrovski{\u\i}
in~\cite{MO-OS} by probabilistic methods (roughly speaking
``embedding'' a long jump random walk in~$\R^n$
into a classical random walk in one dimension more, see Figure \ref{fign:rwmm}).

The idea of this extension procedure
is that the nonlocal operator $\frlap $ acting on functions defined on $\Rn$ may be reduced to a local operator, acting on functions defined in the higher-dimensional half-space $\R^{n+1}_+ :=\Rn \times (0,+\infty).$ Indeed, take $U \colon \R^{n+1}_+  \to \R$ solution to the equation 	
\syslab{\label{lapextop} 
 &\text{div} \Big(y^{1-2s} \nabla U(x,y)\Big) =0 &&  \mbox{in}\quad \R^{n+1}_+\\
&U(x,0)=u(x)&&\mbox{in} \quad \Rn.}
Then up to constants one has that\[ -\lim_{y\to 0^+} \Big( y^{1-2s} \partial_y U (x,y) \Big)= \frlap u(x).\]

\subsection{Water wave model}

Let us consider the half space $\R_{+}^{n+1} = {\Rn} \times (0,+\infty)$ endowed with the coordinates $x \in {\Rn}$ and $y\in (0,+\infty)$. We show that the half-Laplacian (namely when $s=  1/2 )$ arises when looking for a harmonic function in $\R_{+}^{n+1}$ with given data on $\Rn \times \{y=0\}$. Thus, let us consider the following local Dirichlet-to-Neumann problem: 
	\begin{equation*}
		\begin{cases}
		\Delta U = 0 & \text{ in }  \R_{+}^{n+1} ,\\
		U(x,0) =u(x) & \text{ for }  x\in  { \Rn}.
		\end{cases}
	\end{equation*}
The function $U$ is the harmonic extension of $u$, we write $U=E u$, and define the operator $\mathcal{L}$ as 
	\eqlab { \label{D1} \mathcal{L} u (x):= -\partial_y U (x,0) .}
We claim that 
	\eqlab{\label{D2} \mathcal{L} = \sqrt{-\Delta_x}, }
	in other words
	\[ \mathcal{L} ^2 = -\Delta_x. \]
Indeed, by using the fact that $E(\mathcal{L} u) = -\partial_y U$ (that can be proved, for instance, by using the Poisson kernel representation for the solution), we obtain that	
	\[ \begin{split}
		\mathcal{L}^2 u (x) =\; & \mathcal{L}\big( \mathcal{L}u \big) (x) \\
		=\; &-\partial_y E\big( \mathcal{L}u\big)  (x,0) \\
		=\; & -\partial_y \big( -\partial_y  U\big) (x,0) \\
		=\; &\big( \partial_{yy} U + \Delta_x U- \Delta_x U \big)(x,0)\\
		=\; &	\Delta U (x,0) - \Delta u (x)	\\
		=\; &  -\Delta u(x),
	\end{split}\]
which concludes the proof of~\eqref{D2}.

One  remark in the above calculation lies in the choice of the sign of the square root of the operator. Namely, if we set~$\tilde{\mathcal{L}}u(x):=\partial_y U(x,0)$, the same computation as above would give that~$\tilde{\mathcal{L}}^2 = -\Delta$. In a sense, there is no surprise that a quadratic equation offers indeed two possible solutions. But a natural question is how to choose the ``right'' one.

There are several reasons to justify the sign convention in \eqref{D1}. One 
reason
is given by spectral theory, that makes the (fractional) Laplacian a negative defined operator. Let us discuss a purely geometric justification, in the simpler $n=1$-dimensional case. We wonder how the solution of the problem
\begin{equation}\label{EQ-D1-1}
 \begin{cases}
(-\Delta)^s u=\,1    &\mbox{ in }\;  (-1,1),\\
	 u=\,0   &\mbox{ in } \; \R \setminus (-1,1). \end{cases}
\end{equation}
should look like in the extended variable~$y$. First of all, 
by Maximum Principle (recall Theorems~\ref{THM-MA-1}
and~\ref{THM-MA-1-STRONG}),
we have that~$u$ is positive\footnote{As a matter of fact,
the solution of~\eqref{EQ-D1-1} is explicit and it is given by~$(1-x^2)^s$,
up to dimensional constants (see Section \ref{ctfrlap}). See also~\cite{DYDA} for a list of functions
whose fractional Laplacian can be explicitly computed
(unfortunately, differently from the classical cases, explicit computations
in the fractional setting
are available only for very few functions).}
when~$x\in(-1,1)$
(since this is an $s$-superharmonic function,
with zero data outside). 

Then the harmonic extension~$U$ in~$y>0$ of a 
function~$u$ which is positive in~$(-1,1)$ and vanishes outside~$(-1,1)$ 
should have the shape of an elastic membrane over the halfplane~$\R^2_+$ 
that is constrained to the graph of~$u$ on the trace~$\{y=0\}$.
\begin{center}
\begin{figure}[htpb]
	\hspace{0.9cm}
	\begin{minipage}[b]{0.90\linewidth}
	\centering
	\includegraphics[width=0.90\textwidth]{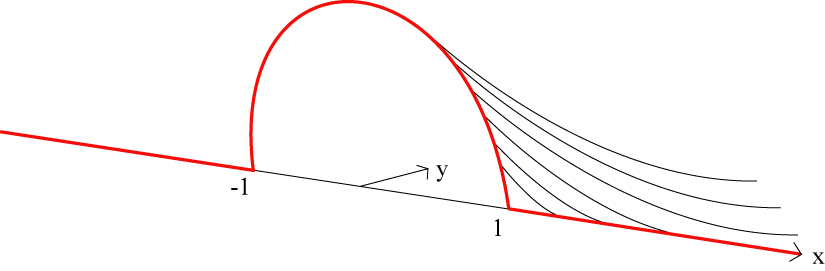}
	\caption{The harmonic extension}   
	\label{fign:FE}
	\end{minipage}
\end{figure} 
\end{center}
We give a picture
of this function~$U$ in Figure~\ref{fign:FE}. Notice from the picture that $\partial_y U(x,0)$ is negative, for any~$x\in(-1,1)$. Since $(-\Delta)^s u (x)$ is positive, we deduce that, to make our picture consistent with the maximum principle, we need to take the sign of~${\mathcal{L}}$ opposite to that of~$\partial_y U(x,0)$. This gives a geometric justification of~\eqref{D1}, which is only based on maximum principles (and on ``how classical harmonic functions look like''). 
\begin{center}
\begin{figure}[htpb]
	\hspace{0.6cm}
	\begin{minipage}[b]{0.70\linewidth}
	\centering
	\includegraphics[width=0.70\textwidth]{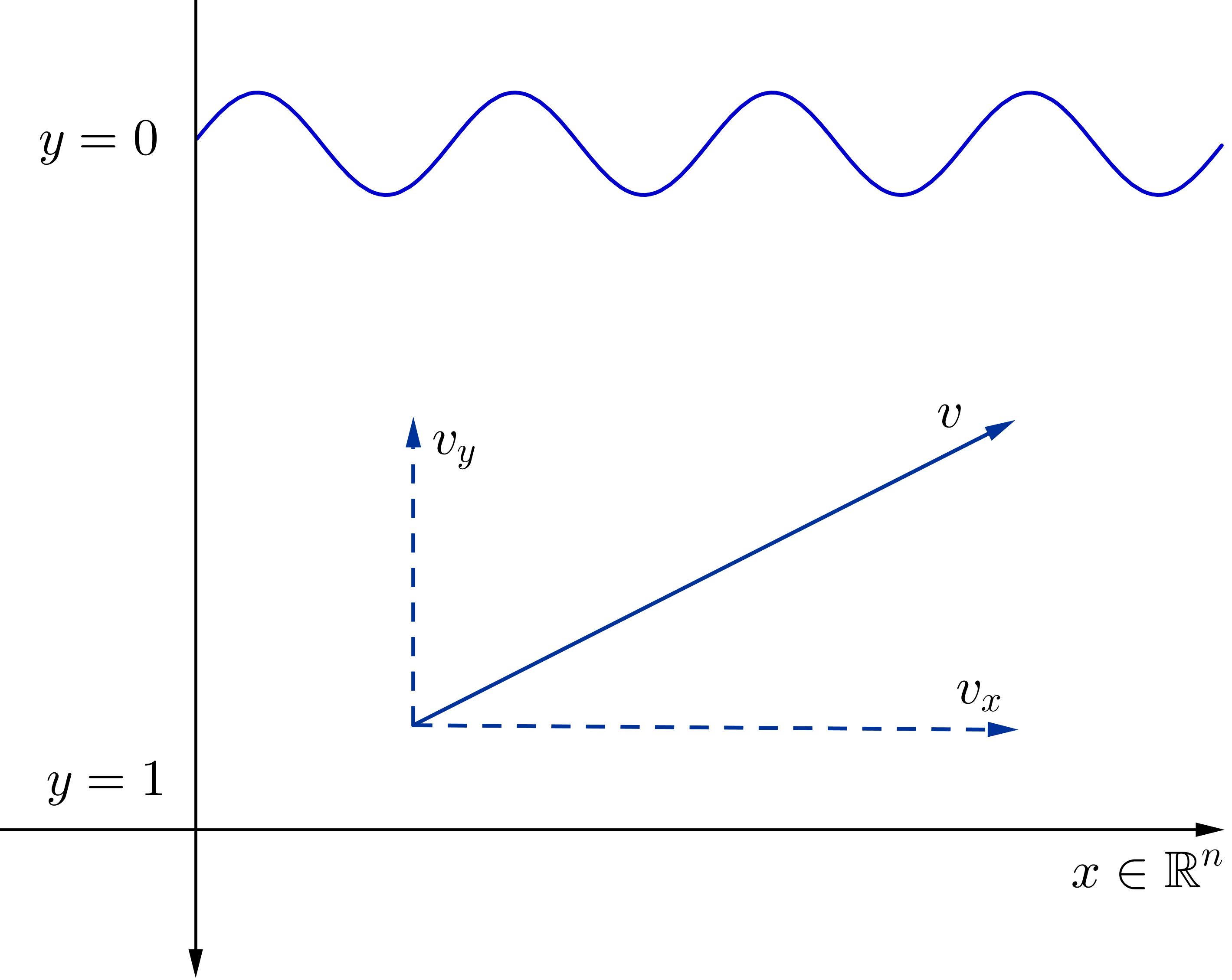}
	\caption{The water waves model}   
	\label{fign:ww}
	\end{minipage}
\end{figure} 
\end{center}

We show now that the operator $\mathcal{L}$ arises in the theory of water waves of irrotational, incompressible, inviscid fluids in the small amplitude, long wave regime.\\ 
Consider a particle moving in the sea, which is, for us, the space $\Rn \times (0,1)$, where the bottom of the sea is set at level $1$ and the surface at level $0$ (see Figure \ref{fign:ww}). The velocity of the particle is $v\colon \Rn \times (0,1) \to \R^{n+1}$ and we write $v(x,y)=\big(v_x(x,y),v_y(x,y)\big)$, where $v_x \colon \Rn \times(0,1) \to \Rn$ is the horizontal component and $v_y\colon \Rn \times(0,1)\to \R$ is the vertical component. We are interested in the vertical velocity of the water at the surface of the sea. \\
In our model, the water is incompressible, thus $\text{div } v=0 $ in $\Rn \times (0,1)$.  Furthermore, on the bottom of sea (since water cannot penetrate into the sand), the velocity has only a non-null horizontal component, hence $v_y(x,1)=0$. Also, in our model we
assume that there are no vortices: at a mathematical level,
this gives that~$v$ is irrotational, thus we may write it as the gradient of a function $U \colon \R^{n+1} \to \R$. This says that the vertical component of the velocity at the surface of the sea is $ v_y(x,0)=\partial_y U(x,0)$.
We are led to the problem 
	\begin{equation} \label{ww}
		\begin{cases}
		\Delta U = 0 & \text{ in }  \R_{+}^{n+1} ,\\
		 \partial_yU (x,1) =0 & \text{ in }  {\Rn}, \\
		U(x,0) =u(x) & \text{ in }  {\Rn}.
		\end{cases}
	\end{equation}

Let $\mathcal {L}$ be, as before, the operator $\mathcal{L}u(x):=-\partial_yU(x,0)$.
We solve the problem \eqref{ww} by using the Fourier transform and, up to a normalization factor, we obtain that
	\[ \mathcal{L}u=  \mathcal{F}^{-1} \bigg( |\xi| \frac{ e^{|\xi|} - e^{-|\xi|}}{e^{|\xi|}+e^{-|\xi|}} \widehat{u}(\xi) \bigg).\]
Notice that for large frequencies $\xi$, this operator is asymptotic to the square root of the Laplacian: 
	\[ \mathcal{L} u \simeq \mathcal{F}^{-1} \bigg( |\xi| \widehat{u}(\xi) \bigg) = \sqrt{-\Delta} u.\]

The operator $\mathcal{L}$ in the two-dimensional case has an interesting property, that is in analogy to a conjecture of
De Giorgi (the forthcoming Section \ref{sbsdg} 
will give further details about it):
more precisely,
one considers
entire, bounded, smooth, monotone solutions of the 
equation $\mathcal{L} u= f(u)$ for given $f$, and proves
that the solution only depends on one variable. More precisely:
\begin{theorem}\label{DLV}
Let $f \in C^1(\R)$ and $u$ be a bounded smooth solution of 
 	\begin{equation*}
		\begin{cases}
		\mathcal{L}u = f(u)  & \text{ in } \R^2,\\
		\partial_{x_2} u >0   & \text{ in } \R^2.
		\end{cases}
	\end{equation*}
Then there exist a direction $\omega \in S^1$ and a function $u_0\colon \R \to \R$ such that, for any $x \in \R^2$, 
	\[ u(x)=u_0(x \cdot \omega).\]
\end{theorem}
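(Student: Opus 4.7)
The plan is to exploit the very structure that made $\mathcal{L}$ tractable in the first place, namely the extension characterization from \eqref{D1}--\eqref{ww}, and then run a Berestycki--Caffarelli--Nirenberg / Sternberg--Zumbrun type argument in the strip $\R^2\times(0,1)$, concluding with a two-dimensional Liouville property proved via a logarithmic cutoff. In this way the nonlocality is paid only once, when passing from $u$ to its extension $U$, and all the subsequent analysis is purely local and variational.

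First I would lift the equation to the harmonic extension $U:\R^2\times(0,1)\to\R$ given by \eqref{ww}: then $\Delta U=0$ in the strip, $\partial_y U(\cdot,1)=0$, and the equation $\mathcal{L}u=f(u)$ becomes the nonlinear Robin-type boundary condition
\[
-\partial_y U(x,0)=f(U(x,0))\quad\text{on }\R^2\times\{0\}.
\]
Elliptic regularity up to the boundary, together with the fact that $u$ and $f(u)$ are bounded (so that all the boundary data are bounded), gives $U\in C^{1,\alpha}_{\rm loc}$ and, crucially, $|\nabla U|\in L^\infty(\R^2\times(0,1))$. Moreover, differentiating the problem in $x_2$, the function $\sigma_2:=\partial_{x_2}U$ is harmonic in the strip, has Neumann datum zero at $y=1$, and on $y=0$ equals $\partial_{x_2}u>0$; by the Maximum Principle for harmonic functions in the strip with these boundary conditions one concludes that $\sigma_2>0$ everywhere in $\overline{\R^2\times(0,1)}$.

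Next, set $\sigma_i:=\partial_{x_i}U$, $i=1,2$. Each $\sigma_i$ is harmonic in the strip, satisfies $\partial_y\sigma_i=0$ on $\{y=1\}$, and on $\{y=0\}$ satisfies $\partial_y\sigma_i=-f'(u)\sigma_i$ (by differentiating the nonlinear boundary condition above in $x_i$). Define the quotient
\[
w:=\frac{\sigma_1}{\sigma_2},
\]
which is well defined since $\sigma_2>0$. A direct computation shows that
\[
\operatorname{div}(\sigma_2^{\,2}\nabla w)=0\quad\text{in }\R^2\times(0,1),
\]
while on both boundary components one has $\sigma_2^{\,2}\partial_y w=\sigma_2\partial_y\sigma_1-\sigma_1\partial_y\sigma_2=0$: on $\{y=1\}$ this is immediate, and on $\{y=0\}$ the two Robin terms $-f'(u)\sigma_1\sigma_2$ exactly cancel. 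So $w$ solves a weighted divergence-form equation in the strip with homogeneous Neumann data on both faces.

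The final step is a Liouville-type estimate in two unbounded dimensions. Let $\eta_R(x)$ be a standard logarithmic cutoff on $\R^2$: $\eta_R\equiv 1$ on $B_{\sqrt R}$, $\eta_R\equiv 0$ outside $B_R$, with $\|\nabla\eta_R\|_{L^2(\R^2)}^2\le C/\log R$. Testing the equation for $w$ against $\eta_R^{\,2}w$ and integrating by parts in $\R^2\times(0,1)$, the Neumann conditions eliminate the boundary terms and Cauchy--Schwarz yields
\[
\int_{B_{\sqrt R}\times(0,1)}\!\!\sigma_2^{\,2}|\nabla w|^{2}\,dxdy\;\le\;4\int_{(B_R\setminus B_{\sqrt R})\times(0,1)}\!\!\sigma_2^{\,2}w^{2}|\nabla\eta_R|^{2}\,dxdy\;=\;4\int\sigma_1^{\,2}|\nabla\eta_R|^{2}\,dxdy.
\]
Because $|\nabla U|\le C$ uniformly, the right-hand side is bounded by $C/\log R\to 0$ as $R\to\infty$. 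Hence $\sigma_2\nabla w\equiv 0$, and since $\sigma_2>0$ the ratio $w$ is constant on $\R^2\times(0,1)$. Therefore $\sigma_1=c\,\sigma_2$ throughout the strip for some $c\in\R$, which at $y=0$ reads $\partial_{x_1}u=c\,\partial_{x_2}u$. This precisely means that $u$ is constant along the direction $(1,c)$ and depends only on the orthogonal direction $\omega=(c,-1)/\sqrt{1+c^{2}}\in S^{1}$, giving the desired one-dimensional profile $u(x)=u_0(x\cdot\omega)$.

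The main obstacle is the Liouville step: it relies on two unbounded spatial dimensions (so that the logarithmic cutoff gives $\int|\nabla\eta_R|^2\to 0$) together with the global Lipschitz bound on $U$; the latter requires care because the extension lives in a strip and carries a nonlinear boundary condition, but it is standard once one combines interior gradient estimates for harmonic functions with boundary Schauder estimates applied on flat pieces of $\partial(\R^2\times(0,1))$. A smaller but important subtlety is the verification that the Robin terms cancel exactly in the Neumann condition for $w$, which is what makes the weighted divergence structure integrable against $\eta_R^{\,2}w$ without any boundary defect; this cancellation is the analogue, in this extended setting, of the classical BCN computation and is really the reason why the monotonicity hypothesis $\partial_{x_2}u>0$ suffices.
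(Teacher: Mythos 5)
Your argument is essentially correct and is, in substance, the proof that the paper defers to: the paper does not prove Theorem~\ref{DLV} itself but cites Corollary~2 of~\cite{LV09}, and the route you follow --- lift to the bounded harmonic extension in the strip $\R^2\times(0,1)$, translate the equation into the Robin condition $-\partial_y U(\cdot,0)=f(U(\cdot,0))$, show $\sigma_2=\partial_{x_2}U>0$ by a maximum principle (after establishing a global gradient bound for $U$ via Robin/Neumann Schauder estimates), run the Berestycki--Caffarelli--Nirenberg weighted divergence identity for $w=\sigma_1/\sigma_2$, and close with a logarithmic-cutoff Liouville estimate using $\sigma_2^2 w^2=\sigma_1^2\in L^\infty$ --- is exactly the extension-plus-Liouville scheme used there. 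The key cancellation of the Robin terms on $\{y=0\}$ is the heart of the matter and you identify it correctly, as well as why the argument is genuinely two-dimensional (the factor $1/\log R$). The only error is a sign slip at the very end: from $\partial_{x_1}u=c\,\partial_{x_2}u$ one gets $\nabla u=\partial_{x_2}u\,(c,1)$, so $u$ is constant along $(1,-c)$ (not $(1,c)$; indeed the directional derivative along $(1,c)$ is $2c\,\partial_{x_2}u\ne 0$ unless $c=0$), and the correct one-dimensional direction is $\omega=(c,1)/\sqrt{1+c^2}$; the vector $(c,-1)$ you wrote is not orthogonal to the kernel direction $(1,-c)$. One further remark: the gradient bound $|\nabla U|\in L^\infty$ that you invoke before the maximum principle is not immediate from $u$ and $f(u)$ bounded; it requires the bootstrap (bounded Neumann data $\Rightarrow$ uniform $C^\alpha$ up to $\{y=0\}$ $\Rightarrow$ $f(U(\cdot,0))\in C^\alpha$ uniformly $\Rightarrow$ Schauder gives uniform $C^{1,\alpha}$), which you allude to but should spell out, since the boundedness of $\sigma_2$ is what licenses the Phragm\'en--Lindel\"of maximum principle in the unbounded strip.
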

See Corollary 2 in \cite{LV09} for a proof of Theorem \ref{DLV} and to Theorem 1 in \cite{LV09} for a more general result (in higher dimension).

\subsection{Crystal dislocation}

A crystal is a material whose atoms are displayed in a regular way.
Due to some impurities in the material or to an external stress,
some atoms may move from their rest positions. The system
reacts to small modifications by pushing back towards
the equilibrium. Nevertheless, slightly larger modifications may lead
to plastic deformations.
Indeed, if an atom dislocation is of the order
of the periodicity size of the crystal, it can be perfectly
compatible with the behavior of the material
at a large scale, and it can lead to a permanent modification.

Suitably superposed atom dislocations
may also produce macroscopic deformations of the material,
and the atom dislocations may be moved
by a suitable external force,
which may be more effective if it happens to be compatible with the periodic
structure of the crystal.

These simple considerations may be framed into a mathematical
setting, and they also have concrete
applications in many industrial branches
(for instance,
in the production of a soda can, in order to change the shape of an aluminium
sheet, it is reasonable to believe that applying the right force to it can be simpler and less expensive than melting the metal).

It is also quite popular (see e.g.~\cite{CATERPILLAR})
to describe
the atom dislocation motion in crystals
in analogy
with the movement of caterpillar 
(roughly speaking, it is less expensive for
the caterpillar to produce a defect in the alignment
of its body and to dislocate this displacement, rather
then rigidly translating his body on the ground).

The mathematical framework of crystal dislocation
presented here is related to the 
Peierls-Nabarro model, that is
a hybrid model in which a discrete dislocation 
occurring along a slide line is incorporated in a continuum medium.
The 
total energy in the Peierls-Nabarro model combines the elastic energy of 
the material in reaction to the single dislocations, and the potential energy of 
the misfit along the glide plane. The main result is that, at a 
macroscopic level, dislocations tend to concentrate at single points, 
following the natural periodicity of the crystal.
\begin{center}
	\begin{figure}[htpb]
	\hspace{0.7cm}
	\begin{minipage}[b]{0.7\linewidth}
	\centering
	\includegraphics[width=0.7\textwidth]{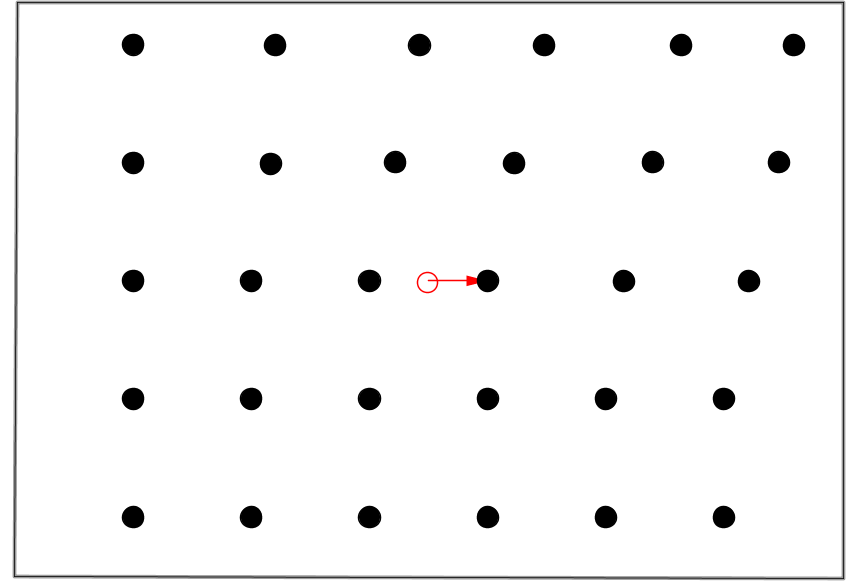}
	\caption{Crystal dislocation}   
	\label{fign:figure1}
	\end{minipage}
	\end{figure}
\end{center}

To introduce a mathematical framework
for crystal dislocation,
first, we ``slice'' the crystal with a plane. The mathematical setting will be then, by symmetry arguments, the half-plane $\R^2_{+} =\{ (x,y)\in   \R^2 \text{ s.t. } y \geq 0\}$ and the glide line will be the $x$-axis. In a crystalline structure, the atoms display periodically. Namely, the atoms on the $x$-axis have the preference of occupying integer sites. If atoms move out of their rest position due to a misfit, the material will have an elastic reaction, trying to restore the crystalline configuration. The tendency is to move back the atoms to their original positions, or to recreate, by translations, the natural periodic configuration. This effect may be modeled by defining $v^0(x):=v(x,0)$ to be the discrepancy between the position of the atom $x$ and its rest position. Then, the misfit energy is 
	\begin{equation}\label{YU-1}
\mathcal{M}(v^0):=\int_{\R} W\Big(v^0(x)\Big) \, dx,\end{equation}
where~$W$ is a smooth periodic potential,
normalized in such a way that~$W(u+1)=W(u)$ for any $u\in \R$ and~$0=W(0)<
W(u)$ for any~$u\in(0,1)$. We also assume that the minimum of~$W$
is nondegenerate, i.e.~$W''(0)>0$.

We consider
the dislocation function $v(x,y)$ on the half-plane $\R^2_{+}$. The
elastic energy of this model is given by
	\begin{equation}\label{YU-2}
 \mathcal{E}(v):=\frac{1}{2} \int_{\R^2_{+}} \Big|\nabla v(x,y)\Big|^2 \, dx \, dy.\end{equation}
The total energy of the system is therefore
	\begin{equation}\label{YU-3}
\mathcal{F} (v):= \mathcal{E}(v)+ \mathcal{M}(v^0) =\frac{1}{2} \int_{\R^2_{+}} \Big|\nabla v(x,y)\Big|^2 \, dx \, dy+\int_{\R} W\Big(v(x,0)\Big) \, dx.\end{equation}
Namely, the total energy of the system is the superposition
of the energy in~\eqref{YU-1},
which tends to settle all the atoms in their rest position
(or in another position equivalent to it from the
point of view of the periodic crystal),
and the energy in~\eqref{YU-2},
which is the elastic energy of the material itself.
\bigskip

Notice that some approximations have been performed
in this construction. For instance,
the atom dislocation changes the structure of the crystal itself:
to write~\eqref{YU-1}, one is making the assumption
that the dislocations of the single atoms do not destroy
the periodicity of the crystal at a large scale,
and it is indeed this ``permanent'' periodic structure
that produces the potential~$W$.

Moreover, in~\eqref{YU-2}, we are supposing that
a ``horizontal'' atom displacement
along the line~$\{y=0\}$ causes a horizontal
displacement at~$\{y=\epsilon\}$ as well.
Of course, in real life, if an atom at~$\{y=0\}$ moves, say, to the right,
an atom at level~$\{y=\epsilon\}$ is dragged to the right as well,
but also slightly downwards towards the slip line~$\{y=0\}$.
Thus, in~\eqref{YU-2} we are neglecting this ``vertical''
displacement. This approximation
is nevertheless reasonable since, on the one hand,
one expects the vertical displacement to be negligible with
respect to the horizontal one and, on the other hand,
the vertical periodic structure of the crystal tends to avoid
vertical displacements of the atoms outside the periodicity range
(from the mathematical point of view,
we notice that taking into account vertical displacements
would make the dislocation function vectorial,
which would produce a system of equations, rather than one single
equation for the system).

Also, the initial assumption of slicing the crystal
is based on some degree of simplification,
since this comes to studying dislocation curves in spaces
which are ``transversal'' to the slice plane.

In any case, we will take these (reasonable, after all)
simplifying assumptions
for granted, we will study their
mathematical consequences and see how the results
obtained agree with the physical experience.\bigskip

To find the Euler-Lagrange 
equation associated to~\eqref{YU-3}, let us consider a perturbation~$\phi \in C_0^{\infty}(\R^2)$, with $\varphi (x):=\phi(x,0)$ and let $v$ be a minimizer. Then
\[ \frac{d}{d\eee} \mathcal{F} (v+\eee \phi) \Big|_{\eee=0} =0,\]
which gives
\[ \int_{\R^2_+} \nabla v\cdot \nabla \phi \, dx \,dy + \int_{\R} W'(v^0)  \varphi \, dx=0.\]
Consider at first the case in which $\text{supp} \phi\cap \partial \R^2_+ = \emptyset$, thus $\varphi=0$. By the Divergence Theorem we obtain that 
	\[\int_{\R^2_+} \phi \, \Delta v \, dx \, dy=0 \quad \mbox{ for any } \phi\in C_0^{\infty}(\R^2),\] 
thus $\Delta v =0$ in $\R^2_+$. 
If $\text{supp} \phi\cap \partial \R^2_+ \neq \emptyset$ then we have that
	\[ \begin{split}
		0=&\;\int_{\R^2_+} \text{div} (\phi \nabla v)\, dx \, dy + \int_{\R} W'(v^0)  \varphi \, dx \\
		=&\; \int_{\partial \R^2_+} \phi \frac{\partial v}{\partial \nu} \, dx  + \int_{\R} W'(v^0)  \varphi \, dx \\
			=&\; - \int_{\R} \varphi \frac{\partial v}{\partial y} \, dx  + \int_{\R} W'(v^0)  \varphi \, dx 
			\end{split}
			\]
			for an arbitrary $\varphi \in C_0^{\infty}(\R)$ therefore $\displaystyle\frac{\partial v}{\partial y}(x,0)=W'(v^0(x))$ for $x \in \R$. 
Hence the critical points of $\mathcal{F}$ are solutions of the problem
	\begin{equation*}
		\begin{cases}
		\Delta v(x,y)=0 &\text{ for } x\in {\R} \text{ and } y>0, \\
		v(x,0)=v^0(x) &\text{ for  } x \in \R, \\
		\partial_y v(x,0)=W'\Big(v(x,0)\Big)  & \text{ for } x \in \R
		\end{cases}
	\end{equation*}
and up to a normalization constant, recalling \eqref{D1} and \eqref{D2}, we have that
	\[-\sqrt{-\Delta} v(x,0)=W'\big(v(x,0)\big), \text{ for any } x \in \R.\]
The corresponding parabolic evolution equation is $ \partial_t v (x,0)= -\sqrt{-\Delta} v(x,0)- W'\big(v(x,0)\big)$.\bigskip

After this discussion, one is lead to
consider the more general case of the fractional Laplacian $\frlap$ for any~$s\in(0,1)$
(not only the half Laplacian), and the corresponding parabolic equation
	\[\partial_t v=  - \frlap v - W'(v) +\sigma,\]
where $\sigma$ is a (small) external stress.\\
If we take the lattice of size $\epsilon$ and rescale $v$ and $\sigma$ as
	\[v_{\epsilon} (t,x)= v\bigg( \frac{t}{\epsilon^{1+2s}}, \frac{x}{\epsilon}\bigg)\quad \mbox{ and } \quad \sigma = \eee^{2s} \sigma \bigg( \frac{t}{\epsilon^{1+2s}}, \frac{x}{\epsilon}\bigg),\]
then the rescaled function satisfies 
	\begin{equation}
		\partial_t v_{\epsilon} = \displaystyle \frac{1}{\epsilon} \big( - \frlap v_{\epsilon} -\frac{1}{\epsilon^{2s}} W'(v_{\epsilon}) + \sigma \big)\text{ in } (0, +\infty) \times \R
		\label{pareq}
	\end{equation}
with the initial condition
	 \[v_{\epsilon}(0,x)= v_{\epsilon}^0(x) \text{ for } x \in \R.\]
To suitably choose the initial condition $v_{\epsilon}^0$, we introduce the basic layer\footnote{As a matter of fact,
the solution of~\eqref{allencahn} coincides
with the one
of a one-dimensional fractional Allen-Cahn equation, 
that will be discussed in further detail in the forthcoming
Section \ref{sbsac}.}
solution $u$, that is, the unique solution of the problem
	\begin{equation}
		\begin{cases}
		- \frlap u(x)=W'(u)  \quad \text { in } \R, \\
		u'>0 \text{ and } u(-\infty)=0, u(0) = 1/2, u(+\infty)=1.
		\label{allencahn}
		\end{cases}
	\end{equation}
For the existence of such solution and its main properties see \cite{PSV13} and \cite{CS15}. Furthermore, the solution decays polynomially at $\pm \infty$ (see \cite{DPV15} and \cite{DFV14}), namely
	\begin{equation}
		\bigg| u(x) - H(x) + \frac {1}{2sW''(0)}\frac{x}{|x|^{1+2s}} \bigg| \leq \frac{C}{|x|^{\vartheta}} \quad \text { for any } x \in \Rn,
		\label{layersol}
	\end{equation}
where $\vartheta>2s$ and~$H$ is the Heaviside step function
	\[H(x) =\begin{cases} 1, & x\geq 0\\
						0,& x<0.
			\end{cases} \]
We take the initial condition in~\eqref{pareq}
to be the superposition of transitions all occurring with the same orientation, i.e. we set
	\begin{equation} \label{cdin1} v_{\epsilon}(x,0):= \frac{\epsilon^{2s}}{W''(0)}\sigma(0,x)+ \sum_{i=1}^N u\bigg(\frac{x-x_i^0}{\epsilon}\bigg),\end{equation}
where $x_1^0,\dots, x_N^0$ are $N$ fixed points. 

\begin{center}\begin{figure}[htpb]
	\hspace{0.6cm}
	\begin{minipage}[b]{0.8\linewidth}
	\centering
	\includegraphics[width=0.8\textwidth]{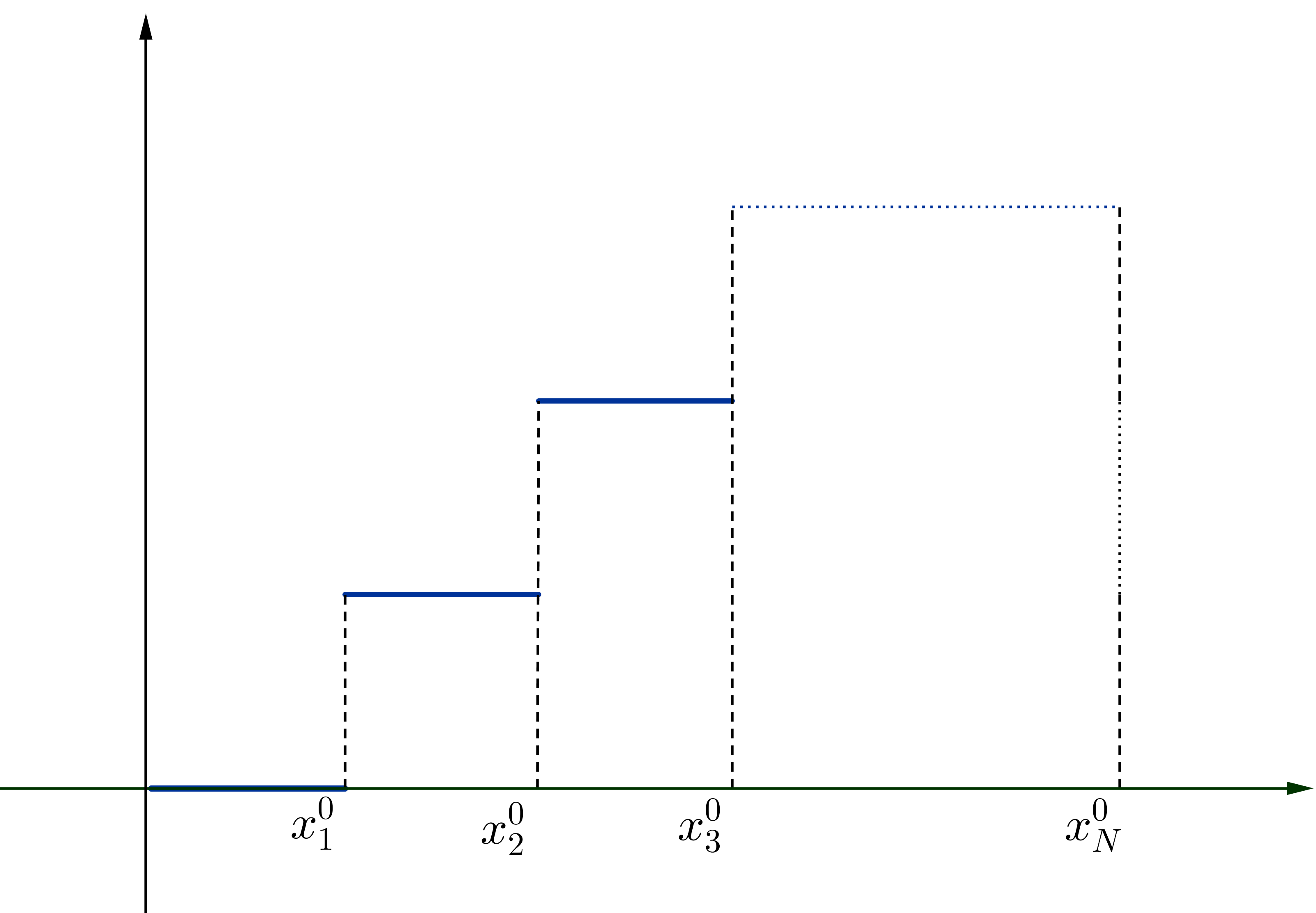}
	\caption{The initial datum when $\eee \to 0$}   
	\label{fign:cryind}
	\end{minipage}
	\end{figure}
\end{center}
The main result in this setting is that the solution~$v_\epsilon$ approaches,
as~$\epsilon\to0$, the superposition of step functions.
The discontinuities of the limit function occur at some points~$\big(
x_i(t)\big)_{i=1, \dots, N}$ which
move accordingly to the following\footnote{
The system of ordinary differential equations
in~\eqref{solx}
has been extensively studied in~\cite{MONNEAU-FORCADEL-IMBERT}.}
dynamical system
	\begin{equation}
		\begin{cases}
			\dot{x_i} = \gamma \bigg( -\sigma (t,x_i) +\displaystyle \sum_{j\neq i} \displaystyle \frac{x_i-x_j} {2s |x_i-x_j|^{2s+1}}\bigg)  \quad \text{ in } (0,+\infty) ,\\
			x_i(0)=x_i^0,
		\end{cases}
	\label{solx}
	\end{equation}
where 
	\begin{equation} \label{cdg}\gamma = \bigg(\int_{\R} (u')^2\bigg)^{-1}.\end{equation}

More precisely,
the main result obtained here is the following.

\begin{theorem}\label{VISCOSITY}
There exists a
unique viscosity solution of 	
	\begin{equation*}
		\begin{cases}
		\partial_t v_{\epsilon} = \displaystyle \frac{1}{\epsilon} \Big( - \frlap v_{\epsilon} -\frac{1}{\epsilon^{2s}} W'(v_{\epsilon}) + \sigma \Big)&\text{ in } (0, +\infty) \times \R,\\
		v_{\epsilon}(0,x)=  \displaystyle \frac{\epsilon^{2s}}{W''(0)}\sigma(0,x)+ \sum_{i=1}^N u\bigg(\frac{x-x_i^0}{\epsilon}\bigg)&\text{ for } x \in \R
		\end{cases}
	\end{equation*}
such that 
	\begin{equation}\label{SENSE}
\lim_{\epsilon \to 0} v_{\epsilon} (t,x) = \sum_{i=1}^N H\big(x-x_i(t)\big),\end{equation}
where $\big(x_i(t)\big)_{i=1, \dots, N}$ is solution to \eqref{solx}.
\end{theorem}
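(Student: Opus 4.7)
My plan is to prove Theorem \ref{VISCOSITY} by the method of sub- and super-solutions in the viscosity framework, following the approach pioneered by Evans-Souganidis for phase-field models. The first part -- existence and uniqueness of the viscosity solution $v_\epsilon$ for each fixed $\epsilon>0$ -- should follow from standard theory for integro-differential parabolic equations, once I verify the comparison principle applies: the nonlocality produced by $(-\Delta)^s$ is of order $2s\in(0,2)$, $W'$ is smooth and bounded on compact intervals, and the initial datum in \eqref{cdin1} is Lipschitz and bounded. The real content of the theorem, and where I will concentrate the work, is the convergence statement \eqref{SENSE}.

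For the convergence I would construct, for every $\eta>0$, a supersolution $v_\epsilon^+$ and a subsolution $v_\epsilon^-$ of the form
\begin{equation*}
v_\epsilon^\pm(t,x) \;:=\; \frac{\epsilon^{2s}}{W''(0)}\,\sigma(t,x) \;+\; \sum_{i=1}^N u\!\left(\frac{x-x_i^\pm(t)}{\epsilon}\right) \;\pm\; \epsilon^{\beta}\,\psi(t),
\end{equation*}
where $x_i^\pm(t)$ are small $O(\eta)$-perturbations of the solution $x_i(t)$ of the ODE system \eqref{solx}, and $\psi$ is a suitably chosen positive corrector that absorbs lower-order errors (one picks $\beta\in(0,2s)$). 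The initial ordering $v_\epsilon^-(0,\cdot)\le v_\epsilon(0,\cdot)\le v_\epsilon^+(0,\cdot)$ is arranged by choosing $x_i^-(0)<x_i^0<x_i^+(0)$ appropriately; once this and the differential inequality are established, the comparison principle for \eqref{pareq} sandwiches $v_\epsilon$ between $v_\epsilon^\pm$ and, since $x_i^\pm(t)\to x_i(t)$ as $\eta\to 0$, yields \eqref{SENSE} away from the jump points.

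The crux is checking that $v_\epsilon^+$ (and symmetrically $v_\epsilon^-$) is a supersolution. Applying $\partial_t + \epsilon^{-1}\bigl((-\Delta)^s + \epsilon^{-2s}W'(\cdot) - \sigma\bigr)$ to the ansatz and expanding $W'$ around $\sum_i u\!\left(\frac{x-x_i^+}{\epsilon}\right)$, the leading singular term $\epsilon^{-1-2s}$ cancels at each $i$ thanks to the layer equation \eqref{allencahn} satisfied by $u$. The next-order contribution splits into two pieces: (i) the time derivative $-\epsilon^{-1}\dot x_i^+\, u'\!\left(\frac{x-x_i^+}{\epsilon}\right)$, and (ii) the linearized operator $-W''(u)$ acting on the sum of the \emph{far-field tails} of the other dislocations. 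Using the sharp decay estimate \eqref{layersol}, each such tail at a point near $x_i$ equals $-\frac{1}{2sW''(0)}\frac{x_i-x_j}{|x_i-x_j|^{1+2s}}$ plus a faster-decaying remainder, and the linear correction of $u$ against these external forces -- after projecting onto $u'$, which is in the kernel of the linearized operator $\mathcal L u := (-\Delta)^s + W''(u)$ -- produces a Fredholm solvability condition. That condition, with the normalization $\gamma=(\int (u')^2)^{-1}$ of \eqref{cdg}, is exactly
\begin{equation*}
\dot x_i^+ \;=\; \gamma\!\left(-\sigma(t,x_i^+) + \sum_{j\neq i}\frac{x_i^+-x_j^+}{2s|x_i^+-x_j^+|^{2s+1}}\right) + O(\eta),
\end{equation*}
which is the perturbed form of \eqref{solx}. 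Choosing $x_i^+(t)$ to satisfy this perturbed ODE with a strictly positive $O(\eta)$ defect guarantees the supersolution inequality.

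The main obstacle I expect is the computation of this solvability condition, namely showing rigorously that the interaction of the far-field polynomial tail of one layer with the linearized operator at another layer integrates, against $u'$, to give precisely the dipole-like kernel $(x_i-x_j)/(2s|x_i-x_j|^{2s+1})$ in \eqref{solx}. This is where the decay rate $\vartheta>2s$ in \eqref{layersol} is essential: it ensures that remainder terms are integrable against $u'$ and produce genuinely lower-order contributions that can be dominated by the corrector $\epsilon^\beta\psi(t)$. A secondary technical difficulty is handling the equation in the viscosity sense at points where the ansatz is merely Lipschitz: this is resolved by the standard fact that smooth test functions touching the ansatz from above (or below) can be compared directly, and one only needs the pointwise inequality where the ansatz is smooth, which is everywhere except possibly at exceptional times (a measure-zero set that viscosity theory handles automatically).
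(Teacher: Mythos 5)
Your proposal sketches the rigorous sub/super-solution (Evans--Souganidis) argument and correctly isolates its key ingredients: cancellation of the leading $\epsilon^{-1-2s}$ singularity via the layer equation \eqref{allencahn}, derivation of the ODE system \eqref{solx} as a Fredholm solvability condition after projection onto $u'$, and the essential role of the decay rate $\vartheta>2s$ in \eqref{layersol}. The paper, however, does not actually prove Theorem~\ref{VISCOSITY}: it defers the full argument to \cite{GM12}, \cite{DPV15}, \cite{DFV14}, and what appears in the text under ``Justification of ODE system'' is an explicitly formal (non-rigorous) computation that assumes $v_\epsilon\simeq\sum_i u_i$, substitutes into the PDE, multiplies by $u_k'$, and integrates term by term. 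The computational core of that heuristic---the $u_k'$-projection together with the layer asymptotics \eqref{layersol}---is exactly your solvability-condition step, so the two agree on where \eqref{solx} comes from; what your sketch adds is the outer architecture (perturbed centers $x_i^\pm$, the corrector, the viscosity comparison principle) that could promote the heuristic to an actual proof. One caution: a corrector of the bare form $\pm\epsilon^\beta\psi(t)$ is likely too coarse, and the cited references use a spatially-structured inner corrector to absorb the $O(\epsilon^{2s})$ error arising from the quadratic term of $W'$ near the layer centers; but as a roadmap your proposal is correct and faithfully reflects the strategy of those references, going further in the direction of a full proof than the paper's own formal derivation.
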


We refer to \cite{GM12} for the case $\displaystyle s=\frac{1}{2}$, 
to~\cite{DPV15} for the case 
$s>\displaystyle \frac{1}{2}$, and \cite{DFV14} for the case~$s<\displaystyle 
\frac{1}{2}$
(in these papers, it is also carefully stated
in which sense the limit in~\eqref{SENSE} holds true).

We would like to give now a formal (not rigorous) justification of the ODE system in \eqref{solx} that drives the motion of the transition layers.

\begin{proof}[Justification of ODE system \eqref{solx}]
We assume for simplicity that the external stress $\sigma $ is null. We use the notation $\simeq$ to denote the equality up to negligible terms in $\epsilon$. Also, we denote
	\[ u_i(t,x):=u\bigg(\frac{x-x_i(t)}{\epsilon}\bigg)\] 
and, with a slight abuse of notation 
	\[u'_i(t,x):= u'\bigg(\frac{x-x_i(t)}{\epsilon}\bigg).\]
By $\eqref{layersol}$ we have that the layer solution is approximated by
	\begin{equation}
	 	u_i(t,x) \simeq H\bigg(\frac{x-x_i(t)}{\epsilon}\bigg) - \frac{\epsilon^{2s}\big(x-x_i(t)\big)}{2s W''(0)\big|x-x_i(t)\big|^{1+2s}}.
		\label{ui}
	\end{equation}
We use the assumption that the solution $v_\epsilon$ is well approximated by the sum of $N$ transitions and write
	\[v_\epsilon(t,x)\simeq \sum_{i=1}^Nu_i(t,x) = \sum_{i=1}^N u\Big(\frac{x-x_i(t)}{\epsilon}\Big).\]
For that
	\[\partial_t v_\epsilon (t,x)= -\frac{1}{\epsilon} \sum_{i=1}^N u'_i(t,x) \dot{x_i}(t)\]
and, since the basic layer solution $u$ is the solution of $\eqref{allencahn}$, we have that 		
	\[ \begin{split}
		 -\frlap v_\epsilon \simeq \; &-\sum_{i=1}^N \frlap u_i(t,x) 					=  \frac{1}{\epsilon^{2s}}\sum_{i=1}^N  \frlap u \Big(\frac{x-x_i(t)}{\epsilon}\Big) \\
					= \;& \frac{1}{\epsilon^{2s}} \sum_{i=1}^N W' \bigg(u\Big(\frac{x-x_i(t)}{\epsilon}\Big)\bigg)
					=  \frac{1}{\epsilon^{2s}} \sum_{i=1}^N  W' \big(u_i(t,x)\big).		
		\end{split} \]
Now, returning to the parabolic equation $\eqref{pareq}$ we have that
	\begin{equation}
		 -\frac{1}{\epsilon}\sum_{i=1}^N u'_i(t,x) \dot{x_i}(t) = \frac{1}{\epsilon^{2s+1} } \bigg( \sum_{i=1}^N  W' \big(u_i(t,x)\big) - W'\Big(\sum_{i=1}^Nu_i(t,x) \Big) \bigg).
		\label{upareq}
	\end{equation}
Fix an integer $k$ between $1$ and $N$, multiply $\eqref{upareq}$ by $u_k'(t,x)$ and integrate over $\R$. We obtain
	\begin{equation}
		\begin{split}
		  -\frac{1}{\epsilon}& \sum_{i=1}^N   \dot{x_i}(t) \int_{\R} u'_i(t,x) u_k'(t,x) \, dx  \\ 
		  \;&=  \frac{1}{\epsilon^{2s+1} }  \bigg(  \sum_{i=1}^N \int_{\R}   W' \big(u_i(t,x)\big) u'_k(t,x) \, dx - \int_{\R}  W'\Big(\sum_{i=1}^Nu_i(t,x)\Big)u_k'(t,x) \, dx  \bigg).
		\label{ukpar}
		\end{split}
	\end{equation}
We compute the left hand side
of~\eqref{ukpar}. First, we take the $k^{\text{th}}$ term of the sum (i.e.
we consider the case~$i=k$). By using the change of variables
	\begin{equation}
		y:= \frac{x-x_k(t)}{\epsilon}
		\label{chvar}
	\end{equation}
we have that
	\begin{equation}
		\begin{split}
		 -\frac{1}{\epsilon}\dot{x_k}(t) \int_{\R} (u_k')^2(t,x) \,dx =&\;-\frac{1}{\epsilon}\dot{x_k}(t) \int_{\R} (u')^2\bigg(\frac{x-x_k(t)}{\epsilon}\bigg) \,dx \\	
												=&\;-\dot{x_k}(t) \int_{\R} (u')^2 (y) \, dy 
												=  -\frac{\dot{x_k}(t)}{\gamma},
		\end{split}
		\label{first}
	\end{equation}
where $\gamma$ is defined by \eqref{cdg}.\\
Then, we consider
the $i^{\text{th}}$ term of the sum on the left hand side
of~\eqref{ukpar}.
By performing again the substitution $\eqref{chvar}$, we see that this term
is
	\[\begin{split}
			-\frac{1}{\epsilon}\dot{x_i}(t) \int_{\R} u_i'(t,x)u_k'(t,x) \,dx = \;&- \frac{1}{\epsilon} \dot{x_i}(t) \int_{\R} u'\bigg(\frac{x-x_i(t)}{\epsilon}\bigg)u'\bigg(\frac{x-x_k(t)}{\epsilon}\bigg) \, dx\\
			= \;& -\dot{x_i}(t) \int_{\R} u'\bigg(y+\frac{x_k(t)-x_i(t)}{\epsilon}\bigg)u'(y) \, dy \\
			\simeq\; & \, 0,
			\end{split}\]
where, for the last equivalence we have used that for  $\epsilon$ small, $ \displaystyle u' \bigg(y + \frac{x_k(t)-x_i(t)}{\epsilon}\bigg)$ is  asymptotic to $u'(\pm\infty) =0$.

We consider the first member on the right hand side of the identity \eqref{ukpar}, and, as before, take the $k^{\text{th}}$ term of the sum. We do the substitution \eqref{chvar} and have that
	\[ \begin{split}
	\frac{1}{\epsilon} \int_{\R} W' \big(u_k(t,x) \big) u_k'(t,x) \, dx =&\;\int_{\R} W' \big(u(y) \big) u'(y) \, dy	\\	=&\; W\big(u(y)\big)\Big|_{-\infty}^{+\infty} 
			=W(1)-W(0)=0\end{split}\]
by the periodicity of $W$.
Now we use \eqref{ui}, the periodicity of $W'$ and we perform a Taylor expansion, noticing that $W'(0)=0$. We see that
		\[ \begin{split}
			 		W' \big(u_i(t,x)\big) \simeq & \;W' \Bigg( H\bigg(\frac{x-x_i(t)}{\epsilon}\bigg) - \frac{\epsilon^{2s}\big(x-x_i(t)\big)}{2s W''(0)\big|x-x_i(t)\big|^{1+2s}}\Bigg)\\
			 	\simeq & \;  W' \bigg( - \frac{\epsilon^{2s}\big(x-x_i(t)\big)}{2s W''(0)\big|x-x_i(t)\big|^{1+2s}}\bigg)\\
				\simeq & \;\frac{-\epsilon^{2s} \big(x-x_i(t)\big) }{ 2s \big|x-x_i(t)\big|^{1+2s} } .
		\end{split}\]		
Therefore, the $i^{\text{th}}$ term of the sum on the right hand side of the identity \eqref{ukpar} for $i\neq k$, by using the above approximation and doing one more time the substitution \eqref{chvar},  for $\epsilon$ small becomes
	\begin{equation}	
		\begin{split}\label{second}
		\frac{1}{\epsilon}\int_{\R} W' \big(u_i(t,x) \big) u_k'(t,x) \, dx = & \; -\frac{1}{\epsilon} \int_{\R} \frac{\epsilon^{2s} \big(x-x_i(t)\big) }{ 2s \big|x-x_i(t)\big|^{1+2s}} u' \bigg(\frac{x-x_k(t)}{\epsilon} \bigg) \, dx \\	
		=&  \;- \int_\R  \frac{\epsilon^{2s} \big( \epsilon y +x_k(t)-x_i(t)\big) }{ 2s \big|\epsilon y +x_k(t)-x_i(t)\big|^{1+2s}} u' (y) \, dy \\
		\simeq & \;- \frac{\epsilon^{2s} \big( x_k(t)-x_i(t)\big) }{ 2s \big|x_k(t)-x_i(t)\big|^{1+2s}} \int_\R u' (y) \, dy \\	
		= & \; - \frac{\epsilon^{2s} \big( x_k(t)-x_i(t)\big) }{ 2s \big|x_k(t)-x_i(t)\big|^{1+2s}}.
		\end{split}
	\end{equation}
We also observe that, for $\epsilon$ small, the second member on the right hand side of the identity \eqref{ukpar}, by using the change of variables \eqref{chvar}, reads
		\[ \begin{split}
		  		\frac{1}{\epsilon}\int_{\R} W'\Big(\sum_{i=1}^Nu_i(t,x)\Big)& u_k'(t,x) \, dx  \\ = &\;\frac{1}{\epsilon}\int_\R W'\Big(u_k(t,x) + \sum_{i\neq k} u_i(t,x)\Big)u_k'(t,x) \, dx \\ 
		=&\; \int_\R W'\bigg(u(y) + \sum_{i\neq k} u\Big(y + \frac{x_k(t)-x_i(t)}{\epsilon}\Big)\bigg) u'(y) \, dy.
		\end{split}\]
For $\epsilon$ small, $\displaystyle u\bigg(y + \frac{x_k(t)-x_i(t)}{\epsilon}\bigg) $ is asymptotic either to $u(+\infty)=1$ for $x_k>x_i$, or to $u(-\infty)=0$ for $x_k<x_i$. By using the periodicity of $W$, it follows that
	\begin{equation*}
		 \frac{1}{\epsilon}\int_{\R} W'\Big(\sum_{i=1}^Nu_i(t,x)\Big)u_k'(t,x) \, dx  = \int_\R W'\Big(u(y) \Big)u'(y) \, dy= W(1) -W(0)=0,
	\end{equation*}
again by the asymptotic behavior of $u$.
Concluding, by inserting the results \eqref{first} and \eqref{second} into \eqref{ukpar} we get that 
	\begin{equation*}
		\frac{\dot{x_k}(t)}{\gamma} =  \sum_{i\neq k} \frac{  x_k(t)-x_i(t) }{ 2s \big|x_k(t)-x_i(t)\big|^{1+2s}},
	\end{equation*}
which ends the justification of the system \eqref{solx}.
\end{proof}

We recall that, till now,
in Theorem~\ref{VISCOSITY}
we considered the initial data as a superposition of transitions all 
occurring with the same orientation (see \eqref{cdin1}),
i.e. the initial dislocation is a monotone function
(all the atoms are initially moved to the right).

Of course, for concrete applications,
it is interesting to consider also
the case in which
the atoms may dislocate in both directions,
i.e.
the transitions can occur with different orientations
(the atoms may be initially 
displaced to the left or to the right of their equilibrium position).

To model the different orientations of the dislocations,
we introduce a parameter $\xi_i \in \{-1,1\}$
(roughly speaking~$\xi_i=1$ corresponds to a dislocation to the right
and~$\xi_i=-1$ to a dislocation to the left). 

The main result in this case is the following (see~\cite{PV15}):

\begin{theorem}\label{VISCOSITY2}
There exists a viscosity solution of 	
	\begin{equation*}
		\begin{cases}
		\partial_t v_{\epsilon} = \displaystyle \frac{1}{\epsilon} \Big( - \frlap v_{\epsilon} -\frac{1}{\epsilon^{2s}} W'(v_{\epsilon}) + \sigma_{\epsilon} \Big)&\text{ in } (0, +\infty) \times \R,\\
		v_{\epsilon}(0,x)=  \displaystyle \frac{\epsilon^{2s}}{W''(0)}\sigma(0,x)+ \sum_{i=1}^N u\bigg(\xi_i \frac{x-x_i^0}{\epsilon}\bigg)&\text{ for } x \in \R
		\end{cases}
	\end{equation*}
such that 
	\[\lim_{\epsilon \to 0} v_{\epsilon} (t,x) = \sum_{i=1}^N H\Big(\xi_i \big(x-x_i(t)\big)\Big),\]
where $\big(x_i(t)\big)_{i=1, \dots, N}$ is solution to 
\begin{equation}
		\begin{cases}
			\dot{x_i} = \gamma \bigg( -\xi_i \sigma (t,x_i) + \displaystyle\sum_{j\neq i} \xi_i \xi_j \displaystyle \frac{x_i-x_j} {2s |x_i-x_j|^{2s+1}}\bigg)  \quad \text{ in } (0,+\infty) ,\\
			x_i(0)=x_i^0.
		\end{cases}
	\label{solx1}
	\end{equation} 
\end{theorem}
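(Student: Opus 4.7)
The proof of Theorem \ref{VISCOSITY2} will follow the same architecture as that of Theorem \ref{VISCOSITY}: first, existence (and uniqueness) of a viscosity solution for the fractional parabolic equation with a bounded, continuous initial datum is a standard matter, available from the general theory of viscosity solutions for integro-differential operators; second, the limit statement \eqref{SENSE} (with mixed orientations) is obtained by constructing suitable sub- and super-solutions of the parabolic equation, in the form of perturbations of the ansatz
\[
v_\epsilon^{\pm}(t,x)\;=\;\frac{\epsilon^{2s}}{W''(0)}\sigma(t,x)\,+\,\sum_{i=1}^{N}u\!\left(\xi_i\,\frac{x-x_i^{\pm}(t)}{\epsilon}\right)\,\pm\,C\epsilon^{2s}\,\mu(t),
\]
where $\mu$ is a positive (exponentially growing) corrector and $x_i^{\pm}(t)$ are perturbations of the trajectories $x_i(t)$ solving \eqref{solx1}. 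Applying the comparison principle for the fractional parabolic equation, one sandwiches $v_\epsilon$ between $v_\epsilon^-$ and $v_\epsilon^+$; sending $\epsilon\to 0^+$ and using the asymptotic profile \eqref{layersol} of $u$ together with $u(+\infty)=1$, $u(-\infty)=0$ produces the desired superposition $\sum_i H(\xi_i(x-x_i(t)))$.

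The key analytical step is the justification of the ODE system \eqref{solx1}, which I would perform by an adaptation of the projection argument given in the excerpt for \eqref{solx}. Plugging the ansatz into the rescaled equation \eqref{pareq} and recalling that $u$ solves $-(-\Delta)^s u=W'(u)$ together with the symmetry of the kernel (which gives $(-\Delta)^s u(\xi_i\,\cdot)=\big((-\Delta)^s u\big)(\xi_i\,\cdot)$ for $\xi_i\in\{-1,+1\}$), one obtains
\[
-\frac{1}{\epsilon}\sum_{i=1}^{N}\xi_i\,\dot x_i\,u'_i\;=\;\frac{1}{\epsilon^{2s+1}}\!\left(\sum_{i=1}^{N}W'(u_i)-W'\!\Big(\sum_{i=1}^{N}u_i\Big)\right)+\frac{\sigma_\epsilon}{\epsilon}\,,
\]
where $u'_i(t,x):=u'(\xi_i(x-x_i(t))/\epsilon)$. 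Multiplying by $u'_k$ and integrating in $x$, a change of variable $y=\xi_k(x-x_k)/\epsilon$ (valid, with the orientation accounted for, in both cases $\xi_k=\pm1$) makes the $i=k$ diagonal term produce $-\xi_k\dot x_k/\gamma$ on the left, while the off-diagonal terms vanish in the limit because $u'(y+(x_k-x_i)/\epsilon)\to 0$. On the right, the $i=k$ term yields $W(1)-W(0)=0$ by periodicity; for $i\neq k$ one inserts the decay expansion \eqref{layersol}, together with a Taylor expansion of $W'$ around the integer value $H(\xi_i(x-x_i))$, to get
\[
W'(u_i)\;\simeq\;-\,\frac{\epsilon^{2s}}{2s}\,\frac{\xi_i(x-x_i)}{|x-x_i|^{1+2s}},
\]
from which the change of variable produces the interaction contribution $-\xi_i(x_k-x_i)/(2s|x_k-x_i|^{1+2s})$. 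Finally, the $\sigma_\epsilon$ term, after a matched expansion of $W'(v_\epsilon)$ against the outer corrector $\epsilon^{2s}\sigma/W''(0)$, survives in the projection and yields $\sigma(t,x_k)$. Collecting all contributions gives
\[
-\,\frac{\xi_k\,\dot x_k}{\gamma}\;=\;\sigma(t,x_k)\,-\,\sum_{i\neq k}\frac{\xi_i(x_k-x_i)}{2s|x_k-x_i|^{1+2s}},
\]
and multiplying by $-\xi_k$ (using $\xi_k^2=1$) recovers precisely \eqref{solx1}.

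The main obstacle, and the reason one cannot argue by a literal copy of the monotone case, is that with mixed orientations the ansatz is no longer monotone in $x$, so the comparison is less immediate and one cannot rely on the structure of increasing transitions. A careful verification is therefore required that, along the trajectories $x_i^{\pm}(t)$ solutions of a perturbation of \eqref{solx1}, the functions $v_\epsilon^{\pm}$ are bona fide super/sub-solutions of the fractional parabolic equation up to an error compatible with the corrector $C\epsilon^{2s}\mu(t)$. The critical estimates are twofold: a uniform control (of order $o(\epsilon^{-2s-1})$) of the error terms arising from the $W'$-expansion when two transitions are at bounded distance, and the verification that the system \eqref{solx1} is well-posed for all $t>0$ with no collisions of the $x_i(t)$; the latter is known from \cite{MONNEAU-FORCADEL-IMBERT} and references therein, and prevents the barrier construction from degenerating. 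Once these ingredients are in place, applying comparison and sending $\epsilon\to 0$ yields the convergence statement as in the monotone case treated in \cite{DPV15,DFV14,PV15}.
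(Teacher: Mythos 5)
The paper does not actually prove Theorem~\ref{VISCOSITY2}; it states it and cites~\cite{PV15}, and the only in-paper computation in this neighborhood is the ``formal (not rigorous) justification'' of the ODE system~\eqref{solx} for the monotone case of Theorem~\ref{VISCOSITY}. Your adaptation of that formal projection argument to the mixed-orientation setting is correct in its algebra: the change of variable $y=\xi_k(x-x_k)/\epsilon$ leaves $\int (u_k')^2\,dx$ unchanged and picks up the factor $\xi_k$ only through $\partial_t u_k=-\xi_k\dot x_k u_k'/\epsilon$, the expansion $W'(u_i)\simeq-\frac{\epsilon^{2s}}{2s}\frac{\xi_i(x-x_i)}{|x-x_i|^{1+2s}}$ follows from the decay~\eqref{layersol} applied with argument $\xi_i(x-x_i)/\epsilon$, and multiplying the projected identity by $-\gamma\xi_k$ (using $\xi_k^2=1$) recovers precisely~\eqref{solx1}. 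The sub/super-solution and comparison-principle strategy you describe is also what the literature (\cite{GM12,DPV15,DFV14,PV15}) does.

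There is, however, a genuine gap in your closing paragraph, and it is at the single point where the mixed-orientation case is actually harder than the monotone one. You assert that the ODE system~\eqref{solx1} ``is well-posed for all $t>0$ with no collisions of the $x_i(t)$,'' citing~\cite{MONNEAU-FORCADEL-IMBERT}, and that this ``prevents the barrier construction from degenerating.'' That reference treats the monotone system~\eqref{solx}, where all interaction terms are repulsive and collisions indeed never occur. For mixed orientations, two adjacent transitions with $\xi_i\xi_{i+1}=-1$ \emph{attract} each other, and collisions do occur in finite time: this is the whole content of the discussion that the paper devotes to Theorem~\ref{J78K} and to the collision-time estimates immediately after the statement you are proving. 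Consequently, the convergence $\lim_{\epsilon\to0}v_\epsilon=\sum_i H(\xi_i(x-x_i(t)))$ can only be asserted for $t$ before the first collision time $T_c$ (which is how it is meant in~\cite{PV15}); the barrier construction does degenerate as $t\to T_c^-$, and its breakdown must be quantified rather than ruled out. Your proposal needs to (a) restrict the time interval to $[0,T_c)$ and (b) replace the ``no collision'' claim by a uniform control on the barrier error \emph{away} from $T_c$, since otherwise the sandwiching argument you invoke simply does not close.

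One further, minor, remark: the paper does not give a rigorous proof of Theorem~\ref{VISCOSITY} either (it cites \cite{GM12,DPV15,DFV14}), so the sentence ``will follow the same architecture as that of Theorem~\ref{VISCOSITY}'' refers to a proof that is also only sketched here; your outline is a heuristic of the same kind, not a self-contained argument. That is acceptable in context, but you should say so explicitly rather than frame the projection computation as if it completed the proof.
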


We observe that
Theorem~\ref{VISCOSITY2} reduces to Theorem~\ref{VISCOSITY}
when~$\xi_1=\dots=\xi_n=1$. In fact, the case discussed
in Theorem~\ref{VISCOSITY2} is richer than the one in Theorem~\ref{VISCOSITY},
since, in the case of different initial orientations,
collisions can occur, i.e. it may happen that~$x_i(T_c)=x_{i+1}(T_c)$
for some~$i\in\{1,\dots,N-1\}$ at a collision time~$T_c$.

For instance, in the case $N=2$, for $\xi_1=1$ and $\xi_2=-1$
(two initial dislocations with different orientations)
we have that
	\[ \begin{split}&\text{ if } \sigma \leq 0  \mbox{ then } T_c\leq\frac{s\theta_0^{1+2s}}{(2s+1)\gamma},\\
		&\text{ if } \theta_0 < (2s \|\sigma\|_{\infty})^{-\frac{1}{2s}} \mbox{ then } T_c \leq \frac{s\theta_0^{1+2s}}{\gamma(1-2s \theta_0\|\sigma\|_{\infty} )},
		\end{split} \]
where~$\theta_0:=x_2^0-x_1^0$ is the initial
distance between the dislocated atoms.
That is, if either the external force has the right sign,
or the initial distance is suitably small with respect to the
external force, then the dislocation time is finite,
and collisions occur in a finite time
(on the other hand, when these conditions are violated,
there are examples in which collisions do not occur).\bigskip

This and more general cases of collisions, with precise
estimates on the collision times, are discussed in detail
in~\cite{PV15}.\bigskip

An interesting feature of the system
is that the dislocation function~$v_\epsilon$
does not annihilate
at the collision time. More precisely, in the appropriate
scale, we have that~$v_\epsilon$ at the collision time
vanishes outside the collision
points, but it still preserves a non-negligible
asymptotic contribution exactly
at the collision points.
A formal statement is the following (see~\cite{PV15}):

\begin{theorem}\label{J78K}
Let~$N=2$ and assume that a
collision occurs. Let~$x_c$ be the collision point, namely $x_c=x_1(T_c)=x_2(T_c)$. Then 
\begin{equation} \label{C1} \lim_{t\to T_c} \lim_{\eee\to 0} v_\eee(t,x)=0 \quad \mbox { for any } \quad x\neq x_c,\end{equation} but
\begin{equation}\label{C2}\limsup_{\substack{ {t\to T_c}\\{\eee\to 0}}} v_\eee(t,x_c) \geq 1.\end{equation} 
\end{theorem}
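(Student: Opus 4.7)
The plan is to deduce both conclusions from the convergence statement of Theorem~\ref{VISCOSITY2} together with a comparison principle applied to explicit sub- and supersolutions of the rescaled parabolic equation~\eqref{pareq} built out of the basic layer solution $u$ of~\eqref{allencahn}.

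First, for \eqref{C1}, I would fix $x\neq x_c$ and pass to the limit in $\eee$ at a fixed $t<T_c$: Theorem~\ref{VISCOSITY2} already tells us that $v_\eee(t,x)$ converges (in the viscosity/a.e.\ sense, and locally uniformly away from the transition points) to the step function $H(\xi_1(x-x_1(t)))+H(\xi_2(x-x_2(t)))$. With $\xi_1=+1$, $\xi_2=-1$ (the orientations that force the attractive regime producing the collision), this step profile has jumps only at $x_1(t)$ and $x_2(t)$, and as $t\to T_c^-$ both transition points collapse onto $x_c$; their Heaviside contributions cancel pointwise at every $x\neq x_c$, modulo the constant background which is annihilated by the periodicity of $W$ and the normalization of $u$. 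The residual external-stress term in $v_\eee(0,x)$ is of order $\eee^{2s}$ and disappears in the iterated limit, giving~\eqref{C1}.

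Next, for \eqref{C2}, the strategy is to construct an explicit subsolution concentrated at $x_c$ that retains unit mass as $t\to T_c$ and $\eee\to 0$. Because the two transitions have opposite orientation and meet at $x_c$, the natural candidate is the superposition
\[
\underline{v}_\eee(t,x)\;:=\;u\!\left(\frac{x-x_1(t)}{\eee}\right)\,+\,u\!\left(-\,\frac{x-x_2(t)}{\eee}\right)\,-\,\eta_\eee,
\]
where $\eta_\eee\to 0$ is a small correction chosen to absorb the mismatch created by the nonlinear reaction term $W'$. Evaluated at $x=x_c$ and at $t=T_c$ (when $x_1=x_2=x_c$) this gives $u(0)+u(0)-\eta_\eee=1-\eta_\eee$. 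Verifying that $\underline v_\eee$ is indeed a subsolution of~\eqref{pareq} uses: the fact that each single layer solves the stationary equation $-(-\Delta)^s u = W'(u)$; the polynomial decay \eqref{layersol} of $u-H$, which controls the cross-interaction between the two transitions by a term of order $\eee^{2s}/|x_1(t)-x_2(t)|^{2s}$; and the Taylor expansion of $W'$ at its zeros (with $W''(0)>0$), which turns this cross-interaction into a lower-order forcing that can be absorbed by the dynamic correction $\eta_\eee$. Applying the comparison principle for viscosity solutions of~\eqref{pareq} then yields $v_\eee(t,x_c)\geq \underline v_\eee(t,x_c)$, so that choosing a diagonal sequence $(t_k,\eee_k)\to(T_c,0)$ along which $\eta_{\eee_k}\to 0$ produces \eqref{C2}.

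The main obstacle is precisely the subsolution construction in the second step: both $x_1(t)$ and $x_2(t)$ approach $x_c$, so the cross-interaction term $W'(u_1+u_2)-W'(u_1)-W'(u_2)$ can become large exactly in the regime one needs to control, and the ODE for the dynamics~\eqref{solx1} becomes singular as $|x_1-x_2|\to 0$. One must therefore quantify carefully how fast the collision occurs (using the finite-time collision estimates that accompany Theorem~\ref{VISCOSITY2}) and coordinate that rate with the parameter $\eee$, so that the error between the genuine solution and the two-bump barrier is dominated by the correction $\eta_\eee$ uniformly on a shrinking time window $[T_c-\tau_\eee,T_c]$. This diagonal choice of $(\eee_k,t_k)$ is what turns a pointwise lower bound of the form $1-o(1)$ into the $\limsup\geq 1$ asserted in~\eqref{C2}.
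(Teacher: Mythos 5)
The thesis does not actually contain a proof of this theorem --- it is stated with the attribution ``(see~\cite{PV15})'' and the proof lives in that external reference --- so there is no internal proof to compare against. Judging your proposal on its own merits, it has a concrete gap in the argument for~\eqref{C1} and an unresolved crux in~\eqref{C2}.

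For~\eqref{C1}, your claim that ``their Heaviside contributions cancel pointwise at every $x\neq x_c$'' is not correct under the convention of Theorem~\ref{VISCOSITY2}, which gives $\lim_{\eee\to0}v_\eee(t,x)=H(\xi_1(x-x_1(t)))+H(\xi_2(x-x_2(t)))$. With $\xi_1=1$, $\xi_2=-1$ and $x_1(t)<x_2(t)$, this is $H(x-x_1(t))+H(x_2(t)-x)$, which equals $1$ for $x<x_1(t)$, equals $2$ for $x_1(t)<x<x_2(t)$, and equals $1$ for $x>x_2(t)$: the two Heavisides do not cancel, they \emph{sum} to the constant $1$ away from the transitions (and to $0$ only if the ordering is reversed, which then gives $1$ outside). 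As $t\to T_c^-$ the iterated limit is therefore $1$ at every $x\neq x_c$, not $0$. You acknowledge a ``constant background'' but then wave it away by appealing to ``periodicity of $W$ and the normalization of $u$'': this is not a valid step, since $v_\eee$ is an actual real-valued function, not a class modulo $\Z$, and $1\neq 0$. The iterated limit $\lim_{\eee\to0}v_\eee(t,x)$ equals $0$ only for $t>T_c$, where the post-collision relaxation (cf.\ Theorem~\ref{thmexponentialdecay}, which gives $|v_\eee(t,x)|\le\varrho_\eee e^{c(T_\eee-t)/\eee^{2s+1}}$ with $\varrho_\eee\to0$ and $T_\eee\to T_c$) forces $v_\eee\to0$ uniformly. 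So the proof of~\eqref{C1} must run through the \emph{post-collision} decay estimate, a genuinely different PDE argument (supersolution barriers after $T_c$), and not through the pre-collision step-function profile that you invoke.

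For~\eqref{C2}, your two-bump subsolution $\underline v_\eee(t,x)=u\big(\tfrac{x-x_1(t)}{\eee}\big)+u\big(-\tfrac{x-x_2(t)}{\eee}\big)-\eta_\eee$ is the right kind of object, and you correctly identify the obstacle: the cross-interaction term $W'(u_1+u_2)-W'(u_1)-W'(u_2)$, estimated via~\eqref{layersol}, is of size $\eee^{2s}/|x_1(t)-x_2(t)|^{2s}$, which is not small when $|x_1(t)-x_2(t)|\to0$ at a rate comparable to $\eee$; one must coordinate the shrinking time window $[T_c-\tau_\eee,T_c]$ with $\eee$ so that the barrier inequality survives, and this quantitative tuning (using the collision-time estimates) is precisely the heart of the argument and is not carried out. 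You have the skeleton, but the step that is easy to state and hard to check --- verifying the subsolution inequality uniformly on a shrinking window as the singularity of~\eqref{solx1} is approached --- is exactly the one that is deferred, so the proof of~\eqref{C2} is incomplete rather than wrong in spirit.
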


Formulas~\eqref{C1} and~\eqref{C2} describe what happens in the crystal at the collision time. On the one hand, formula~\eqref{C1} states that at any point that is not the collision point and at a large scale, the system relaxes at the collision time. On the other hand, formula~\eqref{C2} states that the behavior at the collision points at the collision time is quite ``singular''. Namely, the system does not relax immediately
(in the appropriate scale). As a matter of fact, in related numerical simulations
(see e.g. \cite{RAABE}) one may notice that the dislocation
function may persists after collision and, in higher dimensions,
further collisions may change the topology of the dislocation curves.

What happens is that a slightly larger time is needed before the system relaxes exponentially fast: a detailed description of this relaxation phenomenon
is presented in \cite{RELAX}. For instance,
in the case~$N=2$,
the dislocation function decays to zero
exponentially fast, slightly after collision, as given by the following
result:

\begin{theorem}\label{thmexponentialdecay}
Let~$N=2$, $\xi_1=1$, $\xi_2=-1$, and let
$v_\epsilon$ be the solution given by Theorem~\ref{VISCOSITY2},
with~$\sigma\equiv0$.
Then there exist
$\epsilon_0>0$, $c>0$, $T_\epsilon>T_c$ and~$\rho_\epsilon>0$
satisfying
\begin{eqnarray*}
&& \lim_{\epsilon\to0} T_\epsilon=T_c\\
{\mbox{and }}&& \lim_{\epsilon\to0}
\varrho_\epsilon=0\end{eqnarray*}
such that for any $\epsilon<\epsilon_0$ we have
\begin{equation}\label{4.23bis}
|v_\epsilon (t,x)|\leq \varrho_\epsilon
e^{c\frac{T_\epsilon-t}{\epsilon^{2s+1}}},\quad\text{for any }x\in\R\text{ and
}t\geq T_\epsilon.\end{equation} \end{theorem}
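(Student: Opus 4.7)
The plan is to combine a post-collision smallness estimate (which provides the initial height $\varrho_\epsilon$ at the time $T_\epsilon$) with a linearization argument near zero that yields a supersolution decaying exponentially in the parabolic time scale $\epsilon^{-(2s+1)}$. Then comparison principles for the rescaled parabolic equation close the argument.

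First I would choose $T_\epsilon$. By Theorem~\ref{J78K} (and the convergence statements of Theorem~\ref{VISCOSITY2}), for any fixed $\delta>0$ the function $v_\epsilon(t,\cdot)$ converges uniformly to $0$ on $\mathbb{R}$ at times $t = T_c + \delta$, once $\epsilon$ is sufficiently small. Hence I can pick $T_\epsilon := T_c + \delta_\epsilon$ with $\delta_\epsilon \to 0^+$ slowly enough that
\[
\varrho_\epsilon := \sup_{x\in\mathbb{R}} |v_\epsilon(T_\epsilon,x)| \longrightarrow 0 \quad \text{as } \epsilon\to 0,
\]
and still $T_\epsilon > T_c$. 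This defines both $T_\epsilon$ and $\varrho_\epsilon$ with the required asymptotic properties.

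Next, the key analytic step is to linearize near $u=0$. Since $W(u+1)=W(u)$, $W(0)=0$ is a nondegenerate minimum with $W''(0)>0$; in particular, by Taylor's formula, there exists $\eta>0$ such that for all $|u|\leq\eta$ one has $u\,W'(u) \geq \tfrac{1}{2} W''(0)\, u^2$. Once $\varrho_\epsilon$ is smaller than $\eta$ (which holds for $\epsilon<\epsilon_0$), I would look for a supersolution on $[T_\epsilon,\infty)\times\mathbb{R}$ of the form
\[
\Psi(t,x) := \varrho_\epsilon\, \exp\!\Bigl(c\,\frac{T_\epsilon - t}{\epsilon^{2s+1}}\Bigr),
\]
which is spatially constant. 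A direct computation gives
\[
\partial_t \Psi + \frac{1}{\epsilon}(-\Delta)^s \Psi + \frac{1}{\epsilon^{2s+1}} W'(\Psi)
= \frac{\Psi}{\epsilon^{2s+1}}\left( -c + \frac{W'(\Psi)}{\Psi}\right),
\]
which is nonnegative provided $c \leq \tfrac{1}{2} W''(0)$ (using the lower bound $W'(\Psi)/\Psi \geq \tfrac{1}{2}W''(0)$ valid when $|\Psi|\leq \eta$). By construction $\Psi(T_\epsilon,x) = \varrho_\epsilon \geq |v_\epsilon(T_\epsilon,x)|$, so $\pm v_\epsilon \leq \Psi$ at $t=T_\epsilon$; applying the comparison principle for the rescaled viscous Hamilton-Jacobi equation driving $v_\epsilon$ (valid in the viscosity-solution framework of Theorem~\ref{VISCOSITY2}) to both $\Psi - v_\epsilon$ and $\Psi + v_\epsilon$ yields $|v_\epsilon(t,x)| \leq \Psi(t,x)$ for $t\geq T_\epsilon$, which is exactly \eqref{4.23bis} with $c := \tfrac12 W''(0)$.

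The main obstacle, and the reason the first step above needs to be done carefully, is that the supersolution argument requires $|v_\epsilon|$ to remain in the linearization regime $\{|u|\leq \eta\}$ throughout $[T_\epsilon,\infty)$; this is not automatic from Theorem~\ref{J78K}, which only states that the limit at $T_c$ vanishes pointwise away from $x_c$. To make $\varrho_\epsilon<\eta$ truly uniform in $x$, one needs a diagonal choice of $\delta_\epsilon$ together with either the quantitative collision estimates of~\cite{PV15} or a decay-at-infinity bound for $v_\epsilon$ (controlling $v_\epsilon$ on $|x|\gg 1$ by the tail behavior \eqref{layersol} of the basic layer $u$). Once uniform smallness at $t=T_\epsilon$ is established, the supersolution $\Psi$ remains $\leq \eta$ for all later times, and the comparison bootstraps the smallness forward in time with the claimed exponential rate.
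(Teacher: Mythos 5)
The thesis itself gives no proof of Theorem~\ref{thmexponentialdecay} --- it is quoted from~\cite{RELAX} --- so there is no argument here to compare yours against; I can only evaluate the proposal on its own merits.

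Your second step is correct and essentially complete: since $(-\Delta)^s$ annihilates spatial constants, the supersolution inequality for $\Psi(t)=\varrho_\epsilon e^{c(T_\epsilon-t)/\epsilon^{2s+1}}$ collapses to the ODE condition $W'(\Psi)\geq c\Psi$, which holds with $c=\tfrac12 W''(0)$ once $|\Psi|\leq\eta$; the analogous computation (using $-W'(-u)\geq\tfrac12 W''(0)u$ for small $u>0$) shows $-\Psi$ is a subsolution, and comparison transfers both bounds to $v_\epsilon$, giving~\eqref{4.23bis}. The real gap is your first step. The assertion that for each fixed $\delta>0$ one has $v_\epsilon(T_c+\delta,\cdot)\to 0$ uniformly on $\R$ as $\epsilon\to0$ is not a consequence of anything you cite: Theorem~\ref{J78K} provides only a double, pointwise-in-$x$ limit (first $\epsilon\to0$, then $t\to T_c$), and neither it nor Theorem~\ref{VISCOSITY2} says anything at all about times $t>T_c$, since the ODE system~\eqref{solx1} degenerates at collision and the step-function limit profile ceases to be defined. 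Producing uniform smallness at a time $T_\epsilon>T_c$ with $T_\epsilon\to T_c$ is exactly the hard, quantitative part of the proof in~\cite{RELAX}, carried out with explicit barriers built from the layer solution $u$ and its polynomial tail~\eqref{layersol}; your diagonal choice of $\delta_\epsilon$ presupposes that estimate rather than establishing it. You should also flag that, with the initial datum of Theorem~\ref{VISCOSITY2} taken literally for $\xi_1=1,\ \xi_2=-1$, one gets $v_\epsilon(0,\pm\infty)=1$, so uniform-in-$x$ smallness at $T_\epsilon$ is only consistent after the appropriate normalization (subtracting the constant background $1$, as is done in~\cite{RELAX}); without noting this, the target inequality is vacuously unattainable at $|x|\to\infty$.
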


The estimate in~\eqref{4.23bis} states, roughly speaking, that
at a suitable time~$T_\epsilon$ (only slightly bigger than the collision
time~$T_c$) the dislocation function gets below a small threshold~$\rho_\epsilon$,
and later it decays exponentially fast (the constant of this
exponential becomes large when $\epsilon$ is small).

The reader may compare Theorem~\ref{J78K}
and~\ref{thmexponentialdecay} and notice that different asymptotics
are considered by the two results.
A result
similar to Theorem~\ref{thmexponentialdecay}
holds for a larger number of dislocated atoms.
For instance,
in the case of three atoms with alternate
dislocations, one has that, slightly after collision,
the dislocation function decays
exponentially fast to
the basic layer solution. More precisely (see again~\cite{RELAX}),
we have that:

\begin{theorem}\label{mainthmbeforecoll3}
Let~$N=3$, $\xi_1=\xi_3=1$, $\xi_2=-1$, and let
$v_\epsilon$ be the solution given by Theorem~\ref{VISCOSITY2},
with~$\sigma\equiv0$.
Then there exist
$\epsilon_0>0$, $c>0$, 
$T_\epsilon^1,T_\epsilon^2>T_c$ and~$\rho_\epsilon>0$
satisfying
\begin{eqnarray*}
&& \lim_{\epsilon\to0} T_\epsilon^1=
\lim_{\epsilon\to0} T_\epsilon^2
=T_c,\\
{\mbox{and }}&& \lim_{\epsilon\to0}
\varrho_\epsilon=0\end{eqnarray*}
and points~$\bar y_\epsilon$ and~$\bar z_\epsilon$ satisfying
$$ \lim_{\epsilon\to0}
|\bar z_\epsilon-\bar y_\epsilon|=0$$
such that for any $\epsilon<\epsilon_0$ we have
\begin{equation}
\label{SDF-1}
v_\epsilon(t,x)\leq
   u\left(\frac{x-\bar y_\epsilon}{\epsilon}\right)+
\varrho_\epsilon e^{-\frac{c
(t-T^1_\epsilon)}{\epsilon^{2s+1}}},\qquad
   \text{for any }x\in\R \text{ and }t\geq T^1_\epsilon,
\end{equation}
and
\begin{equation}
\label{SDF-2}
v_\epsilon(t,x)\geq
u\left(\frac{x-\bar z_\epsilon}{\epsilon}\right)
-\varrho_\epsilon e^{-\frac{c
(t-T^2_\epsilon)}{\epsilon^{2s+1}}},\qquad
\text{for any }x\in\R
\text{ and }t\geq
T^2_\epsilon,\end{equation}
where $u$ is the basic layer solution introduced in~\eqref{allencahn}.
\end{theorem}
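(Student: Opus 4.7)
The plan is to prove Theorem~\ref{mainthmbeforecoll3} by a barrier argument, trapping $v_\epsilon$ between an upper and a lower translated basic layer solution, each modified by a perturbation that decays exponentially in the fast time scale $\tau = (t-T_\epsilon^{i})/\epsilon^{2s+1}$. Since $u$ solves $(-\Delta)^s u = W'(u)$ in $\R$, the translated profile $u\big((x-\bar y_\epsilon)/\epsilon\big)$ is a stationary solution of the full parabolic equation for $v_\epsilon$ (with $\sigma\equiv 0$), so it is a natural center around which to linearize.

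First I would set up the supersolution
\[
\bar v_\epsilon(t,x):= u\!\left(\frac{x-\bar y_\epsilon}{\epsilon}\right)+\varrho_\epsilon\,e^{-\frac{c\,(t-T^{1}_\epsilon)}{\epsilon^{2s+1}}},\qquad t\ge T^{1}_\epsilon,
\]
and the analogous subsolution with $-$ and $\bar z_\epsilon$. A direct computation, using that $(-\Delta)^s$ is linear and kills constants and that the $\eta(t):=\varrho_\epsilon e^{-c(t-T_\epsilon^{1})/\epsilon^{2s+1}}$ part is spatially constant, reduces the supersolution inequality
\[
\partial_t\bar v_\epsilon\ge \frac{1}{\epsilon}\Big(-(-\Delta)^s\bar v_\epsilon-\frac{1}{\epsilon^{2s}}W'(\bar v_\epsilon)\Big)
\]
to the pointwise requirement
\[
-\frac{c\,\eta}{\epsilon^{2s+1}}\ge \frac{1}{\epsilon^{2s+1}}\big[W'(u)-W'(u+\eta)\big]=-\frac{1}{\epsilon^{2s+1}}\big(W''(u)\,\eta+\mathcal O(\eta^2)\big).
\]
This holds as soon as $c\le W''(u)-C\varrho_\epsilon$ at every $y=(x-\bar y_\epsilon)/\epsilon$. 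Because $u(y)$ lies in $(0,1)$ and tends to $0$ or $1$ as $y\to\mp\infty$, the quantity $W''(u(y))$ is bounded below by a positive constant on the transition region and equals $W''(0)=W''(1)>0$ at infinity; hence such a uniform $c>0$ exists provided $\varrho_\epsilon\to 0$. So if we can start the barrier at $t=T^1_\epsilon$ with $v_\epsilon(T^1_\epsilon,x)\le \bar v_\epsilon(T^1_\epsilon,x)$, the comparison principle for the parabolic fractional equation gives \eqref{SDF-1} for all later times, and symmetrically for \eqref{SDF-2}.

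The core of the proof — and the main obstacle — is producing the times $T^{1}_\epsilon, T^{2}_\epsilon\searrow T_c$, the centers $\bar y_\epsilon,\bar z_\epsilon$ with $|\bar z_\epsilon-\bar y_\epsilon|\to 0$, and the small threshold $\varrho_\epsilon\to 0$ such that at $t=T^1_\epsilon$ the inequality $v_\epsilon\le \bar v_\epsilon$ already holds. For $N=3$ with orientations $\xi_1=\xi_3=1,\xi_2=-1$, the ODE system \eqref{solx1} drives $x_1,x_2,x_3$ to a common collision point $x_c$, and the counterpart of Theorem~\ref{J78K} tells us that at time $T_c$ the dislocation function relaxes to $0$ away from $x_c$ but retains mass of order $1$ at $x_c$ itself (since the two $+$ transitions and one $-$ transition leave a net positive dislocation). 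I would use this to show that for $t$ slightly larger than $T_c$, $v_\epsilon$ is, up to $\mathcal{O}(\varrho_\epsilon)$, already indistinguishable from a single shifted layer $u((x-\bar y_\epsilon)/\epsilon)$: away from $x_c$ the bound follows from the decay estimate~\eqref{layersol} and the fact that the two canceling transitions have essentially annihilated, while near $x_c$ one extracts the surviving profile by a compactness/blow-up argument combined with the uniqueness of the layer solution to \eqref{allencahn}.

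Concretely, the hard step will be to quantify the "post-collision" behavior: one must show that immediately after $T_c$ there is a well-defined short window $(T_c,T_\epsilon^{1})$, of length vanishing with $\epsilon$, during which the three-transition configuration reorganizes into one basic layer plus an error of size $\varrho_\epsilon$. I would do this by constructing ad-hoc sub- and supersolutions on $[T_c-\delta_\epsilon,T_\epsilon^{1}]$ modeled on sums $\sum_{i} u(\xi_i(x-x_i(t))/\epsilon)$ corrected by a "fast" exponential factor tuned to dominate the remainder terms in the expansion of $W'$, exactly as in the proof of Theorem~\ref{thmexponentialdecay} for two atoms. The new difficulty over the $N=2$ case is that after the two $\pm$ transitions in the $N=3$ problem annihilate, a third transition remains, and the barrier must track this surviving transition rather than relaxing to zero; this is precisely what forces the appearance of $u((x-\bar y_\epsilon)/\epsilon)$ in~\eqref{SDF-1}–\eqref{SDF-2} and determines the centers $\bar y_\epsilon,\bar z_\epsilon$. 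Once this post-collision matching is achieved, the exponential decay part of the statement follows from the supersolution argument sketched above.
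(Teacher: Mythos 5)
The paper does not actually prove this theorem; it states it and refers to~\cite{RELAX} for the proof, so there is no internal argument to compare your proposal against. Judged on its own, your proposal has the right general skeleton (trap $v_\epsilon$ between shifted layers plus exponentially small errors, and reduce the problem to an initial comparison at a time $T_\epsilon^i$ just past $T_c$), but it contains a genuine and fatal gap in the supersolution computation.

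The gap is your claim that $W''(u(y))$ ``is bounded below by a positive constant on the transition region.'' This is false for the class of potentials considered here: $W$ is a smooth $1$-periodic function with $W(0)=0<W(u)$ for $u\in(0,1)$ and $W''(0)>0$, so $W$ attains an interior maximum in $(0,1)$ and $W''$ is strictly negative in a neighborhood of that maximum. Since $u$ ranges monotonically over all of $(0,1)$, the quantity $W''(u(y))$ necessarily changes sign. Consequently the inequality $-c\,\eta\ge -W''(u)\eta+\mathcal O(\eta^2)$ cannot be satisfied with a uniform $c>0$ for a perturbation $\eta(t)$ that is constant in $x$: near the transition midpoint the right-hand side is positive and the barrier fails to be a supersolution.

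The standard remedy — and the one needed to make this argument work — is a Fife--McLeod type moving barrier: take $\bar v_\epsilon(t,x)=u\bigl((x-\bar y_\epsilon - p(t))/\epsilon\bigr)+q(t)$ with a time-dependent translation $p(t)$ chosen so that the extra term $-\dot p(t)\,u'/\epsilon$ in $\partial_t\bar v_\epsilon$ compensates the regions where $W''(u)$ is negative. This works because $u'>0$ is bounded away from zero exactly on the part of the transition where $W''(u)$ is negative, so one can trade the two. Without that correction the exponential-decay step of your proof does not close; the post-collision matching step you describe, by contrast, is the right idea and is in the spirit of Theorem~\ref{J78K} and the $N=2$ argument for Theorem~\ref{thmexponentialdecay}.
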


Roughly speaking, formulas~\eqref{SDF-1}
and~\eqref{SDF-2}
say that for times~$T^1_\epsilon$, $T^2_\epsilon$
just slightly bigger than the collision time~$T_c$,
the dislocation function~$v_\epsilon$ gets trapped
between two basic layer solutions (centered at points~$\bar y_\epsilon$
and~$\bar z_\epsilon$), up to a small error.
The error gets eventually to zero, exponentially fast in time,
and the two basic layer solutions which trap~$v_\epsilon$
get closer and closer to each other as~$\epsilon$
goes to zero (that is, the distance between~$\bar y_\epsilon$
and~$\bar z_\epsilon$ goes to zero with~$\epsilon$).

We refer once more
to~\cite{RELAX} for a series of figures
describing in details
the results of Theorems~\ref{thmexponentialdecay}
and~\ref{mainthmbeforecoll3}. 
We observe that
the results presented in Theorems \ref{VISCOSITY}, \ref{VISCOSITY2}, \ref{J78K}, \ref{thmexponentialdecay} and \ref{mainthmbeforecoll3} describe the crystal at different space and time scale.
As a matter of fact, the mathematical study of a crystal typically goes from an atomic description (such as a classical discrete model presented by Frenkel-Kontorova
and Prandtl-Tomlinson) to a macroscopic scale in which a plastic deformation occurs.

In the theory discussed here, we join this atomic and macroscopic scales by a series
of intermediate scales, such as a microscopic scale, in which the Peierls-Nabarro model is introduced,
a mesoscopic scale, in which we studied the dynamics of the dislocations (in particular,
Theorems \ref{VISCOSITY} and \ref{VISCOSITY2}),
in order to obtain at the end a macroscopic theory
leading to the relaxation of the model to a permanent
deformation (as given in Theorems \ref{thmexponentialdecay} and \ref{mainthmbeforecoll3} ,
while Theorem \ref{J78K} somehow describes the further intermediate
features between these schematic scalings).

\subsection{An approach to the extension problem via the Fourier transform}

We will discuss here the extension operator of the fractional Laplacian
via the Fourier transform approach (see~\cite{CAFSIL} and~\cite{stingatorrea2} 
for other approaches and further results and also~\cite{FGMT},
in which a different extension formula is obtained
in the framework of the Heisenberg groups). 
%

We fix at first a few pieces of
notation. We denote points in~$\R^{n+1}_+:=\R^n\times(0,+\infty)$ as~$X=(x,y)$, with~$x\in\R^n$ and~$y>0$. When taking gradients in~$\R^{n+1}_+$, we write~$\nabla=(\nabla_x,\partial_y)$, where~$\nabla_x$ is the gradient in~$\R^n$. Also, in~$\R^{n+1}_+$, we will often take the
Fourier transform in the variable~$x$ only, for fixed~$y>0$. We also set \[ a:=1-2s\in(-1,1).\]

We will consider the fractional Sobolev space $\widehat H^s(\Rn)$
defined as the 
set of functions $u$ that satisfy
\[\|u\|_{L^2(\Rn)}+[\widehat u]_G <+\infty,\]
where
	\[ [v]_G:=\sqrt{ \int_{\Rn} |\xi|^{2s}\,|v(\xi)|^2\,d\xi}. \]
For any~$u\in W^{1,1}_{\rm loc}((0,+\infty))$, we consider the functional 	
	\begin{equation} \label{Gu}
		 G(u):=\int_0^{+\infty}t^a \Big(\big|u(t)\big|^2+\big|u'(t)\big|^2\Big) \,dt.
	\end{equation}
By Theorem~4 of~\cite{SeV14}, we know that the functional~$G$ attains its minimum among all the functions~$u\in W^{1,1}_{\rm loc}((0,+\infty))\cap C^0([0,+\infty))$ with~$u(0)=1$. We call~$g$ such minimizer and
	\begin{equation}\label{F-0}
		C_\sharp:=G(g)=\min_{{u\in W^{1,1}_{\rm loc}((0,+\infty)) 
		\cap C^0([0,+\infty))}\atop{u(0)=1}} G(u).
	\end{equation}

The main result of this subsection is the following.

\begin{theorem} \label{thmext}
Let~$u\in \mathcal{S}(\R^n)$ and let
	\begin{equation}\label{F-7-bis--}
		U(x,y):={\mathcal{F}}^{-1} \Big(\widehat u(\xi)\,g(|\xi|y)\Big).
	\end{equation}
Then
	\begin{equation}\label{78}
		{\rm div}\, (y^a \nabla U)=0
	\end{equation}
for any~$X=(x,y)\in\R^{n+1}_+$. 
In addition,
	\begin{equation}\label{0S}
		-y^a\partial_y U \Big|_{\{y=0\}} = C_\sharp (-\Delta)^s u
	\end{equation}
in~$\Rn$, both in the sense of distributions and as a pointwise limit.
\end{theorem}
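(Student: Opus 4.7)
The plan is to turn the statement into a one-dimensional ODE problem by taking the partial Fourier transform in the $x$-variable, exploiting the fact that $U$ has been defined precisely so that $\widehat{U}(\xi,y)=\widehat u(\xi)\,g(|\xi|y)$ is a product of a symbol in $\xi$ and a profile in $y$. First, I would derive the Euler--Lagrange equation for the minimizer $g$ of the functional $G$ in \eqref{Gu}. Since $g(0)=1$ is fixed, inserting variations $\varphi\in C^{\infty}_{c}([0,+\infty))$ with $\varphi(0)=0$ into $G(g+\varepsilon\varphi)$, differentiating at $\varepsilon=0$ and integrating by parts gives
\[
(t^{a}g'(t))' \;=\; t^{a}\,g(t) \qquad\text{on }(0,+\infty),
\]
together with the integrability conditions $t^{a}g^{2},\,t^{a}(g')^{2}\in L^{1}(0,+\infty)$ implied by $G(g)=C_\sharp<+\infty$.

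Next, I would identify the boundary flux value. Multiplying the Euler--Lagrange equation by $g$, integrating over $(0,+\infty)$ and integrating by parts yields
\[
\int_{0}^{+\infty} t^{a}\bigl(g^{2}+(g')^{2}\bigr)\,dt \;=\; \bigl[t^{a}g'(t)g(t)\bigr]_{0}^{+\infty}\;+\;0.
\]
Using $g(0)=1$ and the decay of $t^{a/2}g$ and $t^{a/2}g'$ at infinity (forced by $G(g)<+\infty$ and the ODE), the boundary term at infinity vanishes and we obtain the crucial identity
\[
-\lim_{t\to 0^{+}} t^{a}g'(t)\;=\;G(g)\;=\;C_\sharp.
\]
This is the mechanism that produces the constant in~\eqref{0S}.

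Then, for the bulk equation \eqref{78}, I would take the partial Fourier transform in $x$ of $\operatorname{div}(y^{a}\nabla U)=y^{a}\Delta_{x}U+\partial_{y}(y^{a}\partial_{y}U)$. With the substitution $t=|\xi|y$, a direct chain-rule calculation gives
\[
\widehat{\partial_{y}(y^{a}\partial_{y}U)}(\xi,y)\;=\;|\xi|^{2-a}\,\widehat u(\xi)\,(t^{a}g'(t))'\bigr|_{t=|\xi|y},
\]
\[
\widehat{y^{a}\Delta_{x}U}(\xi,y)\;=\;-\,C\,|\xi|^{2-a}\,\widehat u(\xi)\,t^{a}g(t)\bigr|_{t=|\xi|y},
\]
where $C$ is the multiplier produced by the Fourier convention (it is exactly this constant which can be absorbed into the definition of $g$). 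The Euler--Lagrange equation for $g$ makes these two terms cancel, giving \eqref{78} in the distributional sense on $\R^{n+1}_{+}$; pointwise validity then follows because $u\in\mathcal{S}(\R^{n})$ makes $\widehat U(\cdot,y)$ rapidly decaying, so $U$ is smooth in $\R^{n+1}_{+}$. For \eqref{0S}, using the same change of variables,
\[
\widehat{-y^{a}\partial_{y}U}(\xi,y)\;=\;-|\xi|^{1-a}\,\bigl(t^{a}g'(t)\bigr)\bigr|_{t=|\xi|y}\,\widehat u(\xi),
\]
and the previous paragraph shows that as $y\to 0^{+}$ this tends to $C_\sharp\,|\xi|^{2s}\,\widehat u(\xi)$, which matches $C_\sharp\,\widehat{(-\Delta)^{s}u}$ up to the Fourier normalization. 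Inverting the transform yields \eqref{0S}.

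The main obstacle I anticipate is justifying the limits at the degenerate boundary $\{y=0\}$ in both the distributional and the pointwise sense simultaneously. On the Fourier side one needs a uniform-in-$\xi$ control of $t^{a}g'(t)$ near $t=0$ (not just a pointwise limit), so that dominated convergence can push the limit past $\int_{\R^{n}}(\,\cdot\,)\,e^{2\pi i x\cdot\xi}\,d\xi$; this in turn requires precise asymptotics of the minimizer~$g$ at the origin, extracted from the ODE $(t^{a}g')'=t^{a}g$ by ODE techniques (power-series at $0$ giving $g(t)=1+o(1)$ and $t^{a}g'(t)=-C_\sharp+o(1)$, and a Bessel-type analysis at infinity giving exponential decay). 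Establishing these two one-sided expansions for $g$, and merging them with the Schwartz decay of~$\widehat u$, is the technically delicate step on which the whole argument rests.
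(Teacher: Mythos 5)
Your proposal is correct, but it takes a genuinely different route from the paper. You work on the Fourier side throughout: derive the Euler--Lagrange ODE $(t^{a}g')'=t^{a}g$ for the energy minimizer $g$, substitute $\widehat U(\xi,y)=\widehat u(\xi)\,g(|\xi|y)$ directly into $\widehat{\operatorname{div}(y^{a}\nabla U)}$ and let the ODE kill the bulk term, and then extract the boundary condition from the pointwise limit $\lim_{t\to 0^+}t^{a}g'(t)=-C_\sharp$, which you obtain cleanly by multiplying the ODE by $g$, integrating by parts, and using $G(g)=C_\sharp$. The paper instead never uses the ODE to verify the PDE: it proves (Lemma~\ref{L7s}) that $U$ \emph{minimizes} the weighted Dirichlet energy $[V]_a^2$ among all extensions with trace $u$, with minimal value $C_\sharp[\widehat u]_G^2$, and then gets \eqref{78} from first variation with compactly supported perturbations in $\R^{n+1}_+$ and \eqref{0S} from first variation with perturbations $\varphi\in C^\infty_0(\R^n)$ whose support touches $\{y=0\}$ (the distributional identity drops out of comparing $[U_\epsilon]_a^2$ with $C_\sharp[\widehat u_\epsilon]_G^2$). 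The pointwise version of \eqref{0S} is the only place where the paper reverts to a Fourier computation, using precisely the limit $t^a g'(t)\to -C_\sharp$, which it cites from the literature rather than re-deriving.

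The trade-off is roughly this: the paper's variational argument gives the distributional boundary condition \emph{for free} once Lemma~\ref{L7s} is in place, and does not require one to know that $g$ solves an ODE classically, only that it minimizes $G$ under integrability constraints (the ODE is used only to obtain the monotonicity estimate $|g'(t)|\le C_\sharp t^{-a}$, namely \eqref{98}). Your direct-substitution argument is more elementary conceptually (no variational inequality, no minimization lemma) but does require the pointwise ODE and the uniform bound $t^{a}|g'(t)|\le C_\sharp$, which is exactly what is needed to dominate the Fourier-side limit and pass from a pointwise limit in $\xi$ to convergence in $L^1_\xi$, and hence to both uniform and distributional convergence of $-y^{a}\partial_y U$ as $y\to 0^+$. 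You correctly identify this as the delicate step; note that the bound is obtained in the paper from the ODE by showing $\gamma(t):=t^a|g'(t)|$ is nonincreasing (using $g\ge 0$ and $g'\le 0$), and your proposed ``power series at $0$'' together with the Bessel-type decay at infinity would deliver the same control and also justify the vanishing of the boundary term at $+\infty$ in your integration-by-parts identity, which mere $L^1$-integrability of $t^a(g^2+(g')^2)$ would not by itself give.
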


In order to prove Theorem \ref{thmext}, we need to make some preliminary computations. At first, let us recall a few useful properties of the minimizer function $g$ of the operator $G$ introduced in \eqref{Gu}.

We know from formula~(4.5) in~\cite{SeV14} that 
	\begin{equation}\label{gg}
			0\le g\le 1,
	\end{equation}
and from formula~(2.6) in~\cite{SeV14} that
	\begin{equation}\label{89}
		g'\le0.
	\end{equation}
We also cite formula~(4.3) in~\cite{SeV14}, according to which~$g$ is a solution of
	\begin{equation}\label{89-bis}
		g''(t)+a t^{-1} g'(t)=g(t)
	\end{equation}
for any~$t>0$, and formula~(4.4) in~\cite{SeV14}, according to which
	\begin{equation}\label{89-bis2}
		\lim_{t\rightarrow0^+} t^a g'(t)\,=\,-C_\sharp.
	\end{equation}

Now, for any~$V\in W^{1,1}_{\rm loc}(\R^{n+1}_+)$ we set
	\[ [V]_a := \sqrt{\int_{\R^{n+1}_+} y^a |\nabla V(X)|^2\,dX}.\]
Notice that~$[V]_a$ is well defined (possibly infinite) on such space. Also, one can compute~$[V]_a$ explicitly in the following interesting case:

\begin{lemma} \label{lem1ext}
Let~$\psi\in \mathcal{S}(\Rn)$
and
	\begin{equation}\label{F-7}
		U(x,y):={\mathcal{F}}^{-1} \Big(\psi(\xi)\,g(|\xi|y)\Big).
	\end{equation}
Then
	\begin{equation}\label{F-8}
		[U]_a^2 = C_\sharp\,[\psi]_G^2.
	\end{equation}
\end{lemma}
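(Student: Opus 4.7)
The plan is to compute $[U]_a^2$ by Fourier methods and reduce the $y$-integration to the one-dimensional functional $G$ evaluated at its minimizer $g$. Since $\psi \in \mathcal{S}(\R^n)$, the representation \eqref{F-7} makes sense and $U(\cdot, y)$ belongs to $L^2(\R^n)$ for each $y > 0$, so Plancherel's identity in the $x$-variable is available.

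First I would compute the partial derivatives of $U$ in Fourier space. Differentiating under the integral, the Fourier transform (in $x$) of $\nabla_x U(\cdot, y)$ equals $i\xi\,\psi(\xi)\,g(|\xi|y)$ (up to the $2\pi$ factor dictated by the paper's convention, which only rescales $C_\sharp$), while the Fourier transform of $\partial_y U(\cdot, y)$ equals $\psi(\xi)\,|\xi|\,g'(|\xi|y)$. Plancherel then gives
\begin{equation*}
\int_{\R^n} \bigl(|\nabla_x U(x,y)|^2 + |\partial_y U(x,y)|^2\bigr)\,dx
= \int_{\R^n} |\xi|^2\, |\psi(\xi)|^2\,\bigl(g(|\xi|y)^2 + g'(|\xi|y)^2\bigr)\,d\xi.
\end{equation*}

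Next, I would multiply by $y^a$ and integrate in $y \in (0,+\infty)$, applying Fubini (the rapid decay of $\psi$ controls the $\xi$-integrability, while the $y$-integrability is controlled by the finiteness of $G(g) = C_\sharp$). This yields
\begin{equation*}
[U]_a^2 = \int_{\R^n} |\xi|^2 |\psi(\xi)|^2 \left( \int_0^{+\infty} y^a \bigl(g(|\xi|y)^2 + g'(|\xi|y)^2\bigr)\,dy \right) d\xi.
\end{equation*}
Then I would perform the rescaling $t := |\xi|\,y$ in the inner integral. Using that $g'(|\xi|y) \cdot |\xi| $ is the $y$-derivative of $t \mapsto g(t)$ evaluated at $t = |\xi|y$, a quick change of variables shows
\begin{equation*}
\int_0^{+\infty} y^a \bigl(g(|\xi|y)^2 + g'(|\xi|y)^2\bigr)\,dy = |\xi|^{-1-a}\int_0^{+\infty} t^a \bigl(g(t)^2 + g'(t)^2\bigr)\,dt = |\xi|^{-1-a}\,C_\sharp,
\end{equation*}
by the very definition \eqref{F-0} of $C_\sharp$.

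Substituting back and recalling that $a = 1 - 2s$, so that $2 - (1+a) = 2s$, we obtain
\begin{equation*}
[U]_a^2 = C_\sharp \int_{\R^n} |\xi|^{2s}\,|\psi(\xi)|^2\,d\xi = C_\sharp\,[\psi]_G^2,
\end{equation*}
which is \eqref{F-8}. The main technical point is the justification of the formal manipulations: one needs the Fubini step (assured by $\psi$ being Schwartz and $C_\sharp < +\infty$), the legitimacy of differentiating under the Fourier integral in both $x$ and $y$ (again assured by the Schwartz class of $\psi$ and the fact that $g, g' \in L^2_{\rm loc}([0,+\infty); t^a\,dt)$), and keeping track of the Fourier normalization constant, which is immaterial since it is absorbed into the overall $C_\sharp$. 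This is the only real obstacle; once these analytic details are in place, the identity is just a calculation.
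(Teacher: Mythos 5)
Your proof is correct and follows essentially the same strategy as the paper's: Plancherel in $x$, the rescaling $t=|\xi|y$, and the identification of the inner $y$-integral with $G(g)=C_\sharp$. The one place where the paper is more careful is the justification that $\partial_y U = \mathcal{F}^{-1}(|\xi|\psi(\xi)g'(|\xi|y))$: rather than appealing to $g'\in L^2_{\rm loc}(t^a\,dt)$, the paper derives the pointwise bound $|g'(t)|\le C_\sharp t^{-a}$ from the ODE $g''+at^{-1}g'=g$ and the boundary behavior $t^a g'(t)\to -C_\sharp$, which yields a genuine $L^2(\R^n)$ dominating function $C_\sharp y^{-a}|\xi|^{1-a}|\psi(\xi)|$ for the differentiated integrand.
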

 
\begin{proof} By~\eqref{gg}, for any fixed~$y>0$, the function~$\xi\mapsto \psi(\xi)\,g(|\xi|y)$ belongs to~$L^2(\Rn)$, and so we may consider its (inverse) Fourier transform. This says that the definition of~$U$ is well posed. 

By the inverse Fourier transform definition \eqref{invF}, we have that 
	\[ \begin{split}
		 \nabla_x U(x,y)= &\; \nabla_x
			\int_{\Rn} \psi(\xi)\,g(|\xi|y) \,e^{ix\cdot\xi}\,d\xi \\
			=&\;
			\int_{\Rn} i\xi\psi(\xi)\,g(|\xi|y) \,e^{ix\cdot\xi}\,d\xi\\
			=&\;{\mathcal{F}}^{-1} \Big( i\xi\psi(\xi) g(|\xi|y)\Big) (x).
	\end{split} \]
Thus, by Plancherel Theorem,
	\[ \int_{\Rn} |\nabla_x U(x,y)|^2\,dx =\int_{\Rn} \big|\xi\psi(\xi) g(|\xi|y)\big|^2\,d\xi.\]
Integrating over~$y>0$, we obtain that
	\begin{equation}\label{F-2}
		\begin{split}
			\int_{\R^{n+1}_+} y^a |\nabla_x U(X)|^2\,dX\,&
			=\int_{\Rn} |\xi|^2 \big|\psi(\xi)\big|^2 \left[\int_0^{+\infty} 
			y^a \big|g(|\xi|y)\big|^2 \,dy\right]\,d\xi\\
			&=\int_{\Rn} |\xi|^{1-a} \big|\psi(\xi)\big|^2 \left[\int_0^{+\infty}
			t^a \big|g(t)\big|^2 \,dt\right]\,d\xi
			\\ &=\int_0^{+\infty}
			t^a \big|g(t)\big|^2 \,dt\cdot
			\int_{\Rn} |\xi|^{2s} \big|\psi(\xi)\big|^2 \, d\xi
		\\ &= [\psi]_{G}^2 \;\int_0^{+\infty}
			t^a \big|g(t)\big|^2 \,dt.
		\end{split}
	\end{equation}
Let us now prove that the  following identity is well posed
	\begin{equation}\label{4.3bis}
\partial_y U(x,y)= {\mathcal{F}}^{-1} \Big(|\xi|\,\psi(\xi)\,g'(|\xi|y)\Big) .\end{equation}
For this, we observe that
	\begin{equation}\label{98}
		|g'(t)|\le C_\sharp t^{-a}.
	\end{equation}
To check this, we define~$\gamma(t):= t^a \,|g'(t)|$. {F}rom~\eqref{89} and~\eqref{89-bis}, we obtain that
	\[ \gamma'(t) =-\frac{d}{dt}\big(t^a g'(t)\big)=-t^a \,\big(g''(t)+a t^{-1} g'(t)\big)=-t^a g(t)\le0.\]
Hence
$$ \gamma(t)\le \lim_{\tau\rightarrow 0^+} \gamma(\tau)=
\lim_{\tau\rightarrow 0^+} \tau^a |g'(\tau)|= C_\sharp,$$
where formula~\eqref{89-bis2} was used in the last identity,
and this establishes~\eqref{98}.\\
{F}rom~\eqref{98} we have that~$|\xi|\,|\psi(\xi)|\,|g'(|\xi|y)|\le
 C_\sharp y^{-a}\,|\xi|^{1-a}\,|\psi(\xi)|\in L^2(\Rn)$, and so~\eqref{4.3bis}
follows.\\
Therefore, by~\eqref{4.3bis} and the Plancherel Theorem,
$$ \int_{\Rn} |\partial_y U(x,y)|^2\,dx= \int_{\Rn} |\xi|^2\,\big|\psi(\xi)\big|^2\,\big|g'(|\xi|y)\big|^2\,d\xi.$$
Integrating over~$y>0$ we obtain
\[ \begin{split}
	\int_{\R^{n+1}_+} y^a |\partial_y U(x,y)|^2\,dx =&\;
	\int_{\Rn} |\xi|^2\,\big|\psi(\xi)\big|^2\,\left[\int_0^{+\infty}
	y^a \big|g'(|\xi|y)\big|^2\,dy \right]\,d\xi
	\\ =&\;
	\int_{\Rn} |\xi|^{1-a} \,\big|\psi(\xi)\big|^2\,\left[\int_0^{+\infty}
	t^a \big|g'(t)\big|^2 dt \right]\,d\xi
	\\ =&\;
	\int_0^{+\infty}
	t^a \big|g'(t)\big|^2 dt\cdot
	\int_{\Rn} |\xi|^{2s} \,\big|\psi(\xi)\big|^2\,d\xi
	\\ =&\;
	 [\psi]_{G}^2 \; \int_0^{+\infty}
	t^a \big|g'(t)\big|^2 dt.
\end{split} \]
By summing this with~\eqref{F-2}, and recalling~\eqref{F-0}, we obtain the desired result $[U]_a^2 = C_\sharp\,[\psi]_G^2$. This concludes the proof of the Lemma.
\end{proof}

Now, given~$u \in L^1_{\rm loc}(\Rn)$, we consider the space~$X_u$ of all the functions~$V\in W^{1,1}_{\rm loc}(\R^{n+1}_+)$ such that, for any~$x\in\Rn$, the map~$y\mapsto V(x,y)$ is in~$C^0\big([0,+\infty)\big)$, with~$V(x,0)=u(x)$ for any~$x\in\Rn$.
Then the problem of minimizing~$[\,\cdot\,]_a$ over~$X_u$ has a somehow explicit solution.

\begin{lemma}\label{L7s}
Assume that~$u\in \mathcal{S}(\Rn)$. Then
\begin{equation}\label{A6}
\min_{V\in X_u} [V]_a^2 = [U]_a^2
=C_\sharp\,[\widehat u]_G^2,\end{equation}

\begin{equation}\label{F-7-bis}
U(x,y):={\mathcal{F}}^{-1} \Big(\widehat u(\xi)\,g(|\xi|y)\Big).\end{equation}
\end{lemma}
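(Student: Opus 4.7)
The plan is to leverage Lemma \ref{lem1ext} together with a Fourier slicing argument in the $x$-variable. First, by Lemma \ref{lem1ext} applied to $\psi := \widehat u \in \mathcal S(\R^n)$, the function $U$ defined in \eqref{F-7-bis} satisfies $[U]_a^2 = C_\sharp\,[\widehat u]_G^2$, which handles the second equality in \eqref{A6}. To check that $U$ belongs to $X_u$, I would verify that $U(\cdot,y)\in L^2(\R^n)$ for every $y\ge 0$ (using \eqref{gg} and the fact that $\widehat u\in \mathcal S$), that $y\mapsto U(x,y)$ is continuous on $[0,+\infty)$ for each $x$ (this follows from dominated convergence and the continuity of $g$, since $g(0)=1$ by the constraint in \eqref{F-0}), and that $U(x,0)=\mathcal F^{-1}(\widehat u)(x) = u(x)$.

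The heart of the proof is the lower bound $[V]_a^2\geq C_\sharp[\widehat u]_G^2$ for every competitor $V\in X_u$. By a standard approximation I may assume that $V(\cdot,y)\in L^2(\R^n)$ for a.e.\ $y>0$ with weak derivative in $y$, so the partial Fourier transform $v(\xi,y):=\mathcal F_x V(\xi,y)$ is well defined. Applying Plancherel in $x$ slice by slice gives
\[
[V]_a^2 = \int_{\R^n}\!\int_0^{+\infty}\! y^a\Big(|\xi|^2|v(\xi,y)|^2 + |\partial_y v(\xi,y)|^2\Big)\,dy\,d\xi,
\]
with the trace condition $v(\xi,0)=\widehat u(\xi)$ for a.e.\ $\xi$. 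For each fixed $\xi\ne 0$, I reduce to a one-dimensional minimization: writing $v(\xi,y)=\widehat u(\xi)\,w(|\xi|y)$ with $w\in W^{1,1}_{\rm loc}((0,+\infty))\cap C^0([0,+\infty))$ and $w(0)=1$, the change of variables $t=|\xi|y$ yields
\[
\int_0^{+\infty}\! y^a\Big(|\xi|^2|v(\xi,y)|^2+|\partial_y v(\xi,y)|^2\Big)\,dy = |\widehat u(\xi)|^2\,|\xi|^{2s}\,G(w).
\]
By the definition of $C_\sharp$ in \eqref{F-0}, the inner slice is bounded below by $C_\sharp|\widehat u(\xi)|^2|\xi|^{2s}$, with equality attained precisely when $w=g$. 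Integrating in $\xi$ and using Fubini gives $[V]_a^2\ge C_\sharp [\widehat u]_G^2$, with equality realized by $U$ from \eqref{F-7-bis}.

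The main obstacle is the Fubini/Plancherel step: I must justify that an arbitrary competitor $V\in X_u$, which is only assumed to lie in $W^{1,1}_{\rm loc}(\R^{n+1}_+)$ with a continuous trace, actually admits a partial Fourier transform in $x$ for a.e.\ $y$ and that one can invoke the slice-wise one-dimensional minimization measurably in $\xi$. I would circumvent this by first establishing the inequality on the dense subclass of $V\in X_u$ that are Schwartz in $x$ for each $y$ (and decay suitably in $y$), where both sides of \eqref{A6} are finite and the computation above is rigorous; then I would extend by density/lower semicontinuity of $[\,\cdot\,]_a^2$, noting that if $[V]_a^2<+\infty$ there is nothing to prove outside the reachable class, and if $[V]_a^2=+\infty$ the inequality is trivial. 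The measurable selection of the optimal slice profile $g(|\xi|y)$ is automatic from the explicit minimizer. This completes the identification of the minimum with $[U]_a^2$ and yields \eqref{A6}.
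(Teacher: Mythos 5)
Your proof follows the same strategy as the paper's: use Lemma~\ref{lem1ext} for the equality $[U]_a^2=C_\sharp[\widehat u]_G^2$, check $U\in X_u$, and obtain the lower bound by slicing Plancherel in $x$ plus the one-dimensional minimization characterization of $C_\sharp$. Two points are worth noting, though. First, in the one-dimensional reduction you write the slice profile as $v(\xi,y)=\widehat u(\xi)\,w(|\xi|y)$ with $w(0)=1$; this division by $\widehat u(\xi)$ breaks down precisely where $\widehat u(\xi)=0$. The paper sidesteps this by defining $h(y):=\phi(\xi,|\xi|^{-1}y)$ directly and invoking the scaled minimization principle $\min_{w(0)=\lambda}G(w)=\lambda^2 C_\sharp$ for arbitrary $\lambda\in\R$, including $\lambda=0$ (where $G\ge0$ trivially closes the estimate). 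Your argument can be patched by the same observation, but as written it is slightly incomplete. Second, you correctly identify that the Plancherel/Fubini step needs justification for a generic $V\in W^{1,1}_{\rm loc}$ with $[V]_a<\infty$: one knows $\nabla_x V(\cdot,y)$ and $\partial_y V(\cdot,y)$ lie in $L^2(\R^n)$ for a.e.\ $y$, but the partial Fourier transform of $V(\cdot,y)$ itself is not a priori defined (the paper glosses over this). Your proposed density argument is a reasonable way out in spirit, but it is not spelled out: you would need to approximate a given $V$ by nicer functions while preserving both the finiteness of $[\,\cdot\,]_a$ and the exact boundary trace $u$, which is nontrivial (mollification in $x$, for instance, changes the trace). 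So the overall route matches the paper, the core computation is correct, but the two technical points above should be addressed for a complete argument.
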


\begin{proof} We remark that~\eqref{F-7-bis} is simply~\eqref{F-7}
with~$\psi:=\widehat u$, and by Lemma \ref{lem1ext} we have that \[ [U]_a^2= C_{\sharp}[\widehat u]^2_G.\]
Furthermore, we claim that
\begin{equation}\label{98-bis}
U\in X_u. \end{equation}
In order to prove this, we first observe that
\begin{equation}\label{98-tris}
|g(T)-g(t)|\le \frac{C_\sharp\,|T^{2s}-t^{2s}|}{2s} .
\end{equation}
To check this, without loss of generality, we may suppose that~$T\ge t\ge0$.
Hence, by~\eqref{89} and~\eqref{98},
	\[ \begin{split}
		|g(T)-g(t)|\le  \int_t^T |g'(r)|\,dr 
		\le  C_\sharp\int_t^T r^{-a}\,dr
		= \frac{C_\sharp\,(T^{1-a}-t^{1-a})}{1-a},
	\end{split}\]
that is~\eqref{98-tris}.\\
Then, by~\eqref{98-tris}, for any~$y$, $\tilde y\in(0,+\infty)$,
we see that
\[ \Big| g(|\xi|\,y)-g(|\xi|\,\tilde y)\Big|\le
\frac{C_\sharp\,|\xi|^{2s} |y^{2s}-{\tilde y}^{2s}|}{2s} .\]
Accordingly,
	\[ \begin{split}
		\big|U(x,y)-U(x,\tilde y)\big|
		=& \; \bigg|{\mathcal{F}}^{-1} \bigg(\widehat u(\xi)\,\Big( g(|\xi|\,y)-g(|\xi|\,\tilde y)\Big)\bigg)\bigg|\\
		 \le & \; 	\int_{\Rn}  	\Big|\widehat u(\xi)\,\Big( g(|\xi|\,y)-g(|\xi|\,\tilde y)\Big)\Big|\,d\xi \\ 
		\le & \; 	\frac{C_\sharp\,|y^{2s}-{\tilde y}^{2s}|}{2s} \int_{\Rn} |\xi|^{2s} |\widehat u(\xi)|\,d\xi,
	\end{split} \]
and this implies~\eqref{98-bis}.

Thanks to~\eqref{98-bis} and~\eqref{F-8}, in order to complete the proof
of~\eqref{A6},
it suffices to show that, for any~$V\in X_u$, we have that
\begin{equation}\label{A6-c}
[V]_a^2 \ge [U]_a^2.\end{equation}
To prove this,
let us take~$V\in X_u$. Without loss of generality,
since~$[U]_a<+\infty$ thanks to~\eqref{F-8},
we may suppose that~$[V]_a<+\infty$. Hence, 
fixed a.e.~$y>0$, we have that
$$ y^a \int_{\Rn}|\nabla_x V(x,y)|^2\,dx
\,\le\,
y^a \int_{\Rn}| \nabla V(x,y)|^2\,dx\,<\,+\infty,$$
hence the map~$x\in |\nabla_x V(x,y)|$ belongs to~$L^2(\Rn)$.
Therefore, by Plancherel Theorem,
\begin{equation}\label{F-9}
\int_{\Rn}|\nabla_x V(x,y)|^2\,dx =
\int_{\Rn}\Big|{\mathcal{F}}\big(\nabla_x V(x,y)\big) (\xi)\Big|^2\,d\xi
.\end{equation}
Now using the Fourier transform definition (see \eqref{transF})
\begin{eqnarray*}  {\mathcal{F}}\big(\nabla_x V(x,y)\big) (\xi)=
\int_{\Rn} \nabla_x V(x,y)\,e^{-ix\cdot\xi}\,dx
=\int_{\Rn} i\xi\,V(x,y)\,e^{-ix\cdot\xi}\,dx =
i\xi \,{\mathcal{F}}\big(V(x,y)\big) (\xi),\end{eqnarray*}
hence~\eqref{F-9} becomes
\begin{equation}\label{F-11}
\int_{\Rn}|\nabla_x V(x,y)|^2\,dx =
\int_{\Rn} |\xi|^2 \,|{\mathcal{F}}\big(V(x,y)\big) (\xi)|^2\,d\xi.\end{equation}
On the other hand
$$ {\mathcal{F}}\big(\partial_y V(x,y)\big) (\xi)=
\partial_y {\mathcal{F}}\big(V(x,y)\big) (\xi)$$
and thus, by Plancherel Theorem,
$$ \int_{\Rn} |\partial_y V(x,y)|^2 \,dx=
\int_{\Rn}\big| {\mathcal{F}}\big(\partial_y V(x,y)\big) (\xi)\big|^2\,d\xi
=\int_{\Rn} |\partial_y {\mathcal{F}}\big(V(x,y)\big) (\xi)|^2\,d\xi.$$
We sum up this latter result with identity \eqref{F-11} and we use the notation~$\phi(\xi,y):=
{\mathcal{F}}\big(V(x,y)\big) (\xi)$
to conclude that
\begin{equation}\label{A5} 
\int_{\Rn}|\nabla V(x,y)|^2\,dx =
\int_{\Rn} |\xi|^2 \,|\phi(\xi,y)|^2 + |\partial_y \phi(\xi,y)|^2\,d\xi.\end{equation}
Accordingly, integrating over~$y>0$, we deduce that
\begin{equation}\label{F-13}
[V]_a^2 =\int_{\R^{n+1}_+} y^a\Big(
 |\xi|^2 \,|\phi(\xi,y)|^2 + |\partial_y \phi(\xi,y)|^2 \Big)\,d\xi\,dy.
\end{equation}
Let us first consider the integration over~$y$, for any fixed $\xi\in
\Rn\setminus\{0\}$, that we now omit from the notation when
this does not generate any confusion. We set~$h(y):= 
\phi(\xi, |\xi|^{-1} y)$.
We have that~$h'(y)=|\xi|^{-1} \partial_y\phi(\xi, |\xi|^{-1} y)$ and
therefore, using the substitution~$t=|\xi|\,y$, we obtain
	\begin{equation}\label{F-17}
		\begin{split}
		&\int_0^{+\infty} y^a \Big(|\xi|^2\,|\phi(\xi, y)|^2 +  \big|\partial_y\phi(\xi, y)\big|^2 \Big)\,dy
		\\ = &\; |\xi|^{1-a} 
		\int_0^{+\infty} t^a \Big( 
		|\phi(\xi, |\xi|^{-1} t)|^2 +
		|\xi|^{-2} \big|\partial_y\phi(\xi, |\xi|^{-1} t)\big|^2
		\Big)\,dt\\
		= &\; |\xi|^{1-a}
		\int_0^{+\infty} t^a \Big( 
		|h(t)|^2 +|h'(t)|^2
		\Big)\,dt
		\\ = &\; |\xi|^{2s} \,G(h).
\end{split}\end{equation}
Now, for any~$\lambda\in\R$, we show that
\begin{equation}\label{F-15}
\min_{w\in W^{1,1}_{\rm loc}((0,+\infty))
\cap C^0([0,+\infty))} {w(0)=\lambda} G(w) = \lambda^2 \,C_\sharp.
\end{equation}
Indeed, when~$\lambda=0$, the trivial function is an allowed
competitor and~$G(0)=0$, which gives~\eqref{F-15}
in this case. If, on the other hand,~$\lambda\ne 0$, given~$w$
as above with~$w(0)=\lambda$ we set~$w_\lambda(x):=\lambda^{-1} w(x)$.
Hence we see that~$w_\lambda(0)=1$ and thus~$G(w)=
\lambda^2 \, G(w_\lambda)\le \lambda^2\,G(g)=\lambda^2\,C_\sharp$, due to the minimality
of~$g$. This proves~\eqref{F-15}.
{F}rom~\eqref{F-15} and the fact that
$$h(0)=
\phi(\xi, 0)={\mathcal{F}}\big(V(x,0)\big) (\xi) = \widehat u(\xi),$$
we obtain that
$$ G(h)\ge C_\sharp\, \big|\widehat u(\xi)\big|^2.$$
As a consequence, we get from~\eqref{F-17} that
$$ \int_0^{+\infty} y^a \Big(
|\xi|^2\,|\phi(\xi, y)|^2 +
\big|\partial_y\phi(\xi, y)\big|^2
\Big)\,dy \ge C_\sharp\,|\xi|^{2s} \,\big|\widehat u(\xi)\big|^2.$$
Integrating over~$\xi\in\Rn\setminus\{0\}$ we obtain that
\[ \int_{\R^{n+1}_+} y^a \Big(
|\xi|^2\,|\phi(\xi, y)|^2 +
\big|\partial_y\phi(\xi, y)\big|^2
\Big)\,d\xi\,dy \ge  C_\sharp\, [\widehat u]_G^2.\]
Hence, by~\eqref{F-13},
\[ [V]_a^2 \ge C_\sharp\, [\widehat u]_G^2,\]
which proves~\eqref{A6-c},
and so~\eqref{A6}.
\end{proof}

We can now prove the main result of this subsection.

\begin{proof}[Proof of Theorem \ref{thmext}] Formula~\eqref{78} follows from the minimality
property in~\eqref{A6}, by writing that
$[U]_a^2 \le [U+\epsilon \varphi]_a^2$
for any~$\varphi$ smooth and compactly supported inside~$\R^{n+1}_+$
and any~$\epsilon\in\R$.

Now we take~$\varphi\in C^\infty_0(\Rn)$ (notice that its support
may now hit~$\{y=0\}$). We define $u_\epsilon:= u+\epsilon\varphi$,
and~$U_\epsilon$
as in~\eqref{F-7-bis--}, with~$\widehat u$ replaced by~$\widehat u_\epsilon$
(notice that~\eqref{F-7-bis--}
is nothing but~\eqref{F-7-bis}), hence we will be able to exploit
Lemma~\ref{L7s}.\\
We also set
$$\varphi_* (x,y):= {\mathcal{F}}^{-1} \Big(\widehat\varphi(\xi)\,g(|\xi|y)\Big).$$
We observe that
\begin{equation}\label{9956}
\varphi_* (x,0)=
{\mathcal{F}}^{-1} \Big(\widehat\varphi(\xi)\,g(0)\Big)=
{\mathcal{F}}^{-1} \Big(\widehat\varphi(\xi)\Big)=\varphi(x)\end{equation}
and that
$$ U_\epsilon= U+\epsilon
{\mathcal{F}}^{-1} \Big(\widehat\varphi(\xi)\,g(|\xi|y)\Big)=
U+\epsilon\varphi_*.$$
As a consequence
$$ [U_\epsilon]_a^2 \, =[U_\epsilon]_a^2 +2\epsilon
\int_{\R^{n+1}_+} y^a \nabla U\cdot \nabla  \varphi_*\,dX+o(\epsilon).$$
Hence, using~\eqref{78}, \eqref{9956} and the Divergence Theorem,
	\begin{equation}\label{67}
		\begin{split}
			[U_\epsilon]_a^2= &\; [U]_a^2 +2\epsilon 	\int_{\R^{n+1}_+} {\rm div}\,
			\Big( \varphi_* \; y^a\nabla_X U \Big) \,dX+o(\epsilon)\\
			 =&\;   [U]_a^2 -2\epsilon \int_{\R^{n}\times\{0\}}
			\varphi \; y^a\partial_y U \,dx+o(\epsilon).
		\end{split}
	\end{equation}
Moreover, from Plancherel Theorem, and the fact that the image of~$\varphi$ is in the reals,
\[ \begin{split}
[\widehat u_\epsilon]_G^2 =&\; [\widehat u]_G +2\epsilon \int_{\Rn}
|\xi|^{2s}\widehat u(\xi)\,\overline{\widehat \varphi(\xi)}\,d\xi+o(\epsilon)\\
=&\;[\widehat u]_G +2\epsilon \int_{\Rn}
{\mathcal{F}}^{-1}\Big( |\xi|^{2s}\widehat u(\xi)\Big)(x)
\,\overline{\varphi(x)}\,dx+o(\epsilon)\\
=&\;[\widehat u]_G +2\epsilon \int_{\Rn} (-\Delta)^s u(x)\,
{\varphi(x)}\,dx+o(\epsilon).
\end{split} \] 
By comparing this with~\eqref{67} and recalling \eqref{A6}
we obtain that
\[ \begin{split}
& [U]_a^2 -2\epsilon
\int_{\R^{n}\times\{0\}}
\varphi \; y^a\partial_y U \,dx+o(\epsilon)=[U_\epsilon]_a^2
 = C_\sharp [u_\epsilon]_G^2
\\=&\; 
C_\sharp [\widehat u]_G +2C_\sharp
\epsilon \int_{\Rn} (-\Delta)^s u(x)\,
{\varphi(x)}\,dx+o(\epsilon)
\\=&\;[U]_a^2+2C_\sharp
\epsilon \int_{\Rn} (-\Delta)^s u\,
{\varphi}\,dx+o(\epsilon) 
\end{split}\]
and so
$$ -\int_{\R^{n}\times\{0\}}
\varphi \; y^a\partial_y U \,dx = C_\sharp
\int_{\Rn} (-\Delta)^s u\,
{\varphi}\,dx,$$
for any~$\varphi\in C^\infty_0(\Rn)$, that is the
distributional formulation of~\eqref{0S}.

Furthermore, by \eqref{F-7-bis--}, we have that
$$ y^a \partial_y
U(x,y)={\mathcal{F}}^{-1} \Big(|\xi|\,\widehat u(\xi)\,y^a\,g(|\xi|y)\Big)
= {\mathcal{F}}^{-1} \Big(|\xi|^{1-a}\,\widehat u(\xi)\,(|\xi|y)^a\,g(|\xi|y)\Big)
.$$
Hence, by~\eqref{89-bis2}, we obtain
\[ \begin{split}
 	\lim_{y\rightarrow0^+} y^a \partial_y U(x,y) =&\; -C_\sharp {\mathcal{F}}^{-1} \Big(|\xi|^{1-a}\,\widehat u(\xi) \Big)\\
 	=&\;-C_\sharp {\mathcal{F}}^{-1} \Big(|\xi|^{2s}\,\widehat u(\xi)  \Big)\\
	=&\;-(-\Delta)^s u(x),
	\end{split} \]
that is the pointwise limit formulation of~\eqref{0S}. This concludes the proof of Theorem \ref{thmext}.
\end{proof}

 \section{An extension problem for the fractional derivative defined by Marchaud}
 
%
The purpose of this Section is to introduce an extension operator for the fractional derivative introduced by Marchaud and to prove a Harnack inequality for stationary functions (in the sense of Marchaud).  \\
 The left and the right Marchaud fractional derivative of order $s \in(0,1)$ (see \cite{samkokilbas}, formulas 5.57 and 5.58) are respectively defined as follows:
\begin{equation} \label{mdefct}
{\bold{D}}^s_{\pm} f(t)=\frac{s}{\Gamma(1-s)}\int_{0}^{\infty}\frac{f(t)-f(t\mp \tau)}{\tau^{1+s}}d\tau.
\end{equation}
 These fractional derivatives are well defined when $f$ is a bounded, locally H{\"o}lder continuous function in $\mathbb{R}.$  Indeed, we assume that\footnote{Indeed, we have that \begin{equation*}
	\begin{split}
	\bigg|\int_0^\infty \frac{f(t)-f(t-\tau)}{\tau^{s+1}}\, d\tau\bigg| \leq  2\|f\|_{L^{\infty}(\mathbb{R})} \int_1^\infty  \frac{1}{\tau^{s+1}}\, d\tau +\|f\|_{C^{\bar \gamma}(\R)}  \int_0^1  \tau^{\bar{\gamma}-s-1} \, d\tau<\infty,\end{split}\end{equation*}
			given that $\bar{\gamma}>s$.} 
$f\in C^{\bar{\gamma}}(\mathbb{R}),$ for $s<\bar{\gamma} \leq 1$ and $f\in L^{\infty}(\mathbb{R})$. 
    In addition, we just recall here that  the Marchaud derivative can be defined for $s\in (0,n)$ and $n \in \mathbb{N}$, as
\[\bold{D}_{\pm}^s f(t) = \frac{\{s\}}{\Gamma(1-\{s\})} \int_{0}^{\infty} \frac{f^{[s]} (t)-f^{[s]}(t\mp\tau)}{\tau^{1+\{s\}}}\, d \tau,\]
where $[s]$ and $\{s\}$ denote, respectively, the integer and the fractional part of $s$. Our work focuses on the case $n=1$ and, in the first part of the paper, on the left fractional derivative, that we can write using a change of variable, neglecting the constant and omitting for simplicity the subscript symbol $+$, as:
	\begin{equation}\label{frader}
	\begin{split} \bold{D}^s f (t):=  \int_{0}^\infty \frac{f(t)-f(t-\tau)}{\tau^{s+1}}\, d\tau= \int_{-\infty}^t \frac{f(t)-f(\tau)}{(t-\tau)^{s+1}}\, d\tau.
	 \end{split}
	 \end{equation} 
We consider (\ref{frader}) as the definition of our fractional derivative without taking care of what happens when $s\to 0^+$ or $s\to 1^-$. We just remark that $\bold{D}_{\pm}^s\varphi\to\varphi$ as $s\to 0^+$ and $D_{\pm}^s\varphi\to \varphi'$ as $s\to 1^-.$ Indeed, as $s\to 0^+$ or $s\to1^-$, the integral term in (\ref{mdefct}) does not converge but one is able to pass to the limit using the constant term (which, in those cases, plays a fundamental role). 

 The operator  $\bold{D}^s$ naturally arises when dealing with a family of singular/degenerate parabolic problems (which, for $s={1}/{2}$, reduces to the heat conduction problem) on the positive half-plane, with a positive space variable and  for all times, namely for $(x,t)\in [0,\infty)\times \mathbb{R} $.  

%
\noindent In order to construct this extension operator, we exploit the idea recently revisited in \cite{CAFSIL}. In that paper, the fractional Laplacian was characterized via an extension procedure, by means of a degenerate second order elliptic local operator. 
 
Considering the function $\varphi$ of one variable, formally representing the time variable, our approach relies on constructing a parabolic local operator by adding an extra variable, say the space variable, on the positive half-line, and working on the extended plane $ [0,\infty)\times \mathbb{R} $.
 
 The heuristic argument can be described in the simplest case $s={1}/{2}$ as follows. Let $\varphi:\mathbb{R}\to \mathbb{R}$ be a ``good'' function and $U$ be a solution of the problem
  \begin{equation} \label{pro1}\displaystyle
 \left\{ 
  \begin{split}
 &\frac{\partial U}{\partial t} =\frac{\partial^2 U}{\partial x^2}, & & (x,t) \in (0,\infty)\times\mathbb{R}\\
 		&U(0,t)=\varphi(t), & &t \in \mathbb{R}. 
\end{split}
		\right.
		\end{equation}
We point out that this is not the usual Cauchy problem for the heat operator, but a heat conduction problem. 

It is known that, without extra assumptions, we can not expect to have a unique solution of the problem (\ref{pro1}), see \cite{tichonov}, Chapter 3.3. Nevertheless, if we denote by $T_{1/2}$ the operator that associates to $\varphi$ the partial derivative ${\partial U}/{\partial x},$ whenever $U$ is sufficiently regular, we have that
$$
T_{1/2}T_{1/2}\varphi=\frac{d\varphi}{dt}.
$$
That is $T_{1/2}$ acts like an half derivative, indeed 
$$
\frac{\partial}{\partial x}\frac{\partial U}{\partial x}(x,t)=\frac{\partial U}{\partial t}(x,t) \underset{x \rightarrow 0^+} \longrightarrow  \frac{d\varphi(t)}{dt}.
$$
The solution of the problem (\ref{pro1}) under the reasonable assumptions that $\varphi$ is bounded and H{\"o}lder continuous, is explicitly known (check \cite{tichonov}, Chapter 3.3) to be
\begin{equation*}\begin{split} U(x,t)=&\, c {x} \int_{-\infty}^t \displaystyle e^{-\frac{x^2}{4(t-\tau)}}{(t-\tau)^{-\frac{3}{2}}}\varphi(\tau)\, d\tau\\
= &\,c {x}  \int_{0}^{\infty} \displaystyle e^{-\frac{x^2}{4\tau}}{\tau^{-\frac{3}{2}}}\varphi (t-\tau)\, d\tau,
\end{split}
\end{equation*}
where the last line is obtained with a change of variables. Using $\displaystyle t ={x^2}/{(4\tau)}$ and the integral definition \eqref{ABRAMOWITZ} of the Gamma function  we have that 
\[ \int_0^\infty x e^{-\frac{x^2}{4\tau}} \tau ^{-\frac{3}2} \, d\tau = 2 \int_0^\infty e^{-t} t^{-\frac{1}2}\, dt = 2 \Gamma\left(\frac{1}{2}\right).\]
Hence,
$$ \frac{U(x,t)-U(0,t)}{x}= c\int_{0}^{\infty}e^{-\frac{x^2}{4\tau}}{\tau^{-\frac{3}{2}}}\left(\varphi(t-\tau)-\varphi(t)\right)d\tau,$$
choosing $c$ that takes into account the right normalization. This yields, by passing to the limit, that
\begin{equation*}
-\lim_{x\to 0^+}\frac{U(x,t)-U(0,t)}{x}=c\int_{0}^{\infty}\frac{\varphi(t)-\varphi(t-\tau)}{\tau^{\frac{3}{2}}}d\tau.
\end{equation*}
Hence, with the right choice of the constant, we get exactly $\bold{D}^{1/2}\varphi$ (see \eqref{frader}), i.e. the Marchaud derivative  of order $1/2$ of $\varphi.$

Now we are in position to state our main result.
 
\begin{theorem} \label{teo:mainstat}
Let $s\in (0,1) $ and $\bar{\gamma}\in (s,1]$ be fixed. Let $\varphi \in C^{\bar{\gamma}}(\mathbb{R})$ 
be a bounded function 
and let $U\colon [0,\infty)\times \mathbb{R}\to \mathbb{R}$ be a solution of the problem 
\begin{equation}\label{marprob1}
\left\{
\begin{split}
	&\frac{\partial U}{\partial t} (x,t)= \frac{1-2s}{x} \frac{\partial U }{\partial x}(x,t)+ \frac{\partial^2 U }{\partial x^2}(x,t), & &    (x,t)\in(0,\infty)\times \mathbb{R}\\
	 & U(0,t)=\varphi(t),  &&t\in\mathbb{R}\\
	  & \lim_{x \to +\infty} U(x,t)=0,&&t\in\mathbb{R}.
\end{split}
\right.
\end{equation}
Then $U$ defines the extension operator for $\varphi$, such that
\begin{equation*}
\bold{D}^s \varphi(t)=-\lim_{x\to 0^+} c_s x^{-2s}(U(x,t)- \varphi(t)),\quad \mbox{ where }\quad c_s= 4^s\Gamma(s).
\end{equation*}
 \end{theorem}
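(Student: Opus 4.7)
The plan is to construct an explicit Poisson-type representation for $U$ in the spirit of the Caffarelli-Silvestre extension, and then read off the Marchaud derivative by a dominated convergence argument on the incremental quotient $x^{-2s}(U(x,t)-\varphi(t))$. Motivated by the $s=1/2$ heuristic preceding the theorem (which produces a Gaussian kernel coming from the heat equation), I would look for $U$ of the form
\begin{equation*}
U(x,t)\,=\,\frac{x^{2s}}{4^{s}\Gamma(s)}\int_{-\infty}^{t}\frac{1}{(t-\tau)^{1+s}}\exp\!\left(-\frac{x^{2}}{4(t-\tau)}\right)\varphi(\tau)\,d\tau,
\end{equation*}
i.e.\ the convolution (in $t$) of $\varphi$ with the kernel $K_{s}(x,\sigma):=\frac{x^{2s}}{4^{s}\Gamma(s)\,\sigma^{1+s}}e^{-x^{2}/(4\sigma)}\chi_{\{\sigma>0\}}$. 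The normalization is forced by requiring $U\equiv 1$ when $\varphi\equiv 1$: after the change of variable $u=x^{2}/(4\sigma)$, the integral $\int_{0}^{\infty}\sigma^{-1-s}e^{-x^{2}/(4\sigma)}\,d\sigma$ equals $4^{s}x^{-2s}\Gamma(s)$, which also explains why the constant $c_{s}=4^{s}\Gamma(s)$ appears in the statement.

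The first step is to verify that this $U$ is indeed a solution of the problem \eqref{marprob1}. Boundedness and decay as $x\to+\infty$ follow at once from $\|\varphi\|_{L^{\infty}}$ and the normalization identity, while the limit $U(x,t)\to\varphi(t)$ as $x\to 0^{+}$ will be obtained together with the main limit below, using $\varphi\in C^{\bar\gamma}$. The PDE is checked by differentiating under the integral sign: writing $U$ as $(K_{s}(x,\cdot)\star\varphi)(t)$, a direct computation shows that $K_{s}$ solves $\partial_{t}K_{s}=\partial_{xx}K_{s}+\tfrac{1-2s}{x}\partial_{x}K_{s}$ on $(0,\infty)\times(0,\infty)$; the extra weight $x^{2s}$ and the factor $\sigma^{-1-s}$ interact precisely so that the drift $\tfrac{1-2s}{x}\partial_{x}U$ cancels the ``wrong'' term from differentiating $x^{2s}$. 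The decay of $K_{s}$ and its derivatives in $\sigma$ near $0$ and $\infty$ justifies the differentiation under the integral.

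The second step is the computation of the limit. Using the normalization, one writes
\begin{equation*}
-c_{s}\,x^{-2s}\bigl(U(x,t)-\varphi(t)\bigr)\,=\,\int_{0}^{\infty}\frac{1}{\sigma^{1+s}}\,e^{-x^{2}/(4\sigma)}\bigl(\varphi(t)-\varphi(t-\sigma)\bigr)\,d\sigma,
\end{equation*}
after the change of variable $\sigma=t-\tau$ and using that $c_{s}/(4^{s}\Gamma(s))=1$. Now the H\"older hypothesis $\varphi\in C^{\bar\gamma}(\mathbb{R})$ with $\bar\gamma>s$ together with $\|\varphi\|_{L^{\infty}}<\infty$ provides the dominating function $\sigma^{-1-s}|\varphi(t)-\varphi(t-\sigma)|\le [\varphi]_{C^{\bar\gamma}}\sigma^{\bar\gamma-1-s}\chi_{(0,1)}+2\|\varphi\|_{\infty}\sigma^{-1-s}\chi_{(1,\infty)}\in L^{1}(0,\infty)$, exactly the integrability condition used already (via footnote) to make sense of $\bold{D}^{s}\varphi$. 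Hence dominated convergence applies and we may send $x\to 0^{+}$ inside the integral, killing the Gaussian factor and leaving precisely
\begin{equation*}
\lim_{x\to 0^{+}}\bigl(-c_{s}\,x^{-2s}(U(x,t)-\varphi(t))\bigr)\,=\,\int_{0}^{\infty}\frac{\varphi(t)-\varphi(t-\sigma)}{\sigma^{1+s}}\,d\sigma\,=\,\bold{D}^{s}\varphi(t),
\end{equation*}
which is definition \eqref{frader}. The same bound, applied to $|U(x,t)-\varphi(t)|\le x^{2s}\cdot(\text{integrable quantity})$, gives the boundary condition $U(x,t)\to\varphi(t)$ and closes the verification.

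The main obstacle I anticipate is not the limit itself, which is essentially dominated convergence, but rather two technical points: first, the rigorous justification of differentiation under the integral near $\sigma=0^{+}$ (where $K_{s}$ is singular but the Gaussian gives exponential decay saving the day for each fixed $x>0$), and second, a possible uniqueness issue for the degenerate parabolic problem \eqref{marprob1}. The statement formulates the result for \emph{a} solution $U$, so strictly speaking I only need to exhibit one; however, if one wanted to assert that every bounded solution of \eqref{marprob1} produces the Marchaud derivative, one would need a uniqueness statement, presumably via a maximum principle adapted to the Bessel-type operator $\partial_{t}-\partial_{xx}-\tfrac{1-2s}{x}\partial_{x}$, which is standard but delicate at the degenerate boundary $\{x=0\}$.
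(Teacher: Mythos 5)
Your proposal takes essentially the same route as the paper: both use the Poisson-type kernel $\Psi_s(x,t)=\tfrac{1}{4^s\Gamma(s)}x^{2s}e^{-x^2/(4t)}t^{-s-1}\chi_{\{t>0\}}$, check unit mass by the substitution $u=x^2/(4t)$, and pass to the limit by dominated convergence using the H\"older and boundedness hypotheses on $\varphi$. The paper only adds a heuristic derivation of the kernel via the Laplace transform and modified Bessel function $\bold{K}_s$ before verifying the PDE directly, and, like you, works exclusively with the explicit convolution solution (noting the Tikhonov nonuniqueness issue for the underlying heat-conduction problem in a preceding remark).
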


We notice that one can write
	\begin{equation} \label{mainstat1} \bold{D}^s \varphi(t)= -\lim_{x\to 0^+} c_sx^{1-2s}\frac{\partial U}{\partial x}(x,t),
	\end{equation}
	in analogy with formula (3.1) in \cite{CAFSIL}.

\begin{remark}
The extension operator satisfies, as one would expect, up to constants that
 	\[ \bold{D}^{1-s} \bold{D}^{s}  \varphi(t)= \varphi'(t).\] 
 	Indeed, using \eqref{mainstat1} and thanks to \eqref{marprob1} we have that
\begin{equation*}
\begin{split}
\bold{D}^{1-s} \bold{D}^s \varphi(t) =&  \lim_{x\to 0^+} x^{2s-1} \frac{\partial }{\partial x}\left(x^{1-2s} \frac{\partial U}{\partial x}(x,t)\right)\\=  & \lim_{x\to 0^+} \frac{\partial^2{U}}{\partial x^2}(x,t) + \frac{1-2s}x \frac{\partial U}{\partial x}(x,t)\\
=&\lim_{x\to 0^+} \frac{\partial {U}}{\partial t}(x,t)=\frac{\partial {U}}{\partial t}(0,t)=\varphi'(t) .
\end{split}
\end{equation*}
	\end{remark}
	
An interesting application that follows from this extension procedure is a Harnack inequality for Marchaud-stationary functions in an interval $J \subseteq {\mathbb{R}},$ namely for functions that satisfy  $\bold{D}^s \varphi=0$  in  $J.$ This fact is not obvious, indeed the set of functions determined by fractional-stationary functions (on an interval) is nontrivial, see Section \ref{capdens}.  
\begin{theorem}\label{teoHarn}
Let $s\in (0,1)$. There exists a positive constant $\gamma$ such that, if $\bold{D}^s\varphi=0$ in an interval $J\subseteq \mathbb{R}$ and $\varphi\geq 0$ in $\mathbb{R}$, then  
\begin{equation*}\label{Harnack_intro}
\sup_{[t_0-\frac{3}{4}\delta,t_0-\frac{1}{4}\delta]}\varphi\leq \gamma \inf_{[t_0+\frac{3}{4}\delta,t_0+\delta]}\varphi
\end{equation*}
for every $t_0\in \mathbb{R}$ and for every  $\delta >0$ such that $[t_0-\delta,t_0+\delta]\subset J$.
\end{theorem}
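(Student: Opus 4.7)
The approach is to transfer the Harnack inequality for $\varphi$ to a parabolic Harnack inequality for its extension $U$, obtained through the extension procedure of Theorem \ref{teo:mainstat}, and then apply known results for degenerate parabolic equations with Muckenhoupt weights. First I would invoke Theorem \ref{teo:mainstat} to get $U\colon [0,\infty)\times\mathbb{R}\to\mathbb{R}$ satisfying the degenerate parabolic equation $\partial_t U = \frac{1-2s}{x}\partial_x U + \partial_{xx} U$ on $(0,\infty)\times\mathbb{R}$, with trace $U(0,t)=\varphi(t)$ and vanishing at $x=+\infty$. Since $\varphi\geq 0$ on $\mathbb{R}$, the representation formula underlying Theorem \ref{teo:mainstat} (based on the heat-type kernel) yields $U\geq 0$ on the whole extended half-plane. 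The Marchaud-stationarity $\bold{D}^s\varphi=0$ on $J$ translates, via \eqref{mainstat1}, into the conormal condition
\[
\lim_{x\to 0^+} x^{1-2s}\frac{\partial U}{\partial x}(x,t)=0\qquad\text{for every }t\in J.
\]

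Next, I would rewrite the equation in divergence form by multiplying through by $x^{1-2s}$, namely
\[
x^{1-2s}\partial_t U = \partial_x\bigl(x^{1-2s}\partial_x U\bigr)\qquad\text{on }(0,\infty)\times\mathbb{R},
\]
and perform an even reflection $\tilde U(x,t):=U(|x|,t)$ for $(x,t)\in\mathbb{R}\times\mathbb{R}$. A direct computation, analogous to the one carried out in Chapter \ref{chap4} for the elliptic extension of the fractional Laplacian, shows that $\tilde U$ satisfies the equation
\[
|x|^{1-2s}\partial_t \tilde U = \partial_x\bigl(|x|^{1-2s}\partial_x \tilde U\bigr)
\]
in the weak (distributional) sense on $(-R,R)\times J'$ for any interval $J'\subset J$ and any $R>0$; the vanishing of the conormal derivative at $x=0^+$ on $J$ is exactly what kills the boundary term that would otherwise arise from integration by parts across $\{x=0\}$, so $\tilde U$ is a genuine weak solution across the line $\{x=0\}$.

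The weight $|x|^{1-2s}$ belongs to the Muckenhoupt class $A_2(\mathbb{R})$ because $1-2s\in(-1,1)$, and so the equation for $\tilde U$ falls within the scope of the parabolic Harnack inequality for degenerate parabolic operators with $A_2$ weights developed by Chiarenza--Serapioni, Gutiérrez--Wheeden and Ishige. Applied on the rectangle $(-\delta,\delta)\times[t_0-\delta,t_0+\delta]\subset(-\delta,\delta)\times J$, this yields a constant $\gamma>0$ (depending only on $s$) such that
\[
\sup_{Q^-}\tilde U \;\leq\; \gamma\,\inf_{Q^+}\tilde U,
\]
where $Q^-$ and $Q^+$ are, respectively, past and future sub-cylinders whose time projections are $[t_0-\tfrac{3}{4}\delta,t_0-\tfrac{1}{4}\delta]$ and $[t_0+\tfrac{3}{4}\delta,t_0+\delta]$ (the $\tfrac{1}{2}\delta$-gap is the standard waiting-time condition for parabolic Harnack). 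Since $\tilde U$ is continuous up to $\{x=0\}$ with trace $\varphi$, restricting to $x=0$ and relabelling $\gamma$ gives exactly the stated inequality.

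\textbf{Main obstacle.} The delicate point is the even reflection step: one has to verify rigorously that, under the conormal condition $\lim_{x\to 0^+}x^{1-2s}\partial_x U(x,t)=0$ for $t\in J$, the reflected function $\tilde U$ is a weak solution of the divergence-form degenerate equation across $\{x=0\}$, with no singular boundary contribution. This requires a careful approximation argument (integrating against test functions compactly supported in $(-R,R)\times J'$ and splitting the space integral on $\{x>0\}$ and $\{x<0\}$), exploiting both the evenness of $\tilde U$ in $x$ and the regularity of $U$ guaranteed by the hypothesis $\varphi\in C^{\bar\gamma}(\mathbb{R})$ with $\bar\gamma>s$. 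Once this is established, invoking the degenerate-parabolic Harnack inequality is essentially a citation.
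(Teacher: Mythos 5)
Your proposal follows essentially the same route as the paper: extend $\varphi$ via the degenerate parabolic problem, perform an even reflection across $\{x=0\}$ (this is Lemma \ref{tildeu} in the text), observe that $|x|^{1-2s}$ is an $A_2$ weight, invoke the Chiarenza–Serapioni parabolic Harnack inequality, and restrict to the trace $\{x=0\}$. The paper proves the reflection step (which you correctly identify as the delicate point) in Lemma \ref{tildeu} by exactly the integration-by-parts argument you sketch, and obtains the time intervals $[t_0-\frac34\delta,t_0-\frac14\delta]$ and $[t_0+\frac34\delta,t_0+\delta]$ by the substitution $\delta=\rho^2$ in the parabolic cylinders — a scaling detail your sketch glosses over but that does not change the structure of the argument.
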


The previous result can be deduced from the Harnack inequality proved in \cite{CS} for some degenerate parabolic operators (see also \cite{FKS} for the elliptic setting). In particular, the constant $\gamma$ used in  Theorem \ref{teoHarn} is the same that appears in the parabolic case in \cite{CS}. 

In addition, we remark that Theorem \ref{teoHarn} does not give the usual Harnack inequality for elliptic operators, where the comparison between the supremum and the infimum is done on the same set, e.g. the same metric ball.
This Harnack inequality for the Marchaud-stationary functions inherits the behavior of its parabolic extension. 

We point out at this point the very interesting paper \cite{Torreault}. Indeed, after we have submitted our paper, we learnt from professor Jos\' e L. Torrea about the results contained in  his joint paper where an extension procedure for a class of operators has been studied.

\subsection{The extension parabolic problem}

In this subsection we find a solution of the system \eqref{marprob1}. At first, we introduce a particular kernel, that acts as the Poisson kernel. We then look for a particular solution of the system by means of the Laplace transform, and in this way we show how the solution arises. Finally, by a straightforward check, it yields that indeed the indicated solution satisfies the problem \eqref{marprob1}.

We 
study
at first the properties of a kernel, that acts as the Poisson kernel for the problem \eqref{marprob1}. The readers can see Section $3$ in \cite{stinga1}, where this kernel is studied in a more general framework.\\
\noindent We define  for every $x\in \mathbb{R},$
\begin{equation*} \label{kern} \Psi_s(x,t) := \left\{
\begin{split} &\frac{1}{4^s \Gamma(s)} x^{2s} e^{-\frac{x^2}{4t}} t^{-s-1},& \mbox{ if }& t> 0 ,\\
								&0, & \mbox{ if }& t\leq 0 .
								\end{split}
								\right.
								\end{equation*} 
	Also, let
	\begin{equation*}
	\psi_s(t) :=\left\{
	  \begin{split}& \frac{1}{4^s \Gamma(s)} e^{-\frac{1}{4t}} t^{-s-1},& \mbox{ if }& t> 0 ,\\
								&0, & \mbox{ if }& t\leq 0 
								\end{split}
								\right.
								\end{equation*}
and notice that
	\begin{equation} \label{bla4} \int_\mathbb{R} \Psi_s(x,t)\, dt= \int_\mathbb{R} \psi_s(t)\, dt.
	\end{equation}
	Indeed, we have by changing the coordinate $\tau={t}/{x^2}$ that
	\begin{equation*}
	\begin{split}  \int_{\mathbb{R}} \Psi_s(x,t)\, dt  =& \frac{1}{4^s\Gamma(s)} \int_0^\infty x^{2s}e^{-\frac{x^2}{4t}} t^{-s-1} \, dt\\ 
	&=\frac{1}{4^s\Gamma(s)} \int_0^\infty e^{-\frac{1}{4\tau}} \tau^{-s-1} \, d\tau\\
	&= \int_\mathbb{R} \psi_s(t)\, dt.
	\end{split}
	\end{equation*}
	
The kernel $\Psi_s$ satisfies also the following property:
\begin{equation}\label{kencalc1} \int_{\mathbb{R}} \Psi_s(x,t)\, dt = 1.
\end{equation}
Indeed, by changing the variable $ t =  {1}/{(4\tau)}$ we get that
	\begin{equation} \label{bla2}
	\begin{split}
	\int_\mathbb{R} \psi_s(\tau)\, d\tau = \frac{1}{4^s\Gamma(s)}   \int_0^\infty e^{-\frac{1}{4\tau}} \tau^{-s-1} \, d\tau
	 =  \frac{1}{\Gamma(s)} \int_0^\infty e^{-t} t^{s-1}\, d t =1,
	 \end{split}
	 \end{equation}
	 thanks to the integral definition of the Gamma function (see \eqref{ABRAMOWITZ}).
	 It follows from \eqref{bla4} that
				  $$ \int_{\mathbb{R}} \Psi_s(x,t)\, dt  =1.  $$ 		

Taking the Laplace transform
 of the kernel $\Psi_s$ (see e.g. \cite{dyke} for details on this integral transform), we have the following result involving the modified Bessel function of the second kind $\bold K_s,$ see \cite{MaGoberSo} and
 \cite{ABRAMOWITZ}, \textsection 9.6. We use here the notation $\Re \omega >0$ to denote the real part of a complex number $\omega$.
 \begin{lemma}The Laplace transform of the function $\psi_s \in L^1(\mathbb{R})$ is
\begin{equation} \label{fourfpsi} \mathcal{L} (\psi_s)(\omega)= \frac{1}{2^{s-1}\Gamma(s)} \omega^{\frac{s}2}\bold K_s(\sqrt{\omega}) \mbox{ for } \Re \omega >0.\end{equation}
Moreover, the Laplace transform with respect to the variable $t$ of the kernel $\Psi_s \in L^1(\mathbb{R}, dt)$ is
\begin{equation} \label{calcKpsi} \mathcal{L} (\Psi_s)(x,\omega)=\frac{1}{2^{s-1}\Gamma(s)} x^s \omega^{\frac{s}2}\bold K_s(x\sqrt{\omega}) \mbox{ for } \Re \omega >0. 
\end{equation}
\end{lemma}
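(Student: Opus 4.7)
The proof will proceed by a direct computation that identifies the integrals defining the Laplace transforms with the standard integral representation of the modified Bessel function of the second kind, namely the formula
\[
\mathbf{K}_\nu(z)=\tfrac{1}{2}\left(\tfrac{z}{2}\right)^{\nu}\int_{0}^{\infty}e^{-u-\frac{z^{2}}{4u}}\,u^{-\nu-1}\,du,\qquad \Re(z^{2})>0,
\]
which can be invoked from \cite{ABRAMOWITZ}, \S 9.6. I will take this identity as known, together with its validity by analytic continuation to the half-plane $\Re\omega>0$.

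For the first claim, I plan to write down $\mathcal{L}(\psi_s)(\omega)=\frac{1}{4^{s}\Gamma(s)}\int_{0}^{\infty}e^{-\omega t-\frac{1}{4t}}t^{-s-1}\,dt$ and then perform the rescaling $u=\omega t$, which turns the exponent $-\omega t-\tfrac{1}{4t}$ into $-u-\tfrac{\omega}{4u}$ and produces a factor $\omega^{s}$ from the Jacobian together with the power $t^{-s-1}=(u/\omega)^{-s-1}$. Matching this to the Bessel formula with $z=\sqrt{\omega}$ (so $z^{2}=\omega$ and $z^{2}/4=\omega/4$) yields
\[
\int_{0}^{\infty}e^{-u-\frac{\omega}{4u}}u^{-s-1}\,du=2^{s+1}\omega^{-s/2}\mathbf{K}_{s}(\sqrt{\omega}),
\]
and combining this with the prefactor $\omega^{s}/(4^{s}\Gamma(s))$ gives exactly $\frac{1}{2^{s-1}\Gamma(s)}\omega^{s/2}\mathbf{K}_{s}(\sqrt{\omega})$, which is \eqref{fourfpsi}. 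The integrability of $\psi_s$ on $\mathbb R$, needed to make sense of the Laplace transform at real $\omega>0$, follows from \eqref{bla2}; the extension to $\Re\omega>0$ is then obtained by analyticity of both sides in $\omega$.

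For the second claim, rather than redoing the computation, I plan to exploit the self-similar structure of $\Psi_s$, namely the observation that, after the substitution $\tau=t/x^{2}$ in the defining integral
\[
\mathcal{L}(\Psi_s)(x,\omega)=\frac{1}{4^{s}\Gamma(s)}\,x^{2s}\int_{0}^{\infty}e^{-\omega t-\frac{x^{2}}{4t}}t^{-s-1}\,dt,
\]
the Jacobian $dt=x^{2}\,d\tau$ combines with $t^{-s-1}=x^{-2s-2}\tau^{-s-1}$ and with $e^{-x^{2}/(4t)}=e^{-1/(4\tau)}$ to yield precisely $\mathcal{L}(\psi_s)(\omega x^{2})$. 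Plugging $\omega x^{2}$ in place of $\omega$ into \eqref{fourfpsi} gives $\frac{1}{2^{s-1}\Gamma(s)}(\omega x^{2})^{s/2}\mathbf{K}_{s}(\sqrt{\omega x^{2}})=\frac{1}{2^{s-1}\Gamma(s)}x^{s}\omega^{s/2}\mathbf{K}_{s}(x\sqrt{\omega})$, which is \eqref{calcKpsi}. The only mildly delicate point is the choice of branch for $\sqrt{\omega x^{2}}$ when $\omega\in\mathbb{C}$ with $\Re\omega>0$ and $x>0$: the principal branch is the natural one, so that $\sqrt{\omega x^{2}}=x\sqrt{\omega}$, and both sides remain holomorphic in $\omega$ on the right half-plane.

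The main obstacle is not computational but bookkeeping: it is to track the powers of $2$, $x$ and $\omega$ carefully enough that the substitution $u=\omega t$ (and later $\tau=t/x^{2}$) produces the exact normalization constant $1/(2^{s-1}\Gamma(s))$; all the rest is a straightforward identification with the standard Bessel integral.
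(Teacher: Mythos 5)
Your proof is correct and lands on the same key identity as the paper; the difference is one of packaging rather than substance. You start from the integral representation of the modified Bessel function from \cite{ABRAMOWITZ}, \S 9.6, and derive the required Laplace transform yourself by the substitution $u=\omega t$ (checked for real $\omega>0$ and then extended by analyticity to $\Re\omega>0$), whereas the paper simply cites the tabulated Laplace transform
\[
\mathcal{L}\big(t^{\gamma-1}e^{-a/t}\big)(\omega) = 2\Big(\frac{a}{\omega}\Big)^{\gamma/2}\mathbf{K}_\gamma\big(2\sqrt{a\omega}\big)
\]
(formula 5.34 in \cite{oberl}), then sets $\gamma=-s$, $a=1/4$ and invokes the symmetry $\mathbf{K}_s=\mathbf{K}_{-s}$ to read off \eqref{fourfpsi}. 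Your version is slightly more self-contained and makes the cancellation of the powers of $2$ explicit; the paper's is shorter but relies on a table lookup. For the second claim, you and the paper do exactly the same thing: the change of variable $\tau=t/x^{2}$ reduces $\mathcal{L}(\Psi_s)(x,\omega)$ to $\mathcal{L}(\psi_s)(\omega x^{2})$, and then \eqref{fourfpsi} gives \eqref{calcKpsi} after noting that for $x>0$ and $\Re\omega>0$ the principal branch satisfies $\sqrt{\omega x^{2}}=x\sqrt{\omega}$. Your bookkeeping of the constant $1/(2^{s-1}\Gamma(s))$ is correct, and the branch discussion you flag at the end is the right thing to check.
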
	
\begin{proof}
If one proves claim \eqref{fourfpsi}, the identity \eqref{calcKpsi} follows by changing the variable $\tau=t/x^2$.
For $\Re a>0$ and  $\omega\in \mathbb{C}$ with $\Re\omega >0$, as stated in formula 5.34 in \cite{oberl}, we have that
 	\[\mathcal{L} \left(t^{\gamma-1} e^{-\frac{a}{t} }\right) = 2 \left(\frac{a}{\omega}\right)^{\frac{\gamma}2} \bold K_\gamma \left(2(a\omega)^{\frac{1}{2}}\right)  .\] 
 Taking $\gamma=-s$ and $a=1/4$, recalling that $\bold K_s=\bold K_{-s}$, we obtain that
	\[ \mathcal{L} (\psi_s)(\omega) =\frac{1}{4^s\Gamma(s)} \mathcal{L} \Big(e^{-\frac{1}{4\tau}}  \tau^{-s-1} \Big) = \frac{1}{2^{s-1}\Gamma (s) } {\omega}^{\frac{s}{2}}\bold K_s(\sqrt{ \omega})\] and thus \eqref{fourfpsi}. This concludes the proof of the Lemma.
\end{proof}			
   

We recall now a useful result (see \cite{FerFra}, Proposition 4.1) involving the modified Bessel function of the second kind. 
\begin{prop}{\label{prop:FerFra}} 
If $-\infty <\alpha<1$, the boundary value problem 
	\begin{equation} \label{proby1}
	\left\{
	\begin{split}
		 &x^{\alpha} y''(x) =y(x), & \mbox{in } &(0,\infty)\\
		 &y(0)=1, &  &\\
		 &\lim_{x \to \infty} y(x) =0. &&
		\end{split}
		 \right.
		 \end{equation}
		has a solution $y \in C^{2-\alpha} \left([0,\infty)\right)$ of the form
	\begin{equation*}
			y(x)= c_{k} x^{\frac{1}2}\bold{K}_{\frac{1}{2k}} \left(\frac{t^k}{k}\right),
			\end{equation*} where $c_k$ is the positive constant
			\[ c_k = \frac{2^{1-\frac{1}{2k}} k^{-\frac{1}{2k}}}{\Gamma \left(\frac{1}{2k}\right)} \quad \mbox{ and } \quad  k := \frac{2-\alpha}{2}.\]
				\end{prop}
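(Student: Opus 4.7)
The plan is to reduce the second order ODE $x^{\alpha} y''(x) = y(x)$ to a modified Bessel equation via an explicit change of variables, then select the unique branch that decays at infinity and fix the multiplicative constant via the boundary value at $0$. Concretely, setting $k := (2-\alpha)/2 > 1/2$ and $\nu := 1/(2k) \in (0,1)$ (so $\nu$ is never an integer), I will use the ansatz $y(x) = x^{1/2} u(z)$ with $z := x^k/k$, and compute $y'$ and $y''$ in terms of $u$, $u'$, $u''$. Substituting into the rewritten ODE $x^{2-2k} y'' = y$ and invoking $x^{-k} = 1/(kz)$ and $x^{-2k} = 1/(k^2 z^2)$, the identity rearranges to
\[ z^2 u''(z) + z u'(z) - \bigl(z^2 + \nu^2\bigr) u(z) = 0, \]
which is exactly the modified Bessel equation of order $\nu$.

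The solution space of this equation is spanned by the modified Bessel functions $I_\nu$ and $K_\nu$. Since $I_\nu(z) \sim e^z/\sqrt{2\pi z}$ grows exponentially while $K_\nu(z) \sim \sqrt{\pi/(2z)}\,e^{-z}$ decays exponentially as $z \to \infty$, and since $k>0$ forces $z \to \infty$ as $x \to \infty$, the boundary condition $\lim_{x\to\infty} y(x) = 0$ selects $u = K_\nu$ uniquely up to a scalar: the $x^{1/2}$ prefactor cannot overcome the $e^{-x^k/k}$ decay.

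To fix $c_k$, I will invoke the classical small-argument asymptotics $K_\nu(z) = \tfrac{1}{2}\,\Gamma(\nu)\,(z/2)^{-\nu} + o(z^{-\nu})$ as $z \to 0^+$. Since $k\nu = 1/2$, substitution yields
\[ \lim_{x \to 0^+} x^{1/2} K_\nu\!\left(\tfrac{x^k}{k}\right) = \tfrac{1}{2}\,\Gamma(\nu)\,(2k)^{1/(2k)}, \]
and imposing $y(0) = 1$ produces exactly $c_k = 2^{1-1/(2k)} k^{-1/(2k)} / \Gamma(1/(2k))$, matching the stated value. The $C^{2-\alpha}([0,\infty))$ regularity at the origin will follow from the Frobenius series for $K_\nu$ at non-integer order: multiplication by $x^{1/2}$ and the substitution $z=x^k/k$ produce powers $x^{1/2 + k(2m \pm \nu)}$ for integers $m \ge 0$, so the leading non-constant term is $x^{1/2 + k\nu} = x$ and the first genuinely non-smooth term is $x^{1/2 + k(2-\nu)} = x^{2k} = x^{2-\alpha}$, giving exactly the claimed H\"older exponent at zero; smoothness on $(0,\infty)$ is automatic from the real-analyticity of $K_\nu$ there. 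The main technical obstacle is executing the Bessel change of variables without arithmetic slips; after that, the remaining steps are direct applications of standard asymptotic formulae for $K_\nu$.
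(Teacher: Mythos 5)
Your proof is correct. The paper does not actually prove Proposition~\ref{prop:FerFra}; it simply cites \cite{FerFra}, Proposition~4.1. Your self-contained derivation — the Liouville substitution $y(x)=x^{1/2}u(x^k/k)$ reducing $x^{2-2k}y''=y$ to the modified Bessel equation of order $\nu=1/(2k)$, discarding the exponentially growing $I_\nu$ branch via the decay condition at infinity, and fixing $c_k$ through the small-argument asymptotics $K_\nu(z)\sim\tfrac12\Gamma(\nu)(z/2)^{-\nu}$ together with $k\nu=1/2$ — is the standard and surely the same route taken in the cited reference, and your Frobenius-series argument for the $C^{2-\alpha}$ regularity at $0$ (exponents $x^{2km}$ and $x^{1+2km}$, first non-smooth term $x^{2k}=x^{2-\alpha}$) is also sound.
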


We show in the next rows how the solution of the problem \eqref{marprob1} arises, using the Laplace transform. So, we look for a possible candidate of a solution
 in the simplified situation in which $U$ has a sub-exponential growth in $t$, and in which the function $\varphi$ is zero on the negative semi-axis $(-\infty,0]$.
 Under this additional hypothesis, we take the Laplace transform in $t$ of the system \eqref{marprob1}. Since the Laplace transform of the derivative of a function gives
$$ \mathcal{L} (f')(\omega) =  \omega \mathcal{L}f(\omega),$$ we get that
\begin{equation*}\left\{
\begin{split}
	& \omega \mathcal{L} U(x,\omega) = \frac{1-2s}x \frac{\partial \mathcal{L} U}{\partial x}(x,\omega) + \frac{\partial^2{ \mathcal{L} U}}{\partial x^2}(x,\omega), 	&&\mbox{in } \quad (0,\infty)\times \mathbb{C}\\
	&\mathcal{L} U(0,\omega)=\mathcal{L} \varphi(\omega), & &\mbox{in } \quad\mathbb{C}\\
	&\lim_{x \to +\infty} \mathcal{L}U(x,\omega)=0, &&\mbox{in } \quad\mathbb{C}.
	\end{split}
	\right.
	\end{equation*}
We define for any fixed $\omega \in \mathbb{C}$
\begin{equation} \label{deff} f(x):= \mathcal{L} U(x,\omega),
\end{equation}
 then $f $ must be a solution of the system
 	\begin{equation} \label{probf1}
	\left\{
\begin{split}
 	  &\omega  f(x)= \frac{1-2s}x  f'(x) +  f''(x),	 &\quad\mbox{in } &(0,\infty)\\
	&f(0)=\mathcal{L} \varphi(\omega)  &&\\
	&\lim_{x \to +\infty} f(x)=0.&&
\end{split}
\right.
\end{equation}	
	We assume here that for any $\omega\in \mathbb{C}$, $ \mathcal{L} \varphi(\omega)  \neq 0$.\\
	We take in Proposition \ref{prop:FerFra}, $\alpha={(2s-1)}/{s}$ (notice for $s\in(0, 1)$ that $\alpha\in (-\infty,1)$) and $y(x)$ to be the solution there introduced. We claim that taking
		\begin{equation*}\label{chvarfy} f(x) = \mathcal{L} \varphi (\omega) y\left( \omega^s \left(\frac{x}{2s}\right)^{2s}\right),
		\end{equation*}
		$f(x)$ is a solution of the system \eqref{probf1}.
	Indeed, 
$f(0)= \mathcal{L} \varphi(\omega)$ and
		\begin{equation*}\begin{split} y''\left( \omega^s \left(\frac{x}{2s}\right)^{2s}\right) &=  f''(x) \frac{1}{\mathcal{L} \varphi(\omega)} \omega^{-2s} (2s)^{4s-2} x^{2-4s} \\ 
		& + f'(x)\frac{1-2s}{\mathcal{L} \varphi (\omega)}(2s)^{4s-2}\omega^{-2s}  x^{1-4s} .
		\end{split}
		\end{equation*}
		Since $y(x)$ satisfies the system \eqref{proby1} we have that
			\[ \left( \omega^s \left(\frac{x}{2s}\right)^{2s}\right)^{\frac{2s-1}s} y''\left( \omega^s \left(\frac{x}{2s}\right)^{2s}\right) =y\left(\omega^s \left(\frac{x}{2s}\right)^{2s}\right) .\] This implies that 
			\[\omega f(x)=f''(x) +(1-2s) x^{-1} f'(x),\] which yields that $f$ is a solution of \eqref{probf1}. \\
Now, from Proposition \ref{prop:FerFra} we have $k={1}/{(2s)}$ and
	\begin{equation*}
	 y(x)= \frac{2^{1-s}(2s)^s}{\Gamma(s)}x^{\frac{1}{2}} \bold{K}_s\left(2s x^{\frac{1}{2s}}\right).
	\end{equation*}
	And so we get that
	$$ f(x)= \mathcal{L} \varphi(\omega) \frac{2^{1-s}}{\Gamma(s)} \omega^{\frac{s}2} {x^s} \bold{K}_s(x\sqrt{\omega} ) .$$
	We use \eqref{deff}, take the 
inverse Laplace transform, and recall that the pointwise product is taken into the convolution product to obtain that
		\begin{equation*} U(x,t)= \frac{2^{1-s}}{\Gamma(s)} \varphi * \mathcal{L}^{-1} \left( \omega^{\frac{s}2}{x^s}  \bold{K}_s(x\sqrt{\omega} ) \right)(t).
		\end{equation*}
			And so, using \eqref{calcKpsi}, we get the following representation formula for the system \eqref{marprob1}: 
			\begin{equation*}\label{bla3} U(x,t) = \varphi*\Psi_s(x,t) = \int_0^t \Psi(x,\tau) \varphi(t-\tau) \,d\tau.\end{equation*}

We recall that we obtained the above formula by taking the function $\varphi$ to be vanishing in $(-\infty,0)$. 
However, it is reasonable to suppose that this formula holds true also for a function that is not a signal. Hence,  we take $\varphi$
that does not vanish in $(-\infty,0)$ and claim that $\varphi*\Psi_s$ still defines a solution of the problem \eqref{marprob1}.
Indeed, we show the following existence theorem:

\begin{theorem}\label{teo:sol}
There exists a continuous solution of the problem \eqref{marprob1} given by
\begin{equation*} U(x,t)	=   \Psi_s(x,\cdot)*\varphi (t) := \int_{\mathbb{R}} \Psi_s(x,\tau)\varphi (t-\tau)\, d\tau.
\end{equation*}
More precisely (inserting the definition \eqref{kern}) we have that
	\begin{equation}\label{solsts2} U(x,t)=\frac{1}{4^s \Gamma(s)} x^{2s} \int_0^{\infty} e^{-\frac{x^2}{4\tau}} \tau^{-s-1}  \varphi(t-\tau) \, d\tau.
	\end{equation}
\end{theorem}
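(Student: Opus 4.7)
The plan is to show directly that the function $U$ defined by the convolution formula \eqref{solsts2} satisfies all three conditions in \eqref{marprob1}, exploiting the fact that $\Psi_s$ is itself a solution of the PDE in $(0,\infty)\times(0,\infty)$ and that $\Psi_s(x,\cdot)$ is a probability density concentrating at $\tau=0$ as $x\to 0^+$.

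First I would check that $U$ is well defined and continuous on $[0,\infty)\times\mathbb{R}$. Since $\|\Psi_s(x,\cdot)\|_{L^1(\mathbb{R})}=1$ by \eqref{kencalc1} and $\varphi$ is bounded, the integral in \eqref{solsts2} converges absolutely and satisfies $|U(x,t)|\le\|\varphi\|_{L^{\infty}(\mathbb{R})}$. Continuity follows from the dominated convergence theorem together with the continuity of $\tau\mapsto\varphi(t-\tau)$ and a standard uniform-in-compact-sets bound for $\Psi_s$.

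Next, I would verify that $\Psi_s$ is a classical solution of the PDE in $(0,\infty)\times(0,\infty)$: a direct computation gives
\begin{equation*}
\partial_{\tau}\Psi_s(x,\tau)=\frac{1}{4^s\Gamma(s)}x^{2s}e^{-x^2/(4\tau)}\tau^{-s-2}\Bigl(\tfrac{x^2}{4\tau}-(s+1)\Bigr),
\end{equation*}
while the same exponential-times-power prefactor appears in $\partial_x\Psi_s$ and $\partial_{xx}\Psi_s$; assembling the three terms the $x^{2s-2}$ coefficients collapse via the identity $2s(2s-1)+2s(1-2s)=0$ and the $x^{2s}/\tau$ coefficients sum to $-(s+1)$, so that $\partial_{\tau}\Psi_s=\partial_{xx}\Psi_s+\frac{1-2s}{x}\partial_x\Psi_s$. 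Then differentiating \eqref{solsts2} under the integral with respect to $x$ twice (justified by dominated convergence, since $|\partial_x\Psi_s|$ and $|\partial_{xx}\Psi_s|$ are integrable in $\tau$ on $(0,\infty)$ for every fixed $x>0$ and locally bounded in $x$), and computing $\partial_t U$ after the change of variable $\sigma=t-\tau$ (the boundary term at $\sigma=t$ vanishes since $\Psi_s(x,0^+)=0$ for $x>0$), produces the equality $\partial_tU=\partial_{xx}U+\frac{1-2s}{x}\partial_xU$ throughout $(0,\infty)\times\mathbb{R}$.

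For the Dirichlet trace $U(0,t)=\varphi(t)$, the key is the substitution $u=x^2/(4\tau)$, which rewrites
\begin{equation*}
U(x,t)=\frac{1}{\Gamma(s)}\int_0^{\infty}e^{-u}u^{s-1}\varphi\!\left(t-\frac{x^2}{4u}\right)du,
\end{equation*}
and simultaneously confirms \eqref{kencalc1}. Since $\varphi$ is bounded and $\bar{\gamma}$-H\"older continuous with $\bar{\gamma}\in(s,1]$, the integrand is dominated by $\|\varphi\|_{L^{\infty}(\mathbb{R})}e^{-u}u^{s-1}\in L^1((0,\infty))$; dominated convergence and the continuity of $\varphi$ then yield $U(x,t)\to\varphi(t)$ as $x\to 0^+$. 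The same representation gives $\lim_{x\to+\infty}U(x,t)=0$ under the decay hypothesis $\varphi(\sigma)\to 0$ as $\sigma\to-\infty$, since for each fixed $u>0$ one has $t-x^2/(4u)\to-\infty$.

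The main obstacle is the careful bookkeeping required for the interchange of differentiation and integration: I must produce integrable, $x$-locally-uniform majorants for $\Psi_s$, $\partial_x\Psi_s$, $\partial_{xx}\Psi_s$ and $\partial_{\tau}\Psi_s$ near both $\tau=0^+$ (where the Gaussian factor $e^{-x^2/(4\tau)}$ forces decay that must be quantified against the singular $\tau^{-s-2}$ weight) and $\tau\to+\infty$ (where $\tau^{-s-1}$ suffices). Once these majorants are in place, the PDE verification reduces to the algebraic cancellation already performed on $\Psi_s$, and the boundary and decay conditions follow from the clean $e^{-u}u^{s-1}$ form of the rescaled kernel.
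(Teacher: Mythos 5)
Your proof takes essentially the paper's route: differentiate under the integral, exploit the algebraic structure of the kernel, and check the boundary conditions by rescaling and dominated convergence. The one organizational difference -- first verifying that the kernel $\Psi_s$ itself solves the degenerate parabolic equation, then transferring to $U$ by differentiation under the integral sign -- is conceptually cleaner than the paper's direct manipulation of $V=4^s\Gamma(s)U$, but it produces the same computations. Your observation that the boundary term at $\tau=0$ vanishes (because $\Psi_s(x,0^+)=0$ for $x>0$) is the same mechanism the paper uses when it extends the kernel $A_{x,\tau}$ by zero for $\tau\le0$.

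Where you genuinely diverge, and in fact improve on the paper, is the decay condition $\lim_{x\to+\infty}U(x,t)=0$. You correctly observe that this requires the extra hypothesis $\varphi(\sigma)\to0$ as $\sigma\to-\infty$, using the rescaled representation $U(x,t)=\Gamma(s)^{-1}\int_0^\infty e^{-u}u^{s-1}\varphi(t-x^2/(4u))\,du$. The paper instead asserts the $\tau$-uniform-in-$x$ majorant $x^{2s}e^{-x^2/(4\tau)}\le Me^{-1/(4\tau)}$ for large $x$ and invokes dominated convergence; but this bound fails (at $\tau$ of order $x^2$ the left side grows like $x^{2s}$ while the right side stays bounded), and since $\Psi_s(x,\cdot)$ carries unit $L^1$-mass for every $x>0$, the pointwise decay $\Psi_s(x,\tau)\to0$ as $x\to\infty$ reflects mass escaping to $\tau=+\infty$ -- precisely the regime in which dominated convergence cannot be applied. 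The counterexample $\varphi\equiv1$ gives $U\equiv1$, so the decay condition genuinely fails without the hypothesis you add. Your version is the corrected one.

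Finally, you flag but do not supply the $x$-locally-uniform integrable $\tau$-majorants for $\Psi_s$, $\partial_x\Psi_s$, $\partial_{xx}\Psi_s$ and $\partial_\tau\Psi_s$ needed to justify differentiating under the integral sign. These do exist: for $x$ in a compact subset of $(0,\infty)$ the Gaussian factor $e^{-x^2/(4\tau)}$ dominates any negative power of $\tau$ as $\tau\to0^+$, and the weights $\tau^{-s-1}$ and $\tau^{-s-2}$ are integrable at $\tau\to+\infty$. The paper suppresses the same check, so this is an elision rather than a gap.
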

\begin{proof}
We define
	\begin{equation*} A_{x,\tau}:=  
	\left\{
	\begin{split}  
	&e^{-\frac{x^2}{4\tau}}\tau^{-s-1}, &\mbox{if } &\tau>0\\
	&0, &\mbox{if } &\tau\leq 0 
	\end{split}
	\right.
	\end{equation*}
	and notice that
		\begin{equation*}
		\frac{ \partial A_{x,\tau}}{\partial x}= \left\{
		\begin{split} &-\frac{x}{2\tau}A_{x,\tau}, &\mbox{if } &\tau>0\\
	 &0,&\mbox{if } &\tau\leq 0.
	 \end{split}
	 \right.
	 \end{equation*}
	Let 
		$$ V(x,t):=4^s\Gamma(s) U(x,t) =  x^{2s}\int_\mathbb{R} A_{x,\tau}\varphi(t-\tau)\, d\tau,$$ where we have introduced the notation $A_{x,\tau}$ into \eqref{solsts2}.
Taking the derivative with respect to $x$ of $V(x,t)$ we have that
	$$ \frac{\partial V}{\partial x} (x,t)= 2s x^{2s-1} \int_\mathbb{R} A_{x,\tau} \varphi(t-\tau) \, d\tau -\frac{x^{2s+1}}2  \int_\mathbb{R} \frac{A_{x,\tau}}{\tau} \varphi(t-\tau) \, d\tau,$$
	and that
	\begin{equation*}\begin{split}\frac{\partial^2{V}}{\partial x^2}(x,t)=&\, 2s(2s-1) x^{2s-2}  \int_\mathbb{R} A_{x,\tau} \varphi(t-\tau) \, d\tau\\ &\,
	 - \frac{(4s+1)x^{2s}}{2}  \int_\mathbb{R} \frac{A_{x,\tau}}{\tau} \varphi(t-\tau) \, d\tau + \frac{x^{2s+2}}4  \int_\mathbb{R} \frac{A_{x,\tau}}{\tau^2} \varphi(t-\tau) \, d\tau.
	\end{split}
	\end{equation*}
Then, by changing variables, we write
	\[ V(x,t)=x^{2s} \int_\mathbb{R} A_{x,t-\tau} \varphi(\tau) \, d\tau ,\] and taking the derivative with respect to $t$, we get that
	\[\frac{\partial { V}}{\partial t}(x,t)= x^{2s} \int_\mathbb{R}  \left[x^2 \frac{A_{x,t-\tau}}{4(t-\tau)^2} \varphi(\tau) -(s+1) \frac{A_{x,t-\tau}}{(t-\tau) }\varphi(\tau)\right]\, d\tau.\]
	We change back variables to obtain	
	\[\frac{\partial { V}}{\partial t}(x,t)  =x^{2s+2}\int_\mathbb{R}  \frac{A_{x,\tau}}{4\tau^2 } \varphi(t-\tau)\, d\tau  -(s+1)x^{2s} \int_\mathbb{R} \frac{A_{x,\tau}}{\tau }\varphi(t-\tau) \, d\tau.\]
	By substituting these computations, we obtain that indeed $V$, hence $U$ by the definition of $V$, satisfies the equation 
	\[ \frac{\partial { U}}{\partial t} (x,t)= \frac{1-2s}{x}\frac{\partial U}{\partial x} (x,t)+\frac{\partial^2{U}}{\partial x^2}(x,t).\] 
		Moreover, using for $x$ large enough the bound
		\[ x^{2s}e^{-\frac{x^2}{4\tau}} \leq M e^{-\frac{1}{4\tau}},\] 
		thanks to the Dominated Convergence Theorem and the limit
			\[\lim_{x\to +\infty} x^{2s} e^{-\frac{x^2}{4\tau}} =0,\]
			it yields that \[\lim_{x\to +\infty} U(x,t)=0.\]
Furthermore, in \eqref{solsts2} by changing the variable $\tilde \tau = {\tau}/{x^2}$ (but still writing $\tau$ as the variable of integration), we have that
		$$ U(x,t)= \frac{1}{4^s\Gamma(s)} \int_0^\infty e^{-\frac{1}{4\tau}} \tau^{-s-1} \varphi(t-\tau x^2) \, d\tau.$$
		Since $\varphi$ is bounded, by the Dominated Convergence Theorem, we have that
		$$\lim_{x\to 0^+} U(x,t) = \frac{\varphi(t)}{4^s\Gamma(s)}  \int_0^\infty e^{-\frac{1}{4\tau}} \tau^{-s-1}  \, d\tau
		=\varphi(t),$$
		according to \eqref{bla2}. This proves the continuity up to the boundary of the solution $U,$ concluding the proof of the Theorem. 
\end{proof}



We  prove here that
 the Marchaud derivative is obtained as the trace operator of the extension given by the solution of problem \eqref{marprob1} obtained in Theorem \ref{teo:sol}. Namely, we prove the following theorem.

\begin{proof}[Proof of Theorem \ref{teo:mainstat}]
By inserting the expression of $U(x,t)$ from \eqref{solsts2}, we compute 
\begin{equation*}
\begin{split}& \lim_{x \to 0^+} x^{-2s} \left(U(x,t)-\varphi(t)\right)\\
& = \lim_{x \to 0^+}x^{-2s}\left( \frac{1}{4^s\Gamma(s)} \int_0^\infty x^{2s}e^{-\frac{x^2}{4\tau}} \tau^{-s-1} \varphi(t-\tau) \, d\tau - \varphi (t)\right).
\end{split}
\end{equation*}
Recalling property \eqref{kern} of the kernel, we have that
	\begin{equation*}\begin{split} \lim_{x \to 0^+} x^{-2s} &\left(U(x,t)-\varphi(t)\right) \\
	&= \lim_{x \to 0^+} \frac{x^{-2s}}{4^s\Gamma(s)} \int_0^\infty x^{2s}e^{-\frac{x^2}{4\tau}} \tau^{-s-1} \left( \varphi(t-\tau) - \varphi (t)\right) \, d\tau\\
	&= \lim_{x\to 0^+} \frac{1}{4^s \Gamma(s)} \int_0^\infty e^{-\frac{x^2}{4\tau}}  \frac{ \varphi(t-\tau)-\varphi(t)}{\tau^{s+1}}\, d\tau.
	\end{split}
	\end{equation*}
Now \[ e^{-\frac{x^2}{4\tau}}  \leq 1\] and since $\varphi$ is bounded, we have that 
	\[ \frac{|\varphi(t-\tau)-\varphi(t) |}{\tau^{s+1}} \leq 2M \tau^{-s-1} \in L^1\left((1,\infty)\right).\]
	On the other hand, recalling that $\varphi$ is $C^{\bar{\gamma}}(\mathbb{R})$ we have that
		\[ |\varphi(t) - \varphi(t-\tau)|\leq c\tau^{\bar{\gamma}}.\]
		Hence, since $\bar{\gamma}>s$,
			\[    \frac{|\varphi(t-\tau)-\varphi(t) |}{\tau^{s+1}} \leq  c \tau^{\bar{\gamma}-s-1} \in L^1\left((0,1)\right).\]
			Using the Dominated Converge Theorem, we obtain
\begin{equation}\label{derivative_right}
 \begin{split}\lim_{x \to 0^+} x^{-2s} \left(U(x,t)-\varphi(t)\right) &= \frac{1}{4^s \Gamma(s)} \int_0^\infty \lim_{x\to 0^+} e^{-\frac{x^2}{4\tau}}  \frac{ \varphi(t-\tau)-\varphi(t)}{\tau^{s+1}} \, d\tau \\
				&= \frac{1}{4^s \Gamma(s)} \int_0^\infty\frac{ \varphi(t-\tau)-\varphi(t)}{\tau^{s+1}}\, d\tau.
				\end{split}
				\end{equation}
	And so for $c_s=4^s\Gamma(s),$
		\begin{equation*}
		\begin{split} -  c_s \lim_{x \to 0^+} x^{-2s} \left(U(x,t)-\varphi(t)\right) = \int_0^\infty\frac{ \varphi(t)-\varphi(t-\tau) }{\tau^{s+1}}\, d\tau
		= \bold{D}^s \varphi(t)
		\end{split}
		\end{equation*}
		by definition \eqref{frader}. This concludes the proof of Theorem \ref{teo:mainstat}.
\end{proof}

We make a short remark on the right Marchaud fractional derivative (denoted by ${\bold{D}}^s_-\varphi$) and the backward equation. The following result is true:
\begin{theorem}
Let $s\in (0,1) $ and $\bar{\gamma}\in (s,1]$ be fixed. Let $\varphi \in C^{\bar{\gamma}}(\mathbb{R})$ be a bounded function and let $U_-\colon [0,\infty)\times \mathbb{R}\to \mathbb{R}$ be a solution of the problem
\begin{equation}
 \label{prob2}
\left\{ \begin{split}
 &  -\frac{\partial {U(x,t)}}{\partial t} = \frac{1-2s}x  \frac{\partial{U(x,t)}}{\partial x} + \frac{\partial^2{U(x,t)}}{\partial x^2}, &  & (x,t)\in(0,\infty)\times \mathbb{R}\\
	& U(0,t)=\varphi(t), &&t\in\mathbb{R}\\
	  & \lim_{x \to +\infty} U(x,t)=0. && 
\end{split}
\right.
\end{equation}

Then $U_-$ defines the extension operator for $\varphi$, such that
\begin{equation*}
\bold{D}^s_- \varphi(t)=-\lim_{x\to 0^+} c_s x^{-2s}(U_-(x,t)- \varphi(t)) , \quad \mbox{ where } \quad c_s= 4^s\Gamma(s).
\end{equation*}
\end{theorem}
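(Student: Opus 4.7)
The strategy is to reduce the backward problem \eqref{prob2} to the forward problem \eqref{marprob1} via the time-reversal transformation $t\mapsto -t$, and then read off the result from Theorem \ref{teo:mainstat}. First I would set $\tilde\varphi(t):=\varphi(-t)$. Since $\varphi$ is bounded and of class $C^{\bar\gamma}(\mathbb{R})$ with $\bar\gamma\in(s,1]$, so is $\tilde\varphi$, with the same norms. Theorem \ref{teo:sol} then provides a continuous solution
\[
\tilde U(x,t):=\Psi_s(x,\cdot)*\tilde\varphi\,(t)=\frac{1}{4^s\Gamma(s)}x^{2s}\int_0^{\infty}e^{-\frac{x^2}{4\tau}}\tau^{-s-1}\tilde\varphi(t-\tau)\,d\tau
\]
of the forward problem \eqref{marprob1} with boundary datum $\tilde\varphi$, and Theorem \ref{teo:mainstat} gives
\[
\bold{D}^s\tilde\varphi(t)=-c_s\lim_{x\to 0^+}x^{-2s}\bigl(\tilde U(x,t)-\tilde\varphi(t)\bigr),\qquad c_s=4^s\Gamma(s).
\]

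Next I would define $U_-(x,t):=\tilde U(x,-t)$ and check by the chain rule that it satisfies \eqref{prob2}. Indeed, $\partial_t U_-(x,t)=-(\partial_t\tilde U)(x,-t)$, while the equation for $\tilde U$ gives $(\partial_t\tilde U)(x,-t)=\frac{1-2s}{x}\partial_x\tilde U(x,-t)+\partial_x^2\tilde U(x,-t)=\frac{1-2s}{x}\partial_x U_-(x,t)+\partial_x^2 U_-(x,t)$, which is exactly the backward PDE. The boundary conditions transfer immediately: $U_-(0,t)=\tilde\varphi(-t)=\varphi(t)$ and $\lim_{x\to+\infty}U_-(x,t)=\lim_{x\to+\infty}\tilde U(x,-t)=0$. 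Hence $U_-$ is the desired solution of \eqref{prob2}.

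Finally, I would evaluate the trace limit. Substituting $t\mapsto -t$ in the identity for $\tilde U$ yields
\[
\bold{D}^s\tilde\varphi(-t)=-c_s\lim_{x\to 0^+}x^{-2s}\bigl(\tilde U(x,-t)-\tilde\varphi(-t)\bigr)=-c_s\lim_{x\to 0^+}x^{-2s}\bigl(U_-(x,t)-\varphi(t)\bigr).
\]
It remains to match the left-hand side with $\bold{D}^s_-\varphi(t)$: using the definitions in \eqref{mdefct} and \eqref{frader},
\[
\bold{D}^s\tilde\varphi(-t)=\int_0^{\infty}\frac{\tilde\varphi(-t)-\tilde\varphi(-t-\tau)}{\tau^{s+1}}\,d\tau=\int_0^{\infty}\frac{\varphi(t)-\varphi(t+\tau)}{\tau^{s+1}}\,d\tau=\bold{D}^s_-\varphi(t),
\]
which completes the argument.

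The proof is essentially mechanical once the reduction is set up; the only minor subtlety is being sure that the existence/regularity statements of Theorems \ref{teo:sol} and \ref{teo:mainstat} apply verbatim to $\tilde\varphi$ (they do, since the hypotheses are preserved under time reflection) and that the time-reversed convolution produces exactly the kernel structure expected for the backward problem. Alternatively, one could carry out the proof directly by working with the convolution $U_-(x,t)=\int_0^{\infty}\Psi_s(x,\tau)\varphi(t+\tau)\,d\tau$ and repeating verbatim the computations of Theorems \ref{teo:sol} and \ref{teo:mainstat}, which would give an independent, self-contained argument but add no new conceptual content.
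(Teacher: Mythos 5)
Your proof is correct and takes essentially the same route as the paper, which simply remarks that the backward problem reduces to the forward one via the time reversal $U_-(x,t)=U(x,-t)$; you fill in the details (the need to use the reflected datum $\tilde\varphi(t)=\varphi(-t)$, the chain-rule check of the PDE, and the identity $\bold{D}^s\tilde\varphi(-t)=\bold{D}^s_-\varphi(t)$), and your computations are all accurate.
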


The proof follows similarly to the proof of Theorem \ref{teo:mainstat}.
We only point out that if $U_-$ is a solution of \eqref{prob2}, then $U_-(x,t)=U(x,-t),$ where $U$ is the solution of the differential equation in \eqref{marprob1}.
%

%
%

\subsection{Applications: a Harnack inequality for Marchaud-stationary functions}

In this subsection
we prove a Harnack inequality for functions that have a vanishing Marchaud derivative in a bounded interval $J$, namely we prove here Theorem \ref{teoHarn}. At this purpose, we use a known Harnack inequality for degenerate parabolic operators, that can be found in \cite{CS}, see Theorem 2.1. There, the result is given in its generality, in $\mathbb{R}^n$.  For the reader's convenience we recall in Proposition \ref{Chiarenza_Serapioni} this result in the case $n=1.$ 

We 
point out that the result given in \cite{CS} was proved for $n\geq 3$. Nevertheless the same proof works also for $n = 1$ with some adjustments. We recall here the hypotheses we need, adapted in our case $n=1$. It is worth to say that this problem has been studied in a more general fashion in \cite{GuWhee2} and \cite{GuWhee1}.

The degenerate parabolic 
\begin{equation}\label{specific_heat}
w(x)\frac{\partial u}{\partial t}=\frac{\partial }{\partial x}{}\left(w(x)\frac{\partial u}{\partial x}\right),
\end{equation}
is given in $Q=(-R,R)\times (0,T)$, for $R>0$. The weight $w$ has to satisfy an integrability condition (also known as a Muckehoupt, or $A_2$ weight condition), given by  
\begin{equation}\label{2-weight}
\sup_{J}\left(\frac{1}{|J|}\int_{J}w(x)\, dx\right)\, \left(\frac{1}{|J|}\int_{J}\frac{1}{w (x)}\, dx\right)=c_0<\infty,
\end{equation}
for any interval $J \subseteq (-R,R)$. The constant $c_0$ is indicated as the $A_2$ constant of $w$. \\
In this particular case we give here in \eqref{specific_heat}, the conductivity coefficient (i.e. the coefficient in front of the $x$ derivative) and the specific heat (the coefficient of the $t$ derivative) coincide. 
A more general form of the equation in $\mathbb R$ can be given in these terms: 
\begin{equation}\label{genst1}
w(x)\frac{\partial u}{\partial t}=\frac{\partial u}{\partial x}\left(a(x)\frac{\partial u}{\partial x}\right),
\end{equation}
i.e. when the conductivity and the specific heat are not equal.
In that case, one has to require, besides condition \eqref{2-weight}, that
	\[ \lambda^{-1} w(x) \leq a(x)\leq \lambda w(x). \] 

In addition we consider the functional space
\[ W:=\left\{u\in L^2(0,T;H^{1}_0(J,w)) \mbox{ s.t. }\quad \frac{\partial u}{\partial t}\in  L^2(0,T;L^2(J,w))\right\}.\]
We denote here by $L^2(J,w),$ the Banach space of measurable functions $u$ with finite weighted norm
$$
\|u\|_{2,w;J}=\left(\int_{J}|u|^2 w \,dx\right)^{1/2}<\infty,
$$
by $H^1(J,w)$ the completion of $C^{\infty}(\overline{J})$ under the norm
$$
\|u\|_{1,w;J}=\left(\int_{J}(u^2+|\partial_x u|^2)w \, dx\right)^{1/2}
$$
and by $H^1_0 (J,w)$ the completion of $C^{\infty}_0(J)$ under the norm
$$
\|u\|_{1,w;J}=\left(\int_{J}|\partial_x u|^2w \,dx\right)^{1/2}.
$$ The time dependent Sobolev space $L^2\left(0,T; H_0^1(J,w)\right)$ is defined as the set of all measurable functions $ u$ such that
 \[ \|u\|_{L^2\left(0,T; H_0^1(J,w)\right)}  := \left(  \iint_{ J \times(0,T)} |u(x,t)|^2 w(x)\, dx \, dt \right)^{\frac{1}2}< \infty.\] 

In this setting, we introduce the notion of weak solution of the problem \eqref{specific_heat}.
\begin{defn}
We say that $u\in L^2(0,T;H^1(J,w))$ is a weak solution of \eqref{specific_heat} in $J \times (0,T)$ if, for every $\eta\in W,$ such that $\eta(x,0)=\eta(x,t)$ for any $x\in J$, we have that
\begin{equation*}\label{weak_solution}
\iint_{J\times (0,T)} w(x) \left(\frac{\partial u}{\partial x}  \frac{\partial {\eta}}{\partial x}  - u\frac{\partial{\eta}}{\partial t}\right)\, dx\, dt=0.
\end{equation*}
\end{defn}

We have the next proposition (see for the proof Theorem $2.1$ in \cite{CS}). 
\begin{prop}\label{Chiarenza_Serapioni}
Let $u$ be a positive solution in $(-R,R)\times (0,T)$ of (\ref{specific_heat}) and assume that condition \eqref{2-weight} holds, with constant $c_0$. Then there exists $\gamma=\gamma(c_0)>0$ such that
\begin{equation*}
\sup_{\left(-\frac{\rho}2,\frac{\rho}2\right)\times\left(t_0-\frac{3\rho^2}4 , t_0-\frac{\rho^2}{4}\right)} u\leq \gamma \inf_{\left(-\frac{\rho}2,\frac{\rho}2\right)\times\left( t_0+\frac{3\rho^2}4 , t_0+\rho^2\right)}u
\end{equation*}
holds for $t_0\in (0,T)$ and any $\rho$ such that $0<\rho<R/2$ and $[t_0-\rho^2,t_0+\rho^2]\subset (0,T)$.
\end{prop}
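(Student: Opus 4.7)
The plan is to follow the Moser iteration scheme, adapted to the weighted setting, as in the original argument of Chiarenza--Serapioni (cf.\ also the Fabes--Kenig--Serapioni theory of $A_2$ weights). The two key analytic ingredients that replace the usual unweighted inequalities are: (i) a weighted Sobolev inequality of the form $\|\phi\|_{L^{2\kappa}(J,w)} \leq C\, \||\partial_x \phi|\|_{L^2(J,w)}$ for some $\kappa>1$ and all $\phi \in C^\infty_0(J)$, where $C$ and $\kappa$ depend only on $c_0$, and (ii) a weighted Poincar\'e inequality on intervals. Both are consequences of the $A_2$ condition \eqref{2-weight}. By rescaling $(x,t) \mapsto (\rho x, t_0 + \rho^2 t)$ and noting that the equation \eqref{specific_heat} is invariant under such parabolic dilations (with the rescaled weight still $A_2$ with the same constant $c_0$), it is enough to prove the inequality on the standard cylinder $Q_1 = (-1,1)\times(-1,1)$, comparing the ``past'' box $Q^- = (-\tfrac{1}{2},\tfrac{1}{2})\times(-\tfrac{3}{4},-\tfrac{1}{4})$ with the ``future'' box $Q^+ = (-\tfrac{1}{2},\tfrac{1}{2})\times(\tfrac{3}{4},1)$.

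The first main step is to derive Caccioppoli-type energy estimates for positive sub- and supersolutions. Using test functions of the form $\eta^2 u^{q}$ in the weak formulation (where $\eta$ is a smooth cutoff and $q\in\mathbb{R}\setminus\{-1\}$), and exploiting that the same weight $w$ appears on both sides of \eqref{specific_heat}, one obtains inequalities of the shape
\begin{equation*}
\sup_{t}\int \eta^{2}u^{q+1}w\,dx + \iint \eta^{2}|\partial_x(u^{(q+1)/2})|^{2} w\,dx\,dt \leq C(q)\iint (\eta^{2}+|\partial_{x}\eta|^{2}+|\partial_{t}\eta^{2}|)\, u^{q+1}w\,dx\,dt.
\end{equation*}
Combining this with the weighted Sobolev inequality yields the reverse-H\"older type bound
$\bigl(\fint_{Q_{r}}u^{\kappa(q+1)}w\bigr)^{1/\kappa(q+1)} \leq \tfrac{C}{(R-r)^{\alpha}}\bigl(\fint_{Q_{R}}u^{q+1}w\bigr)^{1/(q+1)}$
on nested subcylinders. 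Iterating this bound with $q+1 \to \kappa(q+1) \to \kappa^{2}(q+1)\to\cdots$ (the standard Moser iteration) gives, for any $p_{0}>0$,
\begin{equation*}
\sup_{Q^{+}} u \,\leq\, C\Bigl(\fint_{\tilde Q^{+}} u^{p_{0}}\,w\,dx\,dt\Bigr)^{1/p_{0}} \quad\text{and}\quad \inf_{\tilde Q^{+}} u \,\geq\, C^{-1}\Bigl(\fint_{Q^{+}} u^{-p_{0}}\,w\,dx\,dt\Bigr)^{-1/p_{0}},
\end{equation*}
and symmetrically on the past cylinder.

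The crux of the argument, and the main obstacle, is to bridge these two one-sided bounds by controlling an $L^{p_0}$ norm of $u$ in the past by an $L^{-p_0}$ norm of $u$ in the future, for a suitable (small) $p_0>0$. For this one uses the classical logarithmic estimate: testing the equation for a positive supersolution with $\eta^{2}/u$ yields
\begin{equation*}
\frac{d}{dt}\int \eta^{2}(\log u)\,w\,dx \,+\, c\int \eta^{2}|\partial_{x}\log u|^{2}w\,dx \,\leq\, C\int |\partial_{x}\eta|^{2}\,w\,dx,
\end{equation*}
which, integrated in $t$, shows that $v:=\log u$ has bounded oscillation of BMO type with respect to the doubling measure $w\,dx\,dt$ on parabolic cylinders. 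The hard part is then the parabolic weighted John--Nirenberg inequality (or, equivalently, the Bombieri--Giusti lemma applied in the weighted setting), which produces $p_{0}=p_{0}(c_{0})>0$ and a constant $M$ with
\begin{equation*}
\Bigl(\fint_{\tilde Q^{-}} u^{p_{0}}w\Bigr)^{1/p_{0}} \Bigl(\fint_{\tilde Q^{+}} u^{-p_{0}}w\Bigr)^{1/p_{0}} \leq M.
\end{equation*}
This is the step where the $A_2$ property is used most heavily: it guarantees that $w\,dx\,dt$ is a doubling measure on parabolic cylinders and that the weighted Poincar\'e inequality holds, both of which are required to run the covering/stopping-time argument underlying John--Nirenberg. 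Concatenating the sup-bound on $Q^+$ with the $L^{p_0}$--$L^{-p_0}$ inequality and the inf-bound on $Q^-$ (applied, by time reversal via $t\mapsto -t$, to the past cylinder as well, using that the equation is self-adjoint in $x$) yields $\sup_{Q^{-}} u \leq \gamma \inf_{Q^{+}} u$ with $\gamma=\gamma(c_{0})$, which is exactly the claim after undoing the rescaling.
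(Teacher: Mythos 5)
Your outline is correct and is essentially the argument of the source the paper relies on: the paper does not prove this proposition at all, but simply quotes it from Theorem 2.1 of Chiarenza--Serapioni \cite{CS} (restricted to $n=1$), and that proof is exactly the weighted Moser scheme you describe — Caccioppoli estimates, the Fabes--Kenig--Serapioni weighted Sobolev/Poincar\'e inequalities coming from the $A_2$ condition, the logarithmic estimate, and the weighted parabolic John--Nirenberg/Bombieri--Giusti crossover. No gap to report; only note the minor labeling slip in the concatenation step, where the sup-bound should be applied on the past box $Q^-$ and the inf-bound on the future box $Q^+$, as your final inequality correctly states.
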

\begin{remark}
The reader can easily imagine the general situation in any dimension as explicated in Theorem 2.1 in \cite{CS}, where the coefficient $a(x)$ in \eqref{genst1} is a matrix and the domains are cylinders.  
We have stated the Harnack inequality in $ (0,T).$ Nevertheless with a change of coordinates in space and time, we can always say that the Harnack inequality holds in any subset of $(R_1,R_2)\times(\tau_1,\tau_2),$ where $R_1,R_2,\tau_1,\tau_2\in \mathbb{R}.$ 
\end{remark}
We consider here that $\bold{D}^s \varphi (t)=0$ in an interval $J$. By taking the reflection of the solution of problem \eqref{marprob1}, we obtain a solution in a weak sense of \eqref{marprob1} across $x=0$.
 
It is useful to introduce a weak version of the limit $ \displaystyle \lim_{x\to 0^+} x^{1-2s} \partial_x U(x,t)$. In this sense, we have:
\begin{defn}
We say that in a weak sense
$$  \lim_{x\to 0^+} x^{1-2s} \frac{\partial{U}}{\partial x}(x,t) =0$$
if and only if, for any  $\eta\in W$ such that $\eta(x,0)=\eta(x,t)$ for any $x\in J$, we have that
\begin{equation}\label{weak_limit} \lim_{x\to 0^+} \int_0^T x^{1-2s} \frac{\partial U}{\partial x} \,\eta\,dt =0.\end{equation}
\end{defn}
\begin{lemma}\label{tildeu}
Let $U\colon \mathbb{R} \times [0,\infty)\to \mathbb{R}$ be a solution of the problem (\ref{marprob1}) such that, in a weak sense,
$  \displaystyle\lim_{x\to 0^+} x^{1-2s} \partial_x U(x,t)=0.$
Then the extension 
\begin{equation*}
\tilde U(x,t):=\left\{ 
\begin{split}
&U(x,t), && (x,t)\in [0,+\infty)\times (0,T)\\
&U(-x,t), & &(x,t)\in (-\infty,0)\times (0,T)
\end{split}
\right.
\end{equation*}
is a weak solution of
 \begin{equation}\label{weaksol}
\frac{\partial (|x|^{1-2s}U)}{\partial t} (x,t)= \frac{\partial }{\partial x}\left(|x|^{1-2s}\frac{\partial U}{\partial x}(x,t)\right)
\end{equation}
in $(-R,R)\times (0,T)$.
\end{lemma}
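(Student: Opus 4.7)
The plan is to verify the weak formulation directly, exploiting the even symmetry of $\tilde U$ together with the divergence-form recasting of the equation and the weak vanishing of $x^{1-2s}\partial_x U$ at the origin. First I would observe that, since $|x|^{1-2s}$ is $t$-independent, the equation \eqref{weaksol} is equivalent to
$$ |x|^{1-2s}\partial_t\tilde U = \partial_x\bigl(|x|^{1-2s}\partial_x\tilde U\bigr), $$
and the equation in \eqref{marprob1}, after multiplying through by $x^{1-2s}$, becomes precisely $x^{1-2s}\partial_t U = \partial_x(x^{1-2s}\partial_x U)$ on $(0,\infty)\times(0,T)$. Thus $U$ satisfies the divergence-form equation with weight $w(x)=|x|^{1-2s}$ strongly on the positive half-line. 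By the symmetry $\tilde U(x,t)=\tilde U(-x,t)$, one computes that $\tilde U$ also satisfies the same divergence-form equation strongly on $(-R,0)\times(0,T)$, since the weight is even in $x$ and a reflection in $x$ leaves $|x|^{1-2s}\partial_t\tilde U$ and $\partial_x(|x|^{1-2s}\partial_x\tilde U)$ invariant.

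Next, I would take $\eta\in W$ with $\eta(x,0)=\eta(x,T)$ and $\eta(\pm R,t)=0$, and split
$$ I := \iint_{(-R,R)\times(0,T)}|x|^{1-2s}\bigl(\partial_x\tilde U\,\partial_x\eta - \tilde U\,\partial_t\eta\bigr)\,dx\,dt = I_+ + I_- $$
into integrals on $(0,R)\times(0,T)$ and $(-R,0)\times(0,T)$. On $(\epsilon,R)\times(0,T)$, integration by parts in $x$ (with $\eta(R,t)=0$) gives
$$ \iint_{(\epsilon,R)\times(0,T)}\!\!x^{1-2s}\partial_x U\,\partial_x\eta = -\int_0^T\!\epsilon^{1-2s}\partial_x U(\epsilon,t)\,\eta(\epsilon,t)\,dt - \iint_{(\epsilon,R)\times(0,T)}\!\!\partial_x(x^{1-2s}\partial_x U)\,\eta. $$
Sending $\epsilon\to 0^+$ the boundary term vanishes by the weak limit hypothesis \eqref{weak_limit}, and on the remaining volume term I substitute the PDE $\partial_x(x^{1-2s}\partial_x U)=x^{1-2s}\partial_t U$. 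Integrating by parts in $t$ (the time-boundary contributions cancel thanks to $\eta(x,0)=\eta(x,T)$) yields
$$ \iint_{(0,R)\times(0,T)}\!\!x^{1-2s}\partial_x U\,\partial_x\eta\,dx\,dt = \iint_{(0,R)\times(0,T)}\!\!x^{1-2s}U\,\partial_t\eta\,dx\,dt, $$
so $I_+=0$.

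The treatment of $I_-$ is identical: since $\tilde U$ solves the same divergence-form equation strongly on $(-R,0)\times(0,T)$, the same integration-by-parts procedure reduces $I_-$ to a boundary term at $x=0^-$, which after the substitution $y=-x$ equals $-\lim_{y\to 0^+}y^{1-2s}\partial_y U(y,t)\,\eta(0,t)$ (in the appropriate weak sense) and therefore also vanishes by hypothesis. Hence $I=I_++I_-=0$ for every admissible $\eta$, which is precisely the weak formulation of \eqref{weaksol} on $(-R,R)\times(0,T)$. The main delicate point is the boundary contribution at $x=0$: rather than interpreting the weak limit pointwise (which is not granted), one has to make sure that the integration by parts is carried out with the test function $\eta$ kept inside the limit in $\epsilon$, so that \eqref{weak_limit} applies as stated and the spurious trace at $x=0$ is genuinely eliminated from both sides.
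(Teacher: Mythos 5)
Your proof is correct and follows essentially the same strategy as the paper's: split the weak form over $(0,R)$ and $(-R,0)$, integrate by parts in $x$ on each half, use the weak limit hypothesis \eqref{weak_limit} to eliminate the boundary contribution at $x=0^\pm$, substitute the divergence-form PDE, and integrate by parts in $t$. The only cosmetic difference is that you kill the $x=\pm R$ boundary terms already in the respective integrations by parts and conclude $I_+=I_-=0$ separately, whereas the paper carries these terms along and cancels them only after summing; the substance is identical.
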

\begin{proof}
We claim that the extension $\tilde U$ is a weak solution of \eqref{weaksol}, hence that 
	\begin{equation}
	\label{bla22} 
	\int_{(-R,R)\times(0,T)} |x|^{1-2s}\left( \frac{\partial { \tilde U}}{\partial x} \frac{\partial { \eta}}{\partial x} - \tilde U \frac{\partial{\eta}}{\partial t}\right)\, dx\, dt =0. \end{equation}
	We compute, integrating by parts
	\begin{equation*}\begin{split}  \int_0^T  & \left(  \int_0^R x^{1-2s} \frac{\partial { \tilde U}}{\partial x} \, \frac{\partial { \eta}}{\partial x} \, dx\right)\,dt \\ &=   \int_0^T R^{1-2s} \frac{\partial { \tilde U}}{\partial x} (R,t) \, \eta(R,t)  \, dt - \lim_{x\to 0} \int_0^T x^{1-2s}\frac{\partial U}{\partial x} \,  \eta \, dt \\ 
	& - \int_0^T \left( \int_0^R \frac{\partial }{\partial x} \left( x^{1-2s} \frac{\partial { \tilde U}}{\partial x}  \right) \eta \, dx \right)\, dt \\
	&=  \int_0^T R^{1-2s} \frac{\partial { \tilde U}}{\partial x} (R,t)   \eta (R,t) \, dt  -\int_0^T\left( \int_0^R x^{1-2s}  \frac{\partial{\tilde U}}{\partial t} \, \eta\, dx\right)\, dt ,
	\end{split}
	\end{equation*}
	where we have used the weak limit in \eqref{weak_limit} and the fact that $\tilde U$ solves equation \eqref{weaksol}.
In the same way, one obtains that
 	\begin{equation*}\begin{split}  \int_0^T  & \left(  \int_{-R}^0 (-x)^{1-2s} \frac{\partial { \tilde U}}{\partial x} \, \frac{\partial { \eta}}{\partial x} \, dx\right)\,dt =  \int_0^T R^{1-2s} \frac{\partial { \tilde U}}{\partial x}  (-R,t)  \eta (-R,t)  \, dt  \\ 
	& -\int_0^T \left( \int_{-R}^0 (-x)^{1-2s}   \frac{\partial{\tilde U}}{\partial t} \, \eta \, dx\right)\, dt ,
	\end{split}
	\end{equation*}
 	therefore, by summing up, 
 	\begin{equation*}\begin{split} \int_{(-R,R)\times(0,T)} & |x|^{1-2s}\frac{\partial {\tilde U}}{\partial x}  \frac{\partial { \eta}}{\partial x} \, dx \, dt  \\ 
	&=  \int_0^T R^{1-2s} \left(\frac{\partial { \tilde U}}{\partial x} (R,t)  \eta(R,t)   - \frac{\partial { \tilde U}}{\partial x}(-R,t) \eta (-R,t)\right) \, dt \\ 
	& -\int_0^T \left(\int_{-R}^R |x|^{1-2s}  \frac{\partial{\tilde U}}{\partial t} \, \eta \, dx\right) \, dt.
\end{split}
\end{equation*}	
Hence
\begin{equation*}\begin{split}	\int_{(-R,R)\times(0,T)} &     |x|^{1-2s} \left( \frac{\partial {\tilde U}}{\partial x} \frac{\partial { \eta}}{\partial x} - \tilde U \frac{\partial{\eta}}{\partial t}\right)\, dx\, dt  \\ 
&=     \int_0^T R^{1-2s}\left( \frac{\partial { \tilde U} }{\partial x}(R,t) \eta(R,t)  - \frac{\partial { \tilde U}}{\partial x}(-R,t)  \eta (-R,t) \right) \, dt  \\ 
&   - \int_0^T\left( \int_{-R}^R |x|^{1-2s} \left(\frac{\partial{\tilde U}}{\partial t} \, \eta - \tilde U \frac{\partial{ \eta}}{\partial t}\right) \, dx\right)\, dt \\
	&=  \int_0^T R^{1-2s} \left( \frac{\partial { \tilde U}}{\partial x} (R,t) \eta(R,t)  - \frac{\partial { \tilde U}}{\partial x}(-R,t)  \eta (-R,t)  \right) \, dt \\ 
	&-   \int_{-R}^R|x|^{1-2s} 	\left( \tilde U (x,T)  \eta (x,T)  - \tilde U(x,0)\eta(x,0) \right) \, dx\\
	&= 0,
	\end{split}
	\end{equation*}
since $\eta(x,T)=\eta(x,0)=0$ and $\eta(R,t)=\eta(-R,t)=0$. This is the claim in \eqref{bla22}, and we conclude the proof of the Lemma.
\end{proof}


We show now that the Harnack inequality for Marchaud stationary functions can be deduced from the Harnack inequality associated with the extension operator.

The interested reader can also see \cite{CAFSIL} for the proof (using the extension operator) of the Harnack inequality for the fractional Laplacian, and \cite{DCKP14}  for the inequality for other types of nonlocal operators. In addition, we also point out \cite{FerFra} for the case of the fractional subelliptic operators in Carnot groups and \cite{stingatorrea2} for the fractional harmonic oscillator.
\begin{proof}[Proof of Theorem \ref{teoHarn}]
We consider $U$ to be the extension of $\varphi$, as introduced in Theorem \ref{teo:mainstat}. Since $\varphi$ is nonnegative, given the explicit solution $U$ in Theorem \ref{teo:sol}, the function $U$ is also positive. Now, we reflect $U$ and obtain $\tilde U$, as we have done in Lemma \ref{tildeu}. \\
We prove at first the theorem when $J=(0,T)$. Since $\bold{D}^s \varphi (t)=0$ in $(0,T)$, we have by definition that
	\[ \lim_{x\to 0^+} x^{-2s}  \frac{\partial {U}}{\partial x}(x,t)=0,\]
	and thanks to Lemma \ref{tildeu}, we obtain that $\tilde U$ is a weak solution of \eqref{weaksol} in, say, $(-R,R)\times(0,T)$ for a fixed arbitrary $R>0$. Moreover, the function $|x|^{1-2s}$ satisfies the condition \eqref{2-weight}, and according to Proposition \ref{Chiarenza_Serapioni}, we have that
	
	\begin{equation*}
\sup_{\left(-\frac{\rho}2,\frac{\rho}2\right)\times\left(t_0-\frac{3\rho^2}4 , t_0-\frac{\rho^2}{4}\right)} \tilde U\leq \gamma \inf_{\left(-\frac{\rho}2,\frac{\rho}2\right)\times\left( t_0+\frac{3\rho^2}4 , t_0+\rho^2\right)}\tilde U.
\end{equation*}
	It suffices now to slice the domain at $x=0$ to obtain that
		\[\sup_{\left(t_0-\frac{3\rho^2}4 , t_0-\frac{\rho^2}{4}\right)} U(0,t) \leq \gamma \inf_{\left(t_0+\frac{3\rho^2}4 , t_0+\rho^2\right)}U (0,t) ,\]
		hence
		\[ \sup_{\left(t_0-\frac{3\rho^2}4 , t_0-\frac{\rho^2}{4}\right)}\varphi(t)\leq \gamma \inf_{\left(t_0+\frac{3\rho^2}4 , t_0+\rho^2\right) }\varphi(t) \]
		for any $\rho$ such that $0<\rho<R/2$ and $[t_0-\rho^2,t_0+\rho^2]\subset (0,T)$.\\
			Now, in order to prove that the Harnack inequality holds on any interval $J \subset \mathbb{R}$, one considers a translation of $U$, namely for any $\theta\in \mathbb{R}$, the function $U_\theta (x,t):=U(x,t+\theta)$, and reflects it as in Lemma \ref{tildeu}. Then $\tilde U_\theta$ is a weak solution of \eqref{weaksol}, and $\tilde U_\theta(0,t)=\varphi(t+\theta)$.
		One obtains then, as a consequence of the Harnack inequality for the solution $U_\theta$, the following:
\[ \sup_{\left(t_0-\frac{3\rho^2}4 , t_0-\frac{\rho^2}{4}\right)}\varphi(t+\theta)\leq \gamma \inf_{\left(t_0+\frac{3\rho^2}4 , t_0+\rho^2\right) }\varphi(t+\theta) \]
for any $\rho$ such that $0<\rho<R/2$ and $[t_0-\rho^2,t_0+\rho^2]\subset (0,T)$. Therefore
	\[ \sup_{\left(t_0-\frac{3\rho^2}4 , t_0 -\frac{\rho^2}{4}\right)}\varphi(t)\leq \gamma \inf_{\left (t_0+\frac{3\rho^2}4 , t_0+\rho^2\right) }\varphi(t) \]
for any $\rho$ such that $0<\rho<R/2$ and $[t_0-\rho^2,t_0+\rho^2]\subset (\theta,T+\theta)$. As $\theta$ and $R$ are arbitrary, one concludes that
	\[ \sup_{\left(t_0-\frac{3\delta}4 , t_0 -\frac{\delta}{4}\right)}\varphi(t)\leq \gamma \inf_{\left (t_0+\frac{3\delta}4 , t_0+\delta\right) }\varphi(t) \]
	for any $\delta>0$ such that $[t_0-\delta, t_0+\delta]\subset J$.
		This concludes the proof of Theorem \ref{teoHarn}.
\end{proof}
\begin{remark}\label{Holder_regularity_ine}

We would like to point out that the Harnack type inequality obtained in Theorem \ref{teoHarn} can be equivalently stated as follows.
 Let us define for every $\delta>0$ and for every $\tau\in \mathbb{R}$ the sets:
\begin{equation*}\begin{split} &I(\tau,\delta)= [\tau-\frac{15}{8}\delta,\tau+\frac{1}{8}\delta],\\
&I^+(\tau,\delta)= [\tau-\frac{15}{8}\delta,\tau-\frac{7}{4}\delta],\\
&I^-(\tau,\delta)= [\tau-\frac{1}{8}\delta,\tau+\frac{1}{8}\delta].
\end{split}
\end{equation*}
With this notation, the Harnack inequality gives that for every  $I(\tau,\delta)\subset J$
$$
\sup_{I^+(\tau,\delta)} \varphi\leq \gamma \inf_{I^-(\tau,\delta)}\varphi.
$$
\end{remark}

\chapter{Some nonlocal nonlinear stationary equations}\label{S:NP:2}

\begin{abstract}We deal in this chapter with some nonlocal nonlinear stationary type problems. We first take a look at a problem connected to solitary solutions of nonlinear dispersive wave equations, in particular that arises in the study of the fractional Schr\"{o}dinger equation when looking for standing waves. More precisely, we discuss here the existence of a solution that concentrates at interior points of the domain, of  the probelm
\begin{equation*}
		\begin{cases}
		\varepsilon^{2s} \frlap u +u = u^p &\text{ in } \Omega \subset \Rn\\ 
		u=0 &\text{ in } \Rn \setminus \Omega, 
		\end{cases}
			\end{equation*} for $p\in (1, 2_s^{\star}-1)$, where 
		$ \cx={2n}/({n-2s}) $ is the critical fractional Sobolev exponent and $\varepsilon$ is a small parameter. Moreover, we prove the existence of a positive solution of the nonlinear and nonlocal elliptic equation in $\Rn$ 
\[ (-\Delta)^s u =\varepsilon h u^q+u^{\cx-1} \]
in the convex case $1\leq q<\cx-1$, where $\varepsilon$ is a small parameter and $h$ is a given bounded, integrable function.  		
	The problem has a variational structure and we prove the existence of a solution using the classical Mountain-Pass Theorem. We work here with the harmonic extension of the fractional Laplacian, which allows us to deal with a weighted (but possibly degenerate) local operator, rather than with a nonlocal energy. 
	\end{abstract}

\bigskip 
\bigskip
	
In this chapter, we study some nonlocal nonlinear problems of stationary type. 	
Let $s \in (0,1)$ be the fractional parameter, $n>2s$ be the dimension of the reference space, and $\varepsilon>0$ be a small parameter. We consider the so-called fractional Sobolev exponent defined for $n>2s$ as  
		\[ 2_s^{\star}:= 	\frac{2n}{n-2s}.\]
				
\section{A  nonlocal nonlinear stationary Schr\"{o}dinger type equation}
The type of problems introduced in this section are connected to solitary solutions of nonlinear dispersive wave equations (such as the Benjamin-Ono equation, the Benjamin-Bona-Mahony equation and the fractional Schr\"{o}dinger equation). In this section, only stationary equations are studied and we redirect the reader to \cite{vazquez_nonlinear, vazquez_recent} for the study of evolutionary type equations. 

We discuss the following nonlocal nonlinear Schr\"{o}dinger equation 
	\begin{equation}
		\begin{cases}
		\varepsilon^{2s} \frlap u +u = u^p &\text{ in } \Omega \subset \Rn\\ 
		u=0 &\text{ in } \Rn \setminus \Omega, 
		\end{cases}
		\label{sch}
	\end{equation}
in the subcritical case $p\in (1, 2_s^{\star}-1)$, namely when $p\in \displaystyle \bigg(1, \frac{n+2s}{n-2s}\bigg)$.
We study the existence of a solution that concentrates at interior points of the domain, points that depend on the global geometry of the domain. Moreover, we point out a simple consequence of
the Uncertainty Principle, which can be seen as
a fractional Sobolev inequality
in weighted spaces.

This equation \ref{sch} arises in the study of the 
fractional Schr\"{o}dinger equation when looking for standing waves. 
Namely, the fractional Schr\"{o}dinger equation
considers solutions~$\Psi=\Psi(x,t):\R^n\times\R\to {\mathbb{C}}$ of
\begin{equation}\label{PLANCK}
i\hslash\partial_t \Psi = \big( \hslash^{2s} (-\Delta)^s +V\big)\Psi,\end{equation}
where~$s\in(0,1)$, $\hslash$ is the reduced
Planck constant and~$V=V(x,t,|\Psi|)$ is a potential.
This equation
is of interest in quantum mechanics
(see e.g.~\cite{L00} and the appendix in~\cite{DPDV14}
for details and physical motivations). 
Roughly speaking, the quantity~$|\Psi(x,t)|^2\,dx$
represents the probability density of finding a quantum particle
in the space region~$dx$ and at time~$t$.

\noindent The simplest solutions of~\eqref{PLANCK} are the ones
for which this probability density is independent of time,
i.e.~$|\Psi(x,t)|=u(x)$ for some~$u:\R^n\to[0,+\infty)$.
In this way, one can write $\Psi$ as~$u$ times
a phase that oscillates (very rapidly) in time: that is
one may look for solutions of~\eqref{PLANCK} of the form
$$ \Psi(x,t) := u(x)\, e^{i\omega t/\hslash},$$
for some frequency~$\omega\in\R$.
Choosing~$V=V(|\Psi|)=-|\Psi|^{p-1}=-u^{p-1}$,
a substitution into~\eqref{PLANCK} gives that
$$
\Big( \hslash^{2s} (-\Delta)^s u +\omega u - u^p \Big)\, e^{i\omega t/\hslash}
=\hslash^{2s} (-\Delta)^s \Psi -
i\hslash\partial_t \Psi +V\Psi=0,$$
which is~\eqref{sch}
(with the normalization convention~$\omega:=1$ and~$\varepsilon:=\hslash$).

The goal of this section is to construct solutions of problem \eqref{sch} that concentrate at interior points of the domain $\Omega$ for sufficiently small values of $\varepsilon$. We perform
a blow-up of the domain, defined as
	\[\Omega_\varepsilon:=\displaystyle \frac{1}{\varepsilon} \Omega=\bigg\{  \frac{x}{\varepsilon}, x\in \Omega\bigg\}.\]
We can also rescale the solution of \eqref{sch} on $\Omega_\varepsilon$, 
	\[u_\varepsilon(x)=u(\varepsilon x).\]
The problem \eqref{sch} for $u_\varepsilon$ then reads
	\begin{equation}
		\begin{cases}
		 \frlap u +u = u^p &\text{ in } \Omega_\varepsilon\\ 
		u=0 &\text{ in } \Rn \setminus \Omega_\varepsilon.
		\end{cases}
		\label{dsch}
	\end{equation} 
When~$\varepsilon\to0$, the domain~$\Omega_\varepsilon$ invades the whole of the space.
Therefore, it is also natural to 
consider (as a first approximation)
the equation on the entire space  
	\begin{equation}
	 	\frlap u +u=u^p \text{ in } \Rn.
		\label{entsch}
	\end{equation}
The first result that
we need  is that there exists an entire positive radial least energy solution $w \in H^s(\Rn)$ of \eqref{entsch}, called the \emph{ground state solution}. 
Here follow some relevant results on this. The interested reader can find their proofs in \cite{FLS13}.
\begin{enumerate}
	\item The ground state solution $w\in H^s(\Rn)$ is unique up to translations. 
	\item The ground state solution $w\in H^s(\Rn)$ is nondegenerate, i.e., the derivatives $D_i w$ are solutions to the linearized equation
		\begin{equation}
		 	\frlap Z +Z= pZ^{p-1}.
			\label{linsch}
		\end{equation} 
	\item The ground state solution $w\in H^s(\Rn)$ decays polynomially at infinity, namely there exist two constants $\alpha, \beta >0$  such that 
		\[ \alpha |x|^{-(n+2s)} \leq u(x) \leq \beta |x|^{-(n+2s)}.\] 
\end{enumerate}
Unlike the fractional case, we remark that for the (classical) Laplacian, at infinity  the ground state solution decays exponentially fast. We also refer to~\cite{FL1D} for the one-dimensional case.

The main theorem of this section establishes the existence of a solution that concentrates at interior points of the domain for sufficiently small values of $\varepsilon$. This concentration phenomena is written
in terms of the ground state solution~$w$.
Namely, the first approximation for the solution
is exactly the ground state~$w$,
scaled and concentrated at an appropriate point~$\xi$ of
the domain. More precisely, we have:

\begin{theorem}\label{THSP}
If $\varepsilon$ is sufficiently small, there exist a point $\xi \in \Omega$ and a solution $U_\varepsilon$ of the problem \eqref{sch} such that
			\[ \bigg| U_\varepsilon (x)- w\Big(\frac{x-\xi}{\varepsilon}\Big)\bigg| \leq C \varepsilon^{n+2s},\]
and $\text{dist}(\xi, \partial \Omega)\geq \delta>0$. Here, $C$ and $\delta$ are constants independent of $\varepsilon$ or $\Omega$, and the
function $w$ is the ground state solution of problem \eqref{entsch}.
\label{nnscheq}
\end{theorem}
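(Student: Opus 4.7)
The plan is to implement a Lyapunov--Schmidt finite-dimensional reduction in the spirit of the classical singular perturbation arguments for NLS equations, taking careful account of the two nonlocal features that set the fractional case apart: the need to work with the extension of Caffarelli--Silvestre from the previous chapter, and the polynomial (rather than exponential) decay of the ground state $w$ at infinity, which will dictate the $\varepsilon^{n+2s}$ size of the error term. After the blow-up change of variables that turns \eqref{sch} into \eqref{dsch} on $\Omega_\varepsilon = \Omega/\varepsilon$, I would fix a concentration parameter $\xi \in \Omega$ with $\mathrm{dist}(\xi,\partial\Omega)\geq \delta$ and seek a solution of the form $u_\varepsilon(y) = W_{\xi,\varepsilon}(y) + \phi(y)$, where $W_{\xi,\varepsilon}$ is the ground state $w$ centered at $\xi/\varepsilon$ and truncated (or projected) so as to vanish outside $\Omega_\varepsilon$. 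The first step is to quantify the discrepancy $\frlap W_{\xi,\varepsilon} + W_{\xi,\varepsilon} - W_{\xi,\varepsilon}^p$: since $w(z)\sim |z|^{-(n+2s)}$ by the polynomial decay recalled before the theorem, and the distance from $\xi/\varepsilon$ to $\partial\Omega_\varepsilon$ is of order $1/\varepsilon$, this error is of order $\varepsilon^{n+2s}$ in the natural norm.

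Next I would study the linearized operator $L := \frlap + 1 - pw^{p-1}$ on $H^s(\R^n)$. By the nondegeneracy of the ground state stated before the theorem, $\ker L$ is exactly $\mathrm{span}\{D_1 w,\dots, D_n w\}$. Passing to the extension setting, one works with the weighted operator $\operatorname{div}(y^{1-2s}\nabla\cdot)$ on $\R^{n+1}_+$ with the corresponding trace nonlinearity, so that $L$ becomes a local, self-adjoint Fredholm operator modulo the translation kernel. Projecting the equation onto $\ker L^\perp$ and applying a contraction mapping argument in a weighted function space (chosen so that the polynomial decay of $w$ is encoded in the norm), I would produce, for each admissible $\xi$, a unique correction $\phi = \phi_\varepsilon(\xi)$ orthogonal to $\ker L$ with $\|\phi_\varepsilon(\xi)\|\leq C\varepsilon^{n+2s}$, depending smoothly on $\xi$.

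It remains to choose $\xi$ so that the Lagrange multipliers in front of the $D_i w$ vanish, i.e.\ so that $U_\varepsilon := W_{\xi,\varepsilon}+\phi_\varepsilon(\xi)$ actually solves \eqref{dsch}. By the standard variational identity, this reduces to finding a critical point of a function $J_\varepsilon(\xi)$ on $\Omega$ obtained by evaluating the energy functional associated to \eqref{sch} at the ansatz. A careful asymptotic expansion, using again the $|z|^{-(n+2s)}$ tail of $w$, yields $J_\varepsilon(\xi) = c_0 + \varepsilon^{n+2s}\,\Gamma(\xi) + o(\varepsilon^{n+2s})$, where $\Gamma$ is a ``Robin-type'' function that blows up (or becomes singular) as $\xi$ approaches $\partial\Omega$. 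Since $\Omega$ is a bounded domain and $\Gamma$ is continuous in the interior, $J_\varepsilon$ attains an extremum at some interior point $\xi_\varepsilon$, uniformly away from $\partial\Omega$, and this extremum produces the required solution together with the bound $|U_\varepsilon(x) - w((x-\xi_\varepsilon)/\varepsilon)|\leq C\varepsilon^{n+2s}$.

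The main obstacle I expect is the asymptotic expansion of $J_\varepsilon$: in the classical local case the interaction between $w$ and $\partial\Omega$ is exponentially small and the leading term is relatively easy to identify, whereas here one must track algebraically small quantities and show that the leading $\varepsilon^{n+2s}$ correction genuinely detects the geometry of $\Omega$ (so that $\Gamma$ has an interior critical point); controlling error terms without losing the correct algebraic rate, both in the Lyapunov--Schmidt step and in the reduced functional, is the delicate technical point of the argument.
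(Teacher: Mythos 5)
Your overall strategy is the right one: the paper's proof of Theorem \ref{THSP} is indeed a Lyapunov--Schmidt reduction, it blows up to $\Omega_\varepsilon = \Omega/\varepsilon$, uses the nondegeneracy of $w$ from \cite{FLS13} to invert the linearized operator on the orthogonal complement of $\mathrm{span}\{\partial_i w\}$, produces a corrector of size $\varepsilon^{n+2s}$, and finishes by finding an interior critical point of a reduced one-variable functional $J_\varepsilon(\xi)$. So the skeleton of your argument matches the paper.

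There are, however, two points where your proposal diverges in substance, one of which is an actual error. First, the choice of first approximation. You propose to use $w$ itself, truncated or projected to vanish outside $\Omega_\varepsilon$. The paper instead takes $\bar u_\xi$ to be the exact solution of the \emph{linear} problem $\frlap \bar u_\xi + \bar u_\xi = w_\xi^p$ in $\Omega_\varepsilon$, $\bar u_\xi = 0$ outside. This is not merely a stylistic choice: it yields, via the Green's function $G_\varepsilon$ of $\mathcal L = \frlap + I$ and the associated Robin function $H_\varepsilon(x,y)=\Gamma(x-y)-G_\varepsilon(x,y)$, the exact decomposition
\[
\bar u_\xi(x) = w_\xi(x) - \int_{\R^n\setminus\Omega_\varepsilon} w_\xi^p(y)\,\Gamma(x-y)\,dy - \int_{\Omega_\varepsilon} w_\xi^p(y)\,H_\varepsilon(x,y)\,dy,
\]
from which the energy expansion falls out in terms of the explicit geometric quantity $\mathcal H_\varepsilon(\xi)=\int\!\!\int H_\varepsilon(x,y)\,w_\xi^p(x)\,w_\xi^p(y)\,dx\,dy$. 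A truncated $w_\xi$ would carry an extra localized truncation error near $\partial\Omega_\varepsilon$ that is not captured by a Robin function and that you would have to estimate separately; the paper's choice makes the boundary effect transparent and algebraically computable. (Relatedly, the paper does \emph{not} pass to the Caffarelli--Silvestre extension in this proof: it works directly with $\frlap$ via the Green's function representation, so your proposed trip through $\R^{n+1}_+$ is an avoidable detour here.)

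Second, and more seriously, your asymptotic exponent for the reduced functional is off by $\varepsilon^{2s}$. You claim $J_\varepsilon(\xi) = c_0 + \varepsilon^{n+2s}\,\Gamma(\xi) + o(\varepsilon^{n+2s})$, but the paper establishes (using the polynomial tail $w(z)\sim|z|^{-(n+2s)}$ \emph{twice}, once for each copy of $w_\xi^p$ in the double integral defining $\mathcal H_\varepsilon$) that
\[
J_\varepsilon(\xi) = I(w) + \tfrac12\mathcal H_\varepsilon(\xi) + \mathcal O(\varepsilon^{n+4s}),
\]
and that $\mathcal H_\varepsilon(\xi)\simeq d^{-(n+4s)}$ where $d$ is the distance of $\xi$ to $\partial\Omega_\varepsilon$. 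In the original variables this is of order $\varepsilon^{n+4s}$ in the interior (where $d\sim 1/\varepsilon$), of order $\delta^{-(n+4s)}\varepsilon^{n+4s}$ at distance $\delta/\varepsilon$, so the variation of the Robin-type term across $\Omega_{\varepsilon,\delta}$ is comparable to --- but, for $\delta$ small, dominant over --- the $\mathcal O(\varepsilon^{n+4s})$ error, which is precisely what makes the interior minimum stable. The quantity $\varepsilon^{n+2s}$ governs the size of the pointwise corrector $\psi$ (and hence the bound in the statement of the theorem), but the reduced energy is smaller by a further $\varepsilon^{2s}$ because the Robin interaction involves a product of two algebraically small tails. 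With your $\varepsilon^{n+2s}$ bookkeeping the comparison between the geometric term and the error in the reduced functional would not close, so this is a genuine gap rather than a cosmetic difference.
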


The concentration point~$\xi$ in Theorem~\ref{THSP}
is influenced by the global geometry of the domain.
On the one hand, when~$s=1$, the point~$\xi$ is the one that maximizes
the distance from the boundary. On the other hand,
when~$s\in(0,1)$, such simple characterization of~$\xi$
does not hold anymore: in this case, $\xi$
turns out to be asymptotically the maximum of a (complicated, but rather explicit)
nonlocal functional: see \cite{DPDV14} for more details.

We state here the basic idea of the proof of Theorem~\ref{THSP}
(we refer again to \cite{DPDV14} for more details).
\begin{proof}[Sketch of the proof of Theorem \ref{nnscheq}] In this proof, we make use of the Lyapunov-Schmidt procedure. Namely, rather than looking for the
solution in an infinite-dimensional functional space,
one decomposes the problem into two orthogonal subproblems.
One of these problem is still infinite-dimensional,
but it has the advantage to bifurcate from a known object
(in this case, a translation of the ground state).
Solving this auxiliary subproblem does not provide a true
solution of the original problem, since a leftover
in the orthogonal direction may remain. To kill this
remainder term, one solves a second subproblem,
which turns out to be finite-dimensional
(in our case, this subproblem is set in~$\R^n$,
which corresponds to the action of the translations
on the ground state).

A structural advantage of the problem considered
lies in its variational structure.
Indeed, equation~\eqref{dsch} is 
the Euler-Lagrange equation of
the energy functional
	\eqlab{ \label{peren} 
I_\varepsilon(u) =\frac{1}{2} \int_{\Omega_\varepsilon} \Big( \frlap u(x)+u(x) \Big) u(x) \, dx - \frac{1}{p+1} \int_{\Omega_\varepsilon} u^{p+1} (x) \, dx }
for any $u \in H^s_0(\Omega_\varepsilon) := \{ u\in H^s(\R^n) \text { s.t. } u=0 \text{ a.e. in } \Rn \setminus \Omega_\varepsilon\}$.
Therefore, the problem reduces to finding critical points of~$I_\varepsilon$.

To this goal,
we consider the \emph{ground state} solution $w$ and  for any $\xi \in \Rn$ we let $w_\xi:=w(x-\xi)$. For a given $\xi \in \Omega_\varepsilon$ a first approximation $\bar{u}_\xi$ for the solution of problem \eqref{dsch} can be taken as the solution of the linear problem 
	\begin{equation}
		\begin{cases}
		 \frlap \overline u_\xi +\overline u_\xi = w_\xi^p &\text{ in } \Omega_\varepsilon,\\ 
		\overline u_\xi=0 &\text{ in } \Rn \setminus \Omega_\varepsilon.
		\end{cases}
		\label{dlsch}
	\end{equation}
The actual solution will be obtained as a small perturbation of $\bar u_\xi$ for a suitable point $\xi = \xi (\varepsilon)$.
We define the operator $\mathcal{L}:=\frlap +I$, where~$I$ is the identity and we notice that $\mathcal{L}$ has a unique fundamental solution that solves
	\[\mathcal{L}\Gamma = \delta_0 \quad \text{ in } \Rn.\]
The Green function $G_\varepsilon$ of the operator $\mathcal{L}$ in $\Omega_\varepsilon$ satisfies 	
	\begin{equation}
		\begin{cases}
		\mathcal{L} G_\varepsilon(x,y) = \delta_y(x)  &\text{ if } x \in \Omega_\varepsilon,\\ 
		 G_\varepsilon (x,y)=0 &\text{ if } x \in \Rn \setminus \Omega_\varepsilon. 
		\end{cases}
		\label{gsch}
	\end{equation}
It is convenient to introduce 
the regular part of $G_\varepsilon$, which is often called
the Robin function. This function is defined by
	\begin{equation}\label{7.8bis}
H_\varepsilon (x,y):= \Gamma(x-y) - G_\varepsilon(x,y) \end{equation}
and it satisfies, for a fixed $y \in \Rn$,
	\begin{equation*}
		\begin{cases}
		\mathcal{L} H_\varepsilon(x,y) = 0 &\text{ if } x \in \Omega_\varepsilon,\\ 
		H_\varepsilon (x,y)=\Gamma(x-y) &\text{ if } x \in \Rn \setminus \Omega_\varepsilon. 
		\label{rsch}
		\end{cases}
	\end{equation*}
We have that 
	\[\overline u_\xi (x)= \int_{\Omega_\varepsilon} \overline u_\xi (y)\delta_0(x-y) \, dy,\]
and by \eqref{gsch} that
	\[\overline u_\xi (x)=  \int_{\Omega_\varepsilon} \overline u_\xi (y)\mathcal{L} G_\varepsilon(x,y) \, dy.\]
The operator $\mathcal{L}$ is self-adjoint and thanks to the above identity and to equation \eqref{dlsch} it follows that 
	\[ 	 \overline u_\xi (x)=  \int_{\Omega_\varepsilon} \mathcal{L} \overline u_\xi (y) G_\varepsilon(x,y) \, dy
					=  \int_{\Omega_\varepsilon}
w_\xi^p(y) G_\varepsilon(x,y)\, dy.
		\]
So, we use~\eqref{7.8bis} and we obtain that
$$ \overline u_\xi (x)=
\int_{\Omega_\varepsilon} w_\xi^p(y) \Gamma(x-y) \, dy-\int_{\Omega_\varepsilon} w_\xi^p(y) H_\varepsilon(x,y)\, dy.$$
Now we notice that, since $w_\xi$ is solution of \eqref{entsch} and $\Gamma$ is the fundamental solution of $\mathcal{L}$, we have that
		\[ 
			\int_{\Rn} w_\xi^p(y)\Gamma(x-y) \, dy =\int_{\Rn} \mathcal{L}w_\xi(y) \Gamma(x-y) \, dy
											= \int_{\Rn} w_\xi (y)  \mathcal{L}\Gamma(x-y) \, dy
											=w_\xi (x).
		\]
Therefore we have obtained that 
 		\begin{equation}
	 \overline u_\xi (x) = w_\xi (x) -\int_{\Rn \setminus \Omega_\varepsilon} w_\xi^p(y) \Gamma(x-y) \, dy- \int_{\Omega_\varepsilon} w_\xi^p(y) H_\varepsilon(x,y)\, dy.
		\label{uexp}
	\end{equation}
Now we can insert~\eqref{uexp} into the energy functional~\eqref{peren}
and expand the errors
in powers of~$\varepsilon$.
For $\text{dist}(\xi,\partial \Omega_\varepsilon)\geq \displaystyle \frac{\delta}{\varepsilon}$ with $\delta$ fixed and small, the energy of $\overline u_\xi$ is a perturbation of the energy of the ground state~$w$
and one finds (see Theorem 4.1 in~\cite{DPDV14})
that
	\begin{equation}
		 I_\varepsilon(\overline u_\xi) = I(w) + \frac{1}{2} \mathcal{H}_\varepsilon(\xi) + \mathcal{O}(\varepsilon^{n+4s}),
	\label{en}
	\end{equation}
where 
	\[\mathcal{H}_\varepsilon (\xi):= \int_{\Omega_\varepsilon}\int_{\Omega_\varepsilon} H_\varepsilon(x,y) w_\xi^p(x)w_\xi^p(y) \, dx \, dy\]
and $I$ is the energy computed on the whole space $\Rn$, namely
	\begin{equation*}
	I(u) =\frac{1}{2} \int_{\Rn} \Big( \frlap u(x)+u(x) \Big) u(x) \, dx - \frac{1}{p+1} \int_{\Rn} u^{p+1} (x) \, dx.
	\label{enfuncrn}
	\end{equation*}
In particular, $I_\varepsilon(\overline u_\xi)$
agrees with a constant (the term~$I(w)$), plus a functional over a finite-dimensional space
(the term~$\mathcal{H}_\varepsilon (\xi)$, which only depends on~$\xi\in\R^n$),
plus a small error.

We remark that the solution~$\overline u_\xi$
of equation~\eqref{dlsch} which can be obtained from~\eqref{uexp}
does not provide a solution for the original problem~\eqref{dsch}
(indeed, it only solves~\eqref{dlsch}):
for this,
we look for solutions $u_\xi$ of \eqref{dsch} as perturbations of~$\overline u_\xi$,
in the form
	\begin{equation}\label{POj}
u_\xi :=\overline u_\xi +\psi.\end{equation}
The perturbation functions $\psi$ are considered among those vanishing outside $\Omega_\varepsilon$ and orthogonal to the space $\mathcal{Z}=\text{Span}(Z_1,\dots,Z_n)$, where $Z_i=\displaystyle \frac{\partial w_\xi}{\partial x_i}$ are solutions of the linearized equation \eqref{linsch}.
This procedure somehow ``removes the degeneracy'',
namely we look for the corrector~$\psi$ in
a set where the linearized operator is invertible.
This makes it
possible, fixed any~$\xi\in\R^n$, 
to find~$\psi=\psi_\xi$ such that the function~$u_\xi$,
as defined in~\eqref{POj}
solves the equation 
	\eqlab{ \label{qsci} \frlap u_\xi +u_\xi=u_\xi^p +\sum_{i=1}^n c_i Z_i \quad  \text{ in }\Omega_\varepsilon.}
That is, $u_\xi$ is solution of
the original equation~\eqref{dsch}, up to an error
that lies in the tangent space of the translations
(this error is exactly the price that we pay
in order to solve the corrector equation for~$\psi$
on the orthogonal of the kernel, where the operator is nondegenerate).
As a matter of fact (see Theorem~7.6 in~\cite{DPDV14} for details)
one can see that the corrector~$\psi=\psi_\xi$ is of order~$\varepsilon^{n+2s}$.
Therefore, one can compute~$I_\varepsilon (u_\xi)
=I_\varepsilon (\overline u_\xi+\psi_\xi)$ as a higher order
perturbation of~$I_\varepsilon (\overline u_\xi)$.
{F}rom~\eqref{en}, one obtains that
\begin{equation}\label{9HU88I}
I_\varepsilon (u_\xi)=
I(w) +\frac{1}{2}\mathcal{H}_\varepsilon(\xi)+\mathcal{O}(\varepsilon^{n+4s}),\end{equation}
see Theorem~7.17 in~\cite{DPDV14} for details.

Since this energy expansion now depends only on~$\xi\in\R^n$,
it is convenient
to define the operator $J_\varepsilon \colon \Omega_\varepsilon \to \R$ as
	\[ J_\varepsilon(\xi):=I_\varepsilon (u_\xi).\]
This functional is often
called the reduced energy functional.
{F}rom~\eqref{9HU88I}, we conclude that
	\begin{equation}\label{JK L}
J_\varepsilon(\xi)=
I(w) +\frac{1}{2}\mathcal{H}_\varepsilon(\xi)+\mathcal{O}(\varepsilon^{n+4s}).\end{equation}
The reduced energy~$J$ plays an important role in this framework
since critical points of~$J$ correspond to true solutions
of the original equation~\eqref{dsch}. More precisely
(see Lemma 7.16 in~\cite{DPDV14}) one has that
$c_i=0$ for all $i=1, \dots, n$ 
in~\eqref{qsci}
if and only if 
	\begin{equation}\label{NEAT}
\frac{\partial J_\varepsilon}{\partial \xi}(\xi)=0.\end{equation}
In other words, when $\varepsilon$ approaches $0$, to find
concentration points, it is enough to find critical points
of~$J$,
which is a finite-dimensional problem.
Also, critical points for~$J$ come from critical points of~$\mathcal{H}_\varepsilon$,
up to higher orders, thanks to~\eqref{JK L}.
The issue is thus to prove that~$\mathcal{H}_\varepsilon$
does possess critical points
and that these critical points survive after
the small error of
size~$\varepsilon^{n+4s}$: in fact, we show that~$\mathcal{H}_\varepsilon$
possesses a minimum,
which is stable for perturbations.
For this, one needs
a bound for the Robin function $H_\varepsilon$ from above and below.
To this goal,
one builds a barrier function $\beta_\xi$ defined for $\xi \in \Omega_\varepsilon$  and $x \in \Rn$ as
	\[ \beta_\xi (x) := \int_{\Rn \setminus \Omega_\varepsilon} \Gamma(z-\xi)\Gamma(x-z) \, dz.\]
Using this function in combination with suitable maximum principles,
one obtains the existence of a constant $c\in (0,1)$ such that
	\[ c H_\varepsilon (x,\xi) \leq \beta_\xi(x) \leq c^{-1}H_\varepsilon(x,\xi),\]
 for any $x\in \Rn$ and any $\xi \in \Omega_\varepsilon$ with $\text{dist} (\xi, \partial \Omega_\varepsilon)>1$, see Lemma~2.1
in~\cite{DPDV14}. {F}rom this it follows that  
	\begin{equation}\label{ATT}
\mathcal{H}_\varepsilon (\xi) \simeq d^{-(n+4s)},\end{equation}
for all points $\xi \in \Omega_\varepsilon$ such that $d \in [5, \delta / \varepsilon]$.
So, one considers the domain~$\Omega_{\varepsilon,\delta}$ of the points
of~$\Omega_\varepsilon$ that lie at distance more than~$\delta/\varepsilon$
from the boundary of~$\Omega_\varepsilon$. By~\eqref{ATT},
we have that
  \begin{equation}\label{ATT-1}
\mathcal{H}_\varepsilon (\xi) \simeq \frac{\varepsilon^{n+4s}}{\delta^{n+4s}}
\;{\mbox{ for any }}\;\xi\in \partial \Omega_{\varepsilon,\delta}.
\end{equation}
Also, up to a translation, we may suppose that~$0\in\Omega$.
Thus, $0\in\Omega_{\varepsilon}$ and its distance from~$\partial\Omega_\varepsilon$
is of order~$1/\varepsilon$ (independently of~$\delta$).
In particular, if~$\delta$ is small enough, we have that~$0$
lies in the interior of~$ \Omega_{\varepsilon,\delta}$,
and~\eqref{ATT} gives that
$$ \mathcal{H}_\varepsilon (0) \simeq \varepsilon^{n+4s}.$$
By comparing this with~\eqref{ATT-1}, we see that~$\mathcal{H}_\varepsilon $
has an interior minimum in~$ \Omega_{\varepsilon,\delta}$.
The value attained at this minimum
is of order~$\varepsilon^{n+4s}$,
and the values attained at the boundary of~$\Omega_{\varepsilon,\delta}$
are of order~$\delta^{-n-4s}\varepsilon^{n+4s}$,
which is much larger than~$\varepsilon^{n+4s}$, if~$\delta$ is small enough.
This says that the interior minimum of~$\mathcal{H}_\varepsilon $
in~$ \Omega_{\varepsilon,\delta}$ is nondegenerate and it survives to
any perturbation of order~$\varepsilon^{n+4s}$, if~$\delta$ is small enough.

This and~\eqref{JK L} imply that~$J$ has also an interior 
minimum at some point~$\xi$ in~$ \Omega_{\varepsilon,\delta}$. By 
construction, this point~$\xi$ satisfies~\eqref{NEAT},
and so this completes the proof of
Theorem~\ref{THSP}.
\end{proof}

The variational argument in the proof above (see in particular~\eqref{NEAT})
has a classical and neat geometric interpretation. Namely, the ``unperturbed'' functional (i.e. the one with~$\varepsilon=0$) has a very degenerate geometry, since it has a whole manifold of minimizers with the same energy: this manifold corresponds to the translation of the ground state~$w$, namely it is of the form~$M_0:=\{ w_\xi,\;\xi\in\R^n\}$ and, therefore, it can be identified with~$\R^n$.
\begin{center}
\begin{figure}[htb]
\begin{minipage}[b]{0.65\linewidth}
	\centering
	\includegraphics[width=0.65\textwidth]{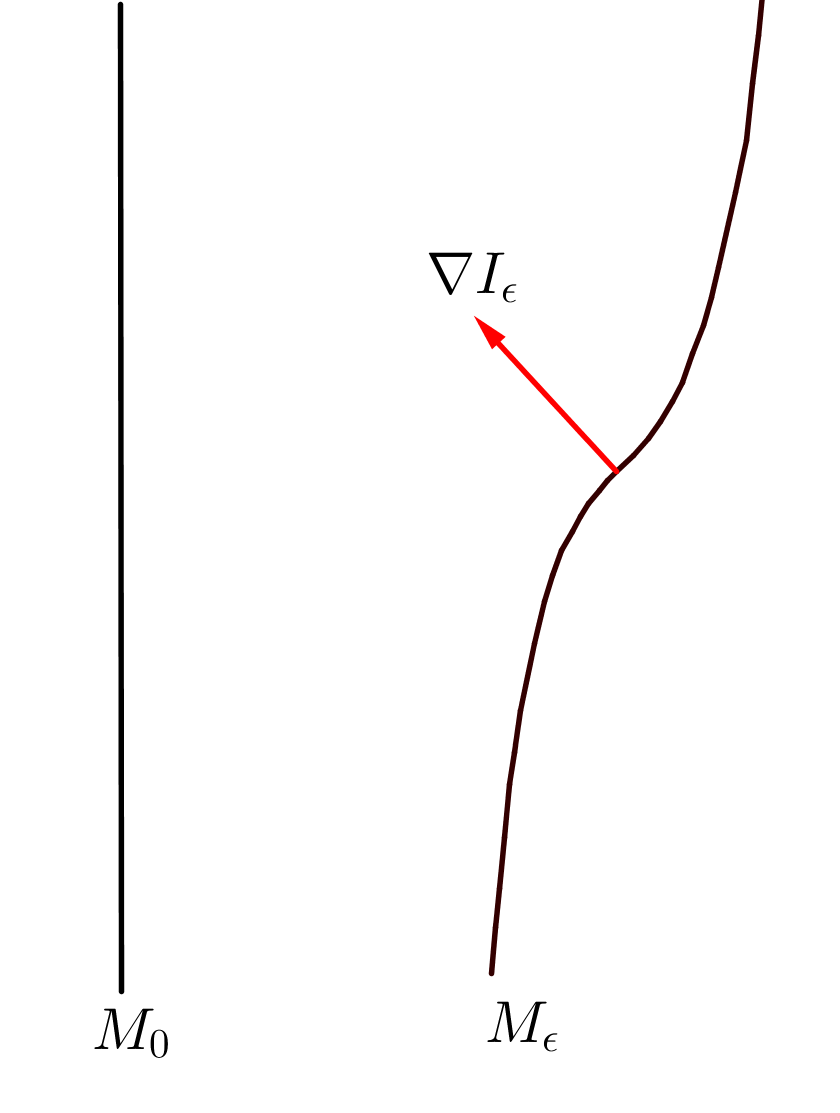}
	\caption{Geometric interpretation}   
	\label{fign:seq}
	\end{minipage}
\end{figure}
\end{center}
For topological arguments, this degenerate picture may constitute a serious obstacle to the existence of critical points for the ``perturbed'' functional (i.e. the one with~$\varepsilon\ne0$). As an obvious example, the reader may think of the function of two variables~$f_\varepsilon:\R^2\to\R$ given by~$f_\varepsilon(x,y):=x^2+\varepsilon y$.
When~$\varepsilon=0$, this function attains its minimum along the manifold~$\{x=0\}$, but all the critical points on this manifold are ``destroyed'' by the
perturbation when~$\varepsilon\ne0$ (indeed~$\nabla f_\varepsilon(x,y) =(2x,\varepsilon)$ never vanishes).

In the situation described in the proof of Theorem~\ref{nnscheq}, this pathology does not occur, thanks to the nondegeneracy provided in~\cite{FLS13}.
Indeed, by the nondegeneracy of the unperturbed critical manifold, when~$\varepsilon\ne0$ one can construct a manifold, diffeomorphic to the original one 
(in our case of the form~$M_\varepsilon:=\{ \overline u_\xi+\psi(\xi),\;\xi\in\R^n\}$), that enjoys the special feature of ``almost annihilating'' the gradient of the functional, up to vectors parallel to the original manifold~$M_0$ (this is the meaning of formula \eqref{qsci}).

Then, if one finds a minimum of the functional constrained to~$M_\varepsilon$, the theory of Lagrange multipliers (at least in finite dimension) would suggest that the gradient is normal to~$M_\varepsilon$. That is, the gradient of the functional is, simultaneously, parallel to~$M_0$ and orthogonal to~$M_\varepsilon$. But since~$M_\varepsilon$ is diffeomorphically
close to~$M_0$, the only vector with this property is the null vector, hence this argument provides the desired critical point.
\bigskip

We also recall that the fractional Schr\"odinger equation
is related to a nonlocal 
canonical quantization, which in turn produces
a nonlocal Uncertainty Principle.
In the classical setting, one considers the momentum/position
operators, which are defined in~$\R^n$, by
\begin{equation}\label{CQP}
P_k:=-i\hslash\partial_k \ {\mbox{
and }} \
Q_k:=x_k\end{equation}
for~$k\in\{1,\dots,n\}$. Then, the
Uncertainty Principle states that the operators~$P=(P_1,\dots,P_n)$
and~$Q=(Q_1,\dots,Q_n)$ do not commute (which makes
it practically impossible to measure simultaneously both momentum
and position).
Indeed, in this case a simple computation shows that
\begin{equation}\label{PLj89aaLL}
[Q,P]:=\sum_{k=1}^n [Q_k,P_k] = i\hslash n.\end{equation}
The nonlocal analogue of this quantization may be formulated
by introducing a nonlocal momentum, i.e. by replacing the operators
in~\eqref{CQP} by
\begin{equation}\label{CQ}
P_k^s:=-i\hslash^s\partial_k (-\Delta)^{\frac{s-1}2} \ {\mbox{
and }} \
Q_k:=x_k.\end{equation}
In this case, 
one has that
\eqlab{\label{723}
 {\mathcal{F}}\big(x_k {\mathcal{F}}^{-1} g(x)\big) (\xi)
&= \int_{\R^n} \,dx \int_{\R^n} \,dy \; e^{2\pi ix\cdot (y-\xi)} x_k g(y)
\\
&= \frac1{2\pi i}
\,\int_{\R^n} \,dx \int_{\R^n} \,dy \;
\partial_{y_k} e^{2\pi ix\cdot (y-\xi)} g(y)
= \frac{i}{2\pi}\int_{\R^n} \,dx \int_{\R^n} \,dy \;
e^{2\pi ix\cdot (y-\xi)} \partial_k g(y)\\
&= \frac{i}{2\pi}
\int_{\R^n} \,dx\;
e^{-2\pi ix\cdot \xi} {\mathcal{F}}^{-1}(\partial_k g)(x)
= \frac{i}{2\pi} {\mathcal{F}}\big({\mathcal{F}}^{-1}(\partial_k g)\big)(\xi)
= \frac{i}{2\pi} \partial_k g(\xi),
}
for any test function~$g$.
In addition,
$${\mathcal{F}}(P_k^s f)= (2\pi)^{s}\hslash^s
\xi_k\,|\xi|^{s-1}\widehat f.$$
Therefore, given any test function~$\psi$, using this with $f:=\psi$
and~$f:=x_k\psi$,
and also~\eqref{723} with $g:={\mathcal{F}}(P_k^s \psi)$
and~$g:=\widehat\psi$,
we obtain that
\bgs{
& {\mathcal{F}} \big(Q_k P_k^s \psi(x)-P_k^s Q_k\psi(x)\big)
= {\mathcal{F}} \big(x_k P_k^s \psi(x)\big) -{\mathcal{F}} \big (P_k^s (x_k\psi(x)) \big)\\
=& \frac{i}{2\pi} \partial_k {\mathcal{F}}( P_k^s \psi)(\xi)
- (2\pi)^{s}\hslash^s
\xi_k\,|\xi|^{{s-1}} {\mathcal{F}}(x_k\psi(x))(\xi)
\\ 
=& (2\pi)^{s-1} i \hslash^s
\partial_k \big( \xi_k\,|\xi|^{{s-1}}\widehat \psi(\xi)\big)
- (2\pi)^{s}\hslash^s
\xi_k\,|\xi|^{{s-1}} \widehat x_k* \widehat\psi(\xi)\\
= &(2\pi)^{s-1} i \hslash^s
\partial_k \big( \xi_k\,|\xi|^{{s-1}}\widehat \psi(\xi)\big)
- (2\pi)^{s-1}i\hslash^s
\xi_k\,|\xi|^{{s-1}}\partial_k\widehat \psi(\xi)
\\ =& (2\pi)^{s-1} i \hslash^s 
\partial_k \big( \xi_k\,|\xi|^{{s-1}}\big)\,\widehat \psi(\xi)
=
(2\pi)^{s-1} i \hslash^s 
\left( |\xi|^{{s-1}} +(s-1)
\xi_k^2\,|\xi|^{s-3}\right)\,\widehat \psi(\xi)
.}
Consequently, by summing up,
\[ \mathcal F \big([Q,P^s]\psi)=(2\pi )^{s-1} i \hslash^s\,|\xi|^{s-1}\,\left(n +s-1\right)\, \widehat \psi (\xi).\] 
So, by taking the anti-transform,
\eqlab{ \label{s-COMM-QP}
 [Q,P^s]\psi=
i \hslash^s\,\left(
n +{s-1}
\right)\,
{\mathcal{F}}^{-1}
\big( (2\pi |\xi|)^{{s-1}} \widehat\psi\big)= 
i\hslash^s 
\,(n+s-1)\,
(-\Delta)^{\frac{s-1}2}\psi.}
Notice that, as $s\rightarrow1$, this formula reduces to the
the classical Heisenberg Uncertainty Principle in~\eqref{PLj89aaLL}.


\bigskip

Now we point out a simple consequence of
the Uncertainty Principle
in formula~\eqref{s-COMM-QP}, which can be seen as
a fractional Sobolev inequality
in weighted spaces. The result
(which boils down to known formulas as~$s\to1$)
is the following:

\begin{prop}
For any~$u\in \mathcal{S}(\R^n)$, we have that
\begin{equation*}
\Big\| \, (-\Delta)^{\frac{s-1}{4}} u \, \Big\|_{L^2(\R^n)}^2
\le \frac{2}{n+s-1} \,
\Big\|\, |x|u \, \Big\|_{L^2(\R^n)}
\,\Big\| \,\nabla (-\Delta)^{\frac{s-1}{2}} u\,
\Big\|_{L^2(\R^n)}.\end{equation*}
\end{prop}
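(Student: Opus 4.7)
The plan is to deduce the inequality directly from the commutator identity \eqref{s-COMM-QP} by pairing both sides with $u$ in $L^2(\R^n)$ and exploiting the self-adjointness of the quantization operators. First I would check that the operators $Q_k$ (multiplication by $x_k$) and $P^s_k=-i\hslash^s\partial_k(-\Delta)^{(s-1)/2}$ are self-adjoint on $\mathcal{S}(\R^n)\subset L^2(\R^n)$: the former is obvious, while for the latter one observes that its Fourier multiplier $(2\pi)^s\hslash^s\,\xi_k|\xi|^{s-1}$ is real-valued, so $P_k^s$ is given by a real multiplier in Fourier variables, hence self-adjoint. Also, since $(-\Delta)^{(s-1)/2}$ is self-adjoint and positive in the Fourier sense, one has the identity
\[
\bigl\langle(-\Delta)^{(s-1)/2}u,u\bigr\rangle_{L^2}
=\bigl\|(-\Delta)^{(s-1)/4}u\bigr\|_{L^2}^{2}.
\]

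Next, summing \eqref{s-COMM-QP} over $k$ (recall $[Q,P^s]=\sum_k[Q_k,P_k^s]$) and pairing with $u$ in $L^2(\R^n)$ yields
\[
i\hslash^{s}(n+s-1)\,\bigl\|(-\Delta)^{(s-1)/4}u\bigr\|_{L^2}^{2}
=\sum_{k=1}^{n}\bigl\langle Q_kP^s_ku-P^s_kQ_ku,\,u\bigr\rangle_{L^2}.
\]
Using self-adjointness of $Q_k$ and $P^s_k$, each term on the right equals $\langle P^s_k u,Q_k u\rangle-\langle Q_k u,P^s_k u\rangle=2i\,\mathrm{Im}\,\langle P^s_k u,Q_k u\rangle$, which is purely imaginary. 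Dividing by $i\hslash^s$ and taking absolute values gives
\[
(n+s-1)\,\bigl\|(-\Delta)^{(s-1)/4}u\bigr\|_{L^2}^{2}
\le 2\hslash^{-s}\sum_{k=1}^{n}\bigl|\langle P^s_k u,Q_k u\rangle_{L^2}\bigr|.
\]

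Finally I would estimate the right-hand side by the Cauchy--Schwarz inequality twice: first inside each term, then over the index $k$ via the discrete Cauchy--Schwarz, obtaining
\[
\sum_{k=1}^{n}\|P^s_k u\|_{L^2}\|Q_k u\|_{L^2}
\le\Bigl(\sum_{k=1}^{n}\|P^s_k u\|_{L^2}^{2}\Bigr)^{\!1/2}
\Bigl(\sum_{k=1}^{n}\|Q_k u\|_{L^2}^{2}\Bigr)^{\!1/2}.
\]
The two sums are computed explicitly: $\sum_k\|Q_ku\|_{L^2}^2=\||x|u\|_{L^2}^2$, and (using Plancherel on the Fourier multiplier of $P^s_k$) $\sum_k\|P^s_ku\|_{L^2}^2=\hslash^{2s}\|\nabla(-\Delta)^{(s-1)/2}u\|_{L^2}^2$. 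Substituting these in cancels the $\hslash^{\pm s}$ factors and yields the stated inequality. No real obstacle is expected beyond verifying the self-adjointness of $P^s_k$ and justifying the algebraic manipulations on $\mathcal{S}(\R^n)$, where all quantities are finite and the Fourier calculus used in deriving \eqref{s-COMM-QP} is fully rigorous.
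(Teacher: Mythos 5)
Your proof is correct. Both your argument and the paper's derive the inequality from the commutator identity \eqref{s-COMM-QP} together with (anti-)self-adjointness and a Cauchy--Schwarz estimate, so at bottom the content is the same; the only genuine difference is the way Cauchy--Schwarz is packaged. The paper works with the anti-self-adjoint operator $A := iP_k^s$ and exploits the nonnegativity of the quadratic $\lambda\mapsto\sum_k\|(A_k+\lambda Q_k)u\|^2$, optimizing in $\lambda$; this ``completing the square'' version automatically keeps track of the real/imaginary bookkeeping because $\langle(Q_kA_k-A_kQ_k)u,u\rangle$ is real. You instead keep $P_k^s$ self-adjoint, pair the commutator with $u$, observe that each term $\langle(Q_kP_k^s-P_k^sQ_k)u,u\rangle$ is purely imaginary while the right-hand side of \eqref{s-COMM-QP} carries an explicit factor of $i$, cancel the $i$'s, and then apply the $L^2$ and discrete Cauchy--Schwarz inequalities directly. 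These are two standard realizations of the same Heisenberg argument, and both correctly recover the identities $\sum_k\|Q_ku\|_{L^2}^2=\||x|u\|_{L^2}^2$ and $\sum_k\|P_k^su\|_{L^2}^2=\hslash^{2s}\|\nabla(-\Delta)^{\frac{s-1}{2}}u\|_{L^2}^2$ via Plancherel, with the $\hslash$-factors cancelling as you noted. The verification that $P_k^s$ has a real Fourier multiplier (hence is self-adjoint on $\mathcal{S}(\R^n)$) is the right justification.
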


\begin{proof}
The proof is a general argument in operator theory.
Indeed, suppose that there are two operators~$S$ and~$A$,
acting on a space with a scalar
Hermitian product. Assume that~$S$ is self-adjoint
and~$A$ is anti-self-adjoint, i.e.
$$ \langle u,Su\rangle=\langle Su,u\rangle
\;{\mbox{ and }}\;
\langle u,Au\rangle=-\langle Au,u\rangle,$$
for any~$u$ in the space.
Then, for any~$\lambda\in\R$,
\begin{eqnarray*}
\| (A+\lambda S)u\|^2&=&
\|Au\|^2 + \lambda^2 \|Su\|^2 + \lambda\Big(
\langle Au,Su\rangle +\langle Su,Au\rangle\Big)\\
&=&
\|Au\|^2 + \lambda^2 \|Su\|^2 + \lambda
\langle (SA-AS)u,u\rangle .
\end{eqnarray*}
Now we apply this identity in the space~$C^\infty_0(\R^n)\subset L^2(\R^n)$,
taking~$S:=Q_k=x_k$ and~$A:=i P_k^s=\hslash^s\partial_k (-\Delta)^{\frac{s-1}2}$
(recall~\eqref{CQ}
and notice that~$iP_k^s$ is anti-self-adjoint,
thanks to the integration by parts formula).
In this way, and using~\eqref{s-COMM-QP}, we obtain that
\begin{eqnarray*}
0&\le&
\sum_{k=1}^n
\| (iP_k^s+\lambda Q_k)u\|^2_{L^2(\R^n)}\\
&=&
\sum_{k=1}^n \left[
\|iP_k^s u\|^2_{L^2(\R^n)}
+ \lambda^2 \|Q_k u\|^2_{L^2(\R^n)} 
+ i\lambda \langle [Q_k ,P_k^s] u,u\rangle_{L^2(\R^n)}\right]
\\ &=&
\hslash^{2s} \Big\| \,\nabla (-\Delta)^{\frac{s-1}{2}} u\,\Big\|_{L^2(\R^n)}^2
+\lambda^2 \Big\|\, |x|u \, \Big\|_{L^2(\R^n)}^2 + i^2 \,\lambda\,(n + s - 1)\,\hslash^{s}
\langle (-\Delta)^{\frac{s-1}{2}} u,u\rangle_{L^2(\R^n)}
\\ &=&
\hslash^{2s} \Big\| \,\nabla (-\Delta)^{\frac{s-1}{2}} u\,\Big\|_{L^2(\R^n)}^2
+\lambda^2 \Big\|\, |x|u \, \Big\|_{L^2(\R^n)}^2-\lambda\,(n + s -1)\,\hslash^{s}
\Big\| \, (-\Delta)^{\frac{s-1}{4}} u \, \Big\|_{L^2(\R^n)}^2.
\end{eqnarray*}
Now, if~$u\not\equiv0$,
we can optimize this identity by choosing
$$\lambda:=\frac{ 
(n + s -1)\,\hslash^{s}
\Big\| \, (-\Delta)^{\frac{s-1}{4}} u \, \Big\|_{L^2(\R^n)}^2}{2\,
\Big\|\, |x|u \, \Big\|_{L^2(\R^n)}^2 }$$
and we obtain that
$$ 0\le
\hslash^{2s} \Big\| \,\nabla (-\Delta)^{\frac{s-1}{2}} u\,\Big\|_{L^2(\R^n)}^2
- \frac{(n + s -1)^2\,\hslash^{2s}
\Big\| \, (-\Delta)^{\frac{s-1}{4}} u \, \Big\|_{L^2(\R^n)}^4}{4\,
\Big\|\, |x|u \, \Big\|_{L^2(\R^n)}^2},$$
which gives the desired result.
\end{proof}


\section[A fractional elliptic problem in $\Rn$]{A fractional elliptic problem in $\Rn$ with critical growth and convex nonlinearities}

\noindent  The goal of this section is to prove the existence of a positive solution to the convex problem 
\begin{equation} \label{scrproblem}(-\Delta)^s u =\varepsilon h u^q+u^{\cx-1} \qquad\mbox{ in }\mathbb{R}^n,
\end{equation}
where \textcolor{black}{$s\in (0,1)$, $n>2s$, $1\leq q <2^{\star}_s-1$ are given quantities}, $\varepsilon>0$ is a small parameter, and $h$ is a function satisfying suitable summability conditions. 
The main result of this section goes as follows.
	\begin{theorem}\label{scrtheorem}
	Let $q\in [1, \cx-1)$ and $h$ be such that
	\eqlab{\label{h1} & h\in L^1(\Rn)\cap L^{\infty}(\Rn)\quad \mbox{and}\\
	& \mbox{there exists a ball }B\subset \Rn \mbox{ such that } \inf_B h>0.\\
	& \mbox{If }n\in (2s,6s), \mbox{suppose in addition }h\geq 0.}
	Let $\varepsilon>0$ be a small parameter. Then the problem \eqref{scrproblem}
 admits a positive (mountain-pass) solution, provided that
	 $n>{\frac{2s(q+3)}{q+1} }$.
	\end{theorem}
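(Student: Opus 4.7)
The plan is to recast \eqref{scrproblem} as a local (but degenerate/singular) problem on the half-space $\mathbb{R}^{n+1}_+$ via the Caffarelli--Silvestre extension and then apply the Mountain Pass Theorem to the associated functional
\[\mathcal{J}_\varepsilon(U)=\frac{1}{2}\int_{\mathbb{R}^{n+1}_+} y^{1-2s}|\nabla U|^2\,dx\,dy-\frac{\varepsilon}{q+1}\int_{\mathbb{R}^n} h\,|u|^{q+1}\,dx-\frac{1}{2_s^\star}\int_{\mathbb{R}^n}|u|^{2_s^\star}\,dx,\]
where $u$ is the trace of $U\in \dot H^s_a(\mathbb{R}^{n+1}_+)$ and $a=1-2s$. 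A positive critical point of $\mathcal{J}_\varepsilon$ yields, via Theorem \ref{thmext} and \eqref{0S}, a positive solution of \eqref{scrproblem}.

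First I would verify the Mountain Pass geometry. Since $q\geq 1$, both nonlinear terms are superlinear (or at least at-linear for $q=1$, which is compensated by the smallness of $\varepsilon$): using the fractional Sobolev inequality \eqref{OK:sob:p} one obtains $\mathcal{J}_\varepsilon(U)\geq \alpha\|U\|^2$ on a small sphere $\{\|U\|=\rho\}$ for $\varepsilon$ small, and choosing any $U_\ast$ with positive trace supported where $h>0$ gives $\mathcal{J}_\varepsilon(tU_\ast)\to -\infty$ as $t\to +\infty$ (here the critical term dominates). The MP candidate level is
\[c_\varepsilon:=\inf_{\gamma\in\Gamma}\max_{t\in[0,1]}\mathcal{J}_\varepsilon(\gamma(t)),\qquad \Gamma:=\{\gamma\in C([0,1],\dot H^s_a)\,:\,\gamma(0)=0,\ \mathcal{J}_\varepsilon(\gamma(1))<0\}.\]
Next I would localize the loss of compactness. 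The Concentration-Compactness alternative of Lions, adapted to the extended degenerate setting, shows that a Palais--Smale sequence at level $c$ converges (up to subsequence) provided
\[c<c^\ast:=\frac{s}{n}\,S(n,s)^{n/(2s)},\]
where $S(n,s)$ is the best constant in the trace Sobolev inequality associated with the extension; below this threshold every bubble of mass $\geq S^{n/(2s)}$ is excluded.

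The decisive step, and the true source of the dimensional restriction, is to prove $c_\varepsilon<c^\ast$. I would follow the Brezis--Nirenberg philosophy and plug the rescaled Talenti-type extremals $w_\mu$ (extremals of $S(n,s)$, with concentration parameter $\mu>0$) into the path $t\mapsto t w_\mu$ and estimate
\[\max_{t\geq 0}\mathcal{J}_\varepsilon(tw_\mu)\leq c^\ast-\varepsilon\,C_1\,\mu^{n-\frac{(n-2s)(q+1)}{2}}\int_B h\,w_\mu^{q+1}\,dx+O(\mu^{n-2s}),\]
where the negative perturbation comes from the convex term $\varepsilon\int h u^{q+1}$ tested on $w_\mu$ localized in the ball $B$ where $\inf_B h>0$, and the $O(\mu^{n-2s})$ remainder arises from the expansion of the critical term (there is no concave helper as in \cite{maria}, so the gain must be extracted entirely from $\varepsilon\int h u^{q+1}$). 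The condition $n>\frac{2s(q+3)}{q+1}$ is exactly what makes the $\varepsilon$-gain dominate the remainder for a suitable choice $\mu=\mu(\varepsilon)$; this is the delicate point that distinguishes the present convex-convex case from the concave-convex one, and it is where I expect the main technical obstacle. The sign assumption $h\geq 0$ for $n\in(2s,6s)$ enters here to keep the estimate on the correct side of the threshold in low dimensions.

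Finally I would apply the Mountain Pass Theorem to obtain a PS sequence at the level $c_\varepsilon<c^\ast$; by the compactness statement this sequence converges to a critical point $U_\varepsilon$ of $\mathcal{J}_\varepsilon$ with $\mathcal{J}_\varepsilon(U_\varepsilon)=c_\varepsilon>0$, hence $U_\varepsilon\not\equiv 0$. Replacing $U_\varepsilon$ by $|U_\varepsilon|$ (which has the same energy, since the nonlinear terms are even and $\mathcal{J}_\varepsilon$ is invariant under $u\mapsto |u|$ after symmetrization on the trace) gives a nonnegative solution, and the Strong Maximum Principle stated in Theorem \ref{THM-MA-1-STRONG} (applied to the degenerate equation with $a=1-2s$, cf.\ the trace formulation \eqref{0S}) upgrades nonnegativity to positivity on $\mathbb{R}^n$, producing the desired solution $u_\varepsilon=\operatorname{tr} U_\varepsilon$ of \eqref{scrproblem}.
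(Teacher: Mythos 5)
Your overall architecture (extension, Mountain Pass geometry, Concentration-Compactness, Brezis--Nirenberg test functions built from Talenti bubbles, and extracting the dimensional restriction from the competition between the $\varepsilon\int h\,u^{q+1}$ gain and the $O(\mu^{n-2s})$ cut-off remainder) coincides with the paper's. Your identification of where the sign hypothesis on $h$ and the restriction $n>\frac{2s(q+3)}{q+1}$ enter is also correct.

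The gap is the compactness threshold. You assert that a Palais--Smale sequence at level $c$ converges provided $c<c^\ast=\frac{s}{n}S^{n/(2s)}$. This is false in the present problem, and the paper's Theorem on the Palais--Smale condition (Theorem \ref{PSCthm}) is devoted precisely to correcting it: convergence is obtained only below the \emph{strictly lower} level
\[
c_\varepsilon + c_1\varepsilon^{1+\delta} + \overline C\,\varepsilon^{\frac{2_s^\star}{2_s^\star-(q+1)}} < \frac{s}{n}S^{n/(2s)}\quad\text{(when $n\ge 6s$)},
\]
and below $\frac{s}{n}S^{n/(2s)}-c_1\varepsilon^{1+\delta}$ when $n\in(2s,6s)$. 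The correction terms arise from the cut-off argument used to split the PS sequence into a compact part $V_k$ and a tail $W_k$ (see Lemma \ref{tightness}), and from the interaction of the $h$-term with the cut-off; they cannot be absorbed for free. The paper explicitly flags an analogous flaw in \cite{AGP} and explains that, since here both nonlinearities are \emph{convex}, there is no local minimum of negative energy to push the minmax level comfortably down — the Brezis--Nirenberg estimate is therefore \emph{tight}, and the test path must beat the corrected (lower) threshold, not $c^\ast$. Concretely this forces an additional upper bound on the scaling exponent $\beta$ when $\mu=\varepsilon^\beta$ (namely $\beta<\frac{2_s^\star}{(n-2s)[2_s^\star-(q+1)]}$ for $n\ge6s$), and the hypothesis $q\ge1$ is then used to check that this upper bound is compatible with the lower bound $\beta>\frac{2}{n(q+1)-2s(q+3)}$. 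Your sketch does not account for any of this, so the proof would break at the very step it most needs to be careful. A secondary, minor discrepancy: the paper builds the functional with $u_+$ from the outset and invokes a regularity/positivity argument from \cite{maria}, rather than replacing $U$ by $|U|$; the latter is not obviously admissible since the $h$-term may be sign-changing.
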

		
The literature concerning problems with this type of nonlinearities is large and deep in the classical case, see for instance \cite{ABC, AGP1, AGP, ALM, CW, Cing}, among others. In particular, in \cite{AGP} A. Ambrosetti, J. Garc\'ia-Azorero and I. Peral studied \eqref{scrproblem} for $s=1$. There, \textcolor{black}{the existence of solutions} is proved by means of two different techniques: bifurcation and concentration-compactness. In the first case, they construct solutions for the whole range $0<q<2^{\star}_s-1$ as small perturbations of the solutions to the problem
$$-\Delta u = u^{2^{\star}-1}\qquad \mbox{ in }\mathbb{R}^n,\qquad u>0,$$
by using a Lyapunov-Schmidt reduction. \textcolor{black}{
On the other hand, the authors also prove the existence of two solutions for $0<q<1$ (that is, the concave-convex problem) by applying an argument of concentration-compactness type (in the spirit of \cite{lions1, lions2}).}

The fractional counterpart of these results is as follows. In \cite{DMPV}, a solution to \eqref{scrproblem} for $0<q<2^{\star}_s-1$ is obtained by means of a Lyapunov-Schmidt reduction. Indeed, the authors prove the existence of a function $w_\varepsilon$ (which goes to zero \textcolor{black}{in a suitable space with $\varepsilon\to 0$}) so that, for some $\mu\in (0,+\infty)$ and $\xi\in \mathbb{R}^n$, $z_{\mu,\xi}+w_\varepsilon$ solves the problem \eqref{scrproblem}, where
\begin{equation} \label{zmu} z_{\mu,\xi}(x)=\mu^{\frac{2s-n}{2}}z\left(\frac{x-\xi}{\mu}\right),\qquad z(x)=\frac{c_*}{(1+|x|^2)^{\frac{n-2s}{2}}}\end{equation}
is a solution of
$$(-\Delta)^s u = u^{2^{\star}_s-1}\qquad \mbox{ in }\mathbb{R}^n,\qquad u>0.$$
\textcolor{black}{Moreover, in \cite{maria} for the range $0<q<1$ the authors use the concentration-compactness principle to prove the existence of two solutions for the problem \eqref{scrproblem}}
    (see also \cite{Bego, SV-2, SV-1} for related problems in the nonlocal case).
\medskip

\textcolor{black}{In this section, we solve the problem \eqref{scrproblem}
in the fractional case $s\in (0,1)$ and in the range $1\leq q<2^{\star}_s-1$, using a concentration-compactness principle. Notice that in our problem the two nonlinearities are convex, and the geometry of the functional suggests the existence of one solution instead of two. In order to prove the existence of a solution we use, roughly speaking, the following strategy:}

(i) we consider the energy functional associated to \eqref{scrproblem} and we prove that it satisfies some compactness condition (Palais-Smale condition) under a certain energy level.

(ii) we build a sequence of functions with an appropriate geometry (of Mountain Pass type) whose energy lies below the critical level found in (i). 

(iii) we apply the Mountain Pass Lemma (see \cite{mps}) to pass to the limit, getting a solution.\\
There are two fundamental points here: to identify the energy level, and to find the appropriate sequence. \textcolor{black}{We point out that, in the concave-convex (fractional) problem studied in \cite{maria}}, the geometry derived from the concave term (the functional has a minimum of negative energy) helps to prove that the sequence stays below the critical level. However, in our problem both nonlinearities are convex, and the proof gets more involved. Indeed, if one adapts straightforwardly the compactness result in \cite[Proposition 4.2.1]{maria} and builds the sequence in the standard way (by considering the path along the Sobolev minimizers), then the arguments to prove that the energy of the sequence is small enough do not work. \\
Thus, the study of \eqref{scrproblem} will first require a finer analysis of the compactness properties of the functional. More precisely, we will have to improve the estimates of the functional in order to get a slightly higher critical level.
Accordingly, once we have found this new critical level, we perform a more careful analysis of the energy of the sequence given by the minimizers. We will finally conclude by applying the Mountain Pass Lemma in the standard way.

\textcolor{black}{We remark here that in this work we also overcome 
a flaw found in \cite{AGP}, where the classical problem is studied}; indeed, to prove compactness (Proposition 2.1 therein) they state that the critical energy level $c_\varepsilon$ has to satisfy
$$c_\varepsilon<\frac{1}{n}S^{{n}/2}-C\varepsilon^{\frac{2^{\star}}{2^{\star}-(q+1)}}.$$
Nevertheless, if one follows the proof it arises that, in order to reach the contradiction, it has to be required that
$$c_\varepsilon<\frac{1}{{n}}S^{{n}/2}-C\varepsilon^{\frac{2^{\star}}{2^{\star}-(q+1)}}-C\varepsilon.$$
Notice that what we are saying here is that the compactness holds below a lower critical level, and thus it will be more difficult to find the sequence in (ii). This flaw was already fixed in \cite{maria} in the fractional, \textcolor{black}{concave-convex} case (see Proposition 4.2.1), where the authors consider the lower level and find the appropriate sequence. 

\bigskip

\textcolor{black}{We make now some preliminary observations on the problem that we study.} We see at first that if $h$ satisfies conditions \eqref{h1}, then also 
		\bgs{\label{h2} h\in L^m(\Rn) \mbox{ for any } m \in (1,+\infty).}
Furthermore, in the case $n\in (2s,6s)$ we need to ask $h$ to be positive. This restriction arises again from the study of the energy of the Sobolev minimizers. As we commented before, we would like to control the energy of the sequence that we will construct (and that will be based on the functions $z_{\mu,\xi}$, \textcolor{black}{see \eqref{zmu}}), and thus we would like the negative terms to be as large as possible. In particular, if one looks at the $q$-order term, we hope that the part where $h$ is positive \textcolor{black}{dominates over} the part where it is negative. To have this, we will center the function $z_{\mu,\xi}$ in the ball where $h$ is positive, so that the mass is concentrated there. However, it can be easily seen that for low dimensions the mass of the tails of the minimizers is too large and it annihilates the mass in the positive part of $h$. This computation gives an idea of why the necessity of \textcolor{black}{requiring} $h\geq 0$ for $n\in (2s,6s)$, but the detailed restriction can be found in Section \ref{proofThm}.
\medskip

The section is organized as follows: in Subsection \ref{strategy} we provide the functional framework that will be needed, as well as some auxiliary results related to compactness and geometry properties. Subsection \ref{PSC} is devoted to the proof of the Palais-Smale condition for the energy functional, and Subsection \ref{minmax} to construct the sequence with mountain pass geometry and whose energy level lies below the critical one. Finally, in Subsection \ref{proofThm} we prove Theorem \ref{scrtheorem}.

	\subsection{Functional framework and preliminary computations}\label{strategy}

\noindent \textcolor{black}{We introduce at first some notations.} Let us denote by $\Rp:=\Rn\times (0,+\infty)$ the $n+1$ dimensional half-space, by $X=(x,y)\in \Rp$ a $n+1$ dimensional vector, having $x\in \Rn$ and $y>0$, and take $a:=1-2s$.  Moreover, for $x\in \Rn$ and $r>0$ we write $B_r(x)$ \textcolor{black}{(shorted to $B_r$ when $x=0$)} for the ball in $\Rn$ centered at $x$ with radius $r$, i.e.
	  \[ B_r(x):= \{ x'\in \Rn \mbox{ s.t. } |x-x'|<r\},\]
	  and for $X\in \Rp$ and $r>0$ we write $B_r^+(X)$ for the ball in $\Rp$ centered at $X$ with radius $r$, that is
	  \[    B_r^+(X):= \{ X'\in \Rp \mbox{ s.t. } |X-X'|<r\}.\]
Let us introduce first the seminorm
		\[ [u]^2_{\dot H^s(\Rn)}: = \iint_{\R^{2n}} \frac{|u(x)-u(y)|^2}{|x-y|^{n+2s}}\, dx\, dy,\] 
and define  the space $\dot H^s(\Rn)$ as the completion of the Schwartz space of rapidly decreasing smooth functions, with respect to the norm $[\,\cdot\,]_{\dot H^s(\Rn)} + \|\cdot\|_{L^{2^{\star}_s}(\mathbb{R}^n)}$. 
For the sake of simplicity, from now on we will use the notation $\|\cdot\|$ for the $\Lst$ norm.
	
\begin{defn}
We say that $u\in \dot H^s(\Rn)$ is a (weak) solution of $(-\Delta)^s u=f$ in $\Rn$ for a given $f\in L^\beta(\Rn)$ where $\beta:=2n/(n+2s)$ if
	\[ \frac{C(n,s)}{2} \iint_{\R^{2n}} \frac{ \left(u(x)-u(y)\right)\left(\varphi(x)-\varphi(y)\right)}{|x-y|^{n+2s}} \, dx \, dy = \int_{\Rn} f(x)\varphi(x)\, dx\] for every $\varphi \in \dot H^s(\Rn)$.
\end{defn}

\noindent Nevertheless, instead of directly working in this framework, we will transform the problem into a local one by using the extension due to L. Caffarelli and L. Silvestre (see Chapter \ref{S:3} and the original result in the paper \cite{CAFSIL}). 
%
Let $U$ be the extension operator defined in \eqref{lapextop} an denote 
			\[ [U]_a^{\star}:=\left( \kappa_s\int_{\R^{n+1}} y^a |\nabla U|^2 \, dX\right)^{1/2} ,\] where $\kappa_s$ is a normalization constant. We then define the spaces
			\[ \dot H_a^s (\R^{n+1}) := \overline{ C_0^\infty(\R^{n+1})}^{[\cdot]_a^{\star}}\]
			and
			\[ \Hsa := \Big\{ U:=\tilde U\Big|_{\Rp} \mbox{ s.t. }\tilde U \in \dot H^s_a (\R^{n+1}), \tilde U(x,y)=\tilde U(x,-y) \mbox{ a.e. in } \Rn \times \R \Big\}.\]
The norm in $\Hsa$ is, neglecting the constants,
	\bgs{\label{norm} [U]_a:= \lr{\int_{\R^{n+1}_+} y^a|\nabla U|^2 \, dX }^{1/2}.} 
	        
\noindent So, finding a solution $u\in \dot H^s(\Rn)$ of the nonlocal problem $(-\Delta)^s u=f(u)$ (thanks to \eqref{lapextop}) is equivalent to finding a solution $U\in \Hsa$ of the local problem
\sys{&\mbox{div} \left(y^a\nabla U\right) =0 && \mbox{ in } \Rp,\\
& -\lim_{y\to 0^+} y^a \partial_y U = f(u) && \mbox{ in } \Rn .}
Since we are looking for positive solutions of \eqref{scrproblem}, we will consider the problem
\eqlab{\label{eqpls} (-\Delta)^s u =\varepsilon h u _+^q + u_+^{\cx-1} \quad \mbox{ in } \Rn}
and (according to the considerations above) its equivalent formulation 
\syslab{\label{equiveqplus} &\mbox{div} \left(y^a\nabla U\right) =0 && \mbox{ in } \Rp,\\
& -\lim_{y\to 0^+} y^a \partial_y U (x,y)= \varepsilon h U _+^q(x,0) + U_+^{\cx-1}(x,0) && \mbox{ in } \Rn.}
In particular, we say that $U\in \Hsa$ is a (weak) solution on the problem \eqref{equiveqplus} if
\bgs{\label{weaksolU} \int_{\Rp} y^a\langle \nabla U,\nabla \varphi\rangle \, dX =\int_{\Rn} \lr{ \varepsilon h(x) U_+^q(x,0) +U_+^{\cx-1}(x,0)} \varphi(x,0)\, dx,}
for every $\varphi \in \Hsa$. Furthermore, the energy functional associated to the problem \eqref{equiveqplus} is 
\bgs{ \label{oper} \Fl (U):=\frac{1}2 \int_{\Rp}y^a |\nabla U|^2 \, dX - \frac{\varepsilon}{q+1} \int_{\Rn} h(x) U_+^{q+1} (x,0)\, dx -\frac{1}{\cx}\int_{\Rn} U_+^{\cx}(x,0)\, dx.} 
In particular $\Fl \in C^1(\Hsa)$ and for any $U,V\in \Hsa$
\bgs{\label{operderiv}&  \langle \Fl'(U),V\rangle\\ =&\; \int_{\Rp} y^a\langle \nabla U,\nabla V\rangle \, dX - \varepsilon \int_{\Rn} h(x) U_+^q (x,0)V(x,0) - \int_{\Rn} U_+^{\cx-1}(x,0) V(x,0)\, dx.}
The purpose of the section from here on is to prove the existence of a critical point $U$ of the operator $\Fl$. Then, $U$ is a solution of \eqref{equiveqplus} and therefore $u:=U(\cdot, 0)$ is a solution of \eqref{eqpls}. Moreover, one can prove that any nontrivial solution $u$ of \eqref{eqpls} (hence its extension $U$) is nonnegative, and therefore a true solution of \eqref{scrproblem} (see for this \cite[Proposition 2.2.3]{maria}).
\medskip

It is known that (up to constants) the harmonic extension of the fractional Laplacian gives an isometry between $\dot H^s(\Rn)$ and $\Hsa$, i.e.
	\eqlab{\label{isom}    [u]_{\dot H^s(\Rn)} = [U]_a.}	We recall that the Sobolev embedding in $\dot H^s(\Rn)$ gives that
	\[ S\|u\|^2 \leq [u]^2_{\dot H^s(\Rn)},\] where $S$ is the best constant of the Sobolev embedding of $\dot H^s(\Rn)$ (see for instance \cite[Theorem 6.5]{galattica}). As a consequence, we have the following inequality,
	\begin{prop}[Trace inequality] \label{traceIneq}
Let $U \in \Hsa$. Then
\bgs{ S\|U(\cdot, 0)\|^{\textcolor{black}{2}}\leq [U]_a^2.}
\end{prop}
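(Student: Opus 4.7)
The plan is to combine the two ingredients already assembled in the text: the isometry \eqref{isom} between $\dot H^s(\R^n)$ and $\Hsa$ (which identifies, up to the normalizing constant, the seminorm of a function with that of its harmonic extension) and the fractional Sobolev inequality $S\|u\|^2\le [u]_{\dot H^s(\R^n)}^2$ stated just before the proposition. The only thing to be careful about is that $U\in \Hsa$ is not \emph{a priori} the harmonic extension of its own trace, so we need a minimality argument to reduce to that case.

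First, I would set $u(x):=U(x,0)$ and define $E(u)$ to be the Caffarelli--Silvestre harmonic extension of $u$, that is, the unique solution in $\Hsa$ of the degenerate problem $\operatorname{div}(y^{a}\nabla V)=0$ in $\R^{n+1}_+$ with trace $u$ on $\{y=0\}$. The construction in Chapter~\ref{S:3} (cf.\ Lemma~\ref{L7s} together with formula \eqref{F-7-bis}) provides $E(u)$ explicitly via Fourier multipliers and, crucially, shows that $E(u)$ \emph{minimizes} $[\,\cdot\,]_a^2$ among all competitors in $\Hsa$ sharing the trace $u$. Therefore
\[
[E(u)]_a^{\,2}\;\le\;[U]_a^{\,2}.
\]

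Second, I would invoke the isometry \eqref{isom}, which tells us (up to the normalization $\kappa_s$ that was absorbed into the definition of $[\,\cdot\,]_a$) that $[u]_{\dot H^s(\R^n)}^{\,2}=[E(u)]_a^{\,2}$. Combining with the previous display,
\[
[u]_{\dot H^s(\R^n)}^{\,2}\;\le\;[U]_a^{\,2}.
\]
At this point the fractional Sobolev inequality recalled immediately before the proposition, namely $S\|u\|^{2}\le [u]_{\dot H^s(\R^n)}^{\,2}$, is applied directly to conclude
\[
S\,\|U(\cdot,0)\|^{2}\;=\;S\,\|u\|^{2}\;\le\;[u]_{\dot H^s(\R^n)}^{\,2}\;\le\;[U]_a^{\,2},
\]
which is precisely the trace inequality.

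There is really no serious obstacle in this argument; the only subtle point is the minimality of the extension among functions in $\Hsa$ with prescribed trace, and this was already established in Lemma~\ref{L7s}. An alternative, purely intrinsic route would be to prove a trace theorem $\|U(\cdot,0)\|_{\dot H^s(\R^n)}\lesssim [U]_a$ directly, either by Fourier methods (slicing $U$ in the $y$-variable and using that $y^{a}$ is a Muckenhoupt $A_2$ weight on $(0,\infty)$ for $a\in(-1,1)$) or by a density argument starting from $U\in C_c^\infty(\overline{\R^{n+1}_+})$; but going through the isometry is shorter and uses only material already in the excerpt.
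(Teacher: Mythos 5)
Your proposal is correct and follows essentially the route the paper intends: the paper presents Proposition~\ref{traceIneq} as an immediate consequence of the isometry \eqref{isom} and the fractional Sobolev inequality stated just above it, and your proof correctly fleshes out the one step that is glossed over, namely that a generic $U\in\Hsa$ need not be the harmonic extension of its own trace, so that the minimality of the extension (Lemma~\ref{L7s}) is needed to pass from $[U]_a^2$ to $[U(\cdot,0)]_{\dot H^s(\Rn)}^2$. The only technical caveat worth flagging is that Lemma~\ref{L7s} is stated for $u\in\mathcal{S}(\Rn)$, so a routine density argument is needed to extend the minimality to traces of arbitrary $U\in\Hsa$, but this is a standard point and does not affect the correctness of the argument.
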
			
\noindent In \cite[Theorem 1.1]{costi} the best Sobolev constant and the fractional Sobolev minimizers are explicitly computed. The form of the fractional Sobolev minimizer is given by
	\eqlab{\label{SobMin}  z(x):= \frac{c_\star}{(1+|x|^2)^{\frac{n-2s}{2}}}} for a positive constant $c_\star=c_\star(n,s)$. 

We introduce for $r\in(1,+\infty)$ the weighted Lebesgue space endowed with the norm
\[\|U\|_{L^r(\Rp,y^a)} :=\lr{\int_{\Rp}y^a |U|^r\, dX}^{1/r}.\] The following result gives a continuous Sobolev embedding of the space $\Hsa$ into the weighted Lebesgue space for a particular value of $r$. See for the proof \cite[Proposition 3.1.1]{maria}.

\begin{prop}[Sobolev embedding] \label{sob}There exists a constant $\widehat S>0$ such that for all $U\in \Hsa$ it holds that
\bgs{ \lr{ \int_{\Rp} y^a|U|^{2\gamma} \, dX}^{1/2\gamma} \leq \widehat S \lr{ \int_{\Rp} y^a |\nabla U|^2 \, dX}^{1/2},}
where $\gamma=1+2/(n-2s)$. 
\end{prop}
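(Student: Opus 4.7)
The plan is to deduce the inequality from the sharp weighted Sobolev embedding on $\R^{n+1}$ associated with the Muckenhoupt power weight $|y|^a$. By a standard density argument it suffices to prove the claim for $U\in C_c^\infty(\overline{\Rp})$. Extending such a $U$ evenly across $\{y=0\}$ produces a Lipschitz function $\widetilde U$ on $\R^{n+1}$ for which
$$\int_{\R^{n+1}}|y|^a|\widetilde U|^{2\gamma}\,dX=2\int_{\Rp}y^a|U|^{2\gamma}\,dX,\qquad \int_{\R^{n+1}}|y|^a|\nabla\widetilde U|^2\,dX=2\int_{\Rp}y^a|\nabla U|^2\,dX,$$
so that it is enough to establish the symmetric inequality
$$\Big(\int_{\R^{n+1}}|y|^a|\widetilde U|^{2\gamma}\,dX\Big)^{1/(2\gamma)}\le C\Big(\int_{\R^{n+1}}|y|^a|\nabla\widetilde U|^2\,dX\Big)^{1/2}.$$

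Since $a=1-2s\in(-1,1)$, the weight $|y|^a$ lies in the Muckenhoupt class $A_2(\R^{n+1})$. The exponent $2\gamma$ is not coincidental: it is precisely the Sobolev conjugate of $2$ associated with the \emph{effective dimension}
$$N:=(n+1)+a=n+2-2s,$$
because $2N/(N-2)=2(n+2-2s)/(n-2s)=2\gamma$. I would therefore invoke the sharp weighted Sobolev inequality corresponding to such a power weight, which can be obtained either (i) by combining the Fabes-Kenig-Serapioni framework of $A_p$-weighted embeddings from \cite{FKS} with a Stein-Weiss-type Hardy-Littlewood-Sobolev estimate that upgrades the exponent to the sharp value, or (ii) by representing $\widetilde U$ through the Caffarelli-Silvestre Poisson kernel associated with $\mathrm{div}(|y|^a\nabla\cdot)$ and reducing to the classical fractional Sobolev inequality on $\R^n$, which is already encoded in Proposition \ref{traceIneq} together with the isometry \eqref{isom}.

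Reverting the even symmetrization then yields the claim with an explicit constant $\widehat S$ (equal to the sharp weighted Sobolev constant multiplied by the appropriate power of $2$). The main technical obstacle is securing the \emph{sharp} exponent $2\gamma$, since a generic $A_2$-weighted embedding delivers only some exponent $2\kappa>2$ without identifying $\kappa$. Overcoming this requires exploiting the explicit form of the power weight $|y|^a$, which fixes the effective dimension $N$ and hence the Sobolev conjugate; once the sharp weighted inequality is in hand, the remaining steps of reduction and symmetrization are routine.
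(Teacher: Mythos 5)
The paper does not give its own proof of this proposition; it simply refers to \cite[Proposition~3.1.1]{maria}, so there is no internal argument to compare with yours. Judging your sketch on its own merits: the computation $2\gamma=2N/(N-2)$ with effective dimension $N=n+2-2s$ is exactly the right observation, and your route~(i), using the Fabes--Kenig--Serapioni $A_2$-weighted Sobolev theory together with a Stein--Weiss / weighted Riesz-potential estimate to pin down the sharp exponent, is a standard and viable way to conclude. The even extension across $\{y=0\}$ and the density reduction are harmless here, since by definition elements of $\Hsa$ are restrictions of even functions in $\dot H^s_a(\R^{n+1})=\overline{C^\infty_0(\R^{n+1})}^{[\cdot]^\star_a}$.

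Your route~(ii), however, contains a genuine gap. A general $U\in\Hsa$ is \emph{not} the $a$-harmonic (Caffarelli--Silvestre) extension of its trace $u=U(\cdot,0)$; the extension is the minimizer of $[\cdot]_a$ among functions with that trace, and $U$ is merely a competitor. Hence you cannot write $U=P_y*u$ in the bulk, and the trace inequality of Proposition~\ref{traceIneq} plus the isometry~\eqref{isom} do not apply to $U$ directly but only to the minimizer. Even if one first projects $U$ onto its extension $V$, the discrepancy $W=U-V$ has zero trace and to bound $\|W\|_{L^{2\gamma}(\Rp,y^a)}$ by $[W]_a$ one is back to needing the very weighted Sobolev embedding under proof, so the argument becomes circular. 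Moreover, even for the extension itself, controlling $\int_{\Rp}y^a|P_y*u|^{2\gamma}\,dX$ by $\|u\|_{L^{2^\star_s}(\R^n)}^{2\gamma}$ is a nontrivial weighted mixed-norm bound for the Poisson kernel, not something that falls out of the trace inequality alone. I would drop route~(ii) entirely and complete route~(i), or equivalently run the fractional-dimension argument (treat $y^a\,dy$ as the radial measure in $1+a$ extra dimensions and adapt the standard proof of the Sobolev inequality in $\R^{n+1+a}$) to secure the sharp exponent; at present you state that this step ``requires exploiting the explicit form of the power weight'' and then declare the rest routine, but that step is precisely the content of the proposition and should be carried out.
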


\noindent In the next proposition, we prove a useful integral inequality that will be frequently used.
\begin{prop} \label{qineq} Let $1\leq q <\cx-1$. Assume $u\in L^{2^{\star}_s}(\mathbb{R}^n)$  and $h\in L^m(\mathbb{R}^n)$ with $m=\frac{2^{\star}_s}{2^{\star}_s-(q+1)}$. Then
\bgs{\label{bound3} \left| \int_{\Rn} h(x) u^{q+1}(x) \, dx\right| \leq \|h\|_{L^m(\Rn)} \|u\|^{q+1}.}
\end{prop}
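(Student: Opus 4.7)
The plan is to prove this by a direct application of H\"older's inequality with a carefully chosen pair of conjugate exponents. First I would identify the conjugate exponent $m'$ to $m$, so that $\frac{1}{m}+\frac{1}{m'}=1$, and check that it matches the $L^{2^{\star}_s}$ integrability of $u$ after raising to the power $q+1$.

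Concretely, writing $\frac{1}{m}=\frac{2^{\star}_s-(q+1)}{2^{\star}_s}=1-\frac{q+1}{2^{\star}_s}$, I get $\frac{1}{m'}=\frac{q+1}{2^{\star}_s}$, that is $m'=\frac{2^{\star}_s}{q+1}$. Note that the assumption $1\le q<2^{\star}_s-1$ ensures $q+1\in(1,2^{\star}_s)$, so both $m$ and $m'$ lie in $(1,+\infty)$ and H\"older's inequality applies. Observe also that $(q+1)m'=2^{\star}_s$, which is the key algebraic identity that will turn the second factor into a power of $\|u\|=\|u\|_{L^{2^{\star}_s}(\Rn)}$.

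Next I would bound the absolute value of the integral by the integral of the absolute values and apply H\"older's inequality in the form
\[
\int_{\Rn}|h(x)|\,|u(x)|^{q+1}\,dx \le \|h\|_{L^m(\Rn)}\,\Big\|\,|u|^{q+1}\,\Big\|_{L^{m'}(\Rn)}.
\]
Then I would compute the last factor using $(q+1)m'=2^{\star}_s$:
\[
\Big\|\,|u|^{q+1}\,\Big\|_{L^{m'}(\Rn)}=\Big(\int_{\Rn}|u|^{(q+1)m'}\,dx\Big)^{1/m'}=\Big(\int_{\Rn}|u|^{2^{\star}_s}\,dx\Big)^{(q+1)/2^{\star}_s}=\|u\|^{q+1}.
\]
Combining the last two displays gives the claimed inequality.

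There is really no obstacle here: the statement is essentially the definition of the critical exponent $m$, and the only thing to watch is that the hypotheses $h\in L^m(\Rn)$ and $u\in L^{2^{\star}_s}(\Rn)$ match precisely the two factors produced by H\"older's inequality, so both sides are finite and the estimate is meaningful.
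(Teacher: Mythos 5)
Your proof is correct and follows essentially the same route as the paper: a single application of H\"older's inequality with exponents $m$ and $m'=\frac{2^{\star}_s}{q+1}$, followed by the observation that $(q+1)m'=2^{\star}_s$ to identify the second factor with $\|u\|^{q+1}$. The only cosmetic difference is that you verify explicitly that $m,m'\in(1,\infty)$, which the paper leaves implicit.
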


\begin{proof} We use the Hölder inequality to deduce that
	\bgs{ \left| \int_{\Rn} h(x) u^{q+1} (x) \, dx \right| \leq \al \int_{\Rn} |h(x)| u^{q+1} (x) \, dx
	\leq  \lrq{\int_{\Rn}|h|^{\frac{\cx}{\cx-q-1} } \, dx }^{\frac{\cx-q-1}{\cx}} \lrq{ \int_{\Rn} |u| ^{\cx} \, dx }^{\frac{q+1}{\cx}}\\
	\leq \al \|h\|_{L^m(\Rn)} \|u\|^{q+1}  } for $m= \frac{\cx}{\cx-q-1}>1$, and so the inequality is proved. 
\end{proof}

\noindent The next proposition is the equivalent of \cite[Lemma 4.1.1]{maria} in the case $q\geq1$ and goes as follows.
\begin{prop}\label{convres}
Let $v_k \in L^{\cx}(\Rn,[0,+\infty))$ be a sequence converging to some $v$ in $L^{\cx}(\Rn)$. Then for any $r>1$
\[ \lim_{k \to +\infty} \int_{\Rn} |v_k^r(x) -v^r(x)|^{\frac{\cx}{r}} \, dx =0.\]
\end{prop}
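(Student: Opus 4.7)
The plan is to reduce the convergence of $v_k^r$ to $v^r$ in $L^{\cx/r}(\Rn)$ to the continuity of the power map via an elementary pointwise inequality together with Hölder's inequality. The key algebraic fact I would exploit is that, for $a,b\geq 0$ and $r>1$,
\[
|a^r-b^r|\,\leq\, r\,\bigl(a^{r-1}+b^{r-1}\bigr)\,|a-b|,
\]
which follows from the mean value theorem applied to $t\mapsto t^r$ on the interval with endpoints $a,b$. Setting $p:=\cx/r$ (note $p>1$ since $r<\cx$ is not needed; we only need $v_k,v\in L^{\cx}$) and using the subadditivity $(x+y)^{p}\leq 2^{p-1}(x^p+y^p)$, this upgrades to the pointwise bound
\[
|v_k^r-v^r|^{p}\,\leq\, C_{r,p}\,\bigl(v_k^{(r-1)p}+v^{(r-1)p}\bigr)\,|v_k-v|^{p}.
\]

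Next, I would integrate and apply Hölder's inequality with conjugate exponents $\tfrac{r}{r-1}$ and $r$ to each of the two resulting terms. For the term with $v_k^{(r-1)p}$, this yields
\[
\int_{\Rn} v_k^{(r-1)p}\,|v_k-v|^{p}\,dx
\,\leq\,
\Bigl(\int_{\Rn}v_k^{\cx}\,dx\Bigr)^{\!(r-1)/r}
\Bigl(\int_{\Rn}|v_k-v|^{\cx}\,dx\Bigr)^{\!1/r},
\]
after checking the exponent book-keeping $(r-1)p\cdot\tfrac{r}{r-1}=pr=\cx$ and $p\cdot r=\cx$. The symmetric estimate holds with $v_k$ replaced by $v$. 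Collecting both terms gives
\[
\int_{\Rn}|v_k^r-v^r|^{\cx/r}\,dx
\,\leq\, C_{r,p}\Bigl(\|v_k\|^{\,\cx(r-1)/r}+\|v\|^{\,\cx(r-1)/r}\Bigr)\,\|v_k-v\|^{\,\cx/r}.
\]

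Finally, since $v_k\to v$ in $L^{\cx}(\Rn)$, the norms $\|v_k\|$ are bounded uniformly in $k$, while $\|v_k-v\|\to 0$; the right-hand side therefore tends to $0$, which is the desired conclusion. I do not anticipate a genuine obstacle here: the only mildly delicate point is the exponent check in the Hölder step, but once $p=\cx/r$ is chosen and the conjugate pair $(r/(r-1),r)$ is used, everything balances. A variant of the argument via the Brezis--Lieb lemma (or dominated convergence after passing to a pointwise a.e.\ convergent subsequence together with uniform integrability) would also work, but the direct Hölder computation is the shortest route and makes the dependence on $\|v_k\|$ explicit, which is useful if the estimate has to be quoted later in the section.
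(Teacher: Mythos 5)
Your proposal is correct and takes essentially the same route as the paper. Both start from the mean-value-theorem bound $|a^r-b^r|\leq r(\ldots)|a-b|$ and finish with H\"older against the $L^{\cx}$ norms; the only cosmetic difference is that the paper keeps the factor as $(a+b)^{r-1}$ and does a single H\"older estimate on $(v_k+v)^{(r-1)\cx/r}|v_k-v|^{\cx/r}$, whereas you split $a^{r-1}+b^{r-1}$ into two terms and apply H\"older twice, which gives the same conclusion.
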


\begin{proof}
For any $a\geq b\geq 0$ and any $r>1$ we see that
\[ a^r-b^r =r\int_{b}^{a} t^{r-1}\, dt \leq r a^{r-1} (a-b) \leq r(a^{r-1}+b^{r-1})(a-b).\]
Exchanging $a$ with $b$, we conclude that
\[ \big| a^r-b^r \big|\leq r (a+b)^{r-1}|a-b|.\]
Then by the Hölder inequality we have that
\bgs{\int_{\Rn} |v_k^r(x)-v^r(x)|^{\frac{\cx}{r}}  \, dx \leq &\; r^{\frac{\cx}{r}} \lr{\int_{\Rn} (v_k+v)^{\cx }\, dx} ^{(r-1)/r} \lr{ \int_{\Rn} |v_k-v|^{\cx}\, dx}^{1/r} \\
\leq &\; r^{\frac{\cx}{r}} \
\|v_k+v\|^{\frac{(r-1) \cx}{r}} \|v_k-v\|^{\frac{\cx}{r}}   .} 
Using the convergence $\|v_k-v\|\to 0$ (from which it also follows that $\|v_k+v\|$ is uniformly bounded), the conclusion plainly follows.
\end{proof}

\noindent Another useful result is given in \cite[Lemma 4.2.4]{maria}. We just notice that now, for $q>1$, the statement goes as follows:
\begin{prop}\label{proppqs}
Let $m:= \frac{\cx}{\cx-(q+1)}$. Then there exists a positive constant $\bar C$ depending on $n,s,q$ and $\|h\|_{L^m(\Rn)}$ such that, for any $\alpha>0$,
\bgs{ \frac{s}n \alpha^{\cx} -\varepsilon\lr{\frac{1}2-\frac{1}{q+1}}\|h\|_{L^m(\Rn)}\alpha^{q+1}\geq -\bar C\eps^{\frac{\cx}{\cx-(q+1)}}.}
\end{prop}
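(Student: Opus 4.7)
The statement is essentially a one-variable calculus exercise: it asserts that the global minimum of the real function
\[ f(\alpha) := \frac{s}{n}\alpha^{\cx} - \varepsilon\left(\frac{1}{2}-\frac{1}{q+1}\right)\|h\|_{L^m(\R^n)}\,\alpha^{q+1}, \qquad \alpha\ge 0, \]
is bounded below by $-\bar C\,\varepsilon^{\cx/(\cx-(q+1))}$. My plan would be to set $K:=\left(\tfrac{1}{2}-\tfrac{1}{q+1}\right)\|h\|_{L^m(\R^n)}$ and $c:=\varepsilon K$, and to distinguish two cases depending on the sign of $K$.

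The first step is to dispose of the borderline case $q=1$, where $\frac{1}{2}-\frac{1}{q+1}=0$; then $c=0$ and $f(\alpha)=\frac{s}{n}\alpha^{\cx}\ge 0$, so the inequality holds trivially with any $\bar C\ge 0$. For the remaining range $1<q<\cx-1$ one has $c>0$ and $\cx>q+1$, so $f(0)=0$ and $f(\alpha)\to +\infty$ as $\alpha\to +\infty$; hence the infimum of $f$ on $[0,+\infty)$ is attained at an interior critical point. Solving $f'(\alpha)=0$ gives the unique candidate
\[ \alpha_0=\left(\frac{nc(q+1)}{s\cx}\right)^{\frac{1}{\cx-(q+1)}}, \]
which is a global minimum by the shape of $f$.

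The second step is to evaluate $f(\alpha_0)$ explicitly. From the critical point equation one has $c\,\alpha_0^{q+1}=\frac{s\cx}{n(q+1)}\,\alpha_0^{\cx}$, and substituting into $f(\alpha_0)$ yields
\[ f(\alpha_0)=\frac{s}{n}\left(1-\frac{\cx}{q+1}\right)\alpha_0^{\cx} = -\frac{s(\cx-(q+1))}{n(q+1)}\,\alpha_0^{\cx}. \]
Raising the formula for $\alpha_0$ to the power $\cx$ gives $\alpha_0^{\cx}=\bigl(\tfrac{n(q+1)K}{s\cx}\bigr)^{\cx/(\cx-(q+1))}\varepsilon^{\cx/(\cx-(q+1))}$, so that
\[ f(\alpha_0) = -\bar C\,\varepsilon^{\frac{\cx}{\cx-(q+1)}}, \qquad \bar C := \frac{s(\cx-(q+1))}{n(q+1)}\left(\frac{n(q+1)K}{s\cx}\right)^{\frac{\cx}{\cx-(q+1)}}, \]
which is a positive constant depending only on $n$, $s$, $q$ and $\|h\|_{L^m(\R^n)}$, as required.

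There is no substantive obstacle: the whole proof is an explicit minimization, and the only point requiring some care is handling the exponents $\cx-(q+1)$ (which is positive precisely because $q<\cx-1$) and checking that the critical point is indeed the global minimum on $[0,+\infty)$. The structure of the exponent $\cx/(\cx-(q+1))$ in the conclusion is dictated by the scaling relation between the two competing powers $\alpha^{\cx}$ and $\alpha^{q+1}$, which is why one gets this specific power of $\varepsilon$.
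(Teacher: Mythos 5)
Your proof is correct, and since the paper gives no proof of its own (it simply refers to Lemma~4.2.4 of the cited reference by Bucur--Medina and remarks that the statement carries over to $q>1$), there is nothing to compare it against except the expected content of that lemma. Your argument — distinguish $q=1$ where the coefficient $\tfrac12-\tfrac1{q+1}$ vanishes and the bound is trivial, then for $q>1$ minimize the one-variable function $f(\alpha)=\tfrac{s}{n}\alpha^{\cx}-\varepsilon K\alpha^{q+1}$ at its unique interior critical point $\alpha_0=\bigl(\tfrac{n(q+1)\varepsilon K}{s\,\cx}\bigr)^{1/(\cx-(q+1))}$ and read off the scaling $\varepsilon^{\cx/(\cx-(q+1))}$ — is exactly the standard computation, the exponents all check out, and the resulting constant $\bar C$ has the claimed dependence on $n$, $s$, $q$ and $\|h\|_{L^m(\R^n)}$.
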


\subsection{Palais-Smale condition}\label{PSC}
\noindent The main result of this Section is the following.
\begin{theorem}\label{PSCthm} 
There exists~$\bar C, c_1>0$, depending on~$h, q, n$
and~$s$, such that the following statement
holds true. 

Let $\{U_k\}_{k\in \N}\subset \Hsa$ be a sequence satisfying
\begin{enumerate}
\item[(i)]$\displaystyle\lim_{k\to+\infty}\mathcal{F}_\varepsilon(U_k)= 
c_\varepsilon$, with 
\begin{equation*}\begin{split}\label{ceps}
&c_\varepsilon+c_1\varepsilon^{1+\delta} +\overline C \varepsilon^{\frac{\cx}{\cx-(q+1)}}<
\dfrac{s}{n}S^{\frac{n}{2s}}\qquad  \hbox{ if }n\geq 6s,\\
&c_\varepsilon+c_1\varepsilon^{1+\delta}<
\dfrac{s}{n}S^{\frac{n}{2s}} \qquad \hbox{ if }n\in (2s,6s),
\end{split}\end{equation*} 
where $\delta>0$ and $S$ is the Sobolev constant appearing in Proposition~\ref{traceIneq},
\item[(ii)]$\displaystyle\lim_{k\to+\infty}\mathcal{F}'_\varepsilon(U_k)= 0.$
\end{enumerate}
Then there exists a subsequence, still denoted by~$\{U_k\}_{k\in\mathbb{N}}$, 
which is strongly convergent in $\dot{H}^s_a(\mathbb{R}^{n+1}_+)$ as~$k\to+\infty$.
\end{theorem}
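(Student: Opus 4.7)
The argument is a concentration--compactness dichotomy in the spirit of Brezis--Nirenberg and Lions, adapted to the extension framework; the buffer $c_1\eps^{1+\delta}$ in~(i) plays the key role of ruling out the concentrating bubble. The first step is to obtain boundedness of $\{[U_k]_a\}$: combining~(i) with~(ii) tested against $U_k$ in the form $\Fl(U_k)-\tfrac{1}{q+1}\langle\Fl'(U_k),U_k\rangle$ yields
\begin{equation*}
\Bigl(\tfrac12-\tfrac1{q+1}\Bigr)[U_k]_a^2+\Bigl(\tfrac1{q+1}-\tfrac1{\cx}\Bigr)\|\ukp(\cdot,0)\|^{\cx}=c_\eps+o(1)+o([U_k]_a),
\end{equation*}
in which both coefficients on the left are non-negative (from $q\ge 1$ and $q+1<\cx$). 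For $q>1$ this yields $\sup_k[U_k]_a<\infty$ at once; for $q=1$ one also uses Proposition~\ref{qineq}, Proposition~\ref{traceIneq}, and the smallness of $\eps$. Passing to a subsequence, $U_k\rightharpoonup U$ in $\Hsa$, and the compact trace embedding $H^s(\R^n)\hookrightarrow L^{q+1}_{\rm loc}(\R^n)$ gives $U_k(\cdot,0)\to U(\cdot,0)$ almost everywhere and in $L^{q+1}_{\rm loc}$. To promote this to the global convergence $\int h\,\ukp^{q+1}(\cdot,0)\to\int h\,U_+^{q+1}(\cdot,0)$, I split the integral at $|x|=R$ and apply H\"older with exponent $m=\cx/(\cx-(q+1))$, so that the tail is bounded by $\|h\|_{L^m(\{|x|>R\})}[U_k]_a^{q+1}\to 0$ as $R\to\infty$ because $h\in L^m(\R^n)$. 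Combined with the weak convergence of the gradient and critical terms, this yields $\Fl'(U)=0$.

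Set $V_k:=U_k-U$. The Brezis--Lieb lemma applied to $[\,\cdot\,]_a^2$ via weak convergence of the $y^{a/2}$-weighted gradients in $L^2(\Rp)$, and to $\|\,\cdot\,\|^{\cx}$ via a.e.\ convergence of the traces (for positive parts), together with the subcritical convergence just established, gives
\begin{equation*}
\Fl(U_k)=\Fl(U)+\tfrac12[V_k]_a^2-\tfrac1{\cx}\|V_{k,+}(\cdot,0)\|^{\cx}+o(1),
\end{equation*}
and similarly $\langle\Fl'(U_k),U_k\rangle=[V_k]_a^2-\|V_{k,+}(\cdot,0)\|^{\cx}+o(1)$, because the contribution of $U$ cancels by $\Fl'(U)=0$. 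Extracting a further subsequence so that $[V_k]_a^2\to L_1$ and $\|V_{k,+}(\cdot,0)\|^{\cx}\to L_2$, assumption~(ii) gives $L_1=L_2=:L$, and~(i) then yields the energy identity $c_\eps=\Fl(U)+\tfrac{s}{n}L$.

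If $L=0$ then $[V_k]_a\to 0$ and the proof is complete; otherwise, the trace inequality of Proposition~\ref{traceIneq} applied to $V_k$ and passage to the limit forces $L\ge S^{n/(2s)}$, so $\Fl(U)\le c_\eps-\tfrac{s}{n}S^{n/(2s)}$. Testing $\Fl'(U)=0$ against $U$ gives $[U]_a^2=\eps\int h\,U_+^{q+1}(\cdot,0)+\|U_+(\cdot,0)\|^{\cx}$, which inserted into~\eqref{oper} produces
\begin{equation*}
\Fl(U)=\eps\Bigl(\tfrac12-\tfrac{1}{q+1}\Bigr)\int_{\R^n}h(x)U_+^{q+1}(x,0)\,dx+\tfrac{s}{n}\|U_+(\cdot,0)\|^{\cx}.
\end{equation*}
When $n\in(2s,6s)$ the hypothesis $h\ge 0$ together with $q\ge 1$ gives $\Fl(U)\ge 0$, contradicting $\Fl(U)<-c_1\eps^{1+\delta}$ for any $c_1>0$; when $n\ge 6s$, bounding the first term from below via Proposition~\ref{qineq} and invoking Proposition~\ref{proppqs} with $\alpha=\|U_+(\cdot,0)\|$ gives $\Fl(U)\ge-\bar C\eps^{\cx/(\cx-(q+1))}$, contradicting the corresponding threshold in~(i) for any $c_1>0$. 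The main obstacle throughout is to produce a lower bound on $\Fl(U)$ \emph{at the critical point} sharp enough to be absorbed by the gap $\tfrac{s}{n}S^{n/(2s)}-c_\eps$: the refined threshold in~(i), with its two dimensional regimes, mirrors exactly the quality of this estimate, and the sign restriction on $h$ for $n\in(2s,6s)$ is forced by Proposition~\ref{proppqs} yielding no better than $\eps^{\cx/(\cx-(q+1))}$, which in low dimensions is too weak to beat the buffer $c_1\eps^{1+\delta}$.
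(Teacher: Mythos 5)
Your proof is correct, but it takes a genuinely different route from the paper's. The paper proves the Palais--Smale condition by first establishing \emph{tightness} of the sequence (Lemma \ref{tightness}) through a delicate cut-off decomposition $U_k=\chi U_k+(1-\chi)U_k$ at a carefully selected annulus, and then invokes the Lions concentration--compactness principle for measures (Proposition \ref{CCP}) to rule out the atomic concentrations $\nu_j\delta_{x_j}$. The cut-off step unavoidably produces errors of size $\eps^{\alpha/\gamma}$, and this is precisely what forces the buffer $c_1\eps^{1+\delta}$ into the energy threshold in hypothesis (i). You instead bypass tightness entirely with a Brezis--Lieb/single-bubble argument: the weak limit $U$ is shown to be a critical point, the Brezis--Lieb decompositions for $[\cdot]_a^2$ and $\|(\cdot)_+(\cdot,0)\|^{\cx}$ together with the subcritical convergence give $c_\eps=\Fl(U)+\tfrac{s}{n}L$ with either $L=0$ or $L\geq S^{n/(2s)}$, and the lower bound on $\Fl(U)$ at criticality (which is the same computation the paper carries out for $W_k$ and $V_k$) produces the contradiction directly. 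This handles vanishing, translation to infinity, and pointwise concentration uniformly, since the Brezis--Lieb identity is indifferent to where the mass of $V_k$ goes, and it actually establishes the Palais--Smale condition under the \emph{sharper} threshold $c_\eps+\bar C\eps^{\cx/(\cx-(q+1))}<\tfrac{s}{n}S^{n/(2s)}$ (for $n\geq 6s$): the $c_1\eps^{1+\delta}$ term is an artifact of the paper's cut-off and plays no role in your argument. Two points you leave implicit but should verify are standard: the Brezis--Lieb lemma applied to $\|(U_k)_+(\cdot,0)\|^{\cx}$ with remainder $(V_k)_+=(U_k-U)_+$ (rather than $(U_k)_+-U_+$) requires the generalized Brezis--Lieb lemma for $j(t)=t_+^p$, which holds since $|j(a+b)-j(a)|\leq\eps|a|^p+C_\eps|b|^p$; and the passage to the limit $\langle\Fl'(U_k),\phi\rangle\to\langle\Fl'(U),\phi\rangle$ in the $h$-term needs the H\"older splitting with exponent $m=\cx/(\cx-(q+1))$ (testing $(U_k)_+^q-U_+^q$ in $L^{\cx/q}$, $\phi$ in $L^{\cx}$, $h$ in $L^m$), not merely local compactness of the trace.
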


\noindent \textcolor{black}{Here, the limit in $(ii)$ is to be intended as
\[ \lim_{k\to +\infty}  \|\mathcal F'(U_k) \|_{\mathcal L(E,E)} = \lim_{k\to +\infty} \sup_{V\in E,\|V\|_{E}=1} \left|\scp{ \mathcal F'(U_k),V }\right| = 0, \] where we denote by $\mathcal L(E,E)$ the space of all linear functionals from $E$ to $E$.}
\begin{remark}
As we commented in the introduction, one of the key points in this work is to slightly improve the critical level in such a way that further on we can build a sequence whose energy lies below it. This is precisely the role played by the parameter $\delta$ in the previous theorem. We can not drop this term (that will cause important difficulties) but we can choose $\delta$ large enough so that we can neglect it when $\varepsilon\rightarrow 0$.
\end{remark}

\noindent We recall at first a concentration-compactness principle, stated in \cite[Proposition 3.2.3]{maria} and proved there. This principle is based on the original results by P.L Lions in \cite{lions1,lions2} (in particular  in \cite[Lemma 2.3]{lions2}). For this, we recall the next definitions.
\begin{defn}
A sequence $\{U_k\}_{k\in \N}$ is tight if for every $\mu>0$ there exists $\rho>0$ such that for any $k\in \N$
\[ \int_{\Rp\setminus B_{\rho}^+}y^a |\nabla U_k|^2 \, dX\leq \mu.\]
\end{defn}
\begin{defn}\label{measconv}
Let $\{\mu_k\}_{k\in \N}$ be a sequence of measures on a topological space $X$. We say that $\mu_k $ converges to $\mu$ on $X$ if and only if
\[\lim_{k\to +\infty} \int_X \varphi d\mu_k =\int_X \varphi \, d\mu \quad \mbox{ for any } \varphi \in C_0(X).\]
\end{defn}
\noindent Then the principle goes as follows.
\begin{prop}[Concentration-Compactness Principle]\label{CCP}
Let $\{U_k\}_{k\in \N}$ be a bounded and tight sequence in $\Hsa$ such that $U_k$ converges weakly to $U$ in $\Hsa$. Let $\mu,\nu$ be two nonnegative measures on $\Rp$ respectively $\Rn$ such that (in the sense of Definition \ref{measconv})
\bgs{\label{measconv1} \lim_{k \to +\infty} y^a|\nabla U_k|^2 =\mu }
and
\bgs{\label{measconv2} \lim_{k \to +\infty} | U_k(x,0)|^{2_s^{\star}} =\nu. }
 Then there exists a set $J$ that is at most countable and three families $\{x_j\}_{j\in J}\in \Rn$, $\{\nu_j\}_{j\in J}$ and $\{\mu_j\}_{j\in J}$ with $\nu_j, \mu_j\geq0$ such that
\begin{flalign*}
\mbox{ (i) } &\nu=|U(x,0)|^{2_s^{\star}}+\sum_{j\in J} \nu_j \delta_{x_j} \qquad \qquad \qquad\qquad\qquad\qquad\qquad\qquad\qquad\qquad\qquad\qquad\qquad\qquad\qquad \\
\mbox{ (ii) }& \mu \geq y^a|\nabla U|^2 + \sum_{j\in J}\mu_j \delta_{\{x_j,0\}}\\
\mbox{ (iii) }&\mu_j\geq S\nu_j^{2/2_s^{\star}} \mbox{ for all } j \in J.
\end{flalign*}

\end{prop}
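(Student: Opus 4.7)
I will follow the classical Lions concentration–compactness method, adapted to the weighted half–space setting, where the trace Sobolev inequality of Proposition~\ref{traceIneq} plays the role usually reserved for the Sobolev embedding. The plan has three stages: reduction to the case of zero weak limit, a localized reverse Hölder inequality between the two limiting measures, and the atomic structure deduced from it.

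First, I would reduce to the case $U\equiv0$ by setting $V_k:=U_k-U$. Up to extracting a subsequence, $V_k\rightharpoonup 0$ in $\Hsa$ and $V_k(\cdot,0)\to 0$ a.e.\ on $\R^n$ (the latter by a Rellich-type compactness for the weighted trace; this is where the Muckenhoupt $A_2$ property of $y^a$ is crucial). A Brezis–Lieb type splitting then yields
\[
|U_k(\cdot,0)|^{2^\star_s}-|V_k(\cdot,0)|^{2^\star_s}\;\longrightarrow\;|U(\cdot,0)|^{2^\star_s}
\quad\text{and}\quad
y^a|\nabla U_k|^2-y^a|\nabla V_k|^2\;\longrightarrow\;y^a|\nabla U|^2
\]
in the sense of measures. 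The tightness of $\{U_k\}$ is used here to prevent mass from escaping to infinity. Thus $\nu=|U(\cdot,0)|^{2^\star_s}+\tilde\nu$ and $\mu\geq y^a|\nabla U|^2+\tilde\mu$, and it suffices to study $\tilde\mu,\tilde\nu$ associated with $V_k$.

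Second, for any $\phi\in C_c^\infty(\R^{n+1})$, applying Proposition~\ref{traceIneq} to $\phi V_k$ gives
\[
S\Bigl(\int_{\R^n}|\phi(x,0)|^{2^\star_s}|V_k(x,0)|^{2^\star_s}\,dx\Bigr)^{2/2^\star_s}\leq \int_{\Rp} y^a|\nabla(\phi V_k)|^2\,dX.
\]
Expanding $|\nabla(\phi V_k)|^2=\phi^2|\nabla V_k|^2+2\phi V_k\,\nabla\phi\cdot\nabla V_k+V_k^2|\nabla\phi|^2$, the cross and lower-order terms vanish as $k\to\infty$ by the weak convergence of $V_k$ to $0$ combined with the strong convergence $V_k\to 0$ in the weighted space $L^2(\mathrm{supp}\,\phi;y^a)$. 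Passing to the limit yields the reverse Hölder inequality
\[
S\Bigl(\int_{\R^n}|\phi(\cdot,0)|^{2^\star_s}\,d\tilde\nu\Bigr)^{2/2^\star_s}\leq \int_{\Rp}\phi^2\,d\tilde\mu.
\]
A density argument extends this to all bounded Borel $\phi$.

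Third, the reverse Hölder inequality forces $\tilde\nu$ to be purely atomic: if $E\subset\R^n\times\{0\}$ has $\tilde\mu(E)=0$, then $\tilde\nu(E)=0$, so $\tilde\nu$ is singular with respect to $\tilde\mu$ restricted to non-atomic parts and must therefore be supported on the atoms of $\tilde\mu$. Writing $\tilde\nu=\sum_{j\in J}\nu_j\delta_{x_j}$ with $J$ at most countable (since $\tilde\nu$ is finite), and choosing cutoffs concentrating at $x_j$, one extracts atoms $\mu_j$ of $\tilde\mu$ at $(x_j,0)$ with $\mu_j\geq S\nu_j^{2/2^\star_s}$. Combined with the Brezis–Lieb decomposition of stage one, this gives (i), (ii) and (iii).

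The main obstacle is the rigorous passage to the limit in the cross term $\int_{\Rp}y^a\phi V_k\,\nabla\phi\cdot\nabla V_k\,dX$: by Cauchy–Schwarz it is controlled by $\|V_k\|_{L^2(y^a;\,\mathrm{supp}\,\phi)}\,[V_k]_a$, and the first factor must be shown to vanish. This requires a compact embedding of $\Hsa$ into $L^2_{\mathrm{loc}}(\Rp,y^a)$, which follows from general results on Muckenhoupt-weighted Sobolev spaces and is the technical heart of the adaptation of Lions' argument to the degenerate-weight setting.
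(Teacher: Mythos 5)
Your proposal follows the standard Lions concentration–compactness method, which is exactly the route taken in the reference the paper cites for this proposition: the text does not prove the statement but refers to \cite[Proposition 3.2.3]{maria}, which in turn adapts Lions' original argument (and in particular Lemma I.2 of \cite{lions2}) to the weighted half–space. Your three stages — Brezis–Lieb reduction to zero weak limit, the reverse Hölder inequality from the trace inequality applied to $\phi V_k$, and atomicity — match that scheme; you also correctly flag the one nontrivial ingredient, the compact embedding of $\dot H^s_a(\R^{n+1}_+)$ into $L^2_{\rm loc}(\R^{n+1}_+,y^a\,dX)$ needed for the cross and lower-order terms to vanish, which is where the $A_2$ property of $y^a$ enters.
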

\noindent We prove that a sequence $\{U_k\}_{k\in\N}$ satisfying the assumptions in Theorem \ref{PSCthm} is bounded. \textcolor{black}{A slighter more general result is given in the following Lemma.}

\begin{lemma}\label{Ukisbd}
Let $\varepsilon, \kappa>0$ and let $\{U_k\}_{k\in \N} \subset \Hsa$ be a sequence that satisfies
\eqlab{ \label{bound1} |\Fl (U_k)| + \sup_{V\in \Hsa, \,  [V]=1}| \scp{\Fl' (U_k),V}|\leq \kappa }
for any $k\in \N$. Then there exists $M>0$ such that for any $k\in \N$
	\eqlab{\label{bound}  [U_k]_a\leq M.}
\end{lemma}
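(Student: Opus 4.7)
The strategy is the classical Ambrosetti--Rabinowitz type bootstrap: form a linear combination $\Fl(U_k)-\tfrac{1}{\theta}\scp{\Fl'(U_k),U_k}$ for an appropriate $\theta$ that isolates a positive multiple of $[U_k]_a^2$ while killing (or controlling) the dangerous critical term. First, observe that plugging $V=U_k/[U_k]_a$ into the hypothesis yields $|\scp{\Fl'(U_k),U_k}|\le \kappa\,[U_k]_a$ (the case $U_k\equiv 0$ being trivial), so the combination is controlled by $\kappa(1+[U_k]_a/\theta)$.

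The natural choice is $\theta=q+1$, because then the $h$--term cancels exactly: using \eqref{oper} and \eqref{operderiv} one computes
\begin{equation*}
\Fl(U_k)-\frac{1}{q+1}\scp{\Fl'(U_k),U_k}
=\left(\frac12-\frac{1}{q+1}\right)[U_k]_a^2+\left(\frac{1}{q+1}-\frac{1}{\cx}\right)\int_{\Rn}U_{k,+}^{\cx}(x,0)\,dx.
\end{equation*}
Since $q+1<\cx$, the coefficient of the critical integral is strictly positive, so it can be dropped from the left. When $q>1$, we also have $\tfrac12-\tfrac{1}{q+1}>0$ and the estimate
\begin{equation*}
\left(\frac12-\frac{1}{q+1}\right)[U_k]_a^2\le \kappa+\frac{\kappa}{q+1}[U_k]_a
\end{equation*}
immediately yields the bound $[U_k]_a\le M$, with $M$ depending only on $\kappa,n,s,q$.

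The main obstacle is the borderline case $q=1$, in which the quadratic coefficient $\tfrac12-\tfrac{1}{q+1}$ vanishes, so the above identity only provides control of the critical $L^{\cx}$--norm, namely
\begin{equation*}
\left(\frac12-\frac{1}{\cx}\right)\int_{\Rn}U_{k,+}^{\cx}(x,0)\,dx\le \kappa+\frac{\kappa}{2}[U_k]_a,
\end{equation*}
i.e. $\int_{\Rn}U_{k,+}^{\cx}(x,0)\,dx\le C(1+[U_k]_a)$. To recover a bound on $[U_k]_a$, I then feed this information back into the definition of $\Fl(U_k)$ itself: using Proposition \ref{qineq} with $q=1$ together with the trace inequality (Proposition \ref{traceIneq}),
\begin{equation*}
\frac12[U_k]_a^2=\Fl(U_k)+\frac{\varepsilon}{2}\int_{\Rn}h\,U_{k,+}^{2}(x,0)\,dx+\frac{1}{\cx}\int_{\Rn}U_{k,+}^{\cx}(x,0)\,dx
\le\kappa+\frac{\varepsilon\,\|h\|_{L^m(\Rn)}}{2S}[U_k]_a^2+\widetilde C(1+[U_k]_a).
\end{equation*}
For $\varepsilon$ sufficiently small (consistent with the smallness regime in which Theorem~\ref{PSCthm} and the main Theorem~\ref{scrtheorem} are stated), the quadratic coefficient on the right is strictly less than $1/2$, and absorbing it on the left gives a quadratic-in-$[U_k]_a$ inequality that forces $[U_k]_a\le M$. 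The resulting $M$ depends on $\kappa,\varepsilon,n,s,q,h$, which is precisely what the statement claims.
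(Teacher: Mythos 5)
Your choice of multiplier $\theta=q+1$ is a genuinely different (and, for $q>1$, cleaner) route than the paper's, which instead forms $\Fl(U_k)-\tfrac12\scp{\Fl'(U_k),U_k}$; there the gradient term $[U_k]_a^2$ cancels, leaving $\varepsilon(\tfrac12-\tfrac1{q+1})\int h\,(U_k)_+^{q+1}+\tfrac{s}{n}\|(U_k)_+\|^{\cx}$, from which the paper first bounds $\|(U_k)_+\|^{\cx}$ linearly in $[U_k]_a$ (absorbing the $h$-term via the ratio $\tau^{q+1}/\tau^{\cx}\to0$) and then closes by plugging into the rough bound $[U_k]_a^2\lesssim 1+\|(U_k)_+\|^{\cx}$. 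Your route, by cancelling the $h$-term outright, gives the $q>1$ case in one line, which is a real simplification.

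However, there is a genuine gap in your $q=1$ case as written. You bound $\int h\,(U_k)_+^2$ by $\|h\|_{L^m}\|(U_k)_+\|^2\le \tfrac{\|h\|_{L^m}}{S}[U_k]_a^2$ via the trace inequality and then ask for $\tfrac{\varepsilon\|h\|_{L^m}}{2S}<\tfrac12$ to absorb it, i.e.\ you implicitly require $\varepsilon<S/\|h\|_{L^m}$. The lemma is stated for \emph{all} $\varepsilon>0$ (and is used as a black box inside Lemma \ref{tightness}), so this smallness requirement is not available; you have proved a weaker statement than the one claimed. The fix stays entirely within your framework: you have already established $\|(U_k)_+\|^{\cx}\le C(1+[U_k]_a)$, and since $\|(U_k)_+\|^2=\big(\|(U_k)_+\|^{\cx}\big)^{2/\cx}$ with $2/\cx<1$, the term $\tfrac{\varepsilon}{2}\int h\,(U_k)_+^2\le\tfrac{\varepsilon}{2}\|h\|_{L^m}\big(C(1+[U_k]_a)\big)^{2/\cx}$ grows sublinearly in $[U_k]_a$. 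Feeding this (rather than the trace-inequality bound) into $\tfrac12[U_k]_a^2=\Fl(U_k)+\tfrac{\varepsilon}{2}\int h\,(U_k)_+^2+\tfrac1{\cx}\|(U_k)_+\|^{\cx}\le\kappa+\tfrac{\varepsilon}{2}\|h\|_{L^m}\big(C(1+[U_k]_a)\big)^{2/\cx}+\tfrac{C}{\cx}(1+[U_k]_a)$ gives a right-hand side of strictly subquadratic growth, and hence the bound on $[U_k]_a$ for every $\varepsilon>0$, as required.
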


\begin{proof}
We suppose by contradiction that for every $M>0$ there exists $k\in \N$ such that
	\eqlab{ \label{ra}  [U_k]_a>M.}
Thanks to \eqref{bound1} we have that
	\bgs{  \kappa \geq \al \Fl (U_k) = \frac{1}{2} [U_k]_a^2 -\frac{\varepsilon}{q+1} \int_{\Rn} h(x) (U_k)_+ ^{q+1}(x,0)\, dx -\frac{1}{\cx}\int_{\Rn} (U_k)_+^{\cx} (x,0)  \, dx.}
Using also the bound in \eqref{bound3}, we obtain that
	\eqlab{ \label{e1}   \,[U_k]_a^2 \leq \al 2 \kappa + \frac{2\varepsilon}{q+1} \int_{\Rn} h(x) (U_k)_+ ^{q+1}(x,0)\, dx +\frac{2}{\cx}\int_{\Rn} (U_k)_+^{\cx} (x,0) \, dx \\
		\leq \al 2 \kappa + \frac{2 \varepsilon}{q+1} \|h\|_{L^m(\Rn)} \|(U_k)_+\|^{q+1} +\frac{2}{\cx}\| (U_k)_+\|^{\cx} .}
Thus, from this and \eqref{ra}, we deduce that also \textcolor{black}{for every $\tilde{M}>0$ one can find $k\in\mathbb{N}$ so that}
	\eqlab{ \label{ra2} \|\ukp\|>\tilde{M}.} 
Consider now the function $f\colon (0,\infty) \to (0,\infty)$ defined as 
	\[ f(\tau):=\frac{\tau^{q+1}}{\tau^{\cx}}.\] 
Since $q+1<\cx$ we have that
	\[\lim_{\tau \to \infty}f(\tau)=0,\] 
and hence, for any $\delta>0$ there exists $\tau_\delta>0$ such that for every $\tau>\tau_\delta$, one has that $f(\tau)< \delta$. Hence, fixing  $0<\delta<1$, by \eqref{ra2} we can assume
\begin{equation}\label{lowBound1}
\|(U_k)_+\|>\tau, \qquad \|\ukp\|^{q+1}\leq \delta\|\ukp\|^{\cx},\qquad \forall \; \tau>\tau_\delta.
\end{equation}	
Therefore, by Proposition \ref{traceIneq} \textcolor{black}{there exists $k\in \N$ such that }
\begin{equation}\label{lowBound2}
[U_k]_a>\tau S^{1/2}, \qquad \forall \; \tau>\tau_\delta.
\end{equation}
	\textcolor{black}{Using \eqref{lowBound1} and} \eqref{e1} we obtain that
	\eqlab{ \label{bound2}   	[U_k]_a^2  \leq 2\kappa+ \lr{\delta \frac{2\varepsilon}{q+1} \|h\|_{L^m(\Rn)}+ \frac{2}{\cx}  } \|\ukp\|^{\cx}.}
	
\noindent On the other hand, considering the quotient $U_k / [U_k]_a$ from \eqref{bound1} we get that
	\[ |\scp{\Fl'(U_k),U_k}|\leq \kappa [U_k]_a.\]
	From this and the fact that $|\Fl(U_k)|\leq \kappa$, for $q>1$ we have that
		\eqlab{\label{usel} \kappa (1+[U_k]_a)\geq \al \Fl(U_k) - \frac{1}{2}\scp{\Fl'(U_k),U_k} \\ =\al \varepsilon\lr{\frac{1}{2} -\frac{1}{q+1} }\int_{\Rn} h(x) \ukp^{q+1}(x,0)\, dx + \frac{s}{n} \|\ukp \|^{\cx},}
		\textcolor{black}{recalling that
		\[ \frac{1}2-\frac{1}{\cx} =\frac{s}n.\]}
Thanks to the bound in \eqref{bound3}, it follows that
	\bgs{ \frac{s}{n} \|\ukp\|^{\cx} \leq\al   \kappa (1+[U_k]_a) + \varepsilon\lr{\frac{1}{2} -\frac{1}{q+1} } \|h\|_{L^m(\Rn)} \|\ukp\|^{q+1}.}
We use \eqref{lowBound1} again and we obtain that 
			\bgs{ \frac{s}{n} \|\ukp\|^{\cx} \leq\al \kappa (1+[U_k]_a) +\delta \varepsilon \lr{\frac{1}{2} -\frac{1}{q+1} } \|h\|_{L^m(\Rn)}  \|\ukp\|^{\cx}  .}
			Thus
					\bgs{\lrq{ \frac{s}{n}-  \delta \varepsilon \lr{\frac{1}{2} -\frac{1}{q+1} } \|h\|_{L^m(\Rn)}  } \|\ukp\|^{\cx} \leq\al \kappa (1+[U_k]_a)  ,}
					which for $\delta$ small enough, implies that
						\[ c \|\ukp\|^{\cx}\leq \kappa(1+[U_k]_a).\] 
Notice that for $q=1$ the inequality above immediately follows from \eqref{usel}. 
						This, together with \eqref{bound2}, yields 
						\[ [U_k]_a^2\leq C_1+ C_2 [U_k]_a \] 
						for suitable positive constants $C_1, C_2$, both independent of $k$. Choosing $\tau$ large enough in \eqref{lowBound2} we contradict this inequality and conclude the proof.
\end{proof}
\noindent Furthermore, a sequence $\{U_k\}_{k\in \N} \subset \Hsa$ that satisfies the hypotheses of Theorem \ref{PSCthm} is tight, as stated in the next Lemma.

\begin{lemma}\label{tightness} Let $\{U_k\}_{k\in \N} \subset \Hsa$ be a sequence that satisfies the hypothesis of Theorem \ref{PSCthm}. Then for any $\eta>0$ there exists $\rho>0$ such that for any $k\in \N$ it holds that
\[ \int_{\Rp\setminus B_{\rho}^+} y^a|\nabla U_k|^2 \, dX + \int_{\Rn \setminus \{B_\rho \cap\{y=0\}\}} (U_k)^{\cx} (x,0)\, dx<\eta.\] In particular, the sequence $\{U_k\}_{k\in \N}$ is tight.
\end{lemma}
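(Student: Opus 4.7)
The strategy will be a concentration-compactness argument at infinity, in the spirit of P.~L. Lions, adapted to the extended setting. First, one observes that hypotheses $(i)$--$(ii)$ of Theorem~\ref{PSCthm} imply the quantitative bound \eqref{bound1} for some $\kappa > 0$ and all sufficiently large $k$, so Lemma~\ref{Ukisbd} will deliver a uniform estimate $[U_k]_a\leq M$. The measures $\mu_k:=y^{a}|\nabla U_k|^2\,dX$ on $\overline{\Rp}$ and $\nu_k:=\ukp^{\cx}(\cdot,0)\,dx$ on $\Rn$ are therefore uniformly bounded and, up to a subsequence, we shall introduce the \emph{masses at infinity}
\[\mu_\infty:=\lim_{R\to\infty}\limsup_{k\to\infty}\int_{\Rp\setminus B_R^+} y^a|\nabla U_k|^2\,dX,\qquad \nu_\infty:=\lim_{R\to\infty}\limsup_{k\to\infty}\int_{\Rn\setminus B_R}\ukp^{\cx}(x,0)\,dx.\]
A standard diagonal argument will then reduce the asserted tightness to showing $\mu_\infty=\nu_\infty=0$.

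The heart of the plan is to derive two matching inequalities for these masses. Fix a smooth radial cut-off $\psi_\rho\in C^\infty(\R^{n+1};[0,1])$ with $\psi_\rho\equiv 0$ on $B_\rho^+$, $\psi_\rho\equiv 1$ outside $B_{2\rho}^+$ and $|\nabla\psi_\rho|\leq C/\rho$. Applying the trace inequality of Proposition~\ref{traceIneq} to $\psi_\rho U_k$, expanding $\nabla(\psi_\rho U_k)=\psi_\rho\nabla U_k+U_k\nabla\psi_\rho$, and handling the cross term through the weighted Sobolev embedding of Proposition~\ref{sob} together with H\"{o}lder on the annulus $B_{2\rho}^+\setminus B_\rho^+$, the $\nabla\psi_\rho$-contribution will vanish uniformly in $k$ as $\rho\to\infty$; sending first $k\to\infty$ and then $\rho\to\infty$ will yield $\mu_\infty\geq S\,\nu_\infty^{2/\cx}$. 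Testing next $\scp{\Fl'(U_k),\psi_\rho U_k}=o(1)$ (valid since $\psi_\rho U_k$ is bounded in $\Hsa$), the same cross-term analysis combined with Proposition~\ref{qineq} applied to the weight $h\,\chi_{\Rn\setminus B_\rho}$ (whose $L^m$-norm tends to $0$ because $h\in L^m(\Rn)$) will dispose of the subcritical term and produce the reverse identity $\mu_\infty=\nu_\infty$. Combining the two, either $\nu_\infty=0$ or $\nu_\infty\geq S^{n/2s}$.

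The second alternative will be excluded through the energy bound $(i)$. Subtracting $\tfrac{1}{2}\scp{\Fl'(U_k),U_k}=o(1)$ from $\Fl(U_k)=c_\varepsilon+o(1)$ produces
\[c_\varepsilon+o(1)=\tfrac{s}{n}\int_{\Rn}\ukp^{\cx}(x,0)\,dx+\varepsilon\Big(\tfrac12-\tfrac1{q+1}\Big)\int_{\Rn}h\,\ukp^{q+1}(x,0)\,dx.\]
For $n\in(2s,6s)$ the hypothesis $h\geq 0$ makes the second right-hand term nonnegative, so $\nu_\infty\geq S^{n/2s}$ would force $c_\varepsilon\geq\tfrac{s}{n}S^{n/2s}$, contradicting $(i)$ for $\varepsilon$ small. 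For $n\geq 6s$ the (possibly signed) $h$-integral will be estimated from below via Proposition~\ref{proppqs} with $\alpha=\|\ukp\|$, introducing exactly the correction $-\bar C\varepsilon^{\cx/(\cx-(q+1))}$ appearing in the hypothesis on $c_\varepsilon$; the resulting inequality $c_\varepsilon\geq\tfrac{s}{n}S^{n/2s}-\bar C\varepsilon^{\cx/(\cx-(q+1))}$ again contradicts $(i)$. In either case $\nu_\infty=0$, whence $\mu_\infty=0$ and tightness follows.

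\textbf{The main obstacle} will be the careful bookkeeping of the mixed term $\int y^a U_k\nabla U_k\cdot\nabla\psi_\rho\,dX$ that arises in both the trace inequality and the Palais-Smale testing: a crude H\"{o}lder estimate on the annulus alone does not suffice, and one must combine the $1/\rho$ decay of $|\nabla\psi_\rho|$ with the weighted Sobolev embedding of Proposition~\ref{sob} to absorb the factor $1/\rho$ and obtain smallness uniform in $k$. A secondary subtlety is that for $n\in(2s,6s)$ the penalty $\bar C\varepsilon^{\cx/(\cx-(q+1))}$ would dominate the leading order in the energy identity, which is precisely why the sign hypothesis $h\geq 0$ must be imposed in that dimensional range.
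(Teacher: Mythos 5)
Your overall plan — the Lions concentration-at-infinity dichotomy, with masses $\mu_\infty,\nu_\infty$, the matching inequalities $\mu_\infty\ge S\,\nu_\infty^{2/\cx}$ and $\mu_\infty=\nu_\infty$, and exclusion of the nontrivial alternative through the energy identity together with Proposition~\ref{proppqs} — is a genuinely different route from the paper's. The paper instead proceeds by contradiction, uses a pigeonhole over $j_\eps\sim\eps^{-\alpha}$ concentric width-one shells to locate a specific annulus $I_{\bar l}$ on which $\int y^a|\nabla U_k|^2$, $\int y^a|U_k|^{2\gamma}$ and the trace $L^{\cx}$ mass are all $\le\eps^\alpha$, then performs the cut-off splitting $V_k=\chi U_k$, $W_k=(1-\chi)U_k$ on that shell and bounds each of $\Fl(V_k),\Fl(W_k),\Fl(U_k)$ from below. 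Your route, if made rigorous, would avoid the pigeonhole and be somewhat cleaner.

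However, there is a genuine gap in your control of the $\nabla\psi_\rho$-contribution. You claim that "the weighted Sobolev embedding of Proposition~\ref{sob} together with H\"older on the annulus" makes these errors "vanish uniformly in $k$ as $\rho\to\infty$." This fails: the weighted annular measure scales as $\int_{B_{2\rho}^+\setminus B_\rho^+}y^a\,dX\sim\rho^{n+2-2s}$, and since $(\gamma-1)/\gamma=2/(n+2-2s)$, H\"older plus Proposition~\ref{sob} give
\[\frac{C}{\rho^2}\int_{B_{2\rho}^+\setminus B_\rho^+}y^a|U_k|^2\,dX
\le\frac{C}{\rho^2}\Big(\int y^a|U_k|^{2\gamma}\Big)^{1/\gamma}\Big(\int_{B_{2\rho}^+\setminus B_\rho^+}y^a\Big)^{(\gamma-1)/\gamma}
\le C\,\widehat S^{\,2}[U_k]_a^2,\]
so the $1/\rho^2$ decay of $|\nabla\psi_\rho|^2$ is \emph{exactly} cancelled by the growth of the annular measure: the term is bounded but does not become small, and the same cancellation occurs in the cross term after Cauchy--Schwarz. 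In particular the vanishing cannot be uniform in $k$. The mechanism that actually makes the Lions route work is entirely different: after extracting a further subsequence so that $\rho\mapsto\int_{\C B_\rho^+}y^a|\nabla U_k|^2$ and $\rho\mapsto\int_{\C B_\rho^+}y^a|U_k|^{2\gamma}$ converge pointwise to nonincreasing tail functions $g_1,g_2$, the shell masses $\lim_k\int_{B_{2\rho}^+\setminus B_\rho^+}(\cdot)$ equal $g_i(\rho)-g_i(2\rho)$, which tend to zero as $\rho\to\infty$; this telescoping cancellation, not the global Sobolev bound, is what kills both error terms in the iterated limit $k\to\infty$, then $\rho\to\infty$. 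Without this ingredient the inequalities $\mu_\infty\ge S\nu_\infty^{2/\cx}$ and $\mu_\infty\le\nu_\infty$ do not follow, so the argument as written does not close.
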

\begin{proof}
First we notice that~\eqref{bound1}
holds in this case, due to conditions~(i) and~(ii) in
Theorem~\ref{PSCthm}. Hence,
Lemma \ref{Ukisbd} 
gives that the sequence 
$\{U_k\}_{k\in\mathbb{N}}$ is uniformly bounded in $\dot{H}^s_a(\mathbb{R}^{n+1}_+)$, and thus
\begin{equation}\begin{split}\label{weak convergence-1}
& U_k\rightharpoonup U \quad \hbox{ in }\dot{H}^s_a(\mathbb{R}^{n+1}_+) \quad {\mbox{ as }}k\to+\infty \\
{\mbox{and }}& U_k\rightarrow U\quad  \hbox{ a.e. in }\mathbb{R}^{n+1}_+\quad {\mbox{ as }}k\to+\infty.
\end{split}\end{equation}

\noindent We now proceed by contradiction. Suppose that there exists $\eta_0>0$ 
such that for all $\rho>0$ there exists~$k=k(\rho)\in\N$ such that
\begin{equation}\label{contrad}
\int_{\mathbb{R}^{n+1}_+\setminus B_\rho^+}{y^a|\nabla U_k|^2\,dX}
+\int_{\mathbb{R}^n\setminus\{B_\rho\cap\{y=0\}\}}{(U_k)_+^{2^{\star}_s}(x,0)\,dx}
\geq \eta_0.
\end{equation}
We observe that 
\begin{equation}\label{forse0} 
k\to+\infty \quad {\mbox{ as }}\rho\to+\infty.
\end{equation}
Indeed, let us take a sequence $\{\rho_i\}_{i\in\mathbb{N}}$ such that $\rho_i\rightarrow +\infty$ as $i\rightarrow +\infty$, and suppose that $k_i:=k(\rho_i)$ given by \eqref{contrad} is a bounded sequence. That is, the set $F:=\{k_i:\;i\in\N\}$ is a finite set of integers.

Hence, there exists an integer $k^\star$ so that we can extract a subsequence $\{k_{i_j}\}_{j\in\N}$ satisfying $k_{i_j}=k^\star$ for any $j\in\N$. Therefore,
\begin{equation}\label{contrad2}
\int_{\mathbb{R}^{n+1}_+\setminus B_{\rho_{i_j}}^+}{y^a|\nabla U_{k^\star}|^2\,dX}
+\int_{\mathbb{R}^n\setminus\{B_{\rho_{i_j}}\cap\{y=0\}\}}{(U_{k^\star})_+^{2^{\star}_s}(x,0)\,dx}
\geq \eta_0,
\end{equation}
for any $j\in \N$. 
But on the other hand, since~$U_{k^\star}$
belongs to~$\dot{H}^s_a(\mathbb{R}^{n+1}_+)$ 
(and so $U_{k^\star}(\cdot,0)\in L^{2^{\star}_s}(\R^n)$ 
thanks to Proposition \ref{traceIneq}), for $j$ large enough there holds
\begin{equation*}
\int_{\mathbb{R}^{n+1}_+\setminus B_{\rho_{i_j}}^+}{y^a|\nabla U_{k^\star}|^2\,dX}
+\int_{\mathbb{R}^n\setminus\{B_{\rho_{i_j}}\cap\{y=0\}\}}{(U_{k^\star})_+^{2^{\star}_s}(x,0)\,dx}
\leq \frac{\eta_0}{2},
\end{equation*}
which is a contradiction with \eqref{contrad2}.
This shows~\eqref{forse0}. 

Now, since $U$ given in~\eqref{weak convergence-1} belongs to~$\in\dot{H}^s_a(\mathbb{R}^{n+1}_+)$, by Propositions~\ref{traceIneq} 
and~\ref{sob} 
we have that for a fixed $\varepsilon>0$, there exists $r_\varepsilon>0$ such that
$$\int_{\mathbb{R}^{n+1}_+\setminus B_{r_\varepsilon}^+}{y^a|\nabla U|^2\,dX}
+\int_{\mathbb{R}^{n+1}_+\setminus B_{r_\varepsilon}^+}{y^a|U|^{2\gamma}\,dX}
+\int_{\mathbb{R}^n\setminus\{B_{r_\varepsilon}\cap\{y=0\}\}}{|U(x,0)|^{2^{\star}_s}\,dx}<\varepsilon^\alpha,$$
with $\alpha>\gamma$ 
 and $\gamma$ defined in Proposition \ref{sob}. Notice that, without loss of generality, we can assume that 
\begin{equation}\label{eps to zero}
{\mbox{$r_\varepsilon\to +\infty$ as $\varepsilon\to 0$.}} 
\end{equation}
On the other hand, since $h\in L^m(\R^n)$ for every $m\in(1,+\infty)$, in particular we can assure the existence of a radius $\bar{r}_\varepsilon$ such that
\begin{equation}\label{heps}
\|h\|_{L^{m}(\R^n\setminus B_{\bar{r}_\varepsilon})}\leq \varepsilon^{\beta},
\end{equation}
with $m$ satisfying $\frac{1}{m}=1-\frac{q+1}{2^{\star}_s}$ and $\beta>\alpha/\gamma-1$.\\
Moreover, by \eqref{bound} and again by Propositions~\ref{traceIneq}
and~\ref{sob}, there exists $\tilde{M}>0$ such that
\begin{equation}\label{boundk}
\int_{\mathbb{R}^{n+1}_+}{y^a|\nabla U_k|^2\,dX}+\int_{\mathbb{R}^{n+1}_+}{y^a|U_k|^{2\gamma}\,dX}
+\int_{\mathbb{R}^n}{|U_k(x,0)|^{2^{\star}_s}\,dx}\leq \tilde{M}.
\end{equation}
Let \textcolor{black}{\eqlab{\label{chooser} r:=\max\{r_\varepsilon,\bar{r}_\varepsilon\}.}} Now let $j_\varepsilon\in\mathbb{N}$ be the integer part of $\frac{\tilde{M}}{\varepsilon^\alpha}$. Notice that~$j_\varepsilon$ tends to~$+\infty$ as~$\varepsilon$ 
tends to~0. We also set
$$ I_l:=\{(x,y)\in\mathbb{R}^{n+1}_+:r+l\leq |(x,y)|\leq r+(l+1)\},\;l=0,1,\cdots,j_\varepsilon.$$
Thus, from~\eqref{boundk} we get
\begin{eqnarray*}
(j_\varepsilon+1)\varepsilon^\alpha &\geq &
\frac{\tilde{M}}{\varepsilon^\alpha}\varepsilon^\alpha \\&\ge &
 \sum_{l=0}^{j_\varepsilon}\left({\int_{I_l}{y^a|\nabla U_k|^2\,dX}
 +\int_{I_l}{y^a|U_k|^{2\gamma}\,dX}
+\int_{I_l\cap\{y=0\}}{|U_k(x,0)|^{2^{\star}_s}\,dx}}\right),
\end{eqnarray*}
and this implies the existence of $\bar{l}\in\{0,1,\cdots, j_\varepsilon\}$ such that, 
up to a subsequence, 
\begin{equation}\label{epsBound}
\int_{I_{\bar{l}}}{y^a|\nabla U_k|^2\,dX}+\int_{I_{\bar{l}}}{y^a|U_k|^{2\gamma}\,dX}
+\int_{I_{\bar{l}}\cap\{y=0\}}{|U_k(x,0)|^{2^{\star}_s}\,dx}\leq \varepsilon^\alpha .
\end{equation}
We take now a cut-off function~$\chi\in C^\infty_0(\R^{n+1}_+,[0,1])$, 
such that
\begin{equation}\label{3.4bis}
\chi(x,y)=\begin{cases}
1,\quad |(x,y)|\leq r+\bar{l}\\
0,\quad |(x,y)|\geq r+(\bar{l}+1),
\end{cases}
\end{equation}
and 
\begin{equation}\label{3.4bisbis}
|\nabla \chi|\leq 2.
\end{equation} 
We also define 
\begin{equation}\label{3.4ter}
V_k:=\chi U_k \quad {\mbox{ and }}\quad W_k:=(1-\chi)U_k.
\end{equation}
We estimate
\begin{equation}\begin{split}\label{math F}
&|\langle \mathcal{F}'_\varepsilon(U_k)-\mathcal{F}'_\varepsilon(V_k),V_k\rangle |\\
&\quad = \bigg|\int_{\mathbb{R}^{n+1}_+}{y^a\langle\nabla U_k,\nabla V_k\rangle\,dX}
-\varepsilon \int_{\mathbb{R}^{n}}{h(x) (U_k)_+^q(x,0)\,V_k(x,0)\,dx}\\
&\qquad\quad -\int_{\mathbb{R}^{n}}{(U_k)_+^{\cx-1}(x,0)\,V_k(x,0)\,dx}
-\int_{\mathbb{R}^{n+1}_+}{y^a\langle\nabla V_k,\nabla V_k\rangle\,dX}\\
&\qquad\quad +\varepsilon \int_{\mathbb{R}^{n}}{h(x) (V_k)_+^{q+1}(x,0)\,dx}
+\int_{\mathbb{R}^{n}}{(V_k)_+^{\cx}(x,0)\,dx}\bigg|.
\end{split}\end{equation}
First, we observe that
\begin{equation}\begin{split}\label{AA}
&\bigg|\int_{\mathbb{R}^{n+1}_+}{y^a\langle\nabla U_k,\nabla V_k\rangle\,dX}-\int_{\mathbb{R}^{n+1}_+}{y^a\langle\nabla V_k,\nabla V_k\rangle\,dX}\bigg|\\
&\qquad\leq \int_{I_{\overline{l}}}{y^a|\nabla U_k|^2|\chi||1-\chi|\,dX}+\int_{I_{\overline{l}}}{y^a|\nabla U_k||U_k||\nabla\chi|\,dX}\\
&\qquad\qquad +2\int_{I_{\overline{l}}}{y^a|U_k||\nabla U_k||\nabla\chi||\chi|\,dX}+\int_{I_{\overline{l}}}{y^a|U_k|^2|\nabla \chi|^2\,dX}\\
&\qquad =:A_1+A_2+A_3+A_4.
\end{split}\end{equation}
By \eqref{epsBound}, we have that $A_1\leq C\varepsilon^\alpha $, for some $C>0$. 
Furthermore, by the H\"older inequality, \eqref{3.4bisbis} and \eqref{epsBound}, we obtain
\begin{eqnarray*}
A_2 &\leq& 2\int_{I_{\overline{l}}}{y^a|\nabla U_k||U_k|\,dX}\leq 2\left(\int_{I_{\overline{l}}}{y^a|\nabla U_k|^2\,dX}\right)^{1/2}\left(\int_{I_{\overline{l}}}{y^a| U_k|^2\,dX}\right)^{1/2}\\
&\leq& 2\varepsilon^{\alpha/2}  \left(\int_{I_{\overline{l}}}{y^a|U_k|^{2\gamma}\,dX}\right)^{1/{2\gamma}}
\left(\int_{I_{\overline{l}}}y^{a}\,dX \right)^{\frac{\gamma-1}{2\gamma}}.
\end{eqnarray*}
Since $a=(1-2s)>-1$,
the second integral is finite, 
and therefore, for $\varepsilon<1$,
\begin{equation*}
A_2\leq \tilde{C}\varepsilon^{\alpha/2}  \left(\int_{ I_{\overline{l} }}{y^a|U_k|^{2\gamma}\,
dX}\right)^{1/{2\gamma}}\leq C\varepsilon^{\alpha/2} \varepsilon^{\alpha/2\gamma}\le C\eps^{\alpha/\gamma},
\end{equation*}
where \eqref{epsBound} was used again. 
In the same way, we get that $A_3\leq C\varepsilon^{\alpha/\gamma}$. Finally, 
\begin{equation*}
A_4\leq C\left(\int_{I_{ \overline{l} }}{y^a|U_k|^{2\gamma}\,dX}\right)^{1/{\gamma}}
\left(\int_{I_{\overline{l}}}{y^{a}\,dX}\right)^{\frac{\gamma-1}{\gamma}}\leq C\eps^{\alpha/\gamma}.
\end{equation*}
Using this information in \eqref{AA}, since $\alpha>\alpha/\gamma$ we obtain that 
$$ \bigg|\int_{\mathbb{R}^{n+1}_+}{y^a\langle\nabla U_k,\nabla V_k\rangle\,dX}
-\int_{\mathbb{R}^{n+1}_+}{y^a\langle\nabla V_k,\nabla V_k\rangle\,dX}\bigg|
\le C\eps^{\alpha/\gamma}, $$
up to renaming the constant $C$. \\
On the other hand by \eqref{3.4ter} and \eqref{epsBound}, 
\begin{eqnarray*}
\bigg|\int_{\mathbb{R}^n}{\Big( (U_k)_+^{\cx-1}(x,0)\,V_k(x,0)-(V_k)_+^{\cx}(x,0)\Big)\,dx}\bigg|
&\le &\int_{\mathbb{R}^n}{|1-\chi^{\cx-1}||\chi| |U_k(x,0)|^{\cx}\,dx}\\
&\leq& C\int_{I_{\overline{l}}\cap\{y=0\}}{|U_k(x,0)|^{2^{\star}_s}\,dx}\leq C\eps^{\alpha}.
\end{eqnarray*}
In the same way, applying the H\"older inequality, one obtains
\begin{equation}\begin{split}\label{ealpha}
&\bigg|\varepsilon\int_{\mathbb{R}^n}{h(x)\,\left((U_k)_+^q(x,0)\,V_k(x,0)-(V_k)_+^{q+1}(x,0)\right)\,dx}\bigg|
\\&\qquad \le \varepsilon\int_{\mathbb{R}^n}{|h(x)|\,|1-\chi^q||\chi| |U_k(x,0)|^{q+1}\,dx}\\
&\qquad \leq C\, \varepsilon\|h\|_{L^\infty(\R^n)}\,
\int_{I_{\overline{l}}\cap\{y=0\}}{|U_k(x,0)|^{2^{\star}_s}\,dx}\leq C\eps^{1+\alpha}.
\end{split}\end{equation}

\noindent All in all, plugging these observations in \eqref{math F}, we obtain that 
\begin{equation}\label{boundV}
|\langle \mathcal{F}'_\varepsilon(U_k)-\mathcal{F}'_\varepsilon(V_k),V_k\rangle|
\leq C\eps^{\alpha/\gamma}.
\end{equation}
Likewise, one can see that
\begin{equation}\label{boundW}
|\langle \mathcal{F}'_\varepsilon(U_k)-\mathcal{F}'_\varepsilon(W_k),W_k\rangle|
\leq C\eps^{\alpha/\gamma}.
\end{equation}

\noindent Now we claim that 
\begin{equation}\label{fprimeV}
|\langle \mathcal{F}'_\varepsilon(V_k),V_k\rangle|\leq C\eps^{\alpha/\gamma}+o_k(1),
\end{equation}
where $o_k(1)$ denotes (here and in the rest of this section)
a quantity that tends to 0 as $k$ tends to $+\infty$. 
For this, we first observe that 
\begin{equation}\label{bbbb}
[V_k]_a\le C\mbox{ and }[W_k]_a\leq C,
\end{equation}
for some $C>0$. Indeed, recalling \eqref{3.4ter} and using \eqref{3.4bis} 
and \eqref{3.4bisbis}, we have 
\begin{eqnarray*}
[V_k]_a^2 &=& \int_{\R^{n+1}_+}y^a|\nabla V_k|^2\,dX \\ 
&=& \int_{\R^{n+1}_+}y^a|\nabla\chi|^2|U_k|^2\,dX + 
\int_{\R^{n+1}_+}y^a\,\chi^2|\nabla U_k|^2\,dX + 2\int_{\R^{n+1}_+}y^a\,\chi\,U_k
\ \langle \nabla U_k, \nabla\chi\rangle\,dX\\
&\le & 4 \int_{I_{\overline{l}} }y^a| U_k|^2\,dX + [U_k]_a^2 +
C\left(\int_{ I_{\overline{l}}}y^a|\nabla U_k|^2\,dX\right)^{1/2}\, 
\left(\int_{I_{\overline{l}}}y^a |U_k|^2\,dX\right)^{1/2}\\
&\le & C \left(\int_{I_{\overline{l}} }y^a| U_k|^{2\gamma}\,dX\right)^{1/\gamma} 
+ [U_k]_a^2 +
C\, [U_k]_a\, \left(\int_{I_{\overline{l}}}y^a |U_k|^{2\gamma}\,dX\right)^{1/2\gamma}, 
\end{eqnarray*}
where the H\"older inequality was used in the last two lines.  
Hence, from Proposition \ref{sob} and using \eqref{bound}, we obtain \eqref{bbbb}. The estimate for $W_k$ can be proved analogously.

Now, we notice that 
\begin{eqnarray*}
|\langle \mathcal{F}'_\varepsilon(V_k),V_k\rangle| \le 
|\langle \mathcal{F}'_\varepsilon(V_k)-\mathcal{F}'_\varepsilon(U_k),V_k\rangle| + 
|\langle \mathcal{F}'_\varepsilon(U_k),V_k\rangle| \le  
C\,\varepsilon^{\alpha/\gamma} +|\langle \mathcal{F}'_\varepsilon(U_k),V_k\rangle|,
\end{eqnarray*}
thanks to \eqref{boundV}. 
Thus, from \eqref{bbbb} and assumption (ii) in Theorem \ref{PSCthm} 
we get the desired claim in \eqref{fprimeV}. 

Analogously (but making use of \eqref{boundW}), one can see that  
\begin{equation}\label{fprimeW}
|\langle \mathcal{F}'_\varepsilon(W_k),W_k\rangle|\leq C\eps^{\alpha/\gamma}+o_k(1),
\end{equation}
Let us consider first the case $n\geq 6s$. From now on, we divide the proof in three main steps: 
we first show lower bounds for $\mathcal{F}_\varepsilon(V_k)$ 
and $\mathcal{F}_\varepsilon(W_k)$ (see Step 1 and Step 2, respectively), 
and then in Step 3 we obtain a lower bound for $\mathcal{F}_\varepsilon(U_k)$, 
which will give a contradiction with the hypotheses on $\mathcal{F}_\varepsilon$, 
and so the conclusion of Lemma \ref{tightness}.

\medskip

\noindent {\it Step 1: Lower bound for $\mathcal{F}_\varepsilon(V_k)$.} 
Recalling  that 
$$ \frac12-\frac{1}{\cx}=\frac{s}{n}$$ we have by Proposition \ref{qineq} that
\begin{equation*}\begin{split}
\mathcal{F}_\varepsilon(V_k)&-\frac{1}{2}\langle \mathcal{F}'_\varepsilon(V_k), V_k\rangle 
= \left(\frac{1}{2}-\frac{1}{\cx}\right)\|(V_k)_+(\cdot,0)\|^{\cx}\\
&\;\;+\varepsilon\left(\frac{1}{2}-\frac{1}{q+1}\right)\int_{\R^n}h(x)(V_k)_+^{q+1}(x,0)\,dx\\
&\geq \frac{s}n \|(V_k)_+(\cdot,0)\|^{\cx} - \varepsilon\left(\frac{1}{2}-\frac{1}{q+1}\right)\|h\|_{L^m(\Rn)} \|(V_k)_+(\cdot,0)\|^{q+1},
\end{split}\end{equation*}
and by Proposition \ref{proppqs} and \eqref{fprimeV} we get that
\begin{equation}\label{LowerBoundV}
\mathcal{F}_\varepsilon(V_k)\geq -C\eps^{\alpha/\gamma}- \overline C \varepsilon^{\frac{\cx}{\cx-(q+1)}}+o_k(1).
\end{equation}
\\

\noindent {\it Step 2: Lower bound for $\mathcal{F}_\varepsilon(W_k)$.} 
First of all, by the definition of $W_k$ in \eqref{3.4ter} \textcolor{black}{(recall that $W_k$ is supported in $\Rn \setminus B_{r+\overline l}\subset \Rn\setminus B_{\bar r_{\varepsilon}}$, using also \eqref{chooser})}, by Proposition \ref{qineq} and \ref{traceIneq}, using \eqref{heps} and \eqref{bbbb}, we have that 
\begin{equation}\begin{split}\label{upBoundWq}
\,& \bigg|\varepsilon\int_{\mathbb{R}^n}{ h(x)(W_k)_+^{q+1}(x,0)\,dx }\bigg|
\leq \varepsilon\int_{\mathbb{R}^n\setminus B_{\bar{r}_\varepsilon}}{ |h(x)|(W_k)_+^{q+1}(x,0)\,dx }\\
\leq\,& \varepsilon\, \|h\|_{L^m(\mathbb{R}^n\setminus B_{\bar{r}_\varepsilon})}
\|(W_k)_+(\cdot,0)\|^{q+1}
\leq \varepsilon\, C\, \|h\|_{L^m(\mathbb{R}^n\setminus B_{\bar{r}_\varepsilon})}[W_k]_a^{q+1}
\leq C\eps^{1+\beta},
\end{split}\end{equation}
where $1+\beta>\alpha/\gamma$. Thus, from \eqref{fprimeW} 
we get that
\begin{equation}\begin{split}\label{Wbound}
&\bigg|\int_{\mathbb{R}^{n+1}_+} {y^a|\nabla W_k|^2\,dX}
-\int_{\mathbb{R}^n}  {(W_k)_+^{\cx} (x,0)\,dx} \bigg|
\\ &\qquad\le \left|\langle \mathcal{F}'_\varepsilon(W_k),W_k\rangle\right| + 
\left|\varepsilon\int_{\mathbb{R}^n}{ h(x) (W_k)_+^{q+1}(x,0)\,dx}\right|\\
&\qquad \leq C\varepsilon^{\alpha/\gamma} + o_k(1).
\end{split}\end{equation}
Moreover, notice that $W_k=U_k$ in $\R^{n+1}_+\setminus B_{r+\overline{l}+1}$ 
(recall \eqref{3.4bis} and \eqref{3.4ter}). 
Hence, using \eqref{contrad} with $\rho:=r+\overline{l}+1$, we get 
\begin{equation}\begin{split}\label{espero}
&\int_{\mathbb{R}^{n+1}_+\setminus B^+_{r+\bar{l}+1}}{y^a|\nabla W_k|^2\,dX}
+\int_{\mathbb{R}^n\setminus\{B_{r+\bar{l}+1}\cap\{y=0\}\}}{(W_k)_+^{2^{\star}_s}(x,0)\,dx}\\
&\qquad =\int_{\mathbb{R}^{n+1}_+\setminus B^+_{r+\bar{l}+1}}{y^a|\nabla U_k|^2\,dX}
+\int_{\mathbb{R}^n\setminus\{B_{r+\bar{l}+1}\cap\{y=0\}\}}
{(U_k)_+^{2^{\star}_s}(x,0)\,dx}
\geq \eta_0,
\end{split}\end{equation}
for $k=k(\rho)$. 
We observe that $k$ tends to $+\infty$ as $\varepsilon\to 0$, 
thanks to \eqref{forse0} and \eqref{eps to zero}. 

From \eqref{espero} we obtain that either 
$$ \int_{\mathbb{R}^n\setminus\{ B_{r+\bar{l}+1} \cap\{y=0\}\} }
{(W_k)_+^{2^{\star}_s}(x,0)\,dx}
\ge\frac{\eta_0}{2}$$
or
$$ \int_{\mathbb{R}^{n+1}_+\setminus B^+_{r+\bar{l}+1}}{y^a|\nabla W_k|^2\,dX}
\ge\frac{\eta_0}{2}.
$$
In the first case, we get that 
$$\int_{\mathbb{R}^n} { (W_k)_+^{2^{\star}_s}(x,0)\,dx}\ge \int_{\mathbb{R}^n\setminus\{B_{r+\bar{l}+1}\cap\{y=0\}\}}
{(W_k)_+^{2^{\star}_s}(x,0)\,dx}\ge 
\frac{\eta_0}{2}.$$
In the second case, taking $\varepsilon$ small (and so $k$ large enough), by \eqref{Wbound} 
we obtain that
\begin{equation*}\begin{split}
\int_{\mathbb{R}^n}{(W_k)_+^{\cx}(x,0)\,dx} & \geq 
\int_{\mathbb{R}^{n+1}_+}{y^a|\nabla W_k|^2\,dX}-C\varepsilon^{\alpha/\gamma}-o_k(1)
\\ &\geq \int_{\mathbb{R}^{n+1}_+\setminus B^+_{r+\bar{l}+1}}
{y^a|\nabla W_k|^2\,dX}-C\varepsilon^{\alpha/\gamma}-o_k(1)>\frac{\eta_0}{4}.
\end{split}\end{equation*}
Hence, in both cases we have that 
\begin{equation}\label{lowBoundWp}
\int_{\mathbb{R}^n}{(W_k)_+^{\cx}(x,0)\,dx} >\frac{\eta_0}{4}
\end{equation}
for $\varepsilon$ small and $k$ large enough. We now define $\psi_k:=\alpha_kW_k$, with
$$ \alpha_k^{\cx-2}:=\frac{[W_k]_a^2}{\|(W_k)_+(\cdot,0)\|^{\cx}}.$$
Notice that from \eqref{fprimeW} we have that
\begin{eqnarray*}
[W_k]_a^2 &\le & \|(W_k)_+(\cdot,0)\|^{\cx}
+\left|\varepsilon\int_{\R^n}h(x)(W_k)_+^{q+1}(x,0)\,dx\right|
+C\,\varepsilon^{\alpha/\gamma} +o_k(1)\\
&\le & \|(W_k)_+(\cdot,0)\|^{\cx}
+C\,\varepsilon^{\alpha/\gamma} +o_k(1),
\end{eqnarray*}
where \eqref{upBoundWq} was used in the last line. 
Hence, thanks to \eqref{lowBoundWp}, we get that 
\begin{equation}\label{star-1}
\alpha_k^{\cx-2}\leq 1+C\varepsilon^{\alpha/\gamma}+o_k(1).\end{equation}
Also, we notice that for this value of $\alpha_k$, we have the following chain of identities,
$$[\psi_k]^2_a=\alpha_k^2[W_k]_a^2=\alpha^{\cx}_k
\|(W_k)_+(\cdot,0)\|^{\cx}
=\|(\psi_k)_+(\cdot,0)\|^{\cx}.$$
Thus, by Proposition \ref{traceIneq} and~\eqref{isom}, we obtain 
\begin{eqnarray*}
&& S\leq \frac{[\psi_k(\cdot,0)]^2_{\dot{H}^s(\mathbb{R}^n)}}
{\|(\psi_k)_+(\cdot,0)\|^2}
=\frac{[\psi_k]_a^2}{\|(\psi_k)_+(\cdot,0)\|^{2}}=\frac{\|(\psi_k)_+(\cdot,0)\|^{\cx}}
{\|(\psi_k)_+(\cdot,0)\|^{2}}
=\|(\psi_k)_+(\cdot,0)\|^{\frac{4s}{n-2s}}.
\end{eqnarray*}
Consequently,
$$\|(W_k)_+(\cdot,0)\|^{\cx}
=\frac{\|(\psi_k)_+(\cdot,0)\|^{\cx}}{\alpha_k^{\cx}}
\geq S^{n/2s}\frac{1}{\alpha_k^{\cx}}.$$
This, together with \eqref{star-1}, gives that 
\begin{equation}\begin{split}\label{swkbla}
S^{n/2s}\leq\;&(1+C\varepsilon^{\alpha/\gamma}+o_k(1))^{\frac{\cx}{\cx-2}}
\|(W_k)_+(\cdot,0)\|^{\cx}\\
\leq\;& \|(W_k)_+(\cdot,0)\|^{\cx}+C\varepsilon^{\alpha/\gamma}+o_k(1).
\end{split}\end{equation}
We get that 
\begin{eqnarray*}
\mathcal{F}_\varepsilon(W_k)-\frac{1}{2}\langle \mathcal{F}'_\varepsilon(W_k),W_k\rangle
&=&\frac{s}{n}\|(W_k)_+(\cdot,0)\|^{\cx}\\
&&\qquad +\varepsilon\left(\frac{1}{2}-\frac{1}{q+1}\right)
\int_{\mathbb{R}^n}{h(x)(W_k)_+^{q+1}(x,0)\,dx}\\
&\geq&\frac{s}{n}S^{n/2s} -C\varepsilon^{\beta+1}-C\varepsilon^{\alpha/\gamma}+o_k(1),
\end{eqnarray*}
where we have used \eqref{upBoundWq} to estimate the $(q+1)$-order term.
Finally, using also \eqref{fprimeW} and the fact that $\beta+1>\alpha/\gamma$, we get 
\begin{equation}\label{LowBoundFW}
\mathcal{F}_\varepsilon(W_k)\geq \frac{s}{n}S^{n/2s}-C\varepsilon^{\alpha/\gamma}+o_k(1).
\end{equation}
\\

\noindent {\it Step 3: Lower bound for $\mathcal{F}_\varepsilon(U_k)$.} 
We first observe that by definition
we can write 
\begin{equation}\label{adwetperigyrejh}
U_k=(1-\chi)U_k+\chi U_k=W_k+V_k.\end{equation} 
Therefore
\begin{equation}\begin{split}\label{sumF}
\mathcal{F}_\varepsilon(U_k) = &\, \mathcal{F}_\varepsilon(V_k)+\mathcal{F}_\varepsilon(W_k)
+\int_{\mathbb{R}^{n+1}_+}{y^a\langle\nabla V_k,\nabla W_k\rangle\,dX} \\
&\quad +\frac{1}{\cx}\int_{\mathbb{R}^n}{(V_k)_+^{\cx}(x,0)\,dx}
+\frac{\varepsilon}{q+1}\int_{\mathbb{R}^n}{h(x)(V_k)_+^{q+1}(x,0)\,dx}\\
&\quad +\frac{1}{\cx}\int_{\mathbb{R}^n}{(W_k)_+^{\cx}(x,0)\,dx}
+\frac{\varepsilon}{q+1}\int_{\mathbb{R}^n}{h(x)(W_k)_+^{q+1}(x,0)\,dx}\\
&\quad -\frac{1}{\cx}\int_{\mathbb{R}^n}{(U_k)_+^{\cx}(x,0)\,dx}
-\frac{\varepsilon}{q+1}\int_{\mathbb{R}^n}{h(x)(U_k)_+^{q+1}(x,0)\,dx}.
\end{split}\end{equation}
On the other hand,
\begin{eqnarray*}
&& \int_{\mathbb{R}^{n+1}_+}{y^a\langle\nabla V_k,\nabla W_k\rangle\,dX} \\
&&\qquad = \frac{1}{2}\int_{\mathbb{R}^{n+1}_+}{y^a\langle\nabla U_k-\nabla V_k,\nabla V_k\rangle\,dX} +\frac{1}{2}\int_{\mathbb{R}^{n+1}_+}{y^a\langle\nabla U_k-\nabla W_k,\nabla W_k\rangle\,dX}.
\end{eqnarray*}
Also
\begin{eqnarray*}
&&\langle \mathcal{F}_\varepsilon'(U_k)-\mathcal{F}_\varepsilon'(V_k),V_k
\rangle \\&=& 
\int_{\R^{n+1}_+}y^a\langle\nabla U_k-\nabla V_k, \nabla V_k\rangle\,dX \\
&&\qquad - \varepsilon \int_{\R^n}h(x)(U_k)_+^q(x,0)\,V_k(x,0)\,dx - 
\int_{\R^n}(U_k)_+^{\cx-1}(x,0)\,V_k(x,0)\,dx\\
&&\qquad +\varepsilon \int_{\R^n}h(x)(V_k)_+^{q+1}(x,0)\,dx + 
\int_{\R^n}(V_k)_+^{\cx}(x,0)\,dx,
\end{eqnarray*}
and 
\begin{eqnarray*}
&&\langle \mathcal{F}_\varepsilon'(U_k)-\mathcal{F}_\varepsilon'(W_k),W_k
\rangle \\&=& 
\int_{\R^{n+1}_+}y^a\langle\nabla U_k-\nabla W_k, \nabla W_k\rangle\,dX \\
&&\qquad - \varepsilon \int_{\R^n}h(x)(U_k)_+^q(x,0)\,W_k(x,0)\,dx - 
\int_{\R^n}(U_k)_+^{\cx-1}(x,0)\,W_k(x,0)\,dx\\
&&\qquad +\varepsilon\int_{\R^n}h(x)(W_k)_+^{q+1}(x,0)\,dx + 
\int_{\R^n}(W_k)_+^{\cx}(x,0)\,dx.
\end{eqnarray*}
Hence, plugging the three formulas above into \eqref{sumF} we get
\begin{equation*}\begin{split}
\mathcal{F}_\varepsilon(U_k) = &\, \mathcal{F}_\varepsilon(V_k)+\mathcal{F}_\varepsilon(W_k)+\frac12
\langle \mathcal{F}_\varepsilon'(U_k)-\mathcal{F}_\varepsilon'(V_k),V_k
\rangle +\frac12\langle \mathcal{F}_\varepsilon'(U_k)-\mathcal{F}_\varepsilon'(W_k),W_k
\rangle \\
&\quad +\frac{1}{\cx}\int_{\mathbb{R}^n}{(V_k)_+^{\cx}(x,0)\,dx}
+\frac{\varepsilon}{q+1}\int_{\mathbb{R}^n}{h(x)(V_k)_+^{q+1}(x,0)\,dx}\\
&\quad +\frac{1}{\cx}\int_{\mathbb{R}^n}{(W_k)_+^{\cx}(x,0)\,dx}
+\frac{\varepsilon}{q+1}\int_{\mathbb{R}^n}{h(x)(W_k)_+^{q+1}(x,0)\,dx}\\
&\quad -\frac{1}{\cx}\int_{\mathbb{R}^n}{(U_k)_+^{\cx}(x,0)\,dx}
-\frac{\varepsilon}{q+1}\int_{\mathbb{R}^n}{h(x)(U_k)_+^{q+1}(x,0)\,dx}\\
&\quad +\frac{\varepsilon}{2}
\int_{\R^n}h(x)(U_k)_+^q(x,0)\,V_k(x,0)\,dx + 
\frac12 \int_{\R^n}(U_k)_+^{2^{\star}_s-1}(x,0)\,V_k(x,0)\,dx\\
&\quad -\frac{\varepsilon}{2}\int_{\R^n}h(x)(V_k)_+^{q+1}(x,0)\,dx - 
\frac12\int_{\R^n}(V_k)_+^{\cx}(x,0)\,dx\\
&\quad + \frac{\varepsilon}{2} \int_{\R^n}h(x)(U_k)_+^q(x,0)
\,W_k(x,0)\,dx 
+\frac12 \int_{\R^n}(U_k)_+^{\cx-1}(x,0)\,W_k(x,0)\,dx\\
&\quad -\frac{\varepsilon}{2}\int_{\R^n}h(x)(W_k)_+^{q+1}(x,0)\,dx - \frac12
\int_{\R^n}(W_k)_+^{\cx}(x,0)\,dx.
\end{split}\end{equation*}
Therefore, using~\eqref{boundV} and~\eqref{boundW} we obtain that 
\begin{equation*}\begin{split}
\mathcal{F}_\varepsilon(U_k) \ge &\, \mathcal{F}_\varepsilon(V_k)+\mathcal{F}_\varepsilon(W_k)\\
&\quad +\frac{1}{\cx}\int_{\mathbb{R}^n}{(V_k)_+^{\cx}(x,0)\,dx}
+\frac{1}{\cx}\int_{\mathbb{R}^n}{(W_k)_+^{\cx}(x,0)\,dx}  -\frac{1}{\cx}\int_{\mathbb{R}^n}{(U_k)_+^{\cx}(x,0)\,dx}\\
&\quad +\frac12 \int_{\R^n}(U_k)_+^{\cx-1}(x,0)\,V_k(x,0)\,dx  +\frac12 \int_{\R^n}(U_k)_+^{\cx-1}(x,0)\,W_k(x,0)\,dx  \\
&\quad - \frac12\int_{\R^n}(V_k)_+^{\cx}(x,0)\,dx
 - \frac12 \int_{\R^n}(W_k)_+^{\cx}(x,0)\,dx  \\
 &\quad -\varepsilon\left(\frac{1}{2}-\frac{1}{q+1}\right)\int_{\mathbb{R}^n}{h(x)(W_k)_+^{q+1}(x,0)\,dx}\\
 &\quad-\varepsilon\left(\frac{1}{2}-\frac{1}{q+1}\right)\int_{\mathbb{R}^n}{h(x)(V_k)_+^{q+1}(x,0)\,dx}\\
 &\quad - \frac{\varepsilon}{q+1} \int_{\Rn} h(x) \ukp^{q+1}(x,0)\, dx + \frac{\varepsilon}2 \int_{\Rn} h(x)\ukp^q(x,0)V_k(x,0)\, dx \\  
&\quad + \frac{\varepsilon}2 \int_{\Rn} h(x)\ukp^q(x,0)W_k(x,0)\, dx  -C\varepsilon^{\alpha/\gamma},
\end{split}\end{equation*}
for some positive~$C$. We use identity~\eqref{adwetperigyrejh} to write
\[ \ukp^{\cx-1}(V_k+W_k)=\ukp^{\cx}\quad \mbox{ and }\quad \ukp^{q+1}=\ukp^q(V_k+W_k),\]
and obtain that
\eqlab{\label{qwqweteyryhg}
\mathcal{F}_\varepsilon(U_k) \ge &\, \mathcal{F}_\varepsilon(V_k)+\mathcal{F}_\varepsilon(W_k)\\ 
&\quad+\bigg( \frac12-\frac{1}{\cx}\bigg) \lrq{ \int_{\mathbb{R}^n}(U_k)_+^{\cx}(x,0)\,dx -\int_{\mathbb{R}^n}(V_k)_+^{\cx}(x,0)\,dx 
-\int_{\mathbb{R}^n}(W_k)_+^{\cx}(x,0)\,dx }\\ 
&\quad +  \varepsilon\lr{ \frac{1}{2}-\frac1{q+1} }\int_{\mathbb{R}^n}{h(x)\left[(U_k)_+^{q}(x,0)V_k(x,0) - (V_k)_+^{q+1}(x,0)\right]\,dx} \\ 
&\quad + \varepsilon\lr{ \frac{1}{2}-\frac1{q+1} }\int_{\mathbb{R}^n}{h(x)\left[(U_k)_+^{q}(x,0)W_k(x,0) - (W_k)_+^{q+1}(x,0)\right]\,dx} -C\varepsilon^{\alpha/\gamma}.
} 
Using \eqref{ealpha}, reasoning in the same way for the term with $W_k$ \textcolor{black}{and recalling that $1+\alpha>\alpha/\gamma$ we get that} 
\begin{equation*}\begin{split}
\mathcal{F}_\varepsilon(U_k) \ge  &\, \mathcal{F}_\varepsilon(V_k)+\mathcal{F}_\varepsilon(W_k)\\
&\quad +\frac{s}n \int_{\mathbb{R}^n}\left(
(U_k)_+^{2^{\star}_s}(x,0)-(V_k)_+^{2^{\star}_s}(x,0)-(W_k)_+^{2^{\star}_s}(x,0)\right)\,dx-C\varepsilon^{\alpha/\gamma}\\
= &\, \mathcal{F}_\varepsilon(V_k)+\mathcal{F}_\varepsilon(W_k)\\
&\quad +\frac{s}n \int_{\mathbb{R}^n}(U_k)_+^{2^{\star}_s}(x,0)\left(1-\chi^{2^{\star}_s}(x,0)-(1-\chi(x,0))^{2^{\star}_s}\right)\,dx-C\varepsilon^{\alpha/\gamma},
\end{split}\end{equation*}
where~\eqref{3.4ter} was used in the last line. 
Also, since $\cx>2$ and 
\begin{equation}\label{chipos} 1-\chi^{\cx}(x,0)-(1-\chi(x,0))^{\cx}\ge0 \quad {\mbox{ for any }}x\in\R^n,
\end{equation}
we get
\begin{equation*}\begin{split}
\mathcal{F}_\varepsilon(U_k) \ge  &\mathcal{F}_\varepsilon(V_k)+\mathcal{F}_\varepsilon(W_k) -C\eps^{\alpha/\gamma}. 
\end{split}\end{equation*}
This, \eqref{LowerBoundV} and \eqref{LowBoundFW} imply that 
\begin{equation*}
\mathcal{F}_\varepsilon(U_k)\geq 
\frac{s}{n}S^{n/2s}-c_1\eps^{\alpha/\gamma}- \overline C \varepsilon^{\frac{\cx}{\cx-(q+1)}}+o_k(1).
\end{equation*}
Hence, taking the limit as $k\to+\infty$ we obtain that 
$$ c_\varepsilon=\lim_{k\to+\infty}\mathcal{F}_\varepsilon(U_k)\geq 
\frac{s}{n}S^{n/2s}-c_1\eps^{\alpha/\gamma}  -\overline C \varepsilon^{\frac{\cx}{\cx-(q+1)}},$$
which is a contradiction with assumption (i) of Theorem \ref{PSCthm}. 
This concludes the proof of Lemma \ref{tightness} in the case $n\geq 6s$.
\medskip

Consider now $n\in (2s,6s)$. In such a case, 
one easily sees that
\begin{equation*}\begin{split}
\mathcal{F}_\varepsilon(V_k)&-\frac{1}{2}\langle \mathcal{F}'_\varepsilon(V_k), V_k\rangle 
= \left(\frac{1}{2}-\frac{1}{\cx}\right)\|(V_k)_+(\cdot,0)\|^{\cx}\\
&\;\;+\varepsilon\left(\frac{1}{2}-\frac{1}{q+1}\right)\int_{\R^n}h(x)(V_k)_+^{q+1}(x,0)\,dx\\
&\geq \varepsilon\left(\frac{1}{2}-\frac{1}{q+1}\right)\int_{\R^n}h(x)(V_k)_+^{q+1}(x,0)\,dx,
\end{split}\end{equation*}
and by \eqref{fprimeV} we get that
\begin{equation}\label{LowerBoundV2}
\mathcal{F}_\varepsilon(V_k)\geq \varepsilon\left(\frac{1}{2}-\frac{1}{q+1}\right)\int_{\R^n}h(x)(V_k)_+^{q+1}(x,0)\,dx-C\eps^{\alpha/\gamma}+o_k(1).
\end{equation}
On the other hand, proceeding analogously to the previous case \textcolor{black}{(check \eqref{swkbla})}, we obtain
\begin{eqnarray*}
\mathcal{F}_\varepsilon(W_k)-\frac{1}{2}\langle \mathcal{F}'_\varepsilon(W_k),W_k\rangle
&=&\frac{s}{n}\|(W_k)_+(\cdot,0)\|_{L^{\cx}(\mathbb{R}^n)}^{\cx}\\
&&\qquad +\varepsilon\left(\frac{1}{2}-\frac{1}{q+1}\right)
\int_{\mathbb{R}^n}{h(x)(W_k)_+^{q+1}(x,0)\,dx}\\
&\geq&\frac{s}{n}S^{n/2s}+\varepsilon\left(\frac{1}{2}-\frac{1}{q+1}\right)
\int_{\mathbb{R}^n}{h(x)(W_k)_+^{q+1}(x,0)\,dx}\\
&&\qquad -C\varepsilon^{\alpha/\gamma}+o_k(1).
\end{eqnarray*}
Thus, using also \eqref{fprimeW}, we get 
\begin{equation}\label{LowBoundFW2}
\mathcal{F}_\varepsilon(W_k)\geq \frac{s}{n}S^{n/2s}+\varepsilon\left(\frac{1}{2}-\frac{1}{q+1}\right)
\int_{\mathbb{R}^n}{h(x)(W_k)_+^{q+1}(x,0)\,dx}-C\varepsilon^{\alpha/\gamma}+o_k(1).
\end{equation}
Now, using the positivity of $h$, from \eqref{qwqweteyryhg} and \eqref{chipos} we get
\bgs{
\mathcal{F}_\varepsilon(U_k) \ge &\, \mathcal{F}_\varepsilon(V_k)+\mathcal{F}_\varepsilon(W_k)\\ 
&\quad+\bigg(\frac12 - \frac{1}{\cx}\bigg) \lrq{\int_{\mathbb{R}^n}(U_k)_+^{\cx}(x,0)\,dx  -\int_{\mathbb{R}^n}(V_k)_+^{\cx}(x,0)\,dx 
-\int_{\mathbb{R}^n}(W_k)_+^{\cx}(x,0)\,dx 
 }\\ 
&\quad -  \varepsilon\lr{ \frac{1}{2}-\frac1{q+1} }\int_{\mathbb{R}^n}{h(x)\left[(V_k)_+^{q+1}(x,0)+(W_k)_+^{q+1}(x,0)\right]\,dx}  -C\varepsilon^{\alpha/\gamma}\\
\ge &\, \mathcal{F}_\varepsilon(V_k)+\mathcal{F}_\varepsilon(W_k)\\ 
&\quad -  \varepsilon\lr{ \frac{1}{2}-\frac1{q+1} }\int_{\mathbb{R}^n}{h(x)\left[(V_k)_+^{q+1}(x,0)+(W_k)_+^{q+1}(x,0)\right]\,dx}  -C\varepsilon^{\alpha/\gamma},\\
\ge &\, \frac{s}{n}S^{n/2s}-C\varepsilon^{\alpha/\gamma}+o_k(1),
} 
where we have used \eqref{LowerBoundV2} and \eqref{LowBoundFW2} in the last line. Passing to the limit as $k\rightarrow\infty$ we reach a contradiction with assumption (i) of Theorem \ref{PSCthm} and thus we finish the proof of Lemma \ref{tightness} in the case $n\in (2s,6s)$.
\end{proof}

\noindent Knowing that the sequence $\{U_k\}_{k\in \N}$ is bounded and tight, one can use the Concentration Compactness principle and prove Theorem \ref{PSCthm}. More precisely, one applies Proposition \ref{CCP} for the positive sequence $\{\ukp\}_{k\in \N}$, which is also bounded and tight, to obtain that
\begin{equation*}\begin{split}
\ukp^{2^{\star}_s}(\cdot,0)&\xrightarrow[k\to +\infty]{} \nu=\overline{U}^{2^{\star}_s}(\cdot,0)+\sum \nu_j\delta_{x_j},\\
y^a|\nabla \ukp|^2&\xrightarrow[k\to +\infty] {}\mu\geq y^a|\nabla\overline{U}|^2+\sum\mu_j\delta_{(x_j,0)},
\end{split}\end{equation*}
and then, following the steps in \cite[Proof of Proposition 4.2.1]{maria} and using Proposition \ref{proppqs}, one deduces $\nu_j=\mu_j=0$ for every $j$. Finally, proceeding as in \cite[Proposition 4.2.1]{maria} (using Proposition \ref{convres} instead of  \cite[Lemma 4.1.1]{maria}) the strong convergence in $\dot{H}^s_a(\mathbb{R}^{n+1}_+)$ follows, and thus Theorem \ref{PSCthm} holds. 
\subsection{Bound on the minmax value and geometry of the functional}\label{minmax}
\noindent The purpose of this subsection is to show that the minmax value of the Mountain Pass Lemma lies below the critical threshold given in Theorem \ref{PSCthm}. To see this, the idea is to find a path where the maximum value of the functional is smaller than this critical level (and so the minmax). We obtain such path by working with the fractional Sobolev minimizers, explicitly computed in formula \eqref{SobMin}. 
One considers, as done in \cite[Section 6.5]{maria}, the ball $B$ given in \eqref{h1}  and takes $\mu_0 >0$ and $\xi \in \Rn$  to be the radius and the center of $B$ respectively. Namely, one has that 
\[\inf_{B_{\mu_0}(\xi)} h >0.\] 
Let $\bar \phi \in C_0^{\infty}(B_{\mu_0}(\xi),[0,1])$ be a cut-off function such that $\bar \phi(x) = 1$ in $B_{\frac{\mu_0}2}(\xi)$. Translating and rescaling the function $z$ in \eqref{SobMin} we define 
\eqlab{\label{zmmu} z_{\mu,\xi}(x):=\mu^{\frac{2s-n}2}z\lr{ \frac{x-\xi}{\mu}},\qquad \mu>0.} 
Let $\bar Z_{\mu,\xi}$ be the extension of $\bar \phi z_{\mu,\xi}$, as defined in  \eqref{lapextop}. With some manipulations (check Section \textcolor{black}{6.5} in \cite{maria}), one has that
\eqlab{\label{zsmal1} \|z_{\mu,\xi}\|^2 =S^{\frac{n-2s}{4s}}} 
and that
\eqlab{\label{zbar1} [\bar Z_{\mu,\xi} ]_a^2 = [\bar \phi z_{\mu,\xi} ]^2_{\dot H^s(\Rn)} \leq S^{\frac{n}{2s}}+C\mu^{n-2s}.}
Moreover, we have the following result.
\begin{lemma} There exists $C=C(n,s,\mu_0)>0$ such that
\label{lemzm} \[\|\bar Z_{\mu, \xi} (\cdot, 0)\|^{\cx} \geq \|z_{\mu,\xi}\|^{\cx} -C\mu^n.\]
\end{lemma}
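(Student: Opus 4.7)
\smallskip

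My plan is to exploit the fact that $\bar{\phi}\equiv 1$ on $B_{\mu_0/2}(\xi)$ and $0\le \bar\phi\le 1$ everywhere, so that the deficit between $\|z_{\mu,\xi}\|^{\cx}$ and $\|\bar\phi z_{\mu,\xi}\|^{\cx}$ is controlled by the tail of $z_{\mu,\xi}$ outside the ball $B_{\mu_0/2}(\xi)$. Since $\bar Z_{\mu,\xi}(\cdot,0)=\bar\phi z_{\mu,\xi}$ and $z_{\mu,\xi}\ge 0$, the first step is to write
\[
\|\bar Z_{\mu,\xi}(\cdot,0)\|^{\cx}
=\int_{\R^n}\bar\phi^{\cx}(x)\,z_{\mu,\xi}^{\cx}(x)\,dx
\ge \int_{B_{\mu_0/2}(\xi)} z_{\mu,\xi}^{\cx}(x)\,dx
= \|z_{\mu,\xi}\|^{\cx}-\int_{\R^n\setminus B_{\mu_0/2}(\xi)} z_{\mu,\xi}^{\cx}(x)\,dx,
\]
so that the claim reduces to showing that the last integral is bounded by $C\mu^n$.

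\smallskip

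The second step is a direct change of variable. Using the definition \eqref{zmmu} and the identity $\frac{2s-n}{2}\cdot \cx=-n$, we have $z_{\mu,\xi}^{\cx}(x)=\mu^{-n}z\bigl(\tfrac{x-\xi}{\mu}\bigr)^{\cx}$, hence setting $y=(x-\xi)/\mu$ yields
\[
\int_{\R^n\setminus B_{\mu_0/2}(\xi)} z_{\mu,\xi}^{\cx}(x)\,dx
=\int_{\R^n\setminus B_{\mu_0/(2\mu)}} z(y)^{\cx}\,dy.
\]

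\smallskip

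The third and final step is the explicit tail estimate. Recalling \eqref{SobMin}, one has $z(y)^{\cx}=c_\star^{\cx}(1+|y|^2)^{-n}$, so by passing to polar coordinates
\[
\int_{\R^n\setminus B_{\mu_0/(2\mu)}} z(y)^{\cx}\,dy
\le C\int_{\mu_0/(2\mu)}^{+\infty} r^{n-1}(1+r^2)^{-n}\,dr
\le C\int_{\mu_0/(2\mu)}^{+\infty} r^{-n-1}\,dr
=C'(n,\mu_0)\,\mu^n,
\]
which combined with the first step proves the lemma. There is no real obstacle here; the only thing to watch is the bookkeeping of the exponent $\frac{2s-n}{2}\cdot\cx=-n$, which is exactly what makes the $L^{\cx}$ norm of $z_{\mu,\xi}$ scale-invariant and forces the error to be of order $\mu^n$ rather than a lower power of $\mu$.
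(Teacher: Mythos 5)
Your proof is correct and follows essentially the same route as the paper: both arguments exploit $\bar\phi\equiv 1$ on $B_{\mu_0/2}(\xi)$ to reduce the deficit to a tail integral of $z_{\mu,\xi}^{\cx}$ over $\R^n\setminus B_{\mu_0/2}(\xi)$, rescale by $y=(x-\xi)/\mu$ using $\tfrac{2s-n}{2}\cdot\cx=-n$, and bound the tail of $z^{\cx}(y)\sim|y|^{-2n}$ to get $C\mu^n$. The only cosmetic difference is that the paper writes the deficit as $\int(1-\bar\phi^{\cx})z_{\mu,\xi}^{\cx}$ and bounds $1-\bar\phi^{\cx}\le 1$, while you drop $\bar\phi^{\cx}$ directly; these are equivalent.
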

\begin{proof}
In the next computations, the constant may change value from line to line. Using that  $\bar\phi=1$ on $B_{{\mu_0}/2}(\xi)$, we have that
\bgs{ \|z_{\mu,\xi}\|^{\cx}-\|\bar Z_{\mu,\xi}(\cdot,0)\|^{\cx} =&\; \int_{\Rn}(1-\bar \phi^{\cx})z_{\mu,\xi}^{\cx}\, dx   = \int_{\Rn \setminus B_{\frac{\mu_0}2}(\xi) } (1-\bar \phi^{\cx})z_{\mu,\xi}^{\cx}\, dx \\
   \leq &\; \mu^{-n} \int_{\Rn \setminus B_{\frac{\mu_0}2}(\xi)} z^{\cx}\lr{\frac{x-\xi}\mu} \, dx .}
   Making the change of variable $y= (x-\xi)/\mu$ and inserting definition \eqref{SobMin} we get
   \bgs{ \|z_{\mu,\xi}\|^{\cx}-\|\bar Z_{\mu,\xi}(\cdot,0)\|^{\cx}   = \int_{\Rn\setminus B_{\frac{\mu_0}{2\mu}}} z^{\cx}(y) \, dy 
    \leq c_{\star}^{\cx}\int_{\Rn\setminus B_{\frac{\mu_0}{2\mu}}} |y|^{-2n}\, dy \leq C \mu^n,} where $C$ depends on $n,s, \mu_0$.
   This proves the lemma.
\end{proof}

\noindent Let $t> 0$. We consider the path $t\bar Z_{\mu,\xi}$ and compute the energy along it. Namely, we focus on  obtaining an upper bound for 
\[ \Fl (t\bar Z_{\mu,\xi}) = \frac{t^2}2 [\bar Z_{\mu,\xi}]_a^2 - \frac{t^{\cx}}{\cx} \|\bar Z_{\mu,\xi}(\cdot,0)\|^{\cx} - \frac{\varepsilon}{q+1}\int_{\Rn} h(x) \lr{ t\bar Z_{\mu,\xi}(x,0)}^{q+1}\, dx  \] \textcolor{black}{and proving that it stays below the critical threshold  given in Theorem \ref{PSCthm}. Of course, if $t=0$ the energy level is zero, and  for $\varepsilon$ small enough, this is trivially fulfilled.}
We introduce  the following Lemmata.
\begin{lemma}\label{lemqp1}
Let $n>\frac{2s(q+3)}{q+1}$. There exists $\mu^{\star}<\mu_0/2$ such that for any $t> 0$ and any $\mu\in(0,\mu^\star)$  
\bgs{\label{cl3} \int_{\Rn} h(x) \lr{ t \bar Z_{\mu,\xi}(x,0)}^{q+1} \, dx \geq C\; t^{q+1}\;  \mu^{\frac{(2s-n)(q+1)}2+n}  ,}
where $C=C(n,s,h,\mu_0,\mu^\star)$ is a positive constant.\end{lemma}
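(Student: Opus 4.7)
The plan is to exploit the structure of the extension $\bar Z_{\mu,\xi}$, whose trace on $\{y=0\}$ is precisely $\bar\phi \cdot z_{\mu,\xi}$, together with the positivity of $h$ on the ball $B=B_{\mu_0}(\xi)$, in order to reduce the integral to a tractable rescaled one. First I would observe that since $\bar\phi$ is supported in $B$ and $h \ge \inf_B h>0$ on $B$, the integrand $h(x)\,\bar\phi^{q+1}(x)\,z_{\mu,\xi}^{q+1}(x)$ is pointwise nonnegative on all of $\Rn$, so I may restrict integration to the smaller ball $B_{\mu_0/2}(\xi)$ on which $\bar\phi\equiv 1$, obtaining
\[
\int_{\Rn} h(x)\,\bigl(t\,\bar Z_{\mu,\xi}(x,0)\bigr)^{q+1}\,dx
\;\ge\; t^{q+1}\,(\inf_B h)\int_{B_{\mu_0/2}(\xi)} z_{\mu,\xi}^{q+1}(x)\,dx.
\]

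Next I would perform the natural change of variables $y=(x-\xi)/\mu$, exploiting the scaling $z_{\mu,\xi}(x)=\mu^{(2s-n)/2}z((x-\xi)/\mu)$, to factor out the $\mu$-dependence:
\[
\int_{B_{\mu_0/2}(\xi)} z_{\mu,\xi}^{q+1}(x)\,dx
\;=\;\mu^{\frac{(2s-n)(q+1)}{2}+n}\int_{B_{\mu_0/(2\mu)}} z^{q+1}(y)\,dy.
\]
This already exhibits the claimed power of $\mu$, so the task reduces to showing that $\int_{B_{\mu_0/(2\mu)}} z^{q+1}(y)\,dy$ stays bounded below by a positive constant as $\mu\to 0^+$.

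The key arithmetic step is to verify that the hypothesis $n>2s(q+3)/(q+1)$, combined with $q\ge 1$, implies the integrability condition $(n-2s)(q+1)>n$. The latter is equivalent to $n>2s(q+1)/q$, and the elementary algebraic comparison $(q+3)/(q+1)\ge(q+1)/q$ for $q\ge 1$ (which reduces to $q(q+3)\ge (q+1)^2$, i.e.\ $q\ge 1$) gives $2s(q+3)/(q+1)\ge 2s(q+1)/q$. Since $z(y)\sim c_\star|y|^{-(n-2s)}$ at infinity, it follows that $\int_{\Rn} z^{q+1}(y)\,dy<\infty$. By monotone convergence there is then $\mu^\star\in(0,\mu_0/2)$ such that for every $\mu\in(0,\mu^\star)$,
\[
\int_{B_{\mu_0/(2\mu)}} z^{q+1}(y)\,dy \;\ge\; \tfrac12\int_{\Rn} z^{q+1}(y)\,dy \;=:\;c_0>0,
\]
yielding the desired bound with $C=(\inf_B h)\,c_0$.

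The proof is essentially a scaling computation, and I do not anticipate any serious technical obstacle. The only point requiring care is confirming that the dimensional restriction $n>2s(q+3)/(q+1)$ (which is dictated in the paper by subtler considerations elsewhere in the minmax argument) is strong enough to guarantee the finite-mass property of $z^{q+1}$ that the lower bound relies on; the algebraic comparison above settles this for the whole convex range $q\ge 1$.
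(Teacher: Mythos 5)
Your proposal is correct and follows essentially the same route as the paper: restrict to $B_{\mu_0/2}(\xi)$ using positivity of $h$ and $\bar\phi\equiv 1$ there, rescale to factor out $\mu^{\frac{(2s-n)(q+1)}{2}+n}$, and then use the decay of $z$ to bound the remaining integral below. The only cosmetic difference is in the finishing step: you invoke integrability of $z^{q+1}$ over all of $\Rn$ together with monotone convergence, whereas the paper bounds $(1+|y|^2)^{(2s-n)(q+1)/2}\ge |y|^{(2s-n)(q+1)}$ and computes the resulting radial integral over $1<|y|<\mu_0/(2\mu)$ explicitly; both hinge on exactly the same strict inequality $(n-2s)(q+1)>n$, which you correctly derive from $n>\frac{2s(q+3)}{q+1}$ and $q\ge1$ via the comparison $\frac{q+3}{q+1}\ge\frac{q+1}{q}$ (a step the paper asserts without spelling out).
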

\begin{proof}
Notice that since $q<\cx-1$, we have that $ {\frac{(2s-n)(q+1)}2+n} >0$. 
 \textcolor{black}{Given the definition of $\bar \phi$
 we have that}
\eqlab{ \label{bla12}
&\; \int_{\Rn} h(x) \lr{ t \bar Z_{\mu,\xi}(x,0)}^{q+1} \, dx  
	= \int_{\Rn} h(x) t^{q+1} \bar \phi^{q+1} z_{\mu,\xi}^{q+1} (x)\, dx \\
=&\;\int_{B_{\mu_0}(\xi)}h(x) t^{q+1} \bar \phi^{q+1} z_{\mu,\xi}^{q+1} (x)\, dx  
\geq  t^{q+1} \inf_{B_{\mu_0}(\xi)} h  \, \int_{B_{\frac{\mu_0}2}(\xi)} z_{\mu,\xi}^{q+1}(x)\, dx,}
 recalling also that $\inf_{B_{\mu_0}(\xi)} h$ is positive. Thus, using \eqref{zmmu}, changing the variable $y=(x-\xi)/\mu$ and inserting definition \eqref{SobMin} we obtain that
\bgs{  \int_{B_{\frac{\mu_0}2}(\xi)} z^{q+1}_{\mu,\xi}(x)\, dx =&\; \mu^{\frac{(2s-n)(q+1)}2+n} \int_{B_{\frac{\mu_0}{2\mu}}} z^{q+1}(y)\, dy \\=&\;   \textcolor{black}{C}\mu^{\frac{(2s-n)(q+1)}2+n}  \int_{B_{\frac{\mu_0}{2\mu}}} (1+|y|^2)^{\frac{(2s-n)(q+1)}2} \, dy,}\textcolor{black}{where $C=C(n,s)>0$.}
Passing to polar coordinates and taking $\mu$ small enough, say $\mu<\mu_0/2$ we get that
\bgs{  &\;  \int_{B_{\frac{\mu_0}{2\mu}}}(1+|y|^2)^{\frac{(2s-n)(q+1)}2} \, dy\geq \textcolor{black}{ \int_{B_{\frac{\mu_0}{2\mu}}}|y|^{(2s-n)(q+1)}\, dy} \\
 \geq   &\; c_{n,s} \int_1^{\frac{\mu_0}{2\mu}} \rho^{{(2s-n)(q+1)}}  \rho^{n-1} \, d\rho 
= c_{n,s} \frac{\lr{\frac{\mu_0}{2\mu}}^{(2s-n)(q+1)+n}-1}{(2s-n)(q+1)+n}   .}
We have that $(2s-n)(q+1) +n <0$ and renaming the constants we obtain that
\bgs{  \int_{B_{\frac{\mu_0}{2\mu}}}(1+|y|^2)^{\frac{(2s-n)(q+1)}2} \, dy  \geq c_{n,s} \lr{\mu_0^{(n-2s)(q+1)- n}-(2 \mu)^{(n-2s)(q+1)- n} } \geq  C_{n,s,\mu_0,\mu^\star}, }  for any  $\mu \in  (0,\mu^{\star})$ , $\mu^{\star}<\mu_0/2$, where $C_{n,s,\mu_0,\mu^\star}$ designates a positive constant. Hence 
\bgs{  \int_{B_{\frac{\mu_0}2}(\xi)} z^{q+1}_{\mu,\xi}(x)\, dx \geq  C_{n,s,\mu_0,\mu^\star} \mu^{\frac{(2s-n)(q+1)}2+n},}
and from \eqref{bla12} it follows
\bgs{ \int_{\Rn} h(x) \lr{ t \bar Z_{\mu,\xi}(x,0)}^{q+1} \, dx   \geq &\;C  t^{q+1} \mu^{\frac{(2s-n)(q+1)}2+n} ,}
where $C$ is a positive constant that depends on $n,s, h, \mu_0$ and $\mu^\star$. 
\end{proof}

\noindent Let $\mu^\star$ be fixed as in Lemma \ref{lemqp1}. We want to prove now that the energy level along the path induced by $t\bar Z_{\mu,\xi}$, $t>0$, stays below the critical threshold  given in Theorem \ref{PSCthm} for $\mu<\mu^{\star}$. With this purpose, we state the next result.
\begin{lemma}\label{haha}
There exists $\mu_1 \in(0, \mu_0)$ such that
\eqlab{ \label{cl1} \lim_{t\to +\infty} \sup_{\mu \in (0,\mu_1)}\Fl(t\bar Z_{\mu,\xi}) =-\infty.}
Furthermore, if $n>\frac{2s(q+3)}{q+1}$, for any $\mu \in ( 0, \min\{\mu^\star,\mu_1\})$
\eqlab{ \label{cl2} \sup_{t\geq 0} \Fl (t \bar Z_{\mu,\xi}) <\frac{s}n S^{\frac{n}{2s}} +C_1 \mu^{ n-2s} + o(\mu^{n-2s})  - C_3  \varepsilon\mu^{\frac{(2s-n)(q+1)}2+n},}
where $\mu^{\star}$ was given in Lemma \ref{lemqp1}.
\end{lemma}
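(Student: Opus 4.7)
The strategy is to decompose the functional along the path as $\mathcal{F}_\varepsilon(t\bar Z_{\mu,\xi}) = g_\mu(t) - \frac{\varepsilon}{q+1}\int_{\mathbb{R}^n} h(x)\bigl(t\bar Z_{\mu,\xi}(x,0)\bigr)^{q+1}\,dx$, where $g_\mu(t):=\frac{a(\mu)\,t^2}{2} - \frac{b(\mu)\,t^{\cx}}{\cx}$ with $a(\mu):=[\bar Z_{\mu,\xi}]_a^2$ and $b(\mu):=\|\bar Z_{\mu,\xi}(\cdot,0)\|^{\cx}$. By \eqref{zbar1} we have $a(\mu)\le S^{n/(2s)}+C\mu^{n-2s}$, and by Lemma \ref{lemzm} combined with $\|z_{\mu,\xi}\|^{\cx}=S^{n/(2s)}$ (a scale invariant consequence of \eqref{zsmal1}), we have $b(\mu)\ge S^{n/(2s)}-C\mu^n$. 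In particular, shrinking $\mu_1\in(0,\mu_0/2)$ if necessary, we can assume $a(\mu)\le A_0$ and $b(\mu)\ge b_0>0$ uniformly for $\mu\in(0,\mu_1)$.

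To establish \eqref{cl1}, observe that Proposition \ref{qineq} together with the trace inequality (Proposition \ref{traceIneq}) yields $\int_{\mathbb{R}^n}h(x)\bar Z_{\mu,\xi}^{q+1}(x,0)\,dx\le \|h\|_{L^m(\mathbb{R}^n)}\,B_0$ for a constant $B_0$ independent of $\mu\in(0,\mu_1)$. Therefore
\[\mathcal{F}_\varepsilon(t\bar Z_{\mu,\xi}) \le \frac{A_0\,t^2}{2} + \frac{\varepsilon\|h\|_{L^m}B_0\,t^{q+1}}{q+1} - \frac{b_0\,t^{\cx}}{\cx},\]
and since $b_0>0$ and $\max\{2,q+1\}<\cx$, the right-hand side tends to $-\infty$ as $t\to+\infty$, uniformly in $\mu\in(0,\mu_1)$.

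For \eqref{cl2}, fix once and for all a constant $\tau>0$ small enough that $A_0\tau^2/2 < \frac{s}{2n}S^{n/(2s)}$. We split the supremum into two regimes. For $t\in[0,\tau]$, Lemma \ref{lemqp1} implies the integral against $h$ is nonnegative, hence $\mathcal{F}_\varepsilon(t\bar Z_{\mu,\xi})\le g_\mu(t)\le a(\mu)\tau^2/2\le A_0\tau^2/2$, which is strictly below the RHS once $\mu,\varepsilon$ are sufficiently small. For $t\ge\tau$, Lemma \ref{lemqp1} yields $\int h(t\bar Z_{\mu,\xi})^{q+1}\,dx \ge C\,t^{q+1}\mu^{(2s-n)(q+1)/2+n} \ge C\,\tau^{q+1}\mu^{(2s-n)(q+1)/2+n}$, whence, setting $C_3:=C\tau^{q+1}/(q+1)$,
\[\mathcal{F}_\varepsilon(t\bar Z_{\mu,\xi}) \le g_\mu(t) - C_3\,\varepsilon\,\mu^{(2s-n)(q+1)/2+n}.\]
Taking the supremum over $t\ge\tau$ and using the explicit maximum $\sup_{t\ge 0}g_\mu(t)=\frac{s}{n}\bigl(a(\mu)/b(\mu)^{2/\cx}\bigr)^{n/(2s)}$, together with the Taylor expansion $a(\mu)/b(\mu)^{2/\cx}=S\bigl(1+O(\mu^{n-2s})\bigr)$ derived from the bounds on $a(\mu),b(\mu)$ above, we obtain $\sup_{t\ge0}g_\mu(t)\le \frac{s}{n}S^{n/(2s)}+C_1\mu^{n-2s}+o(\mu^{n-2s})$, proving \eqref{cl2}.

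The delicate part is choosing the cut $\tau$ correctly: on $[0,\tau]$ no gain from the $\varepsilon h$ term is available (the integrand vanishes as $t\to 0$), so one must rely purely on smallness of $g_\mu(t)$ there; on $[\tau,\infty)$ the uniform lower bound $t^{q+1}\ge\tau^{q+1}$ is essential to make the gain from Lemma \ref{lemqp1} uniform in $t$, yielding the $-C_3\varepsilon\mu^{(2s-n)(q+1)/2+n}$ improvement independent of where the supremum is actually attained. The hypothesis $n>2s(q+3)/(q+1)$ enters only implicitly here, through the requirement of Lemma \ref{lemqp1}; its role as the ``gap'' condition that makes the $\varepsilon$-term dominate the $\mu^{n-2s}$ term will matter only later, when \eqref{cl2} is compared against the Palais–Smale threshold in Theorem \ref{PSCthm}.
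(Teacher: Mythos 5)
Your proof is correct, and it reaches the same bound as the paper, but via a genuinely different route. The paper bounds $\Fl(t\bar Z_{\mu,\xi})$ from above by $S^{n/2s}g(t)$, then explicitly studies the rescaled function $g$: it writes $g'(t)=t[f(t)-h(t)]$ with $f$ strictly decreasing, $h$ strictly increasing, locates the unique maximizer $t_\mu\in(0,\alpha)$, proves $t_\mu\ge\delta$ uniformly via the critical-point equation $g'(t_\mu)=0$, and finally uses the monotonicity of the quadratic-critical part $F$ on $(0,\alpha)$ to bound $F(t_\mu)\le F(\alpha)$. The role of the lower bound $t_\mu\ge\delta$ is to guarantee the $\varepsilon$-gain $-C_3\varepsilon t_\mu^{q+1}\mu^{(2s-n)(q+1)/2+n}$ is not lost at the maximizer. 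Your argument achieves the same thing by brute cutoff: you fix $\tau$ once and for all with $A_0\tau^2/2<\frac{s}{2n}S^{n/2s}$, so on $[0,\tau]$ the energy is already strictly below the target level with room to spare, and on $[\tau,\infty)$ the uniform lower bound $t^{q+1}\ge\tau^{q+1}$ delivers the same $\varepsilon$-gain. You then bound $\sup_{t\ge0}g_\mu$ by its exact closed-form value and Taylor-expand $a(\mu)/b(\mu)^{2/\cx}=S(1+O(\mu^{n-2s}))$. This avoids the paper's critical-point analysis entirely, which is cleaner; what you give up is the explicit localization of the maximizer $t_\mu\in(\delta,\alpha)$, which the paper never uses outside this proof anyway. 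Two small bookkeeping points to be explicit about: you implicitly shrink $\mu_1$ so that the first regime's bound $A_0\tau^2/2$ sits below the right-hand side uniformly on $(0,\mu_1)$, and $C_3$ ends up depending on the fixed cutoff $\tau$ (hence on $\mu_1$); both are consistent with what the lemma permits, since the paper's $C_3$ likewise absorbs the constant $\delta$.
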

\begin{proof}

Thanks to \eqref{zbar1}, \eqref{zsmal1} and Lemma \ref{lemzm} we have that
\eqlab{ \label{bla11}   \frac{t^2}2 [\bar Z_{\mu,\xi}]^2_a - \frac{t^{\cx}}{\cx} \|\bar Z_{\mu,\xi}(\cdot,0)\|^{\cx}  \leq &\;\frac{t^2}2 \lr{S^{\frac{n}{2s}} +C_1\mu^{n-2s} } -\frac{t^{\cx}}{\cx} \lr{\|z_{\mu,\xi}\|^{\cx}  -C_2\mu^n} \\
\leq  &\; \frac{t^2}2 \lr{S^{\frac{n}{2s}} +C_1\mu^{n-2s} }  -  \frac{t^{\cx}}{\cx} \lr{S^{\frac{n}{2s}} -C_2\mu^n}.}
\textcolor{black}{From \eqref{bla12} it follows that for any $\mu\in(0,\mu^{\star})$}
\[\int_{\Rn} h(x) \lr{ t\bar Z_{\mu,\xi}(x,0)}^{q+1}\, dx  \geq 0,\] and therefore
\bgs{ \Fl (t\bar Z_{\mu,\xi}) = &\;\frac{t^2}2 [\bar Z_{\mu,\xi}]^2_a - \frac{t^{\cx}}{\cx} \|\bar Z_{\mu,\xi}(\cdot,0)\|^{\cx} - \frac{\varepsilon}{q+1}\int_{\Rn} h(x) \lr{ t\bar Z_{\mu,\xi}(x,0)}^{q+1}\, dx \\
 \leq&\; \frac{t^2}2 \lr{S^{\frac{n}{2s}} +C_1\mu^{n-2s} }  -  \frac{t^{\cx}}{\cx} \lr{S^{\frac{n}{2s}} -C_2\mu^n}.  } 
Now, there exists $\mu_1\in(0,\mu_0)$ small enough such that $S^{n/2s}-C_2\mu^n$ is positive and hence, sending $t $ to $+\infty$ and recalling that $\cx>2$, we obtain
\[ \lim_{t\to +\infty} \Fl (t\bar Z_{\mu,\xi})=-\infty\] for any $\mu \in (0,\mu_1)$. This proves \eqref{cl1}. 

To obtain \eqref{cl2}, we use \eqref{bla11} and taking any  $\mu \in ( 0, \min\{\mu^\star,\mu_1\})$, by Lemma \ref{lemqp1} we have that
\bgs{ \Fl (t\bar Z_{\mu,\xi})   \leq \frac{t^2}2 \lr{S^{\frac{n}{2s}} +C_1\mu^{n-2s} }  -  \frac{t^{\cx}}{\cx} \lr{S^{\frac{n}{2s}} -C_2\mu^n} -C_3 \varepsilon \frac{t^{q+1}}{q+1} \mu^{\frac{(2s-n)(q+1)}2+n} .} By renaming the constants, we obtain
\eqlab{\label{fltx}    \Fl (t\bar Z_{\mu,\xi}) \leq  S^{\frac{n}{2s}} g(t),}
where 
\[g(t):=  \frac{t^2}2 \lr{1 +C_1\mu^{n-2s} }  -  \frac{t^{\cx}}{\cx} \lr{1 -C_2\mu^n} - C_3\eps \frac{t^{q+1}}{q+1}   \mu^{\frac{(2s-n)(q+1)}2+n}   .\] We compute the first derivative of $g$ and have that
\eqlab{ \label{blaaa} g'(t)= t\lrq{ (1+C_1 \mu^{n-2s}) -t^{\cx-2} (1-C_2\mu^n) -C_3 \varepsilon t^{q-1} \mu^{\frac{(2s-n)(q+1)}2+n}} .}
Let 
$$f(t):= (1+C_1 \mu^{n-2s}) -t^{\cx-2} (1-C_2\mu^n)\quad  \hbox{ and } \quad h(t):= C_3 \varepsilon t^{q-1} \mu^{\frac{(2s-n)(q+1)}2+n}.$$ 
Looking for a critical point of $g$ is equivalent to looking for a solution of $f(t)=h(t)$. We notice that $f(t)=0$ has the solution
\[ \alpha =\lr{ \frac{1+C_1 \mu^{n-2s}} { 1-C_2\mu^n}}^{\frac{n-2s}{4s}},\] which is positive for $\mu\in(0,\mu_1)$. Moreover, $f$ is strictly decreasing on $(0,+\infty)$ for any $\mu \in (0,\mu_1) $, $h$ is strictly increasing  on $(0,+\infty)$ (recalling that $q\textcolor{black}{\geq}1$) and 
\bgs{ f(0)>0, \quad  f(\alpha)=0 , \quad \mbox{and} \quad h(0)=0 , \quad h(\alpha)>0.}
From this it follows that there exists (and is unique) $t_{\mu} \in (0,\alpha)$ such that $f(t_{\mu})=h(t_{\mu})$ (hence $g'(t_{\mu})=0$). Notice also that $g'(t)>0$ on $(0,t_{\mu})$ and   $g'(t)<0$ on $(t_{\mu},+\infty)$. This implies that $g(t_{\mu})$ is a maximum. 
Now, denoting by 
\[F(t):= \frac{t^2}2 \lr{1 +C_1\mu^{n-2s} }  -  \frac{t^{\cx}}{\cx} \lr{1 -C_2\mu^n} \]
we have that $F'(t)= tf(t) >0$ on $(0,\alpha)$, hence $F(t_{\mu})\leq F(\alpha)$. 
  On the other hand, $t_{\mu}>0$ and there exists $\delta >0$ independent on $\varepsilon$ and $ \mu$ such that $t_{\mu}\geq \delta$.
  Indeed, since $g'(t_{\mu})=0$, one has from \eqref{blaaa} that
  \[ 1<1 + C_1 \mu^{n-2s}  = t_{\mu}^{\cx-2}  (1-C_2\mu^n)+ C_3 \varepsilon t_{\mu}^{q-1} \mu^{\frac{(2s-n)(q+1)}2+n} <t_{\mu}^{\cx-2}  + C_3 t_{\mu}^{q-1} \] for any $\mu \in (0,\mu_1) $ and  $\varepsilon \in (0,1)$ and this implies the claim. 
  And so by renaming $C_3$ (that will depend on $\delta$ also) \textcolor{black}{and computing $F(\alpha)$} we have that 
  \bgs{ g(t)\leq  g(t_{\mu} )=&\;F(t_{\mu} )  - C_3 \varepsilon \frac{t_{\mu} ^{q+1}}{q+1} \mu^{\frac{(2s-n)(q+1)}2+n}  \leq F(\alpha)- C_3  \varepsilon\mu^{\frac{(2s-n)(q+1)}2+n}   \\
\leq &\; \lr{\frac{1}2-\frac{1}{\cx} } (1+C_1\mu^{n-2s})^{\frac{n}{2s}} (1-C_2\mu^{n})^{\frac{2s-n}{2s}}  - C_3  \varepsilon\mu^{\frac{(2s-n)(q+1)}2+n}    \\
=&\; \frac{s}n +C_1 \mu^{ n-2s} + o(\mu^{n-2s})  - C_3  \varepsilon\mu^{\frac{(2s-n)(q+1)}2+n} .} 
Renaming the constants, from \eqref{fltx} we have that \textcolor{black}{for any $\mu \in (0,\min\{\mu^{\star},\mu_1\})$}
 \[ \Fl (t\bar Z_{\mu,\xi}) \leq \frac{s}n S^{\frac{n}{2s}} +C_1 \mu^{ n-2s} + o(\mu^{n-2s})  - C_3  \varepsilon\mu^{\frac{(2s-n)(q+1)}2+n}.\] \textcolor{black}{This concludes the proof of Lemma \ref{haha}.}
\end{proof}

\subsection{Proof of Theorem \ref{scrtheorem}}\label{proofThm}

\noindent Let us take $\mu=\varepsilon^\beta$ with $\beta$ satisfying
\eqlab{ \label{beta1}\frac{2}{n(q+1)-2s(q+3)} <\beta<\frac{\delta}{\frac{(2s-n)(q+1)}{2}+n},}
and $\delta>0$ large enough to have both conditions satisfied
\textcolor{black}{(notice that both denominators are positive by hypothesis)}. This gives in particular that
\[\beta(n-2s)>1+\beta\left[ \frac{(2s-n)(q+1)}2+n \right].\] 
\noindent\textcolor{black}{ Consider now the case $n\in (2s,6s)$.} For $\varepsilon$ small enough, from Lemma \ref{haha} and renaming the constants, we obtain
\begin{equation*}\begin{split}
c_\varepsilon+C\varepsilon^{1+\delta}&\leq \frac{s}{n}S^{n/2s}+C\varepsilon^{\beta(n-2s)}+o(\varepsilon^{\beta(n-2s)})-C\varepsilon^{1+\beta\left({\frac{(2s-n)(q+1)}{2}+n}\right)}\\
&\leq \frac{s}{n}S^{n/2s}-C\varepsilon^{1+\beta\left({\frac{(2s-n)(q+1)}{2}+n}\right)}<\frac{s}{n}S^{n/2s},
\end{split}\end{equation*}
that is assumption (i) of Theorem \ref{PSCthm} for $n\in(2s,6s)$. 
\medskip

On the other hand, if $n\geq 6s$ we have that
$$q\geq 1 >\frac{n}{n-s}\frac{n}{n-2s}-1,$$
\textcolor{black}{which assures that \[ \frac{2}{n(q+1)-2s(q+3)} <\frac{\cx}{(n-2s)[\cx-(q+1)]} .\] So we pick now $\beta$ with the additional condition \bgs{ \frac{2}{n(q+1)-2s(q+3)}<\beta<\frac{\cx}{(n-2s)[\cx-(q+1)]} }(still taking $\delta>0$ such that \eqref{beta1} is satisfied).
In particular we have that \[ 1+\beta\left[\frac{(2s-n)(q+1)}2+n \right]< \frac{\cx}{\cx-(q+1)}.\] 
Therefore for $\varepsilon$ small enough, we get from Lemma \ref{haha} that
\bgs{ c_\varepsilon+c_1\eps^{1+\delta}+\overline c\eps^{\frac{\cx}{\cx-(q+1)}}<&\; \frac{s}n +C\eps^{\beta(n-2s)} -C \varepsilon^{1+\beta\left({\frac{(2s-n)(q+1)}{2}+n}\right)}\\ <&\;\frac{s}{n}S^{n/2s} -C  \varepsilon^{1+\beta\left({\frac{(2s-n)(q+1)}{2}+n}\right)} <\frac{s}{n}S^{n/2s},}
that is assumption (i) of Theorem \ref{PSCthm} for $n>6s$.}
%

\noindent Hence, Theorem \ref{PSCthm} yields that the operator $\Fl$ satisfies the Palais-Smale condition. Moreover, Lemma \ref{haha} assures that it has the geometry of Mountain Pass type and therefore we conclude the existence of a critical point of $\Fl$. This implies the existence of a positive solution of \eqref{scrproblem} and concludes the proof of Theorem \ref{scrtheorem}. 

\chapter{Nonlocal phase transitions }\label{S:NP}
\begin{abstract}{We consider in this chapter a nonlocal phase transition model, in particular described by the Allen-Cahn equation. We deal here with a two-phase transition model, in which a fluid can reach two pure phases forming an interface of separation. The aim is to describe the pattern and the separation of the two phases, focusing on the study of
long range interactions that naturally leads to the analysis of phase transitions and interfaces of nonlocal type. The formation of the interface is driven by a variational principle, and here the kinetic energy is modified to take into account far away changes in phase (though the influence is weaker and weaker towards infinity). A fractional analogue of a conjecture of De Giorgi, that deals with possible one-dimensional symmetry of entire solutions naturally arises from treating this model, and will be consequently presented. }
\end{abstract}

\bigskip 
\bigskip

We consider
a nonlocal phase transition model, \label{S:NP:P} in particular described by the Allen-Cahn equation. A fractional analogue of
a conjecture of De Giorgi, that deals with possible one-dimensional symmetry of entire solutions, naturally arises from treating this model, and will be consequently presented. There is a very interesting connection with nonlocal minimal surfaces, that will be studied in Chapter \ref{nlms}.

  We introduce briefly the classical case\footnote{We would like to thank Alberto Farina who, during a summer-school in Cortona (2014), gave a beautiful introduction on phase transitions in the classical case.}. The Allen-Cahn equation has various applications, for instance, in the study of interfaces (both in gases and solids), in the theory of superconductors and superfluids or in cosmology.  We deal here with a two-phase transition model, in which a fluid can reach two pure phases (say $1$ and $-1$) forming an interface of separation. The aim is to describe the pattern and the separation of the two phases. 
  
The formation of the interface is driven by a variational principle. Let $u(x)$ be the function describing the state of the fluid at position $x$ in a bounded region $\Omega$. As a first guess, the phase separation
can be modeled
via the minimization of the energy 
  \[ \E_0(u)= \int_{\Omega} W \big(u(x)\big) \, dx,\]
where $W$ is a double-well potential. More precisely, $W\colon [-1,1]\to [0,\infty)$ such that 
	\eqlab{ \label{dwp} W\in C^2\left([-1,1]\right), W(\pm 1)=0, W>0 \mbox{ in } (-1,1),\\
	 W'(\pm 1)= 0 \mbox{ and } W''(\pm 1) >0. }
The classical example is 
\begin{equation}\label{DF-WELL}
W(u):=\displaystyle \frac{(u^2-1)^2}{4}.\end{equation} 
On the other hand, the functional in~$\E_0$ 
produces an ambiguous
outcome, since any function $u$ that attains
only the values $\pm 1$ is a minimizer for the energy. That is,
the energy functional in~$\E_0$ alone cannot detect any
geometric feature of the interface.

To avoid this, one is led to consider an additional energy term that penalizes 
the formation of unnecessary interfaces. The typical energy
functional provided by this procedure has the form
\begin{equation}\label{5.2PRE}
\mathcal{E} (u):= \int_{\Omega} W\big(u(x)\big)\, dx + \frac{\eee^2}{2} \int_{\Omega} |\nabla u(x)|^2 \, dx.\end{equation}
In this way, the potential energy that forces the pure phases is compensated by a small term, that is due to the elastic effect of the reaction of the particles. 
As a curiosity, we point out that
in the classical mechanics framework, the analogue
of~\eqref{5.2PRE} is a Lagrangian action of a particle,
with~$n=1$, $x$ representing a time coordinate and $u(x)$
the position of the particle at time~$x$. In this framework
the term involving the square of the derivative of~$u$
has the physical meaning of a
kinetic energy. With a slight abuse of notation, we will keep
referring to the gradient term in~\eqref{5.2PRE}
as a kinetic energy. Perhaps a more appropriate term would be elastic
energy, but in concrete applications also 
the potential may arise from elastic reactions,
therefore the only purpose of these names in our framework
is to underline the fact that~\eqref{5.2PRE}
occurs as a superposition of two terms,
a potential one, which only depends on $u$,
and one, which will be called kinetic, which only depends
on the variation of $u$ (and which, in principle, possesses no
real ``kinetic'' feature). 
	
	The energy minimizers will be smooth functions, taking values between $-1$ and $1$, forming layers of interfaces of $\eee$-width. If we send $\eee \to 0$, the transition layer will tend to a minimal surface. To better explain this, consider the energy
	\eqlab{\label{gle1}  J(u)= \int \frac{1}2 |\nabla u|^2 +W(u) \,dx,} whose minimizers solve the Allen-Cahn equation
	\begin{equation}\label{ALC} -\Delta u+ W'(u)=0. \end{equation}  
In particular, for the explicit potential in~\eqref{DF-WELL},
equation~\eqref{ALC} reduces (up to normalizations constants) to
\begin{equation}\label{ALC-DG} -\Delta u= u-u^3.\end{equation}
In this setting,
the behavior of $u$ in large domains reflects into the behavior of the rescaled function $u_{\eee} (x)=u\big( \frac{x}{\eee}\big)$ in $B_1$. Namely, the minimizers of $J$ in $B_{1/ \eee}$ are the minimizers of $J_\eee$ in $B_1$, where $J_\eee$ is the rescaled energy functional
	\eqlab{ J_{\eee} (u)= \int_{B_1} \frac{\eee}2 |\nabla u|^2 + \frac{1}\eee W(u) \, dx.    \label{rescef1} }
We notice then that
	\[ J_\eee(u) \geq \int_{B_1} \sqrt{ 2W(u)}\, |\nabla u| \, dx\] 
which, using the Co-area Formula, gives 	\[  J_\eee(u) \geq
\int_{-1}^1 \sqrt{2W(t)}\,\mathcal {H}^{n-1} \left(\{u=t\}\right) \, dt.\] 
The above formula may suggest that the minimizers of~$J_\eee$ 
have the tendency to minimize the $(n-1)$-dimensional measure of
their level sets. It turns out that indeed the level sets of the
minimizers of $J_\eee$ converge 
to a minimal surface as $\eee \to 0$: for more details see, for instance,
\cite{savinminimal} and the references therein.
															
In this setting, a famous De Giorgi conjecture comes into place. In the
late 70's, De Giorgi conjectured that entire, smooth, monotone (in one direction), bounded solutions of~\eqref{ALC-DG}
in the whole of~$\R^n$ 
are necessarily one-dimensional, i.e., there exist $\omega \in S^{n-1}$ and $u_0: \R\to \R$ such that 	
	\[ u(x)=u_0(\omega \cdot x) \quad \mbox{for any} \quad x\in \Rn.\] In other words, the conjecture above asks
if the level sets of the entire, smooth, monotone (in one direction),
bounded solutions are necessarily hyperplanes, at least in dimension $n\leq 8$. 

One may wonder why the number eight has a relevance in the problem above.
A possible explanation for this is given by the Bernstein Theorem,
as we now try to describe.
 
The Bernstein problem asks on whether or not all 
minimal graphs (i.e.
surfaces that locally minimize the perimeter and that
are graphs in a given direction)
in $\R^n$ must be necessarily affine.
This is indeed true in dimensions $n$ at most eight.
On the other hand, in dimension $n\geq 9$ there are global minimal graphs that are not hyperplanes (see e.g.~\cite{GIUSTI}).

The link between the problem of Bernstein and the conjecture of De Giorgi could be suggested
by the fact that
minimizers approach minimal surfaces in the limit.
In a sense, if one is able to prove that the limit interface is a hyperplane
and that this rigidity property gets inherited by the level sets of
the minimizers~$u_\eee$ (which lie nearby such limit hyperplane),
then, by scaling back, one obtains that the level sets
of~$u$ are
also hyperplanes. Of course, this link between the two
problems, as stated here, is only heuristic,
and much work is needed to deeply understand the connections
between the problem of Bernstein and the conjecture of De Giorgi.
We refer to~\cite{FARINA-VALDINOCI-STATE} for a more detailed introduction
to this topic.
\bigskip
	
We recall that this conjecture by De Giorgi
was proved for $n\leq 3$, see~\cite{GGUI, BCN97, AC00}.
Also,  the case $4\leq n \leq 8$ with the additional assumption that 	
\begin{equation} \label{limdgs}
 \lim_{x_n\to \pm \infty} u(x',x_n)=\pm 1, \quad \mbox{for any} \quad x'\in \R^{n-1}
\end{equation}
was proved in \cite{S09}. 

For $n\geq 9$ a counterexample can be found in \cite{PKW08}.  Notice 
that, if the above limit is uniform (and the De Giorgi conjecture with 
this additional assumption is known as the Gibbons conjecture), the 
result extends to all possible $n$ (see for 
instance~\cite{Farina-Gibbons, FARINA-VALDINOCI-STATE}
for further details).

The goal of the next part of this thesis
is then to discuss an analogue of these questions
for the nonlocal case and present related results.

\section{The fractional Allen-Cahn equation} \label{sbsac} 

The extension of the Allen-Cahn equation in~\eqref{ALC}
from a local to a nonlocal 
setting has theoretical interest and concrete applications. 
Indeed, the study of
long range interactions naturally leads to the analysis of
phase transitions and interfaces of nonlocal type.

Given an open domain 
$\Omega\subset \Rn$ and the double well potential $W$
(as in~\eqref{DF-WELL}), our goal here is 
to study the fractional Allen-Cahn equation	
\[ \frlap u +W'(u)=0 \quad \mbox{in} \quad \Omega, \]
for~$s\in(0,1)$ (when~$s=1$, this equation reduces to~\eqref{ALC}).
The solutions are the critical points of the nonlocal energy 	
	\begin{equation} \label{enfac}
		 \mathcal{E}(u,\Omega) := \int_{\Omega} W\big(u(x)\big) \, dx + \frac12
		 \iint_{\R^{2n}\setminus (\C \Omega)^2} \frac{|u(x)-u(y)|^2}{|x-y|^{n+2s} }\, dx\, dy,	
	\end{equation}
up to normalization constants that we omitted for simplicity.
The reader can compare~\eqref{enfac} with~\eqref{5.2PRE}. Namely, in~\eqref{enfac} the kinetic energy
is modified, in order to take into account long range interactions.
That is, the new kinetic energy still depends on the variation of the
phase parameter.
But, in this case, far away changes in phase may influence each other
(though the influence is weaker and weaker towards infinity).
	
Notice that in the nonlocal framework, we prescribe the function on 
$\C \Omega \times \C \Omega$ and consider the kinetic energy on the remaining regions (see Figure \ref{fign:eac}). 
The prescription of values in~$\C \Omega \times \C \Omega$
reflects into the fact that the domain of integration of the kinetic
integral in~\eqref{enfac} is~$\R^{2n}\setminus (\C \Omega)^2$.
Indeed, this is perfectly compatible with
the local case in~\eqref{5.2PRE}, where the domain of
integration of the kinetic term was simply~$\Omega$.
To see this compatibility, one may think that 
the domain of integration of the kinetic energy
is simply the complement of
the set in which the values of the functions are prescribed.
In the local case of~\eqref{5.2PRE}, the values are prescribed on~$\partial\Omega$, or, one may say, in~$\C \Omega$: then the domain of integration
of the kinetic energy is the complement of~$\C \Omega$,
which is simply~$\Omega$. 
In analogy with that, in the nonlocal case of~\eqref{enfac},
the values are prescribed on~$\C \Omega\times\C \Omega=(\C \Omega)^2$,
i.e. outside~$\Omega$ for both the variables $x$ and~$y$.
Then, the kinetic integral is set on the complement of~$(\C \Omega)^2$,
which is indeed~$\R^{2n}\setminus(\C \Omega)^2$.

Of course, the potential energy has local features, both
in the local and in the nonlocal case,
since in our model the nonlocality only occurs in the kinetic interaction,
therefore the potential integrals are set over~$\Omega$
both in~\eqref{5.2PRE} and in~\eqref{enfac}.

For the sake of shortness, given disjoint sets~$A$, $B\subseteq\R^n$
we introduce the notation
\begin{equation*} u(A,B):=\int_{A}\int_{B}  \frac{|u(x)-u(y)|^2}{|x-y|^{n+2s} }\, dx\, dy,\end{equation*}
and we write the new kinetic energy in~\eqref{enfac} as
\begin{equation}\label{kenac}
 \mathcal {K} (u,\Omega) = \frac{1}{2} u(\Omega, \Omega) + u(\Omega, \C \Omega).
\end{equation} 

\begin{figure}[htpb]
	\includegraphics[width=0.40\textwidth]{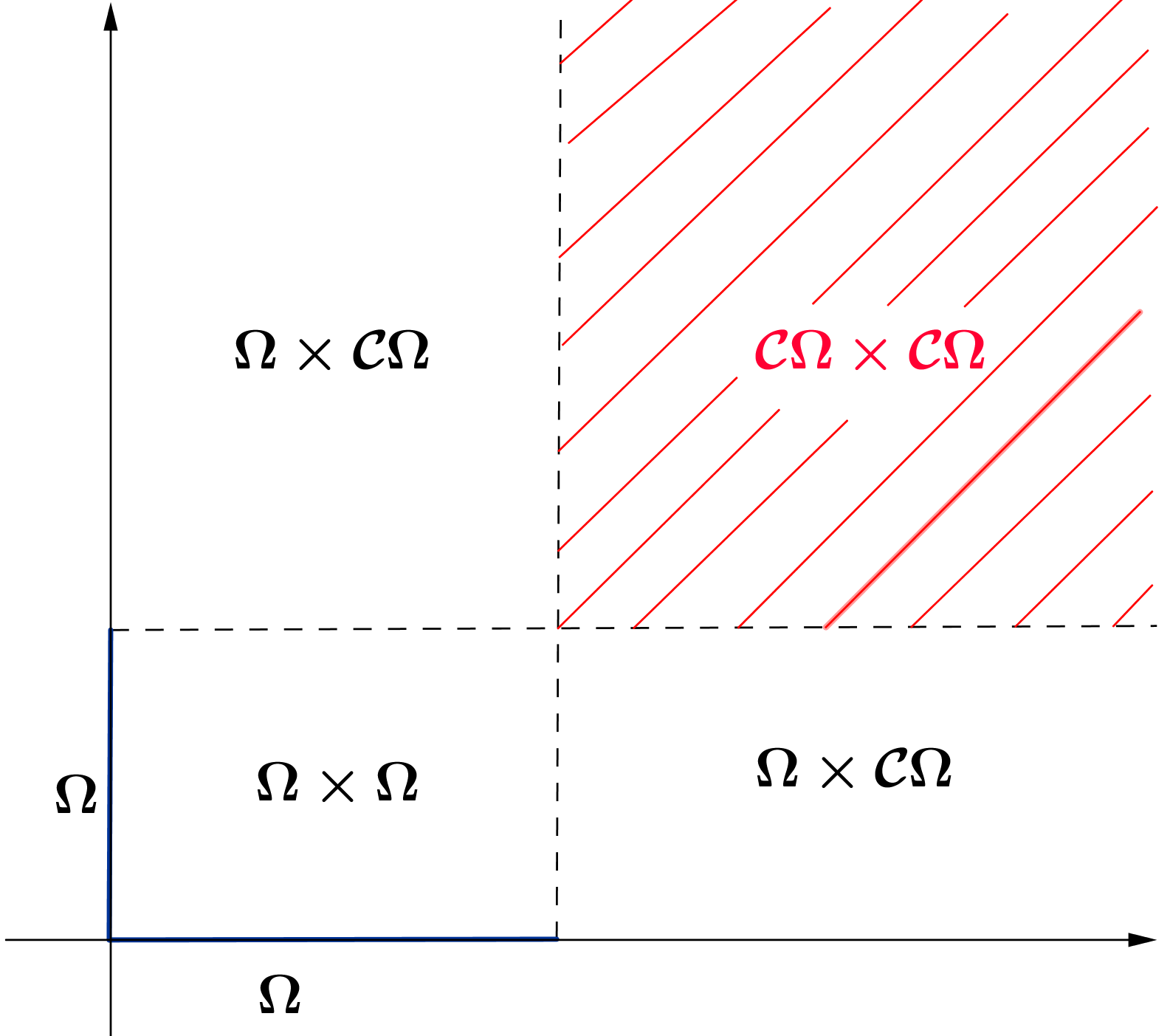}
	\caption{The kinetic energy}   
	\label{fign:eac}
\end{figure}
Let us define the energy minimizers and provide a density estimate for the minimizers.
\begin{defn}
The function $u$ is a minimizer for the energy $\E$ in $B_R$ if 
$ \E (u, B_R)\leq \E (v,B_R) $ for any $v$ such that $u=v $ outside $B_R$. 
\end{defn}

The energy of the minimizers satisfy the following
uniform bound property on large balls.
\begin{theorem} \label{acenest1}
Let $u$ be a minimizer in $B_{R+2}$ for a large $R$, say $R\geq1$. Then 
	\begin{equation}\label{THANKS}
\lim_{R \to +\infty} \frac{1}{R^n} \E (u,B_R) =0.\end{equation} More precisely,
	\[ \E(u,B_R) \leq 
					\begin{cases}
						CR^{n-1} \quad &\mbox{if} \quad s\in \Big(\frac{1}{2},1\Big),\\
						CR^{n-1}\log R \quad &\mbox{if} \quad s=\frac{1}{2},\\
						CR^{n-2s} \quad &\mbox{if} \quad s\in \Big(0,\frac{1}{2}\Big).
					\end{cases}
	\]
	Here, $C$ is a positive constant depending only on $n, s$ and $W$.
\end{theorem}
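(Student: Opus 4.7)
The plan is to prove the bound by using the minimality of $u$ against a carefully chosen competitor that coincides with $u$ outside $B_{R+2}$ and equals the pure phase $+1$ inside $B_R$, with a smooth interpolation on the annular shell $B_{R+2}\setminus B_R$. Since $\mathcal{E}(u,B_R)\le\mathcal{E}(u,B_{R+2})$ (the domain of integration of the kinetic term grows when we enlarge $B_R$ to $B_{R+2}$, as $(\mathbb{C}B_{R+2})^2\subset(\mathbb{C}B_R)^2$), it suffices to bound $\mathcal{E}(v,B_{R+2})$ for such a competitor $v$, and the scaling behavior of a single double integral will produce the three regimes in the statement.

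First I would fix a cutoff $\eta\in C^\infty_c(\mathbb{R}^n,[0,1])$ with $\eta\equiv 1$ on $B_R$, $\eta\equiv 0$ outside $B_{R+2}$ and $|\nabla\eta|\le C$, and define $v:=\eta+(1-\eta)u$, so that $v\equiv 1$ on $B_R$, $v\equiv u$ outside $B_{R+2}$, and $\|v\|_{L^\infty}\le 1$. The potential part is immediate: $W(v)=0$ on $B_R$ and $W(v)\le\|W\|_{L^\infty[-1,1]}$ on the shell $B_{R+2}\setminus B_R$, whose volume is bounded by $CR^{n-1}$, so $\int_{B_{R+2}}W(v)\,dx\le CR^{n-1}$. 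For the kinetic term, I would split
\[
\mathcal{K}(v,B_{R+2})=\tfrac12 v(B_R,B_R)+v(B_R,B_{R+2}\setminus B_R)+\tfrac12 v(B_{R+2}\setminus B_R,B_{R+2}\setminus B_R)+v(B_{R+2},\mathbb{C}B_{R+2}),
\]
where $v(A,B):=\iint_{A\times B}\frac{|v(x)-v(y)|^2}{|x-y|^{n+2s}}dx\,dy$. The first piece vanishes (since $v\equiv 1$ on $B_R$); the two pieces involving the shell can be bounded by routine arguments using $\|v\|_{L^\infty}\le 1$ and the volume estimate $|B_{R+2}\setminus B_R|\le CR^{n-1}$, yielding an $O(R^{n-1})$ contribution. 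The interaction $v(B_{R+2}\setminus B_R,\mathbb{C}B_{R+2})$ is also handled similarly. Thus everything reduces to controlling the long-range interaction
\[
v(B_R,\mathbb{C}B_{R+2})\le 4\,I_R,\qquad I_R:=\int_{B_R}\int_{\mathbb{C}B_{R+2}}\frac{dx\,dy}{|x-y|^{n+2s}}.
\]

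The core computation and main obstacle is the sharp scaling of $I_R$ in each regime. Scaling $x=Rx'$, $y=Ry'$ gives $I_R=R^{n-2s}\,J(2/R)$ where $J(\delta):=\int_{B_1}\int_{\mathbb{C}B_{1+\delta}}|x'-y'|^{-(n+2s)}dx'dy'$, so the behavior is dictated by how $J(\delta)$ diverges as $\delta\to 0^+$. For $s\in(0,1/2)$, $J(0)$ is finite (it equals the fractional perimeter $P_s(B_1)$), so $I_R\le CR^{n-2s}$. For $s\ge 1/2$ the inner integral diverges, and the divergence is localized near $\partial B_1$: parametrizing $x'$ and $y'$ by their distances $t,\tau$ to $\partial B_1$ (with surface area factor $\asymp 1$), the most singular contribution is $\int_0^{1}\int_\delta^\infty(t+\tau)^{-(1+2s)}d\tau\,dt$, which gives $J(\delta)\sim\log(1/\delta)$ at the critical exponent $s=1/2$ (hence $I_R\le CR^{n-1}\log R$) and $J(\delta)\sim\delta^{1-2s}$ for $s\in(1/2,1)$ (hence $I_R\le CR^{n-2s}\cdot R^{2s-1}=CR^{n-1}$). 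This boundary-layer analysis in the three regimes is the most delicate step.

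Finally, combining the potential bound $CR^{n-1}$, the shell kinetic bound $CR^{n-1}$ and the long-range estimate above, I would conclude $\mathcal{E}(v,B_{R+2})\le CR^{n-1}$, $CR^{n-1}\log R$ or $CR^{n-2s}$ according to the regime. Minimality then yields $\mathcal{E}(u,B_R)\le\mathcal{E}(u,B_{R+2})\le\mathcal{E}(v,B_{R+2})$, which gives the precise estimate; dividing by $R^n$ and sending $R\to\infty$ yields \eqref{THANKS} in all cases, since each of the three upper bounds is $o(R^n)$ when $n\ge 1$ (respectively using $n-1<n$, $n-1+\varepsilon<n$, and $n-2s<n$).
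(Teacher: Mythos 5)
Your overall strategy --- compare against a competitor that equals a pure phase inside $B_R$ and agrees with $u$ outside $B_{R+2}$, then exploit the sub-critical decay of the cross-interaction $I_R$ --- is sound in spirit, and your scaling analysis of $I_R$ correctly produces the three regimes (it matches the paper's estimate of $\int_{B_{R+2}}d(x)^{-2s}\,dx$). But there is a genuine gap in the treatment of the shell terms. You claim that $\frac{1}{2}\,v(A,A)$ and $v(A,\C B_{R+2})$ with $A=B_{R+2}\setminus B_R$ ``can be bounded by routine arguments using $\|v\|_{L^\infty}\le 1$ and the volume estimate.'' This is false: the double integral $\int_A\int_A|x-y|^{-(n+2s)}\,dx\,dy$ is divergent, so bounding $|v(x)-v(y)|^2\le 4$ gives $+\infty$, not $O(R^{n-1})$. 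To get a finite bound you would need regularity of $v$ near the diagonal, but $v=\eta+(1-\eta)u$ inherits on the shell the {\em a priori unknown} regularity of the minimizer $u$. Concretely,
\[
v(x)-v(y) = \big(1-u(y)\big)\big(\eta(x)-\eta(y)\big) + \big(1-\eta(x)\big)\big(u(x)-u(y)\big),
\]
so the shell-shell term $v(A,A)$ contains a copy of $u(A,A)$, which is precisely one of the quantities the theorem is trying to control, and the best pointwise estimate of the required form is $|v(x)-v(y)|^2\le(1+\epsilon)|u(x)-u(y)|^2+C_\epsilon|\eta(x)-\eta(y)|^2$. The factor $(1+\epsilon)>1$ means you cannot simply ``bound $\E(v,B_{R+2})$'' as a closed quantity, as you propose; you would have to write out the minimality inequality $\E(u,B_{R+2})\le\E(v,B_{R+2})$, carry the terms $\frac{1}{2}u(A,A)$ and $u(A,\C B_{R+2})$ across to the left, and absorb them by choosing $\epsilon<1$. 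That absorption argument is the missing step, and as written your outline (``bound $\E(v,B_{R+2})\le CR^{n-1}+4I_R$'') would not go through.

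The paper sidesteps this difficulty with a cleverer competitor. It takes a Lipschitz radial function $\psi$ interpolating linearly from $-1$ on $B_{R+1}$ to $+1$ outside $B_{R+2}$, and sets $v=\min\{u,\psi\}$. On the contact set $A=\{v=\psi\}\supseteq B_{R+1}$ one has $v\equiv\psi$, so $v(A,A)=\psi(A,A)$ is purely geometric and involves no unknown regularity of $u$; and for $x\in A$, $y\in\C A$ the $\min$ structure gives the sharp inequality $|v(x)-v(y)|\le\max\{|u(x)-u(y)|,|\psi(x)-\psi(y)|\}$, whose constant is $1$ rather than $1+\epsilon$, so that the $u$-dependent cross terms cancel {\em exactly} in the minimality inequality, leading to $\frac{1}{2}u(B_{R+1},B_{R+1})+\int_{B_{R+1}}W(u)\le\E(\psi,B_{R+2})$. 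The remaining quantities $\E(\psi,B_{R+2})$ and $u(B_R,\C B_{R+1})$ are then estimated through $d(x)=\max\{R+1-|x|,1\}$ and the integral $\int_{B_{R+2}}d(x)^{-2s}\,dx$, which is the same three-regime boundary-layer computation you carry out for $J(\delta)$. The $\min$-competitor (or, alternatively, a careful Young-inequality absorption) is the key ingredient your proposal lacks.
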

Notice that for $\displaystyle s\in \Big(0,\frac{1}{2}\Big)$, $R^{n-2s}>R^{n-1}$. These estimates are optimal (we
refer to \cite{SV14} for further details).
\begin{proof}
\begin{figure}[htpb]
	\includegraphics[width=0.75\textwidth]{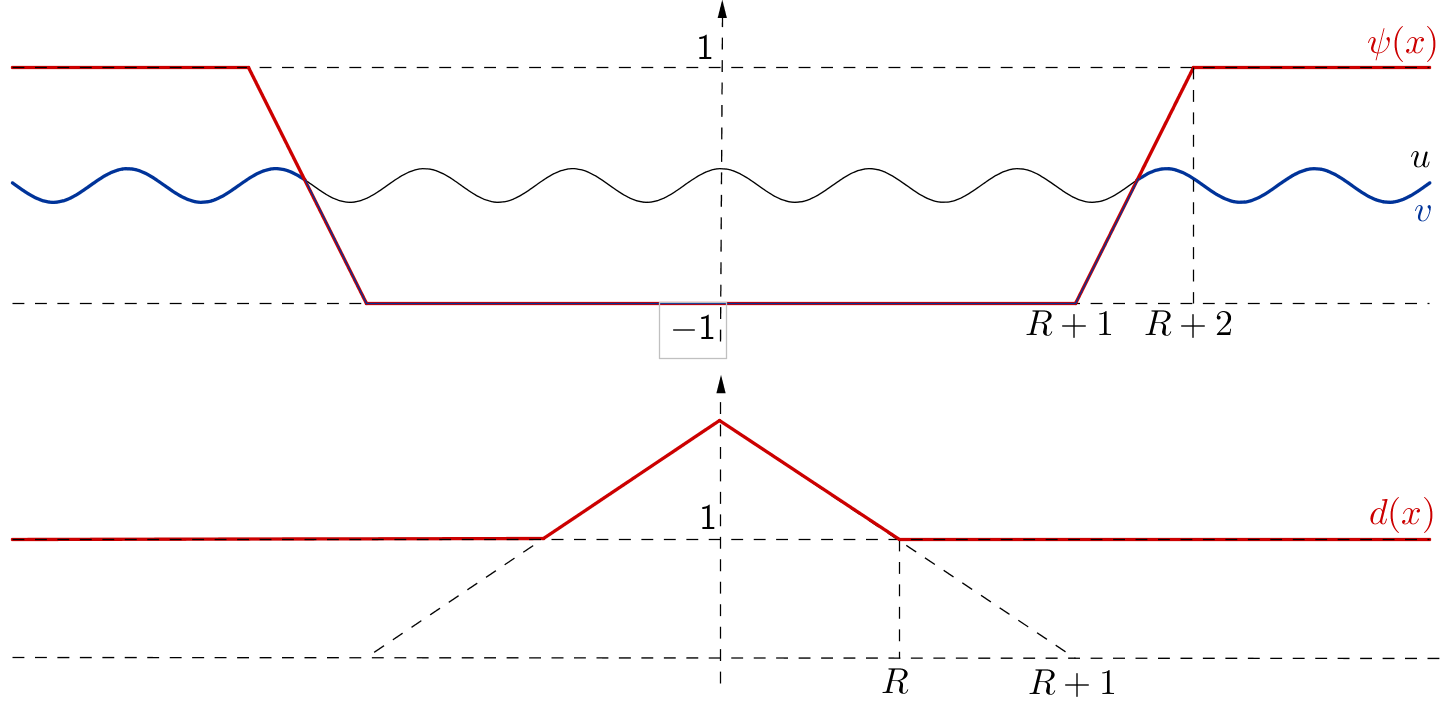}
	\caption{The functions $\psi$, $v$ and $d$}   
	\label{fign:psid}
\end{figure}
We remark that throughout this proof the constants may change value from line to line. 
We introduce at first some auxiliary functions. Let 
	\[\psi(x):=-1+2 \min\Big\{ (|x|-R-1)_+,1\Big\}, \quad v(x):=\min \Big\{u(x), \psi(x)\Big\},\]\[ d(x):= \max\Big\{ (R+1-|x|),1 \Big\}.\]
	Then, for $|x-y|\leq d(x)$ we have that 
	\begin{equation}\label{acpsi1} |\psi(x)-\psi(y)|\leq \frac{2 |x-y|}{d(x)}.\end{equation}
	Indeed, if $|x|\leq R$, then $d(x)=R+1-|x|$ and \[ |y|\leq |x-y|+|x|\leq d(x)+|x|\leq R+1,\] thus $\psi(x)-\psi(y)=0$ and the inequality is trivial. Else, if $|x|\geq R$, then $d(x)=1$,
and so the inequality is assured by the Lipschitz continuity of $\psi$ (with $2$ as the Lipschitz constant).
	
Also, we prove that we have the following estimates for the function $d$:
	\begin{equation} \label{acdx1} \int_{B_{R+2}} d(x)^{-2s}\, dx \leq 
		\begin{cases} 
			 CR^{n-1}  \quad & \mbox{if}  \quad s\in \Big(\frac{1}{2},1\Big),\\
			 CR^{n-1}\log R   \quad &\mbox{if} \quad s=\frac{1}{2},\\
			 CR^{n-2s}  \quad & \mbox{if}  \quad s\in \Big(0,\frac{1}{2}\Big) ,
		\end{cases}
	\end{equation}
	where $C=C(n,s)>0$.
	To prove this, we observe that
in the ring $B_{R+2}\setminus B_R$, we have~$d(x)=1$. Therefore,
the contribution to the integral in~\eqref{acdx1}
that comes from the ring~$B_{R+2}\setminus B_R$
is bounded by the measure of the ring, and so it
is of order $R^{n-1}$, namely
\begin{equation}\label{5.10bis}
\int_{B_{R+2}\setminus B_R} d(x)^{-2s}\,dx=|B_{R+2}\setminus B_R|\le CR^{n-1},
\end{equation}
for some~$C>0$.
We point out that this order is always negligible with respect to
the right hand side of~\eqref{acdx1}.\\
Therefore, to complete the proof of~\eqref{acdx1}, it only remains to estimate
the contribution to the integral coming from~$B_R$.
For this, we use
polar coordinates and perform
the change of variables $t=\rho/(R+1)$. In this way, we obtain that
	\[ \begin{split}
		\;&\int_{B_R} d(x)^{-2s} \, dx = C\,\int_0^R \frac{\rho^{n-1}}	{(R+1-\rho)^{2s}} \, d\rho = C\,(R+1)^{n-2s} \int_0^{1-\frac{1}{R+1}} t^{n-1} (1-t)^{-2s} \, dt\\
			\leq \;& C\,(R+1)^{n-2s} \int_0^{1-\frac{1}{R+1}}  (1-t)^{-2s} \, dt,
	 \end{split}\]
for some dimensional constant~$C>0$. 
Now we observe that
	\[  \int_0^{1-\frac{1}{R+1}}  (1-t)^{-2s} \, dt \leq
	 	\begin{cases} \displaystyle \int_0^1 (1-t)^{-2s} \, dt =C \quad &\mbox{if} \quad s\in \Big(0, \frac{1}{2}\Big),\\
	 					-\log (1-t) \Big|_0^{1-\frac{1}{R+1}} \leq \log R \quad &\mbox{if} \quad s= \frac{1}{2},\\
	 					-\frac{(1-t)^{1-2s}}{1-2s} \Big|_0^{1-\frac{1}{R+1}} \leq C R^{2s-1} \quad &\mbox{if} \quad s\in \Big( \frac{1}{2},1 \Big) .
		\end{cases}
	\]
The latter two formulas and~\eqref{5.10bis}
imply~\eqref{acdx1}.\\ 
Now, we
define the set \[ A:= \{ v=\psi\} \] and notice that $B_{R+1}\subseteq A \subseteq B_{R+2}$. We prove
that for any $x\in A$ and any $y\in \C A$ 
	\begin{equation}\label{8uUtY}
		|v(x)-v(y)| \leq \max\Big\{ |u(x)-u(y)|,|\psi(x)-\psi(y)|\Big\}.	
	\end{equation}
Indeed, for $x\in A$ and $y\in \C A$ we have that
\[ 	v(x) =\psi(x)\leq u(x) \quad \mbox{and} \quad  v(y)=u(y)\leq \psi(y),\]
therefore
	\[ v(x)-v(y)\leq u(x)-u(y) \quad \mbox{and} \quad v(y)-v(x)\leq \psi(y)-\psi(x),\]
	which establishes~\eqref{8uUtY}. This leads to 
	\begin{equation} \label{acv1}
		v(A,\C A) \leq u(A,\C A)+\psi(A,\C A).
	\end{equation}
Notice now that \[\E(u,B_{R+2})\leq \E(v,B_{R+2})\] since $u$ is a minimizer in $B_{R+2}$ and $v=u$ outside $B_{R+2}$. We have that
	\[ \begin{split}
		\E (u,B_{R+2}) =\;& \frac{1}{2} \;u(B_{R+2},B_{R+2}) + u(B_{R+2}, \C B_{R+2}) +\int_{B_{R+2}} W(u) \, dx\\
						=\;& \frac{1}{2}\; u(A,A) + u(A,\C A) \\
						\;& + \frac{1}{2} \;u(B_{R+2}\setminus A,B_{R+2}\setminus A) + u(B_{R+2}\setminus A, \C B_{R+2}) \\
						\;& + \int_{A} W(u) \, dx+ \int_{B_{R+2}\setminus A} W(u) \, dx.
	\end{split}\]
	Since $u$ and $v$ coincide on $\C A$, by using the inequality \eqref{acv1} we obtain that
	\[ \begin{split}
		0\;\leq&\;  \E(v,B_{R+2})- \E(u,B_{R+2}) =\E(v,A)- \E(u,A)  \\= &\; \frac{1}{2}\; v(A,A)-\frac{1}{2} u(A,A) + v(A,\C A) -u (A,\C A) + \int_{A} \Big(W(v)-W(u)\Big) \, dx \\
		 \leq & \;  \frac{1}{2} \; v(A,A)-\frac{1}{2} u(A,A)  + \psi (A,\C A) +  \int_{A}\Big( W(v)-W(u) \Big)\, dx.
	\end{split}\]
Moreover, $v=\psi$ on $A$ and we have that
		\[ \frac{1}{2}\; u(A,A) + \int_{A} W(u) \, dx \leq  \frac{1}{2} \; \psi (A, A) + \psi (A, \C A)  +\int_{A} W(\psi) \, dx )=\E(\psi,A),\]
		and therefore, since $B_{R+1}\subseteq A \subseteq B_{R+2}$,
		\begin{equation}\label{acbr1} \frac{1}{2} \;u(B_{R+1},B_{R+1}) + \int_{B_{R+1}} W(u)\, dx \leq \E(\psi,B_{R+2}).\end{equation}
We estimate now $\E(\psi, B_{R+2})$. For a fixed $x \in B_{R+2}$ we observe that
		\[ \begin{split} \int_{\Rn}  \frac{|\psi(x)-\psi(y)|^2}{|x-y|^{n+2s}} \, dy = \al \int_{|x-y|\leq d(x)} \frac{|\psi(x)-\psi(y)|^2}{|x-y|^{n+2s}} \, dy  + \int_{|x-y|\geq d(x)} \frac{|\psi(x)-\psi(y)|^2}{|x-y|^{n+2s}} \, dy \\
				\leq \al  C \bigg( \frac{1}{d(x)^{2}} \int_{|x-y|\leq d(x)} |x-y|^{-n-2s+2} \, dy + \int_{|x-y|\geq d(x)} |x-y|^{-n-2s} \, dy \bigg),\end{split}\]
		where we have used \eqref{acpsi1} and the boundedness of $\psi$. Passing to polar coordinates, we have that
		\[ \begin{split}
			\int_{\Rn} \frac{|\psi(x)-\psi(y)|^2}{|x-y|^{n+2s}} \, dy \leq&\; C\bigg( \frac{1}{d(x)^{2}} \int_0^{d(x) } \rho^{-2s+1} \, d\rho + \int_{d(x)}^{\infty} \rho^{-2s-1}\, d\rho \bigg) \\ =&\;C d(x)^{-2s}. 
	\end{split}\]
Recalling
that~$\psi(x)=-1$ on $B_{R+1}$ and $W(-1)=0$, we obtain that
 \[ \begin{split}
 	\E(\psi, B_{R+2}) =&\;\int_{B_{R+2}}\int_{\Rn} \frac{|\psi(x)-\psi(y)|^2}{|x-y|^{n+2s}} \, dy \, dx + \int_{B_{R+2}} W(\psi) \, dx \\
 	\leq&\; C\int_{B_{R+2}} d(x)^{-2s} dx + \int_{B_{R+2}\setminus B_{R+1}} W(\psi)\,dx .
 \end{split}\]
Therefore, making use of~\eqref{acdx1},
\begin{equation}\label{acpsi2} \E(\psi, B_{R+2}) \leq \begin{cases} 
			CR^{n-1} \quad &\mbox{if} \quad s\in \Big(\frac{1}{2},1\Big),\\
			CR^{n-1}\log R \quad &\mbox{if} \quad s= \frac{1}{2},\\
			CR^{n-2s} \quad &\mbox{if} \quad s\in \Big(0,\frac{1}{2}\Big).
		\end{cases}\end{equation} 
For what regards the right hand-side of inequality \eqref{acbr1}, we have that
		\eqlab{\label{bla111}
			\frac{1}{2} \;u(B_{R+1},B_{R+1}) + \int_{B_{R+1}} W(u)\, dx  \geq&\;   \frac{1}{2} \; u(B_{R},B_{R}) +u(B_R, B_{R+1}\setminus B_R) \\
			&\; +\int_{B_{R}} W(u)\, dx . }
We prove now that
\begin{equation}\label{5.16bis}
u(B_R,\C B_{R+1})  \leq  C\int_{B_{R+2}} d(x)^{-2s}\, dx .\end{equation}
For this, we observe that if $x\in B_R$, then~$d(x)=R+1-|x|$.
So, if~$x\in B_R$ and~$y \in \C B_{R+1}$, then
\[|x-y|\geq |y|-|x| \geq R+1-|x| =d(x). \] Therefore, by changing variables $z=x-y$ and then passing to polar coordinates, we have that
	\bgs{ u(B_R,\C B_{R+1}) \leq&\; 4\,\int_{B_R} dx \int_{\C B_{d(x)}} |z|^{-n-2s} \, dz\\
				\leq&\; C\,\int_{B_R} dx \int_{d(x) }^{\infty} \rho^{-2s-1} \, d\rho \\
				=&\;C\, \int_{B_R} d(x)^{-2s} \, dx .}
This establishes~\eqref{5.16bis}.\\
	 Hence, by \eqref{acdx1} and~\eqref{5.16bis}, we have that
	 	\begin{equation} \label{acpsi3} 
u(B_R,\C B_{R+1})  \leq  C\int_{B_{R+2}} d(x)^{-2s}\, dx \leq 
		\begin{cases} 
			 CR^{n-1}  \quad & \mbox{if}  \quad s\in \Big(\frac{1}{2},1\Big),\\
			 CR^{n-1}\log R   \quad &\mbox{if} \quad s=\frac{1}{2},\\
			 CR^{n-2s}  \quad & \mbox{if}  \quad s\in \Big(0,\frac{1}{2}\Big) .
		\end{cases}\end{equation}
We also observe that, by adding $u(B_R,\C B_{R+1}) $ to inequality \eqref{bla111}, we obtain that
	\[ \begin{aligned} 	\frac{1}{2} \al  u(B_{R+1},B_{R+1}) + \int_{B_{R+1}} W(u)\, dx  + u(B_R,\C B_{R+1}) 
	\\
		&\qquad\ge   \frac{1}{2} \; u(B_{R},B_{R}) +u(B_R, B_{R+1}\setminus B_R) +\int_{B_{R}} W(u) \, dx +u(B_R,\C B_{R+1})
\\ &\qquad=\E(u,B_R)
.\end{aligned}\] 
This and \eqref{acbr1} give that 
		\[ \E(u,B_R)\leq \E(\psi,B_{R+2} )+  u(B_R,\C B_{R+1}).\] 
Combining this with the estimates in \eqref{acpsi2} and \eqref{acpsi3},
we obtain the desired result.
\end{proof}

Another type of estimate can be given in terms of the level sets of the minimizers (see Theorem 1.4 in \cite{SV14}).

\begin{theorem}\label{TY78UU}
Let u be a minimizer of $\E$ in $B_R$. Then for any $\theta_1, \theta_2 \in (-1,1)$ such that \[ u(0)>\theta_1\] we have that
there exist~$\overline R$ and~$C>0$ such that
	\[ \Big| \{ u>\theta_2\} \cap B_R\Big| \geq C R^n\]
if $R \geq \overline R(\theta_1, \theta_2)$. The constant $C > 0$ depends only on $n$, $s$ and $W$ and $\overline R(\theta_1, \theta_2)$ is
a large constant that depends also on $\theta_1$ and $\theta_2$.
\end{theorem}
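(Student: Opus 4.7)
The goal is a density-type estimate for minimizers, in the spirit of Caffarelli--C\'ordoba for the classical Allen--Cahn equation, adapted to the nonlocal kinetic term in $\mathcal{E}$. Without loss of generality we may assume $\theta_1<\theta_2$: otherwise the set $\{u>\theta_2\}$ contains $\{u>\theta_1\}$, and continuity of $u$ near $0$ already gives positive measure, which can be upgraded by the same scheme below starting from a slightly larger $\theta_1$. Set
\[
V(r):=|\{u>\theta_2\}\cap B_r|,
\]
so that we want $V(R)\ge cR^{n}$ for $R\ge\overline R$. First I would note that by local $C^{\alpha}$-regularity of bounded minimizers (which can be assumed at this point of the exposition), the hypothesis $u(0)>\theta_1$ yields $V(r_0)\ge c_0>0$ for some small fixed $r_0=r_0(\theta_1,\theta_2)$; this provides the ``seed'' of the iteration.

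\textbf{Competitor and potential gain.} For every $r\in[r_0,R-2]$ consider a radial barrier $\psi_r$ with $\psi_r\equiv\theta_2+\delta$ on $B_{r}$, $\psi_r\equiv-1$ on $\mathbb{R}^{n}\setminus B_{r+1}$, and smooth monotone transition, where $\delta>0$ is chosen so that $W>\eta_0>0$ on $[-1,\theta_2+\delta]$. Define the admissible competitor $v:=\max(u,\psi_r)$, which coincides with $u$ outside $B_{r+1}$. Minimality yields $\mathcal{E}(u,B_{R})\le\mathcal{E}(v,B_{R})$, equivalently
\[
\int_{B_{r}}\!\!\bigl[W(u)-W(v)\bigr]\,dx \;\le\; \mathcal{K}(v,B_{r+1})-\mathcal{K}(u,B_{r+1}),
\]
with $\mathcal{K}$ as in \eqref{kenac}. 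On $\{u\le\theta_2\}\cap B_{r}$, one has $v=\psi_r\ge\theta_2+\delta$, hence $W(u)-W(v)\ge\eta_0-\max W$ is bounded below by a positive constant $c_1$; thus the left hand side dominates $c_1\bigl(|B_r|-V(r)\bigr)$.

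\textbf{Controlling the nonlocal kinetic cost.} This is the main obstacle, and it is precisely where the argument differs substantially from the local case. The difference $\mathcal{K}(v,B_{r+1})-\mathcal{K}(u,B_{r+1})$ decomposes into (i) a short-range piece supported where $v\ne u$, estimated by the fractional perimeter of $\{v=\psi_r\}$, and (ii) a long-range tail coming from interactions of $\{v\ne u\}\subset B_{r+1}$ with the complement of $B_{R}$, which is a purely nonlocal phenomenon. The short-range piece is bounded by $C\bigl(|B_{r+1}|-V(r+1)\bigr)^{(n-2s)/n}$ via the fractional isoperimetric/Sobolev inequality (Proposition in Section \ref{sobineq}), while the long-range tail is absorbed using Theorem \ref{acenest1}: the global bound $\mathcal{E}(u,B_{\rho})=o(\rho^{n})$ applied on a dyadic sequence of scales produces a summable control of the far interactions of the form $\int_{B_{r+1}}\!dx\int_{\mathbb R^n\setminus B_{R}}\!|x-y|^{-n-2s}\,dy$, which is the delicate bookkeeping. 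Combining these ingredients yields
\[
|B_r|-V(r) \;\le\; C\bigl(V(r+1)-V(r)\bigr)^{(n-2s)/n} + \text{(negligible tails)}.
\]

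\textbf{Iteration.} Rewriting the previous inequality in terms of the monotone quantity $\widetilde V(r):=|B_r|-V(r)$ one obtains a discrete differential inequality
\[
\widetilde V(r+1)-\widetilde V(r) \;\le\; -c\,\widetilde V(r)^{\alpha} + C r^{n-1},
\]
for some $\alpha\in(0,1)$ depending on $n,s$. A standard comparison with the scalar ODE $y'=-cy^{\alpha}+Cr^{n-1}$ forces either $\widetilde V(r)$ to vanish at some finite scale (in which case $V(r)=|B_r|$ trivially) or to remain bounded by $C'r^{n-1}$, which in either case gives $V(R)\ge cR^{n}$ for $R\ge\overline R(\theta_1,\theta_2)$. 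The base step of this iteration is supplied by the seed $V(r_0)\ge c_0$ from the first paragraph, and the constants $c,C$ depend only on $n,s,W$ as claimed. The hardest piece of the plan, as flagged above, is the quantitative handling of the nonlocal tail interactions, which requires using the sharp scale-by-scale form of Theorem \ref{acenest1} rather than only the qualitative limit \eqref{THANKS}.
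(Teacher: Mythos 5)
Your proposal correctly identifies the Caffarelli--C\'ordoba strategy as the right framework, but the implementation contains a directional error that invalidates the argument. You take the competitor $v=\max(u,\psi_r)$ with $\psi_r$ large on $B_r$, i.e.\ you \emph{raise} $u$ inside the ball. Minimality then gives $\int W(u)-W(v)\le\mathcal K(v)-\mathcal K(u)$, and you claim that on $\{u\le\theta_2\}\cap B_r$ the integrand $W(u)-W(v)$ is bounded below by a positive constant. This is false: on that set $v=\theta_2+\delta$ so $W(v)=W(\theta_2+\delta)>0$, while $u$ can be arbitrarily close to $-1$ where $W(u)\approx0$, making $W(u)-W(v)$ \emph{negative}. (Your stated choice ``$\delta$ such that $W>\eta_0>0$ on $[-1,\theta_2+\delta]$'' is impossible since $W(-1)=0$, and your written bound ``$\ge\eta_0-\max W$'' is a negative number.) The potential cannot pay for the kinetic cost of raising $u$; it can only pay when you \emph{lower} $u$ toward the well $-1$, so the competitor must be $v=\min(u,w)$ with $w$ close to $-1$ inside the ball and equal to $1$ outside. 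This is what the paper does, using the barrier $w$ from Lemma~\ref{bW1} (Lemma~3.1 in \cite{SV14}), which crucially satisfies $-\frlap w\le\tau(1+w)$ and $1+w\sim(R+1-|x|)^{-2s}$: this quantitative sub-solution property is what lets the nonlocal curvature term be absorbed, replacing your vaguer appeal to Theorem~\ref{acenest1} for the tails.

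A second issue is the claimed discrete inequality $\widetilde V(r+1)-\widetilde V(r)\le-c\,\widetilde V(r)^{\alpha}+Cr^{n-1}$ for $\widetilde V(r)=|B_r|-V(r)$. Since $\widetilde V(r)$ grows at most like $r^n$, the term $-c\,\widetilde V(r)^\alpha$ with $\alpha=(n-2s)/n$ would eventually dominate $Cr^{n-1}$, forcing the increments $\widetilde V(r+1)-\widetilde V(r)$ to be very negative; but these increments are trivially bounded in absolute value by $|B_{r+1}\setminus B_r|\sim r^{n-1}$, so the inequality as stated is self-contradictory for large $r$. The paper instead reaches a \emph{scale-invariant} recursive inequality of the form $\rho^{2s}V(\rho-K)^{(n-2s)/n}\le CV(2\rho)$, obtained from the energy comparison $\mathcal K(u-v,B_R)\le C\int_0^R(R+1-t)^{-2s}V'(t)\,dt$ (coarea in the potential term) together with the lower bound $\mathcal K(u-v,B_R)\ge cV(R-K)^{(n-2s)/n}$ from the fractional Sobolev inequality \eqref{OK:sob:p}; the conclusion then follows from the purpose-built iteration Lemma~\ref{indargl}. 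Finally, note the paper only carries out this argument for $s\in(0,1/2)$; the range $s\in[1/2,1)$ requires different estimates on the Gagliardo seminorm (see \cite{SV14}), which your outline does not register.
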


The statement of Theorem \ref{TY78UU}
says that the level sets of minimizers always occupy
a portion of a large ball comparable to the ball itself.
In particular, both phases occur in a large ball,
and the portion of the ball occupied by each phase is comparable
to the one occupied by the other.\\
Of course, the simplest situation in which two phases split
a ball in domains with comparable, and in fact equal, size
is when all the level sets are hyperplanes.
This question is related to a fractional version
of a classical conjecture of
De Giorgi and
to nonlocal minimal surfaces, that we discuss in the following Section~\ref{sbsdg} and Chapter~\ref{nlms}.

Let us try now to give some details on the proof of the Theorem \ref{TY78UU} in the particular case in which $s$ is in the range $(0,1/2)$. The more general proof for all $s \in (0,1)$ can be found in \cite{SV14}, where one uses some estimates on the Gagliardo norm. In our particular case we will make use of the Sobolev inequality that we introduced in \eqref{OK:sob:p}. The interested reader can see \cite{SavVal11} for a more exhaustive explanation of the upcoming proof. 

\begin{proof}[Proof of Theorem \ref{TY78UU}]
Let us consider a smooth function $w$ such that $w=1$ on $\C B_R$ (we will take in sequel $w$ to be a particular barrier for $u$), and define 
	\[ v(x):= \min \{ u(x),w(x)\}.\] 
	Since $|u|\leq 1$, we have that $v=u$ in $\C B_R$. Calling $D=\left(\Rn\times \Rn\right)\setminus \left(\C B_R\times \C B_R \right)$ we have from definition \eqref{kenac} that
		\bgs{ \mathcal {K} (u-v,\al  B_R)  + \mathcal {K} (v,B_R) -\mathcal {K}(u,B_R)  \\ =\al
		\frac{1}{2} \iint_D \frac{ |(u-v)(x) -(u-v)(y)|^2 +|v(x)-v(y)|^2 -|u(x)-u(y)|^2}{|x-y|^{n+2s}} \, dx\, dy.}
		Using the identity $|a-b|^2 +b^2-a^2=2b(b-a)$ with $a=u(x)-u(y)$ and $b=v(x)-v(y)$ we get
		\bgs{ \mathcal {K} (u-v,B_R) \al  + \mathcal {K} (v,B_R) -\mathcal {K}(u,B_R) \\
		= \al  \iint_D \frac{ \left( (u-v)(x) -(u-v)(y)\right)\left(v(y)-v(x)\right)} {|x-y|^{n+2s}} \, dx\, dy.} 
	Since $u-v=0$ on $\C B_R$ we can extend the integral to the whole space $\Rn \times \Rn$, hence 
		\bgs{ \mathcal {K}   (u-v,B_R) \al + \mathcal {K} (v,B_R) -\mathcal {K}(u,B_R) \\
		= \al  \iint_{\Rn\times\Rn} \frac{ \left( (u-v)(x) -(u-v)(y)\right)\left(v(y)-v(x)\right)} {|x-y|^{n+2s}} \, dx\, dy.}
	Then  by changing variables and using the anti-symmetry of the integrals, we notice that
		\bgs{ \al \iint_{B_R\times B_R}    \frac{ \left( (u-v)(x) -(u-v)(y)\right)\left(v(y)-v(x)\right)} {|x-y|^{n+2s}} \, dx\, dy \\
		=\al   \iint_{B_R\times B_R}  \frac{ (u-v)(x) \left(v(y)-v(x)\right)} {|x-y|^{n+2s}} \, dx\, dy  - \iint_{B_R\times B_R} \frac{(u-v)(y)\left(v(y)-v(x)\right)} {|x-y|^{n+2s}} \, dx\, dy \\
		=\al 2  \iint_{B_R\times B_R}   \frac{ (u-v)(x) \left(v(y)-v(x)\right)} {|x-y|^{n+2s}} \, dx\, dy}
		and
		\bgs{ \al \iint_{B_R\times \C B_R}  \frac{ \left( (u-v)(x) -(u-v)(y)\right)\left(v(y)-v(x)\right)} {|x-y|^{n+2s}} \, dx\, dy \\
		 \al +   \iint_{\C B_R \times B_R}  \frac{ \left( (u-v)(x) -(u-v)(y)\right)\left(v(y)-v(x)\right)} {|x-y|^{n+2s}} \, dx\, dy  \\
			=\al \iint_{B_R\times \C B_R}   \frac{ (u-v)(x) \left(v(y)-v(x)\right)} {|x-y|^{n+2s}} \, dx\, dy -\iint_{\C B_R \times B_R} \frac{ (u-v)(y) \left(v(y)-v(x)\right)} {|x-y|^{n+2s}} \, dx\, dy  \\
			= \al 2 \iint_{B_R\times \C B_R}  \frac{  (u-v)(x) \left(v(y)-v(x)\right)} {|x-y|^{n+2s}} \, dx\, dy.}
	Therefore
		\bgs{ \al\mathcal {K}   (u-v,B_R)  + \mathcal {K} (v,B_R) -\mathcal {K}(u,B_R) \\
		= \al 2 \iint_{\Rn\times\Rn}   \frac{\left( u(x)-v(x)\right)\left(v(y)-v(x)\right) }{|x-y|^{n+2s}}\,dx \, dy= 2 \int_{\Rn} (u(x)-v(x))\left(\int_{\Rn} \frac{ v(y)-v(x)}{|x-y|^{n+2s}}\, dy\right)\, dx\\
		=\al 2 \int_{B_R\cap\{u>v=w\}} (u(x)-w(x))\left(\int_{\Rn} \frac{ v(y)-w(x)}{|x-y|^{n+2s}}\, dy\right)\, dx \\ \leq  \al  2 \int_{B_R\cap\{u>v=w\}} (u-w)(x)\left(\int_{\Rn} \frac{ w(y)-w(x)}{|x-y|^{n+2s}}\, dy\right)\, dx\\
		=\al 2 \int_{B_R\cap\{u>w\}} (u-w)(x)\left(- \frlap w \right)(x)\, dx.}
Hence
	\bgs{  \mathcal {K}  (u-v,B_R)  
	 \leq  \mathcal {K} (u,B_R) -\mathcal {K}(v,B_R) + 2 \int_{B_R\cap\{u>w\}} (u-w)\left(- \frlap w \right)\, dx.} By adding and subtracting the potential energy, we have that 
	\bgs{  \mathcal {K}  (u-v,B_R) \leq \al \E(u,B_R)-\E(v,B_R) +\int_{B_R} \left(W(v)-W(u)\right) \, dx \\ \al + 2 \int_{B_R\cap\{u>w\}} (u-w)\left(- \frlap w \right)\, dx} and since $u$ is minimal in $B_R$,
	\eqlab{\label{kuvr1} \mathcal {K}  (u-v,B_R) \leq \al   \int_{B_R\cap \{ u>w=v\}} \left(W(w)-W(u)\right) \, dx + 2 \int_{B_R\cap\{u>w\}} (u-w)\left(- \frlap w \right)\, dx.}
We deduce from the properties in \eqref{dwp} of the double-well potential $W$ that there exists a small constant $c>0$ such that
	\bgs{ &W(t)-W(r) \geq c(1+r)(t-r) +c(t-r)^2 & \mbox{ when } & -1\leq r\leq t\leq -1+c \\
		&W(r)-W(t) \leq \frac{1+r}{c} &\mbox{ when } &-1\leq r\leq t\leq 1.}
	We fix the arbitrary constants $\theta_1$ and $\theta_2$, take $c$ small as here above. Let then \[ \theta_{\star}:=\min\{\theta_1,\theta_2,-1+c\}.\] It follows that
	\eqlab{ \label{wwu} \int_{B_R\cap \{ u>w\} }\al \left(W(w)-W(u)\right) dx \\
	=\al  \int_{B_R\cap \{ \theta_{\star}>u>w\} }\left(W(w)-W(u)\right) dx 	+ \int_{B_R\cap \{ u>\max\{\theta_{\star},w\} \} }\left(W(w)-W(u)\right) dx \\
	\leq \al -c  \int_{B_R\cap \{ \theta_{\star}>u>w\} } (1-w)(u-w) \,dx - c\int_{B_R\cap \{ \theta_{\star}>u>w\} }(u-w)^2\, dx \\ 
	\al+ \frac{1}{c} \int_{B_R\cap \{ u>\max\{\theta_{\star},w\} \} } (1+w)\, dx\\
	\leq  \al -c  \int_{B_R\cap \{ \theta_{\star}>u>w\} } (1-w)(u-w) \,dx + \frac{1}{c} \int_{B_R\cap \{ u>\max\{\theta_{\star},w\} \} } (1+w)\, dx.}
Therefore, in \eqref{kuvr1} we obtain that 
\eqlab{\label{kuvr2} \mathcal {K}  (u-v,B_R) \leq \al   -c  \int_{B_R\cap \{ \theta_{\star}>u>w\} } (1-w)(u-w) \,dx + \frac{1}{c} \int_{B_R\cap \{ u>\max\{\theta_{\star},w\} \} } (1+w)\, dx  \\ \al+ 2 \int_{B_R\cap\{u>w\}} (u-w)\left(- \frlap w \right)\, dx.} 
We introduce now a useful barrier in the next Lemma (we just recall here Lemma 3.1 in \cite{SV14} - there the reader can find how this barrier is build):
\begin{lemma}\label{bW1} Given any $\tau\geq 0$ there exists a constant $C> 1$ (possibly depending on $n, s$ and $\tau$) such that: for any $R\geq C$ there exists a rotationally symmetric function $w \in C\left(\Rn, [-1+CR^{-2s},1]\right)$ with $w=1$ in $\C B_R$ and such that for any $x\in B_R$ one has that
	\eqlab{ \label{w1} \frac{1}{C} (R+1-|x|)^{-2s} \leq 1+w(x)\leq C (R+1-|x|)^{-2s} \quad \mbox{and} }
	\eqlab{\label{w2} -\frlap w(x)\leq \tau (1+w(x)).}
\end{lemma}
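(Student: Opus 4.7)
The plan is to exhibit an explicit rotationally symmetric barrier and then verify the four required properties. As a first ansatz I would try
\[
w(x) = \begin{cases} -1 + 2(R+1-|x|)^{-2s} & \text{if } |x| \leq R, \\ 1 & \text{if } |x| \geq R, \end{cases}
\]
which is rotationally symmetric, continuous across $\partial B_R$ (both branches equal $1$ at $|x|=R$), satisfies $w\equiv 1$ on $\C B_R$, and obeys the two-sided estimate $\frac{1}{C}(R+1-|x|)^{-2s}\leq 1+w(x)\leq C(R+1-|x|)^{-2s}$ with $C=2$ by direct computation. The range bound $w\in[-1+CR^{-2s},1]$ follows from $R+1-|x|\in[1,R+1]$ on $B_R$, so \eqref{w1} and the first three structural requirements are automatic.

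The real difficulty is the fractional Laplacian estimate $-\frlap w(x)\leq\tau(1+w(x))$ uniformly for $x\in B_R$. I would compute $\frlap w(x)$ from the principal value representation in \eqref{frlapdef} and split it into a near-field piece over $B_{(R-|x|)/2}(x)\subset B_R$ and a far-field piece over the rest. On the near-field the radial profile is $C^\infty$, so a second-order Taylor expansion around $x$ combined with the explicit derivatives of $(R+1-r)^{-2s}$ gives a bound of order $(R+1-|x|)^{-2s-2}(R-|x|)^{2-2s}$, which in turn is dominated by $1+w(x)$ times a dimensional constant. On the far-field, the integrand in the region $\C B_R$ equals $(w(x)-1)|x-y|^{-n-2s}$, which is nonpositive and produces an upper bound for $-\frlap w$ proportional to $(1-w(x))\int_{\C B_R}|x-y|^{-n-2s}\,dy$; the latter integral is of order $(R-|x|)^{-2s}$, yielding, after simplification, control by a constant multiple of $1+w(x)$.

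The main obstacle is twofold. First, the naive ansatz has a corner across $\partial B_R$ (the radial derivative jumps from $4s$ to $0$), so for $s\geq 1/2$ this corner yields an unbounded contribution to $\frlap w(x)$ as $|x|\to R^-$. Second, even at the origin the ratio $-\frlap w(0)/(1+w(0))$ tends to a strictly positive dimensional constant as $R\to\infty$, so no arbitrarily small $\tau>0$ could be accommodated by the ansatz alone. To cure both issues I would modify the radial profile in an annulus $\{R-\delta\leq|x|\leq R\}$ of fixed width $\delta=\delta(n,s,\tau)$, replacing the pure power by a $C^{1,\alpha}$ interpolation with $\alpha>2s-1$ between the interior expression $-1+2(R+1-|x|)^{-2s}$ and the outer value $1$, and additionally subtract a nonnegative correction $\psi$ supported in $B_R$ solving an approximate fractional equation $\frlap\psi+\tau\psi=g$ with $g\geq 0$ chosen to absorb the excess. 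The correction makes the ratio as small as desired at the cost of enlarging the constant $C$ (which now depends on $\tau$), while the boundary asymptotics $1+w\sim(R+1-|x|)^{-2s}$ are preserved up to constants. The hypothesis $R\geq C$ ensures that the interior estimate and the boundary modification are compatible on the common scale, closing the proof.
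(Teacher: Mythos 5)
The thesis does not actually prove Lemma~\ref{bW1}; it only recalls the statement and cites Lemma~3.1 of~\cite{SV14} for the construction, so your attempt can only be judged on its own terms. There it contains one misdiagnosis and one step that does not close.

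What you call the ``first obstacle'' is not an obstacle, because the sign of the corner contribution goes in your favor. The radial profile $\phi(r)=-1+2(R+1-r)^{-2s}$ is increasing and convex on $[0,R]$ and then constant, so $\phi'$ \emph{drops} from $4s$ to $0$ across $r=R$: this is a concave kink. Near a concave kink the second difference $2\phi(r)-\phi(r+h)-\phi(r-h)$ is $\geq 0$, so the corner contributes \emph{positively} to $\frlap w(x)$ for $x$ near $\partial B_R$ (for $s>1/2$ this contribution even blows up to $+\infty$ as $|x|\to R^-$). Since~\eqref{w2} is the lower bound $\frlap w(x)\geq-\tau\,(1+w(x))$, this only helps; moreover~\eqref{w2} is required only for $x$ in the open ball $B_R$, so no regularity of $w$ up to $\partial B_R$ is needed. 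The $C^{1,\alpha}$ interpolation across $\partial B_R$ that you propose is therefore unnecessary and addresses a non-problem.

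Your ``second obstacle'' is the real one, but the proposed remedy has the wrong sign and is circular. Writing $v_0:=1+w_0$ with your ansatz, the far-field piece from $\C B_R$ gives $-\frlap v_0(0)\approx c_{n,s}R^{-2s}$ with $c_{n,s}$ independent of $R$, while $\tau v_0(0)\approx 2\tau R^{-2s}$; the same scaling persists for all $|x|\lesssim R/2$, so for $\tau<c_{n,s}/2$ the inequality fails in the bulk, not just at the origin. Now if $\psi\geq0$ solves $\frlap\psi+\tau\psi=g\geq0$ and one sets $v:=v_0-\psi$, then
\begin{equation*}
\frlap v+\tau v=(\frlap v_0+\tau v_0)-g\leq\frlap v_0+\tau v_0,
\end{equation*}
so subtracting $\psi$ makes the defect \emph{worse}. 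Adding $\psi$ at least has the right sign, but then the step is circular: you must establish existence and nonnegativity of such a $\psi$ vanishing outside $B_R$, and then verify that $v_0+\psi$ still obeys~\eqref{w1} and the range bound $1+w\geq CR^{-2s}$, which is essentially as hard as constructing the barrier to begin with. The idea that is actually missing is that the lemma explicitly lets $C$ depend on $\tau$: replace the fixed coefficient $2$ by an amplitude $A=A(n,s,\tau)$ of order $1/\tau$, e.g.\ $1+w=\min\{2,\,A(R+1-|x|)^{-2s}\}$ on $B_R$ and $w=1$ outside. The far-field part of $-\frlap w$ stays of order $R^{-2s}$ with the same $\tau$-independent constant, but $1+w$ in the bulk is now of order $AR^{-2s}$, so the ratio $-\frlap w/(1+w)$ drops below $\tau$ once $A$ is large enough and $R\geq C$; the two-sided bound~\eqref{w1} and the range bound then hold with $C$ comparable to $A$. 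The one place a mollification \emph{is} needed is at the interior transition radius $R_A<R$ (so that $\frlap w$ is even defined there when $s\geq 1/2$), but since that kink is again concave the mollification only raises $\frlap w$ nearby and~\eqref{w2} survives. This amplitude scaling, not an auxiliary $\psi$, is what makes the constant $\tau$-dependent.
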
 
Taking $w$ as the barrier introduced in the above Lemma, thanks to \eqref{kuvr2} and to the estimate in \eqref{w2}, we have that
	\bgs{ \mathcal {K}  (u-v,B_R) \leq \al  -c  \int_{B_R\cap \{ \theta_{\star}>u>w\} } (1+w)(u-w) \,dx \\ \al + \frac{1}{c} \int_{B_R\cap \{ u>\max\{\theta_{\star},w\} \} } (1+w)\, dx  \\ \al + 		2\tau \int_{B_R\cap\{u>w\}} (u-w)(1+w) \, dx .}
Let then $\tau=\frac{c}2$, and we are left with
	\bgs{ \mathcal {K}  (u-v,B_R) \leq  \al c \int_{B_R\cap \{ u>\max\{\theta_{\star},w\} \}} (u-w)(1+w)\, dx \\ \al + \frac{1}{c} \int_{B_R\cap \{ u>\max\{\theta_{\star},w\} \} } (1+w)\, dx\\
	\leq \al C_1 \int_{B_R\cap \{ u>\max\{\theta_{\star},w\} \} } (1+w)\, dx ,}
	with $C_1$ depending on $c$ (hence on $W$).
Using again Lemma \ref{bW1}, in particular the right hand side inequality in \eqref{w1}, we have that
	\[ 	  \mathcal {K}  (u-v,B_R) \leq C_1 \cdot C \int_{B_R\cap \{ u>\max\{\theta_{\star},w\} \} } (R+1-|x|)^{-2s}.\]
We set
	\eqlab{ \label {vrrr} V(R):= |B_R\cap \{ u>\theta_{\star} \} | } and the Co-Area formula then gives
	\eqlab{ \label{strange1}  \mathcal {K}  (u-v,B_R) \leq C_2 \int_0^R (R+1-t)^{-2s} V'(t)\, dt,}
	where $C_2$ possibly depends on $n,s,W$.
	
	 We use now the Sobolev inequality \eqref{OK:sob:p} for $p=2$, applied to $u-v$ (recalling that the support of $u-v$ is a subset of $B_R$) to obtain that
	 	\eqlab{\label{sobk1} \mathcal K(u-v,B_R) \al =  \mathcal K(u-v,\Rn) = \iint_{\Rn \times \Rn} \frac{ |(u-v)(x)-(u-v)(y)|^2}{|x-y|^{n+2s}}\, dx\, dy \\
	 	\geq \al  \tilde C \|u-v\|^2_{L^{\frac{2n}{n-2s}}(\Rn)} = \tilde C \|u-v\|^2_{L^{\frac{2n}{n-2s}}(B_R)}.}
From \eqref{w1} one has that
	\[ w(x)\leq C(R+1-|x|)^{-2s} -1.\] We fix $K$ large enough so as to have $R\geq 2K$ and in $B_{R-K}$ 
		\[ w(x) \leq C(1+K)^{-2s} -1 \leq -1 + \frac{1+\theta_{\star}}2.\]
		Therefore in $B_{R-K}\cap \{u>\theta_{\star} \}$  we have that
		\[ |u-v|\geq u-w \geq u+1- \frac{1+\theta_{\star}}2\geq \frac{1+\theta_{\star}}2 .\] Using definition \eqref{vrrr}, this leads to
			\bgs{ \|u-v\|^2_{L^{\frac{2n}{n-2s}}(B_R)} =\al  \left( \int_{B_R} |u-v|^{\frac{2n}{n-2s}} dx\right) ^{\frac{n-2s}{n}}
			\geq   \left( \frac{1+\theta_{\star}}2\right)^{\frac{2n}{n-2s}} \left( \int_{ B_{R-K}\cap \{u>\theta_{\star} \} } dx \right)^{\frac{n-2s}{n}} \\
			\geq\al C_3 V(R-K)^{\frac{n-2s}{n}}.}
In \eqref{sobk1}  we thus have  	\[ \mathcal K (u-v,B_R) \geq \tilde C_3 V(R-K)^{\frac{n-2s}{n}}\] and from \eqref{strange1} it follows that
	\bgs{ C_4 V(R-K)^{\frac{n-2s}{n}} \leq \int_0^R (R+1-t)^{-2s} V'(t) \, dt.}
Let $R\geq \rho \geq 2K$. Integrating the latter integral from $\rho$ to $\displaystyle \frac{3\rho}2$ we have that
	\bgs{ \al C_4 \frac{\rho}2 V(\rho-K)^{\frac{n-2s}{n}} \leq  C_4   \int_\rho^{\frac{3\rho}2} V(R-K)^{\frac{n-2s}{n}} \, dR \\
		\leq \al \int_0^{\frac{3\rho}2}\left(\int_0^R (R+1-t)^{-2s} V'(t) \, dt\right) \, dR 	
		= \int_0^{\frac{3\rho}2} V'(t)  \left(\int_0^{\frac{3\rho}2} (R+1-t)^{-2s} \, dR\right)\, dt \\
		=\al\int_0^{\frac{3\rho}2} V'(t)  \frac{ \left(\frac{3\rho}2+1-t\right)^{1-2s}-1}{1-2s} \, dt.}
Since $1-2s>0$, one has for large $\rho$ that $\displaystyle \left(\frac{3\rho}2+1-t\right)^{1-2s}-1 \leq (2\rho)^{1-2s}$, hence, noticing that the function $V$ is nondecreasing,
	\bgs{  \frac{\rho}2 V(\rho-K)^{\frac{n-2s}{n}} \leq  C_5 \rho^{1-2s} \int_0^{2\rho} V'(t)\, dt
	\leq  C_5 \rho^{1-2s} V(2\rho).}
	Therefore 	\eqlab{ \label{rhovk1} \rho^{2s} V(\rho-K)^{\frac{n-2s}{n}} \leq2 C_5 V(2\rho).}
Now we use an inductive argument as in Lemma 3.2 in \cite{SV14}, that we recall here:
\begin{lemma} \label{indargl} Let $\sigma, \mu\in (0,\infty), \nu \in (\sigma,\infty) $ and $\gamma, R_0,C\in (1,\infty)$. Let $V \colon (0,\infty)\to (0,\infty)$ be a nondecreasing function. For any $r\in [R_0,\infty)$, let $\alpha(r) := \min\left\{ 1, \displaystyle \frac{ \log V(r)} {\log r} \right\} $. Suppose that $V(R_0)>\mu$  and 	\[ r^\sigma \alpha(r) V(r)^{\frac{\nu-\sigma}{\nu}} \leq C V(\gamma r),\]
for any $r\in [R_0,\infty)$. Then there exist $c\in (0,1)$ and $R_{\star} \in [R_0,\infty)$, possibly depending on $\mu,\nu, \gamma, R_0, C$ such that 
\[ V(r) >c r^{\nu} ,\]
for any $r\in [R_{\star},\infty)$. 
\end{lemma}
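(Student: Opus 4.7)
The plan is to prove Lemma~\ref{indargl} by iterating the hypothesis along the geometric sequence $r_k:=\gamma^k R_0$, $k\in\N_0$. Evaluating the inequality at $r=r_k$ produces the recursion
\[ V(r_{k+1})\,\geq\,\frac{1}{C}\,r_k^{\sigma}\,\alpha(r_k)\,V(r_k)^{\frac{\nu-\sigma}{\nu}}, \]
and the goal is to show that the normalized quantity $W_k:=V(r_k)/r_k^{\nu}$ is bounded below by some positive constant for all sufficiently large $k$. Once this is achieved on the discrete sequence, a monotonicity argument passes the estimate to all $r\in[r_k,r_{k+1})$: indeed, for such $r$ one has $V(r)\geq V(r_k)\geq c_0 r_k^{\nu}\geq c_0\gamma^{-\nu}r^{\nu}$, which gives the desired conclusion with $c:=c_0\gamma^{-\nu}$ and $R_\star:=r_{k_0}$ for a suitable $k_0$.

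My first step is to dispose of the factor $\alpha(r_k)$ by establishing that $V(r_k)\to+\infty$, so that eventually $V(r_k)\geq r_k$ and hence $\alpha(r_k)=1$. Starting from $V(R_0)>\mu>0$ and using that $V$ is nondecreasing, I would iterate the recursion in a crude form: on any range where $\alpha(r_k)=\log V(r_k)/\log r_k$, the inequality becomes
\[ V(r_{k+1})\,\geq\,\frac{r_k^{\sigma}\,\log V(r_k)}{C\log r_k}\,V(r_k)^{\frac{\nu-\sigma}{\nu}}, \]
and the factor $r_k^{\sigma}$ grows geometrically in $k$, which forces $V(r_k)$ to outgrow any polynomial eventually and, in particular, to exceed $r_k$ from some index $k_0$ onwards.

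The second step is the core iteration. Once $\alpha(r_k)=1$ for $k\geq k_0$, dividing the recursion by $r_{k+1}^{\nu}=\gamma^{\nu}r_k^{\nu}$ yields the clean self-similar inequality
\[ W_{k+1}\,\geq\,A\,W_k^{\beta},\qquad A:=\frac{1}{C\gamma^{\nu}},\quad \beta:=\frac{\nu-\sigma}{\nu}\in(0,1). \]
Unwinding the recursion gives $W_{k_0+j}\geq A^{1+\beta+\cdots+\beta^{j-1}}W_{k_0}^{\beta^{j}}$; as $j\to\infty$, the exponent $\beta^{j}\to 0$ so that $W_{k_0}^{\beta^{j}}\to 1$ (using $W_{k_0}>0$), while the geometric sum converges to $1/(1-\beta)=\nu/\sigma$. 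Therefore $W_{k_0+j}$ is bounded below by a positive constant depending only on $C,\gamma,\sigma,\nu$ and $W_{k_0}$, which provides the uniform bound $W_k\geq c_0$ needed in the final step.

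The main obstacle is the first step, i.e.\ ensuring that $\alpha(r_k)$ is eventually equal to $1$. This is delicate precisely because, for values of $r$ at which $V$ is subpolynomial, the factor $\log V(r)/\log r$ decays and threatens to stall the iteration; the trick is to exploit the geometric growth of $r_k^{\sigma}$, which beats any slow decay of $\alpha(r_k)$ after finitely many iterations, so that the recursion is self-improving. All the other ingredients (the algebraic manipulation with $W_k$, the limit of the geometric series, and the final monotonicity argument passing from $\{r_k\}$ to all $r\geq R_\star$) are routine once the alpha factor is under control.
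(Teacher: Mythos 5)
The overall architecture you propose — iterating along the geometric sequence $r_k=\gamma^kR_0$, passing to the normalized quantity $W_k=V(r_k)/r_k^\nu$, unwinding the self-similar recursion $W_{k+1}\geq AW_k^\beta$, and finally bridging from the lattice $\{r_k\}$ to all $r\geq R_\star$ by monotonicity — is sound, and Steps 2 and 3 are correct as you wrote them (the algebra $\sigma+\nu\beta=\nu$ checks out, and the geometric series plus $W_{k_0}^{\beta^j}\to1$ gives a uniform lower bound on $W_k$).

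The gap is in Step 1, which as stated is not merely delicate but actually false in the generality of the lemma. You assert that the geometric growth of $r_k^\sigma$ ``forces $V(r_k)$ to outgrow any polynomial eventually, and in particular to exceed $r_k$.'' But the fixed-point behavior of the very recursion you are iterating is $V(r_k)\sim r_k^\nu$, which is polynomial, not superpolynomial. Since the hypothesis only requires $\nu>\sigma>0$, the exponent $\nu$ can be $\leq1$, and in that case the conclusion $V(r)\geq cr^\nu$ with $c\in(0,1)$ is compatible with $V(r)<r$ for \emph{every} $r$, so $\alpha(r)=\log V(r)/\log r<1$ for all $r$ and your iteration never ``escapes'' the subpolynomial regime. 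The inference ``$V(r_k)\to+\infty$ hence eventually $V(r_k)\geq r_k$'' is therefore a non sequitur: both sides tend to infinity, and the recursion does not produce a winning exponent of $r_k$ beyond $\nu$. Even when $\nu>1$, so that $\alpha(r_k)=1$ does eventually hold, this is not obtained by a ``crude'' one-step estimate but by analyzing the linear recurrence for $w_k:=\log V(r_k)$ with its geometrically growing forcing term, which yields $w_k/\log r_k\to\nu$; you need to carry out that analysis to make the step rigorous.

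The correct repair, which keeps your framework essentially intact, is to replace ``$\alpha(r_k)=1$ eventually'' by the weaker but sufficient claim ``$\alpha(r_k)\geq\delta$ for all large $k$, for some $\delta>0$ depending on the data.'' To see it, write $\ell_k:=\log r_k$, $\tilde\alpha_k:=w_k/\ell_k$, and use $\alpha_k\geq\log\mu/\ell_k$ (so $\log\alpha_k/\ell_{k+1}\to0$) together with the recursion $\tilde\alpha_{k+1}\ell_{k+1}\geq\beta\tilde\alpha_k\ell_k+\sigma\ell_k+\log\alpha_k-\log C$ to conclude $\liminf_k\tilde\alpha_k\geq\sigma/(1-\beta)=\nu$, hence $\alpha_k\geq\min\{1,\nu\}/2$ for $k$ large. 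Feeding this into Step 2 with $A$ replaced by $A\delta$ closes the argument. Without this refinement your Step 1 does not hold, and the subsequent clean iteration $W_{k+1}\geq AW_k^\beta$ is only available in the special case $\nu>1$.
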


For $R$ large, one obtains from \eqref{rhovk1} and Lemma \ref{indargl} that
	\[ V(R) \geq c_0 R^n,\] for a suitable $c_0\in(0,1)$.
Let now \[ \theta^{\star} :=\max \{ \theta_1, \theta_2, -1+c\} .\] We have that
	\eqlab { \label{uteta}|\{ u>\al  \theta^{\star}\} \cap B_R | + |\{ \theta_{\star}<u<\theta^{\star}\}\cap B_R | 
	=  |\{ u>\theta_{\star}\} \cap B_R |
	 =  V(R)\geq c_0 R^n.}
Moreover, from \eqref{acenest1} we  have that for some $\overline c>0$
		\[ \E(u,B_R) \leq \overline c R^{n-2s} ,\] therefore
			\bgs{ \overline c R^{n-2s} \geq \al \E(u,B_R) \geq  \int_{ \{ \theta_{\star}<u<\theta^{\star}\}\cap B_R} W(u)\, dx 
			\geq   \inf_{t\in (\theta_{\star},\theta^{\star})}W(t)\, |\{ \theta_{\star}<u<\theta^{\star}\}\cap B_R |  .}
			From this and \eqref{uteta} we have that
				\bgs{ c_0 R^n \leq \overline C R^{n-2s} + |\{ u>\theta^{\star}\} \cap B_R |  ,}
				and finally \bgs{ |\{ u>\theta^{\star}\} \cap B_R |  \geq C R^n,}
				with $C$ possibly depending on $n,s,W$. This concludes the proof of Theorem \ref{TY78UU} in the case $s\in (0,1/2)$.
\end{proof}

\section{A nonlocal version of a conjecture by De Giorgi} \label{sbsdg}

In this section we consider the fractional 
counterpart of
the conjecture by De Giorgi that was discussed before in the classical case.
Namely,
we consider the nonlocal Allen-Cahn equation 
	\[ \frlap u + W'(u)=0 \quad \mbox{in} \quad \Rn, \]
where $W$ is a double-well potential, and $u$ is smooth, bounded and monotone in one direction, namely $|u|\leq 1$ and $\partial_{x_n} u >0$. We wonder if it is also true, at least in low
dimension, that $u$ is one-dimensional. In this case, the conjecture was initially proved for $n=2$ and $s=\frac{1}{2}$ in \cite{CM05}. In the case $n=2$, for any $s \in (0,1)$, the result is proved using the harmonic extension of the fractional Laplacian in \cite{CS15} and \cite{SV09}. For $n=3$, the proof can be found in \cite{CC10} for $s\in \Big[\frac{1}{2},1\Big]$. The conjecture is still open for $n=3$ and  $s\in \Big[0,\frac{1}{2}\Big]$ and for $n\geq 4$. Also, the Gibbons conjecture (that is, the 
De Giorgi conjecture with the additional condition that the limit in \eqref{limdgs} is uniform) is also true for any $s \in (0,1)$
and in any dimension~$n$, see~\cite{INDIANA}.
 
To keep the discussion as simple as possible,
we focus here on the case $n=2$ and any $s\in (0,1)$, providing an alternative proof that does not make use of the harmonic extension. This part is completely new and not available in the literature. The proof is indeed quite
general and it will be further exploited in \cite{CV15}.
 
 We define (as in \eqref{kenac}) the total energy of the system to be 
 \begin{equation}\label{5.17bis}
\E(u, B_R) =  \mathcal{K}_R(u) + \int_{B_R} W(u) dx,\end{equation}
 where the kinetic energy is 
\begin{equation}  \label{dgkenac} {\mathcal{K}}_R(u):=\frac{1}{2}
\iint_{Q_R} \frac{|u(x)-u(\bar x)|^2}{|x-\bar x|^{n+2s}}\,dx\,d\bar x,\end{equation}
and~$Q_R:= \R^{2n}\setminus (\C B_R)^2= (B_R\times B_R)\cup (B_R\times\C B_R)\cup
(\C B_R\times B_R)$. We recall that the kinetic energy can also be written as
	\begin{equation} \label{dgkr}  {\mathcal{K}}_R(u) = \frac{1}{2} u(B_R,B_R) + u(B_R,\C B_R),\end{equation}
	where for two sets $A,B$ \begin{equation} \label{dguab} u(A,B)= \int_A\int_B \frac{|u(x)-u(\bar x)|^2}{|x-\bar x|^{n+2s}}\,dx\,d\bar x.\end{equation}
	
The main result of this section is the following. 

\begin{theorem}\label{dgdim2}
Let $u$ be a minimizer of the energy defined in \eqref{5.17bis}
in any ball of $\R^2$. Then $u$ is $1$-D, i.e.  there exist~$\omega \in S^{1}$ and $u_0: \R\to \R$ such that 	
	\[ u(x)=u_0(\omega \cdot x) \quad \mbox{for any} \quad x\in \R^2.\] 	
\end{theorem}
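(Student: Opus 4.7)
The plan is to prove that $\nabla u$ points in a fixed direction at every point, combining three ingredients: the stability inequality coming from minimality, the fact that each partial derivative $\phi_e:=\partial_e u$ solves the linearized equation, and the energy estimate of Theorem~\ref{acenest1}, which in dimension $n=2$ gives $\E(u,B_R)=o(R^2)$ in every regime of $s\in(0,1)$.

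First I would record the two basic identities. Computing the second variation of $\E(\cdot,B_R)$ at $u$ in a direction $\phi\in C^\infty_c(B_R)$ yields the stability inequality
\begin{equation*}
\int_{\R^2}W''(u)\phi^2\,dx+\mathcal{K}(\phi)\ge 0,
\end{equation*}
where $\mathcal{K}(\phi)=\tfrac12\iint(\phi(x)-\phi(y))^2|x-y|^{-2-2s}\,dx\,dy$ is the full Gagliardo seminorm (valid because $\phi$ has compact support). On the other hand, since $u$ is smooth and bounded, differentiating $(-\Delta)^s u+W'(u)=0$ in any $e\in S^1$ shows that $\phi_e=\partial_e u$ solves the linearized equation $(-\Delta)^s\phi_e+W''(u)\phi_e=0$.

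Next, I would insert $\phi=\eta\,\phi_e$ into the stability inequality and subtract the integral identity obtained by multiplying the linearized equation by $\eta^2\phi_e$ and integrating. A careful symmetrization of the resulting double integrals (the fractional analogue of the Sternberg--Zumbrun calculation) should produce, for any two linearly independent directions $e_1,e_2\in S^1$, an estimate of the schematic form
\begin{equation*}
\iint\frac{\big(\phi_{e_1}(x)\phi_{e_2}(y)-\phi_{e_2}(x)\phi_{e_1}(y)\big)^2}{|x-y|^{2+2s}}\,\eta(x)\eta(y)\,dx\,dy\;\le\;C\,\|\nabla u\|_{L^\infty}^{2}\,[\eta]_{H^s(\R^2)}^{2},
\end{equation*}
whose left-hand side is a Wronskian-type quantity that vanishes precisely when $\nabla u(x)$ and $\nabla u(y)$ are parallel.

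The last step is to let $\eta$ approach $1$ through a well-chosen sequence $\eta_R$ of cutoffs while keeping $[\eta_R]_{H^s(\R^2)}^{2}$ small. A naive cutoff supported in $B_{2R}$ gives only $[\eta_R]_{H^s}^2\le CR^{2-2s}$, which is not bounded as $R\to\infty$; the $o(R^2)$ decay of $\E(u,B_R)$ from Theorem~\ref{acenest1} must be exploited, either through a logarithmic (capacity-type) refinement of $\eta_R$ or through an averaged argument on a well-chosen sequence of radii, in order to drive the right-hand side to $0$. Once this is achieved, Fatou's lemma forces the Wronskian to vanish a.e., so $\nabla u$ is parallel to a fixed unit vector $\omega\in S^1$ everywhere and $u$ depends only on $\omega\cdot x$, as required. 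The main obstacle is precisely the fractional Sternberg--Zumbrun identity together with the cutoff tuning: in the local two-dimensional case the natural logarithmic cutoff controls the Dirichlet energy uniformly, whereas in the nonlocal setting $|\nabla u|$ is not a solution of the linearized equation, the symmetrization of $\mathcal K$ produces commutators between $(-\Delta)^s$ and multiplication by $\eta$, and these commutator terms have to be absorbed by a Cauchy--Schwarz estimate in the Gagliardo kernel that is calibrated exactly against the decay rate furnished by Theorem~\ref{acenest1}.
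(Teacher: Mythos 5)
Your proposal takes a genuinely different route from the paper. You set up a stability/Sternberg--Zumbrun argument: insert $\phi=\eta\,\partial_e u$ into the second variation, subtract the linearized equation tested against $\eta^2\partial_e u$, and drive the resulting ``Wronskian'' term to zero by choosing the cutoffs $\eta_R$ cleverly, calibrating against the energy estimate of Theorem~\ref{acenest1}. The paper instead proves the theorem by a sliding/domain-perturbation method: it shows (Step~1) that one-dimensionality follows from monotonicity in every direction, then (Steps~2--3) compares $u$ with its translated-in-$B_{R/2}$ deformation $u_{R,+}$ via Lemma~\ref{endg} and the $o(R^2)$ energy bound, and finally runs a contradiction argument using the minimum/maximum of $u$ and $u_{R,+}$ together with the Strong Maximum Principle. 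The two strategies buy different things: the paper's sliding method avoids the Sternberg--Zumbrun machinery entirely and requires only the second-order Taylor estimate of Lemma~\ref{endg} plus the clean-Caccioppoli-free energy bound $\E(u,B_R)=o(R^2)$; your stability route, if it could be closed, would also yield a Liouville theorem for stable (not just minimizing) solutions, which the paper's argument does not.

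However, the proposal as written has a genuine gap precisely at the step you flag as ``the main obstacle'': you do not actually close the cutoff estimate, and the naive version manifestly fails. After the symmetrization, the right-hand side of your inequality is controlled (schematically) by $C\,[\eta_R]_{H^s(\R^2)}^2$, and for \emph{any} family of cutoffs with $\eta_R\to 1$ pointwise this quantity diverges, since the constant function $1$ is not in $H^s(\R^2)$ and even the logarithmic cutoff in $\R^2$ has $[\eta_R]_{H^s}^2\gtrsim R^{4-4s}/\log^2 R\to\infty$ for $s<1$. The Ambrosio--Cabr\'e logarithmic trick works in the \emph{local} 2-D setting because $\int|\nabla\eta_R|^2\to 0$; this has no direct analogue for the Gagliardo seminorm. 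To make your scheme close, one would need the right-hand side to be the $|\nabla u|$-weighted quantity $\iint|\nabla u(x)|^2(\eta_R(x)-\eta_R(y))^2|x-y|^{-2-2s}\,dx\,dy$ rather than the raw $[\eta_R]_{H^s}^2$, and one would then have to show that this is controlled by $\K_R(u)$ up to lower-order terms---but Theorem~\ref{acenest1} bounds the \emph{nonlocal} energy $\K_R(u)$, not $\int_{B_R}|\nabla u|^2$, and it is not demonstrated (or obvious) that the commutator estimate you invoke can actually trade one for the other. Without that, the Wronskian cannot be forced to vanish, and the argument stalls before Fatou's lemma can be applied.
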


The proof relies on the following estimate for the kinetic energy, that we prove
by employing a domain deformation technique.   

\begin{lemma} \label{endg}
Let~$R>1$, $\varphi\in C^\infty_0(B_1)$. Also, for any~$y\in\R^n$, let
\begin{equation}\label{5252}
\Psi_{R,+}(y):=y+\varphi \Big(\frac{ y}{R}\Big)\,e_1
\ {\mbox{ and }} \ \Psi_{R,-}(y):=y-\varphi \Big(\frac{ y}{R}\Big)\,e_1.\end{equation}
Then, for large~$R$, the maps~$\Psi_{R,+}$ and~$\Psi_{R,-}$
are diffeomorphisms on~$\R^n$.
Furthermore, if we define~$u_{R,\pm}(x):= u(\Psi_{R,\pm}^{-1}(x))$,
we have that
\begin{equation}\label{DG01}
{\mathcal{K}}_R (u_{R,+})+{\mathcal{K}}_R (u_{R,-})-2
{\mathcal{K}}_R (u)\le \frac{C}{R^2}{\mathcal{K}}_R (u),
\end{equation}
for some~$C>0$. 
\end{lemma}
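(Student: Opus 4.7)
My plan is to derive the bound by performing the change of variables $y = \Psi_{R,\pm}^{-1}(x)$, $\bar y = \Psi_{R,\pm}^{-1}(\bar x)$ in the integrals defining ${\mathcal K}_R(u_{R,\pm})$ and then exploiting the $\pm$-symmetry to cancel the first-order terms in a $1/R$ expansion. First, I would verify that $\Psi_{R,\pm}$ is a diffeomorphism of $\R^n$ for $R$ large: since $D\Psi_{R,\pm}(y) = I \pm R^{-1}\,e_1\otimes \nabla\varphi(y/R)$, the operator norm of the correction is bounded by $R^{-1}\|\nabla\varphi\|_\infty$, which is less than $\tfrac{1}{2}$ for large $R$, so the inverse function theorem gives a local diffeomorphism; globally $\Psi_{R,\pm}$ is the identity outside $B_R$, hence bijective on $\R^n$. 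Moreover, since $\varphi_R(y):=\varphi(y/R)$ is supported in $B_R$, we have $\Psi_{R,\pm}^{-1}(B_R)=B_R$, so $\Psi_{R,\pm}^{-1}(Q_R)=Q_R$ and the domain of integration is preserved after the change of variables.

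After the substitution, I would write
\[
{\mathcal K}_R(u_{R,\pm}) \;=\; \tfrac12\iint_{Q_R} \frac{|u(y)-u(\bar y)|^2}{|y-\bar y|^{n+2s}}\; F_\pm(y,\bar y)\,dy\,d\bar y,
\]
where, setting $v=y-\bar y$, $\delta=\varphi_R(y)-\varphi_R(\bar y)$, $a(y)=R^{-1}\partial_1\varphi(y/R)$,
\[
F_\pm(y,\bar y) \;=\; \bigl(1\pm a(y)\bigr)\bigl(1\pm a(\bar y)\bigr)\Bigl(1\pm \tfrac{2v_1\delta}{|v|^2}+\tfrac{\delta^2}{|v|^2}\Bigr)^{-(n+2s)/2}.
\]
The reduction is therefore to the pointwise bound $F_+ +F_- -2 \le C/R^2$. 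Writing $A=2v_1\delta/|v|^2$ and $B=\delta^2/|v|^2$, I would verify the uniform estimates $|a|\le\|\nabla\varphi\|_\infty/R$, $|A|\le C/R$ and $0\le B\le C/R^2$; the last two follow from the global bound $|\delta|\le \min(\|\nabla\varphi\|_\infty|v|/R,\,2\|\varphi\|_\infty)$, which in turn uses that $\varphi_R$ vanishes on and outside $\partial B_R$.

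With these bounds in hand, expanding $(1\pm A+B)^{-(n+2s)/2}$ by Taylor's theorem with remainder (valid since $|{\pm}A+B|<\tfrac12$ for $R$ large), the odd-in-$\pm$ contributions cancel in the sum, leaving
\[
F_+ + F_- - 2 \;=\; 2a(y)a(\bar y) \;-\; 2\alpha\,B \;+\; \alpha(\alpha+1)A^2 \;+\; \mathcal O\!\bigl(R^{-3}\bigr),
\]
with $\alpha=(n+2s)/2$ and with a cross term $(a(y)+a(\bar y))(P_+-P_-)=\mathcal O(R^{-1})\cdot\mathcal O(R^{-1})$ that is equally $\mathcal O(R^{-2})$. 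Each surviving term is bounded in absolute value by $C/R^2$, which yields $F_+ + F_- - 2 \le C/R^2$ pointwise on $Q_R$. Multiplying through by $|u(y)-u(\bar y)|^2/|y-\bar y|^{n+2s}$ and integrating gives~\eqref{DG01}.

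The main technical obstacle will be the uniform control of the remainder in the Taylor expansion of the singular denominator: the quantity $\delta$ can be of order $1$ (when $|v|\ge R$) rather than of order $|v|/R$, so one must check separately that in that range the effective small parameter $|\delta|/|v|$ is still $\mathcal O(1/R)$, and that the pointwise expansion is valid without additional integrability conditions on $u$. Once this uniform pointwise inequality is established, the final estimate follows immediately by integration against the positive weight $I_0 = |u(y)-u(\bar y)|^2|y-\bar y|^{-n-2s}$, whose integral over $Q_R$ is exactly $2{\mathcal K}_R(u)$.
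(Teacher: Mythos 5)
Your proposal is correct and follows essentially the same route as the paper's proof: a change of variables in the kinetic energy, a Taylor expansion of both the Jacobian factor and the singular ratio $\bigl(|\Psi_{R,\pm}(y)-\Psi_{R,\pm}(\bar y)|^2/|y-\bar y|^2\bigr)^{-(n+2s)/2}$ in powers of $1/R$ (with the paper's $\eta_\pm$ being exactly your $B\pm A$), the uniform Lipschitz bound $|\varphi(y/R)-\varphi(\bar y/R)|\le C R^{-1}|y-\bar y|$, and the observation that the first-order terms are odd in $\pm$ and cancel in the sum. The ``technical obstacle'' you flag at the end is in fact already dispatched by the first branch of your own bound $|\delta|\le \|\nabla\varphi\|_\infty|v|/R$, which holds uniformly including when $|v|\ge R$, so the effective small parameter is $O(1/R)$ on all of $Q_R$; this is precisely the estimate the paper uses (formula (j9) there), and no separate treatment of the large-$|v|$ range is needed.
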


\begin{proof} First of all, we compute the Jacobian of~$\Psi_{R,\pm}$.
For this, we write~$\Psi_{R,+,i}$ to denote the~$i^{\text{th}}$
component
of the vector~$\Psi_{R,+}=(\Psi_{R,+,1},\cdots,\Psi_{R,+,n})$
and we observe that
\begin{equation}\label{JA}
\frac{\partial \Psi_{R,+,i}(y)}{\partial y_j}=
\frac{\partial }{\partial y_j} \Big(y_i\pm 
\varphi \Big(\frac{ y}{R}\Big) \delta_{i1} \Big) = \delta_{ij} \pm \frac{ 1}{R}
\partial_j\varphi \Big(\frac{  y}{R}\Big) \delta_{i1}.\end{equation}
The latter term is bounded by~${\mathcal{O}}(R^{-1})$, and
this proves that~$\Psi_{R,\pm}$ is a diffeomorphism if~$R$ is large enough.

For further reference, we point out that if~$J_{R,\pm}$
is the Jacobian determinant of~$\Psi_{R,\pm}$, then
the change of variable
\begin{equation}\label{JA0}
x:=\Psi_{R,\pm}(y),\qquad
\bar x:=\Psi_{R,\pm}(\bar y)\end{equation}
gives that
\begin{equation*}
	\begin{split}
		dx\,d\bar x \;=\;& J_{R,\pm}(y)\,J_{R,\pm}(\bar y)\,dy\,d\bar y \\
		 =\;& 		\bigg(1\pm  \Big(\frac{ 1}{R} \Big)\partial_1 \varphi \Big(\frac{ y}{R}\Big) +{\mathcal{O}}\Big(\frac{ 1}{R^2}\Big)\bigg)
		\bigg(1\pm  \frac{ 1}{R}\partial_1 \varphi \Big(\frac{ \bar y}{R} \Big)+{\mathcal{O}}\Big(\frac{ 1}{R^2}\Big) \bigg) dy d\bar y
		\\  \;=\;& 1\pm  \frac{ 1}{R} \partial_1 \varphi \Big(\frac{ y}{R}\Big) \pm   \frac{ 1}{R}
		\partial_1 \varphi \Big(\frac{ \bar y}{R} \Big) +{\mathcal{O}}\Big(\frac{ 1}{R^2}\Big)	
		\,dy\,d\bar y
		,\end{split}
\end{equation*}
thanks to~\eqref{JA}. Therefore
\begin{equation}\label{JA8}
\begin{split}
& \frac{|u_{R,\pm}(x)-u_{R,\pm}(\bar x)|^2}{|x-\bar x|^{n+2s}}\,dx\,d\bar x\\
&\quad=\frac{\big|u(\Psi_{R,\pm}^{-1}(x))-
u(\Psi_{R,\pm}^{-1}(\bar x))\big|^2}{
|\Psi_{R,\pm}^{-1}(x)-\Psi_{R,\pm}^{-1}(\bar x)|^{n+2s}}\cdot
\left(
\frac{ |x-\bar x|^2
}{|\Psi_{R,\pm}^{-1}(x)-\Psi_{R,\pm}^{-1}(\bar x)|^2}\right)^{-\frac{n+2s}{2}}\,dx\,d\bar x
\\ &\quad= \frac{|u(y)-u(\bar y)|^2}{|y-\bar y|^{n+2s}} \cdot \left( 
\frac{ \Big|\Psi_{R,\pm}(y)- \Psi_{R,\pm}(\bar y)\Big|^2 }{|y-\bar 
y|^2}\right)^{-\frac{n+2s}{2}} \\&\qquad\cdot \Bigg( 1\pm  \frac{ 1}{R} \partial_1 \varphi \Big(\frac{ y}{R}\Big) \pm   \frac{ 1}{R}
		\partial_1 \varphi \Big(\frac{ \bar y}{R} \Big) +{\mathcal{O}}\Big(\frac{ 1}{R^2}\Big)	\Bigg)
		\,dy\,d\bar y. \end{split} \end{equation} Now, for 
any~$y$, $\bar y\in\R^n$ we calculate 
	\begin{equation}
		\label{J1}
			\begin{split} 
			&\Big|\Psi_{R,\pm}(y)-\Psi_{R,\pm}(\bar y)\Big|^2\\
			&\quad= \Big| (y-\bar y)\pm \bigg(\varphi \Big(\frac{y}{R}\Big)-\varphi \Big(\frac{\bar y}{R}\Big)\bigg) \,e_1\Big|^2 \\
			 &\quad= |y-\bar y|^2 +\bigg|\varphi \Big(\frac{y}{R}\Big)-\varphi \Big(\frac{\bar y}{R}\Big)\bigg|^2 \pm 2 \bigg(\varphi \Big(\frac{y}{R}\Big) -\varphi \Big(\frac{\bar y}{R}\Big) \bigg) \,(y_1-\bar y_1). 					\end{split}
	\end{equation}
 Notice also  that 
 	\begin{equation}\label{j9} 
 		\bigg|\varphi \Big(\frac{y}{R}\Big)-\varphi \Big(\frac{\bar y}{R}\Big)\bigg| \le \frac{1}{R} \|\varphi\|_{C^1(\R^n)} |y-\bar y|,
	\end{equation} 
hence~\eqref{J1} becomes
\[ \frac{ \Big|\Psi_{R,\pm}(y)-\Psi_{R,\pm}(\bar y)\Big|^2 }{|y-\bar y|^2} 
=1+\eta_{\pm}\]
 where
  \begin{equation}\label{JA7} \eta_{\pm}:= \frac{ \bigg|\varphi \Big(\frac{y}{R}\Big)-\varphi \Big(\frac{\bar y}{R}\Big)\bigg|^2}{|y-\bar y|^2}\pm 2\frac{ \bigg(\varphi \Big(\frac{y}{R}\Big)-\varphi \Big(\frac{\bar y}{R}\Big)\bigg)  \,(y_1-\bar y_1)}{|y-\bar y|^2}={\mathcal{O}}\Big(\frac{1}{R}\Big).
\end{equation} 

As a consequence 
\[  \left( \frac{ \Big|\Psi_{R,\pm}(y)- \Psi_{R,\pm}(\bar y)\Big|^2 }{|y-\bar y|^2}\right)^{-\frac{n+2s}{2}}\\ = 
(1+\eta_\pm)^{-\frac{n+2s}{2}}= 1-\frac{n+2s}{2} \eta_\pm +{\mathcal{O}}(\eta_\pm^2). \]
 We plug this information into~\eqref{JA8} and use~\eqref{JA7} to 
obtain 
\begin{eqnarray*} && \frac{|u_{R,\pm}(x)-u_{R,\pm}(\bar x)|^2}{|x-\bar x|^{n+2s}}\,dx\,d\bar x\\ 
&=& \frac{|u(y)-u(\bar y)|^2}{|y-\bar y|^{n+2s}} \cdot \left(1-\frac{n+2s}{2} \eta_\pm +{\mathcal{O}}\Big(\frac{1}{R^2}\Big) \right) \\
&&\cdot \bigg(1\pm \frac{1}{R}\partial_1 \varphi \Big(\frac{y}{R}\Big)\pm \frac{1}{R} \partial_1 \varphi \Big(\frac{\bar y}{R} \Big)+{\mathcal{O}}\Big(\frac{1}{R^2}\Big)\bigg) 
\,dy\,d\bar y\\ 
&=& \frac{|u(y)-u(\bar y)|^2}{|y-\bar y|^{n+2s}} \cdot \Bigg[ 1-\frac{n+2s}{2} \eta_\pm + \, \bigg(  \pm \frac{1}{R} \partial_1 \varphi \Big(\frac{ y}{R} \Big)   \pm \frac{1}{R} \partial_1 \varphi \Big(\frac{\bar y}{R}\Big) \bigg) \\
	&&+{\mathcal{O}}\Big(\frac{1}{R^2}\Big)\Bigg]\,dy\,d\bar y.\end{eqnarray*}
Using this and the fact that $$ \eta_+ \,+\, \eta_-= 2\,\frac{ \bigg|\varphi \Big( \frac{y}{R}\Big)-\varphi \Big(\frac{\bar y}{R}\Big)\bigg|^2}{|y-\bar y|^2}={\mathcal{O}}\Big(\frac{1}{R^2}\Big),$$ thanks 
to~\eqref{j9},
we obtain \begin{eqnarray*} && 
\frac{|u_{R,+}(x)-u_{R,+}(\bar x)|^2}{|x-\bar x|^{n+2s}}+ 
\frac{|u_{R,-}(x)-u_{R,-}(\bar x)|^2}{|x-\bar x|^{n+2s}} \,dx\,d\bar 
x\\ &=& \frac{|u(y)-u(\bar y)|^2}{|y-\bar y|^{n+2s}} \cdot \left( 2 
+{\mathcal{O}}\Big(\frac{1}{R^2}\Big)\right)
\,dy\,d\bar y 
.\end{eqnarray*}
Thus, if we integrate over~$Q_R$
we find that
$$ {\mathcal{K}}_R(u_{R,+})+{\mathcal{K}}_R(u_{R,-})
= 2{\mathcal{K}}_R(u) +
\iint_{Q_R} {\mathcal{O}}\Big(\frac{1}{R^2}\Big)\,
\frac{|u(x)-u(\bar x)|^2}{|x-\bar x|^{n+2s}}\,dx\,d\bar x.$$
This establishes~\eqref{DG01}.
\end{proof}

 \begin{proof}[Proof of Theorem \ref{dgdim2}]
We organize this proof into four steps. 
 
\noindent \textbf{Step 1.} \textbf{A geometrical consideration}\\
In order to prove that the level sets are flat, it suffices to prove that $u$ is monotone in any direction.  Indeed, if $u$ is monotone in any direction, the level set  $\{ u=0\}$ is both convex and concave, thus it is flat. 
 \bigskip
 
\noindent  \textbf{Step 2.} \textbf{Energy estimates}\\
 	 Let $\varphi \in C_0^{\infty}(B_1)$ such that $\varphi = 1 $ in $B_{1/2}$, and let $e=(1,0)$. We define as in Lemma \ref{endg} 
		\[ \Psi_{R,+}(y):=y+\varphi \Big(\frac{ y}{R}\Big)\,e \ {\mbox{ and }} \ \Psi_{R,-}(y):=y-\varphi \Big(\frac{ y}{R}\Big)\,e,\]
which are diffeomorphisms for large $R$, and the functions ~$u_{R,\pm}(x):= u(\Psi_{R,+}^{-1}(x))$. Notice that 
	 \begin{align}
		&u_{R,+} (y)= u(y) \; &\text{for} \; &y \in \C B_R \label{urpiur1}\\
		&u_{R,+} (y)= u(y-e) \; &\text{for} \; &y \in B_{R/2}\label{urpiur2} .
		\end{align} 
     By computing the potential energy, it is easy to see that
    	\[ \begin{split}
    	\int_{B_R} W(u_{R,+}(x)) \, dx &+\int_{B_R} W(u_{R,-}(x)) \, dx - 2 \int_{B_R} W(u(x)) \, dx \\
    		&\leq \frac{C}{R^2}  \int_{B_R} W(u(x)) \, dx.\end{split}\]
Using this and~\eqref{DG01}, we obtain
the following estimate for the total energy
    	\begin{equation}\label{5.29bis}
\E(u_{R,+},B_R)+\E(u_{R,-},B_R) - 2\E(u,B_R) \leq \frac{C}{R^2} \E(u,B_R).\end{equation}  
Also, since $u_{R,\pm}=u$ in $\C B_R$, we have that
    	\[ \E(u,B_R) \leq \E(u_{R,-},B_R).\]
This and~\eqref{5.29bis} imply that
    		\begin{equation} \label {bla13} \E(u_{R,+},B_R) -\E(u,B_R) \leq \frac{C}{R^2} \E(u,B_R).\end{equation}
As a consequence of this estimate and~\eqref{THANKS},
it follows that
  		\begin{equation} \label{dge11} \lim_{R \to+ \infty} \Big(\E(u_{R,+},B_R) -\E(u,B_R) \Big)=0.
		\end{equation}
		
\bigskip 

  \noindent  \textbf{Step 3.} \textbf{Monotonicity}\\
     We claim that $u$ is monotone. Suppose by contradiction that $u$ is not monotone. 
     That is, up to translation and dilation, we suppose that the value of~$u$ at the 
     origin stays above the values of~$e$ and~$-e$, with $e:=(1,0)$, i.e. 
     \[ u(0) > u(e) \;{\mbox{ and }}\;u(0)>u(-e).\]
     Take $R$ to be large enough, say $R>8$. Let now 
  	\begin{equation}\label{5.31bis}
v_R(x):= \min \big\{u(x),u_{R,+}(x)\big\} \quad \mbox{and} \quad w_R(x):= \max \big\{u(x),u_{R,+}(x)\big\} .\end{equation}
  	By \eqref{urpiur1} we have that $v_R =w_R =u $ outside $B_R$. Then, since $u$ is a minimizer in $B_R$ and $w_R=u$ outside $B_R$, we have that
  		\begin{equation}\label{oPGHll} \E(w_R,B_R) \geq \E(u,B_R) .\end{equation}
Moreover, the sum of the energies of
the minimum and the maximum is less than or equal to the sum
of the original energies: this is obvious in the local case,
since equality holds, and in the nonlocal case the proof
is based on the inspection of the different integral contributions, see e.g.
formula~(38) in~\cite{PSV13}. So we have that
\[  \E(v_R,B_R) + \E(w_R,B_R) \leq\E(u,B_R) +\E(u_{R,+},B_R)\]
hence, recalling~\eqref{oPGHll},
	 \begin{equation} \label{dge12}   \E(v_R,B_R)\leq   \E(u_{R,+},B_R) .\end{equation} 
  	
We claim that~$v_R$ is not identically neither $u$, nor $u_{R,+}$. Indeed, since $u(0)= u_{R,+}(e)$ and $u(-e)= u_{R,+}(0)$ we have that
 	\[\begin{split}	 v_R(0) \;=&\;\min \big\{u(0),u_{R,+}(0)\big\}=
\min\big\{u(0),u(-e)\big\}\\ =&\;u(-e) = u_{R,+}(0) <u(0)  \quad \mbox{and} \\
 					v_R(e) \;= &\;\min \big\{u(e),u_{R,+}(e)\big\}=\min\big\{u(e),u(0)\big\}\\
=&\;u(e) <u(0) = u_{R,+}(e).  \end{split}\]
 					By continuity of $u$ and $u_{R,+}$, we have that
 					\begin{equation} \label{dgcuur1} \begin{split} v_R \;=&\;u_{R,+}<u  \mbox{ in a neighborhood of } 0 \quad \mbox{and} \\
 									  v_R \;=&\;u <u_{R,+} \mbox{ in a neighborhood of } e .\end{split}\end{equation}
 We focus our attention on the energy in the smaller ball $B_2$. We claim that $v_R$ is not minimal for $\E(\cdot, B_2)$. Indeed, if $v_R$ were minimal in $B_2$, then on $B_2$ both $v_R$ and $u$ would satisfy the same equation. However, $v_R \leq u$ in $\R^2$ by definition and $v_R=u$ in a neighborhood of $e$ by the second statement in \eqref{dgcuur1}. The Strong Maximum Principle implies that they coincide everywhere, which contradicts the first line in \eqref{dgcuur1}. 

Hence $v_R$ is not a minimizer in $B_2$. Let then $v^*_R$ be a minimizer of $\E(\cdot, B_2)$, that agrees with $v_R$ outside the ball $B_2$, and we define the positive quantity
 	\begin{equation}\delta_R: = \E(v_R,B_2) -  \E(v^*_R,B_2).  \label{dgclmc1} \end{equation} 
  
 We claim that 
 \begin{equation}\label{remains}
{\mbox{as $R$ goes to infinity, $\delta_R$ remains bounded away from
zero.}}\end{equation} 
To prove this, we assume by contradiction that \begin{equation} \label{dgclaim2} \displaystyle \lim_{R \to+ \infty} \delta_R=0.\end{equation} 
Consider $\tilde u$ to be the translation of $u$, that is~$\tilde u(x):= u(x-e)$.
Let also
\[ m (x):= \min \big\{ u(x), \tilde u(x)\big\}.\] 
We notice that in $B_{R/2}$ we have that $\tilde u (x)= u_{R,+}(x)$. 
This and~\eqref{5.31bis}
give that
\begin{equation}\label{5.31ter}
{\mbox{$m=v_R$ in $B_{R/2}$.}}\end{equation}
Also, from \eqref{dgcuur1} and~\eqref{5.31ter},
it follows that $m$ cannot be identically neither $u$ nor $\tilde u$,
and
\begin{equation} \label{dgcuur2} \begin{split} m \;<&\; u  \mbox{ in a neighborhood of } 0 \quad \mbox{and} \\
 									  m \;=&\;u  \mbox{ in a neighborhood of } e .\end{split}\end{equation}
Let $z$ be a competitor for $m$ in the ball $B_2$, that agrees with $m$ outside $B_2$. We take a
cut-off function $\psi\in C_0^{\infty}(\Rn)$ such that $\psi = 1 $ in $B_{R/4}$, $\psi =0$ in $\C B_{R/2}$. Let \[z_R(x):= \psi(x) z(x) + \big(1- \psi(x)\big) v_R(x).\]  Then we have that $z_R=z $ on $B_{R/4}$ and 
\begin{equation}\label{STAR}
{\mbox{$z_R=v_R$ on $\C B_{R/2}$. }}\end{equation}
In addition, by~\eqref{5.31ter}, we have that~$z=m=v_R$
in~$B_{R/2}\setminus B_2$. So, it follows that
	\[ z_R(x)=\psi(x)v_R(x)+(1-\psi(x))v_R(x)= v_R(x)  =z(x)\quad \mbox{on} \quad B_{R/2}\setminus B_2 .\]
This and~\eqref{STAR} imply that
$z_R=v_R$ on $\C B_2$. 

We summarize
in the next lines these useful identities (see also Figure \ref{fign:Enest}). 
	\begin{align*}
		&\text{in }\; B_2  & & u_{R,+}= \tilde u, \quad m=v_R, \quad z=z_R\\
		&\text{in }\; B_{R/2}\setminus B_2  & & u_{R,+}= \tilde u, \quad  v^*_R=v_R=m=z=z_R \\
		&\text{in }\; B_R \setminus B_{R/2}  & & v^*_R=v_R=z_R,\quad  m=z\\	
		&\text{in }\; \C B_R   & &  u_{R,+}=u=v_R=v^*_R=z_R, \quad m=z.	
	\end{align*}		
\begin{figure}[htpb]
	\includegraphics[width=0.80\textwidth]{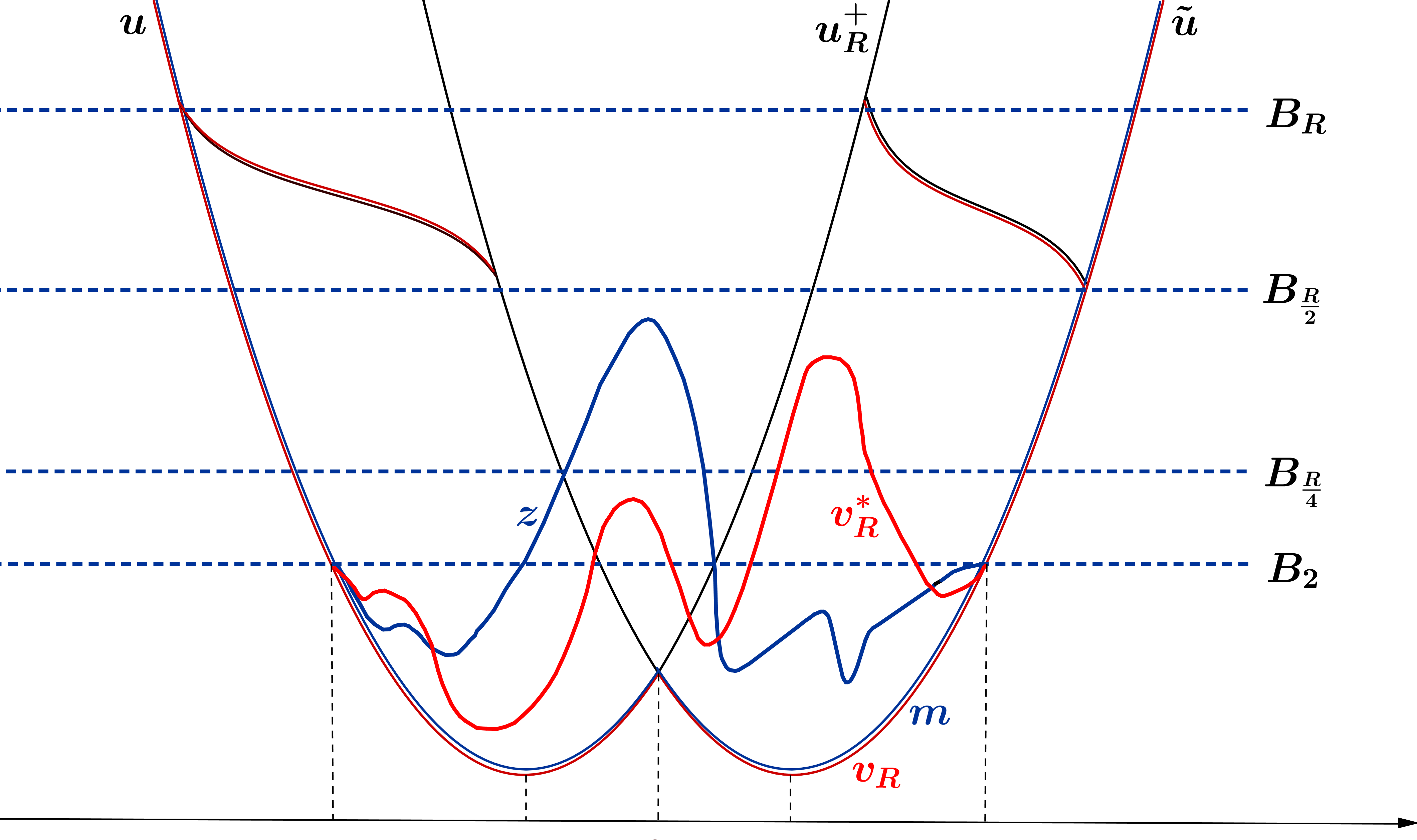}
	\caption{Energy estimates}  
	\label{fign:Enest}
	\end{figure} 
We compute now
		\[ \begin{split} \E(m&\;,B_2) -\E(z,B_2) \\
		= &\; \E(m,B_2)-\E(v_R,B_2)+ \E(v_R,B_2)-\E(z_R,B_2) +\E(z_R,B_2)-\E(z,B_2) .\end{split}\]  \\
By the definition of $\delta_R$ in~\eqref{dgclmc1}, we have that
	\begin{equation}\label{dgeqc}\begin{split} \E(m&\;,B_2) -\E(z,B_2) \\							
				= &\;	\E(m,B_2)-\E(v_R,B_2)+\delta_R + \E(v^*_R,B_2) -\E(z_R,B_2)+\E(z_R,B_2)-\E(z,B_2).
				\end{split} \end{equation}							
Using the formula for the kinetic energy given in \eqref{dgkr} together with \eqref{dguab} we have that 
\[\begin{split} \E(m&\; ,B_2)-\E(v_R,B_2) \\
= &\; \frac{1}{2} m(B_2,B_2)+ m(B_2,\C B_2) + \int_{B_2} W\big(m(x)\big)\, dx \\
&\;- \frac{1}{2} v_R(B_2,B_2) -v_R(B_2,\C B_2) - \int_{B_2}W\big(v_R(x)\big) \, dx.\end{split} \]	
Since $m=v_R$ on  $B_{R/2}$ (recall~\eqref{5.31ter}), we obtain 
\[\begin{split} \E(m&\; ,B_2)-\E(v_R,B_2) \\	=&\; \int_{B_2} \, dx \int_{\C B_{R/2}} \, dy \frac{ |m(x)-m(y)|^2-|m(x)-v_R(y)|^2}{|x-y|^{n+2s}} . 
				\end{split} \]	
Notice now that~$m$ and $v_R$ are bounded on $\Rn$ (since so is
$u$).
Also, if~$x\in B_2$ and~$y\in \C B_{R/2}$ we have that~$|x-y|\ge|y|-|x|\ge |y|/2$ if~$R$ is large.
Accordingly,
	\begin{equation}\label{uno} \E(m ,B_2)-\E(v_R,B_2) \leq  C \int_{B_2} \, dx \int_{\C B_{R/2}}\frac{1}{|y|^{n+2s}} \, dy  \le
	CR^{-2s}, 
				\end{equation}
up to renaming constants.
Similarly, 
$z_R=z$ on $B_{R/2}$ and we have the same bound  
\begin{equation}\label{due}
\E(z_R,B_2)-\E(z,B_2) \leq  C R^{-2s}.  \end{equation}
Furthermore, since $v_R^*$ is a minimizer for $\E(\cdot, B_2)$ and $v_R^*=z_R$ outside of $B_2$, we have that
 \[ \E(v^*_R,B_2) -\E(z_R,B_2)\leq 0.\]   				  			
Using this, \eqref{uno} and~\eqref{due}        
in \eqref{dgeqc}, it follows that
 	\[ \E(m,B_2)-\E(z,B_2) \leq CR^{-2s} +\delta_R.\] 
Therefore, by sending~$R\to+\infty$ and using again~\eqref{dgclaim2}, we
obtain that
 	\begin{equation}\label{MKI} \E(m,B_2) \leq \E(z,B_2) .\end{equation}
We recall that $z$ can be any competitor for $m$, that coincides with $m$ outside of $B_2$. Hence, formula~\eqref{MKI} means that $m$ is a minimizer for $\E(\cdot, B_2)$.
On the other hand,
$u$ is a minimizer of the energy in any ball.
Then, both $u$ and $m$ satisfy the same equation in $B_2$. Moreover, they coincide in a neighborhood of $e$, as stated in the second line of \eqref{dgcuur2}. By the Strong Maximum Principle, they have to coincide on $B_2$, but this contradicts the
first statement of \eqref{dgcuur2}. The proof of~\eqref{remains} is thus complete.
\bigskip

Now, since $v_R^*=v_R$ on $\C B_2$, from definition \eqref{dgclmc1} we
have that 
 	\[ \delta_R  = \E(v_R,B_R)-\E(v_R^*,B_R) .\]
Also, $\E (v_R^*,B_R)\geq \E(u,B_R)$, 
thanks to the minimizing property of~$u$. 
Using these pieces of information and inequality \eqref{dge12}, it follows that 
  	\[ 		\delta_R  \leq  \E(u_{R,+},B_R) - \E(u,B_R) .\]
Now, by  sending $R \to +\infty$ and using~\eqref{remains}, we have that
	\[  \lim_{R \to +\infty} \E(u_{R,+},B_R) - \E(u,B_R) > 0 ,\] which contradicts \eqref{dge11}. This implies that indeed $u$ is monotone, and this concludes the proof of this Step.

\bigskip  					  						  						  					
 \noindent \textbf{Step 4.} \textbf{Conclusions}\\ 
         In Step 3, we have proved that $u$ is monotone, in any given direction~$e$.
Then, Step 1 gives the desired result.
This concludes the proof of Theorem \ref{dgdim2}.
 \end{proof}

We remark that the exponent two in the energy estimate \eqref{DG01} is related to the expansions of order two and not to the dimension of the space. Indeed, the energy estimates hold for any $n$. However, the two power in the estimate \eqref{DG01} allows us to prove the fractional
version of De Giorgi conjecture only in dimension two. In other words, the proof of Theorem \ref{dgdim2} is not applicable for $n> 2$. One can verify this by checking the limit in \eqref{dge11}
\bgs{ \lim_{R \to+ \infty} \Big(\E(u_{R,+},B_R) -\E(u,B_R) \Big)=0,}
which was necessary for the Proof of Theorem \ref{dgdim2} in the case $n=2$.
 We know from Theorem \ref{acenest1} that
	\[ \lim_{R \to+ \infty} \frac{C}{R^n} \E(u,B_R) =0.\]  Confronting this result with inequality \eqref{bla13}
	    		\bgs{ \E(u_{R,+},B_R) -\E(u,B_R) \leq \frac{C}{R^2} \E(u,B_R),}
 we see that we need to have $n=2$ in order for the the limit in \eqref{dge11} to be zero.  
\bigskip

Of course, the one-dimensional symmetry property in
Theorem \ref{dgdim2} is inherited by the spatial homogeneity
of the equation, which is translation and rotation invariant.
In the case, for instance, in which the potential also depends 
on the space variable, the level sets of the (minimal) solutions
may curve, in order to adapt themselves to the spatial inhomogeneity.

Nevertheless, in the case of periodic dependence, it
is possible to construct minimal solutions whose
level sets are possibly not planar, but still remain at a bounded distance
from any fixed hyperplane. As a typical result in this direction,
we recall the following one (we refer to~\cite{MATTEO-ENRICO}
for further details on the argument):
\begin{figure}[htpb]
	\includegraphics[width=0.70\textwidth]{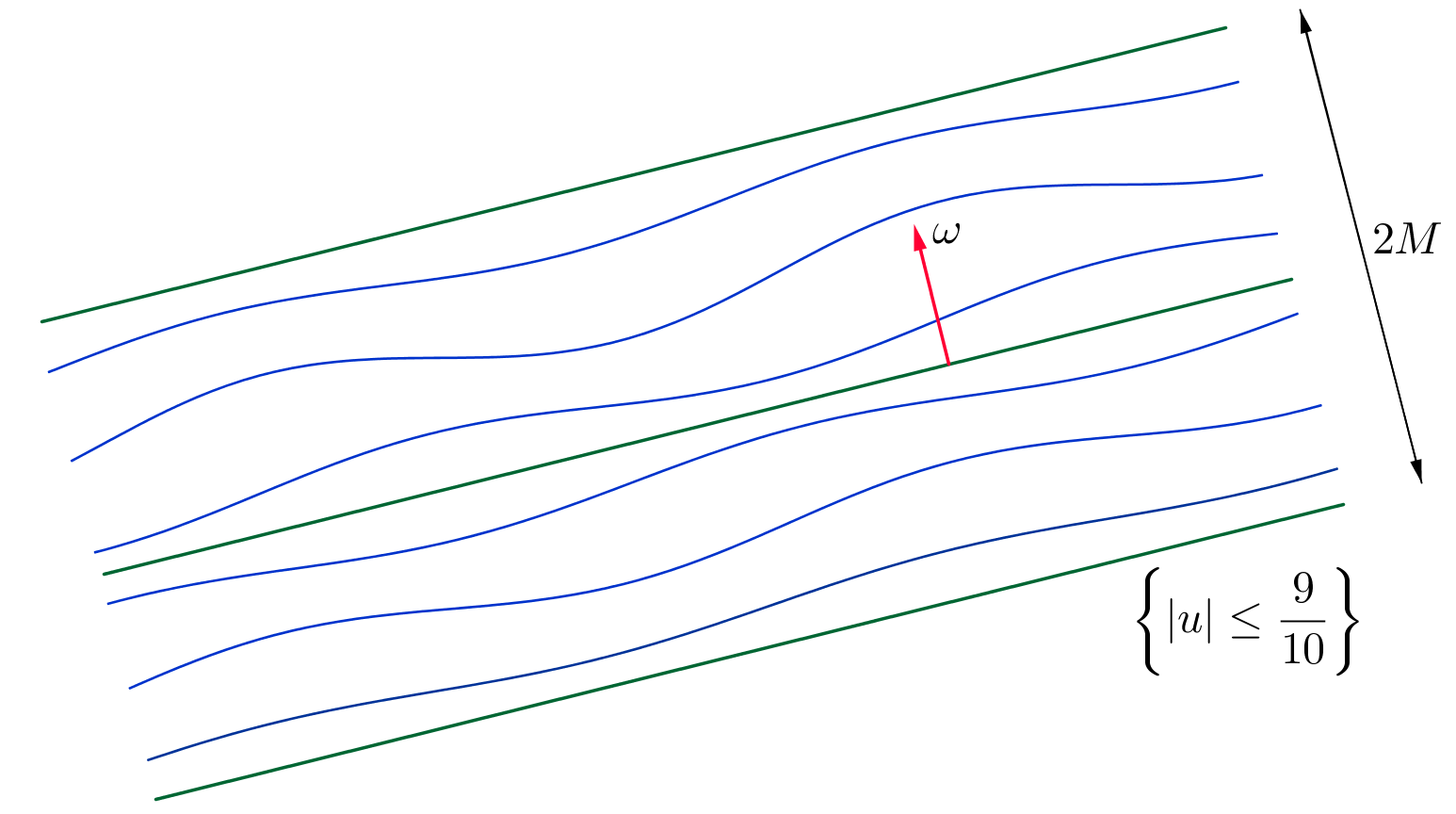}
	\caption{Minimal solutions in periodic medium}  
	\label{fign:minsperm}
	\end{figure} 
\begin{theorem}
Let~$Q_+>Q_->0$ and~$Q:\R^n\to [Q_-,Q_+]$. Suppose that~$Q(x+k)=Q(x)$
for any~$k\in \Z ^n$. Let us consider, in any ball~$B_R$, the energy
defined by
$$ \E(u, B_R) =  \mathcal{K}_R(u) + \frac14 \,\int_{B_R} Q(x)\,(1-u^2)^2 dx,$$
where the kinetic energy ${\mathcal{K}}_R(u)$ is defined as in~\eqref{dgkenac}.

Then, there exists a constant $M> 0$, 
such that, given any~$\omega \in \partial B_1$,
there exists a minimal solution~$u_\omega$ of 
$$ (-\Delta)^s u_\omega (x)= Q(x)\,(u_\omega(x)-u^3_\omega(x))
\qquad{\mbox{ for any }}x\in\R^n$$
for which the level sets $\{|u_\omega | \le\frac{9}{10}\}$ are contained in the 
strip~$\{x\in\R^n {\mbox{ s.t. }} |\omega\cdot x|\le M\}$.

Moreover, if $\omega$ is rotationally dependent,
i.e. if there exists~$k_o\in\Z^n$ such that~$\omega\cdot k_o=0$,
then~$u_\omega$ is periodic with respect to~$\omega$, i.e.
$$ u_\omega(x)=u_\omega(y) {\mbox{ for any $x$, $y\in\R^n$ such that 
$x-y=k$ and $\omega\cdot k=0$.}} $$
\end{theorem}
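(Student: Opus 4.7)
My plan is to adapt the plane-like minimizer strategy of Moser and Bangert to the nonlocal setting, along the lines of the program in \cite{MATTEO-ENRICO}. First I would treat the case when $\omega$ is rationally dependent. Fix such an $\omega \in \partial B_1$ with $\omega \cdot k_o = 0$ for some $k_o \in \Z^n$, and let $\Lambda := \Z^n \cap \omega^{\perp}$, which has rank $n-1$ by rational dependence. For $M>0$ large, I would minimize $\E$ among functions $u \in [-1,1]$ which are $\Lambda$-periodic (i.e.\ $u(x+k) = u(x)$ for every $k \in \Lambda$) and which satisfy the constraint $u \geq 9/10$ on $\{\omega \cdot x \geq M\}$ and $u \leq -9/10$ on $\{\omega \cdot x \leq -M\}$. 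Existence of a minimizer $u_\omega$ in this class follows by a standard direct method in the fractional Sobolev framework, using the boundedness of the constrained set, the lower semicontinuity of the kinetic term $\mathcal{K}$ and the coercivity provided by $Q \geq Q_- > 0$.

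The key intermediate step is to show that $u_\omega$ is actually a \emph{class A} (i.e.\ local) minimizer of $\E$ in every ball, and not merely a constrained one. This is where the main difficulties concentrate. First I would establish a Birkhoff / non-self-intersection property: for every $k \in \Z^n$ with $\omega \cdot k \geq 0$ one has $u_\omega(\cdot - k) \geq u_\omega$ (and the reverse inequality when $\omega \cdot k \leq 0$). The proof uses the fact that $Q$ is $\Z^n$-periodic, so that the translate $u_\omega(\cdot - k)$ is again admissible up to relaxing the constraint; combined with the $\max$/$\min$ decomposition inequality for $\E$ (the nonlocal analog of $\E(u \vee v)+\E(u\wedge v) \le \E(u)+\E(v)$, already used in the proof of Theorem~\ref{dgdim2}), this forces the ordering. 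Once the Birkhoff property is in hand, the constraint can be removed: if the constraint were active, Birkhoff ordering would propagate it everywhere, which combined with the uniform density estimates (see below) gives a contradiction for $M$ large enough. This yields that $u_\omega$ is a minimizer in every ball and satisfies the Euler--Lagrange equation $(-\Delta)^s u_\omega = Q(x)(u_\omega - u_\omega^3)$ in $\R^n$.

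Next I would prove the uniform strip bound on $\{|u_\omega| \leq 9/10\}$. The ingredient here is a density estimate of the same flavor as Theorem~\ref{TY78UU}, adapted to the $x$-dependent potential $W(x,u) := \tfrac14 Q(x)(1-u^2)^2$; this remains valid since $Q_- \le Q \le Q_+$ makes $W(x,\cdot)$ a uniformly nondegenerate double-well. Combined with the energy estimate of Theorem~\ref{acenest1} (which also goes through uniformly in $Q$), the density estimate gives: if $u_\omega(x_0) > 9/10$ at some point $x_0$, then $\{u_\omega > 9/10\}$ occupies a definite fraction of every large ball centered at $x_0$, and symmetrically for $\{u_\omega < -9/10\}$. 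The Birkhoff ordering transversal to $\omega$ then forces both phases $\{u_\omega > 9/10\}$ and $\{u_\omega < -9/10\}$ to be confined to the half-spaces $\{\omega \cdot x \ge -M\}$ and $\{\omega \cdot x \le M\}$ respectively, for a universal $M = M(n,s,Q_-,Q_+)$ independent of $\omega$: this is exactly where the uniformity of $M$ in $\omega$ is produced, and is the most delicate point. Finally, $\Lambda$-periodicity of $u_\omega$ is inherited from the minimization class, giving the periodicity statement.

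To pass to an arbitrary (possibly irrational) $\omega \in \partial B_1$, I would take a sequence $\omega_j \to \omega$ with each $\omega_j$ rationally dependent, consider the associated minimizers $u_{\omega_j}$, and extract a locally uniformly convergent subsequence using the uniform $L^\infty$ bound $|u_{\omega_j}| \le 1$ and interior regularity for the fractional Allen--Cahn equation. The limit $u_\omega$ is a solution of the Euler--Lagrange equation and a minimizer in every ball (since minimality is stable under local uniform convergence via the lower semicontinuity of $\mathcal{K}$ and Fatou for the potential term). The strip bound $\{|u_\omega|\le 9/10\} \subset \{|\omega \cdot x| \le M\}$ passes to the limit because $M$ is independent of $j$; this is why it was essential to obtain a universal $M$ in the rational step. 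The hardest step will be establishing this uniform $M$ and, relatedly, the removal of the constraint via the Birkhoff property in the nonlocal setting, where the long-range interactions require care when comparing a function and its translates near the interface.
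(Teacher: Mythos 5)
Your proposal is essentially the standard Moser--Bangert / Caffarelli--de\,la\,Llave plane-like minimizer program carried to the nonlocal setting, and it is indeed the strategy of the reference the thesis cites (the theorem is stated here but not proved in this thesis; the argument is carried out in~\cite{MATTEO-ENRICO}). The four-step skeleton — constrained $\Lambda$-periodic minimization for rational $\omega$, Birkhoff ordering of integer translates via the $\max/\min$ submodularity inequality, removal of the constraint and confinement in a strip of universal width using density and energy-growth estimates, and finally approximation of irrational directions by rational ones using a direction-independent $M$ — correctly reproduces that program.

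One small point to fix: your Birkhoff inequality has the wrong orientation. With the constraint $u_\omega\ge 9/10$ on $\{\omega\cdot x\ge M\}$ and $u_\omega\le -9/10$ on $\{\omega\cdot x\le -M\}$, the minimizer is ``increasing'' in the $\omega$-direction, so for $k\in\Z^n$ with $\omega\cdot k\ge 0$ one should obtain $u_\omega(\cdot-k)\le u_\omega$ rather than the opposite inequality (the translate shifts the interface towards larger $\omega\cdot x$). This is only a sign convention and does not affect the logic, but it is worth stating correctly since it is precisely this monotone family of translates that one uses to show the constraint becomes inactive.

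Two aspects are compressed in a way that conceals where most of the technical work lives. First, the removal of the constraint is not a one-line consequence of ``Birkhoff + density''; one must actually exhibit a competitor that agrees with $u_\omega$ outside a compact set, beats the constrained minimizer if the constraint were touched, and still lies in the admissible class — this is where the energy estimate of the type in Theorem~\ref{acenest1} (growth like $R^{n-2s}$, $R^{n-1}\log R$, or $R^{n-1}$) enters quantitatively, not merely the density estimate. Second, and more seriously for the nonlocal case: the $\max/\min$ submodularity inequality $\E(u\vee v)+\E(u\wedge v)\le\E(u)+\E(v)$ that you invoke for the Birkhoff step must be applied with some care because $u_\omega(\cdot-k)$ does not coincide with $u_\omega$ outside any compact set, so the two energies being compared are each infinite over $\R^n$; the correct statement is a localized version (compare energies in a ball where the two functions are ordered on the boundary ring, using that the cross terms at infinity cancel by the tail decay of the layers), exactly as in the proof of Theorem~\ref{dgdim2}. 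These are gaps in presentation rather than in strategy, but they are where a full write-up would need to do the work.
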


\chapter{Nonlocal minimal surfaces}\label{nlms}
\begin{abstract}{In this chapter, we introduce nonlocal minimal surfaces. We first discuss a Bernstein type result in any dimension, namely the property that an $s$-minimal graph  in $\R^{n+1}$ is flat (if no singular cones exist in dimension $n$) and prove that an $s$-minimal surface whose prescribed data is a subgraph, is itself a subgraph. The non-existence of nontrivial $s$-minimal cones in dimension $2$ is then proved. Moreover, some boundary regularity properties will be discussed at the end of this chapter: quite surprisingly, and differently from the classical case, nonlocal minimal surfaces do not always attain boundary data in a continuous way (not even in low dimension). A possible boundary behavior is, on the contrary, a combination of stickiness to the boundary and smooth separation from the adjacent portions.
Furthermore, in the last section we deal with the asymptotic behavior as $s\to 0^+$
of the fractional mean curvature, and with the behavior of $s$-minimal surfaces when  $s\in(0,1)$ is small in a bounded and connected open set with $C^2$ boundary $\Omega\subset \Rn$. We classify the behavior of $s$-minimal surfaces with respect to the fixed exterior data (i.e. the $s$-minimal set fixed outside of $\Omega$). So, for $s$ small and depending
on the data at infinity,
the $s$-minimal set can be either empty in $\Omega$, fill all $\Omega$, 
or possibly develop a wildly oscillating boundary. 
Also, we prove the continuity of the fractional mean curvature in all variables, for $s\in [0,1]$. Using this, we see that as the parameter $s$ varies, the fractional mean curvature may change sign. }
\end{abstract}

\bigskip 
\bigskip

In this chapter, we deal with nonlocal minimal surfaces, as introduced in \cite{nms} in 2010 (see also \cite{MILAN} for a preliminary introduction to some properties of nonlocal minimal surfaces). In particular, following the approach of De Giorgi (for classical minimal surfaces), we introduce the fractional perimeter and look for minimizers in bounded open sets with respect to some fixed exterior data. The boundaries of such (nonlocal minimal) sets are called nonlocal minimal surfaces (and are indeed smooth almost everywhere). We give in this chapter some notions on this subject, outline some nice recent achievements and also present a new result on a stickiness phenomena when the fractional parameter is small. So, in this Chapter \ref{nlms} 
\begin{itemize}
\item we prove that $s$-minimal graphs  in $\R^{n+1}$ are flat if no singular cones exist in dimension $n$ (and this is related to a known Bernstein problem),
\item we prove that minimizers with respect to the exterior data that is a subgraph, is a subgraph also inside the domain,
\item we prove that nontrivial minimal cones in dimension two do not exist (which implies, according to the first item, that $s$-minimal graphs in $\R^3$ are flat),
\item we discuss some nice examples of boundary regularity and stickiness phenomena.
\end{itemize}
In the last Section \ref{luke} we focus on the behavior of $s$-minimal surfaces for small values of the fractional parameter. In particular
\begin{itemize}
\item we give the asymptotic behavior of the fractional mean curvature as $s\to 0^+$,
\item we prove the continuity of the fractional mean curvature in all variables for $s\in [0,1]$,
\item when $s\in (0,1)$ is small we classify the behavior of $s$-minimal surfaces, in dependence of the exterior data at infinity.
\end{itemize}   
To give more details on the last item, we prove that when the fractional parameter is small and the exterior data at infinity occupies (in measure, with respect to the weight) less than half the space, then nonlocal minimal surfaces completely stick at the boundary (that is, they are empty inside the domain), or become ``topologically dense'' in their domain. An analogues result, that is nonlocal minimal surfaces fill the domain or become dense, is obtained when the exterior data occupies in the appropriate sense more than half the space (so this threshold is  optimal).

\medskip

Let $s\in (0,1)${\footnote{
We point out that we use the fractional parameter $s$  differently from the previous (and the following) chapters. Indeed, it substitutes the $2s \in(0,2)$ power used up until now in the kernel defining our nonlocal operators. To give a more precise idea, let us denote $\sigma:=2s\in (0,2)$ and write our singular kernel kernel (check \eqref{frlap2def} or \eqref{PV-1}) as $|x-y|^{-n-\sigma}$. In the present chapter, the important thing is that $\sigma$ will take into account only half of the interval of definition, that is $\sigma\in (0,1)$, and this is equivalent to having (the original) $s\in (0,1/2)$. As a notation, we nonetheless writes $s$ instead of $\sigma$, hence we take $s\in (0,1)$. This is just a matter of notation, however we will make clear why we need to take the power in the kernel smaller than $1$ and not up until $2$, in the upcoming Theorem \ref{TH1-SV-GAMMA}.
}.\label{ssigma}

We introduce the fractional perimeter\footnote{The next measure theoretic assumptions are assumed throughout this chapter.
 Up to modifying $E\subset\R^n$ on a set of measure zero we can assume (see e.g. Appendix C of \cite{Myfractal})
that $E$ contains the measure theoretic interior
$
E_{int}:=\Big\{x\in\R^n\,|\,\exists\,r>0\textrm{ s.t. }|E\cap B_r(x)|=\frac{\omega_n}n r^n\Big\}\subset E,
$
the complementary $\C E$ contains its measure theoretic interior
$
E_{ext}:=\{x\in\R^n\,|\,\exists\,r>0\textrm{ s.t. }|E\cap B_r(x)|=0\}\subset\Co E,
$
and the topological boundary of $E$ coincides with its measure theoretic boundary, $\partial E=\partial^-E, \partial^-E:=\R^n\setminus(E_{int}\cup E_{ext})
=\{x\in\R^n\,|\,0<|E\cap B_r(x)|<\omega_nr^n\textrm{ for every }r>0\}.
$
In particular, we remark that both $E_{int}$ and $E_{ext}$ are open sets.} Let $\Omega \subset \Rn$ be an open bounded set, and let $E \subset \Rn$ be fixed outside of $\Omega$. We consider minimizers of the $H^{s/2}$ norm 
	\begin{equation*}
		\begin{split}
		||\chi_E||^2_{H^{\frac{s}2}} =& \int_{\Rn} \int_{\Rn} \frac{|\chi_E(x)-\chi_E(y)|^2}{|x-y|^{n+s}} \, dx \, dy\\
							=& 2 \int_{\Rn} \int_{\Rn} \frac{\chi_E(x)\chi_{\C E}(y)}{|x-y|^{n+s}} \, dx \, dy.
		\end{split}
	\end{equation*}
Notice that only the interactions between $E$ and $\C E$ contribute to the norm. \\
In order to define the fractional perimeter of $E$ in $\Omega$, we need to clarify the contribution of $\Omega$ to the $H^{\frac{s}2}$ norm here introduced. Namely, as $E$ is fixed outside $\Omega$, we aim at minimizing the ``$\Omega$-contribution'' to the norm among all measurable sets that ``vary'' inside $\Omega$. We consider thus interactions between $E\cap \Omega$ and $\C E$ and between $E\setminus \Omega$ and $\Omega \setminus E$, 
neglecting the data that is fixed outside~$\Omega$
and that does not contribute to the minimization of the norm (see Figure \ref{fign:FracPer}).
\begin{figure}[htpb]
	\includegraphics[width=0.85\textwidth]{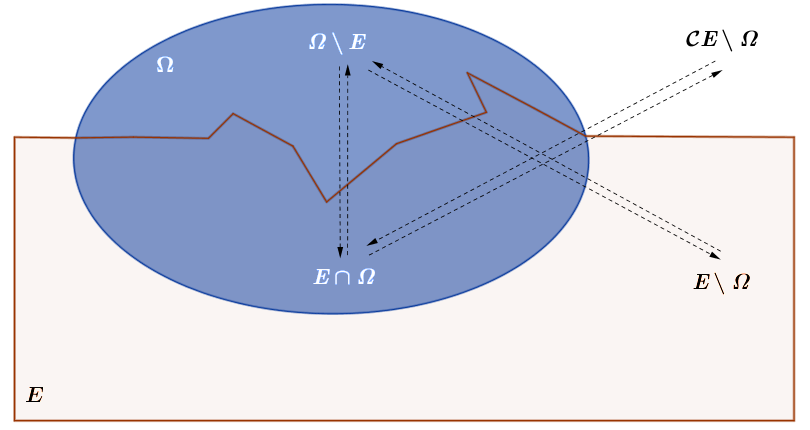}
	\caption{Fractional Perimeter}  
	\label{fign:FracPer}
	\end{figure} 
We define the interaction $I(A,B)$ of two disjoint subsets of $\Rn$ as
	\begin{equation}\label{nmsi1}
		\begin{split}
		I(A,B):=&\int_A \int_B \frac{dx\, dy}{|x-y|^{n+s}}
			= \int_{\Rn} \int_{\Rn} \frac{ \chi_A(x) \chi_B (x) }{|x-y|^{n+s}}\, dx \, dy.
		\end{split}
	\end{equation}
Then (see~\cite{nms}),
one defines the nonlocal $s$-perimeter functional of $E$ in $\Omega$ as	
\begin{equation} \label{nmspf1} \text{Per}_s(E,\Omega) := I(E\cap \Omega,\C E)  + I(E\setminus \Omega,\Omega \setminus E). \end{equation}
Equivalently, one may write
	\[ \text{Per}_s(E,\Omega) = I( E\cap \Omega,\Omega \setminus E) + I(E\cap \Omega , \C \Omega \setminus E)+ I(E\setminus \Omega,\Omega \setminus E). \]

\begin{defn}
Let~$\Omega$ be an open set of~$\R^n$.
A measurable set $E\subset \Rn$ is $s$-minimal in~$\Omega$
if $\text{Per}_s(E,\Omega)$ is finite and if,
for any measurable set~$F$ such that $E\setminus\Omega =F\setminus \Omega$, we have that
	\[ \text{Per}_s(E,\Omega) \leq \text{Per}_s(F,\Omega).\]
A measurable set is $s$-minimal in $\Rn$ if it is $s$-minimal in any ball $B_r$, where $r > 0$.
\end{defn}

The boundaries of $s$-minimal sets are referred to as \emph{nonlocal minimal surfaces}.
\medskip
 
 We discuss briefly the behavior of the perimeter as $s$ tends to $1$ and to $0$.\\
When~$ s\to 1^-$,
the fractional perimeter~$\text{Per}_s$ 
approaches the classical perimeter, see \cite{BBM}. See also \cite{DAVILA} for the precise limit in the class
of functions with bounded variations, \cite{uniform,regularity} for a geometric
approach towards regularity and \cite{PONCE, gammaconv} for an approach based on $\Gamma$-convergence. See also \cite{vS-W-2004} for a different proof and Theorem 2.22 in \cite{lukes} and the references therein for related discussions.
A simple, formal statement (up to renormalizing constants) is the following:
\begin{theorem}\label{TP12}
Let $R>0$ and $E$ be a set with finite perimeter in $B_R$. Then 
	\[\lim_{s \to 1 }(1 -s) \text{Per}_s(E,B_r)=\text{Per }(E,B_r) \]
for almost any $r\in (0,R)$.
\end{theorem}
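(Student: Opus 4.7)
The plan is to reduce Theorem \ref{TP12} to a Bourgain-Brezis-Mironescu type limit for the Gagliardo seminorm of the characteristic function $\chi_E$, combined with a Fubini-coarea argument to handle the ``almost every $r$'' clause. First, I would rewrite the fractional perimeter in symmetric form. Using that $|\chi_E(x)-\chi_E(y)|^2 = |\chi_E(x)-\chi_E(y)|$ and expanding $\C E = (\C E \cap B_r) \cup (\C E \setminus B_r)$ inside the definition \eqref{nmspf1}, a direct bookkeeping shows
\begin{equation*}
2\,\text{Per}_s(E,B_r) \;=\; \mathcal{I}_s(r) \;+\; 2\,\mathcal{J}_s(r),
\end{equation*}
where
\begin{equation*}
\mathcal{I}_s(r) := \iint_{B_r\times B_r}\frac{|\chi_E(x)-\chi_E(y)|}{|x-y|^{n+s}}\,dx\,dy, \qquad \mathcal{J}_s(r) := \iint_{B_r\times \C B_r}\frac{|\chi_E(x)-\chi_E(y)|}{|x-y|^{n+s}}\,dx\,dy.
\end{equation*}
This decomposition isolates the contributions that only feel $E$ inside $B_r$ (the interior piece $\mathcal{I}_s$) from those probing the spherical boundary $\partial B_r$ (the crossing piece $\mathcal{J}_s$).

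For the interior piece I would invoke the $BV$ version of the Bourgain-Brezis-Mironescu formula, due to D\'avila (together with its localized refinement): for a set $E$ of finite perimeter in $B_R$ and every $r<R$,
\begin{equation*}
\lim_{s\to 1^-}(1-s)\,\mathcal{I}_s(r) \;=\; K_n\,\text{Per}(E,B_r),
\end{equation*}
with a dimensional constant $K_n$ that matches the tacit normalization in \eqref{nmspf1}. The standard proof mollifies $\chi_E$, verifies the identity on smooth approximations by an explicit polar-coordinates computation, and passes to the $BV$ limit via lower semicontinuity combined with an upper bound coming from the Fleming-Rishel coarea formula; I would simply cite this as a black box.

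The main obstacle, and the reason the statement only gives ``almost every $r$'', is the crossing term $\mathcal{J}_s(r)$. Its singular contribution as $s\to 1^-$ comes only from pairs $(x,y)$ with $x\in B_r$ and $y\in \C B_r$ both lying in a thin neighbourhood of $\partial B_r$ and sitting on opposite sides of $\partial E$. Parametrizing a tubular neighbourhood of $\partial B_r$ and integrating first along the radial variable, one bounds
\begin{equation*}
\limsup_{s\to 1^-}(1-s)\,\mathcal{J}_s(r) \;\leq\; C_n\,\mathcal{H}^{n-1}\!\left(\partial^{*} E \cap \partial B_r\right).
\end{equation*}
Since $\mathcal{H}^{n-1}(\partial^{*} E\cap B_R)$ is finite by the hypothesis that $E$ has finite perimeter in $B_R$, a Fubini slicing on the family of concentric spheres shows that the set $\{r\in(0,R)\,:\,\mathcal{H}^{n-1}(\partial^{*} E \cap \partial B_r)>0\}$ has zero Lebesgue measure. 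Hence $(1-s)\,\mathcal{J}_s(r)\to 0$ for a.e.\ $r\in(0,R)$, and combining this with the interior limit gives the claimed convergence of $(1-s)\,\text{Per}_s(E,B_r)$ to $\text{Per}(E,B_r)$, up to the global dimensional constant implicit in the statement.
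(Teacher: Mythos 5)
The paper does not prove Theorem~\ref{TP12}: it states the result as a ``simple, formal statement'' and points to the literature (D\'avila's $BV$-version of Bourgain--Brezis--Mironescu, \cite{uniform}, \cite{PONCE}, etc.), so there is no internal proof to compare yours against. Your architecture --- split $2\,\text{Per}_s(E,B_r)=\mathcal{I}_s(r)+2\,\mathcal{J}_s(r)$, treat the interior piece by D\'avila, and show the crossing piece is asymptotically negligible for a.e.\ $r$ --- is exactly the standard route, and you have correctly located the origin of the ``almost every $r$'' restriction in the crossing term.

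The step that does not stand as written is the bound $\limsup_{s\to1^-}(1-s)\mathcal{J}_s(r)\le C_n\,\mathcal{H}^{n-1}(\partial^*E\cap\partial B_r)$ obtained ``by parametrizing a tubular neighbourhood of $\partial B_r$.'' For a generic set of finite perimeter, $\partial^*E$ is only $\mathcal H^{n-1}$-rectifiable and has no uniform structure near $\partial B_r$; the radial integration you gesture at would require a transversality or regularity hypothesis that you do not have. The bound itself is correct, but the robust derivation is a measure-theoretic squeeze that uses only the interior limit already in hand: for $r<r'<R$, split $\C B_r=(B_{r'}\setminus B_r)\cup\C B_{r'}$ to get
\[
\mathcal{J}_s(r)\le\frac{1}{2}\big(\mathcal{I}_s(r')-\mathcal{I}_s(r)\big)
+\iint_{B_r\times\C B_{r'}}\frac{dx\,dy}{|x-y|^{n+s}},
\]
where the first term comes from expanding $\mathcal{I}_s(r')$ over $B_{r'}=B_r\cup(B_{r'}\setminus B_r)$ and the last integral stays bounded as $s\to1^-$ because $|x-y|\ge r'-r$ there. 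Multiplying by $(1-s)$, using the interior limit at both $r$ and $r'$ (which holds for every radius below $R$, not merely a.e.), and then letting $r'\downarrow r$ with continuity from above of the finite measure $\mathcal H^{n-1}$ restricted to $\partial^*E$, one lands precisely on your claimed bound with $C_n=K_n/2$. Finally, the reason the right-hand side vanishes for a.e.\ $r$ is not really Fubini or coarea slicing (those control the $\mathcal H^{n-2}$-measure of the slices $\partial^*E\cap\partial B_r$): it is simply that the spheres $\{\partial B_r\}_{r\in(0,R)}$ are pairwise disjoint and $\mathcal H^{n-1}(\partial^*E\cap B_R)<\infty$, so at most countably many $r$ can carry positive $\mathcal H^{n-1}$-mass on $\partial B_r$. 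With these two repairs your sketch becomes a complete proof, modulo citing D\'avila's theorem for the interior limit.
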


The behavior of~$\text{Per}_s$ as~$s\to0^+$ is slightly more involved.
In principle, the limit as~$s\to0^+$ of~$\text{Per}_s$
is, at least locally, related to the Lebesgue measure
(see e.g.~\cite{VGMAZYA}). Nevertheless, the situation
is complicated by the terms coming from infinity,
which, as~$s\to0^+$, become of greater and greater importance.
We define in this sense the contribution from infinity of a set as
\begin{equation}\label{E:LIM:LE}
\alpha(E) := \lim_{s\to0^+} s
\,\int_{E\setminus B_1}\frac{dy}{|y|^{n+s}}.\end{equation}
We will study in more detail this quantity in the next Section \ref{luke}.\\
It is proved in~\cite{asympt1} that,
if~$\text{Per}_{s_o}(E,\Omega)$ is finite for some~$s_o\in(0,1)$, and $\alpha(E)$ exists, then
\begin{equation}\label{C:I:PA}
\lim_{s\to0} s \,\text{Per}_{s}(E,\Omega)=
\left(\omega_n-\alpha(E)\right)\,|E\cap\Omega| + \alpha(E) \,|\Omega\setminus E|.\end{equation}
We remark that, using polar coordinates,
$$ 0\le \alpha(E)\le 
\lim_{s\to0^+} s
\,\int_{\R^n\setminus B_1}\frac{dy}{|y|^{n+s}}
= 
\lim_{s\to0^+} s\,\omega_n
\int_{1}^{+\infty} \rho^{-1-s}\,d\rho =\omega_n,$$
therefore~$\alpha(E)\in [0,\omega_n]$ plays the role
of a convex interpolation parameter in
the right hand-side of~\eqref{C:I:PA} (up to normalization constants).  \\
In this sense, formula~\eqref{C:I:PA} may be interpreted by saying
that, as~$s\to0^+$, the $s$-perimeter concentrates itself
on two terms that are ``localized'' in the domain~$\Omega$,
namely~$|E\cap\Omega|$ and~$|\Omega\setminus E|$. Nevertheless,
the proportion in which these two terms count is given by
a ``strongly nonlocal'' interpolation parameter, namely
the quantity~$\alpha(E)$ in~\eqref{E:LIM:LE}
which ``keeps track''
of the behavior of~$E$ at infinity.\\
As a matter of fact, to see how~$\alpha(E)$ is influenced
by the behavior of~$E$ at infinity, one can compute~$\alpha(E)$
for the particular cases, as in Subsection \ref{sectexamples}. For instance, taking $E$ a cone, then $\alpha(E)$ gives in this case exactly the opening of the cone.
We also remark that, in general, the limit
in~\eqref{E:LIM:LE} may not exist, even for smooth sets:
indeed, it is possible that the set~$E$ ``oscillates'' wildly at infinity,
say from one cone to another one, leading to the non-existence of the limit in~\eqref{E:LIM:LE}.\\
Moreover, we point out that
the existence of the limit
in~\eqref{E:LIM:LE}
is equivalent to the existence of the limit in~\eqref{C:I:PA},
except in the very special case~$
|E\cap\Omega| = |\Omega\setminus E|$, in which the limit in~\eqref{C:I:PA}
always exists. That is, the following alternative holds true:
\begin{itemize}
\item if $|E\cap\Omega| \ne |\Omega\setminus E|$, then the
limit in~\eqref{E:LIM:LE} exists if and only if
the limit in~\eqref{C:I:PA} exists,
\item if $|E\cap\Omega| = |\Omega\setminus E|$, then the
limit in~\eqref{C:I:PA} always exists (even when the one
in~\eqref{E:LIM:LE} does not exist), and
$$ \lim_{s\to0} \text{Per}_{s}(E,\Omega)=
\omega_n\,|E\cap\Omega| =\omega_n\,|\Omega\setminus E|.$$
\end{itemize}

	We define now the $s$-fractional mean curvature of a set $E$ at a point $q\in\partial E$ as the principal value integral 
\eqlab{ \label{nmc} \I_s[E](q):=P.V.\int_{\R^n}\frac{\chi_{\C E}(y)-\chi_E(y)}{|y-q|^{n+s}}\,dy,}
that is
\[\I_s[E](q):=\lim_{\rho\to0^+}\I_s^\rho[E](q),\qquad\textrm{where}\qquad
\I_s^\rho[E](q)=\int_{
\C B_\rho(q)}\frac{\chi_{\C E}(y)-\chi_E(y)}{|y-q|^{n+s}}\,dy.\]

The fractional mean curvature gives the Euler-Lagrange equation corresponding to the $s$-perimeter functional $\text{Per}_s$.  Indeed, in analogy with the case of
 classical minimal surfaces, which have zero mean curvature, if $E$ is $s$-minimal in $\Omega$, then
\eqlab{\label{ELsmin}\I_s[E]=0,\qquad\textrm{on}\quad\partial E\cap\Omega,}
in an appropriate viscosity sense (see Theorem 5.1 of\cite{nms}).\\
Actually, by exploiting the interior regularity theory of $s$-minimal sets, the equation is satisfied in the classical sense
in a neighborhood of every ``viscosity point'' (see Appendix A in \cite{elsulbordo}). That is, if $E$ has at $p\in\partial E\cap\Omega$ a tangent ball (either interior or exterior), then $\partial E$ is $C^\infty$ in $B_r(p)$, for some $r>0$ small enough, and
\[\I_s[E](x)=0,\qquad\forall\,x\in\partial E\cap B_r(p).\] 
Moreover, if $\Omega$ has a $C^2$ boundary, then the Euler-Lagrange equation (at least as an inequality) holds also at a point $p\in\partial E\cap\partial\Omega$,
%
%
%
%


It is also suggestive to think that the function~$\tilde\chi_E:=
\chi_{\C E}-\chi_{E}$ averages out to zero at the points on~$\partial E$,
if~$\partial E$ is smooth enough, since at these points
the local contribution of~$E$ compensates the one of~$\C E$.
Using this notation, for $x_0\in \partial E$, one may take the liberty of writing
\begin{eqnarray*}
\I_s[E](x_0)&=&\frac12
\int_{\Rn} \frac{\tilde\chi_{E}(x_0+y)+\tilde\chi_{E}(x_0-y)}{|y|^{n+s}}\, dy\\
&=&
\frac12
\int_{\Rn} \frac{\tilde\chi_{E}(x_0+y)+\tilde\chi_{E}(x_0-y)-2\tilde\chi_E(x_0)}{
|y|^{n+s}}\, dy
\\&=&-\frac{-(-\Delta)^{\frac{s}2} \tilde\chi_{E}(x_0)}{C(n,s)},\end{eqnarray*}
using the notation of~\eqref{frlap2def}.
Using this suggestive representation, the
Euler-Lagrange equation in~\eqref{ELsmin} becomes
$$ {\mbox{$(-\Delta)^{\frac{s}2} \tilde\chi_{E}=0$
along~$\partial E\cap \Omega$.}}$$\\
For the main properties of the fractional mean curvature, we refer to \cite{Abaty}.
In particular,  it is proved there in Theorem 12 that for a set $E\subset \Rn$ with $C^2$ boundary and any $x\in \partial E$, one has 
\[ \lim_{s \to 1} \mathcal (1-s)I_s[E] (x) = \omega_{n-1}H[E](x),\]  where $H$ is the classical mean curvature of $E$ at the point $x$ (with the convention that we take $H$ such that the curvature of the ball is a positive quantity).  See also \cite{regularity}.

It is also worth recalling that
the nonlocal perimeter functionals find applications
in motions of fronts by nonlocal mean curvature (see
e.g.~\cite{CAFFA-SOUG, IMBERT, CHAMBOLLE}),
problems in which aggregating and disaggregating terms
compete towards an equilibrium (see e.g.~\cite{I5}
and~\cite{I4}) and nonlocal free boundary problems
(see e.g.~\cite{CAFFA-SAVIN-VALDINOCI} and~\cite{DIPIERRO-SAVIN-VALDINOCI}).
See also~\cite{VGMAZYA}
and~\cite{VISENTIN}
for results related to this type of problems.
\bigskip

We point out that in order to find minimal surfaces we are looking for sets of minimal perimeter (this was first done by De Giorgi in the classical case). However, it is necessary to prove that indeed the boundaries of $s$-minimal sets are smooth surfaces.  In the case of the local perimeter
functional, it is known indeed that the boundaries of minimal sets are smooth in dimension $n\leq 7$. Moreover, if $n\geq 8$  minimal surfaces are smooth except on a small singular set of Hausdorff dimension $n-8$.
Differently from the classical case,
the regularity theory for $s$-minimizers is still quite open.
We present here some of the partial results obtained in this direction:

\begin{theorem}\label{THM 5.8} In the plane, $s$-minimal sets are smooth. More precisely:\\ 
a) If $E$ is an $s$-minimal set in $\Omega \subset \R^2$, then $\partial E \cap \Omega$ is a $C^{\infty}$-curve.\\
b) Let $E$ be $s$-minimal in $\Omega\subset \Rn$ and let $\Sigma_E \subset \partial E\cap\Omega$ denote its
singular set. Then $\mathcal{H}^d (\Sigma_E)=0$ for any $d>n-3$.
\end{theorem}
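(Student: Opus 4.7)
The plan is to follow a De~Giorgi--Federer type strategy, adapted to the nonlocal setting. The backbone consists of three pieces: a blow-up compactness procedure for $s$-minimizers, an improvement-of-flatness ($\varepsilon$-regularity) theorem, and the classification of $s$-minimal cones in $\R^2$ (which is recalled in this very chapter: the only nontrivial $s$-minimal cones in the plane are half-planes). Once these three ingredients are in place, part~(a) follows almost formally, and part~(b) follows from Federer's dimension reduction applied to the singular set.

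More concretely, I would first set up the blow-up analysis. Fix $x_0 \in \partial E \cap \Omega$ and consider the rescaled sets $E_r := (E - x_0)/r$. Uniform $s$-perimeter bounds for $s$-minimizers, together with the density estimates proved in \cite{nms}, yield that along any sequence $r_k \to 0^+$ one can extract a subsequence along which $E_{r_k}$ converges in $L^1_{\mathrm{loc}}(\R^n)$ to a set $C$; the lower semicontinuity of $\mathrm{Per}_s$ and the scale invariance of the problem imply that $C$ is an $s$-minimal cone (a so-called tangent cone at $x_0$). The central analytic ingredient is then the $\varepsilon$-regularity theorem of \cite{nms}: there exists $\varepsilon_0 = \varepsilon_0(n,s) > 0$ such that if $E$ is $s$-minimal in $B_1$ and $\partial E \cap B_1$ lies in a strip $\{|x\cdot\nu| < \varepsilon\}$ with $\varepsilon \le \varepsilon_0$, then $\partial E \cap B_{1/2}$ is the graph of a $C^{1,\alpha}$ function. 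Once the boundary is $C^{1,\alpha}$, the Euler--Lagrange equation $\I_s[E] = 0$ (see \eqref{ELsmin}) becomes a fractional quasilinear equation for the graph parametrization, and the Schauder-type estimates discussed in Chapter~\ref{chap4} allow one to bootstrap up to $C^\infty$.

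For part~(a), the classification of planar $s$-minimal cones immediately gives that at every $x_0 \in \partial E \cap \Omega$ the tangent cone is a half-plane. Consequently, for each $\delta > 0$ there is a scale $r_\delta > 0$ at which $\partial E \cap B_{r_\delta}(x_0)$ lies in a strip of width $\delta\, r_\delta$. Choosing $\delta < \varepsilon_0$ and applying the $\varepsilon$-regularity theorem to the rescaled set, one concludes that $\partial E$ is $C^{1,\alpha}$ in a neighborhood of $x_0$; the bootstrap above then upgrades this to $C^\infty$. For part~(b), I would invoke Federer's dimension reduction principle, now available for $s$-minimizers thanks to the compactness and the fact that singularities are preserved under blow-up limits (both properties are established in \cite{nms}). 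Assuming by contradiction $\mathcal{H}^d(\Sigma_E) > 0$ for some $d > n-3$, an iterated blow-up at a density point of $\Sigma_E$ produces a singular $s$-minimal cone in $\R^{n-d}$ with $n-d \le 2$. In $\R^1$ the only $s$-minimal sets are half-lines (smooth), and in $\R^2$ the only $s$-minimal cones are half-planes (smooth) by the classification recalled in this chapter. This contradicts singularity of the reduced cone, giving $\mathcal{H}^d(\Sigma_E) = 0$ for every $d > n-3$.

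The hard part is undoubtedly the $\varepsilon$-regularity theorem. In the classical case it rests on harmonic approximation of the perimeter excess, but in the fractional setting one must replace the Laplacian by $(-\Delta)^{s/2}$ and match the natural scaling $r^s$ of $\mathrm{Per}_s$; the iterative flattening scheme of \cite{nms} is delicate and constitutes the technical heart of the regularity theory. The other steps---compactness of $s$-minimizers, existence and minimality of tangent cones, Schauder bootstrap for nonlocal operators, and Federer's reduction---are by now standard adaptations of the classical minimal surface machinery, provided one has the nonlocal $\varepsilon$-regularity statement in hand.
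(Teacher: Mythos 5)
Your outline correctly reproduces the strategy of the cited references: blow-up compactness and the $\varepsilon$-regularity theorem of Caffarelli--Roquejoffre--Savin reduce the problem to the classification of planar $s$-minimal cones (Theorem~\ref{THIS}, proved in this chapter), the $C^{1,\alpha}\Rightarrow C^\infty$ bootstrap is exactly the content of \cite{bootstrap}, and part~(b) is Federer's dimension reduction as in \cite{SV13}. The paper itself does not reproduce this argument but refers to \cite{SV13} and \cite{bootstrap}, so your sketch is a faithful account of the same route.
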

See \cite{SV13} for the proof of this results
(as a matter of fact, in~\cite{SV13} only $C^{1,\alpha}$ regularity is proved,
but then \cite{bootstrap} proved that $s$-minimal sets with~$C^{1,\alpha}$-boundary are automatically~$C^\infty$).
Further regularity results of the $s$-minimal surfaces can be found in 
\cite{regularity}. There, a regularity theory
when $s$ is near $\displaystyle 1$ is stated, as we see in the following Theorem:

\begin{theorem}\label{THM 5.9} There exists $\epsilon_0 \in (0,1)
$ such that if $s \geq 1 -\epsilon_0$,  then\\
a) if $n\leq 7$, any $s$-minimal set is of class $C^{\infty}$, \\
b) if $n=8$ any $s$-minimal surface is of class $C^\infty$ except, at most, at countably many isolated points, \\
c) any $s$-minimal surface is of class $C^\infty$ outside a closed set $\Sigma$ of Hausdorff dimension $n-8$.
\end{theorem}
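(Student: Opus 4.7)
The plan is to mimic the classical De Giorgi--Federer regularity program, transferring each ingredient to the nonlocal setting via a uniform--in--$s$ compactness argument as $s\to 1^-$. The starting point is an improvement of flatness theorem for $s$-minimal sets: there exists a universal $\delta_0>0$ such that if $E$ is $s$-minimal in $B_1$ and $\partial E\cap B_1$ is trapped in a slab of width $\delta_0$ around a hyperplane, then $\partial E\cap B_{1/2}$ is a $C^{1,\alpha}$ graph, with estimates that are stable as $s\to 1$. Combined with the bootstrap result of \cite{bootstrap}, which upgrades $C^{1,\alpha}$ regular nonlocal minimal surfaces to $C^\infty$, this reduces the whole theorem to excluding, for $s$ close to $1$, the presence of non-flat blow-ups in low dimension and to controlling the dimension of the singular set in general.

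The second step is to set up a blow-up analysis adapted to the $s$-perimeter. If $p\in\partial E$ is a singular point of an $s$-minimal set $E$, then by the standard rescaling $E_r:=(E-p)/r$ and a monotonicity/compactness argument based on uniform $s$-perimeter estimates, any subsequential limit $E_0=\lim_{r_j\to 0}E_{r_j}$ is an $s$-minimal cone with a singularity at the origin. The singular set $\Sigma_E$ therefore inherits the usual Federer-type stratification, and a dimension reduction argument, identical in form to the classical one, shows that $\mathcal H^d(\Sigma_E)=0$ for every $d>n-k_s$, where $k_s$ is the smallest dimension in which a non-trivial singular $s$-minimal cone exists. Thus part (c) (and parts (a) and (b)) will follow once we prove $k_s\ge 8$ for $s$ close to $1$, and $k_s\ge 9$ when $n\le 7$ or $n=8$ with isolated points.

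The core and most delicate step is the exclusion of non-trivial singular $s$-minimal cones for $s$ near $1$ in dimension $n\le 7$. The plan is to argue by contradiction and compactness: if there existed sequences $s_k\to 1^-$ and $E_k$, each $E_k$ being a non-trivial singular $s_k$-minimal cone in $\R^n$ with $n\le 7$, then by the uniform $s$-perimeter estimates of \cite{uniform} and the $\Gamma$-convergence result
\[
\lim_{s\to 1^-}(1-s)\mathrm{Per}_s(E,B_r)=\omega_{n-1}\,\mathrm{Per}(E,B_r),
\]
one can extract a subsequence converging in $L^1_{\mathrm{loc}}$ to a set $E_\infty$ which is a non-trivial minimizing cone for the classical perimeter in $\R^n$. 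Moreover, the flatness estimate above guarantees that the singularity at the origin persists in the limit, since otherwise $\partial E_k$ would be smooth in a fixed neighbourhood of $0$ for $k$ large. This contradicts the classical Simons theorem, according to which every minimizing cone in $\R^n$ with $n\le 7$ is a half-space. Hence $k_s\ge 8$ for $s\ge 1-\varepsilon_0$, proving (a); a refinement of the same compactness/stratification argument, together with the classification of minimizing cones in $\R^8$ (where singularities are at most isolated), yields (b), and Federer's reduction then yields (c).

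The main obstacle is the last step: making the compactness argument quantitative enough so that a single threshold $\varepsilon_0$ works for all $n$ and for all the stratification levels simultaneously. Concretely, one must ensure that the improvement-of-flatness constants, the monotonicity formula remainders, and the $\Gamma$-limit estimates are all uniform as $s\to 1^-$; this is where the delicate uniform BV-type bounds of \cite{uniform} and the density estimates for $s$-minimal sets enter in an essential way, and this is the ingredient around which the entire proof of Theorem \ref{THM 5.9} is organised.
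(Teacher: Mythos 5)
The thesis itself does not prove Theorem \ref{THM 5.9}: it states the result and refers to \cite{regularity}, so there is no in-paper proof for me to compare against, only the strategy of that reference. Your sketch captures that strategy faithfully. The three pillars you identify --- an improvement-of-flatness theorem with constants stable as $s\to1^-$ together with the bootstrap of \cite{bootstrap}, a blow-up and Federer dimension-reduction scheme (set up for $s$-minimal sets already in \cite{nms}), and a compactness argument sending $s_k\to1^-$ so as to invoke the classical Simons theorem --- are exactly the ingredients assembled in \cite{regularity} to prove parts (a), (b), (c).

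Two points to tighten. First, you attribute the persistence of the singularity in the limit to the flatness estimate, but what is actually needed is the upgrade from $L^1_{\rm loc}$ convergence of the sets $E_k$ to locally uniform (Hausdorff) convergence of the boundaries $\partial E_k$; this upgrade comes from the uniform density and clean-ball estimates of \cite{uniform}, not from the improvement-of-flatness statement itself. Without it, the singularity could a priori evaporate in the $L^1$ limit, and the contradiction with Simons would not be reached. Second, your remark that uniformity of the flatness constants as $s\to1^-$ is the crux is correct, but in \cite{regularity} this uniformity is not taken as a starting hypothesis: it is derived simultaneously with your compactness step, by a contradiction argument that again rests on the uniform perimeter and density bounds of \cite{uniform} and on the $\Gamma$-convergence of $(1-s)\,\mathrm{Per}_s$ to the classical perimeter. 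So the logical order is: uniform density and perimeter bounds first, compactness to classical minimizers second, and only then the uniform improvement of flatness and the exclusion of singular cones, rather than the flatness estimate being a free-standing "starting point." With these adjustments your outline matches the published argument.
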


\section{Graphs and $s$-minimal surfaces} 
Minimal surfaces that are graphs are called minimal graphs, and they reduce to hyperplanes
if~$n\le8$ (this is called the Bernstein property,
which was also discussed at the beginning of the Chapter \ref{S:NP}).
If $n\geq9$, there exist global minimal graphs that are not affine
(see e.g.~\cite{GIUSTI}).

We will focus the upcoming material on two interesting results related to graphs: a Bernstein type result, namely the property that an $s$-minimal graph  in $\R^{n+1}$ is flat (if no singular cones exist in dimension $n$); we will then prove that an $s$-minimal surface whose prescribed data is a subgraph, is itself a subgraph.

The first result is the following theorem:
\begin{theorem}\label{figv}
Let $E= \{ (x,t) \in \Rn \times \R \;\big|\; t<u(x)\}$ be an $s$-minimal graph, and assume there are no singular cones in dimension $n$ (that is, if $\mathcal{K} \subset \Rn$ is an $s$-minimal cone, then $\mathcal{K}$ is a half-space). Then $u$ is an affine function (thus $E$ is a half-space).
\end{theorem}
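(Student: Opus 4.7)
My plan is to follow the classical Bernstein-type reduction adapted to the nonlocal setting, combining a blow-down procedure with a dimensional reduction that exploits the graph structure, ultimately invoking the hypothesis on non-existence of singular cones in dimension $n$.

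First, I would set up the blow-down sequence. Consider the rescaled sets $E_R := R^{-1}E \subset \R^{n+1}$ for $R \to +\infty$. Thanks to the natural scaling of the fractional perimeter and the minimality of $E$, each $E_R$ is $s$-minimal in $B_R$. Using the uniform $s$-perimeter estimates available for minimizers (in the spirit of \cite{nms} and \cite{uniform}), a subsequence converges in $L^1_{\mathrm{loc}}(\R^{n+1})$ to some set $E_\infty$. Since $E_R$ is the subgraph of $u_R(x) = R^{-1} u(Rx)$, and the subgraph property is preserved under $L^1_{\mathrm{loc}}$ convergence, $E_\infty$ is itself the subgraph of some function $u_\infty : \R^n \to [-\infty,+\infty]$.

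Second, I would show $E_\infty$ is an $s$-minimal cone. Minimality is inherited from convergence of minimizers, while the conical structure comes from a monotonicity formula for $r \mapsto r^{-(n-s)} \mathrm{Per}_s(E,B_r)$ (Caffarelli--Roquejoffre--Savin type): the limiting monotone quantity is constant on the blow-down, forcing $E_\infty$ to be $0$-homogeneous. Consequently, $u_\infty$ is $1$-homogeneous, and its subgraph is a minimal cone in $\R^{n+1}$. The crucial dimensional reduction is now to exploit the graph structure: a $1$-homogeneous subgraph cone $E_\infty \subset \R^{n+1}$ is (after passing, if necessary, to a further blow-up at a boundary point and using the fact that the graph direction gives an a priori direction of non-degeneracy) a cylinder over an $s$-minimal cone $\mathcal{K} \subset \R^n$ in the base. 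By hypothesis, $\mathcal{K}$ must be a half-space, so $E_\infty$ is itself a half-space in $\R^{n+1}$; since $E_\infty$ is a subgraph, this half-space must be of the form $\{t < \ell(x)\}$ for a linear $\ell$, i.e.\ $u_\infty$ is linear.

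Finally, to descend from the fact that every blow-down of $E$ is a half-space back to $E$ itself being a half-space, I would invoke the improvement-of-flatness theorem for $s$-minimal surfaces of Caffarelli--Roquejoffre--Savin (Theorem \ref{THM 5.9} above, together with its $C^{1,\alpha}$ version): since the blow-down is a hyperplane, $\partial E$ is asymptotically flat at infinity at every scale, and flatness-implies-smoothness upgrades this into $C^{1,\alpha}$ regularity on larger and larger balls. A Liouville-type argument (using the $s$-minimality and the uniform estimates on $\nabla u$ obtained from the flatness scheme) then forces $u$ to be globally affine.

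The main obstacle, as in the classical theory, is the dimensional reduction step: one must verify rigorously that a $1$-homogeneous subgraph $s$-minimal cone in $\R^{n+1}$ genuinely splits as a cylinder over an $s$-minimal cone in $\R^n$. In the nonlocal case this is delicate because the fractional mean curvature \eqref{nmc} is truly non-local and ``sees'' the whole vertical direction; one has to check that the invariance along the graph direction of $E_\infty$ propagates to a bona fide $s$-minimization problem for the horizontal cross-section, which requires a careful Fubini-type decomposition of the interaction kernel. Once this is established, the non-existence of singular cones in dimension $n$ immediately closes the argument.
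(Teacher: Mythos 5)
Your plan matches the paper's opening moves — blow down to an $s$-minimal cone $E_\infty$ and use the non-existence of singular cones in dimension $n$ — but the way you propose to exploit the graph structure has a genuine gap, and the paper closes the argument by a rather different mechanism.

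The step that fails is the claim that ``a $1$-homogeneous subgraph $s$-minimal cone $E_\infty\subset\R^{n+1}$ genuinely splits as a cylinder over an $s$-minimal cone $\mathcal K\subset\R^n$.'' This is not true: a cone of the form $\{x_{n+1}<u_\infty(x)\}$ with $u_\infty$ positively $1$-homogeneous (say $u_\infty(x)=a|x|$ or a non-radial $1$-homogeneous function) is never a cylinder in any direction, so there is no canonical $s$-minimal cone ``$\mathcal K$ in the base'' to which the hypothesis can be applied. What the dimensional reduction in Lemma~\ref{dimrid} actually gives, combined with a Federer-type blow-up at a hypothetical singular point $p\neq 0$, is a cylinder $\mathcal K\times\R$ with $\mathcal K$ a singular cone in one lower dimension; the hypothesis then rules this out, and one concludes only that $E_\infty$ is singular \emph{at most at the origin}. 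By itself that is far from forcing $E_\infty$ to be a half-space, so the argument cannot stop there. Your proposal acknowledges the difficulty (``the main obstacle'') but then invokes a Fubini-type splitting of the kernel that is not available: the fractional mean curvature of $E_\infty$ at a boundary point sees the whole vertical direction weighted by the $(n+1+s)$-power kernel, and there is no exact product decomposition reducing minimality of $E_\infty$ to that of a horizontal cross-section.

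The missing idea, and what the paper actually does, is to use the graph structure of $E_\infty$ directly via a sliding/perturbation argument rather than a splitting argument. One builds the compactly perturbed subgraphs $F_t=\{x_{n+1}\le u_\infty(x'+t\theta w(x')\sigma)-t\}$ with a bump $w$ and direction $\sigma\in\partial B_1$, checks that $F_1$ lies below $E_\infty$ for $\theta$ small, and lets $t_0$ be the least $t$ for which $F_t$ is below $E_\infty$. A touching point would satisfy $\I_s[E_\infty]=0$, while the perturbation gives $\I_s[F_{t_0}]\simeq\theta t_0$, and the fact that $E_\infty$ and $F_{t_0}$ are separated by $t_0$ in a fixed annulus gives $\I_s[F_{t_0}]-\I_s[E_\infty]\ge ct_0$; for $\theta$ small this forces $t_0=0$, yielding the Lipschitz bound $|\nabla u_\infty|\le 1/\theta$ on $B_{3/5}\setminus B_{2/5}$. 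Interior regularity (Lemma~\ref{reg}) then upgrades $u_\infty$ to $C^\infty$, which for a cone means smoothness at the origin, hence $E_\infty$ is a half-space. The descent from the blow-down back to $E$ itself is then by Lemma~\ref{halfspace} (``if $E_\infty$ is affine, so is $E$''), not by an improvement-of-flatness iteration; your proposed closing step is plausible in spirit but would need to be justified separately and is anyway circumventable once one has the half-space blow-down and the cited lemma.
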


To be able to prove Theorem \ref{figv}, we recall some useful auxiliary results. 
In the following lemma we state a dimensional reduction result (see Theorem 10.1 in \cite{nms}).
\begin{lemma} \label{dimrid}
Let $E=F\times \R$. Then if $E$ is $s$-minimal if and only if $F$ is $s$-minimal.
\end{lemma}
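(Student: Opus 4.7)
The starting point for both implications is a Fubini-type identity for the nonlocal interaction of cylindrical sets. Writing $\mathsf{I}^{(d)}_s(A,B) := \iint_{A\times B}|x-y|^{-d-s}dx\,dy$ for disjoint Borel sets in $\R^d$, the substitution $u = t-\sigma$ in the innermost integral yields, for disjoint $A',B'\subset\R^n$ and measurable $I,J\subset\R$,
\[
\mathsf{I}^{(n+1)}_s(A'\!\times\!I,\,B'\!\times\!J) \;=\; \iint_{A'\times B'}\Phi_s(|x'-y'|;I,J)\,dx'\,dy',\qquad
\Phi_s(d;I,J):=\iint_{I\times J}\frac{dt\,d\sigma}{(d^2+(t-\sigma)^2)^{(n+1+s)/2}}.
\]
The two cases I will need are $\Phi_s(d;I,\R)=c_s|I|\,d^{-n-s}$ with $c_s:=\int_\R(1+\tau^2)^{-(n+1+s)/2}d\tau\in(0,\infty)$, and $\Phi_s(d;(-M,M),(-M,M)) = 2Mc_s\,d^{-n-s}-\eta(d;M)$, where the ``defect'' $\eta(d;M)\nearrow\eta(d;\infty)=O(d^{1-n-s})$ as $M\to\infty$; since $s\in(0,1)$, the defect is integrable against $dx'\,dy'$ on compact product sets in $\R^n\times\R^n$.

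I would first prove the easier direction ``$E:=F\times\R$ is $s$-minimal in $\R^{n+1}$ $\Rightarrow F$ is $s$-minimal in $\R^n$'', by contradiction. Assuming a competitor $G\subset\R^n$ with $G\setminus B_r^n=F\setminus B_r^n$ and $\delta:=\mathrm{Per}_s(F,B_r^n)-\mathrm{Per}_s(G,B_r^n)>0$, I would take, for $M\gg 1$, the competitor $\tilde E:=(G\times(-M,M))\cup(F\times(\R\setminus[-M,M]))$, which agrees with $E$ outside the cylinder $\Omega_M:=B_r^n\times(-M,M)$. Expanding each of the interactions in $\mathrm{Per}_s(E,\Omega_M)-\mathrm{Per}_s(\tilde E,\Omega_M)$ via the Fubini identity and separating ``bulk'' terms (those in which one $t$-interval equals $\R$) from ``seam'' terms (both intervals bounded, corresponding to the horizontal discontinuities at $\{t=\pm M\}$), the bulk terms yield exactly $2Mc_s\delta$ in favor of $\tilde E$, while the seam and attendant cross-interactions are bounded uniformly in $M$ by a constant $C=C(r,n,s,F,G)$, thanks to the boundedness of $\eta(\cdot;M)$ and the integrability of $d^{1-n-s}$ over $B_r^n\times B_r^n$ valid for $s<1$. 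Hence for $M$ large, $\mathrm{Per}_s(E,\Omega_M)-\mathrm{Per}_s(\tilde E,\Omega_M)\ge 2Mc_s\delta-C>0$, contradicting the $s$-minimality of $E$.

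The reverse direction ``$F$ $s$-minimal $\Rightarrow E=F\times\R$ $s$-minimal'' is the main obstacle and requires a more delicate argument. Given any competitor $\tilde E$ of $E$ in a ball $\Omega\subset\R^{n+1}$, the horizontal slices $\tilde E_t:=\{x':(x',t)\in\tilde E\}$ satisfy $\tilde E_t=F$ for $|t|$ large and $\tilde E_t\setminus B_{R(t)}^n=F\setminus B_{R(t)}^n$ for some bounded $R(t)$, so $s$-minimality of $F$ gives $\mathrm{Per}_s(F,B_R^n)\le\mathrm{Per}_s(\tilde E_t,B_R^n)$ for a.e.\ $t$, provided $R$ is large enough. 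The task is to integrate these slice inequalities up to the $(n+1)$-dimensional one; using the Fubini identity I would rewrite the defect $\mathrm{Per}_s(\tilde E,\Omega)-\mathrm{Per}_s(E,\Omega)$ as a double integral in $(t,\sigma)$ of $n$-dimensional quadratic forms governed by the modified kernel $(|x'-y'|^2+(t-\sigma)^2)^{-(n+1+s)/2}$, and combine the slice minimality with the positivity of this kernel. The delicate point is precisely the cross-slice coupling: slice minimality controls only the diagonal $(t=\sigma)$ part, while the off-diagonal part must be handled by an averaging argument. The natural tool here is the convolution $T_M u(x',t):=\frac{1}{2M}\int_{-M}^M u(x',t+s)\,ds$, which commutes with the $t$-translations preserved by $E$, decreases the $H^{s/2}$-seminorm by Jensen's inequality, and satisfies $T_Mu\to\chi_E$ pointwise as $M\to\infty$; this cylindrification effectively reduces the comparison to an $n$-dimensional one, which is then dispatched by the $s$-minimality of $F$ coupled with the identity of the first paragraph. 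I expect the technical heart of the proof to lie in reconciling this averaging step with the localization of the perimeter to $\Omega$, since the $H^{s/2}$-seminorm of $\chi_E$ on all of $\R^{n+1}$ is infinite and the argument must be carried out locally.
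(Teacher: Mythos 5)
Your proof of the direction ``$E=F\times\R$ $s$-minimal $\Rightarrow$ $F$ $s$-minimal'' is complete and correct. The Fubini-type reduction of the $(n+1)$-dimensional interaction to the kernel $|x'-y'|^{-n-s}$ plus a defect $\eta$, the identification of the bulk contribution $2Mc_s\,\text{Per}_s(\cdot,B_r^n)$, and the uniform-in-$M$ control of the remainder all check out. One should note explicitly that the remainder is supported on $B_r^n\times B_r^n$ after the cancellations: the terms $\iint_{(G\cap B_r)\times\C G}\eta - \iint_{(G\cap B_r)\times\C F}\eta$ reduce to integrals over $B_r^n\times B_r^n$ because $G=F$ outside $B_r$, and similarly for the others; on $B_r^n\times B_r^n$ the bound $\eta(\cdot;M)\le\eta(\cdot;\infty)=O(d^{1-n-s})$ is integrable precisely because $s<1$. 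For the record, the thesis does not prove this lemma but defers to Theorem 10.1 of Caffarelli--Roquejoffre--Savin, so there is no ``paper's proof'' against which to compare your route.

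The converse, ``$F$ $s$-minimal $\Rightarrow E=F\times\R$ $s$-minimal,'' is the substantive half of the lemma and you have not closed it: you describe the averaging operator $T_M$, the $H^{s/2}$-contraction, and the pointwise limit, and then explicitly state that reconciling the averaging with the localization to $\Omega$ is ``the technical heart of the proof,'' which you leave open. This is a genuine gap, not a presentational one. Concretely, $T_M\chi_{\tilde E}$ takes values in $[0,1]$ rather than $\{0,1\}$, so the contraction bound alone does not yield a set competitor---you would need to pass through the generalized coarea formula to extract a superlevel set with controlled perimeter. More seriously, $T_M\chi_{\tilde E}$ differs from $\chi_E$ at $t$-distances up to order $M$ beyond $\Omega$ (the averaging spreads the discrepancy), whereas $\text{Per}_s(\cdot,\Omega)$ only compares competitors that agree with $E$ outside $\Omega$; and the unlocalized $H^{s/2}$-seminorm of $\chi_E$ is infinite, so no global comparison is available. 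Until you carry out a truncation-and-limit scheme for the averaged objects, or replace the averaging with a slicing argument that genuinely controls the off-diagonal part of the coupled kernel $(|x'-y'|^2+(t-\sigma)^2)^{-(n+1+s)/2}$, only one implication of the stated equivalence is proved.
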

We define then the blow-up and blow-down of the set $E$ are, respectively
	\[E_0:=\lim_{r\to 0}E_r \quad \mbox{and} \quad E_\infty: =\lim_{r\to +\infty} E_r, \quad \mbox{where} \quad E_r=\frac{E}{r}.\]
A first property of the blow-up of $E$ is the following (see Lemma 3.1 in \cite{FV13}).
\begin{lemma}
If $E_\infty$ is affine, then so is $E$.
\label{halfspace}
\end{lemma}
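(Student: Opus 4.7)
My plan is to combine the scale invariance of $s$-minimality with the improvement of flatness theorem of Caffarelli-Roquejoffre-Savin (the same regularity machinery that underlies Theorem \ref{THM 5.9}). The starting point is that for every $r>0$ the rescaled set $E_r=E/r$ is itself $s$-minimal (in the appropriately rescaled domain), since the $s$-perimeter is scale invariant up to a factor. Hence I would first record that the hypothesis $E_r\to E_\infty$ in $L^1_{\mathrm{loc}}(\mathbb{R}^n)$ as $r\to+\infty$ passes to $s$-minimal sets: each $E_r$ is $s$-minimal on a ball $B_{R/r}$ that eventually exhausts $\mathbb{R}^n$, and the limit $E_\infty$ is an $s$-minimal set on all of $\mathbb{R}^n$.

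Next I would exploit that $E_\infty$ is affine, i.e. a halfspace $H$. After a rotation we may assume $H=\{x_n<0\}$. The $L^1_{\mathrm{loc}}$ convergence $E_r\to H$ means that for every $\varepsilon>0$ there exists $r_\varepsilon$ such that, for $r\ge r_\varepsilon$,
\[
|(E_r\triangle H)\cap B_1|<\varepsilon.
\]
Invoking the density estimates for $s$-minimizers (available in $\mathbb{R}^n$ from the regularity theory), this $L^1$ smallness forces $\partial E_r\cap B_1$ to be trapped in a thin horizontal strip $\{|x_n|<\delta(\varepsilon)\}$ with $\delta(\varepsilon)\to 0$ as $\varepsilon\to 0$. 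Thus, for $r$ large, $\partial E_r$ is ``flat'' in $B_1$ at a level we can make as small as we want.

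At this stage I would apply the improvement of flatness theorem of Caffarelli-Roquejoffre-Savin: once the flatness $\delta(\varepsilon)$ is below a universal threshold, $\partial E_r$ is of class $C^{1,\alpha}$ in $B_{1/2}$ and satisfies a quantitative flatness bound which can be iterated at smaller scales to yield a $C^{1,\alpha}$ estimate of the graph of $\partial E_r$ over the hyperplane $\{x_n=0\}$. Rescaling back to $E$, I obtain that $\partial E\cap B_{r/2}$ is a $C^{1,\alpha}$ graph over a hyperplane, with $C^{1,\alpha}$ seminorm controlled independently of $r$ (indeed tending to zero as $r\to+\infty$, since we can take $\varepsilon\to 0$). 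Letting $r\to\infty$ a standard scaling argument (or a Liouville-type statement for flat, entire $s$-minimal graphs with vanishing oscillation of the normal) forces the normal of $\partial E$ to be constant, hence $\partial E$ is a hyperplane and $E$ is a halfspace.

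The delicate step will be the last one: controlling the $C^{1,\alpha}$ estimate uniformly in $r$ and then passing to the limit to conclude that $\partial E$ is not just asymptotically but exactly a hyperplane. This is where I would need to be careful in quoting improvement of flatness in a form suitable for entire $s$-minimal surfaces (so that the flatness transfers from large scales to all scales) rather than just extracting local $C^{1,\alpha}$ regularity at a point. Alternatively, the same conclusion can be reached by observing that, under the assumption $E_\infty=H$, every blow-down sequence of $E$ must converge to the same halfspace, which together with uniqueness of tangent cones at infinity for smooth $s$-minimal graphs pins $E$ down to be $H$ itself.
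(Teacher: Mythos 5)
Your argument takes a genuinely different route from the one in \cite{FV13}, which the thesis cites here without reproducing the proof. The cited proof is a direct monotonicity argument: in the Caffarelli--Roquejoffre--Savin extension framework, the rescaled weighted Dirichlet energy $\Phi_E(r)$ is non-decreasing in $r$, equals the half-space density as $r\to+\infty$ because $E_\infty$ is a half-space, and is bounded below by that same half-space density as $r\to0^+$ because the blow-up of $E$ at a boundary point is an $s$-minimal cone and half-spaces minimize density among such cones (a dimensional-reduction fact). Hence $\Phi_E$ is constant, $E$ is a cone with the half-space density, and therefore a half-space. Your route via density estimates plus the $\varepsilon$-regularity and improvement-of-flatness theorem reaches the same conclusion with heavier regularity input, whereas the paper's route needs only the one monotone quantity; on the other hand your argument is more geometric and survives in settings where no clean monotonicity formula is at hand.

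The step you flag as ``delicate'' can be closed cleanly, and you should do it explicitly. After rescaling back to $E$, what $\varepsilon$-regularity gives you is that the oscillation of the unit normal $\nu_E$ over $\partial E\cap B_{r/2}$ is at most $C\,\delta(r)$, with $\delta(r)\to0$ as $r\to+\infty$. But $\mathrm{osc}_{\partial E\cap B_{\rho}}\,\nu_E$ is a non-decreasing function of $\rho$ (a supremum over an increasing family of sets), and a non-decreasing quantity tending to zero is identically zero; this is the whole Liouville step, and it is exactly what transfers flatness from large scales down to every scale. Phrase the conclusion in terms of the $L^\infty$ oscillation of the normal (equivalently of $\nabla u$), which scales like $\delta(r)$ under $x\mapsto x/r$, rather than in terms of a $C^{1,\alpha}$ seminorm, which scales like $\delta(r)/r^\alpha$ and is not the quantity whose monotonicity you need.

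Two small corrections. The remark that $E_r$ is $s$-minimal in $B_{R/r}$, a ball ``that eventually exhausts $\R^n$,'' has the scaling backwards for $r\to+\infty$; what you actually want is simply that $E$ is $s$-minimal in every ball, hence so is each $E_r$, hence so is the $L^1_{\mathrm{loc}}$ limit $E_\infty$. And the ``alternative'' you sketch at the end is not a valid shortcut: uniqueness of the tangent cone at infinity is a statement about the limit of the blow-downs and does not by itself force $E$ to coincide with that limit -- that implication is precisely the content of the lemma and requires either the monotonicity rigidity or your flatness iteration.
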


We recall also a regularity result for the $s$-minimal surfaces (see \cite{FV13} and \cite{bootstrap} for details and proof).
\begin{lemma}\label{reg}  Let~$E$ be $s$-minimal. Then:\\
a) If $E$ is Lipschitz, then $E$ is~$C^{1,\alpha}$.\\
b) If $E$ is~$C^{1,\alpha}$, then $E$ is~$C^{\infty}$.
\end{lemma}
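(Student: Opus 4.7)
The plan is to treat the two parts separately, using rather different techniques. For part (a), I would follow the improvement-of-flatness strategy introduced by Caffarelli, Roquejoffre and Savin for nonlocal minimal surfaces. The first observation is that if $E$ is Lipschitz at a boundary point $q\in\partial E$, then $\partial E$ satisfies a two-sided cone condition there, so in a neighborhood of $q$ the boundary is trapped between two parallel hyperplanes at distance $\eta<1$ controlled by the Lipschitz constant. The core step would then be to prove a rescaled improvement-of-flatness estimate: if $\partial E\cap B_1$ lies in the strip $\{|x\cdot e|<\eta\}$ for some unit vector $e$, then there is a new unit vector $e'$, close to $e$, such that $\partial E\cap B_\rho$ lies in the strip $\{|x\cdot e'|<\rho^{1+\alpha}\eta\}$, for some universal $\rho\in(0,1/2)$ and $\alpha\in(0,s)$. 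Iterating this estimate at every dyadic scale at $q$ gives a Hölder modulus of continuity for the normal vector, and hence $\partial E\in C^{1,\alpha}$. The improvement-of-flatness estimate itself would be proved by contradiction and compactness: rescaling a putative sequence of counterexamples and passing to the limit yields a function solving (in a viscosity sense) a linearized fractional mean curvature equation on a hyperplane, which is known to be regular enough to rule out the non-flat alternative.

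For part (b), the strategy is to exploit the Euler--Lagrange equation $\I_s[E]=0$ on $\partial E\cap\Omega$, given in~\eqref{ELsmin}. If $\partial E$ is locally the graph of a $C^{1,\alpha}$ function $u\colon B'_r\to\R$, then $\I_s[E]=0$ translates, after a change of variables, into a nonlinear nonlocal integro-differential equation $F[u](x')=0$ of order $1+s$. I would then differentiate this equation along a tangential direction $e_i$ and view the resulting equation as a \emph{linear} integro-differential equation for $v:=\partial_i u$, of the form $\mathrm{P.V.}\int K(x',y')(v(x')-v(y'))\,dy'=g(x')$, where both the kernel $K$ and the right-hand side $g$ inherit $C^{0,\alpha}$ regularity from the $C^{1,\alpha}$ regularity of $u$. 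Applying Schauder estimates for such linear nonlocal operators gives that $v$ gains $s$ derivatives, so $u\in C^{1+s+\alpha}$. Bootstrapping, each gain of regularity of $u$ improves the regularity of the kernel $K$ and of $g$, which in turn further improves $v$; iterating produces $u\in C^\infty$.

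The main technical obstacle I expect lies in managing the nonlocal tails in both arguments. In (a), the improvement-of-flatness step requires the contribution to $\I_s[E]$ coming from far from $q$ to be quantitatively small compared to the local gain, which forces a careful choice of scales and of the flatness parameter $\eta$; this is exactly where the restriction $\alpha<s$ enters. In (b), the iteration has to be performed in the correct Hölder scale adapted to the order $1+s$ of the operator: one must keep $k+\alpha$ away from integer values at each stage to avoid borderline regularity losses in the Schauder estimates, and one must verify that the corresponding constants do not degenerate as the regularity index grows, so that the bootstrap actually produces $C^\infty$ rather than stalling at some finite order $C^{k,\alpha}$.
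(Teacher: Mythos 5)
The paper gives no proof of this lemma --- it is stated and then deferred in its entirety to \cite{FV13} and \cite{bootstrap} ``for details and proof'' --- so there is no internal proof to compare against; what you have written is a sketch of the strategy in those references. Your description of part (b) is accurate in outline: the bootstrap scheme of \cite{bootstrap} does differentiate the graph version of the Euler--Lagrange equation and apply nonlocal Schauder estimates. Be aware, though, that the Schauder theory you invoke is not the off-the-shelf translation-invariant one: the kernel $K(x',y')$ produced by the graph representation depends on the base point with only $C^{0,\alpha}$ regularity, and establishing Schauder estimates for such variable-kernel integro-differential operators of order $1+s$ is the main technical contribution of \cite{bootstrap}, so it should appear as a cited lemma rather than as folklore.

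For part (a), there is a genuine gap in the step from ``Lipschitz'' to ``flat enough to start the improvement-of-flatness iteration''. A Lipschitz bound with constant $L$ only traps $\partial E\cap B_1$ in a slab of thickness comparable to $L$, which need not be small; the Caffarelli--Roquejoffre--Savin improvement-of-flatness lemma requires the initial flatness to lie below a fixed universal threshold. One cannot simply declare ``$\eta<1$ controlled by the Lipschitz constant'' and begin to iterate. The actual route in \cite{FV13} interposes a blow-up step: one blows up $E$ at a boundary point; the blow-up limit is an $s$-minimal cone that inherits the Lipschitz bound; a separate cone-classification result shows that Lipschitz $s$-minimal cones are half-spaces; only then is the surface $\varepsilon$-flat at some small scale near that point, and only then does the $\varepsilon$-regularity theorem upgrade the regularity to $C^{1,\alpha}$. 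Without the cone classification and the blow-up, the iteration you describe never gets off the ground.
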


We give here a sketch of the proof of Theorem \ref{figv} (see \cite{FV13} for all the details).
\begin{proof}[Sketch of the proof of Theorem \ref{figv}]
If $E\subset \R^{n+1}$ is an $s$-minimal graph, then the blow-down $E_\infty$ is an $s$-minimal cone (see Theorem 9.2 in \cite{nms} for the proof of this statement).  By applying the dimensional reduction argument in Lemma \ref{dimrid} we obtain an $s$-minimal cone in dimension $n$. According to the assumption  that no singular $s$-minimal cones exist in dimension $n$, it follows that necessarily $E_\infty$ can be singular only at the origin.\\
We consider a bump function $w_0 \in C^{\infty}( \mathbb{R}, [0,1])$ such that 
	\begin{equation*}
		\begin{split}
		&w_0(t)=0 \text{ in } \bigg(-\infty, \frac{1}{4}\bigg)\cup \bigg(\frac{3}{4}, +\infty\bigg) \\
		& w_0(t) =1 \text{ in } \bigg(\frac {2}{5},\frac{3}{5}\bigg)\\
		&w(t)=w_0(|t|).
		\end{split}
	\end{equation*}
The blow-down of $E$ is	
	\[E_\infty =\big\{(x',x_{n+1}) \; \big| \; x_{n+1}\leq u_\infty(x')\big\} .\]
For a fixed $\sigma \in \partial B_1$, let
	\[F_t:=\big\{(x',x_{n+1}) \; \big| \;  x_{n+1}\leq u_{\infty} \big(x'+t\theta w(x')\sigma\big) -t\big\}\]
	be a family of sets, where $t\in (0,1)$ and $\theta >0$.
Then for $\theta$ small, we have that 
\begin{equation}\label{ABC567}
{\mbox{$F_1$ is below $E_\infty$.}}\end{equation} Indeed, suppose by contradiction that this is not true. Then, there exists $\theta_k \to 0$ such that
	\begin{equation}\label{UF78988799} u_\infty\big(x'_k+\theta_kw(x'_k)\sigma\big)-1 \geq u_\infty (x'_k).\end{equation}
But $x'_k \in \text{supp}w$, which is compact, therefore $\displaystyle x'_\infty:=\lim_{k\to +\infty} x'_k$ belongs to the support of $w$, and $w(x'_\infty)$ is defined. Then, by sending $k \to +\infty$
in~\eqref{UF78988799}
we have that
	\[u_\infty(x'_\infty) -1\geq u_\infty(x'_\infty),\]
which is a contradiction.
This establishes~\eqref{ABC567}.

Now consider the smallest $t_0\in (0,1)$ for which $F_t$ is below $E_\infty$. Since $E_\infty$ is a graph, then $F_{t_0}$ touches $E_\infty$ from below in one point $X_0=(x'_0,x_{n+1}^0)$, where $x'_0 \in \text{supp} w$. 
Since $E_\infty$ is $s$-minimal, we have that the nonlocal mean curvature (defined in \eqref{nmc}) of the boundary is null. 
Also, since $F_{t_0}$ is a $C^2$ diffeomorphism of $E_\infty$ we have that
	\begin{equation}\label{PLO90-1}
\I_s[F_{t_0}](p) \simeq \theta t_0,\end{equation}
and there is a region where $E_\infty$ and $F_{t_0}$ are well separated by $t_0$, thus
	\[\big|\big(E_\infty \setminus F_{t_0}\big) \cap \big(B_3
\setminus B_2\big)\big| \geq ct_0,\]
for some~$c>0$.
Therefore, we see that
	\[\I_s[F_{t_0}](p) =\I_s[F_{t_0}](p)  -I_s[E_\infty](p) \geq c t_0.\]
This and~\eqref{PLO90-1} give that~$\theta t_0 \ge c t_0$, for some~$c>0$
(up to renaming it). 
If~$\theta$ is small enough, this implies that $t_0=0$.

In particular,
we have proved that there exists $\theta >0$ small enough such that,
for any $t\in (0,1)$ and any~$\sigma\in\partial B_1$, we have that 
	\[  u_\infty\big(x'+t \theta w(x')\sigma\big)-t \leq u_\infty (x').\]
This implies that
	\[\frac{u_\infty\big(x'+t \theta w(x')\sigma\big)-u_\infty (x')}{t\theta}\leq \frac{1}{\theta},\]
hence, letting $t\to 0$, we have that 
	\[ \nabla u_\infty (x')w(x')\sigma \leq \frac{1}{\theta}, \text { for any } x\in \Rn\setminus \{0\}, \text{ and } \sigma \in B_1.\]
We recall now
that $w=1$ in $B_{3/5}\setminus B_{2/5}$ and $\sigma$ is arbitrary in $\partial B_1$.
Hence, it follows that
	\[|\nabla u_\infty(x)| \leq \frac{1}{\theta}, \text{ for any } x \in B_{3/5}\setminus B_{2/5}.\]
Therefore $u_\infty$ is globally Lipschitz. By the regularity statement in 
Lemma \ref{reg}, we have that~$u_\infty$ is $C^\infty$.
This says that~$u$ is
smooth also at the origin, hence (being a cone) it follows that~$E_\infty$ is necessarily
a half-space. Then by Lemma \ref{halfspace}, we conclude that E is a half-space as well.
\end{proof}

We introduce in the following theorem another interesting property related to $s$-minimal surfaces, in the case in which the fixed given data outside a domain is a subgraph. In that case, the $s$-minimal surface itself is a subgraph. Indeed:

\begin{theorem}\label{thmgraph}
Let $\Omega_0$ be a bounded open subset of $\R^{n-1}$ with boundary of class $C^{1,1}$ and let $\Omega:=\Omega_0 \times \R$. Let $E$ be an $s$-minimal set in $\Omega$. Assume that
		\eqlab{\label{thgr2}E\setminus \Omega =\{x_n <u(x'), \, x'\in \R^{n-1}\setminus \Omega_0\}}
		for some continuous function $u\colon \R^{n-1} \to \R$. Then
		\[ E\cap \Omega =\{x_n<v(x'), \, x' \in \Omega_0\}\]
		for some function $v\colon \R^{n-1}\to \R$.  
\end{theorem}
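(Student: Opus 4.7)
My plan is to prove this via a sliding argument in the $x_n$-direction, exploiting the fact that $\Omega=\Omega_0\times\R$ is invariant under vertical translations. Concretely, I want to show that for every $t>0$ the translate $E_t:=E-te_n=\{(x',x_n)\,:\,(x',x_n+t)\in E\}$ satisfies $E_t\subset E$ (up to a null set). Once this is established, $E$ is ``downward closed'' in $x_n$, so setting $v(x'):=\sup\{x_n\,:\,(x',x_n)\in E\}$ produces the desired subgraph representation on $\Omega_0$.

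To prove $E_t\subset E$, I would first note that, since $\Omega$ is invariant under translations by $e_n$ and $\mathrm{Per}_s$ is translation invariant, $E_t$ is $s$-minimal in $\Omega$ with exterior data $E_t\setminus\Omega=\{x_n<u(x')-t\}\cap(\R^{n-1}\setminus\Omega_0)\times\R$. Because $t>0$, this exterior datum is strictly contained in $E\setminus\Omega$, and in particular $E\setminus E_t$ has positive measure in $\C\Omega$. Then I introduce the union $F:=E\cup E_t$ and the intersection $G:=E\cap E_t$. By construction $F\setminus\Omega=E\setminus\Omega$ and $G\setminus\Omega=E_t\setminus\Omega$, so $F$ and $G$ are admissible competitors for $E$ and $E_t$, respectively. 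Minimality thus yields
\[
\mathrm{Per}_s(E,\Omega)\le\mathrm{Per}_s(F,\Omega),\qquad \mathrm{Per}_s(E_t,\Omega)\le\mathrm{Per}_s(G,\Omega).
\]
I would then invoke the submodularity of $\mathrm{Per}_s$, namely
\[
\mathrm{Per}_s(F,\Omega)+\mathrm{Per}_s(G,\Omega)\le\mathrm{Per}_s(E,\Omega)+\mathrm{Per}_s(E_t,\Omega),
\]
which follows from the pointwise inequality $(\chi_{A\cup B}(x)-\chi_{A\cup B}(y))^2+(\chi_{A\cap B}(x)-\chi_{A\cap B}(y))^2\le(\chi_A(x)-\chi_A(y))^2+(\chi_B(x)-\chi_B(y))^2$ and the direct verification that the difference of the two sides equals $2\bigl[\chi_{A\setminus B}(x)\chi_{B\setminus A}(y)+\chi_{B\setminus A}(x)\chi_{A\setminus B}(y)\bigr]$. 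Chaining the three inequalities forces all of them to be equalities, and in particular
\[
\int\!\!\int_{Q_\Omega}\frac{\chi_{E\setminus E_t}(x)\,\chi_{E_t\setminus E}(y)}{|x-y|^{n+s}}\,dx\,dy=0,
\]
where $Q_\Omega=(\R^n\times\R^n)\setminus(\C\Omega)^2$.

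Since the kernel is strictly positive, the vanishing of this double integral over $Q_\Omega$ forces $(E\setminus E_t)\cap\Omega=\emptyset$ or $E_t\setminus E=\emptyset$ (considering the pair $(x,y)$ with $x\in\Omega$), and also $(E_t\setminus E)\cap\Omega=\emptyset$ or $E\setminus E_t=\emptyset$ (considering $y\in\Omega$). Because $E\setminus E_t$ has nontrivial measure already outside $\Omega$, the second alternative in the second dichotomy must hold: $(E_t\setminus E)\cap\Omega$ is a null set. Combined with the already-known inclusion $E_t\setminus\Omega\subset E\setminus\Omega$, this gives $E_t\subset E$ a.e. for every $t>0$, which is precisely the monotonicity that makes $E\cap\Omega$ a subgraph of some function $v:\Omega_0\to\R$.

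The main obstacle, I expect, is the careful bookkeeping in the equality case of the submodularity inequality: in particular, being precise about which pairs $(x,y)$ are actually constrained (those lying in $Q_\Omega$ rather than in all of $\R^n\times\R^n$), and verifying that the exterior ``reservoir'' $E\setminus E_t$ in $\C\Omega$ is non-negligible so that it truly rules out one branch of the dichotomy. A further technical point is handling the measure-theoretic versus topological boundary of $E$ so that $v$ is genuinely well-defined; here one uses the standard convention (recalled in the footnote of the excerpt) that $E$ contains its measure-theoretic interior and $\partial E$ coincides with its measure-theoretic boundary, which makes the pointwise downward-closedness derived above translate into the set-theoretic identity $E\cap\Omega=\{x_n<v(x'),\,x'\in\Omega_0\}$.
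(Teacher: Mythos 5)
Your approach is genuinely different from the one in the paper, and it is worth comparing the two. The paper's argument slides $E_t:=E+te_n$ down to the first contact $t_0$, regularizes both sets by sup/subconvolution so that the Euler--Lagrange equation can be applied classically at the contact point, and then distinguishes the interior-contact case from the boundary-contact case, the latter requiring the boundary regularity theory of \cite{regint,elsulbordo}. Your submodularity argument replaces the entire contact analysis: you show directly, via the equality case in the lattice inequality $\mathrm{Per}_s(E\cup E_t,\Omega)+\mathrm{Per}_s(E\cap E_t,\Omega)\le\mathrm{Per}_s(E,\Omega)+\mathrm{Per}_s(E_t,\Omega)$ together with the two minimality inequalities, that the cross-interaction term vanishes, and you deduce $E_t\subset E$ for every $t>0$ at once. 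The dichotomy you extract from the vanishing of $\iint_{Q_\Omega}\chi_{E\setminus E_t}(x)\chi_{E_t\setminus E}(y)|x-y|^{-n-s}$ is sound (note only a small wording slip: since the exterior reservoir $E\setminus E_t$ is non-null, it is the \emph{first} alternative of your second dichotomy, $(E_t\setminus E)\cap\Omega$ null, that must hold). This is a cleaner mechanism and it avoids the sup/subconvolution machinery and the boundary regularity input entirely.

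However, there is a genuine gap: you never establish the a~priori ``no spikes'' bound \eqref{thgr1}, which the paper's sketch puts front and center, and your argument actually needs it in two places. First, for the conclusion itself: without knowing $\Omega_0\times(-\infty,-M)\subset E\cap\Omega\subset\Omega_0\times(-\infty,M)$, the function $v(x'):=\sup\{x_n:(x',x_n)\in E\}$ may take the values $\pm\infty$, and the theorem asserts $v\colon\R^{n-1}\to\R$ is real-valued; your closing remark about the measure-theoretic normalization does not by itself rule out $v=\pm\infty$ on a set of measure zero (nor does it rule out $v\equiv+\infty$ without further argument). Second, and more importantly, your perimeter-subtraction argument implicitly assumes $\mathrm{Per}_s(E,\Omega)<\infty$. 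For the infinite cylinder $\Omega=\Omega_0\times\R$ this fails even for a half-space (the interaction $I(E\cap\Omega,\C E\setminus\Omega)$ diverges in the $x_n$ direction), so the ``$s$-minimal'' hypothesis in the theorem must be read as local minimality. Under that reading one cannot simply add and cancel $\mathrm{Per}_s(E,\Omega)$ and $\mathrm{Per}_s(E_t,\Omega)$; one must localize to a bounded $\Omega':=\Omega_0\times(-R,R)$, and for $E\cup E_t$, $E\cap E_t$ to be admissible competitors in $\Omega'$ one needs $E\Delta E_t$ to be supported in $\Omega'$. This is exactly what the a~priori bound supplies: it forces $(E_t\setminus E)\cap\Omega\subset\Omega_0\times[-M-t,M]$, so that for $R>M+t$ the competitors agree with $E$ (resp.\ $E_t$) outside $\Omega'$ and the submodularity computation becomes legitimate. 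Without \eqref{thgr1} (or some substitute) the argument as written manipulates possibly infinite quantities, and this is the step that needs to be filled in.
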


The reader can see \cite{graph}, where this theorem and the related results are proved; here, we only state the preliminary results needed for our purposes
and focus on the proof of Theorem \ref{thmgraph}. The proof relies on a
sliding method, more precisely, we take a translation of $E$ in the $n^{\mbox{th}}$ direction, and move it until it touches $E$ from above. If the set $E\cap \Omega$ is a subgraph, then, up to a set of measure $0$, the contact between the translated $E$ and $E$, will be $E$ itself.

 However, since we have no information on the regularity of the minimal surface, we need at first to ``regularize'' the set by introducing the notions of supconvolution and subconvolution. With the aid of a useful result related to the sub/supconvolution of an $s$-minimal surface, we proceed then with the proof of the Theorem \ref{thmgraph}.
\begin{figure}[htpb]
	\includegraphics[width=0.50\textwidth]{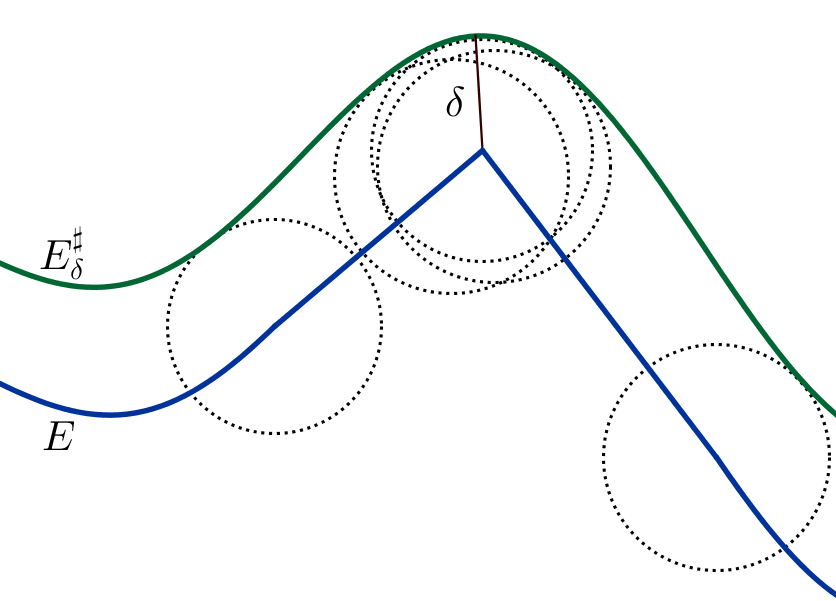}
	\caption{The supconvolution of a set}  
	\label{fign:sgrcor1}
	\end{figure} 

\noindent The supconvolution of a set $E\subseteq \Rn$ is given by 
 	\[ E_{\delta}^{\sharp} := \bigcup_{x\in E} \overline {B_{\delta} (x)}.\]
 	In an equivalent way, the supconvolution can be written as 
 	\[ E_{\delta}^{\sharp} =  \bigcup_{ { v\in \Rn}\atop {|v|\leq \delta}}   (E+v)  .\] 
 	Indeed, we consider $\delta>0$ and an arbitrary $x\in E$. Let $y\in \overline{ B_\delta (x)}$ and we define $v:=y-x$. Then
 		\[ |v|\leq|y-x|\leq \delta\quad \mbox{and} \quad y=x+v \in E+v. \]
 Therefore $\overline{B_\delta(x)} \subseteq E+v$ for $|v|\leq \delta$. In order to prove the inclusion in the opposite direction, one notices that taking $y\in E+v$ with $|v|\leq \delta$ and defining $x:= y-v$, it follows that
 \[ |x-y|=|v|\leq \delta.\] Moreover, $x\in (E+v)-v=E$ and the inclusion $E+v \in \overline{B_\delta(x)}$ is proved.
  	
 	      On the other hand, the subconvolution is defined as
 		\[ E_{\delta}^{\flat} := \Rn \setminus \left( (\Rn \setminus E)_{\delta}^{\sharp}\right).\] Now, the supconvolution of~$E$
is a ``regularized'' version of~$E$, whose nonlocal minimal curvature
is smaller than the one of~$E$, i.e.:
\begin{equation}\label{oforUI89}
\int_{\R^n} \frac{ \chi_{\C E_{\delta}^{\sharp}}(y)
-\chi_{ E_{\delta}^{\sharp} }(y)
}{|x-y|^{n+s}} \,dy \le
\int_{\R^n} \frac{ \chi_{CE}(y)
-\chi_{ E }(y)
}{|\tilde x-y|^{n+s}} \,dy
\le0,
\end{equation}
for any~$x\in\partial E_{\delta}^{\sharp}$, where~$\tilde x:= x-v\in
\partial E$ for some~$v\in\R^n$ with~$|v|=\delta$. 
Then, by construction, the set~$E+v$ lies in~$E_{\delta}^{\sharp}$,
and this implies~\eqref{oforUI89}.
Similarly, one has that the opposite inequality holds
for the subconvolution of~$E$, namely for any $x\in \partial E_{\delta}^{\flat}$
\begin{equation}\label{oforUI89-bis}
\int_{\R^n} \frac{ \chi_{\C E_{\delta}^{\flat}}(y)
-\chi_{ E_{\delta}^{\flat} }(y)
}{|x-y|^{n+s}} \,dy 
\ge0,
\end{equation}
By~\eqref{oforUI89} and~\eqref{oforUI89-bis}, we obtain:
 		 \begin{prop}\label{posubsup}
 		 Let $E$ be an $s$-minimal set in $\Omega$. Let $p\in \partial E_{\delta}^{\sharp}$ and assume that $\overline {B_\delta(p)}\subseteq\Omega$. Assume also that $E_{\delta}^{\sharp}$ is touched from above by a translation of $E_{\delta}^{\flat}$, namely there exists $\omega \in \Rn$ such that 
 		 \[ E_{\delta}^{\sharp} \subseteq E_{\delta}^{\flat} +\omega\] 
 		 and 
 		 \[ p\in (\partial E_{\delta}^{\sharp} )\cap (\partial E_{\delta}^{\flat}+\omega).\] Then \[E_{\delta}^{\sharp} = E_{\delta}^{\flat }+\omega. \]
 		 \end{prop}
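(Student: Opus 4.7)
The strategy is a strong maximum principle for the nonlocal minimal surface equation applied at the touching point $p$. I would combine the two one-sided curvature inequalities \eqref{oforUI89} and \eqref{oforUI89-bis}, which are just inclusion-monotonicity of the nonlocal mean curvature for the sup- and sub-convolution, with the Euler-Lagrange equation \eqref{ELsmin} satisfied by the $s$-minimal set $E$ at regular boundary points inside $\Omega$, and then conclude from an elementary integral identity forced by the set inclusion $E_\delta^\sharp \subseteq E_\delta^\flat + \omega$.

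First, since $p \in \partial E_\delta^\sharp$ with $\overline{B_\delta(p)} \subseteq \Omega$, there exists $\tilde p \in \partial E$ with $|\tilde p - p| = \delta$, and automatically $\tilde p \in \Omega$; the ball $B_\delta(p)$ is then an exterior tangent ball to $E$ at $\tilde p$. The interior regularity theory for $s$-minimal sets (recalled in the paragraph after \eqref{ELsmin}) ensures that $\partial E$ is $C^\infty$ near $\tilde p$ and that $\I_s[E](\tilde p) = 0$ in the classical sense. By \eqref{oforUI89} this upgrades to
\[\I_s[E_\delta^\sharp](p) \le \I_s[E](\tilde p) = 0.\]
Analogously, $p \in \partial(E_\delta^\flat + \omega)$ gives a point $\tilde q \in \partial E$ with $|\tilde q - (p - \omega)| = \delta$ and $B_\delta(p - \omega)$ an interior tangent ball to $E$ at $\tilde q$; assuming one can place $\tilde q$ in $\Omega$ (see below), the EL equation \eqref{ELsmin} gives $\I_s[E](\tilde q) = 0$, and then the translation-invariant analogue of \eqref{oforUI89-bis} yields
\[\I_s[E_\delta^\flat + \omega](p) \ge \I_s[E](\tilde q) = 0.\]

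Second, the inclusion $E_\delta^\sharp \subseteq E_\delta^\flat + \omega$ produces the monotonicity identity
\[\I_s[E_\delta^\sharp](p) - \I_s[E_\delta^\flat + \omega](p) = 2\int_{\R^n}\frac{\chi_{(E_\delta^\flat + \omega)\setminus E_\delta^\sharp}(y)}{|p - y|^{n+s}}\,dy \ge 0.\]
Combining this with the bounds of the previous step I would chain
\[0 \ge \I_s[E_\delta^\sharp](p) \ge \I_s[E_\delta^\flat + \omega](p) \ge 0,\]
forcing all terms to vanish. The vanishing of the displayed positive integral then gives $\bigl|(E_\delta^\flat + \omega) \setminus E_\delta^\sharp\bigr| = 0$, and combined with the inclusion this yields $E_\delta^\sharp = E_\delta^\flat + \omega$ up to a negligible set; the measure theoretic conventions recalled in the footnote of this chapter then upgrade this to set equality, as required.

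The main obstacle is the geometric identification of $\tilde q$ and the verification that $\tilde q \in \Omega$, which is what unlocks the EL equation at $\tilde q$ and closes the chain. The tangency of $\partial E_\delta^\sharp$ and $\partial(E_\delta^\flat + \omega)$ at $p$, forced by the inclusion, imposes that the outer unit normals at $p$ coincide, and a direct computation with the tangent balls of the sup- and sub-convolution gives $\tilde p = p - \delta\nu$ and $\tilde q = p - \omega + \delta\nu$, where $\nu$ is the common outer normal. The hypothesis $\overline{B_\delta(p)} \subseteq \Omega$ takes care of $\tilde p$; placing $\tilde q$ inside $\Omega$ is the delicate point, and it should follow by using the extra geometric information carried by the touching configuration, in particular the fact that the inclusion $E_\delta^\sharp \subseteq E_\delta^\flat + \omega$ being saturated at $p$ constrains $\omega$ in a way compatible with $\overline{B_\delta(p - \omega + \delta \nu)}$ still lying where the EL equation for $E$ is available.
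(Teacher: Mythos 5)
Your proof takes exactly the route the paper gestures at: after \eqref{oforUI89} and \eqref{oforUI89-bis} the paper simply announces the Proposition, and the details you supply — the Euler–Lagrange equation at the two tangency points $\tilde p$ and $\tilde q$, sandwiched with the inclusion-monotonicity identity
\[
\I_s[E_{\delta}^{\sharp}](p) - \I_s[E_{\delta}^{\flat}+\omega](p)
= 2\int_{\R^n}\frac{\chi_{(E_{\delta}^{\flat}+\omega)\setminus E_{\delta}^{\sharp}}(y)}{|p-y|^{n+s}}\,dy\ge 0
\]
to force all three quantities to vanish — are precisely the strong maximum principle argument that makes the passage "By \eqref{oforUI89} and \eqref{oforUI89-bis}, we obtain" honest. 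Your identification of $\tilde p = p-\delta\nu$ and $\tilde q = (p-\omega)+\delta\nu$ with $\nu$ the common normal is correct, and the final measure-zero upgrade to set equality is exactly the use of the measure-theoretic normalization recalled in the footnote at the start of this chapter.

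The one place the proof has a genuine hole is the one you flag: nothing in the stated hypothesis $\overline{B_\delta(p)}\subseteq\Omega$ controls the location of $\tilde q = p-\omega+\delta\nu$, so the Euler–Lagrange equation $\I_s[E](\tilde q)=0$ is not justified as written. Your proposed fix — that "the inclusion being saturated at $p$ constrains $\omega$" — is vague and does not actually deliver $\tilde q\in\Omega$ in the stated generality; it is easy to imagine configurations in which $p-\omega$ falls outside $\Omega$, where $E$ is prescribed rather than minimal and its nonlocal curvature need not vanish. What closes the gap is the specific geometry of the application: in the proof of Theorem \ref{thmgraph} the set $\Omega=\Omega_0\times\R$ is a vertical cylinder and $\omega=\tau_\delta e_n$ is a vertical translation, so $\Omega+\omega=\Omega$ and hence $\overline{B_\delta(p-\omega)}=\overline{B_\delta(p)}-\omega\subseteq\Omega$, which puts $\tilde q$ in $\Omega$ with the interior tangent ball $B_\delta(p-\omega)\subseteq\Omega$. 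The Proposition as stated is implicitly making this assumption (the phrase "touched from above" already signals a vertical translation); once you add that to your argument, the proof is complete.
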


\begin{proof}[Proof of Theorem \ref{thmgraph}]
One first remarks is that
the $s$-minimal set does not have spikes which go to infinity: more precisely,
one shows that
	\eqlab{\label{thgr1} \Omega_0\times (-\infty, -M)\subseteq E\cap \Omega \subseteq \Omega_0 \times (-\infty,M) }
for some $M\geq 0$.
The proof of \eqref{thgr1} can be performed by sliding horizontally a large ball,
see~\cite{graph} for details.

After proving~\eqref{thgr1}, one can deal with the core of
the proof of Theorem \ref{thmgraph}.
The idea is to
slide $E$ from above until it touches itself and analyze what happens at the contact points.
For simplicity, we will assume here that the function~$u$
is uniformly continuous (if~$u$ is only continuous,
the proof needs to be slightly modified since the subconvolution
and supconvolution that we will perform may create new touching
points at infinity).
\begin{figure}[htpb]
	\includegraphics[width=0.75\textwidth]{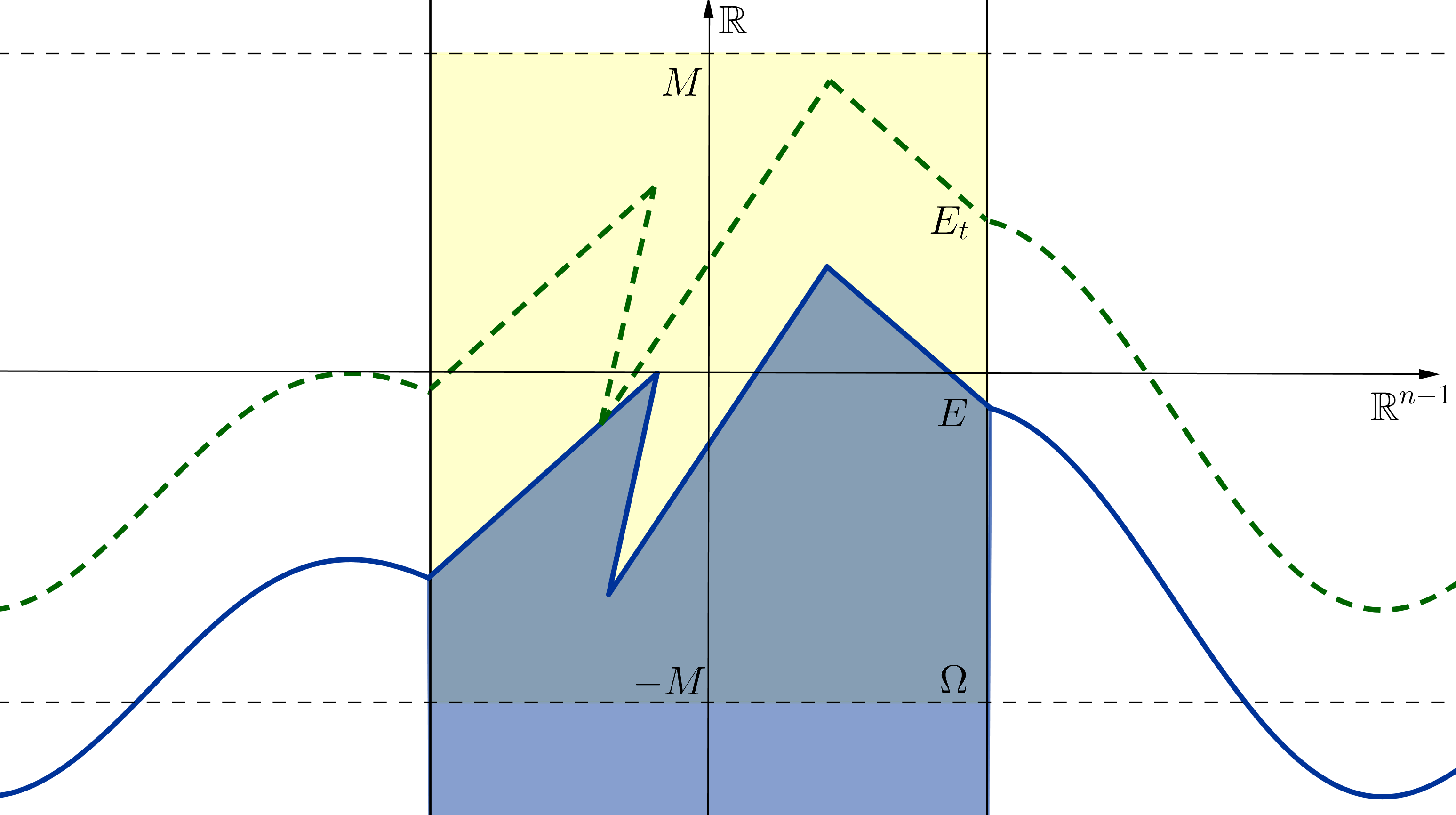}
	\caption{Sliding $E$ until it touches itself at an interior point}  
	\label{fign:thmgrph1}
	\end{figure} 
At this purpose, we consider $E_t=E+t e_n$ for $t\geq 0$. Notice that, by \eqref{thgr1}, if $t \geq 2M$, then $E\subseteq E_t$. Let then $ t$ be the smallest for which the inclusion $E\subseteq E_t$ holds.
We claim that  $t=0$.
		 If this happens, one may consider 
		 \[ v=\inf\{\tau \;\big|\; (x,\tau)\in \C E\}\]
and, up to sets of measure $0$, $E\cap \Omega_0$ is the subgraph of $v$.
 
 The proof is by contradiction, so let us assume  that $ t>0$. According to \eqref{thgr2}, the set $E\setminus \Omega$ is a subgraph, hence the contact points between $\partial E$ and $\partial E_t$ must lie in $\overline \Omega_0 \times \R$. Namely, only two 
possibilities may occur: the contact point is
interior (it belongs to  $\Omega_0 \times\R)$, or it is at the boundary (on $\partial \Omega_0 \times\R$).
So, calling $p$ the contact point, one may have\footnote{As a matter
of fact, the number of contact
points may be higher than one, and even infinitely many
contact points may arise. So, to be rigorous,
one should distinguish the case in which all
the contact points are interior and the case in which
at least one contact point lies on the boundary.

Moreover, since the surface may have vertical portions
along the boundary of the domain, one needs to carefully define
the notion of contact points (roughly speaking, one needs to take
a definition for which the vertical portions
which do not prevent the sliding are not in the contact set).

Finally, in case the contact points are all interior, it is also
useful to perform the sliding method in a slighltly reduced domain,
in order to avoid that the supconvolution method produces
new contact points at the boundary (which may arise from
vertical portions of the surfaces).

Since we do not aim to give a complete proof of Theorem~\ref{thmgraph} here,
but just to give the main ideas and underline the additional
difficulty, we refer to~\cite{graph} for the full details of these arguments.}
that
	\eqlab{\label{caseone}  {\mbox{either }} p\in   \Omega_0 \times \R\quad \mbox{or}}
	\eqlab{\label{casetwo}  p\in \partial \Omega_0 \times \R. \quad \mbox{ }}

We deal with the first case in~\eqref{caseone} (an example of this behavior is depicted in Figure \ref{fign:thmgrph1}).
We consider $E_{\delta}^{\sharp} $ and $E_{\delta}^{\flat}$ to be the supconvolution, respectively the subconvolution of $E$.  We then slide the subconvolution until it touches the supconvolution. More precisely, let $\tau>0$ and we take a translation of the subconvolution, $E_{\delta}^{\flat}+ \tau e_n$. For $\tau$ large, we have that $E_{\delta}^{\sharp}\subseteq E_{\delta}^{\flat}+ \tau e_n$ and we consider $\tau_\delta$ to be the smallest for which such inclusion holds. We have (since $t$ is positive by assumption) that
	\[ \tau_\delta\geq \frac{t}{2}>0.\]
	Moreover, for $\delta$ small, the sets $\partial E_\delta^{\sharp} $ and $\partial (E_{\delta}^{\flat}+ \tau_\delta e_n)$ have a contact point which, according to \eqref{caseone}, lies in $\Omega_0\times \R$. Let $p_{\delta}$ be such a point, so we may write
 \[  p_\delta \in   (\partial E_\delta^{\sharp} ) \cap \partial (E_{\delta}^{\flat}+ \tau_\delta e_n) \quad \mbox{and } \quad   p_\delta \in \Omega_0\times \R.\]
 Then, for $\delta$ small (notice that $\overline{B_{\delta}(p)}\subseteq \Omega$), Proposition \ref{posubsup} yields that 
 \[E_\delta^\sharp =E_\delta^\flat +\tau_\delta e_n.\]
 Considering $\delta$ arbitrarily small, one obtains that
 \[E=E+\tau_0 e_n,\quad \mbox{with} \quad \tau_0>0.\] 
 But $E$ is a subgraph outside of $\Omega$, and this provides a contradiction. Hence, the claim that $t=0$ is proved.

Let us see that we also obtain a contradiction when supposing that $t>0$ and that the second case \eqref{casetwo} holds. Let
 	\[ p=(p',p_n) \quad \mbox{and } \quad p \in  (\partial E)\cap (\partial E_t) . \]
 	Now, if one takes sequences $a_k\in \partial E$ and $b_k\in \partial E_t$, both that tend to $p$ as $k $ goes to infinity, since $E\setminus \Omega $ is a subgraph and $t>0$, necessarily $a_k, b_k$ belong to $\Omega$. Hence
 	 		\eqlab{ \label{pinclos} p\in \overline { (\partial E) \cap \Omega}  \cap \overline {(\partial E_t)\cap \Omega}.} 
 	Thanks to Definition 2.3 in \cite{nms}, one obtains that $E$ is a variational subsolution in a neighborhood of $p$. In other words, if $A\subseteq E\cap\Omega$ and $p\in \overline A$, then 
 			\[ 0\geq \mbox{Per}_s( E,\Omega)-\mbox{Per}_s(E\setminus A, \Omega) = I(A,\C E) -I(A, E\setminus A)\] 
 			(we recall the definition of $I$ in \eqref{nmsi1} and of the fractional perimeter $Per_s$ in \eqref{nmspf1}).
  According to Theorem 5.1 in \cite{nms}, this implies in a viscosity sense (i.e. if $E$ is touched at $p$ from outside by a ball), that
 			\eqlab{\label{curvgz} \int_{\Rn}\frac{\chi_{\C E}(y)-\chi_{ E}(y)}{|p-y|^{n+s}}\, dy\leq 0.} 
In order to obtain an estimate on the fractional mean curvature in the strong sense, we consider the translation of the point $p$ as follows: 
\[p_t=p-t e_n=(p',p_n-t)=(p',p_{n,t}).\] Since $t>0$, one may have that either $p_n\neq u(p')$, or $p_{n,t}\neq u(p')$. 
 			
 			These two possibilities can be dealt with in a similar way, so we just continue with the proof in the case $p_n\neq u(p')$ (as is also exemplified in Figure \ref{fign:thmgrph2}).  Taking $r>0$ small, the set $B_r(p)\setminus \Omega$ is contained entirely in $E$ or in its complement. Moreover, one has from \cite{regint} that $\partial E\cap B_r(p) $ is a $C^{1,\frac{1+s}2}$-graph in the direction of the normal to $\Omega$ at $p$. That is: in Figure \ref{fign:thmgrph2}
the set~$E$ is $C^{1,\frac{1+s}2}$, hence in the vicinity of~$p=(p',p_n)$, it appears to be sufficiently smooth.
\begin{figure}[htpb]
	\includegraphics[width=0.75\textwidth]{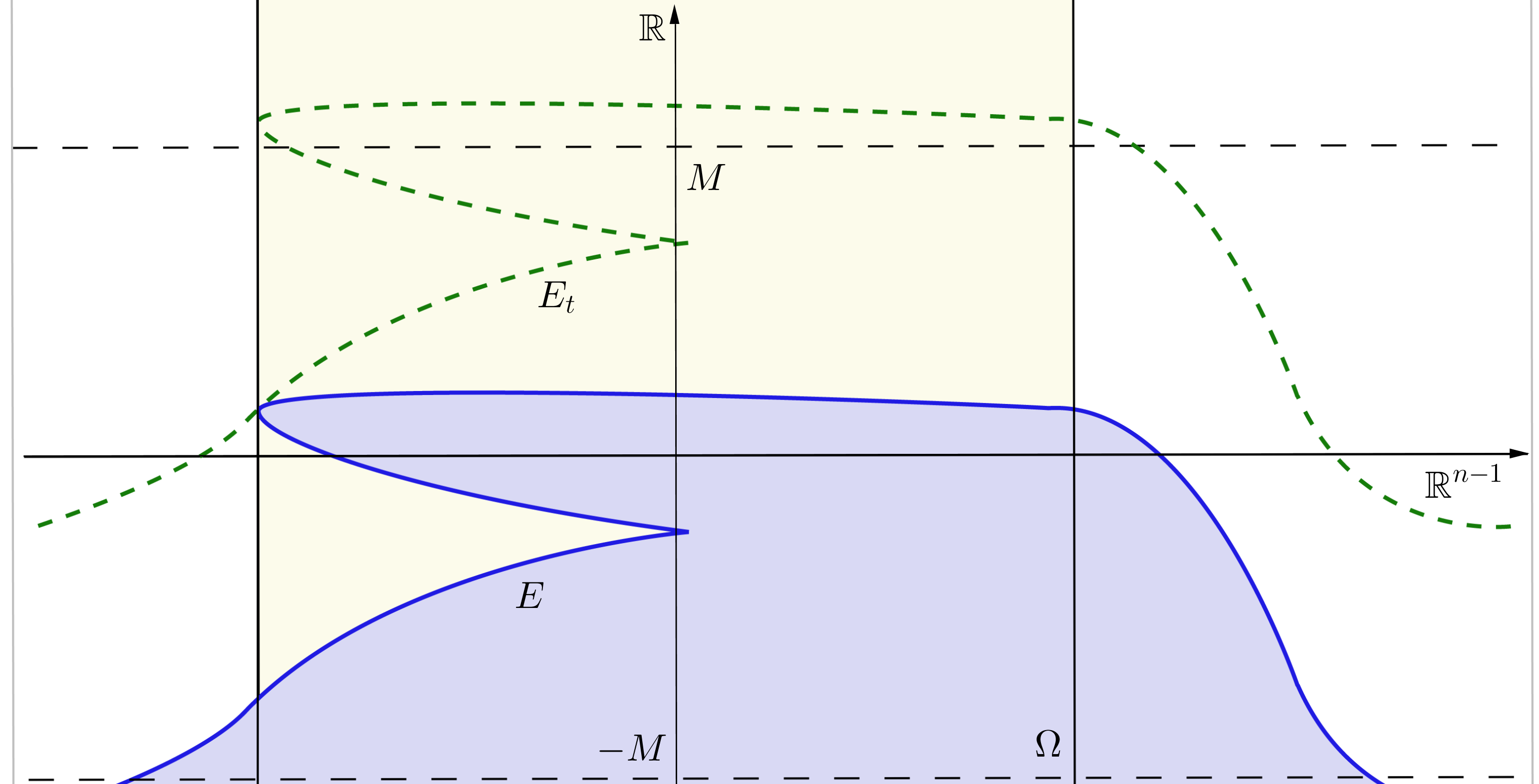}
	\caption{Sliding $E$ until it touches itself at a boundary point}  
	\label{fign:thmgrph2}
	\end{figure} 

So, let $\nu(p)=(\nu'(p),\nu_n(p))$ be the normal in the interior direction, then up to a rotation and since $\Omega$ is a cylinder (hence $\nu_n(p)=0$), we can write $\nu(p)=e_1$. Therefore, there exists a function $\Psi$ of class $ C^{1,\frac{1+s}2}$ such that $p_1=\Psi(p_2,\dots,p_n)$ and, in the vicinity of $p$, we can write $\partial E$ as the graph $G=\{x_1=\Psi(x_2,\dots,x_n)\}$. \\ 			 
 			 Given \eqref{pinclos}, we deduce that there exists a sequence $p_k\in G$ such that $p_k\in \Omega$ and $p_k \to p$ as $k\to \infty$. From this it follows that there exists a sequence of points $p_k\to p$ such that 
 			 \eqlab {\label{pkE1} \partial E \mbox{ in the vicinity of }p_k \mbox{ is a graph of class }C^{1,\frac{1+s}2}} 
 			 and 
 			  	 \eqlab{\label{pkE2} \int_{\Rn}\frac{\chi_{\C E}(y)-\chi_{ E}(y)}{|p_k-y|^{n+s}}\, dy =0.}
From \eqref{pkE1} and~\eqref{pkE2}, and using 
a pointwise version of the Euler-Lagrange equation (see~\cite{graph}
for details), we have that
 			 \bgs{ \int_{\Rn}\frac{\chi_{\C E}(y)-\chi_{E}(y)}{|p-y|^{n+s}}\, dy =0.}	
 		Now, $E\subset E_t$ for $t$ strictly positive, hence
 			\eqlab{\label{curvgz11} \int_{\Rn}\frac{\chi_{\C E_t}(y)-\chi_{E_t}(y)}{|p-y|^{n+s}}\, dy <0.}	 	
 		Moreover, we have that the set $\partial E_t \cap B_{\frac{r}4}(p)$ must remain on one side of the graph $G$, namely one could have that
 			\bgs{ & E_t\cap	B_{\frac{r}4}(p) \subseteq \{x_1\leq \Psi(x_2,\dots,x_n)\} \mbox{ or }\\
 				& E_t\cap	B_{\frac{r}4}(p) \supseteq \{x_1\geq \Psi(x_2,\dots,x_n)\} .}
 	Given again \eqref{pinclos}, 	we deduce that there exists a sequence $\tilde p_k\in \partial E_t\cap \Omega$ such that $\tilde p_k \to p$ as $k\to \infty$ and $ \partial E_t\cap \Omega $ in the vicinity of $\tilde p_k$ is touched by a surface lying in $E_t$, of class $ C^{1,\frac{1+s}2}$. 
 	Then 	\bgs{ \int_{\Rn}\frac{\chi_{\C E_t}(y)-\chi_{E_t}(y)}{|\tilde p_k-y|^{n+s}}\, dy \geq 0.} 
Hence, making use of
a pointwise version of the Euler-Lagrange equation (see~\cite{graph}
for details), we obtain that
 	\bgs{ \int_{\Rn}\frac{\chi_{\C E_t}(y)-\chi_{E_t}(y)}{| p-y|^{n+s}}\, dy \geq 0.} But this is a contradiction with \eqref{curvgz11}, and this concludes the proof of Theorem~\ref{thmgraph}.
 	\end{proof}

On the one hand, one may think that Theorem \ref{thmgraph} has to be well-expected. On the other hand, it is far from being obvious, not only because the proof is not trivial, but also because the statement itself almost risks to be false, especially at the boundary.
Indeed we will see in Theorem \ref{STI-DSV-2}
that the graph property is close to fail at the boundary of the domain, where the $s$-minimal surfaces may present vertical tangencies and stickiness phenomena (see Figure \ref{eST2}).

\section{Non-existence of singular cones in dimension $2$}We now prove the non-existence of singular
$s$-minimal cones in dimension $2$, as stated in the next result
(from this, the more general statement in Theorem~\ref{THM 5.8}
follows after a blow-up procedure):

\begin{theorem}\label{THIS}
If $E$ is an $s$-minimal cone in $\mathbb{R}^2$, then $E$ is a half-plane.
\label{noconestwo}
\end{theorem}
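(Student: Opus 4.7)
The plan is to argue by contradiction, exploiting the sliding/translation technique combined with the strong maximum principle for supconvolutions and subconvolutions established in Proposition~\ref{posubsup}, exactly as was done for the subgraph property in Theorem~\ref{thmgraph}. So, suppose $E$ is an $s$-minimal cone in $\R^2$ which is not a half-plane. Since $E$ is a cone (i.e.\ invariant under positive dilations), its boundary $\partial E$ is a union of rays emanating from the origin; the assumption that $E$ is not a half-plane means that $\partial E$ contains at least four such rays, dividing $\R^2$ into at least four open sectors that alternately belong to $E$ and to $\C E$. In particular, the origin is a ``corner'' and there exists a direction $v\in \partial B_1$ that points into the interior of one of the sectors comprising $\C E$ (and whose opposite $-v$ points into a sector of $E$).

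The first step I would carry out is to perform a sliding argument on the regularized family $E^\sharp_\delta$ and $E^\flat_\delta$. Fix $\delta>0$ small and consider the translated subconvolutions $E^\flat_\delta + tv$ for $t\ge 0$. One needs to verify that for some $t=t_\delta \ge c>0$ (uniform in $\delta$) one has $E^\sharp_\delta \subseteq E^\flat_\delta + t_\delta v$, and moreover that the smallest such $t_\delta$ produces an interior contact point, that is, there exists $p_\delta \in (\partial E^\sharp_\delta)\cap \partial(E^\flat_\delta + t_\delta v)$ with $\overline{B_\delta(p_\delta)}$ contained in some fixed ball $B_R$ (which can serve as the ambient open set for the minimality of $E$). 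This is where the cone structure plays a crucial role: because $E$ is invariant under positive dilations, the geometry of the contact is preserved under scaling, so a contact point at one scale gives rise to a contact point at every scale, ensuring that $t_\delta$ stays bounded away from zero as $\delta \to 0$.

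Once such an interior contact $p_\delta$ is produced, Proposition~\ref{posubsup} applies and forces
\[
E^\sharp_\delta = E^\flat_\delta + t_\delta v.
\]
Passing to the limit as $\delta \to 0^+$ (and extracting a subsequence if necessary so that $t_\delta \to t_0 \ge c>0$), one obtains $E = E + t_0 v$, i.e.\ $E$ is invariant under the non-trivial translation $t_0 v$. But $E$ is a cone, so for every $\lambda>0$ we also have $E = \lambda E = \lambda(E + t_0 v) = \lambda E + \lambda t_0 v = E + \lambda t_0 v$. Hence $E$ is invariant under all translations in the direction $v$, which together with the cone property forces $\partial E$ to be a single straight line through the origin parallel to $v$; that is, $E$ is a half-plane, contradicting our initial assumption.

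The main obstacle, and the place where real work is required, is the second step: producing the interior contact point $p_\delta$ at a uniformly bounded distance from the origin and uniformly separated from infinity. A naive translation of a cone does not nest nicely, since the translated cone shares the same asymptotic behavior as the original one in every direction orthogonal to $v$, so the sliding cannot, in principle, be stopped at a finite distance without the contact happening ``at infinity'' (or at the corner itself, which is not allowed for the supconvolution/subconvolution argument, as Proposition~\ref{posubsup} requires a genuine interior contact). A careful choice of $v$ (so that $v$ bisects a sector of $\C E$ of maximal angular opening) together with an analysis of the fractional mean curvature near the corner, in the spirit of the Euler--Lagrange computation performed inside the proof of Theorem~\ref{thmgraph}, should rule out contact at infinity and pin the contact point into a fixed compact set; this is where one must exploit the precise quantitative information about the nonlocal kernel $|x-y|^{-n-s}$ to guarantee strict inequalities under small perturbations.
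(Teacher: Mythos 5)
Your proposed route — slide translated supconvolutions/subconvolutions of $E$ until they touch, invoke Proposition~\ref{posubsup}, deduce translation invariance, and combine with the cone property — contains a genuine and, I think, unbridgeable gap, and you essentially acknowledge it yourself in your last paragraph. For the supconvolution machinery to bite, you need the global nesting $E^\sharp_\delta \subseteq E^\flat_\delta + \tau v$ for some finite $\tau$, together with an interior contact point at a bounded distance from the origin. But if $E$ is a nontrivial cone and $v$ is any direction, then $E$ and $E+\tau v$ have the \emph{same} asymptotic cone: as $|x|\to\infty$ the boundaries of $E$ and its translate approach each other at unit speed, and so does the gap between $E_\delta^\sharp$ and $E_\delta^\flat+\tau v$. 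Concretely, for $\mathcal K=\{y^2>x^2\}$ and $v=e_1$, points $(x,y)$ with $x\ll0$ and $y^2$ only slightly larger than $x^2$ lie in $\mathcal K$ but not in $\mathcal K+\tau e_1$. Hence the global inclusion $E^\sharp_\delta\subseteq E^\flat_\delta+\tau v$ simply \emph{never holds}, for any $\tau$, and no amount of choosing $v$ to "bisect a sector of maximal opening" repairs this, because the obstruction is at infinity in the two directions transverse to $v$, not near the corner. This is categorically different from the subgraph situation of Theorem~\ref{thmgraph}, where the exterior datum, being a subgraph, makes a large enough vertical translate globally contain the original set, so the sliding does stop at a finite $\tau$ with a genuine contact. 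Your first step therefore cannot even get started, and Proposition~\ref{posubsup} is never applicable.

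The paper takes an entirely different route that sidesteps exactly this problem. Instead of a rigid translation of the whole set, it perturbs $E$ by a compactly supported diffeomorphism $\Psi_{R,+}$ that acts as a unit translation in $B_{R/2}$ and as the identity outside $B_R$, so the perturbed set $u_R^+$ agrees with a translate of $E$ near the origin but with $E$ itself at infinity — thereby eliminating the far-field incompatibility. The comparison is then made in \emph{energy} rather than by set inclusion: Lemma~\ref{endg} shows $\mathcal K_R(u_R^+)-\mathcal K_R(u)\le CR^{-2}\mathcal K_R(u)$, which combined with the energy bound of Theorem~\ref{acenest1} for $s\in(0,1)$ gives $\mathcal K_R(u_R^+)-\mathcal K_R(u)\le CR^{-s}\to 0$. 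On the other hand, if $E$ is not a half-plane one finds points $p,p\pm e_1$ with mixed membership, uses submodularity of $\mathcal K_R$ on $v_R=\max\{u,u_R^+\}$ and $w_R=\min\{u,u_R^+\}$, and the strong maximum principle to show the energy defect $\delta_R$ stays bounded away from zero, a contradiction. The strong maximum principle is the only place where the two proofs overlap in spirit; the global sliding you propose is replaced by a localized, energy-measured perturbation that is compatible with the cone's behavior at infinity.
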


We remark that, as a combination of
Theorems \ref{figv} and~\ref{THIS},
we obtain the following result of Bernstein type:

\begin{corollary}
Let $E= \{ (x,t) \in \Rn \times \R\;\big|\; t<u(x)\}$ be an $s$-minimal graph, and assume that $n \in \{1, 2\}$. Then $u$ is an affine function.
\end{corollary}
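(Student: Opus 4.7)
The plan is to deduce this Bernstein-type corollary by combining the two structural results cited in the statement, namely Theorem \ref{figv} (flatness of $s$-minimal graphs under the hypothesis that no singular $s$-minimal cones exist in the base dimension) and Theorem \ref{THIS} (non-existence of singular $s$-minimal cones in $\R^2$). The strategy therefore reduces to checking the ``no singular cone'' hypothesis of Theorem \ref{figv} in the two relevant base dimensions $n=1$ and $n=2$, and then invoking Theorem \ref{figv} once in each case.

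First, I would dispose of the case $n=2$. Here $E\subset\R^3$ is an $s$-minimal graph over $\R^2$, so Theorem \ref{figv} applied with $n=2$ reduces the problem to classifying $s$-minimal cones in $\R^2$. But this is exactly the content of Theorem \ref{THIS} (equivalently Theorem \ref{noconestwo}), which asserts that every such cone is a half-plane. Hence the hypothesis of Theorem \ref{figv} is satisfied, and we conclude that $E$ is a half-space in $\R^3$, which is equivalent to $u$ being an affine function on $\R^2$.

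For the case $n=1$, $E\subset\R^2$ is an $s$-minimal graph over $\R$. To apply Theorem \ref{figv} with $n=1$, I need to know that every $s$-minimal cone $\mathcal{K}\subset\R$ is a half-line. This is essentially tautological: a cone $\mathcal{K}\subset\R$, i.e.\ a set invariant under positive dilations, must (up to the measure-theoretic conventions fixed at the beginning of Chapter \ref{nlms}) coincide with one of $\emptyset$, $(-\infty,0]$, $[0,+\infty)$, or $\R$. The two non-trivial cases are half-lines, which are precisely the half-spaces of $\R$. Therefore no singular $s$-minimal cone exists in dimension one, and Theorem \ref{figv} yields that $E$ is a half-space in $\R^2$, i.e.\ $u$ is affine on $\R$.

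There is essentially no real obstacle in this argument, since all the heavy lifting has already been carried out in Theorems \ref{figv} and \ref{THIS}. The only point deserving a line of care is the measure-theoretic identification of one-dimensional cones with half-lines, but this is immediate from the conventions recalled in footnote~\ref{ssigma} and the surrounding discussion on measure-theoretic interiors and boundaries, which ensure that we may work with the ``nice'' representative of a cone. Combining the two cases completes the proof of the corollary.
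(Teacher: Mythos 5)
Your proof is correct and follows essentially the same route as the paper, which presents the corollary as an immediate remark obtained ``as a combination of Theorems~\ref{figv} and~\ref{THIS}'' without spelling out the details. For $n=2$ the combination is exactly as you describe, and for $n=1$ your tautological verification that every cone in $\R$ is a half-line is a correct way to check the hypothesis of Theorem~\ref{figv}; one could also note that for $n=1$ the blow-down cone already lives in $\R^2$, where Theorem~\ref{THIS} applies directly, but this is only a cosmetic variant of your argument.
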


\begin{figure}[htpb]
	\includegraphics[width=0.40\textwidth]{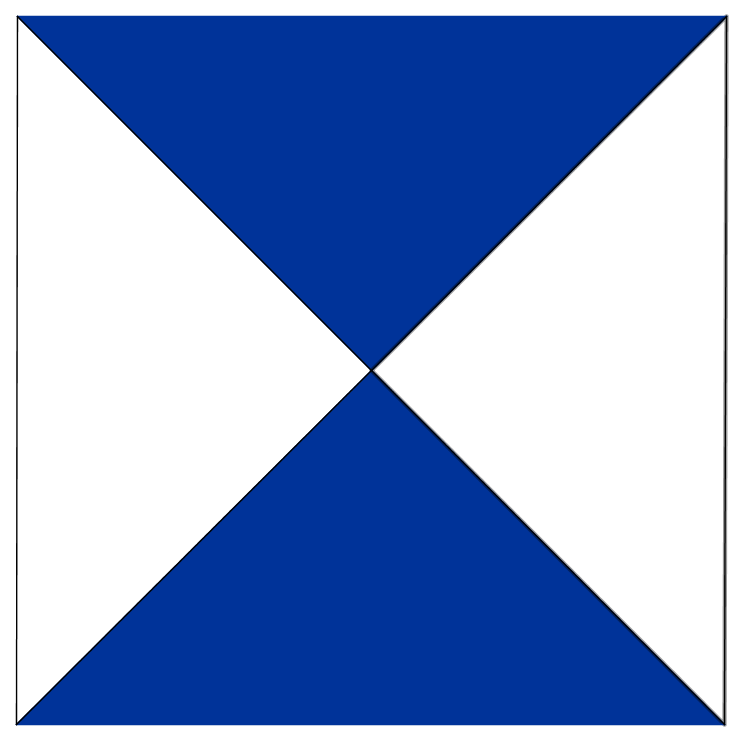}
	\caption{The cone $\mathcal{K}$}   
	\label{fign:Cone}
	\end{figure}
Let us first consider a simple example, given by the cone in the plane, drawn in Figure \ref{fign:Cone},
	\[\mathcal{K} := \Big\{  (x,y)\in \R^2 \;\big|\; y^2>x^2\Big\}.\]

\begin{prop}\label{THAT}
The cone $\mathcal{K}$ depicted in 
Figure \ref{fign:Cone} is not $s$-minimal in $\R^2$.
\end{prop}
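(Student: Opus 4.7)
The plan is to argue by contradiction: assume that $\mathcal{K}$ is $s$-minimal in some sufficiently large ball $B_\rho$, and construct an admissible competitor whose fractional perimeter in $B_\rho$ is strictly smaller. The natural place to probe for failure of minimality is the singularity of $\mathcal{K}$ at the origin, because at every smooth boundary point the fractional mean curvature $\I_s[\mathcal{K}]$ vanishes by symmetry. Indeed, the reflection $(x,y)\mapsto(y,x)$ interchanges $\mathcal{K}$ and $\C\mathcal{K}$ while fixing any point on the diagonal $\{y=x\}$, so by the change of variables in \eqref{nmc} one gets $\I_s[\mathcal{K}]=0$ there, and the analogous argument along $\{y=-x\}$ handles the rest of the smooth part of $\partial\mathcal{K}$. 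Consequently the first-order Euler--Lagrange information is inconclusive, and the whole argument has to come from the corner at $0$.

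For $\eps\in(0,\rho)$ I would test minimality against the competitor
\[
F_\eps := \mathcal{K}\setminus B_\eps,
\]
which agrees with $\mathcal{K}$ outside $B_\rho$. Using the decomposition \eqref{nmspf1}--\eqref{nmsi1}, a short bookkeeping with $A:=\mathcal{K}\cap B_\eps\subset\mathcal{K}\cap B_\rho$ gives the identity
\[
\text{Per}_s(F_\eps,B_\rho)-\text{Per}_s(\mathcal{K},B_\rho)
= -\int_A \I_s[\mathcal{K}](x)\,dx - I(A,A),
\]
where $I$ is defined in \eqref{nmsi1}. Since $\mathcal{K}$ is a cone with vertex at $0$, the change of variable $x=\eps x'$ makes both terms homogeneous of degree $2-s$ in $\eps$, so
\[
\text{Per}_s(F_\eps,B_\rho)-\text{Per}_s(\mathcal{K},B_\rho) = \eps^{2-s}\bigl(c_1(s)-c_2(s)\bigr),
\]
with
\[
c_1(s):=-\int_{\mathcal{K}\cap B_1}\I_s[\mathcal{K}](x)\,dx,\qquad
c_2(s):= I(\mathcal{K}\cap B_1,\,\mathcal{K}\cap B_1)
\]
two explicit dimensional quantities depending only on $s$.

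The crux -- and the main obstacle of the proof -- is then to verify the strict inequality $c_1(s)<c_2(s)$, which turns the $\eps$-family of competitors into a strict contradiction with the assumed $s$-minimality. I would attack it by splitting $\C \mathcal{K}$ into its near and far parts with respect to $B_1$, using the rotation $R_{\pi/2}\mathcal{K}=\C\mathcal{K}$ to pair the contributions of $\mathcal{K}\cap B_1$ against $\C\mathcal{K}\cap B_1$ and cancel the short-range interactions by symmetry, so that only the far-field interaction of $\mathcal{K}\cap B_1$ against the diagonal wedges of $\C\mathcal{K}$ survives as a strict surplus. In case the direct sign calculation proves delicate, an equivalent geometric route is to replace $F_\eps$ by a non-symmetric perturbation such as $\mathcal{K}\cup B_r(c)$ with $c$ on the positive $x$-axis close to $0$, and to slide the ball toward the origin along $\C\mathcal{K}$ until it first touches $\partial\mathcal{K}$ at a smooth boundary point: the viscosity version of the Euler--Lagrange equation \eqref{ELsmin} at the touching point then has the wrong strict sign (because the removed part of $\C\mathcal{K}$ unbalances the two wedges of $\mathcal{K}$), yielding the contradiction.
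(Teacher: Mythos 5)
Your plan is sensible in spirit (perturb near the singularity and show the perimeter strictly decreases), and your preliminary observations are correct: $\I_s[\mathcal K]$ vanishes on the smooth part of $\partial\mathcal K$ by the rotational symmetry, so the argument must come from the corner. But there is a genuine gap at exactly the point you flag: the inequality $c_1(s)<c_2(s)$ is asserted, not proved. As written, $c_1(s)=-\int_{\mathcal K\cap B_1}\I_s[\mathcal K]$ and $c_2(s)=I(\mathcal K\cap B_1,\mathcal K\cap B_1)$ are each $+\infty$ (the curvature integral diverges on interior points of $\mathcal K$, and the self-interaction of a set with nonempty interior diverges on the diagonal), so the meaningful object is the regularized difference $I\big(\mathcal K\cap B_1,\,\mathcal K\setminus B_1\big)-I\big(\mathcal K\cap B_1,\,\Co\mathcal K\big)$, and the strict negativity of \emph{this} quantity for every $s\in(0,1)$ is the entire content. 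Your proposed "rotation pairing" does not deliver it: the rotation by $\pi/2$ gives $I(\mathcal K\cap B_1,\mathcal K\setminus B_1)=I(\Co\mathcal K\cap B_1,\Co\mathcal K\setminus B_1)$ and a similar swap, but these are exactly the relations that make the leading order cancel, not a strict surplus. Indeed, as $s\to 0^+$ the weighted openings of $\mathcal K$ and $\Co\mathcal K$ are both $\pi=\omega_2/2$, so the $s\to 0^+$ asymptotics of the two terms coincide and the sign for small $s$ is decided by subleading corrections that you do not control. Your alternative "sliding ball" route has the same problem in disguise: when the ball $B_r(c)\subset\Co\mathcal K$ first touches $\partial\mathcal K$ at a smooth point $p$, the viscosity Euler--Lagrange gives $\I_s[\mathcal K](p)\ge0$, which is consistent with the symmetry value $\I_s[\mathcal K](p)=0$ and yields no contradiction.

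The paper's proof sidesteps all quantitative estimates. It adds a small square $M\subset\Co\mathcal K$ adjacent to the origin to form $\mathcal K':=\mathcal K\cup M$, and uses the invariance of the picture under the rotation swapping $\mathcal K$ and $\Co\mathcal K$ (together with the unboundedness of the relevant regions, which makes the affected interaction terms coincide pairwise) to conclude that $\text{Per}_s(\mathcal K',B_\rho)=\text{Per}_s(\mathcal K,B_\rho)$ \emph{exactly}. Hence if $\mathcal K$ were $s$-minimal then $\mathcal K'$ would be too, and in particular $\mathcal K'$ would satisfy the Euler--Lagrange equation at the origin; but the extra square makes the set strictly dominant there, so $\int_{\R^2}\frac{\chi_{\Co\mathcal K'}-\chi_{\mathcal K'}}{|y|^{2+s}}\,dy<0$, a contradiction. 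The crucial difference from your route is that the paper's competitor has \emph{equal} perimeter by an identity, so the sign information is extracted from the Euler--Lagrange equation at the corner of the modified set, not from an integral estimate that you would otherwise have to carry out.
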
 

Notice that, by symmetry, one can prove that $\mathcal{K}$ satisfies \eqref{ELsmin}
(possibly in the viscosity sense). On the other hand, Proposition~\ref{THAT} gives that~${\mathcal{K}}$
is not $s$-minimal. This, in particular, provides an example of a set that
satisfies the Euler-Lagrange equation 
in~\eqref{ELsmin}, but is not~$s$-minimal (i.e.,
the Euler-Lagrange equation 
in~\eqref{ELsmin}
is implied by, but not necessarily equivalent to, the $s$-minimality property). 

\begin{proof}[Proof of Proposition~\ref{THAT}]
The proof of the non-minimality of $\mathcal{K}$ is due to an original idea by Luis Caffarelli.

Suppose by contradiction that the cone $\mathcal{K}$ is minimal in $\R^2$. 
We add to $\mathcal K$ a small square adjacent to the origin
(see Figure \ref{fign:Cone1}), and call $\mathcal{K}'$ the set obtained. Then $\mathcal{K}$ and $\mathcal{K}'$ have the same $s$-perimeter. This is due to the interactions considered in the $s$-perimeter functional and the unboundedness of the regions. We remark  that in Figure \ref{fign:Cone1} we draw bounded regions, of course, sets $A, B, C, D, A' ,B' ,C'$ and $D'$ are actually unbounded.
\begin{figure}[htpb]
	\includegraphics[width=0.80\textwidth]{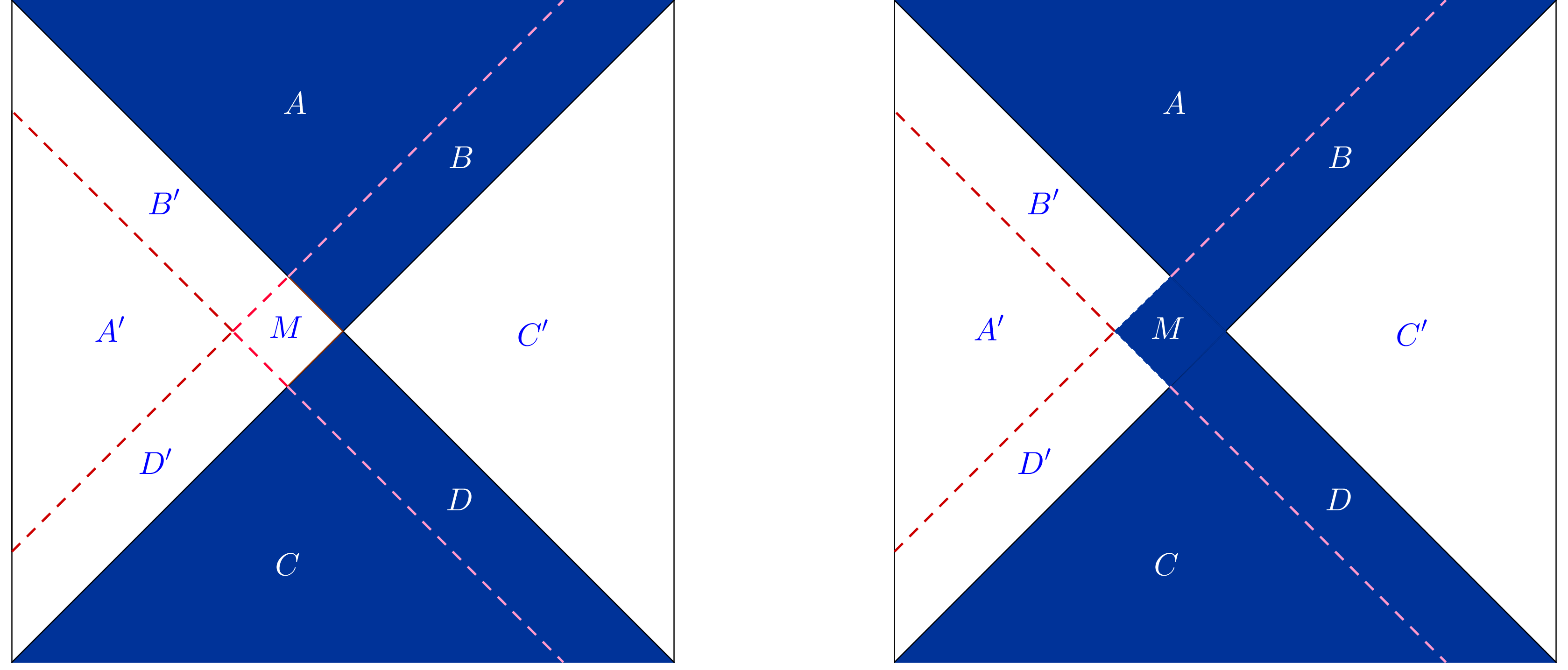}
	\caption{Interaction of $M$ with $A, B, C, D, A', B', C', D'$}   
	\label{fign:Cone1}
\end{figure}
\noindent Indeed, we notice that in the first image, the white square $M$ interacts with the dark regions $A, B, C, D$, while in the second the now dark square $M$ interacts with the regions $A', B', C', D'$, and all the other interactions are unmodified. Therefore, the difference between the $s$-perimeter of $\mathcal{K}$ and that of $\mathcal{K'}$ consists only of the interactions $I(A,M)+I(B,M)+I(C,M)+I(D,M)-I(A',M)- I(B',M)-I(C',M)-I(D',M)$. But $A \cup B=A' \cup B'$ and $C\cup D=C'\cup D'$ (since these sets are all unbounded), therefore the difference is null, and the $s$-perimeter of $\mathcal{K}$ is equal to that of $\mathcal{K}'$. Consequently, $\mathcal{K}'$ is also $s$-minimal, and therefore it satisfies the Euler-Lagrange equation in \eqref{ELsmin} at the origin. But this leads to a contradiction, since the the dark region now contributes more than the white one, namely
	\[ \int_{\R^2} \frac{\chi_{\mathcal{\C K}'}(y) - \chi_{ \mathcal{K}'}(y)} {|y|^{2+s} } \, dy<0.\]
Thus $\mathcal{K}$ cannot be $s$-minimal, and this concludes our proof.
\end{proof}
\begin{figure}[htpb]
	\includegraphics[width=0.55\textwidth]{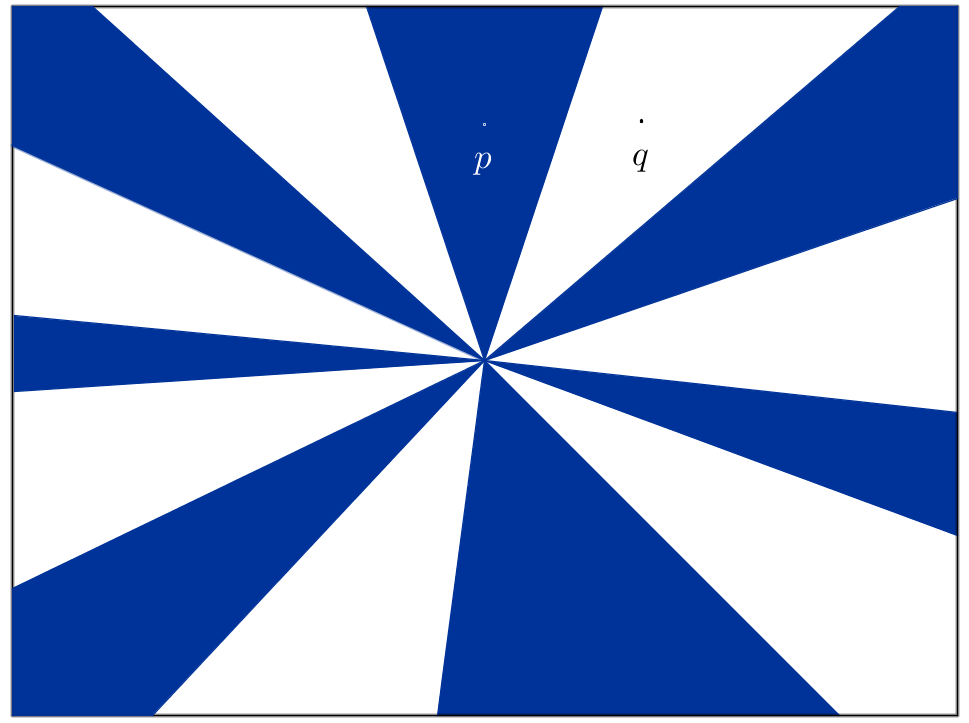}
	\caption{Cone in $\R^2$}   
	\label{fign:Cone2}
\end{figure}

This geometric argument cannot be extended to a more general case (even, for instance, to a cone in $\R^2$ made of many sectors, see Figure \ref{fign:Cone2}).
As a matter of fact, the proof of Theorem~\ref{noconestwo}
will be completely different than the one of Proposition~\ref{THAT} and it will rely on
an appropriate domain perturbation argument.

The proof of Theorem~\ref{noconestwo} that we present here
is actually different than the original one in~\cite{SV13}.
Indeed,
in \cite{SV13}, the result was proved
by using the harmonic extension for the fractional Laplacian.
Here, the extension will not be used; furthermore, 
the proof follows the steps of Theorem \ref{dgdim2} and we will recall here just the main ingredients.

\begin{proof}[Proof of Theorem \ref{noconestwo}]

The idea of the proof is the following: if $E\subset \R^2$ is an $s$-minimal cone, then let $\tilde{E}$ be a perturbation of the set $E$ which coincides with a translation of $E$ in $B_{R/2}$ and with $E$ itself outside $B_R$. Then the difference between the energies of $\tilde{E}$ and $E$ tends to $0$ as $R\to +\infty$. This implies that also the energy of $E \cap \tilde{E}$ is arbitrarily close to the energy of $E$. On the other hand if $E$ is not a half-plane, the set $\tilde{E} \cap E$ can be modified locally to decrease its energy by a fixed small amount and we reach a contradiction.

The details of the proof go as follows.
Let \[ u:= \chi_{\C E} - \chi_E.\] We use now 
definition \eqref{dguab} and Theorem \ref{acenest1} with $s\in (0,1)$ instead of $\sigma:=2s$ as the power of the denominator (check the observation in the footnote at page \pageref{ssigma} 
and Theorem \ref{TH1-SV-GAMMA} further on). We have that
	\[ u(B_R,B_R)= 8 I(E\cap B_R,\C E \cap B_R)\] and\[ u(B_R, {\C}B_R)=4I(B_R\cap E,\C E\setminus B_R) + 4I(\C E \cap B_R,E\setminus B_R),\]
	thus (up to constants that we neglect)	
	\eqlab{ \label{peren1} \text{Per}_s (E,B_R) = \mathcal{K}_R (u),}
	where $\mathcal{K}_R (u) $ is given in \eqref{dgkenac} and $\text{Per}_s(E,B_R)$ is the $s$-perimeter functional defined in \eqref{nmspf1}. Then $E$ is $s$-minimal if $u$ is a minimizer of the energy $\K_R$ in any ball $B_R$, with $R>0$. 
	
	Now, we argue by contradiction, and suppose that $E$ is an $s$-minimal cone different from the half-space. Up to rotations, we may suppose that a sector of $E$ has an angle smaller than $\pi$ and is bisected by $e_2$. Thus there exists $M\geq 1$ and $p\in E\cap B_{M}$ on the $e_2$-axis such that (see
Figure \ref{fign:Cone2})
\[ p\pm e_1 \in \C E. \]  
We take
$\varphi\in C^\infty_0(B_1)$, such that 
$\varphi (x)=1 $ in $B_{1/2}$.
For $R$ large (say $R > 8M$), we define 
\[ \Psi_{R,+}(y):=y+\varphi \Big(\frac{ y}{R}\Big)\,e_1 .\] 
We point out that,
for $R$ large, $\Psi_{R,+}$ is a diffeomorphism on~$\R^2$.\\
Furthermore, we define~$u_R^+(x):= u(\Psi_{R,+}^{-1}(x))$. Then
	\begin{equation*}
		\begin{aligned}
		&u_R^+(p)= u(p-e_1) &\text{ for } &p\in B_{2M} \\
	{\mbox{and }}\;\;
	& u_R^+(p)= u(p) &\text { for } &p \in \C B_R.
		\end{aligned}
 \end{equation*}  
We recall the estimate obtained in \eqref{DG01}, that, combined with the minimality of $u$, gives  
 	\begin{equation*}	 {\mathcal{K}}_R( u_R^+)- {\mathcal{K}}_R( u)\leq \frac{C}{R^2} \mathcal{K}_R(u).\end{equation*}
But $u$ is a minimizer in any ball, and by the energy estimate in Theorem \ref{acenest1} we have that
   \[ \mathcal{K}_R(u_R^+)  -\mathcal{K}_R(u)  \leq CR^{-s}.\]
This implies that
		\begin{equation} \label{urplus11} \lim_{R \to+ \infty} \mathcal{K}_R(u_R^+)  -\mathcal{K}_R(u) =0.
		\end{equation}
Let now
	\[ v_R(x):= \max \{ u(x), u_R^+(x) \} \quad \quad \text{ and } \quad \quad w_R(x):=\min \{ u(x), u_R^+(x) \}.\]
 We claim that $v_R$ is not identically $u$ nor $u_R^+$. Indeed, since $p\pm e_1  \in \C E$ and $p\in E$
	\[\begin{split}  &u_R^+ (p) = u(p-e_1) = (\chi_{\C E} - \chi_{ E} )(p-e_1) =1 \quad \mbox{and}  \\
				&u(p) = (\chi_{\C E} - \chi_{E} )(p) =- 1 .\end{split}\] 
On the other hand,
	\[\begin{split}  & u_R^+(p+e_1) = u(p)=-1 \quad \mbox{and}  \\
& u(p+e_1) = (\chi_{\C E} - \chi_{ E} )(p+e_1) = 1 .\end{split}\] 
By the continuity of $u$ and $u_R^+ $, we obtain that 
	\begin{equation}\label{neigP}
		v_R=u_R^+ > u \text{ in a neighborhood of } p \end{equation} and
		\begin{equation}\label{neigPe}  v_R =u >  u_R^+ \text{ in a neighborhood of } p+e_1.
		\end{equation}
By the minimality of~$u$,
	\[\mathcal K_R (u) \leq \mathcal K_R (v_R).\]  
Moreover (see e.g.
formula~(38) in~\cite{PSV13}), 
 	\[ \mathcal K_R (v_R)+\mathcal K_R (w_R)\le
\mathcal K_R (u) +\mathcal K_R (u_R^+).   \]
The latter two formulas give that 
	\begin{equation}
	\mathcal K_R (v_R) \leq \mathcal K_R (u_R^+).
	\label{wrur}
	\end{equation} 
We claim that 
\begin{equation}\label{I*8ui}
{\mbox{$v_R$ is not minimal for $\mathcal K_{2M}$}}\end{equation}
with respect to compact perturbations in $B_{2M}$.
Indeed, assume by contradiction that $v_R$ is minimal, then in $B_{2M}$ both $v_R$ and $u$ would satisfy the same equation. 
Recalling~\eqref{neigPe}
and applying the Strong Maximum Principle, 
it follows that $u=v_R$ in $B_{2M}$, which contradicts \eqref{neigP}.
This establishes~\eqref{I*8ui}.

Now, we consider a minimizer~$u_R^*$ of~$\mathcal K_{2M}$
among the competitors that agree with~$v_R$ outside~$B_{2M}$.
Therefore, we can define
	\[  \delta_R  : = \mathcal K_{2M} (v_R)- \K_{2M} (u_R^*). \]
In light of~\eqref{I*8ui}, we have that~$\delta_R>0$.

The reader can now compare
Step 3 in the proof of Theorem \ref{dgdim2}. There we proved that 
\begin{equation}\label{89*9*9}
{\mbox{$\delta_R$
remains bounded away from zero as~$R\to+\infty$.}}\end{equation}
Furthermore, since $u_R^*$ and $v_R$ agree outside $B_{2M}$ we obtain that 
	\[\mathcal K_R (u_R^*) +\delta_R = \mathcal K_R (v_R).\]
Using this, \eqref{wrur} and the minimality of $u$, we obtain that
	\[\delta_R  =\mathcal K_R (v_R) - \mathcal K_R (u_R^*) \leq \mathcal K_R (u_R^+) -\mathcal K_R (u).\]
Now we send $R$ to infinity, recall \eqref{urplus11} and~\eqref{89*9*9},
and
we reach a contradiction. Thus, $E$ is a half-space, and this concludes the proof of Theorem \ref{noconestwo}.
\end{proof}

As already mentioned,
the regularity theory for $s$-minimal sets is still widely open.
Little is known beyond Theorems~\ref{THM 5.8}
and~\ref{THM 5.9}, so it would be very
interesting to further investigate the regularity of~$s$-minimal surfaces
in higher dimension
and for small~$s$.\bigskip

It is also interesting to recall that if the $s$-minimal surface~$E$
is a subgraph of some function~$u:\R^{n-1}\to\R$
(at least in the vicinity of some point~$x_0=(x_0', u(x'_0))\in\partial E$)
then the Euler-Lagrange~\eqref{ELsmin} can be written
directly in terms of~$u$.
For instance (see formulas~(49) and~(50) in~\cite{bootstrap}),
under appropriate smoothness assumptions on~$u$, formula~\eqref{ELsmin}
reduces to
\begin{eqnarray*}
0&=&
\int_{\Rn} \frac{\chi_{\C E}(x_0+y) -\chi_{E} (x_0+y) }{|y|^{n+s}}\,dy
\\ &=&\int_{\R^{n-1}} F\left( \frac{u(x'_0+y')-u(x'_0)}{|y'|}\right)\,
\frac{\zeta(y')}{|y'|^{n-1+s}}\,dy'+\Psi(x'_0),\end{eqnarray*}
for suitable~$F$ and~$\Psi$, and a cut-off function~$\zeta$
supported in a neighborhood of~$x_0'$.

\begin{figure}[htpb]
	\begin{minipage}[b]{0.95\linewidth}
        \includegraphics[width=0.85\textwidth]{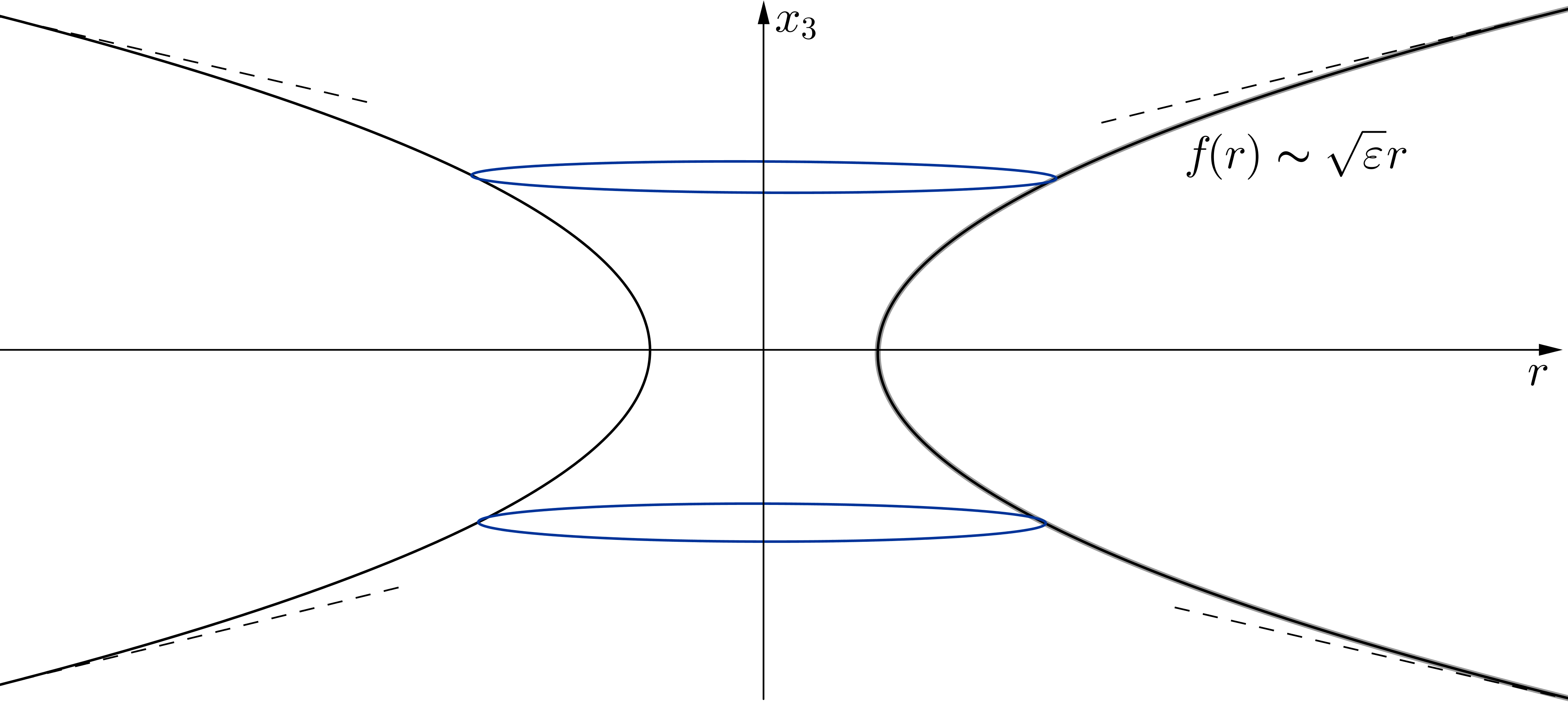}
        \caption{A nonlocal catenoid}
        \label{FRAC_CATENOID}
    \end{minipage}
\end{figure}

Regarding the regularity problems
of the $s$-minimal surfaces, let us mention the recent papers~\cite{Lawson}
and~\cite{Lawson2}. Among other very interesting results,
it is proved there that suitable singular cones
of symmetric type are unstable up to dimension~$6$
but become stable in dimension~$7$ for small~$s$
(these cones can be seen as the nonlocal analogue
of the Lawson cones in the classical minimal surface theory,
and the stability property is in principle weaker than minimality,
since it deals with the positivity of the second order derivative
of the functional).

This phenomenon may suggest the conjecture that the
$s$-minimal surfaces 
may develop singularities in dimension~$7$ and higher
when~$s$ is sufficiently small.
\bigskip

In~\cite{Lawson2}, interesting examples of
surfaces with vanishing nonlocal mean curvature
are provided for~$s$ sufficiently close to~$1$. Remarkably, the surfaces in~\cite{Lawson2}
are the nonlocal analogues of the catenoids, but, differently
from the classical case
(in which catenoids grow logarithmically),
they approach a singular cone at infinity, see Figure~\ref{FRAC_CATENOID}.

Also, these nonlocal catenoids are highly unstable from the
variational point of view, since they possess infinite Morse index
(differently from the standard catenoid, which has Morse index
equal to one, i.e. it is, roughly speaking,
a minimizer in any functional direction with the exception of one).
\bigskip

Moreover, in~\cite{Lawson2}, there are also examples
of surfaces with vanishing nonlocal mean curvature
that can be seen as the nonlocal analogues of two
parallel hyperplanes. Namely,
for~$s$ sufficiently close to~$1$, there exists
a surface of revolution made of two sheets
which are the graph of a radial function~$f=\pm f(r)$.
When~$r$ is small, $f$ is of the order of~$1+(1-s) r^2$,
but for large~$r$ it becomes of the order of~$\sqrt{1-s}\cdot r$.
That is, the two sheets ``repel each other'' and produce a linear
growth at infinity. When~$s$ approaches $1$ the two sheets are
locally closer and closer to two parallel hyperplanes,
see Figure~\ref{FRAC_2_SHEETS}.

The construction above may be extended to build families
of surfaces with vanishing nonlocal mean curvature
that can be seen as the nonlocal analogue of $k$
parallel hyperplanes, for any~$k\in\N$. These $k$-sheet surfaces
can be seen as the bifurcation, as~$s$ is close to~$1$,
of the parallel hyperplanes~$\{x_n = a_i\}$, for~$i\in\{1,\dots,k\}$,
where the parameters~$a_i$ satisfy the constraints
\begin{equation}\label{1.9}
a_1 >\dots> a_k,\qquad \sum_{i=1}^k a_i=0\end{equation}
and the balancing relation
\begin{equation}\label{1.10}
a_i =2\sum_{{1\le j\le n}\atop{j\ne i}}
\frac{(-1)^{i+j+1}}{a_i-a_j}.
\end{equation}
\begin{figure}[htpb]
        \includegraphics[width=0.85\textwidth]{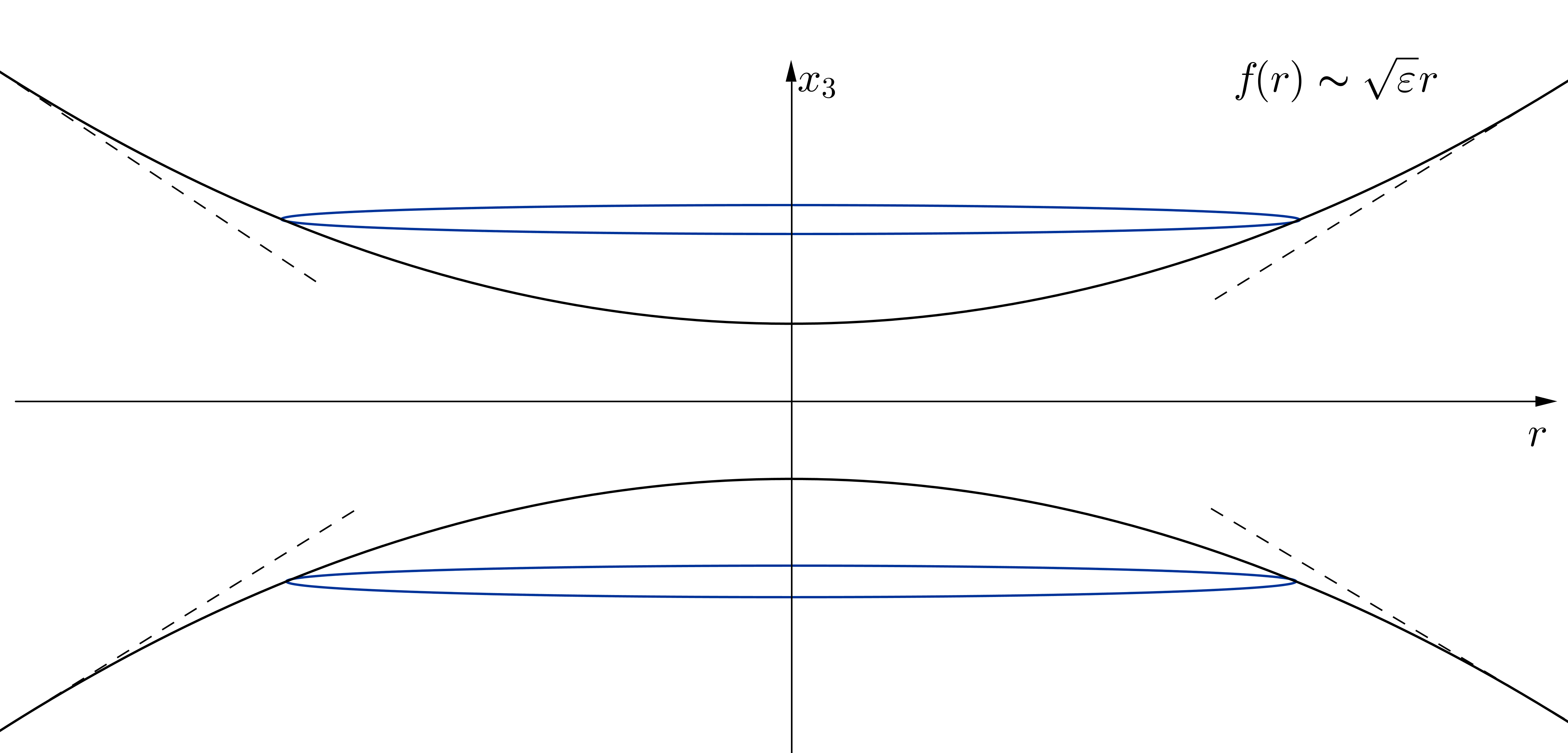}
        \caption{A two-sheet surface with vanishing fractional mean curvature}
        \label{FRAC_2_SHEETS}
\end{figure}
It is actually quite interesting to observe that
solutions of~\eqref{1.10} correspond to (nondegenerate)
critical points of
the functional
$$ E(a_1,\dots,a_k):=\frac12\sum_{i=1}^k a_i^2+
\sum_{{1\le j\le n}\atop{j\ne i}}(-1)^{i+j}\log|a_i-a_j|$$
among all the $k$-ples~$(a_1,\dots,a_k)$ that satisfy~\eqref{1.9}.
\bigskip

These bifurcation techniques
rely on a careful expansion of the fractional perimeter functional
with respect to normal perturbations. That is, if~$E$
is a (smooth) set with vanishing fractional mean curvature,
and~$h$ is a smooth and compactly supported perturbation,
one can define, for any~$t\in\R$,
$$ E_h(t):= \{ x+t h(x)\nu(x),\; x\in\partial E\},$$
where~$\nu(x)$ is the exterior normal of~$E$ at~$x$.
Then, the second variation of the perimeter of~$E_h(t)$
at~$t=0$ is (up to normalization constants)
\begin{eqnarray*}
&& \int_{\partial E} \frac{h(y)-h(x)}{|x-y|^{n+s}}
\,d{\mathcal{H}}^{n-1}(y) + h(x)\,
\int_{\partial E} \frac{\big(\nu(x)-\nu(y)\big)\cdot\nu(x)}{|x-y|^{n+s}}
\,d{\mathcal{H}}^{n-1}(y) \\ &=&
\int_{\partial E} \frac{h(y)-h(x)}{|x-y|^{n+s}}
\,d{\mathcal{H}}^{n-1}(y) + h(x)\,
\int_{\partial E} \frac{1-\nu(x)\cdot\nu(y)}{|x-y|^{n+s}}
\,d{\mathcal{H}}^{n-1}(y)
.\end{eqnarray*}
Notice that the latter integral is non-negative, since
$\nu(x)\cdot\nu(y)\le1$.
The quantity above, in dependence of the perturbation~$h$,
is called, in jargon, ``Jacobi operator''.
It encodes an important geometric information,
and indeed, as~$s\to1$, it approaches the classical operator
$$ \Delta_{\partial E} h +|A_{\partial E}|^2\, h,$$
where~$\Delta_{\partial E}$ is the Laplace-Beltrami operator
along the hypersurface~$\partial E$ and~$|A_{\partial E}|^2$
is the sum of the squares of the
principal curvatures.
\bigskip

Other interesting sets that possess constant nonlocal mean curvature
with the structure of onduloids have been recently constructed 
in \cite{davilaaa} and 
in~\cite{cabre-fall-weth}. This type of sets are periodic in a given direction 
and their construction has perturbative nature (indeed, the sets are 
close to a slab in the plane).

It is interesting to remark that the planar objects constructed
in~\cite{cabre-fall-weth} have no counterpart in the local framework, since
hypersurfaces of constant classical mean curvature with an onduloidal structure
only exist
in~$\R^n$ with~$n\ge3$: once again, this is a typical nonlocal effect,
in which the nonlocal mean curvature at a point is influenced by
the global shape of the set.\bigskip

While unbounded sets with constant nonlocal mean curvature
and interesting geometric features
have been constructed in~\cite{Lawson2, cabre-fall-weth},
the case of smooth and bounded sets is always geometrically trivial.
As a matter of fact, it has been recently proved independently in~\cite{cabre-fall-weth} and~\cite{figalli-maggi-ciraolo}
that bounded sets with smooth boundary
and constant mean curvature
are necessarily balls (this is the analogue
of a celebrated result by Alexandrov for surfaces
of constant classical mean curvature).
\bigskip 

\section{Boundary regularity}The boundary regularity of the nonlocal minimal surfaces
is also a very interesting, and surprising, topic.
Indeed, differently from the classical case,
nonlocal minimal surfaces do not always attain boundary data
in a continuous way (not even in low dimension).
A possible boundary behavior is, on the contrary,
a combination of stickiness to the boundary
and smooth separation from
the adjacent portions. Namely, the nonlocal minimal surfaces
may have a portion that sticks at the boundary and that
separates from it in a~$C^{1,\frac{1+s}2}$-way.
\begin{figure}[htpb]
        \includegraphics[width=0.40\textwidth]{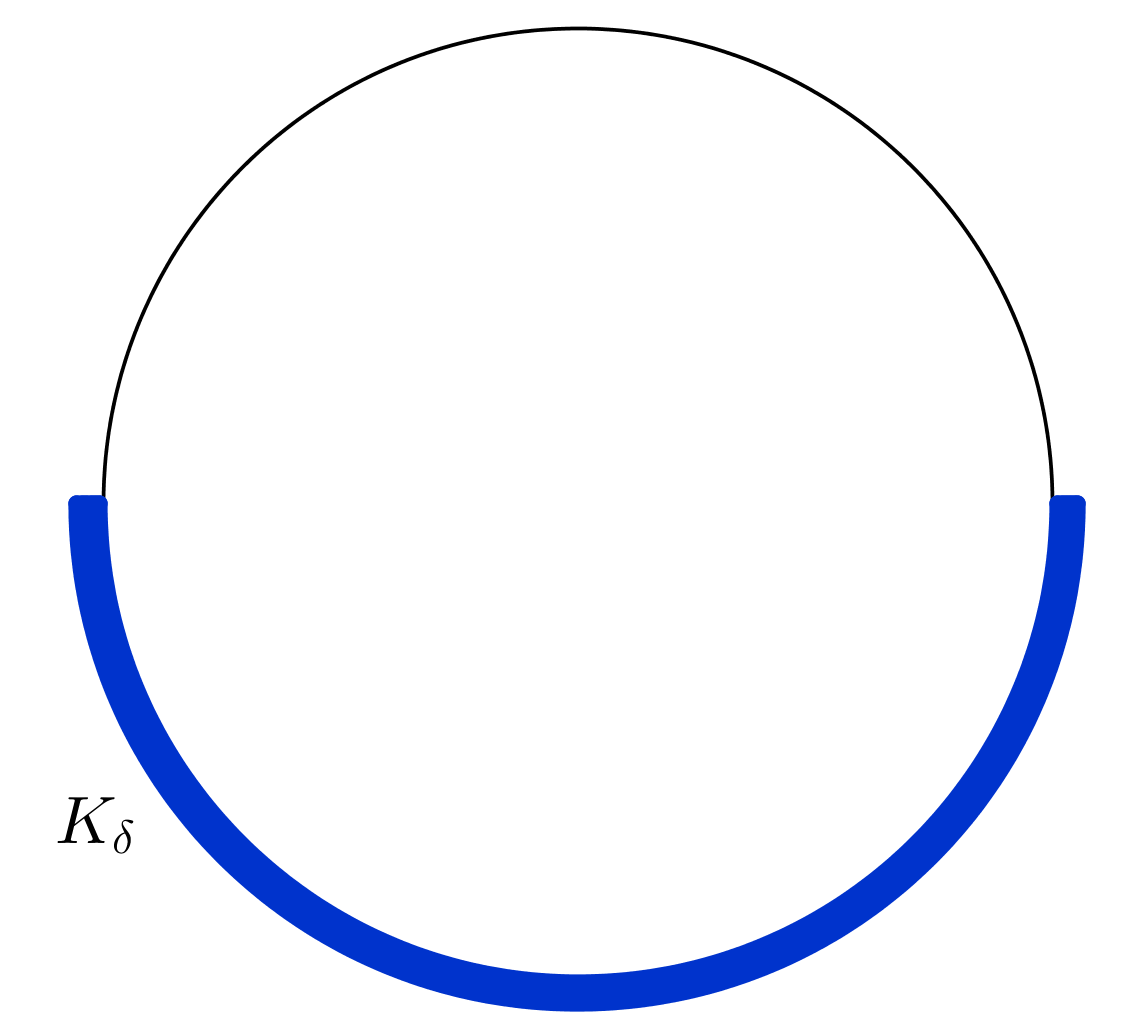}
        \caption{Stickiness properties of Theorem~\ref{STI-DSV-1}.}
        \label{eST1}
\end{figure}
As an example, we can
consider, for any~$\delta>0$, the spherical cap
$$ K_\delta := \big( B_{1+\delta}\setminus B_1\big)\cap \{x_n<0\},$$
and obtain the following stickiness result:

\begin{theorem}\label{STI-DSV-1}
There exists~$\delta_0>0$, depending on~$n$ and~$s$, such that
for any~$\delta\in(0,\delta_0]$, we have that
the $s$-minimal set in~$B_1$
that coincides with~$K_\delta$ outside~$B_1$
is~$K_\delta$ itself.

That is, the $s$-minimal set with datum~$K_\delta$ outside~$B_1$
is empty inside~$B_1$.
\end{theorem}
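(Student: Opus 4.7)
The plan is to argue by direct energy comparison against the candidate $F := K_\delta$ (empty interior in $B_1$) and show that for $\delta$ small this candidate beats any nonempty interior configuration. Setting $A := E \cap B_1$ for the $s$-minimal set $E$, a straightforward expansion using the definitions \eqref{nmspf1} and \eqref{nmsi1} gives
\[
\text{Per}_s(E,B_1)-\text{Per}_s(K_\delta,B_1) = I(A, B_1\setminus A) + I\bigl(A, \C B_1\setminus K_\delta\bigr) - I(A, K_\delta),
\]
so by the $s$-minimality of $E$ it suffices to show the strict positivity of the right-hand side for every nonempty $A\subset B_1$ whenever $\delta$ is small enough (depending only on $n$ and $s$). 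This is the whole game.

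The first ingredient I would use is a reflection across $\{x_n=0\}$: writing $\sigma(y)=(y',-y_n)$, one has $\sigma(K_\delta)\subset \C B_1\setminus K_\delta$, and if $x_n \ge 0$ then a short computation shows $|x-\sigma(y)|\le|x-y|$ for every $y\in K_\delta$. Integrating, this gives $I(A_+,K_\delta)\le I(A_+, \sigma(K_\delta))\le I(A_+, \C B_1\setminus K_\delta)$, where $A_\pm:=A\cap\{\pm x_n>0\}$, so the upper-half contribution of $A$ drops out of the inequality for free. The problem is thereby reduced to controlling $I(A_-,K_\delta)$ by $I(A_-,B_1\setminus A_-) + I\bigl(A_-, \C B_1\setminus K_\delta\bigr)$. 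For $A_-$ staying at a fixed positive distance from the south pole $-e_n$, the thinness of $K_\delta$ gives $I(A_-,K_\delta) = O(\delta)$ while $I\bigl(A_-, \C B_1\cap\{x_n>0\}\bigr)\ge c>0$ uniformly in $\delta$, which closes the estimate. In the remaining regime, where $A_-$ accumulates near $-e_n$, I would perform a blow-up at scale $\delta$ around the south pole: in the limit, $B_1$ becomes a half-space and $K_\delta$ becomes a slab of unit thickness, and one may invoke the viscosity Euler-Lagrange equation \eqref{ELsmin} together with a tangent-ball comparison at $-e_n$ to rule out any nontrivial limiting interior configuration.

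The hard part will be exactly this near-tangency regime: both $A_-$ and $K_\delta$ can touch $\partial B_1$ close to $-e_n$, so the naive smallness of $|K_\delta|$ is not sufficient and one needs sharp Riesz-type pointwise estimates for the fractional mean curvature near the boundary. I expect the rescaling step to be the most delicate part, since one must verify that blow-up limits of $s$-minimal sets behave well in the half-space geometry and that, in the limit, an interior touching point of $\partial E$ would be incompatible with the viscosity form of \eqref{ELsmin} (an obstruction analogous to the one exploited in the sliding argument in the proof of Theorem~\ref{thmgraph}). Once that boundary analysis is carried out, the positivity of the right-hand side of the displayed inequality follows uniformly, yielding $A=\emptyset$ and hence $E=K_\delta$.
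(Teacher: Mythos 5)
Your opening reductions are sound: expanding \eqref{nmspf1} and \eqref{nmsi1} indeed gives $\text{Per}_s(E,B_1)-\text{Per}_s(K_\delta,B_1) = I(A, B_1\setminus A) + I(A, \C B_1\setminus K_\delta) - I(A, K_\delta)$ for $A:=E\cap B_1$, and the reflection $\sigma(y)=(y',-y_n)$ does absorb the upper-half contribution because $|x-\sigma(y)|\le|x-y|$ whenever $x_n\ge0$ and $y_n\le0$ and $\sigma(K_\delta)\subset\C B_1\setminus K_\delta$. This is a genuinely different route from the paper, which does not attempt a direct energy comparison: there one first bounds $\text{Per}_s(E,B_1)$ by a small $\eta$ (using the competitor that is empty in $B_1$), uses the Clean Ball Condition to confine $\partial E$ and then the whole of $E\cap B_1$ to a thin collar near $\partial B_1$, and finally slides a ball $B_\rho$ from the origin outward until it touches $\partial E$ at some interior point $p$, deriving a contradiction with $\I_s[E](p)=0$ because the confinement makes the inner contribution too weak to balance the outer one.

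The second half of your plan has a real gap. The dichotomy you draw is governed by the distance of $A_-$ from the south pole $-e_n$, but $-e_n$ plays no distinguished role: $K_\delta$ is tangent to $B_1$ along the entire open southern hemisphere, so the delicate regime occurs near any $q\in\partial B_1\cap\{x_n<0\}$ with $q_n$ bounded away from $0$, not merely near $-e_n$. For $A_-$ a ball of radius $r$ tangent to $\partial B_1$ at such a $q$, a model computation shows $I(A_-,K_\delta)$, $I(A_-,B_1\setminus A)$ and $I(A_-,\C B_1\setminus K_\delta)$ are all of the same order $r^{n-s}$ when $r\sim\delta$; in particular $I(A_-,K_\delta)$ is not $O(\delta)$ there, and the claimed lower bound $I(A_-,\C B_1\cap\{x_n>0\})\ge c>0$ fails once $|A_-|\to0$. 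The strict positivity you need is therefore a comparison of constants at scale $\delta$ along the whole hemisphere, and this is precisely what is left unaddressed. Finally, the proposed closing move (blow-up at $-e_n$ followed by an invocation of the viscosity equation \eqref{ELsmin} and a tangent-ball comparison) does not mesh with the energy-comparison framework and, even on its own terms, only reproduces the original difficulty in the half-space/slab model; ruling out a nontrivial solution of that limiting problem still requires something like the paper's confinement-plus-curvature argument, which your sketch does not supply.
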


The stickiness property of Theorem~\ref{STI-DSV-1}
is depicted in Figure~\ref{eST1}.

Other stickiness examples occur at the sides of slabs in the plane.
For instance,
given~$M>1$, one can 
consider the $s$-minimal set~$E_M$ in~$(-1,1)\times\R$
with datum outside~$(-1,1)\times\R$ given by
the ``jump'' set~$J_M:=J^-_M \cup J^+_M$,
where
\begin{eqnarray*}
&& J^-_M:= (-\infty,-1]\times (-\infty,-M)
\\{\mbox{and }}&&J^+_M:= [1,+\infty)\times (-\infty,M).\end{eqnarray*}
Then, if~$M$ is large enough,
the minimal set~$E_M$ sticks at the boundary of the slab:

\begin{theorem}\label{STI-DSV-2}
There exist~$M_o>0$, $C_o>0$, depending on~$s$, such that
if~$M\ge M_o$ then
\begin{eqnarray}
&& [-1,1)\times [C_o M^{\frac{1+s}{2+s}},M]\subseteq E_M^c \label{SL-011}
\\{\mbox{and }}&& (-1,1]\times [-M,-C_o M^{\frac{1+s}{2+s}}]\subseteq E_M.
\label{SL-012}
\end{eqnarray}
\end{theorem}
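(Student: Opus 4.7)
By the central symmetry $(x_1,x_2)\mapsto(-x_1,-x_2)$, which sends $J_M$ to its complement outside $\Omega$ and therefore $E_M$ to its own complement, $\eqref{SL-012}$ follows at once from $\eqref{SL-011}$; so I focus on $\eqref{SL-011}$. Since $J_M$ is a subgraph in the $x_2$-direction and $\Omega=(-1,1)\times\R$ is a cylinder with smooth cross-section, Theorem $\ref{thmgraph}$ gives a function $v_M\colon(-1,1)\to\R$ with $E_M\cap\Omega=\{x_2<v_M(x_1)\}$, and $\eqref{SL-011}$ is equivalent to the uniform bound $v_M(x_1)\le h_o:=C_oM^{(1+s)/(2+s)}$ on $[-1,1)$.

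The heart of the plan is a barrier argument. Set $h:=\sup_{(-1,1)} v_M$ and argue by contradiction that $h>h_o$; by the interior $C^{1,(1+s)/2}$-regularity of $\partial E_M$ quoted after $\eqref{ELsmin}$, this supremum is attained at an interior point $p_\star=(\xi_\star,h)\in\partial E_M\cap\Omega$ (any genuine boundary contact point is handled via the viscosity formulation of $\eqref{ELsmin}$, available because $\partial\Omega$ is $C^2$). Introduce the flat-cap competitor
\[
B:=J_M\cup\bigl((-1,1)\times(-\infty,h)\bigr),
\]
so that $E_M\subseteq B$ globally, with tangential contact along $\{x_2=h\}$ at $p_\star$. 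Monotonicity of the integrand $\chi_{\C(\cdot)}-\chi_{(\cdot)}$ under set inclusion, paired with the Euler-Lagrange equation $\I_s[E_M](p_\star)=0$, yields $\I_s[B](p_\star)\le 0$; the contradiction will come from the strict reverse inequality $\I_s[B](\xi,h)>0$ for every $(\xi,h)$ with $\xi\in(-1,1)$ and $h\ge h_o$.

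The computation of $\I_s[B](\xi,h)$ splits according to the three vertical strips $\{y_1<-1\}$, $\{-1<y_1<1\}$, $\{y_1>1\}$: the middle contribution vanishes identically by oddness of $\chi_{\C B}-\chi_B$ about $y_2=h$, while the two side strips, after the natural scalings $(y_1+1,y_2+M)=(h+M)(u,v)$ and $(y_1-1,y_2-M)=(M-h)(u',v')$, are expressed as explicit one-dimensional principal-value integrals. Expanding for $h\le M$ with $M\to\infty$ and combining (a) the antisymmetric cancellation of the leading $(1\pm\xi)^{-s}$ terms around $\xi=0$, (b) the subleading ``corner correction'' near $(\pm1,\pm M)$, and (c) the self-interaction of the cap $A:=E_M\cap\Omega\cap\{x_2>h_o\}$, produces an inequality of the schematic form
\[
\I_s[B](\xi,h)\,\gtrsim\,\kappa_s\bigl(h^{2+s}-c\,M^{1+s}\bigr)\,\mathfrak{G}(\xi,h,M),
\]
with $\mathfrak{G}>0$ an explicit positive weight. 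The threshold $h^{2+s}\sim M^{1+s}$, i.e.\ $h\sim M^{(1+s)/(2+s)}$, emerges exactly from balancing the positive cap self-interaction of order $h^{1-s}$ against the negative bulk mean-curvature term of order $M\,h^{-(1+s)}$ coming from the $J_M^+$ side of the datum.

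The main obstacle I anticipate is the delicate matching of scales underlying the exponent $(1+s)/(2+s)$. At leading order the two side-strip contributions produce an antisymmetric term $c_s[(1+\xi)^{-s}-(1-\xi)^{-s}]$, which is negative and in fact singular as $\xi\to 1^-$, so the flat-cap barrier $B$ becomes ineffective when $\xi_\star$ approaches the right edge of $\Omega$. To overcome this, one must first localize $\xi_\star$ away from $\pm 1$ by a preliminary horizontal sliding argument (comparing $E_M$ with translates $E_M+\tau e_1$ and exploiting the monotonicity in $x_1$ of the exterior datum together with the subgraph property of Theorem $\ref{thmgraph}$), and only afterwards couple the horizontal scale $|1-\xi_\star|$ to the vertical scale $h_o$ inside the corner expansion. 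Extracting the sharp exponent $(1+s)/(2+s)$---rather than the weaker $1/(1+s)$ which a naive mean-curvature-only argument would yield---requires retaining the cap self-interaction term via the fractional Sobolev-type inequality $\eqref{OK:sob:p}$, and it is this two-scale coupling that is the genuine technical heart of the proof.
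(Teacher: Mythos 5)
Your proposal takes a genuinely different route from the paper, and unfortunately there is a critical gap at its heart. The paper's proof proceeds in two steps: first a \emph{weaker, linear-scale} stickiness $[-1,1)\times[c_oM,M]\subset \C E_M$, obtained by sliding a \emph{ball of radius $c_oM$} horizontally until it touches $\partial E_M$ and estimating $\I_s[E_M]$ at the contact point directly; then a refinement to the exponent $\frac{1+s}{2+s}$ by sliding suitable \emph{portions of balls}. Your barrier $B:=J_M\cup\bigl((-1,1)\times(-\infty,h)\bigr)$ is flat, and its nonlocal mean curvature at the contact point does not have the sign you need. Decomposing by vertical strips as you do, the middle strip vanishes by odd symmetry and the two side strips give
\[
\I_s[B](\xi,h)=2\Phi(1+\xi,\,M+h)-2\Phi(1-\xi,\,M-h),\qquad \Phi(a,b):=\int_a^\infty\!\!\int_0^b\frac{dy_2\,dy_1}{(y_1^2+y_2^2)^{\frac{2+s}{2}}}.
\]
Since $\Phi(a,\infty)=\frac{c_s}{s}\,a^{-s}$, letting $M\to\infty$ with $h\ll M$ gives the leading behavior $\I_s[B](\xi,h)\to\frac{2c_s}{s}\bigl[(1+\xi)^{-s}-(1-\xi)^{-s}\bigr]$, which is \emph{strictly negative} for every $\xi\in(0,1)$ and diverges to $-\infty$ as $\xi\to1^-$. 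The next-order correction coming from the finite height is of size $O(h/M^{1+s})$, so $\I_s[B](\xi,h)>0$ can only hold for $\xi\lesssim h/M^{1+s}$, an exponentially small neighbourhood of $\xi=0$ when $h\sim M^{(1+s)/(2+s)}$. Thus $E_M\subseteq B$ with contact at $p_\star=(\xi_\star,h)$ and $\I_s[E_M](p_\star)=0$ gives $\I_s[B](p_\star)\le0$, which is \emph{consistent}, not contradictory, and the argument produces nothing.

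You anticipate the difficulty near $\xi_\star\to1^-$, but the proposed repair does not work. A horizontal sliding comparison $E_M$ vs.\ $E_M+\tau e_1$ is not available here: $J_M-\tau e_1$ (for $\tau>0$) gains the slab $[1-\tau,1]\times(-\infty,M)$ and loses $[-1-\tau,-1]\times(-\infty,-M)$, so it is \emph{neither contained in nor contains} $J_M$, and no one-sided comparison principle applies; Theorem~\ref{thmgraph} gives only the subgraph property in $x_2$, not monotonicity in $x_1$. Moreover, even if such a localization were available, it would not suffice: by the antisymmetry $v_M(-\xi)=-v_M(\xi)$ one has $v_M(0)=0<h$, so the supremum is attained at some $\xi_\star$ \emph{away} from $0$ (heuristically near $1^-$), exactly where the flat cap fails worst. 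The paper sidesteps this entirely by using \emph{curved} barriers; a large exterior tangent ball contributes a strictly positive amount to $\I_s[E_M]$ at the contact point that dominates the $J_M$ asymmetry, and the refined second step uses portions of balls whose curvature is tuned so the calculation balances at scale $M^{(1+s)/(2+s)}$. Finally, note that your two heuristics for the exponent disagree with each other: ``$h^{1-s}\sim Mh^{-(1+s)}$'' gives $h\sim M^{1/2}$, not $M^{(1+s)/(2+s)}$, and the ``self-interaction of the cap $A\subset E_M$'' cannot enter the estimate of $\I_s[B]$, which depends only on the set $B$.
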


The situation of Theorem~\ref{STI-DSV-2}
is described in Figure~\ref{eST2}.
\begin{figure}[htpb]
        \includegraphics[width=0.75\textwidth]{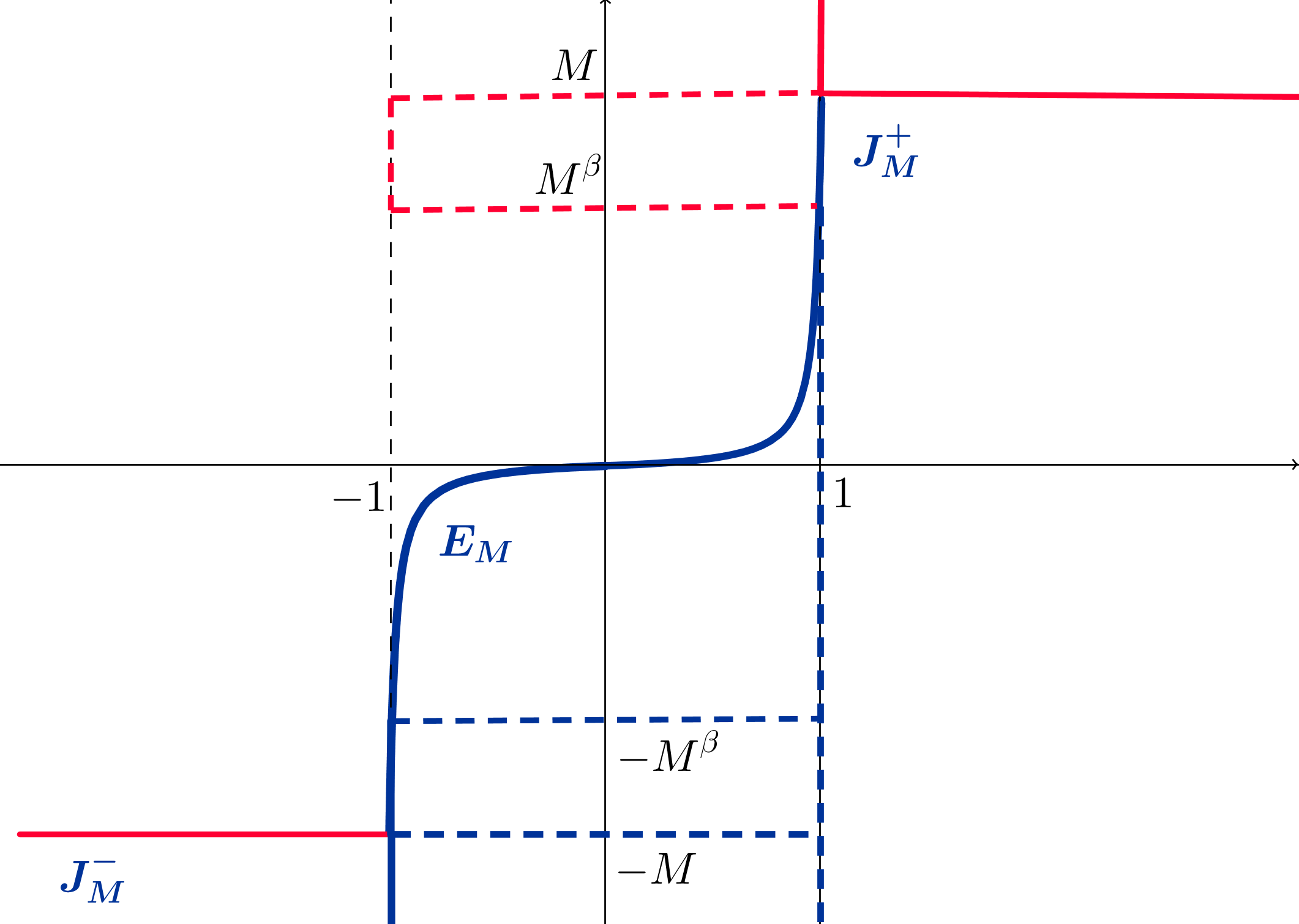}
        \caption{Stickiness properties of Theorem~\ref{STI-DSV-2}.}
        \label{eST2}
\end{figure}
We mention that the ``strange''
exponent~$\frac{1+s}{2+s}$ in~\eqref{SL-011}
and~\eqref{SL-012} is optimal.\bigskip

For the detailed proof of
Theorems~\ref{STI-DSV-1} and~\ref{STI-DSV-2},
and other results on the 
boundary behavior
of nonlocal minimal surfaces, see~\cite{STICK}.
Here, we limit ourselves to give some 
heuristic motivation and a sketch of the proofs.
\bigskip

As a motivation for the (somehow unexpected) stickiness
property at the boundary,
one may look at Figure~\ref{eST1} and argue like this.
In the classical case, corresponding to~$s=1$,
independently on the width~$\delta$,
the set of minimal perimeter in~$B_1$ will always be
the half-ball~$B_1\cap \{x_n<0\}$. 

Now let us take~$s<1$.
Then, the half-ball~$B_1\cap \{x_n<0\}$ cannot be an $s$-minimal
set, since the nonlocal mean curvature, for instance, at the origin
cannot vanish. Indeed, the origin ``sees'' the complement 
of the set in a larger proportion than the set itself.
More precisely, in~$B_1$ (or even in~$B_{1+\delta}$)
the proportion of the set is
the same as the one of the complement, but outside~$B_{1+\delta}$
the complement of the set is dominant. Therefore, to ``compensate''
this lack of balance, the $s$-minimal set for~$s<1$ has to bend
a bit. Likely, the $s$-minimal set in this case will have the tendency
to become slightly convex at the origin, so that, at least nearby,
it sees a proportion of the set which is larger than the proportion of the complement
(we recall that, in any case, the proportion of the complement will
be larger at infinity, so the set needs to compensate at least near
the origin). But when~$\delta$ is very small, it turns
out that this compensation is not sufficient to obtain the desired
balance between the set and its complement: therefore,
the set has to ``stick'' to the half-sphere, in order to drop
its constrain to satisfy a vanishing nonlocal mean curvature equation.

Of course some quantitative estimates
are needed to make this argument work,
so we describe the sketch of the rigorous proof of
Theorem~\ref{STI-DSV-1} as follows.

\begin{proof}[Sketch of the proof of Theorem~\ref{STI-DSV-1}]
First of all, one checks that for any fixed~$\eta>0$,
if~$\delta>0$ is small enough, we have that
the interaction between~$B_1$ and~$B_{1+\delta}\setminus B_1$
is smaller than~$\eta$. In particular, by comparing with
a competitor that is empty in~$B_1$, by minimality we obtain that
\begin{equation}\label{ETA:DE} {\text{Per}}_s(E_\delta, B_1)\le \eta,\end{equation}
where we have denoted by~$E_\delta$ the
$s$-minimal set in~$B_1$
that coincides with~$K_\delta$ outside~$B_1$.

Then, one checks that
\begin{equation}\label{small:n}
{\mbox{the boundary of~$E_\delta$
can only lie in a small neighborhood of~$\partial B_1$}}\end{equation}
if~$\delta$ is sufficiently small.\\
Indeed, if, by contradiction,
there were points of~$\partial E_\delta$
at distance larger than~$\epsilon$
from~$\partial B_1$, then one could find
two balls of radius comparable to~$\epsilon$,
whose centers lie at distance larger than~$\epsilon/2$
from~$\partial B_1$ and at mutual distance smaller than~$\epsilon$,
and such that one ball is entirely contained in~$B_1\cap E_\delta$
and the other ball is entirely contained in~$B_1\setminus E_\delta$
(this is due to a 
Clean Ball Condition, see Corollary 4.3 in~\cite{nms}).
As a consequence,
${\text{Per}}_s(E_\delta, B_1)$ is bounded from below
by the interaction of these two balls, which is at least of
the order of~$\epsilon^{n-s}$. Then, 
we obtain a contradiction with~\eqref{ETA:DE}
(by choosing~$\eta$ much smaller than~$\epsilon^{n-s}$,
and taking~$\delta$ sufficiently small).

This proves~\eqref{small:n}. {F}rom this, it follows
that 
\begin{equation}\label{HJ:pr:ba}
\begin{split}
&{\mbox{the whole  set~$E_\delta$ must lie in a small neighborhood of~$\partial B_1$.}}\end{split}\end{equation}
Indeed, if this were not so, by~\eqref{small:n} the set~$E_\delta$
must contain a ball of radius, say~$1/2$. Hence,
${\text{Per}}_s(E_\delta, B_1)$ is bounded from below
by the interaction of this ball against~$\{x_n>0\}\setminus B_1$,
which would produce a contribution of order one,
which is in contradiction with~\eqref{ETA:DE}.

Having proved~\eqref{HJ:pr:ba}, one can use it to
complete the proof of
Theorem~\ref{STI-DSV-1} employing a geometric argument.
Namely, one considers the ball~$B_\rho$,
which is outside~$E_\delta$ for small~$\rho>0$, in virtue of~\eqref{HJ:pr:ba},
and then enlarges~$\rho$ untill it touches~$\partial E_\delta$.
If this contact occurs at some point~$p\in B_1$,
then the nonlocal mean curvature of~$E_\delta$ at~$p$ must be zero.
But this cannot occur (indeed, we know by~\eqref{HJ:pr:ba}
that the contribution of~$E_\delta$ to the nonlocal mean curvature
can only come from a small neighborhood of~$\partial B_1$,
and one can check, by estimating integrals, that this is not
sufficient to compensate the outer terms in which the complement of~$E_\delta$
is dominant).

As a consequence, no touching point between~$B_\rho$
and~$\partial E_\delta$ can occur in~$B_1$, which shows that~$E_\delta$
is empty inside~$B_1$ and completes the proof of
Theorem~\ref{STI-DSV-1}.
\end{proof}

As for the proof of Theorem~\ref{STI-DSV-2}, 
the main arguments are based on sliding a ball of suitably large radius
till it touches the set, with careful quantitative estimates.
Some of the details are as follows (we refer to~\cite{STICK}
for the complete arguments).

\begin{proof}[Sketch of the proof of Theorem~\ref{STI-DSV-2}]
The first step is to prove
a weaker form of stickiness as the one claimed in Theorem~\ref{STI-DSV-2}.
Namely, one shows that
\begin{eqnarray}
&& [-1,1)\times [c_oM\,,M]\subseteq E_M^c \label{SL-011-PRE}
\\{\mbox{and }}&& (-1,1]\times [-M,\,-c_o M]\subseteq E_M,
\label{SL-012-PRE}
\end{eqnarray}
for some~$c_o\in(0,1)$. Of course, the statements in~\eqref{SL-011}
and~\eqref{SL-012} are stronger than
the ones in~\eqref{SL-011-PRE}
and~\eqref{SL-012-PRE} when~$M$ is large, since~${\frac{1+s}{2+s}}<1$,
but we will then obtain them later in a second step.

To prove~\eqref{SL-011-PRE},
one takes balls of radius~$c_oM$ and centered at~$\{x_2=t\}$,
for any~$t\in [c_o M,\,M]$. One slides these balls from left to right,
till one touches~$\partial E_M$. When~$M$ is large enough
(and~$c_o$ small enough) this contact point cannot lie
in~$\{|x_1|<1\}$. This is due to the fact that
at least the sliding ball lies outside~$E_M$,
and the whole~$\{x_2>M\}$ lies outside~$E_M$ as well.
As a consequence, these contact points see a proportion of~$E_M$
smaller than the proportion of the complement
(it is true that
the whole of~$\{x_2<-M\}$ lies inside~$E_M$,
but this contribution comes from further away than the ones just
mentioned, provided that~$c_o$ is small enough).
Therefore, contact points cannot satisfy a vanishing mean curvature
equation and so they need to lie on the boundary of the domain
(of course, careful quantitative estimates are
necessary here, see~\cite{STICK},
but we hope to have given an intuitive sketch of the computations
needed).

In this way, one sees that all the portion
$[-1,1)\times [c_oM\,,M]$
is clean from the set~$E_M$
and so~\eqref{SL-011-PRE} is established
(and~\eqref{SL-012-PRE} can be proved similarly).

Once~\eqref{SL-011-PRE} and~\eqref{SL-012-PRE} are established, one uses them
to obtain the strongest form expressed
in~\eqref{SL-011} and~\eqref{SL-012}. For this,
by~\eqref{SL-011-PRE} and~\eqref{SL-012-PRE},
one has only to take care of points
in~$\{ |x_2|\in [C_o M^{ \frac{1+s}{2+s} }, \,c_oM]\}$.
For these points, one can use again a sliding method,
but, instead of balls, 
one has to use suitable surfaces obtained by appropriate
portions of balls and adapt the calculations in order to evaluate
all the contributions arising in this way.

The computations are not completely obvious (and once again we refer
to~\cite{STICK} for full details), but the idea is, once again,
that contact points that are
in the set $\{ |x_2|\in [C_o M^{ \frac{1+s}{2+s} }, \,c_oM]\}$
cannot satisfy the balancing relation prescribed by
the vanishing nonlocal mean curvature equation.
\end{proof}

The stickiness property discussed above also has an interesting
consequence in terms of the ``geometric stability'' of
the flat $s$-minimal surfaces. 
For instance, rather surprisingly, 
the flat lines in the plane are ``geometrically
unstable'' nonlocal minimal surfaces,
in the sense that an arbitrarily small and compactly supported
perturbation can produce a
stickiness phenomenon at the boundary of the domain.
Of course, the smaller the perturbation, the smaller
the stickiness phenomenon, but it is quite relevant that
such a stickiness property
can occur for arbitrarily small (and ``nice'')
perturbations. This means that $s$-minimal flat objects,
in presence of a perturbation, may not only ``bend''
in the center of the domain, but rather ``jump''
at boundary points as well.
\bigskip

To state this phenomenon in a mathematical framework, one can consider,
for fixed~$\delta>0$ the planar sets
\begin{eqnarray*}
&& H:=\R\times(-\infty,0),\\
&& F_-:=(-3,-2)\times [0,\delta)\\
{\mbox{and }}&&
F_+:= (2,3)\times [0,\delta).\end{eqnarray*} 
One also fixes a set~$F$
which contains~$H\cup F_-\cup F_+$
and denotes by~$E$ be the
$s$-minimal set in~$(-1,1)\times\R$ among all the sets that coincide
with~$F$ outside~$(-1,1)\times\R$. 
\begin{figure}[htpb]
        \includegraphics[width=0.75\textwidth]{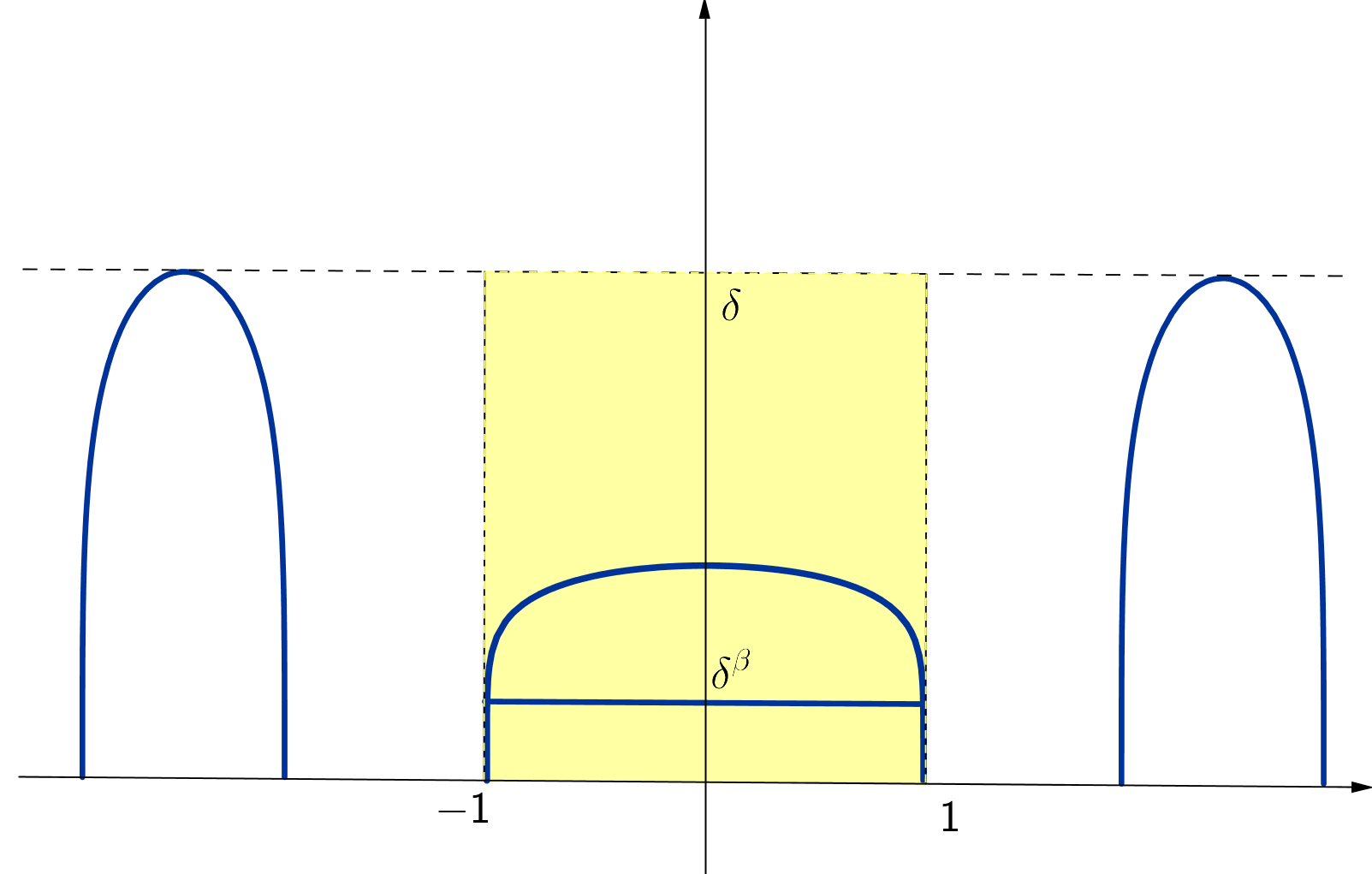}
        \caption{The stickiness/instability property in Theorem~\ref{UNS},
with~$\beta:=\frac{2+\epsilon_0}{1-s}$}
        \label{PUNS}
\end{figure}
Then, this set~$E$ sticks at the boundary of the domain, according
to the next result:

\begin{theorem}\label{UNS}
Fix~$\epsilon_0>0$ arbitrarily small.
Then, there exists~$\delta_0>0$, possibly depending on~$\epsilon_0$,
such that, for any~$\delta\in(0,\delta_0]$,
$$ E\supseteq (-1,1)\times (-\infty, \delta^{\frac{2+\epsilon_0}{1-s}} ].$$
\end{theorem}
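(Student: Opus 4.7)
The strategy is to argue by contradiction via an explicit energy competitor, exploiting the fact that the tiny bumps $F_\pm$ provide a \emph{fixed, one-sided push} on the $s$-perimeter functional through their interaction with any would-be ``hole'' in $E$. Suppose toward a contradiction that for some $\delta\le\delta_0$ (with $\delta_0$ to be chosen small) the set
\[
A := \bigl((-1,1)\times(-\infty,h]\bigr)\setminus E
\]
has positive measure for some $h\in(0,\delta^{(2+\epsilon_0)/(1-s)}]$.

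\textbf{Step 1 (Reduction to graphs).} A standard monotonicity argument lets us assume $F=H\cup F_-\cup F_+$, which is a subgraph. Applying Theorem~\ref{thmgraph} with $n=2$ and $\Omega_0=(-1,1)$, we get $E\cap((-1,1)\times\mathbb{R})=\{x_2<v(x_1)\}$ for some function $v:(-1,1)\to\mathbb{R}$. In particular $A\subseteq(-1,1)\times(0,h]$ and $A$ is disjoint from $H$.

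\textbf{Step 2 (Competitor).} Take $E':=E\cup A$; then $E'$ agrees with $E$ outside $\Omega=(-1,1)\times\mathbb{R}$. A direct bookkeeping on the disjoint contributions to $\text{Per}_s$ yields
\[
\text{Per}_s(E',\Omega)-\text{Per}_s(E,\Omega)=I(A,\mathbb{R}^2\setminus E')-I(A,E),
\]
and the minimality of $E$ forces the key inequality $I(A,\mathbb{R}^2\setminus E')\ge I(A,E)$.

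\textbf{Step 3 (Asymmetric estimates).} For $a=(a_1,a_2)\in A$ an explicit computation gives $\int_H|a-y|^{-2-s}dy=(K_0/s)a_2^{-s}$, and for $a_1$ in a compact subset of $(-1,1)$ the interaction of $a$ with $F_\pm\subset E$ is bounded below by $c_0\delta$, since the kernel is of order unity on the bumps and $|F_-\cup F_+|=2\delta$. Integrating over $A$:
\[
I(A,E)\ge (K_0/s)\int_A a_2^{-s}\,da+c_0\delta\,|A|.
\]
For the upper bound, enclose $\mathbb{R}^2\setminus E'$ inside the ``naive complement''
$N:=(\{y_2\ge h\}\cap\text{strip})\cup(\{y_2\ge 0\}\cap\text{outside strip})$ minus $F_\pm$, and exploit the reflection symmetry $y_2\mapsto 2a_2-y_2$ (comparing the upper half-plane at $\{y_2\ge h\}$ against $H$) to obtain
\[
I(A,\mathbb{R}^2\setminus E')\le (K_0/s)\int_A (h-a_2)^{-s}\,da+C\,h\,|A|-c_0\delta\,|A|.
\]
The crucial ``double dip'' is that the bumps $F_\pm$ enter with the favorable sign \emph{twice}: once as extra mass of $E$, and once as a hole in what would otherwise be the positive contribution from $\mathbb{R}^2\setminus E'$.

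\textbf{Step 4 (Contradiction).} Combining, the minimality inequality becomes
\[
(K_0/s)\int_A\bigl[(h-a_2)^{-s}-a_2^{-s}\bigr]\,da+C\,h\,|A|\ge 2c_0\delta\,|A|.
\]
Along each vertical section of $A$ the integrand on the left is antisymmetric about $a_2=h/2$, so after Fubini its integral is bounded by a constant times $h^{1-s}\,|A'_\mathrm{horiz}|$, where $|A'_\mathrm{horiz}|$ is the horizontal projection of $A$. Since $|A|\le h\,|A'_\mathrm{horiz}|$, this yields $C'\,h^{1-s}\ge c\,\delta$, i.e.\ $h\ge c\,\delta^{1/(1-s)}$. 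For $\delta$ sufficiently small this contradicts $h\le\delta^{(2+\epsilon_0)/(1-s)}$, since $(2+\epsilon_0)/(1-s)>1/(1-s)$.

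\textbf{Main obstacle.} The hard part is Step 3, and specifically making the two bump contributions rigorous with matching constants. The singular terms $a_2^{-s}$ and $(h-a_2)^{-s}$ must cancel through a genuine reflection identity rather than just an estimate, because either one alone dominates $\delta$ (the former blows up as $a_2\to 0$); the boundary error term of size $C\,h\,|A|$ coming from the truncation $y_1\in(-1,1)$ vs.\ $y_1\in\mathbb{R}$ must be shown to be lower order than $\delta\,|A|$ for $h\le\delta^{(2+\epsilon_0)/(1-s)}$, and the slack exponent $\epsilon_0$ exists precisely to absorb the logarithmic losses arising when $a_1$ approaches $\pm 1$ (where the kernel-based estimate for the bump interaction degenerates). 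Handling this boundary degeneracy rigorously likely requires localizing the competitor to a subdomain $(-1+\eta,1-\eta)\times\mathbb{R}$ and optimizing over $\eta=\eta(\delta)$.
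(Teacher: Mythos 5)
Your approach is genuinely different from the paper's. The paper's argument builds explicit \emph{barriers}---supersolutions of the nonlocal mean curvature equation modeled on the $s$-harmonic profile $x_+^s$ from Section~\ref{shf1}---and slides them to pry $E$ upward near the origin, producing a pointwise lower bound on the graph. You instead attempt a direct energy competitor: fill in the alleged hole $A$ and exploit the minimality inequality $\text{Per}_s(E\cup A,\Omega)\ge\text{Per}_s(E,\Omega)$. The decomposition $\text{Per}_s(E',\Omega)-\text{Per}_s(E,\Omega)=I(A,\C E')-I(A,E)$ is correct, and this is a tempting route, but the argument does not close at Step~4.

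The gap is this. After the reflection identity, the minimality inequality takes the schematic form
\begin{equation*}
\frac{K_0}{s(1-s)}\int_{A'_{\mathrm{horiz}}}f\bigl(c(a_1)\bigr)\,da_1\;+\;O\bigl(h\,|A|\bigr)\;\ge\;2c_0\,\delta\,|A|,
\end{equation*}
with $f(c)=(h-c)^{1-s}+c^{1-s}-h^{1-s}$ and $c(a_1)$ the bottom of the vertical section of $A$ over $a_1$. Your uniform bound $f\le(2^s-1)h^{1-s}$ is correct, but to conclude $h^{1-s}\gtrsim\delta$ one then needs $|A|\gtrsim h\,|A'_{\mathrm{horiz}}|$, i.e.\ that the vertical sections have height comparable to $h$. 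This is exactly what is not known. If the graph of $E$ merely dips a tiny amount $\eta\ll h$ below level $h$, the section length is $\eta$ while $f\approx\eta^{1-s}$, so $f/(h-c)\approx\eta^{-s}\to\infty$; it is not controlled by $h^{-s}$. The inequality then becomes $\eta^{-s}/s+O(h)\ge 2c_0\delta$, which holds vacuously for every small $\eta$: the energy competitor produces \emph{no} contradiction for thin-sliver holes, which are precisely the geometry one must exclude. This is the structural reason the paper uses a pointwise barrier rather than an integral comparison---the barrier supplies a lower bound on the height of the graph itself, which is immune to this degeneracy of section lengths, whereas an energy argument only controls $\int f$ against $\int(h-c)$, and these are not comparable when $h-c$ is small.

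Two further, lesser issues. Theorem~\ref{thmgraph} requires the exterior datum to be the subgraph of a \emph{continuous} function, whereas $H\cup F_-\cup F_+$ is the subgraph of a piecewise constant function with jumps at $x_1=\pm2,\pm3$; your Step~1 needs either a mollification or a remark that the graph theorem extends. And the boundary degeneracy you attribute to the bump interaction is actually absent---$\mathrm{dist}(a,F_\pm)\ge1$ uniformly over the strip, so $I(a,F_\pm)\asymp\delta$ with universal constants---whereas the genuine degeneracy near $|a_1|\to1$ lives in the horizontal-sliver term $I\bigl(A,\{0\le y_2\le h,\,|y_1|\ge1\}\bigr)$, which grows like $(1-|a_1|)^{-1-s}$ and appears on the wrong side of the inequality; it must be controlled, but it is a secondary issue compared to the sliver problem above.
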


The stickiness/instability property in Theorem~\ref{UNS}
is depicted in Figure~\ref{PUNS}. We remark that
Theorem~\ref{UNS}
gives a rather precise quantification
of the size of the stickiness in terms of the size
of the perturbation: namely the size of the stickiness in Theorem~\ref{UNS}
is larger than the size of the perturbation
to the power~$\beta:=\frac{2+\epsilon_0}{1-s}$,
for any~$\epsilon_0>0$ arbitrarily small.
Notice that~$\beta\to+\infty$
as~$s\to1$, consistently
with the fact that
classical minimal surfaces do not stick at the boundary.
\bigskip

The proof of Theorem~\ref{UNS}
is based on the construction of suitable auxiliary barriers.
These barriers are used to detach a portion of the set
in a neighborhood of the origin and their construction
relies on some compensations of nonlocal integral terms.
In a sense, the building blocks of these barriers are
``self-sustaining solutions'' that
can be seen as the geometric counterparts of
the $s$-harmonic function~$x_+^s$ discussed in Section~\ref{shf1}.

Indeed, roughly speaking, like the function~$x_+^s$,
these barriers ``see'' a proportion of the set
in~$\{x_1<0\}$ larger than what is produced by their tangent plane,
but a proportion smaller than that at infinity, due to their sublinear
behavior. Once again, the computations needed to
check such a balancing conditions are a bit involved,
and we refer to~\cite{STICK} for the complete details.
\begin{figure}[htpb]
        \includegraphics[width=0.65\textwidth]{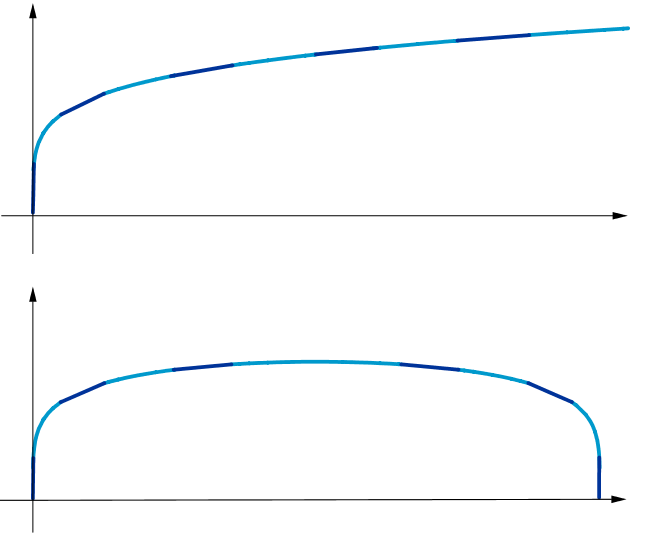}
        \caption{Auxiliary barrier for the proof of Theorem~\ref{UNS}}
        \label{barr}
\end{figure}
\bigskip

To conclude this section, we make a remark on the connection between 
solutions of the fractional Allen-Cahn equation and $s$-minimal 
surfaces. Namely, a suitably scaled version of
the functional in \eqref{enfac}
$\Gamma$-converges to either the classical perimeter
or the nonlocal perimeter functional, depending
on the fractional parameter.
The $\Gamma$-convergence
is a type of convergence of functionals
that is compatible with the
minimization of the energy, and turns out to be very useful when dealing with variational problems indexed by a parameter. This notion was introduced by De Giorgi, see e.g.~\cite{degg}
for details. 

Let us denote $\sigma:=2s \in(0,2)$ in the definition of the functional in \eqref{enfac}. This choice is related to the observation in the footnote at page \pageref{ssigma}. 
In the nonlocal case, some care is needed to introduce
the ``right'' scaling
of the functional, which
comes from the dilation invariance of the 
space coordinates and possesses a nontrivial energy
in the limit.
For this, one takes first the rescaled energy functional
 	\[ J_\eee (u, \Omega) := \eee^{\sigma} \K (u,\Omega)+ 
\int_{\Omega} W(u)\, dx,\]
where $\mathcal{K}$ is the kinetic energy defined in \eqref{kenac} (where we replace $2s$ with $\sigma$).
Then, one considers the functional
\sys[F_\eee (u,\Omega) :=]
	 {&  \eee^{-\sigma} J_\eee(u, \Omega) & & \mbox{ if } \sigma \in(0,1), \\
	&|\eee \log \eee|^{-1} J_\eee(u, \Omega) & & \mbox{ if }\sigma=1,\\
	& \eee^{-1} J_\eee(u, \Omega) & & \mbox{ if }\sigma\in(1,2). }
The limit functional of~$F_\eee$ as~$\eee\to0$ depends on~$\sigma$.
Namely, when~$\sigma \in(0,1)$, the limit functional is
(up to dimensional constants that we neglect) the fractional
perimeter, i.e.
\syslab[F(u,\Omega):=]
{\label{FF1} 
&{\text{Per}}_{\sigma}(E,\Omega)
&& \mbox{ if } u|_\Omega = \chi_E -\chi_{\C E}, \; \mbox{ for some set } E\subset
\Omega\\
& +\infty & & {\mbox{ otherwise.}} }
On the other hand, when~$\sigma \in[1,2)$, the limit functional of~$F_\eee$
is (again, up to normalizing constants) the classical
perimeter, namely
\syslab[ F(u, \Omega):=] {\label{FF2}
&{\text{ Per}} (E,\Omega)
&& {\mbox{ if }} u|_\Omega = \chi_E -\chi_{ \C E},\; \mbox{ for some set }E\subset
\Omega \\
& +\infty && {\mbox{ otherwise,}}
}
That is, the following limit statement holds true:

\begin{theorem}\label{TH1-SV-GAMMA}
Let~$\sigma \in(0,\,2)$. Then,
$F_\eee$ $\Gamma$-converges to~$F$, as defined
in either~\eqref{FF1} or~\eqref{FF2},
depending on whether~$\sigma\in(0,1)$ or~$\sigma \in[1,2)$.
\end{theorem}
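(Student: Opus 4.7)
The plan is to establish the two standard ingredients of $\Gamma$-convergence: the $\liminf$ inequality along every $L^1$-convergent sequence $u_\eee\to u$, and the existence of a recovery sequence matching the limit value. Expanding the definition, one sees that in the regime $\sigma\in(0,1)$ the rescaled functional reduces to
\[F_\eee(u,\Omega)=\mathcal{K}(u,\Omega)+\eee^{-\sigma}\int_\Omega W(u)\,dx,\]
whereas for $\sigma\in(1,2)$ one gets $F_\eee=\eee^{\sigma-1}\mathcal{K}(u,\Omega)+\eee^{-1}\int_\Omega W(u)\,dx$, and at $\sigma=1$ the prefactor of $\mathcal{K}$ is $|\eee\log\eee|^{-1}$. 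The reason behind the dichotomy in \eqref{FF1}--\eqref{FF2} is that the fractional seminorm of a characteristic function is finite exactly when $\sigma<1$; once $\sigma\ge 1$ the $H^{\sigma/2}$-energy of a sharp interface blows up, and only a smaller (suitably rescaled) quantity survives in the limit, which turns out to be the classical perimeter.

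For $\sigma\in(0,1)$ the proof is essentially immediate. A uniform bound $F_\eee(u_\eee,\Omega)\le C$ forces $\int_\Omega W(u_\eee)\,dx\to 0$ by the factor $\eee^{-\sigma}$, and combined with standard compactness this makes any accumulation point $u$ take values $\pm 1$, so $u=\chi_E-\chi_{\C E}$ for some $E\subset\Omega$. The $\liminf$ then follows from Fatou's lemma applied to the nonnegative integrand of $\mathcal{K}$, which gives $\liminf_\eee\mathcal{K}(u_\eee,\Omega)\ge\mathcal{K}(u,\Omega)=4\,\mathrm{Per}_\sigma(E,\Omega)$ (up to normalization); the recovery sequence is simply the constant sequence $u_\eee\equiv\chi_E-\chi_{\C E}$, which annihilates the potential term exactly.

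For $\sigma\in[1,2)$ the construction of the recovery sequence is still reasonably explicit: given a set $E$ with smooth boundary, one takes a one-dimensional profile $\bar u\colon\R\to[-1,1]$ interpolating the minima of $W$ and defines $u_\eee(x):=\bar u(d_E(x)/\eee)$, where $d_E$ is the signed distance to $\partial E$. Parametrizing $\mathcal{K}(u_\eee,\Omega)$ using the coarea formula in the normal direction, one finds that the dominant contribution concentrates in an $\eee$-neighborhood of $\partial E$ and, after the scaling $\eee^{\sigma-1}$ (or the logarithmic one at $\sigma=1$), yields $\mathrm{Per}(E,\Omega)$ multiplied by a dimensional constant that can be absorbed. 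The potential term contributes $\eee^{-1}\int_\Omega W(\bar u(d_E/\eee))\,dx = O(1)$, of the same order.

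The hard part is the $\liminf$ inequality when $\sigma\in[1,2)$, because a naive Fatou argument loses too much information. The strategy I would follow is a slicing reduction: one writes the nonlocal seminorm as an integral of one-dimensional contributions over directions $\nu\in S^{n-1}$ and over affine lines parallel to $\nu$, as in the Bourgain--Brezis--Mironescu framework, and on each line one compares with the optimal one-dimensional transition energy between $-1$ and $+1$; averaging over $\nu$ and localizing near a point of $\partial^* E$ via a blow-up argument then produces $\mathrm{Per}(E,\Omega)$ as a lower bound. The main technical obstacle is the precise matching of the renormalizing constants, particularly at the critical exponent $\sigma=1$, where the divergent behavior of the one-dimensional transition energy must be tracked accurately so as to reproduce the logarithmic factor $|\eee\log\eee|^{-1}$; this is where a careful asymptotic expansion of $\mathcal{K}(u_\eee,\Omega)$ along the layer profile is indispensable, and it is the step that is usually pushed through by an ad hoc truncation and a delicate interpolation between short- and long-range interactions.
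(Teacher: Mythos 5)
The paper does not contain a proof of Theorem~\ref{TH1-SV-GAMMA}: immediately after the statement it defers to~\cite{savingamma} (``For precise statements and further details, see~\cite{savingamma}''), so there is no in-text argument for you to match. That said, your sketch records the natural two-regime strategy and is, in broad strokes, the one followed in that reference. For~$\sigma\in(0,1)$ the argument you outline is essentially complete: the uniform energy bound kills the potential term (since $\int_\Omega W(u_\eee)\,dx\le C\eee^\sigma$), forcing any $L^1$-limit to take values in~$\{\pm1\}$, the $\liminf$ inequality then follows from Fatou on the pointwise-nonnegative integrand of~$\mathcal K$, and the constant sequence is trivially a recovery sequence because $W(\pm1)=0$. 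For~$\sigma\in[1,2)$ your construction of the recovery sequence from a one-dimensional profile composed with the signed distance, and the slicing strategy for the $\liminf$ inequality, are the correct tools.

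Two caveats. First, a small slip in the expansion: at $\sigma=1$ one has $F_\eee=|\log\eee|^{-1}\,\mathcal K(u,\Omega)+|\eee\log\eee|^{-1}\int_\Omega W(u)\,dx$, so the prefactor of~$\mathcal K$ is $|\log\eee|^{-1}$, not $|\eee\log\eee|^{-1}$ as you wrote. Second, and more substantively, your treatment of the case $\sigma\in[1,2)$ is a program rather than a proof: the slicing reduction, the identification of the one-dimensional optimal transition energy, the averaging in the direction~$\nu$ and the blow-up near~$\partial^*E$, and above all the precise asymptotics controlling the logarithmic renormalization at $\sigma=1$ (where the one-dimensional interaction energy of a layer of width $\eee$ diverges like $|\log\eee|$ and must cancel the scaling exactly) account for the bulk of the work in~\cite{savingamma}, and you flag rather than carry out these steps. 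So your proposal is correct as a roadmap and consistent with the reference, but in the supercritical regime it identifies the difficulties without resolving them.
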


For precise statements and further details, see~\cite{savingamma}.

We remark here that Theorem \ref{TH1-SV-GAMMA} clarifies now why we take $\sigma \in (0,1)$ (that we denoted and will denote in this chapter by $s$) when defining our nonlocal operators of this chapter, i.e. the fractional perimeter and the fractional mean curvature (check again the footnote at page \pageref{ssigma}).

Additionally, we remark that the level sets
of the minimizers of
the functional in \eqref{enfac}, 
after a homogeneous scaling in the space variables,
converge
locally uniformly to minimizers 
either of the fractional perimeter (if $\sigma\in(0,1)$)
or of the classical perimeter (if~$\sigma \in[1,2)$):
that is, the ``functional'' convergence stated in
Theorem~\ref{TH1-SV-GAMMA}
has also a ``geometric'' counterpart: for this, see
Corollary~1.7 in~\cite{SV14}.

One can also interpret Theorem~\ref{TH1-SV-GAMMA} by saying that
a nonlocal phase transition possesses two parameters, $\eee$
and~$s$/$\sigma$. When~$\eee\to0$, the limit interface
approaches a minimal surface either in the fractional
case (when~$\sigma\in(0,1)$) or in the classical case (when~$\sigma\in[1,2)$).
This bifurcation at~$\sigma=1$ somehow states that
for lower values of~$\sigma$ the nonlocal phase transition
possesses a nonlocal interface in the limit, but for larger values of~$\sigma$
the limit interface is characterized only by local features
(in a sense, when~$\sigma\in(0,1)$ the ``surface tension effect''
is nonlocal, but for~$\sigma\in[1,2)$ this effect localizes).

It is also interesting to compare Theorems~\ref{TP12}
and~\ref{TH1-SV-GAMMA}, since
the bifurcation at~$\sigma=1$ detected by Theorem~\ref{TH1-SV-GAMMA}
is perfectly compatible with the limit behavior of the
fractional perimeter, which reduces to the classical
perimeter exactly for this value of~$\sigma$, as stated in Theorem~\ref{TP12}.

\section[Complete stickiness at the boundary]{Complete stickiness at the boundary of nonlocal minimal surfaces for small values of the fractional perimeter}[Stickiness at the boundary]\label{luke}

In this section, we deal with the behavior of nonlocal minimal surfaces when the fractional parameter (that we denote by $s\in (0,1)$) is small. In particular
\begin{itemize}
\item we give the asymptotic behavior of the fractional mean curvature as $s\to 0^+$, 
\item we classify the behavior of $s$-minimal surfaces, in dependence of the exterior data at infinity.
\end{itemize}  
Moreover, we prove the continuity of the fractional mean curvature in all variables for $s\in [0,1]$.

The results in this section take their inspiration from \cite{asympt1,STICK}. It is a known result, see  \cite[Corollary 5.3]{nms} that when the exterior data  is a half-space, the $s$-minimal set itself is the same half-space.
On the other hand, as we prove here, by just removing some small set from the half-space, for $s$ small enough the $s$-minimal set completely sticks to the boundary. 

This section is organized as follow. We give some  preliminary results on the contribution from infinity of sets in Subsection \ref{contr_infty}. 

In Subsection \ref{classify}, we consider an exterior data ``occupying at infinity'' in measure, with respect to an appropriate weight, less than an half-space. To be precise

\eqlab{\label{mainhyp} \alpha(E_0)<\frac{\omega_n}{2}. } 
In this hypothesis:
\begin{itemize}
\item we give some asymptotic estimates of the density, in particular showing that when $s$ is small enough, $s$-minimal sets cannot fill their domain. 
\item  we give some estimates on the fractional mean curvature. In particular we show that if a set $E$ has an exterior tangent ball of radius $\delta$ at some point $p\in \partial E$,  then the $s$-fractional mean curvature  of $E$ at $p$ is strictly positive for every $s<s_\delta$.
\item we prove that when the fractional parameter is small and the exterior data at infinity occupies (in measure, with respect to the weight) less than half the space, then $s$-minimal sets completely stick at the boundary (that is, they are empty inside the domain), or become ``topologically dense'' in their domain. A similar result, which says that nonlocal minimal surfaces fill the domain or their complementaries become dense, can be obtained in the same way, when the exterior data occupies in the appropriate sense more than half the space (so this threshold is somehow optimal,). 
\item we narrow the set of minimal sets that become dense in the domain for $s$ small. As a matter of fact, if the exterior data does not completely surrounds the domain, $s$-minimal sets completely stick at the boundary. 
\end{itemize}
In Subsection \ref{sectexamples}, we provide some examples in which we are able to explicitly compute the contribution from infinity of sets. 
Subsection \ref{cont} contains the continuity of the fractional mean curvature operator in all its variables for $s\in[0,1]$.  As a corollary, we show that for $s\to 0^+$ the fractional mean curvature at a regular point of the boundary of a set, takes into account only the behavior of that set at infinity.  Furthermore, the continuity property implies that the mean curvature at a regular point on the boundary of a set may change sign, as $s$ varies, 
depending on the signs of the two asymptotics as $s\to1^-$ and $s\to0^+$. 

In the last Section \ref{appendicite} we collect some useful results that we use in this paper.


\subsection{Statements of the main results}

 We remark that the quantity $\alpha$ (defined in \eqref{E:LIM:LE}) may not exist (see Example 2.8 and 2.9 in \cite{asympt1}). For this reason, we also define
\eqlab{\label {baralpha1} \overline \alpha (E):= \limsup_{s\to 0^+} s\int_{\C B_1} \frac{\chi_E(y)}{|y|^{n+s}}\, dy ,\quad \quad \underline \alpha(E) := \liminf_{s\to 0^+} s\int_{\C B_1} \frac{\chi_E(y)}{|y|^{n+s}}\, dy.}

This set parameter plays an important role in describing
the asymptotic behavior of the fractional mean curvature as~$s\to0^+$
for unbounded sets. As a matter of fact, the limit as~$s\to0^+$
of the fractional mean curvature for a {\em bounded} set
is a positive, universal constant (independent of the set),
see e.g. (Appendix~B in \cite{senonlocal}).
On the other hand, this asymptotic behavior changes for {\em unbounded}
sets, due to the set function $\alpha(E)$, as described explicitly
in the following result:
  \begin{theorem}\label{asympts}
Let $E\subset\Rn$ and let $p\in\partial E$ be such that $\partial E$ is $C^{1,\gamma}$ near $p$,
for some $\gamma\in(0,1]$. Then
\bgs{& \liminf_{s\to0^+} s\,\I_s[E](p) =\omega_n -2 \overline \alpha(E)\\
& \limsup_{s\to0^+} s\,\I_s[E](p) =\omega_n-2 \underline\alpha(E).}
\end{theorem}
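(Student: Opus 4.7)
The plan is to split the principal-value integral defining $\mathcal{I}_s[E](p)$ at scale $1$, analyze the two pieces separately after multiplication by $s$, and let $s \to 0^+$. Writing $\chi_{\C E} - \chi_E = 1 - 2\chi_E$, I would decompose
\[
s\,\mathcal{I}_s[E](p) = s\cdot\mathrm{P.V.}\!\int_{B_1(p)}\!\frac{\chi_{\C E}-\chi_E}{|y-p|^{n+s}}\,dy \;+\; s\!\int_{\C B_1(p)}\!\frac{dy}{|y-p|^{n+s}} \;-\; 2s\!\int_{\C B_1(p)}\!\frac{\chi_E(y)}{|y-p|^{n+s}}\,dy.
\]
The middle term is explicit: passing to polar coordinates centered at $p$ it equals $\omega_n/s$, so after multiplication by $s$ it contributes exactly $\omega_n$ in the limit.

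For the first (near) term, I would use the $C^{1,\gamma}$ regularity at $p$ to control the principal value uniformly in $s$. Let $H$ denote the tangent half-space to $E$ at $p$. By the odd symmetry of $\chi_{\C H}-\chi_H$ with respect to the tangent hyperplane, the principal value of $(\chi_{\C H}-\chi_H)/|y-p|^{n+s}$ over $B_1(p)$ vanishes. Hence the near piece equals the integral of $(\chi_{\C E}-\chi_{\C H}) - (\chi_E - \chi_H)$ against the same kernel, and this integrand is supported in the symmetric difference $E\triangle H$. Since $\partial E$ is $C^{1,\gamma}$ at $p$, we have $|(E\triangle H)\cap B_r(p)|\le C r^{n+\gamma}$, so the near integral is dominated (via a dyadic decomposition) by $C\int_0^1 r^{\gamma-s-1}\,dr$, which is bounded uniformly for $s\in(0,\gamma/2]$. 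Multiplying by $s$ therefore gives $o(1)$ as $s\to 0^+$.

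For the third (tail) term, the key claim is
\[
\limsup_{s\to 0^+}\,s\!\int_{\C B_1(p)}\!\frac{\chi_E(y)}{|y-p|^{n+s}}\,dy = \overline{\alpha}(E),\qquad \liminf_{s\to 0^+}\,s\!\int_{\C B_1(p)}\!\frac{\chi_E(y)}{|y-p|^{n+s}}\,dy = \underline{\alpha}(E),
\]
that is, the translation $p$ and the precise radius do not affect these quantities in the limit. This rests on two estimates. First, comparing $\C B_1(p)$ with $\C B_R$ for $R\ge |p|+2$: their symmetric difference has finite Lebesgue measure and on it the kernel is bounded by a constant independent of $s$, so the difference of the two integrals is uniformly bounded in $s$ and vanishes after multiplication by $s$. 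Second, on $\C B_R$ the mean-value inequality gives $\bigl||y-p|^{-n-s}-|y|^{-n-s}\bigr|\le C|p|\,|y|^{-n-s-1}$ for $|y|\ge 2|p|$, which is integrable with a bound independent of $s\in(0,1)$; so this error also disappears once multiplied by $s$. Combining these with the trivial identity that replacing $\C B_R$ with $\C B_1$ modifies the integral by at most $s\cdot C$, we conclude that the tail integral is $\overline{\alpha}(E) + o(1)$ along a limsup-realizing sequence, and analogously for the liminf. Collecting all three contributions and recalling that a minus sign swaps $\liminf$ and $\limsup$, we obtain
\[
\liminf_{s\to 0^+}\, s\,\mathcal{I}_s[E](p) = \omega_n - 2\overline{\alpha}(E),\qquad \limsup_{s\to 0^+}\, s\,\mathcal{I}_s[E](p) = \omega_n - 2\underline{\alpha}(E),
\]
as claimed.

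The main obstacle I anticipate is the uniform bound on the near piece. One must make sure that the $C^{1,\gamma}$ hypothesis is used strongly enough to absorb the singularity of the kernel: the dyadic estimate $|(E\triangle H)\cap (B_{2^{-k}}(p)\setminus B_{2^{-k-1}}(p))|\le C 2^{-k(n+\gamma)}$ against a kernel of size $2^{k(n+s)}$ yields a geometric series convergent precisely because $\gamma>0$ and $s$ stays below $\gamma$. The tail analysis is by comparison less delicate but still requires care in keeping track of the fact that, while each individual integral may blow up like $1/s$, the relevant differences are $O(1)$ uniformly in $s$, hence harmless after the $s$-scaling.
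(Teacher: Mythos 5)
Your proof is correct and uses the same near/far decomposition as the paper, which treats this theorem as the $E_k=E$, $q_k=p$ special case of Proposition \ref{propsto0}; the substantive difference lies in the near-term estimate. The paper parametrizes $\partial E$ as a graph over a small cylinder $Q_{r,h}(p)$ and applies the representation formula \eqref{complete_curv_formula} together with the H\"older bound \eqref{Holder_useful}, yielding a near-piece bound of order $r^{\gamma-s}/(\gamma-s)$. You instead subtract the tangent half-space $H$ at $p$, whose truncated integrals over $B_1(p)\setminus B_\rho(p)$ vanish identically by odd symmetry, and reduce to an absolutely convergent integral against $\chi_{E\Delta H}$, controlled dyadically via the density estimate $|(E\Delta H)\cap B_r(p)|\le Cr^{n+\gamma}$. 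Both routes need $s<\gamma$ and both deliver a bound on the near piece that is uniform for $s$ near $0$, so that multiplication by $s$ kills it; yours is a bit more self-contained in that it bypasses the graph parametrization, and your choice to split at radius $1$ makes the constant term $\omega_n$ emerge exactly rather than as $\omega_n R^{-s}$. For the far tail you re-derive by hand, via a mean-value estimate in $|p|$ plus the boundedness of the kernel outside $B_1(p)$, the special case of Proposition \ref{unifrq} that the paper invokes directly. Both arguments are valid and of comparable effort.
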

We notice that if~$E$ is bounded, then $\underline\alpha(E)
=\overline\alpha(E)=\alpha(E)=0$, hence Theorem~\ref{asympts}
reduces in this case to formula~(B.1) in  \cite{senonlocal}.
Actually, we can estimate the fractional mean curvature from below (above) uniformly with respect to the radius of the exterior
(interior) tangent ball to $E$. To be more precise, if there exists an exterior tangent ball at $p\in \partial E$ of radius $\delta>0$, then for every $s<s_\delta$ we have 
\[ \liminf_{\rho \to 0^+} s\,\I^\rho_s[E](p)\geq \frac{\omega_n -2\overline \alpha(E)}4.\]    
More explicitly, we have the following result:

   \begin{theorem}\label{positivecurvature}
Let $\Omega\subset\Rn$ be a bounded open set. 
Let $E_0\subset\C\Omega$ be such that
\eqlab{\label{weak_hp_beta}\overline \alpha(E_0)<\frac{\omega_n}2,}
and let
\[\beta=\beta(E_0):=\frac{\omega_n-2\overline \alpha(E_0)}4.\] We define 
\eqlab{\label{delta_wild_index_def}
\delta_s=\delta_s(E_0):=e^{-\frac{1}{s}\log \frac{\omega_n+2\beta}{\omega_n+\beta}} ,}
for every $s\in(0,1)$.
Then, there exists $s_0=s_0(E_0,\Omega)\in(0,\frac{1}{2}]$ such that, if $E\subset\Rn$ is such that $E\setminus\Omega=E_0$
and $E$ has an exterior tangent ball
of radius (at least) $\delta_\sigma$, for some $\sigma\in(0,s_0)$, at some point $q\in\partial E\cap\overline{\Omega}$, then
\eqlab{\label{unif_pos_curv}\liminf_{\rho\to0^+}\I_s^\rho[E](q)\geq\frac{\beta}{s}>0,\qquad\forall\,s\in(0,\sigma].}
 \end{theorem}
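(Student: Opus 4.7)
The natural starting point is the identity
\[ s\,\I_s^\rho[E](q) \,=\, \omega_n\,\rho^{-s} \,-\, 2s\int_{E\setminus B_\rho(q)}\frac{dy}{|y-q|^{n+s}}, \]
obtained by writing $\chi_{\C E}-\chi_E = 1-2\chi_E$ and using $\int_{\C B_\rho(q)}|y-q|^{-n-s}\,dy = \omega_n\rho^{-s}/s$. The plan is then to estimate the right-hand side by separating three contributions: the exterior tangent ball $B_\delta(x_0)$ (which forces an ``empty'' region near $q$), the behavior of $E_0$ at infinity (controlled by $\overline\alpha(E_0)$), and the cancellation between these and the $\omega_n\rho^{-s}$ singularity. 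Concretely, I would fix $R>2\,\mathrm{diam}(\Omega)$ so that $\Omega\subset B_R(q)$ for every $q\in\overline\Omega$, and observe that since $B_\delta(x_0)\cap E=\emptyset$ gives $E\subset\C B_\delta(x_0)$ globally,
\[ \int_{E\setminus B_\rho(q)}\frac{dy}{|y-q|^{n+s}} \,\leq\, \int_{B_R(q)\setminus(B_\delta(x_0)\cup B_\rho(q))}\frac{dy}{|y-q|^{n+s}} \,+\, \int_{E_0\cap\C B_R(q)}\frac{dy}{|y-q|^{n+s}}, \]
where for the second integral we used $E\cap\C B_R(q)=E_0\cap\C B_R(q)$.

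Next, I would compute the ``near'' integral in polar coordinates centered at $q$ along the axis $\nu=(q-x_0)/\delta$: a ray $\{q+t\omega:t>0\}$ lies in $B_\delta(x_0)$ precisely when $\omega\cdot\nu<0$ and $t\in(0,-2\delta\omega\cdot\nu)$. Combined with $\int_{B_R(q)\setminus B_\rho(q)}|y-q|^{-n-s}dy=\omega_n(\rho^{-s}-R^{-s})/s$, a careful computation shows that the $\omega_n\rho^{-s}$ divergence cancels exactly with the contribution of the tangent ball, leaving
\[ \liminf_{\rho\to0^+}\left[\omega_n\rho^{-s} - 2s\int_{B_R(q)\setminus(B_\delta(x_0)\cup B_\rho(q))}\frac{dy}{|y-q|^{n+s}}\right] \,=\, 2\omega_n R^{-s} - 2\omega_{n-1}(2\delta)^{-s}I_n(s), \]
where $I_n(s):=\int_0^1(1-u^2)^{(n-3)/2}u^{-s}du$ is finite for $n\geq 2$, $s\in[0,1)$, and satisfies $I_n(0)=\omega_n/(2\omega_{n-1})$ (the one-dimensional case being handled separately with $I_1\equiv 1/2$). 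Combining with the previous decomposition,
\[ \liminf_{\rho\to0^+}s\,\I_s^\rho[E](q) \,\geq\, 2\omega_n R^{-s} \,-\, 2\omega_{n-1}(2\delta)^{-s}I_n(s) \,-\, 2s\int_{E_0\cap\C B_R(q)}\frac{dy}{|y-q|^{n+s}}. \]

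The final step is to show that the right-hand side exceeds $\beta$ for $s$ small enough. The choice of $\delta_\sigma$ is precisely designed so that $(2\delta_\sigma)^{-\sigma}\leq (\omega_n+2\beta)/(\omega_n+\beta)$, and hence for $\delta\geq\delta_\sigma$ and $s\in(0,\sigma]$ one has $(2\delta)^{-s}\leq(\omega_n+2\beta)/(\omega_n+\beta)$. Using the asymptotics $R^{-s}=1-O(s)$ and $I_n(s)=I_n(0)+O(s)$, together with the bound
\[ 2s\int_{E_0\cap\C B_R(q)}\frac{dy}{|y-q|^{n+s}} \,\leq\, 2\overline\alpha(E_0) + o_{s\to0}(1) \,=\, (\omega_n-4\beta) + o_{s\to0}(1) \]
(uniform in $q\in\overline\Omega$, obtained by comparing the integral centered at $q$ to the one at the origin in the definition of $\overline\alpha$, the discrepancy being controlled by $s\int_{\C B_R}|y|^{-n-s-1}\,dy\lesssim sR^{-1-s}$ thanks to $|q|$ bounded), an algebraic simplification gives
\[ \liminf_{\rho\to0^+}s\,\I_s^\rho[E](q) \,\geq\, \frac{3\omega_n\beta+4\beta^2}{\omega_n+\beta} - o_{s_0\to0}(1) \,=\, \beta + \frac{2\omega_n\beta+3\beta^2}{\omega_n+\beta} - o_{s_0\to0}(1), \]
which is $\geq\beta$ once $s_0$ is chosen small enough (depending only on $E_0$ and $\Omega$). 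The principal obstacle is the exact cancellation of the $\rho^{-s}$ singularity in the middle step, which relies on the geometric fact that the exterior tangent ball subtends precisely a half-sphere of directions at $q$; the specific value of $\delta_\sigma$ is then finely tuned so that this cancellation leaves the prescribed residual $\beta$, while the uniformity in $q\in\overline\Omega$ of the $\overline\alpha$-control requires the center-shift estimate above.
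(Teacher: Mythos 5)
Your argument is correct, and it takes a genuinely different route for the key ``near'' estimate, while agreeing with the paper on the global architecture (split at a fixed radius $R\gtrsim\mathrm{diam}(\Omega)$, handle the far part via Proposition~\ref{unifrq}, handle the near part via the exterior tangent ball, then optimize in $s$ against the definition of $\delta_s$). For the near part, the paper bounds $\int_{B_R(q)\setminus B_\rho(q)}$ by a symmetrization trick: it introduces the lens $D_\delta=B_\delta(p)\cap B_\delta(p')$ (with $p'$ the reflected center), its convex hull $K_\delta$, and the ``complementary'' region $P_\delta=K_\delta\setminus D_\delta$, then discards the lens by symmetry and estimates $P_\delta$ via Lemma~3.1 of the cited reference, at the cost of the extra constant $C_0$. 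You instead write $s\I_s^\rho=\omega_n\rho^{-s}-2s\int_{E\setminus B_\rho}$, compute the contribution of $B_R(q)\setminus(B_\delta(x_0)\cup B_\rho(q))$ in polar coordinates centered at $q$, and exploit that the tangent ball subtends exactly the half-sphere $\{\omega\cdot\nu<0\}$ of directions at $q$, so the $\rho^{-s}$ singularity cancels exactly against $\frac{\omega_n}{2}\rho^{-s}$ coming from the spherical cap, leaving the closed form $2\omega_n R^{-s}-2\omega_{n-1}(2\delta)^{-s}I_n(s)$. This is tighter (you do not pay the constant $C_0$) and avoids the auxiliary lemma, at the cost of a somewhat longer elementary computation and of having to justify the $I_n(s)=I_n(0)+O(s)$ asymptotic and the $n=1$ degenerate case; the arithmetic at the end comes out slightly different but lands at the same $\geq\beta$ threshold via the same choice of $\delta_\sigma$. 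One minor point: your one-line justification of the uniformity of $2s\int_{E_0\cap\C B_R(q)}\leq 2\overline\alpha(E_0)+o(1)$ over $q\in\overline{\Omega}$ (the Taylor-expansion-in-$q$ sketch) is not quite precise as stated, since it glosses over the change of domain from $\C B_R(q)$ to $\C B_R$; the clean way to do this is exactly Proposition~\ref{unifrq} (formula~\eqref{claimalpha}), which the paper invokes and which you should too, rather than re-deriving it.
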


Given an open set $\Omega\subset\R^n$ and $\delta\in\R$, we consider the open set
\[\Omega_\delta:=\{x\in\R^n\,|\,\bar{d}_\Omega(x)<\delta\},\]
where $\bar{d}_\Omega$ denotes the signed distance function from $\partial\Omega$, negative inside $\Omega$.

It is well known (see e.g. \cite{trudy,Ambrosio})
that if $\Omega$ is bounded and $\partial \Omega$ is of class $C^2$, then the distance function is also of class $C^2$
in a neighborhood of $\partial\Omega$. Namely, there exists $r_0>0$
such that
\[\bar{d}_\Omega\in C^2(N_{2r_0}(\partial\Omega)),\quad
\mbox{ where }\quad N_{2r_0}(\partial\Omega):=\{x\in\R^n\,|\,|\bar{d}_\Omega(x)|<2r_0\}.\]
As a consequence, since $|\nabla\bar{d}_\Omega|=1$,
the open set $\Omega_\delta$ has $C^2$ boundary for every $|\delta|<2r_0$.
For a more detailed discussion, see Appendix A.2 and the references cited therein.

The constant $r_0$ will have the above meaning throughout this paper.

\smallskip

We give the next definition.
\begin{defn}\label{wild}
   Let $\Omega\subset \Rn$ be an open, bounded set.
   We say that a set $E$ is $\delta$-{dense} in $\Omega$ for some fixed $\delta>0$ if $|B_\delta(x)\cap E|>0$ for any $x\in \Omega$ for which $B_\delta(x)\subset\subset\Omega$.
  \end{defn}
  
\noindent Notice that if $E$ is $\delta$-dense  then $E$ cannot have an exterior tangent ball of radius greater or equal than $\delta$ at any point $p\in \partial E\cap \Omega_{-\delta}$.

\noindent We observe that the notion for a set of
being $\delta$-dense is a ``topological'' notion,
rather than a measure theoretic one. 
Indeed, $\delta$-dense sets need not be ``irregular'' nor ``dense'' in the measure theoretic sense (see Remark \ref{deltadance}).

\smallskip
With this definition and using Theorem \ref{positivecurvature} we obtain the following classification.
\begin{theorem}\label{THM}
  Let $\Omega$ be an  bounded and  connected open set with $C^2$ boundary. Let $E_0\subset \C \Omega$ such that\[\overline \alpha(E_0)<\frac{\omega_n}{2}.\]  
 Then the following two results hold.\\
  A)  Let $s_0$ and $\delta_s$ be as in Theorem \ref{positivecurvature}. There exists
  $s_1=s_1(E_0,\Omega)\in (0,s_0]$ such that if $s<s_1$ and $E$ is an $s$-minimal set in $\Omega$ with exterior data $E_0$, then either
     \bgs{(A.1) \;  E\cap \Omega=\emptyset \quad  \mbox{ or} \quad\; (A.2)\;  E \mbox{ is } \delta_s-\mbox{dense}.}
 \noindent
 B) Either \\
(B.1) there exists
  $\tilde s=\tilde s(E_0,\Omega)\in (0,1)$ such that if $E$ is an $s$-minimal set in $\Omega$ with exterior data $E_0$ and $s\in(0,\tilde s)$, then
     \bgs{  E\cap \Omega=\emptyset,}
     or \\
    (B.2)    there exist  $\delta_k \searrow 0$, $s_k \searrow 0$ and a sequence of sets  $E_k$ such that each $E_k$ is $s_k$-minimal in $\Omega$ with exterior data $E_0$ and for every $k$
     \bgs{ \partial E_k \cap B_{\delta_k}(x) \neq \emptyset \quad \forall \; B_{\delta_k}(x)\subset\subset \Omega.}
     \end{theorem}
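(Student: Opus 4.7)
\textbf{Plan of proof for Theorem \ref{THM}.}

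\emph{Part A (sliding ball argument).} Fix $s<s_0$, where $s_0$ comes from Theorem~\ref{positivecurvature}, and let $E$ be $s$-minimal in $\Omega$ with exterior data $E_0$. Assume, aiming at a contradiction, that $E\cap\Omega\neq\emptyset$ and yet $E$ is not $\delta_s$-dense. By the negation of Definition~\ref{wild}, there exists $x_0\in\Omega$ with $\overline{B_{\delta_s}(x_0)}\subset\subset\Omega$ and $|B_{\delta_s}(x_0)\cap E|=0$. Using that $\Omega$ is connected and that $E$ has a measure-theoretic interior point in $\Omega$, I would join $x_0$ to such a point by a continuous path $\gamma\subset\Omega$, slide the center of the ball along $\gamma$, and select the first time $t^\ast$ at which the closed ball $\overline{B_{\delta_s}(\gamma(t^\ast))}$ meets $\partial E$. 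By minimality of $t^\ast$, the interior of this ball is still disjoint from $E$, so at any contact point $q\in\partial E\cap\overline{B_{\delta_s}(\gamma(t^\ast))}$ the set $E$ admits an exterior tangent ball of radius $\delta_s$. Theorem~\ref{positivecurvature} then yields $\liminf_{\rho\to0^+}\I_s^\rho[E](q)\geq\beta/s>0$. On the other hand, the Euler--Lagrange condition \eqref{ELsmin}, together with its boundary version (which applies because $\partial\Omega\in C^2$), gives $\I_s[E](q)\leq 0$ whenever $q\in\partial E\cap\overline{\Omega}$: either as an equality at an interior viscosity/regular point or as an inequality at a boundary point with the correct sign imposed by the exterior tangent ball. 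The two estimates clash, so either $E\cap\Omega=\emptyset$ or $E$ is $\delta_s$-dense. Take $s_1:=s_0$.

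\emph{Part B (passing to a sequence).} Consider the alternative: if there exists $\tilde s\in(0,s_1)$ such that every $s$-minimal set with $s\in(0,\tilde s)$ has $E\cap\Omega=\emptyset$, we are in case (B.1). Otherwise, for every $m\in\N$ we can pick $s_m\in(0,1/m)$ and an $s_m$-minimal set $\widetilde E_m$ with $\widetilde E_m\cap\Omega\neq\emptyset$. For $m$ large so that $s_m<s_1$, Part~A forces $\widetilde E_m$ to be $\delta_{s_m}$-dense; since $\delta_s\to 0^+$ as $s\to 0^+$ (by \eqref{delta_wild_index_def}), setting $\delta_m:=\delta_{s_m}$ gives $\delta_m\searrow 0$. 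To upgrade density of $\widetilde E_m$ to density of $\partial\widetilde E_m$, I must ensure that $\C\widetilde E_m$ also meets every small ball compactly contained in $\Omega$. I would split the sequence into two subsequences: those $m$ for which $\C\widetilde E_m\cap\Omega\neq\emptyset$, and those for which $\widetilde E_m\supseteq\Omega$. On the first subsequence, a completely symmetric sliding argument (now sliding a ball contained in $\C\widetilde E_m$ toward a point of $\widetilde E_m$) combined with the observation that the obstruction in Part~A only uses that $\widetilde E_m$ itself carries an exterior tangent ball, shows that both $\widetilde E_m$ and $\C\widetilde E_m$ intersect every ball $B_{\delta_m}(x)\subset\subset\Omega$ in positive measure, so $\partial\widetilde E_m\cap B_{\delta_m}(x)\neq\emptyset$ by the measure-theoretic convention $\partial E=\partial^-E$. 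Relabelling $E_k:=\widetilde E_{m_k}$ yields (B.2). The remaining possibility is that $\widetilde E_m\supseteq\Omega$ eventually; this case must be ruled out using the hypothesis $\overline\alpha(E_0)<\omega_n/2$ together with minimality, by comparing $\widetilde E_m$ with the competitor $\widetilde E_m\setminus\Omega=E_0$: for $s=s_m$ small, the energy gain from deleting $\Omega$ (which reduces to $I(E_0,\Omega)-I(\Omega,\C\Omega\setminus E_0)$ in the limit $s\to 0^+$) becomes strictly positive by \eqref{C:I:PA} and the strict inequality $\overline\alpha(E_0)<\omega_n/2$, contradicting $s_m$-minimality.

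\emph{Main obstacle.} The delicate part is the second half of Part~B: showing that the complement $\C E_k$ is also ``dense enough'' so that $\partial E_k$ hits each small ball. Theorem~\ref{positivecurvature} is one-sided, tailored to the hypothesis $\overline\alpha(E_0)<\omega_n/2$, and cannot be reapplied to $\C E_0$ (which has $\underline\alpha(\C E_0)>\omega_n/2$). So the argument must either (i) run a symmetric sliding ball argument driven directly by minimality (exploiting that a large spherical cap contained in $E_k\cap\Omega$ provides enough negative contribution to violate the Euler--Lagrange equality at a boundary contact point of $\C E_k$), or (ii) use the competitor comparison above to exclude the ``$E_k\supseteq\Omega$'' scenario. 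Handling these two alternatives uniformly in $k$, and making sure the quantitative thresholds behave well as $s_k\searrow 0$, is where the real work lies; the thresholds $\delta_k$ are forced by \eqref{delta_wild_index_def} and by the requirement that the sliding ball fit compactly inside $\Omega_{-\delta_k}$.
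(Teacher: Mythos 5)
Your Part A is essentially the paper's argument: slide a ball disjoint from $E$ to a first contact with $\partial E$, deduce an exterior tangent ball of radius $\delta_s$, then contrast the Euler--Lagrange sign $\I_s[E](q)\le 0$ against the uniform positive lower bound of Theorem~\ref{positivecurvature}. Two technical points you pass over: taking $s_1:=s_0$ is not enough, because the sliding construction requires $\delta_s<r_0$ so that the ball can be kept compactly inside $\Omega$ along the whole path (the paper chooses $s_1\in(0,s_0]$ with $\delta_{s_1}<r_0$); and one must separately handle the degenerate case in which $\overline E\cap\Omega$ lies entirely in the thin collar $\Omega\setminus\Omega_{-\delta'}$, where sliding a ball fails and one dilates $\Omega_{-\rho}$ instead. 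Both are packaged in Proposition~\ref{tgball}, and after that the contradiction with Theorem~\ref{positivecurvature} goes through exactly as you describe.

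Part B has a genuine gap, precisely where you flag the ``main obstacle.'' The symmetric sliding you propose for the subsequence with $\C E_m\cap\Omega\ne\emptyset$ cannot produce a contradiction. Touching $\partial E_m$ with a ball contained in $E_m$ gives an \emph{interior} tangent ball, where the Euler--Lagrange condition yields $\I_{s_m}[E_m](q)\ge 0$; but under $\overline\alpha(E_0)<\omega_n/2$ the fractional mean curvature of $E_m$ at a regular boundary point is itself strictly positive for small $s$ (by Theorem~\ref{asympts}, $s\,\I_s[E](q)\to\omega_n-2\overline\alpha(E)>0$), so the two inequalities are perfectly compatible. The ``large spherical cap inside $E_k$'' contributes a bounded negative term, which is swamped as $s\to0^+$ by the positive contribution of $\C E_0$ at infinity; that is exactly why Theorem~\ref{positivecurvature} is one-sided. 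Consequently your case split (competitor comparison only when $E_m\supseteq\Omega$) leaves the main case unhandled. The paper instead invokes Theorem~\ref{notfull}: for every $\delta>0$ and $\gamma\in(0,1)$ there exists $\sigma_{\delta,\gamma}>0$ such that every $s$-minimal $E$ with exterior data $E_0$ and $s<\sigma_{\delta,\gamma}$ satisfies $|(\Omega\cap B_\delta(x))\setminus E|\ge\gamma\,\frac{\omega_n-2\overline\alpha(E_0)}{\omega_n-\overline\alpha(E_0)}\,|\Omega\cap B_\delta(x)|$ for all $x\in\overline\Omega$. This uniform density estimate supplies, in one stroke and regardless of whether $E_k$ fills $\Omega$, the missing positive measure of $\C E_k$ in every small ball; combined with the $\delta_{s_k}$-density from Part A and connectedness of the ball, it gives $\partial E_k\cap B_{\delta_k}(x)\ne\emptyset$. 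Your competitor-comparison idea using~\eqref{C:I:PA} is essentially the engine inside the proof of Theorem~\ref{notfull}, so the intuition is right, but it must be run as a quantitative estimate uniform over $s$-minimal sets and over $x\in\overline\Omega$, not merely to exclude the degenerate scenario $E_m\supseteq\Omega$.
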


We remark here that Definition \ref{wild} allows the $s$-minimal surface 
to completely fill $\Omega$. The next theorem states that for $s$ small enough (and $\overline \alpha(E)<\omega_n/2$) we can exclude  this possibility.
\begin{theorem}\label{notfull}

Let $\Omega\subset\R^n$ be a bounded open set of finite classical perimeter and let $E_0\subset\C \Omega$ be such that
\[\overline{\alpha}(E_0)<\frac{\omega_n}{2}.\]
For every $\delta>0$ and every $\gamma\in(0,1)$ there exists $\sigma_{\delta,\gamma}=\sigma_{\delta,\gamma}(E_0,\Omega)\in(0,\frac{1}{2}]$ such that if $E\subset\R^n$ is $s$-minimal in $\Omega$, with exterior data $E_0$ and $s<\sigma_{\delta,\gamma}$, then
\eqlab{\label{1666}\big|(\Omega\cap B_\delta(x))\setminus E\big|\ge\gamma\, \frac{\omega_n-2\overline{\alpha}(E_0)}{\omega_n-\overline{\alpha}(E_0)}\big|\Omega\cap B_\delta(x)\big|,\qquad\forall\,x\in\overline{\Omega}.}
\end{theorem}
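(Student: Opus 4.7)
The plan is to combine an a priori uniform vanishing estimate for $|E\cap\Omega|$ with a compactness argument. For Step 1, I would test the $s$-minimality of $E$ against the ``trivial'' competitor $F := E_0$ (admissible since $F\cap\Omega = \emptyset$ and $F\setminus\Omega = E_0$). Expanding $\text{Per}_s(E,\Omega)\le\text{Per}_s(F,\Omega)=\Ll_s(E_0,\Omega)$ and canceling $\Ll_s(E_0,\Omega\setminus E)$ gives
\[
\Ll_s(E\cap\Omega,\C\Omega\setminus E_0)+\Ll_s(E\cap\Omega,\Omega\setminus E)\le \Ll_s(E\cap\Omega,E_0).
\]
The central technical lemma, which I would establish separately, is that for every $\epsilon>0$ there is $s_\epsilon(E_0,\Omega)\in(0,\tfrac12)$ such that, uniformly over measurable $T\subset\Omega$ and $s\in(0,s_\epsilon)$,
\[
s\,\Ll_s(T,E_0)\le|T|\bigl(\overline\alpha(E_0)+\epsilon\bigr)+C\phi(s),\qquad s\,\Ll_s(T,\C\Omega\setminus E_0)\ge|T|\bigl(\omega_n-\overline\alpha(E_0)-\epsilon\bigr)-C\phi(s),
\]
with $\phi(s)\to 0^+$ as $s\to 0^+$. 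Multiplying the minimality inequality by $s$, using the nonnegativity of $\Ll_s(E\cap\Omega,\Omega\setminus E)$, and choosing $\epsilon:=(\omega_n-2\overline\alpha(E_0))/4$, I obtain
\[
|E\cap\Omega|\le\frac{2C\phi(s)}{\omega_n-2\overline\alpha(E_0)-2\epsilon}\longrightarrow 0,
\]
uniformly over the $s$-minimal competitor $E$ (this is consistent with the density alternative in Theorem~\ref{THM}, since being $\delta_s$-dense places no lower bound on $|E\cap\Omega|$).

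For Step 2, I would argue by contradiction: if the conclusion fails, extract $s_k\to 0^+$, $s_k$-minimal sets $E_k$ with $E_k\setminus\Omega=E_0$, and points $x_k\in\overline\Omega$ where \eqref{1666} is violated. A subsequence satisfies $x_k\to x_\infty\in\overline\Omega$. Setting $V_x:=\Omega\cap B_\delta(x)$, the translation continuity $|B_\delta(x_k)\triangle B_\delta(x_\infty)|\to 0$ gives $|V_{x_k}|\to|V_{x_\infty}|$; since $x_\infty\in\overline\Omega$ and $V_{x_\infty}$ is open and nonempty, $|V_{x_\infty}|>0$. By Step 1,
\[
\frac{|V_{x_k}\setminus E_k|}{|V_{x_k}|}=1-\frac{|V_{x_k}\cap E_k|}{|V_{x_k}|}\ge 1-\frac{|E_k\cap\Omega|}{|V_{x_k}|}\longrightarrow 1.
\]
Since $\gamma\,\frac{\omega_n-2\overline\alpha(E_0)}{\omega_n-\overline\alpha(E_0)}<1$, this contradicts the failure of \eqref{1666} at $x_k$ for $k$ large, and produces the required threshold $\sigma_{\delta,\gamma}$.

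The main obstacle is the uniformity in $T\subset\Omega$ of the asymptotic bounds in Step 1. The near-diagonal contribution $s\int_{E_0\cap B_1(y)}|y-z|^{-n-s}\,dz$ is pointwise bounded by $\omega_n(d(y,\C\Omega)^{-s}-1)$, which vanishes as $s\to 0^+$ for each $y$ but not uniformly as $y$ approaches $\partial\Omega$. Here the finite perimeter hypothesis on $\Omega$ is decisive: it implies $\int_\Omega d(\cdot,\C\Omega)^{-1/2}\,dy<+\infty$ (via the coarea formula and the finiteness of $\mathcal{H}^{n-1}(\partial\Omega)$), and dominated convergence then yields $\int_\Omega(d^{-s}-1)\,dy\to 0$, providing a uniform-in-$T$ error $\phi(s)$. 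The ``far'' contribution $s\int_{E_0\cap\C B_1(y)}|y-z|^{-n-s}\,dz$ is controlled by splitting at a large scale $M$: the inner annulus contributes $O(s\log M)$ uniformly, and on the outer region the comparison $|y-z|^{-n-s}\le|z|^{-n-s}(1-R/M)^{-n-s}$ together with the identity $\overline\alpha(E_0-y)=\overline\alpha(E_0)$ for bounded $y$ yields the desired bound with explicit quantitative rate.
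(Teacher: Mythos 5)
Your proposal is correct, and it takes a genuinely different route from the paper's. The paper argues by contradiction directly at the level of the localized perimeter $P_{s_k}(E_k,\Omega\cap B_\delta(x_k))$: it bounds this quantity from above using minimality against a competitor which is empty inside $\Omega\cap B_{\delta+\eps}(x_0)$, and from below via the far interaction $\Ll_s(E_k\cap\Omega\cap B_\delta(x_k),\C B_R(\cdot)\cap\C E_0)$; the contradiction hypothesis is then fed back to lower-bound $|E_k\cap\Omega\cap B_\delta(x_k)|$, and combining the two bounds forces $\overline\alpha(E_0)\ge\omega_n/2$. Your argument instead extracts a cleaner intermediate statement: testing $s$-minimality once and for all against the globally empty competitor $F=E_0$, canceling, dropping the nonnegative term $\Ll_s(E\cap\Omega,\Omega\setminus E)$, and multiplying by $s$ yields the scalar inequality $|E\cap\Omega|(\omega_n-\overline\alpha(E_0)-\eps)\le|E\cap\Omega|(\overline\alpha(E_0)+\eps)+s\,P_{1/2}(\Omega)$, hence $|E\cap\Omega|=O(s)$ \emph{uniformly} over $s$-minimal competitors. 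From there the density estimate is essentially immediate, and your Step~2 (where the compactness argument really only serves to establish $\inf_{x\in\overline\Omega}|\Omega\cap B_\delta(x)|>0$) closes it.

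Two remarks worth recording. First, the uniform-in-$T$ asymptotics you package into the ``central technical lemma'' are exactly what the paper already proves: the far contributions are controlled by Proposition~\ref{unifrq} (its proof gives the uniform convergence $s|\alpha_s(q,r,\cdot)-\alpha_s(0,1,\cdot)|\to0$, from which both the $\limsup\,\sup_q$ and $\liminf\,\inf_q$ statements follow), and the near-diagonal error is controlled by the $P_{1/2}(\Omega)<\infty$ device in Proposition~\ref{barmubaral} — which is a tidier formulation than the $\int_\Omega d(\cdot,\C\Omega)^{-1/2}\,dy<\infty$ bound you invoke, though the two are equivalent (the latter follows from the former by restricting the $z$-integral to $\C\Omega\cap B_1(y)$). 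So you should cite those propositions rather than rebuild the uniformity from scratch. Second, and more interestingly, your Step~1 gives strictly more than the theorem: it shows $|E\cap\Omega|\to0$ uniformly as $s\to0^+$ under the sole hypothesis $\overline\alpha(E_0)<\omega_n/2$. This would allow the coefficient in~\eqref{1666} to be replaced by $\gamma$ itself (any $\gamma<1$) rather than $\gamma\cdot\frac{\omega_n-2\overline\alpha(E_0)}{\omega_n-\overline\alpha(E_0)}$, and would subsume the corollary following the theorem (which the paper states only under $\alpha(E_0)=0$) under the general hypothesis $\overline\alpha(E_0)<\omega_n/2$.
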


\begin{remark}
 Let $\Omega$ and $ E_0$ be as in Theorem \ref{notfull} and fix $\gamma=\frac{1}{2}$.
 \begin{enumerate}
\item Notice that we can find $\bar \delta >0$ and  $\bar x \in \Omega$ such that
\[ B_{2\bar\delta} (\bar x ) \subset \Omega.\]
Now if $s<\sigma_{\bar \delta,\frac{1}{2}}$ and $E$ is $s$-minimal in $\Omega$ with respect to $E_0$, \eqref{1666} says that
\[ |B_{\bar\delta} (\bar x ) \cap \C E|>0.\] 
Then (since the ball is connected), either $B_{\bar\delta} (\bar x ) \subset \C E$ or there exists a point
\[x_0\in\partial E\cap \overline B_{\bar\delta} (\bar x ).\]
In this case, since $d(x_0, \partial \Omega )\ge\bar \delta$, Corollary 4.3 of \cite{nms} implies that
\[B_{\bar\delta c_s}(z)\subset\C E\cap B_{\bar \delta}(x_0)\subset\C E\cap\Omega\] for some $z$, where $c_s\in(0,1]$ denotes the constant of the clean ball condition (as introduced in Corollary 4.3 in \cite{nms}) and depends only on $s$ (and $n$). In both case, there exists a ball of radius $\bar \delta c_s$ contained in $\C E \cap \Omega$. 
\item If $s<\sigma_{\bar \delta,\frac{1}{2}}$ and $E$ is $s$-minimal and $\delta_s$-dense, then 
we have that
\[\delta_s>c_s\bar \delta.\]
On the other hand, we have an explicit expression for $\delta_s$, given in \eqref{delta_wild_index_def}. Therefore, if one could prove that $c_s$ goes to zero slower than $\delta_s$, one could exclude the existence of $s$-minimal sets that are $\delta_s$-dense (for all sufficiently small $s$). 
\end{enumerate}
\end{remark}
\smallskip

     An interesting result is related to $s$-minimal sets whose exterior data does not completely surround $\Omega$. In this case, the $s$-minimal set, for small values of $s$, is always
empty in $\Omega$. More precisely:

 \begin{theorem}\label{boundedset}
  Let $\Omega$ be a  bounded and  connected open set with $C^2$ boundary. Let $E_0\subset \C \Omega$ such that  \[\overline \alpha(E_0)<\frac{\omega_n}{2},\]
 and let $s_1$ be as in Theorem \ref{THM}. Suppose that there exists $R>0$ and $x_0\in \partial \Omega$ such that \[B_R(x_0)\setminus \Omega \subset \C E_0.\]  Then, there exists $s_3=s_3(E_0,\Omega)\in(0,s_1]$ such that if $s<s_3$ and $E$ is an $s$-minimal set in $\Omega$ with exterior data $E_0$, then 
    \[  E\cap \Omega=\emptyset .\]
     \end{theorem}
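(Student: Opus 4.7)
The plan is to use Theorem \ref{THM} (part A) to reduce to the following dichotomy for $s<s_1$: either $E\cap\Omega=\emptyset$ (which is what we want), or $E$ is $\delta_s$-dense in $\Omega$. The proof will consist in ruling out the second alternative by exploiting the extra geometric room provided by the ball $B_R(x_0)\setminus\Omega\subset \C E_0$ via a sliding-ball argument, combined with the curvature bound in Theorem \ref{positivecurvature}.

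First, using that $\Omega$ has $C^2$ boundary I would fix, independently of $s$, a radius $\delta_0=\delta_0(\Omega,R)>0$ smaller than both the interior ball radius $r_0$ of $\Omega$ at $x_0$ and $R/2$, so that with $\nu$ denoting the outer unit normal to $\partial\Omega$ at $x_0$, the ball $B_{\delta_0}(x_0-\delta_0\nu)$ lies in $\Omega$ and the ball $B_{\delta_0}(x_0+\delta_0\nu)$ lies in $B_R(x_0)\setminus\Omega\subset \C E_0 = \C E$. I would then consider the one-parameter family of balls $B_t:=B_{\delta_0}(x_0+(\delta_0-t)\nu)$ for $t\in[0,2\delta_0]$, which at $t=0$ is entirely in $\C E$ and at $t=2\delta_0$ is the interior tangent ball to $\Omega$ at $x_0$. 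Note that the portion of $B_t$ lying outside $\Omega$ is always contained in $B_R(x_0)\setminus\Omega$, hence always in $\C E$.

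Next, set $t^\star:=\sup\{t\in[0,2\delta_0]\,|\,B_{t'}\subset \C E \text{ for all }t'\in[0,t]\}$ and split into two cases. In the first case $t^\star=2\delta_0$, so $B_{\delta_0}(x_0-\delta_0\nu)\subset \C E\cap\Omega$; since $\delta_s=e^{-c/s}\to 0$ as $s\to 0^+$, for $s$ smaller than some threshold $s_3\le s_1$ one has $\delta_s<\delta_0/4$, and one can fit a ball $B_{\delta_s}(y)\subset\subset\Omega$ inside $B_{\delta_0/2}(x_0-\delta_0\nu)$ that misses $E$, contradicting $\delta_s$-density. In the second case $t^\star<2\delta_0$, the ball $B_{t^\star}$ touches $\partial E$ at a first contact point $p$; the hypothesis $B_R(x_0)\setminus\Omega\subset \C E$ ensures $p\in\overline{\Omega}$ (not outside $\Omega$ and inside $B_R(x_0)$), and $B_{t^\star}$ is an exterior tangent ball to $E$ at $p$ of radius $\delta_0$. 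Choosing $s_3$ so small that $\delta_s\le\delta_0$ for all $s<s_3$, Theorem \ref{positivecurvature} applies with $\sigma=s$ and yields $\liminf_{\rho\to 0^+}\I_s^\rho[E](p)\ge\beta/s>0$. On the other hand, since $p\in\partial E\cap\overline{\Omega}$ is touched by an exterior tangent ball, the interior regularity/Euler-Lagrange theory (formula \eqref{ELsmin} and the pointwise version available in a neighborhood of viscosity points, referenced earlier in the paper) forces $\I_s[E](p)\le 0$ in the pointwise sense, a contradiction.

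The main obstacle in carrying this out rigorously is ensuring that the first contact point $p$ in Case B lies at a location where the pointwise Euler--Lagrange inequality can genuinely be evaluated against the strictly positive lower bound of Theorem \ref{positivecurvature}; in particular, one must check that $p$ is not forced onto $\partial\Omega$ in a pathological way, and that the regularity theory yields $\I_s[E](p)\le 0$ rather than just a viscosity inequality. The configuration is actually favorable: by construction the exterior tangent ball at $p$ has fixed radius $\delta_0$ independent of $s$, so both the smoothness of $\partial E$ near $p$ (via the regularity statement recalled after \eqref{ELsmin}) and the strict inequality in Theorem \ref{positivecurvature} are uniform in $s<s_3$, which gives the desired clean contradiction. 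Finally, I would set $s_3:=\min\{s_1,s_0(E_0,\Omega),\tilde s\}$ where $\tilde s$ is chosen so that $\delta_{\tilde s}<\delta_0/4$, and conclude that for all $s<s_3$ the only remaining possibility in Theorem \ref{THM}(A) is $E\cap\Omega=\emptyset$.
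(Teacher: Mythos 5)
Your proposal is correct and follows essentially the same strategy as the paper's proof: slide a fixed-radius ball along the inward normal at $x_0$, using the hypothesis $B_R(x_0)\setminus\Omega\subset\C E_0$ to keep the exterior part of the ball clear of $E$, and either find a ball in $\C E$ compactly inside $\Omega$ (contradicting $\delta_s$-density) or find a first contact point in $\overline\Omega$ where the Euler--Lagrange inequality contradicts the positive curvature bound of Theorem~\ref{positivecurvature}. The only cosmetic differences are that the paper slides all the way to $x_0-r_0\nu_\Omega(x_0)$ (so the final ball $B_\delta(x_0-r_0\nu_\Omega(x_0))$ is automatically compactly contained in $\Omega$, avoiding your half-ball concentric fitting step) and presents the dichotomy as a single claim $B_\delta(x_0-r_0\nu_\Omega(x_0))\subset E_{ext}$ proved by contradiction, rather than an explicit case split on the stopping time $t^\star$.
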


We notice that Theorem \ref{boundedset} prevents the existence 
of $s$-minimal sets that are $\delta$-dense (for any $\delta$).    

\begin{remark}
The indexes $s_1$ and $s_3$ are defined as follows
\[s_1:=\sup\{s\in(0,s_0)\,|\,\delta_s<r_0\}\]
and
\[s_3:=\sup\Big\{s\in(0,s_0)\,\big|\,\delta_s<\frac{1}{2}\min\{r_0,R\}\Big\}.\]
Clearly, $s_3\leq s_1\leq s_0$.
\end{remark}

\begin{remark} We point out that condition \eqref{weak_hp_beta}
is somehow optimal. Indeed,
when $\alpha(E_0)$ exists and 
\[ \alpha(E_0)=\frac{\omega_n}2,\]
several configurations may occur, depending on the position of $\Omega$ with respect to the exterior data $E_0\setminus \Omega$. As an example, take 
\[ \mathfrak P =\{ (x',x_n) \; \big| \; x_n> 0\}.\] Then, for any $\Omega\subset \Rn$  bounded open set with $C^2$ boundary, the only $s$-minimal set with exterior data given by $\mathfrak P \setminus \Omega$ is $\mathfrak P$ itself. So, if $E$ is $s$-minimal with respect to $E_0\setminus \Omega$ then
\bgs{&\Omega\subset \mathfrak P & \quad \implies \quad &E\cap \Omega=\Omega\\
 & \Omega\subset \Rn \setminus \mathfrak P &\quad \implies \quad &E\cap \Omega=\emptyset.} 
 On the other hand, if one takes $\Omega= B_1$, then 
 \[  E\cap B_1 = \mathfrak P  \cap B_1.\] 
 
 As a further example, we consider the supergraph
 \[ E_0:=\{(x',x_n) \; \big| \; x_n > \tanh x_1\},\] for which we have that (see Example \ref{tanh})
 \[\alpha(E_0)=\frac{\omega_n}2.\]  Then for every $s$-minimal set in $\Omega$ with exterior data $E_0\setminus \Omega$, we have that
 \bgs{&\Omega\subset \{ (x',x_n) \; \big| \; x_n> 1\} & \quad \implies \quad &E\cap \Omega=\Omega\\
 & \Omega\subset \{ (x',x_n) \; \big| \; x_n<-1\} &\quad \implies \quad &E\cap \Omega=\emptyset.} 
Taking $\Omega=B_2$, we have by the maximum principle in Proposition \ref{maximum_principle}  that every set $E$ which is $s$-minimal in $B_2$, with respect to $E_0\setminus B_2$, satisfies
\bgs{  B_2\cap  \{ (x',x_n) \; \big| \; x_n> 1\}\subset E,   \qquad 
  B_2 \cap \{ (x',x_n) \; \big| \; x_n<-1\} \subset \C E .} 
 On the other hand, we are not able to establish what happens in $B_2\cap \{ (x',x_n) \; \big| \; -1<x_n< 1\} $.
\end{remark}

\begin{remark}
We notice that when $E$ is $s$-minimal in $\Omega$ with respect to $E_0$, then $\C E$ is $s$-minimal in $\Omega$ with respect to $\C E_0$. Moreover
\[ \underline \alpha(E_0) >\frac{\omega_n}{2} \qquad \implies \qquad \overline \alpha (\C E)< \frac{\omega_n}{2}.\]
So in this case we can apply Theorems \ref{positivecurvature}, \ref{THM}, \ref{notfull} and \ref{boundedset} to $\C E$ with respect to $\C E_0$. For instance, if
$E$ is $s$-minimal in $\Omega$ with exterior data $E_0$ with
\[ \underline \alpha(E_0) >\frac{\omega_n}{2}, \]
and $s<s_1(\C E_0, \Omega)$,
 then either
\[ E\cap \Omega=\Omega \qquad \mbox{ or }  \qquad  \C E \; \mbox{ is } \; \delta_s(\C E_0)-\mbox{dense}.\]
 The analogues of the just mentioned Theorems can be obtained similarly.
\end{remark}

We point out that from our main results and the last two remarks, we have a complete classification of nonlocal minimal surfaces when $s$ is small whenever
\[ \alpha(E_0)\neq  \frac{\omega_n}{2} .\] 

In the last Subsection \ref{cont} of the paper, we prove the continuity of the fractional mean curvature in all variables (see Theorem \ref{everything_converges} and Proposition \ref{propsto0}). As a consequence, we have the following result. 

\begin{prop}\label{rsdfyish}
Let $E\subset\R^n$ and let $p\in\partial E$ such that $\partial E$ is $C^{1,\alpha}$ in $B_R(p)$ for some
$R>0$ and $\alpha\in(0,1]$. Then the function
\[\I_{(-)}[E](-):(0,\alpha)\times(\partial E\cap B_R(p))\longrightarrow\R,
\qquad(s,x)\longmapsto\I_s[E](x)\]
is continuous.\\
Moreover, if $\partial E\cap B_R(p)$ is $C^2$ and for every $x\in \partial E\cap B_R(p)$ we define
\sys[ \tilde \I_s  {[}E{]} (x):=]{ &s(1-s)\I_s[E](x),  & \mbox{ for } &s\in (0,1) \\	
						&{\omega_{n-1}} H[E](x), &\mbox{ for } &s=1,}
then the function
\[\tilde \I_{(-)}[E](-):(0,1]\times(\partial E\cap B_R(p))\longrightarrow\R,
\qquad(s,x)\longmapsto \tilde \I_s[E](x)\]
is continuous.\\
Finally, if $\partial E\cap B_R(p)$ is $C^{1,\alpha}$ and $\alpha(E)$ exists, and if for every $x\in \partial E\cap B_R(p)$ we denote
\[\tilde\I_0[E](x):=\omega_n-2\alpha(E),\]
then the function
\[\tilde \I_{(-)}[E](-):[0,\alpha)\times(\partial E\cap B_R(p))\longrightarrow\R,
\qquad(s,x)\longmapsto \tilde \I_s[E](x)\]
is continuous.
\end{prop}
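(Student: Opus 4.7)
The plan is to derive this proposition directly as a corollary of the continuity statements for varying sets, namely Theorem \ref{everything_converges} and Proposition \ref{propsto0}, by applying them to the constant sequence $E_k := E$. Since constant sequences trivially converge to $E$ both in $C^{1,\alpha}$ and in $C^2$ (whenever $\partial E$ has that regularity in the relevant neighborhood), and since $|E_k \Delta E| = 0$ always fits the hypothesis of Proposition \ref{propsto0}, the only real work is to package the three regimes $s \in (0,\alpha)$, $s \to 1^-$, and $s \to 0^+$ together with the varying base point $x_k \to x$.

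For the first assertion, fix $(s,x) \in (0,\alpha) \times (\partial E \cap B_R(p))$ and take an arbitrary sequence $(s_k, x_k) \to (s,x)$ with $s_k \in (0,\alpha)$ and $x_k \in \partial E \cap B_R(p)$. Since $x_k \to x$ and $\partial E$ is $C^{1,\alpha}$ near $p$, the first part of Theorem \ref{everything_converges}, applied to the constant sequence $E_k = E$, yields $\I_{s_k}[E](x_k) \to \I_s[E](x)$; this is precisely the joint continuity required.

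For the second assertion, we need continuity of $\tilde\I_s[E](x) = s(1-s)\I_s[E](x)$ on $(0,1] \times (\partial E \cap B_R(p))$ when $\partial E$ is $C^2$. At interior points $s \in (0,1)$, this follows from the first assertion multiplied by the continuous factor $s(1-s)$. The only delicate case is $s_k \to 1$: here we invoke the second part of Theorem \ref{everything_converges} with $E_k = E$ and $q_k = x_k \to x$, which gives $(1-s_k)\I_{s_k}[E](x_k) \to \omega_{n-1} H[E](x)$, and multiplying by $s_k \to 1$ yields $\tilde\I_{s_k}[E](x_k) \to \omega_{n-1} H[E](x) = \tilde\I_1[E](x)$. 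A standard triangle inequality / subsequence argument then handles arbitrary sequences $(s_k, x_k) \to (1,x)$.

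For the third assertion, where $\partial E$ is $C^{1,\alpha}$ near $p$ and $\alpha(E)$ exists, we extend continuity down to $s = 0$ by applying Proposition \ref{propsto0} with the constant sequence $E_k = E$ (for which the hypotheses on $|E_k \Delta E|$ and the uniform $C^{1,\alpha}$ bound on the local graph functions $u_k = u$ are trivially fulfilled) and with $q_k = x_k \to x$. This yields $s_k \I_{s_k}[E](x_k) \to \omega_n - 2\alpha(E)$ whenever $s_k \to 0^+$, and multiplying by the convergent factor $(1-s_k) \to 1$ gives $\tilde\I_{s_k}[E](x_k) \to \omega_n - 2\alpha(E) = \tilde\I_0[E](x)$. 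Combined with the continuity already established on $(0,\alpha) \times (\partial E \cap B_R(p))$, this completes the proof; the mildly technical point is handling mixed sequences $(s_k, x_k) \to (0, x)$, which is dispatched by passing to subsequences and using that both limiting behaviors coincide in the constant-set case.
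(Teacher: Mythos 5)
Your proposal is correct and is essentially identical to the paper's argument: the paper itself notes that Proposition~\ref{rsdfyish} follows by fixing $E_k=E$ in Theorem~\ref{everything_converges} and Proposition~\ref{propsto0}. You have simply spelled out the bookkeeping (constant sequences, the three parameter regimes, the $s(1-s)$ factor), which is what the paper leaves implicit.
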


As a consequence of the continuity of the fractional mean curvature and the asymptotic result in  theorem (\ref{asympts}) we
establish that, by varying the fractional parameter $s$,
the nonlocal mean curvature may change sign at a point
where the classical mean curvature is negative, as one can observe in Theorem \ref{changeyoursign}.

\subsection{Contribution to the mean curvature coming from infinity}\label{contr_infty}
In this section, we study in detail the quantities $\alpha(E)$, $\overline\alpha(E),\underline \alpha(E)$) as defined in \eqref{E:LIM:LE}, \eqref{baralpha1}. As a first remark, notice that these definitions  are independent on the radius of the ball (see  Observation 3 in Subsection 3.3), so we have that for any $R>0$
\eqlab{ \label{baralpha}  \overline \alpha (E)= \limsup_{s\to 0^+} s\int_{\C B_R} \frac{\chi_E(y)}{|y|^{n+s}}\, dy, \quad \underline \alpha(E) := \liminf_{s\to 0^+} s\int_{\C B_R} \frac{\chi_E(y)}{|y|^{n+s}}\, dy .}
Notice that
\[ \overline \alpha(E) = \omega_n -\underline \alpha(\C E),  \quad \underline \alpha(E) = \omega_n - \overline \alpha(\C E).\] 
We define
\[ \alpha_s(q,r,E):=\int_{\C B_r(q)} \frac{\chi_{E}(y) }{|q-y|^{n+s}} \, dy .\]
Then, the quantity $\alpha_s(q,r,E)$
somehow ``stabilizes'' for small $s$ independently on how large or where we take the ball, as rigorously given by the following result:

\begin{prop}\label{unifrq}
Let $K\subset \Rn$ be a compact set and $[a,b]\subset \R$ be a closed interval. Then  \[\lim_{s\to 0^+}s|\alpha_s(q,r,E)-\alpha_s(0,1,E)| =0\quad \mbox{ uniformly in } q \in K, r\in [a,b].\]
Moreover,
for any bounded open set $\Omega\subset \Rn$ and any fixed $r>0$,
we have that \eqlab{\label{claimalpha}\limsup_{s\to 0^+} s\inf_{q\in \overline \Omega} \alpha_s(q,r,E)= \limsup_{s\to 0^+} s\sup_{q\in \overline \Omega} \alpha_s(q,r,E)=\overline \alpha(E).}

\end{prop}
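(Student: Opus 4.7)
The plan is to reduce both claims to a careful splitting of the integrals into a ``near'' part (over a large but fixed ball) and a ``far'' part (over its complement), each handled by different estimates. For the first assertion, fix $M:=\sup_{q\in K}|q|$ and choose $R>2M+b+1$, so that $B_1$ and every $B_r(q)$ with $q\in K, r\in[a,b]$ are contained in $B_R$. Then write
\[\alpha_s(q,r,E)-\alpha_s(0,1,E)=J_{\mathrm{far}}(q,s)+J^{(1)}_{\mathrm{near}}(q,r,s)-J^{(2)}_{\mathrm{near}}(s),\]
where $J_{\mathrm{far}}(q,s):=\int_{\C B_R}\chi_E(y)\bigl(|q-y|^{-n-s}-|y|^{-n-s}\bigr)\,dy$, and $J^{(1)}_{\mathrm{near}}$, $J^{(2)}_{\mathrm{near}}$ are the analogous integrals over $B_R\setminus B_r(q)$ and $B_R\setminus B_1$ respectively (implicitly assuming $a>0$ so the near integrands stay away from their singularities).

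The near parts are controlled by monotonicity of the integrand in $\chi_E$: after translating by $q$ and passing to polar coordinates, one gets $|J^{(1)}_{\mathrm{near}}(q,r,s)|\le \frac{\omega_n}{s}\bigl(a^{-s}-(R+M)^{-s}\bigr)$ uniformly in $q\in K$, $r\in[a,b]$, and analogously $|J^{(2)}_{\mathrm{near}}(s)|\le \frac{\omega_n}{s}(1-R^{-s})$. Multiplying by $s$ converts each bound into a difference of the form $\omega_n(e^{-s\log c_1}-e^{-s\log c_2})$ with $c_1,c_2$ independent of $(q,r,s)$, which tends to $0$ as $s\to0^+$.

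For the far part, the mean value theorem applied to the map $q\mapsto|q-y|^{-n-s}$ together with the inequality $|q-y|\geq|y|/2$ (valid for $|y|\geq R>2M$) yields
\[\bigl||q-y|^{-n-s}-|y|^{-n-s}\bigr|\le 2^{n+s+1}(n+s)\,M\,|y|^{-n-s-1}.\]
Integrating in polar coordinates gives $s|J_{\mathrm{far}}(q,s)|\le \frac{sC(n)M\omega_n}{s+1}R^{-s-1}\to 0$ uniformly in $q\in K$. Combining the three estimates delivers the desired uniform limit.

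The second assertion then follows from the first applied with $K=\overline{\Omega}$ and the degenerate closed interval $\{r\}$. Indeed, uniform convergence of $s|\alpha_s(q,r,E)-\alpha_s(0,1,E)|$ to zero yields $s\alpha_s(q,r,E)=s\alpha_s(0,1,E)+o(1)$ uniformly in $q\in\overline{\Omega}$; taking $\sup$ (respectively $\inf$) over $q$ and then $\limsup$ as $s\to0^+$ produces the common value $\overline{\alpha}(E)$ by definition \eqref{baralpha1}. The only real subtlety is ensuring all error estimates are genuinely uniform in $(q,r)$; this is why $R$ must be fixed at the very start in terms of $K$ and $b$ rather than being allowed to vary with the parameters, and it is also the reason the closed interval hypothesis on $r$ (in particular $a>0$) is essential to bound the near contributions.
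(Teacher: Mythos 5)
Your proposal is correct, and it takes a genuinely different route from the paper's. The paper introduces an auxiliary parameter $\eps$, works with the two-scale ball $B_{R/\eps}$, and estimates the far contribution via the comparison $|q-y|\geq(1-\eps)|y|$ followed by a Taylor expansion of $(1-\eps)^{-n-s}$; the price is a double limit, taking $s\to 0^+$ first and then $\eps\to 0^+$. You instead fix a single radius $R$ at the outset (in terms of $K$ and $b$), and apply the mean value theorem to $q\mapsto|q-y|^{-n-s}$ in the far region, which gains an extra power $|y|^{-1}$ of decay. This makes $J_{\mathrm{far}}$ absolutely bounded uniformly in $(q,r,s)$, so $s|J_{\mathrm{far}}|\to 0$ with no auxiliary limit. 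The near parts are treated essentially as in the paper. The effect is a single-scale, one-limit argument that is tighter than the original. Your reduction of the second assertion — uniform convergence to $s\,\alpha_s(0,1,E)$ forces $\limsup_s\inf_q$ and $\limsup_s\sup_q$ to coincide with $\overline\alpha(E)$ — is a cleaner way to phrase the sandwich that the paper carries out explicitly. Your observations about the necessity of a fixed $R$ and of $a>0$ are both accurate and correspond to implicit assumptions in the paper's proof as well.
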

\begin{proof}
Let $K\subset \overline B_{ R}$ and let $\eps\in (0,1)$ be a fixed positive small quantity (that we will take arbitrarily small further on), such that 
\[ R>(\eps r)/(1-\eps).\]
We notice that if $x\in B_r(q)$, we have that 
$|x|<r+|q|<{R}/{\eps},$
hence $B_R(q)\subset B{R/\eps}$.
We write that
\[ \int_{\C B_r(q)}\frac{\chi_E(y)}{|q-y|^{n+s}}\, dy = \int_{\C B_{ R/\eps}} \frac{\chi_E(y)}{|q-y|^{n+s}}\, dy + \int_{ B_{ R/\eps}\setminus B_r(q)} \frac{\chi_E(y)}{|q-y|^{n+s}}\, dy.\]
Now $|y-q|\geq |y|-|q| \geq (1-\eps)|y|$, thus for any $q\in \overline B_R$ 
\eqlab{\label{secondaaa1} \int_{ \C B_{ R/\eps}} \frac{\chi_E(y)}{|q-y|^{n+s}} \, dy \leq &\; (1-\eps)^{-n-s} \int_{\C B_{ R/\eps}} \frac{\chi_E(y)}{|y|^{n+s}}\, dy \\ =&\; \int_{\C B_{ R/\eps}} \frac{\chi_E(y)}{|y|^{n+s}}  \, dy +  \eps(n+s)\int_{\C B_{ R/\eps}} \frac{\chi_E(y)}{|y|^{n+s}} + o(\eps^2)  . }
Moreover 
\eqlab{\label{secondaaa11} \int_{ B_{ R/\eps}\setminus B_r(q)} \frac{\chi_E(y)}{|q-y|^{n+s}}\, dy \leq &\; \int_{B_{ R/\eps}\setminus B_r(q)} \frac{dy}{|q-y|^{n+s}} \leq \omega_n \int_r^{R/\eps+ R} t^{-s-1}\, dt\\ = &\; \omega_n \frac{r^{-s}- R^{-s}\eps^s(1+\eps)^{-s} }{s} \leq \omega_n \frac{a^{-s}- R^{-s}\eps^s(1+\eps)^{-s} }{s} . }
Therefore
\bgs{  \alpha_s(q,r,E)&\; -\alpha_s(0,R/\eps,E) =  \int_{\C B_r(q)}\frac{\chi_E(y) }{|q-y|^{n+s}}\, dy -  \int_{ \C B_{ R/\eps}} \frac{\chi_E(y)}{|y|^{n+s}} \, dy\\ \leq &\; \eps(n+s)   \int_{ \C B_{ R/\eps}} \frac{\chi_E(y)}{|y|^{n+s}} \, dy + o(\eps^2) + \omega_n \frac{a^{-s}- R^{-s}\eps^s(1+\eps)^{-s} }{s}.}
Now, 
\bgs{ \alpha_s(0, R/\eps,E) -\alpha_s(0,r,E)\leq \int_{B_{R/\eps}\setminus B_r} \frac{dy}{|y|^{n+s}}= \omega_n \frac{r^{-s}-R^{-s}\eps^s}s \leq \omega_n \frac{a^{-s}-R^{-s}\eps^s}s  .}
Moreover,
\[ |\alpha_s(0,r,E)-\alpha_s(0,1,E)|\leq \omega_n \frac{ |1-r^{-s}|}{s} \leq\omega_n \frac{\max\{ |1-a^{-s}|, |1-b^{-s}\}} s. \]
So by the triangle inequality we have that
\bgs{ &|\alpha_s(q,r,E)-\alpha_s(0,1,E)|  \leq  \eps(n+s)   \int_{ \C B_{ R/\eps}} \frac{\chi_E(y)}{|y|^{n+s}} \, dy + o(\eps^2)  \\ &\;+ \frac{\omega_n}s \big[ {a^{-s}- R^{-s}\eps^s(1+\eps)^{-s} }+   {a^{-s}-R^{-s}\eps^s}+  {\max\{ |1-a^{-s}|, |1-b^{-s}\}}\big]. }
Hence,
it holds that
\[ \limsup_{s\to 0^+}s |\alpha_s(q,r,E)-\alpha_s(0,1,E)| \leq \eps n \overline \alpha(E) +o(\eps^2),\]
uniformly in $q\in K$ and in $r\in [a,b]$.\\
Letting $\eps \to 0^+$, we obtain that \[\limsup_{s\to 0^+}s|\alpha_s(q,r,E)-\alpha_s(0,1,E)| =0.\] 
Therefore, we conclude that
 \[\lim_{s\to 0^+}s|\alpha_s(q,r,E)-\alpha_s(0,1,E)| =0,\]
uniformly in $q\in K$ and in $r\in [a,b]$.

Now, we consider $K$ such that $K=\overline \Omega$.
Since $B_r(q)\subset B_{R/\eps}$ and $|q-y|\leq |q|+|y|\leq (\eps+1)|y|$,
we obtain that
\bgs{\label{primariga}\int_{\C B_r(q)}\frac{\chi_E(y) }{|q-y|^{n+s}}\, dy \geq &\; \int_{\C B_{ R/\eps}} \frac{\chi_E(y) }{|q-y|^{n+s}}\, dy  \geq (1+\eps)^{-n-s} \int_{\C B_{ R/\eps}} \frac{\chi_E(y)}{|y|^{n+s}}\, dy.\\
}
 Using  this and the inequalities in \eqref{secondaaa1} and \eqref{secondaaa11}, we have that 
\bgs{  (1+\eps)^{-n-s} \int_{\C B_{ R/\eps}} \frac{\chi_E(y)}{|y|^{n+s}} \,dy \leq &\int_{\C B_r(q)}\frac{\chi_E(y)}{|q-y|^{n+s}}\, dy\\ \leq&\;  
   (1-\eps)^{-n-s} \int_{\C B_R} \frac{\chi_E(y)}{|y|^{n+s}}\, dy + \omega_n \frac{a^{-s}-R^{-s}\eps^s(1+\eps)^{-s}}s. }
Passing to limsup  and using \eqref{claimalpha} it follows that
\bgs{ &(1+\eps)^{-n} \overline \alpha(E) \leq \limsup_{s\to 0^+} s\inf_{q\in \overline \Omega} \int_{\C B_r(q)}\frac{\chi_E(y)}{|q-y|^{n+s}} \, dy\\& \leq \limsup_{s\to 0^+} s\sup_{q\in \overline \Omega}\int_{\C B_r(q)}\frac{\chi_E(y)}{|q-y|^{n+s}} \, dy  \leq  (1-\eps)^{-n} \overline\alpha(E).}
Sending $\eps \to 0$  we obtain the conclusion.
\end{proof}

\begin{remark}\label{finmeas}
Let $E\subset \Rn$ be such that $|E|<\infty$. Then
\[ \alpha(E)=0.\]
Indeed,
\[ |\alpha_s(0,1,E)|\leq |E|,\]
hence
\[ \limsup_{s\to 0} s|\alpha_s(0,1,E)|=0.\]
 \end{remark}
 
Now, we discuss some useful properties of $\overline\alpha$. 
Roughly speaking, the quantity $\overline\alpha$ takes into account
the ``largest possible asymptotic opening'' of a set, and so it
possesses nice geometric features such as monotonicity, additivity
and geometric invariances. The detailed list of these properties is
the following:

\begin{prop}\label{subsetssmin} \quad \\ 
(i) (Monotonicity) Let $E,F\subset \Rn$ be such that for some $r>0$ and $q\in \Rn$\[ E\setminus B_r(q)\subset  F\setminus B_r(q).\] Then
\[\overline \alpha(E)\leq \overline \alpha(F).\]
(ii) (Additivity) Let $E,F\subset \Rn$ be such that for some $r>0$ and $q\in \Rn$ \[ (E\cap F)\setminus B_r(q)= \emptyset.\] Then
\[ \overline \alpha (E\cup F)\leq \overline \alpha(E)+\overline \alpha(F).\]
Moreover, if $\alpha(E), \alpha(F)$ exist, then $\alpha(E\cup F)$ exists and
\[ \alpha(E\cup F)= \alpha(E)+\alpha(F).\]
(iii) (Invariance with respect
to rigid motions) Let $E\subset \Rn$, $x\in \Rn$ and $\mathcal R \in \mathcal {SO}(n)$ be a rotation. Then
\[ \overline \alpha(E+x)=\overline \alpha(E) \quad{\mbox{ and }}\quad \overline \alpha( \mathcal R E)=\overline \alpha(E).\]
(iv) (Scaling) Let $E\subset \Rn$ and $\lambda >0$. Then for some $r>0$ and $q\in \Rn$
\[ \alpha_s(q,r,\lambda E) =  \lambda^{-s} 
\alpha_s\left(\frac{q}{\lambda},\frac{r}{\lambda},E\right)
\quad{\mbox{ and }}\quad
\overline \alpha(\lambda E) =\overline \alpha(E).\] 
(v) (Symmetric difference)
Let  $E, F\subset \Rn$. Then for every $r>0$ and $q\in \Rn$
\[ |\alpha_s(q,r,E)-\alpha_s(q,r,F)|\leq \alpha_s(q,r,E\Delta F).\]
As a consequence, if $|E\Delta F|<\infty$ and $\alpha(E)$  exists, then $\alpha(F)$ exists and  
\[\alpha(E)=\alpha(F). \]
\end{prop}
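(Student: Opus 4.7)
The strategy is to reduce each statement to a pointwise (in $s$) assertion about $\alpha_s(q,r,\cdot)$ and then use Proposition \ref{unifrq} to pass to $\overline\alpha$. Throughout, the key observation is that for any $r>0$ and $q\in\R^n$,
\[\overline\alpha(E)=\limsup_{s\to0^+} s\,\alpha_s(q,r,E),\]
since Proposition \ref{unifrq} guarantees that the choice of $q$ and $r$ is immaterial in the limit.

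For (i), outside $B_r(q)$ the inclusion yields $\chi_E\leq\chi_F$, hence $\alpha_s(q,r,E)\leq\alpha_s(q,r,F)$ for every $s\in(0,1)$, and the inequality is preserved by $\limsup$. For (ii), the disjointness of $E$ and $F$ outside $B_r(q)$ gives the pointwise identity $\chi_{E\cup F}=\chi_E+\chi_F$ on $\C B_r(q)$, so that
\[\alpha_s(q,r,E\cup F)=\alpha_s(q,r,E)+\alpha_s(q,r,F).\]
The inequality then follows from the subadditivity of $\limsup$, and when both $\alpha(E)$ and $\alpha(F)$ exist as genuine limits, the sum rule for limits gives existence of $\alpha(E\cup F)$ with equality. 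For (iii), the translation $y\mapsto y+x$ and the rotation $y\mapsto \mathcal Ry$ are isometries preserving Lebesgue measure; applying these changes of variables inside $\alpha_s(0,1,E+x)$ and $\alpha_s(0,1,\mathcal RE)$, using that $B_1$ is invariant under rotations, yields $\alpha_s(0,1,E+x)=\alpha_s(x,1,E)$ and $\alpha_s(0,1,\mathcal RE)=\alpha_s(0,1,E)$. Invoking Proposition \ref{unifrq} to move the basepoint from $x$ back to $0$ in the limsup gives invariance of $\overline\alpha$.

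For (iv), the change of variables $y=\lambda z$ in the integral defining $\alpha_s(q,r,\lambda E)$ produces the factor $\lambda^n$ from the Jacobian, $\lambda^{-n-s}$ from the kernel, and transforms $\C B_r(q)$ into $\C B_{r/\lambda}(q/\lambda)$, yielding the stated scaling identity. Taking $q=0$, $r=1$, multiplying by $s$ and passing to $\limsup$, the factor $\lambda^{-s}\to1$, so $\overline\alpha(\lambda E)=\overline\alpha(E)$. For (v), the pointwise identity $|\chi_E-\chi_F|=\chi_{E\Delta F}$ gives
\[|\alpha_s(q,r,E)-\alpha_s(q,r,F)|\leq\int_{\C B_r(q)}\frac{\chi_{E\Delta F}(y)}{|q-y|^{n+s}}\,dy=\alpha_s(q,r,E\Delta F).\]
If $|E\Delta F|<\infty$, Remark \ref{finmeas} forces $\alpha(E\Delta F)=0$, so $s\,\alpha_s(q,r,E\Delta F)\to0$; thus if $\alpha(E)$ exists, the sandwich bound forces $s\,\alpha_s(q,r,F)$ to converge to the same limit, proving existence of $\alpha(F)$ and equality. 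None of these steps presents a genuine obstacle: the only subtlety is being careful to invoke Proposition \ref{unifrq} rather than working directly with the definition involving $B_1$, so that the basepoint and radius appearing after translation or scaling cause no trouble.
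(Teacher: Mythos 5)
Your proposal is correct and follows essentially the same route as the paper's own proof: in each part you reduce to a pointwise statement about $\alpha_s(q,r,\cdot)$ (inequality for (i), additivity for (ii), a change of variables for (iii)--(iv), the bound $|\chi_E-\chi_F|=\chi_{E\Delta F}$ for (v)) and then invoke Proposition~\ref{unifrq} to pass to $\overline\alpha$ independently of the base point and radius, with Remark~\ref{finmeas} handling the symmetric-difference conclusion. One small slip in (iii): the substitution $z=y-x$ gives $\alpha_s(0,1,E+x)=\alpha_s(-x,1,E)$, not $\alpha_s(x,1,E)$; this sign is harmless here because Proposition~\ref{unifrq} makes the basepoint irrelevant in the limit, but it is worth stating it correctly.
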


\begin{proof} (i) It is enough to notice that for every $s\in (0,1)$
\[ \alpha_s(q, r,E) \leq \alpha_s(q, r,F) .\]
Then, passing to limsup and recalling \eqref{claimalpha} we conclude that
\[\overline \alpha(E)\leq \overline \alpha(F).\]
(ii)  We notice that for every $s\in (0,1)$ \[ \alpha_s(q,r,E\cup F) = \alpha_s(q,r,E)+\alpha_s(q,r, F) \]  
and passing to limsup and liminf as $s\to 0^+$  we obtain the desired claim.\\
(iii) By a change of variables,
we have that
\[ \alpha_s(0,1,E+x)= \int_{\C B_1} \frac{\chi_{E+x}(y)}{|y|^{n+s}}\, dy = \int_{\C B_1(-x)} \frac{\chi_{E}(y)}{|x+y|^{n+s}}\, dy= \alpha_s(-x,1,E).\]
Accordingly, the invariance by translation 
follows after passing to limsup and using \eqref{claimalpha}. 

In addition, the invariance by rotations is obvious, using a change of variables.\\
(iv) Changing the variable $y=\lambda x$ we deduce that 
\[ \alpha_s(q,r,\lambda E)=\int_{\C B_r(q)}\frac{\chi_{\lambda E} (y)}{|q-y|^{n+s}}\, dy
=\lambda ^{-s}\int_{\C B_{\frac{r}{\lambda}}(\frac{q}{\lambda})} \frac{\chi_E(x)}{|\frac{q}{\lambda}-x|^{n+s}}\, dx
=\lambda^{-s} \alpha_s\left(\frac{q}{\lambda},\frac{r}{\lambda},E\right).\] Hence, the claim follows by passing to limsup as $s\to 0^+$.\\
(v) We have that
\[ |\alpha_s(q,r,E)-\alpha_s(q,r,F)| \leq \int_{\C B_r(q)} \frac{ |\chi_{E}(y)-\chi_F(y)|}{|y-q|^{n+s}}\, dy = \int_{\C B_r(q)} \frac{ \chi_{E\Delta F}(y)}{|y-q|^{n+s}}\, dy = \alpha_s(q,r,E\Delta F).\]
The second part of the claim follows applying the Remark \ref{finmeas}.
\end{proof}

We recall the definition (see (3.1) in  \cite{asympt1})
\[\mu(E):=\lim_{s \to 0^+} s P_s(E,\Omega),\]
where $\Omega$ is a bounded open set with $C^2$ boundary.
Moreover, we define 
\[ \overline \mu(E)= \limsup_{s\to 0^+} sP_s(E,\Omega)\]
and give the following upper bound:
\begin{prop}\label{barmubaral} Let $\Omega\subset\Rn$ be a bounded open set with finite classical perimeter
and let $E_0\subset\C \Omega$. Then
\[ \overline \mu(E_0)= \overline \alpha(E_0) |\Omega|.\]
\end{prop}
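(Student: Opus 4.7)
The strategy is to rewrite $P_s(E_0,\Omega)$ as an integral over $\Omega$ of the weighted ``tail contribution'' from $E_0$, truncate at a small scale $r>0$, and then separately analyze the truncated tail (which delivers $\overline\alpha(E_0)|\Omega|$ via the uniform estimates in Proposition~\ref{unifrq}) and the short-range term (which vanishes in the limit thanks to the finite classical perimeter of $\Omega$).

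Since $E_0\subset\C\Omega$, the $s$-perimeter reduces to a single interaction:
\[
P_s(E_0,\Omega)=I(E_0,\Omega)=\int_\Omega\int_{E_0}\frac{dy\,dx}{|x-y|^{n+s}}.
\]
For any $r>0$, I split the inner integral according to whether $y\in B_r(x)$ or not and obtain
\[
P_s(E_0,\Omega)=\int_\Omega\alpha_s(x,r,E_0)\,dx+J_s(r),\qquad J_s(r):=\int_\Omega\int_{B_r(x)\cap E_0}\frac{dy\,dx}{|x-y|^{n+s}}.
\]

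The first step is to dispose of $J_s(r)$. Since $E_0\subset\C\Omega$, I have the pointwise bound $\chi_{B_r(x)\cap E_0}(y)\leq\chi_\Omega(x)\chi_{\C\Omega}(y)\chi_{\{|x-y|<r\}}$. Switching to the variable $z=x-y$ and using Fubini gives
\[
J_s(r)\leq\int_{B_r\setminus\{0\}}\frac{|\Omega\cap(\Omega+z)^c|}{|z|^{n+s}}\,dz.
\]
A standard measure-theoretic estimate (valid because $\Omega$ has finite classical perimeter) yields $|\Omega\setminus(\Omega-z)|\leq|z|\,P(\Omega)$ for every $z\in\R^n$, so passing to polar coordinates I get
\[
J_s(r)\leq C_n\,P(\Omega)\int_0^r t^{-s}\,dt=\frac{C_n\,P(\Omega)}{1-s}\,r^{1-s}.
\]
Hence $sJ_s(r)\to 0$ as $s\to 0^+$ for each fixed $r>0$, so this term does not contribute to $\overline\mu(E_0)$.

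The second step is to analyze the main term. For every $x\in\overline\Omega$ one has the trivial sandwich
\[
\inf_{q\in\overline\Omega}\alpha_s(q,r,E_0)\,|\Omega|\,\le\,\int_\Omega\alpha_s(x,r,E_0)\,dx\,\le\,\sup_{q\in\overline\Omega}\alpha_s(q,r,E_0)\,|\Omega|.
\]
Multiplying by $s$ and applying formula~\eqref{claimalpha} of Proposition~\ref{unifrq} (which asserts that both $s\sup$ and $s\inf$ of $\alpha_s(\cdot,r,E_0)$ on $\overline\Omega$ have limsup equal to $\overline\alpha(E_0)$) gives
\[
\limsup_{s\to0^+}s\int_\Omega\alpha_s(x,r,E_0)\,dx=\overline\alpha(E_0)\,|\Omega|.
\]
Combining this with the vanishing of $sJ_s(r)$ produces
\[
\overline\mu(E_0)=\limsup_{s\to0^+}sP_s(E_0,\Omega)=\overline\alpha(E_0)\,|\Omega|,
\]
as claimed.

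The step I expect to require the most care is the bound on $J_s(r)$: one must verify that the finite perimeter assumption on $\Omega$ is enough to guarantee $|\Omega\setminus(\Omega-z)|\leq|z|\,P(\Omega)$ uniformly in $z$, since this is the only place the regularity of $\partial\Omega$ enters, and without it the short-range contribution could dominate and prevent the clean identity $\overline\mu(E_0)=\overline\alpha(E_0)|\Omega|$ from holding.
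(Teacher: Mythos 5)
Your proposal is correct and follows the same overall strategy as the paper -- splitting the interaction $P_s(E_0,\Omega)=I(E_0,\Omega)$ into a short-range piece and a long-range (tail) piece, showing the tail delivers $\overline\alpha(E_0)|\Omega|$ while $s$ times the short-range term vanishes -- but you implement both halves with different tools. For the tail, you invoke Proposition~\ref{unifrq} and formula~\eqref{claimalpha} as a black box, packaged into the clean sandwich $\inf_q\alpha_s\cdot|\Omega|\le\int_\Omega\alpha_s\,dx\le\sup_q\alpha_s\cdot|\Omega|$, whereas the paper re-derives the uniformity estimates inline, introducing auxiliary radii $1$ and $R/\varepsilon$ and the multiplicative factors $(1\pm\varepsilon)^{-n-s}$, and then lets $\varepsilon\to0$; your route reuses existing work and avoids the $\varepsilon$-bookkeeping. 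For the short-range term, you use the BV translation estimate $|\Omega\setminus(\Omega+z)|\le|z|\,P(\Omega)$ after the change of variables $z=x-y$, yielding $J_s(r)\le C\,P(\Omega)\,r^{1-s}/(1-s)$; the paper instead restricts to $s<1/2$, majorizes $|x-y|^{-n-s}$ by $|x-y|^{-n-1/2}$ on $B_1(y)$, and bounds the result by the fixed constant $P_{1/2}(\Omega)<\infty$ (using that finite classical perimeter implies finite $s$-perimeter). Both exploit $P(\Omega)<\infty$ and both give a short-range contribution that is bounded uniformly in $s$, so that multiplying by $s$ kills it; your translation-estimate route is arguably a bit more elementary, while the paper's route makes the quantity $P_{1/2}(\Omega)$ appear explicitly. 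One minor point worth spelling out in your write-up: the sandwich argument for the tail requires the full strength of~\eqref{claimalpha}, namely that the limsups of both $s\inf_q\alpha_s$ and $s\sup_q\alpha_s$ coincide with $\overline\alpha(E_0)$ -- the squeeze for limsups needs matching upper and lower sequences, and you are correctly relying on Proposition~\ref{unifrq} to supply both.
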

\begin{proof}
Let $R>0$ be fixed such that $\Omega \subset B_R$ and $\eps\in(0,1)$ be small enough such that $R/\eps> R+1$. This choice of $\eps$ assures that $B_1(y)\subset B_{R/\eps}$. 
For any fixed $y\in \Omega$,
we have that
\[ \int_{\Rn} \frac{\chi_{E_0}(x)}{|x-y|^{n+s}}\, dx =  \int_{\C B_{R/\eps} } \frac{\chi_{E_0}(x)}{|x-y|^{n+s}}\, dx  + \int_{ B_{R/\eps}\setminus B_{1}(y)} \frac{\chi_{E_0}(x)}{|x-y|^{n+s}}\, dx + \int_{  B_{1}(y)} \frac{\chi_{E_0}(x)}{|x-y|^{n+s}}\, dx  .\]
Since $|x-y|\geq (1-\eps)|x|$ whenever $x\in \C B_{R/\eps}$, we get
\[ \int_{\C B_{R/\eps} } \frac{\chi_{E_0}(x)}{|x-y|^{n+s}}\, dx\leq (1-\eps)^{-n-s}\int_{\C B_{R/\eps} } \frac{\chi_{E_0}(x)}{|x|^{n+s}}\, dx.\]
Also we have that
\[ \int_{ B_{R/\eps}\setminus B_{1}(y)} \frac{\chi_{E_0}(x)}{|x-y|^{n+s}}\, dx\leq \omega_n\int_1^{\R/\eps+R} \rho^{-s-1}\, d\rho \leq \omega_n\frac{1- \left(\frac{R}{\eps}+R\right)^{-s}}s.\]
Also, we can assume that $s<1/2$ 
(since we are interested in what happens for $s\to 0$). In this way,
if $|x-y|<1$ we have that $|x-y|^{-n-s}\leq|x-y|^{-n-\frac{1}{2}}$, and so
\[\int_{  B_1(y)} \frac{\chi_{E_0}(x)}{|x-y|^{n+s}}\, dx\leq \int_{  B_1(y)} \frac{\chi_{E_0}(x)}{|x-y|^{n+\frac{1}2}}\, dx.\] 
Also, since $E_0\subset\C\Omega$, we have that 
\[\int_{  B_1(y)} \frac{\chi_{E_0}(x)}{|x-y|^{n+\frac{1}2}}\, dx  \leq \int_{  B_1(y)\setminus \Omega} \frac{dx}{|x-y|^{n+\frac{1}2}}
\leq \int_{\C \Omega} \frac{dx}{|x-y|^{n+\frac{1}2}}.  \] 
This means that
\[\int_{\Omega} \int_{  B_1(y)} \frac{\chi_{E_0}(x)}{|x-y|^{n+s}}\, dx\,dy\leq \int_{\Omega} \int_{\C \Omega} \frac{dx}{|x-y|^{n+\frac{1}2}}=P_{\frac{1}2}(\Omega)=c<\infty,\]
since $\Omega$ has a finite classical perimeter.
In this way, it follows that
\eqlab{\label{mumu1} sP_s(E_0,\Omega) = \int_{\Omega} \int_{\Rn}\frac{\chi_{E_0}(x) }{|x-y|^{n+s}} \, dx \, dy\leq  &\;s (1-\eps)^{-n-s} |\Omega|\int_{\C B_{R/\eps} } \frac{\chi_{E_0}(x)}{|x|^{n+s}}\, dx\\
&\; +\omega_n\Big(1- \Big(\frac{R}{\eps}+R\Big)^{-s}\Big)|\Omega| +sc. }
Furthermore, notice that if $x\in B_{R/\eps}$ we have that $|x-y|\leq (1+\eps)|x|$, hence
\[ \int_{\Rn}\frac{\chi_{E_0}(x)}{|x-y|^{n+s}}\, dx\geq \int_{\C B_{{R}/{\eps}}} \frac{\chi_{E_0}(x)}{|x-y|^{n+s}} \, dx \geq (1+\eps)^{-n-s} \int_{\C B_{{R}/{\eps}}} \frac{\chi_{E_0}(x)}{|x|^{n+s}} \, dx.  \]
Thus for any $\eps>0$
\[ sP_s(E_0,\Omega) \geq s |\Omega|  (1+\eps)^{-n-s}  \int_{\C B_{{R}/{\eps}}} \frac{\chi_{E_0}(x)}{|x|^{n+s}} \, dx .\] 
Passing to  limsup as $s\to 0^+$ here above and in \eqref{mumu1} it follows that
\[ (1+\eps)^{-n} \overline \alpha(E_0)\,  |\Omega| \leq \overline \mu(E_0) \leq (1-\eps)^{-n} \overline \alpha(E_0)\, |\Omega| .\]
Sending $\eps \to 0$, we obtain the desired conclusion.
\end{proof}

\subsection{Classification of nonlocal minimal surfaces for small $s$}\label{classify}

\noindent \textbf{Asymptotic estimates of the density (Theorem \ref{notfull}).}\label{sectnotfull} Now we prove Theorem \ref{notfull}. 
\begin{proof}
We argue by contradiction. Suppose that there exists $\delta>0$ and $\gamma\in(0,1)$ for which we can find a sequence $s_k\searrow0$,
a sequence of sets $\{E_k\}$ such that each $E_k$ is $s_k$-minimal in $\Omega$ with exterior data $E_0$, and a sequence
of points $\{x_k\}\subset\overline{\Omega}$ such that
\eqlab{\label{density_contrad_proof}\big|(\Omega\cap B_\delta(x_k))\setminus E_k\big|< \gamma\,\frac{\omega_n-2\overline{\alpha}(E_0)}{\omega_n-\overline{\alpha}(E_0)}\big|\Omega\cap B_\delta(x_k)\big|.}

First of all we remark that, since $\overline{\Omega}$ is compact, up to passing to subsequences we can suppose that $x_k\longrightarrow x_0$, for some $x_0\in\overline{\Omega}$. As a consequence, for every $\eps>0$
there exists $\tilde k_\eps$ such that
\eqlab{\label{subset_balls_eps}
\Omega\cap B_\delta (x_k)\subset\Omega\cap B_{\delta+\eps}(x_0),\qquad
\forall\,k\ge\tilde k_\eps.}

We fix a small $\eps>0$. We will let $\eps\to0$ later on.

Since $E_k$ is $s_k$-minimal in $\Omega$, it is $s_k$-minimal also in every $\Omega'\subset\Omega$.
Thus, by \eqref{subset_balls_eps} and by minimality, for every $k\ge\tilde k_\eps$ we have
\bgs{P_{s_k}(E_k,\Omega\cap B_\delta(x_k))&\le P_{s_k}(E_k,\Omega\cap B_{\delta+\eps}(x_0))\\
&
\le P_{s_k}(E_0\cup(E_k\cap(\Omega\setminus B_{\delta+\eps}(x_0))),\Omega\cap B_{\delta+\eps}(x_0))\\
&
\le\int_{E_0}\int_{\Omega\cap B_{\delta+\eps}(x_0)}\frac{dy\,dz}{|y-z|^{n+s_k}}
+\int_{\Omega\setminus B_{\delta+\eps}(x_0)}\int_{\Omega\cap B_{\delta+\eps}(x_0)}\frac{dy\,dz}{|y-z|^{n+s_k}}\\
&
=:I^1_k+I^2_k.}
Now notice that the set $\Omega\cap B_{\delta+\eps}(x_0)$ has finite classical perimeter. 
Thus, by Proposition \ref{barmubaral} we find
\[\limsup_{k\to\infty}s_k I^1_k\le\overline{\alpha}(E_0)\big|\Omega\cap B_{\delta+\eps}(x_0)\big|,\]
and, since $\Omega$ is bounded
\[\limsup_{k\to\infty}s_k I^2_k\le\overline{\alpha}(\Omega\setminus B_{\delta+\eps}(x_0))\big|\Omega\cap B_{\delta+\eps}(x_0)\big|=0,\] 
for every $\eps>0$.
Therefore, letting $\eps\to0$,
\eqlab{\label{contrad_dens_limsup}\limsup_{k\to\infty}s_k P_{s_k}(E_k,\Omega\cap B_\delta(x_k))\le\overline{\alpha}(E_0)\big|\Omega\cap B_\delta(x_0)\big|.}

On the other hand, if $R>0$ is such that $\Omega\subset\subset B_R(q)$ for every $q\in\overline{\Omega}$, then
we have that
\bgs{P_{s_k}(E_k,\Omega\cap B_\delta(x_k))&\ge
\int_{E_k\cap(\Omega\cap B_\delta(x_k))}
\Big(\int_{\C E_k\setminus(\Omega\cap B_\delta(x_k))}\frac{dz}{|y-z|^{n+s_k}}\Big)dy\\
&
\ge\int_{E_k\cap(\Omega\cap B_\delta(x_k))}\Big(\int_{\C \Omega}\frac{\chi_{\C E_0}(z)}{|y-z|^{n+s_k}}\,dz\Big)dy\\
&\ge \int_{E_k\cap(\Omega\cap B_\delta(x_k))}\Big(\inf_{q\in \overline \Omega } \int_{\C \Omega}\frac{\chi_{\C E_0}(z)}{|q-z|^{n+s_k}}\,dz\Big)dy\\
 &
\ge\big|E_k\cap(\Omega\cap B_\delta(x_k))\big|\inf_{q\in\overline{\Omega}}\int_{\C B_R(q)}
\frac{\chi_{\C E_0}(z)}{|q-z|^{n+s_k}}\,dz.}
So, thanks to Proposition \ref{unifrq}
\bgs{\liminf_{k\to\infty}s_k P_{s_k}&(E_k,\Omega\cap B_\delta(x_k))\\
&
\ge\Big(\liminf_{k\to\infty}|E_k\cap(\Omega\cap B_\delta(x_k))\big|\Big)\Big(\liminf_{k\to\infty}s_k\,
\inf_{q\in\overline{\Omega}}\int_{\C B_R(q)}\frac{\chi_{\C E_0}(z)}{|q-z|^{n+s_k}}\,dz\Big)\\
&
=\big(\omega_n-\overline{\alpha}(E_0)\big)\Big(\liminf_{k\to\infty}|E_k\cap(\Omega\cap B_\delta(x_k))\big|\Big).}

By \eqref{density_contrad_proof} we have 
\bgs{|E_k\cap(\Omega\cap B_\delta(x_k))\big|=|\Omega\cap B_\delta(x_k)|-\big|(\Omega\cap B_\delta(x_k))\setminus E_k\big|
>\frac{(1-\gamma)\omega_n-(1-2\gamma)\overline{\alpha}(E_0)}{\omega_n-\overline{\alpha}(E_0)}\,|\Omega\cap B_\delta(x_k)|,}
and hence, since $x_k\longrightarrow x_0$,
\[\liminf_{k\to\infty}|E_k\cap(\Omega\cap B_\delta(x_k))\big|\geq
\frac{(1-\gamma)\omega_n-(1-2\gamma)\overline{\alpha}(E_0)}{\omega_n-\overline{\alpha}(E_0)}\,|\Omega\cap B_\delta(x_0)|.\]
Thus, recalling \eqref{contrad_dens_limsup} we obtain
\eqlab{\label{contrad_dens_liminf}
\overline{\alpha}(E_0)\,|\Omega\cap B_\delta(x_0)|\ge\liminf_{k\to\infty}s_k P_{s_k}&(E_k,\Omega\cap B_\delta(x_k))\ge
\big((1-\gamma)\omega_n-(1-2\gamma)\overline{\alpha}(E_0)\big)|\Omega\cap B_\delta(x_0)|.}

We remark that, since $x_0\in\overline{\Omega}$, we have
\[|\Omega\cap B_\delta(x_0)|>0,\]
hence we get
\[\overline{\alpha}(E_0)\ge(1-\gamma)\omega_n-(1-2\gamma)\overline{\alpha}(E_0)
\quad\textrm{ that is }\quad
(1-\gamma)\overline{\alpha}(E_0)\ge(1-\gamma)\frac{\omega_n}{2}.\]

Therefore, since $\gamma\in(0,1)$ and by hypothesis $\overline{\alpha}(E_0)<\frac{\omega_n}{2}$, we obtain a contradiction, concluding the proof.
\end{proof}

\begin{corollary}
Let $\Omega\subset\R^n$ be a bounded open set of finite classical perimeter and let $E_0\subset\C\Omega$ be such that
$\alpha(E_0)=0$. Let $s_k\in(0,1)$ be such that $s_k\searrow0$ and let $\{E_k\}$
be a sequence of sets such that each $E_k$ is $s_k$-minimal in $\Omega$ with exterior data $E_0$. Then
\[\lim_{k\to\infty}|E_k\cap\Omega|=0.\]
\end{corollary}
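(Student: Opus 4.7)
The plan is to deduce the corollary directly from Theorem \ref{notfull}. The hypothesis $\alpha(E_0)=0$ forces in particular $\overline{\alpha}(E_0)=0$, so that the hypothesis $\overline{\alpha}(E_0)<\omega_n/2$ of Theorem \ref{notfull} is satisfied and moreover the ratio appearing in estimate \eqref{1666} simplifies neatly to $1$. Consequently, for any $\delta>0$, any $\gamma\in(0,1)$ and any $s<\sigma_{\delta,\gamma}$, every $s$-minimal set $E$ in $\Omega$ with exterior datum $E_0$ satisfies
\[
|E\cap\Omega\cap B_\delta(x)|
\;=\;|\Omega\cap B_\delta(x)|-|(\Omega\cap B_\delta(x))\setminus E|
\;\le\;(1-\gamma)\,|\Omega\cap B_\delta(x)|,
\]
for every $x\in\overline{\Omega}$.

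Next I would bound $|E_k\cap\Omega|$ by a finite covering argument. Fix once and for all $\delta>0$ (for concreteness, say $\delta=1$). By compactness of $\overline{\Omega}$ there exist finitely many points $x_1,\dots,x_N\in\overline{\Omega}$, with $N=N(\delta,\Omega)$, such that $\overline{\Omega}\subset\bigcup_{i=1}^N B_\delta(x_i)$. Given an arbitrary $\varepsilon>0$, I would then \emph{choose} $\gamma=\gamma(\varepsilon,\delta,N)\in(0,1)$ sufficiently close to $1$ so that
\[
(1-\gamma)\,N\,\omega_n\,\delta^n\;<\;\varepsilon.
\]
With this choice, Theorem \ref{notfull} provides a threshold $\sigma_{\delta,\gamma}>0$; since $s_k\searrow 0$, there exists $k_0$ such that $s_k<\sigma_{\delta,\gamma}$ for all $k\ge k_0$. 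For such $k$, summing the previous density estimate over the cover gives
\[
|E_k\cap\Omega|\;\le\;\sum_{i=1}^N|E_k\cap\Omega\cap B_\delta(x_i)|
\;\le\;(1-\gamma)\sum_{i=1}^N|\Omega\cap B_\delta(x_i)|
\;\le\;(1-\gamma)\,N\,\omega_n\,\delta^n\;<\;\varepsilon,
\]
proving $\limsup_{k\to\infty}|E_k\cap\Omega|\le\varepsilon$. Since $\varepsilon$ is arbitrary, the corollary follows.

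There is no serious obstacle here: the whole argument amounts to observing that under $\alpha(E_0)=0$ the quantitative density estimate \eqref{1666} degenerates into an essentially optimal emptiness estimate inside every ball, after which a finite covering of $\overline{\Omega}$ (whose cardinality $N$ depends on $\delta$ but not on $\gamma$) lets us pick $\gamma$ close enough to $1$ to beat the multiplicative constant. The only minor care required is the order of quantifiers — $\delta$ and $N$ are frozen \emph{before} choosing $\gamma$, and only afterwards do we invoke Theorem \ref{notfull} to extract $\sigma_{\delta,\gamma}$ and use the assumption $s_k\searrow 0$ to ensure eventually $s_k<\sigma_{\delta,\gamma}$.
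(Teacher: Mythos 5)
Your proof is correct and follows essentially the same route as the paper: cover $\overline{\Omega}$ by finitely many balls of radius $\delta$, invoke Theorem \ref{notfull} with $\overline{\alpha}(E_0)=0$ to reduce the density ratio to $1$, sum the estimate over the cover, and let $\gamma\to 1^-$ (which you phrase equivalently by fixing $\varepsilon$ first and choosing $\gamma$ close to $1$ accordingly). The only cosmetic difference is that you bound $|\Omega\cap B_\delta(x_i)|$ by $\omega_n\delta^n$ rather than keeping the sum $\sum_i|\Omega\cap B_\delta(x_i)|$ as in the paper; both are harmless.
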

\begin{proof}
Fix $\delta>0$. Since $\overline{\Omega}$ is compact, we can find a finite number of points $x_1,\dots,x_m\in\overline{\Omega}$
such that
\[\overline{\Omega}\subset\bigcup_{i=1}^mB_\delta(x_i).\]
By Theorem \ref{notfull} we know that for every $\gamma\in(0,1)$ we can find a $k(\gamma)$ big enough such that
\[\big|E_k\cap(\Omega\cap B_\delta(x_i))\big|
\le\frac{(1-\gamma)\omega_n-(1-2\gamma)\overline{\alpha}(E_0)}{\omega_n-\overline{\alpha}(E_0)}\,|\Omega\cap B_\delta(x_i)|
=(1-\gamma)|\Omega\cap B_\delta(x_i)|,\]
for every $i=1,\dots,m$ and every $k\ge k(\gamma)$. Thus
\[|E_k\cap\Omega|\le(1-\gamma)\sum_{i=1}^m|\Omega\cap B_\delta(x_i)|,\]
for every $k\ge k(\gamma)$, and hence
\[\limsup_{k\to\infty}|E_k\cap\Omega|\le(1-\gamma)\sum_{i=1}^m|\Omega\cap B_\delta(x_i)|,\]
for every $\gamma\in(0,1)$. Letting $\gamma\longrightarrow1^-$ concludes the proof.
\end{proof}

We recall here that any set $E_0$ of finite measure has $\alpha(E_0)=0$ (check Remark \ref{finmeas}).

\noindent \textbf{Estimating the fractional mean curvature (Theorem \ref{positivecurvature}).}\label{estimatecurvature}
Thanks to the previous preliminary work, we are now in the position
of completing the proof of Theorem \ref{positivecurvature}.

\begin{proof}[Proof of Theorem \ref{positivecurvature}]
Let $R:=2\,\max\{1,\textrm{diam}(\Omega)\}$. First of all, \eqref{claimalpha} implies that
\bgs{ \liminf_{s\to 0^+} \bigg(\omega_n R^{-s} -2s\sup_{q\in \overline \Omega} \int_{\C B_R(q)}\frac{\chi_E(y)}{|q-y|^{n+s}} \, dy \bigg) = {\omega_n-2\overline \alpha(E_0)}=4 \beta.} Notice that by \eqref{weak_hp_beta}, $\beta>0$. Hence for every $s$ small enough, say $s<s'\leq\frac{1}{2}$ with $s'=s'(E_0,\Omega)$, we have that
\eqlab{\label{tildebeta}\omega_n R^{-s} - 2s\sup_{q\in \overline \Omega} \int_{\C B_R(q)}\frac{\chi_E(y)}{|q-y|^{n+s}} \, dy \geq \frac {7}2 \beta.}

Now, let $E\subset\Rn$ be such that $E\setminus\Omega=E_0$, suppose that $E$ has an exterior tangent ball
of radius $\delta<R/2$ at $q\in\partial E\cap\overline{\Omega}$, that is
\[B_\delta(p)\subset\C E\quad\textrm{and}\quad q\in\partial B_\delta(p),\]
and let $s<s'$.
Then for $\rho$ small enough (say $\rho<\delta/2$) we conclude that
\bgs{\label{uno} \mathcal{I}^\rho_s[E](q)= \int_{B_R(q)\setminus B_{\rho}(q)} \frac{\chi_{ \C E} (y)-\chi_{E}(y)}{|q-y|^{n+s}}\, dy + \int_{\C B_R(q)} \frac{\chi_{ \C E} (y)-\chi_{E}(y)}{|q-y|^{n+s}}\, dy.}

 Let $D_\delta= B_\delta(p) \cap B_\delta(p')$, where $p'$ is the symmetric of $p$ with respect to $q$, i.e. the ball $B_\delta(p')$ is the ball tangent to  $B_\delta(p)$ in $q$. Let also $K_\delta$ be the convex hull of $D_\delta$ and let $P_\delta:=K_\delta-D_\delta$. Notice that $B_\rho(q)\subset K_\delta \subset B_R(q)$ . Then
\bgs{ \int_{B_R(q)\setminus B_{\rho}(q)} \frac{\chi_{ \C E} (y)-\chi_{E}(y)}{|q-y|^{n+s}}\, dy=& \int_{D_\delta\setminus B_{\rho}(q)} \frac{\chi_{ \C E} (y)-\chi_{E}(y)}{|q-y|^{n+s}}\, dy +\int_{P_\delta\setminus B_{\rho}(q) } \frac{\chi_{ \C E} (y)-\chi_{E}(y)}{|q-y|^{n+s}}\, dy\\ &+\int_{B_R(q) \setminus K_\delta} \frac{\chi_{ \C E} (y)-\chi_{E}(y)}{|q-y|^{n+s}}\, dy.}
Since $B_\delta(p) \subset \C E$, by symmetry we obtain that
\bgs{  \int_{D_\delta \setminus B_{\rho}(q)} \frac{\chi_{ \C E} (y)-\chi_{E}(y)}{|q-y|^{n+s}}\, dy =\int_{B_\delta(p)\setminus B_{\rho}(q)} \frac{dy}{|q-y|^{n+s}} +   \int_{B_\delta(p')\setminus B_{\rho}(q) }\frac{\chi_{ \C E} (y)-\chi_{E}(y)}{|q-y|^{n+s}}\, dy  \geq 0.   }
Moreover, from Lemma 3.1 in \cite{graph} (here applied with $\lambda =1$) we have that
\[\bigg| \int_{P_\delta \setminus B_{\rho}(q)} \frac{\chi_{ \C E} (y)-\chi_{E}(y)}{|q-y|^{n+s}}\, dy \bigg|\leq  \int_{P_\delta} \frac{dy}{|q-y|^{n+s}}\leq  \frac{C_0}{1-s} {\delta^{-s} }
 ,\] 
with $C_0=C_0(n)>0$.
Notice that $B_\delta(q)\subset K_\delta$  so
\bgs{\bigg | \int_{B_R(q)\setminus K_\delta } \frac{\chi_{ \C E} (y)-\chi_{E}(y)}{|q-y|^{n+s}}\, dy\bigg | \leq &\;\int_{B_R(q)\setminus B_\delta(q)} \frac{dy}{|q-y|^{n+s}} = \omega_n  \frac{\delta^{-s}- R^{-s}}{s} .} 
Therefore for every $\rho <\delta/2$
one has that
\bgs{\label{due}\int_{B_R(q)\setminus B_\rho(q)} \frac{\chi_{ \C E} (y)-\chi_{E}(y)}{|q-y|^{n+s}}\, dy \geq -\frac{C_0}{1-s}\delta^{-s} - \frac{\omega_n}{s} \delta^{-s} + \frac{\omega_n}{s} R^{-s}  .}
Thus, using \eqref{tildebeta}
\eqlab{\label{uno} \mathcal{I}^\rho_s[E](q)=&\;\int_{B_R(q)\setminus B_\rho(q)} \frac{\chi_{ \C E} (y)-\chi_{E}(y)}{|q-y|^{n+s}}\, dy + \int_{\C B_R(q)} \frac{\chi_{ \C E} (y)-\chi_{E}(y)}{|q-y|^{n+s}}\, dy \\
\geq &\; -\frac{C_0}{1-s}\delta^{-s} - \frac{\omega_n}{s} \delta^{-s} + \frac{\omega_n}{s} R^{-s} + \int_{\C B_R(q)} \frac{dy}{|q-y|^{n+s}} -  2\int_{ \C B_R(q)} \frac{\chi_E(y)}{|q-y|^{n+s}}\,dy \\ 
\geq&\; - \delta^{-s} \Big(\frac{C_0}{1-s} +\frac{\omega_n} s\Big) +  \frac{\omega_n}s R^{-s} + \bigg(\frac{\omega_n}s R^{-s}- 2\sup_{q\in \overline \Omega} 
 \int_{ \C B_R(q)} \frac{\chi_E(y)}{|q-y|^{n+s}}\,dy\bigg)\\
 \geq&\; - \delta^{-s} \Big(\frac{C_0}{1-s} +\frac{\omega_n} s\Big) +  \frac{\omega_n}s R^{-s}  +\frac{7\beta}{2s}\\
 \geq &\; - \delta^{-s} \Big(2C_0 +\frac{\omega_n} s\Big) +  \frac{\omega_n}s R^{-s}  +\frac{7\beta}{2s},}
where we also exploited that $s<s'\leq 1/2$.
Since $R>1$, we have
\[R^{-s}\to1^-,\qquad\textrm{as }s\to0^+.\]
Therefore we can find $s''=s''(E_0,\Omega)$ small enough such that
\[\omega_nR^{-s}\geq\omega_n-\frac{\beta}{2},\qquad\forall s<s''.\]

Now let
\[s_0=s_0(E_0,\Omega):=\min\Big\{s',s'',\frac{\beta}{2C_0}\Big\}.\]
Then, for every $s<s_0$ we have
\eqlab{\label{important_estimate_curv}\I_s^\rho[E](q)&\geq\frac{1}{s}\Big\{-\delta^{-s}\big((2C_0)s+\omega_n\big)+\omega_n R^{-s}+\frac{7}{2}\beta\Big\}\\
&
\geq\frac{1}{s}\big\{-\delta^{-s}(\omega_n+\beta)+\omega_n+3\beta\big\},}
for every $\rho\in(0,\delta/2)$.

Notice that if we fix $s\in(0,s_0)$, then for every
\[\delta\geq e^{-\frac{1}{s}\log\frac{\omega_n+2\beta}{\omega_n+\beta}}=:\delta_s(E_0),\]
we have that
\[-\delta^{-s}(\omega_n+\beta)+\omega_n+3\beta\geq\beta>0.\]
To conclude, we
let $\sigma\in(0,s_0)$ and suppose that $E$ has an exterior tangent ball of radius $\delta_\sigma$
at $q\in\partial E\cap\overline{\Omega}$.
Notice that, since $\delta_\sigma<1$, we have
\[-(\delta_\sigma)^{-s}(\omega_n+\beta)+\omega_n+3\beta\geq
-(\delta_\sigma)^{-\sigma}(\omega_n+\beta)+\omega_n+3\beta=\beta,\qquad\forall\,s\in(0,\sigma].\]
Then \eqref{important_estimate_curv} gives that
\[\liminf_{\rho\to0^+}\I_s^\rho[E](q)\geq\frac{\beta}{s}>0,\qquad\forall\,s\in(0,\sigma],\]
which concludes the proof.
\end{proof}

\begin{remark}
We remark that
\[\log\frac{\omega_n+2\beta}{\omega_n+\beta}>0,\]
thus
\[\delta_s\to0^+\qquad\textrm{as }s\to 0^+.\]
\end{remark}


As a consequence of Theorem \ref{positivecurvature}, we have that, as $s\to0^+$,
the $s$-minimal sets with small mass at infinity have small mass in $\Omega$.
The precise result goes as follows:

\begin{corollary}
Let $\Omega\subset\Rn$ be a bounded open set,
let $E\subset\Rn$ be such that
\[\overline \alpha(E) <\frac{\omega_n}2,\]
and suppose that $\partial E$ is of class $C^2$ in $\Omega$.
Then, for every $\Omega'\subset\subset\Omega$ there exists
$\tilde s=\tilde s(E\cap\overline{\Omega'})\in(0,s_0)$ such that for every $s\in(0,\tilde s]$
\eqlab{\I_s[E](q)\geq\frac{\omega_n -2\overline \alpha(E)}{4s}>0,\qquad\forall\,q\in\partial E\cap\overline{\Omega'}.}
\end{corollary}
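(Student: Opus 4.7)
The plan is to derive the corollary as a direct application of Theorem \ref{positivecurvature}, once a uniform exterior tangent ball condition along $\partial E\cap\overline{\Omega'}$ has been established. The argument breaks naturally into four short steps.

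\textbf{Step 1 (Reduction to the hypothesis of Theorem \ref{positivecurvature}).} I would first set $E_0:=E\setminus\Omega$, so that $E_0\subset\C\Omega$ and $E\setminus\Omega=E_0$ trivially. Since $\Omega$ is bounded, the symmetric difference $E\,\Delta\,E_0=E\cap\Omega$ has finite Lebesgue measure, so Remark \ref{finmeas} yields $\overline\alpha(E\,\Delta\,E_0)=0$. Feeding this into the pointwise comparison of Proposition \ref{subsetssmin}(v) and passing to the $\limsup$ as $s\to 0^+$ gives $\overline\alpha(E_0)=\overline\alpha(E)<\omega_n/2$. Hence the hypothesis \eqref{weak_hp_beta} of Theorem \ref{positivecurvature} is met with $\beta=(\omega_n-2\overline\alpha(E))/4>0$, and the theorem furnishes the constants $s_0=s_0(E_0,\Omega)$ and the radii $\delta_s$ of \eqref{delta_wild_index_def}.

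\textbf{Step 2 (Uniform exterior ball condition).} Since $\overline{\Omega'}\subset\subset\Omega$ and $\partial E$ is $C^2$ in $\Omega$, one can fix an open set $\Omega''$ with $\overline{\Omega'}\subset\Omega''\subset\subset\Omega$, and then $\partial E\cap\overline{\Omega''}$ is a compact $C^2$ hypersurface. A standard uniform sphere condition (compactness plus boundedness of the second fundamental form) supplies a radius $r=r(E\cap\overline{\Omega'})>0$ such that at every point $q\in\partial E\cap\overline{\Omega'}$ there exists an exterior tangent ball $B_r(p_q)\subset\C E$ with $q\in\partial B_r(p_q)$. This is where the $C^2$ regularity is essential.

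\textbf{Step 3 (Choice of $\tilde s$ and invocation of Theorem \ref{positivecurvature}).} From the explicit form \eqref{delta_wild_index_def} one reads $\delta_s=e^{-c/s}$ with $c:=\log\frac{\omega_n+2\beta}{\omega_n+\beta}>0$; in particular $s\mapsto\delta_s$ is strictly increasing on $(0,s_0)$ and $\delta_s\to 0^+$ as $s\to 0^+$. I would therefore pick $\tilde s=\tilde s(E\cap\overline{\Omega'})\in(0,s_0)$ so small that $\delta_{\tilde s}\le r$; then $\delta_s\le\delta_{\tilde s}\le r$ for every $s\in(0,\tilde s]$. Applying Theorem \ref{positivecurvature} with $\sigma=\tilde s$ at each $q\in\partial E\cap\overline{\Omega'}$ (which lies in $\overline{\Omega}$, as required), the uniform exterior tangent ball of radius $r\ge\delta_{\tilde s}\ge\delta_s$ guarantees
\[
\liminf_{\rho\to 0^+}\I_s^\rho[E](q)\;\ge\;\frac{\beta}{s}\;=\;\frac{\omega_n-2\overline\alpha(E)}{4s},\qquad\forall\,s\in(0,\tilde s],\ \forall\,q\in\partial E\cap\overline{\Omega'}.
\]

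\textbf{Step 4 (Upgrading $\liminf$ to the principal value).} Since $\partial E$ is $C^2$ near every such $q$, the usual second-order cancellation on a symmetric neighborhood (exactly as explained after \eqref{PV-1}) guarantees that the principal value $\I_s[E](q)=\lim_{\rho\to 0^+}\I_s^\rho[E](q)$ exists; in particular the $\liminf$ coincides with $\I_s[E](q)$, and the desired lower bound follows. The only real obstacle in the plan is Step 2, namely extracting a \emph{$q$-uniform} lower bound on the radius of the exterior tangent ball from the $C^2$ hypothesis on $\partial E$ together with the compact inclusion $\overline{\Omega'}\subset\subset\Omega$; once this uniform radius is in hand, the rest of the argument is essentially a bookkeeping exercise matching the setup of Theorem \ref{positivecurvature}.
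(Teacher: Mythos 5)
Your proposal is correct and takes essentially the same route as the paper: reduce to Theorem \ref{positivecurvature} via a uniform exterior ball radius on the compact set $\partial E\cap\overline{\Omega'}$ (from $C^2$ regularity), then choose $\tilde s$ so small that $\delta_s$ stays below that radius. Your Steps 1 and 4 make explicit two checks the paper leaves implicit (that $\overline\alpha(E)=\overline\alpha(E\setminus\Omega)$, and that the $\liminf$ of the truncated curvatures equals the principal value under $C^2$ regularity), but these are embellishments of the same argument, not a different one.
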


\begin{proof}
Since $\partial E$ is of class $C^2$ in $\Omega$ and $\Omega'\subset\subset\Omega$, the set $E$ satisfies a uniform exterior
ball condition of radius $\tilde\delta=\tilde\delta(E\cap\overline{\Omega'})$ in $\overline{\Omega'}$, meaning that
$E$ has an exterior tangent ball of radius at least $\tilde\delta$ at every point $q\in\partial E\cap\overline{\Omega'}$.

Now, since $\delta_s\to0^+$ as $s\to0^+$, we can find $\tilde s=\tilde s(E\cap\overline{\Omega'})
<s_0(E\setminus\Omega,\Omega)$,
small enough such that $\delta_s<\tilde\delta$ for every $s\in(0,\tilde s]$. Then we can conclude by applying Theorem \ref{positivecurvature}.
\end{proof}

\noindent \textbf{Classification of $s$-minimal surfaces (Theorem \ref{THM}).}\label{alternative}
To classify the behavior of the $s$-minimal surfaces when $s$ is small,
we need to take into account the ``worst case scenario'',
that is the one in which the set behaves very badly in terms
of oscillations and lack of regularity. To this aim,
we make an observation about $\delta$-dense sets.

   \begin{center}
\begin{figure}[htpb]
	\hspace{0.79cm}
	\begin{minipage}[b]{0.79\linewidth}
	\centering
	\includegraphics[width=0.79\textwidth]{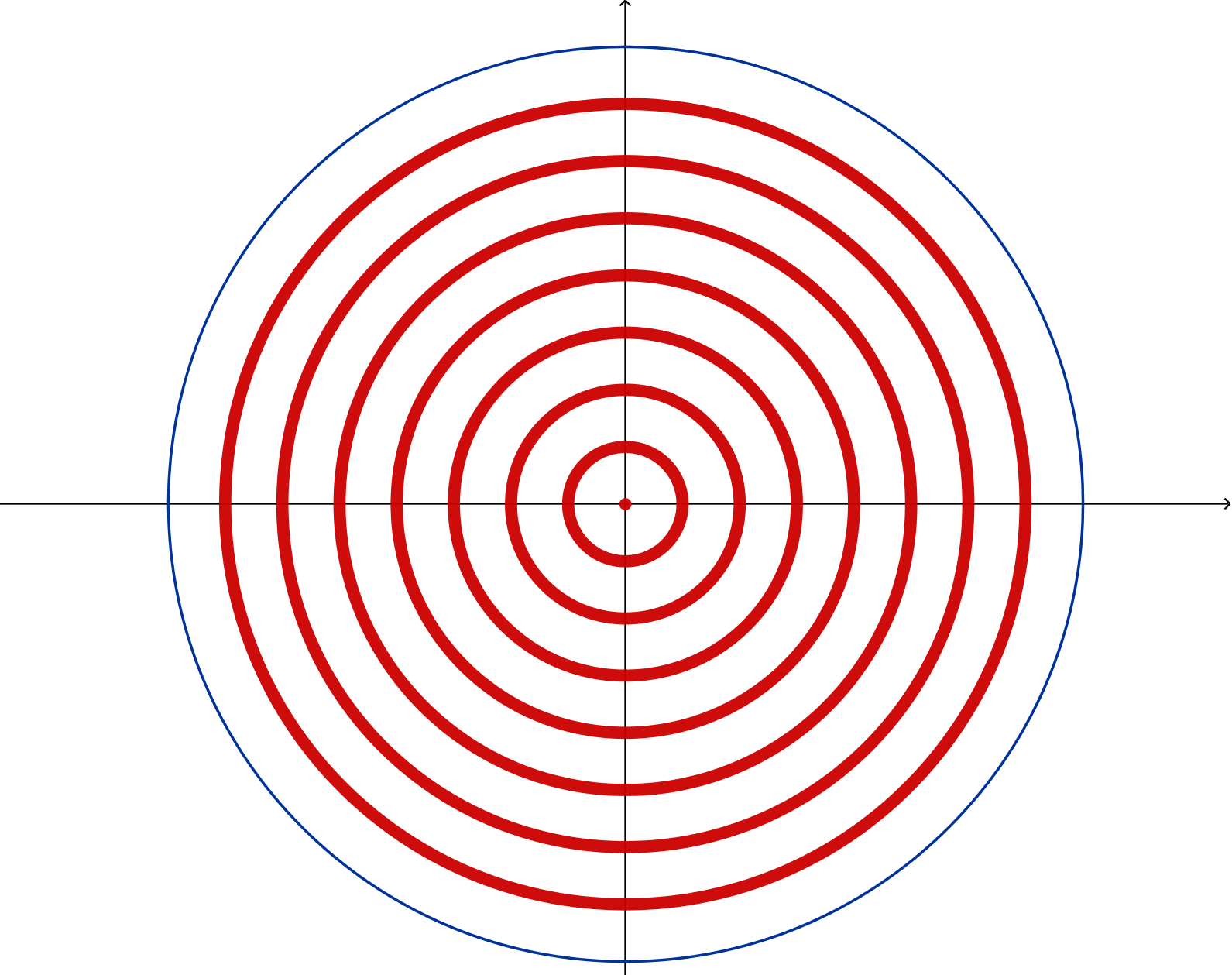}
	\caption{A $\delta$-dense set of measure $<\eps$}   
	\label{x3}
	\end{minipage}
\end{figure} 
	\end{center}
  \begin{remark}\label{deltadance}
  For every $k\ge1$ and every $\eps<2^{-k-1}$, we define the sets
\[\Gamma_k^\eps:=B_\eps\cup\bigcup_{i=1}^{2^k-1}\Big\{x\in\R^n\,\big|\,\frac{i}{2^k}-\eps<|x|<\frac{i}{2^k}+\eps\Big\}
\quad\textrm{ and }\quad\Gamma_k:=\{0\}\cup\bigcup_{i=1}^{2^k-1}\partial B_\frac{i}{2^k}.\]
%
\noindent Notice that for every $\delta>0$ there exists $\tilde k=\tilde k(\delta)$ such that for every $k\geq\tilde k$ we have
\[B_\delta(x)\cap\Gamma_k \neq \emptyset,\qquad\forall\,B_\delta(x)\subset B_1.\]
Thus, for every $k\geq\tilde k(\delta)$ and $\eps<2^{-k-1}$, the set $\Gamma_k^\eps$ is $\delta$-dense in $B_1$.
\noindent Moreover, notice that
\[\Gamma_k=\bigcap_{\eps\in(0,2^{-k-1})}\Gamma_k^\eps\quad\textrm{ and }\quad\lim_{\eps\to0^+}|\Gamma_k^\eps|=0.\]
It is also worth remarking that the sets $\Gamma_k^\eps$ have smooth boundary.
\noindent In particular, for every $\delta>0$ and every $\eps>0$ small, we can find a set $E\subset B_1$ which is $\delta$-dense in $B_1$ and whose measure is $|E|<\eps$.
This means that we can find an open set $E$ with smooth boundary, whose measure is arbitrarily small
and which is ``topologically arbitrarily dense'' in $B_1$.
\end{remark}

We introduce a useful geometric observation.

\begin{prop}\label{tgball}
Let $\Omega\subset \Rn$ be a bounded and connected open set with $C^2$ boundary and let $\delta\in(0,r_0)$,
for $r_0$ given in \eqref{r01}. 
If $E$ is not $\delta$-{dense} in $\Omega$ 
and $|E\cap\Omega|>0$, 
then there exists a point $q\in\partial E\cap\Omega$ such that $E$ has an exterior tangent ball at $q$ of radius $\delta$ (contained in $\Omega$),
i.e.
there exist $p\in \C E\cap \Omega$ such that
\[ B_\delta(p)\subset\subset\Omega,\qquad q \in \partial B_{\delta}(p) \cap \partial E
\quad \mbox{ and }  \quad B_{ \delta}(p) \subset \C E.\] 
\end{prop}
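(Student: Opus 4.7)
The plan is to run a continuous sliding-ball argument driven by the distance function $f(p):=d(p,E)$. The failure of $\delta$-density produces an initial center $x_0\in\Omega$ with $\overline{B_\delta(x_0)}\subset\Omega$ and $|B_\delta(x_0)\cap E|=0$; invoking the measure-theoretic convention stated at the beginning of the chapter, a small-ball argument shows that $B_\delta(x_0)$ cannot meet $E_{int}$ (an open subset of $E$) nor $\partial^- E$ (since any $z\in B_\delta(x_0)\cap\partial^- E$ would satisfy $|E\cap B_r(z)|>0$ for small $r$, contradicting the vanishing of $|B_\delta(x_0)\cap E|$). Consequently $B_\delta(x_0)\subset E_{ext}\subset\Co E$, and in particular $f(x_0)\geq\delta$ and $x_0\in\Omega_{-\delta}$.

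The heart of the argument will use the connectedness of $\Omega_{-\delta}$ to find a point where $f$ drops to $\delta$. Because $\delta<r_0$ and $\partial\Omega$ is of class $C^2$, the signed distance function $\bar d_\Omega$ is $C^2$ on $N_{2r_0}(\partial\Omega)$, from which I will derive two geometric facts about $\Omega_{-\delta}$: it is a non-empty connected open set, and every point of $\Omega$ belongs to some $B_\delta(p)$ with $p\in\Omega_{-\delta}$. The covering statement follows because any $x\in\Omega$ with $d(x,\partial\Omega)\leq\delta$ can be translated along $-\nabla\bar d_\Omega(x)$ by an amount $t\in(\delta-d(x,\partial\Omega),\delta)$ to produce a new point $p\in\Omega_{-\delta}$ with $|p-x|<\delta$. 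If $f\geq\delta$ held on all of $\Omega_{-\delta}$, this covering would force $\Omega\subset\Co E$, contradicting $|E\cap\Omega|>0$; hence some $p_1\in\Omega_{-\delta}$ satisfies $f(p_1)<\delta$. Choosing a continuous path $\gamma:[0,1]\to\Omega_{-\delta}$ from $x_0$ to $p_1$ and setting $t^*:=\min\{t\in[0,1]\,:\,f(\gamma(t))=\delta\}$, which is well-defined by continuity of $f\circ\gamma$ together with $f(\gamma(0))\geq\delta>f(\gamma(1))$, I put $p:=\gamma(t^*)$. Then $d(p,E)=\delta$ forces $B_\delta(p)\cap E=\emptyset$, and any sequence in $E$ realising the distance to $p$ accumulates at a point $q$ with $|q-p|=\delta$ that lies in $\overline E$; since $B_\delta(p)\subset\Co E$ provides $\Co E$-points arbitrarily close to $q$, one has $q\in\partial^-E=\partial E$. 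Finally, $p\in\Omega_{-\delta}$ gives $\overline{B_\delta(p)}\subset\subset\Omega$, so automatically $p\in\Co E\cap\Omega$ and $q\in\partial E\cap\Omega$, as required.

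The main obstacle I expect is the rigorous verification of the two geometric facts about $\Omega_{-\delta}$ invoked above, namely its connectedness and the covering $\Omega=\bigcup_{p\in\Omega_{-\delta}}B_\delta(p)$, under the sole hypothesis $\delta<r_0$. Both reduce to the statement that the normal retraction $x\mapsto x-t\nabla\bar d_\Omega(x)$ is a smooth diffeomorphism on $N_{2r_0}(\partial\Omega)$ for $|t|<2r_0$, which is exactly the content of the $C^2$-regularity of $\bar d_\Omega$ recalled in the paper's Appendix A.2; granted this deformation, $\Omega_{-\delta}$ inherits the homotopy type of $\Omega$ (and hence its connectedness), and the covering property follows by the explicit normal-sliding construction described above. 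The remainder of the proof is purely soft, relying only on the continuity of the distance function and the intermediate value theorem.
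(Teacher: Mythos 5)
Your proof is correct, and it takes a genuinely different route from the one in the paper. Both begin identically: the failure of $\delta$-density furnishes a ball $B_\delta(x_0)\subset E_{ext}$ with $x_0\in\Omega_{-\delta}$. The paper then fixes $\delta'=\min\{r_0,d(x_0,\partial\Omega)\}\in(\delta,r_0]$ and splits into two cases. When $\overline{E}$ meets $\Omega_{-\delta'}$, a sliding-ball argument (Lemma~\ref{slidetheballs}) along a path in the path-connected set $\overline{\Omega_{-\delta'}}$ yields the tangent point. When it does not, the paper instead dilates the collars $\Omega_{-\rho}$ (shrinking $\rho$ from $\delta'$) until $\partial\Omega_{-\tilde\rho}$ first touches $\partial E$, and the uniform interior-ball property of $\Omega_{-\tilde\rho}$ from Lemma~\ref{geomlem} produces the tangent ball. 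Your covering identity $\Omega=\bigcup_{p\in\Omega_{-\delta}}B_\delta(p)$, combined with $|E\cap\Omega|>0$, hands you a center $p_1\in\Omega_{-\delta}$ with $d(p_1,E)<\delta$ unconditionally, so the dilation case disappears and a single intermediate-value argument on $d(\cdot,E)$ along a path from $x_0$ to $p_1$ does the rest. This is a genuine unification of the paper's two cases; the remaining machinery (your IVT on $d(\cdot,E)$ versus the sup-argument in Lemma~\ref{slidetheballs}) is essentially interchangeable.

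The one point you should pin down concerns path connectedness of the \emph{open} set $\Omega_{-\delta}$. Proposition~\ref{retract} establishes path connectedness of the closure $\overline{\Omega_{-\delta}}$ only, and you genuinely need to stay in the open set: if $\gamma(t^*)\in\partial\Omega_{-\delta}$ then $\overline{B_\delta(\gamma(t^*))}$ touches $\partial\Omega$ and the compact containment $B_\delta(p)\subset\subset\Omega$ fails. The claim is true: a disconnection $\Omega_{-\delta}=U\sqcup V$ would force (by connectedness of the closure) a point $z\in\overline{U}\cap\overline{V}\subset\partial\Omega_{-\delta}$, and since $\partial\Omega_{-\delta}$ is a $C^2$ hypersurface with non-vanishing normal, $\Omega_{-\delta}\cap B_r(z)$ is a connected half-ball for small $r$, contradicting that both $U\cap B_r(z)$ and $V\cap B_r(z)$ are nonempty. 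But this step is not in the paper, and your appeal to ``$\Omega_{-\delta}$ inherits the homotopy type of $\Omega$'' is not quite what Proposition~\ref{retract} gives, since the retraction lands in the closure. Note that the paper's choice $\delta'>\delta$ sidesteps the issue entirely, because $\overline{\Omega_{-\delta'}}\subset\Omega_{-\delta}$ provides the needed slack; your covering forces you down to the full collar $\Omega_{-\delta}$ and removes that cushion, so the half-ball argument above should be spelled out.
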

\begin{proof}
Using Definition \ref{wild}, we have that there exists  $x\in \Omega$ for which $B_\delta(x)\subset\subset \Omega$ and $|B_\delta(x)\cap E|=0$, so $B_\delta(x)\subset E_{ext}$. If $B_\delta(x)$ is tangent to $\partial E$ then we are done.

Notice that
\[B_\delta(x)\subset\subset\Omega\quad\Longrightarrow\quad d(x,\partial\Omega)>\delta,\]
and let
\[\delta':=\min\{r_0,d(x,\partial\Omega)\}\in(\delta,r_0].\]
Now we consider the open set $\Omega_{-\delta'}\subset \Omega$
\[ \Omega_{-\delta'} :=\{ \bar d_{\Omega}<-\delta'\},\]  so $x\in \Omega_{-\delta'}$. According to Remark \ref{c21} and Lemma \ref{geomlem} we have that $\Omega_{-\delta'}$ has $C^2$ boundary and that 
\eqlab{\label{unifball} \Omega_{-\delta'} \mbox{ satisfies the uniform interior ball condition of radius at least } r_0.} 
 
 We have two possibilities:
 \eqlab{\label{inside} &\mbox{ i) } && \overline  E\cap \Omega_{-\delta'}\neq \emptyset \\ 
 				  &\mbox{ ii) }&&\emptyset \neq  \overline{E}\cap\Omega \subset \Omega \setminus \Omega_{-\delta'}.} 
 
 If i) happens, we pick any point $y\in \overline{E}\cap \Omega_{-\delta'}$.  
The set $\overline{\Omega_{-\delta'}}$ is path connected (see Proposition \ref{retract}), so there exists a path $c:[0,1]\longrightarrow
\R^n$ that connects $x$ to $y$ and that stays inside $\overline{\Omega_{-\delta'}}$, that is
\[c(0)=x,\qquad c(1)=y\quad\textrm{ and }\quad c(t)\in\overline{\Omega_{-\delta'}},\quad\forall\,t\in[0,1].\]
Moreover, since $\delta<\delta'$, we have
\[B_\delta\big(c(t)\big)\subset\subset\Omega\qquad\forall\,t\in[0,1].\]
Hence,
we can ``slide the ball'' $B_{\delta}(x)$ along the path and we obtain the desired
claim thanks to Lemma \ref{slidetheballs}.

Now, if we are in the case ii) of \eqref{inside}, then $\Omega_{-\delta'}\subset E_{ext}$, so we dilate $\Omega_{-\delta'}$ until we first touch 
$\overline E$. That is, we consider 
\[\tilde \rho:=\inf\{\rho\in[0,\delta'] \; \big| \; \Omega_{-\rho}\subset E_{ext}\}.\]
Notice that by hypothesis $\tilde \rho>0$. Then 
\[\overline{\Omega_{-\tilde \rho}} \subset \overline {E_{ext}}=E_{ext}\cup\partial E. \]
If
\[ \partial \Omega_{-\tilde\rho} \cap \partial E=\emptyset\quad  \mbox { then }\quad  \overline{ \Omega_{-\tilde \rho}} \subset  E_{ext},\] 
hence we have that
\[ d= d\left(  \overline E\cap \Omega\setminus \Omega_{-\delta'}, \overline {\Omega_{-\tilde \rho}}\right)\in(0,\tilde \rho),\]
therefore
\[ \Omega_{-\tilde \rho} \subset \Omega_{-(\tilde \rho-d)}\subset E_{ext}.\]
This is in contradiction with the definition of $\tilde \rho$. Hence, 
 there exists $q \in \partial \Omega_{-\tilde \rho} \cap \partial E$. 

Recall that, by definition of $\tilde\rho$, we have
$\Omega_{-\tilde \rho} \subset \C E.$ Thanks to \eqref{unifball}, there exists a tangent ball at $q$ interior to $\Omega_{-\tilde \rho}$, hence a tangent ball at $q$ exterior to $E$, of radius at least $r_0>\delta$. 
This concludes the proof of the lemma.
\end{proof}

\begin{proof}[Proof of Theorem \ref{THM}]
We begin by proving part $(A)$.\\
First of all, since $\delta_s\to0^+$, we can find $s_1=s_1(E_0,\Omega)\in(0,s_0]$ such that $\delta_s<r_0$
for every $s\in(0,s_1)$.

Now let $s\in(0,s_1)$ and let $E$ be $s$-minimal in $\Omega$, with exterior data $E_0$.

We suppose that  $E\cap \Omega\neq \emptyset$ and prove that $E$ has to be $\delta_s$-dense. 

Suppose by contradiction that $E$ is not $\delta_s$-dense. Then, in view of 
Proposition \ref{tgball}, there exists $p\in \C E\cap \Omega$ such that
\[ q \in \partial B_{\delta_s}(p) \cap (\partial E\cap\Omega) \quad \mbox{ and }  \quad B_{ \delta_s}(p) \subset \C E.\] 
Hence we use the Euler-Lagrange theorem at $q$, i.e.
\[I_s[E](q) \leq 0,\] 
to obtain a contradiction with Theorem \ref{positivecurvature}. This says that $E$ is not $\delta_s$-dense and concludes the proof of part $(A)$ of Theorem \ref{THM}.

Now we prove the part $(B)$ of the Theorem.\\
Suppose that point $(B.1)$ does not hold true. Then we can find a sequence $s_k\searrow0$ and a sequence of sets $E_k$
such that each $E_k$ is $s_k$-minimal in $\Omega$ with exterior data $E_0$ and
\[E_k\cap\Omega\not=\emptyset.\]
We can assume that $s_k<s_1(E_0,\Omega)$ for every $k$. Then part $(A)$ implies that each $E_k$ is $\delta_{s_k}$-dense,
that is
\[|E_k\cap B_{\delta_{s_k}}(x)|>0\quad\forall\,B_{\delta_{s_k}}(x)\subset\subset\Omega.\]
Fix $\gamma=\frac{1}{2}$, take a sequence $\delta_h\searrow0$ and let $\sigma_{\delta_h,\frac{1}{2}}$ be as in Theorem \ref{notfull}. Recall that $\delta_s\searrow0$ as $s\searrow0$.
Thus for every $h$ we can find $k_h$ big enough such that
\eqlab{\label{koala}s_{k_h}<\sigma_{\delta_h,\frac{1}{2}}\qquad\textrm{and}\qquad\delta_{s_{k_h}}<\delta_h.}
In particular, this implies
\eqlab{\label{koala1}|E_{k_h}\cap B_{\delta_h}(x)|\ge|E_k\cap B_{\delta_{s_{k_h}}}(x)|
>0\quad\forall\,B_{\delta_h}(x)\subset\subset\Omega,}
for every $h$. On the other hand, by \eqref{koala} and Theorem \ref{notfull}, we also have that
\eqlab{\label{koala2}|\C E_{k_h}\cap B_{\delta_h}(x)|>0\quad\forall\,B_{\delta_h}(x)\subset\subset\Omega.}
This concludes the proof of part $(B)$. Indeed, notice that since $B_{\delta_h}(x)$ is connected, \eqref{koala1} and \eqref{koala2}
together imply that
\[\partial E_{k_h}\cap B_{\delta_h}(x)\neq\emptyset\quad\forall\,B_{\delta_h}(x)\subset\subset\Omega.\]
\end{proof}

\noindent \textbf{Stickiness to the boundary is a typical behavior (Theorem \ref{boundedset}).}\label{sticky}
Now we show that the ``typical behavior'' of the nonlocal minimal surfaces
is to stick at the boundary whenever they are allowed to do it,
in the precise sense given by Theorem \ref{boundedset}.

\begin{proof}[Proof of Theorem \ref{boundedset}]
Let
\[\delta:=\frac{1}{2}\min\{r_0,R\},\]
and notice that (see Remark \ref{ext_unif_omega})
\[B_\delta(x_0+\delta\nu_\Omega(x_0))\subset B_R(x_0)\setminus\Omega\subset\C E_0.\]
Since $\delta_s\to0^+$, we can find $s_3=s_3(E_0,\Omega)\in(0,s_0]$ such that $\delta_s<\delta$ for every $s\in(0,s_3)$.

Now let $s\in(0,s_3)$ and let $E$ be $s$-minimal in $\Omega$, with exterior data $E_0$.

We claim that
\eqlab{\label{pf_bdedset_eq1}B_\delta(x_0-r_0\nu_\Omega(x_0))\subset E_{ext}.}
We observe that this is indeed a crucial step to
prove Theorem \ref{boundedset}. Indeed, once this is established,
by Remark \ref{ext_unif_omega} we obtain that
\[B_\delta(x_0-r_0\nu_\Omega(x_0))\subset\subset\Omega.\]
Hence, since $\delta_s<\delta$, we deduce from \eqref{pf_bdedset_eq1}
that $E$ is not $\delta_s$-dense.
Thus, since $s<s_3\leq s_1$, Theorem \ref{THM} 
implies that $E\cap\Omega=\emptyset$, which concludes the proof of Theorem \ref{boundedset}.

This, we are left to prove \eqref{pf_bdedset_eq1}. Suppose by contradiction that
\[\overline{E}\cap B_\delta(x_0-r_0\nu_\Omega(x_0))\not=\emptyset,\]
and consider the segment $c:[0,1]\longrightarrow\Rn$,
\[c(t):=x_0+\big((1-t)\delta-t\,r_0\big)\nu_\Omega(x_0).\]
Notice that
\[B_\delta\big(c(0)\big)\subset E_{ext}\quad\mbox{ and }\quad B_\delta\big(c(1)\big)\cap\overline{E}\not=\emptyset,\]
so
\[t_0:=\sup\Big\{\tau\in[0,1]\,\big|\,\bigcup_{t\in[0,\tau]}B_\delta\big(c(t)\big)\subset E_{ext}\Big\}<1.\]
Arguing as in Lemma \ref{slidetheballs}, we conclude that
\[B_\delta\big(c(t_0)\big)\subset E_{ext}\quad\mbox{ and }\quad \exists\,q\in\partial B_\delta\big(c(t_0)\big)\cap\partial E.\]
By definition of $c$, we have that either $q\in\Omega$ or
\[q\in\partial\Omega\cap B_R(x_0).\]
In both cases (see Theorem 5.1 in \cite{nms} and Theorem 1.1 in \cite{elsulbordo}) we have
\[\I_s[E](q)\leq0,\]
which gives a contradiction with Theorem \ref{positivecurvature}.
\end{proof}

 
   \subsection{The contribution from infinity of some supergraphs} \label{sectexamples} We compute  in this Subsection the contribution from infinity of some particular supergraphs.
 \begin{example}[The cone] \label{THECONE} Let $ S \subset  \mathbb S^{n-1}$ be a portion of the unit sphere, $\mathfrak o:=\Ha^{n-1}(S)$ and
  \[ C:=\{ t\sigma  \; \big| \; t\geq 0, \;\sigma\in  S)\}.\] 
  Then the contribution from infinity is given by the opening of the cone,
  \eqlab{\label{cony} \alpha(C) = \mathfrak o.}
  Indeed,
  \[\alpha_s(0,1,C)= \int_{\C B_1} \frac{\chi_C(y)}{|y|^{n+s}}\, dy = \Ha^{n-1}(S) \int_1^{\infty} t^{-s-1}\, dt=
  \frac{\mathfrak o}s,  \]
and we obtain the claim by passing to the limit. Notice that this says in particular that the contribution from infinity of a half-space is $\omega_n/2$.
  \end{example} 
  
  \begin{center}
\begin{figure}[htpb]
	\hspace{0.99cm}
	\begin{minipage}[b]{0.99\linewidth}
	\centering
	\includegraphics[width=0.99\textwidth]{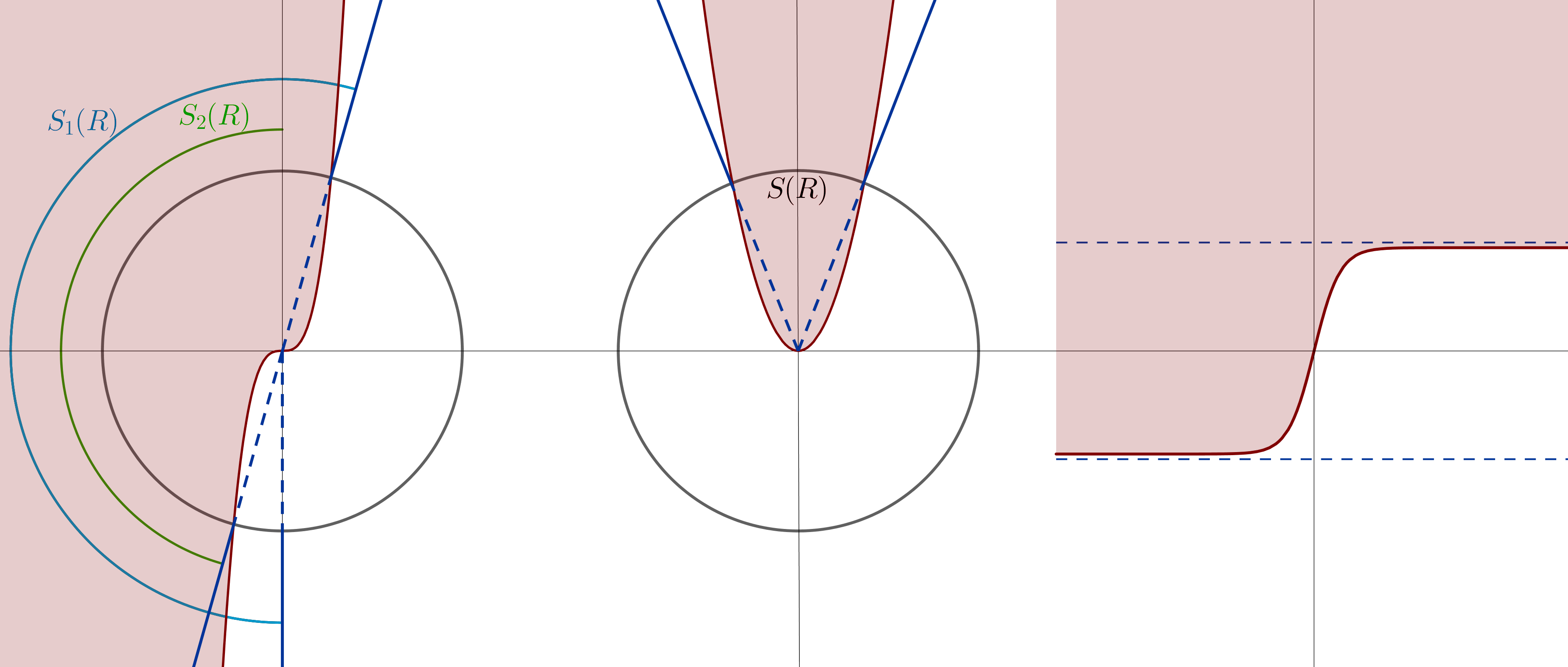}
	\caption{The contribution from infinity of $x^3$, $x^2$ and $\tanh x$}   
	\label{x3}
	\end{minipage}
\end{figure} 
	\end{center}
	  \begin{example}[The parabola]
  We consider the supergraph 
  \[ E:=\{ (x',x_n) \; \big| \; x_n\geq |x'|^2\},\] and we show that, in this case, \[ \alpha(E)=0.\]
  In order to see this, we take any $R>0$, intersect the ball $B_R$ with the parabola and build a cone on this intersection (see the second picture in Figure \ref{x3}), i.e. we define
  \[ S(R):=\partial B_R \cap E , \quad \quad C_R =\{ t\sigma  \; \big| \; t\geq 0, \;\sigma\in  S(R)\}.\] 
  We can explicitly compute the opening of this cone, that is
  \[ \mathfrak o (R)= \Bigg( \arcsin \frac{\left(\sqrt{4R^2+1}-1\right)^{1/2}}{R\, \sqrt 2 }\Bigg)\frac{\omega_n}{\pi} .\]
  Since $E\subset C_R$ outside of $B_R$, thanks to the monotonicity property in Proposition \ref{subsetssmin} and to \eqref{cony}, we have that
  \[  \overline \alpha (E) \leq \overline \alpha(C_R)  =\mathfrak o(R).\]
  Sending $R\to \infty$, we find that
\[  \overline \alpha (E) =0, \quad \mbox{ thus } \quad \alpha(E)=0.\]
  \end{example}
  More generally, if we consider for any given $c, \eps>0$ a function $u$ such that
  \[ u(x')>c|x'|^{1+\eps}, \quad \mbox{ for  any }|x'|>R \mbox{ for  some } R>0\]
  and\[   E:=\{ (x',x_n) \; \big| \; x_n\geq u(x')\},\]
  then
\[ \alpha(E)=0.\]
On the other hand, if we consider a function that is not rotation invariant, things can go differently, as we see in the next example. 
  
   \begin{example}[The supergraph of $x^3$]
 We consider the supergraph 
  \[ E:=\{ (x,y) \; \big| \; y\geq x^3\}.\] 
In this case, we show that \[\alpha(E) = \pi .\]
For this, given $R>0$, we intersect  $\partial B_R$ with $E$  and denote by $S_1(R)$ and $S_2(R)$ the arcs on the circle as the first picture in Figure \ref{x3}. We consider
the cones\[ C^1_R :=\{ t\sigma  \; \big| \; t\geq 0, \;\sigma\in  S_1(R)\}\, \quad  C^2_R :=\{ t\sigma  \; \big| \; t\geq 0, \;\sigma\in  S_2(R)\} \] and
notice that outside of $B_R$, it holds that $C_R^2\subset E\subset C_R^1$. 
Let  $\overline x_R $ be the solution of \[ x^6+x^2=R^2,\] that is the $x$-coordinate in absolute value of the intersection points $\partial B_R \cap \partial E$. Since $f(x)=x^6+x^2$ is increasing on $(0,\infty)$ and $ R^2=f(\overline x_R)<f(R^{1/3}),$ we have that  $\overline x_R< R^{1/3}$. Hence  
\[ \mathfrak o^1(R)= \pi + \arcsin \frac{\overline x_R}R  \leq \pi+  \arcsin \frac{R^{1/3}}R  ,\quad   \mathfrak o^2(R)  \geq \pi- \arcsin \frac{R^{1/3}}R .\] 
Thanks to the monotonicity property in Proposition \ref{subsetssmin} and to \eqref{cony} we have that
  \[  \overline \alpha (E) \leq\alpha(C_R^1)=  \mathfrak o^1(R), \quad \underline \alpha(E) \geq \alpha(C_R^2)= \mathfrak o^2(R)  \]
  and sending  $R\to \infty$ we obtain that
\[ \overline \alpha(E) \leq  \pi , \quad \underline \alpha(E) \geq \pi.\] Thus $\alpha(E)$ exists and we obtain the desired conclusion.
 \end{example}

 \begin{example}[The supergraph of a bounded function]\label{tanh}
 We consider the supergraph 
  \[ E:=\{ (x',x_n) \; \big| \; x_n\geq u(x') \}, \quad \quad{\mbox{with}}
\quad \quad \|u\|_{L^\infty(\Rn)} <M.\] 
We show that, in this case,
\[\alpha(E) = \frac{\omega_n}2 .\]
To this aim, let \bgs{ &\mathfrak{P}_1:=\{ (x',x_n)\; \big| \; x_n>M\} \\
	&\mathfrak{P}_2:=\{ (x',x_n)\; \big| \; x_n<-M\}.}
	We have that 
 \[   \mathfrak{P}_1 \subset E , \quad \quad \mathfrak{P}_2 \subset \C E . \] 
  Hence by Proposition \ref{subsetssmin}
  \[ \underline \alpha(E) \geq \overline \alpha (\mathfrak{P}_1)=\frac{\omega_n}2,\quad  \quad  
  \underline \alpha(\C E)\geq \overline\alpha (\mathfrak{P}_2)=\frac{\omega_n}2 .\]
  Since $\underline \alpha(CE) =\omega_n - \overline \alpha(E)$ we find that
   \[ \overline \alpha(E)\leq \frac{\omega_n}2,\]
  thus the conclusion. An example of this type is depicted in Figure \ref{x3} (more generally, the result holds for the supergraph in $\Rn$  
 $ \{ (x',x_n) \; \big| \; x_n\geq \tanh x_1\}$).
 \end{example}

 \begin{example}[The supergraph of a sublinear graph]\label{candygr}
   More generally, we can take the supergraph of a function that grows sublinearly at infinity, i.e.  
  \[  E:=\{(x',x_n)\;\big|\; x_n>u(x')\}, \qquad{\mbox{with}}\qquad 
\lim_{|x'|\to +\infty} \frac{|u(x')|}{|x'|}=0 .\]
In this case, we show that \[ \alpha(E)= \frac{\omega_n}2.\]
Indeed, for any $\eps>0$ we have that there exists $R=R(\eps)>0$ such that
\[ |u(x')|<\eps |x'|, \quad \forall \;|x'|>R.\]
We denote
\[ S_1(R):= \partial B_R \cap \{(x',x_n)\;\big|\; x_n>\eps|x'|\}, \qquad S_2(R):= \partial B_R \cap \{(x',x_n)\;\big|\; x_n<-\eps|x'|\}\]
and
\[ C_R^i=\{ t\sigma\; \big| \; t\geq 0, \; \sigma \in S_i(R)\}, \quad \mbox{for }\; i=1,2.\]
We have that outside of $B_R$
\[ C_R^1\subset E, \qquad C_R^2\subset \C E ,\] and 
\[ \alpha (C_R^1)= \alpha (C_R^2)= \frac{ \omega_n}{\pi} \left( \frac{\pi}2- \arctan \eps \right).\]
We use Proposition \ref{subsetssmin}, (i), and letting $\eps$ go to zero, we obtain that $\alpha(E)$ exists and 
\[ \alpha (E)= \frac{\omega_n}2.\]
 A particular example of this type is given by
 \[ E:=\{(x',x_n)\;\big|\; x_n>c|x'|^{1-\eps} \}, \quad \mbox{ when } |x'|>R\; \;  \mbox{ for some } \eps\in(0,1],\,c\in\R,\, R>0     .\]
 \end{example}
 In particular using the additivity property in Proposition \ref{subsetssmin} we can compute $\alpha$ for sets that lie between two graphs. 
 \begin{example}[The ``butterscotch hard candy'']
  \begin{center}
\begin{figure}[htpb]
	\hspace{0.89cm}
	\begin{minipage}[b]{0.89\linewidth}
	\centering
	\includegraphics[width=0.89\textwidth]{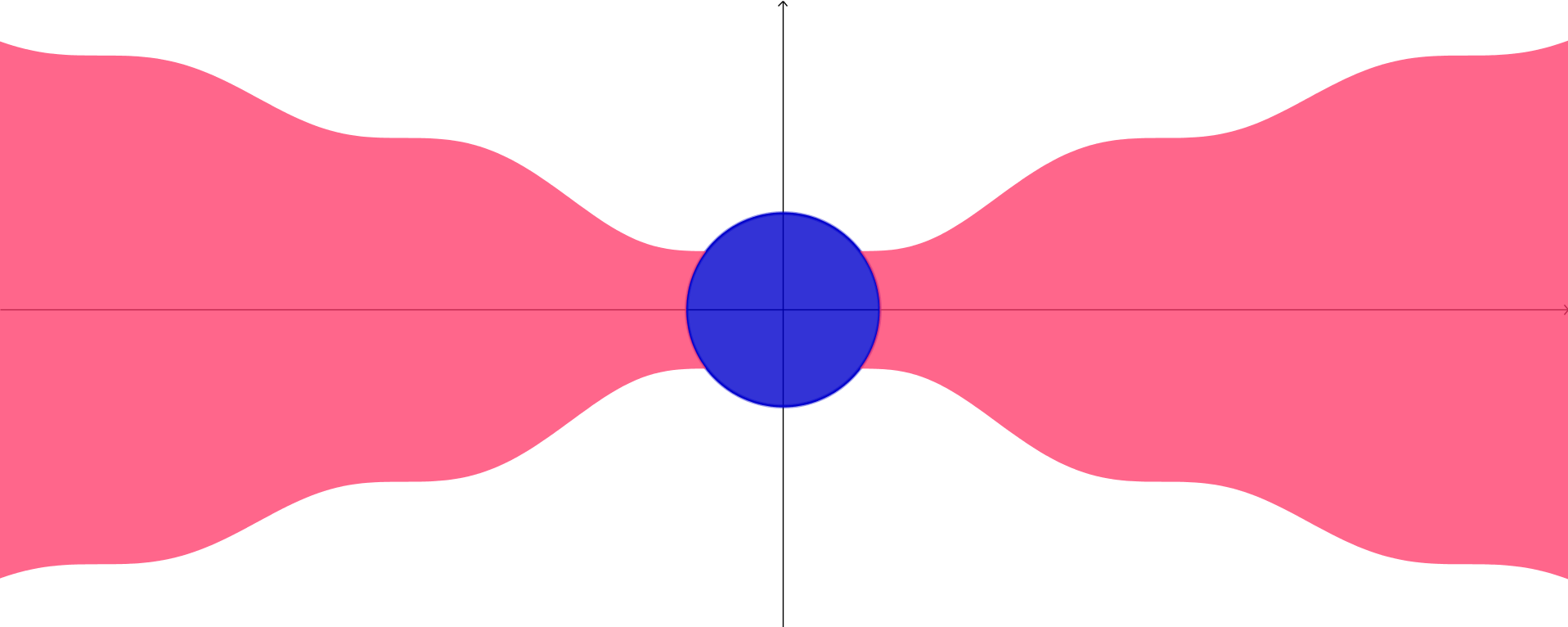}
	\caption{The ``butterscotch hard candy'' graph}   
	\label{candy_pic}
	\end{minipage}
\end{figure} 
	\end{center}
Let~$E\subset\R^n$ be such that
\[ E\cap\{|x'|>R\}\subset
\{(x',x_n)\;\big|\; |x'|>R\;,\;\,|x_n|<c|x'|^{1-\eps} \},   \;\; 
\; \;  \mbox{ for some } \eps\in(0,1],\,c>0,\,  R>0     ,\]
(an example of such a set $E$ is given in Figure \ref{candy_pic}).
In this case, we have that \[\alpha(E)=0.\]
Indeed, we can write $E_1:=E\cap\{|x'|>R\}$ and $E_2:=
E\cap\{|x'|\le R\}$. Then,
using
 the computations in Example \ref{candygr}, we have by
the monotonicity and the additivity properties in Proposition \ref{subsetssmin} that
\[ \overline{\alpha}(E_1) 
\le \alpha\big(\{x_n>-c|x'|^{1-\eps}\}\big)-\alpha\big(\{x_n>c|x'|^{1-\eps}\}\big)=0.\]
Moreover, $E_2$ lies inside $\{|x_1|\le R\}$. Hence, again by Proposition \ref{subsetssmin} and by Example \ref{THECONE},
we find
\[\overline{\alpha}(E_2)\le\alpha\big(\{|x_1|\le R\}\big)=\alpha\big(\{x_1\le R\}\big)-\alpha\big(\{x_1<-R\}\big)=0.\]
Consequently, using again the additivity property in Proposition \ref{subsetssmin}, we obtain that
$$ \overline\alpha(E)\le \overline\alpha(E_1)+\overline\alpha(E_2)
=0,$$
that is the desired result.
\end{example}
We can also compute $\alpha$ for sets that have different growth ratios in different directions. For this, we have the following example.
   \begin{example}[The supergraph of a superlinear function on a small cone]
  We consider a set lying in the half-space, deprived of a set that grows linearly at infinity.  We denote by $\tilde S$ the portion of the sphere given by 
 \bgs{ \tilde{S}:=\Big\{\sigma \in \mathbb S^{n-2}\,\Big|\,\sigma=(&\cos\sigma_1, \sin \sigma_1\cos\sigma_2,\dots, \sin \sigma_1\dots\sin \sigma_{n-2}),\\ &  \mbox{ with } \sigma_i\in \lr{\frac{\pi}2-\bar \eps, \frac{\pi}2+\bar \eps},\;  i=1,\dots,n-2\Big\} ,} where $\overline \eps \in (0,{\pi}/{2})$.
 For $x_0\in \Rn$ and $k>0$ we define  the supergraph $E\subset \Rn$ as
\bgs{\label{sigma}  E:=\big\{ (x',x_n)\in \Rn \; &\big| \; x_n\geq u(x')\big \} \quad   \mbox{ where }  \quad
 u(x')=\alig{ & \, k|x'-x_0'| &\mbox{ for } &x'\in X, \\
		 & \, 0 &\mbox{ for } &x'\notin X,  }\\
	  &X= \{ x'   \in \R^{n-1}  \mbox{ s.t. }x' = t\sigma +x_0',\,  \sigma \in  \tilde S\}.   }
We remark that $X\subset \{x_n=0\}$ is the cone ``generated'' by $\tilde S$ 
and centered at $x_0$.	 Then
\eqlab{\label{alphasigma} \alpha(E) = \frac{\omega_n}2 -  \mathcal{H}^{n-2}(\tilde S)  \int_{0}^k \frac{dt}{(1+t^2)^{\frac{n}2}}.}
Let 
\[ \mathfrak{P}_+
:=\{ (x',x_n) \; \big| \; x_n>0\}, \quad \quad \mathfrak{P}_-:=\{ (x',x_n) \; \big| \; x_n<0\}\] and we consider
the subgraph
\[ F:= \big\{ (x',x_n)\; \big| \; 0<x_n< u(x')\big \}.\]
Then \[ E \cup F=\mathfrak P_+, \quad \quad \mathfrak P_-\cup F = \C E.\] 
Using the additivity property in Proposition \ref{subsetssmin}, we see that
\eqlab{\label{ah123}  \overline \alpha(E)\geq \frac{\omega_n}2-\overline \alpha(F), \quad \quad  \omega_n - \underline \alpha(E) = \overline \alpha(\C E)\leq \frac{\omega_n}2 +\overline \alpha(F).} 
Let $R>0$ be arbitrary. We get that
\[\alpha_s(x_0,R,F) \leq  \int_{\left(B'_R(x'_0)\times \R\right)\cap \C B_R(x_0)}\frac{\chi_{F}(y)}{|y-x_0|^{n+s}}\, dy + \int_{\C \left( B'_R(x'_0)\times \R\right)}\frac{\chi_{F}(y)}{|y-x_0|^{n+s}}\, dy\]
so
\eqlab{\label{i123}  \alpha_s(x_0,R,F) \leq &\;  \int_{B'_R(x'_0)}\frac{dy'}{|y'-x_0'|^{n-1+s}}\int_{\frac{\sqrt{R^2-|y'-x_0'|^2}}{|y'-x_0'|}}^{\infty} \frac{dt}{(1+t^2)^{\frac{n+s}2}}
   \\ &\; + 
   \int_{\C B'_R(x'_0)\cap X}\frac{dy'}{|y'-x_0'|^{n-1+s}}\int_{0}^{k} \frac{dt}{(1+t^2)^{\frac{n+s}2}}\\&\;=I_1+I_2.} 
   Using that $1+t^2\geq \max\{1,t^2\}$ and passing to polar coordinates, we obtain that
   \bgs{ I_1= &\; \int_{B'_R(x_0')}\frac{dy'}{|y'-x_0'|^{n-1+s}}\bigg(\int_{\frac{\sqrt{R^2-|y'-x_0'|^2}}{|y'-x_0'|}}^{\frac{R}{|y'-x_0'|}} \frac{dt}{(1+t^2)^{\frac{n+s}2}} + \int_{\frac{R}{|y'-x_0'|}}^{\infty}\frac{dt}{(1+t^2)^{\frac{n+s}2}}\bigg)\\
   \leq&\; \omega_{n-1} \bigg( \int_0^R \tau^{-s-2}   \left(R- \sqrt{R^2-\rho^2}\right)\, d \rho  +  \frac{R^{-n-s+1}}{n+s-1} \int_0^R \rho^{n-2}\, d \rho    \bigg)\\
   =&\;   \omega_{n-1} \bigg( R^{-s} \int_0^1 \tau^{-s-2}   \left(1- \sqrt{1-\tau^2}\right)\, d \tau  +  \frac{R^{-s}}{(n+s-1)(n-1)}  \bigg)  .}
   Also, for any $\tau \in (0,1)$ we have that
\[1 -  \sqrt{1-\tau^2 }\leq c \tau^2,\] for some positive constant $c$, independent on $n,s$.
Therefore 
\bgs{ I_1\leq  \frac{ c \omega_{n-1} R^{-s} }{1-s} + \frac{ \omega_{n-1} R^{-s} }{(n-1)(n+s-1)} .}
Moreover, 
\[ I_2 =   \mathcal{H}^{n-2}(\tilde S) \frac{R^{-s}}s \int_{0}^k \frac{dt}{(1+t^2)^{\frac{n+s}2}}.\]
So
passing to limsup and liminf as $s\to 0^+$ in \eqref{i123} and using Fatou's lemma we obtain that
\[\overline \alpha(F)\leq   \mathcal{H}^{n-2}(\tilde S)\int_{0}^k \frac{dt}{(1+t^2)^{\frac{n}2}},\quad \quad \underline \alpha(F) \geq  \mathcal{H}^{n-2}(\tilde S)\int_{0}^k \frac{dt}{(1+t^2)^{\frac{n}2}}.\] In particular $\alpha(F)$ exists, and from \eqref{ah123} we get that 
\[ \frac{\omega_n}2 -\alpha(F) \leq \underline \alpha(E)\leq \overline \alpha(E)\leq\frac{\omega_n}2 -\alpha(F).\] 
Therefore, $\alpha(E)$ exists and 
\[  \alpha(E) =  \frac{\omega_n}2 -  \mathcal{H}^{n-2}(\tilde S)  \int_{0}^k \frac{dt}{(1+t^2)^{\frac{n}2}} .\] 
 \end{example}

\subsection{Continuity of the fractional mean curvature and a sign changing
property \\of the nonlocal mean curvature}\label{cont}

We use a formula proved in \cite{regularity} to show that the $s$-fractional mean curvature is continuous
with respect to $C^{1,\alpha}$ convergence of sets, for any $s<\alpha$ and with respect to $C^2$ convergence of sets, for $s$ close to 1.

By $C^{1,\alpha}$ convergence of sets we mean that our sets locally converge in measure and can locally be described as the supergraphs
of functions which converge in $C^{1,\alpha}$. 

\begin{defn}\label{convofsets}
Let $E\subset\R^n$ and let $q\in\partial E$ such that $\partial E$ is $C^{1,\alpha}$ near $q$, for some $\alpha\in(0,1]$. We say that the sequence $E_k\subset\R^n$ converges to $E$ in a $C^{1,\alpha}$ sense (and write $E_k\xrightarrow{C^{1,\alpha}}E$) in a neighborhood of $q$ if:\\
(i)  the sets $E_k$ locally converge in measure to $E$, i.e.
\[|(E_k\Delta E)\cap B_r|\xrightarrow{k\to\infty}0 \quad \mbox{ for any } r>0\]
and \\
(ii) the boundaries $\partial E_k$ converge to $\partial E$ in $C^{1,\alpha}$ sense in a neighborhood of $q$.\\
We define in a similar way the $C^2$ convergence of sets.
\end{defn}

More precisely, 
we denote  
\[Q_{r,h}(x):=B'_r(x')\times(x_n-h,x_n+h),\]
for $x\in\R^n$, $r,h>0$. If $x=0$, we drop it in formulas and simply write $Q_{r,h}:=Q_{r,h}(0)$. Notice that up to a translation and a rotation, we can suppose that $q=0$ and
\eqlab{\label{opossum1}E\cap Q_{2r,2h}=\{(x',x_n)\in\R^n\,|\,x'\in B'_{2r},\,u(x')<x_n<2h\},}
for some $r,h>0$ small enough and $u\in C^{1,\alpha}(\overline B'_{2r})$ such that $u(0)=0$. 
Then, point $(ii)$ means that we can write
\eqlab{\label{opossum}E_k\cap Q_{2r,2h}=\{(x',x_n)\in\R^n\,|\,x'\in B'_{2r},\,u_k(x')<x_n<2h\},}
for some functions $u_k\in C^{1,\alpha}(\overline B_{2r}')$ such that
\eqlab{\label{norm_graph_conv}\lim_{k\to\infty}\|u_k-u\|_{C^{1,\alpha}(\overline B'_{2r})}=0.}
\smallskip
We remark that, by the continuity of $u$,
up to considering a smaller $r$, we can suppose that
\eqlab{\label{bded_graph_hp}|u(x')|<\frac{h}{2},\qquad\forall\,x'\in B'_{2r}.}

We have the following result.

\begin{theorem}\label{everything_converges}
Let $E_k\xrightarrow{C^{1,\alpha}}E$ in a neighborhood of $q\in \partial E$. Let $q_k\in\partial E_k$ be such that $
q_k\longrightarrow q$ and let $s,s_k\in(0,\alpha)$ be such that $s_k\xrightarrow{k\to\infty} s$.
Then
\[\lim_{k\to\infty}\I_{s_k}[E_k](q_k)=\I_s[E](q).\]

Let $E_k\xrightarrow{C^2}E$ in a neighborhood of $q\in \partial E$. Let $q_k\in\partial E_k$ be such that $q_k\longrightarrow q$ and let $s_k\in(0,1)$ be such that $s_k\xrightarrow{k\to\infty} 1$. Then
\[\lim_{k\to\infty}(1-s_k)\I_{s_k}[E_k](q_k)=\omega_{n-1}H[E](q).\]
%
\end{theorem}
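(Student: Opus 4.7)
My plan is to write $\I_{s_k}[E_k](q_k)$ as the sum of a \emph{far-field} contribution, integrated on $\R^n\setminus B_\rho(q_k)$ for a small but fixed $\rho>0$, and a \emph{near-field} contribution on $B_\rho(q_k)\subset Q_{r,2h}(q_k)$, and then pass to the limit in each piece separately. The choice of $\rho$ will be taken small enough that $E_k$ (for $k$ large) and $E$ are both representable as graphs of $u_k$, $u$ over the horizontal ball $B'_\rho(q'_k)$, $B'_\rho(q')$ via \eqref{opossum1}--\eqref{opossum}. Since $q_k\to q$ and $u_k\to u$ in $C^{1,\alpha}(\overline B'_{2r})$, for large $k$ we have $q'_k\to q'$ and the graphs lie nicely inside $Q_{r,2h}$.

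For the far part, I would observe that $|y-q_k|^{-n-s_k}\le C\rho^{-n-s_k}$ on $B_R\setminus B_\rho(q_k)$ for any fixed $R>0$, and that the local convergence in measure from part~(i) of Definition~\ref{convofsets} gives $\|\chi_{E_k}-\chi_E\|_{L^1(B_R)}\to 0$. Outside $B_R$ I would control the difference by the trivial bound $|\chi_{E_k}-\chi_E|\le 2$ together with the tail estimate $\int_{\C B_R}|y-q_k|^{-n-s_k}\,dy\le C\,R^{-s_k}/s_k$, which is uniformly small in $k$ as long as $s_k$ stays bounded away from $0$ (this is where $s>0$ enters). Combined with dominated convergence and $s_k\to s$, this yields $\int_{\C B_\rho(q_k)}\frac{\chi_{\C E_k}-\chi_{E_k}}{|y-q_k|^{n+s_k}}dy \to \int_{\C B_\rho(q)}\frac{\chi_{\C E}-\chi_{E}}{|y-q|^{n+s}}dy$.

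The near-field part is the delicate one, because of the principal value. Here I would invoke the graph formula from \cite{regularity}: integrating out the vertical variable $y_n$ in the slab $Q_{\rho,2h}(q_k)$ turns the signed indicator into an explicit odd function $\Phi_{s_k}$ of $a_k(y'):=\big(u_k(y')-q_{k,n}\big)/|y'-q'_k|$, yielding a formula of the schematic type
\[
\int_{Q_{\rho,2h}(q_k)}\frac{\chi_{\C E_k}-\chi_{E_k}}{|y-q_k|^{n+s_k}}\,dy
=\int_{B'_\rho(q'_k)}\Phi_{s_k}\!\big(a_k(y')\big)\,\frac{dy'}{|y'-q'_k|^{n-1+s_k}}+R_k,
\]
where $R_k$ is a remainder from the truncation at $y_n=q_{k,n}\pm 2h$ which is uniformly $O(h^{-s_k})$ and stable under the limit. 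Because $\Phi_{s_k}$ is odd, the contribution of the linear part of $u_k$ at $q'_k$ vanishes by symmetry of $B'_\rho(q'_k)$ around $q'_k$, so we can subtract it freely and estimate the integrand by $\bigl|\Phi_{s_k}(a_k(y'))-\Phi_{s_k}(\nabla u_k(q'_k)\cdot(y'-q'_k)/|y'-q'_k|)\bigr|\cdot|y'-q'_k|^{1-n-s_k}\le C\,|y'-q'_k|^{\alpha-n-s_k+1}$, using the $C^{1,\alpha}$ bound on $u_k$. This is integrable on $B'_\rho(q'_k)$ precisely when $s_k<\alpha$, which is the hypothesis. Now $a_k\to a$ uniformly and $\Phi_{s_k}\to \Phi_s$, so dominated convergence on $B'_\rho(q'_k)$ delivers the limit of the near-field piece. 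Adding the two pieces yields the first claim.

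The hardest step is ensuring the uniform domination on the near-field integral, since both $s_k$ and $u_k$ are varying simultaneously. It is crucial that the exponent $\alpha-n-s_k+1$ be bounded above by $\alpha-n-s'+1$ for some $s'<\alpha$, which holds as soon as $k$ is large (because $s_k\to s<\alpha$); this allows a single $L^1$ majorant to work. For the second part of the theorem, the same decomposition is carried out with the normalization $(1-s_k)\I_{s_k}$; the $C^2$ regularity improves the bound on $a_k(y')-\nabla u_k(q'_k)\cdot(y'-q'_k)/|y'-q'_k|$ to $O(|y'-q'_k|)$, so the near-field integrand becomes $O(|y'-q'_k|^{2-n-s_k})$, which is integrable for all $s_k<2$, and the factor $(1-s_k)$ absorbs the divergence as $s_k\to 1$, giving in the limit the classical mean curvature integrand up to $\omega_{n-1}$, in agreement with the asymptotic quoted from \cite{Abaty}.
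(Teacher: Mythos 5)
Your proposal follows essentially the same route as the paper: split the fractional mean curvature into a near-field piece controlled by the graph representation formula from \cite{regularity} (with the $C^{1,\alpha}$ H\"older bound providing the integrable majorant for dominated convergence, possible because $s_k$ stays below some $\sigma_1<\alpha$) and a far-field piece controlled by the local convergence in measure plus a uniform tail estimate, and then for $s_k\to1$ invoke the known asymptotic of \cite{Abaty} after showing that the $C^2$ convergence kills the difference. The only differences from the paper's writeup are bookkeeping (ball versus cylinder for the split, truncating at $B_R$ first and sending $R\to\infty$ at the end versus estimating the tail in place), and a small terminological slip in your majorant discussion (for $|y'|<1$ you need the exponent $\alpha-n+1-s_k$ bounded \emph{below} by a fixed admissible exponent, not above); none of this affects the substance.
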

\medskip 

A similar problem is studied also in \cite{mattheorem}, where the author estimates the
difference between the fractional mean curvature of a set $E$ with $C^{1,\alpha}$
boundary and that of the set
$\Phi(E)$, where $\Phi$ is a $C^{1,\alpha}$ diffeomorphism of $\R^n$,
%
in terms of the $C^{0,\alpha}$ norm of the Jacobian of the diffeomorphism $\Phi$.


\smallskip

When $s\to 0^+$ we do not need the $C^{1,\alpha}$ convergence of sets, but only the uniform boundedness  of the $C^{1,\alpha}$ norms of the functions defining the boundary of $E_k$ in a neighborhood of the boundary points. However, we have to require that the measure of the symmetric difference is uniformly bounded. More precisely:

\begin{prop}\label{propsto0}
Let $ E\subset \Rn$
be such that $\alpha(E)$ exists.  Let 
$q \in \partial E$ be such that 
\bgs{ E\cap Q_{r,h}(q)=\{(x',x_n)\in\R^n\,|\, x'\in B'_{r}(q'),\,u(x')<x_n<h+q_n\},}
for some $r,h>0$ small enough and $u\in C^{1,\alpha}(\overline B'_{r}(q'))$ such that $u(q')=q_n$. 
Let $E_k\subset \Rn$ be such that
\[ |E_k\Delta E|<C_1  \] 
for some $C_1>0$. Let $q_k\in \partial E_k \cap B_d$, for some $d>0$, such that 
 \bgs{E_k\cap Q_{r,h}(q_k)=\{(x',x_n)\in\R^n\,|\,x'\in B'_{r}(q_k'),\,u_k(x')<x_n<h+q_{k,n}\} }
for some functions $u_k\in C^{1,\alpha}(\overline B_{r}'(q_k'))$ such that $u_k(q_k')=q_{k,n}$ and
\[ \|u_k\|_{C^{1,\alpha}(\overline B'_{r}(q'_k))} <C_2 \] 
for some $C_2>0$. Let $s_k\in(0,\alpha)$ be such that $s_k\xrightarrow{k\to\infty} 0$. Then
\bgs{\lim_{k\to\infty}s_k\I_{s_k}[E_k](q_k)=\omega_n-2\alpha(E).}
\end{prop}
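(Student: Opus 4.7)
\textbf{Proof proposal for Proposition \ref{propsto0}.}
The plan is to split $\I_{s_k}[E_k](q_k)$ into a local P.V.\ integral over $B_\rho(q_k)$ and an exterior integral $\I_{s_k}^\rho[E_k](q_k)$, showing that the local contribution stays uniformly bounded (so $s_k$ times it vanishes) while the exterior contribution, multiplied by $s_k$, converges to $\omega_n - 2\alpha(E)$. First I would fix $\rho = \rho(r,h,C_2) > 0$ small enough that $B_\rho(q_k) \subset Q_{r,h}(q_k)$ for every $k$, which is possible because the uniform $C^1$ bound on $u_k$ together with $u_k(q_k') = q_{k,n}$ gives $|u_k(y') - q_{k,n}| \leq C_2 |y' - q_k'|$, so the graph stays inside the cylinder. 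Within such a ball the boundary $\partial E_k$ coincides with the graph of $u_k$, so I would subtract the affine tangent halfspace $T_k := \{y_n > q_{k,n} + \nabla u_k(q_k')\cdot(y' - q_k')\}$, whose contribution on the symmetric ball $B_\rho(q_k)$ vanishes in the P.V.\ sense by the involution $y \mapsto 2q_k - y$. This yields
\[ L_k := \int_{B_\rho(q_k)} \frac{\chi_{\C E_k}(y) - \chi_{E_k}(y)}{|y-q_k|^{n+s_k}}\, dy = 2\int_{B_\rho(q_k)} \frac{\chi_{T_k}(y) - \chi_{E_k}(y)}{|y-q_k|^{n+s_k}}\, dy, \]
where the right-hand side is absolutely convergent and supported in $E_k \Delta T_k$. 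A first-order Taylor expansion gives $|u_k(y') - q_{k,n} - \nabla u_k(q_k')\cdot(y'-q_k')| \leq [\nabla u_k]_{C^{0,\alpha}}\,|y' - q_k'|^{1+\alpha} \leq C_2\,|y'-q_k'|^{1+\alpha}$, so the vertical thickness of $E_k \Delta T_k$ at $y'$ is at most $C_2 |y'-q_k'|^{1+\alpha}$, while $|y-q_k| \sim |y'-q_k'|$ on that thin slab. Polar coordinates in $y'$ then bound $|L_k|$ by $C\rho^{\alpha - s_k}/(\alpha - s_k)$, uniformly in $k$ once $s_k \leq \alpha/2$, so $s_k L_k \to 0$.

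For the exterior piece, using $\chi_{\C E_k} - \chi_{E_k} = 1 - 2\chi_{E_k}$ I would write
\[ \I_{s_k}^\rho[E_k](q_k) = \int_{\C B_\rho(q_k)}\frac{dy}{|y-q_k|^{n+s_k}} - 2\,\alpha_{s_k}(q_k,\rho,E_k) = \frac{\omega_n\,\rho^{-s_k}}{s_k} - 2\,\alpha_{s_k}(q_k,\rho,E_k), \]
so after multiplying by $s_k$ the Jacobian term tends to $\omega_n$ and the remaining task is to prove $s_k\,\alpha_{s_k}(q_k,\rho,E_k) \to \alpha(E)$.

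To settle this last limit, given $\eps > 0$ I would pick $R = R(\eps,d) > 2d$ such that $(1-\eps)|y| \leq |y-q_k| \leq (1+\eps)|y|$ for every $|y| \geq R$ and every $q_k \in \overline{B_d}$, and decompose
\[ s_k\,\alpha_{s_k}(q_k,\rho,E_k) = s_k\!\int_{B_R \setminus B_\rho(q_k)}\!\frac{\chi_{E_k}(y)}{|y-q_k|^{n+s_k}}\,dy + s_k\!\int_{\C B_R}\!\frac{\chi_{E_k}(y)}{|y-q_k|^{n+s_k}}\,dy. \]
The first piece is bounded by $s_k \rho^{-n-s_k} |B_R|$, which tends to $0$. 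For the second I would use the two-sided comparison to pass from $|y-q_k|$ to $|y|$, and then the bound
\[ s_k \int_{\C B_R} \frac{|\chi_{E_k}(y) - \chi_E(y)|}{|y|^{n+s_k}}\,dy \leq s_k\,R^{-n-s_k}\,|E_k \Delta E| \leq s_k\,R^{-n-s_k}\,C_1 \longrightarrow 0 \]
to replace $\chi_{E_k}$ by $\chi_E$; the existence of $\alpha(E)$ and its independence of the base radius (noted after \eqref{baralpha1}) then yield $s_k \int_{\C B_R} \chi_E/|y|^{n+s_k}\,dy \to \alpha(E)$. Sandwiching with the factors $(1\pm\eps)^{-n-s_k} \to (1\pm\eps)^{-n}$ and letting $\eps \to 0^+$ gives $\lim_k s_k\,\alpha_{s_k}(q_k,\rho,E_k) = \alpha(E)$. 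Combining the three contributions delivers $\lim_k s_k\,\I_{s_k}[E_k](q_k) = \omega_n - 2\alpha(E)$.

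The main difficulty is precisely this far-field step: one cannot simply invoke $\alpha(E_k) = \alpha(E)$ from Proposition \ref{subsetssmin}(v) and then pass to the limit in $k$, because that identity compares limits for each \emph{fixed} set and is not a priori uniform in $E_k$ or in the base point $q_k$. The scheme above sidesteps this by freezing $R$ first, exploiting $|E_k \Delta E| < C_1$ at each level $s_k$ to swap $\chi_{E_k}$ with $\chi_E$ with a uniformly vanishing error, and only afterwards invoking the existence of $\alpha(E)$; this parallels the interchange-of-limits argument used in the proof of Proposition \ref{unifrq}.
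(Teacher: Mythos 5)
Your proposal is correct and tracks the paper's own proof essentially step by step: a local contribution that the graph's $C^{1,\alpha}$ regularity controls with a bound of size $r^{\alpha-s_k}/(\alpha-s_k)$ (so $s_k$ kills it), an intermediate annular term that $s_k$ also kills, and a far-field term that is transferred from $E_k$ to $E$ via the $|E_k\Delta E|<C_1$ bound before invoking the existence of $\alpha(E)$. The only cosmetic variations are that you isolate a ball $B_\rho(q_k)$ rather than the cylinder $Q_{r,h}(q_k)$ for the local piece, subtract the tangent half-space by hand rather than quoting formula \eqref{complete_curv_formula} and estimate \eqref{Holder_useful}, and re-derive the uniform-in-$q_k$ control of the far field via the $(1\pm\eps)$ sandwich instead of citing Proposition \ref{unifrq} directly (as you yourself observe at the end).
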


In particular, fixing $E_k=E$ in Theorem \ref{everything_converges} and Proposition \ref{propsto0} we obtain Proposition \ref{rsdfyish} stated in the Introduction. 

To prove
Theorem \ref{everything_converges}
we prove at first the following preliminary result. 

\begin{lemma}\label{supergraph_hp_for_proof}
Let  $E_k\xrightarrow{C^{1,\alpha}}E$ in a neighborhood of $0\in \partial E$. Let $q_k\in\partial E_k$ be such that $
q_k\longrightarrow 0$.  Then \[ E_k-q_k\xrightarrow{C^{1,\beta}}E  \quad \mbox{ in a neighborhood of $0$},\] 
for every $\beta \in(0,\alpha)$.\\
Moreover, if $E_k\xrightarrow{C^{2}}E$ in a neighborhood of $0\in \partial E$, $q_k\in\partial E_k$ are such that $
q_k\longrightarrow 0$ and $\mathcal{R}_k\in  SO(n)$ are such that \[\lim_{k\to \infty} |\mathcal R_k -\mbox{Id}|=0,\] then
\[ \mathcal R_k (E_k-q_k) \xrightarrow{C^{2}}E \quad \mbox{ in a neighborhood of $0$ }.\]
%
%
%
\end{lemma}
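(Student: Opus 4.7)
My plan is to reduce both statements to a convergence result for the defining graph functions. By Definition \ref{convofsets} and up to translation and rotation, I may assume $q=0$ and, for some $r,h>0$, that $E$ and $E_k$ agree with the supergraphs of functions $u,u_k\in C^{1,\alpha}(\overline{B'_{2r}})$ (respectively $C^2$ in the second part) on $Q_{2r,2h}$ with $u(0)=0$ and $\|u_k-u\|_{C^{1,\alpha}(\overline{B'_{2r}})}\to0$. Since $q_k\in\partial E_k$ and $q_k\to 0$, for $k$ large $q_k$ lies in $Q_{r,h}$ and thus $q_k=(q_k',u_k(q_k'))$ with $q_k'\to 0$ and $q_{k,n}=u_k(q_k')\to u(0)=0$.

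For the first part, the set $E_k-q_k$ near the origin is the supergraph of
\[
v_k(x'):=u_k(x'+q_k')-q_{k,n},
\]
which satisfies $v_k(0)=0$ and is defined on a slightly smaller ball $\overline{B'_{r/2}}$ for $k$ large. To establish the $C^{1,\beta}$ convergence $v_k\to u$ I would split the difference as
\[
\nabla v_k(x')-\nabla u(x')=\bigl(\nabla u_k-\nabla u\bigr)(x'+q_k')+\bigl(\nabla u(x'+q_k')-\nabla u(x')\bigr),
\]
and treat the two terms separately. The uniform and $C^1$ convergence of $u_k$ to $u$, together with continuity of $\nabla u$ and $u(q_k')\to 0$, handle $\|v_k-u\|_{C^1}\to 0$. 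For the Hölder seminorm, I would invoke the standard interpolation inequality
\[
[f]_{C^{0,\beta}}\le C\,\|f\|_{C^0}^{1-\beta/\alpha}\,[f]_{C^{0,\alpha}}^{\beta/\alpha},
\qquad 0<\beta<\alpha,
\]
applied to each piece: the first piece has $C^0$ norm going to zero and $C^{0,\alpha}$ seminorm uniformly bounded, while for the second piece the $C^0$ norm is controlled by $C|q_k'|^{\alpha}$ (Hölder continuity of $\nabla u$) and the $C^{0,\alpha}$ seminorm is bounded by $2[\nabla u]_{C^{0,\alpha}}$, so the interpolation yields a bound $C|q_k'|^{\alpha-\beta}\to 0$. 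The local convergence in measure of $E_k-q_k$ to $E$ follows immediately from the triangle inequality $|((E_k-q_k)\Delta E)\cap B_\rho|\le|(E_k\Delta E)\cap B_{\rho+|q_k|}|+|(E-q_k)\Delta E|\cap B_{\rho+|q_k|}|$, the hypothesis on $E_k\to E$, and the continuity of translation in $L^1_{\mathrm{loc}}$.

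For the second part, I would apply the implicit function theorem to recover a graphical representation after the perturbation by $\mathcal{R}_k$. Setting
\[
F_k(x',t):=\bigl(\mathcal{R}_k^{-1}(x',t)\bigr)_n+q_{k,n}-u_k\bigl((\mathcal{R}_k^{-1}(x',t))'+q_k'\bigr),
\]
the rotated-translated set $\mathcal{R}_k(E_k-q_k)$ near the origin is the region where $F_k>0$. A direct computation gives
\[
\partial_t F_k(0,0)=(\mathcal{R}_k^{-1}e_n)_n-\nabla u_k(q_k')\cdot(\mathcal{R}_k^{-1}e_n)'\longrightarrow 1,
\]
since $\mathcal{R}_k\to\mathrm{Id}$ and $\nabla u_k(q_k')\to\nabla u(0)$; hence for $k$ large the implicit function theorem produces a function $w_k\in C^2$ on $\overline{B'_{r'}}$, for some $r'>0$ independent of $k$, with $w_k(0)=0$ and whose supergraph coincides with $\mathcal{R}_k(E_k-q_k)$ on a fixed neighborhood of $0$. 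The quantitative form of the implicit function theorem, together with the $C^2$ convergence $u_k\to u$ and $\mathcal{R}_k\to\mathrm{Id}$, $q_k\to 0$, yields $\|w_k-u\|_{C^2(\overline{B'_{r'}})}\to 0$; the convergence in measure is obtained exactly as before, adding the trivial fact that rotations near the identity preserve local $L^1$ convergence.

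The main obstacle is the interpolation step in the first part, which is the reason the conclusion cannot be promoted to $\beta=\alpha$: the second summand $\nabla u(\cdot+q_k')-\nabla u(\cdot)$ has only $C^0$ norm of order $|q_k'|^\alpha$ while its $C^{0,\alpha}$ seminorm does not decay, so no better rate than $|q_k'|^{\alpha-\beta}$ can be extracted, and one truly needs $\beta<\alpha$ for the argument to close. In the $C^2$ case this issue evaporates because one works directly with classical derivatives, and the implicit function theorem supplies the full second-order convergence.
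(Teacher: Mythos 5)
Your proof is correct and follows essentially the same route as the paper. The decomposition of $\nabla v_k - \nabla u$ into $(\nabla u_k - \nabla u)(\cdot + q_k')$ plus $\nabla u(\cdot + q_k') - \nabla u(\cdot)$ is exactly the paper's split (the paper writes $\tau_k$ for the translation operator), and your use of the interpolation inequality $[f]_{C^{0,\beta}} \le C\|f\|_{C^0}^{1-\beta/\alpha}[f]_{C^{0,\alpha}}^{\beta/\alpha}$ is equivalent to the paper's explicit estimate $[f]_{C^{0,\beta}}\le \frac{2}{\delta^\beta}\|f\|_{C^0}+2[f]_{C^{0,\alpha}}\delta^{\alpha-\beta}$ for every $\delta>0$, followed by sending $k\to\infty$ and then $\delta\to 0$; you simply invoke the standard inequality where the paper reproves it inline, and in return you get an explicit $|q_k'|^{\alpha-\beta}$ decay rate for the troublesome piece. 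One cosmetic improvement on your side: you define $v_k(x'):=u_k(x'+q_k')-q_{k,n}$, which ensures $v_k(0)=0$ on the nose, whereas the paper writes $\tilde u_k(x'):=u_k(x'+q_k')$ and lets the vertical shift be absorbed implicitly. For the $C^2$ part the paper is very terse (it only remarks that one needs the implicit function theorem to handle the rotations); your sketch of the implicit function $F_k$ and the computation that $\partial_t F_k(0,0)\to1$ spells out exactly what the paper leaves to the reader, so nothing is missing.
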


\begin{proof}
First of all, notice that since $q_k\longrightarrow0$, for $k$ big enough we have
\[|q'_k|<\frac{1}{2}r\qquad\textrm{and}\qquad|q_{k,n}|=|u_k(q'_k)|<\frac{1}{8}h.\]
 By \eqref{bded_graph_hp} and \eqref{norm_graph_conv}, we
see that for $k$ big enough
\[|u_k(x')|\leq\frac{3}{4}h,\qquad\forall\,x'\in B_{2r}'.\]
Therefore
\[|u_k(x')-q_{k,n}|<\frac{7}{8}h<h,\qquad\forall\,x'\in B_{2r}'.\]
If we define
\[\tilde{u}_k(x'):=u_k(x'+q'_k),\qquad x'\in \overline{B}'_r,\]
for every $k$ big enough we have 
\eqlab{\label{graphs_for_the_proof_eq}
(E_k-q_k) \cap Q_{r,h} =\{(x',x_n)\in\R^n\,|\,x'\in B'_r,\, \tilde u_k(x')<x_n<h\}.}
It is easy to check that the sequence $E_k-q_k$ locally converges in measure to $E$. We claim that
\eqlab{\label{conv_transl_graph}
\lim_{k\to\infty}\|\tilde{u}_k-u\|_{C^{1,\beta}(\overline{B}'_r)}=0.}
Indeed, let 
\[ \tau_k u(x'):= u(x'+q_k').\] 
We have that
\[ \|\tilde u_k-\tau_k u\|_{C^{1}(\overline B_r')}  \leq \|u_k-u\|_{C^{1}\big(\overline B'_{\frac{3}2r}\big)}  \]
and that
\[ \|\tau_k u-u\|_{C^{1}(\overline B_r')} \leq  \|\nabla u\|_{C^0\big(\overline B'_{\frac{3}2r}\big)}  |q_k'| + \|u\|_{C^{1,\alpha}\big( \overline B'_{\frac{3r}2}\big)}|q'_k|^{\alpha} .\] Thus by the triangular inequality 
\[  \lim_{k \to \infty} \|\tilde u_k -u\|_{C^{1}(\overline B_r')} =0,\]
thanks to \eqref{norm_graph_conv} and the fact that $q_k\to 0$.

Now, notice that $\nabla (\tilde u_k) =\tau_k (\nabla u_k)$, so 
\[ [\nabla \tilde u_k -\nabla u]_{C^{0,\beta}(\overline B'_r)} \leq  [\tau_k (\nabla  u_k -\nabla u)]_{C^{0,\beta}(\overline B'_r)}+ [\tau_k(\nabla  u) -\nabla u)]_{C^{0,\beta}(\overline B'_r)}.\] 
Therefore
\[[\tau_k (\nabla  u_k -\nabla u)]_{C^{0,\beta}(\overline B'_r)} \leq  [\nabla  u_k -\nabla u]_{C^{0,\beta}\big(\overline B'_{\frac{3r}2}\big)}\]
and for every $\delta >0$ we obtain
\[  [\tau_k( \nabla  u) -\nabla u]_{C^{0,\beta}(\overline B'_r)} \leq \frac{2}{\delta^\beta} \|\tau_k (\nabla u) -\nabla u\|_{C^{0}\big(\overline B'_{\frac{3r}2}\big)} + 2[\nabla u]_{C^{0,\alpha}(\overline B'_r)} \delta^{\alpha-\beta}.\]
Sending $k\to \infty$ we find that
\[ \limsup_{k\to \infty}  [\tau_k (\nabla  u) -\nabla u)]_{C^{0,\beta}(\overline B'_r)}  \leq 2[\nabla u]_{C^{0,\alpha}(\overline B'_r)} \delta^{\alpha-\beta}\]
for every $\delta>0$, hence
\[ \lim_{k \to \infty}  [\nabla \tilde u_k -\nabla u]_{C^{0,\beta}(\overline B'_r)} =0.\]
This concludes the proof of the first part of the Lemma.\\
As for the second part, the $C^2$ convergence of sets in a neighborhood of $0$ can be proved similarly. Some care must be taken when considering rotations, since one needs to use the implicit function theorem.
\end{proof}

\begin{proof}[Proof of Theorem \ref{everything_converges}]

Up to a translation and a rotation, we can suppose that $q=0$ and $\nu_E(0)=0$. Then we can find $r,h>0$ small enough
and $u\in C^{1,\alpha}(\overline B'_r)$ such that we can write $E\cap Q_{2r,2h}$ as in \eqref{opossum1}.

Since $s_k\to s\in(0,\alpha)$ for $k$ large enough we can suppose that $s_k,s \in[\sigma_0,\sigma_1]$ for $0<\sigma_0<\sigma_1<\beta<\alpha$.
Notice that there exists $\delta>0$ such that
\eqlab{\label{continuity_eq2}
B_\delta\subset\subset Q_{r,h}.}
We take an arbitrary $R>1$ as large as we want and define the sets
\[F_k:= (E_k\cap B_R) -q_k.\]
From Lemma \ref{supergraph_hp_for_proof} we have that in a neighborhood of $0$
\[ F_k\xrightarrow{C^{1,\beta}} E\cap B_R.\] 
In other words,
\eqlab{\label{convvvvv1}\lim_{k\to \infty} |F_k \Delta (E\cap B_R)| =0.}
Moreover, if $u_k$ is a function defining $E_k$ as a supergraph in a neighborhood of $0$ as in \eqref{opossum},
denoting $\tilde u_k(x')=u_k(x'+q_k')$ we have that
\[F_k\cap Q_{r,h}=\{(x',x_n)\in\Rn\,|\,x'\in B'_r,\,\tilde{u}_k(x')<x_n<h\}\]
and that
\eqlab{\label{convvvvv} \lim_{k\to \infty} \|\tilde u_k -u\|_{C^{1,\beta}(\overline B'_r)} =0, \quad  \quad  \|\tilde u_k\|_{C^{1,\beta}(\overline B'_r)}\leq M \; \mbox{ for some } \; M>0.}
We also remark that, by \eqref{bded_graph_hp} we can write
\[E\cap Q_{r,h}=\{(x',x_n)\in\R^n\,|\,x'\in B'_r,\,u(x')<x_n<h\}.\]

Exploiting \eqref{graphs_for_the_proof_eq} we can write the fractional mean curvature of $F_k$ in $0$
by using formula \eqref{complete_curv_formula}, that is
\begin{equation}\label{5rs6ydbfd}
\begin{split}
\I_{s_k}[F_k](0)&=2\int_{B'_r}\Big\{G_{s_k}\Big(\frac{\tilde{u}_k(y')-\tilde{u}_k(0)}{|y'|}\Big)
-G_{s_k}\Big(\nabla \tilde{u}_k(0)\cdot\frac{y'}{|y'|}\Big)\Big\}\frac{dy'}{|y'|^{n-1+s_k}}\\
&
\qquad\qquad+\int_{\Rn}\frac{\chi_{\C F_k}(y)-\chi_{F_k}(y)}{|y|^{n+s_k}}\chi_{\C Q_{r,h}}(y)\,dy.\end{split}\end{equation}
Now, we denote as in \eqref{mathcalg}
\[\mathcal G(s_k,\tilde{u}_k,y'):=\mathcal G(s_k,\tilde{u}_k,0,y')= G_{s_k}\Big(\frac{\tilde{u}_k(y')-\tilde{u}_k(0)}{|y'|}\Big)
-G_{s_k}\Big(\nabla \tilde{u}_k(0)\cdot\frac{y'}{|y'|}\Big)\] and we rewrite the
identity in \eqref{5rs6ydbfd} as
\bgs{\I_{s_k}[F_k](0)&=2\int_{B'_r}\mathcal G(s_k,\tilde{u}_k,y') \frac{dy'}{|y'|^{n-1+s_k}}+\int_{\R^n}\frac{\chi_{\C F_k}(y)-\chi_{F_k}(y)}{|y|^{n+s_k}}\chi_{\C Q_{r,h}}(y)\,dy.} 
Also, with this notation and by formula \eqref{complete_curv_formula} we have for $E$
\[\I_s[E\cap B_R](0)=2\int_{B'_r}\mathcal G(s,u,y')\frac{dy'}{|y'|^{n-1+s}}
+\int_{\R^n}\frac{\chi_{\C (E\cap B_R)}(y)-\chi_{E\cap B_R}(y)}{|y|^{n+s}}\chi_{\C Q_{r,h}}(y)\,dy.\]
%
We can  suppose that $r<1$. We begin by showing that for every $y'\in B_r'\setminus\{0\}$ we have
\eqlab{\label{pwise_conv}\lim_{k\to\infty}\mathcal G(s_k,\tilde{u}_k,y')=\mathcal G(s,u,y').}
First of all, we observe that
\[|\mathcal G(s_k,\tilde{u}_k,y')-\mathcal G(s,u,y')|
\leq|\mathcal G(s_k,\tilde{u}_k,y')-\mathcal G(s,\tilde{u}_k,y')|+|\mathcal G(s,\tilde{u}_k,y')-\mathcal G(s,u,y')|.\]
Then
\bgs{|\mathcal G(s_k,\tilde{u}_k,y')-\mathcal G(s,\tilde{u}_k,y')|&
=\Big|\int_{\nabla \tilde{u}_k(0)\cdot\frac{y'}{|y'|}}^{\frac{\tilde{u}_k(y')-\tilde{u}_k(0)}{|y'|}}(g_{s_k}(t)-g_s(t))\,dt\Big|\\
&
\leq2\int_0^{+\infty}|g_{s_k}(t)-g_s(t)|\,dt.}
Notice that for every $t\in\R$
\[\lim_{k\to\infty}|g_{s_k}(t)-g_s(t)|=0,\qquad\textrm{and}\qquad |g_{s_k}(t)-g_s(t)|\leq2 g_{\sigma_0}(t),\quad\forall\,k\in\mathbb N.\]
Since $g_{\sigma_0}\in L^1(\R)$, by the Dominated Convergence Theorem we obtain that
\[\lim_{k\to\infty}|\mathcal G(s_k,\tilde{u}_k,y')-\mathcal G(s,\tilde{u}_k,y')|=0.\]
We estimate
\bgs{|\mathcal G&(s,\tilde{u}_k,y')-\mathcal G(s,u,y')|\leq\Big|G_s\Big(\frac{\tilde{u}_k(y')-\tilde{u}_k(0)}{|y'|}\Big)
-G_s\Big(\frac{u(y')-u(0)}{|y'|}\Big)\Big|\\
&
\qquad\qquad\qquad\qquad
+\Big|G_s\Big(\nabla\tilde{u}_k(0)\cdot\frac{y'}{|y'|}\Big)-G_s\Big(\nabla u(0)\cdot\frac{y'}{|y'|}\Big)\Big|\\
&
\leq\Big|\frac{\tilde{u}_k(y')-\tilde{u}_k(0)}{|y'|}-\frac{u(y')-u(0)}{|y'|}\Big|
+|\nabla\tilde{u}_k(0)-\nabla u(0)|\\
&
=\Big|\nabla(\tilde{u}_k-u)(\xi)\cdot\frac{y'}{|y'|}\Big|+|\nabla\tilde{u}_k(0)-\nabla u(0)|\\
&
\leq2\|\nabla\tilde{u}_k-\nabla u\|_{C^0(B'_r)},
}
which, by \eqref{conv_transl_graph}, tends to 0 as $k\to\infty$. This proves the pointwise convergence claimed in \eqref{pwise_conv}.\\
Therefore, for every $y'\in B'_r\setminus\{0\}$,
\[\lim_{k\to\infty}\frac{\mathcal G(s_k,\tilde{u}_k,y')}{|y'|^{n-1+s_k}}=
\frac{\mathcal G(s,u,y')}{|y'|^{n-1+s}}.\]
Thus, by \eqref{Holder_useful} we obtain that
\[\Big|\frac{\mathcal G(s_k,\tilde{u}_k,y')}{|y'|^{n-1+s_k}}\Big|
\leq\|\tilde{u}_k\|_{C^{1,\beta}(B'_r)}\frac{1}{|y'|^{n-1-(\beta-s_k)}}
\leq \frac{M}{|y'|^{n-1-(\beta-\sigma_1)}}\in L^1_{loc}(\R^{n-1}),
\]
given \eqref{convvvvv}. 
The Dominated Convergence Theorem then implies that
\eqlab{\label{first_piece_conv}
\lim_{k\to\infty}\int_{B'_r}\mathcal G(s_k,\tilde{u}_k,y')\frac{dy'}{|y'|^{n-1+s_k}}=
\int_{B'_r}\mathcal G(s,u,y')\frac{dy'}{|y'|^{n-1+s}}.
}

Now, we show that 
\eqlab{\label{second_piece} \lim_{k \to \infty}\int_{\Rn}\frac{\chi_{\C F_k}(y)-\chi_{F_k}(y)}{|y|^{n+s_k}} \chi_{\C Q_{r,h}}(y)\, dy = \int_{\Rn}\frac{\chi_{\C (E\cap B_R)}(y)-\chi_{E\cap B_R}(y)}{|y|^{n+s}}\chi_{\C Q_{r,h}}(y) \, dy.}  
For this, we observe that
\bgs{\Big|\int_{\C Q_{r,h}}&(\chi_{\C (E\cap B_R)}(y)-\chi_{E\cap B_R}(y))\Big(\frac{1}{|y|^{n+s_k}}-\frac{1}{|y|^{n+s}}\Big)
dy\Big|
\leq\int_{\C B_\delta}\Big|\frac{1}{|y|^{n+s_k}}-\frac{1}{|y|^{n+s}}\Big|dy,
}where we have used
\eqref{continuity_eq2}  in the last inequality. 
For $y\in \C B_1$ 
\bgs{ \Big|\frac{1}{|y|^{n+s_k}}-\frac{1}{|y|^{n+s}}\Big| \leq \frac{2}{|y|^{n+\sigma_0} }\in L^1(\C B_1) }
and  for  $y\in  B_1\setminus B_\delta$ 
\bgs{ \Big|\frac{1}{|y|^{n+s_k}}-\frac{1}{|y|^{n+s}}\Big| \leq \frac{2}{|y|^{n+\sigma_1}} \in L^1(B_1\setminus B_\delta).} 
We use then the Dominated Convergence Theorem and get that
\bgs{ \lim_{k \to \infty} \int_{\C Q_{r,h}}(\chi_{\C (E\cap B_R)}(y)-\chi_{E\cap B_R}(y))\Big(\frac{1}{|y|^{n+s_k}}-\frac{1}{|y|^{n+s}}\Big)
dy =0.} 
Now
\bgs{ \bigg|\int_{\C Q_{r,h}} & \;\frac{ \chi_{\C F_k}(y)-\chi_F{_k}(y) -\left(\chi_{\C (E\cap B_R)(y) }- \chi_{E\cap B_R} (y)\right) }{|y|^{n+s_k}}\, dy\bigg| =2\int_{\C Q_{r,h}} \frac{\chi_{F_k \Delta (E\cap B_R)} (y)}{|y|^{n+s_k}}\, dy \\ \leq& \;2 \frac{ |F_k \Delta (E\cap B_R)|}{\delta^{n+\sigma_1}} \xrightarrow{k\to \infty} 0,}
according to \eqref{convvvvv1}. The last two limits prove \eqref{second_piece}. Recalling \eqref{first_piece_conv}, we obtain that
\[ \lim_{k\to \infty} \I_{s_k}[F_k](0) = \I_s[E\cap B_R](0).\] 
We have that $\I_{s_k} [F_k](0)= \I_{s_k} [E_k\cap B_R](q_k)$, so
\bgs{|\I_{s_k}[E_k](q_k)-\I_s[E](0)|\leq &\;|\I_{s_k}[E_k](q_k) - \I_{s_k}[E_k\cap B_R](q_k)|+ |\I_{s_k}[F_k](0)- \I_s[E\cap B_R](0)| \\ &\;+ |\I_s[E\cap B_R](0)- \I_s[E](0)|.}
Since
\bgs{\label{ekbounded} | \I_{s_k}[E_k](q_k) - \I_{s_k} [E_k\cap B_R] (q_k) | +| \I_{s}[E](0) - \I_{s} [E\cap B_R] (0) | \leq  \frac{4 \omega_n }{\sigma_0} R^{-\sigma_0},    }
sending $R\to \infty$
\[ \lim_{k \to \infty} \I_{s_k}[E_k](q_k)=\I_s[E](0).\]
This concludes the proof of the first part of the Theorem.

\bigskip

In order to prove the second part of Theorem \ref{everything_converges}, we fix $R>1$ and we denote
\[F_k:=\mathcal R_k\big((E_k\cap B_R)-q_k\big),\]
where $\mathcal R_k\in SO(n)$ is a rotation such that
\[\mathcal R_k:\nu_{E_k}(0)\longmapsto\nu_E(0)=-e_n\quad\mbox{ and }\quad
\lim_{k\to\infty}|\mathcal R_k-\mbox{Id}|=0.\]
Thus, by Lemma \ref{supergraph_hp_for_proof} we know that $F_k\xrightarrow{C^2}E$ in a neighborhood of $0$.\\
To be more precise,
\eqlab{\label{convvvvv2}\lim_{k\to \infty} |F_k \Delta (E\cap B_R)| =0.}
Moreover, there exist $r,h>0$ small enough and $v_k,u\in C^2(\overline B'_r)$ such that
\bgs{&F_k\cap Q_{r,h}=\{(x',x_n)\in\Rn\,|\,x'\in B'_r,\,v_k(x')<x_n<h\},\\
&
E\cap Q_{r,h}=\{(x',x_n)\in\Rn\,|\,x'\in B'_r,\,u(x')<x_n<h\}}
and that
\eqlab{\label{convvvvv3} \lim_{k\to \infty} \|v_k -u\|_{C^2(\overline B'_r)} =0.}
Notice that $0\in\partial F_k$ and $\nu_{F_k}(0)=e_n$ for every $k$, that is,
\eqlab{\label{opossum4}v_k(0)=u(0)=0,\quad\nabla v_k(0)=\nabla u(0)=0.}

We claim that
\eqlab{\label{opossum3}\lim_{k\to\infty}(1-s_k)\big|\I_{s_k}[F_k](0)-\I_{s_k}[E\cap B_R](0)\big|=0.}
By \eqref{opossum4} 
%
and formula \eqref{complete_curv_formula} we have that
\bgs{\label{opossum5}
\I_{s_k}[F_k](0)&=2\int_{B'_r}\frac{dy'}{|y'|^{n+s_k-1}} \int_0^{\frac{v_k(y')}{|y'|} }\frac{dt}{(1+t^2)^{\frac{n+s_k}2}}
+\int_{\C Q_{r,h}}\frac{\chi_{\C F_k}(y)-\chi_{F_k}(y)}{|y|^{n+s_k}}\,dy\\
&= \I^{loc}_{s_k}[F_k](0) +\int_{\C Q_{r,h}}\frac{\chi_{\C F_k}(y)-\chi_{F_k}(y)}{|y|^{n+s_k}}\,dy.}
We use the same formula for $E\cap B_R$ and prove at first that
\bgs{\bigg|\int_{\C Q_{r,h}}\frac{\chi_{\C F_k}(y)-\chi_{F_k}(y)-\chi_{\C(E\cap B_R)}(y)+\chi_{E\cap B_R}(y)}{|y|^{n+s_k}}\,dy\bigg|\le\frac{|F_k\Delta(E\cap B_R)|}{\delta^{n+s_k}}\le\frac{|F_k\Delta(E\cap B_R)|}{\delta^{n+1}},
}
(where we have used \eqref{continuity_eq2}), which tends to 0 as $k\to\infty$, by \eqref{convvvvv2}.

Moreover, notice that by the Mean Value Theorem and \eqref{opossum4} we have
\[|(v_k-u)(y')|\le\frac{1}{2}|D^2(v_k-u)(\xi')||y'|^2\le\frac{\|v_k-u\|_{C^2(\overline B'_r)}}{2}|y'|^2.\]
Thus
\bgs{ &\big| \I^{loc}_{s_k}[F_k](0)  - \I^{loc}_{s_k}[E\cap B_R](0) |
\le2\int_{B'_r} \frac{dy'}{|y'|^{n+s_k-1}} \bigg|\int_{\frac{u(y')}{|y'|}}^{\frac{v_k(y')}{|y'|}}\frac{dt}{(1+t^2)^\frac{n+s_k}{2}}\bigg|\\
&
\le2\int_{B'_r}|y'|^{-n-s_k}|(v_k-u)(y')|\,d y'
\le\frac{\omega_{n-1}\,\|v_k-u\|_{C^2(\overline B'_r)}}{1-s_k}r^{1-s_k},
}
hence by \eqref{convvvvv3} we obtain
\eqlab{\label{opossum7}
\lim_{k\to\infty}(1-s_k)\big|\I^{loc}_{s_k}[F_k](0)  - \I^{loc}_{s_k}[E\cap B_R](0)|=0.
}
This concludes the proof of claim \eqref{opossum3}.

Now we use the triangle inequality and have that
\bgs{
\big|(1-s_k)\I_{s_k}&[E_k](q_k)-H[E](0)\big|\le(1-s_k)\big|\I_{s_k}[E_k](q_k)-\I_{s_k}[F_k](0)\big|\\
&
+(1-s_k)\big|\I_{s_k}[F_k](0)-\I_{s_k}[E\cap B_R](0)\big|
+\big|(1-s_k)\I_{s_k}[E\cap B_R](0)-H[E](0)\big|.
}
The last term in the right hand side converges by Theorem 12 in \cite{Abaty}. As for the first term,
notice that
\[\I_{s_k}[F_k](0)=\I_{s_k}[E_k\cap B_R](q_k),\]
hence
\[\lim_{k\to\infty}(1-s_k)\big|\I_{s_k}[E_k\cap B_R](q_k)-\I_{s_k}[E_k](q_k)\big|\le\limsup_{k\to\infty}(1-s_k)\frac{2\omega_n}{s_k}R^{-s_k}=0.\]
Sending $k\to\infty$ in the triangle inequality above, we conclude the proof of the second part of Theorem \ref{everything_converges}.
\end{proof}

\begin{remark}
In relation to the second part of the proof, we point out that using the directional fractional mean curvature defined in \cite[ Definition 6, Theorem 8]{Abaty}, we can write
\bgs{ \I^{loc}_{s_k}[F_k](0)= &\;2\int_{\mathbb S^{n-2}}\bigg[\int_0^r\rho^{n-2}\bigg(\int_0^{v_k(\rho e)}\frac{dt}{(\rho^2+t^2)^\frac{n+s_k}{2}}\bigg)d\rho\bigg]d\mathcal H^{n-2}_e \\
=&\;2\int_{\mathbb S^{n-2}} \overline K_{s_k,e} d\mathcal H^{n-2}_e.}
One is then actually able to prove that
\bgs{\lim_{k\to\infty}(1-s_k)\overline K_{s_k,e}[E_k-q_k](0)=H_e[E](0),}
uniformly in $e\in\mathbb S^{n-2}$, by using formula \eqref{opossum7} and the first claim of Theorem 12 in \cite{Abaty}.
\end{remark}

\bigskip
We prove now the continuity of the fractional mean curvature as $s\to 0$.

\begin{proof}[Proof of Proposition \ref{propsto0}]
Up to a translation, we can take $q=0$ and $u(0)=0$. \\
For $R>2\max\{r,h\}$, we write
\bgs{  \I_{s_k}[E_k](q_k) =&\;
P.V.\int_{ Q_{r,h}(q_k)} \frac{\chi_{\C E_k}(y) -\chi_{E_k}(y)}{|y-q_k|^{n+s_k}}\, dy
+\int_{\C Q_{r,h}(q_k)} \frac{\chi_{\C E_k}(y)-\chi_{E_k}(y)}{|y-q_k|^{n+s_k}}\, dy 
\\
=&\; P.V.\int_{ Q_{r,h}(q_k)} \frac{\chi_{\C E_k}(y)-\chi_{E_k}(y)}{|y-q_k|^{n+s_k}}\, dy  + \int_{ B_R(q_k)\setminus Q_{r,h}(q_k)}\frac{\chi_{\C E_k}(y)-\chi_{E_k}(y)}{|y-q_k|^{n+s_k}}\, dy
\\
&\; +\int_{\C B_R(q_k)} \frac{\chi_{\C E_k}(y)-\chi_{E_k}(y)}{|y-q_k|^{n+s_k}}\, dy  
 \\
=&\;I_1(k)+I_2(k)+I_3(k).}
Now  using  \eqref{complete_curv_formula}, \eqref{mathcalg} and \eqref{Holder_useful}  we have that 
\bgs{|I_1(k)|\leq &\;  2 \int_{B'_r(q_k')} \frac{| \mathcal G(s_k,u_k,q_k', y')|}{|y'-q_k'|^{n+s_k-1}} \,dy'
 \leq 2  \| u_k \|_{C^{1,\alpha}(\overline{B}'_r(q_k'))}\int_{B'_r(q_k')} \frac{  |y'-q_k'|^{\alpha}}{|y'-q_k'|^{n+s_k-1}} \,dy'
 \\
 \leq  &\; 2 C_2 \omega_{n-1} \frac{r^{\alpha-s_k}}{\alpha-s_k}.
}
Using \eqref{continuity_eq2} we also have that
\[|I_2(k)| \leq  \int_{ B_R(q_k)\setminus B_\delta(q_k)}\frac{dy}{|y-q_k|^{n+s_k}} 
= \omega_n\frac{\delta^{-s_k}-R^{-s_k}}{s_k}. \]
Thus
\eqlab{\label{kangaroo}
\lim_{k\to\infty}s_k\big(|I_1(k)|+|I_2(k)|\big)=0.}
Furthermore
\bgs{ \big | s_k I_3(k)- &\big(\omega_n-2 s_k \alpha_{s_k}(0,R,E) \big)\big|
\\
\leq &\; \bigg|s_k \int_{\C B_R(q_k)} \frac{dy}{|y-q_k|^{n+s_k}}  -2 s_k \int_{\C B_R(q_k)} \frac{\chi_{E_k}(y)}{|y-q_k|^{n+s_k}}\, dy   - \omega_n+2s_k \alpha_{s_k}(q_k,R,E))\bigg|
\\
&\;+2s_k |\alpha_{s_k}(q_k,R,E)- \alpha_{s_k}(0,R,E)|
\\
\leq& \; |\omega_n R^{-s_k}-\omega_n| + 2 s_k\bigg| \int_{\C B_R(q_k)}\frac{\chi_{E_k}(y)}{|y-q_k|^{n+s_k}}\, dy  - \int_{\C B_R(q_k)} \frac{\chi_{E}(y)}{|y-q_k|^{n+s_k}}\, dy \bigg| 
\\
&\;+ 2s_k |\alpha_{s_k}(q_k,R,E)- \alpha_{s_k}(0,R,E)|
\\
\leq &\; |\omega_n R^{-s_k}-\omega_n| + 2 s_k \int_{\C B_R(q_k)}\frac{\chi_{E_k\Delta E}(y)}{|y-q_k|^{n+s_k}}\, dy  
+ 2s_k| \alpha_{s_k}(q_k,R,E)- \alpha_{s_k}(0,R,E)|
\\
\leq &\; |\omega_n R^{-s_k}-\omega_n| + 2 C_1 s_k R^{-n-s_k} +2s_k | \alpha_{s_k}(q_k,R,E)- \alpha_{s_k}(0,R,E)|,
 }
 where we have used that $|E_k\Delta E|<C_1$.
 
 Therefore, since $q_k\in B_d$ for every $k$, as a consequence of Proposition \ref{unifrq} it follows that
\eqlab{\label{kangaroo1}\lim_{k\to \infty}\big | s_k I_3(k)- &\big(\omega_n-2 s_k \alpha_{s_k}(0,R,E) \big)\big|=0.}

Hence, by \eqref{kangaroo} and \eqref{kangaroo1}, we get that
\bgs{ \lim_{k \to \infty} s_k \I_{s_k}[E_k](q_k)=\omega_n-2\lim_{k\to\infty}s_k\alpha_{s_k}(0,R,E)=\omega_n-2\alpha(E),}
concluding the proof.
\end{proof} 

\begin{proof}[Proof of Theorem \ref{asympts}]
Arguing as in the proof of Proposition \ref{propsto0}, by keeping fixed $E_k=E$ and $q_k=p$, we obtain
\bgs{ \liminf_{s\to0} s\, \I_s[E](p)=\omega_n-2\limsup_{s\to0}s\,\alpha_s(0,R,E)=\omega_n-2\overline{\alpha}(E),}
and similarly for the limsup.
\end{proof}

As a corollary of Theorem \ref{everything_converges} and Theorem \ref{asympts}, we have the following result.
\begin{theorem}\label{changeyoursign}
Let $E\subset\R^n$ and let $p\in\partial E$ be such that $\partial E\cap B_r(p)$ is $C^2$ for some $r>0$.
Suppose that the classical mean curvature of $E$ in $p$ is $H(p)<0$. Also assume that 
\[\overline \alpha(E) < \frac{\omega_n}2.\] Then there exist $\sigma_0<\tilde{s}<\sigma_1$ in $(0,1)$ such that

$(i)\quad\I_s[E](p)>0$ for every $s\in(0,\sigma_0]$, and actually
\[\liminf_{s\to0^+}s \;\I_s[E](p)=\omega_n- 2\overline \alpha(E),\]

$(ii)\quad\I_{\tilde{s}}[E](p)=0,$

$(iii)\quad\I_s[E](p)<0$ for every $s\in[\sigma_1,1)$,
and actually
\[ \lim_{s\to 1} (1-s)\;\I_s[E](p)= \omega_{n-1}H(p).\]

\end{theorem}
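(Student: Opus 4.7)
The plan is to assemble the three parts essentially as direct consequences of results already established in this chapter, using the asymptotic formulas at the endpoints $s\to 0^+$ and $s\to 1^-$ to control the sign of $\mathcal{I}_s[E](p)$ near $0$ and near $1$, and using the continuity in $s$ to produce the zero in between. Concretely, first I would invoke Theorem~\ref{asympts} applied at the regular boundary point $p$ (which lies on a $C^2$, hence $C^{1,\gamma}$, portion of $\partial E$) to get
\[
\liminf_{s\to 0^+} s\,\mathcal{I}_s[E](p)=\omega_n-2\overline{\alpha}(E).
\]
The hypothesis $\overline{\alpha}(E)<\omega_n/2$ ensures this liminf is strictly positive, so there exists $\sigma_0>0$ with $s\,\mathcal{I}_s[E](p)>0$, and hence $\mathcal{I}_s[E](p)>0$, for all $s\in(0,\sigma_0]$. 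This establishes item $(i)$.

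For item $(iii)$, I would use Theorem~12 of \cite{Abaty} (also recalled in the introductory part of this chapter), namely
\[
\lim_{s\to 1^-}(1-s)\,\mathcal{I}_s[E](p)=\omega_{n-1}H[E](p),
\]
which is valid since $\partial E$ is $C^2$ near $p$. Because $H(p)<0$, the limit is strictly negative, so $(1-s)\mathcal{I}_s[E](p)<0$ for $s$ sufficiently close to $1$, which yields $\sigma_1\in(\sigma_0,1)$ with $\mathcal{I}_s[E](p)<0$ for every $s\in[\sigma_1,1)$.

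Finally, for item $(ii)$, I would combine $(i)$ and $(iii)$ with the continuity of $s\mapsto \mathcal{I}_s[E](p)$ on $(0,1)$, which is the content of the first part of Proposition~\ref{rsdfyish} applied with $E_k=E$ and $x_k=p$ fixed (so only the $s$-variable varies, in the open interval $(0,1)$, using that $\partial E$ is $C^2$, hence $C^{1,\alpha}$ for every $\alpha\in(0,1]$, in a neighborhood of $p$). Since $\mathcal{I}_{\sigma_0}[E](p)>0$ and $\mathcal{I}_{\sigma_1}[E](p)<0$, the Intermediate Value Theorem produces $\tilde{s}\in(\sigma_0,\sigma_1)$ with $\mathcal{I}_{\tilde{s}}[E](p)=0$, as desired.

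There is no real obstacle here: the statement is essentially a packaging of the two asymptotics (at $s=0^+$ and $s=1^-$) plus continuity in $s$. The only mildly delicate point is verifying that Proposition~\ref{rsdfyish} is applicable on the whole open interval $(0,1)$ (so that one can join $\sigma_0$ and $\sigma_1$); this is immediate because the $C^2$ regularity of $\partial E$ near $p$ gives $C^{1,\alpha}$ regularity for every $\alpha\in(0,1]$, so the continuity statement $s\mapsto\mathcal{I}_s[E](p)$ holds on all of $(0,\alpha)$ for any such $\alpha$, and in particular on any subinterval $[\sigma_0,\sigma_1]\subset(0,1)$.
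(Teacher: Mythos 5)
Your proof is correct and is essentially the argument the paper intends: the theorem is stated in the paper as an immediate consequence of Theorem~\ref{asympts} (the $s\to 0^+$ asymptotics), Theorem~12 of \cite{Abaty} (the $s\to 1^-$ asymptotics), and the continuity in $s$ from Proposition~\ref{rsdfyish}, with the Intermediate Value Theorem producing $\tilde s$. The paper does not write out the details at all, so your proof simply fills in the standard packaging.
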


\subsection{Some useful results}\label{appendicite} \quad { } \quad

\medskip
\noindent \textbf{Sliding the balls.}
For the convenience of the reader, we collect here some auxiliary and elementary
results of geometric nature, that are used in the proofs of the main results.

\begin{lemma}\label{slidetheballs}
Let $F\subset\Rn$ be such that
\[B_\delta(p)\subset F_{ext}\quad\textrm{for some }\delta>0\qquad\textrm{and}\qquad q\in\overline{F},\]
and let $c:[0,1]\longrightarrow\Rn$ be a continuous curve connecting $p$ to $q$, that is
\[c(0)=p\qquad\textrm{and}\qquad c(1)=q.\]
Then there exists $t_0\in[0,1)$ such that $B_\delta\big(c(t_0)\big)$ is an exterior tangent ball to $F$,
that is
\eqlab{\label{slide_ext_tg}
B_\delta\big(c(t_0)\big)\subset F_{ext}\qquad\textrm{and}\qquad\partial B_\delta\big(c(t_0)\big)\cap\partial F\not=\emptyset.}
\end{lemma}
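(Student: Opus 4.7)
The natural idea is to slide the ball $B_\delta(p)$ along the curve $c$ and stop at the first moment at which it is no longer entirely contained in $F_{ext}$. Concretely, I would set
\[
t_0 := \sup\Big\{\tau\in[0,1]\,\Big|\,B_\delta\big(c(t)\big)\subset F_{ext}\textrm{ for every }t\in[0,\tau]\Big\}.
\]
Since $B_\delta(c(0))=B_\delta(p)\subset F_{ext}$ by hypothesis, this set contains $0$ and hence $t_0\in[0,1]$ is well defined. To rule out $t_0=1$, I would use that $q=c(1)\in\overline F$, so $q\notin F_{ext}$; by continuity of $c$, for every $t$ sufficiently close to $1$ we have $|c(t)-q|<\delta/2$, hence $q\in B_\delta(c(t))$, contradicting $B_\delta(c(t))\subset F_{ext}$. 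Therefore $t_0<1$.

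Next I would verify that $B_\delta(c(t_0))\subset F_{ext}$. By definition of the supremum, for every $t<t_0$ one has $B_\delta(c(t))\subset F_{ext}$. If $x\in B_\delta(c(t_0))$, pick $\eps>0$ with $|x-c(t_0)|<\delta-\eps$; by continuity of $c$, we can find $t<t_0$ so close to $t_0$ that $|c(t)-c(t_0)|<\eps$, whence $|x-c(t)|<\delta$ and thus $x\in B_\delta(c(t))\subset F_{ext}$. This gives the first condition in \eqref{slide_ext_tg}.

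It remains to prove that $\partial B_\delta(c(t_0))\cap\partial F\neq\emptyset$. Arguing by contradiction, suppose the sphere $\partial B_\delta(c(t_0))$ avoids $\partial F$ entirely. Then every point of this sphere lies in $F_{int}\cup F_{ext}$; a point of the sphere lying in $F_{int}$ would have a whole neighborhood in $F_{int}$, but this neighborhood intersects the open ball $B_\delta(c(t_0))\subset F_{ext}$, contradicting $F_{int}\cap F_{ext}=\emptyset$. Hence the entire sphere lies in $F_{ext}$, and so the compact set $\overline{B_\delta(c(t_0))}$ is contained in the open set $F_{ext}$. Consequently there exists $\eta>0$ such that the $\eta$-neighborhood of $\overline{B_\delta(c(t_0))}$ is still contained in $F_{ext}$. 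By continuity of $c$, we can find $\sigma>0$ with $|c(t)-c(t_0)|<\eta$ for every $t\in[t_0,t_0+\sigma]\cap[0,1]$, which forces $B_\delta(c(t))\subset F_{ext}$ on this interval. Combined with what we already know for $t\le t_0$, this contradicts the maximality of $t_0$ and completes the proof.

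The main delicate point is this last step: one must carefully exploit the interplay between the openness of $F_{ext}$, the compactness of the closed ball, and the uniform continuity of $c$ on $[0,1]$ to upgrade ``the sphere does not meet $\partial F$'' into ``the whole closed ball is strictly inside $F_{ext}$'', which is what allows the sliding to continue. The rest of the argument is a routine continuity/supremum argument, and no measure-theoretic subtlety beyond the standing conventions on $E_{int}$, $E_{ext}$, $\partial^-E$ enters.
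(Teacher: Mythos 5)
Your proof is correct and follows essentially the same route as the paper: define $t_0$ as a supremum of sliding times, show $t_0<1$ via continuity of $c$ near $1$, verify $B_\delta(c(t_0))\subset F_{ext}$ by a straightforward limiting argument, and then derive a contradiction if $\partial B_\delta(c(t_0))$ misses $\partial F$ by using openness of $F_{ext}$ and compactness of the closed ball to slide slightly past $t_0$. One minor and welcome simplification: to establish $t_0<1$ you use directly that $q\notin F_{ext}$ (an immediate consequence of $q\in\overline F$ and the disjointness of $F_{ext}$ from $\overline F$), whereas the paper asserts the slightly stronger statement $B_\delta(q)\cap F_{int}\neq\emptyset$, which for $q\in\partial F$ needs extra justification; your version cleanly sidesteps that point.
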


\begin{proof}
Define
\eqlab{\label{slide_t}t_0:=\sup\Big\{\tau\in[0,1]\,\big|\,\bigcup_{t\in[0,\tau]}B_\delta\big(c(t)\big)\subset F_{ext}\Big\}.}
Notice that
\[q\in\overline{F}=F_{int}\cup\partial F\quad\Longrightarrow\quad B_\delta(q)\cap F_{int}\not =\emptyset,\]
hence we have that $t_0<1$.

Now we prove that $t_0$ as defined in \eqref{slide_t} satisfies \eqref{slide_ext_tg}.

Notice that by definition of $t_0$
\[B_\delta\big(c(t_0)\big)\subset F_{ext},\]
hence
\eqlab{\label{slide_pf}\overline{B_\delta\big(c(t_0)\big)}\subset \overline{F_{ext}}=F_{ext}\cup\partial F.}

Now, suppose that
\[\partial B_\delta\big(c(t_0)\big)\cap\partial F=\emptyset.\]
Then \eqref{slide_pf} implies that
\[\overline{B_\delta\big(c(t_0)\big)}\subset F_{ext}.\]
Since $F_{ext}$ is an open set, we can find $\tilde\delta>\delta$ such that
\[B_{\tilde\delta}\big(c(t_0)\big)\subset F_{ext}.\]
By continuity of $c$ we can find $\eps\in(0,1-t_0)$ small enough such that
\[|c(t)-c(t_0)|<\tilde\delta-\delta,\qquad\forall\,t\in[t_0,t_0+\eps].\]
Therefore
\[B_\delta\big(c(t)\big)\subset B_{\tilde\delta}\big(c(t_0)\big)\subset F_{ext},\qquad\forall\,t\in[t_0,t_0+\eps],\]
and hence
\[\bigcup_{t\in[0,t_0+\eps]}B_\delta\big(c(t)\big)\subset F_{ext},\]
which is in contradiction with
the definition of $t_0$. Thus
\[\partial B_\delta\big(c(t_0)\big)\cap\partial F\not=\emptyset,\]
which concludes the proof.
\end{proof}

\noindent \textbf{Smooth domains.} Given a set $F\subset\R^n$, the signed distance function $\bar{d}_F$ from $\partial F$, negative inside $F$, is defined as
\begin{equation*}
\bar{d}_F(x)=d(x,F)-d(x,\C F)\qquad\mbox{for every }x\in\R^n,
\end{equation*}
where
\[d(x,A):=\inf_{y\in A}|x-y|,\]
denotes the usual distance from a set $A$. Given an open set $\Omega\subset\R^n$, we denote by
\begin{equation*}
N_\rho(\partial\Omega):=\{|\bar{d}_\Omega|<\rho\}=\{x\in\R^n\,|\,d(x,\partial\Omega)<\rho\}
\end{equation*}
the tubular $\rho$-neighborhood of $\partial\Omega$.
For the details about the properties of the
signed distance function, we refer to \cite{trudy, Ambrosio} and the references cited therein.

Now we recall the notion of (uniform) interior ball condition.
\begin{defn}
We say that an open set $\mathcal O$ satisfies an interior ball condition at $x\in\partial\mathcal O$ if
there exists a ball $B_r(y)$ s.t.
\begin{equation*}
B_r(y)\subset\mathcal O\qquad\textrm{and}\qquad x\in\partial B_r(y).
\end{equation*}
We say that the condition is ``strict'' if $x$ is the only tangency point, i.e.
\[\partial B_r(y)\cap\partial\mathcal O=\{x\}.\]
The open set $\mathcal O$ satisfies a uniform (strict) interior ball condition of radius $r$ if
it satisfies the (strict) interior ball condition at every point of $\partial\mathcal O$,
with an interior tangent ball of radius at least $r$.\\
In a similar way one defines exterior ball conditions.
\end{defn}
We remark that
if $\mathcal O$ satisfies an interior ball condition of radius $r$ at $x\in\partial\mathcal O$,
then the condition is strict for every radius $r'<r$.

\begin{remark}\label{ext_unif_omega}
Let $\Omega\subset\R^n$ be a bounded open set with $C^2$ boundary. It is well
known that $\Omega$ satisfies a uniform interior and exterior ball condition. We fix $r_0=r_0(\Omega)>0$
such that $\Omega$ satisfies a strict interior and a strict exterior ball contition of radius $2r_0$
at every point $x\in\partial\Omega$.
Then
\begin{equation}\label{r01}
\bar{d}_\Omega\in C^2(N_{2r_0}(\partial\Omega)),
\end{equation}
(see e.g. Lemma 14.16 in \cite{trudy}).
\end{remark}

We remark that the distance function $d(-,E)$ is differentiable at $x\in\R^n\setminus\overline E$ if
and only if there is a unique point $y\in\partial E$ of minimum distance, i.e.
\[d(x,E)=|x-y|.\]
In this case, the two points $x$ and $y$ are related by the formula
\[y=x-d(x,E)\nabla d(x,E).\]

This generalizes to the signed distance function. In particular, if $\Omega$ is bounded and has $C^2$ boundary, then we can
define a $C^1$ projection function from the tubular $2r_0$-neighborhood $N_{2r_0}(\partial\Omega)$ onto $\partial\Omega$ by
assigning to a point $x$ its unique nearest point $\pi(x)$, that is
\[\pi:N_{2r_0}(\partial\Omega)\longrightarrow\partial\Omega,\qquad\pi(x):=x-\bar{d}_\Omega(x)\nabla\bar{d}_\Omega(x).\]
We also remark that
on $\partial\Omega$ we have that $\nabla\bar{d}_\Omega=\nu_\Omega$ and that
\[\nabla\bar{d}_\Omega(x)=\nabla\bar{d}_\Omega(\pi(x))=\nu_\Omega(\pi(x)),\qquad\forall x\in N_{2r_0}(\partial\Omega).\]
Thus $\nabla\bar{d}_\Omega$ is a vector field which extends
the outer unit normal to a tubular neighborhood of $\partial\Omega$, in a $C^2$ way.

Notice that given a point $y\in\partial\Omega$, for every $|\delta|<2r_0$ the point $x:=y+\delta\nu_\Omega(y)$ is such that $\bar{d}_\Omega(x)=\delta$ (and $y$ is its unique nearest point).
Indeed, we consider for example $\delta\in(0,2r_0)$. Then we can find an exterior tangent ball
\[B_{2r_0}(z)\subset\C\Omega,\qquad\partial B_{2r_0}(z)\cap\partial\Omega=\{y\}.\]
Notice that the center of the ball must be
\[z=y+2r_0\nu_\Omega(y).\]
Then, for every $\delta\in(0,2r_0)$ we have
\[B_\delta(y+\delta\nu_\Omega(y))\subset B_{2r_0}(y+2r_0\nu_\Omega(y))\subset
\C\Omega,\qquad\partial B_\delta(y+\delta\nu_\Omega(y))\cap\partial\Omega=\{y\}.\]
This proves that
\[|\bar{d}_\Omega(y+\delta\nu_\Omega(y))|=d(x,\partial\Omega)=\delta.\]
Finally, since the point $x$ lies outside $\Omega$, its signed distance function is positive.

\begin{remark}\label{c21}
Since $|\nabla\bar{d}_\Omega|=1$, the bounded open sets
\begin{equation*}
\Omega_\delta:=\{\bar{d}_\Omega<\delta\}
\end{equation*}
have $C^2$ boundary
\begin{equation*}
\partial\Omega_\delta=\{\bar{d}_\Omega=\delta\},
\end{equation*}
for every $\delta\in(-2r_0,2r_0)$.
\end{remark}

As a consequence, we know that for every $|\delta|<2r_0$ the set $\Omega_\delta$
satisfies a uniform interior and exterior ball condition of radius $r(\delta)>0$.
Moreover, we have that $r(\delta)\geq r_0$ for every $|\delta|\leq r_0$
(see also Appendix A in \cite{MR3436398}
for related results).

\begin{lemma}\label{geomlem}
Let $\Omega\subset\R^n$ be a bounded open set with $C^2$ boundary.
Then for every $\delta\in[-r_0,r_0]$ the set $\Omega_\delta$ 
satisfies a uniform interior and exterior ball condition of radius at least $r_0$, i.e.
\begin{equation*}
r(\delta)\geq r_0\qquad\textrm{for every }|\delta|\leq r_0.
\end{equation*}
\end{lemma}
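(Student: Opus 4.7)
The plan is to construct the interior and exterior tangent balls at every boundary point of $\Omega_\delta$ explicitly, using the projection onto $\partial\Omega$ and the global Lipschitz property of the signed distance function.

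First I would fix $\delta\in[-r_0,r_0]$ and an arbitrary $x\in\partial\Omega_\delta=\{\bar{d}_\Omega=\delta\}$. Since $|\delta|\le r_0<2r_0$, the point $x$ lies in the tubular neighborhood $N_{2r_0}(\partial\Omega)$ on which $\bar{d}_\Omega\in C^2$ and the projection $\pi:N_{2r_0}(\partial\Omega)\longrightarrow\partial\Omega$ is well defined. Setting $y:=\pi(x)\in\partial\Omega$, one has $x=y+\delta\,\nu_\Omega(y)$ and $\nabla\bar{d}_\Omega(x)=\nu_\Omega(y)$. As the candidate centers for the two tangent balls at $x$ I would take
\[c^{\pm}:=x\pm r_0\,\nu_\Omega(y)=y+(\delta\pm r_0)\,\nu_\Omega(y).\]
Note that by construction $|x-c^{\pm}|=r_0$, so $x\in\partial B_{r_0}(c^{+})\cap\partial B_{r_0}(c^{-})$, which is precisely the tangency requirement.

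Next I would compute the signed distance of the proposed centers. Because $|\delta\pm r_0|\le 2r_0$, the points $c^{\pm}$ still belong to the closure of the tubular neighborhood where the normal-parametrization is valid, so by the projection formula (recalled in the excerpt just before the statement of the Lemma) one has
\[\bar{d}_\Omega(c^{\pm})=\delta\pm r_0,\qquad \pi(c^{\pm})=y.\]
The crucial point is now that the signed distance function is globally $1$-Lipschitz on $\R^n$ (this follows from $|\nabla\bar{d}_\Omega|=1$ in its domain of smoothness together with the standard Lipschitz property of $d(\cdot,\partial\Omega)$). Therefore, for every $z\in B_{r_0}(c^{+})$,
\[\bar{d}_\Omega(z)\ge\bar{d}_\Omega(c^{+})-|z-c^{+}|>(\delta+r_0)-r_0=\delta,\]
which shows $B_{r_0}(c^{+})\subset\{\bar{d}_\Omega>\delta\}=\C\overline{\Omega_\delta}$ and gives the exterior tangent ball of radius $r_0$ at $x$. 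The symmetric argument applied to $c^{-}$ yields $\bar{d}_\Omega(z)<\delta$ on $B_{r_0}(c^{-})$, hence $B_{r_0}(c^{-})\subset\Omega_\delta$, the interior tangent ball of radius $r_0$ at $x$.

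Since $x\in\partial\Omega_\delta$ was arbitrary and the radius $r_0$ is uniform in $x$ and in $\delta\in[-r_0,r_0]$, this proves the claim $r(\delta)\ge r_0$. The only delicate point, and the one I would treat with care, is the boundary case $\delta=\pm r_0$: there one of the centers $c^{\pm}$ sits exactly at signed distance $\pm 2r_0$, i.e.\ at the edge of the tubular neighborhood where the $C^2$ projection formula was originally established; the argument still goes through because the global $1$-Lipschitz property of $\bar{d}_\Omega$ is unaffected and the interior/exterior ball condition of radius $2r_0$ for $\Omega$ itself guarantees the identity $\bar{d}_\Omega(y\pm 2r_0\nu_\Omega(y))=\pm 2r_0$ at those endpoints. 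Apart from this endpoint bookkeeping, the proof is essentially a one-line application of the Lipschitz estimate.
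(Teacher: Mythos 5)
Your proof is correct and follows essentially the same construction as the paper: both place the candidate tangent ball center at $x \pm r_0\nu_\Omega(\pi(x))$ and conclude by a Lipschitz/triangle estimate for the distance. The one genuine difference is that you invoke the global $1$-Lipschitz property of the \emph{signed} distance $\bar{d}_\Omega$ directly, which immediately pins down the sign of $\bar{d}_\Omega(z)$ on the ball; the paper instead applies the triangle inequality to the unsigned distance $d(\cdot,\partial\Omega)$, obtaining $d(z,\partial\Omega)>|\delta|$, and then relies implicitly on the connectedness of the ball together with the sign of $\bar{d}_\Omega$ at the center to conclude that the ball sits on the correct side. Your variant is thus a small but real streamlining of the same argument, and you also handle the interior and exterior balls symmetrically in one pass, whereas the paper treats one case ($\delta<0$, interior ball) and declares the others analogous. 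Your endpoint remark about $\delta=\pm r_0$ is appropriate caution, though since the $1$-Lipschitz bound for $\bar{d}_\Omega$ is a global property not tied to the tubular neighborhood, the only identity that needs separate care there is $\bar{d}_\Omega(y\pm 2r_0\nu_\Omega(y))=\pm 2r_0$, which indeed follows from the strict tangent-ball condition of radius $2r_0$ for $\Omega$ itself, exactly as you say.
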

\begin{proof}
Take for example $\delta\in[-r_0,0)$ and let $x\in\partial\Omega_\delta=\{\bar{d}_\Omega=\delta\}$.
We show that $\Omega_\delta$ has an interior tangent ball of radius $r_0$ at $x$. The other cases are proven in a similar way.

Consider the projection $\pi(x)\in\partial\Omega$ and the point
\[x_0:=x-r_0\nabla\bar{d}_\Omega(x)=\pi(x)-(r_0+|\delta|)\nu_\Omega(\pi(x)).\]
Then
\[B_{r_0}(x_0)\subset\Omega_\delta\quad\textrm{ and }\quad x\in\partial B_{r_0}(x_0)\cap\partial\Omega_\delta.\]
Indeed, notice that, as remarked above,
\[d(x_0,\partial\Omega)=|x_0-\pi(x)|=(r_0+|\delta|).\]
Thus, by the triangle inequality we have that
\[d(z,\partial\Omega)\ge d(x_0,\partial\Omega)-|z-x_0|>|\delta|,\qquad\textrm{ for every }z\in B_{r_0}(x_0),\]
so $B_{r_0}\subset\Omega_\delta$. Moreover, by definition of $x_0$ we have
\[x\in\partial B_{r_0}(x_0)\cap\partial\Omega_\delta\]
and the desired result follows.
\end{proof}

To conclude, we remark that the sets $\overline{\Omega_{-\delta}}$ are retracts of $\Omega$, for every $\delta\in(0,r_0]$.
Indeed, roughly speaking, each set $\overline{\Omega_{-\delta}}$ is obtained by deforming $\Omega$ in normal direction,
towards the interior.
An important consequence is that if $\Omega$ is connected then $\overline{\Omega_{-\delta}}$ is path connected.

To be more precise, we have the following:

\begin{prop}\label{retract}
Let $\Omega\subset\Rn$ be a bounded open set with $C^2$ boundary.
Let $\delta\in(0,r_0]$ and define
\[\mathcal D:\Omega\longrightarrow\overline{\Omega_{-\delta}},\qquad\mathcal D(x):=
\left\{\begin{split}&x,& &x\in\Omega_{-\delta},\\
&x-\big(\delta+\bar{d}_\Omega(x)\big)\nabla\bar{d}_\Omega(x),& & x\in\Omega\setminus\Omega_{-\delta}.\end{split}\right.\]
Then $\mathcal D$ is a retraction of $\Omega$ onto $\overline{\Omega_{-\delta}}$, i.e. it is continuous and
$\mathcal D(x)=x$ for every $x\in\overline{\Omega_{-\delta}}$.
In particular, if $\Omega$ is connected, then $\overline{\Omega_{-\delta}}$ is path connected.
\end{prop}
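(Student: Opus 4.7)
My plan is to verify the three properties in order: well-definedness, continuity, and the retraction property, and then deduce path-connectedness as a direct consequence.

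First, I would check that $\mathcal{D}$ genuinely takes values in $\overline{\Omega_{-\delta}}$. For $x\in\Omega_{-\delta}$ this is trivial. For $x\in\Omega\setminus\Omega_{-\delta}$, note that $-\delta\le\bar{d}_\Omega(x)<0$, so in particular $|\bar{d}_\Omega(x)|\le\delta\le r_0$ and thus $x\in N_{2r_0}(\partial\Omega)$, where the signed distance function is $C^2$ and satisfies $\nabla\bar{d}_\Omega(x)=\nu_\Omega(\pi(x))$ with $x=\pi(x)+\bar{d}_\Omega(x)\nu_\Omega(\pi(x))$. Substituting,
\[
\mathcal{D}(x)=\pi(x)+\bar{d}_\Omega(x)\nu_\Omega(\pi(x))-\bigl(\delta+\bar{d}_\Omega(x)\bigr)\nu_\Omega(\pi(x))=\pi(x)-\delta\,\nu_\Omega(\pi(x)).
\]
By the observation recalled just before Remark \ref{c21} (with $y=\pi(x)$ and signed parameter $-\delta$, allowed since $\delta\le r_0<2r_0$), we obtain $\bar{d}_\Omega(\mathcal{D}(x))=-\delta$, so $\mathcal{D}(x)\in\partial\Omega_{-\delta}\subset\overline{\Omega_{-\delta}}$.

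Next, for continuity, the restriction of $\mathcal{D}$ to the open set $\Omega_{-\delta}$ is the identity, hence continuous, while the restriction to the open set $\Omega\setminus\overline{\Omega_{-\delta}}$ is manifestly $C^1$ on $N_{2r_0}(\partial\Omega)$ since $\bar{d}_\Omega$ and $\nabla\bar{d}_\Omega$ are $C^2$ there. The only matter is to check agreement across the interface $\partial\Omega_{-\delta}=\{\bar{d}_\Omega=-\delta\}$: but if $x\in\partial\Omega_{-\delta}$ then $\delta+\bar{d}_\Omega(x)=0$, so the second piece reduces to $\mathcal{D}(x)=x$, matching the first piece. The retraction property $\mathcal{D}\big|_{\overline{\Omega_{-\delta}}}=\mathrm{id}$ follows by the same observation: on $\Omega_{-\delta}$ by definition, and on $\partial\Omega_{-\delta}$ because $\delta+\bar{d}_\Omega(x)=0$.

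Finally, if $\Omega$ is connected (hence path-connected, being open in $\mathbb{R}^n$), pick $y_0,y_1\in\overline{\Omega_{-\delta}}\subset\Omega$ and a continuous path $\gamma\colon[0,1]\to\Omega$ with $\gamma(0)=y_0$, $\gamma(1)=y_1$. Then $\mathcal{D}\circ\gamma\colon[0,1]\to\overline{\Omega_{-\delta}}$ is continuous by the previous step and joins $\mathcal{D}(y_0)=y_0$ to $\mathcal{D}(y_1)=y_1$, which proves path-connectedness of $\overline{\Omega_{-\delta}}$.

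I do not anticipate a serious obstacle; the only point that requires attention is the identification $\mathcal{D}(x)=\pi(x)-\delta\,\nu_\Omega(\pi(x))$ and the subsequent application of the basic geometric fact about signed distance in the tubular neighborhood. All the technical content is already packaged in the hypothesis $\delta\le r_0$ and in the $C^2$ regularity of $\bar{d}_\Omega$ on $N_{2r_0}(\partial\Omega)$ recorded in \eqref{r01}.
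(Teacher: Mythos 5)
Your proof is correct and follows essentially the same route as the paper: the key observation in both is that the formula simplifies to $\mathcal D(x)=\pi(x)-\delta\,\nu_\Omega(\pi(x))$ on $\Omega\setminus\Omega_{-\delta}$, which lands on $\partial\Omega_{-\delta}$, after which continuity comes from the pieces agreeing across the interface and path-connectedness is the image of a path under $\mathcal D$. You spell out the algebra and the pasting argument in a bit more detail than the paper does, but there is no substantive difference in the underlying ideas.
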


\begin{proof}
Notice that the function
\[\Phi(x):=x-\big(\delta+\bar{d}_\Omega(x)\big)\nabla\bar{d}_\Omega(x)\]
is continuous in $\Omega\setminus\Omega_{-\delta}$ and $\Phi(x)=x$ for every $x\in\partial\Omega_{-\delta}$.
Therefore the function $\mathcal D$ is continuous.

We are left to show that
\[\mathcal D(\Omega\setminus\Omega_{-\delta})\subset\partial\Omega_{-\delta}.\]
For this, it is enough to notice that
\[\mathcal D(x)=\pi(x)-\delta\nu_\Omega(\pi(x))\qquad\textrm{for every }x\in\Omega\setminus\Omega_{-\delta}.\]
To conclude, suppose that $\Omega$ is connected and recall that if an open set $\Omega\subset\R^n$ is connected, then it is also path connected.
Thus $\overline{\Omega_{-\delta}}$,
being the continuous image of a path connected space, is itself
path connected.
\end{proof}   
%

\noindent \textbf{Explicit formulas for the fractional mean curvature of a graph.}
Now, we collect some auxiliary results on nonlocal minimal surfaces.
In particular, we recall the representation of
the fractional mean curvature when the set is a graph and
a useful and general version of the maximum principle.\\
We denote 
\[Q_{r,h}(x):=B'_r(x')\times(x_n-h,x_n+h),\] for $x\in\R^n,$ $r,h>0$. If $x=0$, we write $Q_{r,h}:=Q_{r,h}(0)$. Let also 
\[g_s(t):=\frac{1}{(1+t^2)^\frac{n+s}{2}}\qquad\textrm{and}\qquad G_s(t):=\int_0^tg_s(\tau)\,d\tau.\]
Notice that
\[0<g_s(t)\leq1,\quad\forall\,t\in\R\qquad\textrm{and}\qquad\int_{-\infty}^{+\infty}g_s(t)\,dt<\infty,\]
for every $s\in(0,1)$.

In this notation, we can write the fractional mean curvature of a graph as follows:

\begin{prop}
Let $F\subset\R^n$ and $p\in\partial F$ such that
\[F\cap Q _{r,h}(p)=\{(x',x_n)\in\R^n\,|\,x'\in B'_r(p'),\,v(x')<x_n<p_n+h\},\]
for some $v\in C^{1,\alpha}(B'_r(p'))$. Then for every $s\in(0,\alpha)$
\eqlab{\label{complete_curv_formula}\I_s[F](p)&
=2\int_{B'_r(p')}\Big\{G_s\Big(\frac{v(y')-v(p')}{|y'-p'|}\Big)
-G_s\Big(\nabla v(p')\cdot\frac{y'-p'}{|y'-p'|}\Big)\Big\}\frac{dy'}{|y'-p'|^{n-1+s}}\\
&
\qquad\qquad+\int_{\R^n\setminus Q_{r,h}(p)}\frac{\chi_{\C F}(y)-\chi_F(y)}{|y-p|^{n+s}}\,dy.}
\end{prop}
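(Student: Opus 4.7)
The plan is to split the defining principal value integral of $\I_s[F](p)$ into the part inside the cylinder $Q_{r,h}(p)$, where the graph structure can be exploited, and the part outside, which already appears in the desired form. Concretely, I would write
\[\I_s[F](p)=\lim_{\rho\to 0^+}\int_{Q_{r,h}(p)\setminus B_\rho(p)}\frac{\chi_{\C F}(y)-\chi_F(y)}{|y-p|^{n+s}}\,dy+\int_{\R^n\setminus Q_{r,h}(p)}\frac{\chi_{\C F}(y)-\chi_F(y)}{|y-p|^{n+s}}\,dy,\]
so that only the first term needs to be transformed.

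For the cylindrical part I would apply Fubini in the $y_n$ variable. Since $F\cap Q_{r,h}(p)$ is the supergraph of $v$ and $v(p')=p_n$, for fixed $y'\in B'_r(p')$ the inner integral becomes
\[\int_{p_n-h}^{p_n+h}\frac{\chi_{\C F}(y)-\chi_F(y)}{(|y'-p'|^2+(y_n-p_n)^2)^{(n+s)/2}}\,dy_n.\]
After the substitution $t=(y_n-p_n)/|y'-p'|$ and using that $g_s$ is even (so the tails $[-h/|y'-p'|,0]$ and $[0,h/|y'-p'|]$ cancel), this inner integral equals
\[\frac{2}{|y'-p'|^{n-1+s}}\,G_s\!\Big(\frac{v(y')-v(p')}{|y'-p'|}\Big).\]
Integrating over $y'\in B'_r(p')\setminus B'_\rho(p')$ and sending $\rho\to 0^+$ would give the first half of the formula, except that this principal value interpretation is inconvenient.

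To dispose of the principal value I would exploit the symmetry of $G_s$: since $g_s$ is even, $G_s$ is odd, so the map $y'\mapsto G_s\bigl(\nabla v(p')\cdot(y'-p')/|y'-p'|\bigr)/|y'-p'|^{n-1+s}$ is odd under reflection through $p'$ and therefore has vanishing principal value integral on every symmetric ball $B'_\rho(p')\setminus B'_{\rho'}(p')$. Subtracting this zero-contribution from the integrand produces
\[\frac{2}{|y'-p'|^{n-1+s}}\Big\{G_s\!\Big(\frac{v(y')-v(p')}{|y'-p'|}\Big)-G_s\!\Big(\nabla v(p')\cdot\frac{y'-p'}{|y'-p'|}\Big)\Big\},\]
which is exactly the desired integrand. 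The last step is to check that this new integrand is absolutely integrable on $B'_r(p')$, so that the principal value becomes a genuine Lebesgue integral: since $v\in C^{1,\alpha}$ and $g_s$ is bounded by $1$, $G_s$ is $1$-Lipschitz, so the curly-bracket factor is $O(|y'-p'|^\alpha)$, giving an integrand of order $|y'-p'|^{\alpha-(n-1+s)}$, integrable on $B'_r(p')$ precisely because $s<\alpha$.

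The main technical point (and the only place where care is needed) is the interchange of the principal value limit with the subtraction of the odd symmetrization: one has to verify, using the two-sided cancellation on symmetric annuli $B'_\rho(p')\setminus B'_{\rho'}(p')$ and the $C^{1,\alpha}$ bound on $v$, that the corrected integrand is in fact integrable and that the principal value of the original cylindrical piece agrees with the Lebesgue integral of the corrected one. The outer contribution $\int_{\R^n\setminus Q_{r,h}(p)}(\chi_{\C F}-\chi_F)/|y-p|^{n+s}\,dy$ is manifestly absolutely convergent because $\mathrm{dist}(p,\R^n\setminus Q_{r,h}(p))>0$, so no analysis is needed there.
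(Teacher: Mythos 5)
Your plan is correct and reproduces the standard argument that the cited references use for this Proposition: split at $Q_{r,h}(p)$, Fubini in the graph direction, the $t$-substitution reducing the inner integral to $2G_s\bigl((v(y')-v(p'))/|y'-p'|\bigr)/|y'-p'|^{n-1+s}$, then the odd-function subtraction, with integrability coming from the $C^{1,\alpha}$ Hölder estimate — which is precisely the estimate recorded in the Remark that follows the Proposition.

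One step is glossed over, and it is not as harmless as your write-up suggests. The principal value in the definition of $\I_s$ excises balls $B_\rho(p)\subset\R^n$, but after Fubini your inner integral is taken over the full interval $(p_n-h,p_n+h)$ and you then restrict $y'$ to $B'_r(p')\setminus B'_\rho(p')$; in other words you have silently replaced the spherical excision by the cylindrical excision $B'_\rho(p')\times(p_n-h,p_n+h)$. For $|y'-p'|<\rho$ the $y_n$-fiber of $Q_{r,h}(p)\setminus B_\rho(p)$ is a punctured interval, not the full one, and the resulting discrepancy integral is supported on the thin annulus $\rho/\sqrt{1+\|\nabla v\|_\infty^2}\le|y'-p'|<\rho$, where a crude bound gives only $O(\rho^{-s})$: it does not vanish automatically. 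You must apply the same oddness trick to the discrepancy: compare the truncated correction built from $a=(v(y')-v(p'))/|y'-p'|$ with the one built from $b=\nabla v(p')\cdot(y'-p')/|y'-p'|$; the latter is odd under $y'\mapsto 2p'-y'$ and so integrates to zero on the symmetric annulus, while the difference between the two is $O(|y'-p'|^\alpha)$ because $t\mapsto G_s(\mathrm{clamp}(t,-\delta,\delta))$ is $1$-Lipschitz, and this yields a discrepancy of size $O(\rho^{\alpha-s})\to 0$. A cleaner route that avoids this bookkeeping altogether is to subtract the indicator of the tangent half-space $\hat F:=\{x_n>p_n+\nabla v(p')\cdot(x'-p')\}$ from the very start: $\chi_{\C\hat F}-\chi_{\hat F}$ contributes zero over $Q_{r,h}(p)\setminus B_\rho(p)$ for every $\rho$ by the reflection symmetry of the half-space through $p$, while $(\chi_{\C F}-\chi_F)-(\chi_{\C\hat F}-\chi_{\hat F})$ is supported between the graph and its tangent plane, hence absolutely integrable by $C^{1,\alpha}$; Fubini then applies directly, without any principal-value limit.
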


This explicit formula was introduced in \cite{regularity} (see also \cite{Abaty,lukes}) when $\nabla v(p)=0$. In \cite{bootstrap}, the reader can find the formula for the case of non-zero gradient. 

\begin{remark}
In the right hand side of \eqref{complete_curv_formula}
there is no need to consider the principal value, since the integrals are summable.
Indeed,
\bgs{\label{Holder_useful}
\Big|G_s\Big(&\frac{v(y')-v(p')}{|y'-p'|}\Big)
-G_s\Big(\nabla v(p')\cdot\frac{y'-p'}{|y'-p'|}\Big)\Big|
=\Big|\int_{\nabla v(p')\cdot\frac{y'-p'}{|y'-p'|}}^{\frac{v(y')-v(p')}{|y'-p'|}}g_s(t)\,dt\Big|\\
&
\leq\Big|\frac{v(y')-v(p')-\nabla v(p')\cdot(y'-p')}{|y'-p'|}\Big|\leq \|v\|_{C^{1,\alpha}(B'_r(p'))}|y'-p'|^\alpha,
}
for every $y'\in B'_r(p')$.
As for the last inequality, notice that by the Mean value Theorem we have
\[v(y')-v(p')=\nabla v(\xi)\cdot(y'-p'),\]
for some $\xi\in B'_r(p')$ on the segment with end points $y'$ and $p'$. Thus
\bgs{|v(y')-v(p')&-\nabla v(p')\cdot(y'-p')|=|(\nabla v(\xi)-\nabla v(p'))\cdot(y'-p')|\\
&
\leq|\nabla v(\xi)-\nabla v(p')||y'-p'|\leq\|\nabla v\|_{C^{0,\alpha}(B'_r(p'))}|\xi-p'|^\alpha|y'-p'|\\
&
\leq\|v\|_{C^{1,\alpha}(B'_r(p'))}|y'-p'|^{1+\alpha}.
}
We denote for simplicity
\eqlab{ \label{mathcalg} \mathcal G(s,v,y',p'):= G_s\Big(&\frac{v(y')-v(p')}{|y'-p'|}\Big)
-G_s\Big(\nabla v(p')\cdot\frac{y'-p'}{|y'-p'|}\Big).}
With this notation, we have
\eqlab{\label{Holder_useful} |\mathcal G(s,v,y',p')|  \leq \|v\|_{C^{1,\alpha}(B'_r(p'))}|y'-p'|^\alpha.}
\end{remark}

    \noindent \textbf{A maximum principle.}
By exploiting the Euler-Lagrange equation, we can compare an $s$-minimal set with half spaces.
We show that if $E$ is $s$-minimal in $\Omega$ and the exterior data $E_0:=E\setminus\Omega$ lies above a half-space, then
also $E\cap\Omega$ must lie above that same half-space. This is indeed
a very general principle, that we now discuss in full detail.
To this aim, it is convenient to point out that
if $E\subset F$ and the boundaries of the two sets touch at a common point $x_0$ where the $s$-fractional mean curvatures coincide, then the two sets must be equal.
The precise result goes as follows:

\begin{lemma}\label{curv_rigidity}
Let $E,F\subset\R^n$ be such that $E\subset F$ and $x_0\in\partial E\cap\partial F$. Then
\begin{equation}\label{confront_curv_ineq}
\I_s^\rho[E](x_0)\geq\I_s^\rho[F](x_0)\qquad\textrm{for every }\rho>0.
\end{equation}
Furthermore, if
\begin{equation}\label{ineq_for_curvs}
\liminf_{\rho\to0^+}\I_s^\rho[F](x_0)\geq a\quad\textrm{and}\quad\limsup_{\rho\to0^+}\I_s^\rho[E](x_0)\leq a,
\end{equation}
then $E=F$, the fractional mean curvature is well defined in $x_0$ and $\I_s[E](x_0)=a$.

\begin{proof}
To get $(\ref{confront_curv_ineq})$ it is enough to notice that
\[
E\subset F\quad\Longrightarrow\quad\big(\chi_{\C E}(y)-\chi_E(y)\big)
\geq\big(\chi_{\C F}(y)-\chi_F(y)\big)\qquad\forall\,y\in\R^n.
\]
Now suppose that $(\ref{ineq_for_curvs})$ holds true. Then by $(\ref{confront_curv_ineq})$ we find that
\[
\exists\,\lim_{\rho\to0^+}\I_s[E](x_0)=\lim_{\rho\to0^+}\I_s[F](x_0)=a.
\]

To conclude, notice that if the two curvatures are well defined (in the principal value sense) in $x_0$ and are equal,
then
\begin{equation*}\begin{split}
0\leq\int_{\C B_\rho(x_0)}&\frac{\big(\chi_{\C E}(y)-\chi_E(y)\big)-\big(\chi_{\C F}(y)-\chi_F(y)\big)}{|x_0-y|^{n+s}}dy\\
&
=\I_s^\rho[E](x_0)-\I_s^\rho[F](x_0)\xrightarrow{\rho\to0^+}0,
\end{split}\end{equation*}
which implies that $\chi_E(y)=\chi_F(y)$ for a.e. $y\in\R^n$, i.e. $E=F$.
\end{proof}\end{lemma}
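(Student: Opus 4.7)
The plan is to reduce everything to the pointwise inequality between indicator functions that the inclusion $E\subset F$ produces. More precisely, since $E\subset F$, we have $\chi_E(y)\le\chi_F(y)$ for every $y\in\R^n$, and therefore
\[
\chi_{\C E}(y)-\chi_E(y)\;\ge\;\chi_{\C F}(y)-\chi_F(y)\qquad\forall\,y\in\R^n.
\]
Dividing by $|x_0-y|^{n+s}$ (a strictly positive weight on $\C B_\rho(x_0)$) and integrating over $\C B_\rho(x_0)$ immediately yields the first claim $\I_s^\rho[E](x_0)\ge \I_s^\rho[F](x_0)$ for every $\rho>0$.

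For the second claim, I would combine this monotonicity with the two assumed one-sided bounds. Taking $\liminf_{\rho\to 0^+}$ in the inequality $\I_s^\rho[E](x_0)\ge \I_s^\rho[F](x_0)$ and using the hypothesis on $F$ gives $\liminf_{\rho\to 0^+}\I_s^\rho[E](x_0)\ge a$; combining with the hypothesis $\limsup_{\rho\to 0^+}\I_s^\rho[E](x_0)\le a$ forces the limit to exist and equal $a$, so that $\I_s[E](x_0)=a$ in the principal value sense. An entirely analogous squeeze shows $\I_s[F](x_0)=a$ as well.

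The last step, and the only one requiring a small observation, is to deduce $E=F$ from the fact that both principal value curvatures exist and coincide. I would write
\[
0\;\le\;\int_{\C B_\rho(x_0)}\frac{\bigl(\chi_{\C E}(y)-\chi_E(y)\bigr)-\bigl(\chi_{\C F}(y)-\chi_F(y)\bigr)}{|x_0-y|^{n+s}}\,dy
\;=\;\I_s^\rho[E](x_0)-\I_s^\rho[F](x_0),
\]
whose right-hand side tends to $a-a=0$ as $\rho\to 0^+$. Since the integrand is nonnegative and $|x_0-y|^{-(n+s)}>0$, monotone convergence forces $\chi_{\C E}(y)-\chi_E(y)=\chi_{\C F}(y)-\chi_F(y)$ for a.e.\ $y\in\R^n$, i.e.\ $\chi_E=\chi_F$ almost everywhere, which is exactly $E=F$ in the measure-theoretic convention adopted throughout the chapter.

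I do not expect any genuine obstacle: the entire argument is an exercise in the monotonicity of the principal value integrand with respect to set inclusion. The only subtle point to watch is the bookkeeping of $\liminf$ versus $\limsup$ in the squeeze step, which is why the statement is phrased with one-sided bounds: the monotonicity $\I_s^\rho[E]\ge\I_s^\rho[F]$ passes to $\liminf$ but not naively to $\limsup$, so the hypothesis must be arranged (as it is) so that the liminf bound is placed on $F$ and the limsup bound on $E$.
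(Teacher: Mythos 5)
Your proof is correct and follows essentially the same route as the paper: the pointwise monotonicity of the integrand under $E\subset F$, the liminf/limsup squeeze to identify the common principal-value limit $a$, and the nonnegativity of the difference integrand to force $\chi_E=\chi_F$ a.e. The only cosmetic difference is that you invoke monotone convergence explicitly in the final step, which the paper leaves implicit.
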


\begin{prop}\label{maximum_principle}[Maximum Principle]
Let $\Omega\subset\R^n$ be a bounded open set with $C^{1,1}$ boundary. Let $s\in(0,1)$ and let $E$ be 
$s$-minimal in $\Omega$. If
\begin{equation}\label{ext_data_incl}
\{x\cdot\nu\leq a\}\setminus\Omega\subset\C E,\end{equation}
for some $\nu\in\mathbb S^{n-1}$ and $a\in\R$, then
\[\{x\cdot\nu\leq a\}\subset \C E.\]

\begin{proof}
First of all, we remark that up to a rotation and translation, we can suppose that $\nu=e_n$ and $a=0$.
Furthermore we can assume that
\[
\inf_{x\in\overline{\Omega}}x_n<0,
\]
otherwise there is nothing to prove.

If $E\cap\Omega=\emptyset$, i.e. $\Omega\subset\C E$, we are done.
Thus we can suppose that $E\cap\Omega\not =\emptyset$.\\
Since $\overline{E}\cap\overline{\Omega}$ is compact, we 
have
\[
b:=\min_{x\in\overline{E}\cap\overline{\Omega}}x_n\in\R.
\]
Now we consider the set of points which realize the minimum
above, namely we set
$$\mathcal P:=\{p\in\overline{E}\cap\overline{\Omega}\,|\,p_n=b\}.$$
Notice that
\begin{equation}\label{confronto_first_incl}
\big\{x_n\leq\min\{b,0\}\big\}\subset\C E,
\end{equation}
so we are reduced to prove that $b\geq0$.

We argue by contradiction and suppose that $b<0$. We will prove that $\mathcal P=\emptyset$.
We remark that $\mathcal P\subset\partial E\cap\overline{\Omega}$.

Indeed, if $p\in\mathcal P$, then by $(\ref{confronto_first_incl})$ we have that
$B_\delta(p)\cap\{x_n\leq b\}\subset\C E$
for every $\delta >0$, so $|B_\delta(p)\cap\C E|\geq\frac{\omega_n}{2}\delta^n$
and $p\not\in E_1$. Therefore, since $\overline{E}=E_1\cup\partial E$, we 
find that $p\in\partial E$.

Roughly speaking, we are sliding upwards the half-space $\{x_n\leq t\}$ until we first touch the set $\overline{E}$. Then the contact points must belong to the boundary of $E$.

Notice that the points of $\mathcal P$ can be either inside $\Omega$ or on $\partial\Omega$.
In both cases we can use the Euler-Lagrange equation to get a contradiction. The precise argument goes as follows.

First, if $p=(p',b)\in\partial E\cap\Omega$, then since $H:=\{x_n\leq b\}\subset\C E$,
we can find an exterior tangent ball to $E$ at $p$ (contained in $\Omega$), so $\I_s[E](p)=0$.

On the other hand, if $p\in\partial E\cap\partial\Omega$, then
$B_{|b|}(p)\setminus\Omega\subset\C E$ and hence (by Theorem 5.1 of \cite{graph})
$\partial E\cap B_r(p)$ is $C^{1,\frac{s+1}{2}}$ for some $r\in(0,|b|)$, and $\I_s[E](p)\leq0$ by Theorem 1.1 of \cite{elsulbordo} (we remark that
sign convention here
is different than the one in \cite{elsulbordo}).

In both cases, we have that
\[
p\in\partial H\cap\partial E,\quad H\subset \C E\quad\textrm{and}\quad\I_s[\C E](p)=-\I_s[E](p)\geq0=\I_s[H](p),
\]
and hence Lemma \ref{curv_rigidity} implies $\C E=H$. However, since $b<0$, this contradicts
$(\ref{ext_data_incl})$.

This proves that $b\geq0$, thus concluding the proof.
\end{proof}
\end{prop}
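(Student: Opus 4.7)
The plan is to argue by contradiction using a sliding/touching argument combined with the Euler--Lagrange equation in viscosity form, together with the rigidity principle in Lemma \ref{curv_rigidity}. After a rigid motion I would reduce to the case $\nu=e_n$ and $a=0$, so the hypothesis becomes $\{x_n\le 0\}\setminus\Omega\subset\C E$, and I want to prove $\{x_n\le 0\}\subset\C E$. If $\inf_{\overline\Omega}x_n\ge 0$ the statement is immediate from the hypothesis on the exterior data, so I may assume the opposite. Trivially, if $E\cap\Omega=\emptyset$ the conclusion holds; otherwise the compact set $\overline E\cap\overline\Omega$ is nonempty and I can introduce
\[
b:=\min_{x\in\overline E\cap\overline\Omega}x_n\in\R,\qquad \mathcal P:=\{p\in\overline E\cap\overline\Omega\,:\,p_n=b\}.
\]
The goal is to show $b\ge 0$.

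First I would check that every $p\in\mathcal P$ actually lies in $\partial E$: by minimality of $b$ together with the exterior hypothesis, the half-space $\{x_n\le b\}$ is contained in $\C E$, which forces any sufficiently small ball around $p$ to contain a positive-measure chunk of $\C E$, so $p\notin E_{int}$, and since $\overline E=E_{int}\cup\partial E$, necessarily $p\in\partial E$. Thus each $p\in\mathcal P$ is a contact point where the closed half-space $H:=\{x_n\le b\}$ touches $E$ from outside.

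Next, assuming by contradiction $b<0$, I would split into the two cases $p\in\Omega$ or $p\in\partial\Omega$, and in both cases exploit that $E$ possesses an exterior tangent object at $p$ so that the fractional mean curvature is controlled. If $p\in\partial E\cap\Omega$, then a small ball tangent to $H$ at $p$ sits inside $\Omega\cap\C E$, and interior $s$-minimality combined with the interior Euler--Lagrange equation (see \eqref{ELsmin} and the discussion that follows it) gives $\I_s[E](p)=0$ in the classical sense. If instead $p\in\partial E\cap\partial\Omega$, the hypothesis on the exterior data provides a ball $B_{|b|}(p)\setminus\Omega\subset\C E$; invoking the boundary regularity theorem from \cite{graph} yields $C^{1,(1+s)/2}$ regularity of $\partial E$ near $p$, and the boundary Euler--Lagrange inequality of \cite{elsulbordo} yields $\I_s[E](p)\le 0$. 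The $C^{1,1}$ assumption on $\partial\Omega$ is exactly what is needed to apply this boundary theory.

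Finally I would close the argument using the rigidity contained in Lemma \ref{curv_rigidity}. In both cases one has $H\subset\C E$, $p\in\partial H\cap\partial E$, and
\[
\I_s[H](p)=0\le -\I_s[E](p)=\I_s[\C E](p),
\]
where the equality $\I_s[H](p)=0$ follows by symmetry of the half-space through the hyperplane $\{x_n=b\}$ passing through $p$. Applying Lemma \ref{curv_rigidity} to the inclusion $H\subset\C E$ at the touching point $p$ forces $\C E=H$, i.e.\ $E=\{x_n>b\}$. But this contradicts the exterior hypothesis $\{x_n\le 0\}\setminus\Omega\subset\C E$ (since $b<0$ means the slab $\{b<x_n\le 0\}$, which lies in $E$ by the equality $E=\{x_n>b\}$, must intersect the exterior of a bounded $\Omega$). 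The expected main obstacle is the boundary case $p\in\partial\Omega$: making the Euler--Lagrange inequality rigorous there is delicate and is what forces the $C^{1,1}$ regularity hypothesis on $\partial\Omega$, since one must invoke the boundary $C^{1,(1+s)/2}$ regularity of $\partial E$ to give the pointwise meaning to $\I_s[E](p)$.
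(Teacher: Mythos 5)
Your proposal reproduces the paper's proof essentially verbatim: the same reduction to $\nu=e_n$, $a=0$, the same minimum $b$ and contact set $\mathcal P$, the same argument that $\mathcal P\subset\partial E$, the identical split into the interior case ($\I_s[E](p)=0$ via an exterior tangent ball) and the boundary case ($\I_s[E](p)\le 0$ via the $C^{1,(1+s)/2}$ boundary regularity of \cite{graph} and the inequality of \cite{elsulbordo}), and the same application of Lemma~\ref{curv_rigidity} to force $\C E=H$ and reach a contradiction with the exterior datum. No meaningful deviation from the paper's argument.
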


{F}rom this, we obtain a strong comparison principle with planes,
as follows:

\begin{corollary}
Let $\Omega\subset\R^n$ be a bounded open set with $C^{1,1}$ boundary.
Let $E\subset\R^n$ be $s$-minimal in $\Omega$, with
$\{x_n\leq0\}\setminus\Omega\subset\C E$.
Then

$(i)\quad$ if $|(\C E\setminus\Omega)\cap\{x_n>0\})|=0$, then $E=\{x_n>0\}$;

$(ii)\quad$ if $|(\C E\setminus\Omega)\cap\{x_n>0\}|>0$,
then for every $x=(x',0)\in\Omega\cap\{x_n=0\}$ there exists $\delta_x\in(0,d(x,\partial\Omega))$ s.t.
$B_{\delta_x}(x)\subset\C E$. Thus
\begin{equation}
\{x_n\leq0\}\cup\bigcup_{(x',0)\in\Omega}B_{\delta_x}(x)\subset\C E.
\end{equation}

\begin{proof}
First of all, Proposition \ref{maximum_principle} guarantees that
\begin{equation*}
\{x_n\leq0\}\subset\C E.
\end{equation*}

$(i)\quad$ Notice that since $E$ is $s$-minimal in $\Omega$, also $\C E$ is $s$-minimal in $\Omega$.\\
Thus, since $\{x_n>0\}\setminus\Omega\subset E=\C(\C E)$, we can use again Proposition \ref{maximum_principle}
(notice that $\{x_n=0\}$ is a set of measure zero) to
get $\{x_n>0\}\subset E$, proving the claim.

$(ii)\quad$ Let $x\in\{x_n=0\}\cap \Omega$.

We argue by contradiction. Suppose that $|B_\delta(x)\cap E|>0$ for every $\delta>0$.
Notice that,
since $B_\delta(x)\cap\{x_n\leq0\}\subset\C E$ for every $\delta>0$, this implies that $x\in\partial E\cap \Omega$.
Moreover, we can find an exterior tangent ball to $E$ in $x$, namely
\begin{equation*}
B_\eps(x-\eps\,e_n)\subset\{x_n\leq0\}\cap\Omega\subset\C E\cap\Omega.
\end{equation*}
Thus the Euler-Lagrange equation gives $\I_s[E](x)=0$.

Let $H:=\{x_n\leq0\}$.
Since $x\in\partial H$, $H\subset\C E$ and also $\I_s[H](x)=0$,
Lemma \ref{curv_rigidity} implies $\C E=H$.
However this contradicts the hypothesis
\begin{equation*}
|(\C E\setminus\Omega)\cap\{x_n>0\}|>0,
\end{equation*}
which completes the proof.
\end{proof}\end{corollary}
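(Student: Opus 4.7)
The plan is to deduce both statements from the already-established maximum principle (Proposition \ref{maximum_principle}) together with the rigidity lemma (Lemma \ref{curv_rigidity}) that forces equality of two nested sets as soon as their nonlocal mean curvatures agree at a common boundary point.

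First, I would apply Proposition \ref{maximum_principle} directly with $\nu = e_n$ and $a = 0$, which gives the baseline inclusion $\{x_n \leq 0\} \subset \C E$. This is the starting point for both parts.

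For part (i), I would observe that since $E$ is $s$-minimal in $\Omega$, the complementary set $\C E$ is $s$-minimal in $\Omega$ as well. The hypothesis $|(\C E \setminus \Omega) \cap \{x_n > 0\}| = 0$ means that, up to a set of measure zero, $\{x_n \geq 0\} \setminus \Omega \subset E$. Applying Proposition \ref{maximum_principle} to $\C E$ with the opposite half-space (noting that $\{x_n = 0\}$ has zero measure and so may be safely absorbed) yields $\{x_n \geq 0\} \subset E$. Combined with the baseline inclusion, this forces $E = \{x_n > 0\}$ up to a negligible set, which is exactly the claim.

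For part (ii), I would argue by contradiction: assume that there exists some $x = (x', 0) \in \Omega \cap \{x_n = 0\}$ for which $|B_\delta(x) \cap E| > 0$ for every $\delta > 0$. Since the baseline inclusion gives $B_\delta(x) \cap \{x_n \leq 0\} \subset \C E$, the point $x$ then lies on $\partial E \cap \Omega$. The half-ball $B_\eps(x - \eps e_n) \subset \{x_n \leq 0\} \cap \Omega \subset \C E$ serves as an exterior tangent ball to $E$ at $x$ for $\eps$ small enough, so that interior regularity and the Euler--Lagrange equation in the classical sense give $\I_s[E](x) = 0$. At this stage the rigidity mechanism of Lemma \ref{curv_rigidity} kicks in: setting $H := \{x_n \leq 0\}$, we have $H \subset \C E$, $x \in \partial H \cap \partial(\C E)$, and an explicit computation (using the symmetry of $H$ with respect to $\{x_n = 0\}$) yields $\I_s[H](x) = 0 = \I_s[\C E](x)$. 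The lemma then forces $\C E = H$, contradicting the assumption $|(\C E \setminus \Omega) \cap \{x_n > 0\}| > 0$. Hence, for every such $x$, there must exist some $\delta_x > 0$ with $B_{\delta_x}(x) \subset \C E$, and by taking $\delta_x < d(x, \partial \Omega)$ we stay inside $\Omega$, giving the stated inclusion.

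The only point that genuinely requires care is the application of the Euler--Lagrange equation at the boundary point $x$ in part (ii): one needs the exterior tangent ball argument together with the interior regularity result cited earlier in the paper, to guarantee that $\I_s[E](x)$ is actually well defined (in the principal value sense) and equals zero, rather than merely satisfying a one-sided viscosity inequality. Once this is justified, Lemma \ref{curv_rigidity} produces the contradiction immediately, since the half-space $H$ is such a rigid minimizer for its own fractional mean curvature that no proper subset of $\C E$ can coincide with $H$ at a boundary contact point without $\C E$ itself collapsing onto $H$.
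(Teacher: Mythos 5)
Your proposal follows the paper's own argument step for step: the same application of Proposition \ref{maximum_principle} for the baseline inclusion, the same observation that $\C E$ is $s$-minimal to handle part (i), and the same contradiction argument in part (ii) via the exterior tangent ball, the Euler--Lagrange equation, and Lemma \ref{curv_rigidity} applied to $H=\{x_n\le 0\}\subset\C E$. The only cosmetic difference is that you assert $\I_s[\C E](x)=0$ outright rather than the one-sided inequality the paper records, but since $\I_s[\C E](x)=-\I_s[E](x)$ this is the same fact, and the lemma's hypotheses are met in either case.
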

\appendix
\chapter{Appendix A}
\section{The Fourier transform}\label{Four}
\noindent We consider the  Schwartz space of rapidly decaying functions defined as 
	\eqlab{\label{schsp}\mathcal{S} ({\Rn}): = \left \{  f \in C^\infty(\Rn)\;  \big|   \; \forall \alpha, \, \beta \in \mathbf{N}^n_0, \,  \sup_{x\in {\Rn}} |x^{\alpha} \partial_{\beta} f(x)|<  \infty \right \}.  }
	In other words, the Schwartz space consists of smooth functions whose derivatives (including the function itself) decay at infinity faster than any power of $x$. Endowed with the family of seminorms
	\begin{equation} \label{seminormss} [f]_{\Sa(\Rn)}^{\alpha,N} =\sup_{x\in {\Rn}} (1+|x|)^N  \sum_{|\alpha| \leq N}| D^{\alpha} f(x)|, \end{equation}
	the Schwartz space is a locally convex topological space. We denote by $\Sa'(\Rn)$ the space of tempered distributions, the topological dual of $\Sa(\Rn)$.
	
Using $x \in \Rn$ as the  space variable and $\xi \in \Rn$ as the frequency variable, the Fourier transform
and the inverse Fourier transform of  $f \in L^1(\Rn)$, are defined respectively, as
		\begin{equation*} 
		\widehat f(\xi) := \mathcal{F} f(\xi):= 
\int_{{\Rn}} f(x) e^{-  2\pi i x \cdot \xi} \, dx
		\label{transF} 
	\end{equation*}
and
	\begin{equation*}	\label{invF}
		\wck f(x)  =\mathcal{F}^{-1} f(x)  = \int_{\Rn} f (\xi) e^{2\pi i x \cdot \xi} \, d\xi.
	\end{equation*} 
	We recall that the pointwise product is taken into the convolution product and vice versa, namely for all $ f,\,g \in L^1(\Rn)$
	\eqlab{\label{Fconv}  \mathcal{F} (f*g)=  \mathcal{F}(f) \, \mathcal{F}(g) . }
	We have that  $f(x)= \F \big(\F^{-1} f)(x) =  \F^{-1} \big(\F f)(x)$ holds almost everywhere if both $f$ and $\widehat f\in L^1(\Rn)$, and pointwise if $f$ is also continuous.
Also for all $ f,\,g \in L^1(\Rn)$
	\[  \int_{\Rn} \widehat f(\xi)  g(\xi) \, d\xi = \int_{\Rn}f(\xi) \widehat g (\xi) \, d\xi  . \]
	On the Schwartz space, the Fourier transform gives a continuous bijection between $\mathcal{S}(\Rn)$  and $\mathcal{S}(\Rn)$.

\section{Special functions}\label{special}
We recall here a few notions on the special functions Gamma, Beta and hypergeometric (see \cite{ABRAMOWITZ}, Chapters 6 and 15 for details).\\

\noindent\textbf{Gamma function.} The Gamma function is defined  for $x>0$ as (see \cite{ABRAMOWITZ}, Chapter 6):
\begin{equation} \Gamma(x):=\int_0^{\infty}t^{x-1}e^{-t}\, dt.\label{ABRAMOWITZ}\end{equation}
This function has an unique continuation to the whole $\mathbb{R}$ except at the negative integers, by means of Euler's infinite product.
We have that $\Gamma(1)=\Gamma(2)=1$ and $\Gamma({1}/{2})= \sqrt{\pi}$. We  also recall the next useful identities:
 \begin{align}
			&\Gamma(n+1)= n! &   \mbox{ for any } &n \in \N,\label{gammafac}\\
			&\Gamma(x+1)=x\Gamma(x) &   \mbox{ for any } &x >0,   \label{gamxx1}\\
			&\frac{\Gamma(1/2+x)}{\Gamma(2x)} =  \frac{\sqrt{\pi} 2^{1-2x} }{\Gamma(x)}&   \mbox{ for any } &x >0 , \label{gam2}\\
			 	&\Gamma(s)\Gamma(1-s) \;\;=  \frac{\pi}{\sin (\pi s)}&   \mbox{ for } &s\in (0,1),   \label{gam3}\\
			& \Gamma(1/2-s) \Gamma(1/2+s) =   \frac{\pi}{\cos(\pi s)}&   \mbox{ for } &s\in (0,1),   \label{gam1}\\
	 	 &  \Gamma(1-s) =  (-s)\Gamma(-s) &   \mbox{ for } &s\in (0,1)\label{gam4}.
	 \end{align}

\noindent\textbf{Beta function.}
The Beta function can be represented as an integral (see \cite{ABRAMOWITZ}, Section 6.2), namely for $x, y>0$
	 \begin{equation}
			 \beta(x,y) =  \int_0^{\infty} \frac {t^{x-1}}{(1+t)^{x+y}} \, dt \label{beta}
	\end{equation}
and equivalently
	\begin{equation}
 	\beta(x,y)=  \int_0^1 t^{x-1}(1-t)^{y-1} \, dt \label{betazerouno}.
	\end{equation}
Furthermore, we have the identity
	\begin{equation}
\beta(x,y)= \frac {\Gamma(x)\Gamma(y)} {\Gamma(x+y)} \label{betagamma}.
	\end{equation}
	In particular, using this and \eqref{gam3}, we get the useful formula
	\eqlab{\label{betas} \beta(s,1-s)= \frac{\pi}{\sin(\pi s)}.}

\noindent\textbf{Hypergeometric functions.} There are several representations for the hypergeometric function (see \cite{ABRAMOWITZ}, Chapter 15, or page~211 in~\cite{Oberhettinger}). We recall the ones useful for our own purposes. \\

\noindent \emph{(1)  Gauss series}
		\begin{equation}  F(a,b,c,w) = \sum_{k=0}^{\infty} \frac{ (a)_k (b)_k }{(c)_k} \frac{w^k} { k!},\label{gausshyp} \end{equation}
	where $(q)_k$ is the Pochhammer symbol defined by:
		\begin{equation}\label{Pochh}
		(q)_k = \begin{cases}
			1   &\mbox{ for } k = 0 ,\\
  		q(q+1) \cdots (q+k-1) &\mbox{ for } k > 0.
		 \end{cases}
		\end{equation} \\
The interval of convergence of the series is $|w|\leq 1$. The Gauss series, on its interval of convergence, diverges when $c-a-b\leq -1$, is absolutely convergent when $c-a-b>0$ and is conditionally convergent when $|w|<1$ and $-1<c-a-b\leq 0$.	
Also, the series is not defined when $c$ is a negative integer $-m$, provided $a$ or $b$ is a positive integer $n$ and $n<m$.

Some useful elementary computations are
	\begin{subequations}	
		 \begin{align}
		&F(a,b,b,w)=(1-w)^{-a}.\label{hypelc1}\\
		&F\Big(a, \frac{1}{2}+a, \frac{1}{2},w^2\Big) = \frac{ (1+w)^{-2a} +(1-w)^{-2a}}{2}.\label{hypelc2}
		\end{align}
	\end{subequations}

\noindent\emph{(2)  Integral representation}
	\begin{equation}
	 F(a,b,c,w) :=  \frac{\Gamma(c)}{\Gamma(b) \Gamma(c-b)} \int_0^1 t^{b -1} (1-t)^{c-b-1} (1- w t)^{-a}  \, dt .\label{inthyp}
	\end{equation}
The integral is convergent (thus $F$ is defined as an integral) when $c>b>0$ and $|w|<1$. \\

\noindent\emph{ (3) Linear transformation formulas}

From the integral representation \eqref{inthyp}, the following transformations can be deduced.
	\begin{subequations}
		 \begin{align}	
		F(a,b,c,w) =&\;  (1-w)^{c-a-b} F(c-a,c-b,c,w), \label{hyp1}\\
			 =&\;(1-w)^{-a} F\Big(a,c-b,c, \frac{w}{w-1}\Big),\label{hyp2} \\
				 =&\;(1-w)^{-b} F\Big(b,c-a,c,\frac{w}{w-1}\Big),\label{hyp3}\\
				= &\;\frac{\Gamma(c)\Gamma(c-a-b)}{\Gamma(c-a)\Gamma(c-b)} F(a,b,a+b-c+1,1-w) \notag \\&\;+ (1-w)^{c-a-b} \frac{\Gamma(c)\Gamma(a+b-c)}{\Gamma(a)\Gamma(b)}F(c-a,c-b,c-a-b+1, 1-w), \notag\\ &\text{ when } 0<w<1\label{hyp4} .
		\end{align}
	\end{subequations}

\bibliography{biblio}

\begin{thebibliography}{100}

\bibitem{RAABE}
{D}{D}{D}: {D}iscrete {D}islocation {D}ynamics.
\newblock \url{https://www.ma.utexas.edu/mediawiki/index.php/Main_Page}.

\bibitem{Abaty}
Nicola Abatangelo and Enrico Valdinoci.
\newblock A notion of nonlocal curvature.
\newblock {\em Numer. Funct. Anal. Optim.}, 35(7-9):793--815, 2014.

\bibitem{ABRAMOWITZ}
Milton Abramowitz and Irene~A. Stegun, editors.
\newblock {\em Handbook of mathematical functions with formulas, graphs, and
  mathematical tables}.
\newblock A Wiley-Interscience Publication. John Wiley \& Sons, Inc., New York;
  National Bureau of Standards, Washington, DC, 1984.
\newblock Reprint of the 1972 edition, Selected Government Publications.

\bibitem{AGP1}
Antonio Ambrosetti, J.~Garcia Azorero, and Ireneo Peral.
\newblock Perturbation of {$\Delta u+u^{(N+2)/(N-2)}=0$}, the scalar curvature
  problem in {${\bf R}^N$}, and related topics.
\newblock {\em J. Funct. Anal.}, 165(1):117--149, 1999.

\bibitem{AGP}
Antonio Ambrosetti, J.~Garcia Azorero, and Ireneo Peral.
\newblock Elliptic variational problems in {${\bf R}^N$} with critical growth.
\newblock {\em J. Differential Equations}, 168(1):10--32, 2000.
\newblock Special issue in celebration of Jack K. Hale's 70th birthday, Part 1
  (Atlanta, GA/Lisbon, 1998).

\bibitem{ABC}
Antonio Ambrosetti, Ha{\"{\i}}m Br{\'e}zis, and Giovanna Cerami.
\newblock Combined effects of concave and convex nonlinearities in some
  elliptic problems.
\newblock {\em J. Funct. Anal.}, 122(2):519--543, 1994.

\bibitem{ALM}
Antonio Ambrosetti, YanYan Li, and Andrea Malchiodi.
\newblock On the {Y}amabe problem and the scalar curvature problems under
  boundary conditions.
\newblock {\em Math. Ann.}, 322(4):667--699, 2002.

\bibitem{mps}
Antonio Ambrosetti and Paul~H. Rabinowitz.
\newblock Dual variational methods in critical point theory and applications.
\newblock {\em J. Functional Analysis}, 14:349--381, 1973.

\bibitem{AC00}
Luigi Ambrosio and Xavier Cabr{\'e}.
\newblock Entire solutions of semilinear elliptic equations in {$\bold R^3$}
  and a conjecture of {D}e {G}iorgi.
\newblock {\em J. Amer. Math. Soc.}, 13(4):725--739 (electronic), 2000.

\bibitem{Ambrosio}
Luigi Ambrosio and Norman Dancer.
\newblock {\em Calculus of Variations and Partial Differential Equations:
  Topics on Geometrical Evolution Problems and Degree Theory}.
\newblock Springer Science \& Business Media, 2000.

\bibitem{gammaconv}
Luigi Ambrosio, Guido De~Philippis, and Luca Martinazzi.
\newblock Gamma-convergence of nonlocal perimeter functionals.
\newblock {\em Manuscripta Math.}, 134(3-4):377--403, 2011.

\bibitem{Applebaum}
David Applebaum.
\newblock L\'evy processes and stochastic calculus.
\newblock 116:xxx+460, 2009.

\bibitem{Bego}
Bego{\~{n}}a {Barrios}, Eduardo Colorado, Raffaella Servadei, and Fernando
  Soria.
\newblock A critical fractional equation with concave--convex power
  nonlinearities.
\newblock {\em Ann. Inst. H. Poincar\'e Anal. Non Lin\'eaire}, 32(4):875--900,
  2015.

\bibitem{bootstrap}
Bego{\~{n}}a {Barrios}, Alessio {Figalli}, and Enrico {Valdinoci}.
\newblock {Bootstrap regularity for integro-differential operators and its
  application to nonlocal minimal surfaces.}
\newblock {\em {Ann. Sc. Norm. Super. Pisa, Cl. Sci. (5)}}, 13(3):609--639,
  2014.

\bibitem{BASS-KASSMANN}
Richard~Franklin Bass and Moritz Kassmann.
\newblock Harnack inequalities for non-local operators of variable order.
\newblock {\em Trans. Amer. Math. Soc.}, 357(2):837--850, 2005.

\bibitem{BCN97}
Henri Berestycki, Luis Caffarelli, and Louis Nirenberg.
\newblock Further qualitative properties for elliptic equations in unbounded
  domains.
\newblock {\em Ann. Scuola Norm. Sup. Pisa Cl. Sci. (4)}, 25(1-2):69--94
  (1998), 1997.
\newblock Dedicated to Ennio De Giorgi.

\bibitem{Torreault}
Ana Bernardis, Francisco~J. Mart{\'{\i}}n-Reyes, Pablo~Ra{\'u}l Stinga, and
  Jos{\'e}~L. Torrea.
\newblock Maximum principles, extension problem and inversion for nonlocal
  one-sided equations.
\newblock {\em J. Differential Equations}, 260(7):6333--6362, 2016.

\bibitem{B96}
Jean Bertoin.
\newblock {\em L\'evy processes}, volume 121 of {\em Cambridge Tracts in
  Mathematics}.
\newblock Cambridge University Press, Cambridge, 1996.

\bibitem{conto}
R.~M. Blumenthal, R.~K. Getoor, and D.~B. Ray.
\newblock On the distribution of first hits for the symmetric stable processes.
\newblock {\em Trans. Amer. Math. Soc.}, 99:540--554, 1961.

\bibitem{BBM}
Jean Bourgain, Ha{\"{\i}}m Br{\'e}zis, and Petru Mironescu.
\newblock Another look at {S}obolev spaces, in: {J}.{L}. {M}enaldi, {E}.
  {R}ofman, {A}. {S}ulem ({E}ds.).
\newblock {\em Optimal Control and Partial Differential Equations}, {IOS}
  {P}ress, {A}msterdam:439--455, 2001.
\newblock A volume in honor of {A}. {B}ensoussan's 60th birthday.

\bibitem{CC10}
Xavier Cabr{\'e} and Eleonora Cinti.
\newblock Energy estimates and 1-{D} symmetry for nonlinear equations involving
  the half-{L}aplacian.
\newblock {\em Discrete Contin. Dyn. Syst.}, 28(3):1179--1206, 2010.

\bibitem{cabre-fall-weth}
Xavier Cabre, Mouhamed~Moustapha Fall, Joan Sol{\`a}-Morales, and Tobias Weth.
\newblock Curves and surfaces with constant nonlocal mean curvature: meeting
  {A}lexandrov and {D}elaunay.
\newblock {\em arXiv preprint arXiv:1503.00469}, 2015.

\bibitem{CS15}
Xavier Cabr{\'e} and Yannick Sire.
\newblock Nonlinear equations for fractional {L}aplacians {II}: {E}xistence,
  uniqueness, and qualitative properties of solutions.
\newblock {\em Trans. Amer. Math. Soc.}, 367(2):911--941, 2015.

\bibitem{CM05}
Xavier Cabr{\'e} and Joan Sol{\`a}-Morales.
\newblock Layer solutions in a half-space for boundary reactions.
\newblock {\em Comm. Pure Appl. Math.}, 58(12):1678--1732, 2005.

\bibitem{regint}
Luis Caffarelli, Daniela De~Silva, and Ovidiu Savin.
\newblock No title yet.
\newblock {\em arXiv preprint arXiv:1505.02304}, 2015.

\bibitem{nms}
Luis Caffarelli, Jean-Michel Roquejoffre, and Ovidiu Savin.
\newblock Nonlocal minimal surfaces.
\newblock {\em Comm. Pure Appl. Math.}, 63(9):1111--1144, 2010.

\bibitem{CAFFA-SAVIN-VALDINOCI}
Luis Caffarelli, Ovidiu Savin, and Enrico Valdinoci.
\newblock Minimization of a fractional perimeter-{D}irichlet integral
  functional.
\newblock {\em Ann. Inst. H. Poincar\'e Anal. Non Lin\'eaire}, 32(4):901--924,
  2015.

\bibitem{CAFSIL}
Luis Caffarelli and Luis Silvestre.
\newblock An extension problem related to the fractional {L}aplacian.
\newblock {\em Comm. Partial Differential Equations}, 32(7-9):1245--1260, 2007.

\bibitem{CAFFA-SOUG}
Luis Caffarelli and Panagiotis Souganidis.
\newblock Convergence of nonlocal threshold dynamics approximations to front
  propagation.
\newblock {\em Arch. Ration. Mech. Anal.}, 195(1):1--23, 2010.

\bibitem{uniform}
Luis Caffarelli and Enrico Valdinoci.
\newblock Uniform estimates and limiting arguments for nonlocal minimal
  surfaces.
\newblock {\em Calc. Var. Partial Differential Equations}, 41(1-2):203--240,
  2011.

\bibitem{regularity}
Luis Caffarelli and Enrico Valdinoci.
\newblock Regularity properties of nonlocal minimal surfaces via limiting
  arguments.
\newblock {\em Adv. Math.}, 248:843--871, 2013.

\bibitem{STINGA-CAFFARELLI}
Luis~A. Caffarelli and Pablo~Ra{\'u}l Stinga.
\newblock Fractional elliptic equations, {C}accioppoli estimates and
  regularity.
\newblock {\em Ann. Inst. H. Poincar\'e Anal. Non Lin\'eaire}, 33(3):767--807,
  2016.

\bibitem{CAPUTO}
Michele Caputo.
\newblock Linear models of dissipation whose {$Q$} is almost frequency
  independent. {II}.
\newblock {\em Fract. Calc. Appl. Anal.}, 11(1):4--14, 2008.
\newblock Reprinted from Geophys. J. R. Astr. Soc. {{\bf{1}}3} (1967), no. 5,
  529--539.

\bibitem{CW}
Florin Catrina and Zhi-Qiang Wang.
\newblock Symmetric solutions for the prescribed scalar curvature problem.
\newblock {\em Indiana Univ. Math. J.}, 49(2):779--813, 2000.

\bibitem{CHAMBOLLE}
Antonin Chambolle, Massimiliano Morini, and Marcello Ponsiglione.
\newblock A nonlocal mean curvature flow and its semi-implicit time-discrete
  approximation.
\newblock {\em SIAM J. Math. Anal.}, 44(6):4048--4077, 2012.

\bibitem{CS}
Filippo Chiarenza and Raul Serapioni.
\newblock A remark on a {H}arnack inequality for degenerate parabolic
  equations.
\newblock {\em Rend. Sem. Mat. Univ. Padova}, 73:179--190, 1985.

\bibitem{Cing}
Silvia Cingolani.
\newblock Positive solutions to perturbed elliptic problems in {$\Bbb R^N$}
  involving critical {S}obolev exponent.
\newblock {\em Nonlinear Anal.}, 48(8, Ser. A: Theory Methods):1165--1178,
  2002.

\bibitem{CV15}
Eleonora Cinti, Joaquim Serra, and Enrico Valdinoci.
\newblock Quantitative rigidity results for nonlocal phase transitions.
\newblock 2015.

\bibitem{figalli-maggi-ciraolo}
Giulio Ciraolo, Alessio Figalli, Francesco Maggi, and Matteo Novaga.
\newblock Rigidity and sharp stability estimates for hypersurfaces with
  constant and almost-constant nonlocal mean curvature.
\newblock {\em arXiv preprint arXiv:1503.00653}, 2015.

\bibitem{Constantin}
Peter Constantin, Andrew~J. Majda, and Esteban Tabak.
\newblock Formation of strong fronts in the 2-d quasigeostrophic thermal active
  scalar.
\newblock {\em Nonlinearity}, 7(6):1495--1533, 1994.

\bibitem{costi}
Athanase Cotsiolis and Nikolaos~K. Tavoularis.
\newblock Best constants for {S}obolev inequalities for higher order fractional
  derivatives.
\newblock {\em J. Math. Anal. Appl.}, 295(1):225--236, 2004.

\bibitem{mattheorem}
Matteo Cozzi.
\newblock On the variation of the fractional mean curvature under the effect of
  {$C^{1,\alpha}$} perturbations.
\newblock {\em Discrete Contin. Dyn. Syst.}, 35(12):5769--5786, 2015.

\bibitem{MATTEO-ENRICO}
Matteo Cozzi and Enrico Valdinoci.
\newblock Plane-like minimizers for a non-local {G}inzburg-{L}andau-type energy
  in a periodic medium.
\newblock {\em arXiv preprint arXiv:1505.02304}, 2015.

\bibitem{DAVILA}
Juan D{\'a}vila.
\newblock On an open question about functions of bounded variation.
\newblock {\em Calc. Var. Partial Differential Equations}, 15(4):519--527,
  2002.

\bibitem{DPDV14}
Juan D{\'a}vila, Manuel del Pino, Serena Dipierro, and Enrico Valdinoci.
\newblock Concentration phenomena for the nonlocal {S}chr\"odinger equation
  with {D}irichlet datum.
\newblock {\em Anal. PDE}, 8(5):1165--1235, 2015.

\bibitem{davilaaa}
Juan D{\'a}vila, Manuel del Pino, Serena Dipierro, and Enrico Valdinoci.
\newblock Nonlocal delaunay surfaces.
\newblock {\em Nonlinear Analysis: Theory, Methods \& Applications}, 2015.

\bibitem{Lawson}
Juan D{\'a}vila, Manuel del Pino, and Juncheng Wei.
\newblock Nonlocal minimal {L}awson cones.
\newblock {\em arXiv preprint arXiv:1303.0593}, 2013.

\bibitem{Lawson2}
Juan D{\'a}vila, Manuel del Pino, and Juncheng Wei.
\newblock Nonlocal s-minimal surfaces and {L}awson cones.
\newblock {\em arXiv preprint arXiv:1402.4173}, 2014.

\bibitem{degg}
Ennio De~Giorgi.
\newblock Sulla convergenza di alcune successioni d'integrali del tipo
  dell'area.
\newblock {\em Rend. Mat. (6)}, 8:277--294, 1975.
\newblock Collection of articles dedicated to Mauro Picone on the occasion of
  his ninetieth birthday.

\bibitem{LLAVE-VALDINOCI-AIHP}
Rafael de~la Llave and Enrico Valdinoci.
\newblock A generalization of {A}ubry-{M}ather theory to partial differential
  equations and pseudo-differential equations.
\newblock {\em Ann. Inst. H. Poincar\'e Anal. Non Lin\'eaire},
  26(4):1309--1344, 2009.

\bibitem{LV09}
Rafael de~la Llave and Enrico Valdinoci.
\newblock Symmetry for a {D}irichlet-{N}eumann problem arising in water waves.
\newblock {\em Math. Res. Lett.}, 16(5):909--918, 2009.

\bibitem{PKW08}
Manuel del Pino, Micha{\l} Kowalczyk, and Juncheng Wei.
\newblock A counterexample to a conjecture by {D}e {G}iorgi in large
  dimensions.
\newblock {\em C. R. Math. Acad. Sci. Paris}, 346(23-24):1261--1266, 2008.

\bibitem{DCKP14}
Agnese Di~Castro, Tuomo Kuusi, and Giampiero Palatucci.
\newblock Nonlocal {H}arnack inequalities.
\newblock {\em J. Funct. Anal.}, 267(6):1807--1836, 2014.

\bibitem{I4}
Agnese Di~Castro, Berardo Ruffini, Novaga Matteo, and Enrico Valdinoci.
\newblock Non-local isoperimetric problems.
\newblock {\em arXiv preprint arXiv:1406.7545}, 2015.
\newblock To appear on Calc. Var. Partial Differential Equations.

\bibitem{galattica}
Eleonora Di~Nezza, Giampiero Palatucci, and Enrico Valdinoci.
\newblock Hitchhiker's guide to the fractional {S}obolev spaces.
\newblock {\em Bull. Sci. Math.}, 136(5):521--573, 2012.

\bibitem{samko}
Lars Diening and Stefan~G Samko.
\newblock On potentials in generalized {H}{\"o}lder spaces over uniform domains
  in $\mathbb{R}^n$.
\newblock {\em Revista matem{\'a}tica complutense}, 24(2):357--373, 2011.

\bibitem{asympt1}
Serena {Dipierro}, Alessio {Figalli}, Giampiero {Palatucci}, and Enrico
  {Valdinoci}.
\newblock {Asymptotics of the $s$-perimeter as $s \searrow 0$.}
\newblock {\em {Discrete Contin. Dyn. Syst.}}, 33(7):2777--2790, 2013.

\bibitem{DFV14}
Serena Dipierro, Alessio Figalli, and Enrico Valdinoci.
\newblock Strongly nonlocal dislocation dynamics in crystals.
\newblock {\em Comm. Partial Differential Equations}, 39(12):2351--2387, 2014.

\bibitem{DMPV}
Serena Dipierro, Mar{\'i}a Medina, Ireneo Peral, and Enrico Valdinoci.
\newblock Bifurcation results for a fractional elliptic equation with critical
  exponent in {$\Bbb R^n$} , journal = {Manuscripta Mathematica}, year =
  {2016}, pages = {1--48}, issn = {1432-1785}, doi =
  {10.1007/s00229-016-0878-3}, url =
  {http://dx.doi.org/10.1007/s00229-016-0878-3}.

\bibitem{maria}
Serena Dipierro, Maria Medina, and Enrico Valdinoci.
\newblock Fractional elliptic problems with critical growth in the whole of
  $\mathbb{R}^n$.
\newblock {\em arXiv preprint arXiv:1506.01748}, 2015.

\bibitem{DPV15}
Serena Dipierro, Giampiero Palatucci, and Enrico Valdinoci.
\newblock Dislocation dynamics in crystals: a macroscopic theory in a
  fractional {L}aplace setting.
\newblock {\em Comm. Math. Phys.}, 333(2):1061--1105, 2015.

\bibitem{DSV14}
Serena Dipierro, Ovidiu Savin, and Enrico Valdinoci.
\newblock All functions are locally $ s $-harmonic up to a small error.
\newblock {\em arXiv preprint arXiv:1404.3652}, 2014.
\newblock To appear on J. Eur. Math. Soc. (JEMS).

\bibitem{STICK}
Serena Dipierro, Ovidiu Savin, and Enrico Valdinoci.
\newblock Boundary behavior of nonlocal minimal surfaces.
\newblock {\em arXiv preprint arXiv:1506.04282}, 2015.

\bibitem{DIPIERRO-SAVIN-VALDINOCI}
Serena Dipierro, Ovidiu Savin, and Enrico Valdinoci.
\newblock A nonlocal free boundary problem.
\newblock {\em SIAM J. Math. Anal.}, 47(6):4559--4605, 2015.

\bibitem{graph}
Serena Dipierro, Ovidiu Savin, and Enrico Valdinoci.
\newblock Graph properties for nonlocal minimal surfaces.
\newblock {\em Calc. Var. Partial Differential Equations}, 55(4):Paper No. 86,
  25, 2016.

\bibitem{approxforall}
Serena Dipierro, Ovidiu Savin, and Enrico Valdinoci.
\newblock Local approximation of arbitrary functions by solutions of nonlocal
  equations.
\newblock {\em arXiv preprint arXiv:1609.04438}, 2016.

\bibitem{senonlocal}
Serena Dipierro and Enrico Valdinoci.
\newblock Nonlocal minimal surfaces: interior regularity, quantitative
  estimates and boundary stickiness.
\newblock {\em arXiv preprint arXiv:1607.06872}, 2016.

\bibitem{Dong}
Hongjie Dong and Doyoon Kim.
\newblock Schauder estimates for a class of non-local elliptic equations.
\newblock {\em Discrete Contin. Dyn. Syst.}, 33, 2013.

\bibitem{DYDA1}
Bart{\l}omiej Dyda.
\newblock Fractional {H}ardy inequality with a remainder term.
\newblock {\em Colloq. Math.}, 122(1):59--67, 2011.

\bibitem{DYDA}
Bart{\l}omiej Dyda.
\newblock Fractional calculus for power functions and eigenvalues of the
  fractional {L}aplacian.
\newblock {\em Fract. Calc. Appl. Anal.}, 15(4):536--555, 2012.

\bibitem{dyke}
Phil Dyke.
\newblock {\em An introduction to {L}aplace transforms and {F}ourier series}.
\newblock Springer Undergraduate Mathematics Series. Springer, London, second
  edition, 2014.

\bibitem{EVANS}
Lawrence~C. Evans.
\newblock {\em Partial differential equations}, volume~19 of {\em Graduate
  Studies in Mathematics}.
\newblock American Mathematical Society, Providence, RI, second edition, 2010.

\bibitem{FKS}
Eugene~B. Fabes, Carlos~E. Kenig, and Raul~P. Serapioni.
\newblock The local regularity of solutions of degenerate elliptic equations.
\newblock {\em Comm. Partial Differential Equations}, 7(1):77--116, 1982.

\bibitem{fall}
Mouhamed~Moustapha Fall.
\newblock Entire {$s$}-harmonic functions are affine.
\newblock {\em Proc. Amer. Math. Soc.}, 144(6):2587--2592, 2016.

\bibitem{Farina-Gibbons}
Alberto Farina.
\newblock Symmetry for solutions of semilinear elliptic equations in {${\bf
  R}^N$} and related conjectures.
\newblock {\em Atti Accad. Naz. Lincei Cl. Sci. Fis. Mat. Natur. Rend. Lincei
  (9) Mat. Appl.}, 10(4):255--265, 1999.

\bibitem{FARINA-VALDINOCI-STATE}
Alberto Farina and Enrico Valdinoci.
\newblock The state of the art for a conjecture of {D}e {G}iorgi and related
  problems.
\newblock In {\em Recent progress on reaction-diffusion systems and viscosity
  solutions}, pages 74--96. World Sci. Publ., Hackensack, NJ, 2009.

\bibitem{INDIANA}
Alberto Farina and Enrico Valdinoci.
\newblock Rigidity results for elliptic {PDE}s with uniform limits: an abstract
  framework with applications.
\newblock {\em Indiana Univ. Math. J.}, 60(1):121--141, 2011.

\bibitem{Felsinger}
Matthieu Felsinger.
\newblock Parabolic equations associated with symmetric nonlocal operators.
\newblock {\em PhD Thesis Dissertation}, 2013.

\bibitem{Felsinger2}
Matthieu Felsinger and Moritz Kassmann.
\newblock Local regularity for parabolic nonlocal operators.
\newblock {\em Comm. Partial Differential Equations}, 38(9):1539--1573, 2013.

\bibitem{FerFra}
Fausto Ferrari and Bruno Franchi.
\newblock Harnack inequality for fractional sub-{L}aplacians in {C}arnot
  groups.
\newblock {\em Math. Z.}, 279(1-2):435--458, 2015.

\bibitem{I5}
Alessio Figalli, Nicola Fusco, Francesco Maggi, Vincent Millot, and
  Massimiliano Morini.
\newblock Isoperimetry and {S}tability {P}roperties of {B}alls with {R}espect
  to {N}onlocal {E}nergies.
\newblock {\em Comm. Math. Phys.}, 336(1):441--507, 2015.

\bibitem{FV13}
Alessio Figalli and Enrico Valdinoci.
\newblock Regularity and {B}ernstein-type results for nonlocal minimal
  surfaces.
\newblock {\em arXiv preprint arXiv:1307.0234}, 2013.
\newblock To appear on J. Reine Angew. Math.

\bibitem{MONNEAU-FORCADEL-IMBERT}
Nicolas Forcadel, Cyril Imbert, and R{\'e}gis Monneau.
\newblock Homogenization of some particle systems with two-body interactions
  and of the dislocation dynamics.
\newblock {\em Discrete Contin. Dyn. Syst.}, 23(3):785--826, 2009.

\bibitem{FGMT}
Rupert~L. Frank, Mar{\'{\i}}a del~Mar Gonz{\'a}lez, Dario~D. Monticelli, and
  Jinggang Tan.
\newblock An extension problem for the {CR} fractional {L}aplacian.
\newblock {\em Adv. Math.}, 270:97--137, 2015.

\bibitem{FL1D}
Rupert~L. Frank and Enno Lenzmann.
\newblock Uniqueness of non-linear ground states for fractional {L}aplacians in
  {$\Bbb{R}$}.
\newblock {\em Acta Math.}, 210(2):261--318, 2013.

\bibitem{FLS13}
Rupert~L. Frank, Enno Lenzmann, and Luis Silvestre.
\newblock Uniqueness of radial solutions for the fractional {L}aplacian.
\newblock {\em Comm. Pure Appl. Math.}, 69(9):1671--1726, 2016.

\bibitem{FRIEDMAN}
Avner Friedman.
\newblock P{DE} problems arising in mathematical biology.
\newblock {\em Netw. Heterog. Media}, 7(4):691--703, 2012.

\bibitem{stinga1}
Jos{\'e}~E. Gal{\'e}, Pedro~J. Miana, and Pablo~Ra{\'u}l Stinga.
\newblock Extension problem and fractional operators: semigroups and wave
  equations.
\newblock {\em J. Evol. Equ.}, 13(2):343--368, 2013.

\bibitem{GGUI}
Nassif Ghoussoub and Changfeng Gui.
\newblock On a conjecture of {D}e {G}iorgi and some related problems.
\newblock {\em Math. Ann.}, 311(3):481--491, 1998.

\bibitem{trudy}
David Gilbarg and Neil~S. Trudinger.
\newblock {\em Elliptic partial differential equations of second order}.
\newblock Classics in Mathematics. Springer-Verlag, Berlin, 2001.
\newblock Reprint of the 1998 edition.

\bibitem{GIUSTI}
Enrico Giusti.
\newblock {\em Minimal surfaces and functions of bounded variation}, volume~80
  of {\em Monographs in Mathematics}.
\newblock Birkh\"auser Verlag, Basel, 1984.

\bibitem{GM12}
Mar{\'{\i}}a del~Mar Gonz{\'a}lez and Regis Monneau.
\newblock Slow motion of particle systems as a limit of a reaction-diffusion
  equation with half-{L}aplacian in dimension one.
\newblock {\em Discrete Contin. Dyn. Syst.}, 32(4):1255--1286, 2012.

\bibitem{GuWhee2}
Cristian~E. Guti{\'e}rrez and Richard~L. Wheeden.
\newblock Mean value and {H}arnack inequalities for degenerate parabolic
  equations.
\newblock {\em Colloq. Math.}, 60/61(1):157--194, 1990.

\bibitem{GuWhee1}
Cristian~E. Guti{\'e}rrez and Richard~L. Wheeden.
\newblock Harnack's inequality for degenerate parabolic equations.
\newblock {\em Comm. Partial Differential Equations}, 16(4-5):745--770, 1991.

\bibitem{MARINE}
Nicolas~E. Humphries, Nuno Queiroz, Jennifer~RM Dyer, Nicolas~G. Pade,
  Michael~K. Musyl, Kurt~M. Schaefer, Daniel~W. Fuller, Juerg~M.
  Brunnschweiler, Jonathan~D.R. Doyle, Thomas K .and~Houghton, et~al.
\newblock Environmental context explains {L}{\'e}vy and {B}rownian movement
  patterns of marine predators.
\newblock {\em Nature}, 465(7301):1066--1069, 2010.

\bibitem{IMBERT}
Cyril Imbert.
\newblock Level set approach for fractional mean curvature flows.
\newblock {\em Interfaces Free Bound.}, 11(1):153--176, 2009.

\bibitem{Jarohs}
Sven Jarohs.
\newblock Symmetry via maximum principles for nonlocal nonlinear boundary value
  problems.
\newblock {\em PhD Thesis Dissertation}, 2015.

\bibitem{KAO}
Chiu-Yen Kao, Yuan Lou, and Wenxian Shen.
\newblock Random dispersal vs. non-local dispersal.
\newblock {\em Discrete Contin. Dyn. Syst.}, 26(2):551--596, 2010.

\bibitem{K15}
Moritz Kassmann.
\newblock {\em The classical Harnack inequality fails for non-local operators}.
\newblock Sonderforschungsbereich 611, 2007.
\newblock preprint.

\bibitem{KST06}
A.~Anatolii~Aleksandrovich Kilbas, Hari~Mohan Srivastava, and Juan~J. Trujillo.
\newblock {\em Theory and applications of fractional differential equations},
  volume 204.
\newblock Elsevier Science Limited, 2006.

\bibitem{stableprocess}
Tadeusz Kulczycki.
\newblock Properties of {G}reen function of symmetric stable processes.
\newblock {\em Probab. Math. Statist.}, 17(2, Acta Univ. Wratislav. No.
  2029):339--364, 1997.

\bibitem{Landkof}
N.~S. Landkof.
\newblock {\em Foundations of modern potential theory}.
\newblock Springer-Verlag, New York-Heidelberg, 1972.
\newblock Translated from the Russian by A. P. Doohovskoy, Die Grundlehren der
  mathematischen Wissenschaften, Band 180.

\bibitem{L00}
Nikolai Laskin.
\newblock Fractional quantum mechanics and {L}\'evy path integrals.
\newblock {\em Phys. Lett. A}, 268(4-6):298--305, 2000.

\bibitem{CATERPILLAR}
Chaowalit Limmaneevichitr.
\newblock Dislocation motion is analogous to the movement of caterpillar, 2009.
\newblock Available from Youtube, {\url{https://youtu.be/08a9hNFj22Y}}.

\bibitem{lions1}
Pierre-Louis Lions.
\newblock The concentration-compactness principle in the calculus of
  variations. {T}he limit case. {I}.
\newblock {\em Rev. Mat. Iberoamericana}, 1(1):145--201, 1985.

\bibitem{lions2}
Pierre-Louis Lions.
\newblock The concentration-compactness principle in the calculus of
  variations. {T}he limit case. {II}.
\newblock {\em Rev. Mat. Iberoamericana}, 1(2):45--121, 1985.

\bibitem{lukes}
Luca Lombardini.
\newblock Fractional perimeter and nonlocal minimal surfaces.
\newblock {\em arXiv preprint arXiv:1508.06241}, 2015.

\bibitem{elsulbordo}
Luca Lombardini.
\newblock Boundary {E}uler-{L}agrange inequalities for nonlocal perimeter
  functionals.
\newblock {\em preprint}, 2016.

\bibitem{Myfractal}
Luca Lombardini.
\newblock Fractional perimeter from a fractal perspective.
\newblock {\em arXiv preprint arXiv:1603.06088}, 2016.

\bibitem{MaGoberSo}
Wilhelm Magnus, Fritz Oberhettinger, and Raj~Pal Soni.
\newblock {\em Formulas and theorems for the special functions of mathematical
  physics}.
\newblock Third enlarged edition. Die Grundlehren der mathematischen
  Wissenschaften, Band 52. Springer-Verlag New York, Inc., New York, 1966.

\bibitem{Marchaud}
Andr\'{e} Marchaud.
\newblock Sur les d\'{e}riv\'ees et sur les diff\'{e}rences des fonctions de
  variables r\'{e}elles.
\newblock {\em Journ. Math. Pures Appl.}, 9(6):337--425, 1927.

\bibitem{MASSACCESI}
Annalisa Massaccesi and Enrico Valdinoci.
\newblock Is a nonlocal diffusion strategy convenient for biological
  populations in competition?
\newblock {\em arXiv preprint arXiv:1503.01629}, 2015.

\bibitem{VGMAZYA}
Vladimir Maz{`}ya.
\newblock Lectures on isoperimetric and isocapacitary inequalities in the
  theory of {S}obolev spaces.
\newblock In {\em Heat kernels and analysis on manifolds, graphs, and metric
  spaces ({P}aris, 2002)}, volume 338 of {\em Contemp. Math.}, pages 307--340.
  Amer. Math. Soc., Providence, RI, 2003.

\bibitem{metzler}
Ralf Metzler and Joseph Klafter.
\newblock The random walk's guide to anomalous diffusion: a fractional dynamics
  approach.
\newblock {\em Phys. Rep.}, 339(1):77, 2000.

\bibitem{MillerRoss}
Kenneth~S. Miller and Bertram Ross.
\newblock {\em An introduction to the fractional calculus and fractional
  differential equations}.
\newblock A Wiley-Interscience Publication. John Wiley \& Sons, Inc., New York,
  1993.

\bibitem{MO-OS}
S.~A. Mol{\v{c}}anov and E.~Ostrovski{\u\i}.
\newblock Symmetric stable processes as traces of degenerate diffusion
  processes.
\newblock {\em Teor. Verojatnost. i Primenen.}, 14:127--130, 1969.

\bibitem{MONTEFUSCO-PELLACCI-VERZINI}
Eugenio Montefusco, Benedetta Pellacci, and Gianmaria Verzini.
\newblock Fractional diffusion with {N}eumann boundary conditions: the logistic
  equation.
\newblock {\em Discrete Contin. Dyn. Syst. Ser. B}, 18(8):2175--2202, 2013.

\bibitem{Oberhettinger}
Fritz Oberhettinger.
\newblock {\em Tabellen zur {F}ourier {T}ransformation}.
\newblock Springer-Verlag, Berlin-G\"ottingen-Heidelberg, 1957.

\bibitem{oberl}
Fritz Oberhettinger and Larry Badii.
\newblock {\em Tables of {L}aplace transforms}.
\newblock Springer-Verlag, New York-Heidelberg, 1973.

\bibitem{PSV13}
Giampiero Palatucci, Ovidiu Savin, and Enrico Valdinoci.
\newblock Local and global minimizers for a variational energy involving a
  fractional norm.
\newblock {\em Ann. Mat. Pura Appl. (4)}, 192(4):673--718, 2013.

\bibitem{PV15}
Stefania Patrizi and Enrico Valdinoci.
\newblock Crystal {D}islocations with {D}ifferent {O}rientations and
  {C}ollisions.
\newblock {\em Arch. Ration. Mech. Anal.}, 217(1):231--261, 2015.

\bibitem{RELAX}
Stefania Patrizi and Enrico Valdinoci.
\newblock Relaxation times for atom dislocations in crystals.
\newblock {\em Calc. Var. Partial Differential Equations}, 55(3):Art. 71, 44,
  2016.

\bibitem{PONCE}
Augusto~C. Ponce.
\newblock {\em Elliptic {PDE}s, {M}easures and {C}apacities. {F}rom the
  {P}oisson equation to {N}onlinear {T}homas-{F}ermi problems}, volume~23.
\newblock European Mathematical Society (EMS), Z{ü}rich, 2015.

\bibitem{reynolds_levy}
Andy Reynolds and Christopher Rhodes.
\newblock The {L}{\'e}vy flight paradigm: random search patterns and
  mechanisms.
\newblock {\em Ecology}, 90(4):877--887, 2009.

\bibitem{oton}
Xavier Ros-Oton and Joaquim Serra.
\newblock The {D}irichlet problem for the fractional {L}aplacian: regularity up
  to the boundary.
\newblock {\em Journal de Math{\'e}matiques Pures et Appliqu{\'e}es},
  101(3):275--302, 2014.

\bibitem{MR3436398}
Xavier Ros-Oton and Enrico Valdinoci.
\newblock The {D}irichlet problem for nonlocal operators with singular kernels:
  convex and nonconvex domains.
\newblock {\em Adv. Math.}, 288:732--790, 2016.

\bibitem{samkokilbas}
Stefan~G. Samko, Anatoly~A. Kilbas, and Oleg~I. Marichev.
\newblock Fractional integrals and derivatives.
\newblock pages xxxvi+976, 1993.
\newblock Theory and applications, Edited and with a foreword by S. M.
  Nikol{\cprime}ski{\u\i}, Translated from the 1987 Russian original, Revised
  by the authors.

\bibitem{SavinGenius}
O.~Savin.
\newblock Phase transitions, minimal surfaces and a conjecture of {D}e
  {G}iorgi.
\newblock In {\em Current developments in mathematics, 2009}, pages 59--113.
  Int. Press, Somerville, MA, 2010.

\bibitem{S09}
Ovidiu Savin.
\newblock Regularity of flat level sets in phase transitions.
\newblock {\em Ann. of Math. (2)}, 169(1):41--78, 2009.

\bibitem{savinminimal}
Ovidiu Savin.
\newblock Minimal surfaces and minimizers of the {G}inzburg-{L}andau energy.
\newblock 528:43--57, 2010.

\bibitem{SavVal11}
Ovidiu Savin and Enrico Valdinoci.
\newblock Density estimates for a nonlocal variational model via the {S}obolev
  inequality.
\newblock {\em SIAM J. Math. Anal.}, 43(6):2675--2687, 2011.

\bibitem{savingamma}
Ovidiu Savin and Enrico Valdinoci.
\newblock {$\Gamma$}-convergence for nonlocal phase transitions.
\newblock {\em Ann. Inst. H. Poincar\'e Anal. Non Lin\'eaire}, 29(4):479--500,
  2012.

\bibitem{SV13}
Ovidiu Savin and Enrico Valdinoci.
\newblock Regularity of nonlocal minimal cones in dimension 2.
\newblock {\em Calc. Var. Partial Differential Equations}, 48(1-2):33--39,
  2013.

\bibitem{SV14}
Ovidiu Savin and Enrico Valdinoci.
\newblock Density estimates for a variational model driven by the {G}agliardo
  norm.
\newblock {\em J. Math. Pures Appl. (9)}, 101(1):1--26, 2014.

\bibitem{schouten}
Wim Schoutens.
\newblock {L}{\'e}vy processes in finance: Pricing financial derivatives.
\newblock 2003.

\bibitem{SV-2}
Raffaella Servadei and Enrico Valdinoci.
\newblock Mountain pass solutions for non-local elliptic operators.
\newblock {\em J. Math. Anal. Appl.}, 389(2):887--898, 2012.

\bibitem{SV-1}
Raffaella Servadei and Enrico Valdinoci.
\newblock A {B}rezis-{N}irenberg result for non-local critical equations in low
  dimension.
\newblock {\em Commun. Pure Appl. Anal.}, 12(6):2445--2464, 2013.

\bibitem{SeV14}
Raffaella Servadei and Enrico Valdinoci.
\newblock On the spectrum of two different fractional operators.
\newblock {\em Proc. Roy. Soc. Edinburgh Sect. A}, 144(4):831--855, 2014.

\bibitem{SILV-HOELDER}
Luis Silvestre.
\newblock H\"older estimates for solutions of integro-differential equations
  like the fractional {L}aplace.
\newblock {\em Indiana Univ. Math. J.}, 55(3):1155--1174, 2006.

\bibitem{Silvestre}
Luis Silvestre.
\newblock Regularity of the obstacle problem for a fractional power of the
  {L}aplace operator.
\newblock {\em Comm. Pure Appl. Math.}, 60(1):67--112, 2007.

\bibitem{SV09}
Yannick Sire and Enrico Valdinoci.
\newblock Fractional {L}aplacian phase transitions and boundary reactions: a
  geometric inequality and a symmetry result.
\newblock {\em J. Funct. Anal.}, 256(6):1842--1864, 2009.

\bibitem{stingatorrea2}
Pablo~Ra{\'u}l Stinga and Jos{\'e}~Luis Torrea.
\newblock Extension problem and {H}arnack's inequality for some fractional
  operators.
\newblock {\em Comm. Partial Differential Equations}, 35(11):2092--2122, 2010.

\bibitem{tichonov}
Andrej~Nikolaevi{\v{c}} Tichonov, Aleksandr~Andreevi{\v{c}} Samarskij,
  Soviet~Union Mathematician, BM~Budak, Andrej~Nikolaevi{\v{c}} Tichonov,
  Andrej~Nikolaevi{\v{c}} Tichonov, Aleksandr~Andreevi{\v{c}} Samarskij, and
  Aleksandr~Andreevi{\v{c}} Samarskij.
\newblock {\em Problemi della fisica matematica}.
\newblock Mir, 1982.

\bibitem{MILAN}
Enrico Valdinoci.
\newblock A fractional framework for perimeters and phase transitions.
\newblock {\em Milan J. Math.}, 81(1):1--23, 2013.

\bibitem{vS-W-2004}
J.~Van~Schaftingen and Michel Willem.
\newblock Set transformations, symmetrizations and isoperimetric inequalities.
\newblock pages 135--152, 2004.

\bibitem{vazquez_nonlinear}
Juan~Luis V{\'a}zquez.
\newblock Nonlinear diffusion with fractional {L}aplacian operators.
\newblock In {\em Nonlinear partial differential equations}, volume~7 of {\em
  Abel Symp.}, pages 271--298. Springer, Heidelberg, 2012.

\bibitem{vazquez_recent}
Juan~Luis V{\'a}zquez.
\newblock Recent progress in the theory of nonlinear diffusion with fractional
  {L}aplacian operators.
\newblock {\em Discrete Contin. Dyn. Syst. Ser. S}, 7(4):857--885, 2014.

\bibitem{VISENTIN}
Augusto Visintin.
\newblock Generalized coarea formula and fractal sets.
\newblock {\em Japan J. Indust. Appl. Math.}, 8(2):175--201, 1991.

\bibitem{wang}
Xu-Jia Wang.
\newblock Schauder estimates for elliptic and parabolic equations.
\newblock {\em Chin. Ann. Math.}, 27B:637--642, 2006.

\bibitem{zygmund}
Richard~L. Wheeden and Antoni Zygmund.
\newblock {\em Measure and integral}.
\newblock Marcel Dekker, Inc., New York-Basel, 1977.
\newblock An introduction to real analysis, Pure and Applied Mathematics, Vol.
  43.

\bibitem{woycz}
Wojbor~A. Woyczy{\'n}ski.
\newblock L\'evy processes in the physical sciences.
\newblock In {\em L\'evy processes}, pages 241--266. Birkh\"auser Boston,
  Boston, MA, 2001.

\bibitem{YOSIDA}
K{\=o}saku Yosida.
\newblock {\em Functional analysis}.
\newblock Classics in Mathematics. Springer-Verlag, Berlin, 1995.
\newblock Reprint of the sixth (1980) edition.

\bibitem{russo}
Victor~Iosifovich Yudovich.
\newblock Non-stationary flows of an ideal incompressible fluid.
\newblock {\em Zhurnal Vychislitel'noi Matematiki i Matematicheskoi Fiziki},
  3(6):1032--1066, 1963.

\end{thebibliography}
\bibliographystyle{plain}
\section*{Thank you note}

\begin{flushright}
\textit{Not all those who wander are lost\footnote{This verse is from a poem written by J. R. R. Tolkien for his fantasy novel \textit{The Lord of the Rings}.  I didn't know it at the time I first read it. I was in Edinburgh when
I saw these white words  on the side of a building,  in a dark Scottish winter night, and I thought exactly of this thesis and of this  \textit{Thank you} note. }.}
\end{flushright}
\bigskip
\bigskip
\noindent I have the pleasure to thank in this page some of the people that during these three years of PhD have helped me, supported me, and shared with me some really great moments.
\bigskip

\noindent An immense thank you and all my gratitude go to my supervisor, Enrico! None of this work, and none of these three years would have happened without you. Thank for your encouragement, for pushing me to do more and to try to do better, to put in more and more time, for your constant help and you being available at every hour of the day and the night (time zone's fault). For having given me the possibility to travel literally around the world,  visiting beautiful cities and meeting and working with wonderful people and  great mathematicians. For being so friendly and helpful in every occasion, was that for maths or anything else. \\
A huge thank you goes to my family / Multumesc din inima familiei mele: mami, tati, Alin, Boby, Dani, Mamae, Oaoa.  My mother is  prouder  than I am of this PhD thesis! \\
Another huge thank you to Vicky, for all your patience, for staying by my side in my darkest moments, for loving me and supporting me day by day. I would not imagine these years without you.\\
A more than a special thank you to Luca, a great mathematician and a great friend, who (also) forced me to write this \textit{Thank you} note. \\
A great thank you to my office colleagues pasts and presents, especially to Gugu, Pit, Tommy, Teo, Mono, Davide and Alessandra.\\
A huge hug to Maria, thank you for all the great time we had in Berlin and Madrid.\\
 A great thank you to Fausto Ferrari and Aram Karkhanyan, with whom I had the pleasure of working. \\ 
  A thank you to all the people I had the pleasure of meeting in my mathematical journeys, especially to Serena, Pietro and Eleonora.\\
  A big hi to all the people I met in Cetraro (that was by far the best summer school I attended), and  in Cortona.\\
 Thank you, theater, for keeping me sane and always in a parallel world.\\
 Thank you for your hospitality Berlin, Madrid, Edinburgh, Melbourne, Oaxaca, Cetraro, Cortona, Reggio Calabria, Pisa, Roma, Bologna, Magdeburg and of course Milano and Severin. \\
\end{document}